\long\def\@makecaption#1#2{%
	\vskip\abovecaptionskip
	\sbox\@tempboxa{{\bfseries#1.} #2}%
	\ifdim \wd\@tempboxa >\hsize
	{\bfseries#1.} #2\par
	\else
	\global \@minipagefalse
	\hb@xt@\hsize{\hfil\box\@tempboxa\hfil}%
	\fi
	\vskip\belowcaptionskip}
\DeclareMathOperator{\std}{std}
\DeclareMathOperator{\Av}{Av}
\DeclareMathOperator{\occ}{occ}
\DeclareMathOperator{\pocc}{\widetilde{occ}}
\DeclareMathOperator{\cocc}{c\text{-}occ}
\DeclareMathOperator{\pcocc}{\widetilde{c\text{-}occ}}
\DeclareMathOperator{\pat}{pat}
\DeclareMathOperator{\Perm}{\mathbf{Perm}}
\DeclareMathOperator{\Leb}{Leb}
\DeclareMathOperator{\indmax}{indmax}
\DeclareMathOperator{\size}{size}
\DeclareMathOperator{\dec}{dec}
\DeclareMathOperator{\CanTree}{CT} 
\DeclareMathOperator{\Pack}{PA} 
\DeclareMathOperator{\DF}{DF}
\DeclareMathOperator{\DT}{DT}
\DeclareMathOperator{\RP}{RP}
\DeclareMathOperator{\Sh}{Sh}
\DeclareMathOperator{\sgn}{sgn}
\DeclareMathOperator{\Lab}{Lab}
\DeclareMathOperator{\Occ}{Occ}
\DeclareMathOperator{\wcp}{WC}
\DeclareMathOperator{\fortree}{LFor}
\DeclareMathOperator{\labtree}{LTr}
\DeclareMathOperator{\cpbp}{CP}
\DeclareMathOperator{\bow}{OW}
\DeclareMathOperator{\bobp}{OP}
\DeclareMathOperator{\idf}{\mathds{1}}
\DeclareMathOperator{\Var}{Var}
\DeclareMathOperator{\Id}{Id}
\DeclareMathOperator{\mult}{mult}
\DeclareMathOperator{\GG}{G}
\DeclareMathOperator{\CL}{CL}
\DeclareMathOperator{\Pat}{Pat}
\DeclareMathOperator{\Cov}{Cov}
\DeclareMathOperator{\LRM}{LRmax}
\DeclareMathOperator{\LRm}{LRmin}
\DeclareMathOperator{\RLM}{RLmax}
\DeclareMathOperator{\RLm}{RLmin}
\DeclareMathOperator{\conv}{conv}
\newcommand{\cP}{\mathcal{P}}
\newcommand{\cS}{\mathcal{S}}
\newcommand{\myvec}[1]{\vec{#1}} %
\newcommand{\rv}[1]{\mathbf{#1}} %
\newcommand{\SG}{\mathcal{S}}
\newcommand{\cC}{\mathcal{C}}
\newcommand{\zz}{\bm{z}}
\newcommand{\asqnk}{\asq(n,k)}
\newcommand{\sq}{Sq}
\newcommand{\asq}{ASq}
\newcommand{\pu}{pos_U}
\newcommand{\ctu}{ct_U}
\newcommand{\pd}{pos_D}
\newcommand{\ctd}{ct_D}
\newcommand{\pr}{pos_R}
\newcommand{\ctr}{ct_R}
\newcommand{\pl}{pos_L}
\newcommand{\ctl}{ct_L}
\newcommand{\irr}{\mathcal{R}_{irr}}
\newcommand{\ext}{\text{ext}}
\newcommand{\avn}{A\!v_n}
\newcommand{\avnk}{ASq(\avn(321),k)}
\newcommand{\Inverse}{\Theta}
\newcommand{\Walks}{\mathfrak W}
\newcommand{\Coals}{\mathfrak C}
\newcommand{\Perms}{\mathcal S}
\newcommand{\Maps}{\mathfrak m}
\newcommand{\Steps}{A}
\newcommand{\exc}{\mathrm{exc}}
\newcommand{\eps}{\varepsilon}
\newcommand{\conti}[1]{{\bm{\mathscr #1}}}
\newcommand{\solution}{F}
\newcommand\indep{\perp\!\!\!\perp}
\def\N{\mathbb{N}}
\def\Z{\mathbb{Z}}
\def\E{\mathbb{E}}
\def\P{\mathbb{P}}
\def\R{\mathbb{R}}
\def\RR{\mathbb{R}}
\def\S{\mathcal{S}}
\def\Asi{A_{\sigma,i}}
\def\leqsi{\preccurlyeq_{\sigma,i}}
\def\Sr{\mathcal{S}^{\bullet}}
\def\Sri{\tilde{\mathcal{S}}^{\bullet}}
\def\Vint{V_{\text{int}}}
\def\Seq{\textsc{Seq}}
\def\DDD{\mathcal{D}} 
\def\Sall{\mathfrak{S}_{\text{all}}} 
\def\MMM{\mathcal{M}on} 
\def\SS{\mathcal{S}} 
\def\nonp{{\scriptscriptstyle{\mathrm{not}}{\oplus}}}
\def\PackedTree{P}
\def\GGG{\mathcal{G}} 
\def\V{\mathbb{V}}
\def\Uinf{\mathcal{U}_{\infty}^{\bullet}}
\def\lufT{\mathfrak{T}^{\bullet,\text{\tiny luf}}}
\def\lufTD{\mathfrak{T}^{\bullet,\text{\tiny luf}}_{\mathcal D}}
\def\lufPT{\mathfrak{P}^{\bullet,\text{\tiny luf}}}
\def\setTkt{\mathcal T_{k,\Omega}^{[t]}}
\def\ValGraph[#1]{\mathcal{O}v(#1)}
\DeclareRobustCommand{\cev}[1]{%
	\mathpalette\do@cev{#1}%
}
\newcommand{\do@cev}[2]{%
	\fix@cev{#1}{+}%
	\reflectbox{$\m@th#1\vec{\reflectbox{$\fix@cev{#1}{-}\m@th#1#2\fix@cev{#1}{+}$}}$}%
	\fix@cev{#1}{-}%
}
\newcommand{\fix@cev}[2]{%
	\ifx#1\displaystyle
	\mkern#23mu
	\else
	\ifx#1\textstyle
	\mkern#23mu
	\else
	\ifx#1\scriptstyle
	\mkern#22mu
	\else
	\mkern#22mu
	\fi
	\fi
	\fi
}
\newcounter{indice}
\newcommand{\permutation}[1]{
	\setcounter{indice}{0};
	\foreach \i in {#1}
	\addtocounter{indice}{1};
	
	\addtocounter{indice}{1}
	\draw [help lines] (1,1) grid (\theindice,\theindice);
	
	\setcounter{indice}{1};
	
	\foreach \i in { #1 } {
		\draw (\theindice+.5,\i+.5) [fill] circle (.2);
		\addtocounter{indice}{1};
	}
	\addtocounter{indice}{-1};
	
}
\newglossaryentry{one_line}{%
	name=\ensuremath{\sigma(1)\sigma(2)\dots\sigma(n)},
	description={One-line notation for permutations}
}
\newglossaryentry{size_perm}{%
	name=\ensuremath{|\sigma|},
	description={The size of a permutation $\sigma$}
}
\newglossaryentry{perm_n}{%
	name=\ensuremath{\mathcal{S}_n},
	description={The set of permutations of $[n]=\{1,2,\dots,n\}$}
}
\newglossaryentry{perm}{%
	name=\ensuremath{\mathcal{S}},
	description={The set of permutations of finite size}
}
\newglossaryentry{standa}{%
	name=\ensuremath{\mathrm{std}(x_1\dots x_n)},
	description={The unique permutation that is in the same relative order as $x_1\dots x_n$}
}
\newglossaryentry{pattern_ind}{%
	name=\ensuremath{\pat_I(\sigma)},
	description={The permutation induced by $(\sigma(i))_{i\in I}$}
}
\newglossaryentry{class_n}{%
	name=\ensuremath{\mathrm{Av}_n(B)},
	description={The set of $B$-avoiding permutations of size $n$}
}
\newglossaryentry{class}{%
	name=\ensuremath{\mathrm{Av}(B)},
	description={The set of $B$-avoiding permutations of finite size}
}
\newglossaryentry{occ}{%
	name=\ensuremath{\mathrm{occ}(\pi,\sigma)},
	description={The number of occurrences of a pattern $\pi$ in a permutation $\sigma$}
}
\newglossaryentry{pocc}{%
	name=\ensuremath{\widetilde{\mathrm{occ}}(\pi,\sigma)},
	description={The proportion of occurrences of a pattern $\pi$ in a permutation $\sigma$}
}
\newglossaryentry{c_occ}{%
	name=\ensuremath{\mathrm{c\text{-}occ}(\pi,\sigma)},
	description={The number of consecutive occurrences of a pattern $\pi$ in a permutation $\sigma$}
}
\newglossaryentry{pc_occ}{%
	name=\ensuremath{\widetilde{\mathrm{c\text{-}occ}}(\pi,\sigma)},
	description={\fontsize{9.65}{6}\selectfont The proportion of consecutive occurrences of a pattern $\pi$ in a permutation $\sigma$}
}
\newglossaryentry{o_plus}{%
	name=\ensuremath{\oplus},
	description={The direct sum operation for permutations}
}
\newglossaryentry{o_minus}{%
	name=\ensuremath{\ominus},
	description={The skew sum operation for permutations}
}
\newglossaryentry{rec_max}{%
	name=\ensuremath{\LRM(\sigma)},
	description={The set of left-to-right maxima of a permutation $\sigma$ (the other records are denoted $\LRm(\sigma)$, $\RLM(\sigma)$, and $\RLm(\sigma)$)}
}
\newglossaryentry{root_perm_n}{%
	name=\ensuremath{\mathcal{S}^{\bullet}_n},
	description={The set of rooted permutations of size $n$}
}
\newglossaryentry{root_perm}{%
	name=\ensuremath{\mathcal{S}^{\bullet}},
	description={The set of rooted permutations of finite size}
}
\newglossaryentry{root_perm_inf}{%
	name=\ensuremath{\mathcal{S}_{\infty}^\bullet},
	description={The set of rooted permutations of infinite size}
}
\newglossaryentry{root_perm_fin_inf}{%
	name=\ensuremath{\tilde{\mathcal{S}}^{\bullet}},
	description={The set of rooted permutations of finite and infinite size}
}
\newglossaryentry{distance_perm_local}{%
	name=\ensuremath{d},
	description={The local distance on $\tilde{\mathcal{S}}^{\bullet}$}
}
\newglossaryentry{annealed_BS}{%
	name=\ensuremath{\xrightarrow{aBS}},
	description={Convergence in the annealed Benjamini--Schramm sense}
}
\newglossaryentry{quenched_BS}{%
	name=\ensuremath{\xrightarrow{qBS}},
	description={Convergence in the quenched Benjamini--Schramm sense}
}
\newglossaryentry{induce_perm_k}{%
	name=\ensuremath{\Perm_k(\mu)},
	description={The random permutation of size $k$ induced by a permuton $\mu$}
}
\newglossaryentry{space_perm}{%
	name=\ensuremath{\mathcal M},
	description={The set of permutons}
}
\newglossaryentry{distance_perm}{%
	name=\ensuremath{d_\square},
	description={The permuton distance on $\mathcal M$}
}
\newglossaryentry{ind_max}{%
	name=\ensuremath{\indmax(\sigma)},
	description={The index of the maximal value $|\sigma|$ of a permutation $\sigma$}
}
\newglossaryentry{left_right_perm}{%
	name=\ensuremath{\sigma_L \text{ }\mathrm{and}\text{ } \sigma_R},
	description={The left and right subsequences of $\sigma,$ before and after the maximal value}
}
\newglossaryentry{left_right_tree}{%
	name=\ensuremath{T^v_L \text{ }\mathrm{and}\text{ } T^v_R},
	description={The left and the right fringe subtrees of a binary tree $T$ hanging below the vertex $v$}
}
\newglossaryentry{binary_tree}{%
	name=\ensuremath{\mathbb{T}^b},
	description={The set of binary trees}
}
\newglossaryentry{rooted_plane_tree_n}{%
	name=\ensuremath{\mathbb{T}_n},
	description={The set of rooted plane trees with $n$ vertices}
}
\newglossaryentry{rooted_plane_tree}{%
	name=\ensuremath{\mathbb{T}},
	description={The set of finite rooted plane trees}
}
\newglossaryentry{sub_perm}{%
	name=\ensuremath{\theta[\nu^{(1)},\dots,\nu^{(d)}]},
	description={The \emph{substitution} of $\nu^{(1)},\dots,\nu^{(d)}$ in $\theta$}
}
\newglossaryentry{s_all}{%
	name=\ensuremath{\Sall},
	description={The set of all simple permutations}
}
\newglossaryentry{can_tree}{%
	name=\ensuremath{\CanTree(\nu)},
	description={The canonical tree associated with a permutation $\nu$}
}
\newglossaryentry{mon_perm}{%
	name=\ensuremath{\MMM},
	description={The set of all monotone (increasing or decreasing) permutations}
}
\newglossaryentry{simple_mon_perm}{%
	name=\ensuremath{\widehat{\Sall}:=\Sall \cup \MMM},
	description={The set of all monotone and simple permutations}
}
\newglossaryentry{set_can_trees}{%
	name=\ensuremath{\mathcal{T}},
	description={The of  canonical trees with decorations in $\widehat{\mathfrak S}$}
}
\newglossaryentry{set_can_trees_not}{%
	name=\ensuremath{\mathcal{T}_{\nonp}},
	description={The set of canonical trees with a root that is \emph{not} labelled $\oplus$}
}
\newglossaryentry{simple_sub}{%
	name=\ensuremath{\mathfrak{S}},
	description={The set of simple permutations of a given substitution-closed class}
}
\newglossaryentry{gadget}{%
	name=\ensuremath{\GGG(\mathfrak{S})},
	description={The set of all $\mathfrak{S}$-gadgets}
}
\newglossaryentry{gadget_and_mon}{%
	name=\ensuremath{\widehat{\GGG(\mathfrak{S})}},
	description={The set of all $\mathfrak{S}$-gadgets and decorations $\{\circledast_k, k \ge 2\}$}
}
\newglossaryentry{baxter}{%
	name=\ensuremath{\mathcal P},
	description={The set of Baxter permutations}
}
\newglossaryentry{tandem}{%
	name=\ensuremath{\mathcal W},
	description={The set of tandem walks}
}
\newglossaryentry{orientations}{%
	name=\ensuremath{\mathcal{O}},
	description={The set of bipolar orientations}
}
\newglossaryentry{tandem_walks}{%
	name=\ensuremath{\mathcal W},
	description={The set of tandem walks}
}
\newglossaryentry{coal_proc}{%
	name=\ensuremath{\mathscr{C}},
	description={The set of coalescent-walk processes}
}
\newglossaryentry{coal_proc_I}{%
	name=\ensuremath{\Coals(I)},
	description={The set of coalescent-walk processes on some interval $I$}
}
\newglossaryentry{walks_I}{%
	name=\ensuremath{\Walks(I)},
	description={The set of two-dimensional walks with time space $I$ (considered up to an additive constant)}
}
\newglossaryentry{pointed_plane_trees}{%
	name=\ensuremath{\mathfrak{T}^\bullet},
	description={The set of (possibly infinite) pointed plane trees}
}
\newglossaryentry{luf_pointed_plane_trees}{%
	name=\ensuremath{\lufT},
	description={The set of locally and upwards finite pointed trees}
}
\newglossaryentry{lufd_pointed_plane_trees}{%
	name=\ensuremath{\lufTD},
	description={The set of $\DDD$-decorated locally and upwards finite pointed trees}
}
\newglossaryentry{lufd_dist}{%
	name=\ensuremath{d_t},
	description={The local distance on the set $\lufTD$}
}
\newtheorem{thm}{Theorem}[section]
\newtheorem{cor}[thm]{Corollary}
\newtheorem{prop}[thm]{Proposition}
\newtheorem{lem}[thm]{Lemma}
\newtheorem{conj}[thm]{Conjecture}
\newtheorem{prbl}[thm]{Problem}
\theoremstyle{definition}
\newtheorem{defn}[thm]{Definition}
\newtheorem{obs}[thm]{Observation}
\newtheorem{ass}[thm]{Assumption}
\theoremstyle{remark}
\newtheorem{rem}[thm]{Remark}
\newtheorem{exmp}[thm]{Example}
\title{Random Permutations}  
\author{Jacopo Borga}            
\begin{document}

\frontmatter 
    \maketitle
    \begin{secondtitle}
{\fontsize{30}{60}\selectfont Random Permutations}\\[5mm]
\emph{\fontsize{18}{60}\selectfont A geometric point of view}\\[5cm]

{\Huge{\textbf{Jacopo Borga}}}
\end{secondtitle}

    \begin{copyrightenv}
\end{copyrightenv}
          
    \begin{dedication}
{\large{To the crossroad in front of my house}}\\[5mm]
You know, there is COVID-19
\end{dedication}

    \begin{acknowledgements}
	This manuscript gives an overview of my work during the last four years, that is the time I spent here in Zurich for my Ph.D. Reading this manuscript, one might think that my Ph.D. was only made of mathematics, but my experience was actually made of people. Fantastic people that have walked next to me in different ways during this wonderful journey.
	
	Je ne peux m'empêcher de commencer par Valentin et Mathilde, mes deux directeurs de thèse (officiellement) mais surtout mes deux guides professionnels et non professionnels. Merci beaucoup de m'avoir appris à expérimenter les mathématiques d'une manière professionnelle mais insouciante. Pour m'avoir soutenu au quotidien dans la réalisation de mes idées (pas toujours totalement sensées) et pour avoir fait de moi le mathématicien que je suis aujourd'hui. Merci beaucoup de m'avoir accueilli dans votre merveilleuse famille en même temps, je n'oublierai jamais les moments fantastiques passés avec vos trois merveilleux enfants.
	
	Thanks a lot to Nicolas Curien and Peter Winkler for agreeing to be readers for this dissertation and being so supportive. Your feedback is a great stimulus for my future mathematical career.
	
	Grazie mille a Benedetta, per essermi sempre stata a fianco, per aver riletto centinaia di pagine che ho scritto, per aver sopportato tutti i momenti in cui mi perdo a pensare alla matematica, ma soprattutto per avermi sempre amato.
	
	Un grandissimo ringraziamento spetta ovviamente ad Emilio, senza di lui gran parte di questo percorso non sarebbe stato possibile. Non dimenticherò mai il momento in cui mi hai suggerito di raggiungerti a Parigi, una scelta che mi ha cambiato la vita.
	
	Se oggi sono quello che sono lo devo soprattutto alla mia famiglia. Grazie mamma e grazie papà, potervi rendere orgogliosi sarà sempre un mio obiettivo per ringraziarvi di tutto quello che avete fatto per me. Grazie Tommy per essere un fratello speciale, nonostante la costante distanza siamo sempre riusciti a mantenere un rapporto stupendo, e saprò sempre che potrò contare su di te in ogni momento.
	
	\vspace{-0.4cm}
	
	\begin{figure}[H]
		\centering
		\includegraphics[scale=.87]{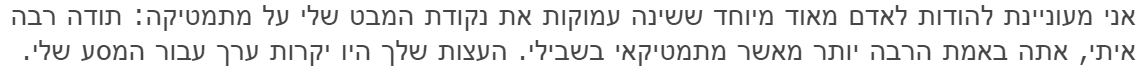}
	\end{figure}
	 

	\vspace{-0.6cm}

	Grazie mille Pegu per essere sempre la prima persona a venirmi a trovare ogni volta che torno in Italia o vado in un nuovo posto nel mondo. Sei una persona unica.
	
	Muito obrigado Raúl por partilhares comigo uma grande parte desta jornada.
	Fazer \break
	matemática contigo foi uma das melhores partes do doutoramento, mas especialmente pôr comboios em artigos só foi possível contigo. Obrigado, man! 
	Vejo-te em breve em São Francisco - fico mesmo feliz que estejas lá.
	
	Un grande ringraziamento ai miei due maestri Paolo Dai Pra e Giambattista Giacomin per avermi sempre supportato quando ero un giovane studente. Siete stati un esempio per me.
	
	A big thanks to all my collaborators, especially to Erik Slivken. I will never forget our first collaboration, how easy it was to start to work with you, and how much fun we had together. And thanks for being my \emph{American guide}, you played a big role in my final decision to go to California.
	
	Danke an alle meine Kollegen der UZH und ETH. Die vielen Erlebnisse mit euch haben Spass gemacht, insbesondere unsere Skiwochenenden und Trinkseminare. Ich bedanke mich auch bei allen Professoren der UZH für die hilfreichen Diskussionen über verschiedene mathematische Probleme. Insbesondere möchte ich mich bei Jean Bertoin und Askan Nikeghbali bedanken. 
	
	Grazie a tutti i \emph{Padovani} per accogliermi ogni volta che torno in Italia come se me ne fossi andato due giorni prima. E grazie al \emph{Borgo Ruga} per ricordarmi ogni volta che senza un po' di ignoranza nulla ha senso, siete grandi!
	
	Grazie a tutto lo staff di \emph{Da Pino} per avermi insegnato che anche nel mondo del lavoro si possono creare delle amicizie che rimarranno per sempre.
	
	Thanks a lot to all the people that supported me during this years, writing letters and spending good words for me. D'une manière particulière, merci à Gregory Miermont de m'avoir soutenu dans plusieurs de mes projets et d'avoir été si solidaire à chaque étape de ma carrière.
	
	Last but not the least, a big thanks goes to Stanford and in particular to Amir Dembo, Persi Diaconis and Sourav Chatterje for giving me the opportunity to join a so special place, but mainly, to make one of my dreams real. Thank you!
\end{acknowledgements}


\begin{abstract}
We look at \emph{geometric} limits of large random \emph{non-uniform} permutations. We mainly consider two theories for limits of permutations: \emph{permuton limits}, introduced by Hoppen, Kohayakawa,
Moreira, Rath, and Sampaio to define a notion of scaling limits for permutations; and \emph{Benjamini-Schramm limits}, introduced by the author to define a notion of local limits for permutations. 

The models of random permutations that we consider are mainly \emph{constrained models}, that is,  uniform permutations belonging to a given subset of the set of all permutations. We often identify this subset using pattern-avoidance, focusing on: permutations avoiding a pattern of length three, substitution-closed classes, (almost) square permutations, permutation families encoded by generating trees, and Baxter permutations.

We explore some universal phenomena for the models mentioned above. For Benjamini-Schramm limits we explore a \emph{concentration phenomenon} for the limiting objects. For permuton limits we deepen the study of some known universal permutons, called \emph{biased Brownian separable permutons}, and we introduce some new ones, called \emph{Baxter permuton} and \emph{skew Brownian permutons}.
In addition, for (almost) square permutations, we investigate the occurrence of a phase transition for the limiting permutons.

On the way, we establish various combinatorial results both for permutations and other related objects. Among others, we give a complete description of the \emph{feasible region for consecutive patterns} as the \emph{cycle polytope} of a specific graph; and we find new bijections relating Baxter permutations, bipolar orientations, walks in cones, and a new family of discrete objects called \emph{coalescent-walk processes}.
\end{abstract}\cleardoublepage
    {
    	\hypersetup{linkcolor=black}
    	\tableofcontents*\if@openright\cleardoublepage\else\clearpage\fi
    }
    

\pagestyle{umpage}
\floatpagestyle{umpage}
\mainmatter 
    \chapter{Introduction: A geometric perspective on random constrained permutations}
\chaptermark{Introduction: A geometric perspective}

\begin{adjustwidth}{8em}{0pt}
	\emph{In which we discuss the main topics of this thesis. We start by contextualizing our work in relation to the existing literature and prior studies. We then give a brief description of permuton and local convergence for permutations. Finally, we present an overview of the remaining chapters of this manuscript.}
\end{adjustwidth}

\bigskip

\bigskip

\bigskip

\noindent An old and fascinating, even though ill-posed, mathematical question that has attracted the attention of many mathematicians in the last century is the following one:

\vspace{0.4cm}

\begin{equation}
	\emph{What does a large random permutation look like?}
\end{equation}

\vspace{0.65cm}

\noindent For years researchers have tried to make sense of this question, particularly in the case of \emph{uniform} random permutations.
The most established approach has been to look at the convergence of relevant statistics\footnote{References and a more complete discussion can be found in \cref{sect:lim_stat}.}, such as number of cycles, number of inversions, length of the longest increasing subsequence and many others.

More recently, a geometric point of view has been explored\footnote{References and a more complete discussion can be found in \cref{sect:perm_lim_intro,sect:local_lim_intro}.}. Instead of looking at statistics on permutations, the question is to directly determine, from a global or local perspective, the limit of the permutation itself. 

\medskip

Let us now briefly explain with an example how these geometric notions of limits work. We start with the global point of view. Consider a large uniform random permutation $\bm\sigma$. One would expect that the points of the diagram of $\bm\sigma$ -- i.e.\ the points $(i,\bm \sigma(i))$ -- are spread out in a homogeneous way inside a square.  And indeed the diagram of a large uniform random permutation looks like this:

\begin{figure}[H]
	\centering
	\includegraphics[scale=.4]{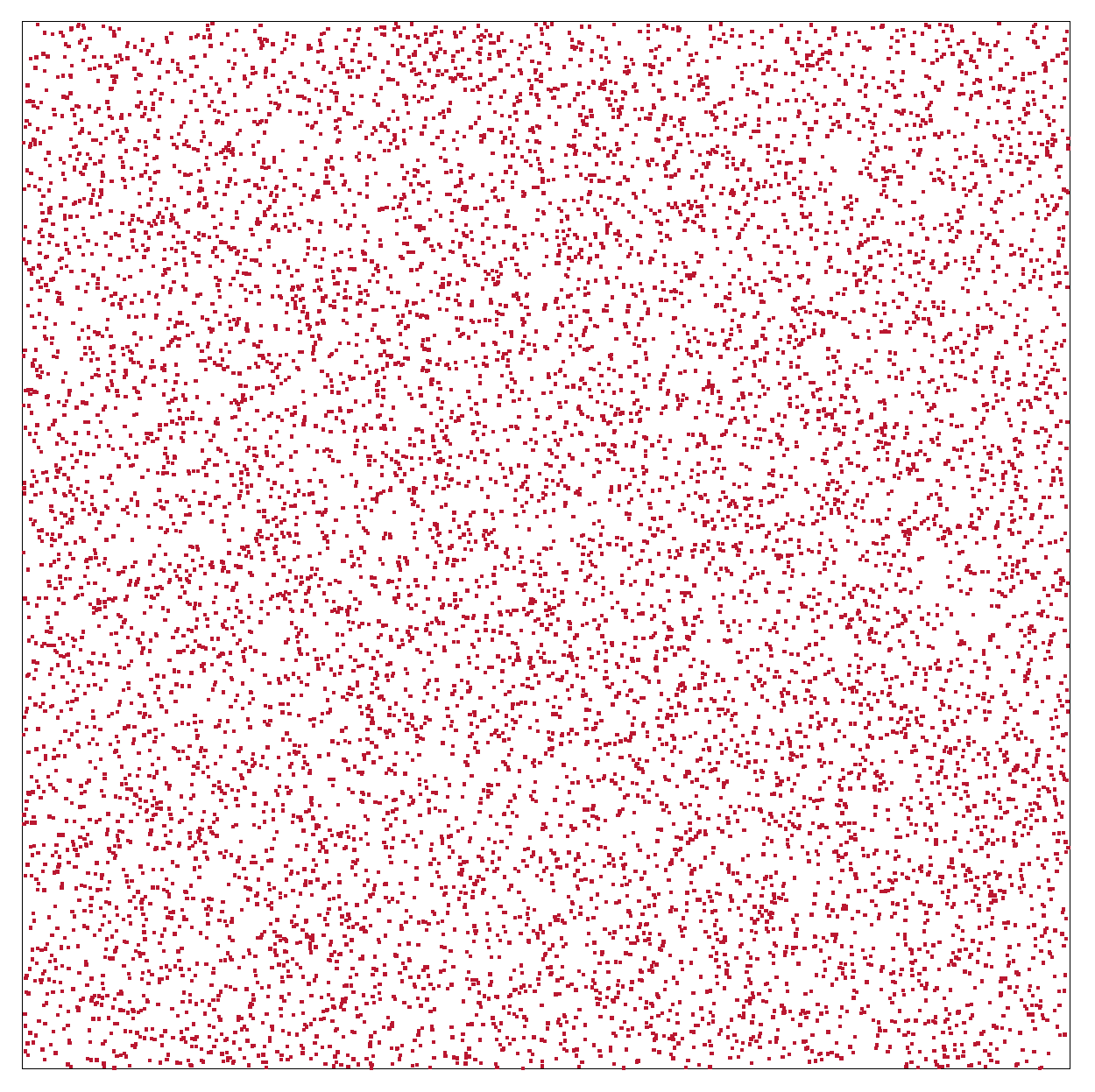}
\end{figure}

This intuition is made rigorous by showing that large uniform random permutations converge in the \emph{permuton sense} to the two-dimensional Lebesgue measure on the unit square, that is, the \emph{measure induced} by the points in the diagram of $\bm\sigma$ converges weakly to the two-dimensional Lebesgue measure on the unit square.

We now move to the local point of view. Consider again a large uniform random permutation $\bm\sigma$. We look at the behavior of the permutation around a distinguished point, called the \emph{root}, asking what are the relative positions of the points around this root. If the latter is chosen uniformly at random, one would expect that these neighboring points are still distributed as a uniform permutation. This intuition is made rigorous by showing that a uniform random  permutation \emph{locally converges}, or even better \emph{Benjamini-Schramm converges}, to the \emph{uniform random infinite rooted permutation}.

The theory of permutons goes back to the works of Hoppen, Kohayakawa,
Moreira, Rath and Sampaio~\cite{hoppen2013limits} and Presutti and
Stromquist~\cite{MR2732835}. In this thesis (\cref{chp:conv_theories}), we formally introduce the notion of \emph{local convergence} or \emph{Benjamini-Schramm convergence} for permutations. 

In our work, instead of looking at uniform random permutations, we focus on constrained models of random permutations, mainly pattern-avoiding permutations\footnote{The reader who is not familiar with the terminology of permutation patterns (such as pattern occurrences, pattern avoidance, \emph{etc.}) can find the necessary background in \cref{sect:not_perm_patt}.}. The goal is to understand how these constraints affect the limiting global and local shape of random permutations.

As we will see, the answers are much more interesting than in  the uniform case. Among various results, we discuss a universal phenomenon for local limits of random constrained permutations (\cref{chp:local_lim}), we  explore some phase transitions for the limiting shape of (almost) square permutations (\cref{chp:square}), and  we present some known and new universal random limits for constrained permutations, leading to the definition of the \emph{skew Brownian permuton} (\cref{chp:perm_lim}).

In order to prove these results, we explore several connections between models of constrained permutations and many other discrete objects, such as trees, walks, and maps (\cref{chp:models}). This provides an occasion to exhibit and discover some pretty combinatorial constructions related with pattern-avoiding permutations.

In \cref{fig:simulations_models}, the reader can see some simulations\footnote{In this manuscript, we present several simulations of large random permutations in various models of non-uniform random permutations. These simulations were obtained using a dozen of different algorithms developed and coded by the author during his Ph.D. We do not discuss the details of such algorithms in this manuscript; we just mention that most of them build on some bijections between permutations and trees or walks, the latter objects being simpler to simulate.} of the diagrams of large random permutations in various models of non-uniform random permutations. As it can be noticed, an incredibly rich variety of limiting behaviors emerges, all of which will be explored in this manuscript. 

\vspace{0.5cm}

\begin{figure}[htbp]
	\centering
	\includegraphics[scale=.28]{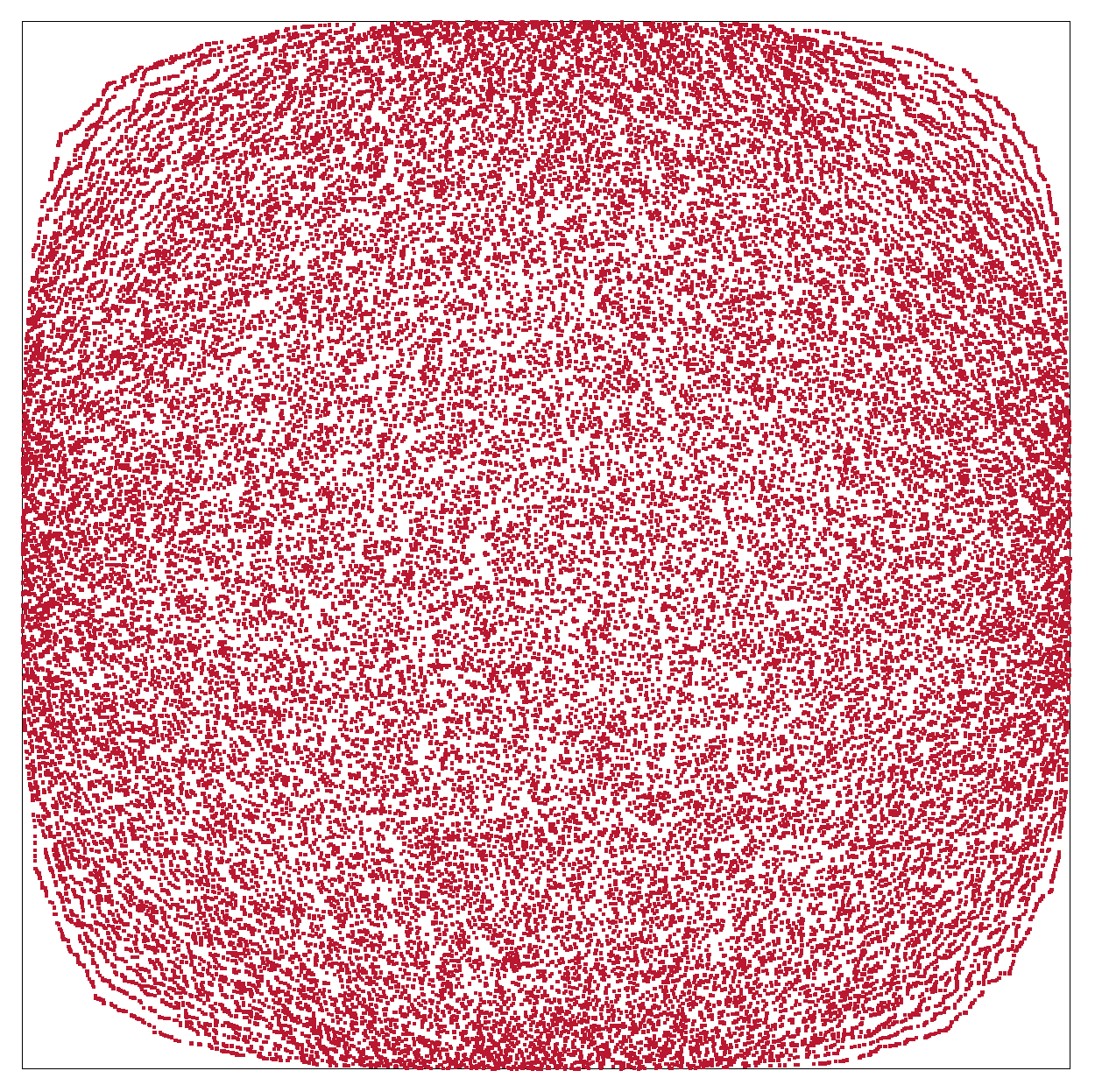}
	\includegraphics[scale=.28]{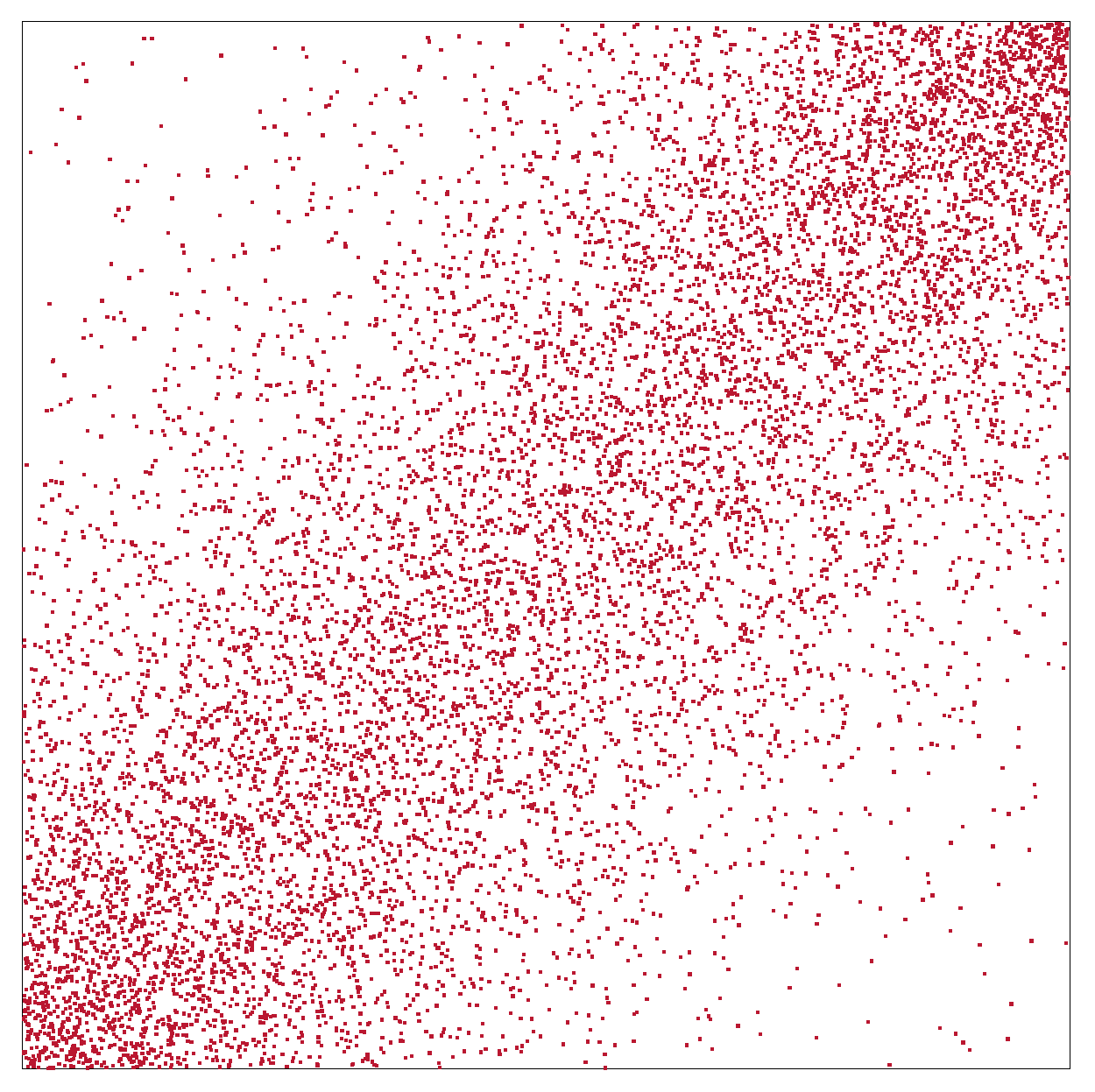}
	\includegraphics[scale=.28]{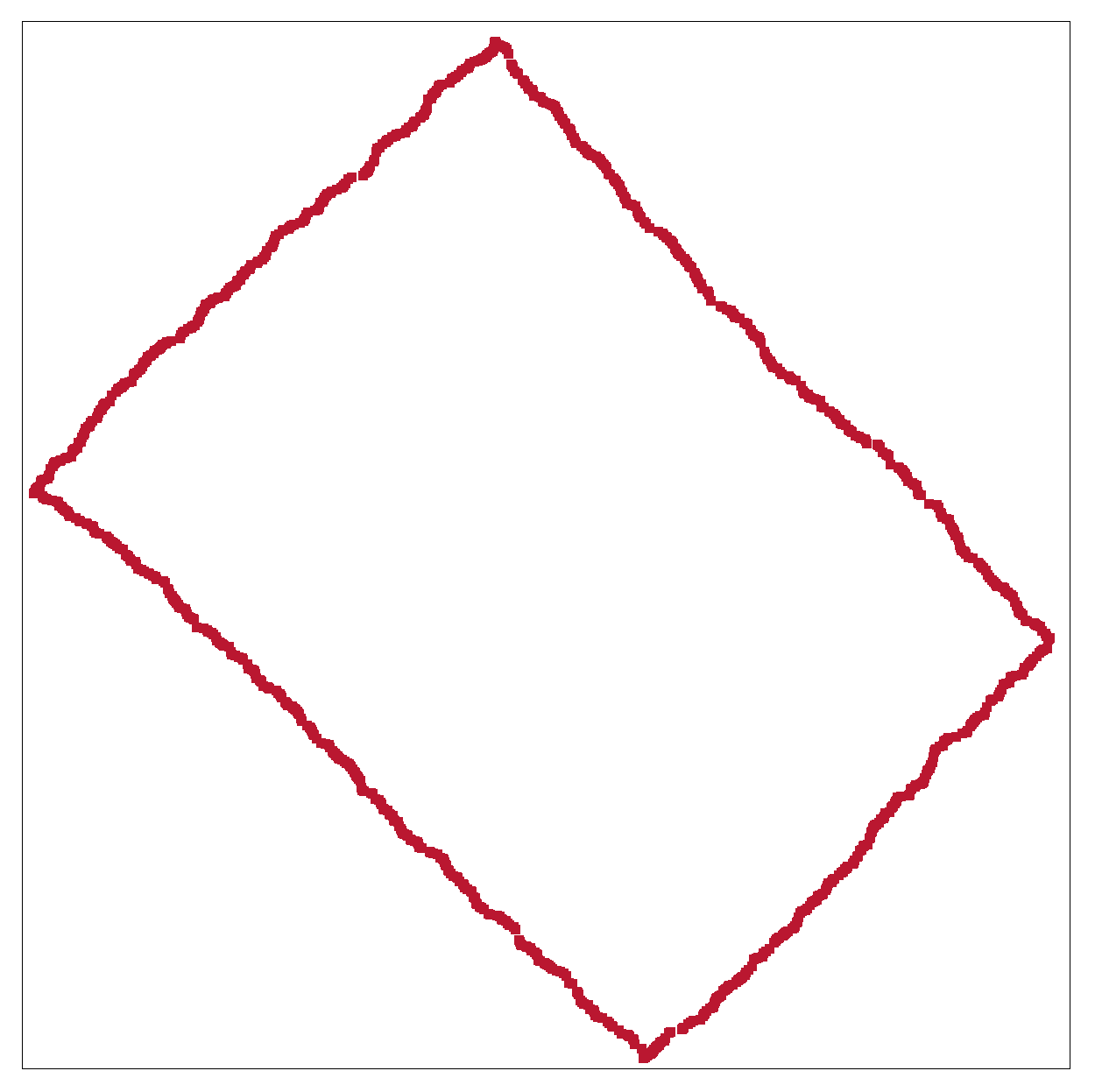}
	\includegraphics[scale=.28]{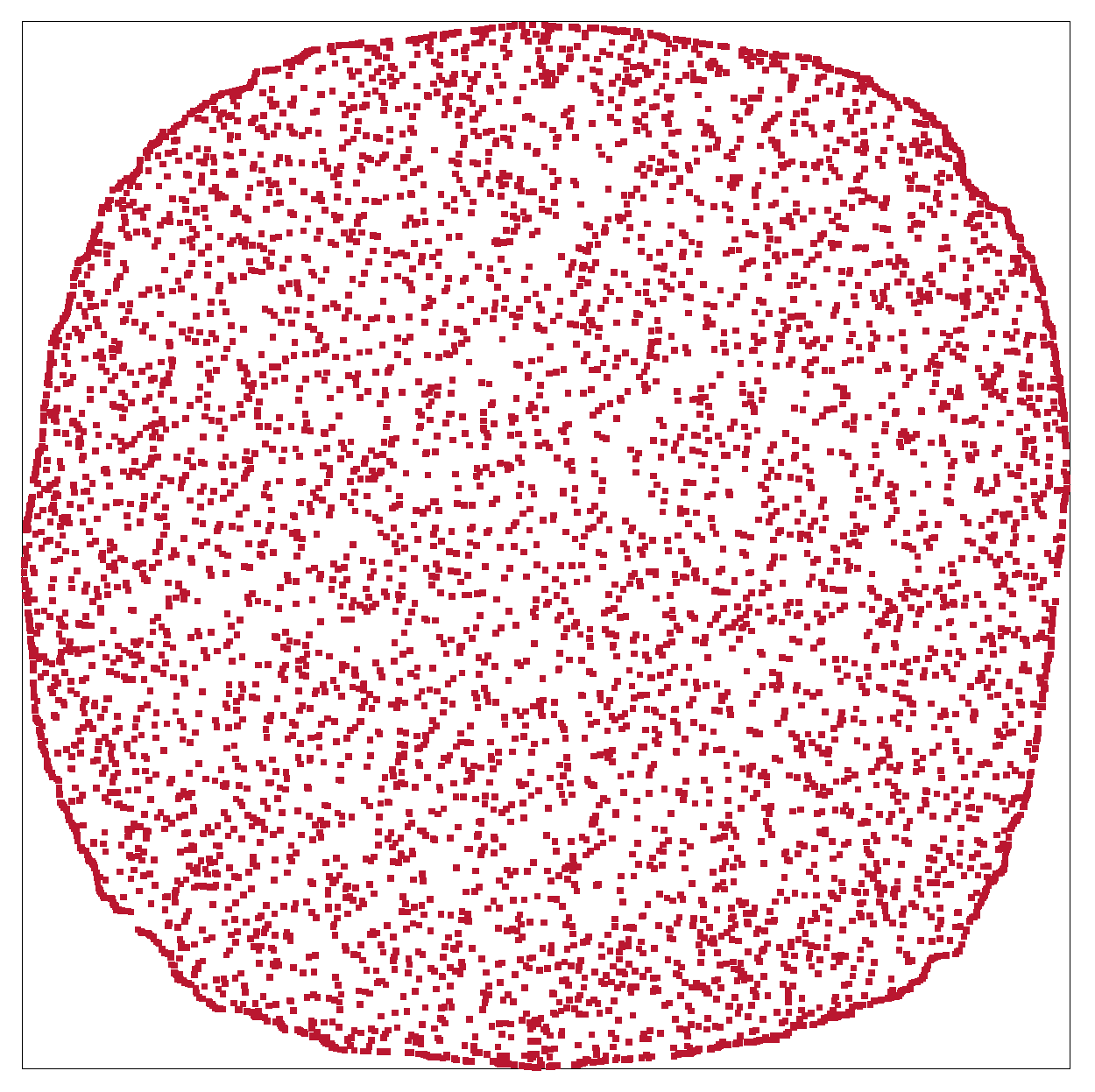}
	\includegraphics[scale=.28]{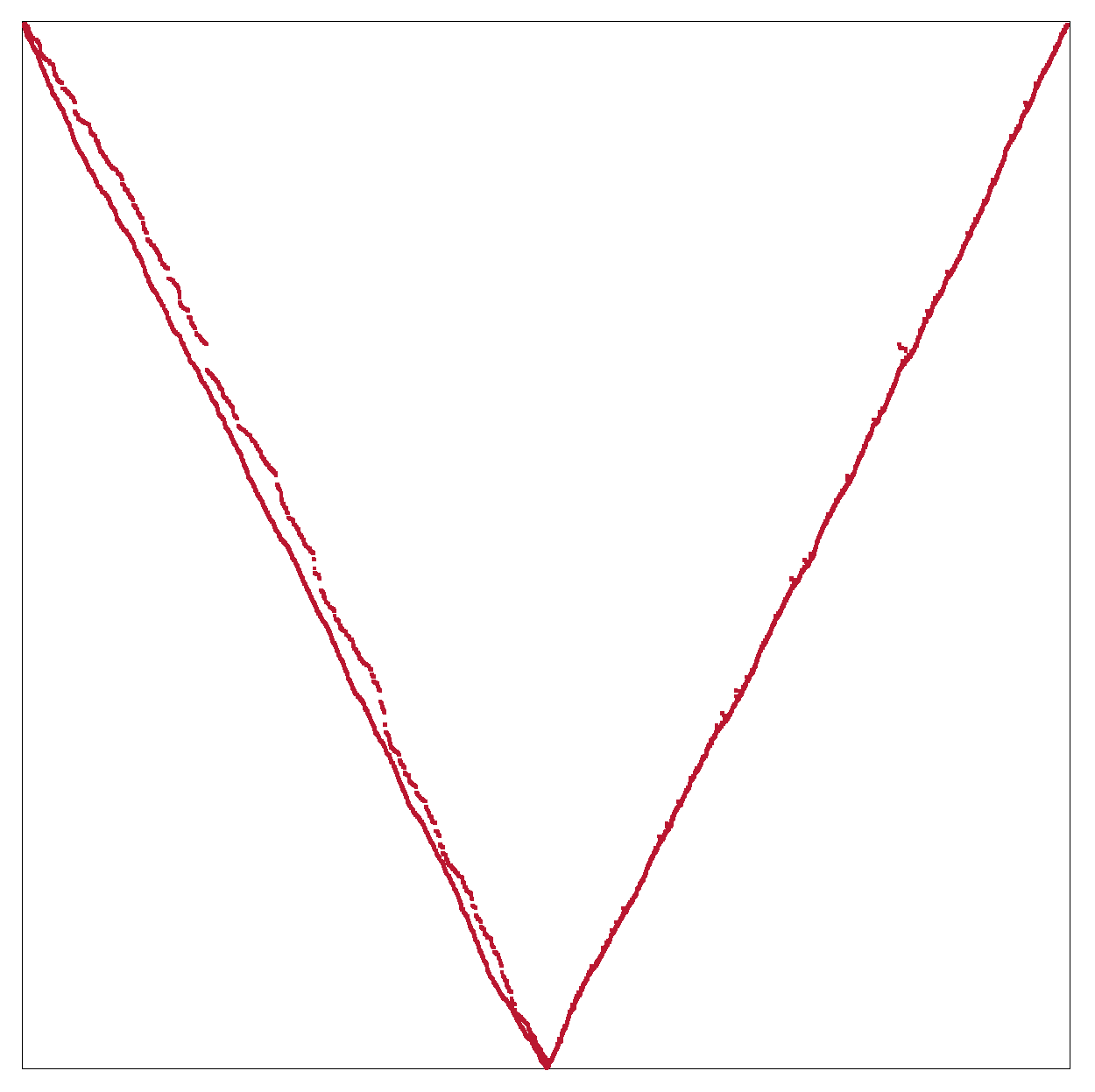}
	\includegraphics[scale=.28]{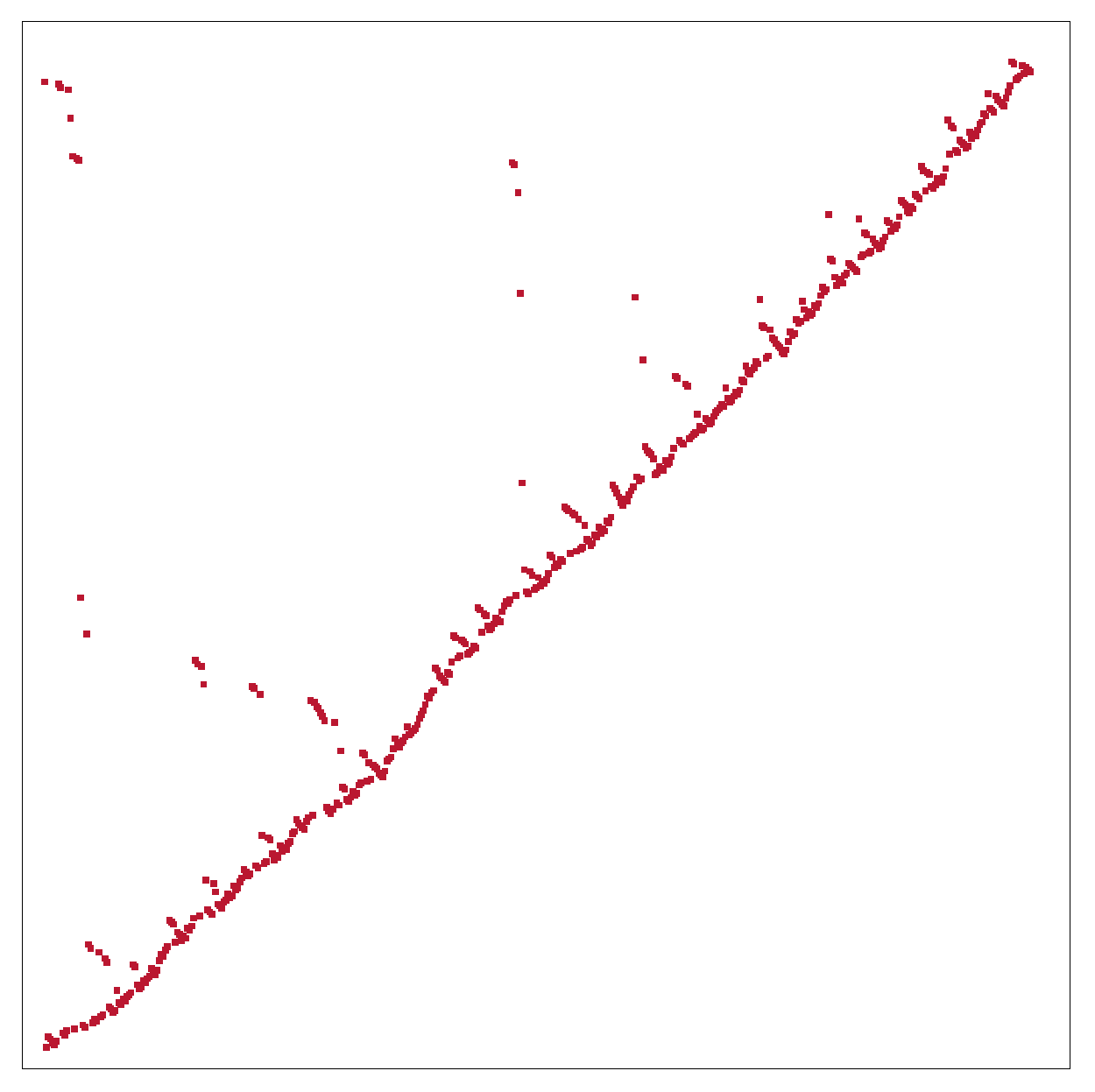}
	\includegraphics[scale=.28]{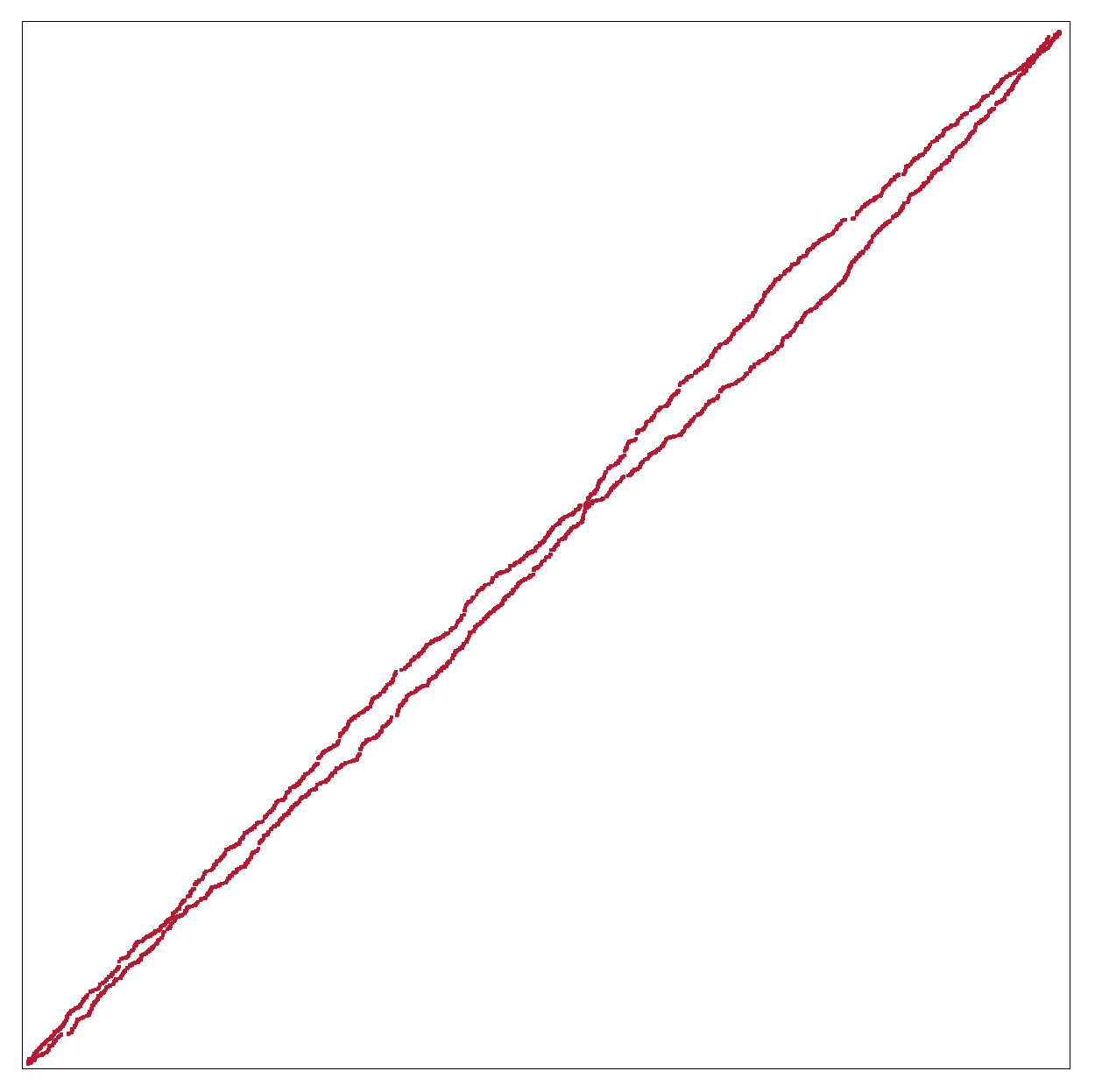}
	\includegraphics[scale=.28]{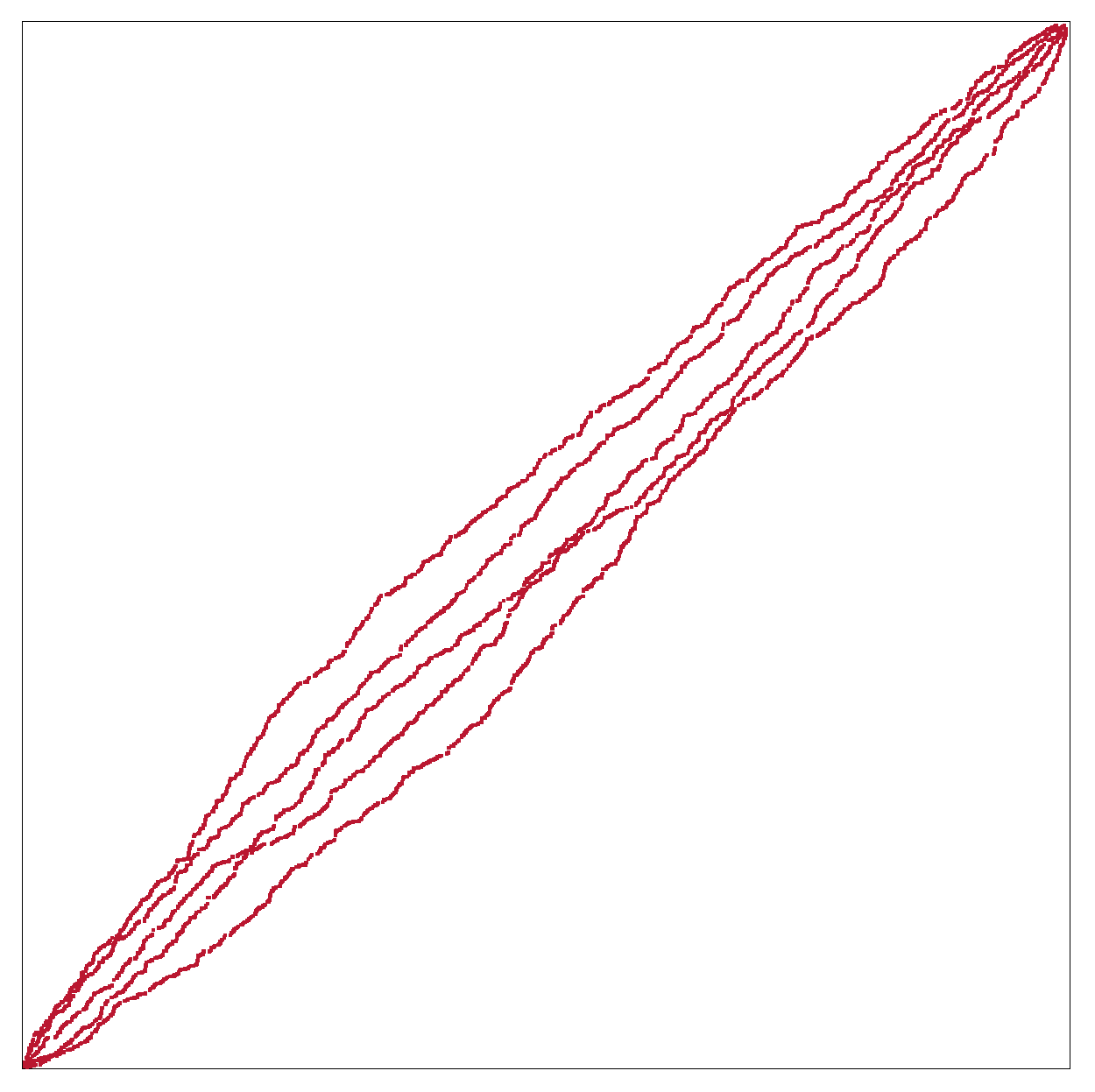}
	\includegraphics[scale=.07]{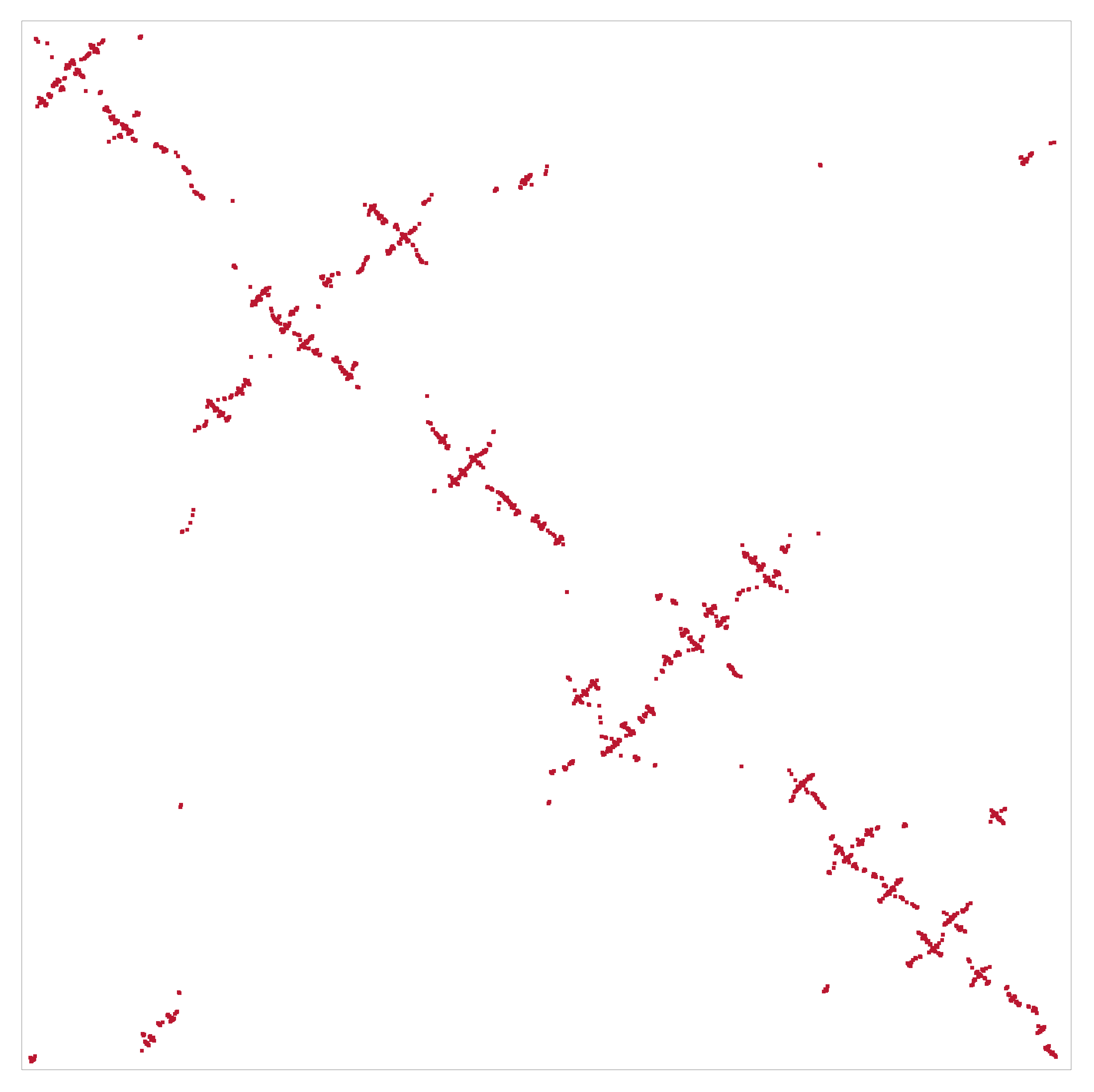}
	\includegraphics[scale=.055]{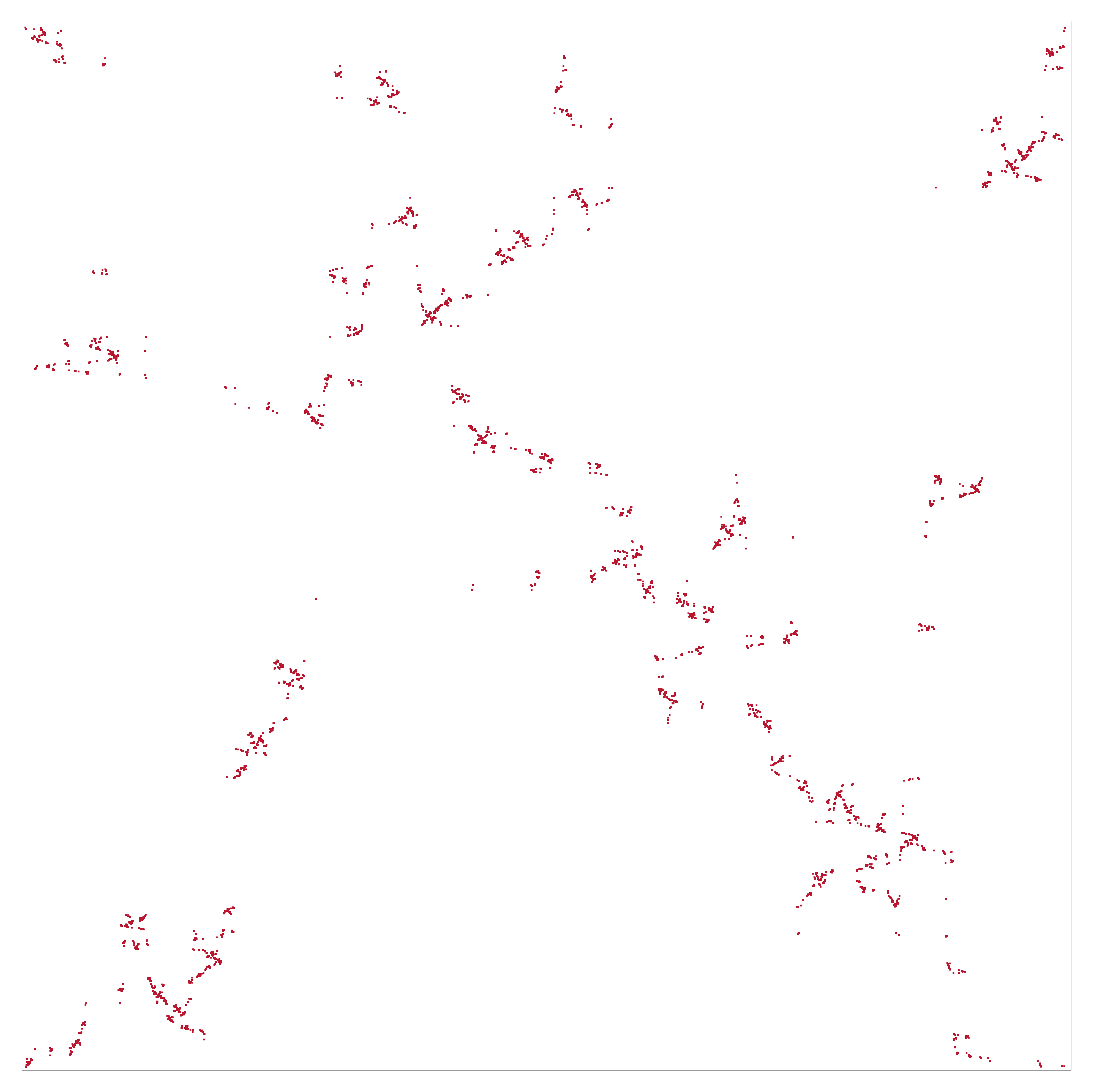}
	\includegraphics[scale=.07]{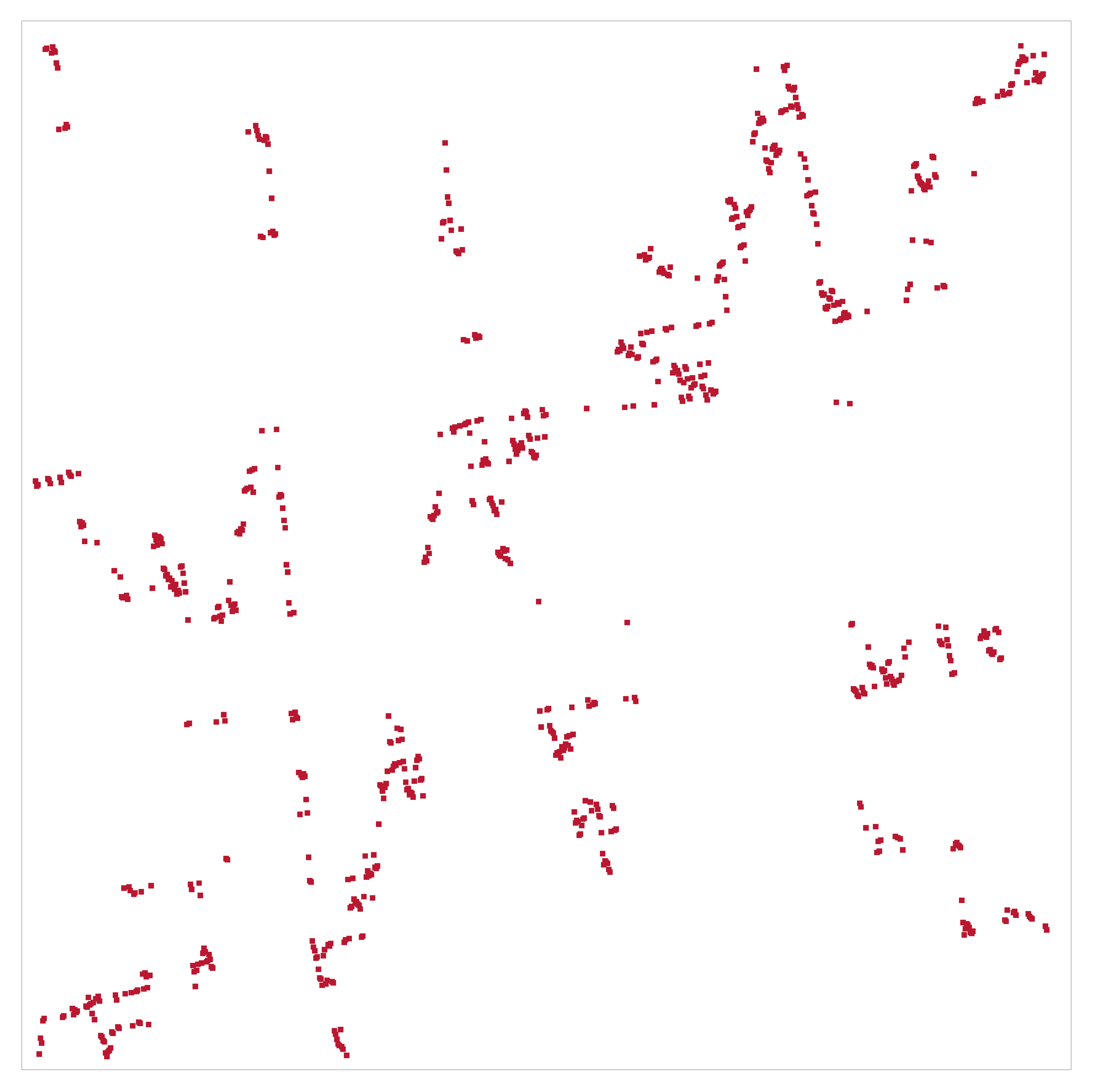}
	\includegraphics[scale=.28]{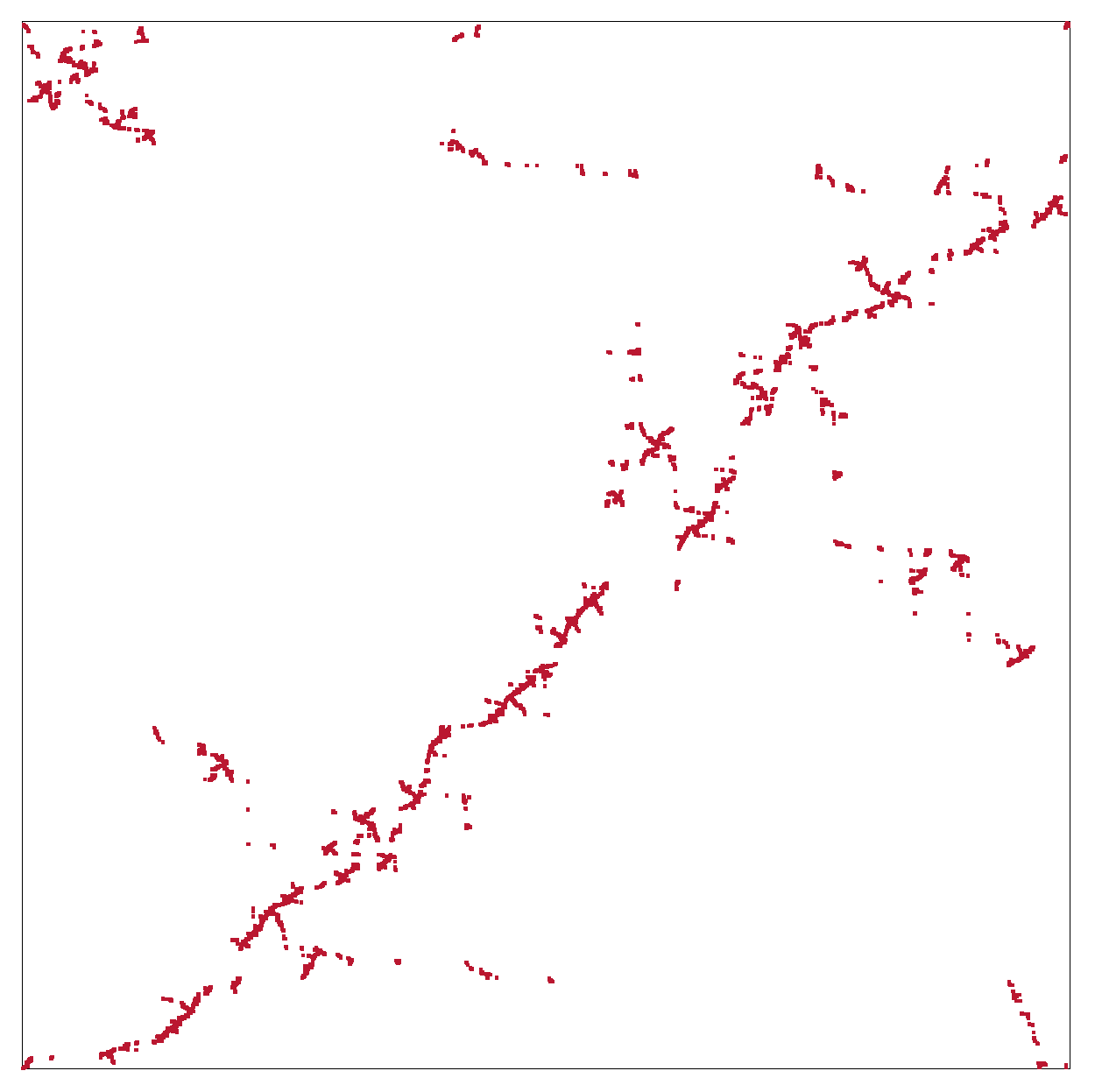}
	\caption{From left to right/top to bottom, large uniform permutations from the following models: Erd\"{o}s–Szekeres permutations, Mallows permutations, square permutations, almost square permutations (with a fifth of external points), $\{1342,2341\}$-avoiding permutations, 231-avoiding permutations, 321-avoiding permutations, 654321-avoiding permutations, separable permutations, Baxter permutations, semi-Baxter permutations, strong-Baxter permutations. All the models are formally defined in this thesis. \label{fig:simulations_models}}
\end{figure}

\section{Pattern-avoiding permutations}

One fundamental aspect of our work is that we do not focus on \emph{uniform} random permutations but we look at \emph{non-uniform} models. There are two main lines of research on non-uniform models of random permutations:

\begin{itemize}
	\item The first one looks at \emph{biased} models of random permutations. Typically, one considers a distribution on the set of permutations where every permutation $\pi$ has probability proportional to $q^{\text{stc}(\pi)}$, where $\text{stc}(\cdot)$ denotes a certain statistics on permutations and $q$ is a positive real-valued parameter. A remarkable model of biased permutations is the Mallows model (see the second picture in \cref{fig:simulations_models}): every permutation $\pi$ has probability proportional to $q^{\text{inv}(\pi)}$, where $\text{inv}(\pi)$ denotes the number of inversions of $\pi$, i.e.\ the number of pairs $i<j$ such that $\pi(i)>\pi(j)$.
		
	\item The second line of research looks at \emph{constrained} models of random permutations. This is the area where this thesis belongs. Typically, one considers the uniform distribution on a given subset of the set of all permutations. A classical way to identify this subset is to use pattern-avoidance, that is, by considering all the permutations avoiding one or multiple patterns (see the third and the last eight pictures\footnote{We remark that Baxter permutations, semi-Baxter permutations and strong-Baxter permutations are defined using \emph{generalized pattern-avoidance}, as we will see in \cref{sec:discrete}.} in \cref{fig:simulations_models}). We will also deal with other constrained models not defined via pattern-avoidance, such as Erd\"{o}s–Szekeres permutations and almost square permutations (see the first and fourth picture in \cref{fig:simulations_models}).
\end{itemize}

One of the big advantages of considering pattern-avoiding permutations is that many elegant and  useful combinatorial constructions are already available in the literature.

The study of patterns in permutations dates back a century, to the works of MacMahon \cite{MR2417935}, where he showed that the permutations that can be partitioned into two decreasing subsequences -- i.e.\ 123-avoiding permutations -- are counted by the Catalan numbers. Other significant early contributions on permutation patterns are due to Erd\"{o}s and Szekeres~\cite{MR1556929} and Schensted \cite{MR121305} on the longest increasing and decreasing subsequence of a permutation. All these works do not explicitly use the permutation patterns terminology.

The modern study of pattern-avoiding permutations started when Knuth published the first volume of \emph{The Art of Computer Programming}~\cite{MR0286317}. After that, around the 90's, the field expanded towards two main connected directions: to determine bijections between pattern-avoiding permutations and other well-studied discrete objects; to enumerate permutations in a given pattern-avoiding family.
For a detailed survey of the relevant literature we refer to \cite[Chapter 12]{MR3408702}, written by Vatter, or to the book of Kitaev~\cite{MR3012380}. 

In \cref{chp:models}, some of the nice combinatorial results established in the literature are presented and some new ones are proved. We will also explain how these results are extremely helpful in providing a convenient way to construct uniform pattern-avoiding permutations.

\section{Scaling limits}

Another fundamental aspect of our work is that we consider geometric notions of convergence for permutations. We discuss in this section the scaling limit approach and in \cref{sect:local_intro} the local limit approach.

\subsection{Scaling limits of discrete structures}

After a suitable rescaling, a simple random walk on $\mathbb{Z}$ converges to the 1-dimensional Brownian motion: probably this is the most famous scaling limit result in probability theory, a.k.a.\ Donsker’s theorem~\cite{MR40613}. In the last thirty years such scaling limits have been intensively studied for different random discrete structures. We mention some examples where this notion has been investigated, without aiming at giving a complete list.

One of the earliest scaling limit results for random discrete structures is due to Logan and Shepp \cite{MR1417317} and Ver\v{s}ik and Kerov \cite{MR0480398}: they investigated Plancherel distributed Young diagrams, proving convergence to the so-called Logan–Shepp–Kerov–Vershik curve $\Omega$.

In the framework of random trees, the systematic study of scaling limits has been initiated by Aldous with the pioneering series of articles about the \emph{Continuum random tree}~\cite{MR1085326,MR1166406,MR1207226}. After that, scaling limits of random planar maps have been thoroughly studied with motivations coming from
string theory and conformal field theory. Convergence results for many models of random planar maps were obtained, both as random metric spaces (with the celebrated results of Le Gall and Miermont~\cite{le2013uniqueness,miermont2013brownian} showing convergence to the Brownian map) and, more recently, as random Riemannian surfaces (with the remarkable achievements of Holden and Sun~\cite{holden2019convergence} showing convergence to the $\sqrt{8/3}$-Liouville quantum gravity).

In statistical mechanics, the study of scaling limits of discrete models is likewise a very active field of research.
For instance, the scaling limit of the discrete Gaussian free field on lattices, i.e., the continuum
Gaussian free field \cite{MR2322706}, has been proven to be the scaling limit of many other random structures, such as uniform domino tilings~\cite{MR1872739}.

Lastly, we mention a notion of scaling limits for dense graphs: \emph{Graphon convergence} was introduced in \cite{MR2455626} and has been a major topic of interest in graph combinatorics ever since – see
\cite{lovasz2012large} for a broad perspective on the field. Heuristically, graphon convergence corresponds to convergence of the rescaled adjacency matrix. As we will see, this notion of convergence shares some similarities with permuton convergence.

\subsection{Permuton limits}\label{sect:perm_lim_intro}

As already mentioned, a notion of scaling limit for permutations, called \emph{permuton}, was recently introduced  in \cite{hoppen2013limits}. Permutons are probability measures on the unit square with uniform marginals, i.e.\ with uniform projections on the axes. They represent the scaling limit of the diagrams of permutations as the size grows to infinity. Any permutation can be interpreted as a permuton through the procedure described in \cref{fig:permtoperm} (for a complete and formal introduction to permuton convergence we refer to \cref{sect:perm_conv}).

\begin{figure}[htbp]
	\begin{minipage}[c]{0.54\textwidth}
		\centering
		\includegraphics[scale=1.5]{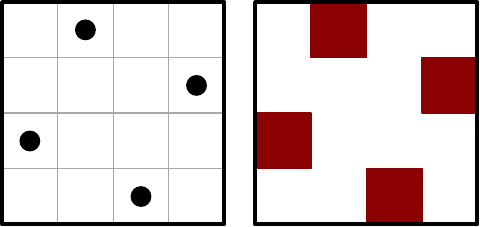}
	\end{minipage}
	\begin{minipage}[c]{0.453\textwidth}
		\caption{On the left-hand side the diagram of the permutation 2413. On the right-hand side the associated permuton: each subsquare in the diagram containing a dot has been endowed with Lebesgue measure of total mass $1/4$. In this way we obtain a probability measure on the unit square with uniform marginals, that is, a permuton.\label{fig:permtoperm}}
	\end{minipage}
\end{figure}

Convergence of permutons is defined as weak convergence of measures. Scaling limits of permutations have been studied in several works. We mention a few of them.
\begin{itemize}
	\item To the best of our knowledge, the first scaling limit result for permutations (without using the setting of permutons) was proved by Romik \cite{MR2266895}. He showed that random Erd\"{o}s–Szekeres permutations (see the first picture in \cref{fig:simulations_models}), i.e.\ uniform permutations of size $n^2$ with no monotone subsequences\footnote{We recall that, by the Erd\"{o}s-Szekeres theorem \cite{MR1556929}, any permutation of size $n^2$ contains at least one monotone subsequence of length $n$.} of length $n+1$, converge to a limiting shape described by an algebraic curve of degree 4. He also determined the internal density of the limiting measure: it is not uniform and the explicit expression is quite involved.
	
	\item With a statistical-mechanical approach, Starr \cite{starr2009thermodynamic} investigated the permuton limit of Mallows permutations (see the second picture in \cref{fig:simulations_models}). Treating the question as a mean-field problem, he was able to calculate the distribution of the permuton limit (after a suitable rescaling of the parameter $q$). This limit is a deterministic measure on the square with an explicit density. Again, such results do not make use of the permuton language. 
	
	\item Another remarkable model of non-uniform random permutations is given by \emph{random sorting networks}~\cite{MR2355610}. A sorting network is a shortest path from $12\dots n$ to $n\dots21$ in the Cayley graph of the symmetric group generated by nearest-neighbour transpositions. There is a natural permutation-valued process associated with a sorting
	network: the value of the process at time $m$ is given by the permutation obtained from $12\dots n$ applying the first $m$ transpositions in the sorting network.
 	Recently, Dauvergne~\cite{dauvergne2018archimedean} determined the permuton limit (without using this vocabulary) of uniform sorting network processes at time $\lfloor nt \rfloor$ showing convergence to the so-called \emph{Archimedean measure} $\mathfrak{A}\mathfrak{r}\mathfrak{c}\mathfrak{h}_t$, the latter being a family of permutons indexed by $t\in[0,1]$.
	
	\item The study of the scaling limits of a uniform random permutation avoiding a pattern of length three (see the sixth and seventh picture in \cref{fig:simulations_models}) was independently initiated by Miner and Pak \cite{miner2014shape} and  Madras and Pehlivan \cite{madras2016structure}. Afterwards, with a series of two articles, Hoffman, Rizzolo and Slivken \cite{hoffman2017pattern,hoffman2017pattern_2} expanded on these early results by exploring the connection of these uniform pattern-avoiding permutations with Brownian excursions. Recently, they further generalized these results to permutations avoiding any monotone pattern (see the eighth picture in \cref{fig:simulations_models}), showing convergence to the traceless Dyson Brownian bridge \cite{hoffman2019dyson}.  We highlight that the permuton limit of a uniform random permutation avoiding either a pattern of length three or a monotone pattern is quite trivial (namely, a diagonal of the square); all these works look at second order fluctuations of the points in the diagram.
	
	\item Dokos and Pak \cite{MR3238333} explored the expected limit shape of the so-called doubly alternating Baxter permutations. In their article they claimed that \emph{"it would be interesting to compute the limit shape of random Baxter permutations"}. In \cref{sect:intro_scaling_limit_results} we provide an answer to this open question.
	
	\item The first concrete and explicit example of convergence formulated in the permuton language was given by Kenyon, Kral,  Radin and Winkler \cite{kenyon2020permutations}. They studied scaling limits of random permutations in which a finite number of pattern densities has been fixed.
	
	\item Lastly, Bassino, Bouvel, F\'eray, Gerin and Pierrot \cite{bassino2018separable} showed that a sequence of uniform random separable permutations of size $n$ converges to the \emph{Brownian separable permuton} (see the ninth picture in \cref{fig:simulations_models}). This is the first instance of a \emph{random} limiting permuton.
	This new limiting object was later investigated by Maazoun \cite{maazoun17BrownianPermuton}. In a second work \cite{bassino2017universal}, the six authors showed that the Brownian permuton has a universality property: they considered uniform random permutations in proper substitution-closed classes and studied their limiting behavior in the sense of permutons, showing that the limit is an elementary one-parameter deformation of the Brownian separable permuton (we will come back to these results in \cref{sect:sub_close_cls}). Finally, in  \cite{bassino2019scaling} the same authors also investigated permuton limits for permutations in classes having a finite combinatorial specification for the substitution decomposition.
\end{itemize}

\section{Local limits}\label{sect:local_intro}

\subsection{Local limits of discrete structures}\label{sect:loc_lim_graph}

In parallel to scaling limits, there is a second notion of limits of discrete structures: the local limits. Informally, scaling limits look at the convergence of the objects from a global
point of view (after a proper rescaling of the distances between points of the objects), while local limits look at discrete objects in a neighborhood of a distinguished point (without rescaling distances). As done for scaling limits, we mention some examples where this notion has been studied, again without aiming at giving a complete overview.

Local limit results around the root of random trees were first implicitly proved by Otter \cite{otter1949multiplicative} and then explicitly by Kesten \cite{kesten1986subdiffusive}, and Aldous and Pitman \cite{aldous1998tree}. Janson \cite{janson2012simply} gave a unified treatment of the local limits of simply generated random trees as the number of vertices tends to infinity. Recently, Stufler \cite{stufler2016local} studied also the local limits for large Galton--Watson trees around a uniformly chosen vertex, building on previous results of Aldous \cite{aldous1991asymptotic} and Holmgren and Janson \cite{holmgren2017fringe}. 

Although implicit in many earlier works, the notion of local convergence around a random vertex (sometimes called \emph{weak local convergence}) for random graphs was formally introduced by Benjamini and Schramm \cite{benjamini2001recurrence} and Aldous and Steele \cite{aldous2004objective}. One would expect that a limiting object for this topology should "look the same" when regarded from any of its vertices (because of the uniform choice of the root). This property is made precise by the notion of \emph{unimodular} random rooted graph, and the weak local limit of any sequence of random graphs is indeed proven to be unimodular (see \cite{benjamini2001recurrence}). However, it is an open problem\footnote{The problem has been recently solved for random \emph{planar} graphs \cite{timar2019unimodular}.}, that goes under the name of \emph{"Sofic problem"}, to determine if every unimodular rooted random graph is the local limit of a sequence of finite random graphs rooted uniformly at random. We will expand upon this notion of unimodularity and the Sofic problem in \cref{sect:sofic}. 

Local convergence has been studied in the framework of random planar maps as well. For example, a well-known result is that the local limit for random  triangulations/quadrangulations is the uniform infinite planar triangulation/quadrangulation (UIPT/UIPQ) (see for instance \cite{angel2003uniform,krikun2005local,stephenson2018local}). Many other local limits of random planar maps are known, for instance it was shown recently by Budzinski and Louf~\cite{MR4199439} that the local limits of uniform random triangulations whose genus is proportional to the number of faces are the Planar Stochastic Hyperbolic Triangulations (PSHT), defined by Curien in \cite{MR3520011}.

\subsection{Local limits for permutations}\label{sect:local_lim_intro}

To the best of our knowledge, a local limit approach had not been investigated in the framework of permutations prior to this thesis. One of the goals of the present work is to fill this gap.

We give here a brief description of local convergence for permutations (a detailed presentation can be found in \cref{sect:local_theory}). In the context of local convergence, we look at permutations with a distinguished entry, called the \emph{root} (see \cref{rest}). We say that a pair $(\sigma,i)$ is a \emph{finite rooted permutation} if $\sigma$ is a permutation and $i$ an index of $\sigma$, i.e.\ an integer between 1 and the size of $\sigma$. 

To each rooted permutation $(\sigma,i)$, we associate a total order $\preccurlyeq_{\sigma,i}$ on a finite interval of integers containing 0, denoted by $A_{\sigma,i}.$
The total order $(A_{\sigma,i},\preccurlyeq_{\sigma,i})$
is simply obtained from the diagram of $\sigma$ as follows (see the left-hand side of \cref{rest}): We shift the indices of the $x$-axis in such a way that the column containing the root of the permutation has index zero (the new indices are displayed under the columns of the diagram). Then we set $j\preccurlyeq_{\sigma,i}k$ if the point in column $j$ is lower than the point in column $k.$

\begin{figure}[htbp]
	\centering
	\includegraphics[scale=.8]{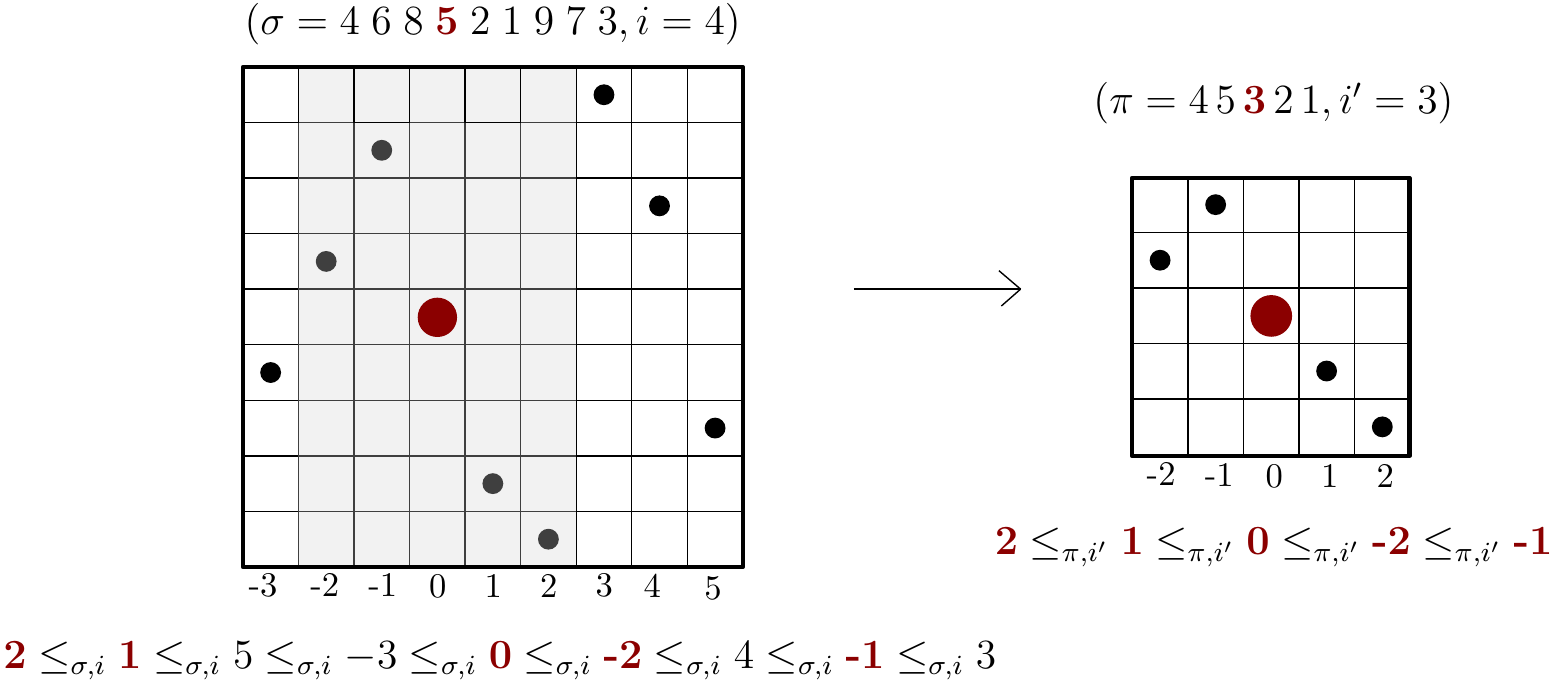}\\
	\caption{Two rooted permutations and the associated total orders. The one on the right is the $2$-restriction of the one on the left (see below for explanations). \label{rest}}
\end{figure}

Since this correspondence defines a bijection, we may identify every rooted permutation $(\sigma,i)$ with the total order $(A_{\sigma,i},\preccurlyeq_{\sigma,i})$.
In light of this identification, it is natural to call \emph{infinite rooted permutation} a pair $(A,\preccurlyeq)$ where $A$ is an infinite interval of integers containing 0 and $\preccurlyeq$ is a total order on $A$.

In order to define a notion of local convergence, we introduce a notion of neighborhood of
the root, called \emph{$h$-restriction}. It can be thought of as
(the diagram of) the pattern induced by a "vertical strip" of width $2h+1$ around the root of the permutation (see again \cref{rest}), or equivalently as
the restriction of the order $(A,\preccurlyeq)$ to $A\cap[-h,h].$ 

We then say that a sequence $(A_{n},\preccurlyeq_{n})_{n\in\Z_{>0}}$ of rooted permutations  \emph{converges locally} to a rooted permutation $(A,\preccurlyeq)$ if, for all $h\in\Z_{>0},$ the $h$-restrictions of the sequence $(A_{n},\preccurlyeq_{n})_{n\in\Z_{>0}}$ converge to the $h$-restriction of $(A,\preccurlyeq)$.

We extend this notion of local convergence for rooted permutations to (unrooted) permutations, rooting them at a uniformly chosen index of $\sigma$. With this procedure, a fixed permutation $\sigma$ naturally identifies a random variable $(\sigma,\bm{i})$ taking values in the set of finite rooted permutations.
In this way, the following notion of weak local convergence becomes natural. Consider a sequence of permutations $(\sigma_n)_{n\in\Z_{>0}}$, and denote by $\bm{i}_n$ a uniform index of $\sigma_n$. We say that $(\sigma_n)_{n\in\Z_{>0}}$ \emph{Benjamini--Schramm converges} to a random (possibly infinite) rooted permutation $(\bm{A},\bm{\preccurlyeq})$ if the sequence $(\sigma_n,\bm{i}_n)_{n\in\Z_{>0}}$ converges in distribution to $(\bm{A},\bm{\preccurlyeq})$ with respect to the local topology defined above. We point out that our choice of the terminology comes from the analogous notion of convergence for graphs (see \cite{benjamini2001recurrence}).

Note that the Benjamini--Schramm convergence has been introduced so far for sequences of deterministic permutations. We also extend this notion  to sequences of \emph{random} permutations $(\bm{\sigma}_n)_{n\in\Z_{>0}}.$ The presence of two sources of randomness, one for the choice of the permutation and one for the (uniform) choice of the root, leads to two non-equivalent possible definitions: the \emph{annealed} and the \emph{quenched}  version of the Benjamini--Schramm convergence. Intuitively, in the second definition the random permutation is frozen, whereas in the first one the random permutation and the choice of the root are treated on the same level.

\section{A connection with limits of permutation statistics}\label{sect:lim_stat}

So far we have seen two natural geometric ways of defining limits of permutations, i.e.\ permutons and Benjamini--Schramm limits. As mentioned at the beginning of this chapter, another classical approach is to look at limits of statistics on permutations. We now quickly explain this additional point of view and then we discuss how the two approaches are related.

\medskip

Let us start by considering the following example: Suppose we are given some samples of a random permutation of size $n$, it is then natural to wonder if this sample is uniform or not. Answering this question is not a straightforward task because we have $n!$ possible outputs. A way to do it is to measure the behavior of some relevant statistics on our samples. Say for convenience that we look at the statistics $\text{stc}(\cdot)$.
Once these data have been collected, one would like to compare them with the expected behavior for a uniform random permutation (see for instance \cite{MR3476619}). This leads to the following natural questions: Consider a uniform random permutation $\bm\sigma_n$ of size $n$.
\begin{itemize}
	\item Does $\text{stc}(\bm\sigma_n)$ satisfy a law of large numbers? What are the mean and the variance of $\text{stc}(\bm\sigma_n)$?
	\item Does $\text{stc}(\bm\sigma_n)$ satisfy a central limit theorem?
\end{itemize}

These kinds of questions have been intensively investigated -- with several other motivations in addition to those mentioned above, see for instance \cite[Chapter 6]{MR964069} -- both for uniform permutations (see for instance \cite{MR0131369, MR652736, MR1682248, MR2118603,MR3338847}) and more recently for non-uniform ones (see for instance \cite{MR3091722,MR3729641,hoffman2016fixed,MR4058419,he2020central}).
 
The notions of permutons and Benjamini--Schramm limits are also well-suited to answer the first two questions for many statistics $\text{stc}(\cdot)$, specifically  those which can be expressed in terms of patterns and consecutive patterns in permutations. Indeed, in \cref{chp:conv_theories}, we will see that permuton convergence (resp.\ Benjamini--Schramm convergence) for a sequence of permutations is equivalent to convergence of all proportions of patterns (resp.\ consecutive patterns). More precisely, we have the following characterizations (see Theorems \ref{thm:randompermutonthm} and \ref{strongbsconditions}): For any $n\in\Z_{>0}$, let $\bm\sigma_n$ be a random permutation of size $n$, then
\begin{itemize}
		\item $(\bm \sigma_n)_{n\in\Z_{>0}}$ converges in distribution w.r.t.\ the permuton topology $\iff$ $\big(\pocc(\pi,\bm\sigma_n)\big)_{\pi \in \S}$ converges in distribution w.r.t.\ the product topology;
		\item $(\bm \sigma_n)_{n\in\Z_{>0}}$ quenched Benjamini-Schramm converges $\iff$  $\left(\widetilde{\cocc}(\pi,\bm{\sigma}_n)\right)_{\pi\in\mathcal{S}}$ converges in distribution w.r.t.\ the product topology;
\end{itemize}
where $\pocc(\pi,\sigma)$ (resp.\ $\widetilde{\cocc}(\pi,\sigma)$) denotes the proportion of occurrences (resp.\ consecutive occurrences) of the patterns $\pi$ in $\sigma$. The two results were established in \cite{bassino2017universal} and \cite{borga2020localperm}, respectively.

\section{Overview of remaining chapters}

\textbf{\emph{Note.}} \emph{Precise statements of all the main probabilistic results proved in this thesis can be found in the first sections of \cref{chp:local_lim,chp:square,chp:perm_lim}, while the combinatorial ones are collected in \cref{chp:models} and \cref{cref:vediuyewvdb}.  \cref{sect:local_theory}, where local convergence is defined, contains original results of the author as well. A summarizing table of all the main theorems proved in this thesis is displayed at the end of this section.} 
	
\noindent\emph{We highlight that, in order to give a more pleasant and organic presentation of the various results obtained by the author during his Ph.D., we opt for a completely different organization of the material compared to the exposition in the various papers. Nevertheless, most of the material in the following chapters is taken from the articles\footnote{During his thesis, the author worked also on an independent project with Cavalli \cite{borga2021quenched} on a model for multi-player leagues. This work is not included in this manuscript.} \cite{borga2019almost,borga2020asymptotic,borga2020localperm,borga2019feasible,borga2020feasible,borga2020scaling,borga2020square,borga2020decorated}.}

\bigskip

\noindent \cref{chp:conv_theories}: \textbf{The theories of permutons
 \& local limits}.

\medskip

\noindent We properly introduce the theories of permutons and local limits of permutations briefly described above. We characterize these topologies in terms of convergence of proportions of patterns and consecutive patterns, as hinted at the end of the previous section. We are able to give a complete characterization of Benjamini-Schramm limits in terms of a \emph{shift-invariant} property, solving the \emph{Sofic problem} for permutations.  At the end of the chapter we also explore some connections between these two topologies and some feasible regions for patterns and consecutive patterns. In particular, we give a complete description of the feasible region for consecutive patterns as the cycle polytope of a specific graph, called \emph{overlap graph}.

\medskip

\noindent\emph{\cref{chp:conv_theories} builds on the works \cite{borga2020localperm,borga2019feasible,borga2020feasible}.}

\bigskip
	
\noindent \cref{chp:models}: \textbf{Models of random permutations \&
 combinatorial constructions}.

\medskip
 
\noindent This is a long journey in the beautiful combinatorics of (constrained) permutations. We present several remarkable constructions (some known/some new) relating permutations with other combinatorial structures. We start by connecting permutations avoiding a pattern of length three with  families of trees, and permutations in substitution-closed classes with some decorated trees. Then we show that square permutations can be encoded by simple walks. Lastly, permutations encoded by generating trees are connected with walks conditioned to stay positive, and Baxter permutations with multi-dimensional walks in cones, with a family of planar maps called \emph{bipolar orientations}, and with some new discrete structures called \emph{coalescent-walk processes}.

\medskip
 
\noindent\emph{\cref{chp:conv_theories} builds on the works \cite{borga2020asymptotic,borga2020localperm,borga2020scaling,borga2020square,borga2020decorated}.}

 \bigskip
	
\noindent \cref{chp:local_lim}: \textbf{Local limits: Concentration \&
 non-concentration phenomena}.

\medskip

\noindent We present many results establishing Benjamini--Schramm convergence for random constrained permutations. We show that, surprisingly, the limiting quenched local object is deterministic for many uniform pattern-avoiding permutations, i.e.\ a concentration phenomena occurs. This is not, however, the case for square permutations. We also present a central limit theorem for the proportion of consecutive patterns in permutations encoded by generating trees. We end this chapter by proving quenched local convergence for Baxter permutations and bipolar orientations.

\medskip

\noindent\emph{\cref{chp:local_lim} builds on the works \cite{borga2020asymptotic,borga2020localperm,borga2020scaling,borga2020square,borga2020decorated}.}

\bigskip

\noindent\cref{chp:square}: \textbf{Phase transition for square and almost
 square permutations, fluctuations,
 and generalizations to other models}.

\medskip

\noindent A \emph{record} in a permutation is an entry which is either larger or smaller than all entries either before or after it (there are therefore four types of records). Entries which are not records are called \emph{internal} points. \emph{Almost square permutations} are permutations with a fixed number of internal points. We explore permuton limits of uniform permutations in the families of almost square permutations with exactly $k$ internal points.
We first investigate the case when $k=0$; this is the class of \emph{square permutations}, i.e.\ permutations where every point is a record. The starting point for our results is a sampling procedure for asymptotically uniform square permutations presented in \cref{sect:sq_perm}. Building on that, we characterize the global behavior by showing that square permutations have a permuton limit described by a random rectangle. We also explore the fluctuations of this random rectangle, which can described through coupled Brownian motions.
We then identify the permuton limit of almost square permutations with $k$ internal points, both when $k$ is fixed and when $k$ tends to infinity along a negligible sequence with respect to the size of the permutation. Here the limit is again a random rectangle, but this time of a different nature: we show that a phase transition on the shape of the limiting rectangles occurs for different values of $k$.

We finally present a conjecture for the limiting shape of almost square permutations with $k$ internal points, when $k$ tends to infinity along a sequence of the same order as the size of the permutation. We further show how our methods can be used in other contexts.

\medskip

\noindent\emph{\cref{chp:square} builds on the works \cite{borga2019almost,borga2020square}.}

\bigskip
 
\noindent\cref{chp:perm_lim}: \textbf{Permuton limits: A path towards a
 new universality class}.

\medskip

\noindent One of the recurring themes in modern probability theory is the research of \emph{canonical} and \emph{universal} limiting objects, i.e.\ objects that are characterized by symmetries and that arise as limits of various random discrete structures. The most famous example is undoubtedly the Brownian motion, but in the recent years, many other canonical and universal limiting objects have been discovered, such as the Continuum random tree, the Brownian map, the Gaussian free field, the Liouville quantum gravity, and the SLE curves. These objects arise respectively as limits of discrete trees, discrete planar maps, discrete walks and more generally of discrete models coming from physics.

This chapter is devoted to universal limits for permutations. We give a new proof of the universality of the one-parameter deformation of the Brownian separable permuton, called \emph{biased Brownian separable permuton}, by showing that uniform random permutations in proper substitution-closed classes converge to this limiting object. Then, we show that Baxter permutations converge to a new limiting object called the \emph{Baxter permuton}, answering to the open question of Dokos and Pak \cite{MR3238333}. We finally conjecture that a two-parameter deformation of the Baxter permuton, called the \emph{skew Brownian permuton}, describes a new universal limiting object that includes as particular cases both the Baxter and the biased Brownian separable permuton. 

While proving permuton convergence for Baxter permutations, we also give a scaling limit result for the four trees characterizing bipolar orientations and their dual maps, answering Conjecture 4.4 of Kenyon, Miller, Sheffield, Wilson \cite{MR3945746} about convergence of bipolar orientations (jointly with their dual maps) to the $\sqrt{4/3}$-Liouville quantum gravity.
 
\medskip

\noindent\emph{\cref{chp:perm_lim} builds on the works \cite{borga2020scaling,borga2020decorated}.}

\bigskip

We display a summarizing table containing all the main theorems proved in this thesis. 

\begin{figure}[H]
	\begin{adjustwidth}{-1mm}{}
\begin{tabularx}{1.0\textwidth} { 
		| >{\centering\arraybackslash\hsize=.26\hsize}X 
		| >{\centering\arraybackslash\hsize=.21\hsize}X
		| >{\centering\arraybackslash\hsize=.14\hsize}X 
		| >{\centering\arraybackslash\hsize=.13\hsize}X
		| >{\centering\arraybackslash\hsize=.26\hsize}X | }
	\hline
	\textbf{Models/Topics} {\scriptsize(Section where introduced)} & \textbf{Discrete objects in bijection} & \textbf{Quenched B--S limits} & \textbf{Permuton limits} & \textbf{Other results} \\
	\hline
	 Perm.\ avoiding a pattern of length three (\cref{sect:perm_len_three})  & Binary trees/ Rooted plane trees &  Thms.\ \ref{thm_1} \& \ref{thm_2}  & (diagonal$^{\scriptscriptstyle{*}}$) {\scriptsize $^*$Consequence of  \cite{hoffman2017pattern}}& \hspace{2mm} Thm.\ \ref{thm:examples_ok} \newline {\scriptsize(CLT for consecutive patterns)} \\
	\hline
	Substitution-closed classes (\cref{sect:sub_close}) & Forests of decorated trees Thm. \ref{te:bijection} &  Thm.\ \ref{thm:local_intro} & {\fontsize{9.4}{6}\selectfont Thm.\ \ref{thm:scaling_intro}$^{\scriptscriptstyle{*}}$} \newline {\scriptsize $^*$New proof of} {\fontsize{5.2}{6}\selectfont \cite [Thm.\ 1.10]{bassino2017universal}}  &  / \\
	\hline
	(Almost) square permutations (\cref{sect:sq_perm})  & Simple walks Lemmas \ref{omega_size} \& \ref{square_is_rect}  &  Thm.\ \ref{thm:local_conv} \newline {\scriptsize(Only for square permutations)}   & Thm.\ \ref{thm:phase_trans} \newline \phantom E  \& \newline Conj.\ \ref{conj:phase_trans} &  Thm.\ \ref{thm:fluctuations}  {\scriptsize(Fluctuations for square permutations)} \newline Thms.\ \ref{approx_size} \& \ref{approx_size_2} \newline {\scriptsize (Asymptotic enumeration of almost square permutations)} \\
	\hline
	Perm.\ families encoded by generat.\ trees (\cref{sect:gen_tree-perm}) &  Walks conditioned to stay positive Prop.\ \ref{prop:bij_gen_tree} &   Corollary \ref{corl:main_thm} & / &  \hspace{2mm} Thm.\ \ref{thm:main_thm_CLT}\newline {\scriptsize(CLT for consecutive patterns)} \\
	\hline
	 Baxter permutations (\cref{sec:discrete})  & Bipolar orientations, Tandem walks, Coalescent-walk \phantom , \phantom , processes \newline Thm.\ \ref{thm:diagram_commutes} &  \hspace{1.1mm} Thm.\ \newline \phantom , \ref{thm:local} \newline {\scriptsize(Gives a joint quenched B--S limit for all the objects)}  & Thm.\ \ref{thm:joint_intro} &  \hspace{2mm} Thm.\ \ref{thm:discret_coal_conv_to_continuous} \newline {\scriptsize(Scaling limit of coalescent-walk processes)} \newline \phantom E Thm.\ \ref{thm:joint_scaling_limits} \newline {\fontsize{5.8}{6}\selectfont (Scaling limit of bipolar orientations jointly with other objects)} \\
	\hline
	321-avoiding perm.\ with few internal points (\cref{sect:321av})  & / &  /  & (diagonal) &  Thm.\ \ref{thm:fluctuations_321} {\tiny(Fluctuations)} \newline  \phantom I \phantom . Thm.\ \ref{thin red line} \newline  {\scriptsize(Asymptotic enumeration)} \\
	\hline
	Feasible regions (\cref{sect:feas}) & / &  /  & / &  \hspace{2mm} Thm.\ \ref{thm:main_res} \newline {\scriptsize(Description as polytope)} \\
	\hline
	\emph{Sofic problem} for perm.\ (\cref{sect:sofic}) & / &  /  & / &  Thm.\ \ref{ewoifhnoipewjfpoew} {\scriptsize(Characterization B--S limits)}\\
	\hline
	\multicolumn{5}{|>{\centering\arraybackslash\hsize=1.12\hsize}X|}{{\tiny The bijections between (i) permutations avoiding a pattern of length three and binary trees/rooted plane trees, (ii) Baxter permutations,  bipolar orientations and tandem walks, were already established in the literature. All the other results in this table are original results proved in this thesis.}}\\
	\hline
\end{tabularx}
\end{adjustwidth}
\end{figure}

\section{Notation and standard definitions}
\textbf{\emph{Note}}. \emph{Most of the notation introduced here and in the following chapters is summarized in a table at the end of this manuscript, page \pageref{not:notation}.}

\subsection{Permutations and patterns}\label{sect:not_perm_patt}
For any $n\in\Z_{>0},$ we denote the set of permutations of $[n]=\{1,2,\dots,n\}$ by  $\gls*{perm_n}$. We write permutations of $\mathcal{S}_n$ in one-line notation as $\sigma=\gls*{one_line}.$ For a permutation $\sigma\in\mathcal{S}_n$ the \emph{size} $n$ of $\sigma$ is denoted by $\gls*{size_perm}$. We let $\gls*{perm}\coloneqq\bigcup_{n\in\Z_{>0}}\mathcal{S}_n$ be the set of finite permutations\footnote{This convention is used for all combinatorial families studied in this manuscript, that is, if $\mathcal C$ is combinatorial family of objects of finite size, then  $\mathcal C_n$ denotes the combinatorial family of objects of size $n$.}. We write sequences of permutations in $\mathcal{S}$ as $(\sigma_n)_{n\in\Z_{>0}}.$ 

We often view a permutation $\sigma$ as a diagram, i.e.\ the set of points of the Cartesian plane with coordinates $(j,\sigma(j))$, for instance

\begin{equation}\label{eq:exemp_perm_dia}
\sigma=3562174=
\begin{array}{lcr}
\begin{tikzpicture}
\begin{scope}[scale=.3]
\permutation{3,5,6,2,1,7,4}
\draw[thick] (1,1) rectangle (8,8);
\end{scope}
\end{tikzpicture}
\end{array}.
\end{equation}

If $x_1\dots x_n$ is a sequence of distinct numbers, let $\gls*{standa}$ be the unique permutation $\pi$ in $\mathcal{S}_n$ that is in the same relative order as $x_1\dots x_n,$ i.e.\ $\pi(i)<\pi(j)$ if and only if $x_i<x_j.$
Given a permutation $\sigma\in\mathcal{S}_n$ and a subset of indices $I\subset[n]$, let $\gls*{pattern_ind}$ be the permutation induced by $(\sigma(i))_{i\in I},$ namely, $\pat_I(\sigma)\coloneqq\std\big((\sigma(i))_{i\in I}\big).$
For example, if $\sigma=87532461$ and $I=\{2,4,7\}$ then $\pat_{\{2,4,7\}}(87532461)=\std(736)=312$.

Given two permutations, $\sigma\in\mathcal{S}_n$ for some $n\in\Z_{>0}$ and $\pi\in\mathcal{S}_k$ for some $k\leq n,$ and a set of indices $I=\{i_1 < \ldots < i_k\}$, we say that $\sigma(i_1) \ldots \sigma(i_k)$ is an \emph{occurrence} of $\pi$ in $\sigma$ if $\pat_I(\sigma)=\pi$ (we also say that $\pi$ is a \emph{pattern} of $\sigma$). If the indices $i_1, \ldots ,i_k$ form an interval, then we say that $\sigma(i_1) \ldots \sigma(i_k)$ is a \emph{consecutive occurrence} of $\pi$ in $\sigma$ (we also say that $\pi$ is a \emph{consecutive pattern} of $\sigma$).
We denote intervals of integers using the standard notation\footnote{Whenever there might be confusion if an interval is an integer interval or a real interval, we explicitly state it.} $[n,m]=\{n,n+1,\dots,m\}$ for $n,m\in\Z_{>0}$ with $n\leq m$. 

\begin{exmp} 
	The permutation $\sigma=1532467$ contains $1423$ as a pattern but not as a consecutive pattern and $321$ as consecutive pattern. Indeed $\text{pat}_{\{1,2,3,5\}}(\sigma)=1423$ but no interval of indices of $\sigma$ induces the permutation $1423.$ Moreover, $\text{pat}_{[2,4]}(\sigma)=\text{pat}_{\{2,3,4\}}(\sigma)=321.$
\end{exmp}
We say that $\sigma$ \emph{avoids} $\pi$ if $\sigma$ does not contain $\pi$ as a pattern. We point out that the definition of $\pi$-avoiding permutations refers to patterns and not to consecutive patterns. Given a set of patterns $B\subset\mathcal{S},$ we say that $\sigma$ \emph{avoids} $B$ if $\sigma$ avoids $\pi,$ for all $\pi\in B$. We denote by $\gls*{class_n}$ the set of $B$-avoiding permutations of size $n$ and by $\Av(B)\coloneqq\bigcup_{n\in\Z_{>0}}\Av_n(B)$ the set of $B$-avoiding permutations of arbitrary finite size\footnote{For a set $B$ of classical patterns, the set $\gls*{class}$ is called \emph{permutation class}. In this manuscript we also consider permutations avoiding generalized patterns, like Baxter permutations; in this case we do not use the terminology \emph{class}.}.

In the following, let $\sigma$ be a permutation of size $n$ and $\pi$ a pattern of size $k\leq n$. We denote by $\gls*{occ}$ the number of occurrences of $\pi$ in $\sigma$, that is,
\begin{equation}
\occ(\pi,\sigma)\coloneqq\#\big\{I \subseteq [n] \big|\#I=k, \pat_I(\sigma)=\pi\big\},
\end{equation}
where $\#$ denotes the cardinality of a set.
Moreover, we denote by $\gls*{pocc}$ the proportion of occurrences of $\pi$ in $\sigma,$ that is,
\begin{equation}
\pocc(\pi,\sigma)\coloneqq\frac{\occ(\pi,\sigma)}{\binom{n}{k}}.
\end{equation}
Similarly, we denote by $\gls*{c_occ}$ the number of consecutive occurrences of $\pi$ in $\sigma$, that is,
\begin{equation}
\label{cocc}
\cocc(\pi,\sigma)\coloneqq\#\Big\{I\subseteq[n]\Big|\#I=k,\;I\text{ is an interval, } \text{pat}_I(\sigma)=\pi\Big\},
\end{equation}
and we denote by $\gls*{pc_occ}$ the proportion of consecutive occurrences of $\pi$ in $\sigma,$ that is,
\begin{equation}
\widetilde{\cocc}(\pi,\sigma)\coloneqq\frac{\cocc(\pi,\sigma)}{n}.
\end{equation}
\begin{rem}
	The natural choice for the denominator of the previous expression should be \break
	$n-k+1$ instead of $n,$ but we make this choice for later convenience (for example, in order to give a probabilistic interpretation of the quantity $\widetilde{\cocc}(\pi,\sigma)$). Moreover, for every fixed $k,$ this makes no difference in the asymptotics when $n$ tends to infinity. 
\end{rem}

We also introduce two classical operations on permutations. We denote by $\gls*{o_plus}$ the \emph{direct sum} of two permutations, i.e.\ for $\tau\in\S_m$ and $\sigma\in\mathcal{S}_n$,
\begin{equation}
	\tau\oplus\sigma=\tau(1)\dots\tau(m)(\sigma(1)+m)\dots(\sigma(n)+m)=
	\begin{minipage}[h]{0.12\linewidth}
		\includegraphics[scale=.67]{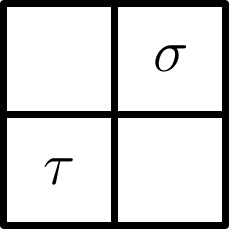}
	\end{minipage},
\end{equation}
and we denote by $\oplus_{\ell}\,\sigma$ the direct sum of $\ell$ copies of $\sigma$ (we remark that the operation $\oplus$ is associative). A similar definition holds for the \emph{skew sum} $\gls*{o_minus}$, 
\begin{equation} \label{eq:skew_sum}
\tau\ominus\sigma=(\tau(1)+n)\dots(\tau(m)+n)\sigma(1)\dots\sigma(n)=
	\begin{minipage}[h]{0.12\linewidth}
		\includegraphics[scale=.67]{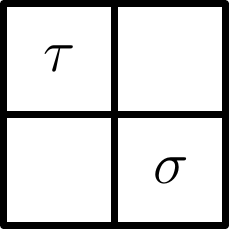}
		\end{minipage}.
\end{equation}
We say that a permutation is $\oplus$-indecomposable (resp.\ $\ominus$-indecomposable) if it cannot be written as the direct sum (resp.\ skew-sum) of two non-empty permutations.

Given a permutation $\sigma\in\mathcal{S}$ we recall that a left-to-right maximum of $\sigma$ is a point
$(i,\sigma(i))$ such that $\sigma(j)<\sigma(i)$ for every $j<i$. Similar definitions hold for right-to-left maxima, left-to-right minima, and right-to-left minima. These points are also called \emph{records}.
The sets of points $(i,\sigma(i))$ associated with the four types of records are denoted by $\gls*{rec_max}, \LRm(\sigma), \RLM(\sigma),$ and $\RLm(\sigma)$ (or simply $\LRM, \LRm, \RLM,$ and $\RLm$, when the permutation $\sigma$ considered is clear from the context).  These sets of points are not necessarily disjoint.  For any permutation $\sigma$, $(1,\sigma(1))$ is always in $\LRM\cap \LRm$.  Similar statements are true for $(n,\sigma(n))$, $(\sigma^{-1}(1),1)$ and $(\sigma^{-1}(n),n)$.  
The permutation may also have points in $\LRM \cap \RLm$ or $\LRm \cap \RLM$.  By simple counting arguments, the points in $\LRM\cap \RLm$ satisfy $i = \sigma(i)$ and the points in $\LRm \cap \RLM$ satisfy $i = n+1-\sigma(i).$

\subsection{Probabilistic notation}
Throughout the manuscript we denote random quantities using \textbf{bold} characters.
Given a probability measure $\mu$ we denote by $\E_{\mu}$ the expectation with respect to $\mu$ and given a random variable $\bm{X},$ we denote by $\mathcal{L}aw(\bm{X})$ its law. Given a sequence of random variables $(\bm{X}_n)_{n\in\Z_{>0}}$, we write $\bm{X}_n\xrightarrow{d}\bm{X}$ to denote  convergence in distribution, $\bm{X}_n\stackrel{P}{\rightarrow}\bm{X}$ to denote  convergence in probability, and $\bm{X}_n\xrightarrow{a.s.}\bm{X}$ to denote almost sure convergence. Given an event $A$ we denote by $A^c$ the complement event of $A.$

    \chapter[The theories of permutons \& local limits]{The theories of permutons\\
	 \& local limits}\label{chp:conv_theories}
\chaptermark{The theories of permutons \& local limits}

\begin{adjustwidth}{8em}{0pt}
	\emph{In which we introduce permuton and local convergence for permutations. For permuton convergence we follow  \cite{hoppen2013limits,bassino2017universal}. Local convergence has been introduced by the author of this manuscript in \cite{borga2020localperm}. This chapter has to be thought of as preparatory for the consecutive chapters. We skip most of the proofs, and we furnish for each result precise references.}
	
	\emph{In the end, we also discuss a relation between the notions of permuton/local limits and some feasible regions for patterns/consecutive patterns. In the case of consecutive patterns, we explore some connections with cycle polytopes.}
\end{adjustwidth}

\bigskip

\section{Permuton convergence}\label{sect:perm_conv}

We first recall in \cref{sect:determ_perm} the definition of (deterministic) permuton and some basic properties.
Then in \cref{sect:random_perm} we present criteria for determining convergence in distribution of random permutons.

\subsection{Deterministic permutons}\label{sect:determ_perm}

\begin{defn}
	A \emph{permuton} $\nu$ is a Borel probability measure on the unit square $[0,1]^2$ with uniform marginals, i.e.\ 
	$\nu( [0,1] \times [a,b] ) = \nu( [a,b] \times [0,1] ) = b-a$
	for all $0 \le a \le b\le 1$. 
\end{defn}

\begin{rem}
	Permutons were first considered by Presutti and
	Stromquist in \cite{MR2732835} under the name of \emph{normalized measures}. The theory of deterministic permutons (without using this terminology) was developed by Hoppen, Kohayakawa,
	Moreira, Rath and Sampaio~\cite{hoppen2013limits}. The terminology \emph{permuton}
	was given afterwards by analogy with
	\emph{graphons} by Glebov, Grzesik, Klimošová and Král~\cite{MR3279390}. Results of \cite{hoppen2013limits} were recently extended to random permutons by Bassino, Bouvel, F\'{e}ray, Gerin, Maazoun, and Pierrot~\cite{bassino2017universal}.
\end{rem}

Any permutation $\sigma$ of size $n \ge 1$ can be interpreted as the permuton $\mu_\sigma$ given by the sum of Lebesgue area measures
\begin{equation}
	\label{eq:defn_perm_perm}
	\mu_\sigma(A)= n \sum_{i=1}^n \Leb\big([(i-1)/n, i/n]\times[(\sigma(i)-1)/n,\sigma(i)/n]\cap A\big),
\end{equation}
for every Borel measurable set $A$ of $[0,1]^2$. Note that $\mu_\sigma$ is essentially the (normalized) diagram of $\sigma$,
where each dot has been replaced with a square of dimension $1/n \times 1/n$ carrying a mass $1/n$. Recall also the example given in \cref{fig:permtoperm} page~\pageref{fig:permtoperm}.

Let $\gls*{space_perm}$ be the set of permutons.
We endow $\mathcal M$ with the weak topology.
Then a sequence of permutons $(\mu_n)_{n\in\Z_{>0}}$ converges \emph{weakly} to $\mu$, simply denoted $\mu_n \to \mu$, if 
$$
\int_{[0,1]^2} f d\mu_n \rightarrow \int_{[0,1]^2} f d\mu,
$$
for every (bounded and) continuous function $f: [0,1]^2 \to \mathbb{R}$.
With this topology, $\mathcal M$ is compact and metrizable by a metric $\gls*{distance_perm}$  defined, for every pair of permutons $(\mu,\mu'),$ by
\begin{equation}\label{eq:perm_dist}
	d_{\square}(\mu,\mu')=\sup_{R\in\mathcal{R}}|\mu(R)-\mu'(R)|,
\end{equation}
where $\mathcal R$ denotes the set of (horizontal) rectangles contained in $[0,1]^2,$ i.e.\ sets of the form $(a,b)\times (c,d)$ with $a,b,c,d\in[0,1]$, $a<b$ and $c<d$.
In other terms (see Lemmas 2.5 and 5.3 in \cite{hoppen2013limits}):
$$
\mu_{n}\rightarrow  \mu \qquad  \Leftrightarrow \qquad d_\square(\mu_n,\mu)\rightarrow  0.
$$

Note that for $\sigma \in \S_n$ and $\pi \in \S_k$, $k\leq n$, we have
\begin{equation}\label{eq:weivfwebvfeuwobfoipe}
\pocc(\pi,\sigma)=\mathbb{P}\left(\pat_{{\bm I}_{n,k}}(\sigma)=\pi \right),
\end{equation}
where ${\bm I}_{n,k}$ is chosen uniformly at random among the $\binom{n}{k}$ subsets of $[n]$ with $k$ elements.

We define the {\em pattern density} $\pocc(\pi,\mu)$ of a pattern $\pi \in \S_k$ in a permuton $\mu$
by analogy with the formula above. We first define the permutation induced by $k$ points in the square $[0,1]^2$. Take a sequence of $k$ points $(X,Y)=((x_1,y_1),\dots, (x_k,y_k))$ in $[0,1]^2$ in general position, i.e.\ with distinct $x$ and $y$ coordinates. 
We denote by $\left((x_{(1)},y_{(1)}),\dots, (x_{(k)},y_{(k)})\right)$ the \emph{$x$-reordering} of $(X,Y)$,
i.e.\ the unique reordering of the sequence $((x_1,y_1),\dots, (x_k,y_k))$ such that
$x_{(1)}<\cdots<x_{(k)}$.
Then the values $(y_{(1)},\ldots,y_{(k)})$ are in the same
relative order as the values of a unique permutation of size $k$, that we call the \emph{permutation induced by} $(X,Y)$. An example is given in \cref{fig:schema_perm_ind}.

\begin{figure}[htbp]
	\centering
	\includegraphics[scale=0.83]{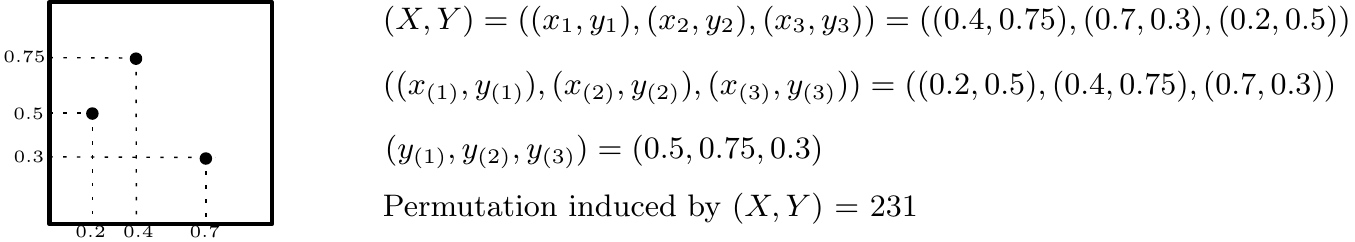}\\
	\caption{An example of permutation induced by 3 points in the square $[0,1]^2$. \label{fig:schema_perm_ind}}
\end{figure}

Let $\mu$ be a permuton and $((\bm X_i,\bm Y_i))_{i\in \Z_{>0}}$ be an i.i.d.\ sequence with distribution $ \mu$. We denote by $\gls*{induce_perm_k}$ the random permutation induced by $((\bm X_i,\bm Y_i))_{i \in [k]}$. 
Then we set for all  $\pi \in \S_k$,
\begin{equation}\label{eq:def_perm_den}
\pocc(\pi,\mu) \coloneqq \P( \Perm_k(\mu)=\pi ).
\end{equation}

The following concentration result shows that $\mu$ is close to $\Perm_k(\mu)$ in probability for $k$ large.

\begin{lem}[{\cite[Lemma 4.2]{hoppen2013limits}}]
	\label{lem:subpermapproxpermuton}
	There exists $K$ such that if $k>K$, for any permuton $\mu$,
	\[\P\left(
	d_{\square}(\mu_{\Perm_k(\mu)},\mu)
	\geq 16k^{-1/4}\right)
	\leq \frac12 e^{-\sqrt{k}}.
	\]
\end{lem}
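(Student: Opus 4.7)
The plan is to reduce the statement to a uniform concentration estimate for the empirical measure of an i.i.d.\ $\mu$-sample over the family of rectangles $\mathcal R$, together with control of the small displacement caused by the rank-reordering that defines $\mu_{\Perm_k(\mu)}$.

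Concretely, let $((\bm X_i,\bm Y_i))_{i\in [k]}$ be i.i.d.\ with law $\mu$ (as in the excerpt), and write $\bm\mu_k := \tfrac{1}{k}\sum_{i=1}^k \delta_{(\bm X_i,\bm Y_i)}$ for the empirical measure. I would control $d_\square(\bm\mu_k,\mu)$ via a discretization: fix $m = \lceil k^{1/4}\rceil$ and consider the $O(m^4)$ ``grid rectangles'' with corners in $\{j/m : 0 \le j \le m\}^2$. For such a fixed rectangle $R'$, Hoeffding's inequality gives $\mathbb P\bigl(|\bm\mu_k(R') - \mu(R')| > \varepsilon\bigr) \le 2e^{-2k\varepsilon^2}$. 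A union bound then yields
\[
\mathbb P\Bigl(\sup_{R' \text{ grid}} |\bm\mu_k(R') - \mu(R')| > C k^{-1/4}\Bigr) \;\le\; 2m^4 e^{-2C^2\sqrt k},
\]
which is $\le \tfrac14 e^{-\sqrt k}$ for $C$ large enough and $k$ large. Any $R \in \mathcal R$ can be sandwiched between two grid rectangles $R^- \subset R \subset R^+$ with $\mu(R^+\setminus R^-)\le 4/m = O(k^{-1/4})$, using the uniform marginals of $\mu$; hence the sup over all $R \in \mathcal R$ is also $O(k^{-1/4})$ with the same probability.

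Next I would compare $\mu_{\Perm_k(\mu)}$ with $\bm\mu_k$. By definition \eqref{eq:defn_perm_perm}, $\mu_{\Perm_k(\mu)}$ replaces each sampled point $(\bm X_i,\bm Y_i)$ by a uniform mass of total $1/k$ on the square $[(r_i-1)/k, r_i/k]\times[(s_i-1)/k, s_i/k]$, where $r_i$ and $s_i$ are the ranks of $\bm X_i$ and $\bm Y_i$. Since $\mu$ has uniform marginals, the $(\bm X_i)$ and $(\bm Y_i)$ are each i.i.d.\ uniform on $[0,1]$, so by the Dvoretzky--Kiefer--Wolfowitz inequality both empirical CDFs deviate uniformly from the identity by at most $k^{-1/4}$ except with probability $\le \tfrac14 e^{-\sqrt k}$. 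On this event each sampled point is displaced by at most $k^{-1/4}$ in each coordinate when passing from $\bm\mu_k$ to $\mu_{\Perm_k(\mu)}$, so for any $R \in \mathcal R$,
\[
\bigl|\mu_{\Perm_k(\mu)}(R) - \bm\mu_k(R)\bigr| \;\le\; \bm\mu_k\bigl(\{\text{strip of width $O(k^{-1/4})$ around }\partial R\}\bigr),
\]
and this strip is itself contained in a union of four thin grid rectangles, so by Step~1 its $\bm\mu_k$-mass is $O(k^{-1/4})$.

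Combining the two steps with the triangle inequality for $d_\square$ yields $d_\square(\mu_{\Perm_k(\mu)}, \mu) \le 16 k^{-1/4}$ outside an event of probability at most $\tfrac12 e^{-\sqrt k}$, uniformly in $\mu$. The main technical point is the interplay in Step~1 between the scale $m$ of the discretization and the deviation $\varepsilon$, which must be tuned so that the $O(m^4)$ union-bound loss still gives a stretched-exponential tail of the form $e^{-\sqrt k}$; the choice $m \asymp k^{1/4}$, $\varepsilon \asymp k^{-1/4}$ is the unique scale that balances these. The DKW step is standard and could equally well be replaced by a direct Glivenko--Cantelli argument combined with the same discretization, but DKW gives the cleanest matching constants.
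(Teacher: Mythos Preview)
The paper does not prove this lemma; it is quoted verbatim from \cite[Lemma 4.2]{hoppen2013limits} and used as a black box. So there is no ``paper's own proof'' to compare against beyond the original reference.

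Your outline is the standard one and is essentially the argument of Hoppen et al.: discretize at scale $m\asymp k^{1/4}$, use Hoeffding plus a union bound over the $O(m^4)$ grid rectangles, sandwich an arbitrary rectangle between two grid rectangles using the uniform marginals, and then control the rank displacement via the DKW inequality applied separately to the $x$- and $y$-coordinates. The two-step decomposition (empirical measure close to $\mu$; permuton of the induced permutation close to the empirical measure) is exactly the right structure. One minor imprecision: when you bound the $\bm\mu_k$-mass of the boundary strip in Step~2, you write that the strip is contained in four thin \emph{grid} rectangles, which it need not be; but since your Step~1 already extends the concentration bound from grid rectangles to all rectangles, this is harmless. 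The only thing missing for a complete proof is bookkeeping the constants to land on the specific values $16$ and $\tfrac12 e^{-\sqrt k}$; your sketch shows the correct orders but does not verify those particular numbers.
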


\subsection{Random permutons and permuton convergence}\label{sect:random_perm}

We now consider random permutons $\bm \mu$. Note that the definition in \cref{eq:def_perm_den} naturally extend to random permutons in the following way:
\begin{equation}\label{eq:bfeweubfbfowe}
\pocc(\pi,\bm \mu) \coloneqq \P( \Perm_k(\bm \mu)=\pi |\bm \mu).
\end{equation}
We have the following version of  \cref{lem:subpermapproxpermuton} for random permutons that will be  useful in the following chapters (for instance, in the proof of \cref{thm:permuton} page \pageref{thm:permuton}).

\begin{lem}[{\cite[Lemma 2.3]{bassino2017universal}}]
	\label{lem:subpermapproxrandompermuton}
	There exists $K$ such that if $k>K$, for any random permuton $\bm \mu$,
	\[\P
	\left(
	d_{\square}(\mu_{\Perm_k(\bm \mu)},\bm \mu)
	\geq 16k^{-1/4}\right)
	\leq \frac12 e^{-\sqrt{k}}.
	\]
\end{lem}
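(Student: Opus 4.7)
The plan is to reduce to the deterministic statement of \cref{lem:subpermapproxpermuton} via a straightforward conditioning argument. By construction (see \eqref{eq:bfeweubfbfowe}), given the random permuton $\bm{\mu}$, the sample $((\bm X_i,\bm Y_i))_{i\in[k]}$ used to build $\Perm_k(\bm\mu)$ is i.i.d.\ with distribution $\bm\mu$. Hence the conditional law of $\Perm_k(\bm{\mu})$ given $\bm\mu$ coincides with the law of $\Perm_k(\mu)$ appearing in \cref{lem:subpermapproxpermuton} evaluated at the (deterministic) permuton $\mu=\bm\mu$.

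Let $K$ be the universal constant from \cref{lem:subpermapproxpermuton} and fix $k>K$. For almost every realization $\mu$ of $\bm{\mu}$, applying the deterministic lemma yields
\[\P\bigl(d_{\square}(\mu_{\Perm_k(\bm\mu)},\bm\mu)\geq 16k^{-1/4}\,\bigm|\,\bm\mu=\mu\bigr)\leq \tfrac{1}{2}e^{-\sqrt{k}}.\]
Taking expectation over $\bm\mu$ and applying the tower property then gives
\[\P\bigl(d_{\square}(\mu_{\Perm_k(\bm\mu)},\bm\mu)\geq 16k^{-1/4}\bigr)=\E\!\left[\P\bigl(d_{\square}(\mu_{\Perm_k(\bm\mu)},\bm\mu)\geq 16k^{-1/4}\,\bigm|\,\bm\mu\bigr)\right]\leq \tfrac{1}{2}e^{-\sqrt{k}},\]
which is the desired inequality. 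Equivalently, one may phrase this via Fubini's theorem by integrating the deterministic bound against the law of $\bm\mu$ on $\mathcal M$.

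The only genuine point to verify is measurability: one needs the map $\mu\mapsto \P\bigl(d_{\square}(\mu_{\Perm_k(\mu)},\mu)\geq 16k^{-1/4}\bigr)$ to be Borel measurable on $(\mathcal M,d_\square)$, so that the conditional probability above is a bona fide measurable function of $\bm\mu$. This follows from standard arguments: $\Perm_k(\mu)$ is a measurable functional of $k$ i.i.d.\ samples drawn from $\mu$, the event $\{d_\square(\mu_{\Perm_k(\mu)},\mu)\geq 16k^{-1/4}\}$ is Borel in the product space $\mathcal M\times([0,1]^2)^k$, and integration against $\mu^{\otimes k}$ depends measurably on $\mu$ in the weak topology. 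Since $\mathcal M$ is Polish (\cref{sect:determ_perm}), a regular conditional distribution of $\Perm_k(\bm\mu)$ given $\bm\mu$ exists, which legitimates the tower-property step. No further obstacle arises; the result is essentially a cosmetic upgrade of the deterministic lemma from \cite{hoppen2013limits} to the random-permuton setting.
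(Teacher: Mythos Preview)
Your argument is correct and is exactly the intended one: the deterministic bound of \cref{lem:subpermapproxpermuton} is uniform over all permutons, so conditioning on $\bm\mu$ and averaging via the tower property immediately yields the random version. The thesis itself does not give a proof (it only cites \cite{bassino2017universal}), and the proof there proceeds by precisely this conditioning-and-averaging step; your treatment of the measurability issue is more explicit than what is usually written but entirely appropriate.
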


\medskip

This result has an important consequence for the distribution of random permutons. 
\begin{prop}[{\cite[Proposition 2.4]{bassino2017universal}}]\label{Prop:CaracterisationLoiPermuton}
	Let $\bm \mu$, $\bm {\mu}'$ be two random permutons. If there exists $k_0$ such that $\P(\Perm_k(\bm \mu)=\pi)=\P(\Perm_k(\bm \mu')=\pi)$ for all $k\geq k_0$ and all $\pi\in\mathcal{S}_k$, 
	then $\bm \mu \stackrel{d}{=}\bm \mu'$.
\end{prop}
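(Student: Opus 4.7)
The plan is to leverage the concentration bound of \cref{lem:subpermapproxrandompermuton} applied to both $\bm\mu$ and $\bm\mu'$, in order to reduce distributional equality of the random permutons to the (hypothesized) equality of the laws of their sampled permutations $\Perm_k(\cdot)$.

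First I would observe that, under the assumption, the $\S_k$-valued random variables $\Perm_k(\bm\mu)$ and $\Perm_k(\bm\mu')$ have identical distributions for every $k\geq k_0$. Applying the deterministic embedding $\sigma\mapsto\mu_\sigma$ from \cref{eq:defn_perm_perm}, which maps each permutation of size $k$ to a permuton, I immediately get that the $\mathcal{M}$-valued random variables $\mu_{\Perm_k(\bm\mu)}$ and $\mu_{\Perm_k(\bm\mu')}$ are equal in distribution for every $k\geq k_0$. This step is essentially free since $\Perm_k(\cdot)$ only takes values in the finite set $\S_k$, so the distribution of the image is completely determined by the scalar probabilities in the hypothesis.

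Next, I would apply \cref{lem:subpermapproxrandompermuton} to each of $\bm\mu$ and $\bm\mu'$: for every $k>K$,
\[
\P\bigl(d_{\square}(\mu_{\Perm_k(\bm\mu)},\bm\mu)\geq 16k^{-1/4}\bigr)\leq \tfrac{1}{2}e^{-\sqrt k},
\]
and analogously for $\bm\mu'$. Letting $k\to\infty$, both the threshold $16k^{-1/4}$ and the probability bound vanish, so $\mu_{\Perm_k(\bm\mu)}\to \bm\mu$ in probability with respect to $d_{\square}$, and similarly $\mu_{\Perm_k(\bm\mu')}\to \bm\mu'$. Since $(\mathcal{M},d_{\square})$ is compact and metrizable, convergence in probability implies convergence in distribution, and hence $\mu_{\Perm_k(\bm\mu)}\xrightarrow{d} \bm\mu$ and $\mu_{\Perm_k(\bm\mu')}\xrightarrow{d}\bm\mu'$ as $\mathcal{M}$-valued random variables.

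To conclude, I would combine the two previous steps: along $k\geq\max(k_0,K)$, the sequences $\bigl(\mu_{\Perm_k(\bm\mu)}\bigr)$ and $\bigl(\mu_{\Perm_k(\bm\mu')}\bigr)$ agree term-by-term in law while converging in distribution to $\bm\mu$ and $\bm\mu'$ respectively; uniqueness of the distributional limit then yields $\bm\mu\stackrel{d}{=}\bm\mu'$. There is no real obstacle here: all the analytic content is packaged inside \cref{lem:subpermapproxrandompermuton}, and the only conceptual point worth flagging is the (trivial) passage from equality of the scalar marginals $\P(\Perm_k(\bm\mu)=\pi)$ to equality of the laws of the $\mathcal{M}$-valued variables $\mu_{\Perm_k(\bm\mu)}$.
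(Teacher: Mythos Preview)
The paper does not give its own proof of this proposition; it simply quotes the result from \cite{bassino2017universal}. Your argument is correct and is the standard one: the concentration inequality of \cref{lem:subpermapproxrandompermuton} forces $\mu_{\Perm_k(\bm\mu)}\to\bm\mu$ in probability (hence in distribution), and since the hypothesis makes $\mu_{\Perm_k(\bm\mu)}\stackrel{d}{=}\mu_{\Perm_k(\bm\mu')}$ for all large $k$, uniqueness of distributional limits yields the conclusion.
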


We now consider a sequence of random permutations $(\bm \sigma_n)_{n\in\Z_{>0}}$, with $\bm \sigma_n$ of size $n$.
Taking ${\bm I}_{n,k}$ independently from $\bm \sigma_n$, we have for every pattern $\pi$ of size $k$,
\begin{equation}\label{eq:E(occ)=P(pat)}
\E[\pocc(\pi,\bm \sigma_n)] 
\stackrel{\eqref{eq:weivfwebvfeuwobfoipe}}{=} \E\left[\mathbb{P}\left(\pat_{{\bm I}_{n,k}}(\bm \sigma_n)=\pi \middle| \bm{\sigma_n}\right)\right]
=\P\left(\pat_{{\bm I}_{n,k}}(\bm \sigma_n) = \pi\right).
\end{equation}
Similarly, for a random permuton $\bm{\mu}$, we have 
\begin{equation}
\E[\pocc(\pi,\bm \mu)]
\stackrel{\eqref{eq:bfeweubfbfowe}}{=}\P\left(\Perm_k(\bm \mu)=\pi\right).
\label{eq:E(occ)=P(perm)}
\end{equation}

The main theorem of this section (see \cref{thm:randompermutonthm}) is a consequence of the relations in the two equations above. It deals with convergence of a sequence of random permutations to a random permuton. 
It generalizes the result of \cite{hoppen2013limits} which states that \emph{deterministic} permuton convergence is characterized by convergence of pattern densities. 
More precisely, it shows that for a sequences of \emph{random} permutations, permuton convergence \emph{in distribution} is characterized by convergence in \emph{expectation} of pattern densities (see also the consecutive \cref{rem:exp_in_enough}),
or equivalently of the induced subpermutations of any (fixed) size.

\begin{thm}[{\cite[Theorem 2.5]{bassino2017universal}}]\label{thm:randompermutonthm}
	For any $n\in\Z_{>0}$, let $\bm\sigma_n$ be a random permutation of size $n$. 
	Moreover, for any fixed $k\in\Z_{>0}$, let ${\bm I}_{n,k}$ be a uniform random subset of $[n]$ with $k$ elements, independent of $\bm \sigma_n$.
	The following assertions are equivalent:
	\begin{enumerate}%
		\item [(a)] $(\mu_{\bm \sigma_n})_{n\in\Z_{>0}}$ converges in distribution w.r.t.\ the permuton topology to some random permuton $\bm \mu$. 
		\item [(b)] The random infinite vector $\big(\pocc(\pi,\bm\sigma_n)\big)_{\pi \in \S}$ converges in distribution w.r.t.\ the product topology to some random infinite vector $(\bm \Lambda_\pi)_{\pi \in \S}$. 
		\item [(c)] For every $\pi$ in $\S$, there exists $\Delta_\pi \geq 0$ such that 
		$\E[\pocc(\pi,\bm \sigma_n)] \xrightarrow{n\to\infty} \Delta_\pi.$
		\item [(d)] For every $k\in\Z_{>0}$, the sequence  $\big(\pat_{{\bm I}_{n,k}}(\bm\sigma_n)\big)_{n\in\Z_{>0}}$ of random permutations converges in distribution to some random permutation $\bm \rho_k$.
	\end{enumerate}
	Whenever these assertions are verified, we have $(\bm \Lambda_\pi)_{\pi\in\S} \stackrel d = (\pocc(\pi,\bm \mu))_{\pi\in\S}$, and for every $k\in\Z_{>0}$ and $\pi\in\S_k$,
	\[ \P(\bm \rho_{k} = \pi) = \Delta_\pi = \E[\bm \Lambda_\pi] = \E[\pocc(\pi,\bm{\mu})] = \P(\Perm_k(\bm\mu) = \pi). \] 

\end{thm}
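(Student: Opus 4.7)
The plan is to prove the cycle $(a)\Rightarrow(b)\Rightarrow(c)\Leftrightarrow(d)$ and close with $(c)\Rightarrow(a)$. Two ingredients do essentially all the work: (i) the functional $\mu\mapsto\pocc(\pi,\mu)$ defined in \eqref{eq:def_perm_den} is continuous on $(\mathcal M,d_\square)$ for every fixed $\pi$, and (ii) Proposition \ref{Prop:CaracterisationLoiPermuton}, which characterizes the law of a random permuton by the marginals $\P(\Perm_k(\bm\mu)=\pi)$.

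For $(a)\Rightarrow(b)$, I would first observe that approximating a permutation by its associated permuton costs little: by a direct counting argument (bounding the probability that two of $k$ sampled points from $\mu_\sigma$ fall in the same $\tfrac1n\times\tfrac1n$ cell, and comparing with $k$ points sampled without replacement from the cells), one obtains
\[
\bigl|\pocc(\pi,\sigma)-\pocc(\pi,\mu_\sigma)\bigr|=O\!\left(\tfrac{k^2}{n}\right)
\qquad\text{for }\sigma\in\S_n,\ \pi\in\S_k.
\]
Together with the continuity of each $\mu\mapsto\pocc(\pi,\mu)$, the continuous mapping theorem applied to any finite subfamily $(\pi_1,\dots,\pi_m)$ of patterns yields convergence in distribution of $\bigl(\pocc(\pi_i,\bm\sigma_n)\bigr)_{i\le m}$ to $\bigl(\pocc(\pi_i,\bm\mu)\bigr)_{i\le m}$; since the product topology on $[0,1]^{\S}$ is characterized by finite-dimensional marginals, this gives $(b)$ with $\bm\Lambda_\pi\eqdist\pocc(\pi,\bm\mu)$. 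The implication $(b)\Rightarrow(c)$ follows immediately from bounded (dominated) convergence, since each $\pocc(\pi,\bm\sigma_n)\in[0,1]$; this also shows that $\Delta_\pi=\E[\bm\Lambda_\pi]$. The equivalence $(c)\Leftrightarrow(d)$ is then just a restatement of \eqref{eq:E(occ)=P(pat)}: $\E[\pocc(\pi,\bm\sigma_n)]=\P\bigl(\pat_{\bm I_{n,k}}(\bm\sigma_n)=\pi\bigr)$, so convergence of the left-hand side for every $\pi\in\S_k$ is exactly convergence in distribution of $\pat_{\bm I_{n,k}}(\bm\sigma_n)$ in the discrete space $\S_k$, and $\P(\bm\rho_k=\pi)=\Delta_\pi$.

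The main obstacle is the closing step $(c)\Rightarrow(a)$, because $(c)$ only gives convergence of single expectations while $(a)$ requires convergence in distribution of the full random permutons. The strategy is a standard tightness-plus-uniqueness argument. Since $\mathcal M$ is compact, the laws of $\mu_{\bm\sigma_n}$ are automatically tight in the space of probability measures on $\mathcal M$, so by Prokhorov's theorem every subsequence has a further subsequence converging in distribution to some random permuton $\bm\mu$. Applying $(a)\Rightarrow(c)$ along such a subsequence (via the continuity of $\mu\mapsto\pocc(\pi,\mu)$ and bounded convergence) gives $\E[\pocc(\pi,\bm\mu)]=\Delta_\pi$ for every $\pi$; combining this with \eqref{eq:E(occ)=P(perm)} yields
\[
\P\bigl(\Perm_k(\bm\mu)=\pi\bigr)=\Delta_\pi\qquad\text{for all }k\in\Z_{>0},\ \pi\in\S_k.
\]
By Proposition \ref{Prop:CaracterisationLoiPermuton}, these identities determine the law of $\bm\mu$ uniquely; hence all subsequential limits share the same distribution and the full sequence $(\mu_{\bm\sigma_n})$ converges in distribution, giving $(a)$.

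Finally, the identification chain $\P(\bm\rho_k=\pi)=\Delta_\pi=\E[\bm\Lambda_\pi]=\E[\pocc(\pi,\bm\mu)]=\P(\Perm_k(\bm\mu)=\pi)$ is automatic from the implications above: the first equality is $(c)\Leftrightarrow(d)$, the second is $(b)\Rightarrow(c)$, the third follows from $\bm\Lambda_\pi\eqdist\pocc(\pi,\bm\mu)$ established in $(a)\Rightarrow(b)$, and the fourth is \eqref{eq:E(occ)=P(perm)}. Thus, aside from the quantitative approximation $\pocc(\pi,\sigma)\approx\pocc(\pi,\mu_\sigma)$ and the continuity of the pattern-density functionals, the real content of the theorem is packed into the uniqueness statement of Proposition \ref{Prop:CaracterisationLoiPermuton}, which makes the concentration Lemma \ref{lem:subpermapproxrandompermuton} the key technical input of the whole argument.
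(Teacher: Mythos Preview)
Your proof is correct, and the paper does not actually give its own proof of this theorem---it is quoted as \cite[Theorem 2.5]{bassino2017universal} and stated without proof. The only thing the paper supplies is \cref{rem:exp_in_enough}, which sketches exactly the argument you carry out for the nontrivial implication $(c)\Rightarrow(a)$: compactness of $\mathcal M$ gives automatic tightness via Prokhorov, and then the expectations $\E[\pocc(\pi,\bm\sigma_n)]\to\Delta_\pi$ play the role of ``finite-dimensional marginals'' whose convergence, combined with the uniqueness statement of \cref{Prop:CaracterisationLoiPermuton}, pins down the law of any subsequential limit. So your write-up is a fleshed-out version of the paper's first (probabilistic) explanation.

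It is worth noting that \cref{rem:exp_in_enough} also offers a second, more algebraic, route that you do not use: products $\prod_i \occ(\pi_i,\sigma)$ expand as finite linear combinations $\sum_\rho C_\rho\,\occ(\rho,\sigma)$, so convergence of all first moments $\E[\pocc(\rho,\bm\sigma_n)]$ automatically gives convergence of all joint moments of $(\pocc(\pi_i,\bm\sigma_n))_i$, hence $(c)\Rightarrow(b)$ directly. This bypasses the need to invoke subsequential permuton limits and the continuity of $\mu\mapsto\pocc(\pi,\mu)$, at the cost of importing the algebraic identity for pattern products. Both approaches are standard; yours is closer to how the original reference \cite{bassino2017universal} proceeds.
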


\begin{rem}\label{rem:exp_in_enough}
	The fact that the convergence of $\E[\pocc(\pi,\bm \sigma_n)]$ for all $\pi\in\S$ is enough to prove permuton convergence of the sequence $(\mu_{\bm \sigma_n})_{n\in\Z_{>0}}$ might be surprising at a first read. 
	
	There are two possible explanations of why this holds. The first explanation, more probabilistic, is in analogy with the convergence of random functions. The latter is usually proven by establishing tightness and convergence of finite-dimensional marginals. Since $\mathcal{M}$ is compact, Prokhorov's theorem ensures that the space of probability distributions on $\mathcal M$ is compact (for convergences of measure, we refer to \cite{billingsley2013convergence}) therefore tightness of the sequence $(\mu_{\bm \sigma_n})_{n\in\Z_{>0}}$ is always guaranteed. Then the convergence of $\E[\pocc(\pi,\bm \sigma_n)]$ for all $\pi\in\S$ (or equivalently of $\pat_{{\bm I}_{n,k}}(\bm\sigma_n)$ for all $k\in\Z_{>0}$)  corresponds to the convergence in distribution of "finite-dimensional marginals".
	
	A second explanation, more algebraic, is the following: fix $k$ patterns $\pi_1,\dots,\pi_k$. Then there exists constants $(C_\rho)_{\rho\in\S}$ such that
	\begin{equation}
	\prod_{i=1}^k\occ(\pi_i, \sigma)=\sum_{\rho\in\S}C_\rho\occ(\rho,\sigma),\quad\text{for all}\quad \sigma\in\S.
	\end{equation}
	For a proof of this result see for instance \cite[Theorem 1.4]{penaguiao2020pattern}. The relation above implies that joint moments of $\pocc(\pi_i, \bm \sigma_n)$ are linear combinations of $(\E[\pocc(\rho,\bm \sigma_n)])_{\rho\in\S}$. Hence, if the expectations $\E[\pocc(\rho,\bm \sigma_n)]$ converge for all $\rho\in\S$, then the joint moments converge and we can recover convergence in distribution for $(\pocc(\pi_i,\bm \sigma_n))_{i\in[k]}$.
\end{rem}

\begin{rem}
	We point out that a characterization similar to the one in \cref{thm:randompermutonthm} was also discovered in the setting of graphons by Diaconis and Janson \cite[Theorem 3.1]{MR2463439}.
\end{rem}

We end this section with a discussion on the existence of random permutons
with prescribed induced subpermutations. 
\begin{defn}
	A family of random permutations $(\bm \rho_n)_{n\in\Z_{>0}}$ is \emph{consistent} if
	\begin{itemize}
		\item for every $n\geq 1$, $\bm \rho_n \in \S_n$,
		\item for every $n\geq k \geq 1$, if $\bm I_{n,k}$ is a uniform subset of $[n]$ of size $k$, independent of $\bm \rho_n$, then $\pat_{{\bm I}_{n,k}}(\bm \rho_n) \stackrel d = \bm \rho_k$.
	\end{itemize} 
	\label{Def:consistency}
\end{defn}
It turns out that consistent families of random permutations and random permutons are essentially equivalent.
\begin{prop}[{\cite[Proposition 2.9]{bassino2017universal}}]
	If $\bm{\mu}$ is a random permuton, then the family defined by $\bm \rho_k \stackrel d = \Perm_k(\bm{\mu})$ is consistent. Conversely, for every consistent family of random permutations $(\bm \rho_k)_{k\in\Z_{>0}}$, there exists a random permuton $\bm \mu$ whose distribution is uniquely determined, such that $\Perm_k(\bm{\mu}) \stackrel d = \bm\rho_k$. In that case, $\mu_{\bm \rho_n} \xrightarrow[n\to\infty]{d} \bm \mu$.
	\label{Prop:existence_permuton}
\end{prop}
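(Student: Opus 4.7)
For the first direction (from permuton to consistent family), I would use the exchangeability of the i.i.d.\ sequence defining $\Perm_k(\bm\mu)$. Set $\bm\rho_k := \Perm_k(\bm\mu)$, built from an i.i.d.\ sequence $((\bm X_i, \bm Y_i))_{i \in \Z_{>0}}$ with common conditional distribution $\bm\mu$ given $\bm\mu$. Consistency requires that, for a uniform $k$-subset $\bm I_{n,k} \subseteq [n]$ independent of everything else, $\pat_{\bm I_{n,k}}(\bm\rho_n) \stackrel d = \bm\rho_k$. Now $\pat_{\bm I_{n,k}}(\bm\rho_n)$ is precisely the permutation induced by the points $((\bm X_i, \bm Y_i))_{i \in \bm I_{n,k}}$; by exchangeability of the i.i.d.\ sample (conditionally on $\bm\mu$, and hence also unconditionally), this is distributed as the permutation induced by the first $k$ points $((\bm X_i, \bm Y_i))_{i \in [k]}$, which is $\Perm_k(\bm\mu) = \bm\rho_k$.

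\textbf{Existence for the converse.} Given a consistent family $(\bm\rho_k)_{k\in\Z_{>0}}$, my strategy is to apply \cref{thm:randompermutonthm} (specifically the implication (d)$\Rightarrow$(a), together with the identification at the end of the statement) to the sequence of random permutations $(\bm\rho_n)_{n\in\Z_{>0}}$. For every fixed $k\in\Z_{>0}$, consistency says that $\pat_{\bm I_{n,k}}(\bm\rho_n) \stackrel d = \bm\rho_k$ for each $n \geq k$, so this sequence is constant in distribution and hence trivially converges in distribution to $\bm\rho_k$. Assertion (d) is therefore satisfied, which yields a random permuton $\bm\mu$ with $\mu_{\bm\rho_n} \xrightarrow[n\to\infty]{d} \bm\mu$. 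The final identity in \cref{thm:randompermutonthm} then gives, for all $k\in\Z_{>0}$ and $\pi\in\S_k$,
\[
\P(\Perm_k(\bm\mu) = \pi) = \P(\bm\rho_k = \pi),
\]
i.e., $\Perm_k(\bm\mu) \stackrel d = \bm\rho_k$.

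\textbf{Uniqueness.} If $\bm\mu'$ is another random permuton with $\Perm_k(\bm\mu') \stackrel d = \bm\rho_k$ for all $k$, then $\Perm_k(\bm\mu) \stackrel d = \Perm_k(\bm\mu')$ for every $k$, and \cref{Prop:CaracterisationLoiPermuton} immediately gives $\bm\mu \stackrel d = \bm\mu'$. The convergence $\mu_{\bm\rho_n} \xrightarrow[n\to\infty]{d} \bm\mu$ was already produced in the previous step.

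The only subtlety is the exchangeability argument in the first direction, which must be carried out conditionally on $\bm\mu$ to justify that restricting to $\bm I_{n,k}$ and restricting to $[k]$ yield the same induced permutation in distribution; everything else reduces to quoting \cref{thm:randompermutonthm} and \cref{Prop:CaracterisationLoiPermuton}.
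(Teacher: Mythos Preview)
The paper does not give its own proof of this proposition; it is quoted from \cite{bassino2017universal} without argument. Your proposal is correct and is essentially the natural proof: exchangeability of the i.i.d.\ sample (conditionally on $\bm\mu$) handles the first direction, and for the converse you correctly observe that consistency makes assertion (d) of \cref{thm:randompermutonthm} hold trivially, which produces the permuton together with the convergence and the identification $\Perm_k(\bm\mu)\stackrel d=\bm\rho_k$; uniqueness then follows from \cref{Prop:CaracterisationLoiPermuton}.
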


\section{Local convergence}\label{sect:local_theory}

This section is dedicated to the theory of local convergence for permutations.

\subsection{The set of rooted permutations}
In the context of local convergence, we need to look at permutations with a distinguished entry, called the \emph{root}. In the first part of this section we formally introduce the notion of finite and infinite rooted permutation. Then we introduce a local distance and at the end we show that the space of (possibly infinite) rooted permutations is the natural space to study local limits of permutations. More precisely, we show that this space is compact and contains the space of finite rooted permutations as a dense subset.

For the reader convenience, we recall the following fundamental definition.
\begin{defn}
	A \emph{finite rooted permutation} is a pair $(\sigma,i),$ where $\sigma\in\mathcal{S}$ and $i\in[|\sigma|]$.
\end{defn}
We extend the notion of size of a permutation to rooted permutations in the natural way, i.e.\ $|(\sigma,i)|\coloneqq|\sigma|.$
We denote  with $\gls*{root_perm_n}$ the set of rooted permutations of size $n$ and with $\gls*{root_perm}\coloneqq\bigcup_{n\in\Z_{>0}}\mathcal{S}^{\bullet}_n$ the set of finite rooted permutations. We write sequences of finite rooted permutations in $\mathcal{S}^{\bullet}$ as $(\sigma_n,i_n)_{n\in\Z_{>0}}.$ 

To a rooted permutation $(\sigma,i),$ we associate the pair $(\Asi,\leqsi),$  where $\Asi\coloneqq[-i+1,|\sigma|-i]$ is a finite interval containing 0 and $\leqsi$ is a total order on $\Asi,$ defined for all $\ell,j\in \Asi$ by
\begin{equation}
\ell\leqsi j\qquad\text{if and only if}\qquad \sigma(i+\ell)\leq\sigma(i+j)\;.
\end{equation}  
Clearly this map is a bijection from the space of finite rooted permutations $\mathcal{S}^{\bullet}$ to the space of total orders on finite integer intervals containing zero. Consequently, we may identify every rooted permutation  $(\sigma,i)$ with the total order $(\Asi,\leqsi).$

We highlight that the total order $(\Asi,\leqsi)$ associated with a rooted permutation indicates how the elements of $\sigma$ at given positions compare to each other (recall for instance the example given in \cref{rest} page \pageref{rest}).

Thanks to the identification between rooted permutations and total orders, the following definition of an infinite rooted permutation is natural. 

\begin{defn}
	We call \textit{infinite rooted permutation} a pair $(A,\preccurlyeq)$ where $A$ is an infinite\footnote{Here, an infinite interval can be infinite just on one side.} interval of integers containing 0 and $\preccurlyeq$ is a total order on $A$. We denote the set of infinite rooted permutations by $\gls*{root_perm_inf}$.
\end{defn}

We highlight that infinite rooted permutations can be thought of as rooted at 0. We set 
$$\gls*{root_perm_fin_inf}\coloneqq\Sr\cup\mathcal{S}_\infty^{\bullet},$$
namely, the set of finite and infinite rooted permutations and we denote by $\Sr_{\leq n}\coloneqq\bigcup_{m\leq n}\Sr_m$ the set of rooted permutations with size at most $n.$ We write sequences of finite or infinite rooted permutations in $\Sri$ as $(A_n,\preccurlyeq_n)_{n\in\Z_{>0}}.$

We now introduce the following \textit{restriction function around the root} defined, for every $h\in\Z_{>0}$, as follow
\begin{align}
\label{rhfunct}
r_h \colon\quad &\tilde{\mathcal{S}}^{\bullet}\;\longrightarrow \qquad \;\mathcal{S}^\bullet\\
(A,&\preccurlyeq) \mapsto \big(A\cap[-h,h],\preccurlyeq\big)\;.
\end{align}
We can think of restriction functions as a notion of neighborhood around the root.
For finite rooted permutations we also have the equivalent description of the restriction functions $r_h$ in terms of consecutive patterns: if $(\sigma,i)\in\Sr$ then $r_h(\sigma,i)=(\text{pat}_{[a,b]}(\sigma),i-a+1)$ where $a=\max\{1,i-h\}$ and $b=\min\{|\sigma|,i+h\}.$
In the next example we give a graphical interpretation of the restriction function around the root.
\begin{exmp} We look at the rooted permutation $(\sigma,i)=(752934861,4).$ When we consider the restriction $r_h(\sigma,i),$ we draw in gray a vertical strip "around" the root of width $2h+1$ (or less if we are near the boundary of the diagram). An example is provided in \cref{examplerootedperm} where $r_2(\sigma,i)$ is computed. In particular $r_2(\sigma,i)=\big([-2,2],\preccurlyeq\big),$ with $-1\preccurlyeq1\preccurlyeq2\preccurlyeq-2\preccurlyeq0.$
\end{exmp}

\begin{figure}[htbp]
	\begin{minipage}[c]{0.55\textwidth}
		\begin{equation}
			\begin{array}{lcr}
				\begin{tikzpicture}
					\begin{scope}[scale=.3]
						\permutation{7,5,2,9,3,4,8,6,1}
						\fill[gray!60!white, opacity=0.4] (2,1) rectangle (7,10); 
						\draw (4+.5,9+.5) [UMRed, fill] circle (.3); 
						\draw[thick] (1,1) rectangle(10,10);
					\end{scope}
				\end{tikzpicture}
			\end{array}
			\stackrel{r_2}{\longrightarrow}
			\begin{array}{lcr}
				\begin{tikzpicture}
					\begin{scope}[scale=.3]
						\permutation{4,1,5,2,3}
						\draw (3+.5,5+.5) [UMRed, fill] circle (.3); 
						\draw[thick] (1,1) rectangle(6,6);
					\end{scope}
				\end{tikzpicture}
			\end{array}
		\end{equation}
	\end{minipage}
	\begin{minipage}[c]{0.44\textwidth}
		\caption{Diagram of the permutation $\sigma=752934861$ rooted at $i=4$ with the corresponding restriction $r_2(\sigma,i).$\label{examplerootedperm}}
	\end{minipage}
\end{figure}
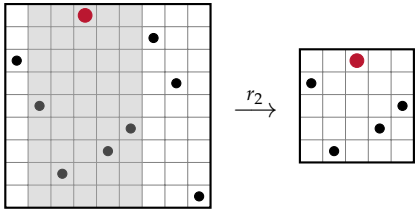 	

\begin{defn}
	We say that a family $(A_h,\preccurlyeq_h)_{h\in\Z_{>0}}$ of elements in $\Sr$ is \emph{consistent} if
	\begin{equation}
	\label{consprop}
	r_{h}(A_{h+1},\preccurlyeq_{h+1})=(A_h,\preccurlyeq_h),\quad \text{ for all }\quad h\in\Z_{>0}.
	\end{equation}
\end{defn}

\begin{obs}
	\label{consistentobs}
	For every (possibly infinite) rooted permutation $(A,\preccurlyeq),$ the corresponding family of restrictions $\big(r_h(A,\preccurlyeq)\big)_{h\in\Z_{>0}}$ is consistent.
\end{obs}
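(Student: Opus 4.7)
The plan is to unwind the definition of $r_h$ directly: for any (possibly infinite) rooted permutation $(A,\preccurlyeq)$, I simply need to verify that
\[
r_h\bigl(r_{h+1}(A,\preccurlyeq)\bigr)=r_h(A,\preccurlyeq)
\qquad\text{for every }h\in\Z_{>0},
\]
which is exactly the consistency relation \eqref{consprop} applied to the family $(A_h,\preccurlyeq_h)_{h\in\Z_{>0}}$ defined by $(A_h,\preccurlyeq_h):=r_h(A,\preccurlyeq)$.

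First I would compute $r_{h+1}(A,\preccurlyeq)=(A\cap[-h-1,h+1],\preccurlyeq')$, where $\preccurlyeq'$ denotes the restriction of the total order $\preccurlyeq$ to the set $A\cap[-h-1,h+1]$ (note that $0\in A\cap[-h-1,h+1]$ since $0\in A$ and $0\in[-h-1,h+1]$, so this is indeed a well-defined element of $\Sr$ in the sense of \cref{rhfunct}). Then I would apply $r_h$ to this rooted permutation: by definition it equals
\[
\bigl((A\cap[-h-1,h+1])\cap[-h,h],\,\preccurlyeq''\bigr),
\]
where $\preccurlyeq''$ is the further restriction of $\preccurlyeq'$ to the intersection. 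The key elementary step is the observation that $[-h,h]\subseteq[-h-1,h+1]$, which gives
\[
(A\cap[-h-1,h+1])\cap[-h,h]=A\cap[-h,h],
\]
and moreover that restricting the order $\preccurlyeq$ twice (first to $A\cap[-h-1,h+1]$, then to $A\cap[-h,h]$) yields the same order as restricting $\preccurlyeq$ directly to $A\cap[-h,h]$. Hence $r_h(r_{h+1}(A,\preccurlyeq))=(A\cap[-h,h],\preccurlyeq)=r_h(A,\preccurlyeq)$, which is \eqref{consprop}.

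There is no real obstacle here: the statement is a direct consequence of unpacking the definition of the restriction function and using the trivial set-theoretic inclusion $[-h,h]\subseteq[-h-1,h+1]$. The only point worth making explicit is that the construction makes sense both when $A$ is finite (so that $(A,\preccurlyeq)\in\Sr$ and all restrictions are genuine finite rooted permutations) and when $A$ is infinite, in which case each $r_h(A,\preccurlyeq)\in\Sr$ by definition of $r_h$, so the consistency condition is a statement about elements of $\Sr$ as required.
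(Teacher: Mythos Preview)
Your proposal is correct. The paper states this as an Observation with no proof, treating it as immediate from the definition of $r_h$; your argument is exactly the routine verification (via $[-h,h]\subseteq[-h-1,h+1]$ and the fact that successive restrictions of a total order agree with a single restriction) that justifies the observation.
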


In the next example, we exhibit a concrete infinite rooted permutation, with some of its restrictions and the associated rooted permutations. 
\begin{exmp}
	We consider the following total order on $\mathbb{Z}:$
	\begin{equation}
	\label{extotord}
	-2\preccurlyeq-4\preccurlyeq-6\preccurlyeq\dots\preccurlyeq-1\preccurlyeq-3\preccurlyeq-5
	\preccurlyeq\dots\preccurlyeq0\preccurlyeq1\preccurlyeq2\preccurlyeq3\preccurlyeq\dots,
	\end{equation}
	that is, the standard order on the integers except that the order on negative numbers is reversed and negative even numbers are smaller than negative odd numbers. 
	Using the definition given in \cref{rhfunct}, we have for $h=4,$
	\begin{equation}
	r_4(\mathbb{Z},\preccurlyeq)=\big([-4,4],\preccurlyeq\big),\quad\text{with}\quad -2\preccurlyeq-4\preccurlyeq-1\preccurlyeq-3\preccurlyeq0\preccurlyeq1\preccurlyeq2\preccurlyeq3\preccurlyeq4,
	\end{equation}
	which represents the rooted permutation $(\pi^4,5)=(241356789,5)=\begin{array}{lcr}
		\begin{tikzpicture}
			\begin{scope}[scale=.2]
				\permutation{2,4,1,3,5,6,7,8,9}
				\draw (5+.5,5+.5) [UMRed, fill] circle (.3); 
				\draw[thick] (1,1) rectangle(10,10);
			\end{scope}
		\end{tikzpicture}
	\end{array}.$
	In particular, $r_4(\mathbb{Z},\preccurlyeq)=(A_{\pi^4,5},\preccurlyeq_{\pi^4,5}).$ 
	Moreover, for $h=3,$
	\begin{equation}
	r_3(\mathbb{Z},\preccurlyeq)=\big([-3,3],\preccurlyeq\big),\quad\text{with}\quad-2\preccurlyeq-1\preccurlyeq-3\preccurlyeq0\preccurlyeq1\preccurlyeq2\preccurlyeq3,
	\end{equation} 
	which represents the rooted permutation $(\pi^3,4)=(3124567,4)=\begin{array}{lcr}
		\begin{tikzpicture}
			\begin{scope}[scale=.2]
				\permutation{3,1,2,4,5,6,7}
				\draw (4+.5,4+.5) [UMRed, fill] circle (.3); 
				\draw[thick] (1,1) rectangle(8,8);
			\end{scope}
		\end{tikzpicture}
	\end{array}.$
	In particular, $r_3(\mathbb{Z},\preccurlyeq)=(A_{\pi^3,4},\preccurlyeq_{\pi^3,4}).$ We also note that $r_3(\pi^4,5)=(\pi^3,4)$.
\end{exmp}

\subsection{Local convergence for rooted permutations}
We are now ready to define a notion of \emph{local distance} on the set of (possibly infinite) rooted permutations $\tilde{\mathcal{S}}^{\bullet}$. Given two rooted permutations $(A_1,\preccurlyeq_1),(A_2,\preccurlyeq_2)\in\tilde{\mathcal{S}}^{\bullet},$ we define
\begin{equation}
\label{distance}
\gls*{distance_perm_local}\big((A_1,\preccurlyeq_1),(A_2,\preccurlyeq_2)\big)\coloneqq 2^{-\sup\big\{h\in\Z_{>0}\;:\;r_h(A_1,\preccurlyeq_1)=r_h(A_2,\preccurlyeq_2)\big\}},
\end{equation}
with the conventions that $\sup\emptyset=0,$ $\sup\Z_{>0}=+\infty$ and $2^{-\infty}=0.$ It is a basic exercise to check that $d$ is a distance. Actually, $d$ is ultra metric, and so all the open balls of radius $2^{-h},$ $h\in\Z_{>0},$ centered in $(A,\preccurlyeq)\in\Sri$ -- which we denote by $B\big((A,\preccurlyeq),2^{-h}\big)$ -- are closed and intersecting balls are contained in each other. We will say that the balls are \emph{clopen}, i.e.\ closed and open.

Once we have a distance, the following notion of convergence is natural. 
\begin{defn}
	\label{local_conv_def}
	We say that a sequence $(A_n,\preccurlyeq_n)_{n\in\Z_{>0}}$ of rooted permutations in $\Sri$ is \emph{locally convergent} to an element $(A,\preccurlyeq)\in\Sri,$ if it converges with respect to the local distance $d.$ In this case we write $(A_n,\preccurlyeq_n)\stackrel{loc}{\longrightarrow}(A,\preccurlyeq).$
\end{defn}

We now explain why the space $\tilde{\mathcal{S}}^{\bullet}$ is the right space to consider in order to study the limits of finite rooted permutations with respect to the local distance $d$ defined in \cref{distance}. More precisely, we will see that $\tilde{\mathcal{S}}^{\bullet}$ is compact and that $\Sr$ is dense in $\tilde{\mathcal{S}}^{\bullet}.$ 
The latter assertion is trivial since 
\begin{equation}
d\big((A,\preccurlyeq),r_h(A,\preccurlyeq)\big)\leq 2^{-h},\quad\text{for all}\quad (A,\preccurlyeq)\in \tilde{\mathcal{S}}^{\bullet},\text{ all } h\in\Z_{>0},
\label{keydistprop}
\end{equation}
and obviously $r_h(A,\preccurlyeq)\in\Sr.$
Before proving compactness we explore some basic but important properties of our local distance $d$. 
The next proposition gives a converse to \cref{consistentobs}.

\begin{prop}[{\cite[Proposition 2.12]{borga2020localperm}}]
	\label{consistprop}
	Given a consistent family $(A_h,\preccurlyeq_h)_{h\in\Z_{>0}}$ of elements in $\Sr$ there exists a unique (possibly infinite) rooted permutation $(A,\preccurlyeq)\in\Sri$ such that
	\begin{equation}
	r_{h}(A,\preccurlyeq)=(A_h,\preccurlyeq_h),\quad \text{ for all }\quad h\in\Z_{>0}.
	\end{equation}
	Moreover, $(A_h,\preccurlyeq_h)\stackrel{loc}{\longrightarrow}(A,\preccurlyeq).$
\end{prop}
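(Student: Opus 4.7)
The plan is to construct $(A,\preccurlyeq)$ explicitly as an inductive limit of the consistent family, verify it satisfies the required restriction identities, and check uniqueness and local convergence.

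First I would set $A \coloneqq \bigcup_{h\in\Z_{>0}} A_h$. The consistency property $r_h(A_{h+1},\preccurlyeq_{h+1})=(A_h,\preccurlyeq_h)$ means exactly that $A_h = A_{h+1}\cap[-h,h]$ and that $\preccurlyeq_{h+1}$ restricted to $A_h$ coincides with $\preccurlyeq_h$. In particular the sets are nested, $A_h\subseteq A_{h+1}$, and since each $A_h$ is a finite integer interval containing $0$, the union $A$ is an integer interval containing $0$ (possibly finite, possibly one-sided infinite, possibly equal to $\Z$, depending on whether the left and right endpoints of $A_h$ eventually stabilize or keep growing with $h$). I would then define $\preccurlyeq$ on $A$ by declaring $i\preccurlyeq j$ iff $i\preccurlyeq_h j$ for any (equivalently, some) $h$ with $i,j\in A_h$; the second half of the consistency condition shows this is independent of the choice of $h$, and totality, antisymmetry and transitivity descend from the corresponding properties of each $\preccurlyeq_h$.

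Next I would verify that $r_h(A,\preccurlyeq)=(A_h,\preccurlyeq_h)$. The inclusion $A_h\subseteq A\cap[-h,h]$ is clear. Conversely, if $k\in A\cap[-h,h]$ then $k\in A_{h'}$ for some $h'\geq h$, and iterated consistency gives $k\in A_{h'}\cap[-h,h]=A_h$. Equality of the induced orders follows directly from the definition of $\preccurlyeq$. This immediately yields convergence, because \eqref{keydistprop} applied to $(A,\preccurlyeq)$ gives
\[
d\bigl((A_h,\preccurlyeq_h),(A,\preccurlyeq)\bigr)=d\bigl(r_h(A,\preccurlyeq),(A,\preccurlyeq)\bigr)\leq 2^{-h}\xrightarrow{h\to\infty}0,
\]
so $(A_h,\preccurlyeq_h)\stackrel{loc}{\longrightarrow}(A,\preccurlyeq)$.

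For uniqueness, if $(A',\preccurlyeq')\in\Sri$ also satisfies $r_h(A',\preccurlyeq')=(A_h,\preccurlyeq_h)$ for every $h$, then $A'\cap[-h,h]=A_h$ for all $h$, hence $A'=\bigcup_h A_h=A$; and the orders $\preccurlyeq'$ and $\preccurlyeq$ agree on every $A_h$, hence on $A$. The only mildly delicate point is keeping the case analysis clean when showing $A$ is actually an integer interval: one needs to observe that if the left endpoint $-a_h$ of $A_h$ satisfies $a_h<h$ then consistency forces $a_{h'}=a_h$ for every $h'\geq h$, so once either endpoint of $A_h$ fails to saturate at $\pm h$ it freezes forever. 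This makes it automatic that $A$ is an interval (rather than, say, acquiring a gap) and clarifies whether $A$ ends up finite, one-sided infinite, or equal to $\Z$, which is the main thing to be careful about in writing up the argument.
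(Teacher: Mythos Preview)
Your proof is correct and follows the natural inductive-limit construction; this is essentially the only reasonable approach here and matches what one expects in the cited reference \cite[Proposition~2.12]{borga2020localperm} (the paper itself does not reproduce the proof). Your care with the endpoint-freezing argument to ensure $A$ is an interval is well placed, and the use of \eqref{keydistprop} for the convergence is exactly right.
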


We have the following consequence.

\begin{prop}[{\cite[Proposition 2.14]{borga2020localperm}}]
	\label{contrh}
	Given a sequence $(A_n,\preccurlyeq_n)_{n\in\Z_{>0}}$ of rooted permutations in $\Sri,$ the following assertions are equivalent:
	\begin{enumerate}[(a)]
		\item There exists $(A,\preccurlyeq)\in\Sri$ such that $(A_n,\preccurlyeq_n)\stackrel{loc}{\longrightarrow}(A,\preccurlyeq)$.
		\item There exists a family $(B_h,\ll_h)_{h\in\Z_{>0}}$ of finite rooted permutations such that
		\begin{equation}
		r_h(A_n,\preccurlyeq_n)\stackrel{loc}{\longrightarrow}(B_h,\ll_h),\quad\text{for all}\quad h\in\Z_{>0}.
		\end{equation}  
	\end{enumerate}
	In particular if one of the two conditions holds (and so both), then $(B_h,\ll_h)=r_h(A,\preccurlyeq),$ for all $h\in\Z_{>0}.$
\end{prop}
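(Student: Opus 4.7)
The plan is to exploit the ultrametric structure of the local distance, plus \cref{consistprop}, to convert between pointwise convergence of restrictions and convergence of the original sequence.

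For the implication (a) $\Rightarrow$ (b), I would simply set $(B_h,\ll_h):=r_h(A,\preccurlyeq)$, which belongs to $\Sr$ since $A\cap[-h,h]$ is finite. The key observation is that the maps $r_h$ are ``locally constant at scale $2^{-h}$'' in the following precise sense: from the definition of $d$, whenever $d\big((A',\preccurlyeq'),(A'',\preccurlyeq'')\big)\leq 2^{-h}$, we have $r_h(A',\preccurlyeq')=r_h(A'',\preccurlyeq'')$. Hence the hypothesis $(A_n,\preccurlyeq_n)\stackrel{loc}{\to}(A,\preccurlyeq)$ implies that for every $h$ and every $n$ large enough, $r_h(A_n,\preccurlyeq_n)=r_h(A,\preccurlyeq)=(B_h,\ll_h)$, yielding immediately the convergence in (b).

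For (b) $\Rightarrow$ (a), I would first upgrade the assumed convergence to eventual equality. Note that $r_h(A_n,\preccurlyeq_n)$ belongs to $\Sr_{\leq 2h+1}$, which is a finite set. Since any two distinct elements of $\Sr_{\leq 2h+1}$ are at positive distance bounded away from zero (indeed, two distinct such rooted permutations are both fixed by $r_k$ for $k$ large, so the sup in \eqref{distance} is finite), the local topology restricted to this finite set is discrete. Convergence of $r_h(A_n,\preccurlyeq_n)$ therefore forces $r_h(A_n,\preccurlyeq_n)=(B_h,\ll_h)$ for all $n\geq N(h)$, which in particular confirms that each $(B_h,\ll_h)$ is indeed a finite rooted permutation (with support necessarily contained in $[-h,h]$).

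Next I would verify that the family $(B_h,\ll_h)_{h\in\Z_{>0}}$ is consistent. Since, for every $(A',\preccurlyeq')\in\Sri$, one has $r_h\circ r_{h+1}(A',\preccurlyeq')=r_h(A',\preccurlyeq')$ (both sides amount to restricting to $[-h,h]$), taking $n\geq\max\{N(h),N(h+1)\}$ yields
\[
r_h(B_{h+1},\ll_{h+1})=r_h\bigl(r_{h+1}(A_n,\preccurlyeq_n)\bigr)=r_h(A_n,\preccurlyeq_n)=(B_h,\ll_h).
\]
\cref{consistprop} then produces a unique $(A,\preccurlyeq)\in\Sri$ such that $r_h(A,\preccurlyeq)=(B_h,\ll_h)$ for all $h$. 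To finish, given any $h$, for $n\geq N(h)$ we have $r_h(A_n,\preccurlyeq_n)=(B_h,\ll_h)=r_h(A,\preccurlyeq)$, so $d\big((A_n,\preccurlyeq_n),(A,\preccurlyeq)\big)\leq 2^{-h}$ by \eqref{keydistprop}-type reasoning (directly from the definition of $d$), hence $(A_n,\preccurlyeq_n)\stackrel{loc}{\to}(A,\preccurlyeq)$. The identification $(B_h,\ll_h)=r_h(A,\preccurlyeq)$ holds by construction in both directions, proving the ``in particular'' clause. The only mildly delicate step is the discreteness argument for $\Sr_{\leq 2h+1}$; everything else is a direct unpacking of the definition of $d$ together with \cref{consistprop}.
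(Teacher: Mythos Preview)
Your proof is correct and follows precisely the natural route suggested by the setup: use the ``local constancy'' of $r_h$ (inherited from the definition of $d$) for (a)$\Rightarrow$(b), and for (b)$\Rightarrow$(a) upgrade convergence to eventual equality via finiteness of $\Sr_{\leq 2h+1}$, then invoke \cref{consistprop}. The paper does not reproduce its own proof of this proposition (it only cites \cite[Proposition 2.14]{borga2020localperm}), but your argument uses exactly the ingredients the paper has set up and matches what one expects the original proof to be.
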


\begin{obs}
	\label{continuityrh}
	Note that for all $h\in\Z_{>0},$ the restriction functions $r_h$ are continuous. This is a simple consequence of implication $(a)\Rightarrow(b)$ in the previous proposition.
\end{obs}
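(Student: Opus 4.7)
The plan is to establish continuity of $r_h$ directly from the definition: take an arbitrary convergent sequence $(A_n,\preccurlyeq_n)\stackrel{loc}{\longrightarrow}(A,\preccurlyeq)$ in $\Sri$ and show that $r_h(A_n,\preccurlyeq_n)\stackrel{loc}{\longrightarrow}r_h(A,\preccurlyeq)$ in $\Sr$ (which, since $\Sr\subset\Sri$, carries the same local topology induced by $d$). Because $d$ is a metric, verifying this sequential criterion is enough to conclude continuity of $r_h$ as a map from $\Sri$ to $\Sri$, and in particular to $\Sr$.

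First I would invoke the hypothesis $(a)$ of \cref{contrh} for the sequence $(A_n,\preccurlyeq_n)_{n\in\Z_{>0}}$. The implication $(a)\Rightarrow(b)$ then immediately gives a family $(B_{h'},\ll_{h'})_{h'\in\Z_{>0}}$ of finite rooted permutations such that $r_{h'}(A_n,\preccurlyeq_n)\stackrel{loc}{\longrightarrow}(B_{h'},\ll_{h'})$ for every $h'\in\Z_{>0}$; moreover, by the last sentence of \cref{contrh}, we have the identification $(B_{h'},\ll_{h'})=r_{h'}(A,\preccurlyeq)$. Specialising to $h'=h$ yields exactly
\[
r_h(A_n,\preccurlyeq_n)\stackrel{loc}{\longrightarrow}r_h(A,\preccurlyeq),
\]
which is the desired continuity statement.

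There is essentially no obstacle here: all the content has already been packaged into \cref{contrh}, and the observation merely rephrases the "moreover" part of that proposition as a continuity assertion. The only mild point worth emphasizing is that $\Sr$ is a subset of $\Sri$ on which the local distance $d$ coincides with the inherited one (so convergence in $\Sri$ is the same as convergence in $\Sr$ when the limit is finite), ensuring that we may indeed view $r_h$ as a continuous map landing in $\Sr$.
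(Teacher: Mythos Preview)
Your proposal is correct and follows exactly the approach the paper indicates: the observation has no separate proof in the paper beyond the remark that it follows from $(a)\Rightarrow(b)$ in \cref{contrh}, and you have correctly unpacked that remark by taking a convergent sequence, applying $(a)\Rightarrow(b)$ together with the identification $(B_h,\ll_h)=r_h(A,\preccurlyeq)$, and reading off sequential continuity.
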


Using the results above, we can prove the following.
\begin{thm}[{\cite[Theorem 2.16]{borga2020localperm}}]
	\label{compactpolish}
	The metric space $(\tilde{\mathcal{S}}^{\bullet},d)$ is compact.
\end{thm}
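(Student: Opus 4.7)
The plan is to prove compactness by establishing sequential compactness, since $(\tilde{\mathcal{S}}^{\bullet},d)$ is a metric space. Given an arbitrary sequence $(A_n,\preccurlyeq_n)_{n\in\Z_{>0}}$ in $\tilde{\mathcal{S}}^{\bullet}$, I would produce a subsequence converging to some element of $\tilde{\mathcal{S}}^{\bullet}$ by means of a diagonal extraction driven by the restriction functions $r_h$.

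The first observation is a finiteness principle: for every fixed $h\in\Z_{>0}$, the image $r_h\bigl(\tilde{\mathcal{S}}^{\bullet}\bigr)$ is contained in the set of rooted permutations of size at most $2h+1$, which is a finite set. Therefore, starting from the given sequence, I can extract an infinite subsequence on which $r_1(A_n,\preccurlyeq_n)$ is constant, call the common value $(B_1,\ll_1)$. From that subsequence I extract a further infinite subsequence on which $r_2(A_n,\preccurlyeq_n)$ is constant, equal to some $(B_2,\ll_2)$, and so on. A standard diagonal extraction then produces a single subsequence $(A_{n_k},\preccurlyeq_{n_k})_{k\in\Z_{>0}}$ along which $r_h(A_{n_k},\preccurlyeq_{n_k})$ is eventually constant and equal to $(B_h,\ll_h)$ for every $h\in\Z_{>0}$; in particular $r_h(A_{n_k},\preccurlyeq_{n_k})\stackrel{loc}{\longrightarrow}(B_h,\ll_h)$ for every $h$.

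Next, I would verify that the family $(B_h,\ll_h)_{h\in\Z_{>0}}$ so produced is consistent. This follows from the identity $r_h\circ r_{h+1}=r_h$ on $\tilde{\mathcal{S}}^{\bullet}$ together with the continuity of $r_h$ noted in the observation after Proposition \ref{contrh}: for all sufficiently large $k$ we have $r_{h+1}(A_{n_k},\preccurlyeq_{n_k})=(B_{h+1},\ll_{h+1})$ and $r_h(A_{n_k},\preccurlyeq_{n_k})=(B_h,\ll_h)$, so applying $r_h$ to the former yields $r_h(B_{h+1},\ll_{h+1})=(B_h,\ll_h)$.

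With consistency in hand, Proposition \ref{consistprop} supplies a (unique) element $(A,\preccurlyeq)\in\tilde{\mathcal{S}}^{\bullet}$ with $r_h(A,\preccurlyeq)=(B_h,\ll_h)$ for all $h$. Finally, the implication $(b)\Rightarrow(a)$ in Proposition \ref{contrh} converts the per-$h$ convergence of the restrictions into local convergence $(A_{n_k},\preccurlyeq_{n_k})\stackrel{loc}{\longrightarrow}(A,\preccurlyeq)$, completing the sequential compactness argument. The only mildly delicate point is the bookkeeping of the diagonal extraction together with the verification of consistency; everything else is a direct application of the previously established propositions.
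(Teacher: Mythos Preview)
Your argument is correct and is exactly the approach the paper's scaffolding is set up for: the finiteness of $r_h(\tilde{\mathcal{S}}^{\bullet})\subseteq \Sr_{\leq 2h+1}$ feeds a diagonal extraction, consistency follows from $r_h\circ r_{h+1}=r_h$, and then Propositions~\ref{consistprop} and~\ref{contrh} finish the job. The paper does not reproduce the proof here (it cites \cite[Theorem~2.16]{borga2020localperm}), but the preceding propositions were clearly arranged with precisely this argument in mind, so your proposal matches the intended proof.
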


\subsection{Benjamini--Schramm convergence: the deterministic case}

In this section we want to define a notion of weak-local convergence for a deterministic sequence of finite (unrooted) permutations $(\sigma_n)_{n\in\Z_{>0}}.$ The case of a random sequence $(\bm{\sigma}_n)_{n\in\Z_{>0}}$ will be discussed in \cref{random case}. 
Some of the results contained in this section are just special cases of the those in \cref{wbsconv} about the annealed version of the Benjamini--Schramm convergence (B--S convergence, for short). However we choose to present the deterministic case separately with a double purpose: first, in this way, the definitions are clearer, secondly, we believe that the subsequent quenched version of the B--S convergence (see \cref{sbsconv}) will be more intuitive.

A possible way to define this weak-local convergence is to use the notion of local distance defined in \cref{distance}. Therefore, we need to construct a sequence of rooted permutations from the sequence $(\sigma_n)_{n\in\Z_{>0}}.$ A natural way to see a fixed permutation $\sigma$ as an element of $\mathcal{S}^\bullet$ is to choose a root uniformly at random among the indices of $\sigma.$

\begin{defn}
	\label{weakconv}
	Let $(\sigma_n)_{n\in\Z_{>0}}$ be a sequence in $\mathcal{S}$, $\bm{i}_n$ a uniform random index in $[|\sigma_n|]$, and $(\bm{A},\bm{\preccurlyeq})$ a random (possibly infinite) rooted permutation. We say that the sequence $(\sigma_n)_{n\in\Z_{>0}}$ \emph{Benjamini--Schramm converges} to $(\bm{A},\bm{\preccurlyeq})$, if the sequence $(\sigma_n,\bm{i}_n)_{n\in\Z_{>0}}$ converges in distribution to $(\bm{A},\bm{\preccurlyeq})$ with respect to the local distance $d$ defined in \cref{distance}.
	In this case we simply write $\sigma_n\xrightarrow{BS} (\bm{A},\bm{\preccurlyeq})$ instead of $(\sigma_n,\bm{i}_n)\xrightarrow{d}(\bm{A},\bm{\preccurlyeq}).$
\end{defn}

Sometimes, in order to simplify notation, we will denote the B--S limit of a sequence $(\sigma_n)_{n\in\Z_{>0}}$ simply by $\bm{\sigma}^{\infty}$ instead of $(\bm{A},\bm{\preccurlyeq}).$

Our main theorem in this section deals with the following interesting relation between the B--S convergence and the convergence of consecutive pattern densities.

\begin{thm}
	\label{bscharact}
	For any $n\in\Z_{>0},$ let $\sigma_n$ be a permutation of size $n.$ Then the following assertions are equivalent:
	\begin{enumerate}[(a)]
		\item There exists a random rooted infinite permutation $\bm{\sigma}^\infty$ such that $\sigma_n\xrightarrow{BS}\bm{\sigma}^\infty$.
		\item There exist non-negative real numbers $(\Delta_{\pi})_{\pi\in\mathcal{S}}$ such that $\widetilde{\cocc}(\pi,\sigma_n)\to\Delta_{\pi}$ for all $\pi\in\mathcal{S}$.
	\end{enumerate}
	Moreover, if one of the two conditions holds (and so both) we have the following relation between the limiting objects:
	\begin{equation}
	\mathbb{P}\big(r_h(\bm{\sigma}^\infty)=(\pi,h+1)\big)=\Delta_{\pi},\quad\text{for all}\quad h\in\Z_{>0},\quad\text{ all}\quad\pi\in\mathcal{S}_{2h+1}.
	\end{equation}
\end{thm}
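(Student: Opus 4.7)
The plan is to exploit the key identity
\[
\mathbb{P}\bigl(r_h(\sigma_n,\bm{i}_n)=(\pi,h+1)\bigr)=\widetilde{\cocc}(\pi,\sigma_n),\qquad \pi\in\mathcal{S}_{2h+1},\ n\ge 2h+1,
\]
which follows because a consecutive occurrence of $\pi$ at positions $[j,j+2h]$ corresponds bijectively to a root $i=j+h\in[h+1,n-h]$ with $\pat_{[i-h,i+h]}(\sigma_n)=\pi$, and the uniform choice of $\bm{i}_n$ among $n$ positions introduces the normalization by $n$. This identity is the backbone of the equivalence.

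For $(a)\Rightarrow(b)$, the definition of B--S convergence gives $(\sigma_n,\bm{i}_n)\xrightarrow{d}\bm{\sigma}^\infty$ in $(\tilde{\mathcal{S}}^{\bullet},d)$. Since each $r_h$ is continuous (\cref{continuityrh}) and the preimage $\{r_h=(\pi,h+1)\}$ is a clopen ball (hence a continuity set of \emph{any} Borel measure), the portmanteau theorem together with the key identity yields $\widetilde{\cocc}(\pi,\sigma_n)\to\mathbb{P}(r_h(\bm{\sigma}^\infty)=(\pi,h+1))$ for every odd-sized $\pi$. To cover patterns $\pi\in\mathcal{S}_k$ of arbitrary size, I would use the "averaging by extension" identity
\[
\cocc(\pi,\sigma)=\sum_{\pi'}\cocc(\pi',\sigma)+O(1),
\]
where $\pi'$ ranges over the $(k+1)(k+2)$ patterns of size $k+2$ whose middle $k$ letters induce $\pi$ and the $O(1)$ absorbs the two boundary intervals; dividing by $n$ and iterating shows that the convergence for odd sizes forces convergence of $\widetilde{\cocc}(\pi,\sigma_n)$ for every $\pi\in\mathcal{S}$, with $\Delta_\pi$ simply the claimed probability.

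For $(b)\Rightarrow(a)$, compactness of $(\tilde{\mathcal{S}}^{\bullet},d)$ (\cref{compactpolish}) together with Prokhorov's theorem guarantees that the laws of $(\sigma_n,\bm{i}_n)$ are tight, and hence that every subsequence has a further convergent subsequence with some limit $\bm{\sigma}^\infty$. By the key identity and hypothesis $(b)$, any such subsequential limit satisfies $\mathbb{P}(r_h(\bm{\sigma}^\infty)=(\pi,h+1))=\Delta_\pi$ for all $h\in\Z_{>0}$ and $\pi\in\mathcal{S}_{2h+1}$. Furthermore, for any rooted permutation $(\pi,p)$ with $|\pi|<2h+1$, the probability $\mathbb{P}(r_h(\sigma_n,\bm{i}_n)=(\pi,p))$ is at most $2h/n\to 0$, so $\bm{\sigma}^\infty$ is almost surely supported on the set of infinite rooted permutations with $\bm{A}=\mathbb{Z}$. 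Hence the law of $\bm{\sigma}^\infty$ is determined on the collection $\mathcal{C}=\{\{r_h=(\pi,h+1)\}:h\ge 1,\ \pi\in\mathcal{S}_{2h+1}\}$; since $\mathcal{C}$ is a $\pi$-system generating the Borel $\sigma$-algebra of $\tilde{\mathcal{S}}^{\bullet}$ (the clopen balls of radius $2^{-h}$ centred at a finite rooted permutation form a basis of the topology, and by consistency each such ball is either contained in or disjoint from another member of $\mathcal{C}$), Dynkin's $\pi$--$\lambda$ lemma gives uniqueness of the subsequential limit, and the full sequence converges, proving $(a)$. The "moreover" statement is immediate from the identity applied in the limit.

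The main obstacle is the uniqueness of the limiting law: it is essential that the $\pi$-system $\mathcal{C}$ of "middle-rooted" balls determines the measure, which in turn requires verifying that the limit assigns no mass to finite rooted permutations (so that the non-middle-rooted balls do not carry additional information). This is why the boundary estimate $2h/n\to 0$ and the consistency property of restrictions (\cref{consistprop}) are used together.
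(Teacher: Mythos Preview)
Your approach is the same as the paper's: Theorem~\ref{bscharact} is proved there as the deterministic special case of Theorem~\ref{weakbsequivalence}, whose proof runs exactly along your lines (the key identity $\mathbb{P}(r_h(\sigma_n,\bm{i}_n)=(\pi,h+1))=\widetilde{\cocc}(\pi,\sigma_n)$, continuity of $r_h$ for $(a)\Rightarrow(b)$, and compactness plus Prokhorov plus identification on the separating class of clopen balls for $(b)\Rightarrow(a)$, using that the non-middle-rooted balls have vanishing mass).

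There is one slip in your $(a)\Rightarrow(b)$ step for even-size patterns. Your extension identity takes $\pi$ of size $k$ to patterns $\pi'$ of size $k+2$, which \emph{preserves} parity; iterating never reaches an odd size from an even one, so the reduction from ``convergence for odd-size $\pi$'' to ``convergence for all $\pi$'' does not go through as written. The fix is immediate: use a one-sided extension instead. For $\pi\in\mathcal{S}_k$, summing over the $k+1$ patterns $\pi'\in\mathcal{S}_{k+1}$ with $\pat_{[1,k]}(\pi')=\pi$ gives
\[
\cocc(\pi,\sigma_n)=\sum_{\pi'}\cocc(\pi',\sigma_n)+\mathds{1}_{\{\pat_{[n-k+1,n]}(\sigma_n)=\pi\}},
\]
and now $k+1$ has the opposite parity, so one application suffices. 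The paper itself flags this even-size case as requiring extra care and defers the details to the source article.
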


We skip the proof of this theorem since it is a particular case of \cref{weakbsequivalence}. 

\begin{rem}
	We could have introduced local convergence for permutations also from another point of view, closer to the one developed for graphons (see \cite{lovasz2012large}): we could say that a sequence of permutations is locally convergent if for every pattern $\pi\in\mathcal{S}$ the sequences $\pcocc(\pi,\sigma_n)$ converge, and then characterize the limiting objects. \cref{bscharact} shows that the two approaches are equivalent.  
\end{rem}

\begin{rem}
	Recently, Pinsky \cite{pinsky2018infinite} studied limits of random permutations avoiding patterns of size three considering a different topology. His topology captures the local limit of the permutation diagram around a {\em corner}. The two topologies (that from \cite{pinsky2018infinite} and that from the present manuscript) are not comparable. We point out that there are two main advantages in our definition of local limit: first, convergence in our local topology is pleasantly equivalent to the convergence of consecutive pattern proportions; second, many natural models have an interesting limiting object for our topology (while converging to $+\infty$ in the topology of \cite{pinsky2018infinite}).
\end{rem}

\subsection{Benjamini--Schramm convergence: the random case}
\label{random case}
The goal of this section is to characterize the convergence in distribution of a sequence of \emph{random} permutations $(\bm{\sigma}_n)_{n\in\Z_{>0}}$ with respect to the local distance defined in \cref{distance}. We have two different natural choices for this definition, one stronger than the other (and not equivalent as shown in \cref{notequivalent}). The weaker definition, as already mentioned, is an analogue of the notion of B--S convergence for random graphs developed  in \cite{benjamini2001recurrence}.

\subsubsection{Annealed version of the Benjamini--Schramm convergence}
\label{wbsconv}
\begin{defn}[Annealed version of the B--S convergence]\label{weakweakconv}
	Given a sequence $(\bm{\sigma}_n)_{n\in\Z_{>0}}$ of random permutations in $\mathcal{S},$ let $\bm{i}_n$ be a uniform index of $\bm{\sigma}_n$ conditionally on $\bm{\sigma}_n$. We say that $(\bm{\sigma}_n)_{n\in\Z_{>0}}$ \emph{converges in the annealed Benjamini--Schramm sense} to a random variable $\bm{\sigma}^{\infty}$ with values in $\Sri$ if the sequence of random variables $(\bm{\sigma}_n,\bm{i}_n)_{n\in\Z_{>0}}$ converges in distribution to $\bm{\sigma}^{\infty}$ with respect to the local distance. In this case we write $\bm{\sigma}_n\gls*{annealed_BS}\bm{\sigma}^\infty$ instead of  $(\bm{\sigma}_n,\bm{i}_n)\xrightarrow{d}\bm{\sigma}^\infty.$
\end{defn}

Like in the deterministic case, the limiting object $\bm{\sigma}^{\infty}$ is a random rooted permutation.

\begin{rem}
	Even though the above definition is stated for sequences of random permutations of arbitrary size, we will often consider the case when, for all $n\in\Z_{>0},$ $|\bm{\sigma}_n|=n$ almost surely and the random choice of the root $\bm{i}_n$ is uniform in $[n]$ and independent of $\bm{\sigma}_n$.
\end{rem}

We give some characterizations of the  annealed version of the B--S convergence. 
\begin{thm}[{\cite[Theorem 2.24]{borga2020localperm}}]
	\label{weakbsequivalence}
	For any $n\in\Z_{>0},$ let $\bm{\sigma}_n$ be a random permutation of size $n$ and $\bm{i}_n$ be a uniform random index in $[n]$, independent of $\bm{\sigma}_n.$ Then the following assertions are equivalent:
	\begin{enumerate}[(a)]
		\item There exists a random rooted infinite permutation $\bm{\sigma}^\infty$ such that $\bm{\sigma}_n\xrightarrow{aBS}\bm{\sigma}^\infty$.
		\item For all $h\in\Z_{>0},$ there exist non-negative real numbers $(\Gamma^h_{\pi})_{\pi\in\mathcal{S}_{2h+1}}$ such that  $$\P\big(r_h(\bm{\sigma}_n,\bm{i}_n)=(\pi,h+1)\big)\xrightarrow[n\to\infty]{}\Gamma^h_{\pi},\quad\text{for all}\quad\pi\in\mathcal{S}_{2h+1}.$$
		\item There exist non-negative real numbers $(\Delta_{\pi})_{\pi\in\mathcal{S}}$ such that $\E[\widetilde{\cocc}(\pi,\bm{\sigma}_n)]\to\Delta_{\pi}$ for all $\pi\in\mathcal{S}$.
	\end{enumerate}
	Moreover, if one of the three conditions holds (and so all of them), for every fixed $h\in\Z_{>0},$ we have the following relations between the limiting objects,
	\begin{equation}\label{eq:fbweiuewbvfweuib}
	\label{limrel}
	\mathbb{P}\big(r_h(\bm{\sigma}^\infty)=(\pi,h+1)\big)=\Delta_{\pi}=\Gamma^h_{\pi},\quad\text{for all}\quad\pi\in\mathcal{S}_{2h+1}.
	\end{equation}
\end{thm}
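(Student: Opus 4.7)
The plan is to prove $(a) \Rightarrow (b) \Rightarrow (a)$ using the compactness of $(\Sri, d)$ from \cref{compactpolish}, and then $(b) \Leftrightarrow (c)$ via a direct combinatorial identity. For $(a) \Rightarrow (b)$, I would fix $h \in \Z_{>0}$ and $\pi \in \mathcal{S}_{2h+1}$; by continuity of $r_h$ (\cref{continuityrh}), the preimage $E_{h, \pi} := r_h^{-1}(\{(\pi, h+1)\}) \subseteq \Sri$ is clopen and thus has empty topological boundary. The portmanteau theorem applied to $(\bm\sigma_n, \bm{i}_n) \xrightarrow{d} \bm\sigma^\infty$ then gives $\P(r_h(\bm\sigma_n, \bm{i}_n) = (\pi, h+1)) \to \P(r_h(\bm\sigma^\infty) = (\pi, h+1))$, so $(b)$ holds with $\Gamma^h_\pi := \P(r_h(\bm\sigma^\infty) = (\pi, h+1))$.

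For $(b) \Rightarrow (a)$, the compactness of $\Sri$ combined with Prokhorov's theorem makes the sequence of laws of $(\bm\sigma_n, \bm{i}_n)$ automatically tight; let $\bm\tau^\infty$ be any subsequential weak limit. Applying $(a) \Rightarrow (b)$ along the subsequence yields $\P(r_h(\bm\tau^\infty) = (\pi, h+1)) = \Gamma^h_\pi$ for every $h$ and every $\pi \in \mathcal{S}_{2h+1}$. Since $|\bm\sigma_n| = n \to \infty$, the probability that $r_h(\bm\sigma_n, \bm{i}_n)$ has size less than $2h+1$ is bounded by $2h/n \to 0$, so $\sum_{\pi \in \mathcal{S}_{2h+1}} \Gamma^h_\pi = 1$ and $\bm\tau^\infty$ lives almost surely in $\mathcal{S}_\infty^\bullet$. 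On this subspace, the collection $\{E_{h, \pi}\}_{h \geq 1, \pi \in \mathcal{S}_{2h+1}}$ is a countable $\pi$-system (the ultrametric structure of $d$ forces any two such sets to be disjoint or nested) that generates the Borel $\sigma$-algebra, so Dynkin's lemma identifies the law of $\bm\tau^\infty$ uniquely, all subsequential limits coincide, and $(\bm\sigma_n, \bm{i}_n) \xrightarrow{d} \bm\sigma^\infty$ where $\bm\sigma^\infty$ is the unique infinite rooted permutation with $\P(r_h(\bm\sigma^\infty) = (\pi, h+1)) = \Gamma^h_\pi$. The main obstacle lies precisely in this uniqueness step: one must ensure that no mass escapes onto finite-size configurations in subsequential limits, which is exactly what $|\bm\sigma_n| \to \infty$ provides.

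The equivalence $(b) \Leftrightarrow (c)$ rests on the exact combinatorial identity, valid for $\pi \in \mathcal{S}_{2h+1}$ and $n \geq 2h+1$:
\begin{equation*}
\P(r_h(\bm\sigma_n, \bm{i}_n) = (\pi, h+1)) = \frac{1}{n}\sum_{i=h+1}^{n-h}\P(\pat_{[i-h, i+h]}(\bm\sigma_n) = \pi) = \frac{\E[\cocc(\pi, \bm\sigma_n)]}{n} = \E[\widetilde{\cocc}(\pi, \bm\sigma_n)],
\end{equation*}
obtained by conditioning on the uniform root $\bm{i}_n$ and using the description of $r_h$ in terms of consecutive patterns recalled after \cref{rhfunct}. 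This immediately yields $(b) \Leftrightarrow (c)$ restricted to odd-sized patterns, with $\Delta_\pi = \Gamma^h_\pi$. To handle patterns $\pi \in \mathcal{S}_k$ of arbitrary size $k$, for any odd $m = 2h+1 \geq k$ I would use the counting identity
\begin{equation*}
\cocc(\pi, \sigma) = \sum_{\substack{\pi' \in \mathcal{S}_m \\ \pat_{[1, k]}(\pi') = \pi}} \cocc(\pi', \sigma) + O(m - k),
\end{equation*}
which holds because each consecutive occurrence of $\pi$ sufficiently far from the right boundary extends uniquely to a size-$m$ consecutive occurrence sharing it as a prefix. Dividing by $n$, taking expectations and letting $n \to \infty$ yields the consistent extension $\Delta_\pi = \sum_{\pi'} \Delta_{\pi'}$, reducing the general case to the odd-sized one. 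Chaining the identifications made in each step then gives the compatibility relations in \eqref{eq:fbweiuewbvfweuib}.
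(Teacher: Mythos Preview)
Your proof is correct and follows essentially the same route as the paper's: continuity of $r_h$ for $(a)\Rightarrow(b)$, compactness plus Prokhorov and the fact that the clopen balls $E_{h,\pi}$ form a separating class for $(b)\Rightarrow(a)$, and the identity $\E[\widetilde{\cocc}(\pi,\bm\sigma_n)]=\P(r_h(\bm\sigma_n,\bm i_n)=(\pi,h+1))$ for $(b)\Leftrightarrow(c)$. Your treatment of even-sized patterns via the prefix-extension identity is a valid way to fill in the step that the paper defers to the cited reference; one small imprecision is that your mass argument actually shows the limit is supported on total orders on all of $\Z$ (not merely on $\mathcal S_\infty^\bullet$, which also contains one-sided infinite orders), and it is on that subspace that your $\pi$-system $\{E_{h,\pi}\}$ generates the Borel $\sigma$-algebra.
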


Before proving the theorem we point out two important facts.

\begin{rem}\label{uyfvuoe3}
	The theorem ensures the existence of a random rooted infinite permutation $\bm{\sigma}^\infty$ but does not furnish any explicit construction of this object as a random total order on $\mathbb{Z}.$
\end{rem}

\begin{rem}
	\label{uyfvuoe2}
	For every fixed $h\in\Z_{>0},$ condition (b) in \cref{weakbsequivalence} only considers the probabilities $\P\big(r_h(\bm{\sigma}_n,\bm{i}_n)=(\pi,j)\big)$ for $\pi\in\mathcal{S}_{2h+1}$ and $j=h+1.$ We remark that for all the other cases, i.e.\ when $\pi\in\mathcal{S}_{2h+1}$ and $j\neq h+1,$ or $|\pi|<2h+1$ and $j\in[|\pi|]$, it is simple to show that
	$\P\big(r_h(\bm{\sigma}_n,\bm{i}_n)=(\pi,j)\big)\to 0.$
\end{rem}

\begin{proof}[Proof of \cref{weakbsequivalence}]
	$(a)\Rightarrow(b).$ For all $h\in\Z_{>0},$ the convergence in distribution of the sequence $\big(r_h(\bm{\sigma}_n,\bm{i}_n)\big)_{n\in\Z_{>0}},$ follows from the continuity of the functions $r_h$ (\cref{continuityrh}). Then (b) is a trivial consequence of the fact that $r_h(\bm{\sigma}_n,\bm{i}_n)$ takes its values in the finite set $\Sr_{\leq 2h+1}.$

	$(b)\Rightarrow(a).$ Thanks to \cref{compactpolish}, $(\Sri,d)$  is a compact (and so Polish) space. Applying Prokhorov's Theorem, in order to show that $(\bm{\sigma}_n,\bm{i}_n)\xrightarrow{d}\bm{\sigma}^{\infty}$ for some random infinite permutation $\bm{\sigma}^{\infty},$ it is enough to show that for every pair of convergent subsequences  $(\bm{\sigma}_{n_k},\bm{i}_{n_k})_{k\in\Z_{>0}}$ and $(\bm{\sigma}_{n_\ell},\bm{i}_{n_\ell})_{\ell\in\Z_{>0}}$ with limits $\bm{\sigma}^\infty_1$ and $\bm{\sigma}^\infty_2$ respectively, then
	$\bm{\sigma}^\infty_1\stackrel{d}{=}\bm{\sigma}^\infty_2.$

	Since $(\bm{\sigma}_{n_k},\bm{i}_{n_k})_{k\in\Z_{>0}}$ and $(\bm{\sigma}_{n_\ell},\bm{i}_{n_\ell})_{\ell\in\Z_{>0}}$ both satisfy $(b)$, the distributions of $\bm{\sigma}^\infty_1$ and $\bm{\sigma}^\infty_2$ must coincide on the collection of sets
	$$\Big\{B\big((A,\preccurlyeq),2^{-h}\big):h\in\Z_{>0},(A,\preccurlyeq)\in\Sri\Big\}.$$ 
	This is a separating class\footnote{For more explanations see \cite[Observation 2.23]{borga2020localperm}.} for the space $(\Sri,d)$ and so we can conclude that $\bm{\sigma}^\infty_1\stackrel{d}{=}\bm{\sigma}^\infty_2.$

	$(b)\Leftrightarrow(c).$
	We assume that $\pi\in\mathcal{S}_{2h+1}$ for some $h\in\Z_{>0}$. 
	Using the independence between $\bm{\sigma}_n$ and $\bm{i}_n$, we have
	\begin{equation}
	\label{relprobexp}
	\E\Big[\widetilde{\cocc}(\pi,\bm{\sigma}_n)\Big]=\E\Big[\P\left(r_h(\bm{\sigma}_n,\bm{i}_n)=(\pi,h+1)\middle| \bm{\sigma_n}\right)\Big]=\P\big(r_h(\bm{\sigma}_n,\bm{i}_n)=(\pi,h+1)\big).
	\end{equation}
	This proves that $(c)\Rightarrow(b)$. For $(b)\Rightarrow(c)$, we also need to prove that $\E\Big[\widetilde{\cocc}(\pi,\bm{\sigma}_n)\Big]$ converges when $\pi$ has even size. Details are given in the proof of \cite[Theorem 2.24]{borga2020localperm}.
\end{proof}

\subsubsection{Quenched version of the Benjamini--Schramm convergence}
\label{sbsconv}

We start by noting that rooting a permutation $\sigma$ uniformly at random, one naturally identifies a probability measure $\nu_{\sigma}$ on $\mathcal{S}^{\bullet},$ which is 
\begin{equation}
	\label{permprob}
	\text{for all Borel sets }A\subset\mathcal{S}^\bullet,\qquad\nu_{\sigma}(A)\coloneqq\mathbb{P}\big((\sigma,\bm{i})\in A\big),
\end{equation}
where $\bm{i}$ is a uniform index of $\sigma.$ Equivalently, $\nu_{\sigma}$ is the law of the random variable $(\sigma,\bm{i})$.

The quenched version of the B--S convergence is inspired by the following equivalent reformulation of \cref{weakconv}: given a sequence $(\sigma_n)_{n\in\Z_{>0}}$ of deterministic elements in $\mathcal{S},$ we can equivalently say that $(\sigma_n)_{n\in\Z_{>0}}$ B--S converges to a probability measure $\nu$ on $\tilde{\mathcal{S}}^\bullet$, if the sequence $(\nu_{\sigma_n})_{n\in\Z_{>0}}$  converges to $\nu$ with respect to the weak topology induced by the local distance $d.$ 
In analogy, we suggest the following definition for the random case.
\begin{defn}[Quenched version of the B--S convergence]
	\label{strongconv}
	Let $(\bm{\sigma}_n)_{n\in\Z_{>0}}$ be a sequence of random permutations in $\mathcal{S}$ and $\bm{\nu}^\infty$ a random measure  on $\Sri$.  We say that $(\bm{\sigma}_n)_{n\in\Z_{>0}}$ \emph{converges in the quenched Benjamini--Schramm sense} to $\bm{\nu}^\infty$ if the sequence of random measures $(\bm{\nu}_{\bm{\sigma}_n})_{n\in\Z_{>0}}$ converges in distribution to $\bm{\nu}^\infty$ with respect to the weak topology induced by the local distance. In this case we write $\bm{\sigma}_n\gls*{quenched_BS}\bm{\nu}^\infty$ instead of $\bm{\nu}_{\bm{\sigma}_n}\xrightarrow{d}\bm{\nu}^\infty.$
\end{defn}
Unlike the annealed version of the B--S convergence, the limiting object $\bm{\nu}^{\infty}$ is a \emph{random measure} on $\Sri.$
Note also that, given a random permutation $\bm{\sigma},$ the corresponding random measure $\bm{\nu}_{\bm{\sigma}}$ is the conditional law of the random variable $\big((\bm{\sigma},\bm{i})\big|\bm{\sigma}\big)$, where $\bm{i}$ is uniform on $[|\bm{\sigma}|]$ conditioning on $\bm{\sigma}$.

We are now ready to state a theorem that gives a characterization of the quenched version of the B--S convergence. The proof of this result is quite technical (because we are dealing with random measures) but it follows the same lines as the proof of \cref{weakbsequivalence}. For these reasons, we skip this proof in this manuscript.

\begin{thm}[{\cite[Theorem 2.32]{borga2020localperm}}]
	\label{strongbsconditions}
	For any $n\in\Z_{>0},$ let $\bm{\sigma}_n$ be a random permutation of size $n$ and $\bm{i}_n$ be a uniform random index in $[n]$, independent of $\bm{\sigma}_n.$ Then the following assertions are equivalent:
	\begin{enumerate}[(a)]
		\item There exists a random measure $\bm{\nu}^\infty$ on $\Sri$ such that   $\bm{\sigma}_n\stackrel{qBS}{\longrightarrow}\bm{\nu}^\infty$. 
		\item There exists a family of non-negative real random variables $(\bm{\Gamma}^h_{\pi})_{h\in\Z_{>0},\pi\in\mathcal{S}_{2h+1}}$ such that  $$\Big(\P\left(r_h(\bm{\sigma}_n,\bm{i}_n)=(\pi,h+1)\middle|\bm{\sigma}_n\right)\Big)_{h\in\Z_{>0},\pi\in\mathcal{S}_{2h+1}}\xrightarrow{d}(\bm{\Gamma}^h_{\pi})_{h\in\Z_{>0},\pi\in\mathcal{S}_{2h+1}},$$
		w.r.t. the product topology.
		\item There exists an infinite vector of non-negative real random variables $(\bm{\Lambda}_{\pi})_{\pi\in\mathcal{S}}$ such that $$\big(\widetilde{\cocc}(\pi,\bm{\sigma}_n)\big)_{\pi\in\mathcal{S}}\xrightarrow{d}(\bm{\Lambda}_{\pi})_{\pi\in\mathcal{S}}$$ w.r.t.\ the product topology.
	\end{enumerate}
\end{thm}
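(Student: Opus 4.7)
\medskip

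\noindent\textbf{Proof proposal.}
The plan is to lift the proof of \cref{weakbsequivalence} to the level of random probability measures. By \cref{compactpolish}, $(\Sri, d)$ is compact Polish, hence so is the space $\cP(\Sri)$ of Borel probability measures equipped with the weak topology; random measures on $\Sri$ are then random elements of a compact Polish space, and Prokhorov's theorem will apply at both levels. The key building blocks will be the sets
\[
B_{\pi,h} := r_h^{-1}\{(\pi, h+1)\} \subseteq \Sri, \qquad h \in \Z_{>0},\ \pi \in \mathcal{S}_{2h+1},
\]
each of which is clopen in $\Sri$ since $d$ is ultrametric and $r_h$ is continuous (cf.\ \cref{continuityrh}). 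Consequently each evaluation map $\mu \mapsto \mu(B_{\pi,h})$ is weakly continuous on $\cP(\Sri)$, and portmanteau-type issues at boundaries never arise.

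For $(a) \Rightarrow (b)$ I would apply the continuous mapping theorem to the joint evaluation $\mu \mapsto (\mu(B_{\pi,h}))_{h,\pi}$, valued in $[0,1]^{\Z_{>0}}$ with the product topology, and combine with the defining identity
\[
\bm{\nu}_{\bm{\sigma}_n}(B_{\pi,h}) = \P\!\left(r_h(\bm{\sigma}_n, \bm{i}_n) = (\pi, h+1) \,\middle|\, \bm{\sigma}_n\right),
\]
so that $\bm{\Gamma}^h_\pi := \bm{\nu}^\infty(B_{\pi,h})$ is the required joint limit. Conversely, for $(b) \Rightarrow (a)$, compactness of $\cP(\Sri)$ makes the laws $\mathcal{L}aw(\bm{\nu}_{\bm{\sigma}_n})$ automatically tight on $\cP(\cP(\Sri))$, so Prokhorov yields subsequential limits. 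To see that any two such limits $\bm{\nu}^\infty_1, \bm{\nu}^\infty_2$ coincide in distribution, the preceding implication combined with (b) guarantees that their ball-evaluation vectors share the same joint law; since the $B_{\pi,h}$ form a countable, $\cap$-stable class generating the Borel $\sigma$-algebra of $\Sri$, a Dynkin $\pi$-$\lambda$ argument at the level of $\cP(\Sri)$ shows that the joint distributions of the evaluation functionals $\mu \mapsto \mu(B_{\pi,h})$ determine the law of a random element of $\cP(\Sri)$. Hence $\bm{\nu}^\infty_1 \stackrel{d}{=} \bm{\nu}^\infty_2$.

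The equivalence $(b) \Leftrightarrow (c)$ is more direct. For $\pi \in \mathcal{S}_{2h+1}$, the same calculation as in \cref{weakbsequivalence} gives
\[
\widetilde{\cocc}(\pi, \bm{\sigma}_n) = \P\!\left(r_h(\bm{\sigma}_n, \bm{i}_n) = (\pi, h+1) \,\middle|\, \bm{\sigma}_n\right) + O(h/n),
\]
with the deterministic error coming from the at most $2h$ indices too close to the boundary of the diagram. For even-sized patterns the analogue of \cref{uyfvuoe2} shows that $\widetilde{\cocc}(\pi, \bm{\sigma}_n) \to 0$ in probability, so those extra coordinates can be added or removed harmlessly in the joint convergence. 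The main obstacle I anticipate is precisely the separating-class step in $(b) \Rightarrow (a)$: one must verify rigorously that the countable family of functionals $\{\mu \mapsto \mu(B_{\pi,h})\}_{h,\pi}$ is distribution-determining for random elements of $\cP(\Sri)$, which requires a Dynkin argument at one level higher than in the annealed proof. Once this is established (using that the algebra generated by the clopen balls is countable and generates the Borel structure of $\Sri$), the remaining ingredients are standard.
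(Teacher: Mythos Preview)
Your overall strategy—lifting the proof of \cref{weakbsequivalence} to random measures via compactness of the space of probability measures on $\Sri$, continuous evaluation at the clopen balls $B_{\pi,h}$, and a separating-class argument to identify subsequential limits—is precisely what the paper has in mind. The paper does not give a proof here but explicitly says the argument ``follows the same lines as the proof of \cref{weakbsequivalence}'' and is more technical because one is dealing with random measures; your outline is the correct unpacking of that remark.

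There is, however, one genuine error in your $(b)\Rightarrow(c)$ step. You claim that for even-sized patterns $\pi$ one has $\widetilde{\cocc}(\pi,\bm{\sigma}_n)\to 0$ in probability, citing an analogue of \cref{uyfvuoe2}. This is false: for instance $\widetilde{\cocc}(12,\bm{\sigma}_n)$ is the proportion of ascents and certainly need not vanish. \cref{uyfvuoe2} concerns the boundary behaviour of $r_h$ (when the restriction window overhangs the edge of the diagram), not even-sized consecutive patterns. The correct way to handle $|\pi|=2h$ is to note that, up to at most one boundary occurrence,
\[
\widetilde{\cocc}(\pi,\sigma)=\sum_{\substack{\pi'\in\mathcal{S}_{2h+1}\\ \pat_{[1,2h]}(\pi')=\pi}}\widetilde{\cocc}(\pi',\sigma)+O(1/n).
\]
This expresses each even-sized coordinate as a deterministic linear function of finitely many odd-sized ones, so joint convergence of the vector in $(b)$ transfers to the full vector in $(c)$ by the continuous mapping theorem. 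This is exactly the ``details'' the paper defers when handling $(b)\Rightarrow(c)$ in the proof of \cref{weakbsequivalence}.
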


The obvious generalizations of Remarks \ref{uyfvuoe3} and  \ref{uyfvuoe2} stated after \cref{weakbsequivalence} hold also for \cref{strongbsconditions}.

\subsubsection{Relation between the annealed and quenched versions of the B--S convergence}

We recall that the \emph{intensity (measure)} $\E[\bm{\nu}]$ of a random measure $\bm{\nu}$ is defined as the expectation $$\E[\bm{\nu}](A)\coloneqq\E\left[\bm{\nu}(A)\right],\quad \text{for all measurable sets } A.$$ 
\begin{prop}[{\cite[Proposition 2.35]{borga2020localperm}}]
	\label{wsrel}
	For any $n\in\Z_{>0},$ let $\bm{\sigma}_n$ be a random permutation of size $n$, independent of $\bm{\sigma}_n.$ If $\bm{\sigma}_n\stackrel{qBS}{\longrightarrow}\bm{\nu}^\infty,$  for some random measure $\bm{\nu}^\infty$ on $\Sri,$ then  $\bm{\sigma}_n\xrightarrow{aBS}\bm{\sigma}^\infty,$
	where $\bm{\sigma}^\infty$ is the random rooted infinite permutation with law $\E[\bm{\nu}^\infty].$  
\end{prop}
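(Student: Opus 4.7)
The plan is to use the characterization of quenched Benjamini--Schramm convergence via random measures directly, and to pass from the random-measure level to the intensity-measure level by taking expectations of bounded continuous test functions. Concretely, recall that by definition $\bm{\sigma}_n\xrightarrow{qBS}\bm{\nu}^\infty$ means $\bm{\nu}_{\bm{\sigma}_n}\xrightarrow{d}\bm{\nu}^\infty$ as random elements of $\mathcal{M}_1(\Sri)$ equipped with the weak topology induced by the local distance $d$, and that $\bm{\nu}_{\bm{\sigma}_n}$ is the conditional law of $(\bm{\sigma}_n,\bm{i}_n)$ given $\bm{\sigma}_n$. Thus the law of $(\bm{\sigma}_n,\bm{i}_n)$ is exactly $\E[\bm{\nu}_{\bm{\sigma}_n}]$, so it suffices to show that $\E[\bm{\nu}_{\bm{\sigma}_n}]\to\E[\bm{\nu}^\infty]$ in the weak topology on $\mathcal{M}_1(\Sri)$; this is precisely the statement $(\bm{\sigma}_n,\bm{i}_n)\xrightarrow{d}\bm{\sigma}^\infty$ with $\mathrm{Law}(\bm{\sigma}^\infty)=\E[\bm{\nu}^\infty]$.

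First I would fix a bounded continuous function $f:\Sri\to\R$ and consider the map $\Phi_f:\mathcal{M}_1(\Sri)\to\R$, $\mu\mapsto\int f\,d\mu$. By definition of the weak topology on probability measures, $\Phi_f$ is continuous and bounded by $\|f\|_\infty$ (since $\mu$ is a probability measure). Applying the continuous mapping theorem to the convergence in distribution $\bm{\nu}_{\bm{\sigma}_n}\xrightarrow{d}\bm{\nu}^\infty$ gives
\begin{equation}
\int f\,d\bm{\nu}_{\bm{\sigma}_n}\;\xrightarrow{d}\;\int f\,d\bm{\nu}^\infty.
\end{equation}
Since both sides are uniformly bounded by $\|f\|_\infty$, convergence in distribution upgrades to convergence of expectations, and using Fubini's theorem on each side we obtain
\begin{equation}
\int f\,d\E[\bm{\nu}_{\bm{\sigma}_n}]=\E\!\left[\int f\,d\bm{\nu}_{\bm{\sigma}_n}\right]\;\longrightarrow\;\E\!\left[\int f\,d\bm{\nu}^\infty\right]=\int f\,d\E[\bm{\nu}^\infty].
\end{equation}

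Since this holds for every bounded continuous $f$ on $\Sri$, the Portmanteau characterization of weak convergence yields $\E[\bm{\nu}_{\bm{\sigma}_n}]\to\E[\bm{\nu}^\infty]$ weakly. Because $\E[\bm{\nu}_{\bm{\sigma}_n}]$ is exactly the law of $(\bm{\sigma}_n,\bm{i}_n)$, this means $(\bm{\sigma}_n,\bm{i}_n)\xrightarrow{d}\bm{\sigma}^\infty$ where $\bm{\sigma}^\infty$ has law $\E[\bm{\nu}^\infty]$, which is by definition $\bm{\sigma}_n\xrightarrow{aBS}\bm{\sigma}^\infty$. The only point requiring a bit of care is ensuring that $\E[\bm{\nu}^\infty]$ is indeed a probability measure on $\Sri$, which follows from the fact that $\bm{\nu}^\infty$ takes values in probability measures almost surely together with Fubini; there is no real obstacle here, since $\Sri$ is compact (\cref{compactpolish}), so no tightness or mass-loss issue can arise. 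The argument is entirely soft, and the only technical ingredient is the continuity of $\mu\mapsto\int f\,d\mu$, which is built into the definition of the weak topology used in \cref{strongconv}.
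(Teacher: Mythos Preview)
Your proof is correct. The paper itself does not include a proof of this proposition; it only cites \cite[Proposition 2.35]{borga2020localperm}. Your argument is the standard soft one: the map $\mu\mapsto\int f\,d\mu$ is bounded and continuous on $\mathcal{M}_1(\Sri)$ for every bounded continuous $f$, so testing the convergence in distribution $\bm{\nu}_{\bm{\sigma}_n}\xrightarrow{d}\bm{\nu}^\infty$ against this map and applying Fubini gives weak convergence of the intensity measures, which is exactly annealed convergence. One minor remark: you do not need the intermediate step via the continuous mapping theorem followed by ``bounded implies convergence of expectations''; since $\Phi_f$ is itself a bounded continuous functional on $\mathcal{M}_1(\Sri)$, the convergence $\E[\Phi_f(\bm{\nu}_{\bm{\sigma}_n})]\to\E[\Phi_f(\bm{\nu}^\infty)]$ is immediate from the definition of convergence in distribution.
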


We now show that in general the two versions of B--S convergence are not equivalent.

\begin{exmp}
	\label{notequivalent}
	For all $n\in\Z_{>0},$ let $\bm{\sigma}_n$ be the random permutation defined by
	$$\P(\bm{\sigma}_n=12\dots n)=\frac{1}{2}=\P(\bm{\sigma}_n=n\;n\text-1\dots 1),$$
	and $\tau_n$  be the deterministic permutation
	\begin{align}
	&\tau_n=135\dots n\;n\text-1\;n\text-3\dots 2,\quad\text{if } n\text{ is odd,}\\
	&\tau_n=135\dots n\text-1\;n\;n\text-2\dots 2,\quad\text{if } n\text{ is even.}
	\end{align} 
	We now show that the alternating sequence between $\bm{\sigma}_n$ and $\tau_n$ converges in the annealed B--S sense but does not converge in the quenched one.

	Indeed, 
	$
	\big(\widetilde{\cocc}(\pi,\bm{\sigma}_n)\big)_{\pi\in\mathcal{S}}\xrightarrow{d}(\bm{X}_{\pi})_{\pi\in\mathcal{S}},
	$
	where, taking a Bernoulli random variable $\bm{Y}$ with parameter $1/2,$ $\bm{X}_{\pi}=\bm{Y}$ if $\pi=12\dots k$ for some $k\in\Z_{>0},$ $\bm{X}_{\pi}=1-\bm{Y}$ if $\pi=k\;k\text{-}1\dots 1$  for some $k\in\Z_{>0}$ and $\bm{X}_{\pi}=0$ otherwise.
	
	On the other hand, 
	$
	\big(\widetilde{\cocc}(\pi,\tau_n)\big)_{\pi\in\mathcal{S}}\xrightarrow{d}(Z_{\pi})_{\pi\in\mathcal{S}},
	$
	where $Z_{\pi}=\frac{1}{2}$ if $\pi=12\dots k,$ or $\pi=k\;k\text{-}1\dots 1$ and $Z_{\pi}=0$ otherwise.
	
	Therefore, using condition (c) in \cref{strongbsconditions}, the alternating sequence between $\bm{\sigma}_n$ and $\tau_n$ does not converge in the quenched B--S sense. Moreover, since $\lim_{n\to\infty}\E\big[\widetilde{\cocc}(\pi,\bm{\sigma}_n)\big]$ is equal to $\lim_{n\to\infty}\E\big[\widetilde{\cocc}(\pi,\tau_n)\big]$ for all $\pi\in\mathcal{S}$, using condition (c) in \cref{weakbsequivalence} we conclude that the alternating sequence between $\bm{\sigma}_n$ and $\tau_n$ converges in the annealed B--S sense.
\end{exmp}

We now analyze the particular case when the limiting objects $(\bm{\Lambda}_{\pi})_{\pi\in\mathcal{S}}$ (or $(\bm{\Gamma}^h_{\pi})_{h\in\Z_{>0},\pi\in\mathcal{S}_{2h+1}}$) in \cref{strongbsconditions} are deterministic, i.e.\ when there is a concentration phenomenon (this will be the case of several results in \cref{chp:local_lim}). Before stating our result we need the following.
\begin{rem}
	When a random measure $\bm{\nu}$ on $\Sri$ is almost surely equal to a deterministic measure $\nu$ on $\Sri$, we will simply denote it with $\nu.$ In particular, if a sequence of random permutations $(\bm{\sigma}_n)_{n\in\Z_{>0}}$ converges in the quenched B--S sense to a deterministic measure $\nu$ (instead of random measure) on $\Sri$, we will simply write $\bm{\sigma}_n\stackrel{qBS}{\longrightarrow}\nu.$
\end{rem}

\begin{cor}[{\cite[Corollary 2.38]{borga2020localperm}}]
	\label{detstrongbsconditions}
	For any $n\in\Z_{>0},$ let $\bm{\sigma}_n$ be a random permutation of size $n$ and $\bm{i}_n$ be a uniform random index in $[n]$, independent of $\bm{\sigma}_n.$ Then the following assertions are equivalent:
	\begin{enumerate}[(a)]
		\item There exists a (deterministic) measure $\nu$ on $\Sri$ such that   $\bm{\sigma}_n\stackrel{qBS}{\longrightarrow}\nu.$
		\item There exists an infinite vector of non-negative real numbers $(\Lambda'_{\pi})_{\pi\in\mathcal{S}}$ such that for all $\pi\in\mathcal{S},$ $$\widetilde{\cocc}(\pi,\bm{\sigma}_n)\stackrel{P}{\to}\Lambda'_{\pi}.$$
	\end{enumerate}
\end{cor}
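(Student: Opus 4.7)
The plan is to deduce this corollary directly from Theorem~\ref{strongbsconditions}, using the elementary fact that, for random variables taking values in a Polish space, convergence in distribution to a deterministic limit is equivalent to convergence in probability.

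For the implication $(a) \Rightarrow (b)$, I would apply Theorem~\ref{strongbsconditions} to obtain a random vector $(\bm{\Lambda}_\pi)_{\pi\in\mathcal{S}}$ such that $(\widetilde{\cocc}(\pi,\bm{\sigma}_n))_{\pi\in\mathcal{S}} \xrightarrow{d} (\bm{\Lambda}_\pi)_{\pi\in\mathcal{S}}$ with respect to the product topology. The next step is to identify $\bm{\Lambda}_\pi$ as a functional of $\bm{\nu}^\infty$: by the quenched analogue of the identities in \eqref{eq:fbweiuewbvfweuib} (which should drop out of the proof of Theorem~\ref{strongbsconditions}, via \eqref{relprobexp} applied conditionally on $\bm{\sigma}_n$), one has $\bm{\Lambda}_\pi = \bm{\nu}^\infty\big(\{(A,\preccurlyeq)\in\Sri : r_h(A,\preccurlyeq)=(\pi,h+1)\}\big)$ almost surely, for $\pi\in\mathcal{S}_{2h+1}$. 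When $\bm{\nu}^\infty = \nu$ is deterministic, each $\bm{\Lambda}_\pi$ is a.s.\ a deterministic constant $\Lambda'_\pi$, and convergence in distribution to a constant upgrades to convergence in probability, giving (b).

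For the implication $(b) \Rightarrow (a)$, componentwise convergence in probability to deterministic constants implies joint convergence in probability, and hence in distribution with respect to the product topology, of the full vector $(\widetilde{\cocc}(\pi,\bm{\sigma}_n))_{\pi\in\mathcal{S}}$ to the deterministic vector $(\Lambda'_\pi)_{\pi\in\mathcal{S}}$. Condition (c) of Theorem~\ref{strongbsconditions} then yields a random measure $\bm{\nu}^\infty$ on $\Sri$ with $\bm{\sigma}_n \xrightarrow{qBS} \bm{\nu}^\infty$, and the identification discussed above gives $\bm{\nu}^\infty\big(\{(A,\preccurlyeq) : r_h(A,\preccurlyeq)=(\pi,h+1)\}\big) = \Lambda'_\pi$ a.s.\ for every $\pi\in\mathcal{S}$.

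The delicate step, which I expect to be the main obstacle, is to conclude from this pointwise determinism on a family of events that $\bm{\nu}^\infty$ itself is a.s.\ a deterministic measure. The idea is that the events $\{(A,\preccurlyeq) : r_h(A,\preccurlyeq)=(\pi,h+1)\}$ are exactly the clopen balls $B((\pi,h+1),2^{-h})$ under the identification of rooted permutations with total orders; as noted in the proof of Theorem~\ref{weakbsequivalence}, these balls form a countable separating class for the Borel $\sigma$-algebra of $(\Sri,d)$. A standard monotone-class / $\pi$-$\lambda$ argument then transfers the a.s.\ determinism from this class to the entire Borel $\sigma$-algebra, so that $\bm{\nu}^\infty$ coincides a.s.\ with a (unique) deterministic measure $\nu$ on $\Sri$, concluding (a).
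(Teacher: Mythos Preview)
Your proposal is correct and follows the natural route; the paper itself does not include a proof of this corollary (it is cited from \cite{borga2020localperm}), so there is nothing to compare against beyond checking soundness. One small imprecision: the events $\{r_h(\cdot)=(\pi,h+1)\}$ with $\pi\in\mathcal{S}_{2h+1}$ are not \emph{all} the clopen balls of $(\Sri,d)$, only those centered at ``full-width'' restrictions; however, the quenched analogue of Remark~\ref{uyfvuoe2} gives that $\bm{\nu}^\infty$ is a.s.\ zero on the remaining balls (those with $|\pi|<2h+1$ or root $\neq h+1$), so the $\pi$--$\lambda$ argument on the full countable separating class of balls goes through exactly as you describe.
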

\begin{rem}
	Thanks to \cref{wsrel} note that if (a) holds then $\bm{\sigma}_n\xrightarrow{aBS}\bm{\sigma}^\infty,$
	where $\bm{\sigma}^\infty$ is the random rooted infinite permutation with law $\mathcal{L}aw(\bm{\sigma}^\infty)=\nu.$ 
\end{rem}

\begin{rem}
	\cref{detstrongbsconditions} shows that in order to prove convergence in the quenched B--S sense when the limiting object is deterministic, it is enough to prove pointwise convergence for the vector $\big(\widetilde{\cocc}(\pi,\bm{\sigma}_n)\big)_{\pi\in\mathcal{S}}$, instead of its joint convergence. This is not true when the limiting object is random. For a counterexample see \cite[Example 2.36]{borga2020localperm}.
\end{rem}

\subsection{A characterization of Benjamini-Schramm limits}\label{sect:sofic}

We characterize random limits for the B--S convergence introducing a "shift-invariant" property. 

For an order $(\Z,\preccurlyeq),$ its \emph{shift} $(\Z,\preccurlyeq')$ is defined by $i+1\preccurlyeq' j+1$ if and only if $i\preccurlyeq j.$ A random infinite rooted permutation is said to be \emph{shift-invariant} if it has the same distribution as its shift.

\begin{thm}
	\label{ewoifhnoipewjfpoew}
	The annealed B--S limit of a sequence of random permutations is shift-invariant.
	
	Moreover, every random shift-invariant infinite rooted permutation can be obtained as the B--S limit of a sequence of finite deterministic permutations. 
\end{thm}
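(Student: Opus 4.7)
The plan splits into two parts.

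For the forward direction (\emph{shift-invariance of the limit}), the key observation is that re-rooting a permutation at $\bm i_n - 1$ instead of $\bm i_n$ corresponds exactly to applying the shift to the rooted permutation $(\bm\sigma_n, \bm i_n)$. Indeed, a direct computation from the definition of $\preccurlyeq_{\sigma,i}$ shows that, when $\bm i_n \geq 2$, the shift of $(\bm\sigma_n, \bm i_n)$ equals $(\bm\sigma_n, \bm i_n - 1)$ as an element of $\Sr$: both the position set $A_{\sigma,i}+1$ and the shifted order $\preccurlyeq'$ match $A_{\sigma,i-1}$ and $\preccurlyeq_{\sigma,i-1}$. Now the conditional law of $\bm i_n - 1$ given $\bm i_n \geq 2$ is uniform on $\{1, \ldots, |\bm\sigma_n| - 1\}$, which has total variation distance $O(1/|\bm\sigma_n|) \to 0$ from the uniform law on $[|\bm\sigma_n|]$, so $(\bm\sigma_n, \bm i_n - 1)$ has the same weak limit as $(\bm\sigma_n, \bm i_n)$, namely $\bm\sigma^\infty$. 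Since the shift is continuous on the set of bi-infinite rooted permutations (which supports $\bm\sigma^\infty$ whenever $|\bm\sigma_n| \to \infty$), passing to the limit yields $\bm\sigma^\infty$ equal in distribution to its shift.

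For the converse direction, the plan is a three-step construction. First, for an \emph{ergodic} shift-invariant random permutation $\bm\sigma^\infty$, take a realization $\sigma^\infty = (\Z, \preccurlyeq)$ and set $\sigma_N \in \mathcal{S}_{2N+1}$ to be the finite permutation induced by $\preccurlyeq|_{[-N, N]}$. Then for each $\pi \in \mathcal{S}_{2h+1}$, $\widetilde{\cocc}(\pi, \sigma_N)$ is, up to a negligible boundary term, the Birkhoff time-average $\frac{1}{2N+1} \sum_{k=-N}^{N} \idf(r_h(\text{shift}_k(\sigma^\infty)) = (\pi, h+1))$, which by the pointwise ergodic theorem converges almost surely to $\Delta_\pi \coloneqq \P(r_h(\bm\sigma^\infty) = (\pi, h+1))$, simultaneously for all $\pi$ by countability of $\mathcal{S}$; hence $\sigma_N \xrightarrow{BS} \bm\sigma^\infty$ via \cref{bscharact} for almost every realization. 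Second, finite mixtures of B--S limits are realized by direct sums: if $\alpha_i$ is the B--S limit of $(\sigma^{(i)}_n)$ and $\sum_{i=1}^K p_i = 1$, then $\tau_n \coloneqq \sigma^{(1)}_{\lfloor p_1 n \rfloor} \oplus \cdots \oplus \sigma^{(K)}_{\lfloor p_K n \rfloor}$ satisfies $\tau_n \xrightarrow{BS} \sum_i p_i \alpha_i$, since a uniform root falls in the $i$-th block with asymptotic probability $p_i$ and the resulting local neighborhood matches that of a uniformly-rooted $\sigma^{(i)}_{\lfloor p_i n \rfloor}$ up to boundary-effects of size $O(h/n)$. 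Third, the set of B--S limits is closed under weak convergence on $\mathcal{P}(\Sri)$: given laws $\mu_k$ of B--S limits with $\mu_k \to \mu$ weakly, a diagonal argument using the compactness and metrizability of $\Sri$ from \cref{compactpolish} produces a single deterministic sequence with B--S limit $\mu$.

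To conclude, the standard ergodic decomposition for shift-invariant probability measures on the compact Polish space of total orders on $\Z$ writes the law of $\bm\sigma^\infty$ as an integral mixture of ergodic shift-invariant random permutations, weakly approximable by finite mixtures; combining the three steps above realizes $\bm\sigma^\infty$ as a B--S limit of deterministic permutations. The main obstacle will be the clean execution of the ergodic decomposition: one must verify that the shift defines a measurable $\Z$-action on the subspace of bi-infinite rooted permutations (which follows from \cref{consistprop} and \cref{compactpolish}) and that $\bm\sigma^\infty$ is almost surely supported there -- automatic if $|\bm\sigma_n| \to \infty$, which is the only non-trivial case for the forward direction and which may be assumed without loss of generality in the converse direction.
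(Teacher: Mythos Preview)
Your proposal is correct.

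For the forward direction, your argument---shifting $(\bm\sigma_n,\bm i_n)$ equals re-rooting at $\bm i_n-1$, the total-variation cost of replacing $\bm i_n$ by $\bm i_n-1$ is $O(1/|\bm\sigma_n|)$, and the shift is continuous on bi-infinite orders where $\bm\sigma^\infty$ is a.s.\ supported---is exactly the standard one and matches \cite[Proposition~2.44]{borga2020localperm}, which the manuscript cites.

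For the converse, the manuscript gives no proof in-text, pointing instead to \cite[Theorem~2.45]{borga2020localperm} combined with \cite[Proposition~3.4]{borga2019feasible}. Your route via ergodic decomposition (Birkhoff on a single realization in the ergodic case, direct sums for finite mixtures, diagonalization for weak closure) is sound and self-contained. The paper's two-reference route is organized differently: Theorem~2.45 of \cite{borga2020localperm} shows that a shift-invariant $\bm\sigma^\infty$ arises as an annealed B--S limit of a sequence of \emph{random} permutations (the finite restrictions $r_N(\bm\sigma^\infty)$, whose expected consecutive-pattern densities equal $\Delta_\pi$ by shift-invariance together with \cref{weakbsequivalence}), while Proposition~3.4 of \cite{borga2019feasible} supplies the passage from random to deterministic sequences. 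Both arguments ultimately hinge on an averaging mechanism---you invoke Birkhoff on a single ergodic trajectory, the references invoke the i.i.d.\ law of large numbers on independent samples combined by direct sum---so the difference is organizational rather than conceptual. Your version lives entirely within the manuscript's framework and isolates the ergodic case cleanly; the paper's version avoids ergodic decomposition at the cost of splitting across two references.
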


The first statement of the theorem above follows from \cite[Proposition 2.44]{borga2020localperm}. The second part is a consequence of \cite[Theorem 2.45]{borga2020localperm} and \cite[Proposition 3.4]{borga2019feasible}.

In some sense, our "shift-invariant" property corresponds to the notion of unimodularity for random graphs discussed in \cref{sect:loc_lim_graph}. \cref{ewoifhnoipewjfpoew} solves the \emph{Sofic problem} for permutations.


\section[Feasible regions]{Feasible regions for patterns and consecutive patterns}\label{sect:feas}

A natural question, motivated by the characterizations of permuton and local convergence for permutations (Theorems \ref{thm:randompermutonthm} and \ref{strongbsconditions}), is the following: Given a finite family of patterns $\mathcal{A}\subseteq\SS$ and a vector $(\Delta_\pi)_{\pi\in\mathcal{A}}\in [0,1]^{\mathcal A}$, or $(\Gamma_\pi)_{\pi\in\mathcal{A}}\in [0,1]^{\mathcal A}$, does there exist a sequence of permutations $(\sigma^n)_{n\in\Z_{>0}}$ such that $|\sigma^n|\to\infty$ and
\begin{equation}
	\widetilde{\occ}(\pi,\sigma^n)\to\Delta_{\pi}, \quad \text{for all} \quad \pi\in\mathcal{A},
\end{equation}
or
$$\widetilde{\cocc}(\pi,\sigma^n)\to\Gamma_{\pi}, \quad \text{for all} \quad \pi\in\mathcal{A}\;?$$

We consider the classical pattern limiting sets, sometimes called the \textit{feasible region} for (classical) patterns, defined as
\begin{align}
	clP_k \coloneqq&\left\{\vec{v}\in [0,1]^{\SS_k} \big| \exists (\sigma^m)_{m\in\Z_{>0}} \in \SS^{\Z_{>0}} \text{ s.t. }|\sigma^m| \to \infty\text{ and } \pocc(\pi, \sigma^m ) \to \vec{v}_{\pi},\forall \pi\in\SS_k  \right\}\nonumber\\
	=&\left\{(\Delta_{\pi}(P))_{\pi\in\SS_k} \big| P\text{ is a permuton}  \right\},\label{eq:setofnotinter}
\end{align}
and the consecutive pattern limiting sets, called here the \textit{feasible region} for consecutive patterns,
\begin{align}
	P_k \coloneqq &\left\{\vec{v}\in [0,1]^{\SS_k} \big| \exists (\sigma^m)_{m\in\Z_{>0}} \in \SS^{\Z_{>0}} \text{ s.t. }|\sigma^m| \to
	\infty \text{ and }  \pcocc(\pi, \sigma^m ) \to \vec{v}_{\pi}, \forall \pi\in\SS_k \right\}\nonumber\\
	=&\left\{(\Gamma_{\pi}(\sigma^{\infty}))_{\pi\in\SS_k} \big| \sigma^{\infty}\text{ is a random infinite rooted \emph{shift-invariant} permutation}  \right\}. \label{eq:setofinter}
\end{align}
The equality in \cref{eq:setofinter} follows from \cref{ewoifhnoipewjfpoew}. The equality in \cref{eq:setofnotinter} follows from \cite[Theorem 1.6]{hoppen2013limits}.

The feasible region $clP_k$ for (classical) patterns was previously studied in some papers (see \cref{sect:feas_reg}), while the feasible region $P_k$ for consecutive patterns was introduced by the author of this manuscript together with Penaguiao \cite{borga2019feasible}. 
The main goal of this section is to analyze the feasible region $P_k$ for consecutive patterns, that turns out to be connected to specific graphs called \emph{overlap graphs} (see \cref{defn:ovgraph}) and their corresponding cycle polytopes (see \cref{defn:cycle_poly}).

\subsection{Feasible regions for classical patterns}
\label{sect:feas_reg}

A complete description of the feasible region $clP_k$ is (at the moment) out-of-reach, but several simpler questions have been investigated in the literature.

The feasible region $clP_k$ for some particular families of patterns instead of the whole $\SS_k$ was first studied in \cite{kenyon2020permutations}.  
More precisely, given a list of finite sets of permutations $(\mathcal{P}_1,\dots,\mathcal{P}_\ell)$, the authors considered the \emph{feasible region} for $(\mathcal{P}_1,\dots,\mathcal{P}_\ell)$, that is, the set
$$\left\{\vec{v}\in [0,1]^{\ell} \Bigg| \exists (\sigma^m)_{m\in\Z_{>0}} \in \SS^{\Z_{>0}} \text{ s.t. }|\sigma^m| \to \infty\text{ and } \sum_{\tau\in\mathcal{P}_i}\pocc(\tau,\sigma^m ) \to \vec{v}_i,\text{ for }i = 1, \dots , \ell  \right\} \, .$$

They first studied the simplest case when $\mathcal{P}_1=\{12\}$ and $\mathcal{P}_2=\{123,213\}$ showing that the corresponding feasible region for $(\mathcal{P}_1 ,\mathcal{P}_2)$ is the region of the square $[0,1]^2$ bounded from below by the parameterized curve $(2t-t^2,3t^2-2t^3)_{t\in[0,1]}$ and from above by the parameterized curve $(1-t^2,1-t^3)_{t\in[0,1]}$  (see \cite[Theorem 14]{kenyon2020permutations}).

They also proved in \cite[Theorem 15]{kenyon2020permutations} that if each $\mathcal{P}_i = \{\tau_i \}$ is a singleton, and if there is some value $p$ such that, for each $\tau_i$ its final element $\tau_i(|\tau_i|)$ is equal to $p$ , then the corresponding feasible region is convex. 
They remarked that one can construct examples where the feasible region is not strictly convex: e.g.\ in the case where $\mathcal{P}_1 = \{231,321\}$ and $\mathcal{P}_2 = \{123,213\}$.

They finally studied two additional examples: the feasible regions for $(\{12\},\{123\})$ (see \cite[Theorem 16]{kenyon2020permutations}) and for the patterns $(\{123\},\{321\})$ (see \cite[Section 10]{kenyon2020permutations}). In the first case, they showed that the feasible region is equal to the so-called “scalloped triangle” of
Razborov \cite{MR2433944,MR2371204} (this region also describes the space of limit densities for edges and triangles in graphs). 
For the second case, they showed that the feasible region is equal to the limit of densities of triangles versus the density of anti-triangles in graphs, see \cite{MR3200287,MR3572422}.

\medskip

The set $clP_k$ was also studied in \cite{MR3567538}, even though with a different goal.
There, it was shown that the region $clP_k$ contains an open ball $B$ with dimension $|I_k|$, where $I_k$ is the set of $\oplus$-indecomposable permutations of size at most $k$.
Specifically, for a specific ball $B\subseteq \R^{I_k}$, the authors constructed permutons $P_{\vec{x}}$ such that $\Delta_{\pi }(P_{\vec{x}}) = \vec{x}_{\pi}$, for each $\vec{x} \in B$ and each $\pi\in I_k$.

This work opened the problem of finding the maximal dimension of an open ball contained in $clP_k$, and placed a lower bound on it.
In \cite{vargas2014hopf} an upper bound for this maximal dimension was indirectly given as the number of so-called \textit{Lyndon permutations}\footnote{The precise definition of Lyndon permutations is not immediate. We just point out that they contain all $\oplus$-indecomposable permutations.} of size at most $k$, whose set is denoted by $\mathcal{L}_k$.
In this article, the author showed that for any permutation $\pi$ that is not a Lyndon permutation, $\pocc(\pi, \sigma ) $ can be expressed as a polynomial on the functions $\{\pocc(\tau, \sigma ) |\tau \in \mathcal{L}_k \}$ that does not depend on $\sigma$.
It follows that $clP_k$ sits inside an algebraic variety of dimension $|\mathcal{L}_k|$.
We expect that this bound is sharp since, from our computations, this is the case for small values of $k$.

\begin{conj}\label{conj:feas_reg}
	The feasible region $clP_k$ contains an open ball of dimension $|\mathcal{L}_k|$.
\end{conj}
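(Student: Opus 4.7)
The approach is to extend the construction of \cite{MR3567538} from the set $I_k$ of $\oplus$-indecomposable permutations to the set $\mathcal{L}_k$ of Lyndon permutations. The plan is to exhibit a reference permuton $P_0$ and a smooth family of permutons $(P_{\vec{x}})_{\vec{x}\in U}$, parameterized by a small neighborhood $U \subseteq \R^{\mathcal{L}_k}$ of the origin with $P_{\vec{0}} = P_0$, such that the map
\begin{equation*}
\Phi \colon U \to \R^{\mathcal{L}_k}, \qquad \Phi(\vec{x}) = \big(\pocc(\tau, P_{\vec{x}})\big)_{\tau \in \mathcal{L}_k},
\end{equation*}
is a local diffeomorphism at $\vec{x} = \vec{0}$. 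The inverse function theorem then gives an open ball in the image of $\Phi$; combined with the polynomial relations from \cite{vargas2014hopf} expressing non-Lyndon densities in terms of Lyndon ones, this produces an open ball of dimension $|\mathcal{L}_k|$ inside $clP_k$, viewed as sitting on the $|\mathcal{L}_k|$-dimensional algebraic variety those relations cut out in $\R^{\SS_k}$.

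First I would set up the appropriate family of perturbations: work inside the affine space of finite signed Borel measures on $[0,1]^2$ with vanishing marginals, and fix a family $\{\eta_\tau\}_{\tau \in \mathcal{L}_k}$ of such measures (a natural candidate is $\eta_\tau = \mu_\tau - \mu_{12 \cdots |\tau|}$, where $\mu_\sigma$ is the permuton associated with $\sigma$). Setting $P_{\vec{x}} \coloneqq P_0 + \sum_{\tau \in \mathcal{L}_k} \vec{x}_\tau \, \eta_\tau$, one obtains a permuton for $\vec{x}$ in a neighborhood of $\vec{0}$, provided $P_0$ has a density bounded away from zero. Since $\pocc(\pi, \cdot)$ is a polynomial functional of the permuton of degree $|\pi|$, the map $\Phi$ is polynomial, hence smooth, in $\vec{x}$.

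The heart of the argument, and its principal obstacle, is to show that the Jacobian
\begin{equation*}
J = \left(\frac{\partial \pocc(\tau', P_{\vec{x}})}{\partial \vec{x}_\tau}\bigg|_{\vec{x}=\vec{0}}\right)_{\tau, \tau' \in \mathcal{L}_k}
\end{equation*}
is non-singular for a good choice of $P_0$ and $\{\eta_\tau\}$. The natural route is algebraic: first, argue that the functionals $\{\pocc(\tau, \cdot)\}_{\tau \in \mathcal{L}_k}$, viewed as real-valued functions on the space of permutons, are algebraically independent. This should follow from the Vargas result together with the freeness of the underlying Hopf algebra of patterns on Lyndon words: any polynomial relation among Lyndon densities would, through the polynomial identities expressing non-Lyndon densities in Lyndon ones, yield a non-trivial element of the pattern algebra vanishing on all permutations, contradicting that this algebra injects into functions on permutations. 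A standard argument from differential algebra then guarantees that at a \emph{generic} permuton $P_0$ the differentials $\{D_{P_0}\pocc(\tau, \cdot)\}_{\tau \in \mathcal{L}_k}$ are linearly independent on the tangent space of zero-marginal signed measures, so one can choose $\{\eta_\tau\}$ dual to them, making $J$ the identity. The delicate point is to exhibit such a $P_0$ concretely and to justify the genericity statement in this infinite-dimensional setting, ideally by constructing $\{\eta_\tau\}$ in a size-triangular way aligned with the Lyndon grading—mirroring the triangular construction of \cite{MR3567538}, but now indexed by all of $\mathcal{L}_k$ rather than $I_k$.
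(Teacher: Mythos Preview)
The statement you are attempting to prove is a \emph{conjecture} in the paper, not a theorem: the paper offers no proof and in fact lists it explicitly as an open problem (see \cref{sect:op_probl_2}). So there is no paper proof to compare against, and the relevant question is whether your proposal actually closes the gap.

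It does not. Your outline is a sensible research strategy, but the step you yourself flag as ``the heart of the argument, and its principal obstacle'' is not carried out. Concretely, you need the Jacobian $J$ to be non-singular, and you propose to deduce this from algebraic independence of the functionals $\{\pocc(\tau,\cdot)\}_{\tau\in\mathcal L_k}$ via a ``standard argument from differential algebra'' and genericity. Two issues arise. First, while Vargas shows that non-Lyndon densities are polynomials in Lyndon ones, the converse direction---that the Lyndon densities satisfy no nontrivial polynomial relation when restricted to genuine permutons (a convex, constrained subset of signed measures)---is not automatic from freeness of the pattern Hopf algebra at the level of formal polynomials in permutations; one must check that no relation appears upon evaluation on this restricted domain. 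Second, and more seriously, the passage from algebraic independence to linear independence of differentials at a generic point is a finite-dimensional phenomenon: in an infinite-dimensional affine space of signed measures there is no off-the-shelf genericity statement guaranteeing that you can find finitely many tangent directions $\{\eta_\tau\}$ making $J$ invertible. You acknowledge this (``the delicate point is to exhibit such a $P_0$ concretely and to justify the genericity statement in this infinite-dimensional setting'') and then defer to a hoped-for triangular construction mirroring \cite{MR3567538}. But that triangular construction is precisely the missing content: the reason \cite{MR3567538} works for $I_k$ is a concrete size-graded argument exploiting $\oplus$-indecomposability, and extending it to all of $\mathcal L_k$ is exactly what the conjecture asks for. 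Your proposal is a reformulation of the problem, not a solution.
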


\subsection{Feasible regions for consecutive patterns}\label{cref:vediuyewvdb}
Unlike in the case of classical patterns, we are able to obtain a full description of the feasible region $P_k$ as the cycle polytope of a specific graph, called the \emph{overlap graph} $\ValGraph[k]$. 

\begin{defn}\label{defn:ovgraph}
	The graph $\ValGraph[k]$ is a directed multigraph with labeled edges, where the vertices are elements of $\SS_{k-1}$ and for every $\pi\in\SS_{k}$ there is an edge labeled by $\pi$ from the pattern induced by the first $k-1$ indices of $\pi$ to the pattern induced by the last $k-1$ indices of $\pi$.
\end{defn}

The overlap graph $\ValGraph[4]$ is displayed in \cref{Overlap_graph_exemp}.

\begin{figure}[htbp]
	\begin{minipage}[c]{0.59\textwidth}
		\centering
		\includegraphics[scale=.45]{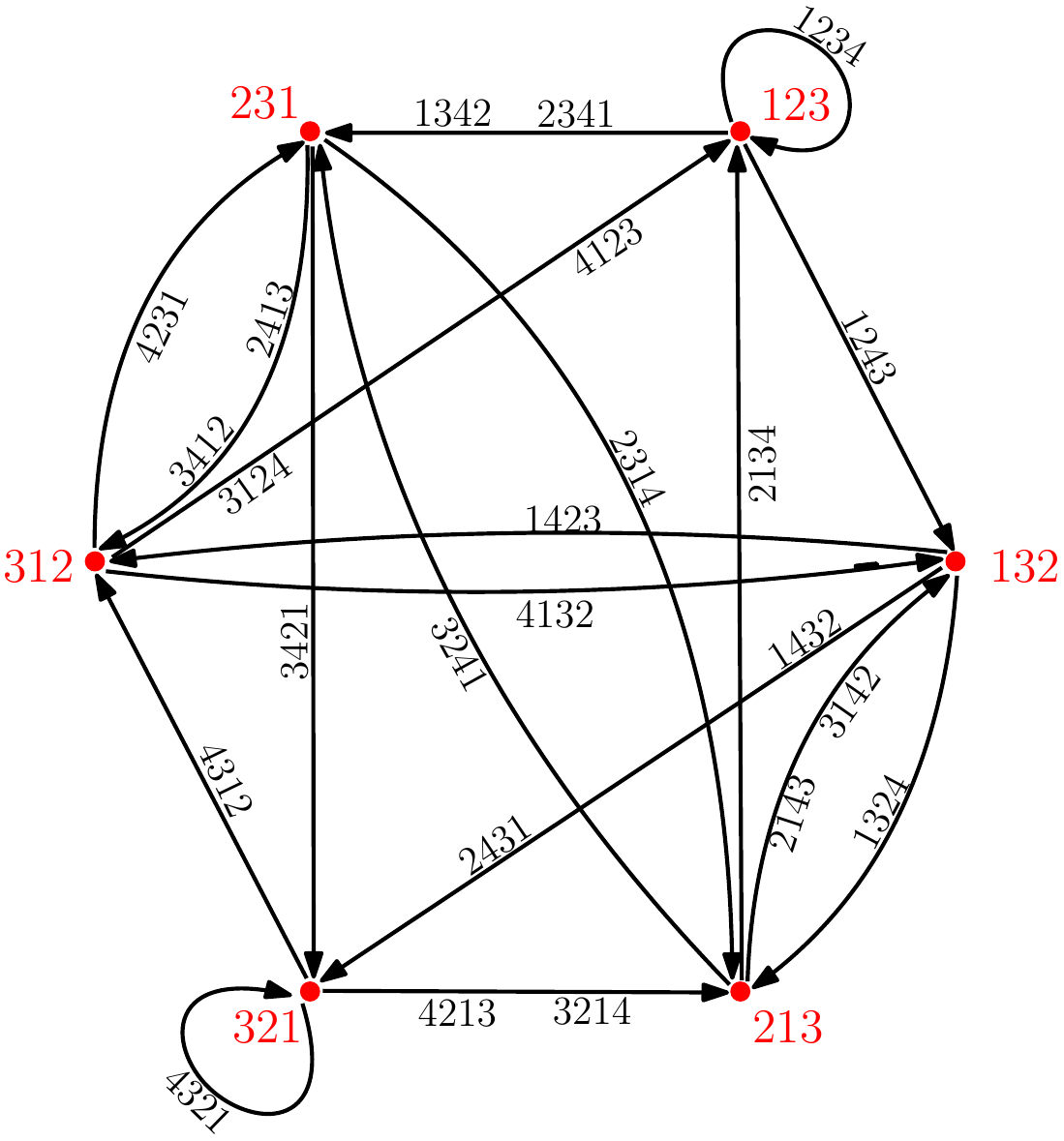}
	\end{minipage}
	\begin{minipage}[c]{0.4\textwidth}
		\caption{The overlap graph $\ValGraph[4]$. The six vertices are painted in red and the edges are drawn as labeled arrows. Note that in order to obtain a clearer picture we did not draw multiple edges, but we use multiple labels (for example the edge $231 \to 312$ is labeled with the permutations $3412$ and $2413$ and should be thought of as two distinct edges labeled with $3412$ and $2413$ respectively). \label{Overlap_graph_exemp}}
	\end{minipage}
\end{figure}

\begin{defn}\label{defn:cycle_poly}
	Let $G=(V,E)$ be a directed multigraph.
	For each simple cycle $\mathcal{C}$ in $G$, define $\vec{e}_{\mathcal{C}}\in \mathbb{R}^{E}$ by
	$$(\vec{e}_{\mathcal{C}})_e \coloneqq \frac{ \mathds{1}_{\{e\in\mathcal{C}\}}}{|\mathcal{C}|}, \quad \text{for all}\quad e\in E. $$
	We define the \emph{cycle polytope} of $G$ to be the polytope $P(G) \coloneqq \conv \{\vec{e}_{\mathcal{C}} | \, \mathcal{C} \text{ is a simple cycle of } G \}$.
\end{defn}

Our main result is the following.

\begin{thm}[{\cite[Theorem 1.6]{borga2019feasible}}]
	\label{thm:main_res}
	$P_k$ is the cycle polytope of the overlap graph $\ValGraph[k]$. Its dimension is $k! - (k-1)!$\,.
\end{thm}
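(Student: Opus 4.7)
The plan is to identify the sequence of consecutive size-$k$ patterns of a long permutation with a walk in the overlap graph $\ValGraph[k]$, so that the feasible region becomes a question about decomposing flows into simple cycles. Concretely, for $\sigma \in \SS_n$, reading $\pat_{[i,i+k-1]}(\sigma)$ for $i = 1,\dots,n-k+1$ traces a walk of length $n-k+1$ in $\ValGraph[k]$ (consecutive edges indeed share a $(k-1)$-block, which is exactly the gluing encoded by the overlap graph), and the normalized edge-use vector of this walk coincides with $(\pcocc(\pi,\sigma))_{\pi}$ up to an error of order $O(1/n)$ due to our normalization convention. Conversely, any walk in $\ValGraph[k]$ starting at $v_0 \in \SS_{k-1}$ can be realized: begin with $k-1$ real numbers whose standardization is $v_0$, and for each edge $\pi \in \SS_k$ extend by inserting one more real value at the unique rank prescribed by $\pi$ (there are $k$ valid extensions at each step, matching the out-degree $k$ of each vertex). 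This construction in particular yields strong connectivity of $\ValGraph[k]$.

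For the inclusion $P_k \subseteq P(\ValGraph[k])$, I would use flow decomposition. Given $\sigma^m$ with $|\sigma^m| \to \infty$ realizing $\vec v$, let $x^m$ denote its un-normalized edge-use vector in $\ValGraph[k]$, of total mass $L_m = |\sigma^m|-k+1$. Then $\partial(x^m) = \delta_{\mathrm{end}_m} - \delta_{\mathrm{start}_m}$, a boundary discrepancy supported on two vertices. Because $\ValGraph[k]$ is strongly connected, adding the indicator $p_m$ of a simple path from $\mathrm{end}_m$ to $\mathrm{start}_m$ (of length at most $|V|-1 = (k-1)!-1$) turns $x^m + p_m$ into a nonnegative integer circulation, which by the classical flow-decomposition theorem can be written as $\sum_{\mathcal{C}} n^m_\mathcal{C} \mathbf{1}_\mathcal{C}$ over simple cycles. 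Dividing by $L_m$ turns this into an almost-convex combination of the $\vec{e}_\mathcal{C}$, with coefficients summing to $1 + O(1/L_m)$. Passing to a subsequential limit on the finite simplex indexed by simple cycles, any accumulation point $\vec v$ of $(\pcocc(\pi,\sigma^m))_{\pi}$ lies in $P(\ValGraph[k])$, which is closed.

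For the reverse inclusion, $P_k$ is easily seen to be convex and closed---convexity via direct-sum concatenation, using that $\cocc(\pi, \sigma \oplus \tau) = \cocc(\pi,\sigma) + \cocc(\pi,\tau) + O(k)$, so taking sizes in ratio $\alpha : (1-\alpha)$ produces the limit $\alpha \vec u + (1-\alpha) \vec v$; closedness by a diagonal extraction---so it suffices to show that every normalized simple-cycle vector $\vec{e}_\mathcal{C}$ lies in $P_k$. For a simple cycle $\mathcal{C} = v_0 \to v_1 \to \cdots \to v_{\ell-1} \to v_0$ with edges $e_1,\dots,e_\ell$, I would start from any size-$(k-1)$ realization of $v_0$ and iteratively extend by one entry per edge using the realization procedure of the first paragraph, repeating the cycle $m$ times. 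The resulting permutation has size $k-1+m\ell$ and each edge of $\mathcal{C}$ is traversed exactly $m$ times, so $\pcocc(\pi, \sigma) \to 1/\ell$ for $\pi \in \mathcal{C}$ and $0$ otherwise, matching $\vec{e}_\mathcal{C}$.

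For the dimension, I would combine the polytope description with classical facts on the cycle space of a strongly connected directed graph. All $\vec{e}_\mathcal{C}$ lie in the affine subspace $A = \{x \in \R^E : \partial(x) = 0,\ \sum_e x_e = 1\}$; since $\ValGraph[k]$ is strongly connected, $\partial$ is surjective onto $\{g \in \R^V : \sum_v g(v) = 0\}$, whence $\ker \partial$ has dimension $|E|-|V|+1$ and $A$ has dimension $|E|-|V| = k! - (k-1)!$. For the matching lower bound, one uses the classical fact that the indicators $\mathbf{1}_\mathcal{C}$ of simple cycles span $\ker \partial$, hence the $\vec{e}_\mathcal{C}$ do as well; since each $\vec{e}_\mathcal{C}$ has coordinate sum $1 \neq 0$, the affine hull is exactly one less than the linear span, giving dimension $k!-(k-1)!$. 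The main delicate step will be carefully tracking the boundary error in the flow-decomposition argument of the first inclusion; the rest follows either from the explicit construction or from classical cycle-space theory.
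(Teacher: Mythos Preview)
The manuscript does not include a proof of this result, citing instead \cite{borga2019feasible}. Your strategy is correct and is precisely the one used there: identify the sequence of consecutive size-$k$ patterns with a walk in $\ValGraph[k]$; obtain $P_k \subseteq P(\ValGraph[k])$ by closing the walk into a nonnegative circulation (adding a short path, at cost $O(1/L_m)$) and decomposing into simple cycles; obtain the reverse inclusion by realizing each simple cycle via the one-step extension construction, together with convexity (via $\oplus$) and closedness (diagonal extraction) of $P_k$; and compute the dimension from the cycle space of a strongly connected directed multigraph, giving $|E|-|V| = k!-(k-1)!$. In the cited paper these last two pieces are packaged as general facts about cycle polytopes of strongly connected multigraphs, but the content is the same.

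One small expository point: the sentence ``this construction in particular yields strong connectivity of $\ValGraph[k]$'' overstates what the construction gives. The extension procedure shows every outgoing edge can be followed (out-degree $k$ at every vertex), but not that any vertex is reachable from any other. Strong connectivity needs a separate one-line argument, e.g.\ repeatedly appending a new maximum sends any $v\in\SS_{k-1}$ to the identity in $k-1$ steps, and a symmetric argument gives the reverse direction. Since you invoke strong connectivity both for the path-closing step and for the fact that simple cycles span $\ker\partial$, it is worth stating this explicitly.
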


An instance of the result stated in \cref{thm:main_res} is depicted in \cref{fig:P_3_and_rest2}.
\begin{figure}[htbp]
	\centering
	\includegraphics[scale=0.9]{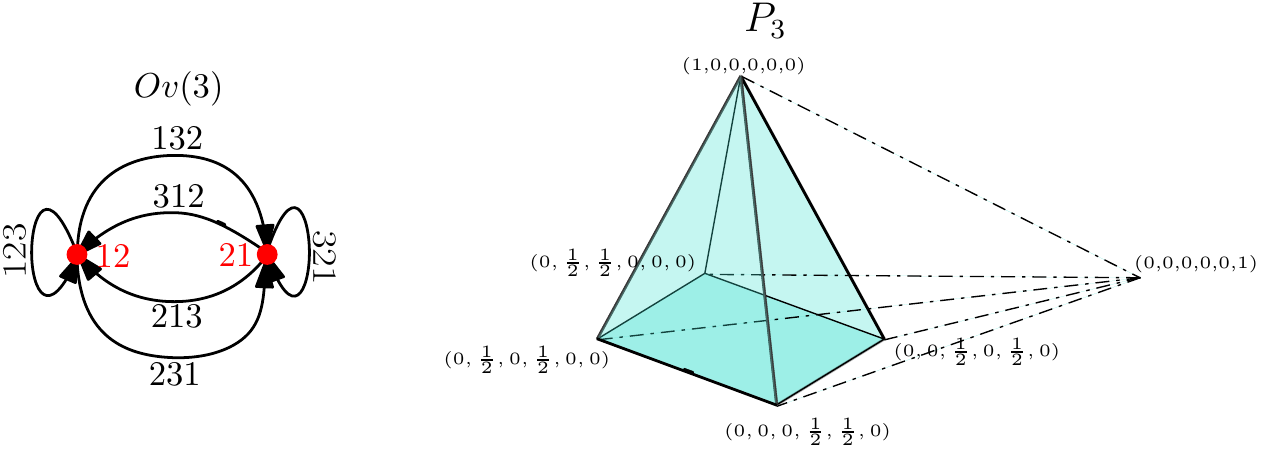}
	\caption{The overlap graph $\ValGraph[3]$ and the corresponding four-dimensional cycle polytope, that is the feasible region $P_3$. The coordinates of the vertices correspond to the patterns $(123,231,312,213,132,321)$ respectively. Note that the top vertex (resp.\ the right-most vertex) of the polytope corresponds to the loop indexed by $123$ (resp.\ $321$); the other four vertices correspond to the four cycles of length two in $\ValGraph[3]$. 
	We highlight in light-blue one of the six three-dimensional faces of $P_3$. This face is a pyramid with a square base. The polytope itself is a four-dimensional pyramid, whose base is the highlighted face. \label{fig:P_3_and_rest2}}
\end{figure}

In addition, we also determine the equations that describe the polytope $P_k$ (for a precise statement see \cite[Theorem 3.12]{borga2019feasible}). We finally mention that in \cite{borga2020feasible} we generalized the results in \cite{borga2019feasible} to pattern-avoiding permutations, i.e.\ we studied the regions
\begin{multline}\label{eq:weufyvewyufvewyf}
	P^{B}_k \coloneqq \{\vec{v}\in [0,1]^{\SS_k} \big| \exists (\sigma^m)_{m\in\Z_{>0}} \in \Av(B)^{\Z_{>0}} \text{ such that }\\
	|\sigma^m| \to \infty \text{ and }  \pcocc(\pi, \sigma^m ) \to \vec{v}_{\pi}, \forall \pi\in\SS_k \},
\end{multline}
where $B\subset\mathcal{S}$ is a fixed set of patterns. We proved the following.

\begin{thm}[{\cite[Theorem 1.1]{borga2020feasible}}]
	Fix $k\in\Z_{\geq 1}$ and a set of patterns $B\subset\mathcal{S}$ such that the family $\Av(B)$ is closed either for the $\oplus$ operation or $\ominus$ operation. The feasible region $P^{B}_k$ is closed and convex. Moreover,
	$$\dim(P^{B}_k)= |\Av_k(B)|-|\Av_{k-1}(B)|.$$
\end{thm}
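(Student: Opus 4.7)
The plan is to mirror the strategy of Theorem~\ref{thm:main_res} in three steps (closedness, convexity, dimension), using the $\oplus$-closedness (or, by symmetry, $\ominus$-closedness) of $\Av(B)$ as the key source of flexibility. Closedness is a standard diagonal extraction: given $\vec v^{(n)}\to \vec v$ with $\vec v^{(n)}\in P_k^B$, for each $n$ pick $\sigma^{(n)}\in\Av(B)$ of size at least $n$ with $\|\pcocc(\cdot,\sigma^{(n)})-\vec v^{(n)}\|_\infty<1/n$, so that $(\sigma^{(n)})_{n\in\Z_{>0}}$ witnesses $\vec v\in P_k^B$.

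For convexity, assume without loss of generality that $\Av(B)$ is $\oplus$-closed. Given $\vec v,\vec w\in P_k^B$ with witnessing sequences $(\sigma^n),(\tau^n)\subset \Av(B)$ and $t\in[0,1]$, I would choose positive integers $a_n,b_n$ so that $a_n|\sigma^n|/(a_n|\sigma^n|+b_n|\tau^n|)\to t$, and let $\rho^n$ be the direct sum of $a_n+b_n$ blocks ($a_n$ copies of $\sigma^n$ and $b_n$ of $\tau^n$ in any order). By $\oplus$-closedness $\rho^n\in\Av(B)$, and every consecutive $k$-pattern of $\rho^n$ is either a consecutive $k$-pattern of a block or one of at most $(k-1)(a_n+b_n-1)$ boundary patterns crossing a junction. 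The latter are negligible against $|\rho^n|$ once one ensures $|\sigma^n|,|\tau^n|\gg k$ (by thinning the sequences), so $\pcocc(\pi,\rho^n)\to t\vec v_\pi+(1-t)\vec w_\pi$, as required.

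For the dimension I would introduce the restricted overlap graph $\ValGraph[k]_B$, with vertex set $\Av_{k-1}(B)$ and edge set $\Av_k(B)$ (well-defined since consecutive sub-patterns of $B$-avoiders are themselves $B$-avoiders), and show that $P_k^B = P(\ValGraph[k]_B)$ following the proof of Theorem~\ref{thm:main_res}. The inclusion $P_k^B\subseteq P(\ValGraph[k]_B)$ is immediate: every $\sigma\in\Av(B)$ determines an edge-walk in $\ValGraph[k]_B$, and so $\pcocc(\cdot,\sigma)$ satisfies the flow-conservation equations at each vertex up to an $O(1/|\sigma|)$ boundary error. Strong connectedness of $\ValGraph[k]_B$ follows because for any $\tau,\tau'\in\Av_{k-1}(B)$ the permutation $\tau\oplus\tau'\in\Av(B)$ provides a directed walk from $\tau$ to $\tau'$. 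For a strongly connected directed multigraph $G=(V,E)$ the cycle polytope has dimension $|E|-|V|$, giving $\dim(P_k^B)=|\Av_k(B)|-|\Av_{k-1}(B)|$.

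The main obstacle is the reverse inclusion $P(\ValGraph[k]_B)\subseteq P_k^B$: each simple cycle of $\ValGraph[k]_B$ must be realised as the limit $\pcocc$-vector of a sequence in $\Av(B)$. A simple cycle prescribes a cyclic sequence of consecutive $k$-patterns stitched together via shared $(k-1)$-overlaps, but this abstract cyclic datum does not automatically come from an honest permutation, and even once a "seed" permutation realising one turn is found, iterating to produce the desired limit requires controlling $B$-avoidance globally. The $\oplus$-closedness hypothesis is precisely what lets us patch a local cyclic construction into a global $B$-avoiding permutation by iterating seeds via $\oplus$; carrying this out uniformly across all simple cycles is the technical heart of the argument.
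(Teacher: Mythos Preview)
Your closedness and convexity arguments are fine, and the inclusion $P_k^B\subseteq P(\ValGraph[k]_B)$ together with strong connectedness correctly gives the upper bound $\dim P_k^B\le |\Av_k(B)|-|\Av_{k-1}(B)|$. The genuine gap is your plan for the lower bound: you propose to show the \emph{equality} $P_k^B = P(\ValGraph[k]_B)$, but this is false in general, so the ``technical heart'' you anticipate cannot be carried out.

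Take $B=\{321\}$ and $k=3$. The class $\Av(321)$ is $\oplus$-closed. In $\ValGraph[3]_{\{321\}}$ the edges $231$ and $312$ form a simple $2$-cycle $C$, and $\vec e_C=(0,0,0,\tfrac12,\tfrac12,0)$ is a vertex of the cycle polytope. But if a permutation has consecutive $3$-patterns $231,312,231$ at positions $[i,i{+}2],[i{+}1,i{+}3],[i{+}2,i{+}4]$, one reads off $\sigma(i)>\sigma(i{+}2)>\sigma(i{+}4)$, a (classical) $321$; the same holds for $312,231,312$. Hence in any $321$-avoider the positions carrying a $231$ or $312$ occur in runs of length at most $2$, forcing $\pcocc(231,\sigma)+\pcocc(312,\sigma)\le \tfrac23+o(1)$. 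So $\vec e_C\notin P_3^{\{321\}}$ and $P_3^{\{321\}}\subsetneq P(\ValGraph[3]_{\{321\}})$. Note the theorem only asserts that $P_k^B$ is closed, convex and of the stated dimension; it does \emph{not} claim $P_k^B$ is a polytope, and your plan to realise every simple cycle by a $B$-avoider is too strong.

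For the lower bound one must instead produce, directly inside $P_k^B$, an affinely independent family of the right size. The natural candidates are the limits $\vec v_\pi:=\lim_{n\to\infty}(\pcocc(\cdot,\oplus_n\pi))$ for $\pi\in\Av_k(B)$: these lie in $P_k^B$ by $\oplus$-closedness, and the task (carried out in the cited reference) is to show that this family, or a suitable perturbation of it obtained by mixing via $\oplus$, affinely spans a set of dimension $|\Av_k(B)|-|\Av_{k-1}(B)|$. This uses the $\oplus$-closedness to build witnesses, not to realise arbitrary cycles of the overlap graph.
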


We remark that the assumptions of the theorem above are satisfied (for instance) by all the families $\Av ( \tau )$ for any fixed pattern $\tau \in \mathcal S$ and various families of pattern-avoiding permutations avoiding multiple patterns. For instance, all substitution-closed classes (see \cref{sect:sub_close}).

\section{Open problems}\label{sect:op_probl_2}

\begin{itemize}
	\item In this chapter we introduced the notion of permuton and B--S limits for permutations. In the next chapters, we will mainly focus on determining convergence of models of random permutations. Another interesting direction of research would be to investigate properties of \emph{convergent} sequences.\newline 
	Some answers for permuton convergence are established: Mueller and Starr \cite{MR3055815}, building on the permuton convergence of Mallows permutations~\cite{starr2009thermodynamic}, determined a law of large numbers for the length of the longest increasing subsequence in the Mallows model; Mukherjee~\cite{MR3515570} connected short cycle count statistics to permuton
	convergence; and very recently Bassino, Bouvel, Drmota, F{\'e}ray, Gerin, Maazoun, and Pierrot~\cite{bassino2021increasing} showed that the length of the longest increasing subsequence in permutations converging to the Brownian separable permuton (introduced in \cref{sect:sub_close_cls}) has sublinear size (building on self-similarity properties of  the Brownian separable permuton). Nevertheless, a general theory (that is not model-dependent) for this type of questions would be desirable and we believe it will be investigated in future projects. \newline 
	On the other hand B-S convergence for permutations has not been investigated in this direction so far, but we expect several interesting results also here. Let us mention an analogy with B--S limits for graphs: recently, Salez~\cite{salez2021sparse} solved a long-standing open problem on the curvature of sparse expander graphs working directly at the level of Benjamini-Schramm limits. Other results were established in his Ph.D. thesis~\cite{salez:tel-00637130}. We believe that similar techniques can be used to establish results on permutations from their B-S limits.
	
	\item Local and global convergence for discrete structures are arguably the best-known notions of convergence, but at the same time other notions of convergence have been considered. For instance, the notion of semi-local convergence is used in graph theory and interpolates between the local and the global one. An example is the work of Curien and Le Gall \cite{curien2014brownian}, where they studied a notion of semi-local convergence for uniform triangulations, obtaining a new limiting object called the \emph{Brownian plane}. They also showed some interesting relations with the well-know Brownian map (the scaling limit) and the Uniform infinite planar triangulation (the local limit). We think it is possible to introduce a notion of \emph{semi-local convergence} also for random permutations that will be useful to investigate the relation between permutons (scaling limit) and infinite rooted permutations (local limits). 
	Moreover, as global convergence is in connection with permutation patterns and local convergence is in connection with consecutive permutation patterns, semi-local convergence would be characterized in terms of convergence of \emph{semi-consecutive patterns}, \emph{i.e.,} patterns determined by a set of indexes with some restrictions on the maximal distance between them. 
	Therefore, the main questions are
	\vspace{-0.14 cm}
		\begin{itemize}
			\item How can a notion of semi-local convergence for permutations be defined?
			\vspace{-0.13 cm}
			\item Can it be characterized in terms of semi-local patterns?
			\vspace{-0.13 cm}
			\item What is the relationship between the different types of convergence and the corresponding limiting objects?		
		\end{itemize}
	\vspace{-0.14 cm}
	We remark that Bevan~\cite{bevan2020independence} implicitly considered a notion of \emph{semi-local convergence} for permutations, but he discusses neither the potential limiting objects nor the relations with the other notions of convergence.
	
	\item An interesting open problem of combinatorial/geometric flavor is to prove \cref{conj:feas_reg}.
	
\end{itemize}

    \chapter{Models of random permutations \& combinatorial constructions}\label{chp:models}
\chaptermark{Models of random permutations}

\begin{adjustwidth}{8em}{0pt}
	\emph{In which we introduce several models of random constrained permutations. The goal is to collect all the combinatorial constructions that we need to prove later various probabilistic results. The structure of the following sections is quite similar: we start by describing a specific family of permutations, then we present or construct some bijections with other useful combinatorial objects (such as trees, maps, walks, etc.), and finally we provide a nice and convenient way to construct uniform permutations in the family.}
\end{adjustwidth}
 
 \bigskip
 
  \bigskip
  
  \bigskip

\noindent Here is a list of families of permutations considered in this chapter. In addition, we indicate where the various sections in this chapter will be used later.

\begin{itemize}
	\item Permutations avoiding a pattern of length three (\cref{sect:perm_len_three} $\Rightarrow$ \cref{sect:231proofs,321}).
	\item Substitution-closed classes (\cref{sect:sub_close} $\Rightarrow$  \cref{sec:local_lim,sect:subclosedperm}).
	\item Square and almost-square permutations (\cref{sect:sq_perm} $\Rightarrow$ \cref{const_lim_obj} and \cref{chp:square}).
	\item Permutation families encoded by generating trees (\cref{sect:gen_tree-perm} $\Rightarrow$ \cref{sect:CLTgentree}).
	\item Baxter permutations (\cref{sec:discrete} $\Rightarrow$  \cref{sec:local,sect:baxperm}).
\end{itemize}

\subsubsection*{Notation for trees}

We only consider rooted plane trees;
{\em plane} means that the children of any given vertex are endowed with a linear order.
Let $\gls*{rooted_plane_tree_n}$ denote the set of rooted plane trees with $n$ vertices and $\gls*{rooted_plane_tree}$ the set of all finite rooted plane trees.
The \emph{out-degree} $d^+_T(v)$ (or $d^+(v)$ when there is no ambiguity) of a vertex $v$ in a tree $T$ is the number of its children (sometimes called arity in other works). 
Note that it may be  different from the graph-degree:
the edge to the parent (if it exists) is not counted in the out-degree. 
We consider both finite and infinite trees. 
We say a tree is {\em locally finite} if all its vertices have finite degree. 
A vertex of $T$ is called a {\em leaf} if it has out-degree zero. The collection of non-leaves (also called \emph{internal vertices}) is denoted by $\Vint(T)$.
The \emph{fringe subtree} of a tree $T$ rooted at a vertex $v$ is the subtree of $T$ containing $v$ and all its descendants. 
We will also speak of {\em branch} attached to a vertex $v$ for a fringe subtree rooted at a child of $v$.

We recall the definition of the Ulam--Harris tree  $\mathcal{U}_{\infty}$. 
The vertex set of  $\mathcal{U}_{\infty}$ is given by the collection of all finite sequences of positive integers, 
and the offspring of a vertex $(i_1, \ldots, i_k)$, i.e.\ the list of children, is given by all sequences $(i_1, \ldots, i_k, j)$, $j\in\Z_{>0}$. The root of $\mathcal{U}_{\infty}$ is the unique sequence of length $0$.
Any plane tree can be encoded in a canonical way as a subtree of  $\mathcal{U}_{\infty}$.

We finally recall two classical ways of visiting vertices of a rooted plane tree that go under the name of \emph{depth-first traversals.} These traversals, originally developed for algorithms searching vertices in a tree, work as follows.
Let $T$ be a rooted plane tree with root $r$. 

\begin{defn}\label{defn:preorder}
	If $T$ consists only of $r$, then $r$ is the \emph{pre-order} traversal of $T$. Otherwise, suppose that $T_1,T_2,\dots,T_d$ are the fringe subtrees in $T$ respectively rooted at the children of $r$ from left to right. The pre-order traversal begins by visiting $r$. It continues by traversing $T_1$ in pre-order, then $T_2$ in pre-order,	and so on, until $T_d$ is traversed in pre-order.
\end{defn}

\begin{defn}\label{defn:postorder}
	If $T$ consists only of $r$, then $r$ is the \emph{post-order} traversal of $T$. Otherwise, suppose that $T_1,T_2,\dots,T_d$ are the fringe subtrees in $T$ respectively rooted at the children of $r$ from left to right. The post-order traversal begins by traversing $T_1$ in post-order, then $T_2$ in post-order, and so on, until $T_d$ is traversed in post-order. It ends by visiting $r.$
\end{defn}

We will also say that $T$ is labeled with the pre-order (resp.\ post-order) labeling starting from $k$, if we associate the label $i+k-1$ with the $i$-th visited vertex by the pre-order (resp.\ post-order) traversal. Moreover, if we simply say that $T$ is labeled with the pre-order (resp.\ post-order), we will assume that we are starting from $k=1$. Examples will be given later.

\section{Permutations avoiding a pattern of length three}\label{sect:perm_len_three}

In this first section we focus on the classes of $\rho$-avoiding permutations, when $|\rho|=3.$ These permutations are well-known to be enumerated by Catalan numbers and they have been intensively studied in the literature. In particular, several bijections with other combinatorial objects -- such as rooted plane trees and Dyck paths -- are known for these families of permutations (see for instance \cite{claesson2008classification}). Here, we only present the bijections we need for our work. Using trivial symmetries, it is enough to investigate 231-avoiding and 321-avoiding permutations.

\subsection{231-avoiding permutations}
\label{231bij}
We present in this section a well-known bijection between 231-avoiding permutations and binary trees (see for instance \cite{bona2012surprising}) which sends the size of the permutation to the number of vertices of the tree. 

A binary tree is a rooted plane tree where each vertex has at most two children; when a vertex has only one child, this can be either a left or right child\footnote{We highlight that for binary trees we are using a slightly generalized notion of "\emph{plane tree}" since we are distinguishing left and right children. We believe that this will not make any confusion. See also the left-hand side of \cref{maxontree} for an example of binary tree.}.
Given a vertex $v$ in the tree, we denote by $\gls*{left_right_tree}$ the left and the right fringe subtrees of $T$ hanging below the vertex $v.$ We simply write $T_L$ and $T_R$ if $v$ is the root. Finally we let $\gls*{binary_tree}$ be the set of all binary trees.

Given $\sigma\in\mathcal{S}_n,$ let $\gls*{ind_max}$ be the index of the maximal value $n,$ i.e.\ $\sigma(\indmax(\sigma))=n.$ If $\ell=\indmax(\sigma),$ we define $\gls*{left_right_perm}$ as $\sigma_L\coloneqq\sigma(1)\dots\sigma(\ell-1)$ and $\sigma_R\coloneqq\sigma(\ell+1)\dots\sigma(n),$ respectively the (possibly empty) left and right subsequences of $\sigma,$ before and after the maximal value. In particular we have $\sigma=\sigma_L\sigma(\ell)\sigma_R,$ where we point out that $\sigma_L\sigma(\ell)\sigma_R$ is not the composition of permutations but just the concatenation of $\sigma_L,\sigma(\ell)$ and $\sigma_R$ seen as words.

The bijection mentioned above is built on the following result (see for instance \cite{bona2010absence}). 
\begin{obs} 
	\label{permfact}
	Let $\sigma$ be a permutation and $\ell=\indmax(\sigma).$ The permutation $\sigma$ avoids $231$ if and only if $\sigma_L$ and $\sigma_R$ both avoid $231$ and furthermore $\sigma(i)<\sigma(j)$ whenever $i<\ell$ and $j>\ell.$
\end{obs}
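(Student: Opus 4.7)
The plan is to prove the equivalence by standard case analysis, treating the two implications separately and in each case reducing to the defining property of the maximum $\sigma(\ell)=n$.

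For the forward direction, suppose $\sigma$ avoids $231$. The subsequences $\sigma_L$ and $\sigma_R$ are obtained from $\sigma$ by selecting an interval of indices, so any $231$ occurrence in $\sigma_L$ or $\sigma_R$ would immediately yield a $231$ occurrence in $\sigma$; hence both avoid $231$. For the "moreover" part, I would argue by contradiction: if some $i<\ell<j$ satisfied $\sigma(i)>\sigma(j)$, then the triple of indices $i<\ell<j$ would give values $\sigma(i),\sigma(\ell),\sigma(j)$ with $\sigma(\ell)=n$ the largest, $\sigma(i)$ the middle value, and $\sigma(j)$ the smallest, which is precisely a $231$ pattern.

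For the backward direction, I assume $\sigma_L$ and $\sigma_R$ both avoid $231$ and that $\sigma(i)<\sigma(j)$ whenever $i<\ell<j$, and suppose for contradiction that $\sigma$ contains an occurrence of $231$ at positions $i_1<i_2<i_3$. The proof is then a case analysis on the position of $\ell$ relative to $i_1,i_2,i_3$. If $i_3<\ell$ or $\ell<i_1$, the pattern lies entirely in $\sigma_L$ or $\sigma_R$, contradicting their $231$-avoidance. The cases $\ell=i_1$ and $\ell=i_3$ are impossible since in a $231$ pattern the first entry is not the largest and the third is not the largest, whereas $\sigma(\ell)=n$ is the global maximum. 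The cases $\ell=i_2$, $i_1<\ell<i_2$, and $i_2<\ell<i_3$ all contradict the hypothesis $\sigma(i)<\sigma(j)$ for $i<\ell<j$: in each case one can identify indices $i<\ell<j$ among $\{i_1,i_2,i_3\}$ for which the $231$ pattern forces $\sigma(i)>\sigma(j)$ (specifically, $(i_1,i_3)$ when $\ell=i_2$ or $\ell\in(i_1,i_2)$, and $(i_2,i_3)$ when $\ell\in(i_2,i_3)$).

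This case analysis is entirely elementary; there is no real obstacle to the argument, only the bookkeeping of making sure all seven positional cases for $\ell$ are covered. I would present the proof as a single short paragraph in each direction, using the characterization of the $231$ pattern as "middle entry largest, first entry greater than last" to unify the contradictions.
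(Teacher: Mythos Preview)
Your proof is correct and is the standard elementary argument for this well-known fact. The paper itself does not give a proof but simply attributes the result to the literature (citing B\'ona), so there is no ``paper's proof'' to compare against; your case analysis is exactly what one would expect.
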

Given a permutation $\sigma\in\Av(231)$ we build a binary tree $T=\varphi(\sigma)$ as follows. If $\sigma$ is empty then $T$ is the empty tree. Otherwise we add the root in $T,$ which corresponds to the maximal element of $\sigma$ and we split $\sigma$ in $\sigma_L$ and $\sigma_R;$ then the left subtree of $T$ is the tree corresponding to $\sigma_L,$ i.e.\ $T_L=\varphi(\sigma_L)$ and similarly, the right subtree of $T$ is the tree corresponding to $\text{std}(\sigma_R),$ i.e.\ $T_R=\varphi(\text{std}(\sigma_R)).$

Conversely, given a binary tree $T$ we construct the corresponding permutation $\sigma=\psi(T)$ in $\text{Av}(231)$ as follows (see \cref{bijtreeperm} below): if $T$ is empty then $\sigma$ is the empty permutation. Otherwise  we split $T$ in $T_L$ and $T_R$ and we set $n\coloneqq|T|$, the number of vertices of $T,$ $S_L\coloneqq\psi(T_L)$ and $S_R\coloneqq\psi(T_R).$ Then we define $\sigma$ as $\sigma=\sigma_Ln\sigma_R$ where $\sigma_L$ is simply $S_L$ and $\sigma_R$ is obtained by shifting all entries of $S_R$ by $|T_L|,$ i.e.\ $\sigma_R(j)=S_R(j)+|T_L|,$ for all $j\leq |T_R|.$

It is clear that $\varphi$ and $\psi$ are inverse of each other, hence providing the desired bijection.

\begin{figure}[htbp]
	\begin{minipage}[c]{0.7\textwidth}
		\centering
		\includegraphics[scale=.70]{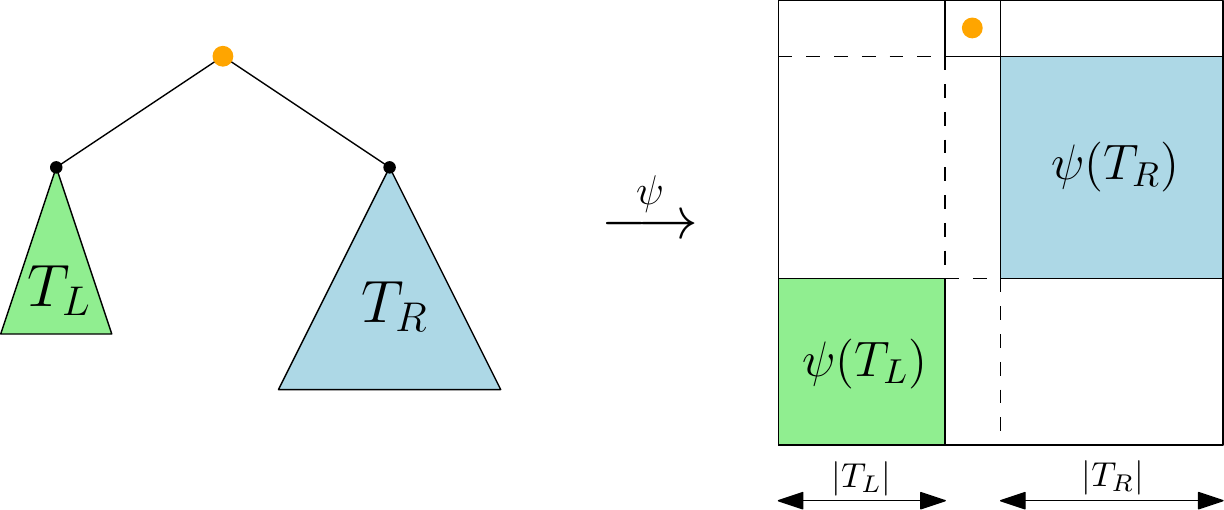}
	\end{minipage}
	\begin{minipage}[c]{0.29\textwidth}
		\caption{A binary tree and the corresponding 231-avoiding permutation as a diagram.\label{bijtreeperm}}
	\end{minipage}
\end{figure}

We denote by $T_{\sigma}$ the binary tree associated with $\sigma$ and, analogously, by $\sigma_T$ the permutation associated with the binary tree $T.$ We have the following consequence.

\begin{prop}\label{prop:unif_231_as_trees}
	Let $\bm{T}_n$ denote a uniform random binary tree with $n$ vertices. Then $\sigma_{\bm{T}_n}$ is a uniform 231-avoiding permutation of size $n$.
\end{prop}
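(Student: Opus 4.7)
The statement is essentially an immediate consequence of the bijection just constructed, so the plan is short. First I would invoke the bijection $\psi : \mathbb{T}^b_n \to \mathrm{Av}_n(231)$ established in the preceding paragraphs (with inverse $\varphi$), which restricts to a bijection between binary trees with exactly $n$ vertices and $231$-avoiding permutations of size $n$; this uses the fact that the recursive construction preserves size (the root of the tree accounts for the maximal entry, and the two fringe subtrees produce $\sigma_L$ and $\sigma_R$ whose total length is $n-1$).

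Next I would use the general principle that the pushforward of a uniform distribution under a bijection between finite sets is uniform. Concretely, for every $\sigma \in \mathrm{Av}_n(231)$,
\[
\mathbb{P}(\sigma_{\bm{T}_n} = \sigma) \;=\; \mathbb{P}(\psi(\bm{T}_n) = \sigma) \;=\; \mathbb{P}(\bm{T}_n = \varphi(\sigma)) \;=\; \frac{1}{|\mathbb{T}^b_n|} \;=\; \frac{1}{|\mathrm{Av}_n(231)|},
\]
where the last equality follows from $|\mathbb{T}^b_n| = |\mathrm{Av}_n(231)|$ (both sides are in bijection via $\psi$).

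There is essentially no obstacle here: the only thing one needs is that $\psi$ is a size-preserving bijection, which was verified in the discussion preceding the statement. I would not include any calculations beyond the short display above.
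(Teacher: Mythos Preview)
Your proposal is correct and matches the paper's approach exactly: the paper states this proposition as an immediate ``consequence'' of the size-preserving bijection $\psi$ just constructed, without giving any further argument. Your short display making the pushforward-of-uniform reasoning explicit is precisely what is implicit in the paper.
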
 

We end this section with some observations and remarks that will be useful in the sequel.

\begin{obs}
	\label{maxnode}
	By construction, the left-to-right maxima in a permutation $\sigma\in\Av(231)$ correspond to the vertices of the form $v=1\dots1$ (in the Ulam--Harris labeling). Similarly, the right-to-left maxima correspond to the vertices of the form $v=2\dots2.$ Obviously the maximum corresponds to the root of the tree. An example is given in \cref{maxontree}.
\end{obs}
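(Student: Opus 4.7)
The plan is a straightforward induction on $n = |T|$, unfolding the recursive definition of the bijection $\psi$.

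\emph{Base case.} When $T$ is empty or consists only of the root, the statement is either vacuous or immediate: if $|T|=1$ then $\sigma = 1$, whose only entry is both the unique left-to-right maximum and the unique right-to-left maximum, and the root of $T$ is indeed the empty sequence in the Ulam--Harris encoding (agreeing with both conventions $v = 1\cdots 1$ and $v = 2\cdots 2$ of length zero).

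\emph{Inductive step.} Assume the statement holds for all binary trees of size strictly less than $n$, and let $T$ have $n$ vertices with left and right subtrees $T_L, T_R$. By definition of $\psi$, we have $\sigma = \sigma_L\, n\, \sigma_R$, where $\sigma_L = S_L = \psi(T_L)$ and $\sigma_R$ is the shift of $S_R = \psi(T_R)$ by $|T_L|$. The key combinatorial input is the characterization from \cref{permfact}: every entry of $\sigma_L$ is strictly smaller than every entry of $\sigma_R$, and in particular smaller than $n$. Consequently, every left-to-right maximum of $\sigma$ distinct from $n$ must sit in $\sigma_L$, and since $\sigma_L$ coincides with $S_L$ as a word, left-to-right maxima of $\sigma$ restricted to $\sigma_L$ are exactly the left-to-right maxima of $S_L$. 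Symmetrically, every right-to-left maximum of $\sigma$ distinct from $n$ must sit in $\sigma_R$, and since shifting by $|T_L|$ preserves relative order, right-to-left maxima of $\sigma$ restricted to $\sigma_R$ are exactly those of $S_R$. The maximum $n$ itself is both a left-to-right and a right-to-left maximum, and corresponds under $\psi^{-1}$ to the root of $T$.

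\emph{Passing to Ulam--Harris labels.} Apply the inductive hypothesis to $T_L$: its left-to-right maxima correspond to its vertices labeled $1\cdots 1$. When $T_L$ is grafted as the left subtree of $T$, each such vertex becomes $1 \cdot 1\cdots 1 = 1\cdots 1$ inside $T$. Together with the root (the empty sequence, which fits the pattern $1\cdots 1$ of length zero), this exhausts the left-to-right maxima of $\sigma$. The analogous computation with $T_R$ grafted on the right (prepending the coordinate $2$) yields the statement for right-to-left maxima.

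\emph{Expected difficulty.} There is no real obstacle: \cref{permfact} immediately rules out the only possible source of trouble (namely, cross interactions between the values of $\sigma_L$ and $\sigma_R$), and the Ulam--Harris bookkeeping is automatic once one observes that the induction naturally prepends a $1$ or a $2$ depending on whether one recurses into $T_L$ or $T_R$. The only point worth stating carefully is that the shift by $|T_L|$ applied to $S_R$ preserves the positions of record values, which is clear since such a shift is a monotone relabeling.
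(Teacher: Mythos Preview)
Your inductive argument is correct. The paper does not give a proof at all: it states this as an observation that holds ``by construction'' and refers the reader to the example in \cref{maxontree}. Your induction is precisely the natural way to unpack that phrase. One minor remark: you cite \cref{permfact} as the source of the fact that every entry of $\sigma_L$ is smaller than every entry of $\sigma_R$, but this is already immediate from the definition of $\psi$ (since $\sigma_R$ is obtained by shifting $S_R$ by $|T_L|$); \cref{permfact} is rather the converse characterization needed for $\varphi$. This does not affect the validity of your argument.
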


\begin{figure}[htbp]
		\centering
		\includegraphics[scale=.70]{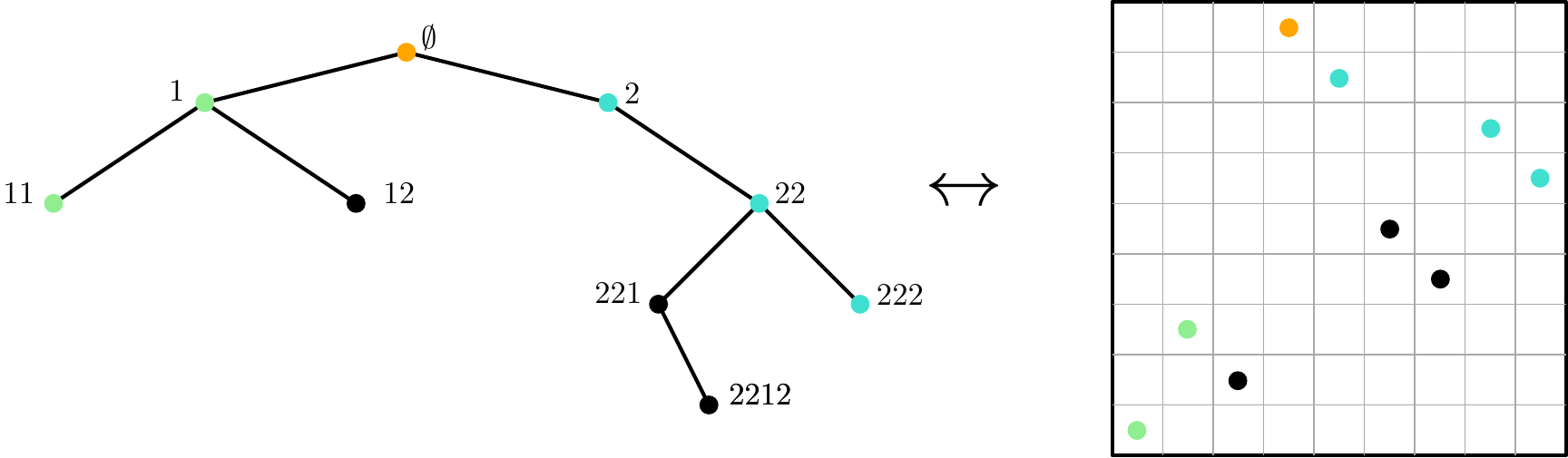}\\
		\caption{Correspondence between maxima of a permutation and vertices on the left and right branches in the associated tree. We highlight in green the left-to-right maxima, in light-blue the right-to-left maxima and in orange the maximum.
		}\label{maxontree}
\end{figure}

The notion of pre-order and post-order traversal (recall Definitions \ref{defn:preorder} and \ref{defn:postorder}) extend immediately to binary trees. Moreover, in the case of binary trees, the in-order traversal is also available. Let $T$ be a binary tree with root $r$. 
\begin{defn}
	If $T$ consists only of $r$, then $r$ is the \emph{in-order} traversal of $T$. Otherwise the in-order traversal begins by traversing $T_L$ in in-order, then visits $r,$ and concludes by traversing $T_R$ in in-order.
\end{defn}
The notion of in-order labeling is defined similarly to the pre-order labeling or post-order labeling.
We exhibit an example of the three different labelings in \cref{exemporder1}.

\begin{figure}[htbp]
	\centering
		\includegraphics[scale=.70]{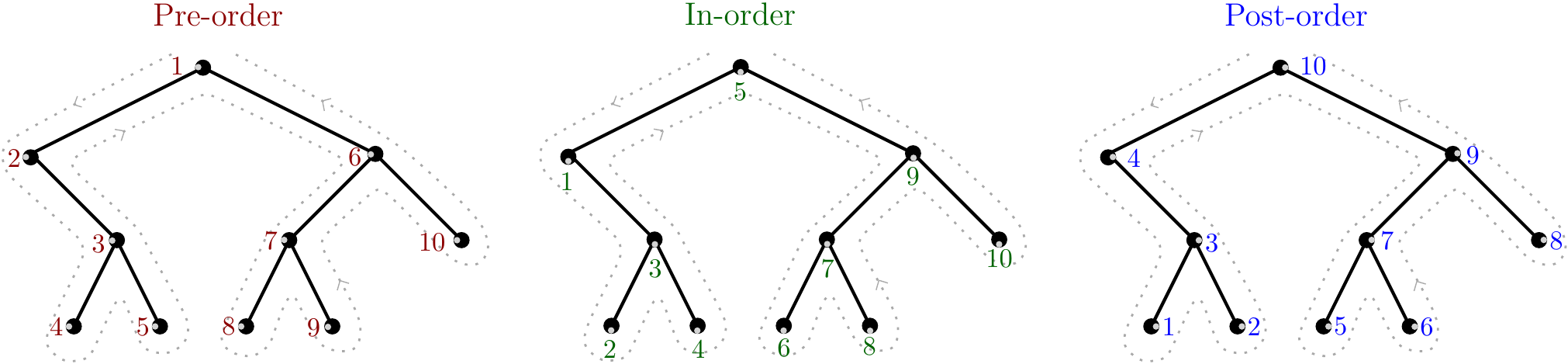}\\
		\caption{We label in red the left picture using the pre-order, in green the middle picture using the in-order and in blue the right picture using the post-order. The three labelings can be obtained following the gray dotted lines from left to right and putting a tag at each time we reach the small gray dots inside each vertex (that are on the left of the vertex for the pre-order, on the bottom of the vertex for the in-order and on the right of the vertex for the post-order).
		}\label{exemporder1}
\end{figure}

We make the following final remark that is not strictly necessary in this section but useful for comparison with the next one.

\begin{rem}
	It is not difficult to prove the following equivalent description of the bijection between trees and permutations based on the use of the tree traversals. Given a binary tree $T$ with $n$ vertices we reconstruct the corresponding permutation in $\text{Av}(231)$ setting $\sigma(i)$ to be equal to the label given by the post-order to the $i$-th vertex visited by the in-order. Namely, we are using the in-order labeling for the indices of the permutation and the post-order labeling for the values. An example is provided in \cref{Av(231)bij}.
\end{rem}

\begin{figure}[htbp]
		\centering
		\includegraphics[scale=.8]{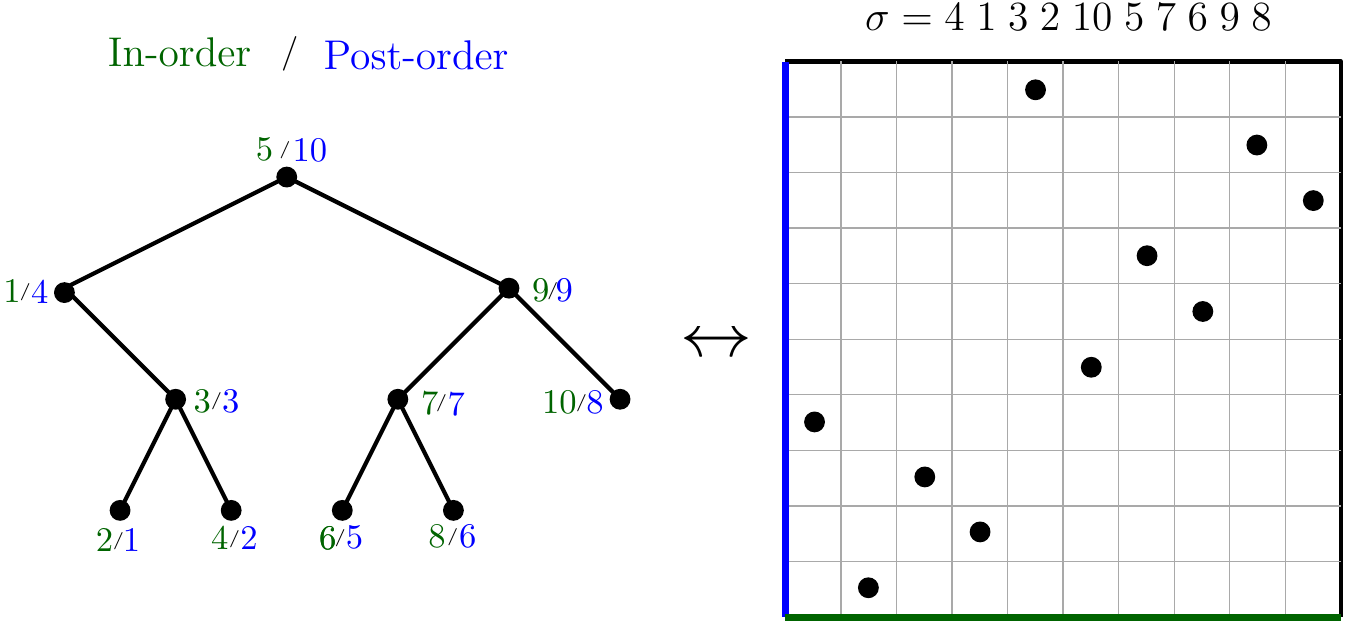}\\
		\caption{A binary tree and the corresponding 231-avoiding permutation.}\label{Av(231)bij}
\end{figure}

\subsection{321-avoiding permutations}
\label{premres_321}
It is well known that 321-avoiding permutations can be broken into two increasing subsequences, one weakly above the diagonal and one strictly below the diagonal (see for instance \cite[Section 4.2]{bona2016combinatorics}). Therefore, defining for every permutation $\sigma\in\Av_n(321)$ 
\begin{equation}
\begin{split}
&E^+=E^+(\sigma)\coloneqq\big\{i\in[n]|\sigma(i)\geq i\big\},\\
&E^-=E^-(\sigma)\coloneqq\big\{i\in[n]|\sigma(i)< i\big\},
\end{split}
\end{equation}
we have that $\text{pat}_{E^+}(\sigma)$ and $\text{pat}_{E^-}(\sigma)$ are increasing patterns.
Note that by convention the indices of fixed points are elements of $E^+.$

We now describe a slightly different, but equivalent (see \cref{rem_equiv}), version of a well-known bijection between 321-avoiding permutations of size $n$ and rooted plane trees with $n+1$ vertices (see for instance \cite{hoffman2016fixed}).

Recall that $\mathbb{T}_{n}$ denotes the set of rooted plane trees with $n$ vertices. Suppose $T\in\mathbb{T}_{n+1}$ has $k$ leaves and let $\ell(T)=(\ell_1,\dots,\ell_k)$ be the list of leaves of $T,$ listed in order of appearance in the pre-order traversal of $T,$ i.e.\ from left to right. Moreover let $s_i$ and $q_i$ be the labels given to the leaf $\ell_i$ respectively by the pre-order labeling of $T$ starting from $0$ and the post-order labeling of $T$ starting from 1. We set $S(T)\coloneqq\{s_i\}_{1\leq i\leq k}$ and $Q(T)\coloneqq\{q_i\}_{1\leq i\leq k}.$  An example is given in the left-hand side of \cref{321bij_1}.

We now construct the permutation $\sigma_{T}$ associated with the tree $T.$ We define $\sigma_T$ pointwise on $Q(T)=\{q_i\}_{1\leq i\leq k}$ by
$$\sigma_T(q_i)=s_i,\quad\text{for all}\quad1\leq i\leq k,$$ 
and then we extend $\sigma_T$ on $[n]\setminus Q(T)$ by assigning values of $[n]\setminus S(T)$ in increasing order. Clearly $\sigma_{T}$ is a $321$-avoiding permutation. An example is given in \cref{321bij_1}. 

We highlight (without proof, for more details see \cite{hoffman2016fixed}) that this construction implies $E^+(\sigma_T)=Q(T)$ and $\sigma(E^+(\sigma_T))=S(T).$ It can also be proved (see again \cite{hoffman2016fixed}) that this construction is invertible: For a permutation $\sigma\in\Av_n(321)$ there is a unique rooted plane tree $T_{\sigma}$ with $n+1$ vertices such that $Q(T_{\sigma})=E^+(\sigma)$ and $S(T_{\sigma})=\sigma(E^+(\sigma)).$

\begin{figure}[htbp]
		\centering
		\includegraphics[scale=0.73]{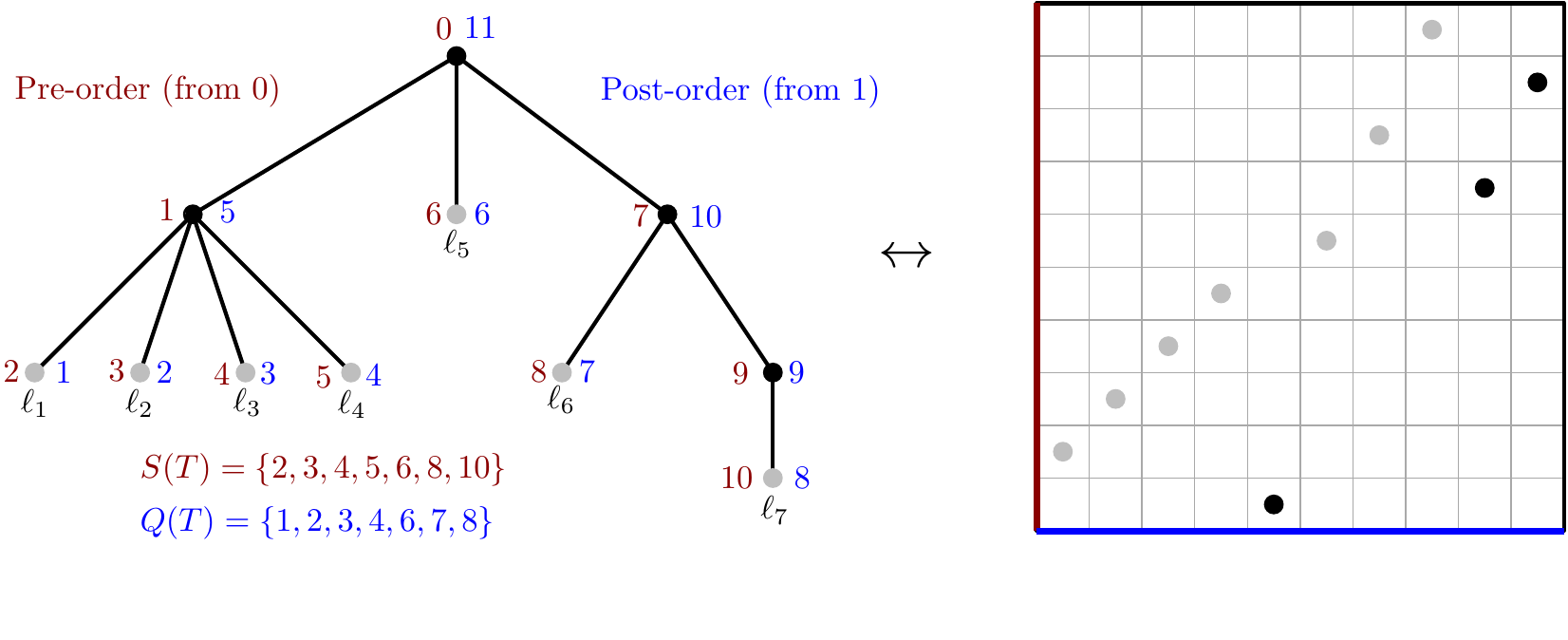}\\
		\caption{\textbf{Left}: A tree $T$ with 11 vertices and 7 leaves. We record in red on the left of each vertex the pre-order label and in blue on the right of each vertex the post-order label. \textbf{Right}: The associated 321-avoiding permutation of size 10. In the diagram, for every leaf in the tree, we draw a gray dot using the post-order label for the $x$-coordinate and the pre-order label for the $y$-coordinate. Then we complete the diagram with the unique increasing subsequence fitting in the empty rows and columns (shown with black dots on the picture).}\label{321bij_1}
\end{figure}

\begin{rem}
	\label{rem_equiv}
	At first sight, it seems that the construction presented here is different from that of \cite{hoffman2016fixed}; in \cite[Remark 5.8]{borga2020localperm} we explained that they are in fact the same.
\end{rem}

\begin{obs}
	\label{obs_treeperm}
	Let $\bm{\eta}\sim Geom(1/2)$, i.e.\ $\eta_k\coloneqq\P(\bm{\eta}=k)=2^{-k-1},$ for all $k\in\Z_{\geq 0}$. We recall that a uniform random rooted plane tree with $n$ vertices $\bm{T}_n$ has the same distribution as a Galton--Watson tree $\bm{T}^{\eta}$ with offspring distribution $\bm{\eta}$ conditioned on having $n$ vertices (see \cite[Example 9.1]{janson2012simply}), namely
	$\bm{T}_n\stackrel{(d)}{=}\big(\bm{T}^\eta\;\big|\;|\bm{T}^\eta|=n\big)$.
\end{obs}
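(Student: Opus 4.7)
The plan is to verify directly that, for every fixed rooted plane tree $T$ with $n$ vertices, the probability that a Galton--Watson tree $\bm{T}^\eta$ with offspring distribution $\bm{\eta}\sim \mathrm{Geom}(1/2)$ equals $T$ depends only on $n$. Once this is established, conditioning on the event $\{|\bm{T}^\eta|=n\}$ produces the uniform distribution on $\mathbb{T}_n$, which is precisely the statement we want.

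First, I would recall the standard product formula for the Galton--Watson measure on plane trees: for any finite rooted plane tree $T$,
\begin{equation*}
\P(\bm{T}^\eta = T) = \prod_{v\in T}\eta_{d^+(v)} = \prod_{v\in T} 2^{-d^+(v)-1}.
\end{equation*}
Then I would compute the exponent by separating the two contributions. Since $T$ has $n$ vertices, the factor $2^{-1}$ appears $n$ times, contributing $2^{-n}$. For the factor $2^{-d^+(v)}$, I would use the elementary identity $\sum_{v\in T}d^+(v)=n-1$ (each edge of $T$ is counted exactly once as the edge joining some vertex to its parent, so the total number of children equals the number of non-root vertices). Combining the two yields
\begin{equation*}
\P(\bm{T}^\eta = T) = 2^{-n}\cdot 2^{-(n-1)} = 2^{-(2n-1)}.
\end{equation*}

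Crucially, the right-hand side does not depend on the shape of $T$ but only on $n$. Therefore the conditional law of $\bm{T}^\eta$ given $|\bm{T}^\eta|=n$ is uniform on $\mathbb{T}_n$, which is exactly $\bm{T}_n\stackrel{d}{=}\bigl(\bm{T}^\eta\bigm||\bm{T}^\eta|=n\bigr)$. I would also briefly remark that the event $\{|\bm{T}^\eta|=n\}$ has positive probability because $\bm{\eta}$ has mean $1$ (critical Galton--Watson), so the conditioning is well-defined; this follows immediately from $\sum_{k\geq 0} k\,2^{-k-1}=1$.

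There is essentially no obstacle: the whole argument reduces to the identity $\sum_v d^+(v)=n-1$ combined with the very specific form of the geometric weights $2^{-k-1}$, which is precisely tuned so that the product telescopes into something independent of the tree shape. The only point worth stating carefully is that we work with \emph{plane} trees (with the Ulam--Harris labelling from the beginning of the section), so that there is no symmetry factor to worry about in the product formula for $\P(\bm{T}^\eta=T)$.
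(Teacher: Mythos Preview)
Your argument is correct and is exactly the standard one: the geometric weights $\eta_k = 2^{-k-1}$ make $\P(\bm{T}^\eta = T) = 2^{-(2n-1)}$ depend only on $|T|=n$, so conditioning on size yields the uniform law. The paper itself does not prove this observation; it simply cites \cite[Example~9.1]{janson2012simply}, where the same computation appears. One tiny remark: the positivity of $\P(|\bm{T}^\eta|=n)$ already follows from your formula $\P(\bm{T}^\eta=T)=2^{-(2n-1)}>0$, so you need not invoke criticality for that purpose.
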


Therefore, thanks to the bijection previously introduced, we have the following result.

\begin{prop}
	Let $\bm{\eta}\sim Geom(1/2)$ i.e.\ $\eta_k\coloneqq\P(\bm{\eta}=k)=2^{-k-1},$ for all $k\in\Z_{\geq 0}$, and set $\bm{T}^\eta_n\coloneqq\big(\bm{T}^\eta\;\big|\;|\bm{T}^\eta|=n\big)$. Then $\sigma_{\bm{T}^\eta_n}$ is a uniform 321-avoiding permutation of size $n$.
\end{prop}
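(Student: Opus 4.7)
The plan is to combine two ingredients already in place: the bijection $T \mapsto \sigma_T$ between rooted plane trees and $321$-avoiding permutations constructed just above (which pairs a tree with $n+1$ vertices to a permutation of size $n$), and the distributional identity in Observation~\ref{obs_treeperm} saying that a $\mathrm{Geom}(1/2)$ Galton--Watson tree conditioned on its size is uniform on rooted plane trees of that size.

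First, I would use the fact that any bijection between finite sets transports the uniform distribution on the source to the uniform distribution on the target. Applying this to $T \mapsto \sigma_T$, a uniformly random $\bm{T}_{n+1}$ on $\mathbb{T}_{n+1}$ yields $\sigma_{\bm{T}_{n+1}}$ uniform on $\Av_n(321)$. So the claim reduces to showing that the relevant conditioned Galton--Watson tree is indeed a uniform plane tree of the right size.

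Next, I would confirm the uniformity statement of Observation~\ref{obs_treeperm} by a direct computation, since this is the only probabilistic content of the proposition. For any fixed plane tree $T \in \mathbb{T}_m$ with out-degrees $(d^+(v))_{v \in T}$, and using that the number of edges equals $m-1$ (so $\sum_v d^+(v) = m-1$), one has
\begin{equation*}
\P(\bm{T}^\eta = T) \;=\; \prod_{v \in T} \eta_{d^+(v)} \;=\; \prod_{v \in T} 2^{-d^+(v)-1} \;=\; 2^{-m-(m-1)} \;=\; 2^{-(2m-1)}.
\end{equation*}
Since this does not depend on $T$, conditioning on $|\bm{T}^\eta|=m$ gives the uniform law on $\mathbb{T}_m$; this is precisely the content of Observation~\ref{obs_treeperm}.

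Combining the two steps yields the proposition: the conditioned Galton--Watson tree of the appropriate size is uniform on $\mathbb{T}_{n+1}$, and the bijection then turns it into a uniform element of $\Av_n(321)$. There is no genuine obstacle here; the only point to keep track of is the one-vertex index shift between the tree size and the permutation size coming from the bijection, so that the tree fed into $\sigma_\cdot$ should be taken with one more vertex than the size of the desired permutation.
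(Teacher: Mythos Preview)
Your proposal is correct and matches the paper's approach exactly: the paper simply states that the proposition follows from combining the bijection with Observation~\ref{obs_treeperm}, and you carry this out (with the bonus of verifying the observation by the standard computation $\P(\bm T^\eta = T) = 2^{-(2m-1)}$). Your flagging of the one-vertex index shift is appropriate; the statement as written in the paper has a harmless off-by-one in the conditioning, and your reading resolves it the right way.
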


\section{Substitution-closed classes}\label{sect:sub_close}

In the previous section we have seen two specific families of pattern-avoiding permutations encoded by trees. 
Here, we still consider pattern-avoiding permutations encoded by trees, but with two remarkable differences: First, the encoding trees have a richer structure, namely decorations on internal vertices; secondly, the number of permutation families treated is infinite.

\subsubsection*{Notation}

Let $\DDD$ be a set and $\size:\DDD \to \Z_{\geq 0}$ be a map
from $\DDD$ to the set of non-negative integers, associating to each object in $\DDD$ its size. 
We say $\DDD$ is an (unlabeled) combinatorial class,  if for any $n \in \Z_{\geq 0}$ the number $d_n$ of $n$-sized objects in $\DDD$ is finite. This allows to form the \emph{(ordinary) generating series}
$
\DDD(z) = \sum_{n \ge 0} d_n z^n.
$
Note that we try to use the same curvy letter $\mathcal{D}$ for the class and its generating series.
This should hopefully not lead to confusions.
Two combinatorial classes $\mathcal{D}_1, \mathcal{D}_2$ are considered \emph{isomorphic} if there is a size-preserving bijection between the two,
or equivalently if they have the same generating series.

Various standard operations are available for combinatorial classes. 
For example, whenever $\DDD$ has no objects of size $0$,
we can form the combinatorial class $\Seq(\DDD)$,
which is the collection of finite sequences of objects from $\DDD$. 
The size of such a sequence is defined to be the sum of sizes of its components. 
We may also consider the subclass $\Seq_{\ge 1}(\DDD) \subset \Seq(\DDD)$ of non-empty sequences. 
For a great introduction to combinatorial classes we refer the reader to the book \cite{flajolet2009analytic}.

\begin{defn} \label{defn:decorated_tree}
	Let $\DDD$ be a combinatorial class.
	A $\DDD$-\emph{decorated tree} is a rooted locally finite plane tree $T$,
	equipped with a function $\dec:\Vint(T) \to \DDD$ from the set of internal vertices of $T$ to $\DDD$
	such that the following holds:
	{\em for each $v$ in $\Vint(T)$, the out-degree of $v$ is exactly $\size(\dec(v))$.} 
\end{defn}

\subsection{Encoding permutations as forests of decorated trees}
\label{sec:bijections}

In this section, we first properly define substitution-closed classes and then we show that any substitution-closed class can be bijectively encoded as a forest of decorated trees. This goal is achieved in \cref{te:bijection}.

\subsubsection{Substitution decomposition and canonical trees}
\label{sec:DecoTrees}
We recall classical concepts related to permutations.
\begin{defn}
	Let $\theta=\theta(1)\cdots \theta(d)$ be a permutation of size $d$, and let $\nu^{(1)},\dots,\nu^{(d)}$ be $d$ other permutations. 
	The \emph{substitution} of $\nu^{(1)},\dots,\nu^{(d)}$ in $\theta$,
	denoted by $\gls*{sub_perm}$, 
	is the permutation of size $|\nu^{(1)}|+ \dots +|\nu^{(d)}|$ 
	obtained by replacing each $\theta(i)$ by a sequence of integers isomorphic to $\nu^{(i)}$ while keeping the relative order induced by $\theta$ between these subsequences.
\end{defn}
Examples of substitution (see \cref{fig:sum_and_skew} below) are conveniently presented representing permutations by their diagrams: 
the diagram of $\nu = \theta[\nu^{(1)},\dots,\nu^{(d)}]$ is obtained by inflating each point $\theta(i)$ of $\theta$ by a square containing the diagram of $\nu^{(i)}$. 
Note that each $\nu^{(i)}$ then corresponds to a \emph{block} of $\nu$, a block being defined as an interval of $[|\nu|]$ which is mapped to an interval by $\nu$. 

Throughout this manuscript, the increasing permutation $12 \ldots d$ will be denoted by $\oplus_d$,
or even $\oplus$ when its size $d$ can be recovered from the context:
this is the case in an inflation $\oplus[\nu^{(1)},\dots,\nu^{(d)}]$ where the size of $\oplus$ is the number $d$ of permutations inside the brackets.
Similarly, we denote the decreasing permutation $d \ldots 21$ by $\ominus_d$,
or $\ominus$ when there is no ambiguity.

\begin{figure}[htbp]
	\centering
		\includegraphics[width=7cm]{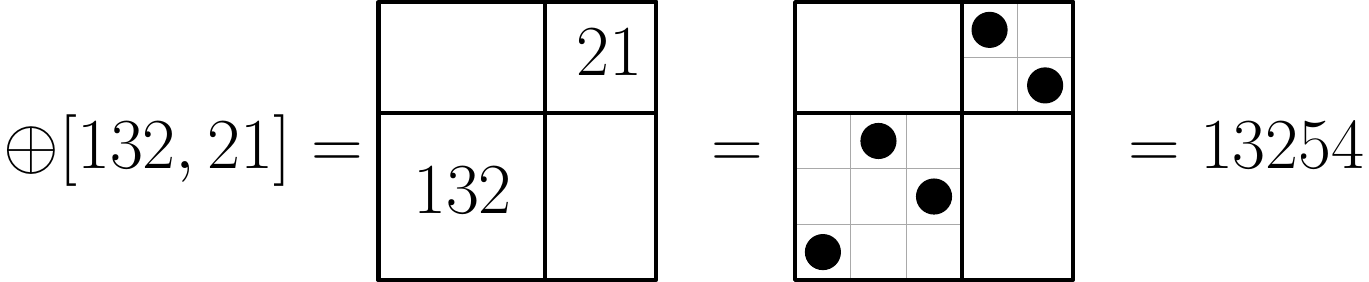} \qquad 
		\includegraphics[width=7cm]{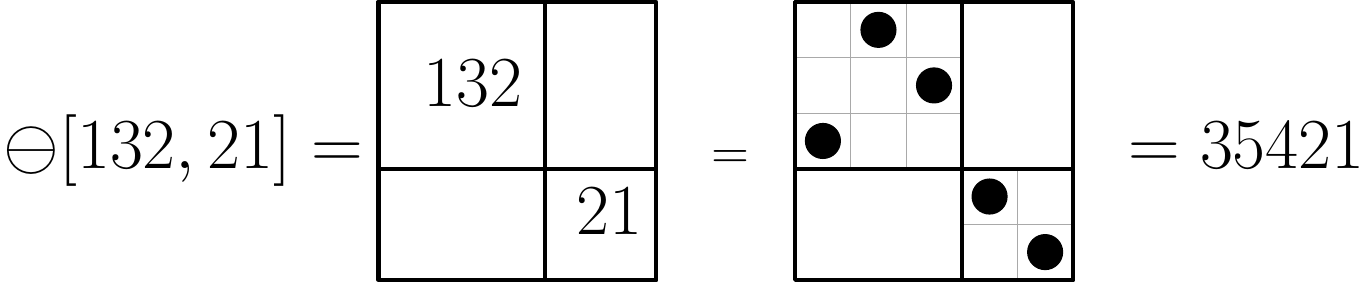}\medskip\\
		\includegraphics[width=95mm]{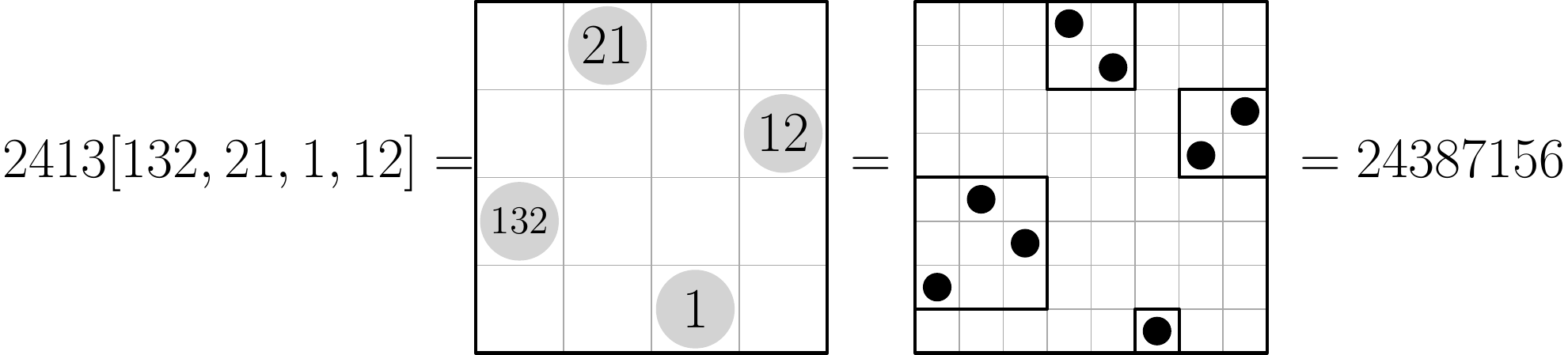}
	\caption{Substitution of permutations.\label{fig:sum_and_skew}}
\end{figure}

Permutations can be decomposed in a canonical way using recursively the substitution operation.
To explain this, we first need to define several notions of indecomposable objects.
\begin{defn}
	A permutation $\nu$ is \emph{$\oplus$-indecomposable} (resp.\ $\ominus$-indecomposable) 
	if it cannot be written as $\oplus[\nu^{(1)},\nu^{(2)}]$ (resp.\ $\ominus[\nu^{(1)},\nu^{(2)}]$).
	
	A permutation of size $n > 2$ is \emph{simple} if it contains no nontrivial block, i.e.\ if it does not map any \emph{nontrivial} interval 
	(i.e.\ a range in $[n]$ containing at least two and at most $n-1$ elements) onto an interval.
\end{defn}
For example, $451326$ is not simple as it maps the interval $[3,5]$ onto the interval $[1,3]$. 
The smallest simple permutations are $2413$ and $3142$ (there is no simple permutation of size $3$). 
We denote by $\gls*{s_all}$ the set of simple permutations.
\medskip

\begin{rem}
	Usually in the literature, the definition of a simple permutation requires $n \geq 2$ instead of $n > 2$, so that $12$ and $21$ are considered to be simple.
	However, for decomposition trees, $12$ and $21$ do not play the same role as the other simple permutations, that is why we do not consider them to be simple.
\end{rem}

\begin{thm}\label{Th:AlbertAtkinson}
	Every permutation $\nu$ of size $n\geq 2$ can be uniquely decomposed as either:
	\begin{itemize}
		\item $\alpha[\nu^{(1)},\dots,\nu^{(d)}]$, where $\alpha$ is a simple permutation (of size $d\geq 4$),
		\item $\oplus[\nu^{(1)},\dots,\nu^{(d)}]$, where $d\geq 2$ and $\nu^{(1)},\dots,\nu^{(d)}$ are $\oplus$-indecomposable,
		\item $\ominus[\nu^{(1)},\dots,\nu^{(d)}]$, where $d\geq 2$ and $\nu^{(1)},\dots,\nu^{(d)}$ are $\ominus$-indecomposable.
	\end{itemize}
\end{thm}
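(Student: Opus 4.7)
My plan is to prove this classical result (due to Albert and Atkinson) via a case analysis based on the substitution structure of $\nu$. The central notion is that of a \emph{block} of $\nu$: a nonempty interval $I \subseteq [n]$ such that $\nu(I)$ is also an interval, with the block called proper when $1 < |I| < n$. Two facts about blocks will be used repeatedly. First (partitive family property): if two blocks $B_1, B_2$ overlap (meaning $B_1 \cap B_2 \neq \emptyset$ but neither contains the other), then $B_1 \cap B_2$ and $B_1 \cup B_2$ are also blocks; this follows since $B_1 \cap B_2 \neq \emptyset$ forces $\nu(B_1) \cap \nu(B_2) \neq \emptyset$, and two intervals whose intersection is nonempty have intersection and union again intervals. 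Second (block lifting): the blocks of an inflation $\theta[\nu^{(1)}, \ldots, \nu^{(d)}]$ are precisely the blocks contained in a single $\nu^{(i)}$, together with the unions of the position-ranges of the $\nu^{(i)}$'s indexed by a block of $\theta$.

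For existence I would distinguish three cases. \emph{Case A:} If $\nu$ is $\oplus$-decomposable, iteratively extract the longest $\oplus$-indecomposable prefix to write $\nu = \oplus[\nu^{(1)}, \ldots, \nu^{(d)}]$ with each $\nu^{(i)}$ being $\oplus$-indecomposable, yielding the decomposition of type (ii). \emph{Case B:} symmetric, producing type (iii). \emph{Case C:} If $\nu$ is both $\oplus$- and $\ominus$-indecomposable, let $d \geq 2$ be the smallest integer such that $\nu = \theta[\nu^{(1)}, \ldots, \nu^{(d)}]$ for some permutation $\theta$ of size $d$; this is well-defined since $d = n, \theta = \nu$ works. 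The indecomposability assumptions rule out $d = 2$ (which would force $\theta \in \{12, 21\}$) and $d = 3$ (since every size-$3$ permutation is $\oplus$- or $\ominus$-decomposable, making $\nu$ so as well); thus $d \geq 4$. Minimality of $d$ then forces $\theta$ to be simple: if $\theta$ had a proper block $I$, collapsing the $\nu^{(i)}$ for $i \in I$ into a single inflation component would give a representation with strictly smaller $d$.

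For uniqueness, the three cases are first shown to be mutually exclusive: cases (ii) and (iii) by definition require $\oplus$- or $\ominus$-decomposability, while in case (i) the block-lifting lemma applied to a simple $\theta = \alpha$ of size $d \geq 4$ shows that the only proper blocks of $\nu$ are those lying inside a single $\nu^{(i)}$ or equal to the range of some $\nu^{(i)}$, a structure incompatible with either $\oplus$- or $\ominus$-decomposability. Within case (ii), the leftmost component $\nu^{(1)}$ must equal the longest $\oplus$-indecomposable prefix of $\nu$, and induction on the remainder yields uniqueness; case (iii) is symmetric. Within case (i), the position-ranges of the $\nu^{(i)}$'s are uniquely recovered as the equivalence classes of the relation on $[n]$ generated by ``belonging to a common proper block'' (an equivalence relation precisely by the partitive family property), after which $\alpha$ is determined as the pattern they induce. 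The main obstacle is this last step: one must use simplicity of $\alpha$ together with the interplay between the partitive family and block-lifting lemmas to ensure that the canonical top-level blocks truly partition $[n]$ rather than overlap, which is what ultimately justifies the canonical nature of the decomposition.
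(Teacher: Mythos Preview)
The paper does not give a proof of this theorem; it attributes the result to Albert and Atkinson and cites \cite{albert2005simple} (see the remark following the statement). Your proposal is the standard argument and is essentially correct.

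One technical point deserves care: your ``block lifting'' lemma is \emph{false} as stated for general $\theta$. For instance, with $\theta=12$ and $\nu^{(1)}=\nu^{(2)}=12$ (so $\nu=1234$), the set $\{2,3\}$ is a block of $\nu$ that is neither contained in a single $\nu^{(i)}$ nor a union of position-ranges indexed by a block of $\theta$. What \emph{is} true, and what you actually use, is the special case where $\theta=\alpha$ is simple: then every block of $\alpha[\nu^{(1)},\dots,\nu^{(d)}]$ is either contained in some position-range $R_i$ or equals $[n]$. (Sketch: if a block $B$ meets at least two $R_i$'s, the index set $J=\{i:R_i\cap B\neq\emptyset\}$ is an interval of $[d]$ and one checks $\alpha(J)$ is also an interval, so $J=[d]$ by simplicity; then the fact that $\alpha(1),\alpha(d)\notin\{1,d\}$ for simple $\alpha$ forces $B=[n]$.) Since both your uses of the lemma---mutual exclusivity of (i) with (ii)/(iii), and uniqueness within (i)---only invoke it for simple $\alpha$, your argument stands once the lemma is restricted accordingly. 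A second minor point: the relation generated by ``belonging to a common proper block'' is an equivalence relation simply because it is a transitive closure; the partitive property is not needed for that step.
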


\begin{rem}
	The theorem above is essentially Proposition 2 in \cite{albert2005simple}, presented with a slightly different point of view.
	The decomposition according to \cref{Th:AlbertAtkinson} is obtained from the one of \cite[Proposition 2]{albert2005simple} 
	by merging maximal sequences of nested substitutions in $12$ (resp.\ $21$) into a substitution in $\oplus$ (resp.\ $\ominus$).
	For example, the second item above for $d=4$ corresponds to $12[\nu^{(1)}, 12[\nu^{(2)},12[\nu^{(3)},\nu^{(4)}]]]$ with the notation of \cite{albert2005simple}. 
	With this obvious rewriting, the statements of \cite[Proposition 2]{albert2005simple} and of \cref{Th:AlbertAtkinson} are trivially equivalent.  
\end{rem}

This decomposition theorem can be applied recursively
inside the permutations $\nu^{(i)}$ appearing in the items above, 
until we reach permutations of size $1$. 
Doing so, a permutation $\nu$ can be naturally encoded by a rooted decorated plane tree $\gls*{can_tree}$ as follows (the notation $\CanTree(\nu)$ stands for \emph{canonical tree}):\label{def:cantree} 

\begin{itemize}
	\item If $\nu=1$ is the unique permutation of size $1$,
	then $\CanTree(\nu)$ is reduced to a single leaf.
	\item If $\nu=\beta[\nu^{(1)},\dots,\nu^{(d)}]$, where $\beta$ is either a simple permutation
	or the increasing (resp.\ decreasing) permutation (denoted by $\oplus$, resp.\ $\ominus$),
	then $\CanTree(\nu)$ has a root of degree $d$
	decorated by $\beta$ and the subtrees attached to the root from left to right
	are $\CanTree(\nu^{(1)})$, \ldots, $\CanTree(\nu^{(d)})$.
\end{itemize}

From the theorem above, the decomposition $\nu=\beta[\nu^{(1)},\dots,\nu^{(d)}]$ exists
and is unique if $|\nu|~\ge~2$. Moreover, $\nu^{(1)},\dots,\nu^{(d)}$ have size smaller than $\nu$
so that this recursive procedure always terminates and its result is unambiguously defined.
The map $\CanTree$ is therefore well-defined.
An example of this construction is shown on \cref{fig:ExCanTree}.

\begin{figure}[htbp]
	\begin{minipage}[c]{0.67\textwidth}
		\centering
		\includegraphics[height=4.5cm]{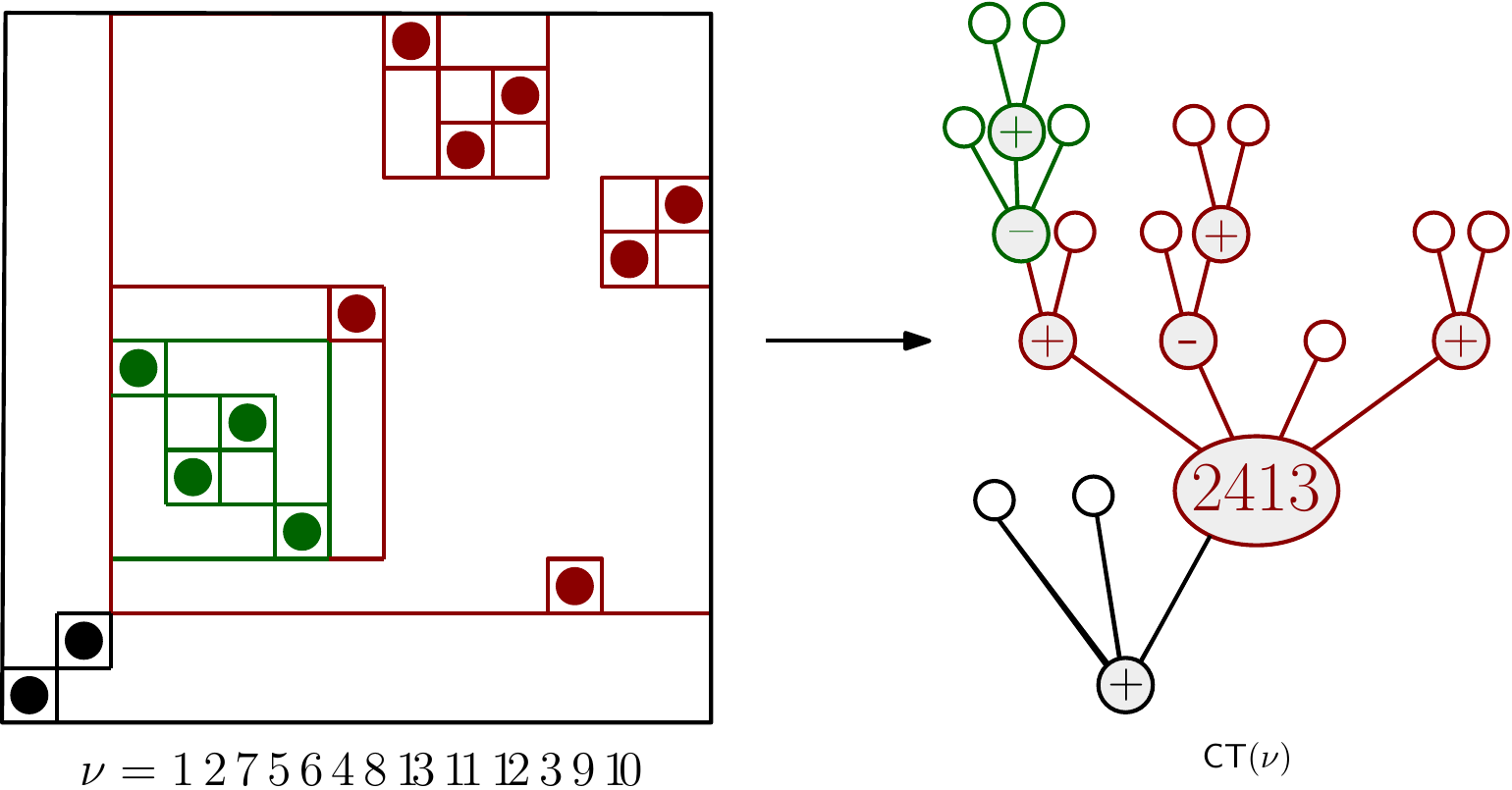}
	\end{minipage}
	\begin{minipage}[c]{0.32\textwidth}
		\caption{A permutation $\nu$ and its decomposition tree $\CanTree(\nu)$.
			To help the reader understand the construction, we have colored accordingly
			some blocks of $\nu$ and some subtrees of $\CanTree(\nu)$.\label{fig:ExCanTree}}
	\end{minipage}
\end{figure}

Since the decoration of each vertex record the permutation $\beta$
in which we substitute, it is clear that $\CanTree$ is injective. 
Moreover, its inverse (once restricted to $\CanTree(\mathcal{S})$) 
is immediate to describe, simply by performing the iterated substitutions recorded in the tree. 
We are just left with identifying the image set of $\CanTree$.
Recall that $\Sall$ denotes the set of all simple permutations, and let $\gls*{mon_perm}$ be the set of all monotone (increasing or decreasing) permutations of size at least $2$. Denote $\gls*{simple_mon_perm}$.
\begin{defn}\label{defintro:CanonicalTree}
	A \emph{canonical tree}  is an $\widehat{\Sall}$-decorated tree
	in which we cannot find two adjacent vertices both decorated
	with increasing permutations (i.e.\ with $\oplus$) or both decorated with decreasing permutations (i.e.\ with $\ominus$).
\end{defn}
Canonical trees are also known in the literature under several names:
decomposition trees, substitution trees,\ldots
We choose the term {\em canonical}
to be consistent with \cite{bassino2017universal}.
The following is an easy consequence of \cref{Th:AlbertAtkinson}.
\begin{prop}\label{prop:Can_tree}
	The map $\CanTree$ defines a size-preserving bijection from the set of
	all permutations to the set of all canonical trees,
	the size of a tree being its number of leaves.
\end{prop}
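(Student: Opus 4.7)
The plan is to verify that $\CanTree$ (i) is well-defined and size-preserving, (ii) lands in the set of canonical trees, (iii) is injective, and (iv) is surjective onto canonical trees. The construction on page~\pageref{def:cantree} already gives well-definedness: \cref{Th:AlbertAtkinson} guarantees that at each recursive step the decomposition $\nu = \beta[\nu^{(1)},\dots,\nu^{(d)}]$ exists and is unique, and the recursion terminates because the $|\nu^{(i)}|$ are strictly smaller than $|\nu|$. Size preservation is a straightforward induction on $|\nu|$: the base case $|\nu|=1$ gives one leaf, and the inductive step uses $|\nu| = \sum_i |\nu^{(i)}|$ together with the fact that the leaves of $\CanTree(\nu)$ are the disjoint union of the leaves of the $\CanTree(\nu^{(i)})$.

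Next I would show that the image is contained in the set of canonical trees. Suppose for contradiction that some vertex $v$ of $\CanTree(\nu)$ is decorated $\oplus$ and has a child $v'$ also decorated $\oplus$. Tracing the recursion, $v$ corresponds to a sub-permutation $\mu = \oplus[\mu^{(1)},\dots,\mu^{(d)}]$ of $\nu$, and $v'$ corresponds to some $\mu^{(i)}$ which, having its own root decorated $\oplus$, is itself of the form $\oplus[\,\cdot\,,\dots,\,\cdot\,]$. But by the first bullet of \cref{Th:AlbertAtkinson} the $\mu^{(i)}$ are required to be $\oplus$-indecomposable, a contradiction. The case $\ominus/\ominus$ is symmetric, and no other adjacency pattern is forbidden by the canonicity condition.

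For injectivity, I would argue that from $\CanTree(\nu)$ one can recover $\nu$ by iterating the natural inverse operation: read the decoration $\beta$ at the root and, by induction on the subtrees, reconstruct the children $\nu^{(1)},\dots,\nu^{(d)}$, then form $\beta[\nu^{(1)},\dots,\nu^{(d)}]$. The uniqueness clause of \cref{Th:AlbertAtkinson} ensures that this reconstruction yields the original $\nu$, so $\CanTree$ is injective and in fact the inverse $\psi$ on its image is well-defined.

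Finally, for surjectivity onto canonical trees, I would extend $\psi$ to \emph{every} canonical tree $T$ by the same recursive rule: if $T$ has root decoration $\beta$ and subtrees $T_1,\dots,T_d$, put $\psi(T) = \beta[\psi(T_1),\dots,\psi(T_d)]$, with $\psi$ of a single leaf being the permutation $1$. It then suffices to prove, by induction on the number of vertices of $T$, that $\CanTree(\psi(T)) = T$. The only real point is to check that the decomposition used to define $\CanTree(\psi(T))$ agrees with the top layer of $T$: if $\beta$ is simple (of size $\ge 4$) this is immediate; if $\beta = \oplus$ we must know that each $\psi(T_i)$ is $\oplus$-indecomposable, and here is where the canonicity of $T$ is essential, since $\psi(T_i)$ is $\oplus$-decomposable precisely when the root of $T_i$ carries $\oplus$, which the canonical condition forbids. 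The case $\beta = \ominus$ is identical. The main subtlety of the whole argument lies in this last step, where the non-adjacency condition on $\oplus$ and $\ominus$ decorations is exactly what is needed to match the uniqueness in \cref{Th:AlbertAtkinson}.
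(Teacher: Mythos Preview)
Your proof is correct and follows exactly the approach the paper has in mind. The paper does not give a detailed proof at all: it simply declares the proposition to be ``an easy consequence of \cref{Th:AlbertAtkinson}'', having already observed just before the definition of canonical trees that injectivity is clear and that the inverse is obtained by performing the iterated substitutions recorded in the tree. Your argument is a faithful expansion of precisely these remarks, with the key point---that the canonicity condition on adjacent $\oplus/\oplus$ or $\ominus/\ominus$ decorations is exactly what matches the $\oplus$- and $\ominus$-indecomposability requirements in \cref{Th:AlbertAtkinson}---correctly identified.
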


\begin{rem}
	\label{rk:CT-1}
	We note that the inverse map $\CanTree^{-1}$, which builds a permutation from a canonical tree performing nested substitutions, 
	can obviously be extended to all $\widehat{\Sall}$-decorated trees, regardless of whether they contain $\oplus - \oplus$ or $\ominus - \ominus$ edges. 
	However, $\CanTree^{-1}$ is no longer injective on this larger class of ``non-canonical'' decomposition trees.
\end{rem}

We will be interested in the restriction of $\CanTree$
to some permutation class $\mathcal C$.
The following condition ensures that its image $\CanTree(\mathcal C)$
has a nice description.

\begin{defn}%
	\label{def:substclosed}
	A permutation class $\mathcal{C}$ is \emph{substitution-closed}
	if  for every $\theta, \nu^{(1)},\dots,\nu^{(d)}$ in $\mathcal{C}$ it holds that
	$\theta[\nu^{(1)},\dots,\nu^{(d)}] \in \mathcal{C}$.
\end{defn}

Let $\mathcal{C}$ be a substitution-closed permutation class, and assume\footnote{Otherwise, 
	$\mathcal{C} \subseteq \{12\ldots k : k \geq 1 \}$ or 
	$\mathcal{C} \subseteq \{k\ldots 21 : k \geq 1 \}$ and these cases are trivial.} that $12,21 \in \mathcal{C}$. 
Denote by $\gls*{simple_sub}$ the set of simple permutations in $\mathcal{C}$ (note that $\mathfrak{S}$ depends on $\mathcal{C}$; since $\mathcal{C}$ will be always fixed, we prefer to not highlight this dependence in the notation).

\begin{prop}
	\label{prop:treesOfSubsClosedClasses} 
	The set of canonical trees encoding permutations of $\mathcal{C}$ 
	is the set of canonical trees with decorations in
	$\widehat{\mathfrak{S}}:=\mathfrak{S} \cup \MMM$. 
\end{prop}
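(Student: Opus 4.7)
The plan is to prove both inclusions of the claimed equality by induction on the size of the tree (equivalently, of the permutation), relying on two simple facts: (i) permutation classes are downward-closed under pattern containment, so in particular every ``decoration'' appearing in the canonical tree of a permutation $\nu \in \mathcal{C}$ is itself a pattern of $\nu$, hence lies in $\mathcal{C}$; (ii) substitution-closure together with $12, 21 \in \mathcal{C}$ forces every monotone permutation to belong to $\mathcal{C}$ (by an immediate induction $\oplus_d = \oplus[\oplus_{d-1}, 1] \in \mathcal{C}$, and similarly for $\ominus_d$). Point (ii) tells us that $\mathcal{M} \subseteq \mathcal{C}$, while by definition $\mathfrak{S} \subseteq \mathcal{C}$, so $\widehat{\mathfrak{S}} \subseteq \mathcal{C}$.

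For the forward inclusion, I would take $\nu \in \mathcal{C}$ and argue by induction on $|\nu|$. If $|\nu|=1$, the canonical tree is a leaf and there is nothing to check. Otherwise write $\nu = \beta[\nu^{(1)}, \ldots, \nu^{(d)}]$ as in \cref{Th:AlbertAtkinson}, so $\beta \in \widehat{\Sall}$. Picking one representative entry from each block $\nu^{(i)}$ yields an occurrence of $\beta$ as a pattern in $\nu$, hence $\beta \in \mathcal{C}$; combined with $\beta \in \widehat{\Sall}$, this gives $\beta \in \mathfrak{S} \cup \mathcal{M} = \widehat{\mathfrak{S}}$. Each $\nu^{(i)}$ is also a pattern of $\nu$, hence lies in $\mathcal{C}$ and, being smaller than $\nu$, has a canonical tree with decorations in $\widehat{\mathfrak{S}}$ by the induction hypothesis. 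Thus the root of $\CanTree(\nu)$ is decorated by an element of $\widehat{\mathfrak{S}}$, and so are all internal vertices of the subtrees; the ``canonical'' adjacency constraint is automatic since $\CanTree(\nu)$ is canonical by \cref{prop:Can_tree}.

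For the reverse inclusion, I would take a canonical tree $T$ with all decorations in $\widehat{\mathfrak{S}}$ and show $\CanTree^{-1}(T) \in \mathcal{C}$ by induction on the number of internal vertices of $T$. If $T$ is a leaf, $\CanTree^{-1}(T) = 1 \in \mathcal{C}$. Otherwise, let $\beta \in \widehat{\mathfrak{S}}$ be the root decoration and $T^{(1)}, \ldots, T^{(d)}$ the subtrees attached to the root; each $T^{(i)}$ is itself a canonical tree with decorations in $\widehat{\mathfrak{S}}$, so by induction $\nu^{(i)} := \CanTree^{-1}(T^{(i)}) \in \mathcal{C}$. Since $\widehat{\mathfrak{S}} \subseteq \mathcal{C}$ by facts (i)–(ii) above, $\beta \in \mathcal{C}$, and substitution-closure of $\mathcal{C}$ yields $\CanTree^{-1}(T) = \beta[\nu^{(1)}, \ldots, \nu^{(d)}] \in \mathcal{C}$.

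There is no serious obstacle here: the only point requiring minimal attention is the verification that decorations are indeed patterns of $\nu$ (so that the class-closure argument applies), and the trivial induction for $\mathcal{M} \subseteq \mathcal{C}$. Everything else is bookkeeping, relying on \cref{Th:AlbertAtkinson} and \cref{prop:Can_tree}.
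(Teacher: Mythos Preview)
Your proof is correct and complete; the two inductions you set up are exactly the natural way to establish both inclusions, and the supporting facts (decorations and blocks are patterns of $\nu$; $\MMM\subseteq\mathcal{C}$ via $12,21\in\mathcal{C}$ and substitution-closure) are all valid. The paper itself does not give a proof but simply refers the reader to \cite[Lemma 11]{albert2005simple}, so your argument in fact supplies the details the paper omits, and it follows the same standard route one finds in that reference.
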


If necessary, details can be found in~\cite[Lemma 11]{albert2005simple}.

\subsubsection{Packed decomposition trees}
\label{sect:Packed_decomposition_trees}

{\em 
	From now until the end of this section we assume that $\mathcal{C}$ is a \emph{proper} substitution-closed class, that is we exclude the case where $\mathcal{C}$ is the class of all permutations. To avoid trivial cases, we furthermore assume that $12, 21 \in \mathcal{C}$. We denote with  $\mathfrak{S}$ the set of simple permutations in $\mathcal{C}$.}

\medskip

The assumption that we are working in $\mathcal{C}$ rather than in the set of all permutations is however often tacit: 
for example, we simply refer to canonical trees instead of canonical trees with decorations in $\widehat{\mathfrak{S}}=\mathfrak{S} \cup \MMM$.  We let $\gls*{set_can_trees}$ denote the collection of  canonical trees with decorations in $\widehat{\mathfrak{S}}$, and $\gls*{set_can_trees_not}$ the subset of canonical trees with a root that is \emph{not} decorated $\oplus$.

\medskip

In this section we introduce a new family of trees called ``packed trees''
and describe a bijection between the collection $\mathcal{T}_{\nonp} \subset \mathcal{T}$ and packed trees.
Packed trees are decorated trees,
whose decorations are themselves trees.
Let us define these decorations, that we call {\em gadgets}.

\begin{defn}\label{def:S_plus_decoration}
	An $\mathfrak{S}$-gadget is an $\widehat{\mathfrak{S}}$-decorated tree of height at most $2$ such that: 
	\begin{itemize}
		\item The root is an internal vertex decorated by a simple permutation;
		\item The children of the root are either leaves or decorated by an increasing permutation.  
	\end{itemize}	
	The size of a gadget is its number of leaves. 
\end{defn}
We denote with $\gls*{gadget}$ the set of $\mathfrak{S}$-gadgets. An example of size 7 is shown on \cref{fig:S/+-decoration}.

\begin{figure}[htbp]
	\begin{minipage}[c]{0.5\textwidth}
		\centering
		\includegraphics[height=2.5cm]{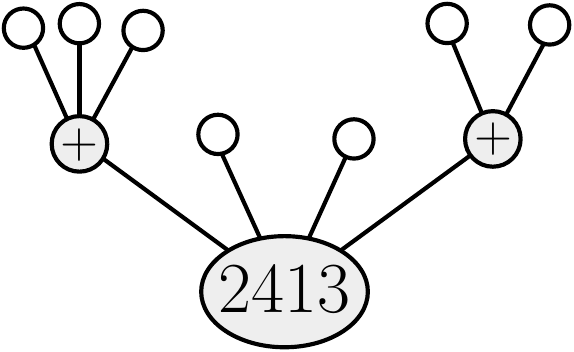}
	\end{minipage}
	\begin{minipage}[c]{0.5\textwidth}
		\caption{An $\mathfrak{S}$-gadget.\label{fig:S/+-decoration}}
	\end{minipage}
\end{figure}

Finally, let $\gls*{gadget_and_mon} = \GGG(\mathfrak{S})\cup\{\circledast_k, k \ge 2\}$,
where for each integer $k\ge 2$ the object $\circledast_k$ has size~$k$. 
To shorten notation,  $\widehat{\GGG(\mathfrak{S})}$ is sometimes denoted $\mathcal{Q}$ in the following. 

\begin{defn}
	An \emph{$\mathfrak{S}$-packed tree} is a $\widehat{\GGG(\mathfrak{S})}$-decorated tree,
	its size being its number of leaves.
\end{defn}

\begin{rem}
	We will often refer to $\mathfrak{S}$-packed trees simply as packed trees since in our analysis, the substitution-closed class $\mathcal{C}$ and 
	its set of simple permutations $\mathfrak{S}$ will be fixed.
\end{rem}

An example of packed tree is shown on the right-hand side of \cref{fig:packed_tree_bij}.

\begin{rem}
	Note that in \cref{fig:packed_tree_bij} the subscript $k$ is not reported in the vertices decorated by an element in $\{\circledast_k, k \ge 2\}.$ 
	Indeed, it can be easily recovered by counting the number of children of the vertex. 
\end{rem}

We now describe a bijection between canonical trees with a root that is not decorated with $\oplus$ 
and packed trees. Given a tree $T\in\mathcal{T}_{\nonp}$ the corresponding packed tree $\Pack(T)$ is obtained modifying $T$ as follows. 
\begin{itemize}
	\item For each internal vertex $v$ of $T$ decorated by a simple permutation, we build an $\mathfrak{S}$-gadget $G_v$
	whose internal vertices are $v$ and the $\oplus$-children of $v$, 
	the parent-child relation in $G_v$ and the left-to-right order between children are inherited from the ones in $T$, 
	and we add leaves so that the out-degree of each internal vertex is the same in $G_v$ as in $T$.
	Then, in $\Pack(T)$, we merge $v$ and the $\oplus$-children of $v$ into a single vertex decorated by $G_v$. 
	\item The remaining vertices of $T$, decorated by $\ominus_k$ or $\oplus_k$,
	are decorated by $\circledast_k$ instead.
\end{itemize}
An example is given on \cref{fig:packed_tree_bij}.

\begin{figure}[ht]
	\centering
		\includegraphics[height=5.8cm]{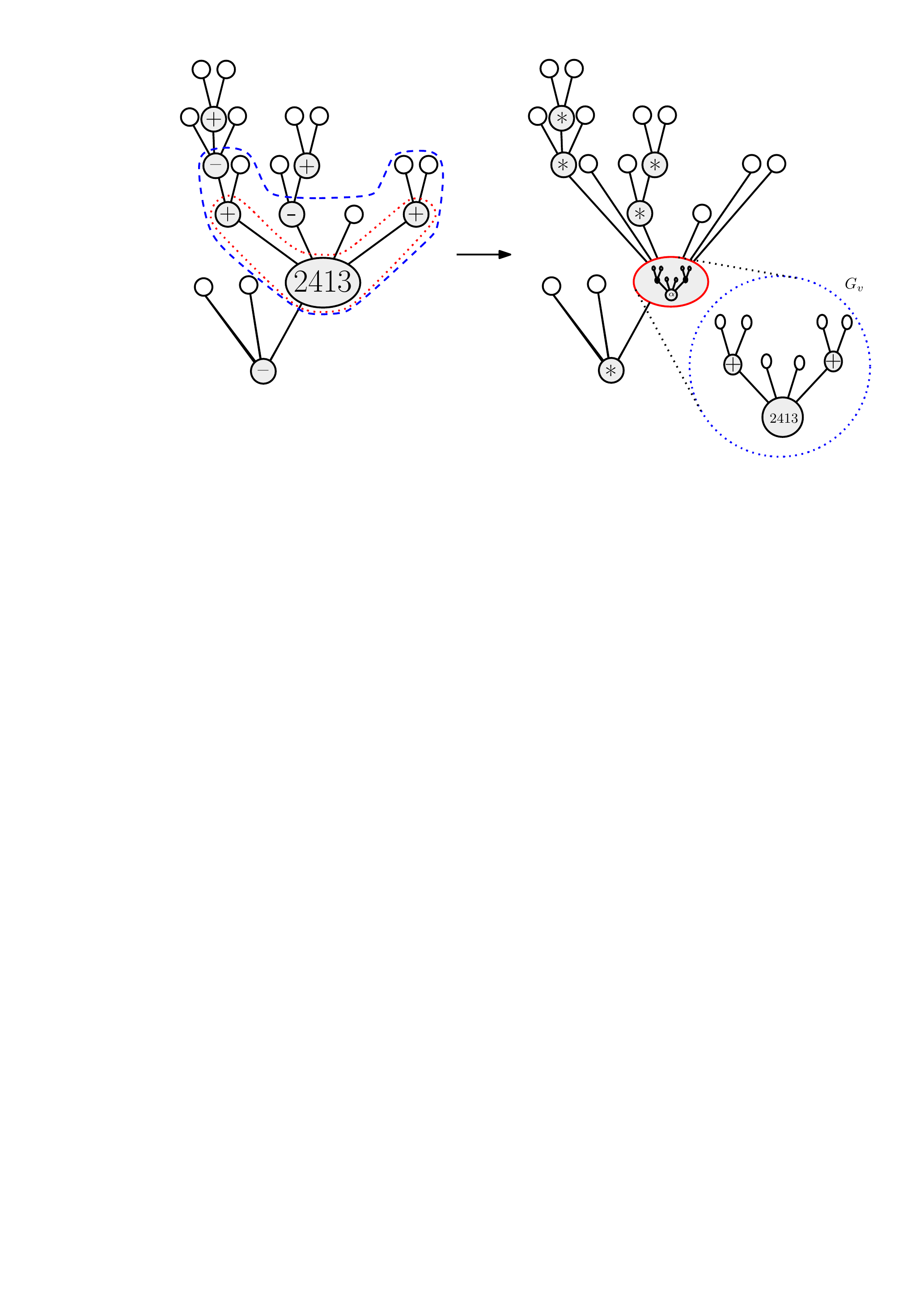}
	\caption{A canonical tree $T$ with the corresponding packed tree $\Pack(T)$. The red dotted line highlights the vertices in $T$ that are merged in the red vertex in $\Pack(T).$ The blue dashed line records the subtree in $T$ that determines the decoration $G_v$ in the tree $\Pack(T)$.}
	\label{fig:packed_tree_bij}
\end{figure}
\begin{prop}[{\cite[Proposition 2.16]{borga2020decorated}}]\label{prop:pack_tree}
	The map $\Pack$ defines a size-preserving bijection from the set $\mathcal{T}_{\nonp}$
	of canonical trees with a root that is not decorated $\oplus$ to the set of $\mathfrak{S}$-packed trees.
\end{prop}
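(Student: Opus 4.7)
The plan is to establish bijectivity by constructing an explicit inverse $\Pack^{-1}$ and verifying that the two maps cancel. I would proceed in four steps.

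First, I would check that $\Pack$ is well-defined: for $T \in \mathcal{T}_{\nonp}$, every object $G_v$ built from a simple-decorated vertex $v$ of $T$ is a genuine $\mathfrak{S}$-gadget (root decorated by some $\alpha \in \mathfrak{S}$, children either leaves or $\oplus$-decorated with leaves below them), and every remaining $\oplus$- or $\ominus$-vertex of $T$ carries a well-defined arity, so that relabelling it by $\circledast_k$ is consistent with \cref{defn:decorated_tree}. The out-degree of the merged vertex in $\Pack(T)$ equals the number of leaves of $G_v$, because both count the same underlying set: the non-$\oplus$ children of $v$ together with the children of its $\oplus$-children. Size preservation is immediate: no leaf of $T$ is ever merged (only internal $\oplus$-children of simple vertices are absorbed), and no new leaf of $\Pack(T)$ is created outside of the gadget decorations themselves.

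Second, I would construct $\Pack^{-1}$. On the ``gadget part'' this is forced: each gadget-vertex is re-expanded into its simple-root and its $\oplus$-internal vertices, the rest of the structure being recorded in the decoration. The delicate part is recovering, for each $\circledast_k$-vertex of a packed tree $P$, whether it represents $\oplus_k$ or $\ominus_k$. I would exploit the canonical-tree property combined with the constraint that the root of a tree in $\mathcal{T}_{\nonp}$ is not $\oplus$. Specifically: (i) if a $\circledast_k$-vertex is the root of $P$, it must be $\ominus_k$; (ii) if its parent in $P$ is a gadget-vertex, it must also be $\ominus_k$, since its parent-in-$T$ is either the simple-root of that gadget or one of its $\oplus$-children, and in either case canonicity forbids it to be $\oplus$; (iii) if its parent is another $\circledast$-vertex, it must carry the opposite sign. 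This yields a deterministic top-down rule along each maximal chain of $\circledast$-vertices that uniquely determines $\Pack^{-1}(P)$, and one verifies $\Pack^{-1}(P) \in \mathcal{T}_{\nonp}$ directly from the rule (root not $\oplus$; adjacent monotone decorations automatically alternate).

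Third, I would verify both composition identities. For $\Pack^{-1} \circ \Pack = \mathrm{id}$, the gadget decoration exactly records the simple root together with its $\oplus$-children, so unpacking recovers them; moreover, the sign-recovery rule assigns $\ominus$ precisely to those $\circledast$-vertices whose parent in $P$ is a gadget-vertex or the root, matching the vertices of $T$ that survived packing (all absorbed $\oplus$'s are inside gadgets). For $\Pack \circ \Pack^{-1} = \mathrm{id}$, the $\oplus$-children of simple vertices produced by $\Pack^{-1}$ are re-absorbed into the same gadgets, and the remaining monotone vertices are rewritten as $\circledast$, returning $P$. The main obstacle, and really the only step requiring care, is step two: proving that the $\oplus$-versus-$\ominus$ ambiguity in the $\circledast$-decoration can always be resolved unambiguously. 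The hypothesis that the root of $T$ is not $\oplus$ (so $T \in \mathcal{T}_{\nonp}$ rather than all of $\mathcal{T}$) is essential here, as it is exactly what anchors the top of each monotone chain to $\ominus$ and breaks the $\oplus/\ominus$ symmetry.
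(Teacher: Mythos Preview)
Your proposal is correct and follows the natural route. The paper does not actually include a proof of this proposition (it is only cited from the external reference), but your sign-recovery rule for $\circledast$-vertices is precisely the one the paper spells out later in \cref{ssec:patterns_subtrees} when explaining how to read patterns from packed trees, so your argument is fully aligned with the paper's viewpoint. One small wording issue in case~(ii): when the parent-in-$T$ of a $\circledast$-vertex is the simple root of the gadget, it is not canonicity that rules out $\oplus$ (a simple vertex may well have $\oplus$-children), but rather the packing rule itself---had the vertex been $\oplus$, it would have been absorbed into the gadget and would not appear as a separate $\circledast$-vertex. The conclusion is unaffected.
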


\begin{rem}
	\label{rk:Leaves_Elements}
	If $T$ (or $P$) is a tree  with $n$ leaves,
	we can label its leaves with number from $1$ to $n$
	using a depth-first traversal of the tree from left to right.
	Then the $i$-th leaf
	of the canonical or packed tree associated with a permutation
	$\nu$ corresponds to the $i$-th element in the one-line notation
	of $\nu$.
	We will use this identification between leaves and elements of the permutations
	later.
\end{rem}

\subsubsection{Permutations as forests of decorated trees}
\label{sec:finalqwerty}

Summing up the results obtained in the previous sections (in particular in Propositions \ref{prop:Can_tree}, \ref{prop:treesOfSubsClosedClasses} and \ref{prop:pack_tree}),
we obtain a bijective encoding of $\oplus$-indecomposable permutations in $\mathcal{C}$:

\begin{lem}
	\label{le:bij_perm_tree}
	The map 
	\[
	\Pack\circ\CanTree: \mathcal{C}_{\nonp} \to \cP
	\]
	is a size-preserving bijection from the set $ \mathcal{C}_{\nonp}$
	of all $\oplus$-indecomposable permutations in $\mathcal{C}$
	to the set $\cP$ of all $\mathfrak{S}$-packed trees.
\end{lem}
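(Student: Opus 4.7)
The plan is to obtain the claimed bijection by composing the two bijections already established in the preceding propositions, after identifying the correct sub-classes that correspond under $\CanTree$.

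First, I would invoke Propositions \ref{prop:Can_tree} and \ref{prop:treesOfSubsClosedClasses} to assert that $\CanTree$ defines a size-preserving bijection from $\mathcal{C}$ (the permutations of the substitution-closed class) onto $\mathcal{T}$ (canonical trees with decorations in $\widehat{\mathfrak{S}}$). The only thing I then need to argue is that this bijection restricts to a bijection between $\mathcal{C}_{\nonp}$ and $\mathcal{T}_{\nonp}$. For this I use the dichotomy in \cref{Th:AlbertAtkinson}: a permutation $\nu \in \mathcal{C}$ of size $\geq 2$ is $\oplus$-indecomposable if and only if its substitution decomposition is not of the form $\oplus[\nu^{(1)},\dots,\nu^{(d)}]$, which by the very construction of $\CanTree$ amounts to saying that the root of $\CanTree(\nu)$ is not decorated by $\oplus$. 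The unique permutation of size $1$ is $\oplus$-indecomposable by convention and is sent to the single-leaf tree, which has no internal root decoration and thus also lies in $\mathcal{T}_{\nonp}$. Hence $\CanTree$ restricts to a size-preserving bijection $\mathcal{C}_{\nonp} \to \mathcal{T}_{\nonp}$.

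Next, I apply \cref{prop:pack_tree}, which states that $\Pack$ is a size-preserving bijection from $\mathcal{T}_{\nonp}$ onto the set $\cP$ of $\mathfrak{S}$-packed trees. Composing with the restricted bijection obtained above yields
\[
\Pack \circ \CanTree \colon \mathcal{C}_{\nonp} \longrightarrow \cP,
\]
which is a size-preserving bijection as the composition of two size-preserving bijections.

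The argument is essentially bookkeeping: the real content lies in the two ingredient propositions, so the only step requiring any care is the verification that $\oplus$-indecomposability on the permutation side matches the ``root not decorated $\oplus$'' condition on the tree side. I do not anticipate any significant obstacle; the main point to state cleanly is the handling of the base case $|\nu|=1$ so that no permutations are lost or double-counted in the restriction.
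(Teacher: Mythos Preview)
Your proposal is correct and follows exactly the approach the paper indicates: the paper simply states that the lemma is obtained by ``summing up the results obtained in the previous sections (in particular in Propositions~\ref{prop:Can_tree}, \ref{prop:treesOfSubsClosedClasses} and \ref{prop:pack_tree})'', and you have spelled out precisely how these three ingredients combine, including the identification of $\mathcal{C}_{\nonp}$ with $\mathcal{T}_{\nonp}$ via \cref{Th:AlbertAtkinson}. There is nothing to add.
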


By \cref{Th:AlbertAtkinson}, any $\oplus$-decomposable permutation corresponds uniquely to a sequence of at least two $\oplus$-indecomposable permutations.
Hence any permutation corresponds bijectively to a non-empty sequence of $\oplus$-indecomposable permutations. 
If we apply the bijection $\Pack\circ\CanTree$  to each we obtain a plane forest of packed trees. 
That is, it is an element of the collection $\Seq_{\ge 1}(\cP)$ of non-empty ordered sequences 
of $\widehat{\mathcal{G}(\mathfrak{S})}$-decorated trees. 
We define the size of such a forest to be the total number of leaves.
The function that maps a permutation of $\mathcal{C}$ to the corresponding forest of packed trees is denoted by $\DF$
($\DF$ stands for decorated forest).
Summing up:

\begin{thm}
	\label{te:bijection}
	The function 
	\begin{equation}
	\DF: \mathcal{C} \to \Seq_{\ge 1}(\cP)
	\end{equation}
	is a size-preserving bijection between the substitution-closed class $\mathcal{C}$ 
	and the set of forests of packed trees.
\end{thm}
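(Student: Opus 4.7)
The plan is to combine Theorem \ref{Th:AlbertAtkinson} with Lemma \ref{le:bij_perm_tree} in an essentially formal way. First, I would note that Theorem \ref{Th:AlbertAtkinson} implies that any permutation $\nu \in \mathcal{C}$ of size $n \ge 1$ decomposes uniquely as a non-empty sequence $(\nu^{(1)}, \ldots, \nu^{(d)})$ of $\oplus$-indecomposable permutations of $\mathcal{C}$ with $\sum_i |\nu^{(i)}| = n$. Indeed, if $\nu$ itself is $\oplus$-indecomposable we take the length-one sequence $(\nu)$; otherwise the third item of Theorem \ref{Th:AlbertAtkinson} gives the decomposition $\nu = \oplus[\nu^{(1)}, \ldots, \nu^{(d)}]$ with $d \ge 2$ and $\nu^{(i)}$'s all $\oplus$-indecomposable, and this decomposition is unique. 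Moreover, since $\mathcal{C}$ is substitution-closed (in particular since $\oplus_d \in \mathcal{C}$ whenever $12 \in \mathcal{C}$), each $\nu^{(i)}$ belongs to $\mathcal{C}$ and conversely any such sequence produces a permutation of $\mathcal{C}$. This yields a size-preserving bijection
\[
\Psi : \mathcal{C} \longrightarrow \Seq_{\ge 1}(\mathcal{C}_{\nonp}).
\]

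Next I would apply Lemma \ref{le:bij_perm_tree}, which provides the size-preserving bijection $\Pack \circ \CanTree : \mathcal{C}_{\nonp} \to \cP$, component by component: sending $(\nu^{(1)}, \ldots, \nu^{(d)})$ to $\bigl(\Pack(\CanTree(\nu^{(1)})), \ldots, \Pack(\CanTree(\nu^{(d)}))\bigr)$ gives a size-preserving bijection
\[
\widetilde{\Psi} : \Seq_{\ge 1}(\mathcal{C}_{\nonp}) \longrightarrow \Seq_{\ge 1}(\cP),
\]
where the size of a forest is the sum of the sizes of its trees, i.e.\ the total number of leaves. Composing, $\DF = \widetilde{\Psi} \circ \Psi$ is the claimed bijection, and size preservation is automatic since both $\Psi$ and $\widetilde{\Psi}$ preserve sizes.

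I do not foresee a serious obstacle: all the substance has already been packed into Theorem \ref{Th:AlbertAtkinson} and Lemma \ref{le:bij_perm_tree}. The only point that requires a brief justification is that the decomposition sequence of $\nu \in \mathcal{C}$ consists of elements of $\mathcal{C}$ (using that $\mathcal{C}$ is a class, hence closed under taking patterns, so every $\nu^{(i)}$, being a pattern of $\nu$, lies in $\mathcal{C}$) and that, conversely, any sequence of $\oplus$-indecomposable elements of $\mathcal{C}$ reassembles into an element of $\mathcal{C}$ (using that $\mathcal{C}$ is substitution-closed and contains $\oplus_d$ for all $d$). Once this is checked, the theorem follows immediately by composing the two bijections.
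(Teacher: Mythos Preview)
Your proposal is correct and follows exactly the approach the paper takes: decompose any $\nu\in\mathcal{C}$ into its sequence of $\oplus$-indecomposable components via Theorem~\ref{Th:AlbertAtkinson}, then apply the bijection $\Pack\circ\CanTree$ of Lemma~\ref{le:bij_perm_tree} componentwise. You are even slightly more explicit than the paper in justifying why the components $\nu^{(i)}$ lie in $\mathcal{C}$ and why every sequence reassembles into $\mathcal{C}$.
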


\subsection{Reading patterns in trees}
\label{ssec:patterns_subtrees}
Let us consider a permutation $\nu$ in $\mathcal C_{\nonp}$ and the associated canonical and packed trees
$T=\CanTree(\nu)$ and $P=\Pack(T)$.
Let $I$ be a subset of $[n]$. 
Using \cref{rk:Leaves_Elements},
$I$ can be seen as a subset of the leaves of $T$ (or $P$).
The purpose of this section is to explain how to read out the pattern $\pi = \pat_I(\nu)$ on the trees $T$ or $P$. This will be a fundamental tool to prove later B--S and permuton convergence for uniform permutations in substitution-closed classes.

Let us first note that a pattern $\pi = \pi(1) \dots \pi(k)$ is entirely determined when we know,
for each $i_1<i_2$, whether $\pi(i_1) \pi(i_2)$ forms an \emph{inversion} 
(i.e.\ an occurrence of the pattern $21$) or a non-inversion (occurrence of $12$). 
Therefore, to read patterns on $T$ (or $P$),
we should explain how to determine,
for any two leaves $\ell_1$ and $\ell_2$ of $I$, whether
the corresponding elements of $\nu$ form an inversion or not
(in the sequel, we will simply say that $\ell_1$ and $\ell_2$ form an inversion,
and not refer anymore to the corresponding elements of $\nu$). 
\medskip

Looking at $T$, this is rather simple.
We consider the closest common ancestor of $\ell_1$ and $\ell_2$, call it $v$.
By definition, $\ell_1$ and $\ell_2$ are descendants of different children of $v$, 
say the $i_1$-th and $i_2$-th.
Then the following holds: $\ell_1$ and $\ell_2$ form an inversion in $\nu$
if and only if $i_1$ and $i_2$ form an inversion in the decoration $\beta$ of $v$.
\medskip

Let us now look at $P$. We consider the closest common ancestor $u\in P$ 
of $\ell_1$ and $\ell_2$ and as before,
we assume that $\ell_1$ and $\ell_2$ are descendants of the $i_1$-th and $i_2$-th children
of $u$. 
Note that, in the packing bijection, the vertex $u$
corresponds to $v$ (the common ancestor of $\ell_1$ and $\ell_2$ in $T$)
potentially merged with other vertices.

Consider first the case that $u$ is decorated by an $\mathfrak{S}$-gadget $G$.
Then $G$ contains the information of the decoration of all vertices merged into $u$,
including $v$.
Therefore, whether $\ell_1$ and $\ell_2$ form an inversion in $\nu$
can be determined by looking at the $i_1$-th and $i_2$-th leaves 
of the gadget $G$ (see \cref{ex:reconstruction_pattern} and \cref{fig:reconstruction_pattern}).

If on the contrary $u$ is not decorated by an $\mathfrak{S}$-gadget but by a $\circledast$, 
we need to determine whether $v$ is decorated with $\oplus$ (implying that $\ell_1$ and $\ell_2$ form a non-inversion) or $\ominus$ (resp., an inversion). 

Assume first that there is a closest ancestor $u'$ of $u$ 
that is decorated with an $\mathfrak{S}$-gadget. 
In this case, we claim that
$v$ is decorated by $\ominus$ if $d(u,u')$ is odd, and it is decorated by $\oplus$ if $d(u,u')$ is even.
Indeed, decorations $\oplus$ and $\ominus$ alternate,
and, by construction of the packing bijection,
the children of an $\mathfrak{S}$-gadget are decorated by $\ominus$.

It remains to analyse the case where $u$ is decorated by $\circledast$, 
as well as all vertices on the path from $u$ to the root $r$ of $P$. 
By construction, this implies that the root of $T$ is decorated by $\ominus$ (recall that $T\in\mathcal{T}_{\nonp}$). 
So, using again the alternation of $\oplus$ and $\ominus$ in $T$, 
the decoration of $v\in T$ is $\ominus$ if $d(u,r)$ is even, 
and $\oplus$ if $d(u,r)$ is odd.
\medskip

We note in particular that the pattern induced by a set $I$ of leaves in $P$
is determined by any fringe subtree containing all leaves of $I$
and {\em rooted at any vertex decorated with an $\mathfrak{S}$-gadget}.

\begin{exmp}\label{ex:reconstruction_pattern}
	Let  $\nu = 13 \; 12 \; 5\; 3\; 4\; 2\; 6\; 11\; 9\; 10\; 1\; 7\; 8$ be a permutation in $\mathcal C_{\nonp}$ with associated canonical and packed trees $T=\CanTree(\nu)$ and $P=\Pack(T)$ shown in Fig.~\ref{fig:reconstruction_pattern}. 
	We explain in the following example how to read out in $P$ the pattern induced by the leaves $\ell_1$, $\ell_2$ and $\ell_3$. 
	
	The closest common ancestor $u\in P$ of $\ell_1$ and $\ell_2$ is decorated with a $\circledast$, 
	which is at distance $1$ from its closest ancestor decorated with an $\mathfrak{S}$-gadget.
	We can conclude that the leaves $\ell_1$ and $\ell_2$ induce an inversion
	(the closest ancestor $v$ of $\ell_1$ and $\ell_2$ in $T$ carries a $\ominus$ decoration).
	
	Now consider $\ell_1$ and $\ell_3$. Their closest common ancestor $u'$ in $P$
	is decorated with an $\mathfrak{S}$-gadget. 
	Note $\ell_1$ and $\ell_3$ are descendants of the first and fifth
	children of this $\mathfrak{S}$-gadget; the corresponding leaves of the $\mathfrak{S}$-gadget
	have the vertex decorated by $2413$ as common ancestor and are attached to the branches
	corresponding to $2$ and $3$.
	We deduce that $\ell_1$ and $\ell_3$ do not form an inversion in $\nu$.
	Similarly, $\ell_2$ and $\ell_3$ do not form an inversion either in $\nu$.
	
	Putting all together, the pattern induced by $\ell_1$, $\ell_2$ and $\ell_3$ is $213$.
	Let us check that it is indeed the case, by reading this pattern on the permutation.
	These three leaves correspond to the 4th, 6th and 12th elements of the permutation respectively,
	which have values 3, 2 and 7. The induced pattern is indeed $213$.
\end{exmp}

\begin{figure}[htbp]
	\begin{minipage}[c]{0.64\textwidth}
		\centering
		\includegraphics[height=5.3cm]{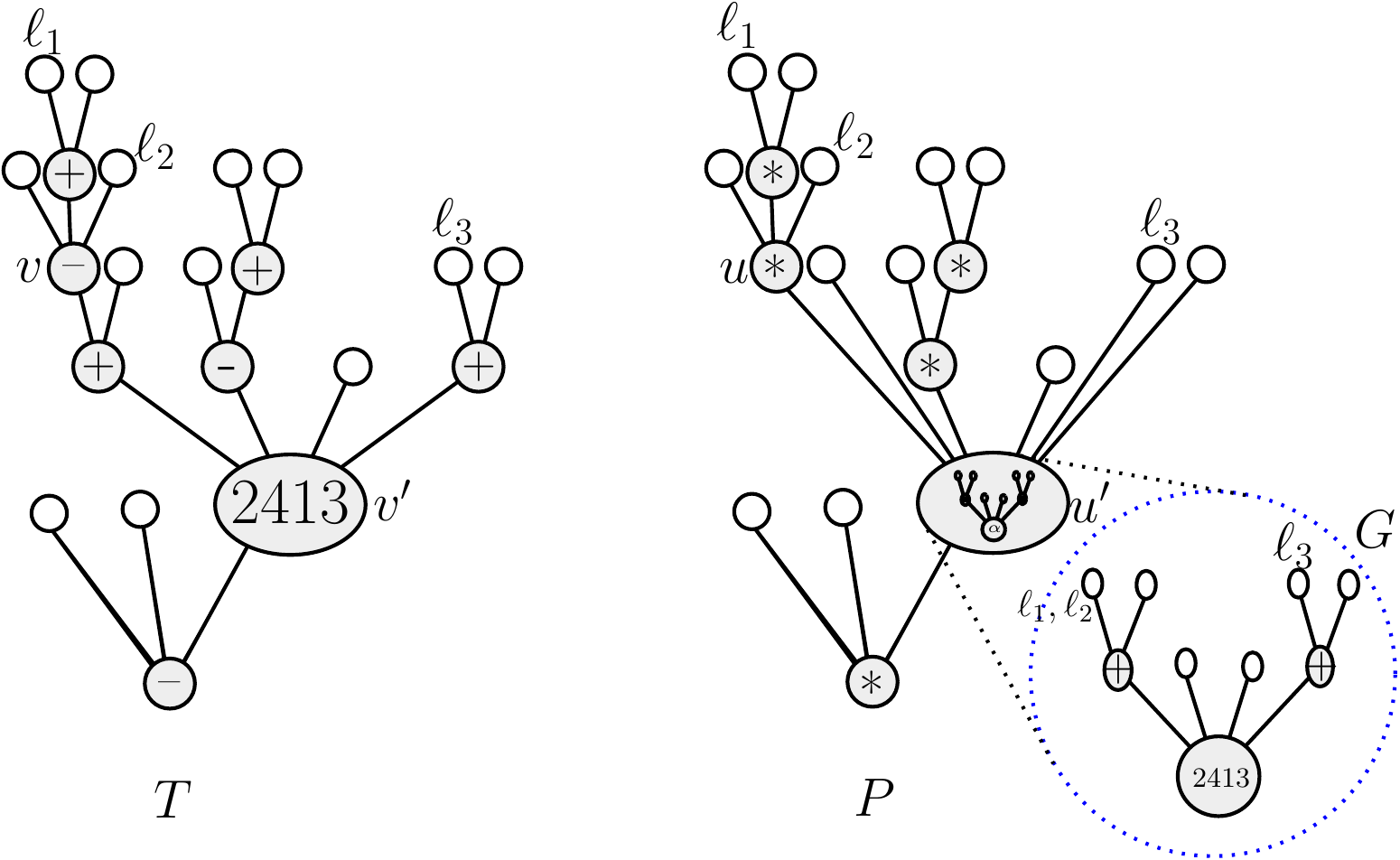}
	\end{minipage}
	\begin{minipage}[c]{0.35\textwidth}
		\caption{Reading patterns from trees -- see \cref{ex:reconstruction_pattern}.\label{fig:reconstruction_pattern}}
	\end{minipage}
\end{figure}

\subsection{Random permutations as conditioned Galton--Watson trees}
\label{sec:indecomposable_GW}

Theorem~\ref{te:bijection} allows us to see a uniform random  permutation $\bm{\nu}_n$ of size $n$ in the substitution-closed permutation class $\mathcal{C}$ as a uniform random forest of packed trees with $n$ leaves.\label{def:bmnu}
In \cref{subsec:Large_Indec_Component} we recall some results from \cite{borga2020decorated} to show that a giant component with size $n - O_p(1)$ emerges. This goal is achieved in Proposition~\ref{prop:giant_comp_perm}. 
This reduces the study of $\bm{\nu}_n$ to that of a uniform random packed tree with $n$ vertices. Finally, in \cref{subsec:PackedTrees_GW,subsec:PackedTrees_GW2}, we explain how to sample a uniform random packed tree with $n$ leaves
as a randomly decorated  Galton--Watson tree conditioned on having $n$ leaves.

\subsubsection{A giant $\oplus$-indecomposable component}

\label{subsec:Large_Indec_Component}

Let $\nu$ be a permutation in the proper substitution-closed class $\mathcal C$.
From \cref{Th:AlbertAtkinson}, we know that 
\begin{itemize}
	\item either $\nu$ is $\oplus$-indecomposable,
	\item or $\nu$ can be uniquely written as $\nu=\oplus[\nu^{(1)},\ldots,\nu^{(d)}]$, 
	where $d \ge 2$ and $\nu^{(1)},\ldots,\nu^{(d)} \in \mathcal{C}_{\nonp}$, the set of $\oplus$-indecomposable permutations of $\mathcal{C}$.
\end{itemize}
In the first case, we set $d=1$ and $\nu^{(1)}=\nu$ for convenience. Recall that \cref{le:bij_perm_tree} allows us to identify the classes $\mathcal{C}_{\nonp}$ and $\cP$. Using Gibbs partition results~\cite[Thm. 3.1]{stufler2016gibbs} one can obtain the following result:
\begin{prop}[{\cite[Proposition 3.2]{borga2020decorated}}]\label{prop:giant_comp_perm}
	Let $\bm \nu_n$ be a uniform permutation of size $n$ in $\mathcal C$ and define $\bm d$, $\bm{\nu^{(1)}}$, \ldots, $\bm{\nu^{(d)}}$ as above.
	Let $\bm{m}$ be the smallest  index such that $|\bm{\nu^{(m)}}|=\max(|\bm{\nu^{(1)}}|,\ldots, |\bm{\nu^{(d)}}|)$. Then $\bm{\nu^{(m)}}$ has size $n - O_p(1)$, and conditionally on its size,
	$\bm{\nu^{(m)}}$ is uniformly distributed among all $|\bm{\nu^{(m)}}|$-sized  $\oplus$-indecomposable permutations in $\mathcal{C}$.
\end{prop}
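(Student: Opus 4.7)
The plan is to translate the statement into a Gibbs partition problem via the bijection of \cref{te:bijection} and then invoke Stufler's theorem. First, I would transport $\bm \nu_n$ through $\DF$ to a uniform random element of $\Seq_{\geq 1}(\cP)$ with $n$ leaves, where $\cP$ is the class of $\mathfrak{S}$-packed trees. Under this identification, the decomposition $\bm \nu = \oplus[\bm{\nu^{(1)}},\ldots,\bm{\nu^{(\bm d)}}]$ corresponds precisely to the ordered list of connected trees of the forest (and $\bm d=1$ when $\bm \nu$ is itself $\oplus$-indecomposable, matching our convention). By \cref{le:bij_perm_tree}, each tree encodes uniquely an element of $\mathcal{C}_{\nonp}$, and the number of leaves is preserved.

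The induced measure on the composition of sizes $(\bm n_1,\ldots,\bm n_{\bm d})$ with $\sum_i \bm n_i = n$ assigns probability proportional to $\prod_{i} p_{\bm n_i}$, where $p_k = |\cP_k| = |\mathcal{C}_{\nonp,k}|$, and conditionally on this composition the components are independent and uniform in their respective $\cP_{\bm n_i}$. This is exactly the setting of a \emph{Gibbs partition} in the sense of~\cite{stufler2016gibbs} (ordered variant for $\Seq$). To apply \cite[Thm.~3.1]{stufler2016gibbs} I need to verify the so-called \emph{convergent/subcritical} regime for $\cP(z) = \mathcal{C}_{\nonp}(z)$. Since $\mathcal{C}$ is a \emph{proper} substitution-closed class, the relation $\mathcal{C}(z) = \mathcal{C}_{\nonp}(z)/(1-\mathcal{C}_{\nonp}(z))$ forces the unique dominant singularity of $\mathcal{C}(z)$ to come from the vanishing of the denominator, so $\mathcal{C}_{\nonp}(\rho) = 1$ at the radius of convergence $\rho$ of $\mathcal{C}_{\nonp}$, and the standard analytic estimates on substitution-closed classes (cf.\ the analysis in \cite{bassino2017universal}) show that $\sum_k k\, p_k \rho^k < \infty$ fails, putting us in the convergent regime of Gibbs partitions.

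Stufler's theorem then asserts that a giant component emerges: there exists an a.s.\ unique index $\bm m$ (for concreteness, the first realizing the maximum) such that $n - |\bm{\nu^{(\bm m)}}|$ is tight, i.e.\ $|\bm{\nu^{(\bm m)}}| = n - O_p(1)$. Moreover, still from the Gibbs partition structure, conditionally on the composition of sizes the components are independent uniform packed trees in their size class; in particular, conditionally on $|\bm{\nu^{(\bm m)}}| = k$, the giant component $\bm{\nu^{(\bm m)}}$ is uniform in $\cP_k$. Transporting this back via $(\Pack\circ\CanTree)^{-1}$ and using \cref{le:bij_perm_tree}, we conclude that conditionally on its size $\bm{\nu^{(\bm m)}}$ is uniform in $\mathcal{C}_{\nonp, |\bm{\nu^{(\bm m)}}|}$, which is the desired statement.

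The main obstacle is checking that the Gibbs partition associated with $\cP$ indeed falls into the convergent regime covered by \cite[Thm.~3.1]{stufler2016gibbs}; everything else is a transfer of information along bijections. This analytic verification reduces to a classical dichotomy result for the generating series of proper substitution-closed classes, which in the non-convergent case would contradict properness. The identification of $\bm m$ as the smallest index realizing the maximum is just a tie-breaking convention, since the event that two components tie in size for the maximum has vanishing probability asymptotically as soon as the giant component carries mass $n - O_p(1)$.
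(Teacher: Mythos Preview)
Your overall approach is exactly the one the paper indicates: it says the result follows from the Gibbs partition theorem \cite[Thm.~3.1]{stufler2016gibbs} once one identifies $\mathcal{C}$ with $\Seq_{\geq 1}(\cP)$ via \cref{te:bijection}, and the conditional uniformity of the giant component comes from the product form of the weights just as you describe.

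There is, however, a genuine error in your verification that the partition is of convergent type. The relation $\mathcal{C}(z)=\cP(z)/(1-\cP(z))$ does \emph{not} force the singularity of $\mathcal{C}$ to come from the denominator. Inverting, $\cP(z)=\mathcal{C}(z)/(1+\mathcal{C}(z))$, so $\cP$ and $\mathcal{C}$ always share the same radius of convergence $\rho$, and $\cP(\rho)=\mathcal{C}(\rho)/(1+\mathcal{C}(\rho))\leq 1$ with \emph{strict} inequality whenever $\mathcal{C}(\rho)<\infty$ --- which is the case, for instance, for separable permutations. Hence your conclusion $\cP(\rho)=1$ is false in general, and the condition $\sum_k k\,p_k\rho^k=\infty$ is neither established by your argument nor is it the hypothesis of \cite[Thm.~3.1]{stufler2016gibbs}. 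What that theorem actually requires is that the normalized weights $p_n\rho^n$ form a subexponential sequence (the ``one big jump'' condition); this is the verification carried out in \cite{borga2020decorated} for proper substitution-closed classes, and it does not go through the dichotomy you sketch.
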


\begin{rem}
	We excluded the case of uniform unrestricted $n$-sized permutations. In this case, it is well-known that the permutation is with high probability $\oplus$-indecomposable. 
	This follows for example from~\cite[Thm 3.4]{PerfectSorting}.
\end{rem}

\subsubsection{From permutations to simply generated trees}
\label{subsec:PackedTrees_GW}

Proposition~\ref{prop:giant_comp_perm} and Lemma~\ref{le:bij_perm_tree} 
reduce the study of the proper substitution-closed class $\mathcal{C}$ to the study of the class $\cP$ of packed trees.
In this section, we explain how a random tree in $\cP$ can be seen as a random simply generated tree with random decorations. 

\medskip

We can describe a packed tree $\PackedTree$ as a pair $(T,\lambda_T)$
where $T$ is a rooted plane tree and $\lambda_T$ is a map
from the internal vertices of $T$ 
to the set $\mathcal{Q} = \widehat{\GGG(\mathfrak{S})}$ which records the decorations of the vertices. 
In order to sample a uniform packed tree with $n$ leaves, we first simulate a random rooted plane tree $\bm{T}_n$ 
and then a random decoration map $\bm{\lambda}_{\bm{T}_n}$ as follows.

Define the weight-sequence $\myvec{q} = (q_k)_{k \ge 0}$,
where, for $k \ge 2$, $q_k$ denotes the $k$-th coefficient of the generating series $\mathcal Q(z)=\widehat{\GGG(\mathfrak{S})}(z)$,
while we set $q_0=1$ and $q_1=0$.
We consider the simply generated tree $\bm{T}_n$ (with $n$ leaves) associated 
with weight-sequence $\myvec{q},$ i.e.\ by definition, $\bm{T}_n$ is a random rooted plane tree such that
\begin{equation}\label{eq:simply_gen_distrib}
\P(\bm{T}_n=T)=\frac{\prod_{v\in T} q_{d^+(v)}}{Z_n}=\frac{\prod_{v\in \Vint(T)} q_{d^+(v)}}{Z_n},
\end{equation}
for all rooted plane trees $T$ with $n$ leaves (we recall that $\Vint(T)$
denotes the set of internal vertices of $T$). 
Here, $Z_n$ is the \emph{partition function} given by 
$
Z_n=\sum_{T}\prod_{v\in T} q_{d^+(v)},
$
where the sum runs over all rooted plane trees with $n$ leaves.
For a general introduction on simply generated trees see \cite[Section 2.3]{janson2012simply}.

Then, given a rooted plane tree $T$, let $\bm{\lambda}_T$ be the random map such that for all internal vertices $v$ of~$T$,
\begin{align}\label{eq:decoration_distrib}
\P(\bm{\lambda}_T(v)=Q)=\frac{1}{q_{d^+_T(v)}}, \quad\text{for all $Q \in \mathcal{Q}$ with $|Q| = d^+_T(v)$},
\end{align}
independently of all other choices. Namely, the decoration of each internal vertex $v$ of $T$ gets drawn uniformly at random among all $d^+_T(v)$-sized decorations in $\mathcal{Q}$, independently of all the other decorations. 

\begin{lem}[{\cite[Lemma 3.4]{borga2020decorated}}]
	\label{lem:unif_packed_tree} \label{def:Pn}
	The random packed tree $\bm{P}_n = (\bm{T}_n,\bm{\lambda}_{\bm{T}_n})$ is uniform among all the packed trees with $n$ leaves.
\end{lem}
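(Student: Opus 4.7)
The plan is to compute $\P(\bm{P}_n = P)$ directly for an arbitrary fixed packed tree $P = (T, \lambda_T) \in \cP_n$ and check that the answer does not depend on $P$. Conditioning on the underlying plane tree,
\[
\P\bigl(\bm{P}_n = (T,\lambda_T)\bigr) = \P(\bm{T}_n = T)\cdot \P\bigl(\bm{\lambda}_{\bm{T}_n} = \lambda_T \,\big|\, \bm{T}_n = T\bigr).
\]
For the first factor I would simply quote the defining formula \eqref{eq:simply_gen_distrib}, which reads $\prod_{v\in \Vint(T)} q_{d^+(v)}/Z_n$. For the second factor I would use the independence of the decorations across internal vertices stated after \eqref{eq:decoration_distrib}, together with \eqref{eq:decoration_distrib} itself, to obtain $\prod_{v\in\Vint(T)} 1/q_{d^+(v)}$. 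Here it is essential that the decoration $\lambda_T(v)$ really lies in $\mathcal{Q}$ with size equal to $d^+(v)$; this is automatic since $(T,\lambda_T)$ is assumed to be a packed tree, i.e.\ a $\widehat{\GGG(\mathfrak S)}$-decorated tree in the sense of \cref{defn:decorated_tree}.

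Multiplying the two factors, the products $\prod_{v\in\Vint(T)} q_{d^+(v)}$ in the numerator and in the denominator cancel exactly, leaving
\[
\P\bigl(\bm{P}_n = (T,\lambda_T)\bigr) = \frac{1}{Z_n}.
\]
Since the right-hand side is independent of the chosen packed tree $P$, the law of $\bm P_n$ is uniform on $\cP_n$. As a consistency check, one can verify that $Z_n = |\cP_n|$: for each plane tree $T$ with $n$ leaves, the weight $\prod_{v\in\Vint(T)} q_{d^+(v)}$ counts exactly the number of ways to decorate each internal vertex $v$ by an element of $\widehat{\GGG(\mathfrak S)}$ of the prescribed size $d^+(v)$, hence summing over $T$ yields the total number of packed trees with $n$ leaves.

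There is no real obstacle; the only subtlety to watch is that every factor $q_{d^+(v)}$ that appears must be nonzero, so that the conditional probability in \eqref{eq:decoration_distrib} is well defined. This is guaranteed on the event $\{\bm T_n = T\}$, because such a tree has positive weight only if $q_{d^+(v)}>0$ for every internal vertex, and in fact $q_k \ge 1$ for all $k\ge 2$ since $\circledast_k \in \widehat{\GGG(\mathfrak S)}$ (while internal vertices of $T$ must have out-degree at least $2$, as $q_0 = 1$ corresponds to leaves and $q_1 = 0$).
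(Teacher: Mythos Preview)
Your proof is correct and follows exactly the natural argument: the weight $\prod_{v\in\Vint(T)} q_{d^+(v)}$ from the simply generated tree cancels against the product $\prod_{v\in\Vint(T)} 1/q_{d^+(v)}$ coming from the uniform choice of decorations, leaving a constant $1/Z_n$. This is the standard computation for such ``weight equals number of decorations'' constructions and is the approach of the cited reference (the paper itself does not reproduce the proof).
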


\subsubsection{Random packed trees as conditioned Galton--Watson trees}
\label{subsec:PackedTrees_GW2}
Building on \cref{lem:unif_packed_tree}, in what follows we explain how to sample a uniform packed tree with $n$ leaves
as a randomly decorated  Galton--Watson tree conditioned on having $n$ leaves.

\medskip

Let $\rho_q$ denote the radius of convergence of the generating series ${\mathcal Q}(z)$.
As observed in \cite[Section 3.1]{borga2020decorated}, it holds that $\rho_q > 0$.
As we shall see, this implies that $\bm{T}_n$ has the distribution of a Galton--Watson tree conditioned of having $n$
leaves, whose offspring distribution $\xi$ is defined below
(for similar discussion with fixed number of vertices,
see \cite[Section 4]{janson2012simply}).

The offspring distribution $\xi$ is given by 
\begin{align}
\label{eq:offspring_distribution_packed_tree}
\begin{cases}
\P(\xi = 0) = a\\
\P(\xi = 1) = 0\\
\P(\xi = k) = q_k t_0^{k-1} \text{ for } k  \geq 2.
\end{cases}
\end{align}
with $a, t_0>0$ constants that are defined as follows. 
If $\lim_{z \nearrow \rho_q} {\mathcal Q}'(z) \ge 1$, let $0<t_0\le \rho_q$ be the unique number with ${\mathcal Q}'(t_0) = 1$. 
If the limit is less than $1$, then set $t_0 = \rho_q$.
Finally set $a = 1 - \sum_{k \ge 2} q_k t_0^{k-1} >0$.

Note that the tilting in \cref{eq:offspring_distribution_packed_tree} previously appeared in \cite[Proposition 2]{pitman2015schroder} (see also the discussion above Corollary 1 in the same paper).
We note that $\xi$ is always aperiodic since $q_k>0$ for $k \ge 2$ (because of the $\circledast$ decorations).
Moreover, we have
\begin{align}
\E[\xi] = \mathcal Q'(t_0) \le 1,
\end{align}
so that the Galton--Watson tree $\bm T^\xi$ of offspring distribution $\xi$ is either subcritical or critical.
It is a simple exercise to check that $\bm T^\xi$, conditioned on having $n$ leaves,
has the same distribution as the simply generated tree $\bm{T}_n$ defined by \cref{eq:simply_gen_distrib}.

The following proposition determines if $\xi$ is either subcritical or critical
in terms of the generating functions of $\mathfrak{S}$ , that we conveniently denote by $\mathcal S(z)$. From Stanley-Wilf-Marcus-Tardös theorem \cite{marcus2004excluded}, it always has a positive radius of convergence
$\rho_{\mathcal S}>0$.
Below, we write $\cS'(\rho_\cS)$ for $\lim_{z \nearrow \rho_\cS} \cS'(z)$, noting that this limit may be infinite. 

\begin{prop}[{\cite[Proposition 3.5]{borga2020decorated}}]
	\label{prop: offspring_distr_charact}
	It holds that $\E[\xi] = 1$ if and only if
	\begin{align}
	\label{eq:type1}
	\cS'(\rho_\cS) \ge \frac{2}{(1 +\rho_\cS)^2} -1.
	\end{align}
	In this case, $t_0 = \kappa/(1+\kappa)$ for the unique number $0  < \kappa \le \rho_\cS$ with $\cS'(\kappa) = 2/(1+\kappa)^2 -1$, and
	\begin{align}
	\V[\xi] = \kappa (1+\kappa)^3 \cS''(\kappa) + 4 \kappa.
	\end{align}
\end{prop}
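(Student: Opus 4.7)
The plan is to reduce everything to explicit computations on the series $\mathcal{Q}(z)$. First I would derive the closed-form expression
\[
\mathcal{Q}(z) \;=\; \cS\!\left(\frac{z}{1-z}\right) + \frac{z^2}{1-z}
\]
by decomposing $\widehat{\GGG(\mathfrak{S})}$ structurally: an $\mathfrak{S}$-gadget is obtained by choosing a simple root-decoration $\beta\in\mathfrak{S}$ of some arity $d\ge 4$ and, for each of the $d$ children, either a single leaf or an internal vertex decorated by some $\oplus_k$ with $k\ge 2$ leaf-children. The one-child series is $z+\sum_{k\ge 2}z^k = z/(1-z)$, producing the $\cS(z/(1-z))$ contribution, while the $\circledast_k$ decorations contribute $\sum_{k\ge 2}z^k = z^2/(1-z)$.

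Next I would perform the change of variable $y = z/(1-z)$, equivalently $z = y/(1+y)$ and $1-z = 1/(1+y)$, which is an increasing bijection from $[0,\rho_q)$ onto $[0,\rho_\cS)$ and immediately gives $\rho_q = \rho_\cS/(1+\rho_\cS)$. A direct differentiation, using the identity $z(2-z)/(1-z)^2 = y(y+2)$, yields
\[
\mathcal{Q}'(z) \;=\; (1+y)^2\,\cS'(y) + y(y+2).
\]
The criticality condition $\lim_{z\nearrow\rho_q}\mathcal{Q}'(z)\ge 1$ then rewrites, using $2-(1+\rho_\cS)^2 = 1-2\rho_\cS-\rho_\cS^2$, exactly as $\cS'(\rho_\cS)\ge 2/(1+\rho_\cS)^2 - 1$, which is the claimed equivalence. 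In the critical case the equation $\mathcal{Q}'(t_0)=1$ translates to $\cS'(\kappa) = 2/(1+\kappa)^2 - 1$ with $\kappa = t_0/(1-t_0)$, equivalently $t_0 = \kappa/(1+\kappa)$. Uniqueness of $\kappa$ follows from monotonicity, since $\cS'$ is nondecreasing on $[0,\rho_\cS]$ with $\cS'(0)=0$ while $y\mapsto 2/(1+y)^2-1$ is strictly decreasing with value $1$ at $y=0$.

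For the variance, a direct computation on the offspring distribution \eqref{eq:offspring_distribution_packed_tree} gives $\E[\xi(\xi-1)] = \sum_{k\ge 2}k(k-1)q_k t_0^{k-1} = t_0\,\mathcal{Q}''(t_0)$, so in the critical case $\V[\xi] = \E[\xi^2] - 1 = t_0\,\mathcal{Q}''(t_0)$. Differentiating $\mathcal{Q}'$ once more and exploiting the telescoping identity $(1-z)^2 + z(2-z) = 1$ to simplify $\tfrac{d}{dz}[z(2-z)/(1-z)^2] = 2/(1-z)^3$, I obtain
\[
\mathcal{Q}''(z) \;=\; \frac{\cS''(y)}{(1-z)^4} + \frac{2\bigl(\cS'(y)+1\bigr)}{(1-z)^3} \;=\; (1+y)^4\,\cS''(y) + 2(1+y)^3\bigl(\cS'(y)+1\bigr).
\]
Evaluating at $y=\kappa$ and substituting $\cS'(\kappa)+1 = 2/(1+\kappa)^2$ collapses the second term to $4(1+\kappa)$, and multiplying by $t_0 = \kappa/(1+\kappa)$ gives precisely $\V[\xi] = \kappa(1+\kappa)^3\cS''(\kappa) + 4\kappa$.

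The main obstacle is the careful bookkeeping in the second-derivative computation: the key simplification is the telescoping identity $(1-z)^2 + z(2-z) = 1$, without which the coefficient of $(1-z)^{-3}$ in $\mathcal{Q}''$ would still carry a stray $\cS'$ term. The critical relation then converts that term cleanly into the $4(1+\kappa)$ tail appearing in the final variance formula.
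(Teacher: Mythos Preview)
Your proof is correct. The paper does not include a proof of this proposition (it is cited from \cite{borga2020decorated}), so there is nothing to compare against here; your argument is the natural direct computation, and all the calculus checks out, including the identification $\rho_q=\rho_\cS/(1+\rho_\cS)$, the first-derivative translation of the criticality condition, and the variance formula via $\V[\xi]=t_0\,\mathcal{Q}''(t_0)$.
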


\section{Square and almost square permutations}\label{sect:sq_perm}

We now leave the combinatorics of trees, deeply used in the two previous sections. Here we encode random permutations with sequences of i.i.d.\ random variables, i.e.\ (unconstrained) random walks.

\medskip

The points of a permutation can be divided into two types, internal and external.  The external points are the records of the permutation, either maximum or minimum, from the left or from the right.  The internal points are the points that are not external.  Square permutations are permutations where every point is external.  Almost square permutations are permutations with some fixed number of internal points. See \cref{example_intro} for two examples. We use the notation $\sq(n)$ to denote the set of square permutations of size $n$ and $\asqnk$ to denote the set of almost square permutations of size $n+k$ with exactly $n$ external points and $k$ internal points.

\begin{figure}[htbp]
	\begin{minipage}[c]{0.55\textwidth}
		\centering
		\includegraphics[scale=0.7]{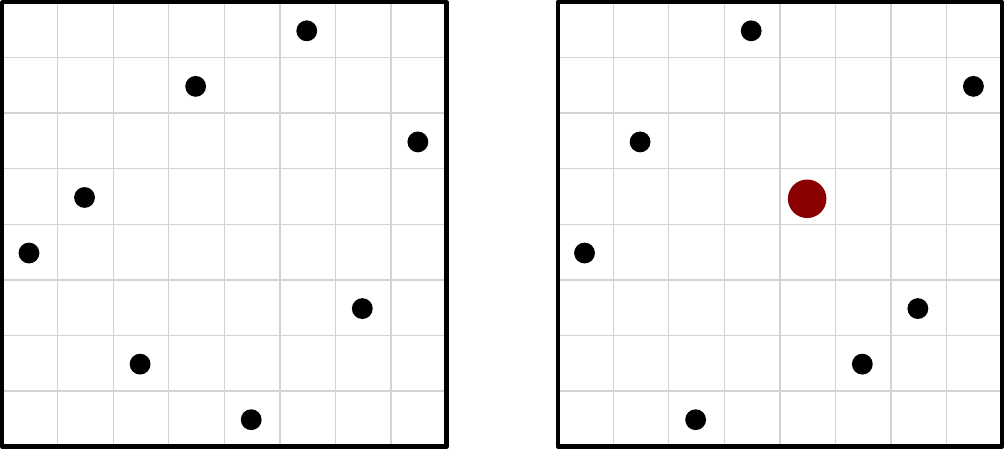}
	\end{minipage}
	\begin{minipage}[c]{0.44\textwidth}
		\caption{The diagram of two permutations. The permutation on the left is a square permutation of size 8. The permutation on the right is an almost square permutation with one internal point, highlighted in red.\label{example_intro}}		
	\end{minipage}
\end{figure}

In \cite{albert2011convex} square permutations were referred to as \emph{convex permutations} and were described by pattern-avoidance. In particular, square permutations are permutations that avoid the sixteen permutations of size $5$ that have one internal point.

The starting point for all our results on square and almost square permutations (see  \cref{const_lim_obj} and \cref{chp:square}) is the sampling procedure for uniform square permutations described in the following sections. 
Inspired by the approach in \cite{hoffman2019dyson}, we define a projection map from the set of square permutations to the set of anchored pairs of sequences of labels, i.e.\ triplets $(X,Y,z_0)\in\{U,D\}^n\times\{L,R\}^n\times [n]$. For every square permutation $\sigma,$ the labels of $(X,Y)$ are determined by the record types. The sequence $X$ records if a point is a maximum ($U$) or a minimum ($D$) and the sequence $Y$ records if a point is a left-to-right record ($L$) or a right-to-left record ($R$); the anchor $z_0$ is the value $\sigma^{-1}(1)$.  Section \ref{sect:perm_to_anchored_seq} gives a precise definition and examples. 

This projection map is not surjective, but in Section \ref{sect:inverse_projection} we show that we can identify subsets of anchored pairs of sequences (called \emph{regular}) and of square permutations where the projection map is a  bijection. We then construct a simple algorithm to produce square permutations from regular anchored pairs of sequences.  We show that asymptotically almost all square permutations can be constructed from regular anchored pairs of sequences, thus a permutation sampled uniformly from the set of regular anchored pairs of sequences will produce, asymptotically, a uniform square permutation.  

These regular anchored pairs of sequences are defined using a slight modification of the \emph{Petrov conditions}, i.e.\ technical conditions on the labels, found in \cite{hoffman2017pattern} and again in \cite{hoffman2019dyson} (a uniform pair of sequences satisfies these conditions outside a set of exponentially small probability). We say that an anchored pair of sequences is \emph{regular} if it satisfies these Petrov conditions, and the anchored point is neither too close to $1$ nor to $n$.

\subsection{Projections for square permutations}
\label{sect:perm_to_anchored_seq}
We begin this section with the following key definition.
\begin{defn}
	An \emph{anchored pair of sequences} is a triplet $(X,Y,z_0)$, where $X\in\{U,D\}^n$, \break
	$Y\in\{L,R\}^n$ and $z_0 \in [n].$ We say that the pair $(X,Y)$ is anchored at $z_0$.
\end{defn}
Given a square permutation $\sigma\in Sq(n),$ we associate to it an anchored pair of sequences $(X,Y,z_0)$ in the following way (\emph{cf.}\ Fig.~\ref{Square_perm_sampling_example}). First let $X_1 = X_n = D$ and $Y_1 = Y_n = L$. Then, for all $i\in \{2,\cdots,n-1\},$  we set
\begin{itemize}
	\item $X_i=\begin{cases}
		D\quad\text{if}\quad (i,\sigma(i))\in\LRm\cup\RLm,\\
		U\quad\text{if}\quad (i,\sigma(i))\in\LRM\cup\RLM.
	\end{cases}$
	\item $Y_i=\begin{cases}
	L\quad\text{if}\quad (\sigma^{-1}(i),i)\in\LRm\cup\LRM,\\
	R\quad\text{if}\quad (\sigma^{-1}(i),i)\in\RLm\cup\RLM.
	\end{cases}$
\end{itemize}
In the case that $(i,\sigma(i))\in\LRM\cap\RLm$ or $(i,\sigma(i))\in\LRm\cap\RLM$ we set $X_i=D$ and $Y_{\sigma(i)}=L.$
Finally, let $z_0 = \sigma^{-1}(1)$. Note that $X_{z_0}$ is always equal to $D$.
Intuitively, the sequence $X$ tracks if the points in the columns of the diagram of $\sigma$ are minima or a maxima and the sequence $Y$ tracks if the points in the rows are left or right records.

\begin{figure}[htbp]
	\begin{minipage}[c]{0.54\textwidth}
		\centering
		\includegraphics[scale=0.37]{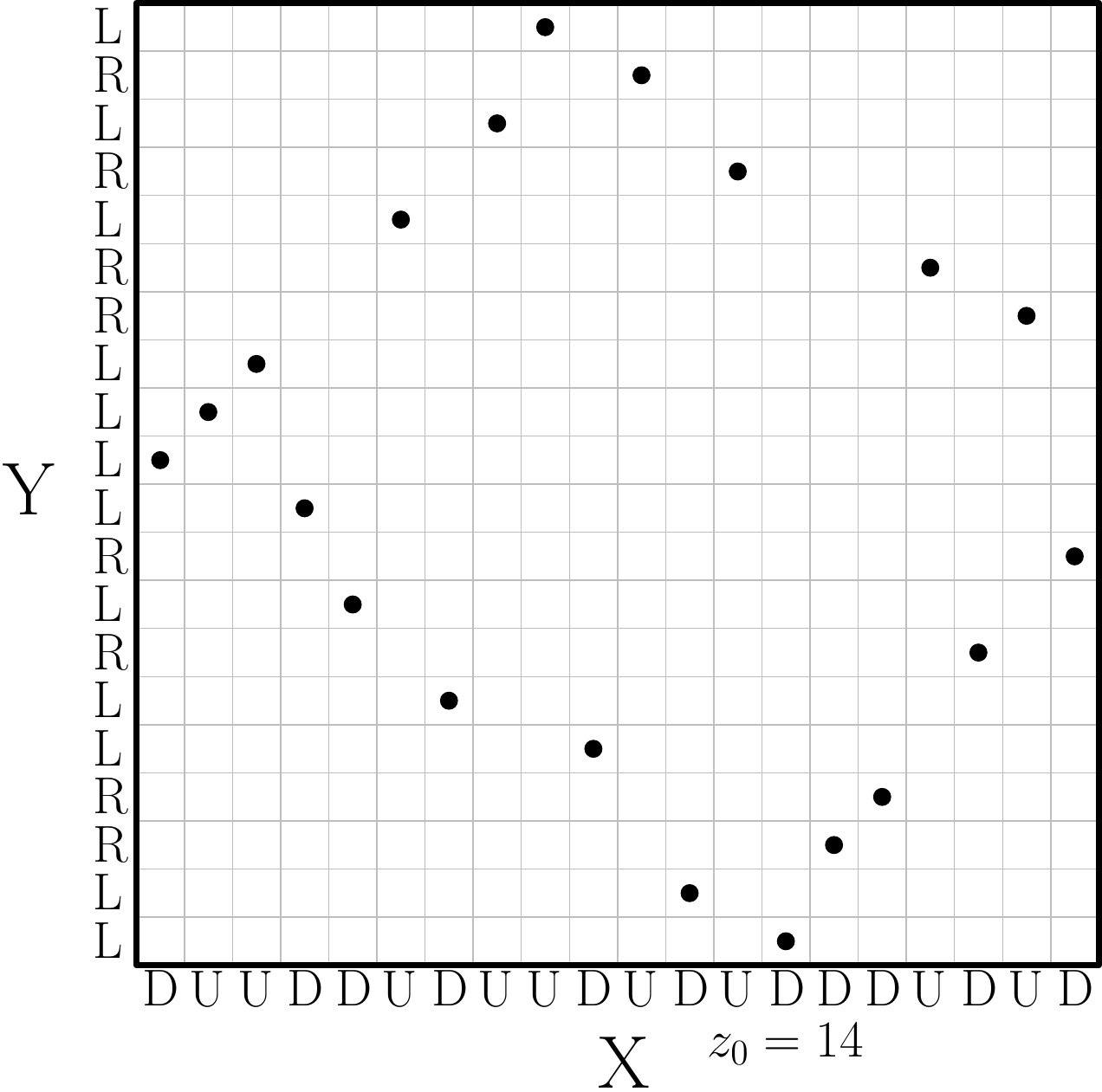}
	\end{minipage}
	\begin{minipage}[c]{0.45\textwidth}
		\caption{A square permutation $\sigma$ with the associated anchored pair of sequences $(X,Y,z_0).$  The sequence $X$ is reported under the diagram (read from left to right) of the permutation and the sequence $Y$ on the left (read from bottom to top).\label{Square_perm_sampling_example}}
	\end{minipage}
\end{figure}

We denote with $\phi$ the map that associates to every square permutation the corresponding anchored pair of sequences, therefore
$$\phi:Sq(n)\to\{U,D\}^n\times\{L,R\}^n\times[n].$$

We say that an anchored pair of sequences $(X,Y,z_0)$ of size $n$ is \emph{good} if $X_1 = X_n = X_{z_0}=D$ and $Y_1 = Y_n = L$. Note that $\phi(Sq(n))$ is contained in the set of good anchored pairs of size $n$.

\begin{rem}
	This projection map is also used in \cite{duchi2019square}. The author shows that $\phi$ is injective.  
\end{rem}

\subsection{Anchored pairs of sequences and Petrov conditions}\label{sect:inverse_projection}

From a good anchored pair we wish to construct a square permutation.  For most good anchored pairs this will be possible, though we do need to proceed with caution.

Let $\ctd(i)$ denote the number of $D$s in $X$ up to (and including) position $i$.  Similarly define $\ctu(i)$, $\ctl(i)$ and $\ctr(i)$ for the number of $U$s in $X$ and the number of $L$s or $R$s in $Y$, respectively.  Let $\pd(i)$ denote the index of the $i$th $D$ in $X$ with $\pd(i) = n$ if there are fewer than $i$ indices labeled with $D$ in $X$.  Similarly define $\pu(i)$, $\pl(i)$ and $\pr(i)$ for the location of the indices of the other labels.

\begin{defn}\label{defn:petrov} We say that the label $D$ in $X$ satisfies the \emph{Petrov conditions} if the following are true:
	
	\begin{enumerate}
		\item $|\ctd(i) - \ctd(j) - \frac12(i-j) | < n^{.4}$, for all $|i-j| < n^{.6}$;
		\item $|\ctd(i) - \ctd(j) - \frac12(i-j) |	 < \frac{1}{2}|i-j|^{.6}$, for all $|i-j|> n^{.3}$;
		\item $|\pd(i) - \pd(j) - 2(i-j)| < n^{.4}$, for all $|i-j| < n^{.6}$ and $i,j \leq \ctd(n)$;
		\item $|\pd(i) - \pd(j) - 2(i-j)| < 2|i-j|^{.6}$,	 for all $|i-j|> n^{.3}$ and $i,j \leq \ctd(n)$.
	\end{enumerate}
	
	In particular, in the above inequalities we allow $j$ to be equal to 0 (defining $\ctd(0)\coloneqq 0$ and $\pd(j)\coloneqq 0$) obtaining that
	
	\begin{enumerate}
		\item[5.] $|\ctd(i) - \frac i 2 | < n^{.6}$, for all $i\leq n$;
		\item[6.] $|\pd(i)- 2i| < 2n^{.6}$, for all $i\leq  \ctd(n)$.
	\end{enumerate}	
\end{defn}

A similar definition holds for the label $U$ in $X$ and the labels $L$ and $R$ in $Y$ for the functions $\ctu,\ctl,\ctr,$ and $\pu, \pl, \pr$.  We say that the Petrov conditions hold for the pair of sequences $X$ and $Y$ if the Petrov conditions hold for all the labels of $X$ and $Y$. 
We state a technical result useful for later results.
\begin{lem}[{\cite[Lemma 3.2]{borga2020square}}]\label{petrov_often}
	If $X \in \{U,D\}^n$ satisfies the Petrov conditions then, for all $i\leq n$, 
	$$i - \pd(\ctd(i)) \leq n^{.3}.$$
\end{lem}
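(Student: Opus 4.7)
The plan is to argue by contradiction and exploit the fact that the stretch between $\pd(\ctd(i))$ and $i$ contains no $D$'s at all, together with Petrov condition~2, which forces $D$'s to appear with asymptotic density $1/2$ on any window of length exceeding $n^{.3}$.

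More precisely, first I would reduce to the case $i > n^{.3}$, since otherwise the bound is trivial: $i - \pd(\ctd(i)) \leq i \leq n^{.3}$. Now suppose for contradiction that $i - \pd(\ctd(i)) > n^{.3}$, and set $j := \pd(\ctd(i))$ (with the convention $\pd(0) = 0$ in case $\ctd(i) = 0$, which is covered by the extension noted at the end of \cref{defn:petrov}). By the very definitions of $\ctd$ and $\pd$, the index $j$ is the largest index $\leq i$ carrying the label $D$ (or $j=0$ if no such index exists), so none of the positions $j+1, j+2, \dots, i$ carry the label $D$. In particular,
\begin{equation*}
\ctd(i) = \ctd(j).
\end{equation*}

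The key step is then to invoke Petrov condition~2 for the label $D$ with this pair $(i,j)$. Since $|i-j| > n^{.3}$, the condition applies and gives
\begin{equation*}
\Big| \ctd(i) - \ctd(j) - \tfrac{1}{2}(i-j) \Big| < \tfrac{1}{2} |i-j|^{.6}.
\end{equation*}
Substituting $\ctd(i) = \ctd(j)$ yields $\tfrac{1}{2}(i-j) < \tfrac{1}{2}(i-j)^{.6}$, hence $(i-j)^{.4} < 1$, i.e.\ $i - j < 1$. This contradicts $i-j > n^{.3} \geq 1$, completing the proof.

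I do not expect any real obstacle: the argument is entirely routine once one observes that the gap $i - \pd(\ctd(i))$ is by definition a maximal run of $U$'s ending at position $i$, and Petrov condition~2 directly forbids $D$-free runs longer than order $n^{.3}$. The only minor bookkeeping concerns the edge case where no $D$ appears in $[1,i]$, which is handled by the $j=0$ convention explicitly allowed after \cref{defn:petrov}.
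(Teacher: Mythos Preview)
Your proof is correct and is the natural argument: the gap $[\pd(\ctd(i))+1,i]$ is $D$-free, so $\ctd(i)=\ctd(j)$, and Petrov condition~2 then forces $i-j<1$, a contradiction. The paper does not include its own proof of this lemma here (it merely cites \cite[Lemma 3.2]{borga2020square}), but your argument is exactly the intended one.
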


We let $\Omega_{n}$ denote the space of good anchored pairs of sequences such that both $X$ and $Y$ satisfy the Petrov conditions and $\delta_n \leq z_0 \leq n -\delta_n$ for some\footnote{In \cite{borga2020square} a regular anchored pair of sequences $(X,Y,z_0)$ satisfies $n^{.9} \leq z_0 \leq n -n^{.9}$ instead of $\delta_n \leq z_0 \leq n -\delta_n$. One can check that all the statements of \cite{borga2020square} are also true for this slightly more general definition.} $\delta_n=o(n),\delta_n\geq n^{.9}$. We will refer to these as \emph{regular} anchored pairs of sequences.

\begin{lem}[{\cite[Lemma 3.4]{borga2020square}}]\label{omega_size}
	Let $\bm X$, $\bm Y$ and $\bm z_0$ be chosen independently and uniformly at random from $\{U,D\}^n$, $\{L,R\}^n$ and $\{1, \cdots , n\}$ respectively.  Then $\P((X,Y,z_0) \in \Omega_n)=1-o(1).$
\end{lem}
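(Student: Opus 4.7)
The plan is to decompose the event $\{(\bm X, \bm Y, \bm z_0) \in \Omega_n\}$ into its defining components: the ``good'' constraints $\bm X_1 = \bm X_n = \bm X_{\bm z_0} = D$ and $\bm Y_1 = \bm Y_n = L$, the Petrov conditions on $\bm X$ and $\bm Y$, and the boundary condition $\delta_n \leq \bm z_0 \leq n - \delta_n$. The boundary condition fails with probability at most $2\delta_n/n = o(1)$ by the uniform choice of $\bm z_0$ together with $\delta_n = o(n)$. The good constraints are a finite collection of constant-probability events that are trivially handled, so that the heart of the argument reduces to showing that the Petrov conditions hold with probability $1 - o(1)$.

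By the obvious symmetry among the four labels $D, U, L, R$ and the two sequences $X, Y$, it suffices to treat the Petrov conditions for label $D$ in $\bm X$. Set $M_k \coloneqq \ctd(k) - k/2$, so that $M_k - M_j = \sum_{\ell=j+1}^k (\idf_{\bm X_\ell = D} - 1/2)$ is a sum of i.i.d.\ centered Bernoulli variables bounded in $[-1/2, 1/2]$. By Hoeffding's inequality,
\[
\P(|M_k - M_j| \geq t) \leq 2\exp\!\left(-\frac{2t^2}{|k-j|}\right).
\]
For condition (1) of \cref{defn:petrov}, take $t = n^{.4}$ on pairs with $|k-j| < n^{.6}$: the bound gives $2e^{-2n^{.2}}$ per pair, and a union bound over the at most $n \cdot n^{.6}$ such pairs yields total failure probability $O(n^{1.6} e^{-2 n^{.2}}) = o(1)$. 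For condition (2), take $t = |k-j|^{.6}/2$ on pairs with $|k-j| > n^{.3}$: the bound gives $2e^{-|k-j|^{.2}/2} \leq 2e^{-n^{.06}/2}$ per pair, and a union bound over $O(n^2)$ pairs again gives $o(1)$. Conditions (5) and (6) are the special cases $j = 0$ and so are absorbed into (1), (2).

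For conditions (3) and (4) on the position function $\pd$, the key observation is that $\pd(i) = \sum_{k=1}^i \bm G_k$, where the gaps $\bm G_k \coloneqq \pd(k) - \pd(k-1)$ are i.i.d.\ Geometric$(1/2)$ random variables with mean $2$. Hence $\pd(i) - \pd(j)$ is a sum of $|i-j|$ such gaps. Geometric variables are sub-exponential, so Bernstein's inequality yields a tail bound of the form
\[
\P(|\pd(i) - \pd(j) - 2(i-j)| \geq t) \leq 2\exp\!\left(-c\,\min\!\left(\tfrac{t^2}{|i-j|},\, t\right)\right)
\]
for a universal constant $c > 0$. Plugging in $t = n^{.4}$ with $|i-j| < n^{.6}$, and $t = 2|i-j|^{.6}$ with $|i-j| > n^{.3}$, yields stretched-exponential failure probabilities per pair, and the same union bounds as above give total failure probability $o(1)$. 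Assembling the four labels and the two sequences (eight parallel arguments) gives $\P((\bm X, \bm Y, \bm z_0) \in \Omega_n) = 1 - o(1)$ as claimed. The main obstacle will be verifying that the exponents $.3, .4, .6$ in the definition are consistent with the sub-exponential tail regime (so Bernstein's exponent is actually $t^2/|i-j|$, not $t$, on the relevant range), and executing the union bound simultaneously over labels, conditions, and pairs $(i,j)$ without losing the $o(1)$ margin.
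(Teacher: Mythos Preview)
Your treatment of the Petrov conditions via Hoeffding/Bernstein plus union bounds, and of the $\bm z_0$-boundary via $2\delta_n/n = o(1)$, is correct and is the standard approach; the thesis does not give its own proof but defers to \cite{borga2020square}, where this is indeed how the argument goes.

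There is, however, a genuine gap in your line ``The good constraints are a finite collection of constant-probability events that are trivially handled.'' For uniform $\bm X,\bm Y$, each of the constraints $\bm X_1=\bm X_n=\bm X_{\bm z_0}=D$ and $\bm Y_1=\bm Y_n=L$ fails with probability $1/2$, so jointly they hold with probability at most $1/32$; this is bounded away from $1$, and no amount of ``trivial handling'' turns it into $1-o(1)$. In fact, with the thesis's definition of $\Omega_n$ (which includes the good constraint), the claim $\P((\bm X,\bm Y,\bm z_0)\in\Omega_n)=1-o(1)$ is literally false, since already $\P(\bm X_1=D)=1/2$. Inspecting how the lemma is actually used in the proof of \cref{square_is_rect} (to deduce $|\Omega_n|=2(n+2)4^{n-3}(1-o(1))$) shows that the intended content is that the Petrov conditions together with the $\bm z_0$-boundary hold with probability $1-o(1)$, the good constraint being handled separately there by direct counting. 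Your argument proves exactly that intended statement, so modulo this misstatement of the lemma in the thesis your work is essentially complete.
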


\subsection{From regular anchored pairs of sequences to square permutations}\label{sect:rho_def}
We wish to define a map $\rho: \Omega_n \to Sq(n)$ by constructing a unique matching between the labels of $X$ and the labels of $Y$.  The matching will depend on the parameter $z_0$.  Once this map is properly defined we show that the composition $\phi\circ \rho$ acts as the identity on $\Omega_n$ (Lemma \ref{lem:map_welldef}).  

This construction may not be well defined on every good anchored pair of sequences in $\{U,D\}^n\times \{L,R\}^n \times [n]$, as we will see in \cref{fail1}, but it will be well-defined if we restrict to $\Omega_n$.
First define the following values (whose role will be clearer later) with respect to $(X,Y,z_0) \in \Omega_n$:
\begin{itemize}
	\item $z_1 = \pl(\ctd(z_0)),$
	\item $z_2 = \pu( \ctl(n) - \ctd(z_0)),$
	\item $z_3 = \pr( \ctd(n) - \ctd(z_0) ).$ 	
\end{itemize} 

The following lemma states a regularity property satisfied by the points $z_1,z_2$ and $z_3$.

\begin{lem}[{\cite[Lemma 3.5]{borga2020square}}]\label{okz}
	Let $(X,Y,z_0) \in \Omega_n.$ Then 
	$$\max(|z_1-z_0|,|z_2-z_3|, |n-z_0-z_2|) < 10n^{.6}.$$
\end{lem}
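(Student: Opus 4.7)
\bigskip

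\textbf{Proof plan for \cref{okz}.} The strategy is a direct computation: each of $z_1, z_2, z_3$ is defined as a position function $\pl, \pu, \pr$ applied to a count of $D$'s, and the Petrov conditions provide linear approximations for both types of functions with error $O(n^{.6})$. Composing these approximations once per quantity and subtracting will give each of the three bounds.

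First I would make the key approximations explicit. Petrov condition 5 applied to the label $D$ in $X$ yields $|\ctd(i) - i/2| < n^{.6}$ for all $i \le n$, and by symmetry the analogous inequality holds for $\ctl$, $\ctu$, $\ctr$ with the corresponding sequence. Petrov condition 6 applied to $D$, $L$, $U$, $R$ yields $|\pd(i)-2i|,|\pl(i)-2i|,|\pu(i)-2i|,|\pr(i)-2i| < 2n^{.6}$, valid provided the argument is at most the corresponding total count. So I would first verify this admissibility: using $\delta_n \le z_0 \le n-\delta_n$ with $\delta_n \ge n^{.9}$, one checks $\ctd(z_0) \le z_0/2 + n^{.6} \le n/2 - \delta_n/2 + n^{.6} < n/2 - n^{.6} \le \ctl(n)$, and similarly $\ctl(n)-\ctd(z_0) \le \ctu(n)$ and $\ctd(n)-\ctd(z_0) \le \ctr(n)$, since $\delta_n \gg n^{.6}$.

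Next, for each of the three quantities I would carry out the two-step linearization and collect constants. For $|z_1 - z_0|$: writing $\ctd(z_0) = z_0/2 + \alpha$ with $|\alpha|<n^{.6}$ and $\pl(\ctd(z_0)) = 2\ctd(z_0) + \beta$ with $|\beta|<2n^{.6}$ gives $z_1 = z_0 + 2\alpha + \beta$, hence $|z_1 - z_0| < 4n^{.6}$. For $|n-z_0-z_2|$: writing $\ctl(n) = n/2 + \alpha_1$, $\ctd(z_0) = z_0/2 + \alpha_2$ (each error $<n^{.6}$) and $\pu(\ctl(n)-\ctd(z_0)) = 2(\ctl(n)-\ctd(z_0)) + \beta_1$ with $|\beta_1|<2n^{.6}$, one gets $z_2 = (n-z_0) + 2\alpha_1 - 2\alpha_2 + \beta_1$, hence $|n - z_0 - z_2| < 6n^{.6}$. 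For $|z_2-z_3|$: the same expansion applied to $z_3$ with $\ctd(n) = n/2 + \alpha_3$ and $\pr$-error $\beta_2$ gives $z_3 = (n-z_0) + 2\alpha_3 - 2\alpha_2 + \beta_2$; subtracting yields $z_2 - z_3 = 2(\alpha_1 - \alpha_3) + (\beta_1 - \beta_2)$, hence $|z_2 - z_3| < 8n^{.6}$.

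Taking the maximum of $4n^{.6}$, $6n^{.6}$, $8n^{.6}$ gives the claimed bound $10n^{.6}$. The only delicate point, and the one I would write out most carefully, is the admissibility check in the previous paragraph: it is exactly there that the restriction $\delta_n \le z_0 \le n - \delta_n$ with $\delta_n \ge n^{.9}$ enters and prevents the Petrov-based approximations $\ctl(n)-\ctd(z_0)$ and $\ctd(n)-\ctd(z_0)$ from overshooting $\ctu(n)$ or $\ctr(n)$. Once this is settled the rest is algebra, so I do not anticipate any real obstacle.
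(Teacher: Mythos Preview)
Your proposal is correct and follows exactly the natural approach: compose the Petrov linear approximations (condition~5 for the counts, condition~6 for the positions), after first checking that the arguments to $\pl,\pu,\pr$ lie within range so that condition~6 applies. The paper does not include its own proof here (the lemma is cited from \cite{borga2020square}), but this is the intended argument, and your explicit error bounds $4n^{.6}$, $6n^{.6}$, $8n^{.6}$ are correct and comfortably below $10n^{.6}$.
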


Define the following index sets:
\begin{itemize}
	\item $I_1 = \{1,\cdots, \ctd(z_0)\}$;
	\item $I_2 = \{1,\cdots,\ctu(z_2)\}$;
	\item $I_3 = \{\ctd(z_0) +1,\cdots, \ctd(n)\}$;
	\item $I_4 = \{\ctu(z_2) +1, \cdots, \ctu(n)\}$.	
\end{itemize}
Using these index sets, we create four sets of points:
\begin{itemize}
	\item $\Lambda_1 = \{(\pd(i),\pl(\ctd(z_0) + 1 -i))\}_{ i \in I_1}$;
	\item $\Lambda_2 = \{(\pu(i),\pl(\ctd(z_0) +i))\}_{ i \in I_2}$;
	\item $\Lambda_3 = \{(\pd(i),\pr(i-\ctd(z_0)))\}_{ i \in I_3}$; 
	\item $\Lambda_4 = \{(\pu(i),\pr(n-\ctd(z_0)+1-i))\}_{ i \in I_4}$. 
\end{itemize}

\begin{figure}[htbp]
	\begin{minipage}[c]{0.52\textwidth}
		\centering
		\includegraphics[scale=.37]{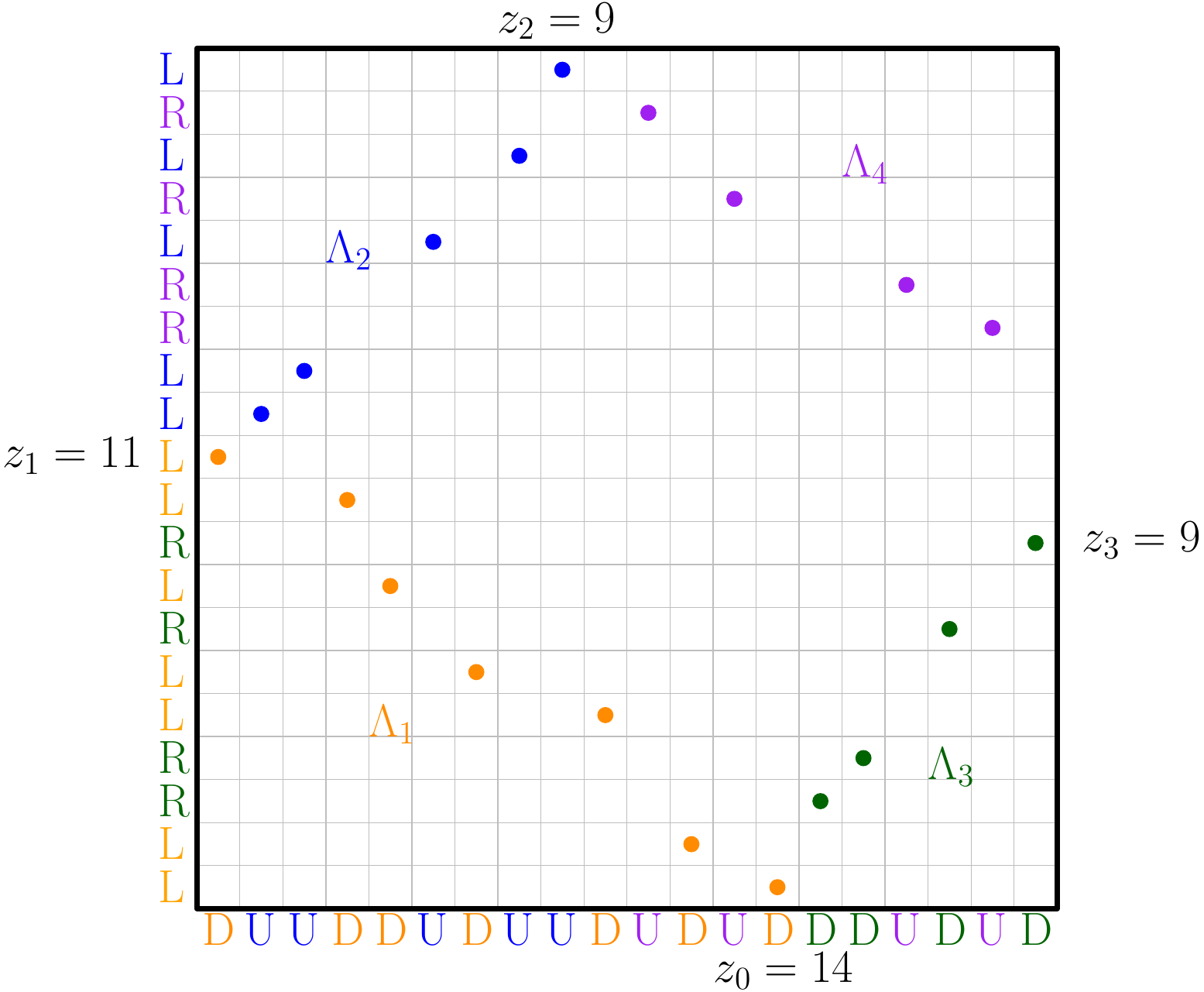}
	\end{minipage}
	\begin{minipage}[c]{0.47\textwidth}
		\caption{An example of square permutation obtained from a regular anchored pair of sequences $(X,Y,z_0)$. We color in orange the labels in the sequences $X$ and $Y$ corresponding to indices of $I_1$ and we also color in orange the points of $\Lambda_1.$ Similarly, we color in blue the labels and the points corresponding to $I_2$ and $\Lambda_2,$ in green the ones corresponding to $I_3$ and $\Lambda_3$, and in purple the ones corresponding to $I_4$ and $\Lambda_4$.\label{reconstruction}}
	\end{minipage}
\end{figure}

First a few observations about each of the sequences $\Lambda_i$ (\emph{cf.}\ Fig.~\ref{reconstruction}):  
\begin{itemize}
	\item The first sequence, $\Lambda_1,$ is obtained by matching the first $\ctd(z_0)$ $D$s in $X$ with the first $\ctd(z_0)$ $L$s in $Y$ to create a decreasing sequence starting from $(1,z_1)$ and ending at $(z_0,1)$ (note that thanks to Petrov conditions, for $n$ large enough, $\ctd(z_0)$ is smaller than the total number of $L$s in $Y$ and so the operations are well-defined\footnote{We point out that the set $\Lambda_1$ is in full generality well-defined for all $n\geq1$, but in the case that $\ctd(z_0)>\ctl(n)$ then $|\Lambda_1|<|I_1|$ (because by definition  $\pl(i)=n$ for all  $i\geq\ctl(n)$). Similar observations will hold also for $\Lambda_2$, $\Lambda_3,$ and $\Lambda_4$.});
	\item the second sequence, $\Lambda_2,$ is obtained by matching the remaining $\ctl(n)-\ctd(z_0)$ $L$s in $Y$ with the first $\ctl(n)-\ctd(z_0)$ $U$s in $X$ (using Petrov conditions it follows that for $n$ large enough $\ctl(n) - \ctd(z_0)<\ctu(n)$) to create an increasing sequence starting from  $(1,z_1)$ and ending at $(z_2,n)$;
	\item the third sequence, $\Lambda_3,$ is obtained by matching the remaining $\ctd(n)-\ctd(z_0)$ $D$s in $X$ with the first $\ctd(n)-\ctd(z_0)$ $R$s in $Y$ to create an increasing sequence starting from $(z_2,n)$ and ending at $(n,z_3)$ (note that thanks to Petrov conditions, for $n$ large enough, $\ctd(n)-\ctd(z_0)$ is smaller than the total number of $R$s in $Y$);
	\item the fourth sequence, $\Lambda_4,$ is obtained by matching the remaining $\ctu(n)-(\ctl(n)-\ctd(z_0))$ $U$s in $X$ with the remaining $\ctr(n)-(\ctd(n)-\ctd(z_0))$ $R$s in $Y$ 
	to create a decreasing sequence between $(z_2,n)$ and $(n,z_3)$ (these two boundary points are not in $\Lambda_4$ by definition).
\end{itemize} 
\medskip
We can conclude that, for $n$ large enough, the indices corresponding to each $D$ and $U$ in $X$ and each $L$ and $R$ in $Y$ are used exactly once. Therefore by this construction each index of $X$ is paired with a unique index in $Y$ so the resulting collection of points must correspond to the points of a permutation $\sigma:[n]\to[n].$ 
We will see (in Lemma \ref{lem:map_welldef} below) that in fact $\sigma$ is in $Sq(n)$ and so that the assignment $\rho((X,Y,z_0))\coloneqq\sigma$ define a map from $\Omega_n$ to $Sq(n)$ when $n$ is large enough.

Note that for some good anchored sequences $(X,Y,z_0)$ that are not in $\Omega_n$ it is possible that the construction of the permutation $\sigma$ might fail (see Fig.~\ref{fail1}).

\begin{figure}[htbp]
	\begin{minipage}[c]{0.52\textwidth}
		\centering
		\includegraphics[scale=.35]{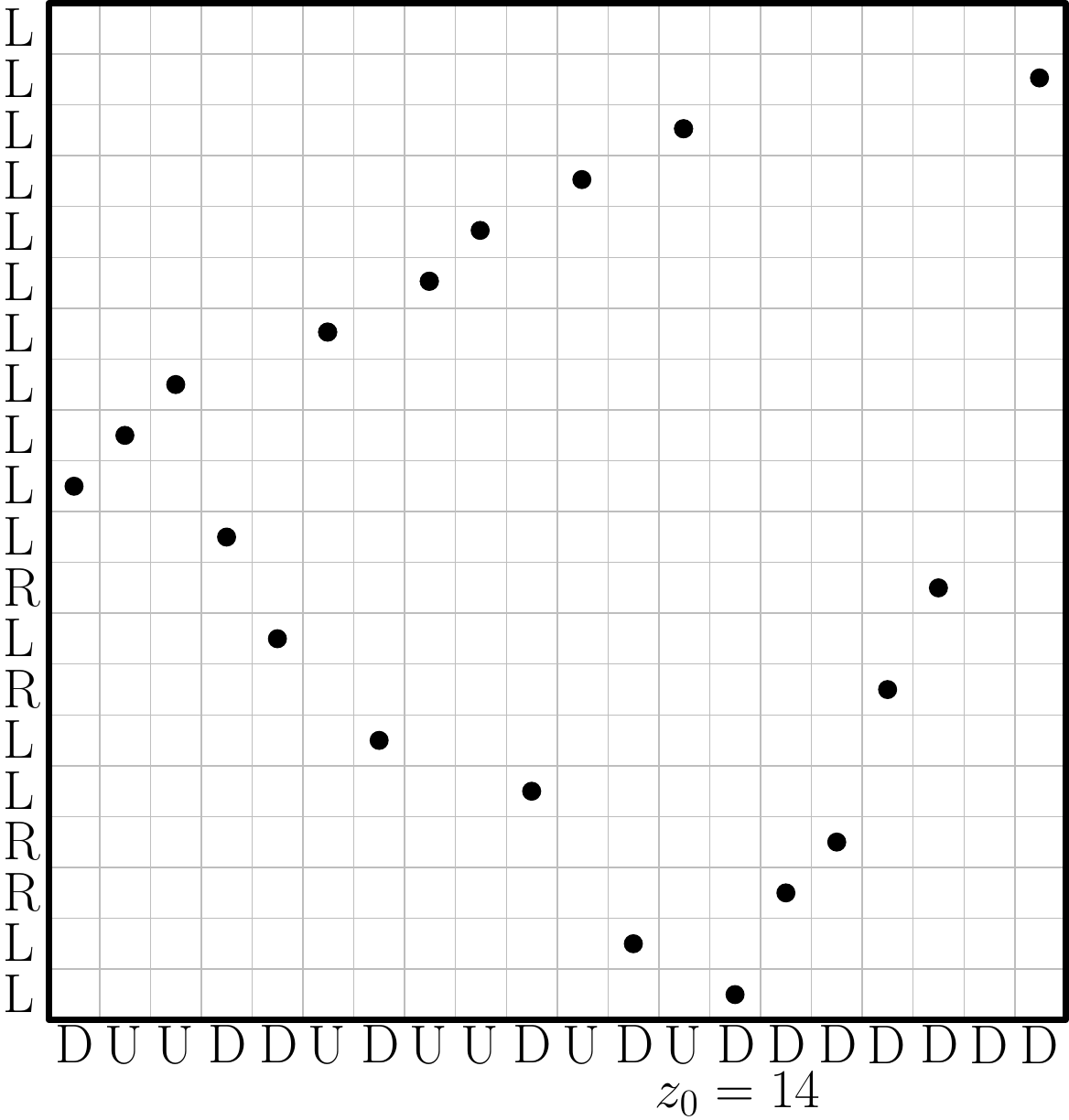}
	\end{minipage}
	\begin{minipage}[c]{0.47\textwidth}
		\caption{An example where the construction of $\sigma$ from a good anchored pair of sequences fails.  The last $L$ does not have a corresponding $U$ with which to match. The problem is that the sequences $X$ (resp.\ $Y$) contains too many $D$s (resp.\ too many $L$s) and so it does not satisfy the Petrov conditions.\label{fail1}}
	\end{minipage}
\end{figure}

The Petrov conditions force the following conditions on the sequences $\Lambda_i,$ for $i=1,2,3,4$. 
\begin{lem}[{\cite[Lemma 3.6]{borga2020square}}]\label{guiding light}
	If $(X,Y,z_0)\in \Omega_n$, then 
	\begin{itemize}
		\item $|s+t -z_0| < 10n^{.6}$ for $(s,t) \in \Lambda_1$,	
		\item $ |t-s - z_0| < 10n^{.6}$ for $(s,t)  \in \Lambda_2$,
		\item $ |s-t - z_0| < 10n^{.6}$ for $(s,t)  \in \Lambda_3$,
		\item $ |2n- s-t-z_0| < 10n^{.6}$ for $(s,t)  \in \Lambda_4$.
	\end{itemize}
\end{lem}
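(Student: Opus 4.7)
The four estimates are all of the same flavor, so the plan is to treat them by a uniform strategy based on the Petrov conditions in Definition \ref{defn:petrov} (in particular the $j=0$ specializations, i.e.\ conditions 5 and 6 in the definition, together with their analogues for the labels $U$, $L$, $R$).

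The first step is to record the two ``master'' estimates we will use repeatedly. From condition 6 applied to each of the four labels we obtain
\[
|\pd(i)-2i|<2n^{.6},\quad |\pu(i)-2i|<2n^{.6},\quad |\pl(i)-2i|<2n^{.6},\quad |\pr(i)-2i|<2n^{.6},
\]
provided $i$ is in the range where the position functions actually land on indices of the corresponding label (which is the case by construction of the index sets $I_1,\dots,I_4$ once $n$ is large enough, by Lemma \ref{petrov_often}, but let me note that this is precisely where regularity is needed). From condition 5 applied to $D$ at position $z_0$ we get $|\ctd(z_0)-z_0/2|<n^{.6}$, hence $|2\ctd(z_0)-z_0|<2n^{.6}$.

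The second step is a direct triangle inequality for each of the four cases. For $(s,t)\in\Lambda_1$ we write $s=\pd(i)$ and $t=\pl(\ctd(z_0)+1-i)$, so
\[
|s+t-z_0|\le |\pd(i)-2i|+|\pl(\ctd(z_0)+1-i)-2(\ctd(z_0)+1-i)|+|2\ctd(z_0)+2-z_0|,
\]
which is at most $2n^{.6}+2n^{.6}+(2n^{.6}+2)<10n^{.6}$ for $n$ large enough. For $(s,t)\in\Lambda_2$ with $s=\pu(i)$ and $t=\pl(\ctd(z_0)+i)$, we bound
\[
|t-s-z_0|\le |\pl(\ctd(z_0)+i)-2(\ctd(z_0)+i)|+|\pu(i)-2i|+|2\ctd(z_0)-z_0|,
\]
again at most $6n^{.6}$. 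The analysis for $\Lambda_3$ (with $s=\pd(i)$, $t=\pr(i-\ctd(z_0))$) and $\Lambda_4$ (with $s=\pu(i)$, $t=\pr(n-\ctd(z_0)+1-i)$) is identical, combining the Petrov bounds for the relevant two position functions with the estimate $|2\ctd(z_0)-z_0|<2n^{.6}$; in the $\Lambda_4$ case one also uses $|2\ctu(n)-n|<2n^{.6}$ to turn $2(n-\ctd(z_0))$ into $2n-z_0$ up to an error of order $n^{.6}$, but since the expression $2(n-\ctd(z_0)+1-i)$ already appears naturally from the Petrov bound, only the earlier estimate is actually needed.

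The only subtle point, and thus the main (minor) obstacle, is to make sure that when we invoke the Petrov bound $|\pl(\cdot)-2(\cdot)|<2n^{.6}$ the argument of $\pl$ lies within $\ctl(n)$, and similarly for the other labels; otherwise $\pl$ is set to $n$ by convention and the bound fails. This is why the four sets $\Lambda_j$ were introduced with precisely those index ranges: an easy book-keeping argument, using condition 5 for each of $\ctd$, $\ctu$, $\ctl$, $\ctr$ (which forces each label to carry roughly $n/2$ occurrences up to $O(n^{.6})$), shows that in each of the four cases the argument lies in the admissible range for all sufficiently large $n$. After checking this, the constant $10$ in the bound leaves ample slack to absorb the $+2$ and any low-order terms, completing the proof.
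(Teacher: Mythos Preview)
Your argument is correct and is exactly the intended one: apply the Petrov bound (condition 6) to each of the two position functions appearing in the coordinates of $\Lambda_j$, then use condition 5 at $z_0$ to convert $2\ctd(z_0)$ into $z_0$ with error at most $2n^{.6}$; the triangle inequality then gives each of the four claims with room to spare inside $10n^{.6}$. The paper does not reproduce a proof here (it just cites \cite[Lemma 3.6]{borga2020square}), but this direct computation is precisely what is done there.

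One small correction: your parenthetical appeal to Lemma~\ref{petrov_often} for the admissibility of the arguments of $\pl$ and $\pr$ is misplaced --- that lemma bounds $i-\pd(\ctd(i))$, which is not what is needed. The range checks you describe in your final paragraph rely only on condition~5 (each label count is $n/2+O(n^{.6})$) together with the construction of the index sets $I_1,\dots,I_4$ and of $z_2$, exactly as the surrounding text in the paper indicates. This does not affect the correctness of your argument.
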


In words, the conditions above impose that the points in the sequences $\Lambda_i,$ for $i=1,2,3,4$ are close to a rectangle rotated of 45 degrees and with bottom vertex at $(z_0,1)$ (see \cref{fig:min_max_perm3}). We highlight that the statement of \cref{guiding light} gives a deterministic result.

\begin{figure}[htbp]
	\begin{minipage}[c]{0.54\textwidth}
		\centering
		\includegraphics[scale=.3]{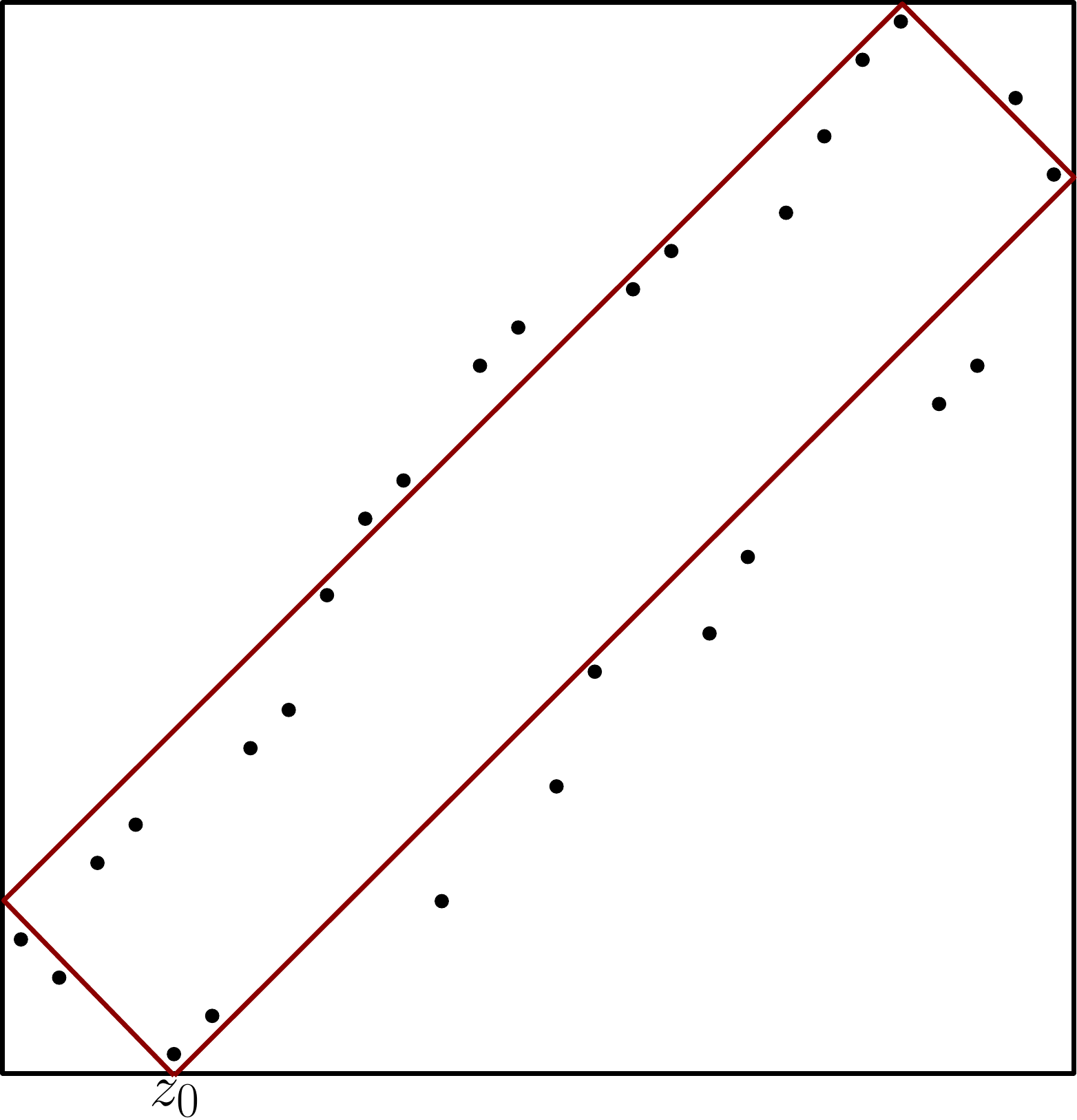}
	\end{minipage}
	\begin{minipage}[c]{0.45\textwidth}
		\caption{A square permutation obtained from an anchored pair of sequences $(X,Y,z_0)\in \Omega_{28}$. In red a rectangle rotated of 45 degrees and with bottom vertex at $(z_0,1)$. \label{fig:min_max_perm3}}
	\end{minipage}
\end{figure}

\begin{lem}[{\cite[Lemma 3.7]{borga2020square}}]
	\label{lem:map_welldef}
	For $n$ large enough the following holds. Let  $(X,Y,z_0) \in \Omega_n$ and  let $\sigma=\rho(X,Y,z_0)$ as above.  Then $\sigma$ is in $Sq(n)$ with $\LRm = \Lambda_1$, $\LRM = \Lambda_2 \cup \{(1,z_1)\}$, $\RLm = \Lambda_3 \cup \{(z_0,1)\}$ and $\RLM = \Lambda_4\cup \{(z_2,n),(n,z_3)\}.$  Moreover, $\rho$ is injective with $\phi\circ\rho$ acting as the identity on $\Omega_n$.   
\end{lem}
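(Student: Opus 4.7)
The plan is to prove the four conclusions in the order: (a) $\sigma$ is a bijection $[n]\to[n]$, (b) the prescribed identification of the four records holds and in particular every point is external, (c) $\phi\circ\rho=\mathrm{Id}$ on $\Omega_n$, and (d) $\rho$ is injective.

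First, for (a), I would check that for $n$ large enough the four index sets $I_1,I_2,I_3,I_4$ really do partition the $D$-indices and $U$-indices of $X$ (respectively the $L$-indices and $R$-indices of $Y$) so that each column and each row appears in exactly one pair of some $\Lambda_i$. The only thing to verify is that the counts are compatible: $\ctd(z_0)\le \ctl(n)$ and $\ctl(n)-\ctd(z_0)\le \ctu(n)$, and symmetrically on the right; each of these is a direct consequence of the fifth Petrov condition on $X$ and $Y$ (the global $n^{.6}$-closeness of the counting functions to $n/2$) together with the assumption $\delta_n\le z_0\le n-\delta_n$ with $\delta_n\ge n^{.9}$. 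This shows $\sigma\colon[n]\to[n]$ is well defined and is a bijection.

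For (b), by construction $\Lambda_1$ is decreasing, $\Lambda_2$ and $\Lambda_3$ are increasing, and $\Lambda_4$ is decreasing, and the four sequences share the ``corner'' points $(1,z_1)$, $(z_0,1)$, $(z_2,n)$, $(n,z_3)$ consistently. The main obstacle is to rule out that points from distinct $\Lambda_i$'s shadow each other (which would kill the record property or move a point into the wrong record class). This is exactly where Lemma \ref{guiding light} is used: the four sequences are each forced into one of the four ``quadrants'' of the rotated rectangle with vertices $(1,z_1),(z_0,1),(n,z_3),(z_2,n)$, up to an error $O(n^{.6})$ — and the diagonals of this rectangle have length of order $n$ by \cref{okz}. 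Inside its own quadrant, each $\Lambda_i$ is monotone of the correct type to be a set of records: for instance points of $\Lambda_1$ are the points with smallest sum $s+t$, so each is automatically smaller (in the $\LRm$ sense) than every point of $\Lambda_2$, $\Lambda_3$, $\Lambda_4$, giving $\Lambda_1\subseteq\LRm$; monotonicity of $\Lambda_1$ itself gives the reverse. The three other identifications are handled by the same argument with the appropriate coordinate combination from \cref{guiding light}. Since $\LRM\cup\LRm\cup\RLM\cup\RLm$ now covers all points of $\sigma$, we conclude $\sigma\in\mathrm{Sq}(n)$.

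For (c), once the four record sets of $\sigma$ have been identified with $\Lambda_1,\ldots,\Lambda_4$ (plus the four corner points), I would just read off the definition of $\phi$ in \cref{sect:perm_to_anchored_seq}: a column $i$ is labelled $D$ in $\phi(\sigma)$ iff $(i,\sigma(i))\in\LRm\cup\RLm$, which by the identifications above is iff $i$ is a $D$-index of $X$; similarly for $U$, $L$, $R$. The convention for points in $\LRM\cap\RLm$ or $\LRm\cap\RLM$ (forcing $X_i=D$, $Y_{\sigma(i)}=L$) has to be checked against the corner points $(z_0,1)$ (in $\RLm$, with $X_{z_0}=D$ by the anchored pair being good), and similarly $(1,z_1)$, $(z_2,n)$, $(n,z_3)$; each works out by direct inspection. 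Finally, the anchor is recovered since $\sigma^{-1}(1)=z_0$ by construction of $\Lambda_1$. This gives $\phi(\rho(X,Y,z_0))=(X,Y,z_0)$, i.e.\ (c). Injectivity of $\rho$ in (d) is then immediate from (c).

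The hard part will be (b): ensuring that the four sequences, which are defined separately and whose definition a priori uses only counts of labels, really glue into a single permutation whose record structure matches $\Lambda_1,\ldots,\Lambda_4$. All the geometric control needed for this gluing is packaged in Lemmas \ref{okz} and \ref{guiding light}, so once those are in hand the proof reduces to careful bookkeeping of the four corner points and four quadrants. The hypothesis $n$ large is used only to absorb the $O(n^{.6})$ error terms into the $\Theta(n)$-separations between the corners.
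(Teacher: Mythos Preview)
The thesis does not give its own proof of this lemma: the statement is quoted with a citation to \cite[Lemma 3.7]{borga2020square}, and no argument is reproduced here. So there is nothing in the present paper to compare your proposal against directly.

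That said, your outline is the natural one and matches what the cited paper does. A couple of places deserve a bit more care. First, your one-line justification for (b) (``points of $\Lambda_1$ are the points with smallest sum $s+t$, so each is automatically smaller in the $\LRm$ sense'') only handles the comparison $\Lambda_1$ versus $\Lambda_4$; Lemma~\ref{guiding light} controls different linear combinations for $\Lambda_2$ and $\Lambda_3$, and the cleanest way to deal with those is instead to note directly from the construction that $\Lambda_1,\Lambda_2$ use disjoint ranges of $L$-positions in $Y$ (so every $\Lambda_2$ point sits strictly above every $\Lambda_1$ point) and that $\Lambda_1,\Lambda_3$ use disjoint ranges of $D$-positions in $X$ (so no $\Lambda_3$ point lies to the left of a $\Lambda_1$ point). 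Lemma~\ref{guiding light} is only genuinely needed for the ``opposite-corner'' pairs $(\Lambda_1,\Lambda_4)$ and $(\Lambda_2,\Lambda_3)$, together with the separation from Lemma~\ref{okz}. Second, in part (c) you should also verify that the corner points do not accidentally land in $\LRM\cap\RLm$ or $\LRm\cap\RLM$ with the ``wrong'' label under the convention of \cref{sect:perm_to_anchored_seq}; this again follows from Lemma~\ref{guiding light} (those intersections would force a point near the diagonal, which is ruled out by the $\delta_n\le z_0\le n-\delta_n$ assumption for $n$ large). With these refinements your proof goes through.
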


We conclude this section with the following key result.

\begin{lem}\label{square_is_rect}
	With probability $1-o(1)$ a uniform random square permutation $\bm{\sigma}_n$ of size $n$ is in $\rho(\Omega_n)$.	
\end{lem}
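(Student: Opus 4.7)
The plan is to reduce the statement to an asymptotic counting argument, leveraging the bijective correspondence built up in the preceding lemmas. First, I would observe that by Lemma~\ref{lem:map_welldef} the map $\rho:\Omega_n\to Sq(n)$ is injective, so $|\rho(\Omega_n)|=|\Omega_n|$. Since $\bm\sigma_n$ is uniform on $Sq(n)$, this gives the exact identity
\[
\P(\bm\sigma_n\in\rho(\Omega_n))=\frac{|\Omega_n|}{|Sq(n)|}.
\]
The remark noting that $\phi$ is injective on $Sq(n)$, combined with the identity $\phi\circ\rho=\mathrm{Id}_{\Omega_n}$, further implies $\Omega_n\subseteq\phi(Sq(n))$, so the above ratio is automatically at most $1$; hence the lemma amounts to proving the matching lower bound $|\Omega_n|/|Sq(n)|\to 1$.

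Next, I would estimate the numerator $|\Omega_n|$ via Lemma~\ref{omega_size}. That lemma says that, with respect to the natural uniform sampling on (good) anchored pairs of sequences in $\{U,D\}^n\times\{L,R\}^n\times[n]$, the Petrov conditions and the localisation $\delta_n\le z_0\le n-\delta_n$ simultaneously hold with probability $1-o(1)$. Multiplying by the total number of good anchored pairs (an elementary count of order $\Theta(n\cdot 4^n)$, obtained by fixing the five forced coordinates $X_1,X_n,Y_1,Y_n,X_{z_0}$ and averaging over the number of admissible anchors $z_0$), this yields an asymptotic formula $|\Omega_n|\sim c\,n\,4^n$ with an explicit constant $c$.

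For the denominator, I would invoke the known enumeration of square permutations from \cite{albert2011convex} (reproved in \cite{duchi2019square}), which provides an asymptotic of exactly the same form $|Sq(n)|\sim c'\,n\,4^n$. Alternatively, using that $\phi$ is injective on $Sq(n)$ one has $|Sq(n)|=|\phi(Sq(n))|$, and $\phi(Sq(n))$ sits between $\Omega_n$ and the set of good anchored pairs; one then checks that the image of $\phi$ fills essentially the whole set of good anchored pairs, up to a $1-o(1)$ factor. In either case, putting the two estimates together yields $|\Omega_n|/|Sq(n)|\to 1$, which concludes the proof.

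The main obstacle will be in the accounting of constants: one has to confirm that the leading-order coefficient $c$ of $|\Omega_n|$ truly equals the coefficient $c'$ given by the enumeration of $Sq(n)$. This requires being careful about which anchored pairs are ``good'' (the constraints $X_1=X_n=X_{z_0}=D$, $Y_1=Y_n=L$), and about the exact uniform distribution to which Lemma~\ref{omega_size} refers, so that the ratio $|\Omega_n|/|\text{good anchored pairs}|$ is legitimately $1-o(1)$ and not merely $\Theta(1)$. Once this bookkeeping is done, the rest of the argument is immediate from the two ingredients cited above.
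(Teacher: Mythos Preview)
Your proposal is correct and follows essentially the same approach as the paper: reduce to the ratio $|\Omega_n|/|Sq(n)|$ via injectivity of $\rho$, compute the number of good anchored pairs exactly (the paper gets $2(n+2)4^{n-3}$), apply Lemma~\ref{omega_size} to obtain $|\Omega_n|=2(n+2)4^{n-3}(1-o(1))$, and compare with the known enumeration of $Sq(n)$. The only minor difference is that the paper cites the explicit formula $|Sq(n)|=2(n+2)4^{n-3}-4(2n-5)\binom{2n-6}{n-3}$ from \cite{mansour2006grid} rather than \cite{albert2011convex} or \cite{duchi2019square}, which makes the matching of leading constants immediate.
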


\begin{proof}
	
	The map $\rho$ is injective from $\Omega_n$ into $Sq(n)$ and thus $\P(\bm{\sigma}_n \in \rho(\Omega_n)) = \frac{|\Omega_n|}{|Sq(n)|}$.
	
	In \cite[Theorem 5.1]{mansour2006grid} it was shown that $|Sq(n)|=2(n+2)4^{n-3}-4(2n-5){ 2n-6 \choose n-3 }$.
	Noting that the negative term satisfies $(2n-5){ 2n-6 \choose n-3 } = o(2(n+2)4^{n-3})$, we obtain that 
	\begin{equation}
	\label{eq:est_sqn}
	|Sq(n)|=2(n+2)4^{n-3}(1-o(1)).
	\end{equation}
	The number of good anchored pairs of sequences of size $n$ is $2^{n-2}(2\cdot 2^{n-2}+(n-2)\cdot 2^{n-3})= 2(n+2)4^{n-3}$. Therefore, using Lemma \ref{omega_size} the size of $\Omega_n$ satisfies 
	$$|\Omega_n|=2(n+2)4^{n-3}(1-o(1)).$$
	Thus, as $n$ tends to infinity, $\P(\bm{\sigma}_n\in \rho(\Omega_n)) \to 1.$  
\end{proof} 

Note that combing Lemmas \ref{square_is_rect} and \ref{omega_size} we get the following remarkable conclusion: Let $\bm X^n$ and $\bm Y^n$ be uniform random walks of size $n$ on $\{U,D\}$ and $\{L,R\}$ respectively. Let also  $\bm z^n_0$ be a uniform number in $\{1, \cdots , n\}$. Then $\rho(\bm X,\bm Y,\bm z^n_0)$ is asymptotically a uniform square permutation of size $n$.

\section{Permutation families encoded by generating trees}\label{sect:gen_tree-perm}

In the previous section we used (unconstrained) random walks to obtain asymptotically uniform random square permutations. We now present a way to encode some other families of permutations with walks conditioned to stay positive. The latter have several remarkable combinatorial/probabilistic properties that will be  used in \cref{sect:CTL_increments}.

\subsection{Definitions and examples}
Since we are interested in the study of permutations, we restrict the definition of generating tree to these specific objects despite they can be defined for other combinatorial families (see for instance \cite{MR1717162,MR3537914,MR3992288}). We need the following preliminary construction.

\begin{defn}
	Given a permutation $\sigma\in\mathcal{S}_n$ and an integer $m\in [n+1],$ we denote by $\sigma^{*m}$ the permutation obtained from $\sigma$ by \emph{appending a new final value} equal to $m$ and shifting by $+1$ all the other values larger than or equal to $m.$ Equivalently,
	$$\sigma^{*m}\coloneqq\text{std}(\sigma(1),\dots,\sigma(n),m-1/2).$$
\end{defn}

We say that a family of permutations $\mathcal{C}$ \emph{grows on the right} if for every permutation $\sigma\in\mathcal{S}$ such that $\sigma^{*m}\in\mathcal{C}$ for some $m\in [|\sigma|+1]$, we have $\sigma\in\mathcal{C}.$ From now until the end of this section we only consider families of permutations that grow on the right, without explicitly saying it.

\begin{defn}
	\label{def:gen_tree}
	The \emph{generating tree} of a family of permutations $\mathcal{C}$ is the infinite rooted tree whose vertices are the permutations of $\mathcal{C}$ (each appearing exactly once in the tree) and such that permutations of size $n$ are at level $n$ (the level of a vertex being the distance from the root plus one).
	The children of some permutation $\sigma\in\mathcal{C}$ correspond to the permutations obtained by appending a new final value to $\sigma$.
\end{defn}

\begin{rem}
	Another classical way of defining generating trees for families of permutations is by appending a new maximal value on the top of the diagram of $\sigma$. We will never consider this case in this manuscript.
\end{rem}

We now give an example of a generating tree for the family of permutations avoiding the patterns 1423 and 4123 inspired by a result of Kremer \cite{kremer2000permutations, kremer2003postscript}\footnote{Note that the various results in \cite{kremer2000permutations, kremer2003postscript} are stated for families of permutations that grows on the top and not on the right. However, these results  can be translated into permutations growing on the right by using the diagonal symmetry of the diagram of pattern-avoiding permutations. For instance, we can recover the result for $\{1423,4123\}$-avoiding permutations growing on the right from $\{1342,2341\}$-avoiding permutations growing on the top.}. This family will appear several times along this section (see Examples \ref{exmp:1423_4123_av_part0}, \ref{exmp:1423_4123_av} and \ref{exmp:1423_4123_av_part2}).

\begin{exmp}
	\label{exmp:1423_4123_av_part0}
	The first levels of the generating tree for $\{1423,4123\}$-avoiding permutations are drawn in \cref{gen_tree_1423_4123}. Note that every child of a permutation is obtained by appending a new final value. For instance, the permutation $321$ on the left-hand side of the third level is obtained from the permutation $21$ by appending a new final value equal to $1.$
\end{exmp}

\begin{defn}
	\label{def:gen_tree_part2}
	Let $S$ be a set and assume there exists an $S$-valued statistics on the permutations of $\mathcal{C},$ whose value determines the number of children in the generating tree and the values of the statistics for the children. Then we give labels to the objects of $\mathcal{C}$ which indicate the value of the statistics. The associated \emph{succession rule} is then given by the label $\lambda$ of the root and, for any label $k,$ the labels $e_1(k),\dots,e_{h(k)}(k)$ of the $h(k)$ children of an object labeled by $k.$ In full generality, the associated succession rule is represented by the formula
	\begin{equation}
	\begin{cases} 
	\text{Root label}: (\lambda) \\
	(k)\to (e_1{\scriptstyle(k)}),\dots,(e_{h(k)}{\scriptstyle(k)}).
	\end{cases}
	\label{eq:ass_succ_rul}
	\end{equation}
	The set of labels appearing in a generating tree is denoted by $\mathcal{L}$ and for all $k\in\mathcal{L},$ the multiset of labels of the children $\{e_1(k),\dots,e_{h(k)}(k)\}$ is denoted by $\CL(k),$ where $\CL$ stands for \emph{``children labels"}.
\end{defn}

\begin{rem} \label{rem:multi-dim}
	In our examples, we only consider generating trees with labels taking values in some subset of $\mathbb{Z}_{>0}.$ However, other types of generating trees have been considered and studied, for instance trees with $(\mathbb{Z}_{>0})^2$-valued labels (an example is the generating tree for the well-known Baxter permutations \cite{bousquet2003four}) or (integer tuples)-valued labels (see \cite{marinov2003generating}). 
\end{rem}

Before continuing Example \ref{exmp:1423_4123_av_part0}, we introduce some terminology.

\begin{defn}
	Given a family of permutations $\mathcal{C},$ a permutation $\sigma\in\mathcal{C}_n$ and an integer $m\in [n+1],$ we say that $m$ is an \emph{active site} of $\sigma$ if $\sigma^{*m}\in\mathcal{C}.$ We denote by $\text{AS}(\sigma)$ the set of active sites of $\sigma$ and by $|\text{AS}(\sigma)|$ its cardinality.
	If $\text{AS}(\sigma)=\{i_1,\dots,i_k\},$ with $i_1<\dots<i_k,$ we call $i_1$ the \emph{bottom active site} and $i_k$ the \emph{top active site}.
\end{defn}

\begin{exmp}\label{exmp:1423_4123_av}
	Given a permutation $\sigma\in\Av_n(1423,4123),$ we have the following properties for the set $\text{AS}(\sigma)$ (for a proof see \cite[Theorem 8]{kremer2000permutations}):
	\begin{itemize}
		\item 1,2 and $n+1$ are always active sites, i.e.\  $\{1,2,n+1 \}\subseteq\text{AS}(\sigma)$.
		\item If $\text{AS}(\sigma)=\{i_1=1,i_2=2,i_3,\dots,i_{k-1},i_k=n+1\}$ then
		\begin{itemize}
			\item $\text{AS}(\sigma^{*1})=\{1,2,3,i_3+1,i_4+1,\dots,i_{k-1}+1,n+2\}$;
			\item $\text{AS}(\sigma^{*(n+1)})=\{1,2,i_3,\dots,i_{k-1},n+1,n+2\}$;
			\item $\text{AS}(\sigma^{*i_j})=\{1,2,i_3,\dots,i_{j},n+2\},$ for all $2\leq j< k.$
		\end{itemize}
	\end{itemize}
	
	Note that, as a consequence, $|\text{AS}(\sigma^{*1})|=|\text{AS}(\sigma^{*(n+1)})|=|\text{AS}(\sigma)|+1,$
	while,
	$|\text{AS}(\sigma^{*i_j})|=j+1,$ for $j\in[2,k-1]$.
	In  particular, the statistics $|\text{AS}(\cdot)|$ fulfills the conditions of \cref{def:gen_tree_part2} for  the family of $\{1423,4123\}$-avoiding permutations, and the associated succession rule is
	\begin{equation}
	\begin{cases} 
	\text{Root label}: (2) \\
	(k)\to (k+1),(3),(4)\dots,(k),(k+1). 
	\end{cases}
	\label{eq:succ_rule_1423_4123}
	\end{equation}
	For convenience, we choose to write the succession rule and to draw the children of a permutation (see \cref{gen_tree_1423_4123} below) in increasing order of the appended final value. 
	
	\begin{figure}[htbp]
		\centering
			\includegraphics[scale=0.56]{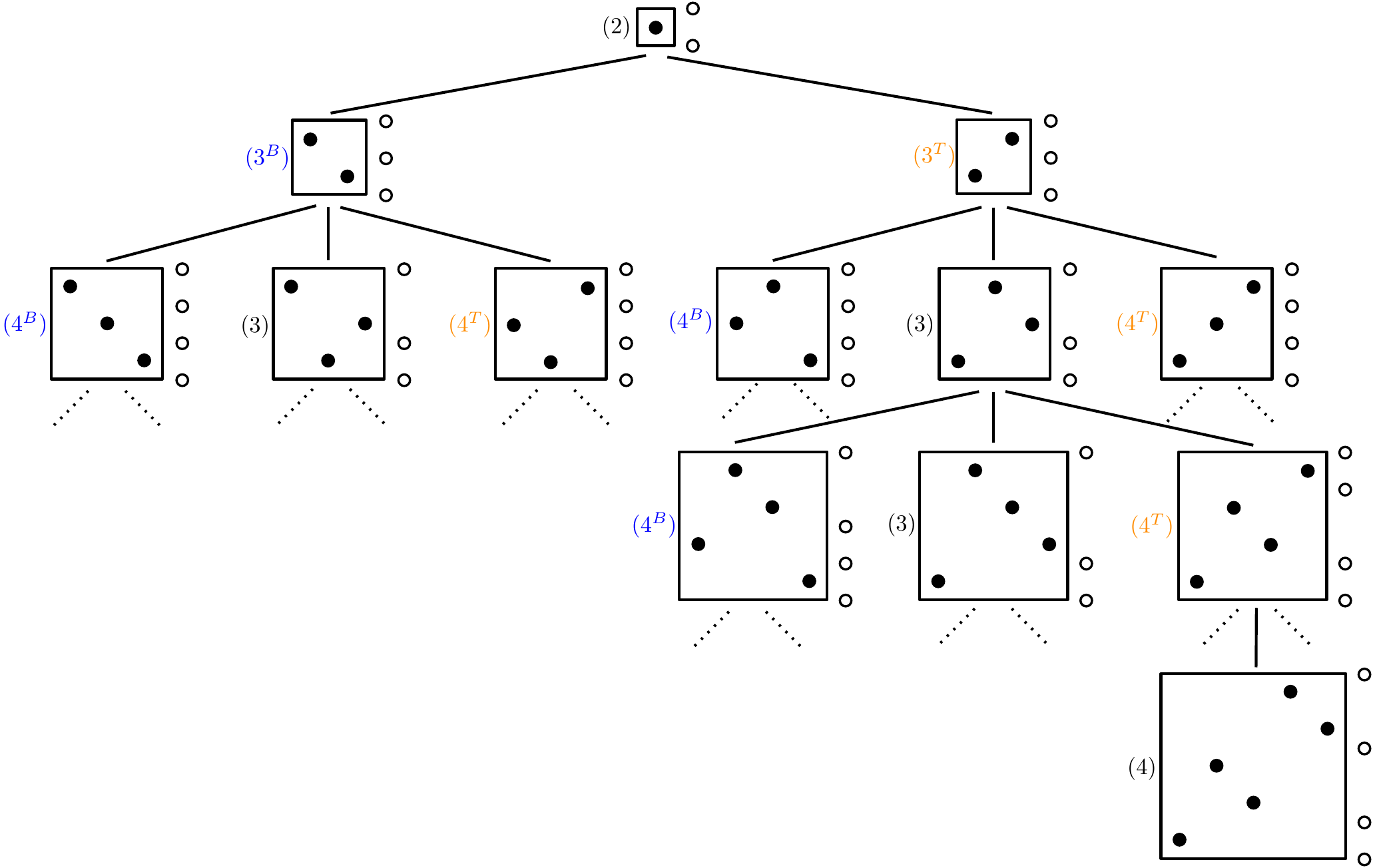}\\
			\caption{The generating tree for $\{1423,4123\}$-avoiding permutations. We completely draw only the first three levels of the tree; for the fourth level, we only draw the children of the permutation 132, and for the fifth level only one of the children of the permutation 1324. For each permutation, we highlight the active sites with some circles on the right-hand side of the diagram. Moreover, on the left-hand side of each diagram we report the corresponding label given by the statistics $|\text{AS}(\cdot)|$. The superscripts $T$ and $B$ and the colors that appear in some labels will be clarified in Example \ref{exmp:1423_4123_av_part2}. \label{gen_tree_1423_4123}}
	\end{figure}
\end{exmp}

\begin{rem}
	In our examples, we only consider generating trees whose succession rule is determined (in the sense of \cref{def:gen_tree_part2}) by the statistics $|\text{AS}(\cdot)|$. However,  in other examples, we need to consider other statistics. For instance, the succession rule for the generating tree for Baxter permutations is determined by the numbers of left-to-right and right-to-left maxima (see \cite{bousquet2003four}).
\end{rem}

\subsection{A bijection between permutations encoded by a generating tree and colored walks}
\label{sect:bij_color_walk_perm}

We start by pointing out that we are not assuming that the children labels $e_1(k),\dots,e_{h(k)}(k)$ appearing in the succession rule in \cref{eq:ass_succ_rul} are distinct (notice for instance that the label $(k+1)$ is produced twice from the label $(k)$ in \cref{eq:succ_rule_1423_4123}). For each pair of labels $(k,k')\in\mathcal{L}^2$, we denote by $\mult_{k}(k')$ the number of indices $i$ such that $ e_{i}(k)=k'.$

There is a natural bijection between permutations in a family encoded by a generating tree and the set of paths in the tree starting at the root. We simply associate to each path the permutation appearing in its endpoint. We may further identify each path in the tree with the list of labels appearing on this path, but this requires to distinguish the potential repeated labels in the succession rule. More precisely, for each label $k\in\mathcal{L}$ and for each element $k'\in\CL(k)$ such that $\mult_{k}(k')>1,$  we distinguish the repeated labels by painting them  with different colors (say colors $\{1,\dots,\mult_{k}(k')\}$). Then the \emph{colored succession rule} is represented as 
\begin{equation}
\begin{cases} 
\label{eq:colored_succ_rule}
\text{Root label}: (\lambda) \\
(k)\to (e^{c}_1{\scriptstyle(k)}),\dots,(e^{c}_{h(k)}{\scriptstyle(k)}), 
\end{cases}
\end{equation}	
where the exponents $c$ recall that the labels might be colored. We highlight that the colors and the values of the children labels depend only on the value of the parent label (and not on the color).

In this way, every permutation of size $n$ is bijectively encoded by a sequence of $n$ (colored) labels $(k_1=\lambda,k_2^{c_2},\dots,k_n^{c_n})$, where $k_i$ records the value of the label of the vertex at level $i$ and $c_i$ its color.  Note that every pair of two consecutive labels, $(k_i^{c_i},k_{i+1}^{c_{i+1}})$ for $1\leq i\leq n-1,$ is \emph{consistent with the colored succession rule}, i.e.\ there exist $j=j(i)\in [1,h(k_i)]$ such that $k_{i+1}^{c_{i+1}}=e_{j}^{c}(k_i)$. In order to simplify the notation, we will often write a sequence of colored labels as $(k_i^{c})_i$ instead of $(k_i^{c_i})_i$.

\medskip

We denote by $\GG$ this map between consistent sequences of colored labels and permutations in the generating tree.

\begin{prop}\label{prop:bij_gen_tree}
	The map $\GG$ is a bijection between consistent sequences of colored labels and permutations in the generating tree.
\end{prop}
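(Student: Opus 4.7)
The plan is to exhibit an explicit inverse and verify that the two compositions are the identity, the argument being essentially a bookkeeping exercise that uses the tree structure and the role of the coloring.

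First, I would show \emph{injectivity} of $\GG$. Because the generating tree is a tree, each permutation $\sigma \in \mathcal{C}$ corresponds to a unique path from the root to $\sigma$, so it suffices to check that two distinct root-to-$\sigma$ and root-to-$\sigma'$ paths produce distinct sequences of colored labels. If two such paths first differ at level $i+1$, then at level $i$ they agree on some common ancestor $\tau$ with label $k_i$, but they continue to two different children of $\tau$. By the convention that children of a parent are listed in increasing order of the appended final value and that the succession rule lists $e_1(k_i), \ldots, e_{h(k_i)}(k_i)$ in the same order, two distinct children of $\tau$ correspond to two distinct positions $j_1 \ne j_2$ in this list. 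If the labels $e_{j_1}(k_i)$ and $e_{j_2}(k_i)$ differ, the colored labels at level $i+1$ differ; if they coincide, then $\mult_{k_i}(e_{j_1}(k_i)) > 1$ and by construction the two occurrences are painted with different colors, so again the colored labels at level $i+1$ differ.

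Second, I would construct $\GG^{-1}$ and prove \emph{surjectivity}. Given a consistent colored sequence $(k_1 = \lambda, k_2^{c_2}, \ldots, k_n^{c_n})$, I walk down the tree as follows: start at the root, and at each step, having reached a vertex $\tau$ of size $i$ with label $k_i$, the consistency condition guarantees that the colored label $k_{i+1}^{c_{i+1}}$ equals exactly one entry $e_j^{c}(k_i)$ of the colored succession rule for $k_i$ (here again the coloring is crucial: when $\mult_{k_i}(k_{i+1}) > 1$, it is precisely $c_{i+1}$ that pins down which index $j$ to pick). The index $j$ in turn identifies a unique child of $\tau$ in the tree by the canonical ordering of children. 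Iterating, this process selects a unique vertex $\sigma \in \mathcal{C}_n$.

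Third, I would check that the two compositions are the identity. The equality $\GG \circ \GG^{-1} = \mathrm{id}$ is immediate: following the path prescribed by a sequence and then reading off the labels and colors along it recovers the input sequence, because the $j$-th child of a label-$k$ vertex produces exactly $e_j^c(k)$. Conversely, $\GG^{-1} \circ \GG = \mathrm{id}$ holds because the path from the root to $\sigma$ is unique in the tree and the procedure defining $\GG^{-1}$ simply retraces that path.

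The argument is routine once the setup is unwound; the one delicate point—and the only place where the proof could go wrong—is the role of the colors. Without the coloring, a pair $(k_i, k_{i+1})$ with $\mult_{k_i}(k_{i+1}) > 1$ would not determine a unique child, and both injectivity and the well-definedness of $\GG^{-1}$ would fail. The definition of the colored succession rule in \eqref{eq:colored_succ_rule} is precisely tailored so that each transition $(k_i, k_{i+1}^{c_{i+1}})$ picks out a unique index $j \in [1, h(k_i)]$, and hence a unique edge in the generating tree.
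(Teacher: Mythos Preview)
Your proof is correct and follows essentially the same approach as the paper. The paper does not give a formal proof of this proposition; the preceding paragraph simply observes that permutations are in natural bijection with root-to-vertex paths (since the generating tree is a tree), and that such paths can be encoded by the sequence of labels along them once repeated labels are distinguished by colors. Your argument is a careful expansion of exactly this reasoning, with the key point---that the coloring resolves the ambiguity when $\mult_{k_i}(k_{i+1})>1$---correctly identified.
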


\begin{exmp}
	\label{exmp:1423_4123_av_part2}
	We continue Examples \ref{exmp:1423_4123_av_part0} and \ref{exmp:1423_4123_av}. In the succession rule given in \cref{eq:succ_rule_1423_4123} the label $(k+1)$ is produced twice. We distinguish the two occurrences of $(k+1)$ by painting the first one in blue (and we will write $\textcolor{blue}{{(k+1)}^{B}}$) and the second one in tangerine\footnote{The choice for the the colors is made with the aim of remembering that a Blue label activates the Bottom active site and a Tangerine label activates the Top active site.} (and we will write $\textcolor{orange}{{(k+1)}^{T}}$). The colored succession rule is 
	\begin{equation}\label{eq:biwfbwrubfwifnwepi}
	\begin{cases} 
	\text{Root label}: (2) \\
	(k)\to \textcolor{blue}{(k+1)^{B}},(3),(4)\dots,(k),\textcolor{orange}{(k+1)^{T}}. 
	\end{cases}
	\end{equation}
	
	With this coloring of the labels, the permutation 13254 (the only one displayed in the fifth level of the generating tree in \cref{gen_tree_1423_4123}) is encoded by the consistent sequence of (colored) labels $(2,\textcolor{orange}{3^{T}},3,\textcolor{orange}{4^{T}},4),$ i.e.\ $\GG(2,\textcolor{orange}{3^{T}},3,\textcolor{orange}{4^{T}},4)=13254.$
\end{exmp}

\begin{rem}
	We will see in \cref{sect:gen_tree_CLT} that, for many families of pattern-avoiding permutations $\mathcal C$ encoded by a generating tree, the walk associated with a uniform random permutation in $\mathcal C$ is a one-dimensional random walk conditioned to stay positive with independent colors.
\end{rem}

\section{Baxter permutations and related combinatorial objects}
\label{sec:discrete}

We conclude this chapter with Baxter permutations. Here we will see how to encode these permutations with higher dimensional discrete structures, such as two-dimensional walks in cones and a family of decorated planar maps.

\medskip

Baxter permutations were introduced by Glen Baxter in 1964 \cite{MR0184217} to study fixed points of commuting functions. A permutation $\sigma$ is Baxter if it is not possible to find $i < j < k$ such that $\sigma(j+1) < \sigma(i) < \sigma(k) < \sigma(j)$ or $\sigma(j) < \sigma(k) < \sigma(i) < \sigma(j+1)$. Baxter permutations are well-studied from a combinatorial point of view by the \textit{permutation patterns} community (see for instance \cite{MR0250516,MR491652,MR555815,bousquet2003four,MR3882946}). They are a particular example of family of permutations avoiding \textit{vincular patterns} (see \cite{MR2901166} for more details).
We denote by $\gls*{baxter}$ the set of Baxter permutations. 

\medskip

In the next sections we introduce several other combinatorial objects connected with Baxter permutations that will be useful for later purposes.

\subsection{Plane bipolar orientations, tandem walks, and coalescent-walk processes}\label{sect:coal_walk_intro}

\textit{Plane bipolar orientations}, or \emph{bipolar orientations} for short, are planar maps (i.e.\ connected graphs properly embedded in the plane up to continuous deformation) equipped with an acyclic orientation of the edges with exactly one source $s$ (i.e.\ a vertex with only outgoing edges) and one sink $s'$ (i.e.\ a vertex with only incoming edges), both on the outer face. We denote by $\mathcal{O}$ the set of bipolar orientations. The size of a bipolar orientation $m$ is its number of edges and it is denoted by $|m|$.

Every bipolar orientation can be plotted in the plane with every edge oriented from bottom to top (this is a consequence for instance of \cite[Proposition 1]{MR2734180}; see the black map on the left-hand side of \cref{fig:bip_orient} for an example). We think of the outer face as split in two: the \textit{left outer face}, and the \textit{right outer face}. The orientation of the edges around each vertex/face is constrained: we sum up these local constraints, setting some vocabulary, on the right-hand side of \cref{fig:bip_orient}. We call in-degree/out-degree of a vertex the number of incoming/outgoing edges around this vertex. We call left degree (resp.\ right degree) of an inner face the number of left (resp.\ right) edges around that face.

\begin{figure}[htbp]
	\centering
	\includegraphics[scale=0.75]{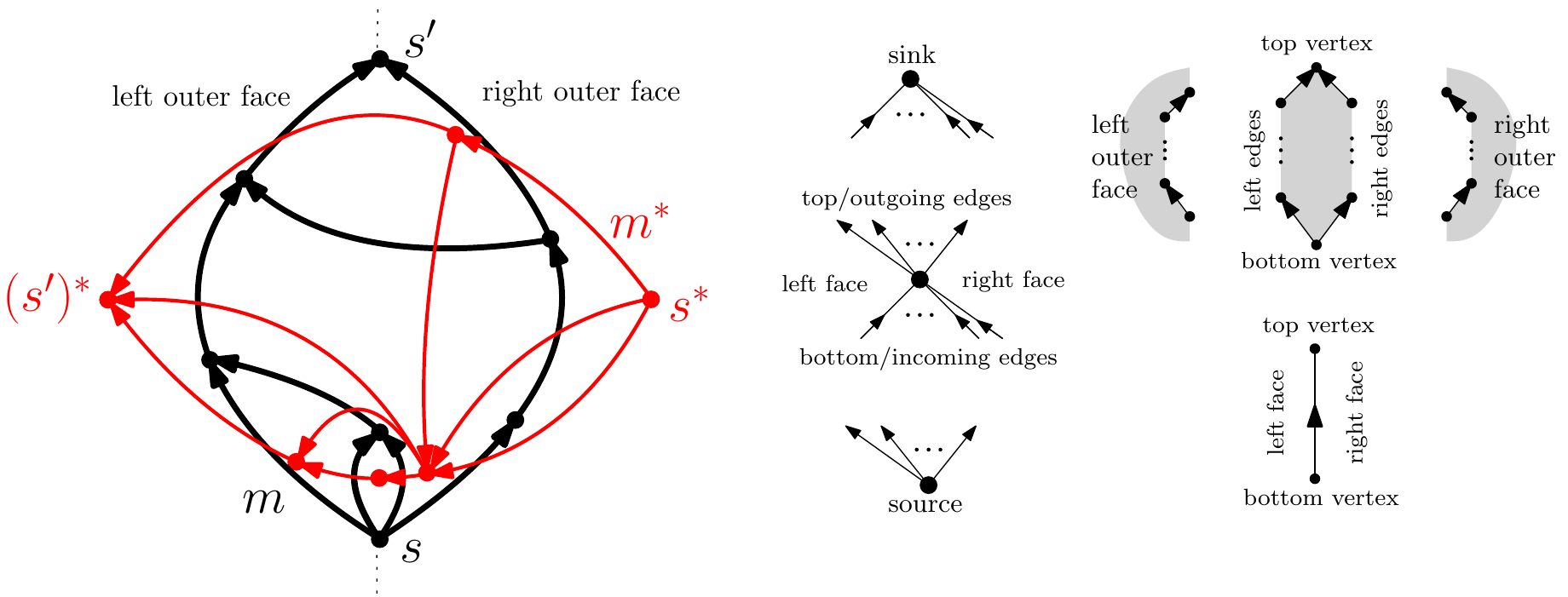}\\
	\caption{On the left-hand side, in black, a bipolar orientation $m$ of size 10 drawn with every edge oriented from bottom to top. In red, its dual map $m^*$, drawn with every edge oriented from right to left. On the right-hand side, the behavior of the orientation around each vertex/face/edge. Note for instance that in the clockwise ordering around each vertex different from the source and the sink there are top/outgoing edges, a right face, bottom/incoming edges, and a left face.\label{fig:bip_orient}}
\end{figure}

The \emph{dual map} $m^*$ of a bipolar orientation $m$ (called the \textit{primal}) is obtained by putting a vertex in each face of $m$, and an edge between two faces separated by an edge in $m$, oriented from the right face to the left face. The primal right outer face becomes the dual source, and the primal left outer face becomes the dual sink. Then $m^*$ is also a bipolar orientation (see the left-hand side of \cref{fig:bip_orient}).
The map $m^{**}$ is just $m$ with the orientation reversed, and $m^{****}=m$.

We now define a notion at the heart of many future combinatorial constructions.
Let $m$ be a bipolar orientation. Disconnecting every incoming edge but the rightmost one at every vertex turns the map $m$ into a plane tree $T(m)$ rooted at the source, which we call the \textit{down-right tree} of the map (see the left-hand side of \cref{fig:bip_orient _with_trees} for an example). The tree $T(m)$ contains every edge of $m$, and the clockwise contour exploration\footnote{Recall that the \emph{exploration} of a tree $T$ is the visit of its vertices (or its edges) starting from the root and following the contour of the tree in the clockwise order.} of $T(m)$ identifies an ordering of the edges of $m$. We denote by $e_1,\ldots,e_{|m|}$ the edges of $m$ in this order (see again \cref{fig:bip_orient _with_trees}). The tree $T(m^{**})$ can be obtained similarly from $m$ by disconnecting every outgoing edge but the leftmost, and is rooted at the sink. The following remarkable facts hold:
\textit{The contour exploration of $T(m^{**})$ visits edges of $m$ in the order $e_{|m|},\ldots,e_1$. Moreover, one can draw $T(m)$ and $T(m^{**})$ in the plane, one next to the other, in such a way that the interface between the two trees traces a path, called \emph{interface path}\label{def:int_path}\footnote{Note that the interface path coincides with the clockwise contour exploration of $T(m)$, an example is given in the first two pictures of \cref{fig:bip_orient _with_trees}.}, from the source to the sink visiting edges $e_1,\ldots, e_{|m|}$ in this order (see the middle picture of \cref{fig:bip_orient _with_trees} for an example).}

A bijection between bipolar orientations and a specific family of two-dimensional walks in the non-negative quadrant was introduced by Kenyon, Miller, Sheffield and Wilson~\cite{MR3945746}.

\begin{defn}\label{defn:KMSW}
	Let $n\geq 1$, $m \in \mathcal O_n$. We define $\bow(m)=(X_t,Y_t)_{1\leq t\leq n} \in (\Z_{\geq 0}^2)^n$ as follows: for $1\leq t\leq n$, $X_t$ is the height in the tree $T(m)$ of the bottom vertex of $e_t$ (i.e.\ its distance in $T(m)$ from the source $s$), and $Y_t$ is the height in the tree $T(m^{**})$ of the top vertex of $e_t$ (i.e.\ its distance in $T(m^{**})$ from the sink $s'$).
\end{defn}

An example is given in the right-hand side of \cref{fig:bip_orient _with_trees}.

\begin{figure}[htbp]
	\centering
	\includegraphics[scale=0.95]{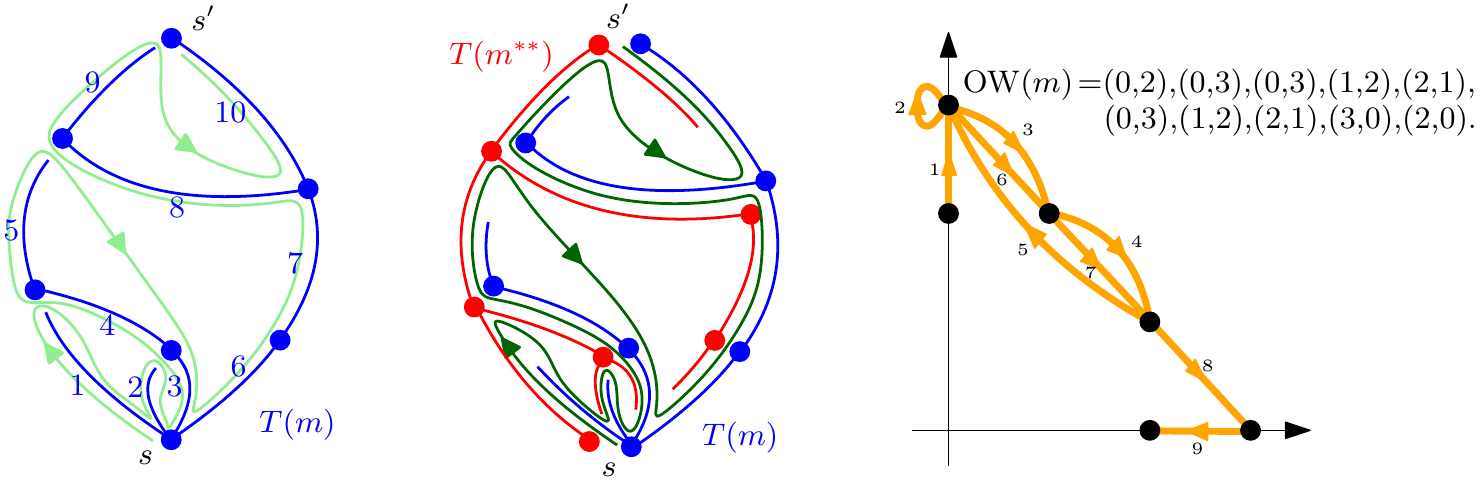}\\
	\caption{On the left-hand side the tree $T(m)$ built by disconnecting the bipolar orientation $m$ from \cref{fig:bip_orient} with the edges ordered according to the exploration process (in light green). In the middle, the two trees $T(m)$ and $T(m^{**})$ with the \emph{interface path} tracking the interface between the two trees (in dark green). On the right-hand side, the two-dimensional walk $\bow(m)$ defined in  \cref{defn:KMSW}. \label{fig:bip_orient _with_trees}}
\end{figure}

\begin{thm}[Theorem 1 of \cite{MR3945746}] \label{thm:KMSW}
	The mapping $\bow$ is a size-preserving bijection between $\mathcal O$ and the set $\mathcal W$ of walks in the non-negative quadrant $\Z_{\geq 0}^2$ starting on the $y$-axis, ending on the $x$-axis, and with increments in 
	\begin{equation}
		\label{eq:admis_steps}
		\Steps = \{(+1,-1)\} \cup \{(-i,j), i\in \Z_{\geq 0}, j\in \Z_{\geq 0}\}.
	\end{equation}
\end{thm}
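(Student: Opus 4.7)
The plan is to verify that $\bow$ maps into $\mathcal W$, and then construct an explicit inverse. For well-definedness of $\bow(m) \in \mathcal W$, the boundary conditions are immediate: $e_1$ is the first edge visited by the clockwise contour exploration of $T(m)$, which begins at the source $s$, so the bottom vertex of $e_1$ is $s$ itself and $X_1 = 0$; dually, $e_n$ is the last edge explored, its top vertex is the sink $s'$, so $Y_n = 0$. Non-negativity of the coordinates is automatic since they are tree heights.

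The admissible increments arise from a local analysis of the interface path between $T(m)$ and $T(m^{**})$. Going from $e_t$ to $e_{t+1}$, one of two situations occurs. Either the top vertex $v_t$ of $e_t$ has an outgoing edge available immediately after $e_t$ in the interface exploration, in which case $e_{t+1}$ is the leftmost outgoing edge of $v_t$; then $e_{t+1}$ is a child of $e_t$ in $T(m)$ (so $X$ increases by one), and dually the top of $e_{t+1}$ is a child of the top of $e_t$ in $T(m^{**})$ (so $Y$ decreases by one), producing the $(+1,-1)$ increment. Otherwise, the interface path closes the right face $f$ of $e_t$ (which coincides with the left face of $e_{t+1}$); writing $i+1$ for the right degree and $j+1$ for the left degree of $f$, the bottom of $e_{t+1}$ lies $i$ levels below the bottom of $e_t$ in $T(m)$ (the interface crosses $i$ right-boundary edges of $f$ on the way down), and its top lies $j$ levels above the top of $e_t$ in $T(m^{**})$, giving the $(-i,j)$ increment. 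This exhausts all cases, so every step lies in $\Steps$, and $\bow(m) \in \mathcal W$.

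To conclude the bijection, I would construct an explicit inverse $\bow^{-1}: \mathcal W \to \mathcal O$ algorithmically. Given a walk $(X_t,Y_t)_{1 \le t \le n}$, one reads its steps from left to right and builds a partial bipolar orientation while maintaining a list of \emph{active edges} exposed on the right boundary of the construction so far. On a $(+1,-1)$ step one attaches a new edge at the top of the rightmost active edge, lengthening the active list by one; on a $(-i,j)$ step one merges the tops of the rightmost $i+1$ active edges into a single new vertex and attaches $j+1$ outgoing edges to it, the rightmost of which becomes the new rightmost active edge. Non-negativity of the walk ensures that enough active edges are always present, and the walk hitting the $x$-axis at time $n$ precisely corresponds to the final merge that creates the unique sink. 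The resulting planar map is acyclic and has a unique source (the bottom of the first edge) and unique sink by construction. The main obstacle is verifying the two identities $\bow \circ \bow^{-1} = \mathrm{id}_{\mathcal W}$ and $\bow^{-1} \circ \bow = \mathrm{id}_{\mathcal O}$, which I would do by a joint induction on size: stripping the last edge of $m$ (either deleting a pendant edge into the sink, or undoing the last face-closing merge) corresponds exactly to truncating the last step of $\bow(m)$, so both identities propagate from the base case $n=1$. Size-preservation is built in, since each edge of $m$ contributes exactly one step to $\bow(m)$.
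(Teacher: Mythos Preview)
Your overall strategy---verify that $\bow$ lands in $\mathcal W$, then build an explicit inverse and check the two compositions by induction on size---is exactly the KMSW approach, and it is also how the paper describes the inverse $\Inverse$ later in \cref{sect:KMSW} (the theorem itself is only cited, not re-proved).

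However, your description of the inverse construction is garbled and would not produce a bipolar orientation. On a $(-i,j)$ step, the correct operation is to \emph{glue a new inner face} of left-degree $i+1$ and right-degree $j+1$ to the current right boundary: the top-left edge of the new face is identified with the active edge $e_t$, the remaining $i$ left-boundary edges of the face are identified with the $i$ edges just below $e_t$ on the right boundary, and the $j+1$ right-boundary edges of the face become new unexplored edges, the bottom one being $e_{t+1}$. This is not the same as ``merging the tops of the rightmost $i+1$ active edges into a single new vertex'': those tops are $i+1$ \emph{distinct consecutive vertices} along the boundary, and identifying them would collapse a path and destroy planarity. Similarly, on a $(+1,-1)$ step the correct operation is to move the active pointer up one edge along the existing right boundary (adding a new pendant edge above the sink only if no unexplored edge is available), not to always attach a fresh edge. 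Relatedly, in your increment analysis you have swapped the roles of left and right degree: the traversed face has \emph{left} degree $i+1$ and \emph{right} degree $j+1$ for an increment $(-i,j)$, since $e_t$ sits at the top of its left boundary and $e_{t+1}$ at the bottom of its right boundary.

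With the corrected face-gluing inverse, your inductive scheme (peeling off the last step of the walk corresponds to undoing the last exploration step of the map) goes through exactly as KMSW do it.
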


We call $\gls*{tandem}$ the set of \emph{tandem walks}, as done in \cite{bousquet2019plane}. 

\medskip

We now introduce a second bijection, fundamental for our results, between bipolar orientations and Baxter permutations, discovered by Bonichon, Bousquet-Mélou and Fusy~\cite{MR2734180}.

\begin{defn}\label{defn:bobp}
	Let $n\geq 1, m\in \mathcal O_n$. Recall that every edge of the map $m$ corresponds to its dual edge in the dual map $m^*$. Let $\bobp(m)$ be the only permutation $\pi$ such that for every $1\leq i \leq n$, the $i$-th edge to be visited in the exploration of $T(m)$ corresponds to the $\pi(i)$-th edge to be visited in the exploration of $T(m^*)$.
\end{defn}

An example is given in \cref{fig:bip_orient_and_perm}.

\begin{figure}[htbp]
	\begin{minipage}[c]{0.59\textwidth}
		\centering
		\includegraphics[scale=0.4]{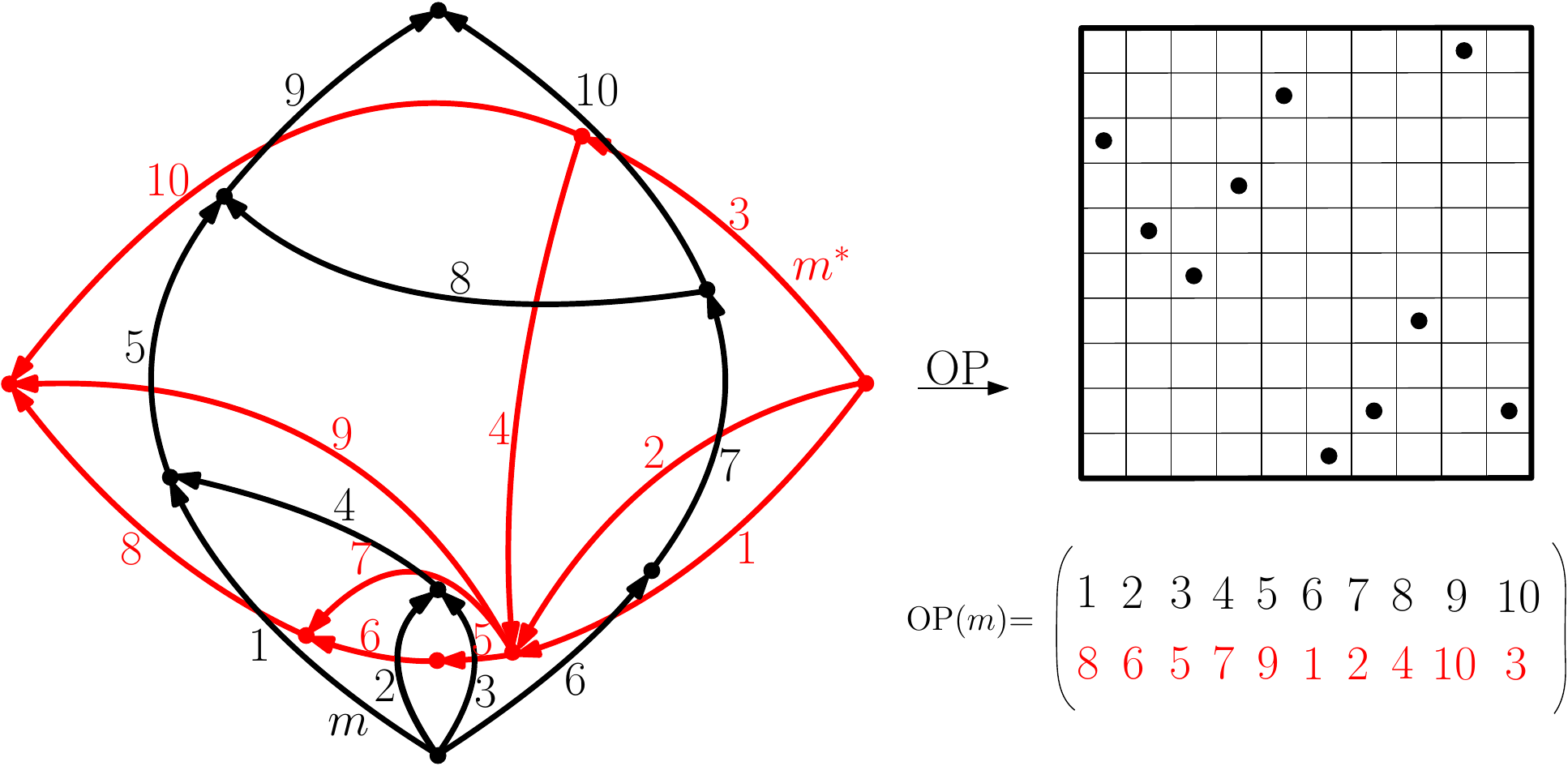}
	\end{minipage}
	\begin{minipage}[c]{0.40\textwidth}
		\caption{A schema explaining the mapping $\bobp$. On the left-hand side, the bipolar orientation $m$ and its dual $m^*$, from \cref{fig:bip_orient}. We plot in black the labeling of the edges of $m$ obtained in \cref{fig:bip_orient _with_trees} and in red the labeling of the edges of $m^*$ obtained by the same procedure. On the right-hand side, the permutation $\bobp(m)$ (together with its diagram) obtained by pairing the labels of the corresponding primal and dual edges between $m$ and $m^*$.  \label{fig:bip_orient_and_perm}}
	\end{minipage}
\end{figure}

\begin{thm}[Theorem 2 of \cite{MR2734180}]\label{thm:bobp}
	The mapping $\bobp$ is a size-preserving bijection between the set $\mathcal O$ of bipolar orientations and the set $\mathcal P$ of Baxter permutations.
\end{thm}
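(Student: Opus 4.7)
The plan is to establish \cref{thm:bobp} by proving three claims: (a) for every $m\in\mathcal O$, the permutation $\bobp(m)$ avoids the two vincular Baxter patterns $2\text{-}41\text{-}3$ and $3\text{-}14\text{-}2$, so $\bobp(m)\in\mathcal P$; (b) the map $\bobp$ admits an explicit inverse $\Psi:\mathcal P\to\mathcal O$; (c) $\Psi\circ\bobp=\Id_{\mathcal O}$ and $\bobp\circ\Psi=\Id_{\mathcal P}$. Since by definition $|\bobp(m)|=|m|$, once (a)–(c) are established, size-preservation is automatic.

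For claim (a), I would fix $m\in\mathcal O_n$, write $\pi=\bobp(m)$, and reason geometrically on 4-tuples of edges. The key geometric dictionary is the following: $\pi(i)<\pi(j)$ iff the dual edge $e_i^*$ is visited before $e_j^*$ in the clockwise exploration of $T(m^*)$; equivalently, $e_i$ lies in the right region and $e_j$ in the left region relative to the interface path, and one can characterize the six possible relative positions of two primal edges ("$e_i$ below $e_j$ in $T(m)$", "$e_j$ in the right subtree of $e_i$", etc.) and, for each, determine whether $\pi(i)<\pi(j)$ or $\pi(i)>\pi(j)$. Assuming a Baxter pattern violation at $i<j<k$ with $j+1=j'$ and (say) $\pi(j')<\pi(i)<\pi(k)<\pi(j)$, I would derive the following contradiction: the edges $e_j$ and $e_{j'}$ are consecutive in the interface exploration, so they share either a vertex or a face of $m$; combining this with the inequalities $\pi(j')<\pi(j)$, $\pi(i)<\pi(j)$, $\pi(j')<\pi(k)$, planarity of $m$, and the forbidden configuration around each vertex/face recalled on the right of \cref{fig:bip_orient} forces $e_i$ and $e_k$ to lie simultaneously on both sides of the "staircase" formed by $e_j, e_{j'}$, which is impossible. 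The symmetric pattern is ruled out analogously (or by duality $m\mapsto m^*$ which conjugates the two Baxter patterns).

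For claim (b), I would construct $\Psi$ inductively on the size by reading the permutation $\pi$ from left to right and inserting edges one at a time into a growing bipolar orientation. Concretely, at step $t$ I keep track of an "active path" on the right boundary of the currently built map; the value $\pi(t)$ relative to the values already placed determines whether the next edge starts a new face, continues the current one, or closes several faces, and this data matches one of the admissible local configurations of \cref{fig:bip_orient}. The Baxter condition is exactly what is needed to guarantee that this insertion is always well defined, since a failure at step $t$ would produce either a $2\text{-}41\text{-}3$ or a $3\text{-}14\text{-}2$ pattern involving the entries around position $t$. Equivalently, and perhaps more cleanly, I would exploit that $\bobp=\bow^{-1}\circ\Phi$ for a map $\Phi$ from Baxter permutations to the set $\mathcal W$ of tandem walks (reading positions of left-to-right maxima and right-to-left minima of $\pi$), and then invoke \cref{thm:KMSW} to transport the bijection through $\mathcal W$.

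The main obstacle is the delicate case analysis in claim (a): there are many relative positions of four edges in a bipolar orientation, and checking that none of them produces a Baxter violation requires a careful labeling of which edge lies in the right/left subtree of which, and how this interacts with the interface path. To keep the argument manageable, I would first prove a small dictionary lemma giving, for each possible geometric relation between two edges $e_i,e_j$, the sign of $\pi(j)-\pi(i)$; once this dictionary is in place, both Baxter patterns are eliminated by inspecting the (few) configurations compatible with the inequalities they impose, and the planarity/interface-path constraint provides the contradiction in each case.
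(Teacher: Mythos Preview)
The paper does not prove this theorem at all: it is quoted verbatim as Theorem~2 of \cite{MR2734180} (Bonichon--Bousquet-M\'elou--Fusy), and the only sentence following it is the remark that \cref{defn:bobp} is a reformulation of their bijection, with a pointer to \cite[Section~2.3]{borga2020scaling} for the verification that the two definitions agree. So there is no ``paper's own proof'' to compare against; your proposal is an independent attempt to reprove a result that the manuscript takes for granted.

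That said, a word of caution about your plan~(b). Your alternative route ``exploit that $\bobp=\bow^{-1}\circ\Phi$ for a map $\Phi:\mathcal P\to\mathcal W$ and invoke \cref{thm:KMSW}'' is dangerously close to circular in this paper's logical structure: the only map $\mathcal P\to\mathcal W$ available here is $\bow\circ\bobp^{-1}$, which presupposes what you are trying to prove, and the commutative diagram \cref{thm:diagram_commutes} relating $\mathcal P$ and $\mathcal W$ through coalescent-walk processes is itself proved \emph{using} \cref{thm:bobp} (via \cref{prop:eq_trees}). If you want a genuinely independent proof you must either carry out your inductive insertion construction directly, or import a separate bijection $\mathcal P\leftrightarrow\mathcal W$ from the literature (e.g.\ via twin binary trees) and check compatibility --- which is essentially what \cite{MR2734180} does.
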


The definition given in \cref{defn:bobp} is a simple reformulation of the bijection presented in \cite{MR2734180}, for more details see \cite[Section 2.3]{borga2020scaling}.
We have the following additional properties of the mapping $\bobp$.

\begin{thm}[\cite{MR2734180}, Theorems 2 and 3, and Propositions 1 and 4]\label{thm:rotation}
	One can draw $m$ on the diagram of $\bobp(m)$ in such a way that every edge of $m$ passes through the corresponding point of $\bobp(m)$. Moreover, we have the following symmetry properties: 
	\begin{itemize}
		\item Denoting by $\sigma^*$ the permutation obtained by rotating the diagram of $\sigma\in \mathfrak S_n$ clockwise by angle $\pi/2$, we have $\bobp(m^*) = \bobp(m)^*$.
		\item We have $\bobp(m^{-1}) = \bobp(m)^{-1}$, where $m^{-1}$ is the symmetric image of $m$ along the vertical axis.
	\end{itemize}
\end{thm}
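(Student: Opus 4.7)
The plan is to derive all three parts of the theorem from a careful analysis of the two trees $T(m)$ and $T(m^*)$ used in the definition of $\bobp$, together with two combinatorial identities: the reversal relation between the explorations of $T(m)$ and $T(m^{**})$ (already stated in the excerpt), and the identification of the mirror-reflected tree $T(m^{-1})$ with a ``leftmost-incoming'' tree in $m$ that can in turn be related to $T(m^*)$.

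For the drawing claim (i), I would give an explicit construction. Since each edge $e_i$ must pass through the point $(i,\bobp(m)(i))$, the source is placed at the bottom-left corner and the sink at the top-right corner of the bounding rectangle of the diagram. For each internal vertex $v$, I would choose its $x$-coordinate strictly between the largest $T(m)$-index of an incoming edge at $v$ and the smallest $T(m)$-index of an outgoing edge at $v$, and similarly for its $y$-coordinate using $T(m^*)$. The key point is that these intervals are nonempty: in the clockwise contour exploration of $T(m)$ the rightmost incoming edge of $v$ (which is the one kept in $T(m)$) is visited immediately before all outgoing edges of $v$, and the other incoming edges are visited earlier. Each edge $e_i$ is then drawn as a monotone curve from its tail to its head passing through $(i,\bobp(m)(i))$. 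Planarity and the correct combinatorial embedding follow because the proposed local cyclic order of edges around each vertex (incoming arriving from below-left with indices increasing rightwards, outgoing leaving upwards analogously) is the one dictated by the tree constructions.

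For the dual symmetry (ii), the argument is essentially a bookkeeping exercise once one uses that the exploration of $T(m^{**})$ visits the edges of $m$ in the reverse order $e_n,\ldots,e_1$. Fix an edge $e$ of $m$, and let $a,b,c$ denote its positions in the explorations of $T(m)$, $T(m^*)$, $T(m^{**})$ respectively. By definition of $\bobp(m)$ and $\bobp(m^*)$ one has $\bobp(m)(a)=b$ and $\bobp(m^*)(b)=c$, and by the reversal relation $c=n+1-a$. Combining, $\bobp(m^*)(b)=n+1-\bobp(m)^{-1}(b)$, which is exactly $\bobp(m)^*(b)$ once one checks that clockwise rotation of the diagram by $\pi/2$ sends the point $(i,\sigma(i))$ to $(\sigma(i),n+1-i)$, hence $\sigma^*(j)=n+1-\sigma^{-1}(j)$. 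For the mirror symmetry (iii), I would argue that mirror reflection sends the ``rightmost-incoming'' rule defining $T(m^{-1})$ to the ``leftmost-incoming'' rule in $m$. The latter tree can be identified, via the local behaviour described in the right-hand picture of \cref{fig:bip_orient}, with $T(m^*)$ up to a natural bijection between edges of $m$ and edges of $m^*$; symmetrically $T((m^{-1})^*)$ corresponds to $T(m)$. The net effect is that the roles of the two explorations are exchanged, so $\bobp(m^{-1})(j)=i$ whenever $\bobp(m)(i)=j$, which is $\bobp(m^{-1})=\bobp(m)^{-1}$.

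The hard part is part (i): checking that the combinatorial recipe above actually produces a planar embedding of $m$ realizing the prescribed edge positions, rather than just an abstract incidence structure. This requires a local verification at each vertex and each face that the cyclic order of adjacent edges in the drawing matches the one in $m$, and it is where the precise compatibility between the two trees $T(m)$ and $T(m^*)$ and the local picture of a bipolar orientation around a vertex really enters. Parts (ii) and (iii) are, by comparison, formal consequences of the key tree identities, and the subtle point in (iii) is isolating the correct identification between the ``leftmost-incoming'' tree of $m$ and $T(m^*)$, which is the only step not immediately visible from the definitions.
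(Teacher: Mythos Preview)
The paper does not give its own proof of this theorem: it is stated with attribution to \cite{MR2734180} (Theorems 2 and 3, Propositions 1 and 4), and no argument follows in the manuscript. So there is no ``paper's proof'' to compare against; your proposal stands on its own.

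Your argument for the dual symmetry (ii) is correct and is exactly the kind of computation one would expect: using the reversal fact for $T(m^{**})$ stated in the paper, the chain $a\mapsto b\mapsto c=n+1-a$ gives $\bobp(m^*)(b)=n+1-\bobp(m)^{-1}(b)$, and the formula $\sigma^*(j)=n+1-\sigma^{-1}(j)$ for clockwise rotation closes the loop.

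For (i), your coordinate recipe for placing vertices is the natural one, and you correctly identify that the real content is the planarity and cyclic-order check. This is genuinely where the work lies in \cite{MR2734180}; your sketch does not yet supply it, but it is an honest sketch.

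For (iii), there is a gap. You assert that the leftmost-incoming tree of $m$ ``can be identified with $T(m^*)$ up to a natural bijection between edges of $m$ and edges of $m^*$,'' but this is precisely the point that needs proof, and as stated it is not quite the right identification. Under vertical reflection, the dual operation does not simply commute: one has to track how the left/right outer faces (hence dual source and sink) swap, which introduces an extra $**$ on the dual side. Concretely, the clean relation one needs is between $T(m^{-1})$ and one of $T(m^*)$ or $T(m^{***})$ \emph{together with} the correct exploration direction, and similarly for $T((m^{-1})^*)$. Until you pin down which of these four trees matches and in which order it is traversed, the conclusion $\bobp(m^{-1})=\bobp(m)^{-1}$ does not follow. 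You flag this yourself as the subtle step; it is not merely subtle but currently missing.
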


So far we have considered three families of objects: Baxter permutations ($\mathcal{P}$), tandem walks ($\mathcal W$), and bipolar orientations ($\mathcal{O}$). We saw that they are linked by the mappings $\bow$ and $\bobp$. 

To investigate local and scaling limits of Baxter permutations, it is natural to first prove local and scaling limits results for tandem walks and then try to transfer these convergences to permutations through the mapping $\bobp \circ \bow^{-1}$. However, the definition of this composite mapping makes it not very tractable, and our first combinatorial result is a new reformulation of it.

Consider a tandem walk $W = (X,Y) \in \mathcal W_n$ and the corresponding Baxter permutation $\sigma = \bobp \circ \bow^{-1} (W)$. 
We introduce the \textit{coalescent-walk process} driven by $W$. It is a family of discrete walks $Z = \{Z^{(i)}\}_{1\leq i \leq n}$, where $Z^{(i)}=Z^{(i)}_t$ has time indices  $t\in\{i,\ldots,n\}$ and it is informally defined as follows: $Z^{(i)}$ starts at $0$ at time $i$, takes the same steps as $Y$ when it is non-negative, takes the same steps as $-X$ when it is negative unless such a step would force $Z^{(i)}$ to become non-negative. If the latter case happens at time $j$, then $Z^{(i)}$ is forced to coalesce with $Z^{(j)}$ at time $j+1$. For a precise definition we refer the reader to \cref{sec:discrete_coal}. An illustration of a coalescent-walk process is given on the left-hand side of \cref{fig:Coal_process_exemp}.

\begin{figure}[htbp]
	\begin{minipage}[c]{0.69\textwidth}
		\centering
		\includegraphics[scale=0.56]{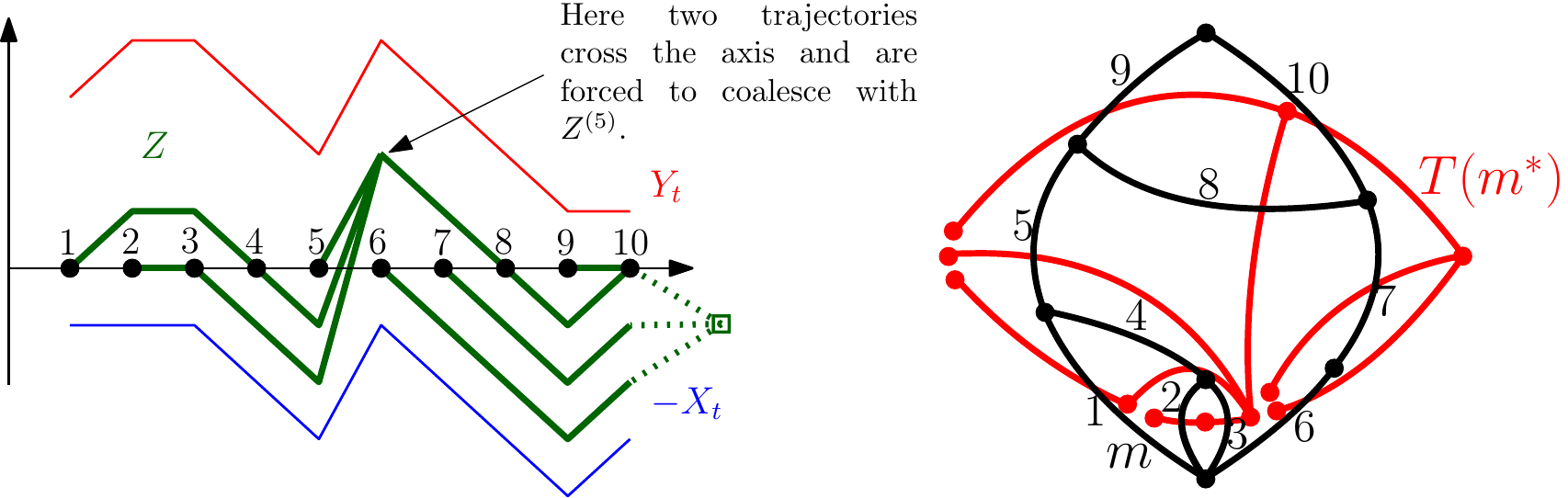}
	\end{minipage}
	\begin{minipage}[c]{0.30\textwidth}
		\caption{The coalescent-walk process $Z = \wcp(W)$ associated with the walk $W=(X,Y) = \bow(m)$. The walk $Y$ is plotted in red and $-X$ is plotted in blue. On the right-hand side, the map $m$ together with the tree $T(m^*)$ drawn in red. \label{fig:Coal_process_exemp}}
	\end{minipage}
\end{figure}

We denote by $\gls*{coal_proc}$ the set of coalescent-walk processes obtained in this way from tandem walks in $\mathcal W$, and we define $\wcp:\mathcal W \to \mathscr{C}$ to be the mapping that associates a tandem walk $W$ with the corresponding coalescent-walk process $Z$.

In a coalescent-walk process, trajectories do not cross but coalesce, hence the name. From a coalescent-walk process one can construct a permutation of the integers: If $Z\in \mathscr{C}_n$, we denote $\cpbp(Z)$ the only permutation $\pi \in \Perms_n$ such that for $i,j\in [n]$ with $i<j$, $\pi(i)<\pi(j)$ if and only if $Z^{(i)}_j<0$. We will see that $\cpbp:\mathscr{C}_n \to \mathcal P_n$, hence $\cpbp(Z)$ is a Baxter permutation. The reader can check that in the case of \cref{fig:Coal_process_exemp} we have $\cpbp(Z) = 8\,6\,5\,7\,9\,1\,2\,4\,10\,3$, which corresponds to $\bobp \circ \bow^{-1}(W)$ (see \cref{fig:bip_orient _with_trees,fig:bip_orient_and_perm}) witnessing an instance of our main combinatorial result.

\begin{thm}\label{thm:diagram_commutes}
	For all $n\in \Z_{>0}$, the following diagram of bijections commutes
	\begin{equation}
		\label{eq:comm_diagram}
		\begin{tikzcd}
			\mathcal{W}_n \arrow{r}{\wcp}  & \mathscr{C}_n \arrow{d}{\cpbp} \\
			\mathcal{O}_n \arrow{u}{\bow} \arrow{r}{\bobp}& \mathcal{P}_n
		\end{tikzcd} \;.
	\end{equation}
\end{thm}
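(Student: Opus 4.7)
The plan is to prove the commutativity by showing that both $\cpbp \circ \wcp$ and $\bobp \circ \bow^{-1}$ send any fixed $m \in \mathcal{O}_n$ to the same permutation. Setting $W = \bow(m)$ and $Z = \wcp(W)$, it suffices to check that for every pair $i < j$ in $[n]$ one has $Z^{(i)}_j < 0$ if and only if $\bobp(m)(i) < \bobp(m)(j)$; this, combined with the defining property $\cpbp(Z)(i)<\cpbp(Z)(j) \iff Z^{(i)}_j<0$, would give $\cpbp(Z) = \bobp(m)$ and, in particular, ensure that $\cpbp(Z) \in \mathcal{P}_n$ is actually Baxter.

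Before attacking the key equivalence, I would first check the well-posedness of the new maps. The map $\wcp$ is a bijection from $\mathcal{W}_n$ onto its image $\mathscr{C}_n$ essentially by construction: the trajectories $Z^{(i)}$ are deterministic functions of $W$, and conversely $W$ is recovered from $Z^{(1)}$ (which reproduces $Y$ while non-negative and $-X$ while negative) together with the coalescence pattern (which encodes the $X$-coordinate resets). One must also verify that $Z^{(i)}_j \neq 0$ for $i < j$, so that the sign of $Z^{(i)}_j$ unambiguously orders $i$ and $j$; this should come from a simple invariant of the coalescence rule (whenever a trajectory is forced not to become non-negative, it lands strictly below zero).

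The main step is a geometric reinterpretation of the trajectories on the primal map $m = \bow^{-1}(W)$. Let $e^*_k$ denote the dual edge of $e_k$ and let $\tau_k$ be the visit time of $e^*_k$ in the clockwise contour exploration of the dual tree $T(m^*)$ starting from the dual source, so that $\bobp(m)(k) = \tau_k$ by \cref{defn:bobp}. I would prove, by induction on $t \geq i$, the following interpretation of $Z^{(i)}$: while $t < \tau_i$, the trajectory $Z^{(i)}_t$ is non-negative and equals the height in the currently revealed portion of $T(m^*)$ of the dual vertex lying on one prescribed side of the interface path at time $t$; at time $t = \tau_i$ the exploration crosses $e^*_i$, the trajectory becomes strictly negative, and it continues to track a signed dual height on the opposite side. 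The coalescence rule would then match exactly the event that two dual trajectories meet at a common ancestor in $T(m^*)$. Given this interpretation, the equivalence $Z^{(i)}_j < 0 \iff \tau_i < \tau_j$ is immediate, and commutativity of the diagram follows.

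The main obstacle, I expect, is to verify this interpretation step by step using the local moves of the KMSW bijection: each increment of $W$ of type $(+1,-1)$ or $(-a,b)$ corresponds to a precise local modification of the interface path between $T(m)$ and $T(m^{**})$, and hence also of the exploration of $T(m^*)$ (which can be read off by duality). The bookkeeping required to show that the rules defining the coalescent-walk process mirror these modifications face-by-face and edge-by-edge is the technical heart of the argument. Once this is in place, bijectivity of $\wcp$ and $\cpbp$ is a formal consequence of the bijectivity of $\bow$ and $\bobp$ established in \cref{thm:KMSW,thm:bobp}, which closes the proof.
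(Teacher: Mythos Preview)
Your overall strategy is correct and matches the paper's: both reduce to showing, for $W=\bow(m)$ and $Z=\wcp(W)$, that the order $\leq_Z$ on $[n]$ coincides with the exploration order of $T(m^*)$ (under the labeling by the exploration of $T(m)$). The paper packages this as a single structural identity (\cref{prop:eq_trees}): the labeled tree $T(m^*)$ equals the tree obtained from the coalescent forest $\fortree(Z)$ by gluing its roots. Since the exploration order of $\fortree(Z)$ is $\leq_Z$ (\cref{prop:fortree_cpbp}) and that of $T(m^*)$ gives $\bobp(m)$, commutativity follows at once. Your inductive ``bookkeeping on local moves of the KMSW bijection'' is exactly what goes into proving \cref{prop:eq_trees}; the tree formulation just organizes it more cleanly.

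Two details in your proposal are wrong, however. First, $Z^{(i)}_j=0$ \emph{can} occur for $i<j$: take an increment $(-a,0)$ at time $j$ with $-a\leq Z^{(i)}_{j-1}<0$, and the coalescence rule gives $Z^{(i)}_j=0$. The definition of $\leq_Z$ absorbs this via the weak inequality $Z^{(i)}_j\geq 0$; in the tree picture this case is precisely ``$j$ is an ancestor of $i$ in $T(m^*)$''. Second, your interpretation ``while $t<\tau_i$, $Z^{(i)}_t\geq 0$'' conflates two incomparable indexings: $t$ runs over times in the exploration of $T(m)$, whereas $\tau_i=\bobp(m)(i)$ is a position in the exploration of $T(m^*)$, so the comparison $t<\tau_i$ has no meaning. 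The correct reading is that the zeros of $Z^{(i)}$ after time $i$ mark exactly the ancestors of edge $i$ in $T(m^*)$, and for other $j>i$ the sign of $Z^{(i)}_j$ records whether edge $j$ sits to the left or right of (the branch containing) edge $i$ in the planar structure of $T(m^*)$. Once this is formulated and proved you have \cref{prop:eq_trees}, and your closing deduction of bijectivity of $\wcp$ and $\cpbp$ from that of $\bow$ and $\bobp$ is fine.
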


Note that the mappings involved in the diagram are denoted using two letters that refer to the domain and co-domain. 
The key-step for the proof of \cref{thm:diagram_commutes} is the following fact, formally stated in \cref{prop:eq_trees}: \emph{Given a bipolar orientation $m$, the ``branching structure" of the trajectories of the coalescent-walk process $\wcp\circ\bow(m)$ is equal to the tree $T(m^*)$}. The reader is invited to verify it in \cref{fig:Coal_process_exemp}.

\medskip

The rest of this section is organized as follows: 
In \cref{sec:discrete_coal} we properly define coalescent-walk processes and the mappings $\cpbp$ and $\wcp$. The proof of \cref{thm:diagram_commutes} is outlined in \cref{sect:equiv_bij}. Finally, in \cref{sect:anti-invo} we generalize some constructions to the four trees characterizing a bipolar map
and its dual.

\subsection{Discrete coalescent-walk processes}
\label{sec:discrete_coal}

This section is devoted to the definition of coalescent-walk processes and our specific model of coalescent-walk processes obtained from tandem walks by the mapping $\wcp$. We then define the permutation and forest naturally associated with a coalescent-walk process.

\begin{defn}
	\label{def:discrete_coal_process}
	Let $I$ be a (finite or infinite) interval of $\Z$. We call \emph{coalescent-walk process} on $I$ a family $\{(Z^{(t)}_s)_{s\geq t, s\in I}\}_{t\in I}$ of one-dimensional walks such that:
	\begin{itemize}
		\item for every $t\in I$, $Z^{(t)}_t=0$;
		\item for $t'\geq t \in I$, if $Z^{(t)}_k\geq Z^{(t')}_k$ (resp.\ $Z^{(t)}_k\leq Z^{(t')}_k$) then $Z^{(t)}_{k'}\geq Z^{(t')}_{k'}$ (resp.\ $Z^{(t)}_{k'}\leq Z^{(t')}_{k'}$) for every $k'\geq k$.
	\end{itemize}
\end{defn}

Note that, as a consequence, if there is a time $k$ such that $Z^{(t)}_k= Z^{(t')}_k$, then $Z^{(t)}_{k'}=Z^{(t')}_{k'}$ for every $k'\geq k$.
In this case, we say that $Z^{(t)}$ and $Z^{(t')}$ are \emph{coalescing} and we call \emph{coalescent point} of $Z^{(t)}$ and $Z^{(t')}$ the point $(\ell,Z^{(t)}_\ell)$ such that  $\ell=\min\{k\geq \max\{t,t'\}|Z^{(t)}_{k}=Z^{(t')}_{k}\}$.

We denote by $\gls*{coal_proc_I}$ the set of coalescent-walk processes on some interval $I$.

\subsubsection{The coalescent-walk process associated with a two-dimensional walk}
\label{sect: bij_walk_coal}
We introduce formally the coalescent-walk processes driven by some specific two-dimensional walks that include tandem walks. Let $I$ be a (finite or infinite) interval of $\Z$. 
We denote by $\Walks_\Steps(I)$ the set of walks (considered up to an additive constant) indexed by $I$, and that take their increments in $\Steps$, defined in \cref{eq:admis_steps} page~\pageref{eq:admis_steps}.
\begin{defn}\label{defn:distrib_incr_coal}
	Let $W\in\Walks_\Steps(I)$ and write $W_t = (X_t,Y_t)$ for $t\in I$. The \emph{coalescent-walk process associated with} $W$
	is the family of walks $\wcp(W) = \{Z^{(t)}\}_{t\in I}$, defined for $t\in I$ by $Z^{(t)}_t=0,$ and for all $\ell\geq t$ such that $\ell+1 \in I$,
	
	\begin{itemize}
		\item if $W_{\ell+1}-W_{\ell}=(1,-1)$ then $Z^{(t)}_{\ell+1}-Z^{(t)}_{\ell}=-1$;
		\item if $W_{\ell+1}-W_{\ell}=(-i,j)$, for some $i,j\geq 0$, then
		\begin{equation}
			Z^{(t)}_{\ell+1}-Z^{(t)}_{\ell}=
			\begin{cases}
				j, &\quad\text{if}\quad Z^{(t)}_{\ell}\geq0,\\
				i,&\quad\text{if}\quad Z^{(t)}_{\ell}<0\text{ and }Z^{(t)}_{\ell}<-i,\\
				j-Z^{(t)}_{\ell},&\quad\text{if}\quad Z^{(t)}_{\ell}<0\text{ and }Z^{(t)}_{\ell}\geq -i.
			\end{cases}
		\end{equation} 
	\end{itemize}
\end{defn}
Note that this definition is invariant by addition of a constant to $W$. We check easily that $\wcp(W)$ is a coalescent-walk process meaning that $\wcp$ is a mapping $\Walks_\Steps(I) \to \Coals(I)$.
Recall that
$\mathscr{C}_n= \wcp(\mathcal W_n)$ and $\mathscr{C}=\cup_{n\in \Z_{\geq 0}}\mathscr{C}_n.$ For two examples, we refer the reader to the left-hand side of \cref{fig:Coal_process_exemp} (for the case of tandem walks) and to \cref{example_discrete_coal} (for the case of non tandem walks) . We finally suggest to the reader to compare the formal \cref{defn:distrib_incr_coal} with the more intuitive definition given in \cref{sect:coal_walk_intro}.

\begin{figure}[htbp]
	\centering
	\includegraphics[scale=.7]{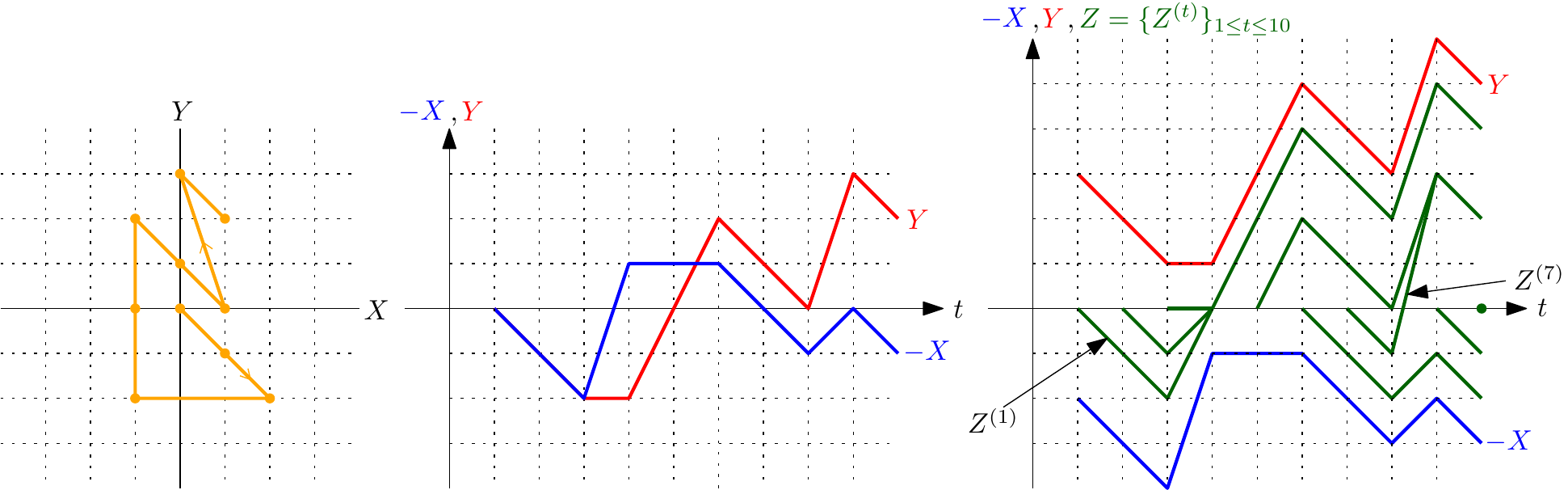}\\
	\caption{Construction of the coalescent-walk process associated with the orange walk $W=(W_t)_{1\leq t\leq 10}$ on the left-hand side. In the middle diagram the two walks $Y$ (in red) and $-X$ (in blue) are plotted. Finally, on the right-hand side the two walks are shifted (one towards the top and one to the bottom) and the ten walks of the coalescent-walk process are plotted in green. \label{example_discrete_coal}}
\end{figure}

\begin{obs}
	The $y$-coordinates of the coalescent points of a coalescent-walk process obtained in this fashion are non-negative. 
\end{obs}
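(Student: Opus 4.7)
The plan is to perform a direct case analysis on the step immediately preceding a coalescence. Fix $t<t'$ in $I$ and suppose $Z^{(t)}$ and $Z^{(t')}$ coalesce at time $\ell>\max\{t,t'\}$, so that $Z^{(t)}_{\ell-1}\neq Z^{(t')}_{\ell-1}$ but $Z^{(t)}_{\ell}=Z^{(t')}_{\ell}$. I want to show that the common value $Z^{(t)}_{\ell}$ is non-negative, regardless of the increment $W_{\ell}-W_{\ell-1}\in\Steps$.

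First, I would dispose of the step $(+1,-1)$: by \cref{defn:distrib_incr_coal}, both $Z^{(t)}$ and $Z^{(t')}$ decrease by $1$ at time $\ell$, so the difference $Z^{(t)}_{\ell-1}-Z^{(t')}_{\ell-1}$ is preserved and no coalescence can occur on such a step. Hence the increment must be of the form $(-i,j)$ with $i,j\geq 0$, and I would split on the signs of $Z^{(t)}_{\ell-1}$ and $Z^{(t')}_{\ell-1}$. If both are $\geq 0$, both walks receive the increment $+j$, so the difference is again preserved and coalescence is impossible. If both are $<-i$, both receive the increment $+i$, same conclusion. The only remaining possibilities are those where at least one of the two walks is in the interval $[-i,-1]$ at time $\ell-1$: in each such case, the third sub-rule of \cref{defn:distrib_incr_coal} resets at least one walk to the value $j\geq 0$, and a short check shows that the only way the two values can coincide at time $\ell$ is if both walks take the value $j$. (For instance, if $Z^{(t)}_{\ell-1}\geq 0$ and $Z^{(t')}_{\ell-1}\in[-i,-1]$, then $Z^{(t)}_{\ell}=Z^{(t)}_{\ell-1}+j$ and $Z^{(t')}_{\ell}=j$, so equality forces $Z^{(t)}_{\ell-1}=0$ and the common value is $j$.)

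Since $j\geq 0$ in every surviving case, the common value $Z^{(t)}_{\ell}=Z^{(t')}_{\ell}$ is non-negative, which is exactly the $y$-coordinate of the coalescent point. The main (and really only) potential obstacle is keeping the bookkeeping of the three-sub-case definition straight, especially in the mixed configurations where one walk is in $[-i,-1]$ and the other is either $\geq 0$ or $<-i$; these are all handled by the same elementary inequality check as above. No further ingredient is needed, so this gives a short, self-contained argument.
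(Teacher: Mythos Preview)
The paper states this as an observation without proof, so there is no argument to compare against; your case analysis on the increment $W_\ell-W_{\ell-1}$ is exactly the natural way to verify it, and the core reasoning is correct. Two small pieces of bookkeeping are worth tightening: (i) you assume $\ell>\max\{t,t'\}$, but the coalescent point is defined with $\ell\geq\max\{t,t'\}$; the boundary case $\ell=t'$ (for $t<t'$) is trivial since then the common value is $Z^{(t')}_{t'}=0$. (ii) Your phrase ``the only remaining possibilities are those where at least one of the two walks is in $[-i,-1]$'' skips the mixed configuration where one value is $\geq 0$ and the other is $<-i$; this case is instantly ruled out (the first walk lands at a value $\geq j\geq 0$, the second at a value $<0$), but it belongs in the enumeration.
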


\subsubsection{The permutation associated with a coalescent-walk process}\label{sect:from_coal_to_perm}

Given a coalescent-walk process $Z = \{Z^{(t)}\}_{t\in I} \in \Coals(I)$ defined on a (finite or infinite) interval $I$, we can define a binary relation $\leq_Z$ on $I$ as follows:

\begin{equation}\label{eq:coal_to_perm}
	\begin{cases}i\leq_Z i,\\
		i\leq_Z j,&\text{ if }i<j\text{ and }\ Z^{(i)}_j<0,\\
		j\leq_Z i,&\text{ if }i<j\text{ and }\ Z^{(i)}_j\geq0.\end{cases}
\end{equation}

\begin{prop}[{\cite[Proposition 2.9]{borga2020scaling}}]
	\label{prop:tot_ord}
	$\leq_Z$ is a total order on $I$. 
\end{prop}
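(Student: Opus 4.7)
The plan is to verify the three defining properties of a total order — reflexivity, antisymmetry (together with totality of the comparison), and transitivity — with transitivity being the only non-trivial step. Reflexivity is built into the first line of the definition of $\leq_Z$ in \cref{eq:coal_to_perm}. For antisymmetry and totality, I would observe that for distinct $i, j \in I$ we may assume $i < j$; then exactly one of the two exclusive alternatives $Z^{(i)}_j < 0$ or $Z^{(i)}_j \geq 0$ holds, and the definition of $\leq_Z$ assigns precisely one of $i \leq_Z j$, $j \leq_Z i$ accordingly. Hence any two elements are comparable and, for $i \neq j$, both relations cannot hold simultaneously.

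The remainder of the argument — and the main obstacle — is to establish transitivity: if $i \leq_Z j$ and $j \leq_Z k$ then $i \leq_Z k$. When any two of $i, j, k$ coincide the claim is immediate, so I assume they are pairwise distinct and split according to the $3! = 6$ possible orderings of $\{i,j,k\}$ in $\Z$. In each case the hypotheses translate, via \cref{eq:coal_to_perm}, into explicit sign conditions on the relevant coordinates of the walks $Z^{(i)}$, $Z^{(j)}$, $Z^{(k)}$, and the desired conclusion is obtained by combining these with the non-crossing property of a coalescent-walk process (second bullet of \cref{def:discrete_coal_process}) and the identity $Z^{(t)}_t = 0$.

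To illustrate the typical case analysis, consider for example the ordering $i < k < j$. Here $i \leq_Z j$ gives $Z^{(i)}_j < 0$, while $j \leq_Z k$ gives $Z^{(k)}_j \geq 0$. Suppose towards contradiction that $Z^{(i)}_k \geq 0 = Z^{(k)}_k$; then by the non-crossing property applied to the pair $i < k$ at time $k$, we would get $Z^{(i)}_j \geq Z^{(k)}_j \geq 0$, contradicting $Z^{(i)}_j < 0$. Hence $Z^{(i)}_k < 0$, i.e.\ $i \leq_Z k$. The remaining five orderings (namely $i<j<k$, $j<i<k$, $j<k<i$, $k<i<j$, and $k<j<i$) are handled by entirely analogous contradiction arguments, in each instance comparing two trajectories at a time where one is $0$ (because the trajectory starts there) and using the non-crossing property to propagate the resulting inequality forward in time until a contradiction with one of the hypotheses emerges.

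The expected difficulty is purely bookkeeping: keeping track of which of $i, j, k$ is smallest, which pair of trajectories one should compare, and at which time to invoke the non-crossing property. Once the six cases are laid out systematically, no step uses more than a single application of the non-crossing property, and the proof is complete.
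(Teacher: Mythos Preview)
Your argument is correct. Note, however, that the thesis does not actually prove this proposition: it merely cites \cite[Proposition 2.9]{borga2020scaling} and moves on. Your case analysis via the non-crossing property of \cref{def:discrete_coal_process} is exactly the natural approach, and the sample case $i<k<j$ you wrote out is handled correctly; the remaining five orderings do go through in the same way, so the proof is complete as sketched.
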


This definition allows to associate a permutation with a coalescent-walk process on $[n]$.

\begin{defn}Fix $n\in\Z_{\geq 0}$. Let $Z = \{Z^{(t)}\}_{i\in [n]} \in \Coals([n])$ be a coalescent-walk process on $[n]$. Denote $\cpbp(Z)$ the unique permutation $\sigma \in \Perms_n$ such that for all $1\leq i, j\leq n$, $$\sigma(i)\leq\sigma(j) \iff i\leq_Z j.$$
\end{defn}

We will furnish an example that clarifies the definition above in \cref{exmp:coal_tree} where it will be also clear that this definition is equivalent to the one given in \cref{sect:coal_walk_intro} (see the discussion above \cref{thm:diagram_commutes}).
We have that pattern extraction in the permutation $\cpbp(Z)$ depends only on a finite number of trajectories (see \cref{prop:patterns} below), a key step towards proving permuton convergence for uniform Baxter permutations.

\begin{prop}
	\label{prop:patterns}
	Let $\sigma$ be a permutation obtained from a coalescent-walk process $Z=\{Z^{(t)}\}_{1\leq t\leq n}$ via the mapping $\cpbp$. Let $I=\{i_1<\dots<i_k\}\subseteq[n]$. Then $\pat_I(\sigma)=\pi$ if and only if the following condition holds: for all $1\leq \ell< s \leq k,$
	$$Z^{(i_\ell)}_{i_s} \geq 0   \iff  \pi(s)<\pi(\ell).$$
\end{prop}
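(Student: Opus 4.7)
The proof is essentially an unwinding of the definition of $\cpbp$ together with the definition of the induced pattern, so the main work is just to chase the equivalences carefully. Here is the plan.

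First, I would reduce the statement to a pointwise comparison between $\sigma(i_\ell)$ and $\sigma(i_s)$. By definition of $\pat_I$ and $\std$, we have $\pat_I(\sigma) = \pi$ if and only if, for every pair of indices $1 \leq \ell < s \leq k$, the pair $(\sigma(i_\ell), \sigma(i_s))$ is in the same relative order as $(\pi(\ell), \pi(s))$, that is,
\begin{equation}
\pi(s) < \pi(\ell) \iff \sigma(i_s) < \sigma(i_\ell), \qquad \text{for all } 1 \leq \ell < s \leq k.
\end{equation}
Thus it suffices to show that, for all $1 \leq \ell < s \leq k$, one has $\sigma(i_s) < \sigma(i_\ell) \iff Z^{(i_\ell)}_{i_s} \geq 0$.

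To prove this last equivalence I would simply apply the definition of the mapping $\cpbp$ to the pair $(i_\ell, i_s)$. By definition, $\sigma = \cpbp(Z)$ satisfies $\sigma(a) \leq \sigma(b) \iff a \leq_Z b$ for all $a,b \in [n]$. Specialising to $a = i_s$ and $b = i_\ell$ (so that $\sigma(i_s) < \sigma(i_\ell)$ translates to $i_s <_Z i_\ell$, as $\sigma$ is a permutation and $a \neq b$) and recalling from \cref{eq:coal_to_perm} that for indices with $i_\ell < i_s$ one has $i_s \leq_Z i_\ell \iff Z^{(i_\ell)}_{i_s} \geq 0$, we obtain
\begin{equation}
\sigma(i_s) < \sigma(i_\ell) \iff i_s <_Z i_\ell \iff Z^{(i_\ell)}_{i_s} \geq 0,
\end{equation}
which is the required equivalence. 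Combining both steps yields the proposition.

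There is essentially no obstacle here: the statement is a direct consequence of the definitions, once one is careful to distinguish between strict and non-strict inequalities (note that in the definition of $\leq_Z$ for $i<j$, the case $Z^{(i)}_j \geq 0$ is assigned to $j \leq_Z i$, which matches the non-strict comparison $\sigma(j) \leq \sigma(i)$; but since $\sigma$ is a permutation the inequality is automatically strict). The only mild care needed is that the proposition's hypothesis only constrains pairs with $\ell < s$, which is exactly the range in which the definition of $\leq_Z$ via $Z^{(i_\ell)}_{i_s}$ directly applies.
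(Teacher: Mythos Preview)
Your proof is correct and follows exactly the same approach as the paper, which dispatches the proposition in a single displayed chain of equivalences $Z^{(i_\ell)}_{i_s} \geq 0 \iff i_s\leq_Z i_\ell \iff \sigma(i_s)\leq\sigma(i_\ell)\iff \pi(s)<\pi(\ell)$. You have simply written out the justification for each link more carefully, including the remark about strict versus non-strict inequalities.
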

This proposition is immediate once one notes that
\begin{equation}
	Z^{(i_\ell)}_{i_s} \geq 0  \iff i_s\leq_Z i_\ell \iff \sigma(i_s)\leq\sigma(i_\ell)\iff  \pi(s)<\pi(\ell).
\end{equation}

\subsubsection{The coalescent forest of a coalescent-walk process}\label{sect:from_coal_to_trees} 
As previously mentioned, given a coalescent-walk process on $[n]$, the plane drawing of the family of the trajectories $\{Z^{(t)}\}_{t\in I}$ identifies a natural tree structure, more precisely, a \emph{$\Z$-planted plane forest}, as per the following definition.
\begin{defn}
	A \emph{$\Z$-planted plane  tree} is a rooted plane tree such that the root has an additional parent-less edge which is equipped with a number in $\Z$ called its (root-)index.
	
	A \emph{$\Z$-planted plane forest} is an ordered sequence of $\Z$-planted plane trees $(T_1,\ldots T_\ell)$ such that the (root-)indices are weakly increasing along the sequence of trees. A $\Z$-planted plane forest admits an exploration process, which is the concatenation of the exploration processes of all the trees, following the order of the sequence.
\end{defn}

An example of a $\Z$-planted plane forest is given in the middle of \cref{coal_tree} (each tree is drawn with the root on the right; trees are ordered from bottom to top; the root-indices are indicated on the right of each tree).

We give here a formal definition of the $\Z$-planted plane forest corresponding to a coalescent-walk process. For a more informal description, we suggest to look at \cref{coal_tree} and at the description given in \cref{exmp:coal_tree}.

\begin{defn}\label{defn:corr_trees}
	Let $Z$ be a coalescent-walk process on a finite interval $I$. Its \emph{forest}, denoted $\fortree(Z)$ for ``labeled forest", is a $\Z$-planted plane forest with additional edge labels in $I$, defined as follows:
	\begin{itemize}
		\item the edge-set is $I$, vertices are identified with their parent edge, and the edge $i\in I$ is understood as bearing the label $i$.
		\item For any pair of edges $(i,j)$ with $i<j$, $i$ is a descendant of $j$ if $(j,0)$ is the coalescent point of $Z^{(i)}$ and $Z^{(j)}$.
		\item Children of a given parent are ordered by $\leq_Z$.
		\item The different trees of the forests are ordered such that their root-edges are in increasing $\leq_Z$-order.
		\item The index of the tree whose root-edge has label $i$ is the value $Z^{(i)}_{\max I}$.
	\end{itemize}
\end{defn}

\begin{figure}[htbp]
	\centering
	\includegraphics[scale=0.85]{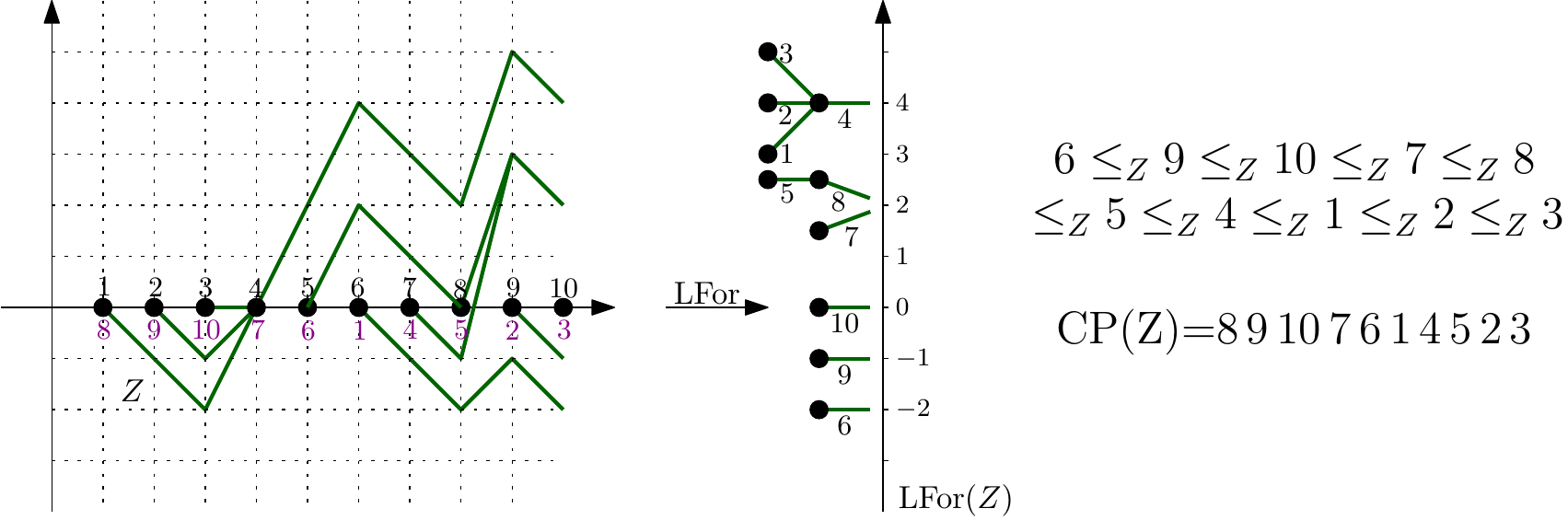}\\
	\caption{In the middle of the picture, the forest $\fortree(Z)$ corresponding to the coalescent-walk process represented on the left that was obtained in \cref{example_discrete_coal}. How this forest is constructed is explained in \cref{exmp:coal_tree}. On the right-hand side we also draw the associated total order $\leq_Z$ and the associated permutation $\cpbp(Z)$.  \label{coal_tree}}
\end{figure}

 An immediate consequence of the properties of a coalescent-walk process is the following.

\begin{prop}\label{prop:fortree_cpbp}
	$\fortree(Z)$ is a $\Z$-planted plane forest, equipped with a labeling of its edges by the values of $I$.
	Moreover the total order $\leq_Z$ on $I$ coincides with the total order given by the exploration process of the forest $\fortree(Z)$.
\end{prop}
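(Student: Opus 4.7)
The plan is to verify the two claims of the proposition in turn. For the first, I would make the immediate-parent map explicit: for $i\in I$, set $p(i)\coloneqq\min\{j\in I:j>i\text{ and }Z^{(i)}_j=0\}$ when this set is non-empty, and declare $i$ a root otherwise. The non-crossing axiom (second bullet of \cref{def:discrete_coal_process}) immediately makes $p$ compatible with the descendant relation of \cref{defn:corr_trees}: if $i<j_1<j_2$ satisfy $Z^{(i)}_{j_1}=Z^{(i)}_{j_2}=0$, then non-crossing forces $Z^{(j_1)}_{j_2}=0$, so $j_1$ becomes a descendant of $j_2$ and the whole ancestry chain of $i$ is recovered by iterating $p$. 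The sibling order is well-defined because $\leq_Z$ is a total order (\cref{prop:tot_ord}). For the root-index monotonicity, the remaining observation is that if $r_1,r_2$ are two roots with $r_1<_Zr_2$, then unwinding \cref{eq:coal_to_perm} yields a strict inequality $Z^{(r_1)}_{\max(r_1,r_2)}<Z^{(r_2)}_{\max(r_1,r_2)}$ at the later of their two birth times, and non-crossing propagates this inequality up to $\max I$, giving $Z^{(r_1)}_{\max I}\leq Z^{(r_2)}_{\max I}$.

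For the second claim, the plan is to prove by induction on $|T_a|$ the following stronger statement: \emph{for each $a\in I$, the set $T_a$ consisting of $a$ and all its descendants is a $\leq_Z$-interval with minimum $a$, and the depth-first exploration of the subtree rooted at $a$ enumerates $T_a$ in $\leq_Z$-order}. Combined with the root-index monotonicity proved above, applying this claim at each root-edge of the forest yields the desired equality of total orders on $I$. The inductive step splits into two assertions: (a) $a$ is $\leq_Z$-smaller than every one of its descendants, which follows from the relation $p(b)\leq_Z b$ propagated along the parent chain from $b$ up to $a$; and (b) for any two adjacent children $c_\ell<_Z c_{\ell+1}$ of $a$ in the sibling order, every element of $T_{c_\ell}$ is $\leq_Z$ every element of $T_{c_{\ell+1}}$. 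Granted (a) and (b), combining the inductive hypotheses for each $T_{c_\ell}$ completes the induction.

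I expect assertion (b) to be the main obstacle. The intuition is transparent: between the later of the birth times $c_\ell,c_{\ell+1}$ and the common time $a$ at which both trajectories return to $0$, the non-crossing axiom keeps $Z^{(c_\ell)}$ on one side of $Z^{(c_{\ell+1})}$; since any $b\in T_{c_\ell}$ satisfies $Z^{(b)}=Z^{(c_\ell)}$ from time $c_\ell$ onwards (by coalescence along its parent chain from $b$ up to $c_\ell$), and similarly for $b'\in T_{c_{\ell+1}}$, the order $b\leq_Z b'$ should follow. The delicate point is that $b$ and $b'$ can be arbitrarily interleaved below $c_\ell$ and $c_{\ell+1}$ in the integer order, so the argument requires a case split; in each case, one identifies the correct time $t\in\{b,b',c_\ell,c_{\ell+1}\}$ at which to apply non-crossing, and then translates the resulting sign back to $\leq_Z$ via \cref{eq:coal_to_perm}.
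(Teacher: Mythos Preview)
Your proposal is correct and considerably more detailed than the paper's treatment, which simply declares the proposition ``an immediate consequence of the properties of a coalescent-walk process'' and offers no argument. Your plan---making the parent map $p(i)=\min\{j>i:Z^{(i)}_j=0\}$ explicit, verifying root-index monotonicity, and then proving by induction on $|T_a|$ that each subtree is a $\leq_Z$-interval enumerated by the exploration in $\leq_Z$-order---is the natural way to write out the details, and your identification of assertion (b) as the crux is right: the case split you sketch (comparing $Z^{(b)}$ and $Z^{(b')}$ at the later birth time, then propagating via non-crossing up to $c_\ell$ or $c_{\ell+1}$ and deriving a contradiction from $Z^{(c_\ell)}_{c_{\ell+1}}<0$ and $Z^{(b')}_{c_{\ell+1}}=0$) does go through.

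One small imprecision: when you write ``Combined with the root-index monotonicity proved above, applying this claim at each root-edge \ldots\ yields the desired equality of total orders'', you are conflating two different statements. Root-index monotonicity ($r_1<_Z r_2\Rightarrow Z^{(r_1)}_{\max I}\leq Z^{(r_2)}_{\max I}$) is what you need for the $\Z$-planted property in the first claim. For the second claim you need the between-trees analogue of (b): if $r_1<_Z r_2$ are roots, then every element of $T_{r_1}$ is $\leq_Z$ every element of $T_{r_2}$. This does not follow from root-index monotonicity, but it follows by the same case analysis as (b), with the simplification that the contradiction ``$Z^{(r)}_j=0$ for some $j>r$'' is ruled out directly by $r$ being a root rather than by $p(r)=a$.
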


\begin{rem}\label{rk:cpbp_through_fortree}
	In the case where $I=[n]$ for some $n\in\Z_{\geq 0}$, the permutation $\pi = \cpbp(Z)$ is readily obtained from $\fortree(Z)$: for $1\leq i \leq n$, $\pi(i)$ is the position in the exploration of $\fortree(Z)$ of the edge with label $i$.
\end{rem}

\begin{exmp}\label{exmp:coal_tree}
	\cref{coal_tree} shows the forest of trees $\fortree(Z)$ corresponding to the coalescent-walk process $Z=\{Z^{(t)}\}_{t\in[10]}$ plotted on the left-hand side (where the forest is plotted from bottom to top). It can be obtained by marking with ten dots the points $\{(t,Z^{(t)}_t=0)\}_{t\in[10]}$. The edge structure of the trees in $\fortree(Z)$ is given by the green lines starting at each dot, and interrupted at the next dot. The lines that go to the end uninterrupted (for example this is the case of the line starting at the fourth dot), correspond to the root-edges of the different trees. The indices of these root-edges are determined by the final heights of the corresponding interrupted lines.   
	The plane structure of $\fortree(Z)$ is inherited from the drawing of $Z$ in the plane. 
	
	We determine the order $\leq_Z$ by considering the exploration process of the forest:  $6\leq_Z 9\leq_Z 10\leq_Z 7 \leq_Z 8 \leq_Z 5 \leq_Z 4 \leq_Z 1 \leq_Z 2 \leq_Z 3$. As a result, $\cpbp(Z) =8\,9\,10\,7\,6\,1\,4\,5\,2\,3$. 
	
	Equivalently, we can pull back (on the points $(t,Z^{(t)}_t=0)$ of the coalescent-walk process) the position of the edges in the exploration process (these positions are written in purple), and then $\cpbp(Z)$ is obtained by reading these numbers from left to right. 	
	
\end{exmp}

\subsection{From walks to Baxter permutations via coalescent-walk processes}
\label{sect:equiv_bij}
We are now in position to explain how to prove the main result of this section, that is \cref{thm:diagram_commutes}.
We are going to show that $\bobp = \cpbp\circ\wcp \circ \bow.$
The key ingredient is to show that the dual tree $T(m^*)$ of a bipolar orientation can be recovered from its encoding two-dimensional walk by building the associated coalescent-walk process $Z$ and looking at the coalescent forest $\fortree(Z)$.
More precisely, let $W = (W_t)_{t\in [n]} = \bow (m)$ be the walk encoding a given bipolar orientation $m$, and $Z = \wcp(W)$ be the corresponding coalescent-walk process. Then the following admitted result holds.

\begin{prop}[{\cite[Proposition 2.17]{borga2020scaling}}]\label{prop:eq_trees} 
	The following are equal:
	\begin{itemize}
		\item the dual tree $T(m^*)$ with edges labeled according to the order given by the exploration of $T(m)$;
		\item the tree obtained by attaching all the edge-labeled trees of $\fortree(Z)$ to a common root.
	\end{itemize}
\end{prop}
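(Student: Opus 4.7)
The plan is to prove the stated equality of edge-labeled trees by induction on $n = |m|$, following the incremental KMSW construction that builds $m$ edge by edge from the walk $W = \bow(m)$. At each time $t \in [n]$ one has revealed a partial bipolar orientation $m^{[t]}$ (whose $t$ edges are $e_1,\dots,e_t$ in exploration order) and simultaneously the portions of the trajectories $Z^{(1)},\dots,Z^{(t)}$ up to time $t$. I will maintain an invariant identifying, on the one hand, the dual down-right forest of $m^{[t]}$ with its labels inherited from the exploration of $T(m^{[t]})$, and on the other hand, the ``current state'' of $\fortree(Z)$ restricted to edges labeled $\le t$, plus the heights $Z^{(i)}_t$ which play the role of ``pointers'' for edges not yet attached to their dual parent.

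First I would establish the following geometric interpretation of $Z$: for each $i \le t$, the sign of $Z^{(i)}_t$ indicates whether the top vertex of $e_i$ still lies on the right boundary of $m^{[t]}$ (in which case $Z^{(i)}_t \ge 0$ is its height in the right part of $T((m^{[t]})^{**})$ above the top of $e_t$), or whether it has already been swallowed by a face-closing step on the right (in which case $Z^{(i)}_t < 0$ and $|Z^{(i)}_t|$ equals the number of faces of $m^{[t]}$ separating $f_R(e_i)$ from the currently exposed right face). This invariant is what makes the two cases of \cref{defn:distrib_incr_coal} match the two types of KMSW steps: a step $(+1,-1)$ merely shifts the right boundary upward without closing faces, decrementing every non-negative $Z^{(i)}$ by $1$; a step $(-a,b)$ closes $a$ consecutive faces of the current right boundary and opens $b+1$ new ones, pushing negative $Z^{(i)}$'s further down, while triggering coalescence at height $0$ for exactly those walks $Z^{(i)}$ whose right face is just being closed.

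Once this invariant is proved, the dictionary becomes transparent: for $i<j$, the dual edge $e_j^*$ is the parent of $e_i^*$ in $T(m^*)$ precisely at the step at which $f_R(e_i)$ gets closed, and that happens exactly when $Z^{(i)}$ and $Z^{(j)}$ coalesce at $(j+1,0)$ in the sense of \cref{def:discrete_coal_process}. Conversely, walks $Z^{(i)}$ that never coalesce correspond to edges of $T(m^*)$ whose dual parent is the dual sink, i.e. to the roots of the trees of $\fortree(Z)$; their terminal heights $Z^{(i)}_n$ record the position of $e_i^*$ in the clockwise order around the dual sink, matching the monotonicity requirement on root-indices in the definition of a $\Z$-planted plane forest. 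Adjoining a common root (the dual sink) to $\fortree(Z)$ therefore reconstructs $T(m^*)$ on the nose, and the edge labels agree by construction of the KMSW exploration.

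The principal obstacle is formulating and maintaining the geometric invariant above with full precision: in particular, the negative regime $Z^{(i)}_t < 0$ must be matched to a position in the right-face boundary of $m^{[t]}$, which shrinks and grows in a non-trivial way under $(-a,b)$ steps. A second delicate point is verifying that the left-to-right order of children in $\fortree(Z)$ (inherited from $\le_Z$) really agrees with the clockwise order of children in $T(m^*)$ around each dual vertex; this requires checking that when several walks $Z^{(i_1)},\dots,Z^{(i_k)}$ coalesce into $Z^{(j)}$ at the same step, the order in which they reached height $0$ within the $(-a,b)$ step matches the clockwise cyclic order of the corresponding dual siblings around the just-closed face. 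Once these two invariants are locked in, the induction step reduces to a direct case analysis of the two admissible step types in $\Steps$.
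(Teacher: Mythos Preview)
The thesis does not prove this proposition; it cites \cite[Proposition~2.17]{borga2020scaling} and illustrates it with a figure. Your strategy---induction along the KMSW construction $m^{[t]}=\Inverse(W|_{[1,t]})$, carrying a geometric invariant that reads each $Z^{(i)}_t$ as a position of the top of $e_i$ on the current right boundary---is exactly the approach of the cited proof, so the plan is sound.

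That said, several of your concrete dynamical claims are wrong in ways that would break the induction. A step $(+1,-1)$ decrements \emph{every} $Z^{(i)}$ by $1$, not just the non-negative ones. A step $(-a,b)$ adds a \emph{single} new face (with left-degree $a+1$ and right-degree $b+1$); it does not ``close $a$ consecutive faces''. Most importantly, you write that a $(-a,b)$ step ``pushes negative $Z^{(i)}$'s further down'', whereas \cref{defn:distrib_incr_coal} gives the opposite: if $Z^{(i)}_\ell<-a$ the increment is $+a$, so negative trajectories move \emph{toward} zero, and those with $-a\le Z^{(i)}_\ell<0$ jump to $b\ge 0$. Finally, the forest definition (\cref{defn:corr_trees}) has $i$ a descendant of $j$ when the coalescent point is $(j,0)$, not $(j{+}1,0)$; the parent of $i$ is the first $j>i$ with $Z^{(i)}_j=0$.

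These sign errors also distort your geometric picture. The parent of $e_i^*$ in $T(m^*)$ is the dual of the \emph{top} edge on the right boundary of $f_R(e_i)$ (``rightmost incoming'' in $m^*$ corresponds to ``topmost'' in $m$ under the $90^\circ$ rotation), not the edge created at the step where $f_R(e_i)$ first appears. Correspondingly, after $Z^{(i)}$ jumps to $b$ at the face-adding step, it must still decrease back to $0$---reached precisely when the exploration arrives at that top-right edge. The invariant you actually want is: for $Z^{(i)}_t\ge 0$, the top of $e_i$ lies $Z^{(i)}_t$ edges above the top of $e_t$ on the current right boundary; for $Z^{(i)}_t<0$, it lies $|Z^{(i)}_t|$ edges below the top of $e_t$ on that same boundary (not a ``number of separating faces''). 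With this corrected invariant, both the parent relation and the $\leq_Z$-ordering of siblings follow from a clean two-case check, as in the cited paper.
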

\cref{thm:diagram_commutes} then follows immediately, by construction of $\bobp(m)$ from $T(m^*)$ and $T(m)$ (\cref{thm:bobp}) and of $\cpbp(Z)$ from $\fortree(Z)$ (see \cref{rk:cpbp_through_fortree}). \cref{prop:eq_trees} is illustrated in an example in \cref{fig:bip_orient_with_coal}.

\begin{figure}[htbp]
	\begin{minipage}[c]{0.65\textwidth}
		\centering
		\includegraphics[scale=0.55]{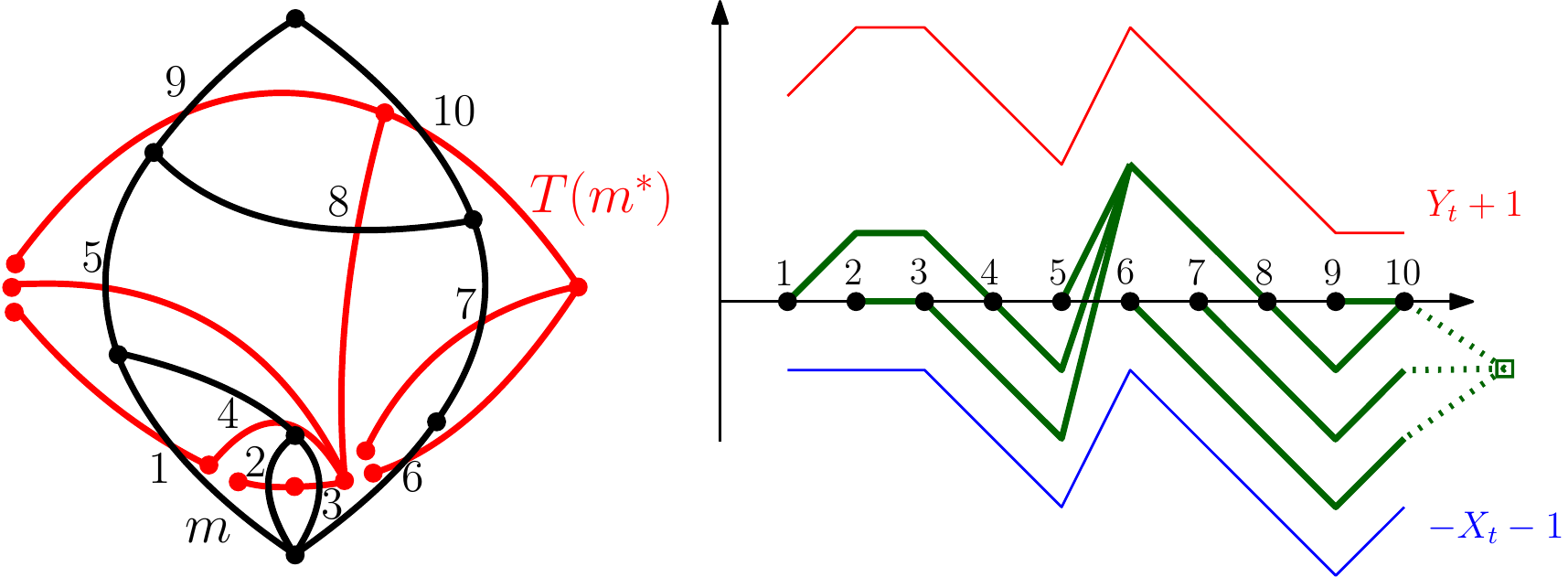}
	\end{minipage}
	\begin{minipage}[c]{0.34\textwidth}
		\caption{In the left-hand side the map $m$ from \cref{fig:bip_orient_and_perm} with the dual tree $T(m^*)$ in red with edges labeled according to the order given by the exploration of $T(m)$. In the right-hand side the associated coalescent-walk process $Z=\wcp\circ\bow(m)$. Note that the red tree (with its labeling) and the green tree (with its labeling) are equal.
			\label{fig:bip_orient_with_coal}}
	\end{minipage}
\end{figure}

An interesting corollary of \cref{prop:eq_trees}, fundamental for later purposes (see the proof of \cref{thm:joint_scaling_limits} page \pageref{proof_of_thm}), is the following result. Given a coalescent-walk process $Z$, we introduce the discrete local time process $L_Z = \left(L_Z^{(i)}(j)\right)$, $1\leq i \leq j \leq n$, defined by
\begin{equation}
	\label{eq:local_time_process}
	L_Z^{(i)}(j)=\#\left\{k\in [i,j]\middle|Z^{(i)}_k=0\right\}.
\end{equation}

\begin{cor}\label{cor:local_time}
	Let $(m,W,Z,\sigma)$ be objects of size $n$ in $\mathcal{O}\times\mathcal{W}\times\mathscr{C}\times\mathcal{P}$ connected by the commutative diagram in \cref{eq:comm_diagram}. 
	Then the height process\footnote{We recall that the height process of a tree $T$ is the sequence of integers obtained by recording for each visited vertex (following the exploration of $T$) its distance to the root.} $(X^*_i)_{i\in [n]}$ of $T(m^*)$ is equal to
	$\left(L_Z^{(\sigma^{-1}(i))}(n)\right)_{i\in [n]}$. In other words, $$X^*_{\sigma(i)}=L_Z^{(i)}(n)-1,\quad{i\in [n]}.$$
\end{cor}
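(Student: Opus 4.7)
The plan is to use \cref{prop:eq_trees} as the bridge between the dual tree $T(m^*)$ and the combinatorics of $Z$. Recall that this proposition identifies $T(m^*)$, with its edges labeled according to the exploration order of $T(m)$, with the tree obtained from the coalescent forest $\fortree(Z)$ by attaching all its trees at a common root $\rho$; under this identification the edge labels match. So the first move is to translate $X^*_{\sigma(i)}$ into a statement about the edge labeled $i$ in $T(m^*)$: by \cref{defn:bobp}, $\sigma(i)$ is the position in the exploration of $T(m^*)$ of the dual edge corresponding to the $i$-th edge of $T(m)$'s exploration, which is precisely the edge labeled $i$ in $T(m^*)$. Hence $X^*_{\sigma(i)}$ is the height in $T(m^*)$ of the bottom vertex (the one closer to the source) of the edge labeled $i$.

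Next I would compute the depth of the edge labeled $i$ in $\fortree(Z)$ directly from $Z$. Set $t_0 \coloneqq i$ and, as long as it exists in $[i,n]$, define $t_{\ell+1}$ as the smallest $j > t_\ell$ with $Z^{(i)}_j = 0$; this produces a finite sequence $i = t_0 < t_1 < \cdots < t_k$ which is exactly the list of zeros of $Z^{(i)}$ in $[i,n]$, so $k+1 = L_Z^{(i)}(n)$. By \cref{defn:corr_trees}, and using that $Z^{(t_\ell)}$ coincides with $Z^{(i)}$ from time $t_\ell$ onward (as they coalesce at $(t_\ell, 0)$), the edge $t_{\ell+1}$ is the parent of $t_\ell$ in $\fortree(Z)$ for each $\ell < k$. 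Moreover $t_k$ is a root-edge of $\fortree(Z)$, because $Z^{(t_k)} = Z^{(i)}$ has no more zeros in $(t_k, n]$ and hence no further ancestor. Consequently, in $\fortree(Z)$ the ancestral chain above $i$ consists of exactly $k$ strict ancestors $t_1, \ldots, t_k$.

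Translating via \cref{prop:eq_trees}, the path in $T(m^*)$ from the root $\rho$ down to the edge labeled $i$ successively traverses the edges labeled $t_k, t_{k-1}, \ldots, t_1, i$, so edge $i$ has edge-depth $k+1$ and its bottom vertex lies at height $k = L_Z^{(i)}(n) - 1$ in $T(m^*)$, which yields the claim. The main subtlety I expect is bookkeeping: one must carefully check that the root of $T(m^*)$ (the source $s^*$) is identified with the common root $\rho$ introduced in the attachment construction, and that ``bottom vertex'' of an edge in $T(m^*)$—being its parent end—is responsible for the $-1$ shift between edge-depth and vertex-height. Once these conventions are pinned down, the identity $X^*_{\sigma(i)} = L_Z^{(i)}(n) - 1$ drops out, and the alternative formulation $X^*_i = L_Z^{(\sigma^{-1}(i))}(n) - 1$ is obtained by reindexing.
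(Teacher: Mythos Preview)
Your proposal is correct and is precisely the argument the paper has in mind: the corollary is stated without proof, explicitly as a consequence of \cref{prop:eq_trees}, and your derivation---identifying the ancestors of the edge labeled $i$ in $\fortree(Z)$ with the zeros of $Z^{(i)}$ on $[i,n]$, then translating edge-depth in the attached tree into the height of the parent-end in $T(m^*)$---is the natural (and essentially only) way to extract it. Your bookkeeping on the $-1$ shift is right; note incidentally that the first displayed formulation in the statement is off by $1$ (compare \cref{prop:anti-inv}), while the second one, which you prove, is the correct identity.
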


\subsection{The four trees of bipolar orientations}
\label{sect:anti-invo}

We now want to investigate the relations between the four trees $T(m)$, $T(m^*)$, $T(m^{**})$, $T(m^{***})$ of a bipolar orientation $m$ (and its dual maps) and some corresponding coalescent-walk processes. The results presented in this section will be also useful for \cref{sec:final}.

Let $m$ be a bipolar orientation,  $W=\bow(m)$ be the corresponding tandem walk, and $Z=\wcp\circ\bow(m)=\wcp(W)$ be the corresponding coalescent-walk process. We set

\begin{itemize}
	\item $LT(m^*)$ to be the tree $T(m^*)$ with edges labeled according to the exploration of $T(m)$.
	\item $\labtree(Z)$ to be the tree obtained by attaching all the trees of $\fortree(Z)$ to a common root.
\end{itemize}

We saw in \cref{prop:eq_trees} that $LT(m^*)=\labtree(Z)$.
We now want to recover from the walk $W$ (and a new associated coalescent-walk process) the tree $LT(m^{***})$, which is defined as the tree $T(m^{***})$ with edges labeled according to the exploration of $T(m^{**})$.

\begin{defn}
	Fix $n\in\Z_{>0}.$
	Given a one dimensional walk $X=(X_t)_{t\in [n]}$ we denote by $\cev{X}$ the time-reversed walk $(X_{n+1-t})_{t\in [n]}$.
	Given a two-dimensional walk $W=(X,Y)=(X_t,Y_t)_{t\in [n]}$, we denote by $\cev{W}$ the walk $(\cev{Y},\cev{X})$ where time is reversed and coordinates are swapped.
\end{defn}

\begin{prop}[{\cite[Proposition 2.24]{borga2020scaling}}]\label{prop:rev_coal_prop}
	Let $m$ be a bipolar orientation and  $W=\bow(m)$ be the corresponding walk. Consider the walk $\cev{W}$ and the corresponding coalescent-walk process $\cev{Z}\coloneqq\wcp(\cev{W})$.
	Then 
	$\bow (m^{**})=\cev{W}$ and $LT(m^{***})= \labtree(\cev{Z})$.
\end{prop}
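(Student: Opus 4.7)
The proposition splits naturally into two assertions, and I would prove them one after the other; crucially, the second will reduce to a direct application of \cref{prop:eq_trees} (Proposition 2.17) once the first is in hand.

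\textbf{Step 1: proving $\bow(m^{**})=\cev{W}$.} The plan is to unwind the definition of $\bow$ on $m^{**}$ and match it with $\cev{W}$ coordinate by coordinate. Recall from the discussion preceding \cref{defn:KMSW} the key fact that \emph{the clockwise contour exploration of $T(m^{**})$ visits the edges of $m$ in the reverse order $e_n,e_{n-1},\dots,e_1$}. Thus if we denote by $e^{**}_1,\dots,e^{**}_n$ the edges of $m^{**}$ in the order induced by the exploration of $T(m^{**})$, we have $e^{**}_t=e_{n+1-t}$ as edges of the underlying (unoriented) map. Now writing $\bow(m^{**})_t=(X^{**}_t,Y^{**}_t)$ and using that reversing the orientation swaps the roles of ``bottom'' and ``top'' vertices of each edge (and of source and sink, hence of the two trees $T(\cdot)$ and $T((\cdot)^{**})$):
\begin{itemize}
\item $X^{**}_t$ is the height in $T(m^{**})$ of the bottom vertex of $e^{**}_t$ in $m^{**}$, which is the top vertex of $e_{n+1-t}$ in $m$, whose height in $T(m^{**})$ is exactly $Y_{n+1-t}$;
\item $Y^{**}_t$ is the height in $T((m^{**})^{**})=T(m)$ of the top vertex of $e^{**}_t$ in $m^{**}$, which is the bottom vertex of $e_{n+1-t}$ in $m$, whose height in $T(m)$ is $X_{n+1-t}$.
\end{itemize}
Therefore $\bow(m^{**})_t=(Y_{n+1-t},X_{n+1-t})=(\cev{Y}_t,\cev{X}_t)=\cev{W}_t$, which is the claim.

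\textbf{Step 2: proving $LT(m^{***})=\labtree(\cev{Z})$.} This is essentially free given Step 1 and \cref{prop:eq_trees}. Indeed, apply \cref{prop:eq_trees} to the bipolar orientation $m^{**}$ (rather than $m$) instead of $m$: the dual tree $T((m^{**})^{*})=T(m^{***})$, with edges labeled according to the exploration of $T(m^{**})$ (that is, by definition, the labeled tree $LT(m^{***})$), equals the tree obtained by attaching the trees of $\fortree(\wcp(\bow(m^{**})))$ to a common root, i.e.\ $\labtree(\wcp(\bow(m^{**})))$. By Step 1, $\bow(m^{**})=\cev{W}$, so $\wcp(\bow(m^{**}))=\wcp(\cev{W})=\cev{Z}$, which concludes the argument.

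\textbf{Expected difficulty.} The whole proposition is essentially a symmetry/duality statement: Step 2 is a one-line corollary of Step 1 combined with \cref{prop:eq_trees}. The only real content is Step 1, and there the main subtlety is purely bookkeeping: one must be careful about the three involutions that are being combined (time reversal, swap of the two coordinates, and the passage $m\leadsto m^{**}$) and check that they compose correctly to yield the identifications ``bottom in $m^{**}\leftrightarrow$ top in $m$'' and ``$T(m^{**})$ plays the role for $m^{**}$ that $T(m^{**})$ already plays for $m$''. Once the underlying fact that $T(m^{**})$ explores the $e_i$'s in reverse order is invoked, everything is immediate from the definitions; so I expect no real obstacle beyond writing this out carefully.
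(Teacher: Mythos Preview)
Your proof is correct and follows exactly the natural approach: Step 1 unwinds the definition of $\bow$ on $m^{**}$ using the key fact that the exploration of $T(m^{**})$ visits edges in reverse order, and Step 2 is indeed an immediate application of \cref{prop:eq_trees} to $m^{**}$ combined with Step 1. The paper itself omits the proof, citing the original reference \cite{borga2020scaling}; your argument is precisely the intended one.
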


An example of the coalescent-walk process $\cev{Z}=\wcp(\cev{W})$ is given on the bottom-left picture of \cref{fig:anti-invo} in the case of the bipolar orientation $m$ considered in \cref{fig:bip_orient}.

\medskip

We now know that given a bipolar orientation $m$ and the corresponding walk $W=\bow(m)$, we can read the trees $LT(m^*)$ and $LT(m^{***})$ in the coalescent-walk processes $Z=\wcp(W)$ and $\cev{Z}=\wcp(\cev{W})$ respectively.
Obviously, considering the bipolar map $m^*$ and the corresponding walk $W^*=\bow(m^*)$, we can read the trees $LT(m^{**})$ and $LT(m^{****})=LT(m)$ in the coalescent-walk processes $Z^*=\wcp(W^*)$ and $\cev{Z}^*=\wcp(\cev{W}^*)$ respectively. 

Actually, we can determine the walk $W^*=\bow(m^*)$ directly from the coalescent-walk processes $Z$ and $\cev{Z}$, as explained in \cref{prop:anti-inv} below. 
We recall that the discrete local time process $L_Z = \left(L_Z^{(i)}(j)\right)$, $1\leq i \leq j \leq n$,  was defined by
$L_Z^{(i)}(j)=\#\left\{k\in [i,j]\middle|Z^{(i)}_k=0\right\}.$ We also recall (see \cref{thm:rotation}) that $\sigma^*$ denotes the permutation obtained by rotating the diagram of a permutation $\sigma$ clockwise by angle $\pi/2$.

\begin{prop}[{\cite[Proposition 2.25]{borga2020scaling}}]\label{prop:anti-inv}
	Let $m$ be a bipolar orientation of size $n$. Set $W^*=(X^*,Y^*)=\bow(m^*)$, $\sigma=\bobp(m)$, $W=\bow(m)$, $Z=\wcp(W)$ and $\cev{Z}=\wcp(\cev{W})$. Then
	\begin{equation}
	(X^*_{i})_{i\in[n]}=\left(L_Z^{(\sigma^{-1}(i))}(n)-1\right)_{i\in [n]}\quad\text{and}\quad
	(Y^*_{i})_{i\in[n]}=\left(L_{\cev{Z}}^{(\sigma^{*}(i))}(n)-1\right)_{i\in [n]}.
	\end{equation} 
\end{prop}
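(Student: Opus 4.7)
The first identity is essentially immediate from \cref{cor:local_time}: by the ``in other words'' reformulation, for every $j \in [n]$ we have $X^*_{\sigma(j)} = L_Z^{(j)}(n)-1$, so substituting $j = \sigma^{-1}(i)$ gives $X^*_i = L_Z^{(\sigma^{-1}(i))}(n)-1$, which is the claimed formula.

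For the second identity, the plan is to apply \cref{cor:local_time} not to $m$ itself but to the bipolar orientation $m^{**}$, and then translate the statement back in terms of data attached to $m$. The key inputs are \cref{prop:rev_coal_prop}, which gives $\bow(m^{**}) = \cev{W}$ and hence $\wcp(\bow(m^{**})) = \wcp(\cev{W}) = \cev{Z}$, together with \cref{thm:rotation} iterated twice, which gives $\bobp(m^{**}) = \sigma^{**}$. Applying \cref{cor:local_time} to the quadruple $(m^{**}, \cev{W}, \cev{Z}, \sigma^{**})$ therefore yields, for every $i \in [n]$,
\[
X(m^{***})_i \;=\; L_{\cev{Z}}^{((\sigma^{**})^{-1}(i))}(n) - 1,
\]
where $X(m^{***})_i$ denotes the height in $T(m^{***})$ of the bottom (in the $m^{***}$-orientation) of the $i$-th edge visited in the exploration of $T(m^{***})$.

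The next step is to match this to $Y^*_j$. Since $m^{***}$ carries the reversed orientation of $m^*$, the ``bottom'' of an edge in $m^{***}$-orientation coincides with its ``top'' in $m^*$-orientation. Moreover, just as the exploration of $T(m^{**})$ visits the edges of $m$ in the reverse order of $T(m)$, the exploration of $T(m^{***}) = T((m^*)^{**})$ visits the edges of $m^*$ in the reverse order of $T(m^*)$. Consequently, the $i$-th edge encountered in the exploration of $T(m^{***})$ is the $(n+1-i)$-th edge of $m^*$ in the exploration of $T(m^*)$, so setting $j = n+1-i$ gives $Y^*_j = X(m^{***})_{n+1-j}$.

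It remains to verify the identity of indices $(\sigma^{**})^{-1}(n+1-j) = \sigma^*(j)$. Using the explicit formula $\sigma^*(k) = n+1-\sigma^{-1}(k)$ coming from the quarter-turn rotation of \cref{thm:rotation}, a direct computation gives $\sigma^{**}(k) = n+1-\sigma(n+1-k)$, and inverting this relation yields precisely $(\sigma^{**})^{-1}(n+1-j) = \sigma^*(j)$. Combining all the previous steps, we obtain $Y^*_j = L_{\cev{Z}}^{(\sigma^*(j))}(n)-1$, as required. The main obstacle in the proof is purely bookkeeping: one must carefully track the various dualities ($m \leftrightarrow m^*$, $m \leftrightarrow m^{**}$), the resulting reversals of the exploration orderings of the primal and dual trees, and the corresponding actions on the Baxter permutation under the rotation operation $\sigma \mapsto \sigma^*$.
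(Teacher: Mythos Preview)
Your proof is correct. The first identity is indeed a direct restatement of \cref{cor:local_time}, and for the second identity your strategy of applying \cref{cor:local_time} to $m^{**}$ and then unwinding the various dualities is exactly right; the permutation bookkeeping $(\sigma^{**})^{-1}(n+1-j)=\sigma^*(j)$ is verified correctly.

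One minor remark: the step where you relate $X(m^{***})_i$ to $Y^*_{n+1-i}$ can be obtained more directly by applying \cref{prop:rev_coal_prop} to $m^*$ rather than to $m$. Indeed, that proposition gives $\bow(m^{***})=\bow((m^*)^{**})=\cev{W^*}$, whose first coordinate at time $i$ is by definition $Y^*_{n+1-i}$. This replaces your paragraph about top/bottom vertices and reversed exploration orders by a one-line citation, but of course both arguments establish the same fact. The paper itself does not reproduce a proof here (it only cites \cite[Proposition~2.25]{borga2020scaling}), so there is no further comparison to make.
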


The coalescent-walk processes $Z_1$, $\cev{Z}_1$, $Z_2$ and $\cev{Z}_2$ corresponding to the edge-labeled trees $LT(m^{*})$, $LT(m^{***})$, $LT(m^{**})$ and $LT(m)$, for our running example, are plotted in \cref{fig:anti-invo}. 

\begin{figure}[H]
	\includegraphics[scale=.45]{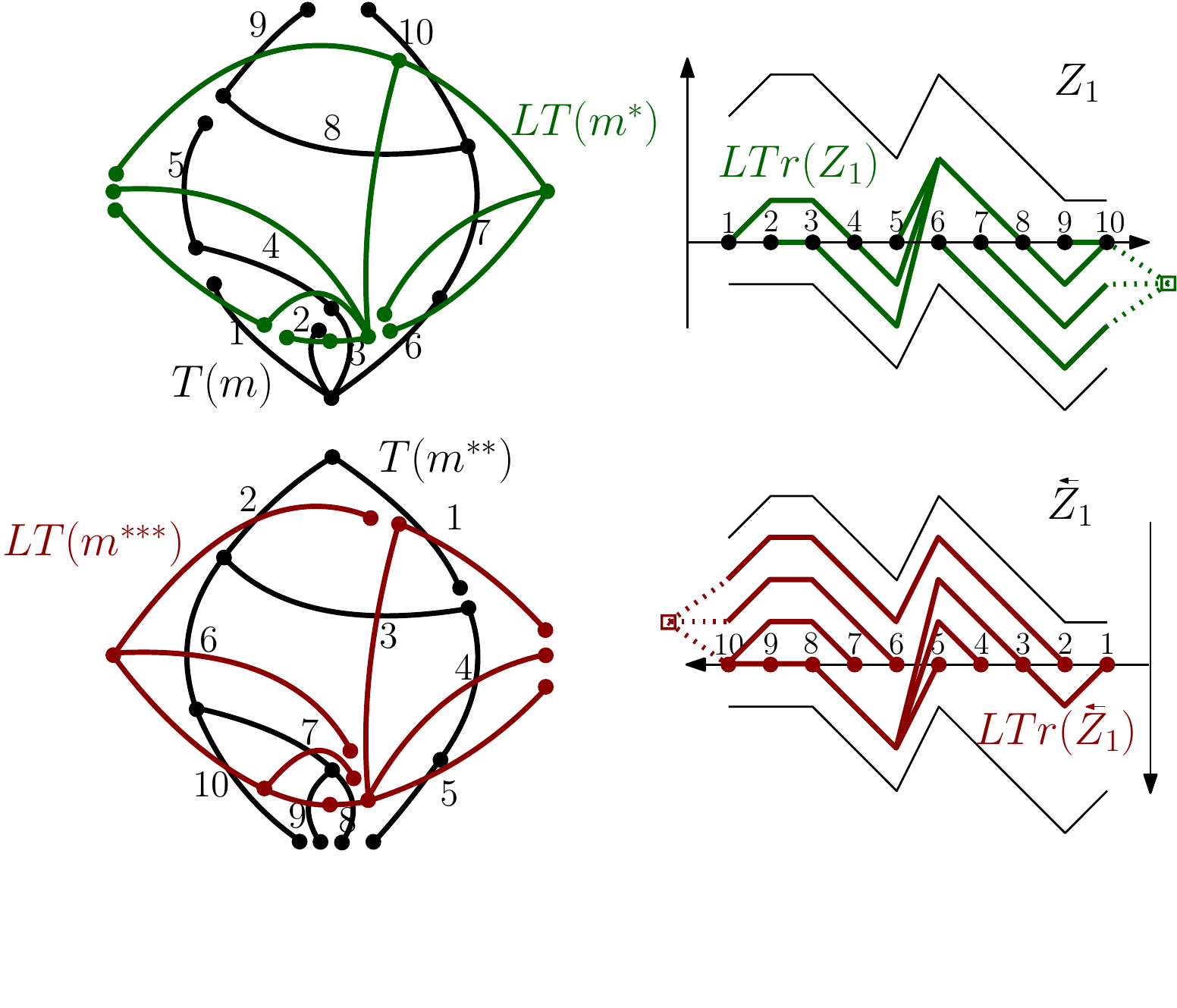}
	\includegraphics[scale=.45]{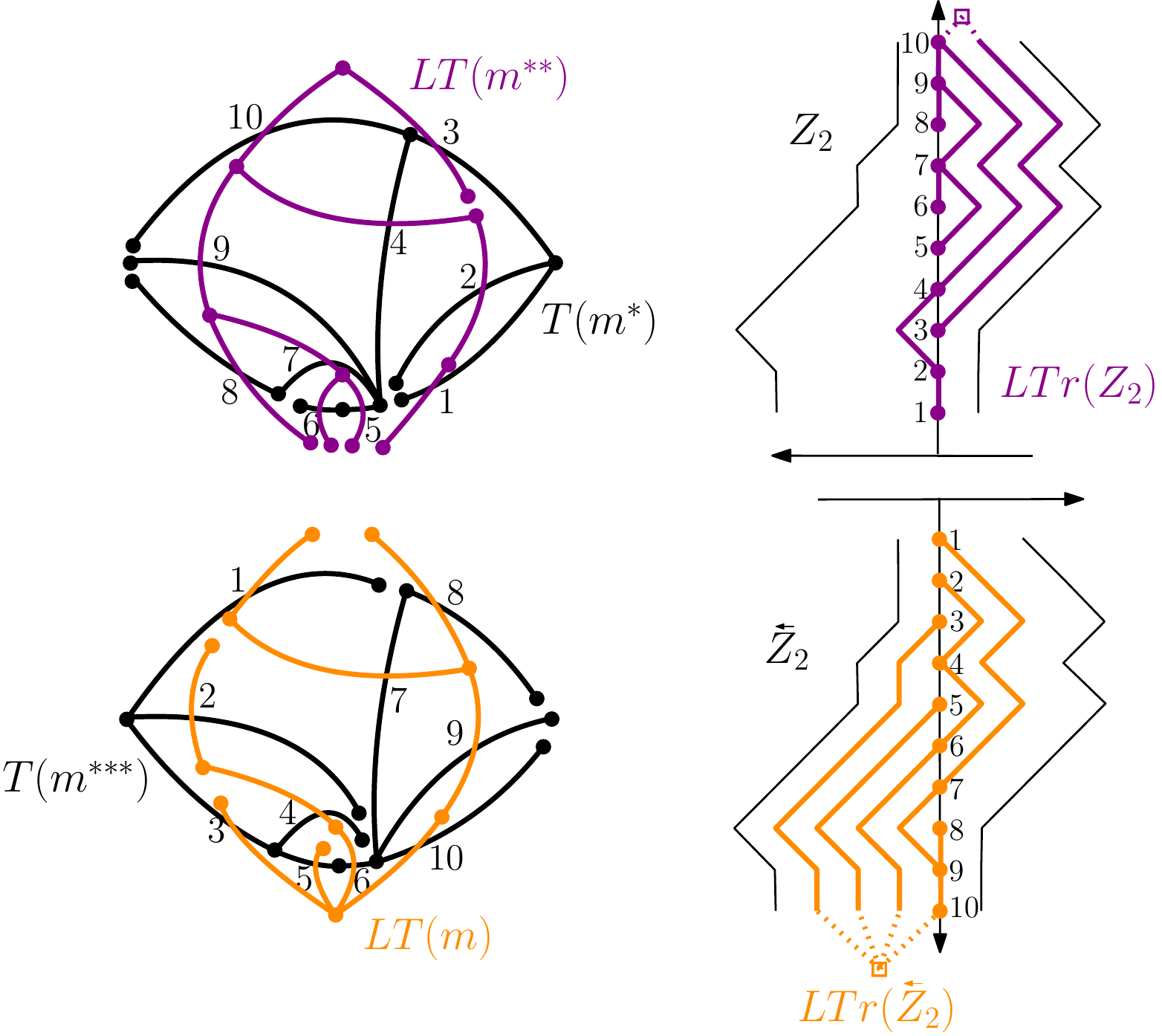}
	\caption{\label{fig:anti-invo}On the left-hand side the coalescent-walk processes $Z_1$ and $\cev{Z}_1$ and the corresponding edge-labeled trees $LT(m^{*})$ and $LT(m^{***})$. On the right-hand side the coalescent-walk processes $Z_2$ and $\cev{Z}_2$, and the corresponding edge-labeled trees $LT(m^{**})$ and $LT(m)$. We oriented the coalescent-walk processes in such a way that the comparison between trees is convenient.}
\end{figure}

    \chapter{Local limits: Concentration \& non-concentration phenomena}\label{chp:local_lim}
\chaptermark{Local limits}
  
\begin{adjustwidth}{8em}{0pt}
	\emph{In which we prove local convergence for various models of random permutations. We show that in many cases a concentration phenomenon occurs, namely the limiting proportion of consecutive patterns is deterministic. On the other hand, we show that this phenomenon is not valid for square permutations. This leads to the open question of characterizing which models show a concentration phenomenon.}
\end{adjustwidth}

\bigskip

\bigskip

\section{Overview of the main results}

We saw in \cref{chp:conv_theories} that local convergence for permutations is equivalent to convergence of proportions of consecutive patterns. In particular we have seen that a sequence of random permutations quenched B--S converges if the corresponding proportions of consecutive patterns jointly converge in distribution (see \cref{strongbsconditions} page \pageref{strongbsconditions}). In the next sections we consider many families of permutations $\mathcal C$, mainly pattern-avoiding ones, and we show that if  $\bm\sigma_n$ denotes a uniform permutation of size $n$ in $\mathcal C$, there exist deterministic numbers $(\gamma_{\pi,\mathcal C})_{\pi\in\mathcal{S}}$ in $[0,1]$ such that
\begin{equation}
	\pcocc(\pi,\bm\sigma_n) \stackrel{P}{\longrightarrow}  \gamma_{\pi,\mathcal C}, \quad\text{for all}\quad \pi\in\mathcal{S}.
\end{equation}
This kind of convergence results will be referred to as \emph{concentration phenomena}, namely the \emph{random} sequence $\pcocc(\pi,\bm\sigma_n)$ converges to a \emph{deterministic} limiting point $\gamma_{\pi,\mathcal C}$.

These results could let us suspect that every uniform permutation in a pattern-avoiding family shows a concentration phenomenon for the proportion of consecutive patterns. In \cref{thm:local_conv} we will see that this is not the case. Indeed, if $\bm\sigma_n$ is a uniform square permutation\footnote{Recall that square permutations are permutations that avoid the sixteen patterns of size $5$ with an internal point, therefore they are an example of pattern-avoiding family.} of size $n$, then
\begin{equation}
	\pcocc(\pi,\bm\sigma_n) \stackrel{d}{\longrightarrow}  \bm\Gamma_{\pi,\mathcal C}, \quad\text{for all}\quad \pi\in\mathcal{S},
\end{equation}
where $\bm\Gamma_{\pi,\mathcal C}$ are (non-trivial) random variables in $[0,1]$.

These results lead to the following open question: Let $\mathcal C$ be a pattern-avoiding family.
\begin{align}
	\text{\emph{Under which conditions on $\mathcal C$ does a uniform permutation in $\mathcal C$ show a}}\\
	\text{\emph{concentration phenomenon for the proportion of consecutive patterns?}}
\end{align}
We point out that a similar question was raised for (classical) patterns by Janson \cite[Remark 1.1]{janson2018patterns}. He notices that, in some pattern-avoiding families, we have concentration for the (classical) pattern occurrences around their mean, in others not. Also in this case the author did not furnish an answer.

\medskip

In the following sections we present several local limits results. For convenience, according to the model we are studying, we state our results using one of the various equivalent characterizations given in \cref{strongbsconditions} or \cref{detstrongbsconditions}.
Moreover, in \cref{thm:main_thm_CLT} we furnish a stronger result, proving a central limit theorem for proportions of consecutive patterns of permutations encoded by generating trees.

Most of the stated results prove existence of a random infinite rooted permutation $\bm{\sigma}^\infty_{\mathcal C}$ without explicitly constructing this limiting object. Nevertheless, in most cases, we provide an explicit construction of $\bm{\sigma}^\infty_{\mathcal C}$ at the end of the various sections.

We end the chapter proving BS-convergence for Baxter permutations but also for some discrete objects which are not permutations, specifically bipolar orientations, tandem walks, and coalescent-walk processes.

\subsection{A concentration phenomenon for $\rho$-avoiding permutations with $|\rho|=3$}\label{sect:loc_three_av}

Here we focus on the classes of $\rho$-avoiding permutations, for $|\rho|=3$.
Our two main results are the two following theorems.
\begin{thm}
	\label{thm_1}
	For any $n\in\Z_{>0}$, let $\bm{\sigma}_n$ be a uniform random $231$-avoiding permutation of size $n$. The following convergence in probability holds,
	\begin{equation}
	\label{primaeq}
	\widetilde{\cocc}(\pi,\bm{\sigma}_n)\stackrel{P}{\to}\frac{2^{\#\emph{LRmax}(\pi)+\#\emph{RLmax}(\pi)}}{2^{2|\pi|}},\quad\text{for all}\quad\pi\in\emph{Av}(231).
	\end{equation}
\end{thm}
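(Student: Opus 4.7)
\emph{Proof sketch.} The plan is to push the problem through the bijection of \cref{prop:unif_231_as_trees}, which identifies $\bm\sigma_n$ with $\sigma_{\bm T_n}$ for $\bm T_n$ a uniform random binary tree with $n$ vertices. I will use that $\bm T_n$ can be realized as a critical Galton--Watson tree conditioned to have $n$ internal vertices, with an offspring law making each of the left and right child slots present independently with probability $1/2$. By the local limit theorem for conditioned Galton--Watson trees around a uniformly chosen vertex (Stufler~\cite{stufler2016local}, building on Aldous~\cite{aldous1991asymptotic} and Janson~\cite{janson2012simply}), the neighborhood of a uniform vertex of $\bm T_n$ converges in distribution to an infinite random binary tree $\bm T_\infty$ of Kesten type; its key feature for us is that every binary choice arising in the local structure (``is this child slot filled?'' for descendants, and ``is the current vertex a left or a right child of its parent?'' while walking up the spine) is independent and has probability $1/2$.

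Next, by the description of the bijection given in \cref{231bij}, the value $\sigma_T(i)$ is the post-order rank of the $i$-th in-order vertex $v_i^T$ of $T$. Thus the consecutive pattern $\pat_{[i,i+k-1]}(\sigma_T)$ is determined by the relative ancestor/descendant and left/right relations among $v_i^T,\dots,v_{i+k-1}^T$, which is a bounded-complexity local feature of $T$ at the $i$-th in-order position. I would then write
\begin{equation*}
	\E\big[\widetilde{\cocc}(\pi,\bm\sigma_n)\big] \;=\; \frac{1}{n}\sum_{i=1}^{n-k+1}\P\big(\pat_{[i,i+k-1]}(\bm\sigma_n)=\pi\big)\, ,
\end{equation*}
and use the local limit so that each summand converges, uniformly for $i$ in the bulk, to the probability that the length-$k$ consecutive pattern at the root of $\bm T_\infty$ equals $\pi$. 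Enumerating the configurations of $\bm T_\infty$ realizing this event -- each internal edge of the canonical tree $T_\pi$ is traced in $\bm T_\infty$ by an in-order walk segment whose total probability sums, via a geometric series of ratio $1/2$, to either $1/2$ (when the walk direction is constrained) or $1$ (when it is free) -- yields, after bookkeeping, the announced value $2^{\#\emph{LRmax}(\pi)+\#\emph{RLmax}(\pi)}/2^{2|\pi|}$. The records of $\pi$ enter naturally because by \cref{maxnode} the left-to-right maxima correspond to the vertices on the left spine of $T_\pi$ and the right-to-left maxima to those on the right spine; these are precisely the vertices whose outward branches point away from the window $[i,i+k-1]$ and hence contribute the ``free'' geometric factors.

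The final step is to upgrade expectation convergence to convergence in probability, which by Chebyshev's inequality reduces to proving $\Var\big(\widetilde{\cocc}(\pi,\bm\sigma_n)\big)\to 0$. Writing $A_i=\{\pat_{[i,i+k-1]}(\bm\sigma_n)=\pi\}$, the variance equals $n^{-2}\sum_{i,j}\Cov(\mathds{1}_{A_i},\mathds{1}_{A_j})$, so one has to show that for most pairs $(i,j)$ the events $A_i$ and $A_j$ are asymptotically uncorrelated. This is the principal technical obstacle: intuitively, when $|i-j|$ is large the two events depend on tree structure near two in-order positions that typically lie in essentially disjoint fringe subtrees of $\bm T_n$, so their covariance is $o(1)$; making this rigorous requires a joint two-point local limit for $\bm T_n$ rooted at two independent uniform vertices, in the spirit of the fringe analyses of Janson~\cite{janson2012simply} and Holmgren--Janson~\cite{holmgren2017fringe}.
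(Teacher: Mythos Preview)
Your route is genuinely different from the paper's, and plausibly sound, but it is worth being precise about where the work lies and how the paper avoids it.

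The paper does \emph{not} use local limits for \cref{thm_1}. Instead it works with an unconditioned subcritical binary Galton--Watson tree $\bm T_\delta$, exploits the recursion $\cocc(\pi,T)=\cocc(\pi,T_L)+\cocc(\pi,T_R)+\mathds{1}_{\{\pat_J(T)=\pi\}}$ (\cref{ricors}) to get $\E[\cocc(\pi,\bm T_\delta)]=\delta^{-1}\P(\pat_J(\bm T_\delta)=\pi)$, computes this probability explicitly via a second recursion (Lemmas~\ref{onecross} and~\ref{twoandthreecross}), and then transfers to $\bm T_n$ by Janson's singularity-analysis lemma (\cref{svantelemma}). The same machinery is then rerun on $\E[\cocc(\pi,\bm T_\delta)^2]$ (\cref{figrfigpqofhfpuh}) to obtain the second moment and conclude by Chebyshev. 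Local limits only enter the paper for the \emph{321}-avoiding case (\cref{thm_2}), where the bijection is quite different.

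Your approach amounts to porting the $321$-method to the $231$-case. For the first moment this is legitimate: the in-order root $v_{\bm j}$ for uniform $\bm j$ \emph{is} a uniform vertex, the in-order successors of $u_0$ in the Kesten limit are a.s.\ well-defined (the walk up the spine until the direction flips is a geometric time), and the pattern event is a local function with high probability, so continuity in the Stufler topology gives $\E[\widetilde{\cocc}(\pi,\bm\sigma_n)]\to P_{231}(\pi)$. Two comments, though. First, the conditioning is on having $n$ vertices, not $n$ \emph{internal} vertices. Second, your bookkeeping sentence (``each internal edge of $T_\pi$ is traced by an in-order walk segment whose total probability sums, via a geometric series, to either $1/2$ or $1$'') is too compressed to be checkable; the paper's explicit recursion makes the exponent $2|\pi|-\#\LRM(\pi)-\#\RLM(\pi)$ emerge transparently, whereas in your framework one really must track, for each of the $k-1$ in-order steps, whether the step enters a right subtree or climbs the spine, and account for the position of $u_0$ within the pattern window. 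It works (e.g.\ one verifies $P_{231}(12)=P_{231}(21)=1/2$ and $P_{231}(123)=1/4$ directly in the Kesten tree), but it is not a one-liner.

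The variance is where the two approaches diverge most. In the paper the second moment of $\cocc(\pi,\bm T_\delta)$ is computed by the same recursion-plus-singularity-analysis trick, giving $\Var\to 0$ essentially for free. Your proposed two-point local limit (joint convergence of neighborhoods of two independent uniform in-order vertices) is the right statement and is in the spirit of the fringe results you cite, but note that what you need is slightly nonstandard: the two marked vertices are determined by their in-order ranks, and the in-order windows around them are not bounded-radius balls in the graph metric. You would need to argue, as for the one-point case, that with high probability the two windows sit in disjoint fringe subtrees and then invoke asymptotic independence of disjoint fringes. This is doable but constitutes the real content of the proof in your approach, whereas the paper sidesteps it entirely.

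In short: your approach trades the generating-function computation for a probabilistic local-limit argument; it is more in line with how the paper handles $\Av(321)$, and it can be made rigorous, but the paper's route for $\Av(231)$ is more self-contained and delivers both moments with the same tool.
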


\begin{thm}
	\label{thm_2}
	For any $n\in\Z_{>0}$, let $\bm{\sigma}_n$ be a uniform random $321$-avoiding permutation of size $n$. The following convergence in probability holds,
	\begin{equation}
	\label{shsshgea}
	\widetilde{\cocc}(\pi,\bm{\sigma}_n)\stackrel{P}{\to}\begin{cases}
	\frac{|\pi|+1}{2^{|\pi|}} &\quad\text{if }\pi=12\dots|\pi|,\\
	\frac{1}{2^{|\pi|}} &\quad\text{if }\cocc(21,\pi^{-1})=1, \\
	0 &\quad\text{otherwise,} \\ 
	\end{cases}\quad\text{for all$\quad\pi\in\emph{Av}(321)$.}
	\end{equation}
\end{thm}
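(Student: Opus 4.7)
By \cref{detstrongbsconditions}, it suffices to show that for each $\pi \in \mathcal{S}$, $\pcocc(\pi, \bm{\sigma}_n) \stackrel{P}{\to} \gamma_\pi$ where $\gamma_\pi$ denotes the right-hand side of \cref{shsshgea}. The case $\pi \notin \Av(321)$ is immediate since then $\pcocc(\pi, \bm{\sigma}_n) = 0$ identically. For $\pi \in \Av(321)$ we apply the bijection of \cref{premres_321} together with \cref{obs_treeperm}: $\bm{\sigma}_n \eqdist \sigma_{\bm T}$ where $\bm T$ is a critical Galton--Watson tree with $\mathrm{Geom}(1/2)$ offspring distribution, conditioned on having $n+1$ vertices. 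Let $v_1, v_2, \ldots$ denote the vertices of $\bm T$ in post-order, so that position $i$ of the permutation corresponds to $v_i$, and set $E_i = \mathds{1}[v_i \text{ is a leaf}]$; then $i \in E^+$ iff $E_i = 1$.

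The combinatorial heart of the proof is the following deterministic claim: provided the window $[i, i+k-1]$ avoids the post-order index of the root of $\bm T$, the consecutive pattern $\pat_{[i, i+k-1]}(\sigma_{\bm T})$ depends only on the indicator word $e = (E_i, \ldots, E_{i+k-1}) \in \{0,1\}^k$, via an explicit map $\Phi \colon \{0,1\}^k \to \Av_k(321)$ satisfying (a) if $e$ has no $(1,0)$ substring (the $k+1$ words of the form $0^a 1^b$), then $\Phi(e) = 12 \cdots k$, and (b) otherwise $\Phi$ restricts to a bijection between the remaining $2^k - k - 1$ words and $\{ \pi \in \Av_k(321) : \cocc(21, \pi^{-1}) = 1 \}$, a set of the same cardinality. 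Part (a) follows immediately since the values of $\sigma_{\bm T}$ are strictly increasing on each of $E^+$ and $E^-$, so descents in $\sigma_{\bm T}$ occur exactly at $(1,0)$-transitions of the $E$-sequence; absence of such transitions in the window forces the identity pattern. Part (b) is more delicate: each $(1,0)$ at relative position $j$ forces $v_{i+j}$ to be a leaf whose post-order successor $v_{i+j+1}$ is its parent, triggering a chain of forced ancestor/sibling relations in the surrounding tree. Combined with the global rule that $E^-$-values are assigned from the sorted list of non-leaf non-root pre-order ranks, an ancestor-counting argument pins down the relative order of the window values: for instance, for $e = (1,0,1,0)$ one shows that at least $r+1$ non-leaf non-root pre-order ranks lie strictly below $p(v_i)$ (where $r$ is the rank of $i+1$ in $E^-$), forcing $\sigma(i+3) < p(v_i) = \sigma(i)$ and hence the pattern $3142$, never $2143$.

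Given the map $\Phi$, one has $\E[\pcocc(\pi, \bm{\sigma}_n)] = \P(\Phi(E_{\bm J}, \ldots, E_{\bm J + k - 1}) = \pi) + o(1)$ for $\bm J$ uniform in $[n-k+1]$. The classical local limit of critical conditioned Galton--Watson trees around a uniform vertex, applied to the specific $\mathrm{Geom}(1/2)$ offspring distribution, yields via a direct computation of post-order-successor probabilities that $(E_{\bm J}, \ldots, E_{\bm J + k - 1})$ converges in distribution to a uniform element of $\{0,1\}^k$ (this special property of $\mathrm{Geom}(1/2)$ can be checked by induction on $k$, since each decision of the form ``is the next post-order vertex a leaf?'' consumes an independent $\mathrm{Geom}(1/2)$-distributed piece of the size-biased tree). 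Combined with parts (a) and (b), this recovers the limiting means $(k+1)/2^k$ for the identity and $1/2^k$ for each one-inverted-pair pattern. Upgrading to convergence in probability is a standard second-moment argument: local configurations at two positions $i, i'$ with $|i-i'| \to \infty$ are asymptotically independent, so $\Var(\pcocc(\pi, \bm{\sigma}_n)) = O(1/n)$ and Chebyshev's inequality yields the conclusion. The main obstacle is part (b) of $\Phi$: because $E^-$-values are determined by a global sorted rank rather than a local function of the tree, showing that the consecutive pattern nevertheless depends only on the leaf-indicator word $e$ requires the ancestor-counting argument sketched above, carried out uniformly in the local tree configuration.
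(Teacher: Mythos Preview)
Your central deterministic claim---that the consecutive pattern $\pat_{[i,i+k-1]}(\sigma_T)$ depends only on the leaf-indicator word $e=(E_i,\ldots,E_{i+k-1})$---is false, and part~(b) of your map $\Phi$ cannot be salvaged by an ancestor-counting argument. Here is an explicit counterexample. Take the two trees
\[
T_1:\quad r\to\{a,c,\ell_3\},\ a\to\{\ell_1\},\ c\to\{\ell_2\},
\qquad
T_2:\quad r\to\{v_4,v_5\},\ v_4\to\{v_2,v_3\},\ v_2\to\{v_1\},
\]
each with six vertices. Both yield permutations of size $5$ with leaf-indicator word $(1,0,1,0,1)$, but $\sigma_{T_1}=21435$ while $\sigma_{T_2}=31425$. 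At positions $1$--$4$ the induced patterns are $2143$ and $3142$ respectively, even though $e=(1,0,1,0)$ in both cases. Your proposed argument that ``at least $r+1$ non-leaf non-root pre-order ranks lie strictly below $p(v_i)$'' fails for $T_1$: there $\sigma(1)=2$, and only one non-leaf non-root vertex (namely $a$, with pre-order label $1$) lies strictly below $2$, whereas your claim would require two.

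This is not a technical oversight but the crux of the problem: since $E^-$-values are assigned by a \emph{global} rule, local leaf data alone cannot determine the pattern. The paper's proof confronts this directly via \cref{prop2}, which establishes the probabilistic statement that, around a uniform root, a \emph{separating line} exists with high probability---i.e., all $E^+$-values in the window lie above all $E^-$-values. Conditionally on this event (which rules out $2143$ from $e=(1,0,1,0)$), the pattern \emph{is} determined by $e$, and the counting you describe then goes through. Proving \cref{prop2} requires a genuinely different input, namely that $|\sigma(i)-i|$ is typically of order $\sqrt n$ (from~\cite{hoffman2017pattern}), so that $E^+$- and $E^-$-values in a constant-width window are well separated. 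Your sketch is thus missing precisely this ingredient; the remaining pieces (uniform distribution of the leaf-indicator word via \cref{prop1}, and the second-moment upgrade) are correctly identified.
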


\begin{obs}
	By symmetry, these two results cover all cases of $\rho$-avoiding permutations with $|\rho|=3.$ Indeed, every permutation of size three is in the orbit of either $231$ or $321$ by applying reverse (symmetry of the diagram w.r.t the vertical axis) and complementation (symmetry of the diagram w.r.t the horizontal axis). Beware that inverse (symmetry of the diagram w.r.t the principal diagonal) cannot be used since it does not preserve consecutive pattern occurrences.
\end{obs}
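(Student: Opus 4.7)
The plan is a short group-theoretic bookkeeping argument combined with a check that the symmetries \emph{reverse} and \emph{complement} are compatible with the consecutive pattern statistic.

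First, I would enumerate the orbits of $\mathcal{S}_3$ under the Klein four-group generated by reverse $r$ (sending $\sigma(1)\cdots\sigma(n)$ to $\sigma(n)\cdots\sigma(1)$) and complement $c$ (sending $\sigma(i)$ to $n+1-\sigma(i)$). A direct computation gives
\[
\{231, r(231), c(231), rc(231)\} = \{231, 132, 213, 312\}, \qquad \{321, r(321)\} = \{321, 123\},
\]
and these two orbits together partition $\mathcal{S}_3$.

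Next, I would verify that for every $\rho$ and $n$, the involutions $r$ and $c$ restrict to bijections $\mathrm{Av}_n(\rho) \to \mathrm{Av}_n(r(\rho))$ and $\mathrm{Av}_n(\rho) \to \mathrm{Av}_n(c(\rho))$; this is immediate from the elementary fact that $\sigma$ contains $\rho$ as a classical pattern if and only if $r(\sigma)$ contains $r(\rho)$, and similarly for $c$. Consequently the uniform law on $\mathrm{Av}_n(\rho)$ pushes forward to the uniform law on the image. The key compatibility step is that reverse and complement preserve intervals of indices (reverse sends an interval to its reflection, complement fixes positions altogether), so consecutive occurrences map to consecutive occurrences of the transformed pattern. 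This gives the identity
\[
\widetilde{\cocc}(\pi, \sigma) = \widetilde{\cocc}\bigl(r(\pi), r(\sigma)\bigr) = \widetilde{\cocc}\bigl(c(\pi), c(\sigma)\bigr), \qquad \text{for all } \pi, \sigma \in \mathcal{S}.
\]
Combining this identity with the bijectivity above, the convergences in probability of Theorems \ref{thm_1} and \ref{thm_2} transfer verbatim to uniform permutations avoiding any size-3 pattern in the corresponding orbit, which yields the analogous $\widetilde{\cocc}$-limit for every $\rho \in \mathcal{S}_3$.

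The cautionary remark about the inverse is settled by a single example. Taking $\sigma = 2413$, whose inverse is $\sigma^{-1} = 3142$, one quickly computes $\cocc(12, 2413) = 2$ (from positions $(1,2)$ and $(3,4)$) but $\cocc(12, 3142) = 1$ (only position $(2,3)$). Hence $\sigma \mapsto \sigma^{-1}$ does not preserve consecutive pattern counts and cannot be used to reduce the orbit further. There is no genuine analytic obstacle here; the main (very minor) difficulty is simply the enumeration of the orbits and the careful transport of pushforward measures through the bijections.
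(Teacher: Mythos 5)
Your argument is correct and is exactly the reasoning the paper leaves implicit: the paper states this as an observation without proof, relying on the same orbit computation $\{231,132,213,312\}\cup\{321,123\}=\mathcal{S}_3$, the fact that reverse and complement preserve both pattern avoidance and consecutive occurrences, and the caveat about the inverse. Your explicit counterexample $\cocc(12,2413)=2\neq 1=\cocc(12,3142)$ is a valid and welcome concretization of the paper's warning, since $12^{-1}=12$ shows no analogous transfer identity can hold for the inverse.
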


Here are some interesting remarks about Theorems \ref{thm_1} and \ref{thm_2}.
\begin{itemize}
	\item A first important fact is the concentration phenomenon.
	\item The second important fact is the different behavior of the two models of Theorem \ref{thm_1} and Theorem \ref{thm_2}: the first limiting density
	has full support on the space of 231-avoiding permutations, whereas the second gives positive measure only to 321-avoiding permutations whose inverse have at most one descent. 
	Indeed, despite having the same enumeration sequence, $\text{Av}(231)$ and $\text{Av}(321)$ are often considered as behaving really differently. Our results give new evidence of this belief.
	\item A further remarkable fact are the explicit expressions for the limiting proportion of consecutive occurrences. As we will see in some future sections, it is not always doable to find such closed formulas for the limiting proportions. For instance in \cref{thm:examples_ok} we will extend the results stated in Theorems \ref{thm_1} and \ref{thm_2} proving a central limit theorem for $\widetilde{\cocc}(\pi,\bm{\sigma}_n)$. Nevertheless, we will only obtain implicit expressions for the limiting mean and variance. 
	\item Explicit constructions of the limiting random total orders on $\Z$ for the B--S convergence are provided in Sections \ref{explcon} and \ref{explcon2}. 
\end{itemize}

Although the two theorems have very similar statements and in both models we use the bijections between $\rho$-avoiding permutations and rooted plane trees introduced in \cref{231bij,premres_321}, the two proofs involve different techniques.
\begin{itemize}
	\item For the proof of Theorem \ref{thm_1} we use the Second moment method. We study the  asymptotic behavior of the first and the second moments of $\widetilde{\cocc}(\pi,\bm{\sigma}_n)$ applying a technique introduced by Janson \cite{janson2003wiener,janson2017patterns}. Instead of studying uniform trees with $n$ vertices, we focus on specific families of binary Galton--Watson trees (which have some nice independence properties). Then we recover results for the first family of trees using singularity analysis for generating functions.
	\item For the proof of Theorem \ref{thm_2} we use a probabilistic approach building on local limits for Galton--Watson trees pointed at a uniform vertex. The bijection between trees and 321-avoiding permutations used strongly depends on the position of the leaves. We therefore study the contour functions of some specific Galton--Watson trees in order to extract information about the positions of the leaves in the neighborhood of a uniform vertex. 
\end{itemize}

\subsection{A concentration phenomenon for substitution-closed classes}
Our decorated tree approach introduced in \cref{sect:sub_close} allows us to obtain local limit results for uniform random permutations in substitution-closed classes. Recall the notation from \cref{prop: offspring_distr_charact} page \pageref{prop: offspring_distr_charact}.

\begin{thm}
	\label{thm:local_intro}
	Let $\mathcal{C}$ be a proper substitution-closed permutation class and assume that
	\begin{equation}
	\label{eq:S_TypeI}
	\cS'(\rho_\cS) \ge \frac{2}{(1 +\rho_\cS)^2} -1.
	\end{equation}
	For each $n \in \Z_{>0}$, consider a uniform random permutation $\bm{\sigma}_n$ of size $n$
	in $\mathcal{C}$.
	Then, for each pattern $\pi \in \mathcal C$,
	there exists $\gamma_{\pi,\mathcal C}$ in $[0,1]$ such that
	\begin{equation}
	\pcocc(\pi,\bm{\sigma}_n) \stackrel{P}{\longrightarrow}  \gamma_{\pi,\mathcal C}.
	\end{equation}
\end{thm}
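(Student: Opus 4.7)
The plan is to combine the bijective encoding by forests of decorated trees (Theorem \ref{te:bijection}) with a law of large numbers for additive functionals of critical conditioned Galton--Watson trees. First, by Proposition \ref{prop:giant_comp_perm}, $\bm{\sigma}_n$ decomposes as $\oplus[\bm\nu^{(1)},\ldots,\bm\nu^{(\bm d)}]$ with one giant $\oplus$-indecomposable component $\bm{\nu}^{(\bm m)}$ of size $n-O_p(1)$, uniform in $\mathcal{C}_{\nonp}$ conditionally on its size. Any consecutive occurrence of a fixed pattern $\pi$ in $\bm\sigma_n$ that is \emph{not} a consecutive occurrence in $\bm{\nu}^{(\bm m)}$ must involve an index of one of the other components, hence there are $O_p(|\pi|)$ such occurrences. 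Dividing by $n$, we get $\pcocc(\pi,\bm\sigma_n)-\pcocc(\pi,\bm\nu^{(\bm m)})\xrightarrow{P}0$, and the problem reduces to proving concentration of $\pcocc(\pi, \bm\nu^{(\bm m)})$.

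By Lemma \ref{lem:unif_packed_tree} and the discussion of Section \ref{subsec:PackedTrees_GW2}, the packed tree $\bm P_n=\Pack\circ\CanTree(\bm\nu^{(\bm m)})$ has the law of a Galton--Watson tree $\bm T^\xi$ with offspring law $\xi$ (defined in \eqref{eq:offspring_distribution_packed_tree}) conditioned on having $|\bm\nu^{(\bm m)}|$ leaves, whose internal vertices are independently decorated, uniformly among elements of $\mathcal Q$ of the appropriate size. Under hypothesis \eqref{eq:S_TypeI}, Proposition \ref{prop: offspring_distr_charact} guarantees $\E[\xi]=1$ and $\V[\xi]<\infty$, so we are in the classical critical, finite-variance conditioned GW regime. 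Identifying each leaf of $\bm P_n$ with the corresponding index of $\bm\nu^{(\bm m)}$ (Remark \ref{rk:Leaves_Elements}), the pattern-reading procedure described in Section \ref{ssec:patterns_subtrees} says that whether the window of $k=|\pi|$ consecutive leaves starting at position $i$ induces $\pi$ is a measurable function $f_\pi(\bm P_n,i)\in\{0,1\}$ of the neighborhood of those leaves in $\bm P_n$. Hence
\begin{equation}
\cocc(\pi,\bm\nu^{(\bm m)})=\sum_{i} f_\pi(\bm P_n,i).
\end{equation}

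The main technical issue, and the heart of the proof, is that $f_\pi$ is not a priori a \emph{local} functional: the recipe of Section \ref{ssec:patterns_subtrees} may require climbing up from the closest common ancestor of the window until one meets a vertex decorated by an $\mathfrak{S}$-gadget (or the root). I propose to handle this with a truncation argument: since the decorations of internal vertices are i.i.d.\ and the $\widehat{\GGG(\mathfrak{S})}$-mass of the gadget decorations is strictly positive, the number of consecutive $\circledast$-ancestors above a vertex is stochastically dominated by a geometric variable. For every $\varepsilon>0$ one can therefore choose $h=h(\varepsilon)$ and define an $h$-local approximation $f_\pi^{(h)}$ (that guesses $\oplus$ vs.\ $\ominus$ past height $h$ and aborts) agreeing with $f_\pi$ with probability $1-\varepsilon$, uniformly in $n$ and~$i$. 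Transferring this control to the conditioned tree $\bm P_n$ is the delicate point: it follows from standard absolute-continuity estimates between $\bm T^\xi$ conditioned on having $n$ leaves and Kesten's infinite $\xi$-tree, together with the fact that such estimates remain valid after $n-O_p(1)$ is plugged in for the conditioning.

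Once the functional has been truncated, $n^{-1}\sum_i f_\pi^{(h)}(\bm P_n,i)$ is a bona fide additive functional of a randomly decorated conditioned GW tree depending on bounded neighborhoods. By the law of large numbers for such functionals on critical finite-variance conditioned Galton--Watson trees (a consequence of the convergence, around a uniform vertex, to the decorated Kesten/sin-tree $\bm{\mathcal K}$, in the spirit of the results recalled in Section \ref{sect:loc_lim_graph}), we obtain
\begin{equation}
\frac{1}{n}\sum_{i} f_\pi^{(h)}(\bm P_n,i)\xrightarrow{P}\gamma_{\pi,\mathcal{C}}^{(h)}:=\E\bigl[f_\pi^{(h)}(\bm{\mathcal K},\bm\ell)\bigr],
\end{equation}
where $\bm\ell$ is the distinguished leaf of $\bm{\mathcal K}$. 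Letting $h\to\infty$ and using the truncation control, the limit $\gamma_{\pi,\mathcal{C}}^{(h)}$ converges to $\gamma_{\pi,\mathcal{C}}:=\E[f_\pi(\bm{\mathcal K},\bm\ell)]\in[0,1]$, yielding $\pcocc(\pi,\bm\nu^{(\bm m)})\xrightarrow{P}\gamma_{\pi,\mathcal{C}}$ and, via Step 1, the desired convergence for $\bm\sigma_n$.
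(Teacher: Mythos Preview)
Your proposal is essentially correct and follows the same strategy as the paper, though you package it differently. The paper sets up a local topology $d_t$ on pointed decorated trees $\lufPT$, proves quenched Benjamini--Schramm convergence of $(\bm P_n,\bm\ell_n)$ to a deterministic law $\mathcal{L}aw(\bm P^\bullet_\infty)$ (your ``decorated Kesten/sin-tree''), then extends the map $\RP=\DT^{-1}$ to infinite pointed trees and shows it is continuous on a set of full $\bm P^\bullet_\infty$-measure; the theorem then follows from a random-measure mapping argument. Your truncation $f_\pi^{(h)}$ is precisely the continuity of $\RP$ rephrased as an approximation statement, and your ``LLN for additive local functionals'' is exactly the quenched tree convergence (the paper cites the same fringe-subtree concentration result of Stufler that you implicitly invoke). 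The underlying probabilistic inputs---geometric control on the height of the first $\mathfrak S$-gadget ancestor, and concentration of fringe-subtree densities in critical finite-variance GW trees conditioned on the number of leaves---are identical.

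Two points to flag. First, your geometric-tail argument for the nearest $\mathfrak S$-gadget ancestor presupposes that the $\mathfrak S$-gadget mass in $\widehat{\GGG(\mathfrak S)}$ is positive, i.e.\ $\mathfrak S\neq\emptyset$. The separable class ($\mathfrak S=\emptyset$) needs a separate argument (there the sign of an internal vertex is determined by the parity of its height, and one uses that parities of essential vertices become independent fair coins in the limit); the paper also treats this case separately. Second, when you write ``convergence, around a uniform vertex'' you need convergence around a uniform \emph{leaf} (since leaves of the packed tree are the indices of the permutation, and the conditioning is on the number of leaves); the relevant limiting object has its spine pointed at a leaf, with size-biased offspring along the spine, which is slightly different from Kesten's tree pointed at a uniform vertex.
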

Note that the theorem reveals a concentration phenomenon for all substitution-closed class
satisfying hypothesis \eqref{eq:S_TypeI}, highlighting an instance of a universal phenomenon.
The constants $\gamma_{\pi,\mathcal C}$ can be constructed from local limits of 
conditioned Galton--Watson trees around a random leaf, see \cref{sec:local_lim}
and in particular \cref{rk:gammas}.
They depend both on the pattern $\pi$ and on the class $\mathcal C$, but unlike the previous section, it does not seem simple to find a closed formula for $\gamma_{\pi,\mathcal C}$.

\subsection{A non-concentration phenomenon for square permutations}
\label{const_lim_obj}

We now investigate a model of random permutations where there is no concentration for the quenched B--S limit.
In contrast with the previous two sections, here we start directly by introducing the candidate limiting objects for the annealed and quenched B--S convergence of square permutations. Therefore we have to define a random infinite rooted permutation and a random measure on $\Sri$. After stating our main result, we will also give an intuitive explanation on the construction of these limiting objects.

We start by defining the random infinite rooted permutation as a random total order $\bm{\preccurlyeq}_{\infty}$ on $\Z.$ 
We consider the set of integer numbers $\Z,$ and a labeling $\mathcal{L}\in\{+,-\}^{\Z}$ of all integers with "$+$" or "$-$".
We set $\mathcal{L}^+\coloneqq\{x\in\Z:x\text{ has label }"+"\}$ and $\mathcal{L}^-\coloneqq\{x\in\Z:x\text{ has label }"-"\}.$

Then we define four total orders $\preccurlyeq_j^\mathcal{L}$ on $\Z$, for $j\in\{1,2,3,4\}$,  saying that for all $x,y\in\Z,$  
\begin{equation} 
	\begin{cases} 
	x\preccurlyeq_1^{\mathcal{L}} y \quad\text{ if }\quad (x<y \text{ and } x,y\in \mathcal{L}^-) \text{ or } (x<y \text{ and } x,y\in \mathcal{L}^+) \text{ or } (x\in \mathcal{L}^- \text{ and } y\in \mathcal{L}^+), \\
	x\preccurlyeq_2^{\mathcal{L}} y \quad\text{ if }\quad (x>y \text{ and } x,y\in \mathcal{L}^-) \text{ or } (x<y \text{ and } x,y\in \mathcal{L}^+) \text{ or } (x\in \mathcal{L}^- \text{ and } y\in \mathcal{L}^+), \\
	x\preccurlyeq_3^{\mathcal{L}} y \quad\text{ if }\quad (x<y \text{ and } x,y\in \mathcal{L}^-) \text{ or } (x>y \text{ and } x,y\in \mathcal{L}^+) \text{ or } (x\in \mathcal{L}^- \text{ and } y\in \mathcal{L}^+), \\
	x\preccurlyeq_4^{\mathcal{L}} y \quad\text{ if }\quad (x>y \text{ and } x,y\in \mathcal{L}^-) \text{ or } (x>y \text{ and } x,y\in \mathcal{L}^+) \text{ or } (x\in \mathcal{L}^- \text{ and } y\in \mathcal{L}^+).
	\end{cases}
\end{equation}
A more intuitive presentation of the four constructions is given in \cref{constr_order}.

\begin{figure}[htbp]
	\centering
		\includegraphics[scale=0.75]{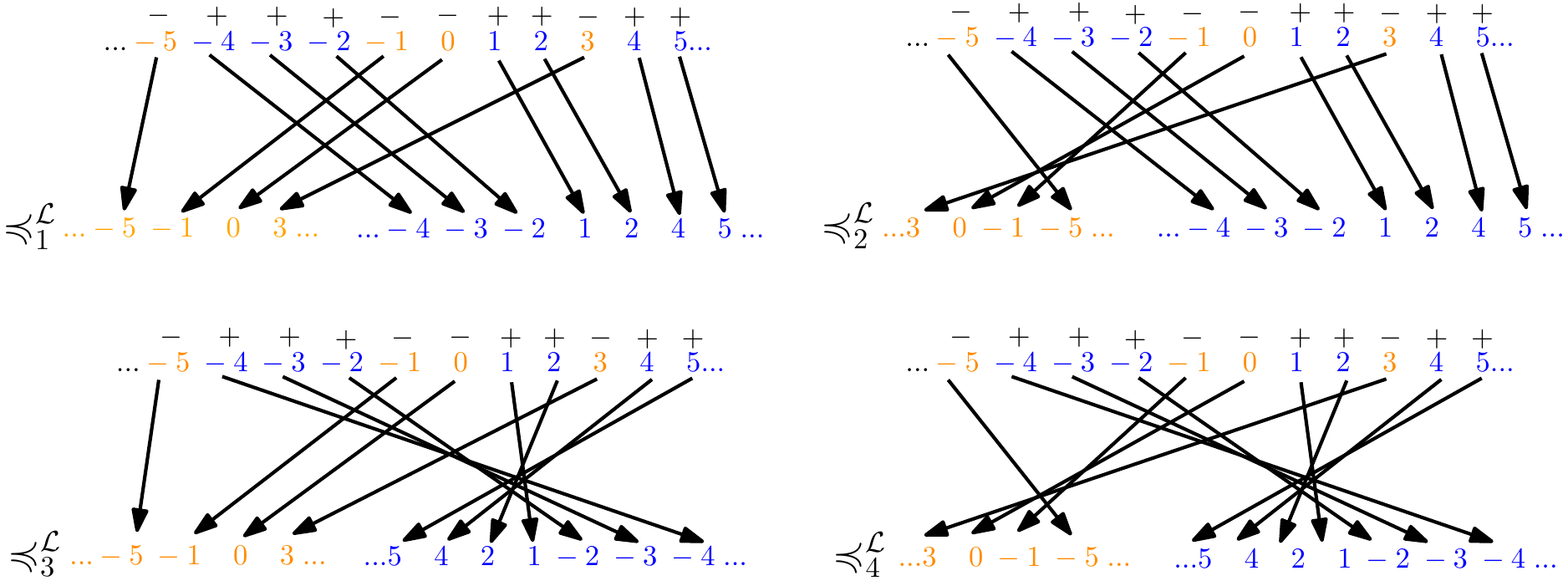}\\
		\caption{An example of the four total orders $(\Z,\preccurlyeq_j^\mathcal{L})$ for $j\in\{1,2,3,4\}$. For each of the four cases, on the top line, we see the standard total order on $\Z$ with the integers labeled by "$-$" signs (painted in orange) and "+" signs (painted in blue). Then, in the bottom line of each of the four cases, we move the "$-$"-labeled numbers at the beginning of the new total order and the  "$+$"-labeled  numbers at the end. Moreover, for $\preccurlyeq_1^{\mathcal{L}} $ we keep the relative order among integers with the same label, for $\preccurlyeq_2^{\mathcal{L}}$  we reverse the order on the "$-$"-labeled numbers, for $\preccurlyeq_3^{\mathcal{L}}$ we reverse the order on the "$+$"-labeled numbers and for $\preccurlyeq_4^{\mathcal{L}}$ we reverse the order on both "$-$"-labeled and "$+$"-labeled numbers. For each case, reading the bottom line gives the total order $\preccurlyeq_j^\mathcal{L}$ on $\Z.$ }\label{constr_order}
\end{figure}
The random total order $\bm{\preccurlyeq}_{\infty}$ on $\Z$ is defined as follows. We choose a Bernoulli labeling $\bm{\mathcal{L}}$ of $\Z,$ namely, 
$\P(x \text{ has label }"+")=1/2=\P(x \text{ has label }"-")$ for all $x\in\Z,$
independently for different values of $x.$
Finally, we set
\begin{equation}
	\label{eq:limiting_ord}
	\bm{\preccurlyeq}_{\infty}\quad\stackrel{d}{=}\quad {\preccurlyeq}_{\bm K}^{\bm{\mathcal{L}}},
\end{equation}
where $\bm K$ is a uniform random variable in $\{1,2,3,4\}$ independent of the random $\{+,-\}$-labeling.

The random total order $(\Z,\bm{\preccurlyeq}_{\infty})$ is the candidate limit for the annealed B--S convergence of square permutations. We now introduce, as a random measure on $\Sri$, the candidate limit for the quenched convergence.
We start by defining the following function from $[0,1]^2$ to $\{1,2,3,4\}$,
\begin{equation}
	\label{eq:def_map_J}
	J(u,v)=\begin{cases}
		1, &\text{if }  u<1/2\text{ and } u\leq v\leq 1-u, \\ 
		2, &\text{if }  v< \min\{u,1-u\},  \\
		3, &\text{if }  v> \max\{u,1-u\},\\
		4, &\text{if }  u\geq1/2\text{ and } 1-u\leq v\leq u.
	\end{cases}
\end{equation}
We consider two independent uniform random variables $\bm U,\bm V$ on the interval $[0,1]$ and we define the random probability measure $\bm{\nu}_{\infty}$ on $\Sri$ as
\begin{equation}
	\label{eq:def_of_quaenched_lim}
	\bm{\nu}_{\infty}=\mathcal{L}aw\big((\Z,\bm{\preccurlyeq}^{\bm{\mathcal{L}}}_{J(\bm U,\bm V) })\big|\bm U\big).
\end{equation}

\begin{thm}[{\cite[Theorem 6.4]{borga2020square}}]
	\label{thm:local_conv}
	Let $\bm{\sigma}_n$ be a uniform random square permutation of size $n$. Then $$\bm{\sigma}_n\stackrel{qBS}{\longrightarrow}\bm{\nu}_{\infty}\quad\text{and}\quad\bm{\sigma}_n\stackrel{aBS}{\longrightarrow}(\Z,\bm{\preccurlyeq}_{\infty}).$$
\end{thm}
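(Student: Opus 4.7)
My plan is to leverage the sampling procedure of \cref{sect:sq_perm}. By Lemmas~\ref{omega_size} and~\ref{square_is_rect}, the uniform square permutation $\bm\sigma_n$ coincides, with probability $1 - o(1)$, with $\rho(\bm X, \bm Y, \bm z_0)$ for $(\bm X,\bm Y, \bm z_0)$ uniform in $\{U,D\}^n\times\{L,R\}^n\times[n]$. Since total variation convergence implies BS convergence, I can replace $\bm \sigma_n$ by $\rho(\bm X, \bm Y, \bm z_0)$. Choose a uniform root $\bm i_n\in[n]$, independent of $(\bm X,\bm Y, \bm z_0)$, and set $\bm U_n := \bm z_0/n$, $\bm V_n := \bm i_n/n$, so that $(\bm U_n,\bm V_n) \xrightarrow{d} (\bm U,\bm V)$, uniform on $[0,1]^2$.

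The next (and central) step will be to describe the local pattern around $\bm i_n$. By the construction in \cref{sect:rho_def}, for each column $i\in[n]$ the point $(i, \bm \sigma_n(i))$ belongs to exactly one of the four monotone sides $\Lambda_1,\dots,\Lambda_4$; combining Lemma~\ref{okz} with the asymptotics $z_2/n\to 1-\bm U$, $\bm z_0/n\to \bm U$, the side index of $(\bm i_n, \bm \sigma_n(\bm i_n))$ converges in distribution to $J(\bm U,\bm V)$, with $J$ as in~\eqref{eq:def_map_J}. I would then fix $h\ge 1$ and look at the $2h+1$ columns in the window $W := [\bm i_n - h, \bm i_n +h]$. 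These columns split, according to their $\bm X$-labels, between the side containing $\bm i_n$ and one adjacent side. Using the regularity from the Petrov conditions and Lemma~\ref{guiding light}, I would verify that (i) within each side the values of the points of $W$ are monotone in the column index, and (ii) the values on the two involved sides are separated by a gap of order $n$. Thus the relative order of the $2h+1$ points in $W$ is determined entirely by $\bm X_{\bm i_n-h},\dots,\bm X_{\bm i_n+h}$, which are i.i.d.\ Bernoulli$(1/2)$ and independent of $(\bm U_n,\bm V_n)$. A case analysis of the four possibilities $j\in\{1,2,3,4\}$, matching the picture of \cref{constr_order} under the identification $U\leftrightarrow+$, $D\leftrightarrow-$, will show that the local order around $\bm i_n$ converges to the restriction around $0$ of $(\Z,\preccurlyeq_{J(\bm U,\bm V)}^{\bm{\mathcal L}})$, with $\bm{\mathcal L}$ the Bernoulli labeling read off the window.

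For the quenched convergence, by \cref{strongbsconditions}(c) it suffices to establish joint convergence in distribution of $(\widetilde{\cocc}(\pi,\bm\sigma_n))_{\pi\in\mathcal S}$. I will partition the sum $n\cdot\widetilde{\cocc}(\pi,\bm\sigma_n)=\sum_{i=1}^n\idf[r_h(\bm\sigma_n,i)=(\pi,h+1)]$ according to the value $j(i) := J(\bm U_n, i/n)\in\{1,2,3,4\}$: the proportion of $i$ with $j(i)=j$ converges to $c_j(\bm U):=\P(J(\bm U,\bm V)=j\,|\,\bm U)$. Within each subset $\{i : j(i)=j\}$, the indicator depends only on a $(2h+1)$-window of i.i.d.\ $\bm X$-labels, so the law of large numbers for $(2h+1)$-dependent sequences gives that the inner average converges in probability to $p_j(\pi):=\P(r_h(\Z,\preccurlyeq_j^{\bm{\mathcal L}})=(\pi,h+1))$. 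Combining yields $\widetilde{\cocc}(\pi,\bm\sigma_n)\xrightarrow{d}\sum_{j=1}^4 c_j(\bm U)\,p_j(\pi)=\pcocc(\pi,\bm\nu_\infty)$, and joint convergence across $\pi$ follows since all limits are continuous functions of the common random variable $\bm U$. The annealed statement will then follow from \cref{wsrel}: the four regions $\{J=j\}$ all have Lebesgue area $1/4$ in $[0,1]^2$, so $J(\bm U,\bm V)$ is uniform on $\{1,2,3,4\}$ and independent of $\bm{\mathcal L}$, which makes the unconditional law of $(\Z,\preccurlyeq_{J(\bm U,\bm V)}^{\bm{\mathcal L}})$ coincide with $(\Z,\bm \preccurlyeq_\infty)$ by definition~\eqref{eq:limiting_ord}. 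The most delicate point will be the second paragraph---carefully matching, for each $j$, the induced local order with $\preccurlyeq_j^{\mathcal L}$---which relies on the quantitative control afforded by the Petrov conditions.
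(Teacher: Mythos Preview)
Your proposal is sound and matches the paper's approach exactly. In fact, the paper does \emph{not} include a proof of this theorem---it explicitly says ``We do not include the proof of this theorem in this manuscript but we try to explain the intuition behind the various constructions''---and the intuition it sketches is precisely the one you develop: reduce to $\rho(\bm X,\bm Y,\bm z_0)$ via Lemmas~\ref{omega_size} and~\ref{square_is_rect}; identify the four possible local shapes of $r_h(\bm\sigma_n,\bm i_n)$ according to the relative position of $\bm i_n$, $\bm z_0$ and $\bm z_2\approx n-\bm z_0$ (encoded by $J$); use Lemma~\ref{guiding light} to separate the two monotone pieces; and read the labeling from the i.i.d.\ $\bm X$-labels in the window. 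Your identification $U\leftrightarrow +$, $D\leftrightarrow -$ and the case analysis matching $\preccurlyeq_j^{\mathcal L}$ with the geometry of $\Lambda_1,\dots,\Lambda_4$ are correct. The quenched argument via a law of large numbers over each region $\{i:j(i)=j\}$, with the limits expressed as continuous functions of the single random variable $\bm U$, is the right route to joint convergence and hence to condition~(c) of Theorem~\ref{strongbsconditions}.

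Two small points worth flagging when you write it out: (i) the paper notes that in cases $j=1$ and $j=4$ the separation of the two monotone pieces holds only with high probability (not deterministically), so you will need the Petrov bounds from Lemma~\ref{guiding light} together with $\bm z_0\in[\delta_n,n-\delta_n]$ to guarantee the gap dominates the $O(n^{.6})$ fluctuations; (ii) the windows straddling $\bm z_0$ or $\bm z_2$ mix two regions, but there are only $O(1)$ such windows so they are negligible in the average.
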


We highlight the remarkable fact that $\bm{\nu}_{\infty}$ is a (non-trivial) random measure. Therefore, the class of square permutations does \emph{not} present a concentration phenomena.

We do not include the proof of this theorem in this manuscript but we try to explain the intuition behind the various constructions.
We saw in \cref{guiding light} page \pageref{guiding light} that the shape of a typical square permutation $\sigma$ is close to a rectangle rotated of 45 degrees and with bottom vertex at $(z_0=\sigma^{-1}(1),1)$ (see the two red rectangles in \cref{local_example}).

In order to prove the (quenched and annealed) B--S convergence for a sequence of uniform square permutations $(\bm{\sigma}_n)_n,$ we must understand, for any fixed $h\in\N,$ the behavior of the pattern induced by an $h$-restriction of $\bm{\sigma}_n$ around a uniform index $\bm{i}_n,$ denoted by $r_h(\bm{\sigma}_n,\bm i_n)$. Therefore, we fix an integer $h\in\N.$ The pattern $r_h(\bm{\sigma}_n,\bm i_n)$ can have four "different shapes", according to the relative position of $\bm z_0=\bm \sigma_n^{-1}(1),$ $\bm z_2=\bm \sigma_n^{-1}(n)$ and $\bm i_n.$ In particular (see also Fig.~\ref{local_example}), when $\bm i_n$ is far enough from $\bm z_0$ and $\bm z_2$ (and this will happen with high probability):
\begin{itemize}
	\item if $\bm z_0<\bm i_n<\bm z_2$ then $r_h(\bm{\sigma}_n,\bm i_n)$ is composed by two increasing sequences, one on top of the other;
	\item if $\bm i_n<\min\{\bm z_0,\bm z_2\}$ then $r_h(\bm{\sigma}_n,\bm i_n)$ is composed by two sequences, an increasing one on top of a decreasing one, that is, it has a "$\textless$"-shape;
	\item if $\bm i_n>\max\{\bm z_0,\bm z_2\}$ then $r_h(\bm{\sigma}_n,\bm i_n)$  is composed by two sequences, a decreasing one on top of an increasing one, that is, it has a "$\textgreater$"-shape;
	\item if $\bm z_0<\bm i_n<\bm z_2$ then $r_h(\bm{\sigma}_n,\bm i_n)$ is composed by two decreasing sequences, one on top of the other.
\end{itemize}

\begin{figure}[htbp]
	\centering
		\includegraphics[scale=0.8]{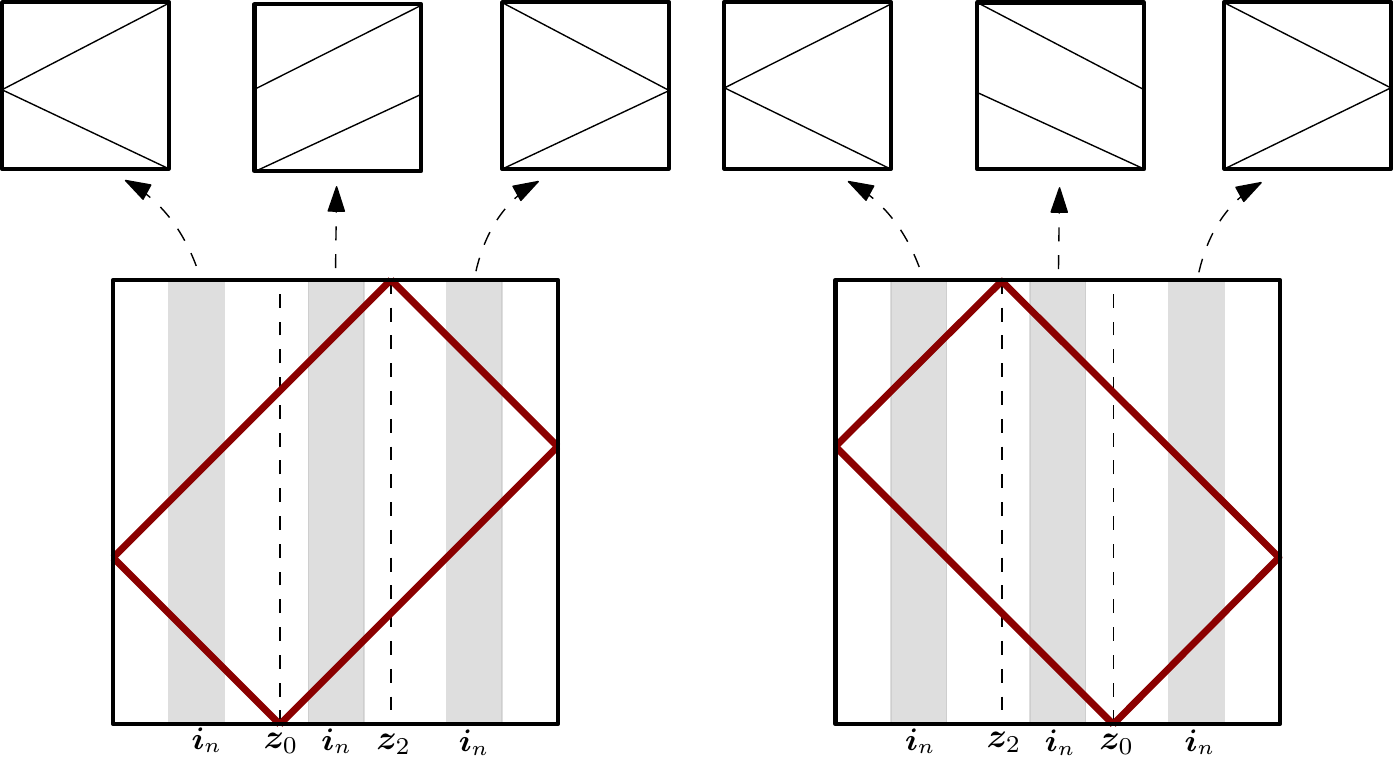}\\
		\caption{In red the approximate shapes of two large square permutations. In the first case $\bm z_0<\bm z_2$ and in the second case $\bm z_2<\bm z_0.$ The top row shows that the possible shapes of a pattern induced by a vertical strip (six of them are highlighted in gray) around an index $\bm i_n$ is determined by the relative position of $\bm z_0,\bm z_2$ and $\bm i_n$.}\label{local_example}
\end{figure}

Note that in the second and third case the fact that the two sequences are "disjoint", that is, that one is above the other, holds for every square permutation. On the other hand, the same result in the first and fourth case is only true with high probability.  

The quenched and annealed limiting objects that we introduced at the beginning of this section are constructed keeping in mind these four possible cases. Specifically, for the quenched limiting object $\bm{\nu}_{\infty}$, the uniform random variables $\bm U$ and $\bm V$ involved in the definition have to be thought of as the limits of the points $\bm z_0$ and  $\bm{i}_n$ respectively (after rescaling by a factor $n$).

Finally, we explain the choice of the Bernoulli labeling $\bm{\mathcal{L}}$ used in the construction of the four random total orders $\preccurlyeq_j^{\bm{\mathcal{L}}}$. It is enough to note that every point of the permutation, contained in a chosen vertical strip around $\bm{i}_n$, is in the top or bottom sequence with equiprobability and independently of the other points (this is a consequence of the construction presented in Section \ref{sect:inverse_projection}).

\subsection{Permutation families encoded by generating trees}\label{sect:gen_tree_CLT}

In the previous sections we have seen several law of large numbers for consecutive patterns in various models of random permutations.
We now present a central limit theorem (CLT) for the proportion of consecutive occurrences of a fixed pattern. Our results applies to uniform permutations encoded by a generating tree with one-dimensional labels. This CLT implies local convergence for such permutations.

\subsubsection{Three assumptions for our central limit theorem}

We first state three assumptions for a family of permutations $\mathcal{C}$ (growing on the right) encoded by a generating tree.

\begin{ass}
	\label{ass1}
	There exists a $\Z$-valued statistics that determines (in the sense of \cref{def:gen_tree_part2}) the succession rule of the generating tree\footnote{These generating trees are usually called \emph{one-label generating trees}, and we used \emph{one-dimensional-labeled generating trees} earlier.}. We further require that the labels appearing in the generating tree are elements of $\mathbb{Z}_{\geq\beta}$ for some $\beta\in\mathbb{Z}$, except for the root, where the label is equal to $\lambda=\beta-1$. 
\end{ass}

\begin{rem}
	The assumption above might seem not completely mathematically 
	meaningful\footnote{For instance, taking any natural example of two-labels generating tree, it is possible to reinterpret it as a one-label generating tree, simply conjugating the $\Z^2$-valued statistics with a bijection between $\Z^2$ and $\Z$.}. Despite this, it is well established in the literature (see for instance \cite{bousquet2003four,MR2376115}) to refer to one-label generating trees, two-labels generating trees, etc.	
	The key point is that we want the statistics describing the generating tree to be \emph{natural} in a sense made precise in \cref{ass2}.
\end{rem}

In order to state the second assumption, we need to introduce some notation.
Let $(\alpha_y)_{y\in\Z_{\leq 1}}$ be a probability distribution on $\Z_{\leq 1}$ and $(c_y)_{y\in\Z_{\leq 1}}$ be a sequence of non-negative integers such that $c_y\geq 1$ if and only if $\alpha_y>0$. Denote by ${\bm{Y}}^{\bm c}$ the corresponding colored $\mathbb{Z}_{\leq 1}$-valued random variable having value-distribution equal to $(\alpha_y)_{y\in\Z_{\leq 1}}$ (i.e.\ $\P({\bm{Y}}=y)=\alpha_y$ for all $y\in\Z_{\leq 1}$) and color-distribution defined as follows: conditioning on $\{{\bm{Y}}=y\}$ then $\bm{c}$ is a uniform random element of $[c_y]$.

Consider now a sequence $({\bm{Y}}^{\bm c_j}_j)_{j\geq 1}$ of i.i.d.\ copies of ${\bm{Y}}^{\bm c}$.
We define the corresponding random colored walk as $({\bm{X}}_i^{\bm c_{i-1}})_{i\geq 1}$, where
\begin{equation}\label{eq:definition_walk}
	{\bm{X}}_i\coloneqq (\beta-1)+\sum_{j=1}^{i-1}{\bm{Y}}_j,\quad\text{for all}\quad i\in\Z_{\geq 1}.
\end{equation}
Note that for all $i\in \Z_{\geq 1}$, ${\bm{X}}_i$ has color $\bm c_{i-1}$. In order to simplify the notation we will often simply write $({\bm{X}}_i^{\overleftarrow{\bm c}})_{i\geq 1}$ for the walk $({\bm{X}}_i^{\bm c_{i-1}})_{i\geq 1}$. 

We denote by $K^c$ the set of all finite sequences of \emph{colored} labels starting at $\lambda$ and consistent with the colored succession rule in \cref{eq:colored_succ_rule} page \pageref{eq:colored_succ_rule}. Moreover, $K^c_n$ is the set of sequences in $K^c$ of length $n.$ 
Finally, recall the bijection $\GG:K^c\to\mathcal{C}$, introduce before \cref{prop:bij_gen_tree} page \pageref{prop:bij_gen_tree}.

\begin{ass}
	\label{ass2}
	There exists a centered probability distribution $(\alpha_y)_{y\in\Z_{\leq 1}}$ on $\Z_{\leq 1}$ with finite variance and a sequence of non-negative integers $(c_y)_{y\in\Z_{\leq 1}}$ such that the corresponding one-dimensional random colored walk $({\bm{X}}^{\overleftarrow{\bm c}}_i)_{i\geq 1}$ (defined above) satisfies
	\begin{equation}\label{eq:rewriting_chain}
		\GG^{-1}(\bm\sigma_n)\stackrel{d}{=}\left({\bm{X}}_{i}^{\overleftarrow{\bm c}}\middle|({\bm{X}}_{j})_{j\in[2,n]}\geq \beta,{\bm{X}}_{n+1}=\beta\right)_{i\in[n]},
	\end{equation}
	where $\bm\sigma_n$ is a uniform permutation in $\mathcal C$ of size $n$. 
\end{ass}

\begin{rem}
	Note that, as a consequence of \cref{ass2}, we have that the children labels satisfies $\CL(k)\subseteq(-\infty,k+1]$, for all $k\in\mathcal{L}$. This is not as restrictive as it might seem. Indeed, many $\mathbb{Z}$-valued generating trees are constructed using the statistics that tracks the number of active sites and, in many cases, adding a new element to a permutation increases the value of this statistics at most by one.
\end{rem}	

\begin{rem}
	The assumption that the distribution $(\alpha_y)_{y\in\Z_{\leq 1}}$ on $\Z_{\leq 1}$ has finite variance is also not too restrictive. Indeed, to the best of our knowledge, we are not aware of a class of permutations coded by a one-dimensional walk with
	infinite variance.
\end{rem}

\begin{rem}\label{rem:ass_is_true}
	A strategy to check \cref{ass2} in many cases, determining a possible distribution $(\alpha_y)_{y\in\Z_{\leq 1}}$ and a sequence of non-negative integers $(c_y)_{y\in\Z_{\leq 1}}$, is furnished in \cite[Section 1.6.2]{borga2020asymptotic}.
\end{rem}

For the third assumption we need to introduce the following definitions.

\begin{defn}
	\label{defn:jumpslabel}
	Given a sequence $(k_i^{c_i})_{i\in[n]}\in K^c_n$ of (colored) labels in a generating tree, the corresponding sequence of \emph{colored jumps} is the sequence $(y_i^{c_{i+1}})_{i\in[n-1]}$ where $y_i=k_{i+1}-k_i$. Note that the label $y_i$ inherits the same color as the label $k_{i+1}$ (if it is colored).
\end{defn}

\begin{defn}\label{defn:rebfvriwbfowenf}
	A sequence of $h$ colored jumps $(y^{c_1}_1,\dots,y^{c_h}_h)$ is \emph{consistent} if it is equal to a factor of a sequence of jumps corresponding to an element of $K^c$.
\end{defn}

We denote by $\mathcal{Y}^c_h$ the set of consistent sequences of $h$ colored jumps. 

\begin{ass} \label{ass3} 
	For all $h \geq 1,$ there exists a function $\Pat:\mathcal{Y}^c_h \to\mathcal{S}_h$ and a constant $c(h )\geq 0$ such that if a sequence of labels $(k_i^{c_i})_{i\in[n]}\in K^c_n$ satisfies for some $m\in[n+1-h ]$ the condition 
	$$k_i>c(h ),\quad\text{for all}\quad i\in[m,m+h -1],$$ 
	then  $$\pat_{[m,m+h -1]}\big(\GG((k_i^{c_i})_{i\in[n]})\big)=\Pat\big((y_i^{c_{i+1}})_{i\in[m-1,m+h -2]}\big),$$
	where $(y_i^{c_{i+1}})_{i\in[n-1]}$ is the sequence of (colored) jumps associated with $(k_i^{c_{i}})_{i\in[n]}\in K^c_n$.
\end{ass}
In words, the consecutive pattern induced by the interval $[m,m+h -1]$ in the permutation $\GG((k_i^{c_i})_{i\in[n]})$ is uniquely determined through the function $\Pat$ by the factor of colored jumps $(y_i^{c_{i+1}})_{i\in[m-1,m+h -2]}$ as long as the labels are large enough (see for instance \cite[Example 3.8]{borga2020asymptotic} to understand why this condition is necessary).

\subsubsection{The statement of the CLT}

We can now state our main result.

\begin{thm}
	\label{thm:main_thm_CLT}
	Let $\mathcal{C}$ be a family of permutations encoded by a generating tree that satisfies Assumptions \ref{ass1}, \ref{ass2} and \ref{ass3}. Let $({\bm{Y}}^{\bm c_i}_i)_{i\geq1}$ be a sequence of i.i.d.\ random variables distributed as ${\bm{Y}}^{\bm c}.$ Let $\bm{\sigma}_n$ be a uniform random permutation in $\mathcal{C}_n,$ for all $n\in\Z_{>0}.$
	Then, for all $\pi\in\mathcal{S},$ we have the following central limit theorem,
	\begin{equation}
		\label{wfowrbfv3}
		\frac{\cocc(\pi,\bm{\sigma}_n)-n\mu_{\pi}}{\sqrt n}\stackrel{d}{\longrightarrow}\bm{\mathcal N}(0,\gamma_{\pi}^2),
	\end{equation}
	where $\mu_{\pi}=\P\left(\Pat\left({\bm{Y}}_1^{\bm c},\dots,{\bm{Y}}^{\bm c}_{|\pi|}\right)=\pi\right)$ and $\gamma_{\pi}^2=\kappa^2-\frac{\rho^2}{\sigma^2}$ with
	\begin{align}
		&\sigma^2=\Var\left({\bm{Y}}_1\right),\\
		&\rho=\E\left[\mathds{1}_{\left\{\Pat\left({\bm{Y}}_1^{\bm c},\dots,{\bm{Y}}^{\bm c}_{|\pi|}\right)=\pi\right\}}\cdot \sum_{j=1}^{|\pi|}\bm{Y}_j\right],\\
		&\kappa^2=2\nu+\mu_\pi-(2|\pi|-1)\cdot\mu_\pi^2,\quad\text{for}\\
		&\nu=\sum_{s=2}^{|\pi|}\sum_{\substack{
				(y^{c}_i)_{i\in[|\pi|]},(\ell^{d}_i)_{i\in[|\pi|]}\in\Pat^{-1}(\pi)\\
				\text{s.t. } ({y}^{c}_s,\dots,{y}^{c}_{|\pi|})=({\ell}^{c}_1,\dots,{\ell}^{d}_{|\pi|-s+1})}}
		\P\left(
		({\bm{Y}}^{\bm c}_{i})_{i\in[|\pi|+s-1]}=({y}^c_1,\dots,{y}^c_{|\pi|},{\ell}^d_{|\pi|-s+2},\dots,{\ell}^d_{|\pi|})\right).
	\end{align}
\end{thm}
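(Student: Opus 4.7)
The plan is to combine Assumption~\ref{ass2} and Assumption~\ref{ass3} to express $\cocc(\pi,\bm\sigma_n)$ as a sum of local functionals of an i.i.d.\ colored jump sequence, conditioned on staying positive and hitting a prescribed endpoint. A CLT for $h$-dependent sums combined with a local limit theorem then yields the result.

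First I would reduce to a walk functional. Set $h = |\pi|$ and write $\cocc(\pi,\bm\sigma_n) = \sum_{m=1}^{n-h+1} \mathds{1}\{\pat_{[m,m+h-1]}(\bm\sigma_n)=\pi\}$. By Assumption~\ref{ass2} we identify $\bm\sigma_n$ with $({\bm X}_i^{\overleftarrow{\bm c}})_{i\in[n]}$ under the conditioning event $E_n := \{{\bm X}_j\geq\beta,\, j\in[2,n]\}\cap\{{\bm X}_{n+1}=\beta\}$. Define
\[
A_n := \{m\in [n-h+1] : {\bm X}_i > c(h) \text{ for all } i\in[m,m+h-1]\}.
\]
By Assumption~\ref{ass3}, for every $m\in A_n$ the pattern indicator coincides with $F_m := \mathds{1}\{\Pat(\bm Y_{m-1}^{\bm c_m},\dots,\bm Y_{m+h-2}^{\bm c_{m+h-1}})=\pi\}$, so that $\cocc(\pi,\bm\sigma_n) = T_n + R_n$ with $T_n := \sum_{m=1}^{n-h+1}F_m$ and $R_n$ a correction supported on $[n-h+1]\setminus A_n$. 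Standard fluctuation estimates for random walk bridges conditioned to stay positive (the walk has typical amplitude $\Theta(\sqrt n)$ in the bulk, so only $O_P(1)$ time steps lie below any fixed threshold) yield $|A_n^c| = O_P(1)$, hence $R_n = o_P(\sqrt n)$.

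Second, I would establish the unconditional joint CLT. Under the unconditional law of the jumps, $(F_m)_m$ is a strictly stationary $(h-1)$-dependent Bernoulli sequence with mean $\mu_\pi$, and
\[
\lim_{n\to\infty}\frac{\Var(T_n)}{n} = \Var(F_1) + 2\sum_{s=2}^h\Cov(F_1,F_s) = \mu_\pi - (2h-1)\mu_\pi^2 + 2\nu = \kappa^2,
\]
where the overlap probabilities $\E[F_1 F_s]$ reduce to the enumeration in the definition of $\nu$ by listing consistent $(h+s-1)$-tuples of colored jumps whose initial and $s$-shifted $h$-windows both induce $\pi$. Similarly $\Var(\sum_j\bm Y_j)/n\to\sigma^2$ and $\Cov(T_n,\sum_j\bm Y_j)/n\to\rho$, since only the jumps within distance $h$ of a given $F_m$ contribute. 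The Hoeffding--Robbins CLT for $m$-dependent sequences applied to the bivariate stationary sequence $(F_m,\bm Y_m)$ then yields
\[
\frac{1}{\sqrt n}\left(T_n - n\mu_\pi,\;\sum_{j=1}^n\bm Y_j\right)\xrightarrow{d}\mathcal N\!\left(\mathbf 0,\begin{pmatrix}\kappa^2&\rho\\\rho&\sigma^2\end{pmatrix}\right).
\]

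Finally, I would transfer this convergence to the conditioned setting. The event $E_n$ enforces simultaneously $\sum_j\bm Y_j = 1$ and positivity of the walk. Conditioning on $\sum_j\bm Y_j=1$ first, a classical Gaussian projection combined with a local CLT for the integer-valued walk $\sum_j\bm Y_j$ (using finiteness of the variance $\sigma^2$ and aperiodicity of the step distribution) shows that $(T_n-n\mu_\pi)/\sqrt n$ converges to the conditional law of the first coordinate of the above Gaussian given that the second coordinate equals $0$, namely $\mathcal N(0,\kappa^2 - \rho^2/\sigma^2) = \mathcal N(0,\gamma_\pi^2)$. The positivity constraint is then incorporated either via a Vervaat-type cyclic lemma --- which realizes the positive excursion as a cyclic shift of the unconditioned bridge, altering $T_n$ by at most $O(h) = O_P(1)$ boundary terms --- or by invoking a Caravenna--Chaumont invariance principle for random walks conditioned to stay positive. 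The main technical obstacle lies precisely in this last step: the simultaneous handling of the fixed-endpoint and staying-positive conditionings requires a delicate fluctuation analysis to confirm that the bulk pattern statistic $T_n$ is insensitive to the excursion conditioning beyond the Gaussian covariance correction captured by $-\rho^2/\sigma^2$. Steps 1 and 2 are comparatively routine (tail bounds and standard $m$-dependent CLT), whereas Step 3 is the analytic heart of the argument.
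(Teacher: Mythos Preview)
Your proposal is correct and follows essentially the same approach as the paper: reduce to a functional of the colored jump sequence via Assumption~\ref{ass3}, establish a bivariate $h$-dependent CLT for $(T_n,\sum_j\bm Y_j)$, and transfer to the conditioned law via the cycle lemma (to remove positivity, using the skip-free property of the increments) together with a Le~Cam--Holst local limit argument (to handle the endpoint). The only minor differences are that the paper applies the cycle lemma \emph{before} the endpoint conditioning rather than after, and that it bounds the bad set $A_n^c$ by $2a_n$ for any $a_n\to\infty$ with $a_n=o(\sqrt n)$ (Proposition~\ref{prop:prop_labels_big}) rather than proving your stronger claim $|A_n^c|=O_P(1)$, which is not needed since $o(\sqrt n)$ already suffices.
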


The proof of this theorem can be found in Section \ref{sect:main_thm_proof}. A consequence of the above result is the following local limit result. 
\begin{cor}\label{corl:main_thm}
	Let $\mathcal{C}$ be a family of permutations encoded by a generating tree that satisfies Assumptions \ref{ass1}, \ref{ass2} and \ref{ass3}. Let $\bm{\sigma}_n$ be a uniform random permutation in $\mathcal{C}_n,$ for all $n\in\Z_{>0}.$ There exists a random infinite rooted permutation $\bm{\sigma}^\infty_{\mathcal{C}}$ such that  
	$$\bm{\sigma}_n\xrightarrow{qBS}\mathcal{L}aw(\bm{\sigma}^\infty_{\mathcal{C}})
	\quad\text{and}\quad
	(\bm{\sigma}_n,\bm{i}_n)\xrightarrow{aBS}\bm{\sigma}^\infty_{\mathcal{C}}.$$	
\end{cor}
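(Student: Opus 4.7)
The plan is to deduce Corollary \ref{corl:main_thm} directly from Theorem \ref{thm:main_thm_CLT} by combining it with the general characterizations of B--S convergence established in Chapter 2, namely Corollary \ref{detstrongbsconditions} and Proposition \ref{wsrel}.

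First, I would extract from the CLT in \eqref{wfowrbfv3} a law of large numbers for the proportion of consecutive occurrences. Recalling that $\widetilde{\cocc}(\pi,\bm{\sigma}_n) = \cocc(\pi,\bm{\sigma}_n)/n$, dividing \eqref{wfowrbfv3} by $\sqrt n$ gives
\[
\widetilde{\cocc}(\pi,\bm{\sigma}_n) - \mu_\pi = \frac{1}{\sqrt n}\cdot\frac{\cocc(\pi,\bm{\sigma}_n)-n\mu_\pi}{\sqrt n} \xrightarrow{d} 0,
\]
because the right-hand side is the product of a deterministic sequence tending to $0$ and a tight sequence of random variables (converging in distribution to $\bm{\mathcal N}(0,\gamma_\pi^2)$). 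Convergence in distribution to a constant implies convergence in probability, hence $\widetilde{\cocc}(\pi,\bm{\sigma}_n) \xrightarrow{P} \mu_\pi$ for every $\pi \in \mathcal{S}$.

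Next, I would apply Corollary \ref{detstrongbsconditions}, which states precisely that pointwise convergence in probability of $\widetilde{\cocc}(\pi,\bm{\sigma}_n)$ for all $\pi \in \mathcal{S}$ is equivalent to quenched Benjamini--Schramm convergence of $(\bm{\sigma}_n)_{n\in\Z_{>0}}$ to some \emph{deterministic} measure $\nu$ on $\tilde{\mathcal{S}}^{\bullet}$. We then define $\bm{\sigma}^\infty_{\mathcal{C}}$ to be the random infinite rooted permutation whose law is $\mathcal{L}aw(\bm{\sigma}^\infty_{\mathcal{C}}) = \nu$. Its existence (as a random total order on a subinterval of $\Z$ containing $0$) follows from the characterization of the limiting measure given after Theorem \ref{strongbsconditions}, with prescribed marginals determined by $\mu_\pi$: specifically, for any $h\in\Z_{>0}$ and $\pi\in\mathcal{S}_{2h+1}$, we will have $\nu\big(\{(A,\preccurlyeq)\,:\,r_h(A,\preccurlyeq)=(\pi,h+1)\}\big)=\mu_\pi$.

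Finally, the annealed statement follows immediately from Proposition \ref{wsrel}: since $\bm{\sigma}_n\xrightarrow{qBS}\nu$ with $\nu$ deterministic, we obtain $\bm{\sigma}_n\xrightarrow{aBS}\bm{\sigma}^\infty_{\mathcal{C}}$, where $\bm{\sigma}^\infty_{\mathcal{C}}$ is the random rooted infinite permutation with law $\E[\nu]=\nu$, i.e.\ the same object already constructed in the quenched step. There is essentially no obstacle here: all the hard work is contained in Theorem \ref{thm:main_thm_CLT}, and the corollary is a purely topological consequence obtained by invoking the right characterizations from Chapter 2.
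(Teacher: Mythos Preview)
Your proof is correct and is exactly the intended argument: the paper states the corollary as an immediate consequence of Theorem~\ref{thm:main_thm_CLT} without giving details, and the details are precisely the ones you supply --- extracting the law of large numbers from the CLT, then invoking Corollary~\ref{detstrongbsconditions} for the quenched convergence (to a deterministic limit) and Proposition~\ref{wsrel} (or the remark following Corollary~\ref{detstrongbsconditions}) for the annealed one.
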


In \cite[Section 3]{borga2020asymptotic} we collected several families of permutations that satisfy Assumptions \ref{ass1}, \ref{ass2} and \ref{ass3}, proving \cref{thm:examples_ok} below.

\begin{thm}\label{thm:examples_ok}
	Let $\mathcal C$ be one of the following families of permutations\footnote{We refer the reader to \cite[Section 3.4]{borga2020asymptotic} for the definition of the last two families of permutations in the list, which avoid generalized patterns and were introduced in \cite{MR2376115}.}:
	\begin{align}
		\Av(123),\quad
		\Av(132),\quad
		\Av(1423,&4123),\quad
		\Av(1234,2134),\quad
		\Av(1324,3124),\\
		\Av(2314,3214),\quad
		\Av(2413,4213),\quad
		\Av&(3412,4312),\quad
		\Av(213,\bar{2}\underbracket[.5pt][1pt]{31}),\quad
		\Av(213,\bar{2}^{o}\underbracket[.5pt][1pt]{31}).
	\end{align}
	For all $n\in\Z_{>0},$ let $\bm{\sigma}_n$ be a uniform random permutation in $\mathcal{C}$ of size $n$. 
	Then, for all patterns $\pi\in\mathcal S$, we have the following central limit theorem,
	\begin{equation}
		\frac{\cocc(\pi,\bm{\sigma}_n)-n\mu_{\pi}}{\sqrt n}\stackrel{d}{\longrightarrow}\bm{\mathcal N}(0,\gamma_{\pi}^2),
	\end{equation}
	where $\mu_{\pi}$ and $\gamma_{\pi}^2$ are described in \cref{thm:main_thm_CLT}.
\end{thm}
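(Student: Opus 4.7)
\medskip

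\noindent\textbf{Proof proposal.} The plan is to show that each of the ten families $\mathcal{C}$ listed in the statement satisfies Assumptions \ref{ass1}, \ref{ass2}, and \ref{ass3}, so that \cref{thm:main_thm_CLT} applies and yields both the central limit theorem and the explicit formulas for $\mu_\pi$ and $\gamma_\pi^2$. Since the three assumptions are independent in nature, I would organize the argument as three verification steps per family, treating the ten families in parallel whenever the same combinatorial mechanism governs several of them.

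For Assumption \ref{ass1}, the succession rules are already known in the literature. For $\Av(123)$ and $\Av(132)$, a one-dimensional succession rule can be read off directly from the bijections recalled in \cref{sect:perm_len_three} (indeed these classes are Catalan, and the number of active sites on the right gives a $\Z_{\geq 1}$-valued label). The six doubly-avoiding classes in the middle block are covered by Kremer's classification \cite{kremer2000permutations, kremer2003postscript} (adapted from top-growth to right-growth by the diagonal symmetry of diagrams); the model case $\Av(1423,4123)$ is worked out in \cref{exmp:1423_4123_av_part2}. Finally, the two vincular-pattern classes $\Av(213, \bar{2}\underbracket[.5pt][1pt]{31})$ and $\Av(213, \bar{2}^o\underbracket[.5pt][1pt]{31})$ come with explicit one-label succession rules established in \cite{MR2376115}.

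The main obstacle is Assumption \ref{ass2}: for each family I would exhibit a centered probability distribution $(\alpha_y)_{y\in\Z_{\leq 1}}$ with finite variance and a sequence of color multiplicities $(c_y)_{y\in\Z_{\leq 1}}$ such that the label sequence $\GG^{-1}(\bm\sigma_n)$ of a uniform permutation has the same law as the colored walk $({\bm X}_i^{\overleftarrow{\bm c}})_{i\in[n]}$ conditioned to remain in $\Z_{\geq\beta}$ after the root step and to end at $\beta$. The strategy, outlined in \cite[Section 1.6.2]{borga2020asymptotic} and to be carried out family by family, is to start from the colored succession rule, read off for each label $k$ the multiset of colored increments $\{e_i^c(k)-k\}$, and then reweight these increments by a geometric (or more generally exponential) factor tuned so that the resulting distribution on $\Z_{\leq 1}$ is both centered and has finite variance. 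For $\Av(1423,4123)$, using the rule \eqref{eq:biwfbwrubfwifnwepi}, the increments seen from label $k$ are $(+1)$ with multiplicity $2$ (blue and tangerine) and $(j-k)$ with multiplicity $1$ for $j\in[3,k]$, so one introduces a parameter and solves for the unique value making the resulting $(\alpha_y)$ centered on $\Z_{\leq 1}$; the same recipe, with families of increments dictated by the succession rule, works verbatim for the remaining nine classes. The identification with the uniform measure on $\mathcal{C}_n$ then follows from the fact that the weights multiply to a constant along any path of the generating tree with fixed length and endpoint, so that the Doob $h$-transform conditioned on the endpoint exactly erases the weighting.

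Assumption \ref{ass3} is the most combinatorial but the most elementary: for each family, I would check that the insertion operation $\sigma \mapsto \sigma^{*m}$ encoded by a jump is \emph{local} in the sense that, provided the current label is larger than some explicit $c(h)$ (guaranteeing that the $h$ topmost active sites are genuinely the $h$ topmost entries of the permutation), the consecutive pattern induced by the last $h$ entries is a deterministic function of the last $h$ colored jumps. This allows one to define $\Pat:\mathcal{Y}_h^c\to\mathcal{S}_h$ by simulating the insertion rule on an abstract large enough permutation; the independence of $\Pat$ on the initial permutation is exactly the locality property. Once Assumptions \ref{ass1}--\ref{ass3} are verified for all ten families, \cref{thm:main_thm_CLT} directly yields \eqref{wfowrbfv3} with the constants expressed through the walk ${\bm Y}^{\bm c}$, completing the proof.
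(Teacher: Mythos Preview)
Your proposal is correct and follows essentially the same approach as the paper: the paper states immediately before \cref{thm:examples_ok} that the result is obtained by verifying, in \cite[Section 3]{borga2020asymptotic}, that each of the ten families satisfies Assumptions \ref{ass1}, \ref{ass2} and \ref{ass3}, after which \cref{thm:main_thm_CLT} applies directly. Your organization of the three verification steps and the references you invoke (Kremer for the doubly-avoiding classes, \cite{MR2376115} for the vincular ones, and the reweighting strategy of \cite[Section 1.6.2]{borga2020asymptotic} for Assumption~\ref{ass2}) match the paper's route.
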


\begin{rem}
	Recall that a law of large numbers for the families $\Av(123)$ and $\Av(132)$ has been established in \cref{sect:loc_three_av}. Nevertheless the central limit theorem in \cref{thm:examples_ok} is a new result also for these two families. Recall that Theorems \ref{thm_1} and \ref{thm_2} furnish explicit expressions for $\mu_{\pi}$ in the specific case of these two families of permutations.
\end{rem}

\subsection{Baxter permutations and related objects}

So far we mainly focused on B--S limits of random permutations. Here, we also look at some different discrete objects: we consider B--S limits of the four families in the commutative diagram in \cref{eq:comm_diagram} page \pageref{eq:comm_diagram}, that is, Baxter permutations, bipolar orientations, tandem walks and coalescent-walk processes, whose sets are denoted $\mathcal P,\mathcal O, \mathcal W, \mathscr{C}$, respectively. 

B--S convergence for permutations has been extensively presented in this manuscript. In order to properly define the B--S convergence for the other three families, we need to present the spaces of infinite objects and the respective local topologies. This is done in \cref{sect:inf_loc_obj,sect:local topologies}, but we give a quick summary here.
\begin{itemize}
	\item $\widetilde \Walks_\bullet$ is the space of two-dimensional walks indexed by a finite or infinite interval of $\Z$ containing zero, with value $(0,0)$ at time $0$, local convergence being finite-dimensional convergence.
	\item $\widetilde \Coals_\bullet$ is the space of coalescent-walk processes indexed by a finite or infinite interval of $\Z$ containing zero, local convergence being finite-dimensional convergence.
	\item The space $\widetilde \Maps_\bullet$ of infinite rooted maps is equipped with the local topology derived from the local convergence for graphs of Benjamini and Schramm. See for instance \cite{curienrandom} for an introduction.
\end{itemize}
In the first two items, the index $0$ has to be understood as the root of the infinite object, and comparison between a rooted finite object and an infinite one is done after applying the appropriate shift.

For every $n\in \Z_{>0}$, let $\bm W_n$, $\bm Z_n$, $\bm \sigma_n$, and $\bm m_n$ denote uniform objects of size $n$ in $\mathcal W_n$, $\mathscr{C}_n$, $\mathcal P_n$, and $\mathcal O_n$, respectively, related by the four bijections in the commutative diagram in \cref{eq:comm_diagram} page \pageref{eq:comm_diagram}.
We define the candidate local random limiting objects. Let $\nu$ denote the probability distribution on $\Z^2$ given by:
\begin{equation}\label{eq:step_distribution_walk}
\nu = \frac 12 \delta_{(+1,-1)} + \sum_{i,j\geq 0} 2^{-i-j-3}\delta_{(-i,j)},\quad \text{where $\delta$ denotes the Dirac measure},
\end{equation}
and let $\overline{\bm W} = (\overline{\bm X},\overline{\bm Y}) = (\overline{\bm W}_t)_{t\in \Z}$ be a two-sided random two-dimensional walk with step distribution $\nu$, having value $(0,0)$ at time 0. Note that $\overline {\bm W}$ is not confined to the non-negative quadrant.

A formal definition of the other limiting objects requires an extension of the mappings in \cref{eq:comm_diagram} to infinite-volume objects\footnote{The terminology \emph{finite/infinite-volume} refers to the fact that the objects are defined in a compact/non-compact set.
	For instance a Brownian motion with time space $\mathbb R$ is a infinite-volume object and a Brownian excursion with time space $[0,1]$ is a finite-volume object.}
which was done in \cref{sec:discrete_coal} for the mapping $\wcp$, and will be done in Sections \ref{sect:infinite_bij} and \ref{sect:inf_loc_obj} for the other three mappings.
Admitting we know such extensions, let $\overline{\bm Z} = \wcp(\overline{\bm W})$ be the corresponding infinite coalescent-walk process, $\overline{\bm \sigma} = \cpbp(\overline{\bm Z})$ the corresponding infinite permutation on $\Z$, and $\overline{\bm m} = \bow^{-1}(\overline{\bm W})$ the corresponding infinite map.

\begin{thm}
	[Quenched Benjamini--Schramm convergence]
	\label{thm:local}
	Consider the sigma-algebra 
	$\mathfrak{B}_n$ generated by $\bm W_n$ (or equivalently by $\bm Z_n$, $\bm \sigma_n$, or $\bm m_n$).
	Let $\bm i_n$ be an independently chosen uniform index of $[n]$.
	We have the following convergence in probability in the space of probability measures on $\widetilde \Walks_\bullet\times\widetilde \Coals_\bullet\times\widetilde \Perms_\bullet\times\widetilde \Maps_\bullet$,
	\begin{equation}
		\mathcal{L}aw\Big(\big((\bm W_n, \bm i_n), (\bm Z_n, \bm i_n), (\bm \sigma_n, \bm i_n), (\bm m_n, \bm i_n)\big)\Big| \mathfrak{B}_n\Big)\xrightarrow[n\to\infty]{P}\mathcal{L}aw\left(\overline{\bm W},\overline{\bm Z},\overline{\bm \sigma},\overline{\bm m}\right),\label{eq:quenched_conv_walks}
	\end{equation}
	where we recall that $\mathcal{L}aw(\cdot)$ denotes the law of a random variable.
\end{thm}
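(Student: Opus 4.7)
The plan is to reduce the joint convergence to the quenched Benjamini--Schramm convergence of the walks $\bm W_n \xrightarrow{qBS} \overline{\bm W}$, and then to transfer this convergence through the three bijections $\wcp$, $\cpbp\circ\wcp$, and $\bow^{-1}$ of the commutative diagram (\ref{eq:comm_diagram}), suitably extended to infinite-volume objects. Since $\bm Z_n, \bm \sigma_n, \bm m_n$ are deterministic functions of $\bm W_n$, and since all four rooted objects are rooted at the same index $\bm i_n$, the joint statement in (\ref{eq:quenched_conv_walks}) will follow once we have (a) the quenched convergence of the walks and (b) the a.s.\ continuity of the extended bijections at $\overline{\bm W}$ for the respective local topologies.

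For step (a), I would first check that $\nu$ in (\ref{eq:step_distribution_walk}) is a centered probability distribution on $\Z^2$ with finite second moment (using $\sum_{i,j\geq 0}2^{-i-j-3}=1/2=\nu(+1,-1)$), and that, by the weighting implicit in \cref{thm:KMSW}, a uniform $\bm W_n\in\mathcal W_n$ coincides in law with a $\nu$-random walk of length $n$ starting on the $y$-axis, ending on the $x$-axis, and conditioned to stay in $\Z_{\geq 0}^2$. A Denisov--Wachtel-type invariance principle for random walks in cones, combined with an absolute continuity argument, then yields that any finite window of $\bm W_n$ around a uniform interior index $\bm i_n$ converges in distribution to the corresponding window of the unconditioned walk $\overline{\bm W}$. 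To upgrade this annealed convergence to the quenched one, by the characterization in \cref{strongbsconditions} it suffices to show that, for each fixed $h$ and each admissible factor $w$ of length $2h+1$, the empirical frequency of $w$ in $\bm W_n$ concentrates around its expectation. This is a second moment computation: the covariance of two local-pattern indicators at two independent uniform indices $\bm i_n$, $\bm i_n'$ vanishes asymptotically, because these indices lie at macroscopic distance with high probability and the conditioned walk restricted to two disjoint such windows becomes asymptotically independent.

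For step (b), transferring to coalescent-walk processes is essentially local: by \cref{defn:distrib_incr_coal} each trajectory $Z^{(i)}$ is built step by step from $\bm W$, and by recurrence of the centered walks $-X$ and $Y$, each trajectory in a finite window around $\bm i_n$ either coalesces or exits the window within a tight number of steps, so a finite window of $\bm Z_n$ around $\bm i_n$ depends on only a tight window of $\bm W_n$. The transfer to permutations is immediate from \cref{prop:patterns}, which expresses the pattern of $\bm \sigma_n$ induced by a window of indices around $\bm i_n$ in terms of finitely many sign evaluations of $\bm Z_n$. For maps, the local structure of $\bm m_n$ around the edge $\bm i_n$ is encoded by finite neighborhoods of $\bm i_n$ in the primal tree $T(\bm m_n)$ (read from $\bm W_n$ via \cref{defn:KMSW}) and in the dual tree $T(\bm m_n^{**})$ (read from $\bm Z_n$ via \cref{prop:eq_trees}), together with their interface.

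The hardest step will be the map convergence: while the restrictions of the primal and dual trees to a neighborhood of the root edge are manifestly readable from finite windows of $\bm W_n$, reconstructing a rooted-map local neighborhood requires a careful control of how the two trees are glued along their shared interface path, and demands a precise definition of the infinite-volume limit $\overline{\bm m}=\bow^{-1}(\overline{\bm W})$ together with the local continuity of this extended $\bow^{-1}$ in the graph local topology. A secondary technical difficulty is the absolute-continuity step of the Denisov--Wachtel argument in the quadrant, which has two reflecting boundaries and hence requires controlling both coordinates of the conditioned walk simultaneously.
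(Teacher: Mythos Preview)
Your overall strategy---prove quenched convergence for the walks via a second-moment argument using absolute continuity of the conditioned walk in its bulk, then transfer to the other three objects by continuity of the extended bijections---is exactly the paper's approach. A couple of implementation points differ. For $\wcp$ and $\cpbp$ you invoke recurrence of the coordinate walks and \cref{prop:patterns}, but in fact both maps are $1$-Lipschitz for the local distances simply because $r_h(\wcp(W))=\wcp(r_h(W))$ and $r_h(\cpbp(Z))=\cpbp(r_h(Z))$ hold by definition; no probabilistic input is needed there. For maps, the paper does not work through the pair of trees (incidentally, \cref{prop:eq_trees} gives $T(m^*)$, not $T(m^{**})$) nor analyse the gluing along the interface. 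Instead it extends the inverse KMSW map $\Inverse$ to infinite walks as the projective limit of the finite-window submaps $\Inverse(W|_{[-n,n]})$, and then shows that $\Inverse$ is almost surely continuous at $\overline{\bm W}$ for the map local topology. The key input is \cref{prop:inf_map_property}: $\overline{\bm m}=\Inverse(\overline{\bm W})$ is a.s.\ an infinite map of the plane, in particular locally finite, so for any $h$ the ball $B_h(\overline{\bm m})$ is contained in $\Inverse(\overline{\bm W}|_{[-n,n]})$ for some finite $n$, which gives continuity immediately. This sidesteps the gluing analysis you anticipated as the hardest step.
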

We recall that the mapping $\bow^{-1}$ naturally endows the map $\bm m_n$ with an edge labeling and the root $\bm i_n$ of $\bm m_n$ is chosen according to this labeling. An immediate corollary, which follows by averaging, is the simpler \textit{annealed} statement.
\begin{cor}[Annealed Benjamini--Schramm convergence]
	We have the following convergence in distribution in the space $\widetilde \Walks_\bullet\times\widetilde \Coals_\bullet\times\widetilde \Perms_\bullet\times\widetilde \Maps_\bullet$,	\begin{equation}\label{eq:annealed_conv_walks}
	((\bm W_n, \bm i_n), (\bm Z_n, \bm i_n), (\bm \sigma_n, \bm i_n), (\bm m_n, \bm i_n)) \xrightarrow[n\to\infty]{d} (\overline{\bm W},\overline{\bm Z},\overline{\bm \sigma},\overline{\bm m}).
	\end{equation}
\end{cor}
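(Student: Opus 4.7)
The plan is to deduce the annealed statement from the quenched one via a standard conditioning argument. Let $S$ denote the Polish product space $\widetilde \Walks_\bullet\times\widetilde \Coals_\bullet\times\widetilde \Perms_\bullet\times\widetilde \Maps_\bullet$ equipped with the product local topology, and write $\bm O_n = ((\bm W_n, \bm i_n), (\bm Z_n, \bm i_n), (\bm \sigma_n, \bm i_n), (\bm m_n, \bm i_n))$ and $\overline{\bm O} = (\overline{\bm W},\overline{\bm Z},\overline{\bm \sigma},\overline{\bm m})$ for brevity. To prove that $\bm O_n \xrightarrow{d} \overline{\bm O}$ in $S$, it suffices by the Portmanteau theorem to show that $\E[f(\bm O_n)] \to \E[f(\overline{\bm O})]$ for every bounded continuous $f : S \to \mathbb{R}$.

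First, I would condition on the sigma-algebra $\mathfrak{B}_n$ introduced in \cref{thm:local}. Using the tower property and the fact that $\bm O_n$ is $\mathfrak{B}_n$-measurable up to the independent choice of $\bm i_n$, one writes
\begin{equation}
\E[f(\bm O_n)] = \E\left[\int_S f \, d\mathcal{L}aw(\bm O_n \mid \mathfrak{B}_n)\right].
\end{equation}
Now the quenched convergence in \cref{thm:local} asserts precisely that $\mathcal{L}aw(\bm O_n \mid \mathfrak{B}_n)$ converges in probability to the deterministic measure $\mathcal{L}aw(\overline{\bm O})$ with respect to the weak topology on probability measures on $S$. Since $f$ is bounded and continuous, the functional $\mu \mapsto \int_S f\, d\mu$ is continuous for this topology, so the random variable $\int_S f \, d\mathcal{L}aw(\bm O_n \mid \mathfrak{B}_n)$ converges in probability to the constant $\int_S f \, d\mathcal{L}aw(\overline{\bm O}) = \E[f(\overline{\bm O})]$.

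Finally, each of these random variables is bounded in absolute value by $\|f\|_\infty$, so the bounded convergence theorem upgrades the convergence in probability to convergence of expectations:
\begin{equation}
\E[f(\bm O_n)] = \E\left[\int_S f \, d\mathcal{L}aw(\bm O_n \mid \mathfrak{B}_n)\right] \xrightarrow[n\to\infty]{} \E[f(\overline{\bm O})].
\end{equation}
This holds for every bounded continuous $f$, yielding the claimed convergence in distribution. There is essentially no obstacle in this argument; the entire content lies in \cref{thm:local}, and the corollary genuinely reduces to the standard fact that quenched (conditional) convergence in probability to a deterministic limit always implies annealed (unconditional) convergence in distribution.
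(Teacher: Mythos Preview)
Your proposal is correct and is exactly the ``averaging'' argument the paper has in mind; the paper states only that the corollary ``follows by averaging'' from \cref{thm:local}, and your write-up spells out precisely this standard passage from quenched convergence in probability (to a deterministic limit) to annealed convergence in distribution.
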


\begin{cor}
	Let $\bm{\sigma}_n$ be a uniform Baxter permutation of size $n$.
	We have the following convergence in probability w.r.t.\ the product topology on $[0,1]^\Perms$
	\begin{equation}\label{eq:cocc_conv}
	\left(\widetilde{\cocc}(\pi,\bm{\sigma}_n)\right)_{\pi\in\Perms}\stackrel{P}{\to}\left(\widetilde{\cocc}(\pi,\overline{\bm \sigma})\right)_{\pi\in\Perms}.
	\end{equation}
\end{cor}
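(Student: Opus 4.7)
The plan is to deduce this corollary as a direct consequence of the quenched Benjamini--Schramm convergence of Baxter permutations proved in \cref{thm:local}, combined with the characterization of quenched BS convergence to a \emph{deterministic} limit in terms of consecutive pattern proportions (\cref{detstrongbsconditions}).

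First, I would project the joint convergence in \cref{eq:quenched_conv_walks} onto its third coordinate. The projection $\widetilde{\Walks}_\bullet \times \widetilde{\Coals}_\bullet \times \widetilde{\Perms}_\bullet \times \widetilde{\Maps}_\bullet \to \widetilde{\Perms}_\bullet$ is continuous, hence it transports convergence in probability of probability measures, yielding
\[
\mathcal{L}aw\big((\bm\sigma_n, \bm i_n) \,\big|\, \mathfrak{B}_n\big) \;\xrightarrow[n\to\infty]{P}\; \mathcal{L}aw(\overline{\bm \sigma})
\]
in the space of probability measures on $\widetilde{\Perms}_\bullet$. Since the right-hand side is a \emph{deterministic} probability measure (not a random one), this is precisely the statement that $\bm\sigma_n \xrightarrow{qBS} \mathcal{L}aw(\overline{\bm\sigma})$ in the sense of \cref{detstrongbsconditions}, i.e.\ quenched BS convergence with a deterministic limit.

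Next, I would apply implication (a)$\Rightarrow$(b) of \cref{detstrongbsconditions} to conclude that, for every pattern $\pi \in \mathcal{S}$, the random variable $\widetilde{\cocc}(\pi, \bm\sigma_n)$ converges in probability to a deterministic constant $\Lambda'_\pi \in [0,1]$. To identify this constant with $\widetilde{\cocc}(\pi, \overline{\bm\sigma})$, I would use \cref{wsrel} to pass from quenched to annealed convergence (obtaining $\bm\sigma_n \xrightarrow{aBS} \overline{\bm\sigma}$), and then invoke relation \eqref{eq:fbweiuewbvfweuib} in \cref{weakbsequivalence} to identify, for $\pi$ of size $2h+1$,
\[
\Lambda'_\pi \;=\; \mathbb{P}\big(r_h(\overline{\bm\sigma}) = (\pi,h+1)\big),
\]
with the minor bookkeeping adjustment on the root position when $|\pi|$ is even (see \cref{uyfvuoe2}). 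By definition, and consistently with the shift-invariance of $\overline{\bm\sigma}$ guaranteed by \cref{ewoifhnoipewjfpoew}, this deterministic number is precisely the density of $\pi$ in $\overline{\bm\sigma}$, i.e.\ the quantity denoted $\widetilde{\cocc}(\pi,\overline{\bm\sigma})$ on the right-hand side of \cref{eq:cocc_conv}.

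Finally, to upgrade from pointwise (in $\pi$) convergence in probability to joint convergence in the product topology on $[0,1]^{\mathcal{S}}$, I would use the fact that since $\mathcal{S}$ is countable, the product topology is metrized by a bounded metric such as $d(x,y) = \sum_{\pi} 2^{-n(\pi)}|x_\pi - y_\pi|$, and for such a metric, convergence in probability of a random vector is equivalent to coordinate-wise convergence in probability (by dominated convergence on the partial sums, no tightness hypothesis being required since the limits are deterministic). The hard part will be \emph{none} in the strict sense: no substantial obstacle is expected, as the proof is essentially a repackaging of \cref{thm:local} through the characterization machinery developed in \cref{sect:local_theory}; the only mildly subtle step is the identification of the limit with $\widetilde{\cocc}(\pi,\overline{\bm\sigma})$, which is however automatic from the displayed formula above.
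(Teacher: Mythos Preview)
Your proposal is correct and follows exactly the route the paper intends: the corollary is stated without proof as an immediate consequence of \cref{thm:local}, and the machinery of \cref{detstrongbsconditions} (developed precisely for this purpose) is the natural tool to extract the statement. Your handling of the identification of the limit and the passage to the product topology is accurate.
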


We collect a few comments on these results.
\begin{itemize}
	\item \cref{eq:quenched_conv_walks,eq:cocc_conv} witness a concentration phenomenon.
	\item The fact that the four convergences are joint follows from the fact that the extensions of the mappings in \cref{eq:comm_diagram} to infinite-volume objects are a.s.\ continuous.
	\item The annealed Benjamini-Schramm convergence for  bipolar orientations to the so-called \textit{Uniform Infinite Bipolar Map} $\overline{\bm m}$ was already proven in \cite[Prop. 3.10]{gwynne2017mating}; the quenched version is however a new result. 
\end{itemize}

\section{231-avoiding permutations}\label{sect:231proofs}

The goal of this section is to prove a law of large numbers for consecutive patterns of uniform random $231$-avoiding permutations, i.e.\ \cref{thm_1}.
Using \cref{detstrongbsconditions}, a consequence of \cref{thm_1} is the following result.
\begin{cor}
	\label{231corol}
	Let $\bm{\sigma}_n$ be a uniform random permutation in $\Av_n(231)$ for all $n\in\Z_{>0}.$ There exists a random infinite rooted permutation $\bm{\sigma}^\infty_{231}$ such that   $\bm{\sigma}_n\stackrel{qBS}{\longrightarrow}\mathcal{L}aw(\bm{\sigma}^\infty_{231}).$
\end{cor}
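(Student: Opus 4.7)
The plan is to derive this corollary as an essentially immediate consequence of \cref{thm_1} combined with the characterization of deterministic quenched B--S limits from \cref{detstrongbsconditions}. Recall that \cref{detstrongbsconditions} asserts the equivalence between quenched B--S convergence to a \emph{deterministic} measure $\nu$ on $\Sri$ and the pointwise convergence in probability $\widetilde{\cocc}(\pi,\bm{\sigma}_n)\stackrel{P}{\to}\Lambda'_{\pi}$ for every $\pi\in\mathcal{S}$. Thus it suffices to exhibit a vector $(\Lambda'_\pi)_{\pi\in\mathcal{S}}\in[0,1]^{\mathcal{S}}$ such that $\widetilde{\cocc}(\pi,\bm{\sigma}_n)\stackrel{P}{\to}\Lambda'_{\pi}$ for every $\pi\in\mathcal{S}$, not merely for $\pi\in\Av(231)$.

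First, for every $\pi\in\Av(231)$, \cref{thm_1} directly provides the convergence with explicit limit
\[
\Lambda'_\pi\coloneqq\frac{2^{\#\LRM(\pi)+\#\RLM(\pi)}}{2^{2|\pi|}}.
\]
Second, for every pattern $\pi\notin\Av(231)$, I would observe that $\pi$ contains $231$ as a classical pattern, hence any consecutive occurrence of $\pi$ in a permutation $\sigma$ would in particular yield a (non-consecutive) occurrence of $231$ in $\sigma$. Since $\bm{\sigma}_n\in\Av_n(231)$ by construction, this forces $\cocc(\pi,\bm{\sigma}_n)=0$ and therefore $\widetilde{\cocc}(\pi,\bm{\sigma}_n)=0$ deterministically, so the convergence holds trivially with $\Lambda'_\pi=0$.

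Combining these two cases yields pointwise convergence in probability of the whole vector $\big(\widetilde{\cocc}(\pi,\bm{\sigma}_n)\big)_{\pi\in\mathcal{S}}$ to the deterministic vector $(\Lambda'_\pi)_{\pi\in\mathcal{S}}$. Applying the implication $(b)\Rightarrow(a)$ of \cref{detstrongbsconditions} produces a deterministic measure $\nu$ on $\Sri$ such that $\bm{\sigma}_n\stackrel{qBS}{\longrightarrow}\nu$. It then suffices to define $\bm{\sigma}^{\infty}_{231}$ as any random infinite rooted permutation with distribution $\nu$, so that $\mathcal{L}aw(\bm{\sigma}^{\infty}_{231})=\nu$, as required.

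Since each step is a direct appeal to earlier results, there is no real obstacle here: the only thing to verify carefully is the trivial case $\pi\notin\Av(231)$, and that the statement of \cref{detstrongbsconditions} only requires pointwise convergence in probability (not joint), which is precisely the relaxation that makes the deterministic-limit regime accessible from the pattern-by-pattern estimate of \cref{thm_1}.
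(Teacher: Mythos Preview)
Your proposal is correct and takes essentially the same approach as the paper, which simply states that the corollary follows from \cref{thm_1} via \cref{detstrongbsconditions}. You have correctly supplied the one detail the paper leaves implicit, namely that for $\pi\notin\Av(231)$ the proportion $\widetilde{\cocc}(\pi,\bm{\sigma}_n)$ vanishes identically, so that condition~(b) of \cref{detstrongbsconditions} is indeed verified for \emph{all} $\pi\in\mathcal{S}$.
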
	

The rest of this section is structured as follows:
\begin{itemize}
	\item In \cref{behavtree} we reduce the study of $\E\big[\widetilde{\cocc}(\pi,\bm{\sigma}_n)\big]$ to the study of a similar expectation defined in terms of a specific binary Galton--Watson tree. This builds on the bijection between binary trees and 231-avoiding permutations introduced in \cref{231bij} and a technique due to Janson \cite{janson2017patterns}.
	\item In Section \ref{weakres} we study the behavior of $\E\big[\widetilde{\cocc}(\pi,\bm{\sigma}_n)\big]$ (see in particular Proposition \ref{weakprop}) using the results from the previous section. Then we prove Theorem \ref{thm_1}: with similar techniques we study the second moment $\E\big[\widetilde{\cocc}(\pi,\bm{\sigma}_n)^2\big]$ and we conclude using the Second moment method.
	\item In Section \ref{explcon} we give an explicit construction of the limiting object $\bm{\sigma}^\infty_{231}$.
\end{itemize}

\subsection*{Notation}
We introduce some more notation used in this section and \cref{321}. When convenient, we extend in the trivial way the various notation introduced for permutations to arbitrary sequences of distinct numbers. For example, we extend the notion of $\cocc(\pi,\sigma)$ for two arbitrary sequences of distinct numbers $x_1\dots x_n$ and $y_1\dots y_k$  as
\begin{equation}
\cocc(y_1\dots y_k,x_1\dots x_n)\coloneqq\cocc\big(\text{std}(y_1\dots y_k),\text{std}(x_1\dots x_n)\big). 
\end{equation}  
We will write $\text{pat}_{b(k)}(\sigma)$ for $\text{pat}_{[1,k]}(\sigma)$ namely, for the pattern occurring in the first $k$ positions of $\sigma.$ Similarly we will write $\text{pat}_{e(k)}(\sigma)$ for $\text{pat}_{[|\sigma|-k+1,|\sigma|]}(\sigma)$ namely, for the pattern occurring in the last $k$ positions of $\sigma.$ Note that $b(k)$ and $e(k)$ stand for \emph{beginning} and \emph{end}. Moreover, if either $k=0$ or $k>|\sigma|$ we set $\text{pat}_{b(k)}(\sigma)\coloneqq\emptyset$ and $\text{pat}_{e(k)}(\sigma)\coloneqq\emptyset$.
To simplify notation, sometimes, we will simply write $\text{pat}_{b}(\sigma)=\pi$ or $\text{pat}_{e}(\sigma)=\pi,$ instead of $\text{pat}_{b(|\pi|)}(\sigma)=\pi$ or $\text{pat}_{e(|\pi|)}(\sigma)=\pi.$

\medskip

We recall that a Laurent polynomial over $\mathbb{R}$ is a linear combination of positive and negative powers of the variable with coefficients in $\mathbb{R},$ \emph{i.e.}, of the form
\begin{equation}
P(x)=a_{-m}x^{-m}+\dots+ a_{-1}x^{-1}+a_0+a_1x+\dots+a_n x^n,
\end{equation}
where $m,n\in\Z_{\geq0}$ and $a_i\in\mathbb{R}$ for all $-m\leq i\leq n.$
We  denote by $O_{LP}(x^{\alpha}),$ $\alpha\in\Z,$ an arbitrary Laurent polynomial in $x$ of valuation at least $\alpha,$ i.e.\ of the form $a_{\alpha}x^{\alpha}+\dots+a_n x^n.$ Note that one can think at these symbols as $O(\cdot)$ symbols around $x=0$. However, we remark that this is \emph{not} the classical definition of $O(\cdot)$ since we are adding the additional hypothesis that the elements of $O_{LP}(\cdot)$ are (Laurent) polynomials and not general functions.

\medskip

We need some further notation for trees. Given  a tree $T\in\mathbb{T},$ for each vertex $v\in T,$ the height of $v$ is denoted by $h(v).$ We denote by $d$ the classical distance on trees, i.e.\ for every $u,v\in T,$ $d(u,v)$ is equal to the number of edges in the unique path between $u$ and $v.$ We also denote by $|T|$ the number of vertices of $T$ and by $\mathcal{L}(T)$ the set of leaves of $T$. Additionally, $a(v)$ denotes the parent of $v$ in $T$ and we write $a^i(v),$ $i\geq 0,$ for the $i$-th ancestor of $v$ in $T,$ i.e.\ $a^i(v)=\underbrace{a\circ a\circ\dots\circ a}_{i\text{-times}}(v)$ (with the convention that if the root $r=a^k(v)$ then $a^i(v)=r$ for all $i\geq k$).
Finally, we denote by $c(u,v)$ the youngest common ancestor of $u$ and $v$ and   by $f(T,v)$ the fringe subtree of $T$ rooted at $v$.

\subsection{From uniform 231-avoiding permutations to binary Galton--Watson trees}
\label{behavtree}

Recalling the bijection $T\mapsto\sigma_T$ between binary trees and 231-avoiding permutations introduced in \cref{231bij} page \pageref{231bij} we define 
\begin{equation}
	\cocc(\pi,T)\coloneqq\cocc(\pi,\sigma_T),\quad\text{for all}\quad\pi\in\mathcal{S},
\end{equation}
and similarly
$$\text{pat}_{I}(T)\coloneqq\text{pat}_I(\sigma_T),\quad\text{ for all finite intervals}\quad I\subset\Z_{>0}.$$

Thanks to \cref{prop:unif_231_as_trees}, instead of studying $\E\big[\widetilde{\cocc}(\pi,\bm{\sigma}_n)\big]$ we can equivalently study the expectation $\E\big[\widetilde{\cocc}(\pi,\bm{T}_n)\big]$, where $\bm{T}_n$ denotes a uniform random binary tree with $n$ vertices. A  method for studying the behavior of the latter expectation is to analyze the expectation $\E\big[\widetilde{\cocc}(\pi,\bm{T}_{\delta})\big]$ for a binary Galton--Watson tree $\bm{T}_\delta$ defined as follows.

For $\delta\in(0,1),$ we set $p=\frac{1-\delta}{2},$ and we consider a binary Galton--Watson tree $\bm{T}_\delta$ with offspring distribution $\eta$ defined by:
\begin{equation}
\label{bintree}
\begin{split}
&\eta_0=\P(\text{The root has 0 child})=(1-p)^2,\\
&\eta_L=\P(\text{The root has only one left child})=p(1-p),\\
&\eta_R=\P(\text{The root has only one right child})=p(1-p),\\
&\eta_2=\P(\text{The root has 2 children})=p^2.
\end{split}
\end{equation}
\begin{rem}
	Note that with this particular offspring distribution, having a left child is independent from having a right child.
\end{rem}
We emphasize that $\bm{T}_\delta$ is an \emph{unconditioned} binary Galton--Watson tree.

\medskip

We now state two lemmas due to Janson. The first one is a classical result that rewrites the expectation of a function of $\bm{T}_\delta$ in terms of uniform random binary trees with $n$ vertices $\bm{T}_n,$ using the known fact that $\mathcal{L}aw(\bm{T}_n)=\mathcal{L}aw\big(\bm{T}_{\delta}\big||\bm{T}_{\delta}|=n\big).$ 
\begin{lem}[{\cite[Lemmas 5.2-5.4]{janson2017patterns}}]
	Let $F$ be a functional from the set of binary trees $\mathbb{T}^b$ to $\mathbb{R}$ such that $|F(T)|\leq C\cdot|T|^c$ for some constants $C$ and $c$ (this guarantees that all expectations and sums below converge). Then
	$$\E\big[F(\bm{T}_\delta)\big]=\frac{1+\delta}{1-\delta}\sum_{n=1}^{+\infty}\E\big[F(\bm{T}_n)\big]\cdot C_n\cdot\Big(\frac{1-\delta^2}{4}\Big)^{n},$$
	where $C_n=\frac{1}{n+1}\binom{2n}{n}$ is the $n$-th Catalan number.
\end{lem}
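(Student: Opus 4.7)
The statement is a standard identity that reweights probabilities on conditioned versus unconditioned Galton--Watson trees, so the plan is essentially computational. First I would compute, for an arbitrary but fixed binary tree $T$ with $n$ vertices, the quantity $\P(\bm{T}_\delta=T)$. The key observation is that, because of the particular product form of the offspring distribution~\eqref{bintree}, the events ``$v$ has a left child'' and ``$v$ has a right child'' are independent, each with probability $p=(1-\delta)/2$. Therefore each vertex $v$ contributes two independent ``slots'', each of which is filled (contributing a factor $p$) or empty (contributing a factor $1-p$). A tree with $n$ vertices has $2n$ slots in total, of which exactly $n-1$ are filled (since that is the number of edges) and $n+1$ are empty. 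This gives
\[
\P(\bm{T}_\delta=T)=p^{n-1}(1-p)^{n+1}.
\]
Notice that this does not depend on the actual shape of $T$, only on its size $n$.

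Since there are $C_n$ binary trees with $n$ vertices (the Catalan number), summing over all of them yields
\[
\P(|\bm{T}_\delta|=n)=C_n\, p^{n-1}(1-p)^{n+1}.
\]
Substituting $p=(1-\delta)/2$ and $1-p=(1+\delta)/2$ and using the factorization $(1-\delta)^{n-1}(1+\delta)^{n+1}=\tfrac{1+\delta}{1-\delta}(1-\delta^2)^{n}$, I obtain the compact form
\[
\P(|\bm{T}_\delta|=n)=\frac{1+\delta}{1-\delta}\, C_n\,\Bigl(\frac{1-\delta^2}{4}\Bigr)^{n}.
\]
As a quick sanity check, summing this over $n\geq 1$ using the Catalan generating function $\sum_{n\geq 0}C_n x^n=(1-\sqrt{1-4x})/(2x)$ evaluated at $x=(1-\delta^2)/4$ (for which $\sqrt{1-4x}=\delta$) gives exactly $1$, confirming that $\bm{T}_\delta$ is a.s.\ finite (as expected since $\bm{T}_\delta$ is strictly subcritical, with mean offspring $2p=1-\delta<1$).

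The last step is to condition on the size. Because $\P(\bm{T}_\delta=T)$ depends on $T$ only through $|T|=n$, the law of $\bm{T}_\delta$ conditioned on $|\bm{T}_\delta|=n$ is uniform over binary trees with $n$ vertices, i.e.\ coincides with the law of $\bm{T}_n$. Partitioning on the value of $|\bm{T}_\delta|$ therefore yields
\[
\E[F(\bm{T}_\delta)]=\sum_{n=1}^{\infty}\E[F(\bm{T}_n)]\,\P(|\bm{T}_\delta|=n)=\frac{1+\delta}{1-\delta}\sum_{n=1}^{\infty}\E[F(\bm{T}_n)]\,C_n\,\Bigl(\frac{1-\delta^2}{4}\Bigr)^{n},
\]
which is the desired identity. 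The only point that needs justification is the absolute convergence of the series (and hence the legitimacy of interchanging sum and expectation): this follows from the polynomial growth hypothesis $|F(T)|\leq C|T|^c$ combined with the fact that $C_n\sim 4^n/(\sqrt{\pi}n^{3/2})$, so the $n$-th term decays like $n^{c-3/2}(1-\delta^2)^n$, which is summable for any $\delta\in(0,1)$. There is no serious obstacle here; the main conceptual point is just the slot-counting that produces $p^{n-1}(1-p)^{n+1}$ regardless of the tree shape.
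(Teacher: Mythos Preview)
Your proof is correct and follows exactly the approach the paper itself sketches: the paper does not give a full proof (it cites Janson), but it explicitly notes that the lemma rests on the fact $\mathcal{L}aw(\bm{T}_n)=\mathcal{L}aw(\bm{T}_{\delta}\mid |\bm{T}_{\delta}|=n)$, and later records the slot-counting identity $\P(\bm{T}_\delta=T)=p^{|T|-1}(1-p)^{|T|+1}$ as \cref{obs:bintreecomp}. You have simply assembled these two observations with the substitution $p=(1-\delta)/2$, which is precisely the intended argument.
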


Applying singularity analysis (see \cite[Theorem VI.3]{flajolet2009analytic}) one obtains:

\begin{lem}
	\label{svantelemma}
	If $\E\big[F(\bm{T}_\delta)\big]=a\cdot\delta^{-m}+O_{LP}(\delta^{-(m-1)}),$ for $\delta\to 0,$ where $m\geq 1$ and $a\neq 0,$ then 
	$$\E\big[F(\bm{T}_n)\big]\sim\frac{\Gamma(1/2)}{\Gamma(m/2)}\cdot a\cdot  n^{\frac{m+1}{2}},\quad\text{as}\quad n\to\infty.$$
\end{lem}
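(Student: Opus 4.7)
\medskip

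\noindent\textbf{Proof plan.} The strategy is a standard application of singularity analysis of generating functions, as hinted at by the reference to \cite[Theorem VI.3]{flajolet2009analytic}. First, I would recast the identity from the previous lemma as a statement about a generating function in the variable $z = (1-\delta^{2})/4$. Explicitly, the previous lemma yields
\begin{equation}
\Phi(z) \coloneqq \sum_{n=1}^{+\infty} \E[F(\bm{T}_n)]\,C_n\,z^n \;=\; \frac{1-\delta}{1+\delta}\,\E\big[F(\bm{T}_\delta)\big],
\end{equation}
where $\delta=\sqrt{1-4z}$, so that $\Phi$ is a function of $z$ with a dominant singularity at $z=1/4$. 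The assumption $\E[F(\bm{T}_\delta)] = a\delta^{-m} + O_{LP}(\delta^{-(m-1)})$ combined with $\frac{1-\delta}{1+\delta}=1-2\delta+O(\delta^{2})$ gives, as $\delta\to 0$,
\begin{equation}
\Phi(z) \;=\; a\,\delta^{-m} + O_{LP}\!\big(\delta^{-(m-1)}\big) \;=\; a\,(1-4z)^{-m/2} + \text{subdominant terms}.
\end{equation}

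Second, I would carefully bound the "subdominant" contribution. Since $O_{LP}(\delta^{-(m-1)})$ means a Laurent polynomial with valuation at least $-(m-1)$, each of its monomials $\delta^{k}$ (for integer $k\geq -(m-1)$) translates into a term of the form $(1-4z)^{k/2}$. Each such term is analytic in a slit neighborhood of $z=1/4$ and, by the standard transfer theorem, contributes coefficient asymptotics of order $4^{n}n^{-k/2-1}$, which for $k\geq -(m-1)$ is $O(4^{n}n^{(m-3)/2})$, strictly smaller than the leading order $4^{n}n^{m/2-1}$ coming from the main singular term.

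Third, I would invoke the transfer theorem \cite[Theorem VI.3]{flajolet2009analytic} applied to the main singular term: since $(1-4z)^{-m/2}$ yields $[z^{n}](1-4z)^{-m/2}\sim \frac{4^{n}n^{m/2-1}}{\Gamma(m/2)}$, we obtain
\begin{equation}
[z^{n}]\,\Phi(z) \;\sim\; \frac{a}{\Gamma(m/2)}\,4^{n}\,n^{m/2-1}.
\end{equation}
Finally, using the classical equivalent $C_{n}\sim \frac{4^{n}}{\sqrt{\pi}\,n^{3/2}} = \frac{4^{n}}{\Gamma(1/2)\,n^{3/2}}$, I divide the two asymptotics to conclude
\begin{equation}
\E[F(\bm{T}_n)] \;=\; \frac{[z^{n}]\Phi(z)}{C_{n}} \;\sim\; \frac{\Gamma(1/2)}{\Gamma(m/2)}\,a\,n^{(m+1)/2}.
\end{equation}

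The main technical obstacle is verifying the analytic hypotheses required by the transfer theorem, namely that $\Phi$ extends analytically to a so-called $\Delta$-domain around $z=1/4$ with the claimed singular expansion uniformly on approach. In the generality of the lemma's statement this is automatic because $\E[F(\bm{T}_\delta)]$ is assumed to be a Laurent polynomial in $\delta$ up to the stated order, so $\Phi$ is the product of $\frac{1-\delta}{1+\delta}$ with such a Laurent polynomial expression, and the substitution $\delta=\sqrt{1-4z}$ produces a function analytic on $\mathbb{C}\setminus[1/4,+\infty)$ with the desired singular expansion; hence the transfer theorem applies directly.
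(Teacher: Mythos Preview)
Your proposal is correct and follows exactly the approach the paper indicates: the paper does not give a detailed proof but states the lemma as a consequence of singularity analysis (\cite[Theorem VI.3]{flajolet2009analytic}) applied to the generating function identity of the preceding lemma, together with the remark that the $O_{LP}$ hypothesis guarantees $\Delta$-analyticity in $z=(1-\delta^2)/4$. One small imprecision: after multiplying by $\frac{1-\delta}{1+\delta}$ the error term is no longer a Laurent polynomial, so the $O_{LP}(\delta^{-(m-1)})$ in your second display should be an ordinary $O(\delta^{-(m-1)})$; but this does not affect the argument since, as you correctly note in the final paragraph, $\Phi$ is the product of the analytic factor $\frac{1-\delta}{1+\delta}$ with a genuine Laurent polynomial in $\delta$, and that product is $\Delta$-analytic with the required singular expansion.
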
 

\begin{rem}
	We recall that since by definition the elements of $O_{LP}(\delta^{-(m+1)})$ are Laurent polynomials (and not general functions) then they are $\Delta$–analytic in $z=\frac{1-\delta^2}{4}$(for a precise definition see \cite[Definition VI.1]{flajolet2009analytic}) and so the standard hypothesis to apply singularity analysis is fulfilled. 
\end{rem}

Lemma \ref{svantelemma} shows us why it is enough to study the behavior of $\E[\widetilde{\cocc}(\pi,\bm{T}_\delta)]$ in order to derive information about the behavior of $\E[\widetilde{\cocc}(\pi,\bm{T}_n)].$ This is the goal of the next section.

\subsection{Annealed and quenched Benjamini--Schramm convergence}
\label{weakres}

For all $k>0,$ we define the following probability distribution on $\Av_k(231),$ 
$$P_{231}(\pi)\coloneqq\frac{2^{\#\text{LRmax}(\pi)+\#\text{RLmax}(\pi)}}{2^{2|\pi|}},\quad\text{for all}\quad \pi\in\Av_k(231).$$

\begin{rem}
	The fact that the equation above defines a probability distribution on $\Av_k(231)$ is a consequence of \cref{weakprop}, noting that in \cref{eq:fvhuweyhiufvwe}, summing over all $\pi\in\Av_k(231)$ the left-hand side, we obtain $\frac{n-k+1}{n}$ that tends to 1. 
	This fact can be also proved by elementary manipulations of the corresponding generating function, as shown in \cite[Appendix A]{borga2020localperm}.
\end{rem}

We are going to first prove the following weaker version of Theorem \ref{thm_1}.	
\begin{prop}
	\label{weakprop}
	Let $\bm{\sigma}_n$ be a uniform random permutation in $\Av_n(231)$ for all $n\in\Z_{>0}.$ It holds that
	\begin{equation}\label{eq:fvhuweyhiufvwe}
		\E\big[\widetilde{\cocc}(\pi,\bm{\sigma}_n)\big]\to P_{231}(\pi),\quad\text{for all}\quad\pi\in\emph{Av}(231).
	\end{equation}
\end{prop}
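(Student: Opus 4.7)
I plan to reduce the statement to computations on the binary Galton--Watson tree $\bm{T}_\delta$ via \cref{prop:unif_231_as_trees} and \cref{svantelemma}: once we have shown
\begin{equation*}
\E[\cocc(\pi,\bm{T}_\delta)]=P_{231}(\pi)\,\delta^{-1}+O_{LP}(1),
\end{equation*}
\cref{svantelemma} applied with $m=1$ will yield $\E[\cocc(\pi,\bm{T}_n)]\sim P_{231}(\pi)\cdot n$, and dividing by $n$ gives the proposition.

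The first step will be a purely combinatorial description of a consecutive occurrence in $\sigma_T$. For $k$ consecutive in-order vertices $v_1,\dots,v_k$ of a binary tree $T$, I will show that their youngest common ancestor must itself lie in the window (otherwise, since in-order visits left subtree, root, right subtree, the ancestor would have to be inserted between two consecutive $v_i$'s); writing it as $v_j$, the vertices $v_1,\dots,v_{j-1}$ are then exactly the last $j-1$ in-order vertices of $T^{v_j}_L$ and $v_{j+1},\dots,v_k$ are the first $k-j$ in-order vertices of $T^{v_j}_R$. Since the post-order label of $v_j$ exceeds those of all its descendants, $v_j$ carries the maximum value of the window, which forces $j=\indmax(\pi)$ and imposes that the two halves induce the patterns $\pi_L\coloneqq\std(\pi(1)\cdots\pi(j-1))$ and $\pi_R\coloneqq\std(\pi(j+1)\cdots\pi(k))$. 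The automatic inequality ``all values on the left are smaller than all values on the right'' is exactly the $231$-avoidance of $\pi$.

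The second step uses the branching property of $\bm{T}_\delta$: conditionally on a vertex $v\in\mathcal{U}_\infty$ being in $\bm{T}_\delta$, the fringe subtree at $v$ is an independent copy of $\bm{T}_\delta$. Defining $p_e(\pi',\delta)$ (resp.\ $p_b(\pi',\delta)$) as the probability that the last (resp.\ first) $|\pi'|$ in-order vertices of a possibly empty independent copy form $\pi'$, summing over $v$ the probability that $v$ plays the role of $v_j$ yields
\begin{equation*}
\E[\cocc(\pi,\bm{T}_\delta)]=\E[|\bm{T}_\delta|]\cdot p_e(\pi_L,\delta)\cdot p_b(\pi_R,\delta)=\delta^{-1}\,p_e(\pi_L,\delta)\,p_b(\pi_R,\delta),
\end{equation*}
since the mean offspring of $\bm{T}_\delta$ equals $2p=1-\delta$, so $\E[|\bm{T}_\delta|]=1/\delta$.

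The third step is to compute $p_e$ and $p_b$ in closed form. The crucial auxiliary fact is that $\P(\bm{T}_\delta=T)=p^{n-1}(1-p)^{n+1}$ for every binary tree $T$ with $n$ vertices: view each vertex as carrying two independent ``slots'' each filled with probability $p$, so $T$ has exactly $n-1$ filled slots and $n+1$ empty ones. Decomposing a GW tree by the size of its right subtree then gives the recursion
\begin{equation*}
p_e(\pi,\delta)=p^{|\pi|-j+1}(1-p)^{|\pi|-j}\,p_e(\pi_L,\delta),\quad j=\indmax(\pi),
\end{equation*}
whose iteration ``climbs along the LR maxima'' of $\pi$, because the position of the max in the iterated left restriction is precisely the previous LR maximum. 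Telescoping yields
\begin{equation*}
p_e(\pi,\delta)=p^{|\pi|}(1-p)^{|\pi|-\#\LRM(\pi)},\qquad p_b(\pi,\delta)=p^{|\pi|}(1-p)^{|\pi|-\#\RLM(\pi)}.
\end{equation*}
Plugging these into the expression for $\E[\cocc(\pi,\bm{T}_\delta)]$, using $\#\LRM(\pi_L)=\#\LRM(\pi)-1$ and $\#\RLM(\pi_R)=\#\RLM(\pi)-1$, and substituting $p=(1-\delta)/2$, gives $\E[\cocc(\pi,\bm{T}_\delta)]=P_{231}(\pi)\,\delta^{-1}+O_{LP}(1)$, as required. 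The main obstacle will be the bookkeeping of this last step, namely verifying that the iteration is governed exactly by the LR (resp.\ RL) maxima of $\pi$ and extracting the correct exponents; by contrast, the structural YCA description in the first step is conceptually immediate once the in-order/post-order dichotomy is in hand.
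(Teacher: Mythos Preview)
Your proof is correct and follows the same high-level strategy as the paper: reduce to the Galton--Watson tree $\bm{T}_\delta$ via \cref{prop:unif_231_as_trees}, show $\E[\cocc(\pi,\bm{T}_\delta)]=P_{231}(\pi)\delta^{-1}+O_{LP}(1)$, and conclude via \cref{svantelemma}. The organization differs somewhat. The paper first establishes the recursion $\cocc(\pi,T)=\cocc(\pi,T_L)+\cocc(\pi,T_R)+\mathds{1}_{\{\text{pat}_{\bm J}(T)=\pi\}}$ (\cref{ricors}) and solves it in expectation to get $\E[\cocc(\pi,\bm T_\delta)]=\delta^{-1}\P(\text{pat}_{\bm J}(\bm T)=\pi)$; it then analyzes $\P(\text{pat}_{\bm J}(\bm T)=\pi)$ via recursive Lemmas~\ref{onecross} and~\ref{twoandthreecross} together with \cref{obs:bintreecomp}, deferring the final exponent count to the cited \cref{patprob231}. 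You instead go straight to the YCA characterization and sum over all vertices, which collapses the paper's two-step reduction into the single identity $\E[\cocc(\pi,\bm T_\delta)]=\E[|\bm T_\delta|]\,p_e(\pi_L,\delta)\,p_b(\pi_R,\delta)$; and by iterating your recursion along the LR (resp.\ RL) maxima you obtain the explicit closed forms $p_e(\pi,\delta)=p^{|\pi|}(1-p)^{|\pi|-\#\LRM(\pi)}$ and $p_b(\pi,\delta)=p^{|\pi|}(1-p)^{|\pi|-\#\RLM(\pi)}$, which makes the proof of \cref{patprob231} self-contained. The two arguments are equivalent---your recursion for $p_e$ is exactly what one gets by substituting $\P(\bm T=T_{\pi_R})=p^{|\pi_R|-1}(1-p)^{|\pi_R|+1}$ into \cref{twoandthreecross}---but your packaging is more direct.
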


Observation \ref{permfact} page \pageref{permfact} gives a recursive formula for $\cocc(\pi,\sigma)$ that can be translated in term of trees. Recall that $\indmax(\sigma)\coloneqq\sigma^{-1}(n)$ for all $\sigma\in\mathcal{S}_n$. 
\begin{lem}[{\cite[Lemma 4.16]{borga2020localperm}}]
	\label{ricors}
	Let $\pi\in\Av_k(231)$ with $k\geq 1$ and  set $m=\indmax(\pi)$. Then, for every binary tree $T,$ denoting $\ell=\indmax(\sigma_{T}),$
	\begin{equation}
		\label{recccoctrees}
		\cocc(\pi,T)=\cocc(\pi,T_L)+\cocc(\pi,T_R)+\mathds{1}_{\big\{\emph{pat}_{[\ell-m+1,\ell+k-m]}(T)=\pi\big\}}.
	\end{equation}  
\end{lem}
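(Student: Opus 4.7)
The plan is to classify the consecutive occurrences of $\pi$ in $\sigma_T$ according to their position relative to the maximum index $\ell = \indmax(\sigma_T)$. Recall from the construction in Section~\ref{231bij} that $\sigma_T = \sigma_L \, n \, \sigma_R$, where $\sigma_L = \sigma_{T_L}$, $\sigma_R$ is obtained from $\sigma_{T_R}$ by adding the constant $|T_L|$ to every entry, and by Observation~\ref{permfact} all entries of $\sigma_L$ are strictly smaller than all entries of $\sigma_R$. The value at position $\ell$ is $n = |\sigma_T|$, the global maximum.

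First I would split the set of intervals $I = [a, a+k-1] \subseteq [1, |\sigma_T|]$ of size $k$ into three disjoint types according to whether $I \subseteq [1,\ell-1]$, or $I \subseteq [\ell+1,|\sigma_T|]$, or $\ell \in I$. Intervals of the first type contribute exactly $\cocc(\pi, \sigma_L) = \cocc(\pi, T_L)$, since $\sigma_T$ restricted to those positions coincides with $\sigma_L$. Intervals of the second type contribute $\cocc(\pi, \sigma_R) = \cocc(\pi, \std(\sigma_R)) = \cocc(\pi, \sigma_{T_R}) = \cocc(\pi, T_R)$, because $\sigma_R$ differs from $\sigma_{T_R}$ only by a global shift, which does not affect the induced pattern.

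The main step is to handle intervals of the third type, those that contain the position $\ell$. For such an interval $I = [a, a+k-1]$, the entry at position $\ell$ takes the value $n$, which is the global maximum, so it is automatically the maximal entry of $\pat_I(\sigma_T)$. If $\pat_I(\sigma_T) = \pi$, the maximum of $\pi$ is located at position $m = \indmax(\pi)$, forcing $\ell - a + 1 = m$, i.e.\ $a = \ell - m + 1$. Hence there is at most \emph{one} interval straddling $\ell$ that can induce $\pi$, namely $I_0 = [\ell - m + 1, \ell + k - m]$, and the contribution of the third type is exactly $\mathds{1}_{\{\pat_{I_0}(T) = \pi\}}$.

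The only minor obstacle is verifying that the candidate interval $I_0$ is well-defined and contained in $[1,|\sigma_T|]$; if it is not (for instance because $\ell$ is too close to a boundary of $\sigma_T$), then the indicator automatically vanishes because $\pat_{I_0}(T)$ is undefined or of the wrong size, and the identity still holds trivially. Summing the three contributions yields the announced formula.
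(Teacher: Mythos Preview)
Your proof is correct and follows exactly the approach the paper indicates: the thesis does not spell out a proof but simply points to Observation~\ref{permfact}, which gives the structural decomposition $\sigma_T = \sigma_L\, n\, \sigma_R$ with all values in $\sigma_L$ smaller than all values in $\sigma_R$, and says the recursion is obtained by ``translating'' this to trees. Your three-way split of intervals (entirely left of $\ell$, entirely right of $\ell$, or containing $\ell$) together with the observation that the global maximum pins down the unique straddling interval is precisely that translation.
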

We now focus on the behavior of $\E\big[\widetilde{\cocc}(\pi,\bm{T}_\delta)\big]$ for $\pi\in\Av(231)$. 
In order to simplify notation we set $\bm{T}\coloneqq\bm{T}_\delta.$ Thanks to Lemma \ref{ricors}, we know that, for all $\pi\in\Av(231),$
\begin{equation}
\label{twostar}
\cocc(\pi,\bm{T})=\cocc(\pi,\bm{T}_L)+\cocc(\pi,\bm{T}_R)+\mathds{1}_{\big\{\text{pat}_{\bm{J}}(\bm{T})=\pi\big\}},
\end{equation}
where $\bm{J}=[\bm{\ell}-m+1,\bm{\ell}+k-m]$, $\bm{\ell}=\text{indmax}(\bm{\sigma}_{\bm{T}})$ and $m=\text{indmax}(\pi).$ 
Taking the expectation in \cref{twostar} we obtain,
$$\E\big[\cocc(\pi,\bm{T})\big]=\E\big[\cocc(\pi,\bm{T}_L)\big]+\E\big[\cocc(\pi,\bm{T}_R)\big]+\P\big(\text{pat}_{\bm{J}}(\bm{T})=\pi\big).$$
Since $\bm{T}_L$ is an independent copy of $\bm{T}$ with probability $p$ and empty with probability $1-p,$ and the same holds for $\bm{T}_R,$ we have,
\begin{equation}
\label{step1}
\E\big[\cocc(\pi,\bm{T})\big]=\frac{\P\big(\text{pat}_{\bm{J}}(\bm{T})=\pi\big)}{1-2p}=\delta^{-1}\cdot\P\big(\text{pat}_{\bm{J}}(\bm{T})=\pi\big),
\end{equation}
where in the last equality we used that $p=\frac{1-\delta}{2}.$
We now focus on the term $\P\big(\text{pat}_{\bm{J}}(\bm{T})=\pi\big).$ With some simple computations it is possible to prove the following formula.

\begin{lem}[{\cite[Lemma 4.17]{borga2020localperm}}]
	\label{onecross}
	Let $\pi\in\emph{Av}(231).$ Using notation as above and decomposing $\pi$ in $\pi=\pi_L\pi(m)\pi_R,$
	\begin{equation}
	\label{oneredcross}
	\P\big(\emph{pat}_{\bm{J}}(\bm{T})=\pi\big)=
	\begin{cases}
	p^2\cdot\P\big(\emph{pat}_e(\bm{T})=\pi_L\big)\cdot\P\big(\emph{pat}_b(\bm{T})=\pi_R\big), &\quad\text{if }\pi_L\neq\emptyset,\;\pi_R\neq\emptyset,\\
	p\cdot\P\big(\emph{pat}_e(\bm{T})=\pi_L\big), &\quad\text{if }\pi_L\neq\emptyset,\;\pi_R=\emptyset,\\
	p\cdot\P\big(\emph{pat}_b(\bm{T})=\pi_R\big), &\quad\text{if }\pi_L=\emptyset,\;\pi_R\neq\emptyset,\\
	1, &\quad\text{if }\pi=1.
	\end{cases}
	\end{equation}
\end{lem}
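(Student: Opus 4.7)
The plan is to exploit the recursive structure of the bijection between binary trees and $231$-avoiding permutations together with the independence properties of the Galton--Watson tree $\bm{T}=\bm{T}_\delta$. Recall that the root of $\bm{T}$ corresponds to the maximum of $\bm{\sigma}_{\bm{T}}$, which sits at position $\bm{\ell}=|\bm{T}_L|+1$; moreover, by Observation \ref{permfact}, the values of $\bm{\sigma}_{\bm{T}}$ in positions $1,\dots,\bm{\ell}-1$ all lie in $[1,|\bm{T}_L|]$, while those in positions $\bm{\ell}+1,\dots,|\bm{T}|$ all lie in $[|\bm{T}_L|+2,|\bm{T}|]$. Hence the window $\bm{J}=[\bm{\ell}-m+1,\bm{\ell}+k-m]$ picks out (from left to right) the last $m-1$ entries of $\sigma_L=\bm{\sigma}_{\bm{T}_L}$, then the maximum, and then the first $k-m$ entries of the (shifted) right part~$\sigma_R$.

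First I would argue that $\{\mathrm{pat}_{\bm{J}}(\bm{T})=\pi\}$ factorizes into conditions on the two fringe subtrees. Because all values appearing in $\sigma_L$ are smaller than those appearing in $\sigma_R$, which are themselves smaller than the root value, any pattern realized on $\bm{J}$ automatically has its maximum at position $m$, with its first $m-1$ entries entry-wise below its last $k-m$ entries. Thus, provided that $\bm{T}_L$ and $\bm{T}_R$ contain at least $m-1$ and $k-m$ vertices respectively,
\[
\mathrm{pat}_{\bm{J}}(\bm{T})=\pi \iff \mathrm{pat}_{e(m-1)}(\bm{T}_L)=\pi_L \ \text{ and }\ \mathrm{pat}_{b(k-m)}(\bm{T}_R)=\pi_R,
\]
with the convention that a vacuous identity arising when $\pi_L$ or $\pi_R$ is empty imposes no condition (the window $\bm{J}$ then lies entirely on one side of $\bm{\ell}$).

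The second step is to invoke the key feature of the offspring distribution in \eqref{bintree}: the events ``$\bm{T}$ has a left child'' and ``$\bm{T}$ has a right child'' are independent and each of probability $p$. Consequently $\bm{T}_L$ and $\bm{T}_R$ are independent, each being empty with probability $1-p$ or a distributional copy of $\bm{T}$ with probability $p$. Combining this with the factorization above yields the four cases of \eqref{oneredcross}: when $\pi_L\neq\emptyset$ (resp.\ $\pi_R\neq\emptyset$) we need $\bm{T}_L$ (resp.\ $\bm{T}_R$) non-empty, which contributes a factor $p$, and conditionally on this $\mathrm{pat}_e(\bm{T}_L)$ (resp.\ $\mathrm{pat}_b(\bm{T}_R)$) is distributed as $\mathrm{pat}_e(\bm{T})$ (resp.\ $\mathrm{pat}_b(\bm{T})$); when one of $\pi_L,\pi_R$ is empty, no condition is imposed on the corresponding subtree; and when $\pi=1$ the window reduces to the root, which is always present, yielding probability~$1$. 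The only non-routine aspect will be the bookkeeping of boundary effects, namely checking that the size conventions $\mathrm{pat}_{e(k)}(\sigma)=\emptyset$ for $k>|\sigma|$ mesh correctly with the identification of a non-empty $\bm{T}_L$ with a copy of $\bm{T}$ inside the probabilities, so that one may freely replace $\mathrm{pat}_e(\bm{T}_L)$ by $\mathrm{pat}_e(\bm{T})$ after paying the factor $p$.
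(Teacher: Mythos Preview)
Your proposal is correct and follows exactly the natural line of argument: factorize the window $\bm J$ across the root using Observation~\ref{permfact} and the recursive bijection, then exploit the independence of $\bm T_L$ and $\bm T_R$ and the fact that each, conditionally on being non-empty, is a fresh copy of $\bm T$. The paper does not include a proof of this lemma (it is cited from \cite{borga2020localperm} with the remark that it follows from ``simple computations''), and your argument is precisely the intended one; the boundary bookkeeping you flag is indeed routine once one notes that the convention $\text{pat}_{e(k)}(\sigma)=\emptyset$ for $k>|\sigma|$ makes both sides of your displayed equivalence fail simultaneously when a subtree is too small.
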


In view of Lemma \ref{onecross}, we now focus on $\P\big(\text{pat}_e(\bm{T})=\pi)$ -- the analysis for $\P\big(\text{pat}_b(\bm{T})=\pi)$ following by symmetry. We want to rewrite the event $\big\{\text{pat}_e(\bm{T})=\pi\big\}$ conditioning on the position of the maximum among the last $|\pi|$ values of $\sigma_{\bm{T}}.$ Using Observation \ref{maxnode}, we know that this maximum is reached at an element of $\sigma_{\bm{T}}$ corresponding to a vertex of $\bm{T}$ of the form
$$v=\underbrace{2\dots2}_{n\text{-times}}\eqqcolon v_{2^n},\quad\text{for some}\quad n\in\Z_{\geq 0},$$
with the convention $v_{2^0}\coloneqq\emptyset.$ For an example, see \cref{lastmax}.

\begin{figure}[htbp]
	\begin{minipage}[c]{0.65\textwidth}
		\centering
		\includegraphics[scale=.56]{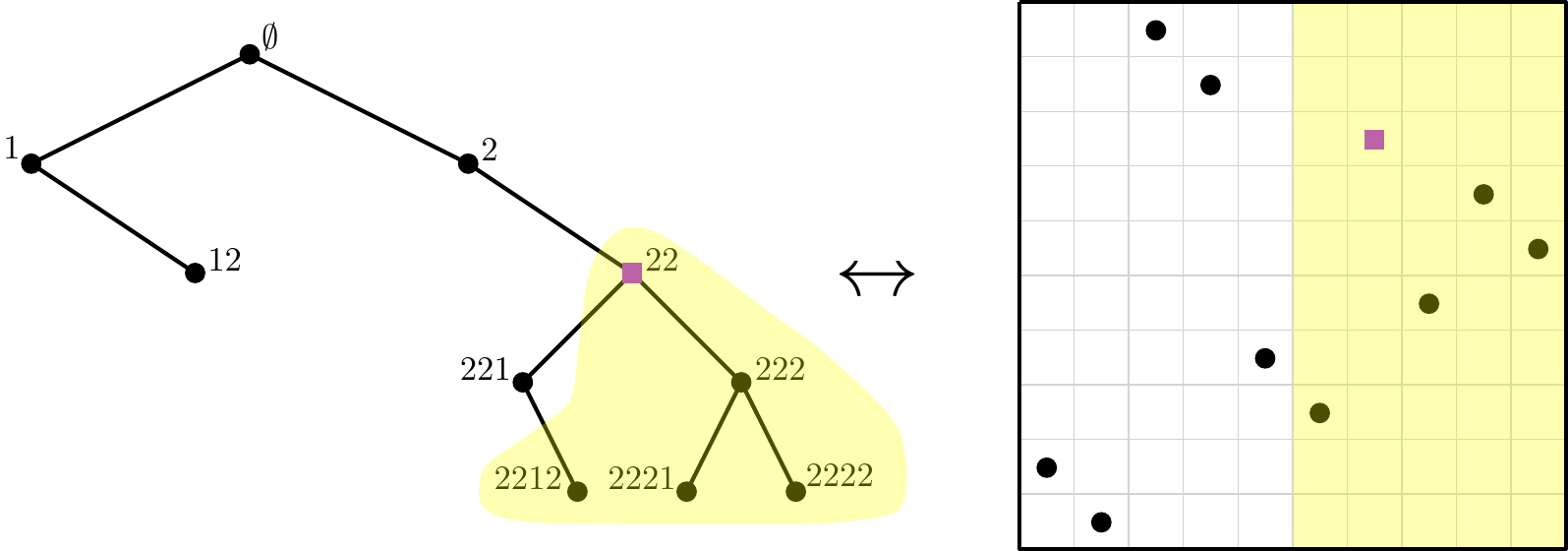}
	\end{minipage}
	\begin{minipage}[c]{0.34\textwidth}
		\caption{We marked with a purple square the maximum among the last 5 values (highlighted in yellow) of the permutation $\sigma=2\;1\;10\;9\;4\;3\;8\;5\;7\;6.$ \label{lastmax}}
	\end{minipage}
\end{figure}

Therefore, defining the following events, for all $n\in\Z_{\geq0},$ all $\pi\in\Av(231),$
\begin{equation}
M^n_{\pi}\coloneqq\big\{v_{2^n}\text{ is the vertex in } \bm{T} \text{ corresponding to the}
\text{ maximum among the last }|\pi|\text{ values of }\sigma_{\bm{T}}\big\},
\end{equation}
we have
$
\P\big(\text{pat}_e(\bm{T})=\pi)=\sum_{n=0}^\infty\P\big(\text{pat}_e(\bm{T})=\pi,M^n_{\pi}).
$
We also introduce the events, for all $n\in\Z_{\geq0},$
$$R^n\coloneqq\big\{\text{The vertex }v_{2^n}\text{ is in }\bm{T}\big\}.$$
Note that 
$
\sum_{n=0}^\infty\mathbb{P}(R^n)=\sum_{n=0}^\infty p^n=\frac{1}{1-p}.
$
The next lemma computes recursively $\P\big(\text{pat}_e(\bm{T})=\pi\big)$ and $\P\big(\text{pat}_b(\bm{T})=\pi\big)$.
\begin{lem}
	\label{twoandthreecross}
	Let $\pi\in\emph{Av}(231).$ Using notation as above,
	\begin{equation}
	\label{twocross}
	\P\big(\emph{pat}_e(\bm{T})=\pi\big)=
	\begin{cases}
	\frac{p^2}{1-p}\cdot\P\big(\emph{pat}_e(\bm{T})=\pi_L\big)\cdot\P\big(\bm{T}=T_{\pi_R}\big), &\quad\text{if }\pi_L\neq\emptyset,\;\pi_R\neq\emptyset,\\
	p\cdot\P\big(\emph{pat}_e(\bm{T})=\pi_L\big), &\quad\text{if }\pi_L\neq\emptyset,\;\pi_R=\emptyset,\\
	\frac{p}{1-p}\cdot\P\big(\bm{T}=T_{\pi_R}\big), &\quad\text{if }\pi_L=\emptyset,\;\pi_R\neq\emptyset,\\
	1, &\quad\text{if }\pi=1,
	\end{cases}
	\end{equation}
	and
	\begin{equation}
	\label{threecross}
	\P\big(\emph{pat}_b(\bm{T})=\pi\big)=
	\begin{cases}
	\frac{p^2}{1-p}\cdot\P\big(\bm{T}=T_{\pi_L}\big)\cdot\P\big(\emph{pat}_b(\bm{T})=\pi_R\big), &\quad\text{if }\pi_L\neq\emptyset,\;\pi_R\neq\emptyset,\\
	\frac{p}{1-p}\cdot\P\big(\bm{T}=T_{\pi_L}\big), &\quad\text{if }\pi_L\neq\emptyset,\;\pi_R=\emptyset,\\
	p\cdot\P\big(\emph{pat}_b(\bm{T})=\pi_R\big), &\quad\text{if }\pi_L=\emptyset,\;\pi_R\neq\emptyset,\\
	1, &\quad\text{if }\pi=1.
	\end{cases}
	\end{equation}
\end{lem}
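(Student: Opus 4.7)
The plan is to prove \eqref{twocross} by conditioning on which right-to-left maximum of $\sigma_{\bm T}$ realizes the largest value among the last $|\pi|$ positions; formula \eqref{threecross} for $\P(\pat_b(\bm T)=\pi)$ will then follow by the mirror argument using left-to-right maxima along the left spine of $\bm T$. By \cref{maxnode}, the right-to-left maxima of $\sigma_{\bm T}$ are exactly the spine vertices $v_{2^n}$, so the events $M^n_\pi$ are disjoint as $n$ varies, and
\[
\P(\pat_e(\bm T)=\pi)=\sum_{n\ge 0}\P(\pat_e(\bm T)=\pi, M^n_\pi).
\]

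For a fixed $n\ge 0$, I would unfold the in-order traversal of $\bm T$ along the right spine $v_1=r,v_2,\dots,v_{2^n}$: it reads as the in-order of $T^{v_1}_L$, then $v_1$, then the in-order of $T^{v_2}_L$, then $v_2$, and so on, terminating with the in-order of $T^{v_{2^n}}_R$. Writing $\pi=\pi_L\,\pi(m)\,\pi_R$ with $m=\indmax(\pi)$, the structural step is to show that, on $M^n_\pi$, the event $\{\pat_e(\bm T)=\pi\}$ is equivalent to the conjunction of (i) $v_{2^n}\in\bm T$; (ii) $T^{v_{2^n}}_R$, viewed as a binary tree, equals the canonical tree $T_{\pi_R}$ (which automatically yields $|T^{v_{2^n}}_R|=|\pi_R|$ and hence places $v_{2^n}$ at window-position $m$); and (iii) $\pat_e(T^{v_{2^n}}_L)=\pi_L$. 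The maximality of $v_{2^n}$ among the last $|\pi|$ positions is then automatic, since every value in $T^{v_{2^n}}_L\cup T^{v_{2^n}}_R$ is strictly less than the value carried by $v_{2^n}$.

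The probability computation is then immediate from the Galton--Watson construction \eqref{bintree}: the vertex $v_{2^n}$ belongs to $\bm T$ with probability $p^n$, and conditionally on this, the subtrees $T^{v_{2^n}}_L$ and $T^{v_{2^n}}_R$ are independent of each other and of the structure above, each being empty with probability $1-p$ and otherwise an independent copy of $\bm T$ with probability $p$. In the generic case $\pi_L,\pi_R\neq\emptyset$ this gives
\[
\P(\pat_e(\bm T)=\pi,\,M^n_\pi)=p^n\cdot\bigl(p\cdot\P(\pat_e(\bm T)=\pi_L)\bigr)\cdot\bigl(p\cdot\P(\bm T=T_{\pi_R})\bigr),
\]
and summing the geometric series $\sum_{n\ge 0}p^n=(1-p)^{-1}$ yields the first line of \eqref{twocross}. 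The three remaining cases follow by obvious substitutions in the factors: if $\pi_R=\emptyset$ the factor $p\cdot\P(\bm T=T_{\pi_R})$ is replaced by $(1-p)$ (forcing $T^{v_{2^n}}_R$ empty); if $\pi_L=\emptyset$ the factor $p\cdot\P(\pat_e(\bm T)=\pi_L)$ is replaced by $1$ (no constraint on $T^{v_{2^n}}_L$); and the degenerate case $\pi=1$ is trivial.

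The hardest step I anticipate is verifying the structural equivalence (i)--(iii). It reduces to a length-bookkeeping argument: on $M^n_\pi$ one must have $|T^{v_{2^n}}_R|\le|\pi|-1$, so that $v_{2^n}$ itself falls in the last $|\pi|$ positions, and simultaneously $|T^{v_{2^n}}_L|+1+|T^{v_{2^n}}_R|\ge|\pi|$, so that the window does not reach $v_{2^{n-1}}$ (which would contradict the maximality of $v_{2^n}$, as $v_{2^{n-1}}$ carries a larger value). Combined with the exact identity $|T^{v_{2^n}}_R|=|\pi_R|$ read off from the position of the peak in the target pattern, this pins down the three conditions above; everything else then reduces to independence of Galton--Watson subtrees and a geometric sum, and the symmetric computation along the left spine delivers \eqref{threecross}.
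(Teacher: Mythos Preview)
Your proof is correct and follows essentially the same approach as the paper: decompose over the events $M^n_\pi$, identify $\{\pat_e(\bm T)=\pi\}\cap M^n_\pi$ with the conjunction of $R^n=\{v_{2^n}\in\bm T\}$, $\bm T^{v_{2^n}}_R=T_{\pi_R}$, and $\pat_e(\bm T^{v_{2^n}}_L)=\pi_L$, then use the independence of the two subtrees conditionally on $R^n$ and sum the geometric series $\sum_{n\ge 0}p^n=(1-p)^{-1}$. The paper phrases the key inclusion as ``the event $\{\pat_e(\bm T^{v_{2^n}}_L)=\pi_L\}\cap\{\bm T^{v_{2^n}}_R=T_{\pi_R}\}$ is contained both in $M^n_\pi$ and in $R^n$'', which is exactly your structural equivalence (i)--(iii); your length-bookkeeping paragraph is a more explicit justification of the same fact.
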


\begin{proof}
	We only investigate $\P\big(\text{pat}_e(\bm{T})=\pi\big)$ assuming that $\pi_L\neq\emptyset,\;\pi_R\neq\emptyset$. The details for all the remaining cases can be found in the proof of \cite[Lemma 4.18]{borga2020localperm}.
	
	As we have seen above $\P\big(\text{pat}_e(\bm{T})=\pi)=\sum_{n=0}^\infty\P\big(\text{pat}_e(\bm{T})=\pi,M^n_{\pi})$. Conditioning on $v_{2^n}$ being the vertex in $\bm{T}$ corresponding to the maximum among the last $|\pi|$ values of $\sigma_{\bm{T}},$ then $\text{pat}_{e(|\pi|)}(\bm{T})=\text{pat}_{e(|\pi|)}(\bm{T}^{v_{2^n}}),$ and so 
	\begin{equation}
		\P\big(\text{pat}_e(\bm{T})=\pi)
		=\sum_{n\in\Z_{\geq0}}\P\big(\text{pat}_e(\bm{T}^{v_{2^n}}_L)=\pi_L,\bm{T}^{v_{2^n}}_R=T_{\pi_R},M^n_{\pi}).
	\end{equation}
	Since the event $\big\{\text{pat}_e(\bm{T}^{v_{2^n}}_L)=\pi_L\big\}\cap\big\{\bm{T}^{v_{2^n}}_R=T_{\pi_R}\big\}$ is contained both in $M^n_{\pi}$ and in $R^n,$ then
	\begin{equation}
			\P\big(\text{pat}_e(\bm{T})=\pi)=\sum_{n\in\Z_{\geq0}}\P\big(\text{pat}_e(\bm{T}^{v_{2^n}}_L)=\pi_L,\bm{T}^{v_{2^n}}_R=T_{\pi_R},R^n).
	\end{equation}
	Using the independence between $\bm{T}^{v_{2^n}}_L$ and $\bm{T}^{v_{2^n}}_R$ conditionally on $R^n$ and continuing the sequence of equalities,
	$$\P\big(\text{pat}_e(\bm{T})=\pi)=\sum_{n\in\Z_{\geq0}}\P\big(\text{pat}_e(\bm{T}^{v_{2^n}}_L)=\pi_L|R^n\big)\cdot\P\big(\bm{T}^{v_{2^n}}_R=T_{\pi_R}|R^n\big)\cdot\P\big(R^n).$$
	Now, noting that conditionally on $R^n,$ $\bm{T}^{v_{2^n}}_L$ is an independent copy of $\bm{T}$   with probability $p$ and empty with probability $1-p$ and the same obviously holds for $\bm{T}^{v_{2^n}}_R,$ we can rewrite the last term as
	\begin{equation}
			\begin{split}
				\P\big(\text{pat}_e(\bm{T})=\pi)&=\sum_{n\in\Z_{\geq0}}p^2\cdot\P\big(\text{pat}_e(\bm{T})=\pi_L\big)\cdot\P\big(\bm{T}=T_{\pi_R}\big)\cdot\P\big(R^n)\\
				&=p^2\cdot\P\big(\text{pat}_e(\bm{T})=\pi_L\big)\cdot\P\big(\bm{T}=T_{\pi_R}\big)\cdot\sum_{n\in\Z_{\geq0}}\P\big(R^n)\\
				&=\frac{p^2}{1-p}\cdot\P\big(\text{pat}_e(\bm{T})=\pi_L\big)\cdot\P\big(\bm{T}=T_{\pi_R}\big),
			\end{split}
	\end{equation}
	where in the last equality we used that $\sum_{n=0}^\infty\mathbb{P}(R^n)=\sum_{n=0}^\infty p^n=\frac{1}{1-p}$.
\end{proof}

We now continue the analysis of $\P\big(\text{pat}_{\bm{J}}(\bm{T})=\pi\big).$ In order to do that, we need a formula for $\P(\bm{T}=T),$ for a given tree $T,$ because such probabilities appear in \cref{twocross,threecross}.

\begin{obs}
	\label{obs:bintreecomp}
	In a binary tree with $n$ vertices, every vertex has two potential children. Out of these $2n$ potential children $n-1$ exist and $n+1$ do not exist. Hence
	\begin{equation}
	\label{bintreecomp}
	\P\big(\bm{T}=T\big)=p^{|T|-1}\cdot(1-p)^{|T|+1}.
	\end{equation}	
\end{obs}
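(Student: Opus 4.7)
The plan is to exploit the independence structure highlighted in the remark just after \eqref{bintree}: under the chosen offspring distribution, for each vertex of $\bm{T}_\delta$, the presence of a left child and of a right child are independent Bernoulli$(p)$ events. In other words, we can construct $\bm{T}_\delta$ by starting from the root, and for every vertex already added, tossing two independent $p$-biased coins to decide whether a left and/or a right child is attached; each vertex of the resulting tree then comes equipped with two ``potential children slots'', each slot being independently filled with probability $p$ and empty with probability $1-p$.

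Given a finite binary tree $T$, I would compute $\P(\bm{T}_\delta = T)$ by reading off which of these potential children slots must be filled and which must be empty in order to exactly match $T$. Each internal parent--child edge of $T$ corresponds to a filled slot (probability $p$), and each absent child corresponds to an empty slot (probability $1-p$), and all these choices are mutually independent.

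The only step that requires a small verification is the combinatorial count, which I would state as follows: a binary tree with $|T|$ vertices has $2|T|$ potential children slots in total, of which exactly $|T|-1$ are filled (this is the number of edges of $T$, equivalently the number of non-root vertices), and therefore exactly $2|T|-(|T|-1) = |T|+1$ are empty. Multiplying the corresponding probabilities yields
\[
\P\bigl(\bm{T}_\delta = T\bigr) = p^{|T|-1}\cdot (1-p)^{|T|+1},
\]
as claimed. There is no genuine obstacle here; the only thing to be careful about is to invoke the left/right independence from the remark after \eqref{bintree}, since without it one could not factor the probabilities slot-by-slot.
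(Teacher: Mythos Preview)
Your proposal is correct and follows exactly the same reasoning as the paper: the observation's own statement already contains the full argument (count $2|T|$ potential child slots, $|T|-1$ filled and $|T|+1$ empty, then use the slot-by-slot independence noted after \eqref{bintree}). There is nothing to add.
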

Using together Lemmas \ref{onecross} and \ref{twoandthreecross} and the above observation we have an explicit recursion to compute the probability $\P\big(\text{pat}_{\bm{J}}(\bm{T})=\pi\big).$ We show an example of the recursion obtained for an explicit pattern $\pi$.

\begin{exmp}
	\label{bigexemp}
	Let $\pi$ be the following 231-avoiding permutation,  
	$$\pi=4\;1\;3\;2\;6\;5\;7\;10\;8\;9\;11\;12\;16\;13\;15\;14=
	\begin{array}{lcr}
	\begin{tikzpicture}
	\begin{scope}[scale=.25]
	\permutation{4,1,3,2,6,5,7,10,8,9,11,12,16,13,15,14}
	\draw (1+.5,4+.5) [green, fill] circle (.21);  
	\draw (5+.5,6+.5) [green, fill] circle (.21); 
	\draw (7+.5,7+.5) [green, fill] circle (.21); 
	\draw (8+.5,10+.5) [green, fill] circle (.21); 
	\draw (11+.5,11+.5) [green, fill] circle (.21); 
	\draw (12+.5,12+.5) [green, fill] circle (.21);
	\draw (13+.5,16+.5) [orange, fill] circle (.21);
	\draw (15+.5,15+.5) [blue, fill] circle (.21); 
	\draw (16+.5,14+.5) [blue, fill] circle (.21); 
	\draw[thick] (1,1) rectangle (17,17);
	\end{scope}
	\end{tikzpicture}
	\end{array},$$
	where we draw in green the left-to-right maxima, in blue the right-to-left maxima, and in orange the maximum.
	We now compute $\P\big(\text{pat}_{\bm{J}}(\bm{T})=\pi\big)$ using Lemmas \ref{onecross} and \ref{twoandthreecross}. First of all, we recursively split our permutation around its maximum as shown in \cref{decomp_tree}.
	
	\begin{figure}[htbp]
		\centering
			\includegraphics[scale=.60]{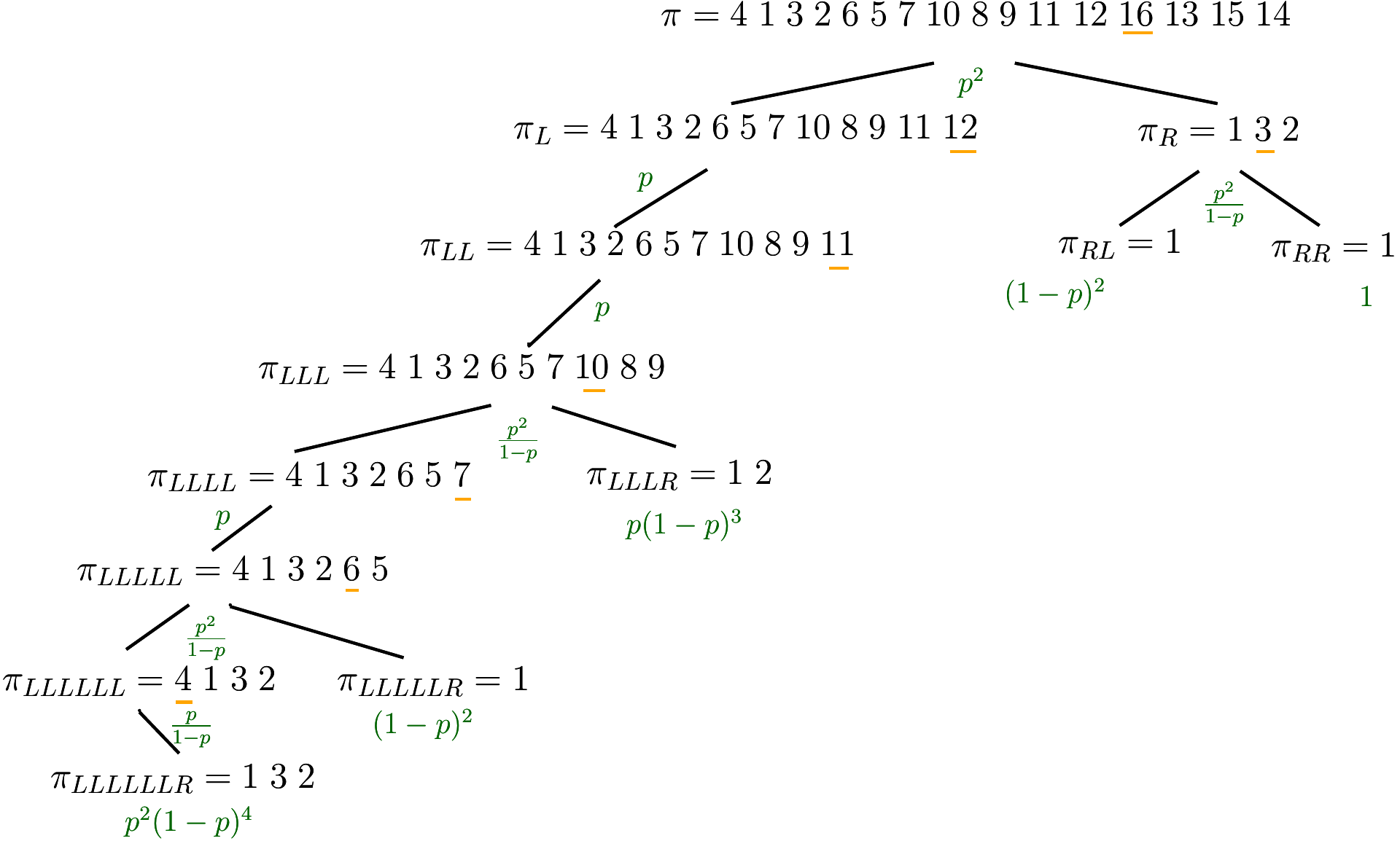}\\
			\caption{We draw the recursive decompositions in the left and right part (w.r.t.\ the position of the maximum underlined in orange) for the permutation $\pi=4\;1\;3\;2\;6\;5\;7\;10\;8\;9\;11\;12\;16\;13\;15\;14.$ Moreover we write in green the factors $p^\alpha\cdot(1-p)^{\beta}$ that we are adding at each step (coming from Lemmas \ref{onecross} and \ref{twoandthreecross}).}\label{decomp_tree}
	\end{figure}
	
	Using \cref{oneredcross} with the decomposition of $\pi$ in $\pi_L\pi(m)\pi_R$ (shown at the root of the tree in \cref{decomp_tree}),  we have
	\begin{equation}
	\P\big(\text{pat}_{\bm{J}}(\bm{T})=\pi\big)= p^2\cdot\P\big(\text{pat}_e(\bm{T})=\pi_L\big)\cdot\P\big(\text{pat}_b(\bm{T})=\pi_R\big).
	\end{equation}
	We continue decomposing $\pi_L$ and $\pi_R$ around their maxima (which correspond to the left and right children of the root in the tree in \cref{decomp_tree}). Using \cref{twocross} for the left part, we obtain 
	\begin{equation}
	\P\big(\text{pat}_e(\bm{T})=\pi_L\big)=p\cdot\P\big(\text{pat}_e(\bm{T})=\pi_{LL}\big),
	\end{equation}
	and using \cref{threecross} for the right part, we obtain
	\begin{equation}
	\P\big(\text{pat}_b(\bm{T})=\pi_R\big)=\frac{p^2}{1-p}\cdot\underbrace{\P\big(\bm{T}=T_{\pi_{RL}}\big)}_{\stackrel{(\ref{bintreecomp})}{=}(1-p)^2}\cdot\underbrace{\P\big(\text{pat}_b(\bm{T})=1\big)}_{1}.
	\end{equation}
	Therefore, summing up the last three equations and then proceeding similarly through the left subtree in \cref{decomp_tree}, we deduce that $\P\big(\text{pat}_{\bm{J}}(\bm{T})=\pi\big)$ is the product of all the green factors in \cref{decomp_tree}, that is
	\begin{equation}
	\P\big(\text{pat}_{\bm{J}}(\bm{T})=\pi\big)=p^{15}(1-p)^{7}=\Big(\frac{1-\delta}{2}\Big)^{15}\cdot\Big(\frac{1+\delta}{2}\Big)^{7}=\Big(\frac{1}{2}\Big)^{22}+O_{LP}(\delta),
	\end{equation}
	and this concludes \cref{bigexemp}.
\end{exmp}

We now proceed with the analysis of the general case. Using the recursion obtained by combining Lemmas \ref{onecross} and \ref{twoandthreecross} and \cref{obs:bintreecomp}, we immediately realize that
\begin{equation}
\label{vov0thg}
\P\big(\text{pat}_{\bm{J}}(\bm{T})=\pi\big)=p^{\alpha}\cdot(1-p)^{\beta},\quad\text{for some}\quad\alpha,\beta\in\Z_{\geq0}.
\end{equation}
Note that $\alpha\geq 0$ since in Lemmas \ref{onecross} and \ref{twoandthreecross} and Observation \ref{obs:bintreecomp} the $p^*$ factors always appear with nonnegative exponent. Moreover $\beta\geq 0$ since in \cref{twocross,threecross}, each time a $(1-p)^{-1}$ factor appears, there is also a $\P\big(\bm{T}=T\big)$ factor that contains a $(1-p)^{\gamma}$ factor with $\gamma\geq2.$
Since $p=\frac{1-\delta}{2},$ it follows that
$$\P\big(\text{pat}_{\bm{J}}(\bm{T})=\pi\big)=\Big(\frac{1}{2}-\frac{\delta}{2}\Big)^{\alpha}\cdot\Big(\frac{1}{2}+\frac{\delta}{2}\Big)^{\beta}=\Big(\frac{1}{2}\Big)^{\alpha+\beta}+O_{LP}(\delta),\quad\text{for some}\quad\alpha,\beta\in\Z_{\geq0}.$$
The exact value $\alpha+\beta$ is determined in the following proposition where a careful analysis on how many factors $p$ and $(1-p)$ appear in $\P\big(\text{pat}_{\bm{J}}(\bm{T})=\pi\big)$ was carried out (using the same ideas as in the example above).

\begin{prop}[{\cite[Proposition 4.22]{borga2020localperm}}]
	\label{patprob231}
	Let $\pi\in\emph{Av}(231)$ and $\bm{T}=\bm{T}_{\delta}$ be a Galton--Watson tree defined as above, then
	\begin{equation}
	\label{probpat231}
	\P\big(\emph{pat}_{\bm{J}}(\bm{T})=\pi\big)=\frac{2^{\#\emph{LRmax}(\pi)+\#\emph{RLmax}(\pi)}}{2^{2|\pi|}}+O_{LP}(\delta)=P_{231}(\pi)+O_{LP}(\delta).
	\end{equation}
\end{prop}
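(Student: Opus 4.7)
The plan is to combine the recursive formulas for $\P\bigl(\text{pat}_{\bm J}(\bm T)=\pi\bigr)$, $\P\bigl(\text{pat}_e(\bm T)=\pi\bigr)$, and $\P\bigl(\text{pat}_b(\bm T)=\pi\bigr)$ supplied by Lemmas~\ref{onecross} and~\ref{twoandthreecross}, together with the closed form $\P(\bm T=T)=p^{|T|-1}(1-p)^{|T|+1}$ from \cref{obs:bintreecomp}, to show that for every $\pi\in\text{Av}(231)$ one has
\begin{equation}\label{eq:target_form}
\P\bigl(\text{pat}_{\bm J}(\bm T)=\pi\bigr)=p^{\alpha(\pi)}(1-p)^{\beta(\pi)},
\end{equation}
with $\alpha(\pi),\beta(\pi)\in\mathbb{Z}_{\geq 0}$ and $\alpha(\pi)+\beta(\pi)=2|\pi|-\#\text{LRmax}(\pi)-\#\text{RLmax}(\pi)$. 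Once this is established, substituting $p=\tfrac{1-\delta}{2}$, $1-p=\tfrac{1+\delta}{2}$ gives
\begin{equation*}
p^{\alpha(\pi)}(1-p)^{\beta(\pi)}=\Bigl(\tfrac12\Bigr)^{\alpha(\pi)+\beta(\pi)}(1-\delta)^{\alpha(\pi)}(1+\delta)^{\beta(\pi)}=\Bigl(\tfrac12\Bigr)^{2|\pi|-\#\text{LRmax}(\pi)-\#\text{RLmax}(\pi)}+O_{LP}(\delta),
\end{equation*}
which is exactly $P_{231}(\pi)+O_{LP}(\delta)$.

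The form \eqref{eq:target_form} (with nonnegative exponents) is immediate by induction on $|\pi|$: every branch of the recursion in Lemmas~\ref{onecross} and~\ref{twoandthreecross} multiplies by a factor of the shape $p^a(1-p)^{-b}$ with $a\geq b\geq 0$, and the only place where a negative power of $(1-p)$ appears is paired with a factor $\P(\bm T=T_{\pi_R})$ or $\P(\bm T=T_{\pi_L})$, each of which contributes $(1-p)^{|\pi_*|+1}$, compensating the $-1$. The substantive step is the count of the total exponent. I would first prove, by induction on $|\pi|$, the two auxiliary identities
\begin{equation*}
\P\bigl(\text{pat}_e(\bm T)=\pi\bigr)=p^{*}(1-p)^{*}\text{ with exponent sum }2|\pi|-\#\text{LRmax}(\pi)-1,
\end{equation*}
\begin{equation*}
\P\bigl(\text{pat}_b(\bm T)=\pi\bigr)=p^{*}(1-p)^{*}\text{ with exponent sum }2|\pi|-\#\text{RLmax}(\pi)-1.
\end{equation*}
Each of the four cases in \cref{twocross} (resp.\ \cref{threecross}) contributes an exponent sum that matches the recursion under the decomposition $\pi=\pi_L\pi(m)\pi_R$, using the elementary fact, valid on $\text{Av}(231)$, that all values of $\pi_L$ lie below all values of $\pi_R$, so that $\#\text{LRmax}(\pi)=\#\text{LRmax}(\pi_L)+1$ and $\#\text{RLmax}(\pi)=\#\text{RLmax}(\pi_R)+1$.

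Equipped with these two identities, I would plug them into the four cases of \cref{oneredcross}. In the generic case $\pi_L,\pi_R\neq\emptyset$, the total exponent becomes
\begin{equation*}
2+\bigl(2|\pi_L|-\#\text{LRmax}(\pi_L)-1\bigr)+\bigl(2|\pi_R|-\#\text{RLmax}(\pi_R)-1\bigr)=2|\pi|-\#\text{LRmax}(\pi)-\#\text{RLmax}(\pi),
\end{equation*}
using $\#\text{LRmax}(\pi_L)+\#\text{RLmax}(\pi_R)=\#\text{LRmax}(\pi)+\#\text{RLmax}(\pi)-2$. The three boundary cases ($\pi_L=\emptyset$, $\pi_R=\emptyset$, $\pi=1$) are verified directly and give the same formula, which closes the induction and yields \eqref{eq:target_form}.

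The recursion itself is purely mechanical, so the main subtlety is bookkeeping: one needs to double-check the signs of the $(1-p)^{\pm 1}$ factors in \cref{twocross}, \cref{threecross} and track how LRmax and RLmax rearrange under splitting. Example~\ref{bigexemp} provides a good sanity check: it should be verified that the expected formula $2|\pi|-\#\text{LRmax}(\pi)-\#\text{RLmax}(\pi)=2\cdot 16-6-3=22$ matches the final exponent $\alpha+\beta=15+7=22$ obtained there. Everything else is a routine expansion of $(1\pm\delta)^{*}$ near $\delta=0$.
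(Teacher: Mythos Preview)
Your proposal is correct and follows essentially the same approach as the paper's (referenced) proof: a careful induction on $|\pi|$ tracking the total exponent $\alpha(\pi)+\beta(\pi)$ through the recursions of Lemmas~\ref{onecross} and~\ref{twoandthreecross}, establishing the auxiliary exponent sums $2|\pi|-\#\text{LRmax}(\pi)-1$ and $2|\pi|-\#\text{RLmax}(\pi)-1$ for $\text{pat}_e$ and $\text{pat}_b$ respectively, and then combining via \cref{oneredcross}. One small slip in your sanity check: in Example~\ref{bigexemp} the maximum $16$ is both a left-to-right and a right-to-left maximum, so $\#\text{LRmax}(\pi)=7$ (not $6$), giving $2\cdot 16-7-3=22$, which then matches $15+7=22$ as intended.
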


We can now prove Proposition \ref{weakprop}.

\begin{proof}[Proof of Proposition \ref{weakprop}]
	Summing up all the results and recalling that $\bm{T}=\bm{T}_{\delta}$, we obtain,
	\begin{equation}
	\begin{split}
	\label{coccT}
	\E\big[\cocc(\pi,\bm{T}_{\delta})\big]&\stackrel{(\ref{step1})}{=}\delta^{-1}\P\big(\text{pat}_{\bm{J}}(\bm{T}_{\delta})=\pi\big)\stackrel{(\ref{probpat231})}{=}\delta^{-1}\big(P_{231}(\pi)+O_{LP}(\delta)\big)\\&\;=\delta^{-1}\cdot P_{231}(\pi)+O_{LP}(1).
	\end{split}
	\end{equation}
	
	Applying Lemma \ref{svantelemma} and using the bijection between $231$-avoiding permutations and binary trees, we conclude that for $n\to\infty,$
	\begin{equation}
	\E\big[\cocc(\pi,\bm{\sigma}_n)\big]\sim P_{231}(\pi)\cdot n,\quad\text{for all}\quad \pi\in\Av(231). 
	\end{equation}
	Dividing by $n$ yields Proposition \ref{weakprop}.
\end{proof}

It remains to prove Theorem \ref{thm_1}. We are going to use the \emph{Second moment method}.
As before we start with a result regarding trees and then we will transfer it to permutations. The proof of the following result uses similar techniques as in the previous results, therefore we skip the details.
\begin{prop}[{\cite[Proposition 4.24]{borga2020localperm}}]
	\label{figrfigpqofhfpuh}
	Using notation as before and setting $\bm{T}=\bm{T}_\delta,$ we have,  
	$$\E\big[\cocc(\pi,\bm{T})^2\big]=\frac{P_{231}(\pi)^2}{2}\cdot\delta^{-3}+O_{LP}(\delta^{-2}),\quad\text{for all}\quad\pi\in\emph{Av}(231).$$
\end{prop}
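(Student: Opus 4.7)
The plan is to mimic the first-moment computation of \cref{weakres} but applied to the square $\cocc(\pi,\bm{T})^2$, where $\bm{T}=\bm{T}_\delta$. Starting from the recursion of \cref{ricors}, write $N=N_L+N_R+\mathds{1}$, where $N\coloneqq\cocc(\pi,\bm{T})$, $N_L\coloneqq\cocc(\pi,\bm{T}_L)$, $N_R\coloneqq\cocc(\pi,\bm{T}_R)$, and $\mathds{1}\coloneqq\mathds{1}_{\{\text{pat}_{\bm{J}}(\bm{T})=\pi\}}$. Squaring gives
\begin{equation}
N^{2}=N_L^{2}+N_R^{2}+\mathds{1}+2\,N_L N_R+2\,N_L\mathds{1}+2\,N_R\mathds{1}.
\end{equation}

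Next, I would exploit the branching structure of $\bm{T}_\delta$: each of $\bm{T}_L$ and $\bm{T}_R$ is an independent copy of $\bm{T}$ with probability $p$ and empty with probability $1-p$, and moreover $\bm{T}_L$ and $\bm{T}_R$ are independent of one another. This yields $\E[N_L^{2}]=\E[N_R^{2}]=p\,\E[N^{2}]$ and, crucially, $\E[N_L N_R]=\E[N_L]\,\E[N_R]=p^{2}\E[N]^{2}$. Taking the expectation of the squared recursion and using $1-2p=\delta$, I obtain
\begin{equation}
\E[N^{2}]=\delta^{-1}\Big(2p^{2}\E[N]^{2}+\E[\mathds{1}]+2\E[N_L\mathds{1}]+2\E[N_R\mathds{1}]\Big).
\end{equation}

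The asymptotic behavior now reads off. Using \cref{coccT} we have $\E[N]=\delta^{-1}P_{231}(\pi)+O_{LP}(1)$, and together with $p^{2}=\tfrac{1}{4}+O_{LP}(\delta)$ this gives $2p^{2}\E[N]^{2}=\tfrac{1}{2}P_{231}(\pi)^{2}\delta^{-2}+O_{LP}(\delta^{-1})$, which after multiplication by $\delta^{-1}$ supplies the claimed main term $\tfrac{1}{2}P_{231}(\pi)^{2}\delta^{-3}$. By \cref{patprob231} the term $\E[\mathds{1}]=\P(\text{pat}_{\bm{J}}(\bm{T})=\pi)$ is $O_{LP}(1)$, contributing only $O_{LP}(\delta^{-1})$ to $\E[N^{2}]$. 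It remains to show that each of the cross terms $\E[N_L\mathds{1}]$, $\E[N_R\mathds{1}]$ is $O_{LP}(\delta^{-1})$, which then yields the error term $O_{LP}(\delta^{-2})$.

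The main obstacle is precisely this bound on the cross terms within the Laurent-polynomial framework: the crude estimate $\E[N_L\mathds{1}]\leq\E[N_L]=p\,\E[N]=O_{LP}(\delta^{-1})$ gives the right order of magnitude, but one must also check that $\E[N_L\mathds{1}]$ is itself a Laurent polynomial (i.e.\ a rational function of $\delta$ with the expected expansion). To handle this, I would invoke the decomposition of $\mathds{1}$ given by \cref{onecross}: writing $\pi=\pi_L\pi(m)\pi_R$, the indicator $\mathds{1}$ factors (up to an event involving the rightmost branch of $\bm{T}$) as the product $\mathds{1}_{\{\text{pat}_e(\bm{T}_L)=\pi_L\}}\cdot\mathds{1}_{\{\text{pat}_b(\bm{T}_R)=\pi_R\}}$, so that by the independence of $\bm{T}_L$ and $\bm{T}_R$,
\begin{equation}
\E[N_L\mathds{1}]=p^{2}\,\E\!\left[\cocc(\pi,\bm{T})\cdot\mathds{1}_{\{\text{pat}_e(\bm{T})=\pi_L\}}\right]\cdot\P(\text{pat}_b(\bm{T})=\pi_R),
\end{equation}
(with analogous formulae when $\pi_L$ or $\pi_R$ is empty). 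Each of the factors on the right admits a recursive analysis through the tree decomposition at the maximum, entirely parallel to that of \cref{twoandthreecross,patprob231}. This yields an explicit polynomial expression in $p$ and $1-p$ (and hence a Laurent polynomial in $\delta$) whose valuation, combined with the $\delta^{-1}$ arising from $\E[\cocc(\pi,\bm{T})]$, is at least $-1$. Assembling all pieces then gives the announced formula $\E[\cocc(\pi,\bm{T})^{2}]=\tfrac{1}{2}P_{231}(\pi)^{2}\delta^{-3}+O_{LP}(\delta^{-2})$.
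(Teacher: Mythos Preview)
Your proposal is correct and follows essentially the same approach as the paper, which defers the proof to \cite[Proposition 4.24]{borga2020localperm} with the remark that it ``uses similar techniques as in the previous results.'' Squaring the recursion of \cref{ricors}, exploiting the independence of $\bm T_L$ and $\bm T_R$ to extract the main term $2p^2\E[N]^2$, and handling the cross terms $\E[N_L\mathds{1}]$, $\E[N_R\mathds{1}]$ via the factorization of the indicator through \cref{onecross} is exactly the intended route; your identification of the cross-term analysis as the only point requiring genuine work (and your outline of how to set up the auxiliary recursion parallel to \cref{twoandthreecross}) is accurate.
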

	
We complete the proof of our main theorem.
\begin{proof}[Proof of Theorem \ref{thm_1}]
	Applying again Lemma \ref{svantelemma} and using our bijection between $231$-avoiding permutations and binary trees, we conclude that for $n\to\infty,$
	\begin{equation}
	\label{final1}
	\E\big[\cocc(\pi,\bm{\sigma}_n)^2\big]\sim P_{231}(\pi)^2\cdot n^2, \quad\text{for all}\quad \pi\in\Av(231).
	\end{equation}
	This, with Proposition \ref{weakprop}, implies that
	\begin{equation}
	\label{final2}
	\text{Var}\big(\widetilde{\cocc}(\pi,\bm{\sigma}_n)\big)\to 0, \quad\text{for all}\quad \pi\in\Av(231).
	\end{equation}
	We can apply the Second moment method and deduce that
	\begin{equation}
	\widetilde{\cocc}(\pi,\bm{\sigma}_n)\stackrel{P}{\rightarrow}P_{231}(\pi),\quad\text{for all}\quad\pi\in\Av(231).
	\end{equation}
	This ends the proof.	
\end{proof}

\subsection{Construction of the limiting object}
\label{explcon}

We now exhibit an explicit construction of the limiting object $\bm{\sigma}^{\infty}_{231}$ as a random total order $\bm{\preccurlyeq}_{231}$ on $\mathbb{Z}.$ 

\begin{rem}
	The intuition behind the construction that we are going to present comes from the local limit for uniform binary trees (for more details, see \cite{stufler2016local}). When the size of a uniform binary tree tends to infinity, looking around a uniform distinguished vertex, we see an infinite upward spine. Each vertex in this spine is the left or the right child of the previous one with probability $1/2.$ Moreover, attached to this infinite spine, there are some independent copies of binary Galton--Watson trees.
	Using this idea and the bijection between binary trees and $231$-avoiding permutations we are going to construct the limiting random total order.
	
	Note however that this intuition is not formally needed in the following since we present the construction of the limiting object using the permutation point of view. 
\end{rem}

We have to introduce some notation. We define two operations from $\Sr_{k}\times\mathcal{S}_{\ell}$ to $\Sr_{k+\ell+1},$ for $k>0$ and $\ell\geq0$ ($\mathcal{S}_{0}$ is the set containing the empty permutation): let $(\sigma,i)\in\Sr_{k}$ be a rooted permutation and $\pi\in\mathcal{S}_{\ell}$ be another (unrooted) permutation, we set
\begin{equation}
\begin{split}
&(\sigma,i)*^R \pi\coloneqq\big(\sigma(1)\dots\sigma(k)(\ell+k+1)(\pi(1)+k)\dots(\pi(\ell)+k)\,,\,i\big),\\
&(\sigma,i)*^L \pi\coloneqq\big(\pi(1)\dots\pi(\ell)(\ell+k+1)(\sigma(1)+\ell)\dots(\sigma(k)+\ell)\,,\,\ell+i+1\big).
\end{split}
\end{equation}

In words, from a graphical point of view, the diagram of $(\sigma,i)*^R \pi$ (resp.\ $(\sigma,i)*^L \pi$) is obtained starting from the diagram of the rooted permutation $(\sigma,i),$ adding on the top-right (resp.\ bottom-left) the diagram of $\pi$ and adding a new maximal element between the two diagrams. We give an example below.
\begin{exmp}
	Let $(\sigma,i)=(132,3)$ and $\pi=21$ then
	$$(\sigma,i)*^R \pi=
	\begin{array}{lcr}
	\includegraphics[scale=0.6]{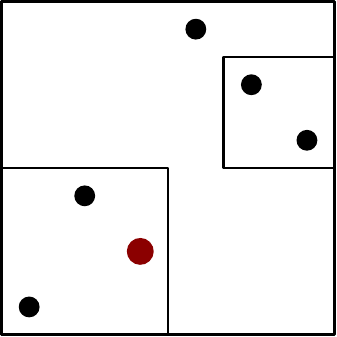}\\
	\end{array}=(132621,3)\quad\text{and}\quad(\sigma,i)*^L \pi=
	\begin{array}{lcr}
	\includegraphics[scale=0.6]{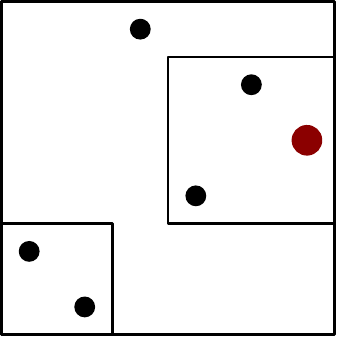}\\
	\end{array}=(216354,6).$$	
\end{exmp}

\begin{obs}
	\label{leftrightrem}
	Note that, given a rooted permutation $(\sigma,i),$ for all permutations $\pi$ (possibly empty), the rooted permutation $(\sigma,i)*^R \pi$ (resp.\ $(\sigma,i)*^L \pi$) contains at least one more element on the right (resp.\ left) of the root than $(\sigma,i)$.
\end{obs}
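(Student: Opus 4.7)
The plan is to verify this observation by a direct and elementary counting argument, working straight from the defining formulas of the operations $*^R$ and $*^L$ that appear just above the statement. Since $(\sigma,i)$ has size $k$ and $\pi$ has size $\ell \geq 0$, the rooted permutations $(\sigma,i)*^R \pi$ and $(\sigma,i)*^L \pi$ both have size $k+\ell+1$, and the key quantity to track is the number of indices strictly to the right (resp.\ strictly to the left) of the root position.

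First I would treat the right-append operation. In $(\sigma,i)$, by definition the root lies at position $i$ in a permutation of size $k$, so there are exactly $k-i$ indices strictly to the right of the root. From the defining formula
\[
(\sigma,i)*^R \pi = \bigl(\sigma(1)\dots\sigma(k)(\ell+k+1)(\pi(1)+k)\dots(\pi(\ell)+k),\,i\bigr),
\]
the root of $(\sigma,i)*^R \pi$ is again at position $i$, while the ambient permutation now has size $k+\ell+1$; hence the number of indices strictly to the right of the root is $(k+\ell+1)-i$. The difference is $\ell+1 \geq 1$, which is exactly the claim.

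For the left-append operation, the same kind of bookkeeping applies, but one has to be careful because the root index shifts. In $(\sigma,i)$ there are $i-1$ indices strictly to the left of the root. From
\[
(\sigma,i)*^L \pi = \bigl(\pi(1)\dots\pi(\ell)(\ell+k+1)(\sigma(1)+\ell)\dots(\sigma(k)+\ell),\,\ell+i+1\bigr),
\]
the new root position is $\ell+i+1$, so there are $(\ell+i+1)-1 = \ell+i$ indices strictly to the left of the root. The difference is again $\ell+1 \geq 1$, as claimed.

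There is essentially no obstacle here: the statement is a book-keeping remark that follows immediately from the explicit formulas and the fact that a new maximal element (indexed $\ell+k+1$) is always inserted between the two blocks, contributing at least one additional index on the relevant side regardless of whether $\pi$ is empty. The only minor care needed is tracking the shift of the root index under $*^L$, which accounts for the $\ell+1$ rather than just $\ell$ new elements appearing on the left.
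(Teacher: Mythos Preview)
Your argument is correct and is exactly the intended verification: the paper states this as an Observation without proof, since it follows immediately from the explicit formulas for $*^R$ and $*^L$ by the index-counting you carry out.
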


We now consider two families of random variables: let $\{\bm{S}^i\}_{i\geq 0}$ be i.i.d.\ random variables with values in $\Av(231)\cup\{\emptyset\}$ and Boltzman distribution equal to 
$$\P(\bm{S}^i=\pi)=\frac{1}{2}\Big(\frac{1}{4}\Big)^{|\pi|},\quad\text{for all}\quad\pi\in\Av(231)\quad\text{and}\quad\P(\bm{S}^i=\emptyset)=\frac{1}{2}.$$
This is a probability distribution since the generating function for the class $\Av(231)$ is given by $A(z)=\frac{1-\sqrt{1-4z}}{2z}$ (see  for instance \cite[Appendix A]{borga2020localperm}).  Moreover, let $\{\bm{X}_i\}_{i\geq 1}$ be i.i.d.\ random variables (also independent of $\{\bm{S}^i\}_{i\geq 0}$) with values in the set $\{R,L\}$ such that
$$\P(\bm{X}_i=\,R)=\frac{1}{2}=\P(\bm{X}_i=\,L).$$

\begin{obs}
	Recalling that $\bm{T}_\delta$ is the binary Galton--Watson tree with offspring distribution $\eta(\delta)$ defined by \cref{bintree}, we note that, for $\delta=0,$ $\sigma_{\bm{T}_{0}}\stackrel{(d)}{=}(\bm{S}^i|\bm{S}^i\neq\emptyset).$
\end{obs}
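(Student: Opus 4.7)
The plan is to directly verify the distributional identity by computing both sides and comparing them term by term; no non-trivial obstacle should arise.

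First, I would specialize the formula from the preceding \cref{obs:bintreecomp}. At $\delta = 0$ we have $p = (1-\delta)/2 = 1/2$, so the offspring distribution $\eta$ becomes uniform on $\{0,L,R,2\}$ with all four atoms equal to $1/4$. In particular, $\E[\text{offspring}] = 1$, so $\bm{T}_0$ is a critical Galton--Watson tree and hence a.s.\ finite; we may freely sum over finite trees. Plugging $p = 1/2$ into \cref{bintreecomp} yields, for every finite binary tree $T$,
\[
\P(\bm{T}_0 = T) \;=\; p^{|T|-1}(1-p)^{|T|+1} \;=\; (1/2)^{2|T|} \;=\; (1/4)^{|T|}.
\]

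Second, I would transport this to permutations via the size-preserving bijection $T \mapsto \sigma_T$ of \cref{231bij}. Since $|\sigma_T| = |T|$ and $T_\pi$ is the unique binary tree with $\sigma_{T_\pi} = \pi$, for every non-empty $\pi \in \Av(231)$,
\[
\P\bigl(\sigma_{\bm{T}_0} = \pi\bigr) \;=\; \P(\bm{T}_0 = T_\pi) \;=\; (1/4)^{|\pi|}.
\]

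Third, I would compute the right-hand side. By the Boltzmann definition of $\bm{S}^i$, for $\pi \in \Av(231)$ non-empty,
\[
\P\bigl(\bm{S}^i = \pi \,\bigm|\, \bm{S}^i \neq \emptyset\bigr) \;=\; \frac{\tfrac{1}{2}(1/4)^{|\pi|}}{\P(\bm{S}^i \neq \emptyset)} \;=\; \frac{\tfrac{1}{2}(1/4)^{|\pi|}}{1/2} \;=\; (1/4)^{|\pi|},
\]
matching the expression for $\P(\sigma_{\bm{T}_0} = \pi)$ obtained above. The two probability distributions therefore coincide on all non-empty $\pi \in \Av(231)$, and since neither has mass on the empty permutation (the tree $\bm{T}_0$ always contains its root), they agree as laws on $\Av(231)$. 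As a sanity check, both are honest probability distributions: summing $(1/4)^n$ weighted by the number $C_n$ of binary trees (equivalently, of $231$-avoiders) of size $n$ gives $\sum_{n\geq 1} C_n (1/4)^n = A(1/4) - 1 = 1$, using the generating function $A(z) = (1-\sqrt{1-4z})/(2z)$ of $\Av(231)$ recalled earlier.
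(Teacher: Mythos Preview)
Your proof is correct. The paper states this as an Observation without proof, treating it as a direct consequence of the definitions; your argument is precisely the natural verification the reader is implicitly expected to carry out, using \cref{obs:bintreecomp} at $p=1/2$ and the Boltzmann definition of $\bm{S}^i$.
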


We are now ready to construct our random order $\bm{\preccurlyeq}_{231}$ on $\mathbb{Z}.$ Set $\tilde{\bm{S}}^0_{\bullet}\coloneqq(\tilde{\bm{S}}^0,\bm{m}),$ where $\tilde{\bm{S}}^0$ is the random variable $\bm{S}^0$ conditioned to be non empty, and $\bm{m}$ is the (random) index of the maximum of $\tilde{\bm{S}}^0.$ Then we set,
$$\bm{\sigma}_n^{\bullet}\coloneqq((\dots((\tilde{\bm{S}}^0_{\bullet}*^{\bm{X}_1}\bm{S}^1)*^{\bm{X}_2}\bm{S}^2)...\bm{S}^{n-1})*^{\bm{X}_n}\bm{S}^n),\quad\text{for all}\quad n\in\Z_{>0}.$$
Note that, for every fixed $h\in\Z_{>0},$ the sequence $\big\{r_h(\bm{\sigma}_n^{\bullet})\big\}_{n\in\Z_{>0}}$ is a.s.\ stationary (hence a.s.\ convergent). Let $\bm{\tau}_h$ be the limit of this sequence. The family $\big\{\bm{\tau}_h\big\}_{h\in\Z_{>0}}$ is a.s.\ consistent. Therefore, applying \cref{consistprop} page \pageref{consistprop}, the family $\big\{\bm{\tau}_h\big\}_{h\in\Z_{>0}}$ determines a.s.\ a unique random total order $(\Z,\bm{\preccurlyeq}_{231})$ such that, for all $h\in\Z_{>0},$ $$r_h(\Z,\bm{\preccurlyeq}_{231})=\bm{\tau}_h=\lim_{n\to\infty}r_h(\bm{\sigma}_n^{\bullet}),\quad\text{a.s.}$$
\begin{prop}
	Let $(\Z,\bm{\preccurlyeq}_{231})$ be the random total order defined above and $\bm{\sigma}^\infty_{231}$ be the limiting object defined in Corollary \ref{231corol}. Then
	$(\Z,\bm{\preccurlyeq}_{231})\stackrel{(d)}{=}\bm{\sigma}^\infty_{231}$.
\end{prop}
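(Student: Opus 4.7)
The plan is to invoke the uniqueness part of \cref{weakbsequivalence}: the law of a random (possibly infinite) rooted permutation is characterized by the laws of its $h$-restrictions $r_h$ for every $h\in\Z_{>0}$, and this in turn is determined by the numbers $\P(r_h(\cdot)=(\pi,h+1))$ for $\pi\in\mathcal{S}_{2h+1}$ (the other values of $j$ give $0$ by \cref{uyfvuoe2}). By \cref{thm_1} together with the identity \eqref{limrel}, we have
\[
\P\big(r_h(\bm{\sigma}^\infty_{231})=(\pi,h+1)\big)=P_{231}(\pi)\cdot\mathds{1}_{\pi\in\Av(231)},\qquad \pi\in\mathcal{S}_{2h+1}.
\]
So it suffices to prove the same formula for $(\Z,\bm{\preccurlyeq}_{231})$. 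Since the constructed object is almost surely a total order on $\Z$ whose finite restrictions correspond by construction to permutations obtained via the bijection $\psi$ of \cref{231bij} (and therefore avoid $231$), the indicator factor is automatic, and it remains to verify the value $P_{231}(\pi)$ when $\pi\in\Av_{2h+1}(231)$.

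First I would fix $h\in\Z_{>0}$ and use \cref{leftrightrem}: each $*^{\bm{X}_i}$ operation adds at least one element on the prescribed side of the root, so once $n$ is large enough (it suffices to take $n\ge 2h$) the value $r_h(\bm{\sigma}_n^{\bullet})$ only depends on finitely many of the random variables $\{\bm{S}^0,\bm{S}^1,\dots,\bm{S}^n,\bm{X}_1,\dots,\bm{X}_n\}$, and $r_h(\bm{\sigma}_n^{\bullet})$ is almost surely constant in $n$ from some (random) index onwards, equal to $\bm{\tau}_h$. Hence $\P(\bm{\tau}_h=(\pi,h+1))=\lim_{n\to\infty}\P(r_h(\bm{\sigma}_n^{\bullet})=(\pi,h+1))$, and this limit is reached for finite $n$.

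Next I would translate the construction to the tree side via $\psi^{-1}$. The sequence of maxima that one sees when peeling off the root of $\bm{\sigma}_n^{\bullet}$ corresponds to a finite chain of ancestors in the binary tree $T=\psi^{-1}(\bm{\sigma}_n^{\bullet})$; the direction choice $\bm{X}_i\in\{L,R\}$ (with $\P(\bm{X}_i=L)=\P(\bm{X}_i=R)=1/2$) is exactly the datum of whether the newly added maximum attaches on the left or on the right, and the $\bm{S}^i$'s, each being distributed as the permutation of an unconditioned binary Galton--Watson tree $\bm{T}_\delta$ at the critical value $\delta=0$ (i.e.\ $p=1/2$), are independent Galton--Watson subtrees hanging from that chain. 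In other words, $(\Z,\bm{\preccurlyeq}_{231})$ is precisely the permutation read off from the local upward-spine limit of $\bm{T}_0$ around a distinguished vertex, made explicit on the permutation side. Using this dictionary, the pattern induced by the $h$-neighborhood of the root in $\bm{\sigma}_n^{\bullet}$ is distributed exactly as the pattern $\pat_{\bm{J}}(\bm{T}_0)$ of \cref{ricors}--\cref{patprob231} around its maximum element, with $p=1/2$.

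Finally, the recursive formulas in \cref{onecross} and \cref{twoandthreecross}, applied at $p=1/2$ and combined with \cref{obs:bintreecomp} (also at $p=1/2$), yield
\[
\P\big(r_h(\bm{\sigma}_n^{\bullet})=(\pi,h+1)\big)=\P\big(\pat_{\bm{J}}(\bm{T}_0)=\pi\big)=P_{231}(\pi),
\]
which is the $\delta=0$ specialization of \cref{probpat231}. Taking $n\to\infty$ gives $\P(\bm{\tau}_h=(\pi,h+1))=P_{231}(\pi)$ for every $\pi\in\Av_{2h+1}(231)$ and every $h\in\Z_{>0}$, matching the corresponding probabilities for $\bm{\sigma}^\infty_{231}$. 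Uniqueness of the law in \cref{weakbsequivalence} then yields $(\Z,\bm{\preccurlyeq}_{231})\stackrel{d}{=}\bm{\sigma}^\infty_{231}$.

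The main obstacle is the middle step: rigorously matching the iterative $*^{\bm{X}_i}$-construction with the size-biased local limit of the critical binary Galton--Watson tree $\bm{T}_0$ around a uniform vertex, so that the recursive computation of \cref{onecross}--\cref{twoandthreecross} can be applied verbatim. This amounts to tracking, in an induction on $|\pi|$, how the decomposition $\pi=\pi_L\,\pi(m)\,\pi_R$ translates into a single step up the spine (choosing $\bm{X}_i=L$ or $\bm{X}_i=R$ with probability $1/2$) together with the attachment of an independent $\bm{S}^{\,\cdot}$ on the other side, and to identifying the factors $p$, $1-p$, $p^2/(1-p)$ appearing in \cref{twocross}--\cref{threecross} with the probabilities that the spine continues, terminates, or carries the corresponding attached Galton--Watson subtree. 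Everything else is bookkeeping.
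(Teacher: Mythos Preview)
Your overall strategy is right: the law of a random infinite rooted permutation is determined by $\P(r_h(\cdot)=(\pi,h+1))$ for $\pi\in\mathcal{S}_{2h+1}$, and by \cref{thm_1} and \eqref{limrel} these equal $P_{231}(\pi)$ for $\bm{\sigma}^\infty_{231}$. The remaining task, showing that the explicit construction also gives $P_{231}(\pi)$, is indeed the whole content of the proposition.

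There is, however, a genuine gap in the middle step. The recursion of \cref{onecross}--\cref{twoandthreecross} computes $\P(\pat_{\bm J}(\bm T_\delta)=\pi)$ by decomposing a Galton--Watson tree \emph{from its root downward}; the window $\bm J$ is centred at the maximum, which is the root of $\bm T_\delta$. In the infinite spine construction, the distinguished point $v_0$ sits at the \emph{bottom} of an infinite upward spine and is not the maximum of anything once a single $*^{\bm X_i}$ has been applied; the appropriate recursion runs upward along the spine, not downward from a root. These are different recursions, and they do not produce the same factors of $p$ and $1-p$. Concretely, take $h=1$ and $\pi=123$: a direct analysis of the spine construction gives
\[
\P\big(r_1(\Z,\bm{\preccurlyeq}_{231})=(123,2)\big)=\P(v_0\text{ has a left child})\cdot\P(v_0\text{ has no right child})=p(1-p),
\]
whereas \cref{onecross} together with \cref{twocross} yields $\P(\pat_{\bm J}(\bm T_0)=123)=p^2$. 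The two agree at $p=1/2$ only because $p=1-p$ there; so your assertion that the recursion ``can be applied verbatim'' is not correct, and the factor-identification you sketch in the last paragraph does not go through as stated.

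What is actually needed (and presumably what the cited \cite[Proposition~4.29]{borga2020localperm} does) is either a \emph{separate} recursion tailored to the spine picture---decomposing according to whether $v_0$ has a left/right child and, when it does not, climbing one step up the spine---and checking that the resulting product of factors of $1/2$ again has exponent $2|\pi|-\#\mathrm{LRmax}(\pi)-\#\mathrm{RLmax}(\pi)$; or, alternatively, a continuity argument: the construction is by design the image under the bijection of the Kesten-type local limit of uniform binary trees at a uniform vertex, and one shows that for each fixed $h$ the quantity $r_h(\psi(\,\cdot\,))$ is almost surely determined by a finite (random) graph-neighbourhood of the distinguished vertex in this limiting tree, so local tree convergence transfers to local permutation convergence. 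Either route closes the gap; invoking \cref{onecross}--\cref{twoandthreecross} directly does not.
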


The (technical) proof of the proposition above can be found in \cite[Proposition 4.29]{borga2020localperm}.

\section{321-avoiding permutations}
\label{321}

The goal of this section is to prove -- using completely different techniques compared to the ones used in the previous section -- a law of large numbers for consecutive patterns of uniform random $321$-avoiding permutations, i.e.\ \cref{thm_2}. As before, we have the following consequence of \cref{thm_2}.

\begin{cor}
	\label{321corol}
	Let $\bm{\sigma}_n$ be a uniform random permutation in $\Av_n(321)$ for all $n\in\Z_{>0}.$ There exists a random infinite rooted permutation $\bm{\sigma}^\infty_{321}$ such that   $\bm{\sigma}_n\stackrel{qBS}{\longrightarrow}\mathcal{L}aw(\bm{\sigma}^\infty_{321}).$
\end{cor}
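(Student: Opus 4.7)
The plan is to derive Corollary \ref{321corol} as a direct consequence of Theorem \ref{thm_2} combined with the equivalence criterion of Corollary \ref{detstrongbsconditions}. The key observation is that Theorem \ref{thm_2} furnishes, for every pattern $\pi \in \mathcal{S}$, a \emph{deterministic} number $\Lambda'_\pi \in [0,1]$ together with the convergence in probability
\begin{equation}
\widetilde{\cocc}(\pi,\bm{\sigma}_n) \stackrel{P}{\longrightarrow} \Lambda'_\pi,
\end{equation}
where $\Lambda'_\pi$ is either $(|\pi|+1)/2^{|\pi|}$ when $\pi$ is the identity, $1/2^{|\pi|}$ when $\pi^{-1}$ has a unique descent, or $0$ otherwise. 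Note that for patterns $\pi \notin \mathrm{Av}(321)$ the statement is vacuous but consistent, since $\cocc(\pi, \bm{\sigma}_n) = 0$ deterministically and $\Lambda'_\pi = 0$ in that case. In particular, condition $(b)$ of Corollary \ref{detstrongbsconditions} is satisfied.

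Next, I would invoke the implication $(b) \Rightarrow (a)$ of Corollary \ref{detstrongbsconditions}: it produces a \emph{deterministic} measure $\nu$ on $\tilde{\mathcal{S}}^{\bullet}$ such that
\begin{equation}
\bm{\sigma}_n \stackrel{qBS}{\longrightarrow} \nu.
\end{equation}
Finally, I would define $\bm{\sigma}^\infty_{321}$ to be a random variable with values in $\tilde{\mathcal{S}}^{\bullet}$ and law $\mathcal{L}aw(\bm{\sigma}^\infty_{321}) = \nu$. With this choice the desired convergence $\bm{\sigma}_n \stackrel{qBS}{\longrightarrow} \mathcal{L}aw(\bm{\sigma}^\infty_{321})$ is exactly the convergence delivered by Corollary \ref{detstrongbsconditions}. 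As observed in the remark following that corollary, one further obtains $\bm{\sigma}_n \xrightarrow{aBS} \bm{\sigma}^\infty_{321}$ by averaging, which is consistent with but not needed for the statement.

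There is essentially no obstacle in this step: the whole work is contained in Theorem \ref{thm_2} itself, whose proof (outlined via contour functions of conditioned Galton--Watson trees and the $321$-avoiding/rooted plane tree bijection of Section \ref{premres_321}) is the genuine content. The passage from Theorem \ref{thm_2} to Corollary \ref{321corol} is just a repackaging through the general characterization of quenched Benjamini--Schramm convergence in terms of pointwise convergence in probability of consecutive pattern densities.
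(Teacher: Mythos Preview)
Your argument is correct and matches the paper's approach exactly: the paper states Corollary~\ref{321corol} simply as a consequence of Theorem~\ref{thm_2}, in the same way Corollary~\ref{231corol} is deduced from Theorem~\ref{thm_1} via Corollary~\ref{detstrongbsconditions}. There is nothing to add.
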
	
An explicit construction of $\bm{\sigma}^\infty_{321}$ is given in \cref{explcon2}.

\subsection{Existence of the separating line and asymptotic independence of points}
\label{mainres_321}

We start by introducing, for all $k>0,$ the following probability distribution on $\Av_k(321)$,
\begin{equation}
	\label{321distr}
	P_{321}(\pi)\coloneqq
	\begin{cases}
		\frac{|\pi|+1}{2^{|\pi|}} &\quad\text{if }\pi=12\dots|\pi|,\\
		\frac{1}{2^{|\pi|}} &\quad\text{if }\cocc(21,\pi^{-1})=1, \\
		0 &\quad\text{otherwise.} \\ 
	\end{cases}
\end{equation}

\begin{rem}
	\label{rem_sep_line}
	We note that the second condition $\cocc(21,\pi^{-1})=1$ (i.e.\ $\pi^{-1}$ has exactly one descent, or equivalently, $\pi$ has exactly one \emph{inverse descent}) is equivalent to saying that we can split the permutation $\pi$ in exactly two increasing subsequences such that all the values of one subsequence are smaller than the values of the other one. Formally, if $\pi\in\Av_k(321)$ and $\cocc(21,\pi^{-1})=1$ there exists a unique bipartition of $[k]$ as $[k]=L(\pi)\sqcup U(\pi)$ such that for all $(i,j)\in L(\pi)\times U(\pi)$, $\pi(i)<\pi(j)$ and $\text{pat}_{L(\pi)}(\pi)$ and $\text{pat}_{U(\pi)}(\pi)$ are increasing.
	
	Given the diagram of a permutation in $\Av_k(321)$ with exactly one inverse descent, we call \emph{separating line} the horizontal line in the diagram between these two increasing subsequences. An example is given in \cref{321ex} in the case of the permutation $\pi=14526738,$ where we highlight in orange the separating line between the two increasing subsequences.  
\end{rem}

\begin{figure}[htbp]
	\begin{minipage}[c]{0.5\textwidth}
		\centering
		\includegraphics[scale=.70]{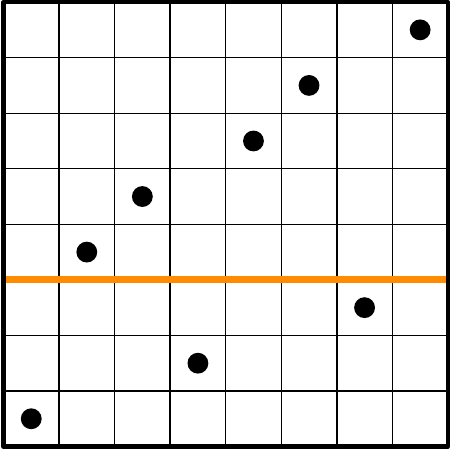}
	\end{minipage}
	\begin{minipage}[c]{0.49\textwidth}
		\caption{Diagram of the permutation $14526738$ with the separating line highlighted in orange.\label{321ex}}
	\end{minipage}
\end{figure} 

\begin{rem}
	\cref{321distr} defines a probability distribution on $\Av_k(321)$. This is a consequence of Theorem \ref{thm_2}: in \cref{shsshgea} page \pageref{shsshgea}, summing over all $\pi\in\Av_k(321)$ the left-hand side, we obtain $\frac{n-k+1}{n}$ which trivially tends to 1. It is however easy to give a direct explanation. Indeed the number of permutations in $\Av_k(321)$ with exactly one inverse descent and separating line at height $\ell\in\{1,\dots,k-1\}$ is $\binom{k}{\ell}-1$ (where the binomial coefficient $\binom{k}{\ell}$ counts the possible choices of points that are below the separating line and the term $-1$ stands for the identity permutation which has no inverse descent). Therefore the number of permutations in $\Av_k(321)$ with exactly one inverse descent is
	$$\sum_{\ell=1}^{k-1}\Bigg(\binom{k}{\ell}-1\Bigg)=2^k-(k+1),$$ 
	which entails $\sum_{\pi\in\Av_k(321)}P_{321}(\pi)=1.$
\end{rem}

We now state (without proof) two propositions that are the key results for the proof of Theorem \ref{thm_2}. We need the following.

\begin{defn}
%
	Given a permutation $\sigma\in\Av_n(321),$ we define for $i,k\in[n],$
	$$E^+_{i,k}(\sigma)=\big\{x\in[-k,k]:x+i\in E^+(\sigma)\big\},$$
	where we recall that $E^+(\sigma)\coloneqq\big\{i\in[n]|\sigma(i)\geq i\big\}$.
\end{defn}
%

\begin{prop}[{\cite[Proposition 5.20]{borga2020localperm}}]\label{prop1}
	For all $n\in\mathbb{Z}_{>0},$ let $\bm{\sigma}_n$ be a uniform random permutation in $\Av_n(321).$ Fix $k\in\mathbb{Z}_{>0}$ and let $\bm{i}_n$ be uniform in $[n]$ and independent of $\bm{\sigma}_n.$ Then, for every subset $A\subseteq[-k,k],$ as $n\to\infty,$
	$$\P\left(E^+_{\bm{i}_n,k}(\bm{\sigma}_n)=A \middle| \bm{\sigma}_n\right)\stackrel{P}{\to}\Big(\frac{1}{2}\Big)^{2k+1}.$$
\end{prop}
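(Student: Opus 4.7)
The plan is to work through the bijection of \cref{premres_321}: a uniform $\bm{\sigma}_n\in\Av_n(321)$ corresponds, via \cref{obs_treeperm}, to a uniform rooted plane tree $\bm{T}_{n+1}$ with $n+1$ vertices (equivalently, a critical $\mathrm{Geom}(1/2)$ Galton--Watson tree conditioned on size $n+1$), and under this correspondence $E^+(\bm{\sigma}_n)$ is exactly the set of post-order labels of the leaves of $\bm{T}_{n+1}$. The uniform index $\bm{i}_n\in[n]$ corresponds to a uniform non-root vertex of $\bm{T}_{n+1}$ (the root has post-order label $n+1$), and the event $\{E^+_{\bm{i}_n,k}(\bm{\sigma}_n)=A\}$ becomes the event that, among the $2k+1$ vertices with post-order labels $\bm{i}_n-k,\dots,\bm{i}_n+k$, the leaves are exactly those with labels $\bm{i}_n+x$ for $x\in A$.

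The next step is to read off the leaf indicator along the post-order from the contour function of the tree. Let $(W_t)_{0\le t\le 2n}$ be the contour walk (with $\pm 1$ steps); since plane trees with $n+1$ vertices are in bijection with Dyck paths of length $2n$, $W$ is a uniform Dyck path. Denote by $\tau_i$ the time of the $i$-th down-step of $W$ (i.e.\ of a step towards the root); then $\tau_i$ is precisely the moment of leaving $v_i$, the $i$-th vertex in post-order. A short case analysis shows that $v_i$ is a leaf if and only if the step of $W$ at time $\tau_i-1$ is a rise, equivalently if and only if the gap $\tau_i-\tau_{i-1}$ is at least $2$ (with the convention $\tau_0=-\infty$, so that the first post-order vertex $v_1$ is always a leaf). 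Now classical local limit results for uniform Dyck paths state that around a uniform interior time, the increment process converges to i.i.d.\ uniform $\pm 1$ random variables, and an elementary Palm-type argument yields that, around a uniformly chosen down-step, the forward and backward gaps $\tau_{i+1}-\tau_i$ converge to i.i.d.\ $\mathrm{Geometric}(1/2)$ random variables on $\{1,2,\dots\}$. Since the probability that a $\mathrm{Geometric}(1/2)$ variable is at least $2$ equals $1/2$, the $2k+1$ leaf indicators $\bigl(\mathds{1}\{v_{\bm{i}_n+j}\text{ is a leaf}\}\bigr)_{j\in[-k,k]}$ converge jointly to i.i.d.\ $\mathrm{Bernoulli}(1/2)$ variables, which yields at once the annealed statement
\[
\P\bigl(E^+_{\bm{i}_n,k}(\bm{\sigma}_n)=A\bigr)\longrightarrow (1/2)^{2k+1}.
\]

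The final step is to upgrade convergence of expectation to convergence in probability. Writing
\[
\P\bigl(E^+_{\bm{i}_n,k}(\bm{\sigma}_n)=A\,\big|\,\bm{\sigma}_n\bigr)=\frac{1}{n}\sum_{i=1}^n\mathds{1}\{E^+_{i,k}(\bm{\sigma}_n)=A\},
\]
convergence in probability to the constant $(1/2)^{2k+1}$ is equivalent to the annealed statement above together with a vanishing variance, which in turn follows from the asymptotic independence of the local environments around two independently chosen uniform non-root vertices $\bm{i}_n,\bm{i}'_n$. The main technical obstacle is this last step: one must show that when $|\bm{i}_n-\bm{i}'_n|$ is typically of order $n$, the two local Dyck-path environments become independent in the limit (contributions from $|\bm{i}_n-\bm{i}'_n|=o(n)$ being negligible of order $O(1/n)$). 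This should follow from a standard mixing statement for the uniform Dyck path at two macroscopically separated times, essentially reducing to the fact that the excursion between two typical points has length of order $n$ and that the environments at its two endpoints decouple.
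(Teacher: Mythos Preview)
Your approach is correct and essentially parallels the paper's, though you take a somewhat more direct route. The paper invokes local limit results for Galton--Watson trees pointed at a uniform vertex (Stufler, Janson) to obtain the limiting infinite tree, and then proves as a separate lemma that the contour function of that limit has i.i.d.\ $\pm 1$ increments; from there one reads off that leaf indicators around a uniform post-order index are asymptotically i.i.d.\ Bernoulli($1/2$). You bypass the tree local limit machinery and work directly on the Dyck path: local convergence of the increments at a uniform time to i.i.d.\ $\pm 1$ steps, combined with your Palm-type reduction (a uniform step conditioned to be a down-step is a uniform down-step, affecting only that single increment in the limit), yields i.i.d.\ Geometric($1/2$) gaps and hence the same conclusion. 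Your leaf characterization via $\tau_i-\tau_{i-1}\ge 2$ is correct and gives a clean way to read off $E^+$ along the post-order.

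Both approaches land on the same bottleneck, which you correctly flag: the quenched upgrade. The paper explicitly notes that the available tree local limit results are annealed and that extending them to the quenched setting required additional work; your second-moment formulation with two-point decoupling of the Dyck-path environment at macroscopically separated down-step indices is exactly the right reduction, and is where the real work lies. What your sketch buys is a more self-contained argument that does not need the Stufler--Janson tree local limit as a black box; what the paper's route buys is a cleaner conceptual picture (the limiting infinite tree with its contour) that can be reused for the companion \cref{prop2}.
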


Informally, conditionally on $\bm{\sigma}_n,$ every index of $r_k(\bm{\sigma}_n,\bm{i}_n)$ is asymptotically equiprobably in $E^+$ or $E^-.$ Moreover, these events are asymptotically independent.

The proof of the proposition above follows from a combination of two results due to Stufler \cite{stufler2016local} and Janson \cite{janson2012simply} that characterize the local limit for Galton--Watson trees pointed at a uniform vertex (for more details see \cite[Proposition 5.9]{borga2020localperm}) and using the bijection between 321-avoiding permutations and rooted plane trees introduce in \cref{premres_321}. In particular, we were able to analyze the shape of the contour function of the local limiting infinite tree showing that it is a Dyck path with balanced up and down steps (see \cite[Lemma 5.18]{borga2020localperm}).

\medskip

Before stating our second proposition, we introduce some more notation (see Fig.~\ref{min_max_perm} below). Given a permutation $\sigma\in\text{Av}(321)$ we define, for $i\in[|\sigma|],$ $k\in\mathbb{Z}_{>0},$
\begin{equation}
\begin{split}
&m_{i,k}^+(\sigma)=\min\big\{E^+(\sigma)\cap[i-k,i+k]\big\},\\
&M_{i,k}^-(\sigma)=\max\big\{E^-(\sigma)\cap[i-k,i+k]\big\},
\end{split}
\end{equation}
with the conventions that $\min{\emptyset}=+\infty$ and $\max{\emptyset}=-\infty.$

\begin{figure}[htbp]
	\begin{minipage}[c]{0.5\textwidth}
		\centering
		\includegraphics[scale=0.35]{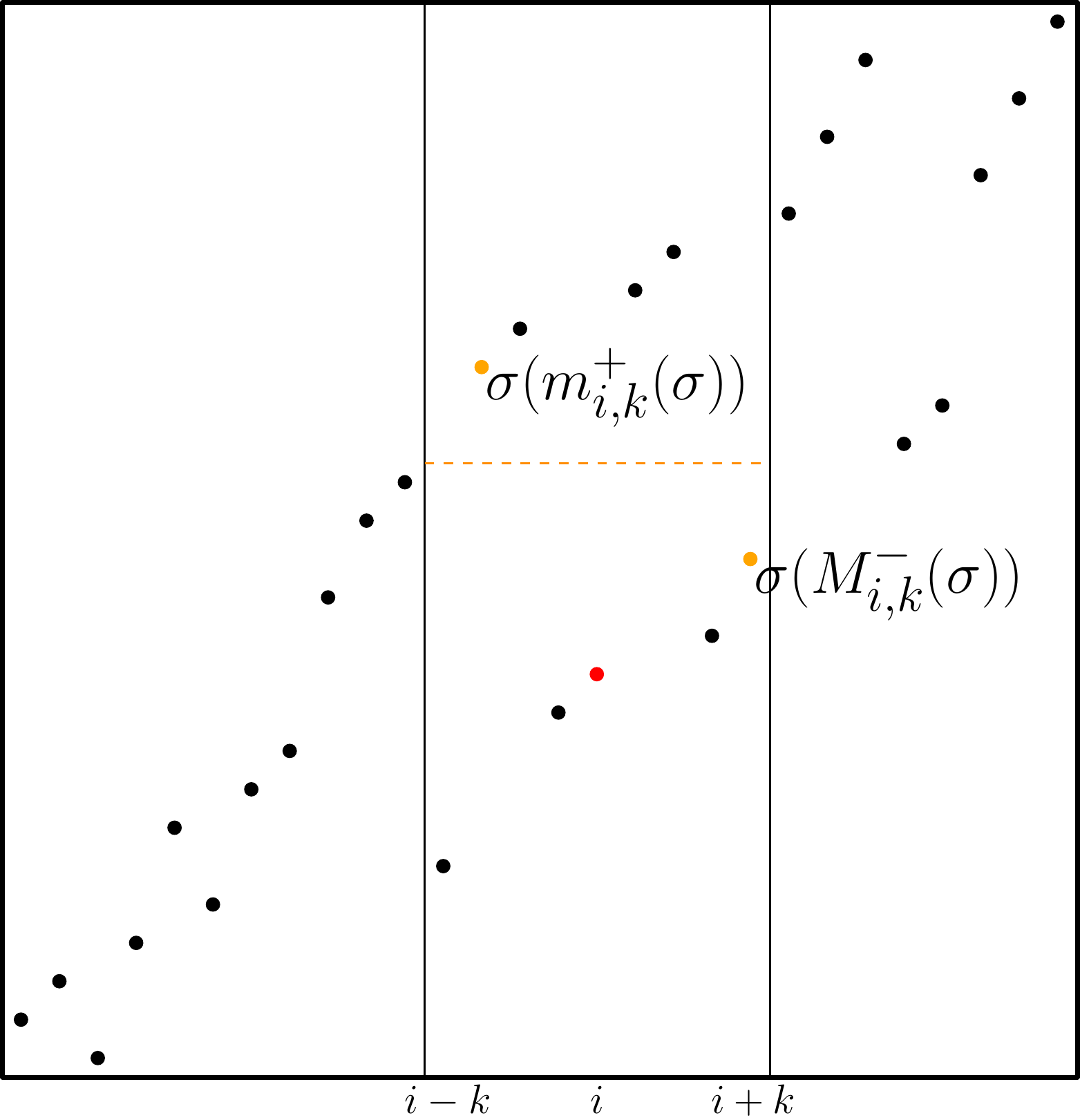}
	\end{minipage}
	\begin{minipage}[c]{0.49\textwidth}
		\caption{A 321-avoiding permutation. The orange dots identify the values $\sigma(m_{i,k}^+(\sigma))$ and $\sigma(M_{i,k}^-(\sigma))$ inside the vertical strip centered in $i$ of width $2k+1.$ Moreover, the dashed orange line identifies the separating line.\label{min_max_perm}}
	\end{minipage}
\end{figure} 

\noindent We also define, for a random 321-avoiding permutation $\bm{\sigma}$ and a uniform index $\bm{i}\in[|\sigma|],$ the following events, for all $k\in\mathbb{Z}_{>0},$
\begin{equation}
\begin{split}
S_k(\bm{\sigma},\bm{i})=&\big\{\bm{\sigma}(m_{\bm{i},k}^+(\bm{\sigma}))>\bm{\bm{\sigma}}(M_{\bm{i},k}^-(\bm{\sigma})),m_{\bm{i},k}^+(\bm{\sigma})\neq +\infty,M_{\bm{i},k}^-(\bm{\sigma})\neq -\infty\big\}\\
&\cup\big\{m_{\bm{i},k}^+(\bm{\sigma})= +\infty\big\}\cup\big\{M_{\bm{i},k}^-(\bm{\sigma})= -\infty\big\}.
\end{split}
\end{equation}
This is the event that the random rooted permutation $r_k(\bm{\sigma},\bm{i})$ splits into two (possibly empty) increasing subsequences separated by a separating line (in \cref{min_max_perm} this separating line is dashed in orange).

\begin{prop}[{\cite[Proposition 5.21]{borga2020localperm}}]\label{prop2}
	For all $n\in\mathbb{Z}_{>0},$ let $\bm{\sigma}_n$ be a uniform random permutation in $\Av_n(321).$ Fix $k\in\mathbb{Z}_{>0}$ and let $\bm{i}_n$ be uniform in $[n]$ and independent of $\bm{\sigma}_n.$ Then,  as $n\to\infty,$
	$$\P\left(S_k(\bm{\sigma}_n,\bm{i}_n) \middle| \bm{\sigma}_n\right)\stackrel{P}{\longrightarrow}1.$$
\end{prop}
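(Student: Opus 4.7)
Since $\P(S_k^c \mid \bm\sigma_n) \ge 0$ and $\E\bigl[\P(S_k^c \mid \bm\sigma_n)\bigr] = \P(S_k^c)$, it suffices to prove the unconditional convergence $\P(S_k(\bm\sigma_n, \bm i_n)^c) \to 0$, which will then force the conditional probability to $0$ in $L^1$, and hence in probability. Now if $m^+_{\bm i_n,k}(\bm\sigma_n) > M^-_{\bm i_n,k}(\bm\sigma_n)$ then automatically $\bm\sigma_n(m^+) \ge m^+ > M^- > \bm\sigma_n(M^-)$, so $S_k$ holds. Hence $S_k^c$ requires the existence of two indices $a<b$ in the window $W \coloneqq [\bm i_n-k, \bm i_n+k]\cap [n]$ with $a\in E^+(\bm\sigma_n)$, $b\in E^-(\bm\sigma_n)$ and $\bm\sigma_n(a) \le \bm\sigma_n(b)$. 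A union bound over the at most $(2k+1)^2$ such pairs reduces the problem to showing, for each fixed offset pair $(a-\bm i_n, b-\bm i_n)$, that this bad configuration has vanishing probability.

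The core input is the tree bijection of \cref{premres_321}. A short computation shows that for any leaf $\ell$ of a rooted plane tree, with pre-order label $s$ (starting from $0$) and post-order label $q$ (starting from $1$), one has $s - q = h(\ell) - 1$, where $h(\cdot)$ denotes the depth (with $h(\mathrm{root})=0$). Consequently, for $a\in E^+(\bm\sigma_n)$, the deviation $\bm\sigma_n(a) - a$ equals $h(\bm\ell_a) - 1$, where $\bm\ell_a$ is the leaf of the underlying random tree $\bm T_{n+1}$ whose post-order label is $a$. An analogous computation, based on tracking the ``increasing filling'' rule that defines $\bm\sigma_n$ on $E^-$, yields a lower bound $b - \bm\sigma_n(b) \ge h(\bm w_b) - 1$ in terms of the depth of a suitable tree vertex $\bm w_b$ determined by $b$.

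The final input is that $\bm T_{n+1}$ is a uniform rooted plane tree with $n+1$ vertices, equivalently a critical $\mathrm{Geom}(1/2)$ Galton--Watson tree conditioned on its size (see \cref{obs_treeperm}). For such a tree, the depth of a uniform vertex is of order $\sqrt n$ in probability, by the classical convergence of the rescaled contour process to a Brownian excursion, or equivalently by the local limit of such a conditioned tree around a uniform vertex to Kesten's infinite spine tree (the same input invoked for \cref{prop1}). Since $\bm i_n$ is uniform on $[n]$ and the offsets $|a-\bm i_n|, |b-\bm i_n| \le k$ are bounded, both $\bm\sigma_n(a) - a$ and $b - \bm\sigma_n(b)$ are at least $c\sqrt n$ for some $c>0$ with probability tending to $1$. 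Therefore, for $n$ large enough,
\begin{equation*}
    \bm\sigma_n(a) - \bm\sigma_n(b) \ge (a-b) + (\bm\sigma_n(a)-a) + (b-\bm\sigma_n(b)) \ge -2k + 2c\sqrt n > 0,
\end{equation*}
contradicting $\bm\sigma_n(a) \le \bm\sigma_n(b)$.

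\textbf{Expected obstacle.} The most delicate step is to extract a height-like lower bound for $b - \bm\sigma_n(b)$ when $b\in E^-$, since here $\bm\sigma_n$ is defined only implicitly via the ``filling'' of non-leaf post-order positions by the values in $[n]\setminus S(\bm T_{n+1})$. Unwinding this rule shows that $b-\bm\sigma_n(b)$ equals the post-order label of the $r$-th non-leaf, non-root vertex of $\bm T_{n+1}$ minus the pre-order label of the $r$-th such vertex taken in pre-order, which a priori involves two distinct tree vertices. Identifying an appropriate vertex $\bm w_b$ whose depth quantitatively controls this difference, and showing that for a uniform choice of $\bm i_n$ the local limit still yields $h(\bm w_b) = \Omega(\sqrt n)$ with high probability, is the main technical work; once achieved, the comparison above closes the argument.
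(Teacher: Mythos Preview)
Your overall strategy is sound and aligns with the paper's sketch: $S_k^c$ forces indices $a<b$ in the window with $a\in E^+$, $b\in E^-$, $\bm\sigma_n(a)\le\bm\sigma_n(b)$, and it suffices to show that both $\bm\sigma_n(a)-a$ and $b-\bm\sigma_n(b)$ grow without bound. Your $L^1$ reduction from the quenched to the annealed statement is correct. One imprecision: the rescaled height $h/\sqrt n$ of a uniform leaf converges in law to a nondegenerate positive variable, so you do \emph{not} get $h\ge c\sqrt n$ with probability tending to $1$ for any fixed $c>0$; what you do get, and all you need here, is $h\to\infty$ in probability.

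The genuine gap is the $E^-$ case, which you flag but do not close. The paper bypasses it: rather than expressing displacements via tree heights, it directly invokes the Hoffman--Rizzolo--Slivken result that $\bigl(\tfrac{1}{\sqrt{2n}}|\bm\sigma_n(\lfloor nt\rfloor)-\lfloor nt\rfloor|\bigr)_{t\in[0,1]}$ converges in distribution to a Brownian excursion. This controls the absolute displacement at every index uniformly and hence treats $E^+$ and $E^-$ at once. Your leaf identity $s-q=h(\ell)-1$ is correct, but for $E^-$ the relevant difference involves two distinct internal vertices (the $r$-th in post-order and the $r$-th in pre-order) and no single-vertex height bound drops out. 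If you want to rescue the tree route, use inversion: for $b\in E^-(\bm\sigma_n)$ set $j=\bm\sigma_n(b)$, so that $b-\bm\sigma_n(b)=\bm\sigma_n^{-1}(j)-j$ with $j\in E^+(\bm\sigma_n^{-1})$; since $\bm\sigma_n^{-1}$ is again uniform in $\Av_n(321)$, one gets $\E\bigl[\#\{b\in E^-:b-\bm\sigma_n(b)\le 2k\}\bigr]\le\E\bigl[\#\{j\in E^+:\bm\sigma_n(j)-j\le 2k\}\bigr]$, which you have already shown is $o(n)$.
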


Informally, conditionally on $\bm{\sigma}_n,$ asymptotically with high probability there exists a separating line in $r_k(\bm{\sigma}_n,\bm{i}_n)$.
We give a quick idea of the proof: by a result from \cite{hoffman2017pattern}, the points of a large uniform 321-avoiding permutation of size $n$ are at distance of order $\sqrt n$ from the diagonal $x=y.$ Therefore, in a window of constant width, the points above the diagonal are all higher than the points below the diagonal.

\begin{rem}
	A main difficulty in the proofs of Propositions \ref{prop1} and \ref{prop2} is that most of the results that we used were available only in an annealed version and so we had to extend them to the quenched setting.
\end{rem}

With these two propositions in our hands, we can prove Theorem \ref{thm_2}.
\begin{proof}[Proof of Theorem \ref{thm_2}]
	In order to prove \cref{primaeq} page \pageref{primaeq}, we can check the equivalent condition (d) of \cref{detstrongbsconditions} page \pageref{detstrongbsconditions}, i.e.\ we have to prove that, for all $h\in\Z_{>0},$ for all $\pi\in\text{Av}_{2h+1}(321),$
	\begin{equation}
	\label{checkthat}
	\P\left(r_h(\bm{\sigma}_n,\bm{i}_n)=(\pi,h+1) \middle| \bm{\sigma}_n\right)\xrightarrow{P}P_{321}(\pi).
	\end{equation}
	We fix $h\in\Z_{>0}$ and we distinguish three different cases for $\pi\in\text{Av}_{2h+1}(321)$:
	
	\underline{$\cocc(21,\pi^{-1})=1:$} We recall (see Remark \ref{rem_sep_line}) that, in this case, $\pi$ splits in a unique way into two increasing subsequences whose indices are denoted by $L(\pi)$ and $U(\pi).$ Note that, in general, these two sets are different from $E^-(\pi)$ and $E^+(\pi)$ (specifically when the permutation has some fixed points at the beginning, for example as in the case of Fig.~\ref{321ex} page~\pageref{321ex}). We set $U^*(\pi)\coloneqq\{j\in[-h,h]:j+h\in U(\pi)\},$ \emph{i.e.} $U^*(\pi)$ denotes the shift of the set $U(\pi)$ inside the interval $[-h,h].$ 
	Conditioning on $S_h(\bm{\sigma}_n,\bm{i}_n),$ namely assuming there is a separating line, $r_h(\bm{\sigma}_n,\bm{i}_n)$ is the union of two increasing subsequences with $U^*(r_h(\bm{\sigma}_n,\bm{i}_n))=E^+_{\bm{i}_n,h}(\bm{\sigma}_n).$ Then, conditioning on $S_h(\bm{\sigma}_n,\bm{i}_n)$, $r_h(\bm{\sigma}_n,\bm{i}_n)=(\pi,h+1)$ if and only if $,E^+_{\bm{i}_n,h}(\bm{\sigma}_n)=U^*(\pi)$. In particular, using Proposition \ref{prop2}, asymptotically we have that,
	$$\P\left(r_h(\bm{\sigma}_n,\bm{i}_n)=(\pi,h+1) \middle| \bm{\sigma}_n\right)=\P\left(S_h(\bm{\sigma}_n,\bm{i}_n),E^+_{\bm{i}_n,h}(\bm{\sigma}_n)=U^*(\pi) \middle| \bm{\sigma}_n\right)+o(1).$$
	By Proposition \ref{prop1} and \ref{prop2}, we have that
	$$\P\left(S_h(\bm{\sigma}_n,\bm{i}_n),E^+_{\bm{i}_n,h}(\bm{\sigma}_n)=U^*(\pi) \middle| \bm{\sigma}_n\right)\stackrel{P}{\to}\frac{1}{2^{2h+1}}=P_{321}(\pi),$$
	and so \cref{checkthat} holds.
	
	\underline{$\pi=12\dots|\pi|$:} In this case $\pi$ does not split in a unique way into two increasing subsequences. We have exacly $2h+2$ different choices for this splitting, namely setting either $U(\pi)=\emptyset$ or $U(\pi)=[k,h],$ for some $k\in[-h,h].$ Therefore, using Proposition \ref{prop2}, asymptotically we obtain that
	$$\P\left(r_h(\bm{\sigma}_n,\bm{i}_n)=(\pi,h+1) \middle| \bm{\sigma}_n\right)=\P\left(S_h(\bm{\sigma}_n,\bm{i}_n),E^+_{\bm{i}_n,h}(\bm{\sigma}_n)\in\{\emptyset\}\cup\bigcup_{k\in[-h,h]}\{[k,h]\} \middle| \bm{\sigma}_n\right)+o(1).$$
	Again by Proposition \ref{prop1} and \ref{prop2}, we have that
	$$\P\left(S_h(\bm{\sigma}_n,\bm{i}_n),E^+_{\bm{i}_n,k}(\bm{\sigma}_n)\in\{\emptyset\}\cup\bigcup_{k\in[-h,h]}\{[k,h]\} \middle| \bm{\sigma}_n\right)\stackrel{P}{\to}\frac{2h+2}{2^{2h+1}}=P_{321}(\pi),$$
	and so \cref{checkthat} holds.
	
	\underline{$\cocc(21,\pi^{-1})>1:$} In this case $\pi$ does not present a separating line. Since by Proposition \ref{prop2} we know that asymptotically almost surely there exists a separating line in $r_h(\bm{\sigma}_n,\bm{i}_n)$ we can immediately conclude that,
	\begin{equation}
	\P\left(r_h(\bm{\sigma}_n,\bm{i}_n)=(\pi,h+1) \middle| \bm{\sigma}_n\right)\leq\P\left(S_h(\bm{\sigma}_n,\bm{i}_n)^c \middle| \bm{\sigma}_n\right)\stackrel{P}{\to}0=P_{321}(\pi).
	\end{equation}
	This ends the proof.
\end{proof}

\subsection{Construction of the limiting object}
\label{explcon2}

We now exhibit an explicit construction of the limiting object $\bm{\sigma}_{321}^{\infty}$ as a random order $\bm{\preccurlyeq}_{321}$ on $\Z$ that is reminiscent of the one presented for square permutations. 

We consider the set of integer numbers $\Z,$ and we label, uniformly and independently, each integer with a "$+$" or a "$-$": namely, for all $x\in\Z,$ 
$$\P(x \text{ has label }"+")=\frac{1}{2}=\P(x \text{ has label }"-").$$
We set $A^+\coloneqq\{x\in\Z:x\text{ has label }"+"\}$ and $A^-\coloneqq\{x\in\Z:x\text{ has label }"-"\}.$
Then we define a random total order $\bm{\preccurlyeq}_{321}$ on $\Z$ saying that, for all $x,y\in\Z,$ $x\bm{\preccurlyeq}_{321}y$ if either
$x<y$ and $x,y\in A^-,$ or $x<y$ and $x,y\in A^+,$ or $x\in A^-$ and $y\in A^+.$ An intuitive description is given in \cref{constr_order321}.

\begin{prop}[{\cite[Proposition 5.27]{borga2020localperm}}]
	Let $(\Z,\bm{\preccurlyeq}_{321})$ be the random total order defined above and $\bm{\sigma}^\infty_{321}$ be the limiting object defined in Corollary \ref{321corol}. Then
	$$(\Z,\bm{\preccurlyeq}_{321})\stackrel{(d)}{=}\bm{\sigma}^\infty_{321}.$$
\end{prop}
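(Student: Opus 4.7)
The plan is to reduce the equality of the two random infinite rooted permutations to an agreement of their $h$-restrictions for every $h\in\Z_{>0}$. By \cref{consistprop}, any total order on $\Z$ is uniquely determined by the consistent family $\{r_h\}_{h\in\Z_{>0}}$ of its restrictions around the root, so it suffices to show
\[
\P\!\big(r_h(\Z,\bm{\preccurlyeq}_{321})=(\pi,h+1)\big)
=\P\!\big(r_h(\bm{\sigma}^\infty_{321})=(\pi,h+1)\big)=P_{321}(\pi),
\]
for all $h\in\Z_{>0}$ and all $\pi\in\mathcal{S}_{2h+1}$. The second equality is a direct consequence of \cref{thm_2}: by \cref{321corol} and \cref{wsrel}, $\bm{\sigma}_n\xrightarrow{aBS}\bm{\sigma}^\infty_{321}$, and condition (b) of \cref{weakbsequivalence} together with the limiting relation displayed there identifies the $h$-restriction probabilities of $\bm{\sigma}^\infty_{321}$ with the constants $P_{321}(\pi)$ coming out of \cref{thm_2}.

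Next I compute the left-hand side directly from the construction. The restriction of $\bm{\preccurlyeq}_{321}$ to $[-h,h]$ depends only on the Bernoulli labeling of the $2h+1$ integers in $[-h,h]$, and this labeling is uniform on $\{+,-\}^{2h+1}$. Given any labeling, the restricted order first enumerates the $-$-integers of $[-h,h]$ in their natural order and then the $+$-integers in their natural order. Under the identification sending $0\in[-h,h]$ to the root index $h+1$, the induced rooted permutation therefore consists of two (possibly empty) strictly increasing runs with the values of the second run strictly above those of the first. In particular the induced pattern always lies in $\mathrm{Av}_{2h+1}(321)$ and satisfies $\cocc(21,\pi^{-1})\le 1$, so only patterns in the support of $P_{321}$ may appear.

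The final step is an elementary counting of labelings. If $\pi\in\mathrm{Av}_{2h+1}(321)$ satisfies $\cocc(21,\pi^{-1})=1$, then by \cref{rem_sep_line} the bipartition $[2h+1]=L(\pi)\sqcup U(\pi)$ is unique, so exactly one labeling realizes $(\pi,h+1)$, namely the one assigning $-$ to the shifts of the indices in $L(\pi)$ and $+$ to those in $U(\pi)$; this contributes probability $2^{-(2h+1)}$. The identity pattern $12\cdots(2h+1)$ is realized precisely by labelings in which the $-$-labels form a prefix $[-h,k]$ with $k\in\{-h-1,-h,\dots,h\}$, giving $2h+2$ labelings and probability $(2h+2)/2^{2h+1}$. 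Every other $\pi\in\mathcal{S}_{2h+1}$ receives probability $0$. Matching with $P_{321}$ yields the desired identity and completes the proof.

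The only real obstacle here is bookkeeping. Specifically, one has to verify carefully that the tautological identification between $([-h,h],\bm{\preccurlyeq}_{321}|_{[-h,h]})$ and a rooted permutation sends $0$ to the root index $h+1$, and that under this identification the combinatorial map ``labelings $\mapsto$ induced patterns'' is exactly the one used in the counting argument. Both are routine but must be traced through in order to legitimize the equation $\P(r_h(\Z,\bm{\preccurlyeq}_{321})=(\pi,h+1))=P_{321}(\pi)$; once this is done, no further ingredient beyond \cref{thm_2} and \cref{consistprop} is needed.
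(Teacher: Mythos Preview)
Your proof is correct. The paper itself does not include a proof of this proposition (it only cites the external reference), but your argument is exactly the natural one: identify the distribution of each $h$-restriction of $(\Z,\bm{\preccurlyeq}_{321})$ by a direct labeling count, match it against $P_{321}(\pi)$, and invoke \cref{weakbsequivalence} (together with \cref{thm_2}) to see that these are precisely the $h$-restriction probabilities of $\bm{\sigma}^\infty_{321}$. One minor remark: \cref{consistprop} is a statement about deterministic objects, so to pass to equality in distribution you really want the fact that the collection of balls $B((A,\preccurlyeq),2^{-h})$ is a separating class for $(\Sri,d)$ (used in the proof of \cref{weakbsequivalence}); this is what guarantees that agreement of all $r_h$-laws forces equality of laws on $\Sri$. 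Your counting of labelings in the identity case and in the one-inverse-descent case is correct and matches $P_{321}$ exactly.
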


\begin{figure}[htbp]
\centering
		\includegraphics[scale=1]{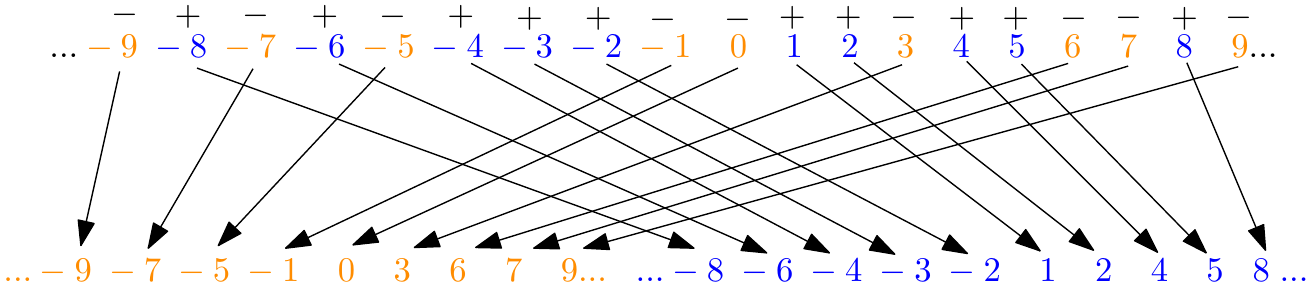}\\
		\caption{A sample of the random total order $(\Z,\bm{\preccurlyeq}_{321}).$ On the top, the standard total order on $\Z$ with the integers labeled by random "-" and "+" signs. We paint in orange the integers labeled by "-" and in blue the integers labeled by "+." Then, in the bottom part of the picture, we move the orange numbers at the beginning of the new random total order and the blue numbers at the end. Reading the bottom line from left to right gives the random total order $\bm{\preccurlyeq}_{321}$ on $\Z.$ }\label{constr_order321}
\end{figure}

\section{Substitution-closed classes}
\label{sec:local_lim}

In this section we investigate the local limits of uniform permutations in a fixed substitution-closed class $\mathcal{C}$, proving \cref{thm:local_intro}. We proceed as follows:
\begin{itemize}
	\item In \cref{sect:loca_top_trees} we introduce a local topology for decorated trees with a distinguished leaf.
	\item In \cref{sec:limiting_local_trees} we prove quenched local convergence  for random packed trees around a uniform leaf (when the number of leaves tends to infinity).
	\item In \cref{sec:continuity_bij} we extend to infinite objects the bijection $\DT:=\Pack\circ\CanTree$ between $\oplus$-indecomposable permutations in $\mathcal C$ and finite packed trees (recall \cref{le:bij_perm_tree} page \pageref{le:bij_perm_tree}). We show that this new map is a.s.\ continuous for the local topologies.
	\item Finally, in \cref{sect:local_lim_sub}, we complete the proof of \cref{thm:local_intro} building on the results proved in the previous two sections.
\end{itemize}

\subsection{Local topology for decorated trees}\label{sect:loca_top_trees}

We introduce a local topology for decorated trees with a distinguished {\em leaf}
(called pointed trees in the sequel).
This is a straight forward adaptation
of that for trees with a distinguished {\em vertex}
introduced by Stufler in \cite{stufler2019local}.

Following the presentation in \cite[Section 6.3.1]{stufler2016limits}, 
we start by defining an infinite pointed plane tree $\Uinf$ (see \cref{fig:Stufler_tree} below).
This infinite tree is meant to be a pointed analogue of Ulam--Harris tree, so that
pointed trees will be seen as subsets of it.
To construct $\Uinf$, we take a spine $(u_i)_{i\geq 0}$ that grows
downwards, that is, such that $u_i$ is the parent of $u_{i-1}$ for all $i\geq 1$. Any vertex $u_i,$ with $i\geq 1,$ has
an infinite number of children to the left and to the right  of its distinguished offspring $u_{i-1}$. 
The former are ordered from right to left and denoted by $(v^{i}_{L,j})_{j\geq1}$, the latter are ordered from left to right and denoted by $(v^{i}_{R,j})_{j\geq1}$. 
Each of these vertices not belonging to the spine $(u_i)_{i\geq 0}$ is the root of a copy of the Ulam--Harris tree $\mathcal{U}_{\infty}$. 
We always think of $\Uinf$ as a tree with distinguished leaf $u_0.$

\begin{figure}[htbp]
	\begin{minipage}[c]{0.6\textwidth}
		\centering
		\includegraphics[height=4.5cm]{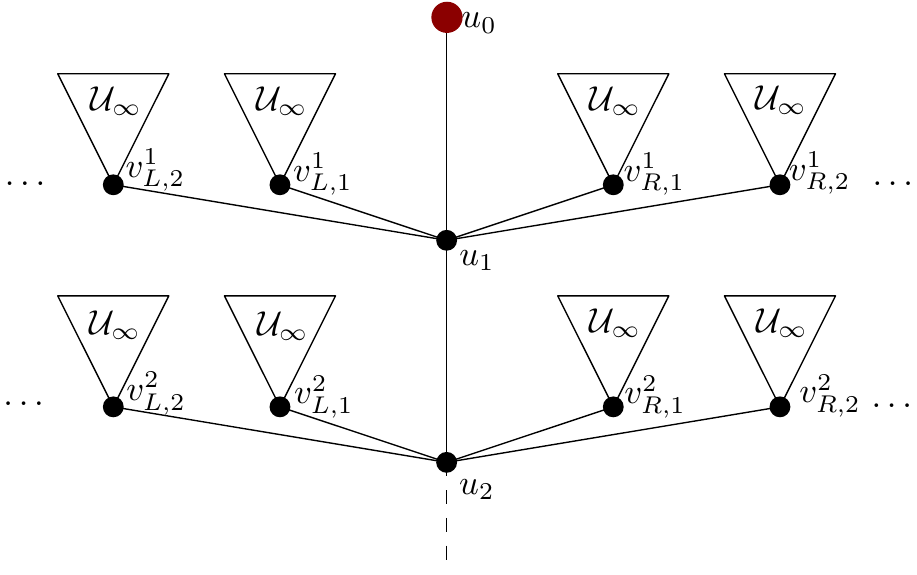}
	\end{minipage}
	\begin{minipage}[c]{0.39\textwidth}
		\caption{A schema of the infinite plane tree $\Uinf$.\label{fig:Stufler_tree}}
	\end{minipage}
\end{figure}

\begin{defn}
	A \emph{(possibly infinite) pointed plane tree} $T^{\bullet}$ is a subset of $\Uinf$ such that
	\begin{itemize}
		\item $u_0\in T^{\bullet}.$
		\item if $u_p\in T^{\bullet}$ then $u_i\in T^{\bullet},$ for all $0\leq i \leq p.$
		\item if $v^{i}_{L,q}\in T^{\bullet}$ (resp.\ $v^{i}_{R,q}\in T^{\bullet}$) then $u^{i}\in T^{\bullet}$ and $v^{i}_{L,j}\in T^{\bullet}$ (resp.\ $v^{i}_{R,j}\in T^{\bullet}$) for all $1\leq j \leq q.$
		\item Any maximal subset of $T^{\bullet}$ contained in 
		one of the Ulam--Harris trees $\mathcal{U}_{\infty}$  of $\Uinf$ is a (finite or infinite) plane tree.
	\end{itemize}
	We denote by $\gls*{pointed_plane_trees}$ the space of (possibly infinite) pointed plane trees.
\end{defn}
We say that a pointed tree $T^{\bullet}$ in $\mathfrak{T}^\bullet$ is {\em locally and upwards finite} if 
every vertex has finite degree and the intersection of $T^\bullet$ with any of the Ulam--Harris 
trees $\mathcal{U}_{\infty}$  of $\Uinf$ is finite.
The set of locally and upwards finite pointed trees will be denoted by $\gls*{luf_pointed_plane_trees}$.

Any finite plane tree $T$ together with a distinguished leaf $v_0$
may be interpreted in a canonical way as a pointed plane tree $T^\bullet$, such that $v_0$ is mapped to $u_0$.
In particular, the backward spine $u_0,u_1,\cdots$ of the associated pointed plane tree $T^\bullet$ is finite and ends at the root of $T$.
\medskip

Next, we need to extend this notion to decorated trees.
Let $\DDD$ be a combinatorial class.
We define a $\DDD$-decorated locally and upwards finite pointed tree
as a tree $T^\bullet$ in $\lufT$, endowed with a decoration function
$\dec: \Vint({T}^\bullet) \to \DDD$, such that, for each $v$ in $\Vint(T^{\bullet})$, the out-degree of $v$ is exactly $\size(\dec(v))$.
We denote such a tree with the pair $(T^\bullet,\dec)$ and the space of such trees as $\gls*{lufd_pointed_plane_trees}$. 
As above, a decorated tree with a distinguished leaf can be identified with an element of this set.

Given a $\DDD$-decorated pointed tree $(T^{\bullet},\lambda_{T^{\bullet}})\in\lufTD$, we denote by $f_h^{\bullet}(T^{\bullet},\lambda_{T^{\bullet}})$ the $\DDD$-decorated pointed tree $(f_h^{\bullet}(T^{\bullet}),f_h^{\bullet}(\lambda_{T^{\bullet}})),$ where $f_h^{\bullet}(T^{\bullet})$ is the pointed fringe subtree rooted at $u_h$ with distinguished leaf $u_0$ (if $u_h$ is not well-defined because $u_h\notin T^\bullet$, we set $f_h^{\bullet}(T^\bullet)=f_m^{\bullet}(T^\bullet),$ where $m$ is the maximal index such that $u_m\in T^\bullet$) 
and $f_h^{\bullet}(\lambda_{T^{\bullet}})$ is $\lambda_{T^{\bullet}}$ restricted to the domain ${\Vint(f_h^{\bullet}(T^{\bullet}))}$.
We note that, for any given $h$, the image set $f_h^{\bullet}(\lufTD)$ is countable.

We endow the set $\lufTD$ with the \emph{local distance} $\gls*{lufd_dist}$ defined for all $(T^{\bullet}_1,\lambda_{T^{\bullet}_1}),(T^{\bullet}_2,\lambda_{T^{\bullet}_2})\in\lufTD$ by
\begin{equation}
\label{eq:local_dist_for_trees}
d_t\big((T^{\bullet}_1,\lambda_{T^{\bullet}_1}),(T^{\bullet}_2,\lambda_{T^{\bullet}_2})\big)=2^{-\sup\{h\geq0\;:\;f^\bullet_h(T^{\bullet}_1,\lambda_{T^{\bullet}_1})=f^\bullet_h(T^{\bullet}_2,\lambda_{T^{\bullet}_2})\}},
\end{equation}
with the classical conventions that $\sup\emptyset=0,$ $\sup\Z_{>0}=+\infty$ and $2^{-\infty}=0$.

\begin{rem}
	The distance defined in \cref{eq:local_dist_for_trees} can be trivially restricted also to the space of non-decorated pointed trees. We point out that this distance is not equivalent to the distance considered in \cite[Section 6.3.1]{stufler2016limits} for the space of non-decorated pointed trees. For instance, if $S_n$, $1\leq n\leq \infty$ is a star where the root has out-degree $n$ and its children all have out-degree 0, then the sequence $(S_n^{\bullet})_{n\geq 1}$, pointed at the root, does not converge for our metric (and has no convergent subsequences). This implies that our space is not compact. On the contrary, the space of pointed trees endowed with the distance defined by Stufler in \cite{stufler2016limits} is compact. 
	
	We also note (without proof since we do not need this result) that in the subspace of locally finite pointed trees the two distances are topologically equivalent. A proof of this result would be a simple adaptation of \cite[Lemma 6.2]{janson2012simply}.
\end{rem}

\begin{prop}
	The space $(\lufTD,d_t)$ is a Polish space.
\end{prop}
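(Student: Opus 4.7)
The plan is to verify separability and completeness of $(\lufTD, d_t)$ separately, exploiting the fact that $d_t$ is an ultrametric (this follows immediately from its definition via the nested fringe truncations $f_h^\bullet$).

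For separability, I would consider the set $\mathcal{F} \subset \lufTD$ of \emph{finite} $\DDD$-decorated pointed plane trees. This set is countable: finite plane trees are countably many, and the out-degree constraint in the definition of a $\DDD$-decorated tree combined with the fact that $\DDD$ is a combinatorial class (so each $\DDD_k$ is finite) forces finitely many decorations for each underlying tree shape. For density, I would observe the uniform estimate $d_t\big((T^\bullet,\lambda),\, f_h^\bullet(T^\bullet,\lambda)\big) \leq 2^{-h}$, which holds because $f_h^\bullet$ is idempotent and hence $f_h^\bullet \circ f_h^\bullet(T^\bullet,\lambda) = f_h^\bullet(T^\bullet,\lambda)$. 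I would then check that $f_h^\bullet(T^\bullet,\lambda)$ is always finite: it consists of the backward spine segment $u_0,u_1,\dots,u_h$ together with the side branches attached to each $u_i$, $0\le i \le h$. By local finiteness each $u_i$ has finitely many side children, and by upward finiteness each side branch is a finite subtree of the corresponding Ulam--Harris copy.

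For completeness, let $(T_n^\bullet,\lambda_n)_{n\geq 1}$ be a $d_t$-Cauchy sequence. Because $d_t$ is ultrametric with values in $\{0\}\cup\{2^{-h} : h\geq 0\}$, for each fixed $h$ the sequence $f_h^\bullet(T_n^\bullet,\lambda_n)$ is eventually constant; call its eventual value $(S_h,\mu_h) \in \mathcal{F}$. Functoriality of the truncation gives the compatibility $f_{h-1}^\bullet(S_h,\mu_h) = (S_{h-1},\mu_{h-1})$, so viewing each $S_h$ as a pointed subtree of $\Uinf$ with distinguished leaf $u_0$, the $S_h$ form an increasing sequence. I would then define the candidate limit as $T^\bullet := \bigcup_{h\geq 0} S_h$, equipped with the decoration $\lambda := \bigcup_{h\geq 0} \mu_h$ (well-defined by compatibility).

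The main technical step -- and the principal obstacle -- is to verify that $(T^\bullet,\lambda) \in \lufTD$, i.e.\ that it is genuinely locally and upwards finite, and that $f_h^\bullet(T^\bullet,\lambda) = (S_h,\mu_h)$. The key observation is that for any vertex $v \in T^\bullet$, one can choose $h$ large enough so that both $v$ and all its children in $T^\bullet$ already lie in $S_h$: if $v$ lies on the spine at position $u_i$, pick any $h > i$; if $v$ is off the spine, it belongs to some $S_{h_0}$ and then all of its descendants lie in $S_{h_0}$ as well (since a fringe subtree is closed under taking descendants). Since each $S_h$ is finite, this gives both local finiteness of $T^\bullet$ and upward finiteness of the side branches in one stroke, and moreover shows that $f_h^\bullet(T^\bullet,\lambda)=(S_h,\mu_h)$, whence $d_t((T_n^\bullet,\lambda_n),(T^\bullet,\lambda)) \to 0$. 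The required size-matching of decorations passes to the union trivially. Combined with separability, this yields that $(\lufTD, d_t)$ is Polish.
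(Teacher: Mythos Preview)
Your proof is correct and follows essentially the same strategy as the paper's. For separability both you and the paper use the countable set $\bigcup_{h\geq 1} f_h^\bullet(\lufTD)$ of truncations (you additionally observe these are exactly the finite decorated pointed trees); for completeness the paper phrases the argument abstractly---embedding $(T^\bullet,\lambda)\mapsto (f_h^\bullet(T^\bullet,\lambda))_{h\geq 1}$ as a closed subspace of a countable product of discrete sets---while you unpack the same idea by explicitly building the limit as the union of the stabilized truncations and verifying it lies in $\lufTD$, which is precisely the content of closedness of the image.
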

\begin{proof}
	The separability is trivial since $\uplus_{h \ge 1} f_h^{\bullet}(\lufTD)$ is a countable dense set.
	The completeness follows from the fact that the space $\lufTD$ is a closed subspace of a countable product of discrete sets (which is complete) via the map $(T^{\bullet},\lambda_{T^{\bullet}})\to\big(f_h^\bullet(T^{\bullet},\lambda_{T^{\bullet}})\big)_{h\geq 1}.$
\end{proof}

We end this section defining two versions 
of the local convergence (similar to those defined for permutations) for random decorated trees 
with a uniform random distinguished leaf.
In both definition,
$(\bm{T}_n,\bm{\lambda}_n)_{n\in\mathbb{N}}$ is a sequence of random (finite) $\DDD$-decorated trees
and $\bm \ell_n$ is a uniform random leaf of $(\bm{T}_n,\bm{\lambda}_n)$.

\begin{defn}[Annealed Benjamini--Schramm convergence]
	We say that $(\bm{T}_n,\bm{\lambda}_n)_{n\in\mathbb{N}}$ converges in the annealed B--S sense to a random variable $(\bm{T}^{\bullet}_\infty,\bm{\lambda}_\infty)$
	with values in $\lufTD$ if the sequence of random pointed $\DDD$-decorated trees $((\bm{T}^{\bullet}_n,\bm{\lambda}_n),\bm \ell_n)_{n\in\mathbb{N}}$ converges in distribution to $(\bm{T}^{\bullet}_\infty,\bm{\lambda}_\infty)$
	with respect to the local distance defined in \cref{eq:local_dist_for_trees}.
\end{defn}

\begin{defn}[Quenched Benjamini--Schramm convergence]
	We say that $(\bm{T}_n,\bm{\lambda}_n)_{n\in\mathbb{N}}$ converges in the quenched B--S sense to a random measure $\bm{\mu}^{\bullet}_{\infty}$
	on $\lufTD$ if the sequence of conditional probability distributions  $\mathcal{L}aw\big(((\bm{T}_n,\bm{\lambda}_n),\bm \ell_n)\big|(\bm{T}_n,\bm{\lambda}_n)\big)_{n\in\mathbb{N}}$ converges in distribution to $\bm{\mu}^{\bullet}_\infty$
	with respect to the weak topology induced by the local distance defined in \cref{eq:local_dist_for_trees}.
\end{defn}
Again, the quenched version is stronger than the annealed one.

\begin{rem}
	It would also be natural, and closer to the usual notion of B--S convergence
	in the literature,
	to distinguish a uniform random vertex $\bm v_n$
	rather than a uniform random leaf $\bm \ell_n$ as above.
	The leaf version is however what we need here for our application to permutations.
\end{rem}

\subsection{Local convergence around a uniform leaf for random packed trees conditioned on the number of leaves}
\label{sec:limiting_local_trees}

We now fix a substitution-closed class $\mathcal C$ and we work under the following assumption on $\mathcal{C}$.
\begin{ass}
	\label{ass:type1}
	Consider the random variable $\xi$ defined\footnote{Recall that this definition is uniquely determined by the choice of the class $\mathcal{C}$.} in \cref{eq:offspring_distribution_packed_tree}.
	We assume that $$\E[\xi]=1.$$
\end{ass}
See also \cref{prop: offspring_distr_charact} page \pageref{prop: offspring_distr_charact} for an explicit characterization of this assumption. We also recall that $\mathfrak S$ denotes the set of simple permutations in $\mathcal{C}$.

\medskip

We begin by constructing the limiting random pointed packed tree $\bm{P}^\bullet_\infty=(\bm{T}^\bullet_\infty,\bm{\lambda}_\infty)$. 
This tree will be the limit of the sequence of uniform packed trees $(\bm{T}_n,\bm{\lambda}_{\bm{T}_n})$ considered in \cref{lem:unif_packed_tree} pointed at a random leaf.

We recall that $\bm{T}$ denotes a $\xi$-Galton--Watson tree.
Additionally, we recall that the \emph{size-biased} version of $\xi$, denoted $\hat{\xi}$, is the random variable with distribution $\P(\hat{\xi} = i) = i\cdot  \P(\xi = i)$.

We define the random tree $\bm{T}^\bullet_\infty$ in the space $\lufT$ as follows. Let $u_0$ be the distinguished leaf. For each $i\geq 1$, we let $u_i$ receive offspring according to an independent copy of $\hat{\xi}$. 
The vertex $u_{i-1}$ gets identified with a child of $u_i$ chosen uniformly at random. The rest of the children of $u_i$ become roots of independent copies of the Galton--Watson tree $\bm{T}$. 

Conditionally on $\bm{T}^\bullet_\infty,$ the random decoration $\bm{\lambda}_{\infty}(v)$ of each internal vertex $v$ of $\bm{T}^\bullet_\infty$ gets drawn uniformly at random among all $d^+_{\bm{T}^\bullet_\infty}(v)$-sized decorations in $\widehat{\GGG(\mathfrak{S})}$ independently of all the other decorations ($\widehat{\GGG(\mathfrak{S})}$ was introduced after \cref{def:S_plus_decoration}). This construction yields a random infinite locally and upwards finite pointed packed tree.

We refer to the sequence of (decorated) vertices $(u_i)_{i \geq 0}$ as the \emph{infinite spine} of $\bm{P}^\bullet_\infty=(\bm{T}^\bullet_\infty,\bm{\lambda}_\infty)$.

\bigskip

To simplify notation, we denote the space $\mathfrak{T}^{\bullet,\text{\tiny luf}}_{\widehat{\GGG(\mathfrak{S})}}$ of (possibly infinite) locally and upwards finite pointed packed trees as $\lufPT$.

\begin{prop}
	\label{prop:quenched_conv_tree}
	Let $\bm{P}_n=(\bm{T}_n,\bm{\lambda}_{\bm{T}_n})$ be the random packed tree considered in \cref{lem:unif_packed_tree} and  $\bm{P}^\bullet_\infty=(\bm{T}^\bullet_\infty,\bm{\lambda}_{\infty})$ be the random pointed packed tree constructed above. It holds that
	\begin{equation}
	\label{eq:random_meas_conv}
	\mathcal{L}aw\big((\bm{P}_n,\bm{\ell}_n)|\bm{P}_n\big)\stackrel{P}{\longrightarrow}\mathcal{L}aw(\bm{P}^\bullet_\infty),
	\end{equation}
	where $\bm{\ell}_n$ is a uniform leaf of $\bm{P}_n$ chosen independently of $\bm{P}_n.$
	
	In other words, $\bm{P}_n$ converges in the quenched B--S sense 
	to the deterministic measure $\mathcal{L}aw(\bm{P}^\bullet_\infty)$
	and  therefore in the annealed B--S sense to the random tree $\bm{P}^\bullet_\infty.$  
\end{prop}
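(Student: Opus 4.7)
\medskip

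The plan is to first reduce the question to the undecorated skeleton, then establish the annealed Benjamini--Schramm convergence by a Kesten-type theorem for simply generated trees conditioned on their number of leaves, and finally upgrade this to the quenched statement via a second-moment/two-leaves argument.

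First I would separate the skeleton from the decorations. By \cref{eq:decoration_distrib}, conditionally on $\bm{T}_n$, the decorations of $\bm{P}_n$ are independent, each drawn uniformly at random among $\widehat{\GGG(\mathfrak{S})}$-objects of the appropriate size. By construction, the same independence holds for $\bm{P}^\bullet_\infty$ given $\bm{T}^\bullet_\infty$. Since the function $f_h^{\bullet}$ only reads finitely many vertices and their decorations, it is therefore enough to prove the analogous quenched B--S convergence for the pair $(\bm{T}_n,\bm{\ell}_n)$ to the random pointed tree $\bm{T}^\bullet_\infty$, then conclude by a standard conditioning argument on the decorations.

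Next I would prove the annealed version, namely that for every $h\ge 0$ and every $T^\bullet\in f_h^{\bullet}(\lufT)$,
\begin{equation*}
\P\big(f_h^{\bullet}(\bm{T}_n,\bm{\ell}_n)=T^\bullet\big)\xrightarrow[n\to\infty]{}\P\big(f_h^{\bullet}(\bm{T}^\bullet_\infty)=T^\bullet\big).
\end{equation*}
Recall that by \cref{sec:indecomposable_GW}, $\bm{T}_n$ has the law of a $\xi$-Galton--Watson tree $\bm{T}^\xi$ conditioned on having $n$ leaves, and under \cref{ass:type1} the offspring distribution $\xi$ is critical and aperiodic with finite variance (\cref{prop: offspring_distr_charact}). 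The candidate limit $\bm{T}^\bullet_\infty$ is precisely Kesten's infinite tree for $\xi$, and the fact that $u_0$ is a leaf by construction matches the uniform-leaf pointing. Averaging over the choice of leaf and decomposing the event according to the backward spine from $\bm{\ell}_n$ up to height $h$, the probability above is a finite sum of terms of the form $Z_m\,Z_{n-m-|T^\bullet|_{\text{leaves}}-\cdots}/Z_n$, times the weight of the prescribed neighborhood. Standard singularity analysis of the generating series $\mathcal{Q}(z)$ at its radius of convergence (see the discussion before \cref{eq:offspring_distribution_packed_tree}), together with the local limit theorems for random walks with step distribution $\xi-1$ that underlie all such Kesten-type theorems (\cite{janson2012simply,stufler2016local}), gives the announced limit, which matches the explicit size-biased construction of $\bm{T}^\bullet_\infty$.

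Finally I would bootstrap from annealed to quenched. Since the target measure $\mathcal{L}aw(\bm{P}^\bullet_\infty)$ is deterministic, \cref{detstrongbsconditions} (whose proof adapts verbatim to the space $\lufTD$) reduces the quenched statement to showing, for every $h$ and every $P^\bullet$, that
\begin{equation*}
\P\big(f_h^{\bullet}(\bm{P}_n,\bm{\ell}_n)=P^\bullet\,\big|\,\bm{P}_n\big)\xrightarrow{P}\P\big(f_h^{\bullet}(\bm{P}^\bullet_\infty)=P^\bullet\big).
\end{equation*}
Given the annealed convergence just proved, it suffices to show convergence of the second moment, i.e.\ to compute the limit of
\begin{equation*}
\E\!\left[\P\big(f_h^{\bullet}(\bm{P}_n,\bm{\ell}_n)=P^\bullet\,\big|\,\bm{P}_n\big)^2\right]=\P\big(f_h^{\bullet}(\bm{P}_n,\bm{\ell}_n^{(1)})=P^\bullet,\ f_h^{\bullet}(\bm{P}_n,\bm{\ell}_n^{(2)})=P^\bullet\big),
\end{equation*}
where $\bm{\ell}_n^{(1)},\bm{\ell}_n^{(2)}$ are two independent uniform leaves of $\bm{P}_n$. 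I would show that this joint probability converges to the square of $\P(f_h^{\bullet}(\bm{P}^\bullet_\infty)=P^\bullet)$. This reduces to a two-pointed version of the Kesten-type theorem: two uniform leaves in a critical conditioned GW tree are at graph distance tending to infinity in probability, so the two $h$-neighborhoods are a.s.\ disjoint for $n$ large, and conditionally on this disjointness the two neighborhoods become asymptotically independent, each distributed as $f_h^{\bullet}(\bm{T}^\bullet_\infty)$ (plus the i.i.d.\ decorations).

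The main obstacle is this last step, the asymptotic independence of two uniformly pointed leaves. Everything else follows from now-standard techniques for simply generated trees and for uniform random decorations. The decoupling itself, however, requires a two-pointed Kesten-type analysis: one must decompose $\bm{T}_n$ along the geodesic between $\bm{\ell}_n^{(1)}$ and $\bm{\ell}_n^{(2)}$ into two pointed fringe subtrees plus a connecting path of diverging length, and then apply the same generating-series/singularity-analysis arguments used for the one-pointed annealed statement, now to a size-biased $\xi$-GW tree with two marked leaves. Control of the atypical event that the two leaves are at bounded distance (and hence their neighborhoods overlap) uses the well-known fact that, for critical $\xi$ with finite variance, $\bm{T}_n$ has typical height of order $\sqrt n$, so this bad event has probability $O(1/\sqrt n)$ and is negligible.
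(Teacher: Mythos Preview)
Your approach is correct in outline but takes a genuinely different route from the paper. You go through the annealed statement first and then upgrade to quenched via a second-moment/two-leaves decoupling. The paper bypasses both of these steps by a single observation: for a leaf $\ell$ of $\bm T_n$, the event $\{f_h^\bullet(\bm T_n,\ell)=T^\bullet\}$ is equivalent to the event $\{f(\bm T_n,v)=T\}$, where $v$ is the $h$-th ancestor of $\ell$ and $T$ is the unpointed version of $T^\bullet$ (and to each such $v$ corresponds exactly one such leaf). This converts the normalized count of leaves with a prescribed pointed $h$-neighborhood into the normalized count of vertices with a prescribed fringe subtree. For the latter, \cite[Remark 1.9]{stufler2019offspring} already gives convergence \emph{in probability} (a quenched statement) directly, so no annealed-to-quenched bootstrap is needed. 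The decorations are handled at the end, not the beginning: since the fringe subtrees equal to $T$ are pairwise disjoint, their decorations are independent and each match $\lambda_T$ with a fixed probability $p$, so a Chernoff bound gives the concentration. What your approach buys is self-containment (you do not need Stufler's fringe result), at the price of the two-pointed asymptotic-independence analysis you correctly flag as the main obstacle; the paper's approach trades that obstacle for a one-line citation.
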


Note that the $\mathcal{L}aw(\bm{P}^\bullet_\infty)$ is a measure on $\lufPT$.
Since the limiting object in quenched B--S convergence is in general a random measure on $\lufPT$,
it should be interpreted as a constant random variable, 
a.s.\ equal to the measure $\mathcal{L}aw(\bm{P}^\bullet_\infty)$.

\begin{proof}[Proof of \cref{prop:quenched_conv_tree}]
	The sequence $\mathcal{L}aw\big((\bm{P}_n,\bm{\ell}_n)|\bm{P}_n\big)_{n\in\Z_{>0}}$ is a sequence of random probability measures on the Polish space $(\lufPT,d_t).$ 
	The set of closed and open balls
	\begin{equation}
	\label{convergece_determ}
	\mathcal{B}=\Big\{B\big((T^\bullet,\lambda_{T^\bullet}),2^{-h}\big):h\in\Z_{>0},(T^\bullet,\lambda_{T^\bullet})\in\lufPT\Big\}
	\end{equation}
	is a convergence-determining class for the space $(\lufPT,d_t)$, 
	that is, for every probability measure $\mu$ and every sequence of probability measures $(\mu_n)_{n\in\Z_{>0}}$ on $\lufPT$, 
	the convergence $\mu_n(B)\to\mu(B)$ for all $B\in\mathcal{B}$ implies $\mu_n\to\mu$ w.r.t.\ the weak-topology. This is a trivial consequence of the monotone class theorem and the fact that the intersection of two balls in $\lufPT$ is either empty or one of them.
	
	Therefore, using \cite[Theorem 4.11]{kallenberg2017random}, the convergence in \cref{eq:random_meas_conv} is equivalent
	to the following convergence, for all $k\in\Z_{>0}$ and for all vectors of balls $(B_i)_{1\leq i\leq k}\in\mathcal{B}^k$:
	\begin{equation}
	\Big(\mathcal{L}aw\big((\bm{P}_n,\bm{\ell}_n)|\bm{P}_n\big)(B_i)\Big)_{1\leq i\leq k}\stackrel{d}{\longrightarrow}\Big(\mathcal{L}aw(\bm{P}^\bullet_\infty)(B_i)\Big)_{1\leq i\leq k}.
	\end{equation}
	Since the limiting vector in the above equation is deterministic, the above convergence in distribution is equivalent to the convergence in probability. Finally, standard properties of the convergence in probability imply that
	it is enough to show the component-wise convergence, that is, for all $B\in\mathcal{B}$, 
	\begin{equation}
	\label{eq:goaloftheproof1}
	\mathcal{L}aw\big((\bm{P}_n,\bm{\ell}_n)|\bm{P}_n\big)(B)\stackrel{P}{\longrightarrow}\mathcal{L}aw(\bm{P}^\bullet_\infty)(B).
	\end{equation}
	
	Fix a ball $B=B\big((T^\bullet,\lambda_{T^\bullet}),2^{-h}\big)\in\mathcal{B}$ and note that \cref{eq:goaloftheproof1} 
	(which we need to prove) rewrites as
	\begin{equation}
	\label{eq:goal1oftheproof}
	\P\big(f_h^{\bullet}(\bm{P}_n,\bm{\ell}_n)=f_h^{\bullet}(T^\bullet,\lambda_{T^\bullet})\big|\bm{P}_n\big)\stackrel{P}{\longrightarrow}\P\big(f^\bullet_h(\bm{P}^\bullet_\infty)=f^\bullet_h(T^\bullet,\lambda_{T^\bullet})\big).
	\end{equation}
	Note that the left-hand side is a function of $\bm{P}_n$, and hence, a random variable; the right-hand side is a number.
	W.l.o.g. we can assume that $f_h^{\bullet}(T^\bullet,\lambda_{T^\bullet})=(T^\bullet,\lambda_{T^\bullet})$. Denoting $\mathfrak{L}(\bm{P}_n)$ the set of leaves of $\bm{P}_n$,
	the left-hand side writes
	\begin{equation}
	\begin{split}
	\P\big(f_h^{\bullet}(\bm{P}_n,\bm{\ell}_n)=(T^\bullet,\lambda_{T^\bullet})\big|\bm{P}_n\big)&=\frac{\big|\big\{\ell\in \mathfrak{L}(\bm{P}_n):f_h^{\bullet}(\bm{P}_n,\ell)=(T^\bullet,\lambda_{T^\bullet})\big\}\big|}{n}\\
	&=\frac{1}{n}\sum_{\ell\in \mathfrak{L}(\bm{P}_n)}\mathds{1}_{\{f_h^{\bullet}(\bm{P}_n,\ell)=(T^\bullet,\lambda_{T^\bullet})\}}\\
	&=\frac{1}{n}\sum_{\ell\in \mathfrak{L}(\bm{T}_n)}\mathds{1}_{\{f_h^{\bullet}(\bm{T}_n,\ell)=T^\bullet\}}\mathds{1}_{\{\bm{\lambda}_{f_h^{\bullet}(\bm{T}_n,\ell)}=\lambda_{T^\bullet}\}}.
	\end{split}
	\end{equation}
	For a vertex $v$ of $\bm{T}_n$, we denote by $f(\bm{T}_n,v)$ the fringe subtree rooted at $v$
	and by $f(\bm{\lambda}_{(\bm{T}_n,v)})$ the map $\lambda_{|_{\Vint(f(\bm{T}_n,v))}}$.
	Let also $T$ be the unpointed version of $T^{\bullet}$.
	Note that a leaf $\ell\in \mathfrak{L}(\bm{T}_n)$ satisfies $f_h^{\bullet}(\bm{T}_n,\ell)=T^\bullet$
	if and only if its $h$-th ancestor $v$ satisfies $f(\bm{T}_n,v)=T$.
	Additionally, to any $v$ with $f(\bm{T}_n,v)=T$ corresponds exactly one leaf $\ell$ with $f_h^{\bullet}(\bm{T}_n,\ell)=T^\bullet$
	(which is determined by the pointing).
	Therefore we can rewrite the last term of the above equation as
	\begin{equation}
	\frac{1}{n}\sum_{v\in \bm{T}_n}\mathds{1}_{\{f(\bm{T}_n,v)=T\}}\mathds{1}_{\{f(\bm{\lambda}_{(\bm{T}_n,v)})=\lambda_{T}\}}.
	\end{equation}
	By \cite[Remark 1.9]{stufler2019offspring}, we have that 
	$$\frac{1}{n}\sum_{v\in \bm{T}_n}\mathds{1}_{\{f(\bm{T}_n,v)=T\}}\stackrel{P}{\longrightarrow}\P\big(f^\bullet_h(\bm{T}^\bullet_\infty)=f^\bullet_h(T^\bullet)\big).$$
	
	Note that all fringe subtrees of $\bm{T}_n$ that are equal to $T$ are necessarily disjoint and that, conditioning on $f(\bm{T}_n,v)=T,$ then $f(\bm{\lambda}_{(\bm{T}_n,v)})=\lambda_{T}$ with some probability $p,$ independently from the rest 
	(specifically $p = \prod_{u \in T} q_{d^+_T(u)}^{-1}$). Therefore, we can conclude using Chernoff concentration bounds that 
	\begin{equation}
	\frac{1}{n}\sum_{v\in \bm{T}_n}\mathds{1}_{\{f(\bm{T}_n,v)=T\}}\mathds{1}_{\{f(\bm{\lambda}_{(\bm{T}_n,v)})=\lambda_{T}\}}
	\stackrel{P}{\longrightarrow}p\cdot\P\big(f^\bullet_h(\bm{T}^\bullet_\infty)=f^\bullet_h(T^\bullet)\big)=\P\big(f^\bullet_h(\bm{P}^\bullet_\infty)=f^\bullet_h(T^\bullet,\lambda_{T^\bullet})\big),
	\end{equation}
	where the last equality follows from the construction of the map $\bm{\lambda}_{\infty}.$   
\end{proof}
\subsection{Continuous extension of the bijection between packed trees and  permutations to infinite objects}
\label{sec:continuity_bij}

In this section we consider a substitution-closed class $\mathcal{C}$ different from the class of separable permutations. 
The latter case is considered separately in \cite{borga2020decorated} and it will be not considered in this manuscript\footnote{The issue is that every packed tree obtained from a separable permutation contains no $\mathfrak S$-gadgets (because $\mathfrak S=\emptyset$ for separable permutations) and these decorations play a fundamental role in our constructions, as we shall see below. For all these reasons, we needed an \emph{ad hoc} construction for separable permutations.}.
We recall that $\DT:=\Pack\circ\CanTree$ is the bijection presented in \cref{le:bij_perm_tree} page \pageref{le:bij_perm_tree} between $\oplus$-indecomposable permutations of $\mathcal{C}$ and finite packed trees. 

The goal of this section is to extend the bijection $\DT^{-1}$ as a function $\RP$ (for \emph{rooted permutations}) from the metric space of (possibly infinite) locally and upwards finite pointed packed trees $(\lufPT,d_t)$ to the metric space of (possibly infinite) rooted permutations $(\tilde{\SG}_{\bullet},d)$.

First, we need to deal with the introduction of a root in permutations (resp.\ a pointed leaf in trees) on finite objects. 
This is very simple, and we extend $\DT^{-1}$ as a function $\RP$ 
from finite pointed packed trees to finite rooted permutations as follows. 
We recall (see \cref{rk:Leaves_Elements} page \pageref{rk:Leaves_Elements})
that the $i$-th leaf $\ell$ of a packed tree $P=\DT(\nu)$ corresponds to
the $i$-th element of the permutation $\nu$.
Therefore the following definition is natural:
\begin{equation}
\label{eq:G_extension_1}
\RP(\PackedTree,\ell):=(\DT^{-1}(\PackedTree),i).
\end{equation} 
Given an infinite pointed packed tree $\PackedTree^{\bullet}$ with infinitely many $\mathfrak{S}$-gadget decorations on its infinite spine, we consider the sequence of pointed subtrees 
$$\big(f^{\bullet}_{s(h)}(\PackedTree^{\bullet})\big)_{h\in\mathbb{N}}$$
consisting of all restrictions for $s(h)\in\mathbb{N}$ such that $f^{\bullet}_{s(h)}(\PackedTree^{\bullet})$ has root decorated by an $\mathfrak{S}$-gadget.

\begin{lem}[{\cite[Lemma 6.14]{borga2020decorated}}]
	\label{lem:limexistence}
	Let $\PackedTree^{\bullet}$ be an infinite pointed packed tree with infinitely many $\mathfrak{S}$-gadget decorations on its infinite spine.
	Then the (deterministic) sequence of rooted permutations $\big(\RP(f^{\bullet}_{s(h)}(\PackedTree^{\bullet}))\big)_{h\in\mathbb{N}}$ is locally convergent as $h$ tends to $+\infty$.
\end{lem}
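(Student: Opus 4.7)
The plan is to reduce the claim to the stabilization of $h'$-restrictions for each fixed $h'\in\mathbb{Z}_{>0}$. By the definition of the local distance in \cref{distance} (and the ultrametric description of open balls it induces), showing that the sequence $(\RP(f^\bullet_{s(h)}(P^\bullet)))_{h\in\mathbb{N}}$ converges in $(\tilde{\mathcal S}^\bullet,d)$ is equivalent to showing that, for every $h'\geq 1$, the finite rooted permutation $r_{h'}(\RP(f^\bullet_{s(h)}(P^\bullet)))$ is eventually constant in $h$. I fix such an $h'$ and write $P^\bullet_h\coloneqq f^\bullet_{s(h)}(P^\bullet)$ and $(\sigma_h,i_h)\coloneqq\RP(P^\bullet_h)$. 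By construction of $\RP$ (see \cref{eq:G_extension_1}), $i_h$ is the pre-order index of $u_0$ among the leaves of $P^\bullet_h$, and $r_{h'}(\sigma_h,i_h)$ depends only on the (at most $2h'+1$) leaves of $P^\bullet_h$ lying at pre-order positions $[\max(1,i_h-h'),\min(|\sigma_h|,i_h+h')]$, together with the shifted root index $i_h-\max(1,i_h-h')+1$.

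The first step is to identify these nearest leaves and prove their stabilization. Since the fringe subtrees $(P^\bullet_h)_{h\geq 0}$ are nested and exhaust $P^\bullet$, a direct analysis of pre-order on plane trees yields the following: if $L_h$ and $R_h$ denote the numbers of leaves in $P^\bullet_h$ preceding (resp.\ following) $u_0$ in pre-order, then $L_h\nearrow L_\infty\in\mathbb{Z}_{\geq 0}\cup\{\infty\}$, $R_h\nearrow R_\infty$, and moreover the $j$-th leaf preceding (resp.\ following) $u_0$ in $P^\bullet_h$, once it exists, coincides with the $j$-th leaf preceding (resp.\ following) $u_0$ in $P^\bullet$. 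Indeed, going from $P^\bullet_h$ to $P^\bullet_{h+1}$ only prepends (resp.\ appends) leaves coming from spine-siblings on the left (resp.\ right) of the path joining $u_{s(h)}$ to $u_{s(h+1)}$, which are strictly \emph{farther} from $u_0$ in pre-order than the leaves already present. Consequently, for all $h$ large enough, the set of at most $2h'+1$ leaves involved in the $h'$-restriction, as well as the shifted root index $\min(h',L_h)+1$, are independent of $h$.

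The second step is to show that the pattern induced by these stabilized leaves is also eventually constant. For any pair $\ell_1,\ell_2$ among these leaves, their closest common ancestor $u$ in $P^\bullet$ is a well-defined (finite) vertex that lies in $P^\bullet_h$ for all sufficiently large $h$. By the rule recalled at the end of \cref{ssec:patterns_subtrees}, the inversion status of the pair $(\ell_1,\ell_2)$ in $\sigma_h=\DT^{-1}(P^\bullet_h)$ is determined by any fringe subtree of $P^\bullet_h$ containing both leaves and rooted at a vertex decorated with an $\mathfrak{S}$-gadget. Since by the definition of $s(h)$ the root of $P^\bullet_h$ itself carries such a decoration, the rule applies as soon as $\ell_1,\ell_2\in P^\bullet_h$, and the answer depends only on the portion of $P^\bullet$ lying between $u$ and its nearest $\mathfrak{S}$-gadget ancestor (or on the $\mathfrak{S}$-gadget at $u$ itself, if $u$ is so decorated), hence is the same for every sufficiently large $h$. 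Combining the two stabilizations yields $r_{h'}(\RP(P^\bullet_h))$ eventually constant, as desired.

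The main obstacle is ensuring that the pattern-reading rule of \cref{ssec:patterns_subtrees}, which requires an $\mathfrak{S}$-gadget ancestor of each closest common ancestor, can be applied uniformly and consistently as $h$ grows. This is precisely where the hypothesis of infinitely many $\mathfrak{S}$-gadgets along the spine enters: it guarantees that $(s(h))_{h\in\mathbb{N}}$ is a well-defined infinite sequence, that the required $\mathfrak{S}$-gadget ancestor has already entered $P^\bullet_h$ for all large $h$, and that the inversion status so read off does not change as we enlarge the fringe.
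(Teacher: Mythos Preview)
Your proposal is correct. The paper itself does not include a proof of this lemma (it is only cited from \cite{borga2020decorated}), but your argument is exactly the natural one and matches the approach of the cited source: you reduce local convergence to eventual constancy of each $h'$-restriction, then argue that (i) the $2h'+1$ leaves nearest to $u_0$ in pre-order stabilize because passing from $P^\bullet_h$ to $P^\bullet_{h+1}$ only adds leaves outside the existing pre-order block, and (ii) the pattern they induce stabilizes thanks to the key observation at the end of \cref{ssec:patterns_subtrees} that the pattern is determined by any fringe subtree rooted at an $\mathfrak{S}$-gadget vertex. One small simplification: in your second step you do not actually need to track the closest common ancestor $u$ and its nearest $\mathfrak{S}$-gadget ancestor separately; it suffices to choose $h_0$ so that $P^\bullet_{h_0}$ already contains all $2h'+1$ relevant leaves, and then note that for every $h\geq h_0$ the tree $P^\bullet_{h_0}$ is a fringe subtree of $P^\bullet_h$ rooted at an $\mathfrak{S}$-gadget, so the induced pattern in $\DT^{-1}(P^\bullet_h)$ coincides with that in $\DT^{-1}(P^\bullet_{h_0})$.
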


This lemma allows to define, for an infinite pointed packed tree $\PackedTree^{\bullet}$ having infinitely many $\mathfrak{S}$-gadget decorations on its infinite spine,
\begin{equation}
\label{eq:G_extension_2}
\RP(\PackedTree^{\bullet}):=\lim_{h\to\infty}\RP(f^{\bullet}_{s(h)}(\PackedTree^{\bullet})).
\end{equation}

We now recover some results on the continuity of the function $\RP$ with respect to the local topologies. Note that $\RP$ is defined only for finite pointed packed trees and infinite pointed packed tree with infinitely many $\mathfrak{S}$-gadget decorations on the infinite spine. This will not be an issue: indeed, we will use the map on a sequence of random pointed packed trees that converges to $\bm{P}^\bullet_\infty$ and this limiting pointed packed tree has a.s.\ infinitely many $\mathfrak{S}$-gadget decorations on the infinite spine.
We set
\begin{equation}
\begin{split}
C_{\RP}:=\big\{\PackedTree^{\bullet}\in&\lufPT:\forall k>0,\;\exists h(k)>0\text{ s.t. }f^{\bullet}_{h(k)}(\PackedTree^{\bullet})\text{ contains at least } k \text{ leaves before and}\\
&\text{$k$ leaves after the distinguished leaf, and has a root decorated with an $\mathfrak{S}$-gadget}\big\},
\end{split}
\end{equation}
where we say that a leaf $\ell_1$ is before (resp.\ after)
a leaf $\ell_2$ if the pre-order label of $\ell_1$ is smaller (resp.\ greater) than
the pre-order label of $\ell_2$.

\begin{prop}[{\cite[Proposition 6.17]{borga2020decorated}}]
	\label{prop:continuity_RP}
	$\RP:(\lufPT,d_t)\to(\tilde{\SG}_{\bullet},d)$ is continuous on $C_{\RP}$.
\end{prop}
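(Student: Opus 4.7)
Fix $\PackedTree^{\bullet}\in C_{\RP}$ and an integer $k>0$; the goal is to exhibit a $\delta>0$ such that $d_t(\PackedTree^{\bullet},\PackedTree_1^{\bullet})<\delta$ implies $r_k(\RP(\PackedTree^{\bullet}))=r_k(\RP(\PackedTree_1^{\bullet}))$ for every $\PackedTree_1^{\bullet}$ in the domain of $\RP$. By definition of $C_{\RP}$, we can pick $h=h(k)$ such that the pointed fringe subtree $F:=f^{\bullet}_{h}(\PackedTree^{\bullet})$ contains at least $k$ leaves before and $k$ leaves after the distinguished leaf $u_0$ and has its root decorated by some $\mathfrak{S}$-gadget. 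The natural choice is $\delta:=2^{-h}$: by the very definition of $d_t$, if $d_t(\PackedTree^{\bullet},\PackedTree_1^{\bullet})<2^{-h}$ then $f^{\bullet}_{h}(\PackedTree_1^{\bullet})=F$.

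The main step is to show that $r_k(\RP(\PackedTree^{\bullet}))$ is already determined by $F$, and likewise for any $\PackedTree_1^{\bullet}$ whose $h$-restriction equals $F$. For this I would invoke the pattern-reading principle established in \cref{ssec:patterns_subtrees}: whenever a set $I$ of leaves of a packed tree has a common ancestor lying inside a fringe subtree rooted at a vertex decorated by an $\mathfrak{S}$-gadget, the pattern induced by $I$ depends only on that fringe subtree. Because $F$ is a pointed fringe subtree containing $u_0$ together with the $k$ leaves immediately before and after it (in pre-order), these $2k+1$ leaves -- which are precisely the leaves encoding $r_k(\RP(\cdot))$ -- have all their pairwise common ancestors inside $F$, and the root of $F$ carries an $\mathfrak{S}$-gadget. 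Hence the induced pattern can be read off entirely from $F$, and the same recipe gives the same pattern for any tree whose $h$-restriction is $F$.

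It remains to translate this observation to the actual definition of $\RP$ on the two possible cases for $\PackedTree_1^{\bullet}$. If $\PackedTree_1^{\bullet}$ is finite, then $\RP(\PackedTree_1^{\bullet})=(\DT^{-1}(\PackedTree_1^{\bullet}),i)$ via \eqref{eq:G_extension_1}; the condition $f^{\bullet}_{h}(\PackedTree_1^{\bullet})=F$ guarantees that the rooted permutation $\RP(\PackedTree_1^{\bullet})$ has at least $k$ entries on each side of its root, and the pattern-reading argument above identifies its $k$-restriction with the one computed from $F$, hence with $r_k(\RP(\PackedTree^{\bullet}))$. If $\PackedTree_1^{\bullet}$ is infinite it lies in the domain of $\RP$, so it carries infinitely many $\mathfrak{S}$-gadgets on its spine; \cref{lem:limexistence} then defines $\RP(\PackedTree_1^{\bullet})$ as the local limit of $\RP(f^{\bullet}_{s(j)}(\PackedTree_1^{\bullet}))$ along the spine levels $s(j)$ carrying $\mathfrak{S}$-gadgets. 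For every $s(j)\geq h$, the fringe subtree $f^{\bullet}_{s(j)}(\PackedTree_1^{\bullet})$ still contains $F$ as a pointed fringe subtree rooted at an $\mathfrak{S}$-gadget vertex, so by the same principle its image under $\RP$ has $k$-restriction equal to the one read from $F$. Passing to the limit in $j$ preserves the $k$-restriction (which stabilizes), and the same argument applied to $\PackedTree^{\bullet}$ itself shows that $r_k(\RP(\PackedTree^{\bullet}))$ is also this common value.

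The subtle point that will require care is a clean statement and proof of the pattern-reading principle for the possibly non-canonical fringe subtrees of a packed tree: in \cref{ssec:patterns_subtrees} the principle was formulated for entire packed trees of permutations of $\mathcal{C}$, whereas here I need to apply it inside an arbitrary pointed fringe subtree $F$ whose root carries an $\mathfrak{S}$-gadget but which is not itself the packed tree of a permutation in $\mathcal{C}$. The key observation that makes it work is that the local reading only uses the closest common ancestor of two leaves and its $\mathfrak{S}$-gadget decoration -- exactly the data available inside $F$ -- so the reading is insensitive to what happens above the root of $F$. Formalizing this carefully (and handling the boundary case of leaves that have the root of $F$ itself as closest common ancestor) is the only real content beyond the bookkeeping above.
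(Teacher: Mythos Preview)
Your approach is correct and is exactly the natural one: use the definition of $C_{\RP}$ to find a level $h(k)$ whose fringe subtree $F$ is rooted at an $\mathfrak{S}$-gadget and contains the $2k+1$ relevant leaves, then invoke the pattern-reading principle of \cref{ssec:patterns_subtrees} to conclude that $r_k(\RP(\cdot))$ depends only on $F$. The paper does not give a proof here (it only cites the source), but this is precisely the argument one expects.

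Your ``subtle point'' at the end is not actually a difficulty. The observation at the end of \cref{ssec:patterns_subtrees} already says exactly what you need: \emph{the pattern induced by a set $I$ of leaves in $P$ is determined by any fringe subtree containing all leaves of $I$ and rooted at any vertex decorated with an $\mathfrak{S}$-gadget}. This statement is about a fringe subtree inside an arbitrary packed tree $P$; it does not require $F$ itself to be the packed tree of some permutation in $\mathcal{C}$. The reason is the one you identify: determining whether two leaves form an inversion only requires tracing upward from their closest common ancestor until one hits an $\mathfrak{S}$-gadget, and if that ancestor lies in $F$ (whose root already carries an $\mathfrak{S}$-gadget), this search never leaves $F$. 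So there is nothing extra to formalize --- you can cite that observation directly and drop the caveat.

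One small cosmetic point: elements of $C_{\RP}$ are necessarily infinite (since the condition is for all $k$), and the definition of $C_{\RP}$ forces infinitely many $\mathfrak{S}$-gadgets on the spine (otherwise some $f^\bullet_{h}$ would be a fixed finite tree for all large $h$, contradicting the requirement for large $k$). So $\PackedTree^{\bullet}$ is automatically in the domain of $\RP$, and the case split you make is only needed for the approximating tree $\PackedTree_1^{\bullet}$.
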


We state a final preliminary result for the proof of \cref{thm:local_intro} in the non-separable case.
\begin{prop}[{\cite[Proposition 6.18]{borga2020decorated}}]
	\label{prop:prob_discontinuity_RP}
	We have $\P(\bm{P}^\bullet_\infty \in C_{\RP})=1$.
\end{prop}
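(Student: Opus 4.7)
The plan is to leverage the explicit construction of $\bm{P}^\bullet_\infty = (\bm{T}^\bullet_\infty, \bm{\lambda}_\infty)$ given just before \cref{prop:quenched_conv_tree}, and to verify separately that each of the three conditions defining $C_{\RP}$ holds almost surely. Writing $(u_i)_{i\geq 0}$ for the infinite spine of $\bm{T}^\bullet_\infty$, the key observation is that, by construction, the pairs $(\bm d_i,\bm \Lambda_i)_{i\geq 1}$ --- where $\bm d_i$ is the out-degree of $u_i$ and $\bm \Lambda_i$ is its $\widehat{\GGG(\mathfrak{S})}$-decoration --- form an i.i.d.\ sequence, with $\bm d_i \stackrel{d}{=} \hat\xi$ and $\bm \Lambda_i$ uniform among size-$\bm d_i$ elements of $\widehat{\GGG(\mathfrak{S})}$ conditionally on $\bm d_i$. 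It is this i.i.d.\ structure along the spine that will power two Borel--Cantelli arguments.

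First I would prove that almost surely infinitely many spine vertices $u_i$ are decorated by an $\mathfrak{S}$-gadget. Since $\mathcal C$ is not the class of separable permutations, $\mathfrak S$ contains some simple permutation $\alpha$, necessarily of size $d_0\geq 4$; the height-one tree whose root is decorated by $\alpha$ and whose $d_0$ children are all leaves is then a valid $\mathfrak{S}$-gadget of size $d_0$. Because $q_{d_0} \geq 1$ (the decoration $\circledast_{d_0}$ alone guarantees this) and $t_0>0$, the formula $\P(\hat\xi = d_0) = d_0\, q_{d_0}\, t_0^{d_0-1}$ is strictly positive, and conditionally on $\{\bm d_i = d_0\}$ the probability of drawing an $\mathfrak{S}$-gadget is also strictly positive. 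Hence $\P(\bm \Lambda_i \in \GGG(\mathfrak S))>0$, and the second Borel--Cantelli lemma together with independence in $i$ gives the claim.

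Next I would argue that almost surely there are infinitely many leaves both before and after $u_0$ in the pre-order of $\bm{T}^\bullet_\infty$. Let $\bm j_i$ denote the position of $u_{i-1}$ among the children of $u_i$, which is conditionally uniform on $[\bm d_i]$. Since $\hat\xi \geq 2$ almost surely (as $\P(\xi=0)\cdot 0 = 0$ and $\P(\xi = 1) = 0$), both events $\{\bm j_i > 1\}$ and $\{\bm j_i < \bm d_i\}$ have a common strictly positive lower bound on their probabilities. Borel--Cantelli then produces infinitely many indices where each event occurs. On each occurrence of $\{\bm j_i > 1\}$, at least one independent copy of the critical Galton--Watson tree $\bm T$ is attached to $u_i$ to the left of $u_{i-1}$; such a copy is almost surely finite and so contains at least one leaf, contributing a leaf strictly before $u_0$ in pre-order. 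Summing produces infinitely many leaves before $u_0$, and the symmetric argument (with $\{\bm j_i < \bm d_i\}$) handles the ``after'' side.

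To conclude on the intersection of the two full-probability events just produced, fix $k\geq 1$: first choose $H = H(k)$ large enough that $f^{\bullet}_{H}(\bm{P}^\bullet_\infty)$ already contains at least $k$ leaves before and $k$ leaves after $u_0$, then use Step 1 to pick $h(k) \geq H$ among the infinitely many levels at which $u_{h(k)}$ carries an $\mathfrak S$-gadget decoration. The main subtlety I anticipate is precisely this simultaneity: the three conditions in the definition of $C_{\RP}$ must all hold at the \emph{same} height $h(k)$, and the two Borel--Cantelli events concern different aspects of the spine. The monotonicity in $h$ of the leaf counts (which rely only on subtrees grafted strictly below level $h$) is what allows to first fix $H$ via Step 2 and then refine within Step 1, bypassing any need for a joint probabilistic argument.
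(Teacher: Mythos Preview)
Your argument is correct and follows the natural approach: the i.i.d.\ structure of $(\bm d_i,\bm\Lambda_i)_{i\geq 1}$ along the spine, together with the strict positivity of $\P(\bm\Lambda_i\in\GGG(\mathfrak S))$ (available precisely because $\mathfrak S\neq\emptyset$) and of $\P(\bm j_i>1)$, $\P(\bm j_i<\bm d_i)$ (available because $\hat\xi\geq 2$ a.s.), feeds two Borel--Cantelli arguments, and monotonicity in $h$ of the leaf counts lets you combine them. This is essentially the argument of the cited reference.
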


\subsection{Local limits of uniform permutations in substitution-closed classes} \label{sect:local_lim_sub}

We now prove a quenched B--S convergence result
for uniform random permutations in a proper substitution-closed class $\mathcal C$ different from the one of separable permutations, so that the set $\mathfrak{S}$ of simple permutations in $\cC$ is non-empty.
As we shall see at the end of the section, this implies our result stated in \cref{thm:local_intro}. Recall \cref{prop: offspring_distr_charact} page \pageref{prop: offspring_distr_charact}.

\begin{thm}
	\label{thm:quenched_BS_cv}
	For all $n\in\Z_{>0}$, let $\bm{\sigma}_n$ be a uniform permutation of size $n$ in a proper substitution-closed class $\mathcal{C}$ different from the one of separable permutations. If 
	\begin{align}
	\cS'(\rho_\cS) \ge \frac{2}{(1 +\rho_\cS)^2} -1,
	\end{align}
	then
	\begin{align}
	\bm{\sigma}_n\stackrel{qBS}{\longrightarrow}\mathcal{L}aw\big(\RP(\bm{P}^\bullet_\infty)\big)\quad\text{and}\quad \bm{\sigma}_n\stackrel{aBS}{\longrightarrow}\RP(\bm{P}^\bullet_\infty).
	\label{eq:local_lim_nonsep}
	\end{align}
\end{thm}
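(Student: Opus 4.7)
The plan is to reduce to random packed trees, apply the quenched local limit for packed trees around a uniform leaf (\cref{prop:quenched_conv_tree}), and push the convergence forward through the map $\RP$ using the continuous mapping theorem. I would carry this out in four steps.

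\emph{Step 1: reduction to a single $\oplus$-indecomposable component.} By \cref{prop:giant_comp_perm}, $\bm\sigma_n$ decomposes as $\bm\sigma_n = \oplus[\bm\nu^{(1)},\dots,\bm\nu^{(\bm d)}]$ with a giant $\oplus$-indecomposable component $\bm\nu^{(\bm m)}$ of size $\bm N = n - O_p(1)$; conditionally on $\bm N$, this component is uniform among $\oplus$-indecomposable permutations of size $\bm N$ in $\mathcal C$. When the root $\bm i_n$, chosen uniformly in $[n]$ independently of $\bm \sigma_n$, falls inside $\bm\nu^{(\bm m)}$ (which happens with probability $\bm N/n \to 1$), it induces a uniform root in $\bm\nu^{(\bm m)}$. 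Moreover, for any fixed $h$, on the event $\{\bm i_n \in \bm\nu^{(\bm m)}\} \cap \{\bm i_n \in [\bm s+h+1, \bm s +\bm N-h]\}$, where $\bm s$ is the shift of $\bm \nu^{(\bm m)}$ inside $\bm\sigma_n$, the $h$-neighborhood of $\bm i_n$ in $\bm\sigma_n$ coincides with that in $\bm\nu^{(\bm m)}$. Since $\bm N = n - O_p(1)$, both events have probability tending to $1$. Thus it suffices to prove the quenched convergence for uniform $\oplus$-indecomposable permutations of size $N$ as $N\to\infty$.

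\emph{Step 2: translation into packed trees.} By \cref{le:bij_perm_tree,lem:unif_packed_tree}, the uniform $\oplus$-indecomposable permutation $\bm\nu^{(\bm m)}$ of size $\bm N$ corresponds via $\DT = \Pack\circ\CanTree$ to a uniform packed tree $\bm P_{\bm N}$ with $\bm N$ leaves, and a uniform index of the permutation corresponds to a uniform leaf $\bm\ell_{\bm N}$ of the tree (recall \cref{rk:Leaves_Elements}). Using the extension $\RP$ of $\DT^{-1}$ defined in \cref{eq:G_extension_1}, the rooted permutation $(\bm\nu^{(\bm m)},\bm i_n - \bm s)$ equals $\RP(\bm P_{\bm N}, \bm\ell_{\bm N})$. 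In particular, the conditional law of the rooted permutation given $\bm\sigma_n$ is the pushforward under $\RP$ of the conditional law of $(\bm P_{\bm N}, \bm\ell_{\bm N})$ given $\bm P_{\bm N}$.

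\emph{Step 3: applying the quenched tree convergence.} Under the assumption $\cS'(\rho_\cS) \ge 2/(1+\rho_\cS)^2 -1$, \cref{prop: offspring_distr_charact} gives $\E[\xi]=1$, so \cref{ass:type1} holds. Therefore \cref{prop:quenched_conv_tree} applies and
\[
\mathcal{L}aw\big((\bm P_{\bm N}, \bm \ell_{\bm N}) \,\big|\, \bm P_{\bm N}\big) \xrightarrow{P} \mathcal{L}aw(\bm P^\bullet_\infty)
\]
in the space of probability measures on $(\lufPT, d_t)$.

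\emph{Step 4: continuous mapping.} By \cref{prop:continuity_RP}, the map $\RP$ is continuous on $C_{\RP}$, and by \cref{prop:prob_discontinuity_RP}, $\P(\bm P^\bullet_\infty \in C_{\RP}) = 1$. The pushforward map $\mu \mapsto \mu \circ \RP^{-1}$ is therefore continuous at $\mathcal{L}aw(\bm P^\bullet_\infty)$ in the weak topology, so by the continuous mapping theorem for random measures, the pushforward of the random conditional law by $\RP$ converges in probability to $\mathcal{L}aw(\RP(\bm P^\bullet_\infty))$. Combining this with Steps 1--2 yields $\bm\sigma_n \stackrel{qBS}{\longrightarrow} \mathcal{L}aw(\RP(\bm P^\bullet_\infty))$, and the annealed convergence follows from \cref{wsrel}.

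\emph{Main obstacle.} The delicate point lies in Step 1: one must argue that the small $\oplus$-indecomposable components to the left and right of $\bm\nu^{(\bm m)}$ do not contribute to the local neighborhood of the root. This boils down to the fact that $\bm s = O_p(1)$ and $\bm N - (\bm i_n - \bm s) = O_p(n-\bm i_n) + O_p(1)$, which together with the uniform choice of $\bm i_n$ makes the probability of the root being within distance $h$ of the boundary of $\bm\nu^{(\bm m)}$ vanish. Formalizing this into the quenched (in-probability) statement—rather than merely an annealed one—requires controlling the random measures uniformly in $\bm\sigma_n$, and will be the technically involved part of the proof.
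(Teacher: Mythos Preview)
Your proposal is correct and follows essentially the same route as the paper: reduce to the giant $\oplus$-indecomposable component via \cref{prop:giant_comp_perm}, identify rooted permutations with rooted packed trees through $\RP$, apply the quenched tree limit \cref{prop:quenched_conv_tree}, and push forward using the almost-sure continuity of $\RP$ at $\bm P^\bullet_\infty$ (\cref{prop:continuity_RP,prop:prob_discontinuity_RP}) together with the mapping theorem for random measures.

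One comment on your ``main obstacle'': Step~1 is considerably easier than you suggest. Since the quenched limit $\mathcal{L}aw(\RP(\bm P^\bullet_\infty))$ is a \emph{deterministic} measure, it suffices to show that for each fixed ball $B$ the conditional probabilities $\P(r_h(\bm\sigma_n,\bm i_n)\in B\mid \bm\sigma_n)$ and $\P(r_h(\bm\nu^{(\bm m)},\bm j)\in B\mid \bm\nu^{(\bm m)})$ differ by $o_P(1)$; this is immediate from $\bm N=n-O_p(1)$, since the two differ only when $\bm i_n$ lands outside $\bm\nu^{(\bm m)}$ or within distance $h$ of its boundary, events of conditional probability $O_p(1/n)$. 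The paper accordingly dispatches this reduction in a single line. The genuinely delicate step is Step~4, where one needs the generalized mapping theorem for random measures (the paper cites \cite[Theorem~4.11, Lemma~4.12]{kallenberg2017random}) to transfer convergence in probability of random measures through an almost-surely continuous map.
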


Like after \cref{prop:quenched_conv_tree}, we want to emphasize the nature of the limiting objects above. 
The limit $\mathcal{L}aw\big(\RP(\bm{P}^\bullet_\infty)\big)$  is a measure on $\Sri$.
Since the limiting object for the quenched B--S convergence is in general a random measure on $\Sri$,
it should be interpreted as a constant random variable, a.s.\
equal to the measure $\mathcal{L}aw\big(\RP(\bm{P}^\bullet_\infty)\big)$.
\begin{proof}
	We only need to prove the quenched convergence statement,
	the annealed versions being a simple consequence of the quenched one.
	Moreover, thanks to \cref{prop:giant_comp_perm},
	it is sufficient to prove the statement for a uniform $\oplus$-indecomposable
	permutation $\bm\sigma_n$.

	Let $\bm{P}_n=(\bm{T}_n,\bm{\lambda}_{\bm{T}_n})$ be the random packed tree considered in \cref{lem:unif_packed_tree} page \pageref{lem:unif_packed_tree}. Consider a uniform random leaf $\bm{\ell}_{n}$ in $\bm{P}_{n}$
	and a uniform random element $\bm i_{n}$ in $\bm{\sigma}_n$.
	We have the following equality in distribution:
	\begin{equation}
	\big(\bm{\sigma}_n,\bm i_n\big) \stackrel{d}{=}\RP(\bm{P}_n,\bm{\ell}_n).
	\label{eq:randomNonPlus_Tree}
	\end{equation}
	We analyse the right-hand side conditionally on $\bm{P}_{n}$.
	By \cref{prop:quenched_conv_tree}, we know that
	\begin{equation}
	\mathcal{L}aw\big((\bm{P}_n,\bm{\ell}_n)|\bm{P}_n\big)\stackrel{P}{\longrightarrow}\mathcal{L}aw(\bm{P}^\bullet_\infty).
	\end{equation}
	Moreover, by Propositions \ref{prop:continuity_RP} and \ref{prop:prob_discontinuity_RP}, $\RP$ is almost surely continuous at $\bm{P}^\bullet_\infty$. 
	Therefore, using a combination of the results stated in \cite[Theorem 4.11, Lemma 4.12]{kallenberg2017random}\footnote{\label{footnote:KAL}The specific result that we need is a generalization of the \emph{mapping theorem} for random measures: Let $(\bm{\mu}_n)_{n\in\Z_{>0}}$ be a sequence of random measures on a space $E$ that converges in distribution to a random measure $\bm{\mu}$ on $E.$ Let $F$ be a function from $E$ to a second space $H$ such that the set $D_F$ of discontinuity points of $F$ has measure $\bm{\mu}(D_F)=0$ a.s.. Then the sequence of pushforward random measures $(\bm{\mu}_n\circ F^{-1})_{n\in\Z_{>0}}$ converges in distribution to the pushforward random measure $\bm{\mu}\circ F^{-1}.$},
	\begin{equation}
	\mathcal{L}aw\big(\RP(\bm{P}_{n},\bm{\ell}_{n})|\bm{P}_{n}\big)\stackrel{P}{\longrightarrow}\mathcal{L}aw\big(\RP(\bm{P}^\bullet_\infty)\big).
	\end{equation}
	Note that the result described in footnote~\ref{footnote:KAL} gives convergence \emph{in distribution}; 
	the limit being a deterministic measure, convergence in probability follows. 
	Comparing with \cref{eq:randomNonPlus_Tree}, we have that
	\[\mathcal{L}aw\big( (\bm{\sigma}_n,\bm i_n) | \bm{\sigma}_n \big) 
	\stackrel{P}{\longrightarrow}\mathcal{L}aw\big(\RP(\bm{P}^\bullet_\infty)\big),\]
	which is the quenched convergence in \cref{eq:local_lim_nonsep}.	
\end{proof}

\begin{proof}
	[Proof of \cref{thm:local_intro}]
	We consider only the case that $\mathcal C$ is not the class of separable permutations (for the proof in the latter case see \cite[Section 6.6]{borga2020decorated}). 
	With the assumption of \cref{thm:local_intro}, we just proved (\cref{thm:quenched_BS_cv})
	that a uniform permutation $\bm \sigma_n$ in $\mathcal C$
	converges in the quenched B--S sense to some deterministic measure $\mathcal{L}aw\big(\bm \sigma_\infty)$.
	The quenched B--S convergence imply the (joint) convergence
	of the random variables $\widetilde{\cocc}(\pi,\bm{\sigma}_n)$ to some random variables $\bm\Lambda_\pi$, for all $\pi\in\mathcal S$.
	Additionally, since the quenched B--S limit is a deterministic measure,
	the random variable $\bm\Lambda_\pi$ are deterministic as well (\cref{detstrongbsconditions} page \pageref{detstrongbsconditions}),
	that is, they are numbers $\gamma_{\pi,\mathcal C}$ in $[0,1]$.
	This concludes the proof.
\end{proof}
\begin{rem}
	\label{rk:gammas}
	Concretely $\gamma_{\pi,\mathcal C}$ is the probability 
	that the restriction of the random order $\RP(\bm{P}^\bullet_\infty)$ 
	on a fixed integer interval of size $|\pi|$
	(e.g. $[0,|\pi|-1]$) is equal to $\pi$ (after the identification between permutations and total order on intervals).
	Computing this number involves a sum over countably many configurations of $\bm{P}^\bullet_\infty$
	and so it is not immediate, even for simple classes $\mathcal C$ and short patterns $\pi$.
\end{rem}

\section{Permutations encoded by generating trees}\label{sect:CLTgentree}

The goal of this section is to prove the central limit theorem stated in \cref{thm:main_thm_CLT} page \pageref{thm:main_thm_CLT}. First, in \cref{sect:CTL_increments} we collect all the results on conditioned random walks that we need for the proof of \cref{thm:main_thm_CLT}.
Then, in \cref{sect:main_thm_proof} we give the proof of this theorem.

\subsection{Results on conditioned random walks}\label{sect:CTL_increments}

We recall that $({\bm{X}}^{\overleftarrow{\bm c}}_{i})_{i\in\Z_{>0}}=({\bm{X}}^{\bm c_{i-1}}_{i})_{i\in\Z_{>0}}$ denotes the random colored walk defined in \cref{eq:definition_walk}. We assume that the distribution of its increments $(\alpha_y)_{y\in\Z_{\leq 1}}$ is centered with finite variance and that ${\bm X}_1=-1$ (i.e.\ $\beta=0$ in \cref{eq:definition_walk}). We also recall that we denote the colored increments of $({\bm{X}}^{\bm c_{i-1}}_{i})_{i\in\Z_{>0}}$ by ${\bm{Y}}^{\bm c_i}_{i}$ in such a way that $\bm{Y}_i={\bm{X}}_{i+1}-{\bm{X}}_{i},$ for all $i\in\Z_{>0}.$

For simplicity we also assume that the period\footnote{Setting $y'=\max\{y\in\Z_{\leq 1}:\alpha_{y}> 0\}$, the \emph{period} of $(\alpha_y)_{y\in\Z_{\leq 1}}$ is the greatest common divisor of the set $\{y'-y:y\in{\Z_{< y'}},\alpha_{y}> 0\}$.} of the distribution $(\alpha_y)_{y\in\Z_{\leq 1}}$ is 1, omitting the minor (and standard) modifications in the general case. For instance, if the period is $h$, when using local limits results (as in \cref{eq:locallimthm}), one has to replace the expression  
\begin{equation}
\P\left({\bm{X}}_{n}=\ell\right)=\frac{1}{\sqrt{2 \pi \sigma^2 n}}\left(e^{-\tfrac{\ell^2}{2n\sigma^2}}+o(1)\right),\quad\text{for all $\ell\in\Z$,}
\end{equation}
with
\begin{equation}
\P\left({\bm{X}}_{n}=\ell\right)=\frac{h}{\sqrt{2 \pi \sigma^2 n}}\left(e^{-\tfrac{\ell^2}{2n\sigma^2}}+o(1)\right),\quad\text{for all $\ell\in h\Z-1$}.
\end{equation}

\medskip

We state our first result on conditioned random walks, which will be helpful in the proof of \cref{thm:main_thm_CLT} in connection with \cref{ass3}.

\begin{prop}
	\label{prop:prop_labels_big}
	Let $c>0$ be a constant. Let $(a_n)_{n\in\Z_{\geq 0}}$ be a sequence of integers such that $a_n<n/2$ for all $n\in\Z_{\geq 0}$ and $a_n\to \infty$. Then, as $n$ tends to infinity,
	\begin{equation}\label{eq:goal_proof}
	\P\left({\bm{X}}_{i}>c \text{ for all } i\in[a_n,n-a_n]\Big|({\bm{X}}_{j})_{j\in[2,n]}\geq 0,{\bm{X}}_{n+1}=0\right)\to 1.
	\end{equation}
\end{prop}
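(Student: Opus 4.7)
The plan is to prove the complementary statement, namely that under the conditioning $\P^*(\,\cdot\,) := \P(\,\cdot\mid ({\bm X}_j)_{j \in [2,n]} \geq 0, {\bm X}_{n+1} = 0)$, one has
\[
\P^*\bigl(\exists\, i \in [a_n, n-a_n]:{\bm X}_i \leq c\bigr) \longrightarrow 0.
\]
By a union bound it suffices to show $\sum_{i=a_n}^{n-a_n} \P^*({\bm X}_i \leq c) \to 0$. Intuitively, under $\P^*$ the walk ${\bm X}$ is a discrete random-walk excursion of length $n$: since increments lie in $\Z_{\leq 1}$, the first step is forced to be $+1$, the walk stays non-negative, and returns to $0$ at time $n+1$. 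By Kaigh's invariance principle, $({\bm X}_{\lfloor nt\rfloor}/(\sigma\sqrt{n}))_{t\in[0,1]}$ converges in distribution to a standard Brownian excursion $\bm e$, whose density at time $t$ satisfies $f_t^{\bm e}(x) = \sqrt{2/\pi}\,x^2/(t(1-t))^{3/2}\exp(-x^2/(2t(1-t)))$, vanishing quadratically at $0$.

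The key technical step will be the uniform local-limit estimate
\[
\P^*({\bm X}_i = k) \ \leq\ \frac{C\,(k+1)^2\,n^{3/2}}{i^{3/2}(n-i)^{3/2}} \qquad \text{for all } 1\leq i \leq n,\ k\in \Z_{\geq 0},
\]
where $C$ depends only on the step distribution. This is the discrete counterpart of the Brownian excursion density near $0$ and, under our centered, finite-variance and aperiodic hypotheses, it follows from standard sharp local-limit theorems for random-walk excursions (e.g.\ via the cycle lemma, which is particularly clean since our walk is skip-free to the right, combined with the ballot-type asymptotics of Caravenna--Chaumont and Vatutin--Wachtel). The $(k+1)^2$ factor reflects the fact that the excursion is pinned near zero at both endpoints, contributing one factor of $(k+1)$ from each side.

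Given this uniform estimate, the conclusion follows by direct summation. Using symmetry around $n/2$ and bounding $n-i \geq n/2$ for $i \leq n/2$,
\[
\sum_{i=a_n}^{n-a_n} \P^*({\bm X}_i \leq c) \ \leq\ C'(c) \sum_{i=a_n}^{n/2} \frac{n^{3/2}}{i^{3/2}(n-i)^{3/2}} \ \leq\ C''(c) \sum_{i \geq a_n} \frac{1}{i^{3/2}} \ =\ O(a_n^{-1/2}),
\]
which tends to $0$ because $a_n \to \infty$. The main obstacle is to secure the local-limit estimate uniformly in $i$ all the way down to $i = a_n$, where $a_n$ may grow arbitrarily slowly; convergence of the rescaled process suffices only in the deep bulk $i \asymp n$, and in the boundary regime $i \ll n$ one must argue separately by a Markov decomposition at time $i$ into a near-meander head and an independent conditioned tail of length $n-i$, applying sharp asymptotics to each factor.
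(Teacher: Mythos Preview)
Your proposal is correct and follows essentially the same route as the paper: a union bound over $i\in[a_n,n-a_n]$, followed by a Markov decomposition at time $i$ into a meander piece of length $i$ and (via time reversal) a second meander piece of length $n-i$, to which Caravenna's local limit theorem for random walks conditioned to stay non-negative is applied, yielding $\P^*({\bm X}_i\leq c)=O(i^{-3/2})$ uniformly for $i\in[a_n,\lfloor n/2\rfloor]$ and hence a total bound of $O(a_n^{-1/2})$. The paper writes the decomposition slightly more explicitly (denoting the reversed walk $\hat{\bm X}$) and absorbs the dependence on the value $y\in[0,c]$ into constants $C_1(y),C_2(y)$ rather than tracking a $(k+1)^2$ factor, but the argument is the same.
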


\begin{proof} 
	Note that, using a union bound, we have
	\begin{multline}\label{eq:step1}
	\P\left({\bm{X}}_{i}>c\text{ for all } i\in[a_n,n-a_n]\Big|({\bm{X}}_{j})_{j\in[2,n]}\geq 0,{\bm{X}}_{n+1}=0\right)\\
	\geq 1-\sum_{i\in[a_n,n-a_n]}\P\left({\bm{X}}_{i}\leq c\Big|({\bm{X}}_{j})_{j\in[2,n]}\geq 0,{\bm{X}}_{n+1}=0\right).
	\end{multline}
	Fix now $i\in[a_n,\lfloor n/2\rfloor]$.
	Denoting with $\hat{\bm{X}}$ the time-reversed walk of ${\bm{X}}$ starting at 0 at time 1, we have that
	\begin{multline}
	\P\left({\bm{X}}_{i}\leq c\Big|({\bm{X}}_{j})_{j\in[2,n]}\geq 0,{\bm{X}}_{n+1}=0\right)=
	\sum_{y=0}^c\frac{\P\left({\bm{X}}_{i}=y,{\bm{X}}_{n+1}=0,({\bm{X}}_{j})_{j\in[2,n]}\geq 0\right)}{\P\left({\bm{X}}_{n+1}=0,({\bm{X}}_{j})_{j\in[2,n]}\geq 0\right)}\\
	=\sum_{y=0}^c\frac{\P\left({\bm{X}}_{i}=y,({\bm{X}}_{j})_{j\in[2,i]}\geq 0\right)\P\left(\hat{\bm{X}}_{n+1-i}=y,(\hat{\bm{X}}_{j})_{j\in[n+1-i]}\geq 0\right)}{\P\left({\bm{X}}_{n+1}=0,({\bm{X}}_{j})_{j\in[2,n]}\geq 0\right)}.
	\end{multline}
	Using a local limit theorem for random walks conditioned to stay positive (see \cite[Theorem 3]{MR2449127}) we have that for constants $C_1(y)$, $C_2(y)$, and $C_3$,
	\begin{align}
	&\P\left({\bm{X}}_{i}=y,({\bm{X}}_{j})_{j\in[2,i]}\geq 0\right)\sim C_1(y)\cdot i^{-3/2},\\
	&\P\left(\hat{\bm{X}}_{n+1-i}=y,(\hat{\bm{X}}_{j})_{j\in[n+1-i]}\geq 0\right)\sim C_2(y)\cdot(n-i)^{-3/2},\\
	&\P\left({\bm{X}}_{n+1}=0,({\bm{X}}_{j})_{j\in[2,n]}\geq 0\right)\sim C_3\cdot n^{-3/2}.
	\end{align}
	Since $i\in[a_n,\lfloor n/2\rfloor]$, $a_n\to\infty,$ and $y\in[0,c]\cap \Z$, we obtain that
	\begin{equation}
	\P\left({\bm{X}}_{i}\leq c\Big|({\bm{X}}_{j})_{j\in[2,n]}\geq 0,{\bm{X}}_{n+1}=0\right)=O(i^{-3/2}),
	\end{equation}
	uniformly for all $i\in[a_n,\lfloor n/2\rfloor]$.
	This bound together with \cref{eq:step1} leads to
	\begin{equation}
	\P\left({\bm{X}}_{i}>c\text{ for all } i\in[a_n,\lfloor n/2\rfloor]\Big|({\bm{X}}_{j})_{j\in[2,n]}\geq 0,{\bm{X}}_{n+1}=0\right)\geq 1-O((a_n)^{-1/2}).
	\end{equation}
	Using the same techniques, we get the same bound for $i\in[\lfloor n/2\rfloor,n-a_n]$, ending the proof.
\end{proof}

We now prove the following CLT for occurrences of patterns of consecutive colored increments in random walks conditioned to stay positive.

\begin{prop}\label{lem:lemma3}
	Fix $h\in\Z_{>0}$. Let ${\mathcal A}$ be a (possibly infinite) family of sequences of $h$ colored increments for the walk $({\bm{X}}^{\overleftarrow{\bm c}}_i)_{i\geq 1}$.  Then
	\begin{equation}
	\left(\frac{\sum_{j\in[1,n-h+1]}\mathds{1}_{\left\{({\bm{Y}}^{\bm c_i}_i)_{i\in[j,j+h-1]}\in{\mathcal A}\right\}}-n\cdot \mu_{{\mathcal A}}}{\sqrt n}\Bigg|({\bm{X}}_i)_{i\in[2,n]}\geq 0,{\bm{X}}_{n+1}=0\right)\stackrel{d}{\longrightarrow}\bm{\mathcal{N}}(0,\gamma_{{\mathcal A}}^2),
	\end{equation}
	where $\mu_{{\mathcal A}}=\P\big(({\bm{Y}}^{\bm c_1}_1,\dots,{\bm{Y}}^{\bm c_h}_{h})\in {\mathcal A}\big)$ and $\gamma_{{\mathcal A}}^2=\kappa^2-\frac{\rho^2}{\sigma^2}$ with
	\begin{align}
	&\sigma^2=\Var({\bm{Y}}_1),\\
	&\rho=\E\left[\mathds{1}_{\left\{({\bm{Y}}^{\bm c_i}_{i})_{i\in[h]}\in{\mathcal A}\right\}}\cdot\left({\bm{X}}_{h+1}+1\right)\right],\\
	&\kappa^2=2\nu+\mu_{{\mathcal A}}-(2h-1)\mu_{{\mathcal A}}^2,\quad\text{for}\\
	&\nu=\sum_{s=2}^h\sum_{\substack{
			({y}^c_i)_{i\in[h]},({\ell}^d_i)_{i\in[h]}\in{\mathcal A}\text{ s.t.}\\
			({y}^c_s,\dots,{y}^c_h)=({\ell}^d_1,\dots,{\ell}^d_{h-s+1})}}
	\P\left(({\bm{Y}}^{\bm c}_{i})_{i\in[h+s-1]}=({y}^c_1,\dots,{y}^c_h,{\ell}^d_{h-s+2},\dots,{\ell}^d_h)\right).
	\end{align}
\end{prop}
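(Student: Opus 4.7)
The plan is to reduce to a joint central limit theorem for the unconditioned walk and then lift it to the conditioned setting in two stages: first conditioning on the endpoint, then on the positivity constraint. Write $S_n = \sum_{j=1}^{n-h+1}\mathds 1_{A_j}$, where $A_j = \{(\bm Y^{\bm c_i}_i)_{i\in[j,j+h-1]}\in \mathcal A\}$. Since $A_j$ depends only on the increments $\bm Y_j,\dots,\bm Y_{j+h-1}$, the indicators $(\mathds 1_{A_j})_j$ form an $(h-1)$-dependent stationary sequence under the law of the unconditioned colored walk. A direct computation using this dependence structure gives, as $n\to\infty$,
\begin{equation*}
\mathbb{E}[S_n] = (n-h+1)\mu_{\mathcal A},\qquad \Var(S_n) = n\kappa^2+O(1),\qquad \Cov(S_n,\bm X_{n+1}) = n\rho+O(1),
\end{equation*}
where the second identity expands the covariance sum using $\mathbb P(A_1\cap A_s)$ for $1\le s\le h$, and the third uses $\Cov(\mathds 1_{A_j},\bm X_{n+1}) = \Cov(\mathds 1_{A_j},\sum_{i=j}^{j+h-1}\bm Y_i) = \mathbb E[\mathds 1_{A_1}(\bm X_{h+1}+1)]$ by the independence of disjoint blocks of increments (recall $\bm X_1=-1$). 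Combining $h$-dependence with the Hoeffding--Robbins CLT for $m$-dependent sequences (applied to arbitrary linear combinations $\alpha S_n +\beta \bm X_{n+1}$) yields the joint Gaussian limit
\begin{equation*}
\frac{1}{\sqrt n}(S_n-n\mu_{\mathcal A},\,\bm X_{n+1})\xrightarrow{d}\bm{\mathcal N}\!\left(\mathbf 0,\begin{pmatrix}\kappa^2 & \rho\\ \rho & \sigma^2\end{pmatrix}\right).
\end{equation*}

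The next step is to pass to the bridge by conditioning on $\bm X_{n+1}=0$. Here I would use a standard ``Gaussian conditional limit'' argument: combine the joint CLT above with a uniform (in a $\sqrt n$-window) local limit theorem for $\bm X_{n+1}$, which is available since $(\alpha_y)$ has finite variance and is aperiodic. More precisely, for a test function $\varphi$ bounded and continuous, one writes
\begin{equation*}
\mathbb E\!\left[\varphi\!\left(\tfrac{S_n-n\mu_{\mathcal A}}{\sqrt n}\right)\Big|\bm X_{n+1}=0\right]=\frac{\mathbb E\!\left[\varphi(\cdots)\,\mathds 1_{\{\bm X_{n+1}=0\}}\right]}{\mathbb P(\bm X_{n+1}=0)},
\end{equation*}
and the ratio converges to the Gaussian conditional expectation thanks to the joint CLT together with the local limit theorem in the denominator and the joint local CLT (Gnedenko--Stone type) in the numerator. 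The resulting limit is $\bm{\mathcal N}(0,\kappa^2-\rho^2/\sigma^2) = \bm{\mathcal N}(0,\gamma_{\mathcal A}^2)$, by the classical formula for the conditional variance in a bivariate Gaussian.

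It remains to incorporate the positivity constraint $(\bm X_i)_{i\in[2,n]}\ge 0$. The plan is to use a discrete Vervaat-type bijection relating the bridge to the bridge conditioned to stay positive by a cyclic shift at the minimum: under this coupling, the two walks coincide up to a rotation of the increment sequence, so $S_n$ changes by at most an additive $O(h)$ boundary term, which is $o(\sqrt n)$. Consequently, the limiting law of $(S_n-n\mu_{\mathcal A})/\sqrt n$ is the same under the positive bridge and under the plain bridge. Combined with the previous step, this gives the desired conclusion.

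The main obstacle, and the step that requires the most care, is the last one: the usual Vervaat/Cramér-type identities give an \emph{exact} distributional identity between the bridge and the meander/excursion only up to conventions about strict vs.\ weak positivity and the handling of ties at the minimum. For increments valued in $\Z_{\le 1}$ with $\bm X_1=-1$ the discrete versions due to Bertoin--Chaumont--Pitman (or a direct cycle lemma argument) apply, but one must verify that the additive discrepancy between $S_n$ evaluated on the two rotations of the increment sequence is indeed $O(h) = o(\sqrt n)$, which is where the $h$-locality of the pattern count $S_n$ is crucial.
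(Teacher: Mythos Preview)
Your approach is essentially correct and shares the overall architecture with the paper's proof: a joint CLT for $(S_n, \bm X_{n+1})$ under the unconditioned walk, a cycle-lemma argument to handle the positivity constraint, and a local-limit step to handle the bridge conditioning $\bm X_{n+1}=0$. The paper performs these in the opposite order (cycle lemma first, then endpoint conditioning), but this reversal is inessential.

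The real difference is in how the bridge conditioning is carried out. You appeal directly to a ``joint local CLT (Gnedenko--Stone type)'' for the pair $(S_n,\bm X_{n+1})$, i.e.\ a local limit theorem in the lattice coordinate $\bm X_{n+1}$ while integrating against a test function in the other coordinate. Such a mixed result is believable for $m$-dependent sequences (via blocking plus the classical Stone LLT), but it is not an off-the-shelf statement and would need its own proof. The paper instead uses the Le~Cam--Holst method: fix $\alpha\in(0,1)$, set $n'=\lfloor\alpha n\rfloor$, and pass to the decorrelated variable $\bar{\bm S}_{n'}=\bm S_{n'}-\tfrac{\rho}{\sigma^2}\bm X_{n'}$, which in the joint Gaussian limit is independent of $\bm X_{n'}$. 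The Markov property at time $n'$ together with the \emph{one-dimensional} LLT for $\bm X_{n+1}-\bm X_{n'}$ then shows that conditioning on $\bm X_{n+1}=0$ leaves the limit law of $\bar{\bm S}_{n'}/\sqrt n$ unchanged, yielding $\mathcal N(0,\alpha\gamma_{\mathcal A}^2)$; a symmetric argument on the time-reversed walk controls $(\bm S_n-\bar{\bm S}_{n'})/\sqrt n$, and one concludes by sending $\alpha\to 1$. The payoff is that only the standard one-dimensional LLT is needed, at the cost of the extra $\alpha\to 1$ limiting argument.

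One remark on emphasis: you flag the Vervaat/cycle-lemma step as the main obstacle, but for increments in $\Z_{\le 1}$ starting at $\bm X_1=-1$ the cycle lemma applies cleanly to the reversed walk (this is exactly how the paper disposes of positivity in two lines), and the $O(h)$ boundary discrepancy is immediate from $h$-locality of $S_n$. The genuinely technical step is the bridge conditioning, where your argument currently leaves the bivariate local CLT as a black box.
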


The proof of the result above is inspired by the proof of \cite[Lemma 7.1]{MR3432572} which is, in turn, based on a method introduced by Le Cam \cite{MR105735} and Holst \cite{MR628875}. Since our conditions are different, we include a complete proof of our result.

In the proof we denote by $\bm O(1)$ an unspecified sequence of random variables $(\bm \varepsilon_n)_{n\in\Z_{>0}}$ that are bounded by a constant (i.e.\ there exists a constant $C>0$ such that $\bm \varepsilon_n\leq C$ a.s.\ for all $n\in\Z_{>0}$).

\begin{proof}[Proof of \cref{lem:lemma3}]
	Recall that $\hat{\bm X}$ denotes the time-reversed walk of $\bm X$ starting at 0 at time 1. Denote by $\hat{\mathcal A}$ the family obtained from ${\mathcal A}$ by reversing the time and changing the sign of each sequence, i.e.\ if $(y^{c_i}_i)_{i\in [h]}\in\mathcal{A}$ then $(-y^{c_{h-i+1}}_{h-i+1})_{i\in [h]}\in\hat{\mathcal{A}}$. Note that the increments $(\hat{\bm{Y}}_{i})_{i\geq 1}$ of the walk $\hat{\bm{X}}$ are supported on $\Z_{\geq -1}$ and so using the cycle lemma (see for instance \cite[Lemma 6.1]{MR2245368}) we can cyclically shift the increments of the walk in order to
	ensure that $(\hat{\bm{X}}_i)_{i\in[n]}\geq 0$. We therefore obtain that
	\begin{multline}
	\left(\sum_{j\in[1,n-h+1]}\mathds{1}_{\left\{(\hat{\bm{Y}}^{\bm c_i}_i)_{i\in[j,j+h-1]}\in\hat{\mathcal A}\right\}}\Bigg|(\hat{\bm{X}}_i)_{i\in[n]}\geq 0,\hat{\bm{X}}_{n+1}=-1\right)\\
	\stackrel{d}{=}\left(\sum_{j\in[1,n-h+1]}\mathds{1}_{\left\{(\hat{\bm{Y}}^{\bm c_i}_i)_{i\in[j,j+h-1]}\in\hat{\mathcal A}\right\}}\Bigg|\hat{\bm{X}}_{n+1}=-1\right)+\bm{O}(1),
	\end{multline}
	where the error term is due to the fact that the cyclic shift can only affect $\bm{O}(1)$ of the consecutive occurrences (because of possible boundary problems). More precisely, the $\bm{O}(1)$ term above is a.s.\ at most $2h$.
	As a consequence,
	\begin{multline}
	\left(\sum_{j\in[1,n-h+1]}\mathds{1}_{\left\{({\bm{Y}}^{\bm c_i}_i)_{i\in[j,j+h-1]}\in{\mathcal A}\right\}}\Bigg|({\bm{X}}_i)_{i\in[2,n]}\geq 0,{\bm{X}}_{n+1}=0\right)\\
	\stackrel{d}{=}\left(\sum_{j\in[1,n-h+1]}\mathds{1}_{\left\{({\bm{Y}}^{\bm c_i}_i)_{i\in[j,j+h-1]}\in{\mathcal A}\right\}}\Bigg|{\bm{X}}_{n+1}=0\right)+\bm{O}(1).
	\end{multline}
	In words, the above equality implies that we can forget the conditional event $\{({\bm{X}}_i)_{i\in[2,n]}\geq 0\}$ in our analysis.
	We set 
	\begin{equation}
	g({\bm{Y}}^{\bm c}_j,\dots, {\bm{Y}}^{\bm c}_{j+h-1})\coloneqq\mathds{1}_{\left\{({\bm{Y}}^{\bm c}_{j+i})_{i\in[0,h-1]}\in{\mathcal A}\right\}}.
	\end{equation}	
	Define the centered sum
	\begin{equation}
	\bm{S}_n\coloneqq\sum_{j=1}^n\left(g({\bm{Y}}^{\bm c}_j,\dots, {\bm{Y}}^{\bm c}_{j+h-1})-\mu_{{\mathcal A}}\right),
	\end{equation}
	with the convention that $g({\bm{Y}}^{\bm c}_j,\dots, {\bm{Y}}^{\bm c}_{j+h-1})=0$ if $j>n-h+1$.
	
	We fix $\alpha$ with $0<\alpha<1$ and a sequence $n'=n'(n)$ with $\frac{n'}{n}\to\alpha$, for instance $n'=\lfloor \alpha n\rfloor$.
	By the central limit theorem for $h$-dependent variables (see \cite{MR26771,MR60175}), applied to the random vectors 
	$$\left(g({\bm{Y}}^{\bm c}_j,\dots, {\bm{Y}}^{\bm c}_{j+h-1})-\mu_{{\mathcal A}},{\bm{Y}}_j\right),$$ we have the following unconditioned result
	\begin{equation}\label{eq:clt_uncond}
	\left(\frac{\bm{S}_{n'}}{\sqrt{n}},\frac{{\bm{X}}_{n'}}{\sqrt n}\right)\xrightarrow{d}\bm{\mathcal{N}}\left(0,\alpha\left(\begin{array}{cc}
	\kappa^2 & \rho\\
	\rho & \sigma^2
	\end{array}\right)\right),
	\end{equation}
	where
	\begin{align}
	&\sigma^2=\Var({\bm{Y}}_1),\\
	&\kappa^2=\Var\left(g({\bm{Y}}^{\bm c}_1,\dots, {\bm{Y}}^{\bm c}_{h})\right)+2\sum_{s=2}^h\Cov\left(g({\bm{Y}}^{\bm c}_1,\dots, {\bm{Y}}^{\bm c}_{h}),g({\bm{Y}}^{\bm c}_s,\dots, {\bm{Y}}^{\bm c}_{s+h-1})\right),\\
	&\rho=\sum_{s=1}^h\Cov\left(g({\bm{Y}}^{\bm c}_1,\dots, {\bm{Y}}^{\bm c}_{h}), {\bm{Y}}_{s}\right)=\Cov\left(g({\bm{Y}}^{\bm c}_1,\dots, {\bm{Y}}^{\bm c}_{h}), {\bm{X}}_{h+1}+1\right).
	\end{align}
	We first compute $\kappa^2$. Note that
	\begin{equation}
	\Var\left(g({\bm{Y}}^{\bm c}_1,\dots, {\bm{Y}}^{\bm c}_{h})\right)=\E\left[\left(\mathds{1}_{\left\{({\bm{Y}}^{\bm c}_{i})_{i\in[h]}\in{\mathcal A}\right\}}\right)^2\right]-\mu_{{\mathcal A}}^2=\mu_{{\mathcal A}}-\mu_{{\mathcal A}}^2.
	\end{equation}
	Now fix $2\leq s\leq h,$
	\begin{equation}
	\Cov\left(g({\bm{Y}}^{\bm c}_1,\dots, {\bm{Y}}^{\bm c}_{h}),g({\bm{Y}}^{\bm c}_s,\dots, {\bm{Y}}^{\bm c}_{s+h-1})\right)=\E\left[\left(\mathds{1}_{\left\{({\bm{Y}}^{\bm c}_{i})_{i\in[h]}\in{\mathcal A}\right\}}\right)\left(\mathds{1}_{\left\{({\bm{Y}}^{\bm c}_{s+i-1})_{i\in[h]}\in{\mathcal A}\right\}}\right)\right]-\mu_{{\mathcal A}}^2.
	\end{equation}
	Noting that, for $({y}^{c_i}_i)_{i\in[h]},({\ell}^{d_i}_i)_{i\in[h]}\in{\mathcal{A}}$, then $\mathds{1}_{\left\{({\bm{Y}}^{\bm c}_{i})_{i\in[h]}=({y}^c_i)_{i\in[h]},({\bm{Y}}^{\bm c}_{s+i-1})_{i\in[h]}=({\ell}^d_i)_{i\in[h]}\right\}}\neq 0$ only if $({y}^c_s,\dots,{y}^c_h)=({\ell}^d_1,\dots,{\ell}^d_{h-s+1})$, we have
	\begin{multline}
	\Cov\left(g({\bm{Y}}^{\bm c}_1,\dots, {\bm{Y}}^{\bm c}_{h}),g({\bm{Y}}^{\bm c}_s,\dots, {\bm{Y}}^{\bm c}_{s+h-1})\right)\\
	=\E\left[\sum_{\substack{
			({y}^c_i)_{i\in[h]}, ({\ell}^d_i)_{i\in[h]}\in{\mathcal A}\text{ s.t.}\\
			({y}^c_s,\dots,{y}^c_h)=({\ell}^d_1,\dots,{\ell}^d_{h-s+1})}}
	\mathds{1}_{\left\{({\bm{Y}}^{\bm c}_{i})_{i\in[h+s-1]}=({y}^c_1,\dots,{y}^c_h,{\ell}^d_{h-s+2},\dots,{\ell}^d_h)\right\}}\right]-\mu_{{\mathcal A}}^2,
	\end{multline}
	and we obtain that $\kappa^2=2\nu+\mu_{{\mathcal{A}}}-(2h-1)\mu_{{\mathcal{A}}}^2$, where $\nu$ is defined in the statement of the theorem.
	
	We finally compute $\rho$. Note that since $\E[{\bm{X}}_{h+1}+1]=0$ (see \cref{eq:definition_walk} and recall that $\beta=0$) then
	\begin{equation}
	\Cov\left(g({\bm{Y}}^{\bm c}_1,\dots, {\bm{Y}}^{\bm c}_{h}), {\bm{X}}_{h+1}+1\right)=\E\left[\mathds{1}_{\left\{({\bm{Y}}^{\bm c}_{i})_{i\in[h]}\in{\mathcal A}\right\}}\cdot\left({\bm{X}}_{h+1}+1\right)\right],
	\end{equation}
	which corresponds to the expression in the statement of the proposition.
	
	We now define for convenience
	\begin{equation}
	\bar{\bm{S}}_n\coloneqq\bm{S}_n-\frac{\rho}{\sigma^2}{\bm{X}}_{n}.
	\end{equation}
	Since we know from \cref{eq:clt_uncond} that
	\begin{equation}
	\left(\frac{\bm{S}_{n'}}{\sqrt{n}},\frac{{\bm{X}}_{n'}}{\sqrt n}\right)\xrightarrow{d}\bm{\mathcal{N}}\left(0,\alpha\left(\begin{array}{cc}
	\kappa^2 & \rho\\
	\rho & \sigma^2
	\end{array}\right)\right),
	\end{equation}
	then with some basic computations we obtain that
	\begin{equation}
	\left(\frac{\bar{\bm{S}}_{n'}}{\sqrt{n}},\frac{{\bm{X}}_{n'}}{\sqrt n}\right)\xrightarrow{d}\bm{\mathcal{N}}\left(0,\alpha\left(\begin{array}{cc}
	\kappa^2-\frac{\rho^2}{\sigma^2} & 0\\
	0 & \sigma^2
	\end{array}\right)\right).
	\end{equation}
	In other words, $\frac{\bar{\bm{S}}_{n'}}{\sqrt{n}}$ and $\frac{{\bm{X}}_{n'}}{\sqrt n}$ are jointly asymptotically normal with independent limits $\bm W=\bm{\mathcal{N}}\left(0,\alpha\cdot\gamma_{{\mathcal A}}^2\right)$, where $\gamma_{{\mathcal A}}^2=\kappa^2-\frac{\rho^2}{\sigma^2}$, and $\bm R=\bm{\mathcal{N}}(0,\alpha\cdot \sigma^2)$.
	
	Next, let $f$ be any bounded continuous function on $\mathbb R$. Then, using a local limit theorem for random walks (see for instance \cite[Theorem VII.1]{MR0388499}) we have that, uniformly for all $\ell\in\Z$,
	\begin{equation}\label{eq:locallimthm}
	\P\left({\bm{X}}_{n}=\ell\right)=\frac{1}{\sqrt{2 \pi \sigma^2 n}}\left(e^{-\tfrac{\ell^2}{2n\sigma^2}}+o(1)\right),
	\end{equation}
	and so 	
	\begin{equation}
	\begin{split}\label{eq:jowefbuiowefbn}
	\E\bigg[f\left(\frac{\bar{\bm{S}}_{n'}}{\sqrt{n}}\right)&\bigg|{\bm{X}}_{n+1}=0\bigg]\\
	&=\frac{\E\left[\sum_jf\left(\frac{\bar{\bm{S}}_{n'}}{\sqrt{n}}\right)\mathds{1}_{\{{\bm{X}}_{n'}=j\}}\mathds{1}_{\{{\bm{X}}_{n+1}-{\bm{X}}_{n'}=-j\}}\right]}{\P\left({\bm{X}}_{n+1}=0\right)}\\
	&=\frac{\sum_j\E\left[f\left(\frac{\bar{\bm{S}}_{n'}}{\sqrt{n}}\right)\mathds{1}_{\{{\bm{X}}_{n'}=j\}}\right]\P\left({\bm{X}}_{n-n'+1}=-j\right)}{\P\left({\bm{X}}_{n+1}=0\right)}\\
	&=\sum_j\E\left[f\left(\frac{\bar{\bm{S}}_{n'}}{\sqrt{n}}\right)\mathds{1}_{\{{\bm{X}}_{n'}=j\}}\right]\sqrt{\frac{n+1}{n-n'+1}}\left(e^{-\tfrac{j^2}{2(n-n')\sigma^2}}+o(1)\right)\\
	&=\sqrt{\frac{n+1}{n-n'+1}}\cdot\E\left[f\left(\frac{\bar{\bm{S}}_{n'}}{\sqrt{n}}\right)e^{-\tfrac{{\bm{X}}_{n'}^2}{2(n-n')\sigma^2}}\right]+o(1)\\
	&\to(1-\alpha)^{-1/2}\cdot\E\left[f(\bm W)e^{-\tfrac{\bm R^2}{2(1-\alpha)\sigma^2}}\right]=\E[f(\bm W)](1-\alpha)^{-1/2}\E\left[e^{-\tfrac{\bm R^2}{2(1-\alpha)\sigma^2}}\right],
	\end{split}
	\end{equation}
	where we used that $\frac{n'}{n}\to\alpha$, $\frac{\bar{\bm{S}}_{n'}}{\sqrt{n}}\xrightarrow{d}\bm W$, $\frac{{\bm{X}}_{n'}}{\sqrt n}\xrightarrow{d}\bm R$, and that $\bm W$ and $\bm R$ are independent. Note that $$(1-\alpha)^{-1/2}\cdot\E\left[e^{-\tfrac{\bm R^2}{2(1-\alpha)\sigma^2}}\right]=1,$$ 
	this easily follows taking $f=1$ in \cref{eq:jowefbuiowefbn} (or from standard computations with Gaussian variables). The last two equations prove that 
	\begin{equation}\label{eq:reachedoneobj}
	\left(\frac{\bar{\bm{S}}_{n'}}{\sqrt{n}}\bigg|{\bm{X}}_{n+1}=0\right)\xrightarrow{d}\bm W=\bm{\mathcal{N}}\left(0,\alpha\cdot \gamma_{{\mathcal A}}^2\right).
	\end{equation}	
	
	We now look at
	\begin{equation}
	\bm{S}_{n}-\bar{\bm{S}}_{n'}
	=\sum_{j=n'+1}^{n}\left(g({\bm{Y}}^{\bm c}_j,\dots, {\bm{Y}}^{\bm c}_{j+h-1})-\mu_{{\mathcal A}}\right)+\frac{\rho}{\sigma^2}{\bm{X}}_{n'}.
	\end{equation}
	Recalling that $\hat{\bm{X}}$ denotes the time-reversed walk of ${\bm{X}}$ starting at 0 at time 1 (and $\hat{\bm Y}^{\bm c}_{j}$ the corresponding reversed steps), we have from the above equation that
	\begin{multline}\label{eq:bfirybfuiwbf0ow}
	\left(\bm{S}_{n}-\bar{\bm{S}}_{n'}\Big|{\bm{X}}_{n+1}=0\right)\\
	\stackrel{d}{=}\left(\sum_{j=1}^{n-n'}\left({g}(\hat{\bm{Y}}^{\bm c}_j,\dots, \hat{\bm Y}^{\bm c}_{j+h-1})-\mu_{{\mathcal A}}\right)+\frac{\rho}{\sigma^2}\hat{\bm{X}}_{n-n'+1}\Bigg|\hat{\bm{X}}_{n+1}=-1\right),
	\end{multline}
	where
	\begin{equation}
	{g}(\hat{\bm{Y}}^{\bm c}_j,\dots, \hat{\bm Y}^{\bm c}_{j+h-1})\coloneqq\mathds{1}_{\left\{(\hat{\bm{Y}}^{\bm c}_{j+i})_{i\in[0,h-1]}\in{\hat{\mathcal A}}\right\}}.
	\end{equation}	
	
	Note that, using \cref{eq:bfirybfuiwbf0ow} and similar arguments to the ones used for proving \cref{eq:reachedoneobj}, we obtain that
	\begin{equation}\label{eq:fiugwebhdndpwemd}
	\left(\frac{\bm{S}_{n}-\bar{\bm{S}}_{n'}}{\sqrt{n}}\bigg|{\bm{X}}_{n+1}=0\right)\xrightarrow{d}\bm{\mathcal{N}}\left(0,(1-\alpha)\gamma_{\hat{\mathcal A}}^2\right),
	\end{equation}
	for some constant $\gamma_{\hat{\mathcal A}}^2$.
	
	Letting $\alpha\to 1$, we obtain that
	\begin{align}
	\bm{\mathcal{N}}\left(0,\alpha\cdot \gamma_{{\mathcal A}}^2\right)\xrightarrow{d}\bm{\mathcal{N}}\left(0, \gamma_{{\mathcal A}}^2\right),\\
	\bm{\mathcal{N}}\left(0,(1-\alpha)\gamma_{\hat{\mathcal A}}^2\right)\xrightarrow{P}0.
	\end{align}
	Therefore from \cref{eq:reachedoneobj,eq:fiugwebhdndpwemd} and using \cite[Theorem 3.1]{billingsley2013convergence} we obtain that
	\begin{equation}
	\left(\frac{\bm{S}_{n}}{\sqrt{n}}\bigg|{\bm{X}}_{n+1}=0\right)\xrightarrow{d}\bm{\mathcal{N}}\left(0,\gamma_{{\mathcal A}}^2\right),
	\end{equation}
	concluding the proof.
\end{proof}

\subsection{A central limit theorem for consecutive patterns }\label{sect:main_thm_proof}

We can now prove our main result.

\begin{proof} [Proof of Theorem \ref{thm:main_thm_CLT}]
	We fix $h\in\Z_{>0}$ and a pattern $\pi\in\mathcal{S}_{h}.$ Let $\bm{\sigma}_n$ be a uniform permutation of size $n$ in $\mathcal C$. From \cref{ass2} (w.l.o.g.\ we can assume $\beta=0$) and \cref{ass3} we have that, conditioning on $\left\{({\bm{X}}_{i})>c(h)\text{ for all } i\in[j+1,j+h]\right\}$,
	\begin{equation}
	\pat_{[j+1,j+h]}(\bm{\sigma}_n)=\Pat\big(({\bm{Y}}_i^{\bm c})_{i\in[j,j+h-1]}\big|({\bm{X}}_i)_{i\in[2,n]}\geq 0,{\bm{X}}_{n+1}=0\big).
	\end{equation}
	Therefore, for any sequence of integers $(a_n)_{n\in\Z_{>0}}$, conditioning on 
	$$\left\{(\bm{X}_{i})>c(h) \text{ for all } i\in[a_n,n-a_n]\right\},$$ 
	we can rewrite $\cocc(\pi,\bm{\sigma}_n)$ as
	\begin{equation}\label{eq:rewriting_coc}
	\left(\sum_{j\in[1,n-h+1]}\mathds{1}_{\left\{({\bm{Y}}_i^{\bm c})_{i\in[j,j+h-1]}\in\Pat^{-1}(\pi)\right\}}\Bigg|({\bm{X}}_i)_{i\in[2,n]}\geq 0,{\bm{X}}_{n+1}=0\right)+O(a_n).
	\end{equation}
	Now, using \cref{lem:lemma3}, we have that
	\begin{equation}
	\left(\frac{\sum_{j\in[1,n-h+1]}\mathds{1}_{\left\{({\bm{Y}}^{\bm c}_i)_{i\in[j,j+h-1]}\in\Pat^{-1}(\pi)\right\}}-n\cdot \mu_{\pi}}{\sqrt n}\Bigg|({\bm{X}}_i)_{i\in[2,n]}\geq 0,{\bm{X}}_{n+1}=0\right)\stackrel{d}{\longrightarrow}\bm{\mathcal{N}}(0,\gamma_{\pi}^2),
	\end{equation}
	where the expressions of $\mu_{\pi}$ and $\gamma_{\pi}^2$ given in the statement of \cref{thm:main_thm_CLT} follow from the statement of \cref{lem:lemma3}.
	Since thanks to Proposition \ref{prop:prop_labels_big}, we can choose $a_n=o(\sqrt n)$ and such that
	\begin{equation}
	\P\left({\bm{X}}_{i}>c\text{ for all } i\in[a_n,n-a_n]\Big|({\bm{X}}_{j})_{j\in[2,n]}\geq 0,{\bm{X}}_{n+1}=0\right)\to 1,
	\end{equation}
	then we can conclude that
	\begin{equation}
	\frac{\cocc(\pi,\bm{\sigma}_n)-n\cdot \mu_{\pi}}{\sqrt n}\stackrel{d}{\longrightarrow}\bm{\mathcal{N}}(0,\gamma_{\pi}^2).
	\end{equation}
	This ends the proof.
\end{proof}

\section{Baxter permutations and related objects}
\label{sec:local}

The goal of this section is to prove \cref{thm:local} page \pageref{thm:local}, a joint quenched local convergence result for the four families of objects put in correspondence by the mappings in the commutative diagram in \cref{eq:comm_diagram} page \pageref{eq:comm_diagram}, that is, Baxter permutations $(\mathcal P)$, bipolar orientations $(\mathcal O)$, tandem walks $(\mathcal W)$, and coalescent-walk processes $(\mathscr{C})$.

As in the previous section, the consecutive patterns of a Baxter permutation are encoded by the local increments of the corresponding walk. Nevertheless, studying these local increments is much more
difficult. Indeed, in this section, we must deal with two-dimensional random walks in cones instead of a one-dimensional random walks conditioned to stay positive.

\medskip

We start with some extra properties on the Kenyon-Miller-Sheffield-Wilson bijection.

\subsection{The Kenyon-Miller-Sheffield-Wilson bijection}
\label{sect:KMSW}

Recall that the definition of the mapping $\bow : \mathcal O \to \mathcal W$ was given in \cref{defn:KMSW} page \pageref{defn:KMSW}.
Recall also that the set of tandem walks $\mathcal{W}$ is defined as the set of two-dimensional walks in the non-negative quadrant, starting at $(0,h)$ and ending at $(k,0)$ for some $h\geq 0, k\geq 0$, with increments in 
\begin{equation}
	\label{eq:admis_steps2}
	\Steps = \{(+1,-1)\} \cup \{(-i,j), i\in \Z_{\geq 0}, j\in \Z_{\geq 0}\}.
\end{equation}

An equivalent way of understanding $\bow$ is as follows.

\begin{rem}\label{rem:height_process}
	Let $m\in \mathcal O$ and $\bow(m) = ((X_t,Y_t))_{1\leq t \leq n}$. The walk $(0,X_1+1,\ldots,X_{|m|}+1)$ is the height process\footnote{We recall that the height process of a tree $T$ is the sequence of integers obtained by recording for each visited vertex (following the exploration of $T$) its distance to the root.} of the tree $T(m)$. The walk $(0,Y_{|m|}+1, Y_{|m|-1}+1,\ldots Y_1+1)$ is the height process of the tree $T(m^{**})$.
\end{rem}

We explain some properties of the bijection $\bow$. Let $m\in \mathcal O$ and $\bow(m) = ((X_t,Y_t))_{1\leq t \leq n}$. Suppose that the left outer face of $m$ has $h+1$ edges and the right outer face of $m$ has $k+1$ edges, for some $h,k\geq 0$. Then the walk $(X_t,Y_t)_{t\in[|m|]}$ starts at $(0,h)$, ends at $(k,0)$, and stays by definition in the non-negative quadrant $\mathbb{Z}_{\geq 0}^2$.

We give an interpretation to the increments of the walk, i.e.\ the values of $(X_{t+1},Y_{t+1})-(X_t,Y_t)$. 
We say that two edges of a tree are consecutive if one is the parent of the other.
The interface path of the map $m$, defined above \cref{defn:KMSW} page \pageref{def:int_path}, has two different behaviors: 
\begin{itemize}
	\item either it is following two edges $e_t$ and $e_{t+1}$ that are consecutive, both in $T(m)$ and $T(m^{**})$, in which case the increment is $(+1,-1)$;
	\item or it is first following $e_t$, then it is traversing a face of $m$, and finally is following $e_{t+1}$, in which case the increment is $(-i,+j)$ with $i,j\in\Z_{\geq 0}$, and the traversed face has left degree $i+1$ and right degree $j+1$.
\end{itemize}

\begin{exmp}\label{exemp:walk}
	Consider the map $m$ in Fig.~\ref{fig:bip_orient _with_trees2}. 
	The corresponding walk $\bow(m)$ plotted on the right-hand side of \cref{fig:bip_orient _with_trees2} is:
	\begin{equation}
		\begin{split}
			&W_{1}=(0,2),W_{2}=(0,3),W_{3}=(0,3),W_{4}=(1,2),W_{5}=(2,1),\\
			&W_{6}=(0,3),W_{7}=(1,2),W_{8}=(2,1),W_{9}=(3,0),W_{10}=(2,0).
		\end{split}
	\end{equation}
	Note, for instance, that $W_6 - W_5=(-2,2)$, indeed between the edges 5 and 6 the interface path is traversing a face with $3$ edges on the left boundary and $3$ edges on the right boundary.
	On the other hand $W_9-W_8 = W_8 - W_7 = W_7 - W_6 = (+1,-1)$. Indeed, in these cases, the interface path is following consecutive edges.
\end{exmp}

\begin{figure}[htbp]
	\begin{minipage}[c]{0.70\textwidth}
		\centering
		\includegraphics[scale=.95]{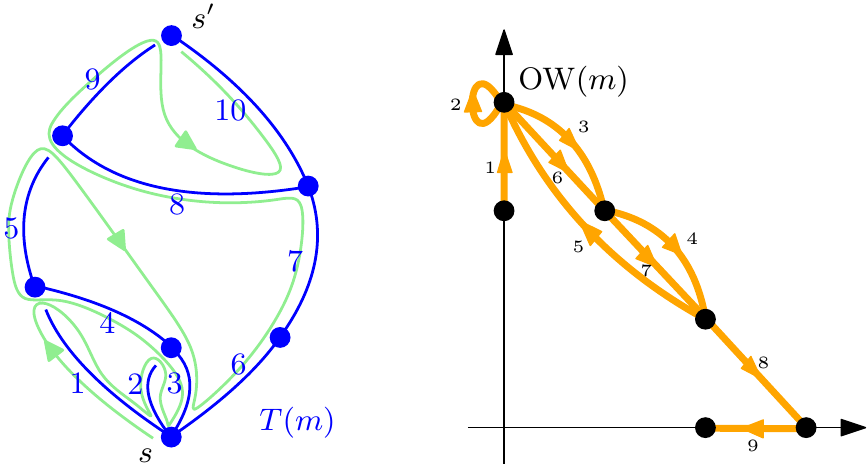}
	\end{minipage}
	\begin{minipage}[c]{0.29\textwidth}
		\caption{The bipolar orientation $m$ and the corresponding two-dimensional walk $\bow(m)$ already considered in \cref{fig:bip_orient _with_trees}. \label{fig:bip_orient _with_trees2}}
	\end{minipage}
\end{figure}

We finish this section by describing the inverse bijection  $\bow^{-1}$. We actually construct a mapping $\Inverse$ on a larger space of walks, whose restriction to $\mathcal W$ is the inverse of $\bow$.

Let $I$ be an interval (finite or infinite) of $\Z$. Let $\gls*{walks_I}$ be the set of two-dimensional walks with time space $I$ considered up to an additive constant. More precisely $\Walks(I)$ is the quotient $(\Z^2)^I / \sim$, where $w\sim w'$ if and only if there exists $x\in \Z^2$ such that $w(i) = w'(i)+x$ for all $i\in I$. We usually take an explicit representative of elements of $\Walks(I)$, chosen according to the context. For instance, if $0\in I$, we often take the representative that verifies $w(0)=(0,0)$, called "pinned at zero".
Let $\Walks_\Steps(I)\subset \Walks(I)$ be the restriction to two-dimensional walks with increments in $\Steps$ (see \cref{eq:admis_steps2}). For every $n\geq 1$, $\mathcal W_n$ is naturally embedded in $\Walks_\Steps([n])$, with an explicit representative.

Let $I=[j,k]$ be a finite integer interval. We shall define  $\Inverse$ on every walk in $ \Walks_\Steps(I)$ by induction on the size of $I$, and denote by $\Maps(I)$ the image of $\Walks_A(I)$ by $\Inverse$. An element $m\in\Maps(I)$ is a bipolar orientation, together with a subset of edges  labeled by $j,\ldots,k$ (which identifies a subinterval of the interface path started on the left boundary and ended on the right boundary of $m$). The edges labeled $j,\ldots,k$ are called \textit{explored edges}, and the edge labeled $k$ is called \textit{active}. The other edges, called \textit{unexplored}, are either below $j$ on the left boundary, or above $k$ on the right boundary. Bipolar orientations of size $n\geq 1$ are the elements of $\Maps([n])$ with no unexplored edges. The elements of $\Maps(I)$ are called \textit{marked bipolar orientations} in \cite{bousquet2019plane}.

The base case for our induction is $I=\{j\}$. In this case, an element $W$ of $\Walks_A(\{j\})$ is mapped to a single edge with label $j$. If $W \in \Walks_A([j,k+1])$ with $k\geq j$, then denote by $m' = \Inverse(W|_{[j,k]})$ and

\begin{enumerate}
	\item if $W_{k} - W_{k-1} = (1,-1)$, then $\Inverse(W)$ is obtained from $m'$, by giving label $k+1$ to the edge immediately above the edge of label $k$. If no such edge exists, a new edge is added on top of the sink with label $k+1$.
	\item If $W_{k} - W_{k-1} = (-i,j)$, then $\Inverse(W)$ is obtained from $m'$ by adding a face of left-degree $i+1$ and right-degree $j+1$. Its left boundary is glued to the right boundary of $m'$, starting with identifying the top-left edge of the new face with $e_k$, and continuing with edges below.
	The bottom-right edge of the new face is given label $k+1$, hence is now active. All other edges that were not present in $m'$ are unexplored.
\end{enumerate}

An example of this construction (inductively building $\Inverse(W)$ with $W\in \mathcal W$) is given in \cref{fig:bip_orient_from_walk}. For an example of application of the mapping $\Inverse$ to a walk that is not a tandem walk see \cref{fig:bip_orient_from_not_tandem_walk}. In this case, the final map still has unexplored edges.

\begin{figure}[htbp]
	\centering
	\includegraphics[scale=0.72]{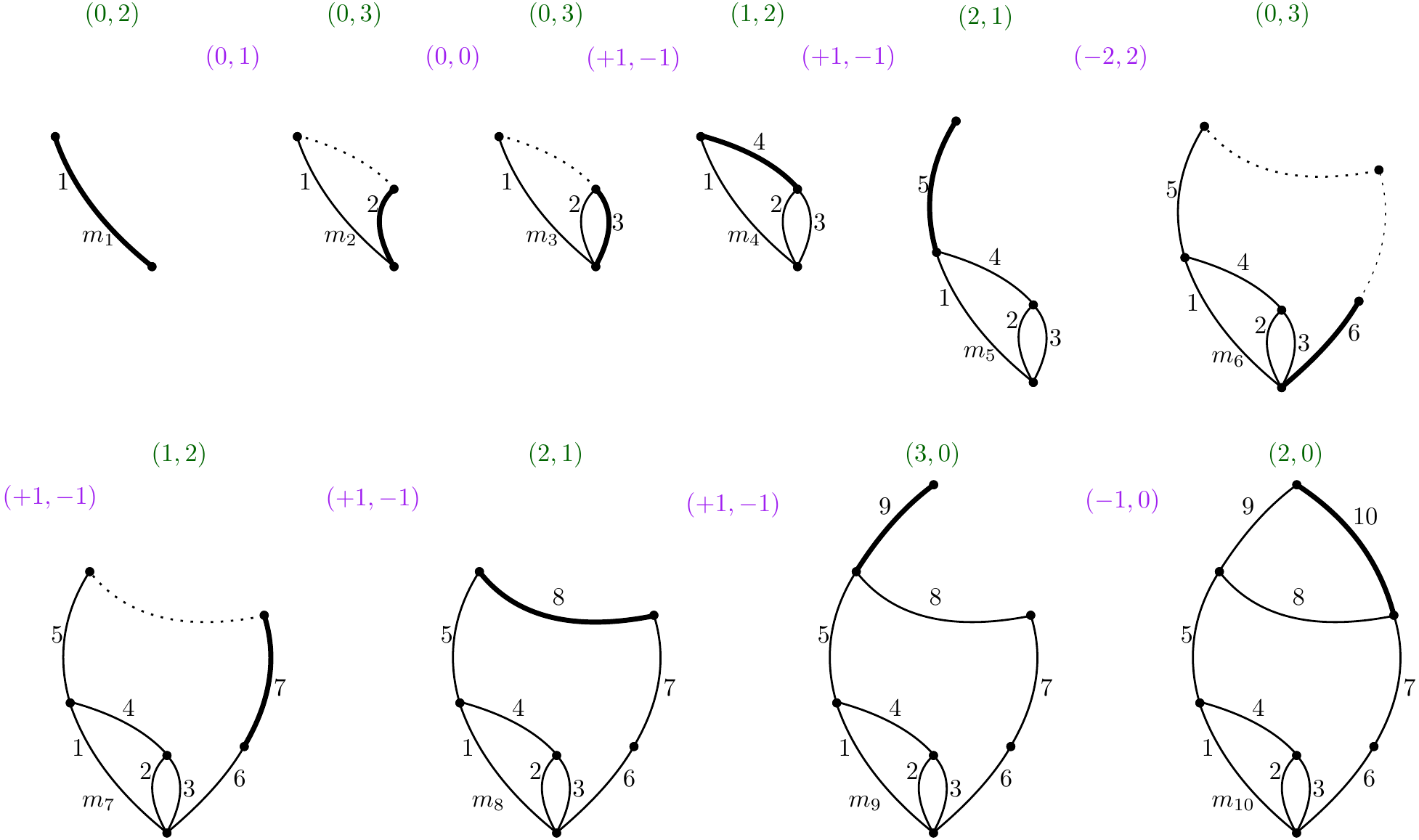}\\
	\caption{The sequence of bipolar orientations $m_k=\Inverse(W|_{[1,k]})$ determined by the walk $W$ considered in \cref{exemp:walk}. Note that $m_{10}$ is exactly the map $m$ in \cref{fig:bip_orient _with_trees2}. For each map $m_k$, we indicate on top of it (in green) the value $W_k$. Between two maps $m_k$ and $m_{k+1}$, we report (in purple) the corresponding increment $W_{k+1}-W_k$. For every map $m_k$, we draw the explored edges with full lines, the unexplored edges with dotted lines, and we additionally highlight the active edge $e_k$ in bold.   \label{fig:bip_orient_from_walk}}
\end{figure}

\begin{figure}[htbp]
	\centering
	\includegraphics[scale=0.75]{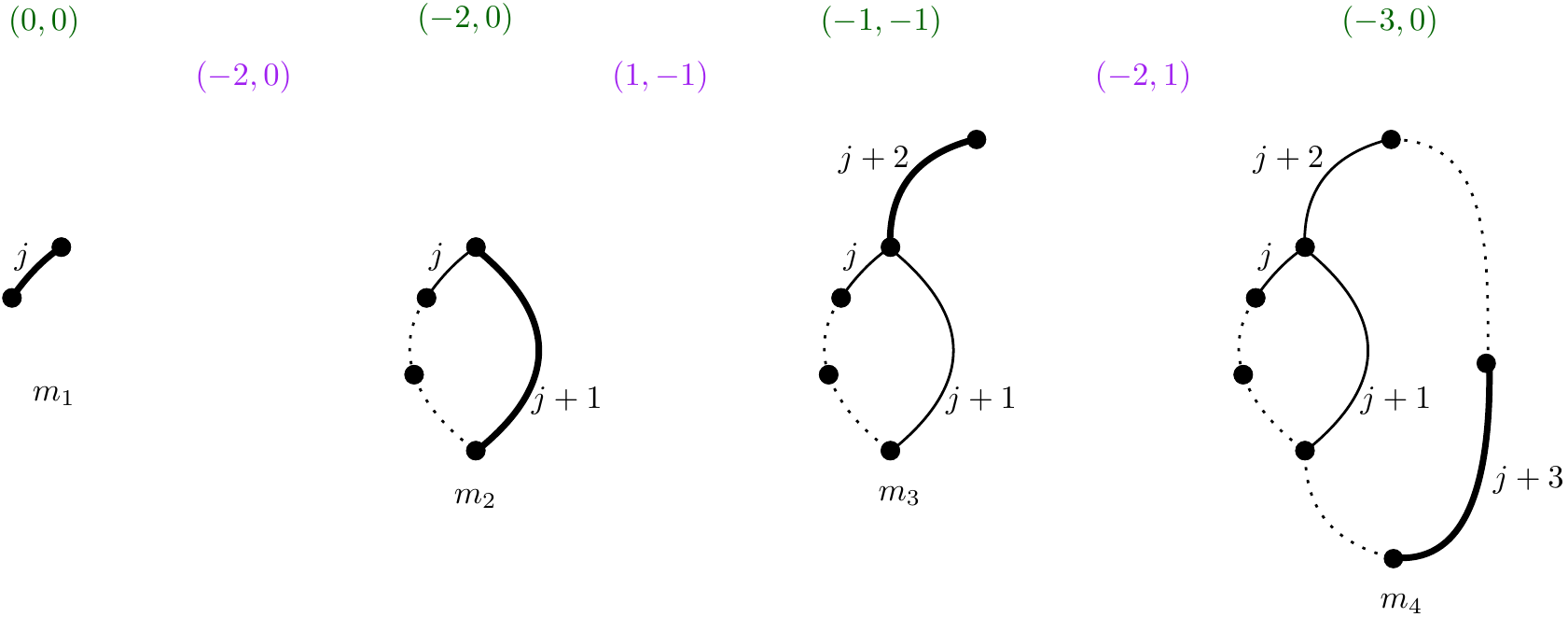}\\
	\caption{The sequence of bipolar orientations $m_k=\Inverse(W|_{[j,j+k-1]})$ determined by the walk $W_j=(0,0),W_{j+1}=(-2,0),W_{j+2}=(-1,-1),W_{j+3}=(-3,0)$, that is an element of the set $\Walks_\Steps([j,j+3])$. This walk is not a tandem walk. We used the same notation as in \cref{fig:bip_orient_from_walk}.  \label{fig:bip_orient_from_not_tandem_walk}}
\end{figure}

\begin{prop}[Theorem 1 of \cite{MR3945746}] \label{prop:inverse}
	For every finite interval $I$, the mapping $\Inverse: \Walks_A(I)\to \Maps(I)$ is a bijection. Moreover, for every $n\geq 1$, $\Theta(\mathcal W_n) = \mathcal O_n$ and $\bow^{-1}: \mathcal W_n \to \mathcal O_n$ coincides with $\Inverse$. 
	Finally, if $W \in \Walks_A(I)$ and $[j,k] \subset I$, then the map $\Inverse(W|_{[j,k]})$ is the submap obtained from $\Inverse(W)$ by keeping
	\begin{enumerate}
		\item edges with label $j,\ldots,k$ (explored edges);
		\item faces that have explored edges on both their left and right boundary (explored faces);
		\item other edges incident to explored faces (unexplored edges).
	\end{enumerate}
\end{prop}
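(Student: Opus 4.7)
The plan is to proceed in three steps, treating the bijection, the identification with $\bow^{-1}$, and the restriction property separately.

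First I would establish that $\Inverse : \Walks_A(I) \to \Maps(I)$ is a bijection by explicitly constructing its inverse $\Phi : \Maps(I) \to \Walks_A(I)$. Given a marked bipolar orientation $m' \in \Maps(I)$ with $I=[j,k]$ and active edge $e_k$, the last increment of the corresponding walk should be readable from the local configuration around $e_k$: if $e_k$ has a parent edge in the down-right tree structure among the explored edges (or is the unique edge on top of the sink) then the last step is $(+1,-1)$; otherwise, $e_k$ is the bottom-right edge of a face $f$ whose entire left boundary consists of explored edges, and the last step is $(-i,+j)$ where $i+1$ and $j+1$ are the left and right degrees of $f$. Reading this off recursively yields the walk $\Phi(m')$. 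By inspection of the two cases in the inductive definition of $\Inverse$, one checks that $\Phi \circ \Inverse = \mathrm{id}$ and $\Inverse \circ \Phi = \mathrm{id}$; the only subtle point is to verify that each inductive step of $\Inverse$ produces a well-defined marked bipolar orientation (in particular, that gluing the left boundary of the new face to the right boundary of $m'$ is always legal, i.e.\ the right boundary of $m'$ starting at $e_k$ always contains at least $i+1$ edges, or else $W_{k+1}$ would leave the admissible region, contradicting $W \in \Walks_A(I)$).

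The second step is to identify $\Inverse$ restricted to tandem walks with $\bow^{-1}$. A walk $W \in \Walks_A([n])$ belongs to $\mathcal W_n$ precisely when it starts on the $y$-axis, ends on the $x$-axis, and stays in the non-negative quadrant. I would show by induction on $k$ that the coordinates $(X_k, Y_k)$ of the walk $W = \bow \circ \Inverse(W)$ after $k$ steps equal, respectively, the height in the current down-right tree of the bottom vertex of $e_k$ and the height in the current up-left tree of the top vertex of $e_k$, matching Definition 3.5.1. The condition $X_n = k \geq 0$ and $Y_n = 0$ (endpoint on the $x$-axis) forces no unexplored edges to remain on the right boundary, and the condition of starting at $(0,h)$ on the $y$-axis forces none on the left boundary either; hence $\Inverse(\mathcal W_n) \subseteq \mathcal O_n$. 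Combined with Theorem \ref{thm:KMSW}, which asserts that $\bow$ is a size-preserving bijection $\mathcal O_n \to \mathcal W_n$, this gives $\Inverse|_{\mathcal W_n} = \bow^{-1}$. The main obstacle here will be the careful bookkeeping of the tree heights during the face-attachment step: when a face of left-degree $i+1$ is glued, one must check that the height of the new active edge's bottom vertex has increased correctly by the $x$-increment (and similarly for the $y$-coordinate and the dual tree).

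Finally, the restriction property is essentially built into the inductive definition. By construction, $\Inverse(W|_{[j,k]})$ is obtained from $\Inverse(W|_{[j,k+1]})$ by removing the last-added edge or face; iterating gives exactly the three classes of surviving objects listed in the statement (edges with labels in $[j,k]$, faces with explored edges on both boundaries, and the incident unexplored edges). The proof amounts to formalizing the induction on $|I \setminus [j,k]|$. The hard part of the overall argument is really the first step: showing well-definedness of the gluing in case (2) and checking that the explicit inverse $\Phi$ always produces an admissible increment in $\Steps$; once this is in hand, the remaining verifications are routine.
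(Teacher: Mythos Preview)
The paper does not prove this proposition: it is stated with the attribution ``Theorem 1 of \cite{MR3945746}'' and the text moves directly to the next subsection. So there is no proof in the paper to compare against; the result is imported from Kenyon--Miller--Sheffield--Wilson.

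That said, your sketch contains a genuine misconception in Step~1. You write that the gluing in case~(2) is ``always legal'' because otherwise ``$W_{k+1}$ would leave the admissible region, contradicting $W\in\Walks_A(I)$''. But $\Walks_A(I)$ carries no positivity constraint: these are arbitrary walks with increments in $A$, considered up to an additive constant. The paper even gives an explicit example (the figure captioned ``bip\_orient\_from\_not\_tandem\_walk'') of $\Inverse$ applied to a non-tandem walk. What actually happens when the right boundary of $m'$ below $e_k$ has fewer than $i+1$ edges is that the remaining left-boundary edges of the new face are \emph{not} glued; they become additional unexplored edges on the left outer boundary of the resulting marked bipolar orientation. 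This is precisely how $\Maps(I)$ can have unexplored edges both above and below the explored region. Your inverse map $\Phi$ would also need to account for this: when undoing a face addition, some of the face's left-boundary edges may have been unglued unexplored edges rather than previously explored ones, and $\Phi$ must detect and remove them.

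Once this is corrected, the three-step plan (explicit inverse, induction on $k$ to match the tree heights of \cref{defn:KMSW}, induction on $|I\setminus[j,k]|$ for the restriction property) is a sound route to the result and is essentially how the KMSW paper argues it.
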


\subsection{Mappings of the commutative diagram in the infinite-volume}\label{sect:infinite_bij}

Let $I$ be a (finite or infinite) interval of $\Z$. Recall that $\Walks_\Steps(I)$ is the subset of two-dimensional walks with time space $I$ and with increments in $\Steps$. Recall also that $\Coals(I)$ is the set of coalescent-walk processes on $I$, and that the mapping $\wcp : \Walks_\Steps(I) \to \Coals(I)$ sends walks to coalescent-walk processes, both in the finite and infinite-volume case.

In this section we extend the mappings $\cpbp$ and $\Inverse$, defined earlier, to infinite-volume objects. 

\medskip

We start with the mapping $\cpbp$. We denote by $\Perms(I)$ the set of total orders on $I$, which we call permutations on $I$. This terminology makes sense because for $n\geq 1$, the set $\Perms_n$ of permutations of size $n$ is readily identified with $\Perms([n])$, by the mapping $\sigma \mapsto \preccurlyeq_\sigma$, where 
\begin{equation}
	i \preccurlyeq_\sigma j\qquad\text{if and only if}\qquad \sigma(i)\leq\sigma(j)\;.
\end{equation} 

\medskip

Then we can extend $\cpbp: \Coals(I) \to \Perms(I)$ to the case when $I$ is infinite. For $Z = \{Z^{(t)}\}_{t\in I} \in \Coals(I)$, we set $\cpbp(Z)$ to be the total order $\leq_Z$  on $I$ defined, as in \cref{sect:from_coal_to_perm}, by
\begin{equation}
	\begin{cases}i\leq_Z i,\\
		i\leq_Z j,&\text{ if }i<j\text{ and }\ Z^{(i)}_j<0,\\
		j\leq_Z i,&\text{ if }i<j\text{ and }\ Z^{(i)}_j\geq0.\end{cases}
\end{equation}
This is consistent, through the stated identification, with our previous definition of $\cpbp$ when $I = [n]$.

\medskip

Finally, we also extend $\Inverse$ to the infinite-volume case. We need first to clarify what is our definition of \emph{infinite planar maps}. From now on,  we use the following equivalent definition of (finite) planar map: a planar map is a finite collection of finite polygons (the inner faces), glued
along some pairs of edges, so that the resulting surface has the topology of the disc, i.e.\ is
simply connected and has one boundary.

\begin{defn}\label{defn:Inf_map}
	An \emph{infinite oriented quasi-map} is an infinite collection of finite polygons with oriented edges, glued along some of their edges in such a way that the orientation is preserved. 
	In the case the graph corresponding to an infinite oriented quasi-map is locally finite (i.e.\ every vertex has finite degree) then we say that it is an \emph{infinite oriented map}. 
	
	We call \emph{boundary} of an infinite oriented map, the collection of edges of the finite polygons that are not glued with any other edge.
	An infinite oriented map $m$ is 
	\begin{itemize}
		\item \textit{simply connected} if for every finite submap $f\subset m$ there exists a finite submap $f'\subset m$ which is a planar map (i.e.\ is simply connected) with $f\subset f'\subset m$;
		\item \textit{boundaryless} if the boundary of $m$ is empty.
	\end{itemize}
	When these two conditions are verified, we say that $m$ is an \textit{infinite map of the plane}\footnote{We point out that when the map is locally finite, it is well-defined as a topological manifold, and the combinatorial notions of simple connectivity and boundarylessness defined above are equivalent to the topological ones. By the classification of two-dimensional surfaces, $m$ is an infinite map of the plane if and only if it is homeomorphic to the plane.}.
\end{defn}

Let $I$ be an infinite interval and $w\in \Walks_\Steps(I)$. Using the definition above, we can view a finite bipolar orientation as a finite collection of finite inner faces, together with an adjacency relation on the oriented edges of these faces. This allows us to construct $\Inverse(w)$ as a projective limit, as follows: from \cref{prop:inverse}, if $J$ and $J'$ are finite intervals and $J'\subset J \subset I$ then $\Inverse(w|_{J'})$ is a submap of $\Inverse(w|_J)$ defined in a unique way. This means that the face set of $\Inverse(w|_{J'})$ is included in that of $\Inverse(w|_J)$, and that two faces of $\Inverse(w|_{J'})$ are adjacent if and only if they are adjacent through the same edge in $\Inverse(w|_J)$.
Then, taking $J_n$ a growing sequence of finite intervals such that $I = \cup_n J_n$, we set $\Inverse(w) := \mathrlap {\  \uparrow} \bigcup_{n\geq 0} \Inverse(w|_{J_n})$ to be an infinite collection of finite polygons, together with a gluing relation between the edges of these faces, i.e.\ $\Inverse(w)$ is an infinite oriented quasi-map.

\subsection{Random infinite limiting objects}\label{sect:inf_loc_obj}
We define here what will turn out to be our local limiting objects. Recall that $\nu$ denotes the following probability distribution on $A$ (defined in \cref{eq:admis_steps} page~\pageref{eq:admis_steps}):
\begin{equation}\label{eq:walk_distrib}
	\nu = \frac 12 \delta_{(+1,-1)} + \sum_{i,j\geq 0} 2^{-i-j-3}\delta_{(-i,j)},\quad\text{where $\delta$ denotes the Dirac measure,}
\end{equation}
and recall that $\overline{\bm W} = (\overline{\bm X},\overline{\bm Y}) = (\overline{\bm W}_t)_{t\in \Z}$ is a bi-directional random two-dimensional walk with step distribution $\nu$, having value $(0,0)$ at time zero. The interest of introducing the probability measure $\nu$ resides in the following way of obtaining uniform elements of $\mathcal W_n$ as conditioned random walks. For all $n\geq 1$, let $\Walks_{A,\exc}^n \subset \Walks_A([0,n-1])$ be the set of two-dimensional walks $W=(W_t)_{0\leq t\leq n-1}$ in the non-negative quadrant, starting and ending at $(0,0)$, with increments in $A$. Notice that for $n\geq 1$, the mapping $\mathcal \Walks_{A,\exc}^{n+2} \to \mathcal W_{n}$ removing the first and the last step, i.e.\ $W \mapsto (W_{t})_{1\leq t \leq n}$,
is a bijection. A simple calculation then gives the following (obtained also in \cite[Remark 2]{MR3945746}):

\begin{prop}\label{prop:unif_law}
	Conditioning on $\{(\overline{\bm W}_t)_{0\leq t\leq {n+1}}  \in \mathcal \Walks_{A,\exc}^{n+2}\}$, the law of  $(\overline{\bm W}_t)_{0\leq t \leq n+1}$ is the uniform distribution on $\Walks_{A,\exc}^{n+2}$, and the law of $(\overline{\bm W}_t)_{1\leq t \leq n}$ is the uniform distribution on $\mathcal W_n$.
\end{prop}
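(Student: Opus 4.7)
The plan is a direct computation based on the carefully tuned form of the step distribution $\nu$. Since $\overline{\bm W}$ has i.i.d.\ increments, for any deterministic walk $W = (W_0, W_1, \ldots, W_{n+1}) \in \Walks_{A,\exc}^{n+2}$, we have
\[
\P\bigl( (\overline{\bm W}_t)_{0\leq t\leq n+1} = W \bigr) = \prod_{t=0}^{n} \nu(W_{t+1}-W_t).
\]
Let $k$ be the number of increments equal to $(+1,-1)$, and let $(-i_1,j_1), \ldots, (-i_\ell, j_\ell)$ be the remaining increments, so that $k+\ell = n+1$. By \eqref{eq:walk_distrib}, the probability above equals
\[
\left(\tfrac{1}{2}\right)^k \cdot \prod_{s=1}^{\ell} 2^{-i_s - j_s - 3} = 2^{-k - \sum_s i_s - \sum_s j_s - 3\ell}.
\]

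The key step is to exploit the excursion condition $W_0 = W_{n+1} = (0,0)$. Summing first coordinates of the increments gives $k - \sum_s i_s = 0$, hence $\sum_s i_s = k$. Similarly, summing second coordinates gives $-k + \sum_s j_s = 0$, hence $\sum_s j_s = k$. Therefore the exponent simplifies to $-k - k - k - 3\ell = -3(k+\ell) = -3(n+1)$, and we conclude
\[
\P\bigl( (\overline{\bm W}_t)_{0\leq t\leq n+1} = W \bigr) = 2^{-3(n+1)},
\]
a value that is independent of the choice of $W \in \Walks_{A,\exc}^{n+2}$.

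Since the conditional law is constant on $\Walks_{A,\exc}^{n+2}$, it is uniform on this set, proving the first claim. The second claim follows immediately from the first and the size-preserving bijection $\Walks_{A,\exc}^{n+2} \to \mathcal W_n$ defined by $W \mapsto (W_t)_{1\leq t\leq n}$ stated just above the proposition: pushing forward the uniform distribution on $\Walks_{A,\exc}^{n+2}$ through this bijection yields the uniform distribution on $\mathcal W_n$, and the image of $(\overline{\bm W}_t)_{0\leq t\leq n+1}$ under this bijection is precisely $(\overline{\bm W}_t)_{1\leq t\leq n}$. There is no real obstacle here; the only subtlety is checking the two balance equations imposed by the excursion condition, which is exactly what makes the calibrated distribution $\nu$ yield a uniform conditional law.
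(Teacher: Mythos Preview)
Your proof is correct and is precisely the ``simple calculation'' the paper alludes to without spelling out. The paper gives no explicit argument, merely citing \cite[Remark 2]{MR3945746}; your computation showing that the excursion constraint forces $\sum i_s = \sum j_s = k$ and hence $\P((\overline{\bm W}_t)_{0\le t\le n+1}=W)=2^{-3(n+1)}$ for every $W\in\Walks_{A,\exc}^{n+2}$ is exactly the intended content.
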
 

Let us now consider $\overline {\bm m} = \Inverse(\overline {\bm W})$ and describe some of its properties. Recall that it is an infinite quasi-map, i.e.\ a countable union of finite polygons, glued along edges. 

\begin{prop}[{\cite[Proposition 3.5]{borga2020scaling}}]\label{prop:inf_map_property}
	Almost surely, $\overline{\bm m}$ is an infinite map of the plane. 
	In particular, it is locally finite.
\end{prop}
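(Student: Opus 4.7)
The plan is to establish in turn the three properties defining an infinite map of the plane for the random quasi-map $\overline{\bm m}$: local finiteness, boundarylessness, and simple connectedness. All three will be traced back to the oscillation of the two-sided walk $\overline{\bm W}$, which serves as the main probabilistic input.

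First I would verify that the step distribution $\nu$ defined in \eqref{eq:walk_distrib} is centered with finite second moment. A direct calculation gives that both coordinates have mean $0$ and variance $2$ (for instance $\E[\overline{\bm X}_1] = \tfrac12 - \tfrac18\sum_{i\geq 0}i\,2^{-i}\sum_{j\geq 0}2^{-j} = \tfrac12-\tfrac12 = 0$). Hence each of the one-dimensional marginals $(\overline{\bm X}_t)_{t\in\Z}$ and $(\overline{\bm Y}_t)_{t\in\Z}$ is a centered, finite-variance random walk, and by the Chung--Fuchs theorem (or equivalently the CLT applied along subsequences) it oscillates in both time directions, i.e.\ a.s.
$$\limsup_{t\to\pm\infty}\overline{\bm X}_t = +\infty \quad \text{and} \quad \liminf_{t\to\pm\infty}\overline{\bm X}_t=-\infty,$$
and likewise for $\overline{\bm Y}$.

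Next I would address \emph{boundarylessness}. Using the explicit description of $\Inverse$ given in Section~\ref{sect:KMSW}, the unexplored edges of $\overline{\bm m}_n := \Inverse(\overline{\bm W}|_{[-n,n]})$ lie either on the left outer boundary (below the starting height $\overline{\bm Y}_{-n}$) or on the right outer boundary (above the current height $\overline{\bm X}_n$). From the update rule (case 2 of $\Inverse$, where a step $(-i,j)$ attaches a new face whose left side is glued to the current right boundary), a given unexplored edge $e$ of $\overline{\bm m}_n$ becomes explored in some $\overline{\bm m}_{n'}$ as soon as the walk reaches low enough $\overline{\bm X}$- or $\overline{\bm Y}$-values on the appropriate side of zero; oscillation guarantees this happens a.s. For \emph{simple connectedness}, each $\overline{\bm m}_n$ is by definition a marked bipolar orientation, hence a finite planar map; any finite submap $f$ of $\overline{\bm m}$ sits in some $\overline{\bm m}_n$, and by taking $n' \geq n$ large enough---using the boundarylessness argument above---all unexplored edges of $\overline{\bm m}_n$ are explored in $\overline{\bm m}_{n'}$, so the submap of $\overline{\bm m}$ formed by the explored faces of $\overline{\bm m}_{n'}$ gives a finite planar witness $f' \supset f$.

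The genuinely harder property is \emph{local finiteness}. For a vertex $v$ of $\overline{\bm m}$, appearing first at some time $t_v$ in the exploration, I would control the degree of $v$ by identifying exactly which walk-steps can create a new edge or face incident to $v$. Using the height-process interpretation of Remark~\ref{rem:height_process}, $v$ is at a prescribed height in both $T(\overline{\bm m})$ and $T(\overline{\bm m}^{**})$, and a new incident face can only be created by a step $(-i,j)$ occurring while the walk $(\overline{\bm X},\overline{\bm Y})$ lies in a bounded window around these heights. By oscillation and transience of the one-dimensional coordinate walks away from any fixed level, the walk a.s.\ spends only finitely many steps in any such bounded window (a quantitative renewal/excursion argument turns oscillation into this finiteness statement), so the degree of $v$ is a.s.\ finite; a union bound over the countable vertex set yields the claim. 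The main obstacle of the proof lies exactly here: making rigorous the geometric claim that only walk-steps within a bounded $(X,Y)$-window can contribute to the degree of a fixed vertex $v$, and then converting oscillation into a quantitative bound on the time spent in such a window, is where the real work takes place; once this is done, boundarylessness and simple connectedness follow easily from the constructions above.
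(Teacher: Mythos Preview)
Your arguments for boundarylessness and simple connectedness are essentially sound: oscillation of $\overline{\bm Y}$ forward in time and of $\overline{\bm X}$ backward does ensure every unexplored boundary edge is eventually explored, and each $\Inverse(\overline{\bm W}|_{[-n,n]})$ is a marked bipolar orientation, hence already a planar disk furnishing the required simply-connected witness.

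The local-finiteness argument, however, has a genuine gap. You claim that ``by oscillation and transience of the one-dimensional coordinate walks away from any fixed level, the walk a.s.\ spends only finitely many steps in any such bounded window.'' But the coordinate walks $\overline{\bm X}$ and $\overline{\bm Y}$ are centered with finite variance, hence \emph{recurrent} by the very Chung--Fuchs theorem you invoke for oscillation: each visits every level infinitely often. The two-dimensional walk $\overline{\bm W}$ is likewise recurrent (zero mean, non-degenerate covariance $\begin{psmallmatrix}2&-1\\-1&2\end{psmallmatrix}$ in dimension two), so it revisits any bounded region of $\Z^2$ infinitely often almost surely. The probabilistic input you rely on is therefore false. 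The geometric assertion that incidence with a fixed vertex $v$ is detected by the walk lying in a fixed bounded $(X,Y)$-window is also not justified: a vertex can remain on the evolving right boundary while $X$ ranges over an unbounded set, and what governs its degree is a first-passage event (the walk dropping past a moving threshold after a specific time), not membership in a static window.

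The paper does not attempt a direct walk-window argument here; it proves local finiteness via the coalescent-walk process $\overline{\bm Z}=\wcp(\overline{\bm W})$, using \cref{prop:trajectories_are_rw}: each trajectory $\overline{\bm Z}^{(t)}$ is itself a random walk with the step law of $\overline{\bm Y}$ (the ``miracle of the geometric distribution''). Since the coalescent forest encodes the dual tree (\cref{prop:eq_trees}), random-walk hitting estimates on these trajectories are exactly what deliver finiteness of vertex degrees. That detour through the coalescent-walk process is the missing idea in your sketch.
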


The proposition above was proved using the following remarkable property of the trajectories of the coalescent-walk process $\overline{\bm Z} = \wcp(\overline{\bm W})$. This property will be also of great help when we will look at the scaling limit of $\overline{\bm Z}$ in \cref{sect:scal_coal_proc}.

\begin{prop}[{\cite[Proposition 3.3]{borga2020scaling}}]\label{prop:trajectories_are_rw}
	For every $t\in \Z$, $\overline{\bm Z}^{(t)}$ has the distribution of a random walk with the same step distribution as $\overline{\bm Y}$ (which is the same as that of $-\overline{\bm X}$).
\end{prop}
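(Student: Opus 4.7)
The plan is to establish the stronger statement that, for every fixed $t \in \Z$, the increments $(\overline{\bm Z}^{(t)}_{\ell+1} - \overline{\bm Z}^{(t)}_\ell)_{\ell \geq t}$ are i.i.d.\ with a common law $\mu$ that coincides with the law of $\overline{\bm Y}_1 = \overline{\bm Y}_1 - \overline{\bm Y}_0$. A direct marginalisation of the step distribution $\nu$ in \eqref{eq:walk_distrib} gives $\P(\mu = -1) = \tfrac{1}{2}$ and $\P(\mu = j) = 2^{-j-2}$ for $j \geq 0$; by the symmetry $(-i,j) \leftrightarrow (-j,i)$ of $\nu$, one also has $-\overline{\bm X}_1 \stackrel{d}{=} \overline{\bm Y}_1$. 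Hence the announced identification of step distributions is immediate, and the whole content of the statement lies in the i.i.d.\ property.

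To prove the latter, I would fix $\ell \geq t$, let $\mathcal F_\ell$ be the $\sigma$-algebra generated by $(\overline{\bm W}_s - \overline{\bm W}_r)_{t \leq r \leq s \leq \ell}$, and condition on $\mathcal F_\ell$. By construction of $\wcp$ (Definition~\ref{defn:distrib_incr_coal}) the random variable $\overline{\bm Z}^{(t)}_\ell$ is $\mathcal F_\ell$-measurable, while the step $\overline{\bm W}_{\ell+1} - \overline{\bm W}_\ell$ is independent of $\mathcal F_\ell$ with law $\nu$. The conditional law of $\overline{\bm Z}^{(t)}_{\ell+1} - \overline{\bm Z}^{(t)}_\ell$ given $\mathcal F_\ell$ therefore depends only on $\overline{\bm Z}^{(t)}_\ell$, through the three-case rule of Definition~\ref{defn:distrib_incr_coal}. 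When $\overline{\bm Z}^{(t)}_\ell \geq 0$ the increment is just the $y$-component of the $\nu$-step, which is $\overline{\bm Y}_{\ell+1} - \overline{\bm Y}_\ell$ and has law $\mu$. When $\overline{\bm Z}^{(t)}_\ell = -m$ with $m \geq 1$, it equals $-1$ (prob.\ $\tfrac12$), or $i$ for $0 \leq i < m$ with probability $\sum_{j \geq 0} 2^{-i-j-3} = 2^{-i-2}$, or $j+m$ for $j \geq 0$ with probability $\sum_{i \geq m} 2^{-i-j-3} = 2^{-j-m-2}$; summing the two contributions that give a common value $k \geq 0$ yields $2^{-k-2}$ in every case, matching $\mu$. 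An induction on $\ell$ via the tower property then upgrades this pointwise identification of the conditional law into the joint i.i.d.\ statement, and the proposition follows.

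The main obstacle is the exact compensation in the second case: the contribution from steps that keep $\overline{\bm Z}^{(t)}$ below zero and the contribution from steps that push it to $\geq 0$ combine to reproduce the very law $\mu$ that governs the trajectory above zero. This identity is entirely algebraic and relies on the product-geometric shape of $\nu$; perturbing $\nu$ slightly (even while keeping the two marginal laws) destroys the matching, so the calculation must be done carefully and cannot be abstracted away by soft arguments. Conceptually, this memoryless property is the probabilistic counterpart of a combinatorial symmetry of uniform bipolar orientations, which also explains the sibling statement $-\overline{\bm X}_1 \stackrel{d}{=} \overline{\bm Y}_1$ used above.
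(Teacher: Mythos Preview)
Your proof is correct and is exactly the direct computation that the paper alludes to as the ``miracle of the geometric distribution'' (the thesis itself only cites \cite[Proposition 3.3]{borga2020scaling} without reproducing a proof). One small wording point: the two contributions in the case $\overline{\bm Z}^{(t)}_\ell = -m$ cover disjoint ranges of $k$ (namely $0\le k<m$ and $k\ge m$) rather than overlapping, so ``summing'' should be read as ``combining the two formulas into a single one''; the arithmetic is right either way.
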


\begin{rem}
	Recall that the increments of a walk of a coalescent-walk process are not always equal to the increments of the corresponding walk (see the last case considered in \cref{defn:distrib_incr_coal}). The statement of \cref{prop:trajectories_are_rw} is a sort of "miracle" of the geometric distribution.
\end{rem}

\subsection{Local topologies}\label{sect:local topologies}

The following constructions and definitions are reminiscent of the ones given in \cref{sect:local_theory}, where we introduced the local topology for permutations.

We define a (finite or infinite) \emph{rooted walk} (resp.\ \emph{rooted coalescent-walk process}) as a walk (resp.\ coalescent-walk process) on a (finite or infinite) interval of $\Z$ containing $0$. More formally, we define the following sets (with the corresponding notions of size):
\begin{align}
	\widetilde \Walks_\bullet &\coloneqq \bigsqcup_{I \ni 0} \Walks(I),&&\text{where}\quad |W| \coloneqq |I|\text{ if }W\in \Walks(I),\\
	\widetilde \Coals_\bullet &\coloneqq \bigsqcup_{I \ni 0} \Coals(I),&&\text{where}\quad |Z| \coloneqq |I|\text{ if }Z\in \Coals(I).
\end{align}
Of course, $0$ has to be understood as the root of any object in one of these classes. For $n\in \Z_{>0} \cup \{\infty\}$, $\Walks_\bullet^n$ is the subset of objects in $\widetilde \Walks_\bullet$ of size $n$, i.e.\ $\Walks_\bullet^n=\bigcup_{I \ni 0, |I|=n} \Walks(I)$, and $\Walks_\bullet$ denotes the set of finite-size objects. We also define analogs for $\widetilde \Coals_\bullet$.

We justify the terminology. A rooted object of size $n$ can be also understood as an unrooted object of size $n$ together with an index in $[n]$ which identifies the root through the following identifications\footnote{Note that the natural identification for walks would be $(W,i) \longmapsto (W_{i+t} - W_i)_{t\in [-i+1, n-i]}$, but since we are considering walks up to an additive constant then the identification $(W,i) \longmapsto (W_{i+t})_{t\in [-i+1, n-i]}$ is equivalent.}: 
\begin{align}
	\Walks^n \times [n] \longrightarrow \Walks^n_\bullet,
	&\quad(W,i) \longmapsto (W_{i+t})_{t\in [-i+1, n-i]}\in \Walks([-i+1, n-i]),\\
	\Coals^n \times [n] \longrightarrow \Coals^n_\bullet,
	&\quad((Z^{(s)}_t)_{s,t\in [n]},i) \longmapsto (Z^{(i+s)}_{i+t})_{s,t\in [-i+1, n-i]}\in \Coals([-i+1, n-i]).
\end{align}

We may now define \emph{restriction functions}: for $h\geq 1$, $I$ an interval of $\Z$ containing 0, and $\square\in  \Walks(I)$ or $\Coals(I)$, we define
\[r_h(\square) =\square |_{I \cap [-h,h]}.\]
So, for all $h\geq 1$, $r_h$ is a well-defined function $\widetilde \Walks_\bullet \to \Walks_\bullet$, $\widetilde \Coals_\bullet \to \Coals_\bullet$. Finally local distances on either $\widetilde \Walks_\bullet$ and $\widetilde \Coals_\bullet$ are defined by a common formula:
\begin{equation}\label{eq:local_distance}
	d\big(\square_1,\square_2\big)=2^{-\sup\big\{h\geq 1\;:\;r_h(\square_1)=r_h(\square_2)\big\}},
\end{equation}
with the conventions that $\sup\emptyset=0,$ $\sup\Z_{>0}=+\infty$ and $2^{-\infty}=0.$

For the case of planar maps, the theory is slightly different. For a finite interval $I$, we denote by $\Maps(I)$ the set of planar oriented maps with edges labeled by the integers in the interval $I$. We point out that we do not put any restriction on the possible choices for the labeling.
We also define the set of finite rooted maps $\Maps_\bullet = \bigsqcup_{I\ni 0,|I|<\infty} \Maps(I)$, where the edge labeled by $0$ is called root.

A finite rooted map is obtained by rooting (i.e.\ distinguishing an edge) a finite unrooted map, by the identification $\Maps([n]) \times [n] \to \Maps_\bullet^n$ which sends $(m,i)$ to the map obtained from $m$ by shifting all labels by $-i$. We also define a distance.
Let $B_h(m)$ be the ball of radius $h$ in $m$, that is the submap of $m$ consisting of the faces of $m$ that contain a vertex at distance less than $h$ from the tail of the root-edge. We set $d(m,m') =2^{-\sup\big\{h\geq 1\;:\;B_h(m) = B_h(m')\big\}},$ with the same conventions as before. We do not describe the set of possible limits, but simply take $\widetilde \Maps_{\bullet}$ to be the completion of the metric space $(\Maps_\bullet, d)$. In particular, it is easily seen that $\widetilde \Maps_{\bullet}$ contains all the infinite rooted planar maps.

\begin{prop} 
	\label{prop:continuty_of_cpbp}
	The mappings $\wcp: \widetilde \Walks_\bullet \to	\widetilde \Coals_\bullet$ and $\cpbp: \widetilde \Coals_\bullet \to 
	\widetilde \Perms_\bullet$
	are $1$-Lipschitz.
\end{prop}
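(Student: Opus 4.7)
The proof will rest on a single commutation property: for every $h \geq 1$, the restriction function $r_h$ commutes with both mappings, i.e.
\begin{equation}
r_h \circ \wcp = \wcp \circ r_h \quad \text{on } \widetilde{\Walks}_\bullet, \qquad r_h \circ \cpbp = \cpbp \circ r_h \quad \text{on } \widetilde{\Coals}_\bullet.
\end{equation}
Once these identities are established, the $1$-Lipschitz property is immediate from the definition of the local distance in \cref{eq:local_distance}: if $d(W_1,W_2) \leq 2^{-h}$, meaning $r_h(W_1) = r_h(W_2)$, then $r_h(\wcp(W_1)) = \wcp(r_h(W_1)) = \wcp(r_h(W_2)) = r_h(\wcp(W_2))$, so $d(\wcp(W_1), \wcp(W_2)) \leq 2^{-h}$, and analogously for $\cpbp$.

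For the commutation $r_h \circ \wcp = \wcp \circ r_h$, I would argue directly from the recursive \cref{defn:distrib_incr_coal}: for any $t \in I$ and any $\ell \geq t$ with $\ell+1 \in I$, the increment $Z^{(t)}_{\ell+1} - Z^{(t)}_\ell$ is a deterministic function of the single walk increment $W_{\ell+1} - W_\ell$ and the current height $Z^{(t)}_\ell$. Iterating, for any $t,s \in I$ with $t \leq s$, the value $Z^{(t)}_s$ is determined entirely by the successive increments of $W$ on the interval $[t,s]$. In particular, if $t,s \in I \cap [-h,h]$, then $Z^{(t)}_s$ depends only on $W\big|_{I\cap[-h,h]} = r_h(W)$. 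Since the definition is invariant under adding a constant to $W$ (and $\Walks(I)$ is taken up to such a constant), the two sides of the commutation are well-defined and equal.

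For the commutation $r_h \circ \cpbp = \cpbp \circ r_h$, I would use the very definition of $\cpbp$ extended to infinite intervals: the total order $\leq_Z$ restricted to $I \cap [-h,h]$ is given, for $i < j$ both in $I \cap [-h,h]$, by $i \leq_Z j$ iff $Z^{(i)}_j < 0$ and $j \leq_Z i$ otherwise. This comparison only involves the values $Z^{(t)}_s$ with $t,s \in I \cap [-h,h]$, which by definition constitute exactly $r_h(Z)$. Through the identification between permutations on an interval and rooted permutations (with root at $0$) recalled in \cref{sect:local topologies}, this is precisely the statement that $r_h(\cpbp(Z)) = \cpbp(r_h(Z))$.

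The argument is essentially bookkeeping rather than a serious obstacle; the only point requiring care is to verify that all three restriction operations $r_h$ (on walks, on coalescent-walk processes, and on rooted permutations) extract exactly the same amount of information, so that the commutation identities are well-typed equalities. Once that is clarified, both statements reduce to the observation that both $\wcp$ and $\cpbp$ are defined ``locally in time'': the output data indexed by a pair $(t,s) \in I^2$ depends only on the input data in a window determined by $t$ and $s$, never by indices outside this window.
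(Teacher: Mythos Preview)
Your proposal is correct and takes exactly the same approach as the paper: the paper's entire proof reads ``This is immediate since by definition, $r_h(\wcp(W)) = \wcp(r_h(W))$ and $r_h(\cpbp(Z)) = \cpbp(r_h(Z))$,'' and you have simply spelled out in detail why these commutation identities hold. Your elaboration is accurate and the reasoning is sound.
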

\begin{proof}
	This is immediate since by definition, $r_h(\wcp(W)) = \wcp(r_h(W))$ and $r_h(\cpbp(Z)) = \cpbp(r_h(Z))$.
\end{proof}
\begin{prop} 
	\label{prop:continuty_of_Inverse}
	The mapping $\Inverse: \widetilde \Walks_\bullet \to \widetilde \Maps_\bullet$ is almost surely continuous at $\overline{\bm W}$.
\end{prop}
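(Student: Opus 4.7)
The plan is to establish the following stronger statement: almost surely, for every integer $h \geq 1$, there exists a random finite $H = H(h, \overline{\bm W})$ such that for every $W \in \widetilde \Walks_\bullet$ satisfying $r_H(W) = r_H(\overline{\bm W})$, the balls of radius $h$ centered at the tail of the root-edge coincide in $\Inverse(W)$ and $\Inverse(\overline{\bm W})$. Continuity of $\Inverse$ at $\overline{\bm W}$ then follows from the definition of the local topology on $\widetilde \Maps_\bullet$.

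First, I would invoke \cref{prop:inf_map_property}, which guarantees that almost surely $\overline{\bm m} = \Inverse(\overline{\bm W})$ is a locally finite infinite map of the plane. Consequently, for any fixed $h$, the finite submap $B_{h+1}(\overline{\bm m})$ is a finite union of finite polygonal faces; in particular, only finitely many edges (hence finitely many integer labels) appear in it.

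Second, I would use the finite-volume description in \cref{prop:inverse}: for every finite subinterval $[j,k] \subset \Z$, the map $\Inverse(\overline{\bm W}|_{[j,k]})$ is the submap of $\overline{\bm m}$ obtained by keeping the edges labeled in $[j,k]$, the faces that have labeled edges on both their left and right boundaries (the \emph{explored} faces), and the incident \emph{unexplored} edges. The key consequence is that a face $f$ of $\overline{\bm m}$ is fully recovered (as a polygon with its gluings to other explored faces) in $\Inverse(\overline{\bm W}|_{[-H,H]})$ as soon as its leftmost and rightmost edges carry labels in $[-H,H]$.

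Third, I would choose $H$ large enough that: \textbf{(i)} every face of $B_{h+1}(\overline{\bm m})$ is an explored face of $\Inverse(\overline{\bm W}|_{[-H,H]})$, and \textbf{(ii)} every edge of $B_{h+1}(\overline{\bm m})$ has a label in $[-H,H]$. Such an $H$ is finite almost surely by the first step. Then for any walk $W$ with $r_H(W) = r_H(\overline{\bm W})$, the construction of $\Inverse$ depends only on $r_H$, so $\Inverse(W|_{[-H,H]}) = \Inverse(\overline{\bm W}|_{[-H,H]})$, and both finite submaps embed canonically into $\Inverse(W)$ and $\Inverse(\overline{\bm W})$ respectively. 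Condition \textbf{(i)} forces $B_h(\Inverse(\overline{\bm W}))$ to lie inside this common embedded submap, and the safety margin provided by radius $h+1$ in condition \textbf{(ii)} ensures that no edge of $\Inverse(W)$ outside the embedded copy can be at graph-distance less than $h$ from the tail of the root-edge. Hence $B_h(\Inverse(W)) = B_h(\Inverse(\overline{\bm W}))$, as desired.

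The main obstacle is the control of graph-distances when passing from the finite submap $\Inverse(\overline{\bm W}|_{[-H,H]})$ to the infinite map $\Inverse(W)$: a priori, distances could shrink because of shortcuts through faces that are unexplored at level $H$ but present in $\Inverse(W)$. Ruling this out rigorously is where the safety margin (working with $B_{h+1}$ rather than just $B_h$) and the almost sure local finiteness of $\overline{\bm m}$ come together, and it is the one step that requires genuine care rather than a formal manipulation.
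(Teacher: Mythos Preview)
Your approach is essentially the same as the paper's: invoke \cref{prop:inf_map_property} to get local finiteness of $\overline{\bm m}$, use finiteness of $B_h(\overline{\bm m})$ together with $\overline{\bm m} = \bigcup_n \Inverse(\overline{\bm W}|_{[-n,n]})$ to find a large enough window, and conclude that walks agreeing on that window yield the same ball. The paper's proof is in fact briefer than yours and does not spell out the shortcut issue you raise; your safety-margin discussion (working with $B_{h+1}$ and requiring all incident edges to be explored) is a legitimate refinement of a step the paper leaves implicit.
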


\begin{proof}
	Assume we have a realization $\overline{W}$ of $\overline{\bm W}$ such that $\overline{ m} = \Inverse(\overline{ W})$ is a rooted infinite oriented planar map, in particular is locally finite (this holds for almost all realizations thanks to \cref{prop:inf_map_property}). Let $h > 0$. The ball $B_h(\overline{m})$ is a finite subset of $\overline{m}$. Since $\overline{m} = \bigcup_n \Inverse(\overline{W}|_{[-n,n]})$, there must be $n$ such that $\Inverse(\overline{W}|_{[-n,n]}) \supset B_h(\overline{m})$. As a result, for all $W'\in\widetilde \Walks_\bullet$ such that $d(W',\overline{W})<2^{-n}$, then $B_h(\Inverse(W'))= B_h(\Inverse(\overline{ W}))$. This shows a.s.\ continuity.
\end{proof}

\subsection{Local convergence for tandem walks, coalescent-walk processes, Baxter permutations and bipolar orientations}\label{sect:hbkfweibfoiwe}

We turn to the proof of \cref{thm:local} page \pageref{thm:local}. We will prove local convergence for walks and then transfer to the other objects by continuity of the mappings $\wcp,\cpbp,\Inverse$. Let us recall that thanks to \cref{prop:unif_law}, $\bm W_n$ is distributed like $\overline{\bm W}|_{[n]}$ under a suitable conditioning. This will allow us to prove the following lemmas.

\begin{lem}\label{lem:quenched_conv_walks1}
	Fix $h\in\Z_{>0}$ and $W \in \Walks([-h,h])\subset \Walks_\bullet^{2h+1}$.
	Fix $0<\varepsilon<1$. Then, uniformly for all $i$ such that $\lfloor n\varepsilon \rfloor +h < i< \lfloor (1-\varepsilon)n \rfloor-h$,
	\begin{equation}\label{eq:statement_uenched_conv_walks1}
		\mathbb{P}\left(
		r_{h}({\bm W}_n,i)=W
		\right)
		\to\mathbb{P}\left(
		r_{h}(\overline{\bm W})=W \right).
	\end{equation}
\end{lem}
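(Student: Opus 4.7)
\medskip

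\noindent\textbf{Proof plan.} The strategy is to realize $\bm W_n$ as a conditioning of the infinite walk $\overline{\bm W}$ via Proposition~\ref{prop:unif_law}, decompose the resulting conditional probability using the Markov property, and then evaluate the outcome using a local limit theorem for $\nu$-random walks confined to the quadrant $\mathbb{Z}_{\ge 0}^2$. Concretely, pick the representative $W^*$ of $W$ with $W^*(-h) = (0,0)$, set $\Delta := W^*(h)$, let
\[
x_W := \bigl(-\min_{t \in [-h,h]} W^*(t)_1,\ -\min_{t \in [-h,h]} W^*(t)_2\bigr) \in \mathbb{Z}_{\ge 0}^2,
\]
and note that $x_W$ is the smallest starting point from which the trajectory $W^*$ stays in the quadrant. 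Define
\[
\alpha(W) := \prod_{t=-h}^{h-1} \nu\!\bigl(W^*(t+1) - W^*(t)\bigr) = \mathbb{P}\bigl(r_h(\overline{\bm W}) = W\bigr).
\]
Let $\mathcal E_n := \{\overline{\bm W}_0 = \overline{\bm W}_{n+1} = (0,0),\ \overline{\bm W}_t \in \mathbb{Z}_{\ge 0}^2 \text{ for all } 0 \le t \le n+1\}$ be the excursion event of Proposition~\ref{prop:unif_law}, and let $A_i$ denote the event that the increments of $\overline{\bm W}$ at positions $i-h,\ldots,i+h-1$ coincide with those of $W^*$. Proposition~\ref{prop:unif_law} then gives $\mathbb{P}(r_h(\bm W_n, i) = W) = \mathbb{P}(A_i \mid \mathcal E_n)$.

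Next I will apply the Markov property of $\overline{\bm W}$ at times $i\pm h$ to the numerator and at time $i-h$ to the denominator. Writing $q_m(a, b) := \mathbb{P}(\overline{\bm W}_m = b,\ \overline{\bm W}_t \in \mathbb{Z}_{\ge 0}^2 \text{ for } 0 \le t \le m \mid \overline{\bm W}_0 = a)$ for the walk-in-cone transition probabilities, this yields
\[
\mathbb{P}(A_i \cap \mathcal E_n) = \alpha(W)\!\!\sum_{x \ge x_W}\!\! q_{i-h}(\mathbf{0}, x)\, q_{n+1-i-h}(x + \Delta, \mathbf{0}),
\qquad
\mathbb{P}(\mathcal E_n) = \sum_{x}q_{i-h}(\mathbf{0}, x)\, q_{n+1-i+h}(x, \mathbf{0}).
\]
Hence establishing \eqref{eq:statement_uenched_conv_walks1} reduces to proving that
\[
R_n(i) := \frac{\sum_{x \ge x_W} q_{i-h}(\mathbf{0}, x)\, q_{n+1-i-h}(x + \Delta, \mathbf{0})}{\sum_{x} q_{i-h}(\mathbf{0}, x)\, q_{n+1-i+h}(x, \mathbf{0})} \ \longrightarrow\ 1
\]
as $n \to \infty$, uniformly for $i \in (\lfloor n\varepsilon\rfloor + h,\ \lfloor(1-\varepsilon)n\rfloor - h)$.

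To evaluate $R_n(i)$, I plan to invoke a local limit theorem for $\nu$-walks in the quadrant, in the spirit of Denisov--Wachtel: since $\nu$ is centered, has exponential tails and the cone has angle $\pi/2$, there exist an exponent $p>0$, a positive harmonic function $V$ on the cone, and a bounded continuous density $g$ on its interior such that
\[
q_m(\mathbf{0},\lfloor u\sqrt m\rfloor) \sim c\, V(\mathbf{0})\, g(u)\, m^{-1-p/2}, \qquad m\to\infty,
\]
uniformly for $u$ in compact subsets of the open cone. The restriction on $i$ ensures that both $i-h$ and $n+1-i-h$ are of order at least $\varepsilon n$, so the mass of both sums concentrates on the macroscopic interior region $\|x\| = \Theta(\sqrt n)$ with $x$ at macroscopic distance from the boundary of the cone. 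In this region: (i) the constraint $x \ge x_W$ is automatic because $x_W$ is bounded; (ii) the bounded shift by $\Delta$ is asymptotically invisible, i.e. $q_{n+1-i-h}(x+\Delta, \mathbf{0}) = q_{n+1-i-h}(x, \mathbf{0})(1+o(1))$; and (iii) the length difference of $2h$ between $q_{n+1-i-h}$ and $q_{n+1-i+h}$ changes the LLT asymptotic only by a factor $1+O(1/n)$. Combining (i)--(iii) with a uniform LLT shows that numerator and denominator are both asymptotic to $\sum_{x} q_{i-h}(\mathbf{0}, x)\,q_{n+1-i-h}(x, \mathbf{0})$, so $R_n(i) = 1+o(1)$ uniformly in $i$, completing the proof.

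The principal obstacle will be making these asymptotics uniform both in $i$ and in the summation variable $x$. In particular I would need to supplement the pointwise LLT with a Gaussian-type tail bound $q_m(\mathbf{0},x) \lesssim m^{-1-p/2}\exp(-c\|x\|^2/m)$ for $\|x\| \gtrsim \sqrt m \log m$, in order to show that contributions from $x$ outside the typical scale or near the boundary of the cone are negligible. Both estimates should be accessible thanks to the exponential tails of $\nu$.
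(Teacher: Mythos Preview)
Your decomposition and the formula for $R_n(i)$ are correct, and the plan via a Denisov--Wachtel local limit theorem would work. However, the paper takes a different and somewhat slicker route (it actually only proves the companion Lemma~\ref{lem:quenched_conv_walks2} and says this one is ``similar and simpler''). Instead of splitting the walk at the $i$-dependent times $i\pm h$, the paper cuts at the \emph{fixed} times $\lfloor n\varepsilon\rfloor$ and $n+2-\lfloor n\varepsilon\rfloor$ and uses the absolute-continuity Lemma~\ref{lem:AbsCont} to write the conditioned walk on the middle interval as an unconditioned walk tilted by a density $\alpha_{n+2,\lfloor n\varepsilon\rfloor}^{0,0}(\inf \overline{\bm W},\,\overline{\bm W}_{\text{end}})$. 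The event $\{r_h(\cdot,i)=W\}$ now concerns only $2h$ increments of this unconditioned walk; removing them produces a walk $\overline{\bm S}$ independent of the event, and the effect on the $(\inf,\text{endpoint})$ pair is a bounded shift (Observation~\ref{obs:infofwalks}). One then uses Lemma~\ref{lem:LLT} (uniform convergence of $\alpha^{0,0}_{n,\lfloor n\varepsilon\rfloor}$ to the bounded continuous function $\alpha_\varepsilon$) together with Donsker's theorem and the identity $\E[\alpha_\varepsilon(\inf\conti W,\conti W_{1-2\varepsilon})]=1$ from Proposition~\ref{prop:brown_ex} to conclude.

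The practical difference is exactly the obstacle you flag at the end: in your approach the split point moves with $i$, so the LLT has to be applied at two time scales $i-h$ and $n+1-i\mp h$ simultaneously and uniformly over $i\in(\varepsilon n,(1-\varepsilon)n)$, together with tail bounds to localize the sum over $x$. The paper's packaging absorbs all of this into the pre-proved Lemmas~\ref{lem:AbsCont}--\ref{lem:LLT}, so that the only $i$-dependence left is in a bounded perturbation of the argument of a continuous function, which gives the uniformity for free. Your argument is not wrong, just closer to reproving those appendix lemmas by hand.
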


\begin{lem}\label{lem:quenched_conv_walks2}
	Fix $h\in\Z_{>0}$ and $W \in \Walks([-h,h])\subset \Walks_\bullet^{2h+1}$.
	Fix $0<\varepsilon<1$. Then, uniformly for all $i,j$ such that $\lfloor n\varepsilon \rfloor +h < i,j< \lfloor (1-\varepsilon)n \rfloor-h$ and $|i-j|>2h$,
	\begin{equation}\label{eq:statement_uenched_conv_walks2}
		\mathbb{P}\left(
		r_{h}({\bm W}_n,i)=r_{h}({\bm W}_n,j)=W
		\right)
		\to\mathbb{P}\left(
		r_{h}(\overline{\bm W})=W \right)^2.
	\end{equation}
\end{lem}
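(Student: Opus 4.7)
The strategy mirrors that of \cref{lem:quenched_conv_walks1}, factoring the joint event over the two disjoint windows via an extra application of the Markov property. Assume without loss of generality $i<j$ and set $a_1 = i-h$, $b_1 = i+h$, $a_2 = j-h$, $b_2 = j+h$. The hypothesis $|i-j|>2h$ guarantees $b_1<a_2$, so the windows $[a_1,b_1]$ and $[a_2,b_2]$ are disjoint, while $\lfloor n\varepsilon\rfloor + h < i,j < \lfloor(1-\varepsilon)n\rfloor-h$ forces the three ``free'' segments $[0,a_1]$, $[b_1,a_2]$, $[b_2,n+1]$ to all have length of order $n$. By \cref{prop:unif_law}, $\bm W_n$ is distributed as $(\overline{\bm W}_t)_{1\leq t\leq n}$ conditioned on the excursion event $E_n = \{(\overline{\bm W}_t)_{0\leq t\leq n+1}\in\Walks_{A,\exc}^{n+2}\}$, and the event $\{r_h(\bm W_n,i) = W\}$ simply prescribes the increments $(\overline{\bm W}_{t+1}-\overline{\bm W}_t)$ for $t\in[a_1,b_1-1]$ to coincide with those of $W$ (and similarly at $j$).

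Applying the Markov property of $\overline{\bm W}$ at the four boundary times $a_1,b_1,a_2,b_2$, and writing $\Delta_W$ for the total displacement of $W$ and $p_W$ for the $\nu$-probability of its step sequence, one obtains
\begin{equation*}
\mathbb{P}\bigl(r_h(\bm W_n,i)=r_h(\bm W_n,j)=W\bigr) = \frac{p_W^2}{Z_n}\sum_{v_1,v_2}\mathfrak{q}_{a_1}(0,v_1)\,\mathfrak{q}_{a_2-b_1}(v_1+\Delta_W,v_2)\,\mathfrak{q}_{n+1-b_2}(v_2+\Delta_W,0),
\end{equation*}
where $\mathfrak{q}_m(v,v')$ denotes the probability that $\overline{\bm W}$ goes from $v$ to $v'$ in $m$ steps while staying in $\Z_{\geq 0}^2$, and $Z_n=\mathbb{P}(E_n)$. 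Strictly speaking, the factor $p_W$ at each window implicitly requires the intermediate walk positions inside the window to remain in the quadrant; for $v_1,v_2$ of order $\sqrt{n}$ this costs only a negligible correction that can be absorbed into the $o(1)$, thanks to the uniform tail bound on $\nu$ and the bounded window size.

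The key analytic input, which is precisely the one underlying the proof of \cref{lem:quenched_conv_walks1}, is a local limit theorem of Denisov--Wachtel type for two-dimensional random walks of step law $\nu$ conditioned to stay in the non-negative quadrant. It yields, uniformly on the scale $v,v'=O(\sqrt{m})$, an asymptotic of the form $\mathfrak{q}_m(v,v')\sim C\, m^{-\alpha} V(v)V'(v')\psi(v/\sqrt{m},v'/\sqrt{m})$, together with $Z_n\sim C' n^{-\alpha}$, for the appropriate exponent $\alpha$ and harmonic-type functions $V,V',\psi$ attached to the cone. Substituting these estimates into the displayed identity, the double sum over $v_1,v_2$ turns into a Riemann sum converging to a product of integrals over the scaling-limit density of the conditioned cone walk. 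Comparing with the analogous one-window Riemann-sum computation that underlies \cref{lem:quenched_conv_walks1}, the joint probability factorizes asymptotically as the square of the single-window probability, giving $\mathbb{P}(r_h(\bm W_n,i)=r_h(\bm W_n,j)=W)\to \Lambda_W^2$ uniformly in $i,j$, with $\Lambda_W=\mathbb{P}(r_h(\overline{\bm W})=W)$. The only genuine obstacle is carrying the uniformity of the Denisov--Wachtel estimate through a double (rather than single) Riemann-sum approximation, but this is of the exact same nature as the uniformity already needed for \cref{lem:quenched_conv_walks1}, so the extension amounts essentially to bookkeeping.
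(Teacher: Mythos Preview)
Your decomposition has a genuine gap. You assert that the three free segments $[0,a_1]$, $[b_1,a_2]$, $[b_2,n+1]$ all have length of order $n$, but this is false for the middle one: the hypotheses only give $a_2-b_1=j-i-2h>0$, which can be as small as $1$. When $j-i$ is bounded, the Denisov--Wachtel asymptotic you invoke for $\mathfrak q_{a_2-b_1}(v_1+\Delta_W,v_2)$ is simply unavailable, and the Riemann-sum factorization into two independent integrals breaks down. Uniformity over all admissible $i,j$ is precisely the content of the lemma, so the short-gap regime cannot be dismissed. A fix would require a separate treatment of this regime (or a different splitting), and it is not the ``bookkeeping'' you suggest.

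The paper's proof avoids this difficulty by a different route. Rather than applying the Markov property at the four window boundaries, it peels off only the first and last $\lfloor n\varepsilon\rfloor$ steps via the absolute-continuity \cref{lem:AbsCont}, so that on the remaining stretch of length $\approx n(1-2\varepsilon)$ the walk is an \emph{unconditioned} random walk tilted by the explicit density $\alpha^{0,0}_{n+2,\lfloor n\varepsilon\rfloor}$. On this unconditioned walk, prescribing the increments in the two disjoint windows is an event $\widetilde E$ with probability \emph{exactly} $p_W^2=\mathbb P(r_h(\overline{\bm W})=W)^2$, independent of $i,j$. The only remaining work is to show that the density factor, after excising the two windows (which perturbs the infimum and endpoint by bounded amounts, cf.\ \cref{obs:infofwalks}), converges to $1$; this follows from \cref{lem:LLT} and \cref{prop:brown_ex}. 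The point is that in the paper's scheme there is no middle factor whose time-scale varies with $|i-j|$, so uniformity comes for free.
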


We just prove the second lemma, the proof of the first one is similar and simpler. The proof of \cref{lem:quenched_conv_walks2} contains the key idea for establishing local convergence for tandem walks, that is to use some absolute continuity results between a conditioned walk away of its starting and ending points and an unconditioned one. These results were established in \cite{MR3342657,duraj2015invariance,bousquet2019plane} and they are collected in \cref{sec:appendix} since they will be also used later in \cref{sect:baxperm}.

For the proof of \cref{lem:quenched_conv_walks2} we need the following observation.
In what follows, if $W = (X,Y)$ is a two-dimensional walk, then $\inf W = (\inf X, \inf Y)$.

\begin{obs}\label{obs:infofwalks}
	Let $x=(x_i)_{i\in[0,n]}=(\sum_{j=1}^i y_j)_{i\in[n]}$ be a one-dimensional deterministic walk starting at zero of size $n$, i.e.\ $y_j\in\Z$ for all $j\in [n]$. Let $h$ be much smaller than $n$ and consider a second deterministic one-dimensional walk $x'=(x'_i)_{i\in[0,h]}=(\sum_{j=1}^i y'_j)_{i\in[0,h]}$. Fix also $k,\ell$ such that $0\leq k<\ell\leq n$ and consider the walk $x''=(x''_i)_{i\in[0,n+2h]}$ obtained by inserting two copies of the walk $x'$ in the walk $x$ at time $k$ and $\ell$. That is, for all $i\in[0,n+2h]$,
	\begin{equation}
		x''_i= \sum_{j=1}^k y_j\cdot\mathds{1}_{j\leq i}+\sum_{j=1}^h y'_j\cdot\mathds{1}_{j+k\leq i}+\sum_{j=k+1}^\ell y_j\cdot\mathds{1}_{j+h\leq i}+\sum_{j=1}^h y'_j\cdot\mathds{1}_{j+\ell+h\leq i}+\sum_{j=\ell+1}^n y_j\cdot\mathds{1}_{j+2h\leq i}.
	\end{equation}
	Then 
	$$\inf_{i\in[0,n+2h]}\{x''_i\}=\inf_{i\in[0,n]}\{x_i\}+\Delta,$$
	where $\Delta=\Delta(x,k,\ell,x')\in\R^2$ and it is bounded by twice the total variation of $x'$.
\end{obs}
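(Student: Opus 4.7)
The plan is to decompose $x''$ into the five natural segments induced by the two insertions, and on each segment to express $x''_i$ as $x_{j(i)} + \delta_i$ for some index $j(i)\in[0,n]$ and some shift $\delta_i\in\R$ whose absolute value will be controlled by twice the total variation $\mathrm{TV}(x') \coloneqq \sum_{j=1}^h |y'_j|$ of $x'$. Writing $s \coloneqq x'_h = \sum_{j=1}^h y'_j$ for the total increment of $x'$, I will check by a direct unpacking of the summation defining $x''_i$ that the shift $\delta_i$ equals $0$ on $[0,k]$, equals $x'_{i-k}$ on $[k,k+h]$, equals $s$ on $[k+h,\ell+h]$, equals $s + x'_{i-\ell-h}$ on $[\ell+h,\ell+2h]$, and equals $2s$ on $[\ell+2h,n+2h]$ (with $j(i)$ being respectively $i$, $k$, $i-h$, $\ell$, $i-2h$). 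Since $|s|$ and $|x'_m|$ are both at most $\mathrm{TV}(x')$, each of these shifts is bounded in absolute value by $2\,\mathrm{TV}(x')$.

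Once this deterministic decomposition is in place, the two one-sided inequalities yielding $|\Delta|\le 2\,\mathrm{TV}(x')$ follow immediately. For the upper bound $\inf x'' \le \inf x + 2\,\mathrm{TV}(x')$, I will pick a minimizer $i^\star$ of $x$ on $[0,n]$ and evaluate $x''$ at the corresponding shifted index $i^\star$, $i^\star+h$ or $i^\star+2h$ (depending on whether $i^\star$ lies in $[0,k]$, $[k,\ell]$ or $[\ell,n]$), producing a value of the form $x_{i^\star} + \delta$ with $|\delta|\le 2\,\mathrm{TV}(x')$. For the matching lower bound $\inf x'' \ge \inf x - 2\,\mathrm{TV}(x')$, any index $j^\star\in[0,n+2h]$ lies in one of the five segments and therefore satisfies $x''_{j^\star} = x_{j(j^\star)} + \delta_{j^\star} \ge \inf x - 2\,\mathrm{TV}(x')$; taking the infimum over $j^\star$ yields the claim with $\Delta \coloneqq \inf x'' - \inf x$.

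The argument is pure deterministic bookkeeping, so I do not anticipate any real obstacle. The only mildly subtle point is the accounting on the segment $[\ell+h,\ell+2h]$, where the first inserted copy of $x'$ has already contributed its full final value $s$ while the second copy is mid-traversal at running value $x'_{i-\ell-h}$; the combined shift $s + x'_{i-\ell-h}$ therefore reaches magnitude up to $2\,\mathrm{TV}(x')$, and it is precisely this segment that forces the factor $2$ in the stated bound.
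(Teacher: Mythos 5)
Your proof is correct: the segment-by-segment identification $x''_i=x_{j(i)}+\delta_i$ with $\delta_i\in\{0,\;x'_{i-k},\;s,\;s+x'_{i-\ell-h},\;2s\}$ (where $s=x'_h$) is exactly the elementary bookkeeping the paper leaves implicit, since the observation is stated there without any proof, and your two one-sided estimates give $|\Delta|\le 2\,\mathrm{TV}(x')$ as claimed. The only point worth noting is that the paper applies the observation coordinate-wise to two-dimensional walks (whence the ``$\Delta\in\R^2$'' in the statement), and your one-dimensional argument transfers verbatim to each coordinate.
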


\begin{proof}[Proof of \cref{lem:quenched_conv_walks2}]
	Set $E\coloneqq\left\{r_{h}(\overline{\bm W}|_{[n]},i)=r_{h}(\overline{\bm W}|_{[n]},j)=W\right\}$. By \cref{prop:unif_law}, the left-hand side of \cref{eq:statement_uenched_conv_walks2} can be rewritten as $\mathbb{P}\left(E \middle|(\overline{\bm W}_t)_{0\leq t\leq {n+1}}  \in \Walks_{A,\exc}^{n+2} \right)$. Using \cref{lem:AbsCont}, we have that
	\begin{multline}\label{eq:first_rewriting}
		\mathbb{P}\left(E \middle|(\overline{\bm W}_t)_{0\leq t\leq {n+1}}  \in \Walks_{A,\exc}^{n+2} \right)
		=\E\left[\mathds{1}_{\widetilde{E}}\cdot 
		\alpha_{n+2,\lfloor n\varepsilon \rfloor}^{0,0}\left(\inf_{0\leq k \leq n+2-2\lfloor n\varepsilon \rfloor} \overline{\bm W}_k\;,\;\overline{\bm W}_{n+2-2\lfloor n\varepsilon \rfloor}\right)
		\right],
	\end{multline} 
	where  $\alpha_{n+2,\lfloor n\varepsilon \rfloor}^{0,0}(a,b)$ is a function defined in \cref{eq:alpha_tilting_function} page \pageref{eq:alpha_tilting_function} and 
	$$\widetilde{E}\coloneqq\left\{r_{h}(\overline{\bm W}|_{[n]},i-\lfloor n\varepsilon \rfloor)=r_{h}(\overline{\bm W}|_{[n]},j-\lfloor n\varepsilon \rfloor)=W\right\}.$$
	From \cref{obs:infofwalks}, conditioning on $\widetilde{E}$, we have that
	\begin{multline}\label{eq:equality_walks}
		\left(\inf_{0\leq k \leq n+2-2\lfloor n\varepsilon \rfloor} \overline{\bm W}_k\;,\;
		\overline{\bm W}_{n+2-2\lfloor n\varepsilon \rfloor}
		\right)\\
		=\left(\inf_{0\leq k \leq n+2-2\lfloor n\varepsilon \rfloor-2(2h+1)} \overline{\bm S}_k+  \bm\Delta\;,\;\overline{\bm S}_{n+2-2\lfloor n\varepsilon \rfloor-2(2h+1)}+2\delta\right),
	\end{multline}
	where $\overline{\bm S}=(\overline{\bm S}_t)_{t\in \Z}$ is the walk obtained from $(\overline{\bm W}_t)_{t\in \Z}$ removing the $2h+1$ steps around $i-\lfloor n\varepsilon \rfloor$ and $j-\lfloor n\varepsilon \rfloor$ , $\delta = W_h - W_{-h}$ and $\bm \Delta$ is a deterministic function of $(\overline{\bm S}_t)_{t\in \Z}$, $i$, $j$ and $W$, bounded by twice the total variation of $W$.
	Using the relation in \cref{eq:equality_walks} we can rewrite the right-hand side of \cref{eq:first_rewriting} as
	\begin{equation}
		\mathbb{P}(\widetilde{E})
		\cdot
		\E\left[\alpha_{n+2,\lfloor n\varepsilon \rfloor}^{0,0}\left(\inf_{0\leq k \leq n-2\lfloor n\varepsilon \rfloor-2(2h+1)} \overline{\bm S}_k+ \bm \Delta\;,\;\overline{\bm S}_{n-2\lfloor n\varepsilon \rfloor-2(2h+1)}+2\delta\right)
		\right],
	\end{equation}
	where we used the independence between $\widetilde{E}$ and the right-hand side of \cref{eq:equality_walks}. Note that $\P(\widetilde{E})=
	\mathbb{P}\left(r_{h}(\overline{\bm W})=W \right)^2$ since  $|i-j|>2h$ by assumption.
	We now show that 
	the second factor of the equation above converges to 1 uniformly for all $i$,$j$.
	Set for simplicity of notation 
	\begin{equation}
		f(\overline{\bm S})=\left(\inf_{0\leq k \leq n-2\lfloor n\varepsilon \rfloor-2(2h+1)} \overline{\bm S}_k+ \bm \Delta\;,\;\overline{\bm S}_{n-2\lfloor n\varepsilon \rfloor-2(2h+1)}+2\delta\right).
	\end{equation}
	By \cref{lem:LLT} we have
	$$\sup_{i,j}\Bigg|\E\left[\alpha_{n+2,\lfloor n\varepsilon \rfloor}^{0,0}\left(f(\overline{\bm S})\right)\right]-\E\left[\alpha_\eps\left(g(\overline{\bm S})\right)\right]\Bigg|\to0,$$
	where $\alpha_\eps(\cdot)$ is defined in \cref{eq:fuction_alpha} and
	$$g(\overline{\bm S})=\left(\frac 1{\sqrt { n}}\left(\inf_{0\leq k \leq n-2\lfloor n\varepsilon \rfloor-2(2h+1)} \overline{\bm S}_k\right) + \frac {\bm\Delta}{\sqrt{ n}}\;,\;\frac 1{\sqrt { n}}\overline{\bm S}_{n-2\lfloor n\varepsilon \rfloor-2(2h+1)} + \frac {2\delta}{\sqrt{ n}}\right).$$
	Therefore, in order to conclude, it is enough to show that $\E\left[\alpha_\eps\left(g(\overline{\bm S})\right)\right]\to1$. 
	
	We have that $\E\left[\alpha_\eps\left(g(\overline{\bm S})\right)\right]\to\E\left[\alpha_\eps
	\left(g( {\conti W})\right)\right]$,
	where
	$\conti W = (\conti X,\conti Y)$ is a standard two-dimensional Brownian motion of correlation $-1/2$. This follows from the fact that $\bm \Delta$ is bounded, that $\alpha_\eps$ is a continuous and bounded function (see \cref{lem:LLT}), and that 
	$$\left(\frac{1}{\sqrt n}\overline{\bm S}_{\lfloor nt\rfloor}\right)_{t\in[0,1]}\stackrel{d}{\longrightarrow}\left(\bm{\conti W}_t\right)_{t\in[0,1]}.$$
	The latter claim is a consequence of Donsker's theorem and the basic computation $\Var(\nu) = \begin{psmallmatrix}2 &-1 \\ -1 &2 \end{psmallmatrix}$. In addition, we have that $\E\left[\alpha_\eps
	\left(g( {\conti W})\right)\right]=1$ by \cref{prop:brown_ex} (used with $h=1$), and so we can conclude the proof. 
\end{proof}

We can now prove the main result of this section, i.e.\ the joint quenched local limit result presented in the introduction.

\begin{proof}[Proof of \cref{thm:local}]
	We start by proving that
	\begin{equation}\label{eq:firststepintheproof}
		\mathcal{L}aw\Big((\bm W_n, \bm i_n)\Big|\bm W_n\Big)\stackrel{P}{\longrightarrow}\mathcal{L}aw\left(\overline{\bm W}\right).
	\end{equation}
	For that it is enough (recall \cref{strongbsconditions} and \cref{detstrongbsconditions} for a similar argument in the case of permutations) to show that, for any $h\geq 1$ and fixed finite rooted walk $W\in \Walks([-h,h]) \subset \Walks_\bullet$,
	\begin{equation}\label{eq:goaloftheproof3}
		\mathbb{P}\left(r_{h}(\bm W_n,\bm i_n)=W
		\mid
		\bm W_n\right)
		\stackrel{P}{\longrightarrow}\mathbb{P}(r_{h}(\overline{\bm W})=W).
	\end{equation}
	Note that 
	\begin{align}
		\mathbb{P}\left(r_{h}(\bm W_n,\bm i_n)=W
		\mid
		\bm W_n\right)
		&=\frac{\#\left\{j\in[n] : 
			r_h(\bm W_n,j)=W)\right\}}{n}
		=\frac 1 n \sum_{j\in [n]}\mathds{1}_{\left\{r_{h}(\bm W_n,j)=W)\right\}}.
	\end{align}
	We use the second moment method to prove that this sum converges in probability to $\mathbb{P}(r_{h}(\overline{\bm W})=W)$.
	We first compute the first moment:
	\begin{equation}
		\E\left[\frac 1 n \sum_{j\in [n]}\mathds{1}_{\left\{r_{h}({\bm W_n},j)=W)\right\}}\right]=\frac 1 n \sum_{j\in [n]}\mathbb{P}\left(r_{h}({\bm W_n},j)=W\right)
		\to\mathbb{P}\left(r_{h}(\overline{\bm W})=W\right),
	\end{equation}
	where for the limit we used \cref{lem:quenched_conv_walks1}.
	We now compute the second moment:
	\begin{multline}
		\E\left[\left(\frac 1 n \sum_{j\in [n]}\mathds{1}_{\left\{r_{h}(({\overline{\bm W}}_t)_{t\in [n]},j)=W)\right\}}\right)^2\;\right]\\
		=\frac{1}{n^2}\sum_{i,j\in [n], {|i-j|>2h}}\P\left(r_{h}({\bm W_n},i)=r_{h}({\bm W_n},j)=W\right) + O(1/n).
	\end{multline}
	This converges to $\mathbb{P}(r_{h}(\overline{\bm W})=W)^2$ by \cref{lem:quenched_conv_walks2}.
	The computations of the first and second moment, together with Chebyshev's inequality lead to the proof of \cref{eq:goaloftheproof3} and so to the quenched convergence of walks.
	
	Now to extend the result to other objects, we will use continuity of the mappings $\wcp,\cpbp,\Inverse$ (see \cref{prop:continuty_of_cpbp} and \cref{prop:continuty_of_Inverse}). 
	Using a combination of the results stated in \cite[Theorem 4.11, Lemma 4.12]{kallenberg2017random}, see also \cref{footnote:KAL} page \pageref{footnote:KAL}, we have that \cref{eq:firststepintheproof} implies the following convergence
	\begin{equation}
		\mathcal{L}aw\Big(\big((\bm W_n, \bm i_n), (\wcp(\bm W_n), \bm i_n), (\cpbp(\bm W_n), \bm i_n), (\Inverse(\bm W_n), \bm i_n)\big)\Big| \mathfrak{B}_n\Big)\stackrel{P}{\longrightarrow}\mathcal{L}aw\left(\overline{\bm W},\overline{\bm Z},\overline{\bm \sigma},\overline{\bm m}\right),
	\end{equation}
	where we recall that $\mathfrak{B}_n\coloneqq\sigma(\bm W_n)=\sigma(\bm Z_n)=\sigma(\bm \sigma_n)=\sigma(\bm m_n).$ This ends the proof.
\end{proof}

\section{Open problems}

\begin{itemize}
	
	\item We have seen concentration phenomena for consecutive patterns in many pattern-avoiding permutations and we have also seen that this is not the case for square permutations. A natural open question is the following one: Under what conditions a random pattern-avoiding permutation shows a concentration phenomenon for the proportion of consecutive patterns?
	
	Recall also that Janson \cite[Remark 1.1]{janson2018patterns} raised a similar question for classical patterns. 
	
	\item We proved that uniform 321-avoiding permutations converge in the quenched B--S sense (see \cref{thm_2} and \cref{321corol}). Recently,  Hoffman, Rizzolo and Slivken \cite{hoffman2019dyson} showed that uniform permutations avoiding any monotone pattern converge (after a proper rescaling) to the traceless Dyson Brownian bridge. Building on this new result, we believe that it is possible to generalize the proof of \cref{thm_2} for all uniform $\rho$-avoiding permutation when $\rho$ is an increasing or decreasing permutation of size greater than three.
	
	\item The central limit theorem for consecutive patterns of permutations encoded by generating trees should hold under weaker conditions. In particular, we believe that \cref{ass1} in \cref{thm:main_thm_CLT}, i.e.\ that the generating tree has one-dimensional labels, can be relaxed. Our assumptions reduce the problem on permutations to the study of one-dimensional random walks conditioned to be positive. Relaxing \cref{ass1} to multi-dimensional labels, leads to the study of \emph{multi-dimensional random walks conditioned to stay in a cone}. We believe that the work of Denisov and Wachtel~\cite{MR3342657} furnishes some useful results to investigate this problem, specifically to prove an analogue of the results proved in \cref{sect:CTL_increments} for random walks in cones. Recall also that in \cref{sect:hbkfweibfoiwe} we proved a law of large numbers for consecutive increments of tandem walks. The goal would be to establish a central limit theorem for consecutive increments of a larger class of random walks in cones.
	
	A motivation for studying this more general framework is given by that fact that several families of permutations have been enumerated using multi-dimensional generating trees, for instance Baxter permutations (see \cite{bousquet2003four}). In \cref{sec:local}, we
	used as a key tool to study consecutive patterns in Baxter permutations the Kenyon-Miller-Sheffield-Wilson bijection between Baxter permutations and tandem walks, since the latter walks are easier to study than the walks given by the generating tree approach. Nevertheless, the generating tree approach would give a more standard tool to encode permutations with walks in cones that can be used in different models, such as strong-Baxter and semi-Baxter permutations (see \cite{MR3882946}). We will discuss this point with more details in \cref{sect:univ}.
	
	\item Another open problem related with the central limit theorem for consecutive patterns of permutations encoded by generating trees is the following one: It is not evident to us that the expressions of the variances $\gamma_{\pi}^2$ in \cref{thm:main_thm_CLT} satisfy $\gamma_{\pi}^2>0$. It would be interesting to investigate this problem. Moreover, given a specific family of permutation $\mathcal{C}$, it would be interesting to explicitly compute the expressions for $\mu_\pi$, for all $\pi\in\mathcal{C}$, as done  in Theorems \ref{thm_1} and \ref{thm_2} for instance for the families $\Av(123)$ and $\Av(132)$.
\end{itemize}
    \chapter{Phase transition for square and almost square permutations, fluctuations, \& generalizations to other models}\label{chp:square}
\chaptermark{Square and almost square permutations}

\begin{adjustwidth}{8em}{0pt}
	\emph{In which we investigate the occurrence of a phase transition on the limiting permuton for square and almost square permutations when the number of internal points increases. For square permutations the limiting shape is a random rectangle: we further study the fluctuations of the dots of the diagram around the four edges of the rectangle. Finally, we extend our techniques to study 321-avoiding permutations with a fixed number of internal points.}
\end{adjustwidth}

\bigskip

\bigskip

\bigskip

\bigskip

\bigskip

\begin{figure}[h]
	\centering
	\includegraphics[scale=0.286]{Square_2_2000}
	\includegraphics[scale=0.286]{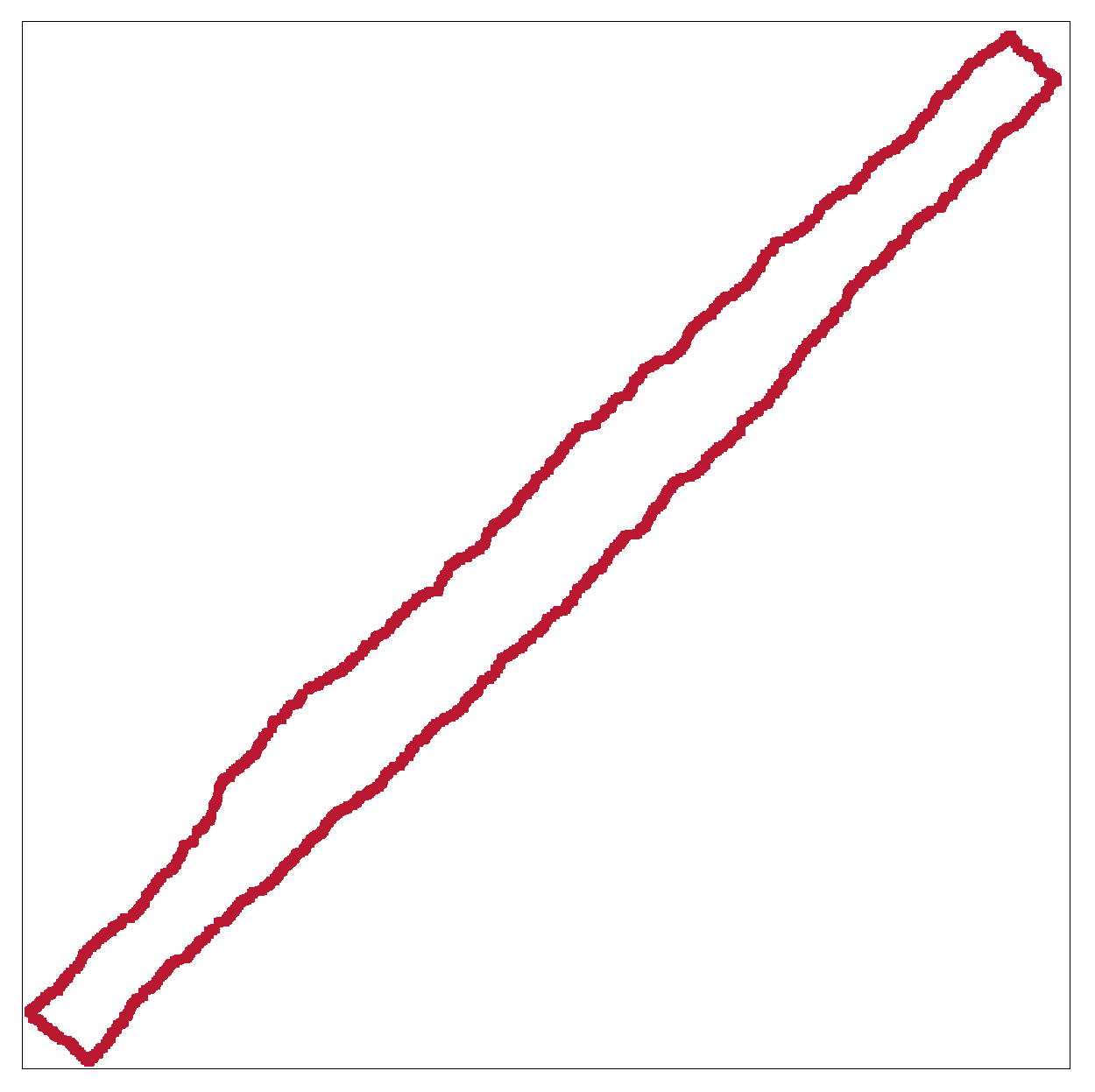}
	\includegraphics[scale=0.286]{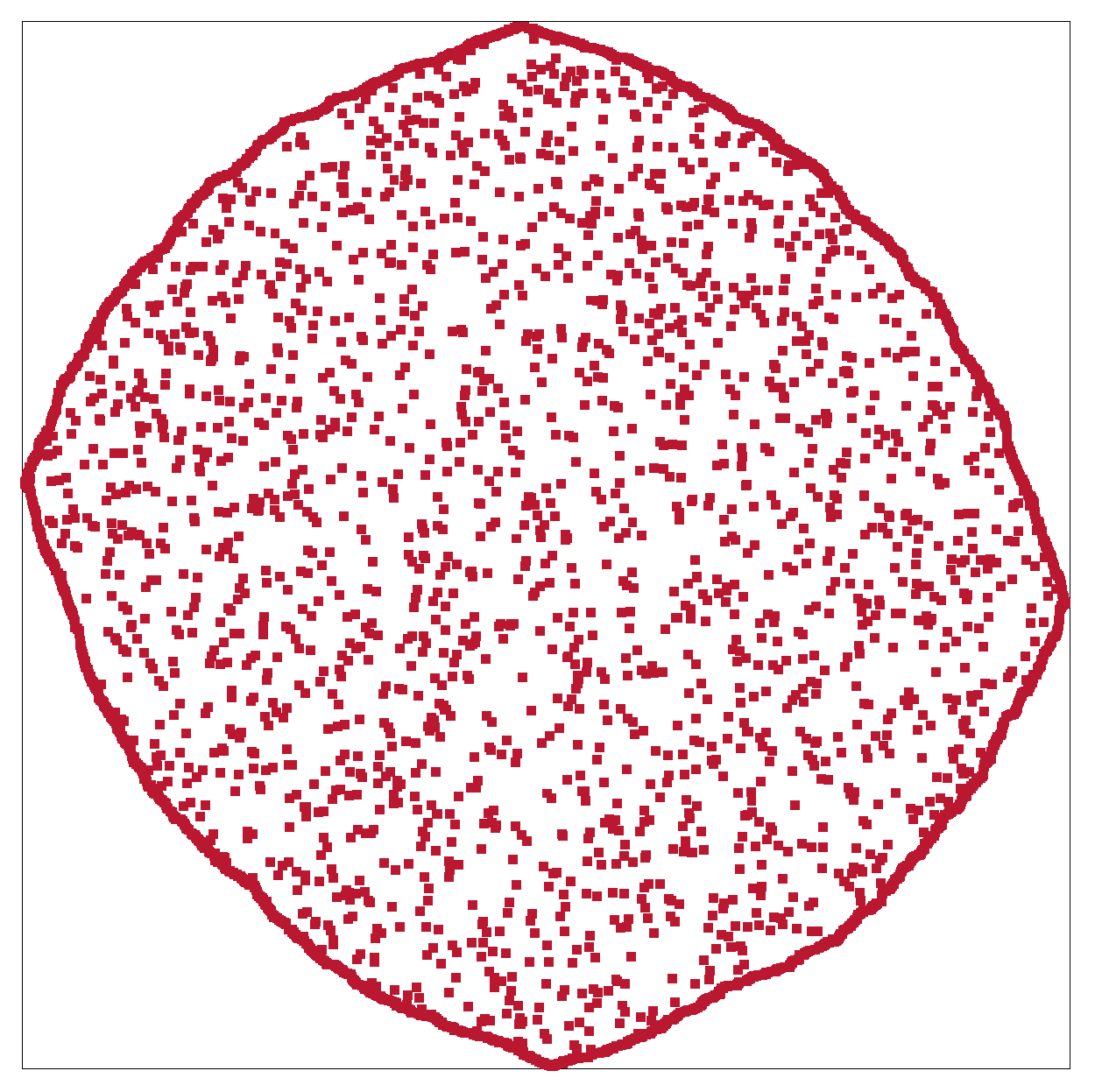}
	\includegraphics[scale=0.286]{circ_1_5_2}
	\caption{\textbf{Left}: Two large uniform square permutations. \textbf{Right}: Two large uniform almost square permutations; the first one has half of the points that are external, the second one a fifth. \label{fig:phase_trans}}
\end{figure}

\newpage

\section{Overview of the main results}
\subsection{The phase transition}

We first investigate the limiting permutons for uniform square or almost square permutations. In order to state our result we first need some definitions.

\begin{defn}\label{defn:square_perm}
	For $z\in(0,1)$, let $\mu^{z}$ denote the (deterministic) permuton given by the Lebesgue measure of total mass 1 supported on a rectangle in $[0,1]^2$ with corners at $(z,0), (0,z),(1-z,1)$ and $(1,1-z)$. 
\end{defn}

For two examples see the left-hand side of \cref{fig:circ_perm}. Fix now $k\in\Z_{\geq 0}$.  Let $\zz^{(k)}$ denote a random variable supported on $(0,1)$ with density
$$f_{\zz^{(k)}}(t) = (2k+1){2k \choose k} (t(1-t))^k,$$
that is, $\zz^{(k)}$ is a Beta random variable with parameters $(k+1,k+1)$. Note that for $k=0$, $\zz^{(0)}$ is a uniform random variable in $(0,1)$. We will consider the random permutons $\mu^{\zz^{(k)}}$ for all $k\in\Z_{\geq 0}$.

\medskip

We introduce a second family of permutons. For convenience, we suggest to look at the first picture in the right-hand side of \cref{fig:circ_perm} despite the definition covers also the cases of the other two pictures.

\begin{defn}\label{defn:circ_perm}
	Given $\alpha\in \R_{>0}$, we denote by $\nu^{\alpha}$ the permuton corresponding to the red shape in the first picture in the right-hand side of \cref{fig:circ_perm} where:
	\begin{itemize}
		\item The bottom-left curve (the one from $(0,1/2)$ to $(1/2,0)$) of the boundary of the red shape is parametrized by 
		\begin{equation}
			\Gamma(t)\coloneqq\left(\frac{\gamma^t-1}{2(\gamma-1)},\frac{{\gamma}^{1-t}-1}{2(\gamma-1)}\right),\qquad t\in[0,1],
		\end{equation}	
		where $\gamma=\gamma(\alpha)$ solves $\frac{1}{1+\alpha}=\frac{1}{\gamma-1}\log(\gamma)$. The remaining three curves of the boundary of the red shape are determined by trivial symmetries.
		\item There is Lebesgue measure of total mass $\frac{\alpha}{\alpha+1}$ in the interior of the red shape.
		\item There is the unique measure $\rho$ supported on $\Gamma$ of total mass $\frac{1}{4(\alpha+1)}$ satisfying
		\begin{equation}
			\rho(\Gamma([0,\log_\gamma(2s(\gamma-1)+1]))=\frac s 2-\frac{\alpha}{(\alpha+1)}A_s,\quad\text{for all}\quad s\in[0,1/2],
		\end{equation}
		where $A_s$ denotes the area of the points $(x,y)$ above the curve $\Gamma$ and satisfying $x\leq t,y\leq1/2$. The remaining three boundary measures are again determined by trivial symmetries.
	\end{itemize}
\end{defn}
\begin{figure}[htbp]
	\centering
	\includegraphics[scale=0.55]{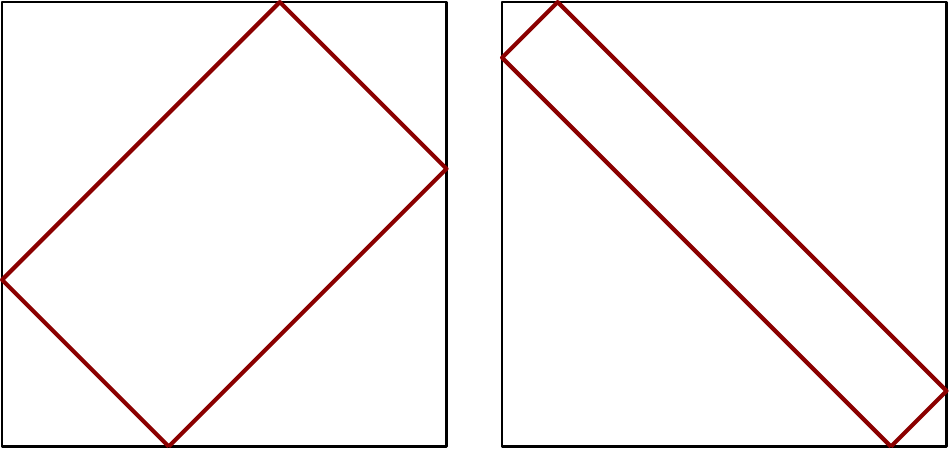}
	\hspace{1cm}
	\includegraphics[scale=.22]{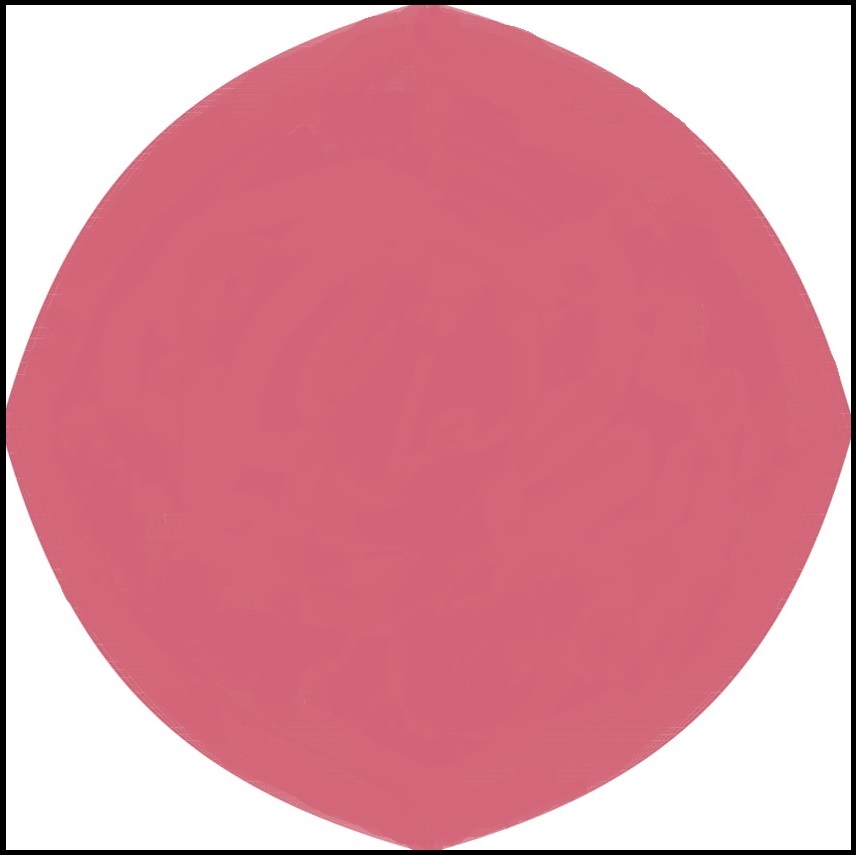}
	\includegraphics[scale=.22]{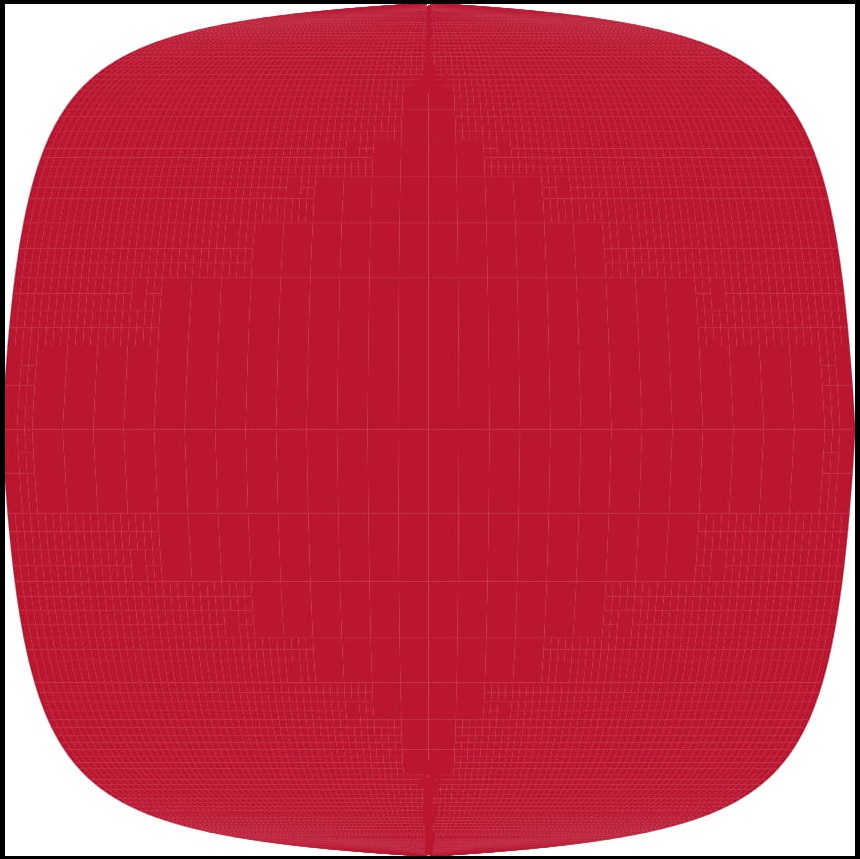}
	\includegraphics[scale=.22]{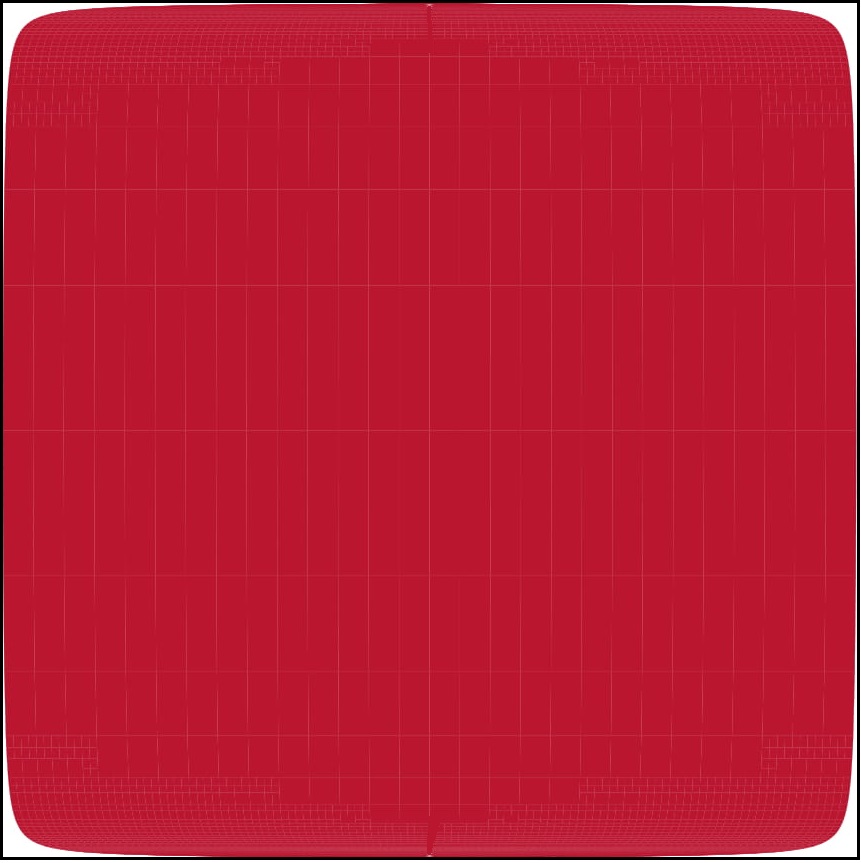}
	\caption{\textbf{Left}: The permuton $\mu^{z}$ defined in \cref{defn:square_perm} for $z=3/8$ and $z=7/8$. \textbf{Right}: The permuton $\nu^{\alpha}$ defined in  \cref{defn:circ_perm} for three values of $\alpha$ (increasing from left to right). Note that the color of the permuton is darker to recall that the internal total mass is larger.\label{fig:circ_perm}}
\end{figure}

Recall that $\asqnk$ denotes the set of almost square permutations of size $n+k$ with exactly $n$ external points and $k$ internal points.

\newpage

\begin{thm}\label{thm:phase_trans}	
	Let $\bm\sigma_n^k$ be a uniform permutation in $\asqnk$.
	\begin{enumerate}
		\item[(a)] Let $k=0$, i.e.\ $\bm\sigma_n^0$ is a uniform square permutation. Then as $n\to \infty,$
		$\mu_{\bm{\sigma}_n^0} \stackrel{d}{\longrightarrow} \mu^{\zz^{(0)}}.$
		\item[(b)] Let $k\in\Z_{> 0}$ be fixed (i.e.\ independent of $n$). Then as $n\to \infty,$
		$\mu_{\bm{\sigma}_n^k} \stackrel{d}{\longrightarrow} \mu^{\zz^{(k)}}.$
		\item[(c)] Let $k=k(n)$ and $n$ both tend to infinity with $k/n\to 0$. Then as $n\to \infty,$
		$\mu_{\bm{\sigma}_n^k} \stackrel{d}{\longrightarrow} \mu^{1/2}.$
	\end{enumerate}
\end{thm}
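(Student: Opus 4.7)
The proof of all three parts builds on the sampling construction of \cref{sect:sq_perm}, combined with a biasing argument to handle internal points. For part (a), by \cref{square_is_rect,omega_size}, a uniform square permutation $\bm\sigma_n^0$ has the same distribution, up to an event of probability $o(1)$, as $\rho(\bm X, \bm Y, \bm z_0)$, where $\bm X, \bm Y$ are independent uniform walks on $\{U,D\}^n$ and $\{L,R\}^n$ and $\bm z_0$ is uniform on $[n]$ (conditioned on $(\bm X, \bm Y, \bm z_0) \in \Omega_n$). Clearly $\bm z_0/n \xrightarrow{d} \zz^{(0)}$, a uniform $(0,1)$ variable. By \cref{guiding light} and \cref{okz}, the four sequences $\Lambda_1,\Lambda_2,\Lambda_3,\Lambda_4$ lie within distance $O(n^{.6})$ of the macroscopic rectangle with corners $(\bm z_0, 1), (1, \bm z_1), (\bm z_2, n), (n, \bm z_3)$, with $\bm z_1 \approx \bm z_0$ and $\bm z_2 \approx \bm z_3 \approx n - \bm z_0$. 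The Petrov conditions imply that the positions along each $\Lambda_i$ are asymptotically equidistributed, so the empirical permuton $\mu_{\bm\sigma_n^0}$, conditionally on $\bm z_0/n \to z$, converges weakly to the normalized one-dimensional Lebesgue measure on the rectangle with corners $(z,0), (0,z), (1-z,1), (1,1-z)$, i.e.\ to $\mu^z$. Averaging over $\bm z_0$ yields $\mu_{\bm\sigma_n^0} \xrightarrow{d} \mu^{\zz^{(0)}}$.

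For part (b) with $k$ fixed, I would describe a uniform element of $\asqnk$ as a pair consisting of a square skeleton $\sigma$ of size $n$ together with an unordered set of $k$ internal insertions. The key observation is that, to leading order, the number of valid insertions $N(\sigma)$ depends only on the macroscopic rectangle shape of $\sigma$ and equals the number of lattice points strictly inside the rectangular hull, which is asymptotically $2\bm z_0(n-\bm z_0)$. Hence the number of elements of $\asqnk$ with a given $\bm z_0$ is asymptotically $\binom{2\bm z_0(n-\bm z_0)}{k} \sim (2\bm z_0(n-\bm z_0))^k/k!$ times the number of square skeletons with that value of $\bm z_0$ (which is asymptotically constant in $\bm z_0$). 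Under the uniform measure on $\asqnk$, the rescaled variable $\bm z_0/n$ therefore has density proportional to $(z(1-z))^k$, which is exactly the density of $\zz^{(k)} \sim \mathrm{Beta}(k+1, k+1)$. The $k$ internal points carry vanishing mass $k/(n+k) \to 0$ in the permuton limit, so the rectangle structure of the external skeleton prevails, and applying part~(a) conditionally on $z$ yields $\mu_{\bm\sigma_n^k} \xrightarrow{d} \mu^{\zz^{(k)}}$.

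Part (c) follows from the same biasing identity: $\bm z_0/n$ has density asymptotically proportional to $(z(1-z))^{k(n)}$, i.e.\ it is $\mathrm{Beta}(k(n)+1, k(n)+1)$ with variance $1/(4(2k(n)+3)) \to 0$. Thus $\bm z_0/n \xrightarrow{P} 1/2$; combined with the vanishing mass of the internal points and the conditional convergence from part~(a), we obtain $\mu_{\bm\sigma_n^k} \xrightarrow{d} \mu^{1/2}$.

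The main obstacle will be the biasing step in (b)--(c). Two technical points require care. First, I need to verify that inserting lattice points strictly inside the rectangular hull of a regular square skeleton produces a bona fide element of $\asqnk$: the inserted points must be genuinely internal and the $n$ original records must remain records in the enlarged permutation. This is plausible because the inserted values lie strictly between the values on the bottom and top edges of the rectangle, preserving all four record types, but must be checked uniformly over $\Omega_n$. Second, the count $N(\sigma) = 2\bm z_0(n-\bm z_0)+o(n^2)$ must be controlled with a relative error small enough -- uniformly in $\bm z_0$ over a range where the Beta density is non-negligible -- to extract the exact $\mathrm{Beta}(k+1,k+1)$ limit; the Petrov-type bounds built into the definition of $\Omega_n$ should suffice for $k$ fixed, while part (c) requires only concentration at $z=1/2$, which is considerably less demanding.
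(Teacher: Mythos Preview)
Your proposal is correct and follows essentially the same approach as the paper. Part~(a) matches the paper's argument (Lemma~\ref{permuton_bounds} gives the uniform bound $d_\square(\mu_{\sigma_n},\mu^{z_n})<400n^{-.4}$ for $\sigma_n\in\rho(\Omega_n)$, then continuity of $z\mapsto\mu^z$ concludes), and your biasing idea for~(b) and~(c)---that the number of ways to insert $k$ internal points into a regular square skeleton with anchor $z_0$ is asymptotically $(2z_0(n-z_0))^k/k!$, so that $z_0/n$ acquires the Beta$(k+1,k+1)$ density---is exactly the mechanism the paper uses, formalized via the asymptotic enumeration results Lemma~\ref{cor:asympt}, Theorem~\ref{approx_size}, and Theorem~\ref{approx_size_2}; the vanishing contribution of the internal points to the permuton is handled by Lemma~\ref{lem:nochanges}. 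The two technical points you flag (validity of insertions, uniform control of the relative error) are precisely what the cited lemmas from \cite{borga2019almost} address.
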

\begin{conj}\label{conj:phase_trans}
	Let $\bm\sigma_n^k$ be a uniform permutation in $\asqnk$.
	\begin{enumerate}
		\item[(d)] Let $k$ and $n$ both tend to infinity with $k/n\to \alpha,$ for $\alpha\in \R_{>0}$. Then as $n\to \infty,$
		$\mu_{\bm{\sigma}_n^k} \stackrel{d}{\longrightarrow} \nu^{\alpha}.$
		\item[(e)] Let $k=k(n)$ and $n$ both tend to infinity with $k/n\to \infty$. Then as $n\to \infty,$
		$\mu_{\bm{\sigma}_n^k} \stackrel{d}{\longrightarrow} \Leb_{[0,1]^2}.$
	\end{enumerate}
\end{conj}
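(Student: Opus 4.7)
The plan is to extend the walks-encoding machinery of \cref{sect:sq_perm} to almost square permutations by decomposing a permutation in $\asqnk$ into its \emph{external skeleton} (the $n$ records, which by themselves form a square permutation of size $n$) and a configuration of $k$ internal points fitting inside the envelope cut out by this skeleton. Concretely, projecting onto the anchored pair $(X,Y,z_0)$ still produces an external structure $\rho(X,Y,z_0)$; by \cref{square_is_rect}, asymptotically almost all the mass is carried by the regular sequences $\Omega_n$, for which \cref{guiding light} ensures the envelope is close to a rotated rectangle with bottom vertex at $(z_0,1)$. Both parts of the conjecture should reduce to controlling the joint law of this envelope and the locations of the $k$ internal points under the uniform measure on $\asqnk$.

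For part (e), $k/n\to\infty$, I would first note that after rescaling by $1/(n+k)$ the external skeleton occupies a vanishing mass $\tfrac{n}{n+k}\to 0$, so it suffices to show that conditionally on the envelope, the internal points distribute according to Lebesgue measure on $[0,1]^2$. Heuristically, placing $k\gg n$ internal points in a region whose boundary holds only $n$ points should wash out the constraint ``avoid creating new records'', and the enforced permutation structure should leave a uniform density in the bulk. This can be made rigorous by showing that pattern densities $\pocc(\pi,\bm\sigma_n^k)$ converge in expectation to those of $\Leb_{[0,1]^2}$ using \cref{thm:randompermutonthm}; since a $\pi$-pattern sampled from Lebesgue has each entry uniform and independent, the claim follows if one can compare sampling $|\pi|$ uniform entries of $\bm\sigma_n^k$ with sampling $|\pi|$ entries of a uniform permutation of size $n+k$, the Radon--Nikodym density being $1+o(1)$ when $k/n\to\infty$.

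For part (d), $k/n\to\alpha\in\R_{>0}$, the two contributions balance (boundary mass $\tfrac{1}{\alpha+1}$, interior mass $\tfrac{\alpha}{\alpha+1}$), and the curve $\Gamma$ must be identified. My approach is a large deviation / entropy argument: parametrize a candidate shape by a decreasing curve $\gamma:[0,1/2]\to[0,1/2]$ with $\gamma(0)=1/2$, $\gamma(1/2)=0$, and estimate the number of permutations in $\asqnk$ whose bottom-left boundary concentrates around $\gamma$. Up to subexponential factors this count should factorize as
\begin{equation}
\exp\bigl(n\cdot \mathcal{E}_{\mathrm{ext}}(\gamma)+k\cdot \mathcal{E}_{\mathrm{int}}(\gamma)+o(n+k)\bigr),
\end{equation}
where $\mathcal{E}_{\mathrm{ext}}$ counts external skeletons of approximate shape $\gamma$ (accessible via the sampling of \cref{sect:sq_perm} and Stirling-type asymptotics on \cref{eq:est_sqn}) and $\mathcal{E}_{\mathrm{int}}(\gamma)$ is essentially the logarithm of the area enclosed by $\gamma$. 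Maximizing $\mathcal{E}_{\mathrm{ext}}(\gamma)+\alpha\cdot\mathcal{E}_{\mathrm{int}}(\gamma)$ subject to uniform $x$- and $y$-marginals should yield an Euler--Lagrange equation whose unique solution is the exponential profile $\Gamma$ with parameter $\gamma=\gamma(\alpha)$ determined by $\tfrac{1}{\alpha+1}=\tfrac{1}{\gamma-1}\log\gamma$.

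The hard part is this last step. Evaluating $\mathcal{E}_{\mathrm{ext}}$ uniformly in the shape $\gamma$ is delicate because the external points are constrained by non-trivial record conditions, and one must rule out that an internal point creates an unwanted record, which couples the two contributions. A related technical obstacle is proving tightness of $(\mu_{\bm\sigma_n^k})_n$ before identifying the limit: without tightness, the variational argument only shows that the limit, \emph{if it exists}, must equal $\nu^\alpha$. A potentially cleaner alternative would be to find a Markov-chain sampler for $\asqnk$ whose stationary measure interpolates between the regimes of parts (c) and (e), and to use mean-field or cavity-type computations to derive $\Gamma$ as the fixed point of a self-consistency equation; the exponential form of $\Gamma$ is suggestive of such a local balance condition.
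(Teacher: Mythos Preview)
This statement is a \emph{conjecture}, and the paper does not prove it either; \cref{sect:strat_conj} gives only an informal heuristic argument. So your proposal should be read as a proof \emph{strategy}, and the right comparison is between your strategy and the paper's.

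Your approach and the paper's are genuinely different. You propose a variational/large-deviation scheme: parametrize candidate boundary shapes by a curve $\gamma$, write the count of permutations with that approximate shape as $\exp(n\mathcal{E}_{\mathrm{ext}}(\gamma)+k\mathcal{E}_{\mathrm{int}}(\gamma)+o(n+k))$, and recover $\Gamma$ as the maximizer. The paper instead first simplifies to a one-corner model (permutations of size $n+k$ with exactly $n$ left-to-right minima), writes down an \emph{exact} formula for the probability that the $h$-th record sits at $(x,y)$ in terms of Stirling numbers of the first kind (\cref{lem:decomp_counting}), applies known asymptotics for Stirling numbers, and solves the discrete gradient equations $\nabla\log p^h_{n,k}(x,y)=0$ to find where this distribution concentrates. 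The exponential curve $\Gamma$ then drops out of an explicit algebraic system rather than an Euler--Lagrange equation. The paper's route bypasses precisely the step you flag as ``the hard part'': there is no need to compute a shape-dependent entropy $\mathcal{E}_{\mathrm{ext}}(\gamma)$, because the Stirling-number decomposition localizes the computation record by record.

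That said, the paper's argument is also incomplete: it treats only the one-corner simplification, assumes without proof that the record distribution is log-concave and concentrates at its maximum, and leaves open the coupling between the four corners and the concentration of the four extremal points $z_0,z_1,z_2,z_3$ (see the open problems in the paper). Your entropy framework is in principle more global, but you correctly identify that evaluating $\mathcal{E}_{\mathrm{ext}}$ uniformly over shapes and decoupling the ``no-new-record'' constraint are serious obstacles; the paper's Stirling-number shortcut suggests that working record-by-record rather than shape-by-shape may be the more tractable path.
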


\begin{rem}
	Item (a) in \cref{thm:phase_trans} could be included in Item (b), but we prefer to keep it separate for two main reasons: the case of square permutations is particularly relevant; and splitting the two items will help the following discussions.
\end{rem}

\begin{rem}
	In simple words, \cref{thm:phase_trans} says that:
	$$\emph{"Square permutations are typically rectangular, and almost square permutations are typically square"}$$
	This rephrasing inspired also the title of the works \cite{borga2020square,borga2019almost}.
\end{rem}

Note that \cref{thm:phase_trans} together with \cref{conj:phase_trans} describe a phase transition in the limiting shape of (almost) square permutations that interpolates between the random rectangle $\mu^{\zz^{(0)}}$ and the deterministic two-dimensional Lebesgue measure on the unit square.

\medskip

In the rest of this section we present some other results, more precisely:

\begin{itemize}
	\item In \cref{sect:fluct_square} we present (without giving a proof) some fluctuation results for the dots of the diagram of a uniform square permutation established in \cite{borga2020square}.
	\item In \cref{sect:asym_enum} we discuss the asymptotic enumeration of almost square permutations established in \cite{borga2019almost}. This is a fundamental step in the proof of Items (b) and (c) of \cref{thm:phase_trans}.
	\item In \cref{sect:321-fluct} we discuss how one can apply the techniques developed for (almost) square permutations in order to establish the asymptotic enumeration for permutations avoiding the pattern $321$ with $k$ additional internal points and to investigate some additional fluctuation results.
\end{itemize}

The proof of \cref{thm:phase_trans}, that is the heart of the works \cite{borga2020square,borga2019almost}, and some explanations about \cref{conj:phase_trans} are then given in \cref{sect:perm_conv_square}. Lastly, \cref{sect:321av} gives the proof of the results on 321-avoiding permutations with internal points.

\subsection{Fluctuations for square permutations}\label{sect:fluct_square}

We saw in Item (a) of \cref{thm:phase_trans} that the permuton limit of a sequence of uniform square permutations is a random rectangle. Here we investigate the fluctuations of the dots of the diagram of a uniform square permutation around the four edges of this rectangle. 

\bigskip

Let $\sigma_n\in Sq(n)$ and let $z_0=z_0(n)=\sigma_n^{-1}(1).$  For convenience, we assume that $z_0>\frac{n}{2}+10n^{.6}.$
We focus on the following three families of points of $\sigma_n$:
\begin{itemize}
	\item $DR=DR(n)=\RLm(\sigma_n),$ in green in Fig.~\ref{Squarepermsimulation};
	\item $DL=DL(n)=\{(i,\sigma_n(i))\in\LRm(\sigma_n):\sigma_n(i)\leq n-z_0+1\},$ in red in Fig.~\ref{Squarepermsimulation};
	\item $UR=UR(n)=\{(i,\sigma_n(i))\in\RLM(\sigma_n):i\geq z_0\},$ in blue in Fig.~\ref{Squarepermsimulation}.
\end{itemize}
Note that $(z_0,1)\in DR\cap DL$ and $(n,\sigma_n(n))\in DR\cap UR.$
For each set of points, we perform a particular rotation so that each of the following lines become the new $x$-axis:
\begin{itemize}
	\item $r^{DR}: y=x+(1-z_0),$  in green in Fig.~\ref{Squarepermsimulation};
	\item $r^{DL}: y=-x+(z_0+1),$  in red in Fig.~\ref{Squarepermsimulation};
	\item $r^{UR}: y=-x+(2n-z_0+1),$  in blue in Fig.~\ref{Squarepermsimulation}.
\end{itemize} 
More precisely, as shown in Fig.~\ref{Squarepermsimulation}, we apply a clockwise rotation of 45 degrees to the first family of points, a clockwise rotation of 135 degrees to the second family, and counter-clockwise rotation of 45 degrees to the third family. Note that the first two sequences of points (obtained form $DR$ and $DL$) starts at height zero. In order to have the same for the third sequence, we translate the $y$-coordinate of all the points in the third family by the distance of the first point from the line $r^{UR}$. We denote the three new families of points as $\mathcal{P}^{DR}$, $\mathcal{P}^{DL}$ and $\mathcal{P}^{UR}.$

\begin{figure}[htbp]
	\begin{minipage}[c]{0.55\textwidth}
		\centering
		\includegraphics[scale=0.29]{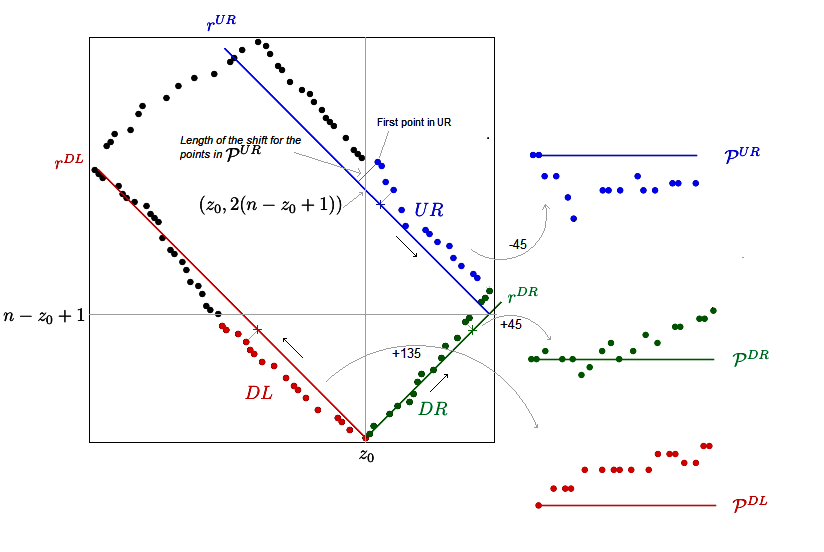}
	\end{minipage}
	\begin{minipage}[c]{0.45\textwidth}
		\caption{A square permutation $\sigma$ with the three families $DR,DL,UR$ highlighted. The dots of $DR,DL,UR$ are colored in the diagram of $\sigma$ in green, red and blue respectively. Similarly we paint the lines $r^{DR}$, $r^{DL}$ and $r^{UR}$ in green, red and blue. On the right, we draw the diagrams of the points in $\mathcal{P}^{UR}$, $\mathcal{P}^{DR}$ and $\mathcal{P}^{DL}$ obtained rotating the families of points $UR$, $DR$ and $DL$ by the indicated angle (with the additional translation for the points in $\mathcal{P}^{UR}$).\label{Squarepermsimulation}}
	\end{minipage}
\end{figure}

Given a family of points $\mathcal{P}=\{(x_i,y_i)\}_{i=0}^m$, with $x_0\leq x_1\leq x_2\leq\dots\leq x_m$, we denote by $F^{\mathcal{P}}(t),$ for $t\in[0,1],$ the linear interpolation among the points $\{(\frac{x_i}{x_m},\frac{y_i}{\sqrt{m}})\}_{i=0}^m.$
Let $\mathcal{C}([0,1],\mathbb{R})$ denote the space of continuous functions from $[0,1]$ to $\mathbb{R}$ endowed with the uniform distance. 

\begin{thm}
	\label{thm:fluctuations}
	Let $\bm{\sigma}_n$ be a uniform random square permutation of size $n$, and let $\bm{B}_1(t),\bm{B}_2(t),\bm{B}_3(t)$, and $\bm{B}_4(t)$ be four independent standard Brownian motions on the interval $[0,1].$ Fix a sequence of integers $(t_n)_n$ such that $\frac{n}{2}+10n^{.6}<t_n\leq n-n^{.9}.$ Conditioning on $\bm{z_0}=t_n,$ we have the following convergence in distribution in the space $\mathcal{C}([0,1],\mathbb{R})^3$:
	\begin{equation}
		\big(\bm{F}^{\mathcal{P}^{DR(n)}}(t),\bm{F}^{\mathcal{P}^{DL(n)}}(t),\bm{F}^{\mathcal{P}^{UR(n)}}(t)\big)_{t\in[0,1]}\stackrel{d}{\longrightarrow}\big(\bm{B}_1(t)+\bm{B}_2(t),\bm{B}_3(t)+\bm{B}_1(t),\bm{B}_4(t)+\bm{B}_2(t)\big)_{t\in[0,1]}.
	\end{equation} 
\end{thm}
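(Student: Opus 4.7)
The plan is to leverage the sampling procedure from \cref{sect:sq_perm}: by \cref{omega_size} and \cref{square_is_rect}, outside an event of probability $o(1)$ a uniform random $\bm\sigma_n\in\sq(n)$ can be realised as $\rho(\bm X,\bm Y,\bm z_0)$ with $(\bm X,\bm Y,\bm z_0)$ uniform on $\Omega_n$. Conditioning on $\bm z_0=t_n$ leaves $\bm X$ and $\bm Y$ essentially uniform independent Bernoulli walks on $\{U,D\}^n$ and $\{L,R\}^n$, so the centered $\pm 1$ random walks $W^D(j):=\ctd(j)-\ctu(j)$ and $W^R(j):=\ctr(j)-\ctl(j)$ are independent and, by Donsker's theorem, converge jointly after rescaling by $\sqrt n$ to two independent standard Brownian motions.

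Using the exact dualities $\pd(i)=2i-W^D(\pd(i))$, $\pu(i)=2i+W^D(\pu(i))$, $\pr(i)=2i-W^R(\pr(i))$, $\pl(i)=2i+W^R(\pl(i))$ together with the formulas for $\Lambda_1,\Lambda_3,\Lambda_4$ from \cref{sect:rho_def}, one computes that, modulo errors controlled by the Petrov conditions of \cref{defn:petrov} and by negligible random shifts in the arguments, the rotated $y$-coordinates of the points of $DR$, $DL$, $UR$ at interpolation parameter $t\in[0,1]$ have the form (up to signs)
\begin{align*}
y^{DR}_{\mathrm{rot}}(t)&\sim[W^D(z_0+t(n-z_0))-W^D(z_0)]-W^R(t(n-z_0)),\\
y^{DL}_{\mathrm{rot}}(t)&\sim-[W^D(z_0)-W^D(z_0-t(n-z_0))]-W^R(t(n-z_0)),\\
y^{UR}_{\mathrm{rot}}(t)&\sim[W^D(z_0+t(n-z_0))-W^D(z_0)]+[W^R(2(n-z_0))-W^R((2-t)(n-z_0))],
\end{align*}
where the last bracket in the $UR$ expression incorporates the translation by the height of the first point prescribed in the definition of $\mathcal P^{UR}$.

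The crucial structural observation is that, under the assumption $n/2+10n^{0.6}<t_n\le n-n^{0.9}$, the four walk pieces appearing above live on \emph{pairwise disjoint} subintervals of $[0,n]$: $W^D$ on $[z_0,n]$ (shared by $DR$ and $UR$), $W^R$ on $[0,n-z_0]$ (shared by $DR$ and $DL$), $W^D$ on $[z_0-(n-z_0),z_0]$ (only in $DL$), and $W^R$ on $[n-z_0,2(n-z_0)]$ (only in $UR$; the assumption $z_0\ge n/2$ ensures $2(n-z_0)\le n$). By independence of $\bm X$ and $\bm Y$ and of walk increments over disjoint intervals, Donsker's theorem yields joint convergence of these four pieces, rescaled by $\sqrt{n-z_0}$ and reparametrised by $t\in[0,1]$, to four independent standard Brownian motions $\bm B_1,\bm B_2,\bm B_3,\bm B_4$. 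A routine verification of signs and orientations then produces the announced joint limit $(\bm B_1+\bm B_2,\bm B_3+\bm B_1,\bm B_4+\bm B_2)$, upon absorbing the elementary identities $-B\stackrel{d}{=}B$ and $B(1)-B(1-\cdot)\stackrel{d}{=}B$ into the independent Brownian motions.

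The main obstacle will be the careful bookkeeping of error terms near the upper bound $z_0=n-n^{0.9}$. The random shifts $|W^D(z_0)|=O_p(\sqrt n)$ in the arguments of the walks translate, after rescaling, into shifts of order $\sqrt n/(n-z_0)\le n^{-0.4}=o(1)$ in the time parameter, hence are negligible by continuity of Brownian motion. Likewise, the Petrov conditions give $|\pd(i)-2i|=O(n^{0.6})$, so $|W^D(\pd(i))-W^D(2i)|=O(n^{0.3+\eps})=o(\sqrt{n-z_0})$. Both estimates must be made uniform in $t\in[0,1]$ in order to pass to functional convergence. Apart from this bookkeeping, the proof reduces to Donsker's invariance principle together with the combinatorial identification of which pieces of $W^D$ and $W^R$ feed into each of the three rotated families.
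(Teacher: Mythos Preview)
The manuscript does not include a proof of this theorem; it simply states that ``the (rather technical) proof of \cref{thm:fluctuations} can be found in \cite[Section 5]{borga2020square} and is not included in this manuscript.'' So there is no in-text argument to compare against.

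That said, your outline is the natural and correct strategy, and it is almost certainly the one carried out in \cite{borga2020square}: reduce via \cref{square_is_rect} to $(\bm X,\bm Y,\bm z_0)\in\Omega_n$; use the explicit description of $\Lambda_1,\Lambda_3,\Lambda_4$ to write the rotated heights as differences of the centered walks $W^D$ and $W^R$ evaluated on four sub-intervals of $[0,n]$; observe that under $z_0>n/2$ these intervals are pairwise disjoint with the correct sharing pattern; and conclude by Donsker. The bookkeeping you flag (arguments of the form $W^D(\pd(i))$ versus $W^D(2i)$, the non-linear abscissa $x_i/x_m$ versus $t$, and the $\sqrt m$-normalisation when $n-z_0\geq n^{0.9}$) is exactly the ``rather technical'' part alluded to in the text.

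Two small points worth flagging. First, your labeling gives $(B_1+B_2,\,B_3+B_2,\,B_4+B_1)$ rather than the stated $(B_1+B_2,\,B_3+B_1,\,B_4+B_2)$; since $B_1,B_2$ are i.i.d., the two triples have the same joint law, so this is harmless but worth noting explicitly. Second, the claim ``conditioning on $\bm z_0=t_n$ leaves $\bm X,\bm Y$ essentially uniform'' is not automatic: it goes through the approximate bijection $\rho$ with $\Omega_n$ (Lemmas~\ref{omega_size} and~\ref{square_is_rect}) and must absorb the finitely many forced coordinates $X_1=X_n=X_{z_0}=D$, $Y_1=Y_n=L$ as well as the Petrov conditioning, all of which are negligible for the functional limit.
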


The (rather technical) proof of \cref{thm:fluctuations} can be found in \cite[Section 5]{borga2020square} and is not included in this manuscript.

\begin{rem}
	Note that Theorem $\ref{thm:fluctuations}$ not only describes the scaling limit of the families of points $\mathcal{P}^{DR}$, $\mathcal{P}^{DL}$ and $\mathcal{P}^{UR}$, but also describes the dependency relations among them. Indeed, the limit $\lim_{n\to\infty}\bm{F}^{\mathcal{P}^{DL(n)}}(t)=\bm{B}_3(t)+\bm{B}_1(t)$ is independent of the limit $\lim_{n\to\infty}\bm{F}^{\mathcal{P}^{UR(n)}}(t)=\bm{B}_4(t)+\bm{B}_2(t).$ On the other hand, the limit $\lim_{n\to\infty}\bm{F}^{\mathcal{P}^{DR(n)}}(t)=\bm{B}_1(t)+\bm{B}_2(t)$ is correlated with both the limits $\lim_{n\to\infty}\bm{F}^{\mathcal{P}^{DL(n)}}(t)=\bm{B}_3(t)+\bm{B}_1(t)$ and $\lim_{n\to\infty}\bm{F}^{\mathcal{P}^{UR(n)}}(t)=\bm{B}_4(t)+\bm{B}_2(t)$. Moreover, this dependency is completely explicit.
\end{rem}

\begin{rem}
	We chose to study only the family of points $DL,$ $DR$ and $UR$ in order to simplify as much as we can the notation. Nevertheless, the result stated in Theorem \ref{thm:fluctuations} can be generalized to every possible choice of "a vertical and horizontal strip" in the diagram. In particular, in our case, the vertical strip is the one between indexes $z_0$ and $n$ and the horizontal strip is the one between values $1$ and $n-z_0+1$.
\end{rem}

\subsection{Asymptotic enumeration of almost square permutations}\label{sect:asym_enum}
A fundamental step in order to study permuton limit of almost square permutations is the fact that we are able to obtain an asymptotic enumeration of these families.
Almost square permutations were studied from an enumerative point of view in \cite{disanto2011permutations}. It was shown that, for fixed
$k > 0$, the generating function with respect to the size for $ASq(n, k)$ is algebraic of degree 2,
and this generating function was explicitly computed for $k = 1, 2, 3$. However,
computations become intractable for $k > 3$.

Using our probabilistic results on square permutations, in particular the fact that the limiting shape is a random rectangle (see below for further explanations), we were able in \cite{borga2019almost} to give the asymptotic enumeration of almost square permutations.  
\begin{thm}[{\cite[Theorem 1.1]{borga2019almost}}]\label{approx_size}
	For $k=o(\sqrt n)$, as $n\to \infty,$	
	\begin{equation}\label{approx_size_eq}
		|\asqnk| \sim \frac{k!2^{k+1}n^{2k+1}4^{n-3}}{(2k+1)!}\sim \frac{k!2^{k}n^{2k}}{(2k+1)!}|\sq(n)|.	
	\end{equation}	
\end{thm}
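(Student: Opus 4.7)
The plan is to extend the sampling procedure of Section~\ref{sect:sq_perm} from square permutations to almost square permutations, reducing the enumeration to an averaging computation over $\Omega_n$. The key observation is that a typical $\tau \in ASq(n,k)$ is determined by its ``external skeleton'' $\sigma \in Sq(n)$ (obtained by deleting the $k$ internal points and standardising) together with the exact positions of the $k$ internal points in the $(n+k)\times(n+k)$ grid. Equipping $\tau$ with an arbitrary ordering of its internal points, this yields a $k!$-to-$1$ correspondence between $ASq(n,k)$ and pairs $(\sigma,T)$, where $T$ is an ordered $k$-tuple of ``valid internal insertions'' for $\sigma$: positions that lie strictly inside the rectangle cut out by the external points of $\sigma$, with pairwise distinct rows and distinct columns, chosen so that no existing external point is demoted to an internal one.

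Next, take $\sigma = \rho(X,Y,z_0)$ for $(X,Y,z_0) \in \Omega_n$. By Lemma~\ref{guiding light}, the external points of $\sigma$ lie within $O(n^{.6})$ of a rotated rectangle with bottom vertex $(z_0,1)$ and area $2 z_0 (n - z_0)$. For $k = o(\sqrt n)$, the number $V_k(X,Y,z_0)$ of valid insertions is therefore asymptotic to $(2 z_0 (n - z_0))^k$, since (a) collisions among the row/column indices of the $k$ insertions have vanishing probability and (b) the $O(n^{.6})$ boundary fluctuation contributes only an $O(k\, n^{-.4})$ multiplicative error. Since $z_0$ is uniform in $\{1,\dots,n\}$ when $(X,Y,z_0)$ is uniform in $\Omega_n$, averaging gives
\[ \E\!\left[\bigl(2 \bm z_0 (n - \bm z_0)\bigr)^k\right] \sim 2^k n^{2k} \int_0^1 (t(1-t))^k\, dt = \frac{2^k n^{2k} (k!)^2}{(2k+1)!}. \]
Combined with $|Sq(n)| \sim |\Omega_n| \sim 2(n+2) 4^{n-3}$, which follows from Lemma~\ref{omega_size} and~\eqref{eq:est_sqn}, this yields
\[ |ASq(n,k)| \sim \frac{1}{k!}\, |Sq(n)| \cdot \frac{2^k n^{2k} (k!)^2}{(2k+1)!} = \frac{k!\, 2^{k+1}\, n^{2k+1}\, 4^{n-3}}{(2k+1)!}, \]
which is exactly the desired estimate.

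The main obstacle will be making the correspondence above precise and controlling the error terms uniformly in $k = o(\sqrt n)$. Three aspects need care: (i) one must show that almost square permutations whose external skeleton is \emph{not} of the form $\rho(X,Y,z_0)$ for some $(X,Y,z_0) \in \Omega_n$ form an $o(1)$ fraction of $ASq(n,k)$, an almost-square analogue of Lemma~\ref{square_is_rect}, provable by the same Petrov-condition argument; (ii) one must verify rigorously that adding points strictly inside the rectangle preserves the record status of all external points, which follows directly from the geometric description of Lemma~\ref{guiding light}; (iii) the boundary regime $z_0 \le \delta_n$ or $z_0 \ge n - \delta_n$ excluded by the definition of $\Omega_n$ should contribute negligibly to the average, since $\int_0^{\delta_n/n} (t(1-t))^k\, dt = o\bigl(\int_0^1 (t(1-t))^k\, dt\bigr)$ for any $\delta_n = o(n)$ with $\delta_n \geq n^{.9}$. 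All three points are accessible with techniques already developed for $Sq(n)$.
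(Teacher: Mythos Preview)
Your approach is exactly that of the paper (which defers full details to \cite{borga2019almost}): decompose $ASq(n,k)$ according to the external skeleton $\sigma\in Sq(n)$, estimate the number of ordered $k$-tuples of internal insertions into $\sigma\in\rho(\Omega_n)$ by $(2z_0(n-z_0))^k$ via the rectangle shape of Lemma~\ref{guiding light}, then average over $z_0$ to recover the Beta integral $\int_0^1(t(1-t))^k\,dt=(k!)^2/(2k+1)!$. The paper's Lemma~\ref{cor:asympt} and Lemma~\ref{lem:irrbound} correspond precisely to your obstacles (i)--(iii).

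One small correction: your claimed multiplicative error $O(k\,n^{-.4})$ from the $O(n^{.6})$ boundary fluctuation is not $o(1)$ over the full range $k=o(\sqrt n)$ (take for instance $k=n^{.45}$). The Petrov exponents in Definition~\ref{defn:petrov} are flexible and can be pushed closer to $1/2$, so this is repairable, but it does require more care than the displayed bound suggests; this is one reason the paper's condition~\eqref{eq:cond_dn} ties the choice of $\delta_n$ to $k$.
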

When $k$ grows at least as fast as $\sqrt n$ the above result fails. Nevertheless, when $k=o(n)$, we can still obtain the following weaker asymptotic expansion that determines the behavior of the exponential growth.
\begin{thm}[{\cite[Theorem 1.2]{borga2019almost}}]\label{approx_size_2}
	For $k=o(n)$, as $n\to \infty,$
	\begin{equation}
		\log\left(|\asq(n,k)|\right)=\log\left(\frac{k!}{(2k+1)!}2^{k+1}n^{2k+1}4^{n-3}\right)+o(k).
	\end{equation}
\end{thm}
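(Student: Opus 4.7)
The plan is to promote the probabilistic reasoning behind \cref{approx_size}---which identifies the shape of a typical square permutation with a random rectangle---into a quantitative enumeration argument, tight enough to control $\log|\asqnk|$ with error $o(k)$ when $k$ is allowed to grow with $n$. The starting point is the natural forgetful map $\asqnk \to \sq(n)$ obtained by removing the $k$ internal points and standardizing the records. Writing $N(\sigma, k)$ for the size of its fiber over $\sigma$ (the number of ways of placing $k$ internal points on top of $\sigma$ so as to form an almost-square permutation of size $n+k$), we have the exact identity
\begin{equation*}
|\asqnk| = \sum_{\sigma \in \sq(n)} N(\sigma,k).
\end{equation*}

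First I would estimate $N(\sigma, k)$ for typical $\sigma$. By the analysis in \cref{sect:sq_perm}, a square permutation in $\rho(\Omega_n)$ is confined within $O(n^{0.6})$ of a rotated rectangle whose shape is determined by the single parameter $z = z_0(\sigma)/n$ (see \cref{guiding light}). A position $(i,v)\in[n+k]^2$ can be inserted as an internal point iff each of the four quadrants around it contains an existing record; up to boundary corrections of sub-leading order, this coincides with $(i,v)$ lying strictly inside the rectangle, i.e., inside a region of area $2z(1-z) + o(1)$ in rescaled coordinates. Arguing that the successive insertions are approximately independent---each one perturbs the boundaries by $O(1)$, hence the usable area by a relative factor $1+O(1/n)$, and points placed strictly inside the rectangle do not disrupt the record status of the boundary points---one should obtain
\begin{equation*}
N(\sigma, k) = \frac{1}{k!}\bigl(2z(1-z)\, n^2\bigr)^k \cdot (1+o(1))^k
\end{equation*}
uniformly for $\sigma$ in $\rho(\Omega_n)$, which carries asymptotically all of $\sq(n)$ by \cref{square_is_rect}.

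Summing over $\sigma$ and using $|\sq(n)| \sim 2(n+2)\cdot 4^{n-3}$ from \cref{eq:est_sqn}, the aggregation becomes
\begin{equation*}
|\asqnk| = |\sq(n)| \cdot \frac{n^{2k}}{k!} \cdot \E\bigl[(2\bm z(1-\bm z))^k\bigr] \cdot (1+o(1))^k,
\end{equation*}
where $\bm z = z_0(\bm\sigma_n)/n$ is the rescaled parameter of a uniform random square permutation of size $n$. Beyond the in-distribution limit ``$\bm z \to \mathrm{Unif}(0,1)$'' underlying \cref{thm:phase_trans}(a), the sampling procedure of \cref{sect:sq_perm}---in which $z_0$ is drawn uniformly in $\{1,\ldots,n\}$, modulo a negligible conditioning---provides a \emph{pointwise} density estimate for $\bm z$, sharp enough to replace the expectation by the Beta integral $\int_0^1 (2t(1-t))^k\, dt = 2^k B(k+1,k+1) = 2^k (k!)^2/(2k+1)!$ up to a further $(1+o(1))^k$ factor. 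Plugging back and taking logarithms gives
\begin{equation*}
\log|\asqnk| = \log\!\left( \frac{k!\cdot 2^{k+1}\cdot n^{2k+1}\cdot 4^{n-3}}{(2k+1)!}\right) + o(k),
\end{equation*}
which is the claim.

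The hard part is controlling the $(1+o(1))^k$ factors: any $k$-independent multiplicative slack of the form $(1+\delta)^k$ with constant $\delta > 0$ would translate into a $\Theta(k)$ additive error on the logarithm and destroy the $o(k)$ conclusion. This forces a quantitative refinement of the rectangle limit beyond mere permuton convergence---with explicit concentration of the four boundary polylines along line segments---together with a sharp pointwise density for $\bm z$ near $t = 1/2$, the region where $(2t(1-t))^k$ concentrates as $k \to \infty$. The threshold $k = o(n)$ is precisely what makes the cumulative area drift $O(k)\cdot O(1/n) = o(1)$ tolerable, and is also what restricts the reach of the argument; beyond it, the rectangle approximation for a typical square permutation decorated with $k$ internal points would no longer be accurate.
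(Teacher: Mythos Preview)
Your strategy is essentially the paper's: decompose $|\asqnk| = \sum_{\sigma\in\sq(n)} N(\sigma,k)$, approximate $N(\sigma,k)$ by $(2z_0(n-z_0))^k/k!$ via the rectangle shape (\cref{guiding light}), sum over $z_0$ to get the Beta integral $\int_0^1 (t(1-t))^k\,dt = (k!)^2/(2k+1)!$, and track the $(1+o(1))^k$ corrections---this is exactly the content of \cref{cor:asympt} and \cref{lem:irrbound}, which the paper combines to obtain both \cref{approx_size} and \cref{approx_size_2}. One point to sharpen: the statement that ``$\rho(\Omega_n)$ carries asymptotically all of $\sq(n)$'' concerns the \emph{unweighted} count of square permutations and does not by itself bound $\sum_{\sigma\in\irr} N(\sigma,k)$ for the upper bound on $\log|\asqnk|$, so the irregular contribution requires a separate uniform estimate (this is where \cref{lem:irrbound} enters).
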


We now explain how we obtained the results above.
For $\sigma\in \sq(n)$, let $\asq(\sigma,k)$ denote the set of permutations in $\asqnk$ with external points having the same relative position as $\sigma$.  For a collection of permutations $\mathcal{Q} \in \sq(n)$, let $\asq(\mathcal{Q},k) = \bigcup_{\sigma\in \mathcal{Q}} \asq(\sigma,k),$ i.e.\ the set of permutations in $\asqnk$ whose exterior lies in $\mathcal{Q}$.
Obviously,
\begin{equation}
	|\asqnk| = |\asq(\sq(n),k)| = |\asq(\rho(\Omega_n),k)|+ |\asq(\sq(n)\setminus \rho(\Omega_n),k)|,
\end{equation}
where the set $\Omega_n$ was defined in \cref{sect:inverse_projection} and the map $\rho$ in \cref{sect:rho_def}.
It is enough to investigate $|\asq(\rho(\Omega_n),k)|$, indeed it is simple to show that $|\asq(\sq(n)\setminus \rho(\Omega_n),k)|$ is negligible (see \cref{lem:irrbound} below). In order to determine the asymptotic enumerations of $\asq(\rho(\Omega_n),k)$, we used the understanding of the geometric structure of a typical square permutation in $\rho(\Omega_n)$ explained in \cref{guiding light} page \pageref{guiding light}. Specifically, thanks to the precise description of the typical shape of a large square permutation, we were able to find precise bounds on the different possible ways of adding internal points -- heuristically, these possible ways are proportional to the area of the limiting rectangle (for further details see \cref{cor:asympt} and the explanations given below). 

\medskip

In order to formalize these ideas we need some more notation.
Given a permutation $\sigma\in\sq(n)$, we say that $\sigma$ has \emph{regular projections} if the Petrov conditions (see \cref{defn:petrov} page \pageref{defn:petrov}) hold for the corresponding pair of sequences $X$ and $Y$.
For each $z_0 \in [n]$, let $\mathcal{R}(z_0)$ denote the subset of $\sq(n)$ consisting of permutations anchored at $z_0$ and having regular projections. 

In what follows, given a sequence $k=k_n=o(n)$ (resp.\ $k=k_n=o(\sqrt{n})$), we fix now some sequence $\delta_n=\delta_n(k)$ such that
\begin{equation}\label{eq:cond_dn}
	\delta_n=o(n),\quad \delta_n\geq n^{.9},\quad\text{and}\quad k=o(\delta_n)\quad(\text{resp.\ } k=o(\sqrt{\delta_n})).
\end{equation} 
This is always possible taking for instance $\delta_n=\max\{\sqrt{nk},n^{.9}\}$ (resp.\ $\delta_n=\max\{\sqrt{n}k,n^{.9}\}$).

We finally denote by $\irr$ the permutations of $\sq(n)$ that either do not have regular projections or such that the anchor $z_0$ belongs to $[n] \backslash (\delta_n,n-\delta_n)$. Note that 
\begin{equation}\label{eq:equall_bet_sets}
	\sq(n)=\irr\cup \rho(\Omega_n)\qquad\text{and}\qquad\rho(\Omega_n)=\bigcup_{z_0\in(\delta_n , n -\delta_n)}\mathcal{R}(z_0).
\end{equation}

\begin{lem}[{\cite[Corollary 5.5]{borga2019almost}}]\label{cor:asympt}
	For $k=o(\sqrt n)$ and $s\in(0,1)$,
	\begin{equation} 
		\sum_{z_0 \in (\delta_n,ns)}|\asq(\mathcal{R}(z_0),k)|\sim \frac{1}{k!}2^{k+1}n^{2k+1}4^{n-3} \int_0^s (t(1-t))^kdt.
	\end{equation}
\end{lem}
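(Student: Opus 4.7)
The strategy is to estimate $|\mathcal R(z_0)|$ and $|\asq(\sigma, k)|$ separately using the geometric description of regular square permutations from \cref{guiding light}, and then recognize the sum over $z_0$ as a Riemann sum for $\int_0^s (t(1-t))^k\, dt$. The first ingredient is a uniform estimate $|\mathcal R(z_0)| = 2 \cdot 4^{n-3}(1 + o(1))$ over $z_0 \in (\delta_n, n - \delta_n)$: since $\rho$ is injective on $\Omega_n$ by \cref{lem:map_welldef}, $|\mathcal R(z_0)|$ equals the number of good anchored pairs $(X, Y, z_0)$ satisfying the Petrov conditions, and the number of good pairs with any fixed anchor $z_0 \in (1, n)$ is exactly $2 \cdot 4^{n-3}$; the fraction failing the Petrov conditions is $o(1)$ uniformly in $z_0 \in (\delta_n, n - \delta_n)$ by the same concentration estimates underlying \cref{omega_size}.

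The second ingredient is to show that for every $\sigma \in \mathcal R(z_0)$,
\begin{equation}\label{eq:key_estimate}
|\asq(\sigma, k)| = \frac{(2 z_0 (n - z_0))^k}{k!}(1 + o(1)),
\end{equation}
uniformly in $\sigma$ and $z_0$. Each $\sigma' \in \asq(\sigma, k)$ corresponds bijectively to an unordered set of $k$ internal points in the $(n+k) \times (n+k)$ grid, with distinct columns and rows, each lying strictly inside the region bounded by the four record chains of $\sigma$. By \cref{guiding light}, these chains lie within $O(n^{0.6})$ of the four sides of the nominal parallelogram $\Pi_{z_0}$ with vertices $(z_0,1), (1,z_0), (n-z_0,n), (n,n-z_0)$, whose area equals $2 z_0 (n - z_0)$. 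Counting ordered $k$-tuples of lattice points in the effective internal region with distinct rows and columns and dividing by $k!$ yields \eqref{eq:key_estimate}, where the combinatorial correction $(1+O(k^2 n/A))$ is $1+o(1)$ thanks to $k = o(\sqrt{\delta_n})$ and $A \geq \delta_n \cdot n/2$.

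Combining the two ingredients and summing over $z_0$ then gives
\begin{equation}
\sum_{z_0 \in (\delta_n, ns)}|\asq(\mathcal R(z_0), k)| = \frac{2 \cdot 4^{n-3} \cdot 2^k}{k!}(1 + o(1)) \sum_{z_0 \in (\delta_n, ns)}(z_0(n-z_0))^k.
\end{equation}
The inner sum is a Riemann sum for $n^{2k+1}\int_{\delta_n/n}^s (t(1-t))^k\, dt$, and since $\delta_n/n \to 0$ and $(t(1-t))^k \leq 4^{-k}$, the contribution from $(0, \delta_n/n)$ is negligible, so the sum is asymptotic to $n^{2k+1}\int_0^s (t(1-t))^k\, dt$. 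Substituting yields the desired asymptotic.

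The main obstacle lies in the uniform error control needed for \eqref{eq:key_estimate}. The worst-case boundary deviation of $O(n^{0.6})$ from \cref{guiding light} yields an absolute area error of $O(n^{1.6})$, hence a relative error of $O(n^{0.6}/\delta_n)$; raising this to the $k$-th power gives a factor $(1 + O(n^{0.6}/\delta_n))^k$ that is $1+o(1)$ only when $k = o(\delta_n/n^{0.6})$, which for $\delta_n \asymp n^{0.9}$ amounts to $k = o(n^{0.3})$. To reach the full range $k = o(\sqrt n)$, one must exploit that the typical boundary fluctuations of $\sigma \in \mathcal R(z_0)$ are of order $O(\sqrt n)$ (as later quantified by \cref{thm:fluctuations}), and show by a separate tail estimate that the contribution of atypical skeletons is absorbed into the $o(1)$ error.
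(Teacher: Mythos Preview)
Your proposal is correct and follows exactly the approach outlined informally in the paper (and carried out in the cited reference): factor the sum as $|\mathcal R(z_0)| \cdot |\asq(\sigma,k)|$, use $|\mathcal R(z_0)| \sim 2\cdot 4^{n-3}$, approximate $|\asq(\sigma,k)|$ by $(2z_0(n-z_0))^k/k!$ via the rectangle-area heuristic, and finish with a Riemann sum. You have also correctly isolated the genuine technical difficulty: the deterministic $O(n^{0.6})$ bound from \cref{guiding light} gives an area error of order $n^{1.6}$, whose $k$-th power is only negligible for $k=o(n^{0.4})$, so the full range $k=o(\sqrt n)$ requires exploiting that the \emph{typical} boundary fluctuations are of order $\sqrt n$ and handling the atypical skeletons by a separate tail bound---precisely the refinement carried out in the cited paper.
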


We give an informal explanation of the various factors appearing in the right-hand side of the expression above:

\begin{itemize}
	\item $2\cdot 4^{n-3}$ is the asymptotic cardinality of $\mathcal{R}(z_0)$.
	\item $2^kn^{2k}(t(1-t))^k$ comes from the following fact: The typical area of the rectangle approximating a permutation $\sigma$ in $\mathcal{R}(z_0)$ is equal to $2\cdot z_0\cdot (n-z_0)$. This area "counts" the possible ways for inserting an internal point in $\sigma$. Therefore the possible ways for inserting $k$ internal points in $\sigma$ is approximately $(2\cdot z_0 (n-z_0))^k$. Substituting $z_0=nt$ we get the factor $2^kn^{2k}(t(1-t))^k$.
	\item $k!$ counts the possible orders in which it is possible to insert $k$ internal points in a square permutation.
	\item The remaining factor $n$ comes from the sum-integral approximation.
\end{itemize}

\begin{lem}[{\cite[Eq. (13)]{borga2019almost}}]\label{lem:irrbound}
	For $k=k_n=o(n)$ or $k=k_n=o(\sqrt{n})$ and the corresponding sequence $\delta_n$, it holds that
	\begin{equation} 
		|\asq(\irr,k)|\leq \frac{1}{k!}2\delta_n4^n2^{2k}n^{2k}.
	\end{equation}
\end{lem}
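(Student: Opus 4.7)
My approach is to factor the bound as a product of two pieces: a cardinality estimate for the "irregular" squares $\irr$ themselves, and a uniform upper bound on the number of ways to decorate any fixed square permutation with $k$ internal points. Concretely, one writes
\begin{equation}
|\asq(\irr,k)| \;=\; \sum_{\sigma\in\irr}|\asq(\sigma,k)| \;\leq\; |\irr|\cdot \max_{\sigma\in\sq(n)}|\asq(\sigma,k)|,
\end{equation}
and shows $|\irr|\leq \tfrac{3}{2}\delta_n\,4^{n-2}$ (say) while $|\asq(\sigma,k)|\leq \tfrac{1}{k!}(2n)^{2k}$ for every $\sigma\in\sq(n)$.

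For the first factor, I would recall that $\irr$ is the union of two subsets of $\sq(n)$: the permutations whose anchor $z_0$ lies in $[1,\delta_n]\cup[n-\delta_n,n]$, and those whose associated pair of label sequences $(X,Y)$ fails at least one of the Petrov conditions of \cref{defn:petrov}. For the boundary-anchor part, the injection $\phi:\sq(n)\to\{U,D\}^n\times\{L,R\}^n\times[n]$ from \cref{sect:perm_to_anchored_seq} sends $\sigma$ to a good anchored pair; once $z_0$ is fixed, the number of good anchored pairs is at most $2^{n-2}\cdot 2^{n-2}=4^{n-2}$, so this contribution is bounded by $2\delta_n\cdot 4^{n-2}$. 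For the Petrov-defective part, the quantitative concentration estimates used to prove \cref{omega_size} (standard large-deviation bounds for simple random walks) give an exponentially small fraction of $(X,Y)\in\{U,D\}^n\times\{L,R\}^n$ violating the Petrov conditions, which is therefore $o(\delta_n\,4^n)$ since $\delta_n\geq n^{.9}$. Combining the two contributions yields $|\irr|\leq 2\delta_n\, 4^{n-2}$ for $n$ sufficiently large, with room to spare.

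For the second factor, fix $\sigma\in\sq(n)$ and build any element of $\asq(\sigma,k)$ by inserting its $k$ internal points one at a time. Each insertion into a permutation of current size $m\in\{n,n+1,\dots,n+k-1\}$ is determined by the pair (new row, new column) used for the inserted point, so it has at most $(m+1)^2\leq(n+k)^2\leq(2n)^2$ choices as soon as $k\leq n$. Since the $k!$ possible orderings of the insertions all yield the same final permutation (the external skeleton $\sigma$ is preserved throughout), we obtain
\begin{equation}
|\asq(\sigma,k)|\;\leq\;\frac{1}{k!}\prod_{i=0}^{k-1}(n+i+1)^2\;\leq\;\frac{1}{k!}(2n)^{2k}\;=\;\frac{1}{k!}\,2^{2k}n^{2k}.
\end{equation}
Multiplying this uniform estimate by the bound on $|\irr|$ and adjusting the constant gives the desired inequality $|\asq(\irr,k)|\leq \tfrac{1}{k!}\,2\,\delta_n\,4^n\,2^{2k}n^{2k}$.

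\textbf{Main obstacle.} The only delicate point is the exponential decay for the Petrov-defective part of $\irr$: one needs to confirm that the failure probability for the Petrov conditions on a uniform $(X,Y)\in\{U,D\}^n\times\{L,R\}^n$ is smaller than any polynomial, in particular smaller than $\delta_n/n$, so that this contribution is absorbed by the boundary-anchor estimate. This is routine (the Petrov conditions are a finite conjunction of deviations of sums of i.i.d.\ Bernoullis on scales $n^{.6}$ and $n^{.3}$, to which Hoeffding gives stretched-exponential bounds), but is the one place where the numerical constants in \cref{defn:petrov} actually matter.
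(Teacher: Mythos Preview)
Your proposal is correct and follows essentially the same approach as the paper (the lemma is quoted from the cited reference without proof here, but the surrounding discussion---in particular the bullet points after \cref{cor:asympt} identifying the factors $(2n)^{2k}$ as insertion choices and $k!$ as the overcount from orderings---confirms this is the intended argument). One small imprecision: your parenthetical ``the external skeleton $\sigma$ is preserved throughout'' is not quite right at intermediate stages (an internal point of $\tau$ may become a record once some of the other internal points are absent), but this is irrelevant to the counting, which only needs that each $\tau\in\asq(\sigma,k)$ arises from at least $k!$ distinct insertion sequences.
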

These results led to the desired asymptotic enumeration, and will also give the description of the typical shape of a large almost square permutation, as we will see in \cref{sect:almost_square_fixed}.

\subsection{321-avoiding permutations with internal points}\label{sect:321-fluct}

The approach used to study (almost) square permutations has a wide range of possible applications. For example, in Section \ref{sect:321av}, we apply our techniques to establish the asymptotic enumeration for permutations avoiding the pattern $321$ with $k$ additional internal points and to investigate some additional fluctuation results. We recall that the points of a $321$-avoiding permutation can be partitioned into two increasing subsequences, one weakly above the diagonal and one strictly below the diagonal (see \cref{premres_321}). Therefore $321$-avoiding permutations are particular instances of square permutations.

Let $\avnk$ denote the set of permutations avoiding the pattern $321$ with $n$ external points and $k$ additional internal points or, equivalently, the subset of permutations $\sigma$ in $\asqnk$ where the pattern induced by the external points of $\sigma$ is in $\avn(321)$. For fixed $k$, we showed in \cite[Theorem 2.2]{borga2019almost} that the generating function of these permutations is algebraic of degree 2, and more precisely rational in the Catalan generating series.  Explicit expressions are derived for $k=1,2,3$. For $k>3$, the computations to determine the generating function become intractable. Nevertheless, using our new probabilistic approach, we are able to compute the first order approximation of the enumeration.  

Let $c_n$ denote the $n$-th Catalan number, $c_n=\frac{1}{n+1}{2n\choose n}$, so that $|\avn(321)| = c_n$.    

\begin{thm}\label{thin red line}
	Fix $k>0$.  Then as $n\to\infty,$
	$$|\avnk|\sim \frac{(2n)^{3k/2}}{k!}\cdot c_n\cdot  \mathbb{E}\left[\left(\int_0^1\bm e_t dt\right)^k\right],$$
	where $\bm e_t $ denotes the standard Brownian excursion on the interval $[0,1]$.	
\end{thm}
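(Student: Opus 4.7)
Following the blueprint used for Theorems~\ref{approx_size} and~\ref{approx_size_2}, I would first partition $\avnk$ according to the $321$-avoiding skeleton $\tau\in\avn(321)$ formed by the external points, obtaining
\[
|\avnk| \;=\; \sum_{\tau\in\avn(321)} N(\tau,k),
\]
where $N(\tau,k)$ counts the number of ways of inserting $k$ additional points into $\tau$ so that the resulting permutation is in $\avnk$. The heart of the argument is to show $N(\tau,k) \approx A(\tau)^k/k!$, where $A(\tau)$ is the size of the \emph{insertable region} of $\tau$ -- the set of lattice positions at which inserting a single new point produces an admissible ($321$-avoiding) permutation whose new point is internal. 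This approximation should follow from inclusion-exclusion: two insertions only interact when they sit in a common small neighbourhood, so the cost of ignoring interactions is of order $k^2 A(\tau)^{k-1}/\sqrt n$, which is negligible for fixed $k$ on the typical event that $A(\tau)=\Theta(n^{3/2})$.

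The next step is to identify the limiting distribution of $A(\bm\tau_n)$ for $\bm\tau_n$ uniform in $\avn(321)$. Geometrically, the insertable region of a $321$-avoiding permutation is, up to boundary effects, the band between the two increasing staircases formed by the points weakly above and strictly below the diagonal; its cardinality is therefore essentially twice the area enclosed between the diagonal and the upper-staircase profile. Invoking the scaling limit result of Hoffman, Rizzolo, and Slivken~\cite{hoffman2017pattern,hoffman2017pattern_2}, namely that
\[
t\longmapsto\frac{\bm\tau_n(\lceil nt\rceil)-\lceil nt\rceil}{\sqrt{2n}},\qquad t\in[0,1],
\]
converges in distribution to a standard Brownian excursion $\bm e$ on $[0,1]$, a Riemann-sum argument then yields
\[
\frac{A(\bm\tau_n)}{(2n)^{3/2}} \;\xrightarrow{d}\; \int_0^1 \bm e_t\, dt,
\]
the prefactor $(2n)^{3/2}=2\sqrt 2\, n^{3/2}$ decomposing as ``$n$ horizontal units, $\sqrt{2n}$ vertical units (the Brownian excursion scaling), and a factor $2$ for the two sides of the diagonal''. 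Substituting back yields, heuristically,
\[
|\avnk| \;\sim\; \frac{c_n}{k!}\,\E\bigl[A(\bm\tau_n)^k\bigr] \;\sim\; \frac{c_n\,(2n)^{3k/2}}{k!}\,\E\Bigl[\Bigl(\textstyle\int_0^1 \bm e_t\,dt\Bigr)^{\!k}\Bigr].
\]

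The main technical obstacle, as is usual in moment-method arguments, is the upgrade from convergence in distribution to convergence of $k$-th moments of $A(\bm\tau_n)/(2n)^{3/2}$. Concretely, one needs to establish uniform integrability of $\{(A(\bm\tau_n)/(2n)^{3/2})^{k+\varepsilon}\}_{n\ge 1}$ for some $\varepsilon>0$; the natural route is to dominate $A(\bm\tau_n)$ by $C\cdot n\cdot\max_{i\in[n]}|\bm\tau_n(i)-i|$ and invoke known sub-Gaussian-type tails for the maximum deviation of a uniform $321$-avoiding permutation from the diagonal (transferring, via the scaling limit, from the analogous tails of $\sup_{t\in[0,1]}\bm e_t$). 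A secondary, more routine, difficulty is to restrict the outer summation to a high-probability ``regular'' set of $\tau$'s on which the approximation $N(\tau,k) = A(\tau)^k/k! \cdot (1+o(1))$ holds uniformly (in the spirit of \cref{lem:irrbound} for the square case) and to show that the complementary contribution is negligible.
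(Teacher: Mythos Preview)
Your proposal is correct and follows essentially the same route as the paper: decompose $|\avnk|$ over skeletons $\tau\in\avn(321)$, approximate the number of ways to add $k$ internal points to $\tau$ by $(\text{area})^k/k!$ on a regular set (the paper states this as Lemma~\ref{city maps}, with the regular set defined via Petrov conditions), invoke the Hoffman--Rizzolo--Slivken scaling limit for the area, and upgrade to moment convergence by bounding the area by $n\cdot\max_i|\tau(i)-i|$ and using uniform exponential moments of the Dyck-path maximum. The only cosmetic difference is that the paper works throughout with the normalized profile $F_{\tau_n}(t)=\tfrac{1}{\sqrt{2n}}|\tau_n(\lfloor nt\rfloor)-\lfloor nt\rfloor|$ rather than the raw insertable-region count $A(\tau)$, so that Lemma~\ref{city maps} directly reads $\tfrac{k!}{(2n)^{3k/2}}|\asq(\tau_n,k)|=\bigl(\int_0^1 F_{\tau_n}\bigr)^k+O(n^{-.1})$.
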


The proof of \cref{thin red line} is given in \cref{sect:321av_1}. The evaluation on the right-hand side of the $k$-th moment of the Brownian excursion area is derived in Section 2 of \cite{janson2007brownian} where the author shows that
$$\mathbb{E}\left[\left(\int_0^1\bm e_t dt\right)^k\right]=(36\sqrt{2})^{-k}\frac{2\sqrt{\pi}}{\Gamma((3k-1)/2)}\xi_k,$$ where $\xi_k$ satisfies the recurrence
\begin{equation}\label{svante constant}
	\xi_r = \frac{12r}{6r-1}\frac{\Gamma(3r+1/2)}{\Gamma(r+1/2)}- \sum_{j=1}^{r-1}{r \choose j} \frac{\Gamma(3j+1/2)}{\Gamma(j+1/2)}\xi_{r-j}, \qquad r\geq 1.
\end{equation}

The final result of this chapter is a generalization of Theorem 1.2 in \cite{hoffman2017pattern} where the authors proved that the points of a uniform random 321-avoiding permutation concentrate on the diagonal and the fluctuations of these points converge in distribution to a Brownian excursion. We are able to extend this result to uniform random permutations in $\avnk$ for any fixed $k\in\Z_{>0}$.
We define for a permutation $\tau^k_n\in \avnk$ (with the convention $\tau^k_n(0)=0$) and $t\in[0,1]$,
$$F_{\tau^k_n}(t) \coloneqq \frac{1}{\sqrt{2(n+k)}}\big |\tau^k_n(s(t)) - s(t) \big|,$$
where $s(t)=\max\left\{m\leq \lfloor (n+k)t \rfloor|\tau^k_n(m)\text{ is an external point}\right\}$. In words, the function $F_{\tau^k_n}(t)$ is interpolating only the external points of $\tau^k_n$ (once reflected above the diagonal), forgetting the internal ones. We also introduce the following notion of biased Brownian excursion.

\begin{defn} \label{def:kbiasedex}
	Let $k\in\Z_{>0}$. The \emph{$k$-biased Brownian excursion} $(\bm{e}^k_t)_{t\in[0,1]}$ is a random variable in the space of continuous functions $\mathcal C([0,1],\mathbb{R})$ with the following law: for every continuous bounded functional $G:\mathcal C([0,1],\mathbb{R})\to\R,$
	$$\E\left[G\left(\bm{e}^k_t\right)\right]=\mathbb{E}\left[\left(\int_0^1\bm e_t dt\right)^k\right]^{-1}\E\left[G(\bm{e}_t)\cdot\left( \int_0^1 \bm{e}_t dt\right) ^k\right],$$
	where $\bm{e}_t$ is the standard Brownian excursion on [0,1].
\end{defn}

\begin{thm}\label{thm:fluctuations_321}
	Fix $k>0$. Let $\bm{\tau}^k_n$ be a uniform random permutation in $\avnk$.  Then
	$$\left(F_{\bm{\tau}^k_n}(t)\right)_{t\in [0,1]} \stackrel{d}{\longrightarrow} \left(\bm{e}^k_t\right)_{t\in [0,1]},$$
	where the convergence holds in the space of right-continuous functions $D([0,1],\mathbb{R})$.
\end{thm}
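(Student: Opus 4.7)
The plan is to transfer the Brownian excursion limit for uniform 321-avoiding permutations (Theorem 1.2 of \cite{hoffman2017pattern}) to the setting with $k$ additional internal points via an absolute continuity (size-biasing) argument. First I would decompose the law of $\bm{\tau}^k_n$ according to the external shape $\mathrm{ext}(\bm{\tau}^k_n)\in \avn(321)$, writing, for each $\sigma\in\avn(321)$,
$$
\P\bigl(\mathrm{ext}(\bm{\tau}^k_n)=\sigma\bigr)=\frac{N(\sigma,k)}{|\avnk|},
$$
where $N(\sigma,k)$ denotes the number of permutations in $\avnk$ whose external shape is $\sigma$.

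The combinatorial heart of the argument is a pointwise refinement of the counting behind \cref{thin red line}: for $\sigma$ in an appropriate "regular" subset of $\avn(321)$ (analogous to $\rho(\Omega_n)$ in the square setting, adapted to 321-avoiders), each internal point can be inserted in approximately $2A_\sigma$ positions while preserving 321-avoidance, where $A_\sigma=\sum_{i=1}^n|\sigma(i)-i|$ is the area between the external curve and the diagonal. A symmetry argument among the $k!$ possible insertion orders would then yield
$$
N(\sigma,k)\sim \frac{(2A_\sigma)^k}{k!},
$$
uniformly over the regular set, with the complement contributing negligibly (an analogue of \cref{lem:irrbound}). Combining with $|\avnk|\sim\frac{(2n)^{3k/2}}{k!}c_n\,\E[(\int_0^1\bm{e}_t\,dt)^k]$ from \cref{thin red line} gives the absolute continuity
$$
c_n\cdot \P\bigl(\mathrm{ext}(\bm{\tau}^k_n)=\sigma\bigr)\sim \frac{\bigl(2A_\sigma/(2n)^{3/2}\bigr)^k}{\E\bigl[(\int_0^1\bm{e}_t\,dt)^k\bigr]},
$$
i.e., $\mathrm{ext}(\bm{\tau}^k_n)$ is asymptotically a uniform 321-avoider biased by the $k$-th power of the normalized area under its excursion.

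From \cite{hoffman2017pattern}, if $\bm{\sigma}_n$ is uniform in $\avn(321)$ then $F_{\bm{\sigma}_n}$ converges in distribution to the standard Brownian excursion $\bm{e}$; moreover the rescaled area $2A_{\bm{\sigma}_n}/(2n)^{3/2}$ is a continuous and uniformly $L^p$-bounded (for all $p$) functional of $F_{\bm{\sigma}_n}$ and converges jointly to $\int_0^1\bm{e}_t\,dt$. The absolute continuity relation above together with the definition of the $k$-biased excursion in \cref{def:kbiasedex} then yields $F_{\mathrm{ext}(\bm{\tau}^k_n)}\stackrel{d}{\longrightarrow}\bm{e}^k$. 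Finally, since $F_{\bm{\tau}^k_n}$ interpolates only external points and there are only $k$ internal points (so positions in $[n+k]$ differ from their counterparts in an external-only permutation by at most $k=O(1)$), one has $\sup_{t\in[0,1]}|F_{\bm{\tau}^k_n}(t)-F_{\mathrm{ext}(\bm{\tau}^k_n)}(t)|=O(1/\sqrt n)$, so the convergence transfers to $F_{\bm{\tau}^k_n}$ in $D([0,1],\R)$.

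The main obstacle will be establishing the pointwise asymptotic $N(\sigma,k)\sim (2A_\sigma)^k/k!$ uniformly on a "regular" subset of $\avn(321)$: one must identify the admissible insertion positions for internal points of a 321-avoider (points that lie strictly between the two increasing subsequences and do not create a 321 pattern with any pair of external points), verify that different insertions do not interfere up to negligible error, and check the uniform integrability of $(A_\sigma/n^{3/2})^k$ needed to pass the biasing to the limit. Everything else is a standard bias-to-bias convergence and a routine perturbation estimate.
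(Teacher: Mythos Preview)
Your proposal is correct and follows essentially the same approach as the paper: the key pointwise estimate $N(\sigma,k)\sim (2A_\sigma)^k/k!$ on a regular set is precisely Lemma~\ref{city maps} (stated there as $\tfrac{k!}{(2n)^{3k/2}}|\asq(\sigma,k)|=(\int_0^1 F_\sigma(t)\,dt)^k+O(n^{-.1})$ under Petrov conditions), the size-biasing computation is carried out verbatim in the proof, and the sup-norm transfer from $F_{\ext(\bm{\tau}^k_n)}$ to $F_{\bm{\tau}^k_n}$ is Lemma~\ref{lem:fluctuations}. The uniform integrability you flag is handled via the maximum-of-Dyck-path bound already used in the proof of \cref{thin red line}.
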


The proof of \cref{thm:fluctuations_321} can be found in \cref{sect:321_fluctu}.

\section{Permuton convergence}\label{sect:perm_conv_square}

\subsection{Square permutations}\label{sect:permuton_square}

In this section we consider random square permutations proving Item (a) of \cref{thm:phase_trans}.
There are a few main steps in establishing convergence in distribution in the permuton topology for uniform elements of $Sq(n).$  We already showed in \cref{square_is_rect} page \pageref{square_is_rect} that it suffices to consider only permutations $\sigma_n$ in $\rho(\Omega_n)$ (see the definition in \cref{sect:inverse_projection}).  Then we show in Lemma \ref{permuton_bounds} a uniform bound for the distance between the permuton $\mu_{\sigma_n}$ and $\mu^{z_n}$ with $z_n = \sigma_n^{-1}(1)/n$.  Finally we prove the main result.   

First we clarify some aspects of our candidate limiting permuton $\mu^{\zz^{(0)}}$. Let $z$ be a point in $[0,1]$.  Let $L_1$ and $L_4$ denote the line segments with slope $-1$ connecting $(0,z)$ to $(z,0)$ and $(1-z,1)$ to $(1,1-z)$, respectively.  Similarly let $L_2$ and $L_3$ denote the line segments with slope $1$ connecting $(0,z)$ to $(1-z,1)$ and $(z,0)$ to $(1,1-z)$, respectively.  The union of $L_1$, $L_2$, $L_3$ and $L_4$ forms a rectangle in $[0,1]^2.$  

For each of the line segments $L_i$ ($i=1,2,3$, or $4$) we will denote by $\mu^z_i$ a particular rescaled one dimensional Lebesgue measure: Let $\nu$ be the Lebesgue measure on $[0,1]$.  Let $S$ be a Borel measurable set on $[0,1]^2$.  For each $i$, let $S_i = S\cap L_i$.  Finally let $\pi_x(S_i)$ be the projection of $S_i$ onto the $x$-axis and $\pi_y(S_i)$ the projection onto the $y$-axis.  As each line has slope $1$ or $-1$, the measures of the projections satisfy $\nu(\pi_x(S_i)) = \nu(\pi_y(S_i)).$  For each $i=1,2,3,4$, define $\mu^z_i(S) := \frac{1}{2} \nu( \pi_x( S_i ) ) = \frac12 \nu( \pi_y(S_i)).$ Note that $\mu^z = \mu^z_1+\mu^z_2+\mu^z_3+\mu^z_4.$

\begin{lem}\label{permuton_bounds}
	For every $\sigma_n \in \rho(\Omega_n)$, let $z_n =\sigma_n^{-1}(1)/n$. Then for $n$ large enough
	$$\sup_{\sigma_n \in \rho(\Omega_n)} d_{\square}(\mu_{\sigma_n},\mu^{z_n}) < 400n^{-.4},$$
	where $d_{\square}$ denotes the distance on the set of permutons defined in \cref{eq:perm_dist} page \pageref{eq:perm_dist}.
\end{lem}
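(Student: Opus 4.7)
The permuton distance $d_{\square}$ is defined as a supremum of $|\mu(R)-\mu'(R)|$ over horizontal rectangles $R \subseteq [0,1]^2$, so the plan is to bound $|\mu_{\sigma_n}(R) - \mu^{z_n}(R)|$ uniformly in $R$ and $\sigma_n \in \rho(\Omega_n)$. The strategy rests on the geometric information provided by Lemma \ref{guiding light}: for any $\sigma_n \in \rho(\Omega_n)$, its points split into four subsequences $\Lambda_1, \Lambda_2, \Lambda_3, \Lambda_4$ (plus four corner points), and each point of $\Lambda_i$, once rescaled by $1/n$, lies within distance $10 n^{-.4}$ of the corresponding line segment $L_i$ of the limit rectangle encoded by $z_n = z_0/n$. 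I will therefore decompose both measures according to these four segments and argue segment by segment.

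First I would decompose $\mu_{\sigma_n} = \sum_{i=1}^4 \mu_{\sigma_n, i} + \mu_{\text{corners}}$, where $\mu_{\sigma_n, i}$ is the contribution to the permuton of the cells corresponding to points in $\Lambda_i$, and $\mu_{\text{corners}}$ consists of the four cells at the corners (with total mass $4/n$, thus negligible). On the other side one has $\mu^{z_n} = \sum_{i=1}^4 \mu^{z_n}_i$ by construction. The main claim to reduce to is
\begin{equation}
|\mu_{\sigma_n, i}(R) - \mu^{z_n}_i(R)| \leq C \cdot n^{-.4}, \qquad i = 1, 2, 3, 4,
\end{equation}
uniformly in $R$, for some absolute constant $C$. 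Summing over the four indices and adding the corner term would yield the bound $400 n^{-.4}$ after careful constant tracking.

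I will establish the main claim for $i = 1$; the other three cases follow by symmetry. Parametrize $\Lambda_1$ by $j \in \{1, \ldots, \ctd(z_0)\}$, with the $j$-th point being $(\pd(j), \pl(\ctd(z_0) + 1 - j))$. Lemma \ref{guiding light} implies that its rescaled position $(s, t) = (\pd(j)/n, \pl(\ctd(z_0) + 1 - j)/n)$ satisfies $|s + t - z_n| < 10 n^{-.4}$, so the $y$-constraint $t \in (c,d)$ may be replaced by the $x$-constraint $s \in (z_n - d, z_n - c)$ up to an error band of width $20 n^{-.4}$. The number of $\Lambda_1$ points in $R$ thus equals the number of indices $j$ with $\pd(j)$ landing in an integer interval $I_n \subseteq [0, z_0]$ whose rescaled length agrees with the $x$-projection of $L_1 \cap R$ up to an additive $O(n^{-.4})$. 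By Petrov conditions 3, 5 and 6, the sequence $(\pd(j))_j$ is within $O(n^{.6})$ of the arithmetic sequence $2j$ on every scale, so the count of such $j$ equals $|I_n|/2 + O(n^{.6})$. Dividing by $n$ and recalling that $\mu^{z_n}_1(R) = \tfrac12 \cdot (\text{length of the } x\text{-projection of } L_1 \cap R)$ yields the claimed bound for $i=1$.

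The main obstacle is not conceptual but a matter of careful bookkeeping: one must track the error contributions from (i) the $O(n^{-.4})$ thickness of the band around each $L_i$, (ii) the Petrov fluctuations of the counting functions $\ctd$ and $\pd$, and (iii) the boundary effects where $R$ cuts a segment $L_i$ obliquely, especially near the corners of the limit rectangle where two segments meet and a point could a priori be counted twice or missed. Each source contributes $O(n^{-.4})$, and when all constants (the factor $10$ in Lemma \ref{guiding light}, the factors appearing in the Petrov definitions, the factor $4$ from summing over segments, and the negligible corner correction) are accounted for, the final bound comes out strictly below $400 n^{-.4}$ for $n$ large enough, uniformly in $\sigma_n \in \rho(\Omega_n)$.
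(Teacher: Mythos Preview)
Your proposal is correct and follows essentially the same route as the paper: decompose both $\mu_{\sigma_n}$ and $\mu^{z_n}$ into four pieces according to the $\Lambda_i$ and the segments $L_i$, then for each $i$ use Lemma~\ref{guiding light} together with the Petrov conditions to show $|\mu_{\sigma_n,i}(R)-\mu^{z_n}_i(R)|<100\,n^{-.4}$ uniformly in $R$, and sum. The only cosmetic difference is that the paper counts via $\ctd$ applied to the leftmost and rightmost points of $\Lambda_1\cap nR$, whereas you count indices $j$ with $\pd(j)$ in an interval; also note that the four $\Lambda_i$ already partition all points of $\sigma_n$, so there is no separate corner term, but this is harmless.
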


\begin{proof}
	Fix $\sigma_n \in \rho(\Omega_n)$ and $R= (a,b)\times(c,d) \subset [0,1]^2$.  The permutation $\sigma_n$ consists of four disjoint sets of points $\Lambda_i$ for $i=1,2,3$, and $4$ (see the discussion before \cref{guiding light} page \pageref{guiding light}).  For each of these sets of points we define the measure $\lambda_i$ on $[0,1]^2$ as 
	$$\lambda_i := \frac{1}{n}\sum_{(i,j) \in \Lambda_i} \Leb\left([(i-1)/n,i/n)\times[(j-1)/n,j/n)\right).$$
	
	Noting that $\mu_{\sigma_n}=\lambda_1+\lambda_2+\lambda_3+\lambda_4$ we have the bound $|\mu_{\sigma_n}(R) - \mu^{z_n}(R)| \leq \sum_{i=1}^4 |\lambda_i(R)- \mu^{z_n}_i(R)|.$
	We will show explicitly that $|\lambda_1(R) - \mu^{z_n}_1(R)| < 100n^{-.4}$.  Similar arguments show the same bound for $i=2,3,4.$    
	
	Recall that $L_1$ is the line connecting $(0,z_n)$ and $(z_n,0)$.  Let $\ell$ denote the line segment given by $R\cap L_1$ (that we assume non-empty) with end points at $(x_1,y_1)$ and $(x_2,y_2)$ where $x_1\leq  x_2$ and $y_1 \geq y_2.$  These endpoints satisfy $x_1+y_1 = x_2 + y_2 = z_n$.  By this construction we have $\mu^{z_n}_1(R) = \frac{1}{2}(x_2-x_1).$ 
	
	Let $(s_1,t_1)$ be the leftmost point in $\Lambda_1 \cap nR$ and $(s_2,t_2)$ the rightmost point.  The total number of points in $\Lambda_1\cap nR$ is given by $\ctd(s_2) - \ctd(s_1) +1$. Therefore $\lambda_1(R)=\tfrac{1}{n}(\ctd(s_2) - \ctd(s_1) +1)\pm\frac{\varepsilon}{n},$ with $\varepsilon\leq2$ (the error term comes form the first and last area measures) and so $$|\lambda_1(R)-\mu^{z_n}_1(R)|<\left|\frac{1}{n}(\ctd(s_1)-1) - \frac{1}{2} x_1 \right|+\left|\frac{1}{n}\ctd(s_2) - \frac{1}{2} x_2 \right|+\frac{\varepsilon}{n}.$$
	By \cref{guiding light} page \pageref{guiding light}, the points of $\Lambda_1$ must lie between the lines $nL_1^-$ and $nL_1^+$ given by the equations  $x+y = nz_n \pm 20n^{.6}$ (\emph{cf.}\ Fig.~\ref{schema_for_proof}).
	
	Suppose $L_1$ exits $(a,b)\times(c,d)$ from the top so that $y_1 = d$.  Then points in $\Lambda_1$ with first coordinate in the interval $[nx_1-40n^{.6}, nx_1-20n^{.6}]$ must lie above the line $y = nd$.  Similarly points in $\Lambda_1$ in the interval $[nx_1+20n^{.6},nx_1 + 40n^{.6}]$ must lie below the line $y=nd$.  By \cref{petrov_often} page \pageref{petrov_often} there is at least one point in $\Lambda_1$ with $x$-coordinate in each of these intervals.  Thus the leftmost point $(s_1,t_1)$ must have $s_1$ in the interval $[nx_1 - 40n^{.6}, nx_1 + 40n^{.6}].$  This combined with the Petrov conditions shows that 
		$$\left|\ctd(s_1) - \frac{1}{2}nx_1\right| < \left|\ctd(s_1) - \frac{1}{2}s_1\right| + \left|\frac{1}{2}(s_1 -  nx_1)\right|< n^{.6} + 40n^{.6}$$
	and thus $\left |\frac{1}{n}(\ctd(s_1)-1) - \frac{1}{2} x_1 \right | < 50n^{-.4}$.

	\begin{figure}[htbp]
		\begin{minipage}[c]{0.59\textwidth}
			\centering
			\includegraphics[scale=.6]{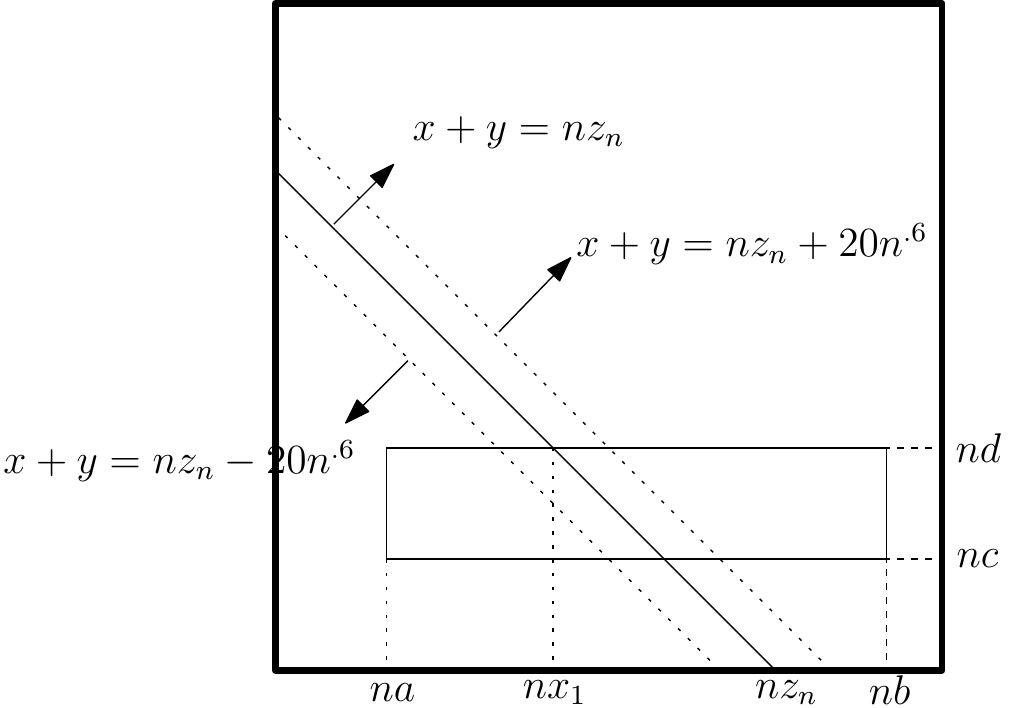}
		\end{minipage}
		\begin{minipage}[c]{0.4\textwidth}
			\caption{A diagram for the proof of Lemma \ref{permuton_bounds}.\label{schema_for_proof}} 
		\end{minipage}
	\end{figure}

	If $L_1$ exits $nR$ on the left so that $x_1 = a$, then a similar argument leads to the same conclusion. Likewise we can show that $\left |\frac{1}{n}(\ctd(s_2)) - \frac{1}{2} x_2 \right | < 50n^{-.4}$ and thus 
	$\left |\lambda_1 (R) -  \mu_1^{z_n}(R)\right| < 100n^{-.4}.$ 
	Similarly, for each $i=2,3,4$, $|\lambda_i(R) -  \mu^{z_n}_i(R)| < 100n^{-.4}$ and thus
	$\left |\mu_{\sigma_n}(R) - \mu^{z_n}(R)\right| < 400n^{-.4}.$
	This bound is uniform over all $\sigma_n \in \rho(\Omega_n)$ and $R\in [0,1]^2$ and so concludes the proof.  
\end{proof}

We now prove that a uniform square permutation $\bm{\sigma}_n$ converges to the permuton $\mu^{\zz}$, for $\zz=\zz^{(0)}$ uniformly random on $(0,1)$, that is Item (a) in \cref{thm:phase_trans}.

\begin{proof}[Proof of Item (a) in \cref{thm:phase_trans}]
	By \cref{square_is_rect} page \pageref{square_is_rect} it suffices to only consider permutations chosen uniformly from $\rho(\Omega_n)$ when showing the distributional limit of $\mu_{\bm{\sigma}_n}.$  
	
	Let ${\zz}_n = \bm{\sigma}_n^{-1}(1)/n$.  Since $\bm{\sigma}_n$ is uniform in $\rho(\Omega_n)$ then $\zz_n$ is uniform in $(n^{-.1},1-n^{-.1})$ and so converges in distribution to $\zz$. The map $z\to\mu^z$ is continuous as a function from $(0,1)$ to $\mathcal{M}$, and thus $\mu^{\zz_n}$ converges in distribution to $\mu^{\zz}.$
	By Lemma \ref{permuton_bounds}, we also have that $d_{\square}(\mu_{\bm{\sigma}_n},\mu^{\zz_n})$ converges almost surely to zero. Therefore, combining these results,  we can conclude that $\mu_{\bm{\sigma}_n}$ converges in distribution to $\mu^{\zz}.$
\end{proof}

\subsection{Almost square permutations with few internal points}\label{sect:almost_square_fixed}

We turn to the proof of Item (b) in \cref{thm:phase_trans}.
Before doing that we need two technical lemmas. The first one follows using arguments similar to the ones used for \cref{permuton_bounds}.

\begin{lem}[{\cite[Lemma 6.2]{borga2019almost}}]\label{lem:nochanges2}
	Let $\sigma$ be a permutation of size $n$ and $\sigma'$ be a permutation obtained from $\sigma$ by adding a point (not necessarily internal) to the diagram of $\sigma$. Then
	$d_{\square}(\mu_{\sigma},\mu_{\sigma'})\leq\frac 6 n.$
\end{lem}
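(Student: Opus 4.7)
My approach will be to construct an explicit transport map between $\mu_{\sigma}$ and $\mu_{\sigma'}$ that carries each unit square of $\sigma$'s diagram to the corresponding (shifted and rescaled) unit square of $\sigma'$'s diagram, and then to estimate how much a rectangle's measure changes under this transport.

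Assume $\sigma'$ is obtained from $\sigma$ by appending a new point at column $k+1$ with value $v$ (shifting columns and values as needed), for some $k\in\{0,\dots,n\}$ and $v\in\{1,\dots,n+1\}$. First I would introduce the piecewise-affine map $\Psi:[0,1]^2\to[0,1]^2$ defined by
\[
\Psi(x,y)=\left(\tfrac{n}{n+1}x+\tfrac{1}{n+1}\mathds{1}_{\{x>k/n\}}\;,\;\tfrac{n}{n+1}y+\tfrac{1}{n+1}\mathds{1}_{\{y>(v-1)/n\}}\right).
\]
A direct check shows that, for each $i\in[n]$, $\Psi$ maps the square $S_i=[(i-1)/n,i/n]\times[(\sigma(i)-1)/n,\sigma(i)/n]$ bijectively onto the square $S'_{i'}$ of $\sigma'$ corresponding (under the insertion) to the point $(i,\sigma(i))$. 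Comparing the uniform densities on these $n$ squares gives the decomposition
\[
\mu_{\sigma'}=\tfrac{n}{n+1}\,\Psi_*\mu_{\sigma}+\tfrac{1}{n+1}\,\mathcal U_{S'_{k+1}},
\]
where $\mathcal U_{S'_{k+1}}$ denotes the uniform probability measure on the square $S'_{k+1}$ corresponding to the new point.

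Next, for any rectangle $R=(a,b)\times(c,d)\subseteq[0,1]^2$, I would write
\[
\mu_{\sigma'}(R)-\mu_{\sigma}(R)=\tfrac{n}{n+1}\mu_{\sigma}(\Psi^{-1}(R))-\mu_{\sigma}(R)+\tfrac{1}{n+1}\mathcal U_{S'_{k+1}}(R),
\]
and bound each summand. The last term is trivially at most $1/(n+1)$. For the first two terms, observe that $\Psi^{-1}(R)$ coincides (up to a $\mu_\sigma$-null set) with a rectangle $R'=(a',b')\times(c',d')$, whose coordinates are obtained by inverting each piece of $\Psi$ (with the convention $a'=k/n$ when $a$ lies in the excluded strip $(k/(n+1),(k+1)/(n+1))$, and analogously for $c'$). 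A short case analysis of the three regimes for $a$ gives $|a-a'|\le 1/n$, and similarly for the other three coordinates. Since $R\triangle R'$ is therefore contained in at most four strips of width $1/n$, and since $\mu_{\sigma}$ has uniform marginals (so that any such strip has $\mu_\sigma$-mass at most $1/n$), we get $|\mu_\sigma(R)-\mu_\sigma(R')|\le 4/n$. Combined with $|\tfrac{n}{n+1}\mu_\sigma(R')-\mu_\sigma(R')|\le \mu_\sigma(R')/(n+1)\le 1/(n+1)$, this yields
\[
|\mu_{\sigma'}(R)-\mu_{\sigma}(R)|\le \tfrac{4}{n}+\tfrac{2}{n+1}\le\tfrac{6}{n},
\]
and taking the supremum over $R$ gives the claim.

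The main obstacle is the careful definition of $\Psi$ and of the preimage rectangle $R'$ when the boundaries of $R$ fall inside the excluded vertical or horizontal strips of the image of $\Psi$; in those cases $\Psi^{-1}(R)$ is not literally a rectangle, and one has to verify that replacing it by the rectangle $R'$ introduces only a $\mu_\sigma$-null discrepancy and preserves the bound $|a-a'|\le 1/n$. Once this bookkeeping is handled, the remainder of the argument reduces to the two elementary estimates above.
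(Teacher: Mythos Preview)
Your argument is correct. The transport map $\Psi$ is well-defined, the decomposition $\mu_{\sigma'}=\tfrac{n}{n+1}\Psi_*\mu_\sigma+\tfrac{1}{n+1}\mathcal U_{S'_{k+1}}$ holds, and your case analysis for $|a-a'|\le 1/n$ (including the boundary case where $a$ lands in the excluded strip) goes through; the final chain of inequalities is clean.

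The thesis itself does not spell out a proof here: it cites the external reference and only remarks that the argument is ``similar to \cref{permuton_bounds}''. That lemma proceeds by fixing a rectangle $R$ and directly estimating, for each of a small number of point collections, how many points of the (rescaled) diagram land inside $nR$, using the uniform-marginal property to control boundary effects. Your approach is organized differently: rather than counting points directly, you package the column/row shift caused by the insertion into a single piecewise-affine transport map and reduce the whole comparison to $\mu_\sigma(R)$ versus $\mu_\sigma(R')$ for a perturbed rectangle $R'$. This is arguably tidier for a one-point insertion, since it makes explicit that the only distortion comes from a rescaling by $n/(n+1)$ plus a single strip of width $1/(n+1)$ in each coordinate; the direct point-counting route would arrive at the same four-strip bound but with more bookkeeping about which points cross the boundary of $nR$ after the shift.
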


In \cref{permuton_bounds} we showed that for $\sigma_n\in \rho(\Omega_n)\stackrel{\eqref{eq:equall_bet_sets}}{=}Sq(n)\setminus\irr$ the permutons $\mu_{\sigma_n}$ and $\mu^{z_n}$ with $z_n = \sigma_n^{-1}(1)/n$ have distance $d_{\square}(\mu_{\sigma_n},\mu^{z_n})$ that tends to zero as $n$ tends to infinity, uniformly over all choices of $\sigma_n$. We prove here that the same result holds for permutations in $ASq(Sq(n)\setminus\irr,k)$ whenever $k=o(n)$.

\begin{lem} \label{lem:nochanges}
	Let $k=o(n)$. For every $\sigma_n \in ASq(Sq(n)\setminus\irr,k)$, let $z_n =\sigma_n^{-1}(1)/n$. The following limit holds
	\begin{equation}
		\sup_{\sigma_n\in ASq(Sq(n)\setminus\irr,k)}d_{\square}(\mu_{\sigma_n},\mu^{z_n})\to 0,
	\end{equation}
	where $d_{\square}$ denotes the distance on the set of permutons defined in \cref{eq:perm_dist} page \pageref{eq:perm_dist}.
\end{lem}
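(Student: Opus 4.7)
The plan is to reduce this to the known estimate for square permutations (\cref{permuton_bounds}) via the cost of inserting $k$ points controlled by \cref{lem:nochanges2}, plus a continuity estimate for the map $z\mapsto \mu^z$.

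First, given $\sigma_n\in ASq(Sq(n)\setminus\irr,k)$, let $\tau_n\in Sq(n)$ denote the permutation induced by the external points of $\sigma_n$, so that by definition $\tau_n\in Sq(n)\setminus\irr=\rho(\Omega_n)$. Set $\tilde z_n\coloneqq\tau_n^{-1}(1)/n$. \cref{permuton_bounds} yields, uniformly in $\tau_n$,
\begin{equation}
d_\square(\mu_{\tau_n},\mu^{\tilde z_n})<400\,n^{-.4}.
\end{equation}

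Next, we bound $d_\square(\mu_{\sigma_n},\mu_{\tau_n})$. The permutation $\sigma_n$ is obtained from $\tau_n$ by inserting $k$ (internal) points one at a time, producing a sequence $\tau_n=\pi_0,\pi_1,\dots,\pi_k=\sigma_n$ with $|\pi_j|=n+j$. Iterating \cref{lem:nochanges2} gives
\begin{equation}
d_\square(\mu_{\sigma_n},\mu_{\tau_n})\le\sum_{j=0}^{k-1}\frac{6}{n+j}\le\frac{6k}{n}=o(1),
\end{equation}
uniformly over $\sigma_n$, since $k=o(n)$.

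Finally, we compare $\mu^{\tilde z_n}$ with $\mu^{z_n}$, where $z_n=\sigma_n^{-1}(1)/n$. The value $1$ of $\sigma_n$ is necessarily carried by an external point (it is a minimum), so $\tau_n^{-1}(1)$ equals the rank of this point among external points from left to right, whereas $\sigma_n^{-1}(1)$ is its position among all $n+k$ entries of $\sigma_n$. The two positions differ by the number of internal points placed to its left, hence
\begin{equation}
|z_n-\tilde z_n|\le\frac{k}{n}=o(1).
\end{equation}
A direct inspection of the definition of $\mu^z$ (see \cref{defn:square_perm}) shows that $z\mapsto\mu^z$ is Lipschitz with respect to $d_\square$ on $(0,1)$ (for instance, moving $z$ by $\eps$ shifts the supporting rectangle by $\eps$, which changes the measure of any rectangle $R\subset[0,1]^2$ by $O(\eps)$), so that $d_\square(\mu^{\tilde z_n},\mu^{z_n})=o(1)$ uniformly in $\sigma_n$.

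Combining the three estimates through the triangle inequality
\begin{equation}
d_\square(\mu_{\sigma_n},\mu^{z_n})\le d_\square(\mu_{\sigma_n},\mu_{\tau_n})+d_\square(\mu_{\tau_n},\mu^{\tilde z_n})+d_\square(\mu^{\tilde z_n},\mu^{z_n})
\end{equation}
yields the desired uniform convergence to $0$. The main (but mild) point to check carefully is the Lipschitz estimate for $z\mapsto\mu^z$, which is straightforward from the explicit description of the four supporting segments of $\mu^z$; all other ingredients are already established in the excerpt.
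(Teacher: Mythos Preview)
Your proof is correct and follows essentially the same strategy as the paper: triangle-inequality through $\mu_{\ext(\sigma_n)}$, then control one piece via iterated application of \cref{lem:nochanges2} and the other via \cref{permuton_bounds}. You are in fact slightly more careful than the paper's presentation: the paper writes the second piece directly as $\sup_{\sigma_n\in Sq(n)\setminus\irr}d_{\square}(\mu_{\sigma_n},\mu^{z_n})$ and invokes \cref{permuton_bounds}, tacitly identifying the anchor of $\sigma_n$ with that of $\ext(\sigma_n)$, whereas you explicitly distinguish $z_n=\sigma_n^{-1}(1)/n$ from $\tilde z_n=\ext(\sigma_n)^{-1}(1)/n$, bound $|z_n-\tilde z_n|\le k/n$, and close with a Lipschitz estimate for $z\mapsto\mu^z$.
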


\begin{proof} We denote by $\ext(\sigma_n)$ the permutation obtained from $\sigma_n$ removing all its internal points.
	We have the following bound for every $\sigma_n\in ASq(Sq(n)\setminus\irr,k)$
	$$d_{\square}(\mu_{\sigma_n},\mu^{z_n})\leq d_{\square}(\mu_{\sigma_n},\mu_{\ext(\sigma_n)})+d_{\square}(\mu_{\ext(\sigma_n)},\mu^{z_n})$$
	that translates into
	$$\sup_{\sigma_n\in ASq(Sq(n)\setminus\irr,k)}d_{\square}(\mu_{\sigma_n},\mu^{z_n})\leq \sup_{\sigma_n\in ASq(Sq(n)\setminus\irr,k)}d_{\square}(\mu_{\sigma_n},\mu_{\ext(\sigma_n)})+\sup_{\sigma_n\in Sq(n)\setminus\irr}d_{\square}(\mu_{\sigma_n},\mu^{z_n}).$$
	The second term in the right-hand side of the above equation tends to zero thanks to the aforementioned \cref{permuton_bounds}. The first term tends to zero because the addition of $k=o(n)$ internal points cannot modify the permuton limit of a sequence of permutations (see \cref{lem:nochanges2}).
\end{proof}

We can now prove the main result of this section, that is, if $k>0$ is fixed and $\bm\sigma_n$ is uniform in $\asqnk$, then $\mu_{\bm{\sigma}_n} \stackrel{d}{\longrightarrow} \mu^{\zz^{(k)}}$ as $n\to \infty$.

\begin{proof}[Proof of Item (b) in \cref{thm:phase_trans}]
	With the asymptotic formula for the cardinality of $\asqnk$ (obtained in Theorem \ref{approx_size}) and \cref{cor:asympt}  we can determine the distribution of the value of $\bm \sigma^{-1}_n(1)$, for a uniform permutation $\bm{\sigma}_n$ in $\asqnk$ when $k$ is fixed. Specifically,
	\begin{multline}\label{coral}
		\P(\bm{\sigma}_n^{-1}(1) \leq ns )=\frac{\#\{\sigma\in\asqnk:{\sigma}^{-1}(1) \leq ns \}}{|\asqnk|}\\
		\sim \frac{ \frac{1}{k!}2^{k+1}n^{2k+1}4^{n-3}\int_0^s (t(1-t))^kdt}{\frac{k!}{(2k+1)!}2^{k+1}n^{2k+1}4^{n-3}} =  (2k+1){2k \choose k}\int_{0}^s(t(1-t))^kdt,
	\end{multline}
	where we used the fact that the contribution to the numerator of permutations in $\irr$ is negligible w.r.t. the cardinality of $\asqnk$ (see \cref{lem:irrbound}). Therefore $\zz_n=\frac{\bm{\sigma}_n^{-1}(1)}{n}\stackrel{d}{\longrightarrow}\zz^{(k)}.$ 
	
	The map $z\to\mu^z$ is continuous as a function from $(0,1)$ to $\mathcal{M}$, and thus $\mu^{\zz_n}$ converges in distribution to $\mu^{\zz^{(k)}}.$ By Lemma \ref{lem:nochanges}, and again the fact that $|\irr|$ is negligible w.r.t. $|\asqnk|$, we also have that $d_{\square}(\mu_{\bm{\sigma}_n},\mu^{\zz_n})$ converges almost surely to zero. Therefore, combining these results,  we can conclude that $\mu_{\bm{\sigma}_n}$ converges in distribution to $\mu^{\zz^{(k)}}.$
\end{proof}

The proof of Item (c) in \cref{thm:phase_trans} follows -- in the same manner as Item (b) -- from the following result, that is a consequence of \cref{approx_size_2}.

\begin{lem}[{\cite[Lemma 6.4]{borga2019almost}}]\label{lemm:conc_result}
	Let $k=o(n)$ and assume that $k\to\infty$. Then for $\bm{\sigma}_n$ uniform in $\asqnk$ it holds 
	$$\frac{\bm{\sigma}_n^{-1}(1)}{n}\stackrel{d}{\longrightarrow}1/2.$$
\end{lem}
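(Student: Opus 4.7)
\textbf{Proof plan for \cref{lemm:conc_result}.} The strategy is to mimic the derivation of the Beta$(k{+}1,k{+}1)$ limit in \cref{coral} but, since here $k=k_n$ is allowed to grow as fast as $o(n)$ rather than $o(\sqrt n)$, to replace the exact equivalent of \cref{cor:asympt} by the weaker logarithmic asymptotic of \cref{approx_size_2}. The required concentration at $1/2$ will then come from the fact that $(t(1-t))^k$ is exponentially (in $k$) smaller away from $t=1/2$ than at $t=1/2$.

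First, I would extend \cref{approx_size_2} to partial sums. Denote, for $0\le a<b\le 1$,
\begin{equation*}
N(a,b;n,k)\coloneqq\#\{\sigma\in \asq(n,k)\,:\,\sigma^{-1}(1)\in (an,bn)\}.
\end{equation*}
Running the same argument as in the proof of \cref{approx_size_2}, but restricting the summation over the anchor $z_0$ to the interval $(an,bn)$ rather than $(\delta_n,n-\delta_n)$, and using \cref{lem:irrbound} to discard the $\irr$ contribution, one obtains
\begin{equation*}
\log N(a,b;n,k)=\log\!\left(\frac{2^{k+1}n^{2k+1}4^{n-3}}{k!}\int_a^b (t(1-t))^k\,dt\right)+o(k),
\end{equation*}
where the $o(k)$ error is uniform in $a,b$. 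This is the technical heart of the argument and the main obstacle: one must verify that the same multiplicative error controls appearing in \cref{approx_size_2} survive when the summation range shrinks, using that $\int_a^b(t(1-t))^k dt$ remains of order at least $4^{-k}/\mathrm{poly}(k)$ as long as the interval contains $1/2$, and is exponentially smaller otherwise.

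Once this is in hand, fix $\epsilon>0$ and write
\begin{equation*}
\P\!\left(\left|\tfrac{\bm\sigma_n^{-1}(1)}{n}-\tfrac12\right|>\epsilon\right)=\frac{N(0,\tfrac12-\epsilon;n,k)+N(\tfrac12+\epsilon,1;n,k)}{|\asq(n,k)|}.
\end{equation*}
Taking logarithms, applying the extended asymptotic to numerator and denominator, and using the elementary bounds
\begin{equation*}
\int_0^{\frac12-\epsilon}\!(t(1-t))^kdt+\int_{\frac12+\epsilon}^1\!(t(1-t))^kdt\;\le\;(\tfrac14-\epsilon^2)^k,
\end{equation*}
together with Stirling's estimate
\begin{equation*}
\int_0^1\!(t(1-t))^kdt=B(k{+}1,k{+}1)\sim\frac{\sqrt{\pi}}{2\sqrt{k}\,4^k},
\end{equation*}
the above probability satisfies
\begin{equation*}
\log\P\!\left(\left|\tfrac{\bm\sigma_n^{-1}(1)}{n}-\tfrac12\right|>\epsilon\right)\le k\log(1-4\epsilon^2)+O(\log k)+o(k).
\end{equation*}
Since $\log(1-4\epsilon^2)<0$ and $k\to\infty$, the dominant term $-c_\epsilon k$ absorbs the $o(k)$ and $O(\log k)$ contributions, so the right-hand side tends to $-\infty$. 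Hence $\P(|\bm\sigma_n^{-1}(1)/n-1/2|>\epsilon)\to 0$ for every $\epsilon>0$, which gives $\bm\sigma_n^{-1}(1)/n\xrightarrow{P}1/2$, and in particular the claimed convergence in distribution.
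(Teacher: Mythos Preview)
Your plan is the natural one and matches what the paper indicates: the text says \cref{lemm:conc_result} is a consequence of \cref{approx_size_2}, and the proof itself is deferred to \cite{borga2019almost}, so there is no argument in the paper to compare against beyond that hint. Your route---extend the logarithmic asymptotic to sums over a restricted anchor range, then use that $(t(1-t))^k$ concentrates exponentially at $t=1/2$---is exactly how one extracts the concentration statement from \cref{approx_size_2}, and the endgame (Beta integral bounds, Stirling, and $k\log(1-4\epsilon^2)+o(k)\to-\infty$) is clean.

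One genuine concern on the step you flag as the obstacle. Invoking \cref{lem:irrbound} verbatim to discard the $\irr$ contribution does not work over the full range $k=o(n)$: comparing $|\asq(\irr,k)|\le \tfrac{2\delta_n 4^n(2n)^{2k}}{k!}$ with the lower bound from \cref{approx_size_2} gives a ratio of order $(\delta_n/n)\,8^k$ (up to $e^{o(k)}$ factors), which diverges once $k$ exceeds logarithmic order in $n$. So the crude $(2n)^2$-per-point insertion bound behind \cref{lem:irrbound} is too loose here; what you actually need is the finer per-anchor upper bound used inside the proof of \cref{approx_size_2} in \cite{borga2019almost} (essentially bounding the number of admissible internal insertions by the true area $\approx 2z_0(n-z_0)$, with multiplicative error $e^{o(k)}$), applied uniformly over anchors. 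That refinement is precisely what makes \cref{approx_size_2} itself hold for $k=o(n)$, and once you have it the restricted-range version follows in the same way. Your identification of this as ``the technical heart'' is accurate---just be aware that \cref{lem:irrbound} alone does not close the gap.
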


\subsection{Almost square permutations with many internal points}\label{sect:strat_conj}

We now give an informal and partial explanation on why \cref{conj:phase_trans} should be true. The strategy illustrated in this section was found after some discussions with A.\ Sportiello and E.\ Slivken.

Instead of studying almost square permutations, we restrict our analysis to permutations with a fixed proportion of left-to-right minima (and we think at points that are not left-to-right minima as internal points).

Let $\bm\sigma_n^k$ be a uniform permutation of size $m=n+k$, with exactly $n$ left-to right minima (and $k$ internal points). We explain here why the permuton limit of $\bm\sigma_n^k$ should be the following.

\begin{defn}\label{defn:circ_perm_corner}
	Given $\alpha\in \R_{>0}$, we denote by $\nu^{\alpha}_*$ the permuton corresponding to the red shape in \cref{fig:circ_perm_corner} where:
	\begin{itemize}
		\item The bottom-left boundary of the red shape is parametrized by the curve 
		\begin{equation}
			\Gamma(t)=\left(\frac{\gamma^t-1}{\gamma-1},\frac{{\gamma}^{1-t}-1}{\gamma-1}\right),\qquad t\in[0,1],
		\end{equation}	
		where $\gamma=\gamma(\alpha)$ solves $\frac{1}{1+\alpha}=\frac{1}{\gamma-1}\log(\gamma)$. 
		\item There is Lebesgue measure of total mass $\frac{\alpha}{\alpha+1}$ in the interior of the red shape.
		\item There is the unique measure $\rho$ supported on $\Gamma$ of total mass $\frac{1}{\alpha+1}$ satisfying
		\begin{equation}
			\rho(\Gamma([0,\log_\gamma(t(\gamma-1)+1)]))=t-\frac{\alpha}{\alpha+1}A_t,\quad\text{for all}\quad t\in[0,1],
		\end{equation}
	    where $A_t$ denotes the area of the points $(x,y)$ above the curve $\Gamma$ and with $x\leq t$.
	\end{itemize}
\end{defn}

\begin{figure}[htbp]
	\begin{minipage}[c]{0.5\textwidth}
		\centering
		\includegraphics[scale=0.4]{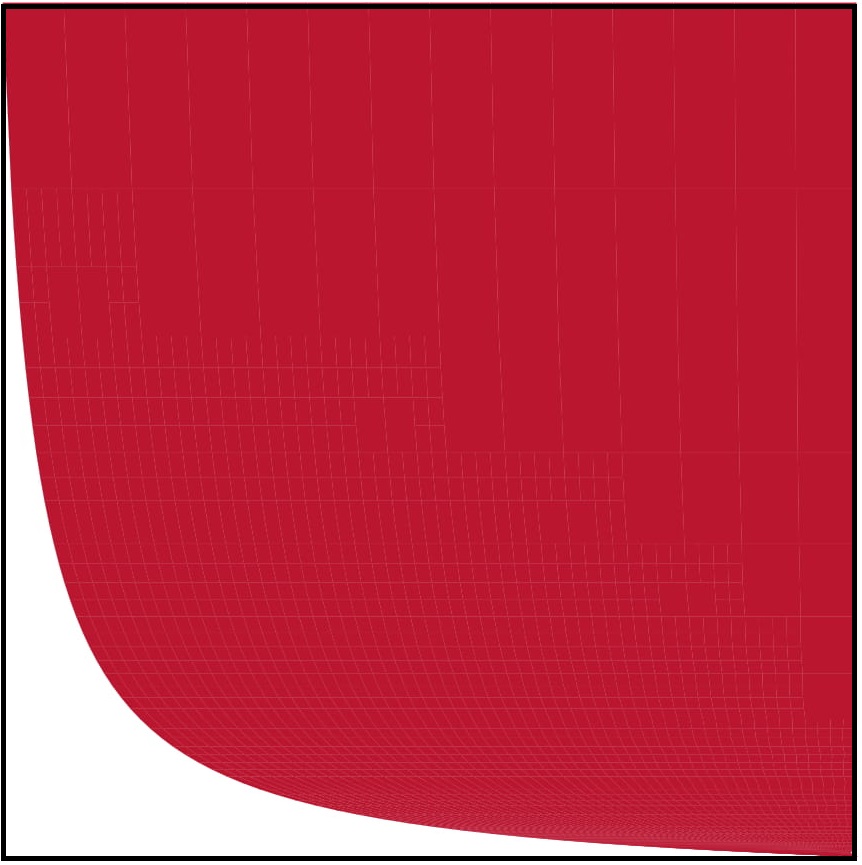}
	\end{minipage}
	\begin{minipage}[c]{0.49\textwidth}
		\caption{A schema for \cref{defn:circ_perm_corner}.\label{fig:circ_perm_corner}}
	\end{minipage}
\end{figure}

Note that the permuton $\nu^{\alpha}_*$ is equal to the the bottom-left corner of the permuton $\nu^{\alpha}$ defined in \cref{defn:circ_perm}. \cref{conj:phase_trans} is motivated by the following simplified version.

\begin{conj}\label{conj:hfbwefniwenfew}
		Let $\bm\sigma_n^k$ be a uniform permutation of size $m=n+k$, with exactly $n$ left-to-right minima, and assume that $k$ and $n$ both tend to infinity with $k/n\to \alpha,$ for some $\alpha\in \R_{>0}$. Then as $n\to \infty,$ 
		$$\mu_{\bm{\sigma}_n^k} \stackrel{d}{\longrightarrow} \nu^{\alpha}_*.$$
\end{conj}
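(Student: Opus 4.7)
The plan is to reduce Conjecture~\ref{conj:hfbwefniwenfew} to a large-deviation and variational problem for the empirical distribution of left-to-right minima, then transfer the concentration to a permuton limit. Write $m=n+k$ and $\bm\sigma_m=\bm\sigma_n^k$, and decompose any such $\sigma$ into its left-to-right minima $((P_i,V_i))_{i=1}^n$, with $P_1<\cdots<P_n$ and $V_1>\cdots>V_n=1$, together with the $k$ internal points $(Q_j,W_j)$. Set $\lambda_\sigma=\tfrac1m\sum_i\delta_{(P_i/m,V_i/m)}$ and $\mu^{\mathrm{int}}_\sigma=\tfrac1m\sum_j\delta_{(Q_j/m,W_j/m)}$. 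A direct adaptation of \cref{lem:nochanges2} shows that $\mu_\sigma$ differs from $\lambda_\sigma+\mu^{\mathrm{int}}_\sigma$ by $o(1)$ in $d_\square$, so it suffices to prove that $\lambda_{\bm\sigma_m}$ converges to the boundary measure $\rho$ of \cref{defn:circ_perm_corner} and that $\mu^{\mathrm{int}}_{\bm\sigma_m}$ converges to the Lebesgue measure in the region under $\Gamma$, scaled to total mass $\alpha/(\alpha+1)$.

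The first ingredient is an exact count of valid internal placements. Given a target LR-minima configuration, the non-LR-min positions $Q_1<\cdots<Q_k$ must be matched bijectively with the non-LR-min values so that the value at $Q_i\in(P_{j_i},P_{j_i+1})$ exceeds $V_{j_i}$. Because $V_{j_i}$ is weakly decreasing in $i$, the allowed value-sets form a nested chain, and the number of valid matchings is the falling product
\begin{equation}
	N_{\mathrm{int}}\bigl((P_i,V_i)\bigr) = \prod_{i=1}^{k}\bigl((m-V_{j_i})-(j_i-1)-(i-1)\bigr)\sim m^k\prod_{i=1}^k \bigl(1-Q_i/m-V_{j_i}/m\bigr).
\end{equation}
Combining this with Stirling's expansion of $m!$ and of the Stirling number $c(m,n)$ (the total number of permutations of size $m$ with $n$ LR-minima), and with Rényi's description of LR-minima indicators as independent Bernoullis of parameters $1/i$, the log-number of permutations whose rescaled LR-minima profile is close to a pair $(F,\phi)$ takes the form $m\,\Psi(F,\phi)+o(m)$, where $F$ is the rescaled LR-minima position CDF, $\phi$ is the nonincreasing value-profile, and $\Psi$ combines an explicit entropy of $(F,\phi)$ with the term $\int(1-F'(x))\log(1-x-\phi(x))\,dx$.

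The heart of the proof is then to solve the variational problem $\max\Psi$ subject to the mass constraints $\int F'=1/(1+\alpha)$ and $\int(1-F')=\alpha/(1+\alpha)$. The Euler--Lagrange equation reduces to a first-order autonomous ODE whose unique decreasing solution from $(0,1)$ to $(1,0)$ should be exactly the hyperbola $y=(1-x)/(1+x(\gamma-1))$, with $\gamma$ fixed by the Lagrange multiplier through $\log\gamma/(\gamma-1)=1/(1+\alpha)$; this is precisely the curve $\Gamma$ of \cref{defn:circ_perm_corner}. Strict concavity of $\Psi$ near this maximizer, together with a covering argument over profiles in a compact space, should yield a large-deviation bound of the form $\P(d(\lambda_{\bm\sigma_m},\lambda^*)>\eps)\leq e^{-c(\eps)m}$, and hence the convergence $\lambda_{\bm\sigma_m}\to\lambda^*=\rho$. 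Conditional on $\lambda_{\bm\sigma_m}$ being close to $\lambda^*$, the staircase-uniform distribution of the internal points is easily shown, via a second-moment estimate on the number of internal points landing in small rectangles, to converge to the Lebesgue measure in the region under $\Gamma$, of total mass $\alpha/(\alpha+1)$. Combining the two convergences gives $\mu_{\bm\sigma_m}\stackrel{d}{\to}\nu^\alpha_*$.

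The main obstacle will be the variational analysis above. The entropy of the LR-minima profile is delicate because Rényi's Bernoulli parameters $1/i$ are non-exchangeable, and because the constraint that LR-minima values be decreasing along positions couples $F$ and $\phi$ nonlinearly. Writing down the correct functional $\Psi$, verifying that its maximizer is the hyperbola $\Gamma$ rather than a mere stationary point, and upgrading pointwise convergence to exponential concentration will each require careful work. The relation $\log\gamma/(\gamma-1)=1/(1+\alpha)$ should emerge naturally as the Lagrange multiplier for the mass constraint $k/m\to\alpha/(\alpha+1)$, but verifying this and the full concentration estimates is where most of the effort will lie.
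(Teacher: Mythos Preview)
This statement is a \emph{conjecture} in the paper, not a theorem: the paper gives only an informal heuristic in \cref{sect:strat_conj} and explicitly lists a rigorous proof as an open problem. So there is no paper's proof to compare against, only a heuristic strategy.

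Your strategy and the paper's heuristic take genuinely different routes to the same curve. The paper works pointwise: it computes via Stirling numbers of the first kind (\cref{lem:decomp_counting}) the exact probability $p^h_{n,k}(x,y)$ that the $h$-th left-to-right minimum sits at $(x,y)$, applies the asymptotic $S_{m+a,n+b}/S_{m,n}\sim m^{a-b}(1+Z)^a Z^{-b}$ from \cite{MR1223774}, and solves the discrete system $\nabla\log p^h_{n,k}(x,y)=0$ by direct algebra, one record at a time. You instead treat the whole LR-minima profile at once via a large-deviation functional $\Psi$ and extract $\Gamma$ from an Euler--Lagrange equation. Both arrive at the same hyperbola $y=(1-x)/(1+x(\gamma-1))$ and the same constraint $\log\gamma/(\gamma-1)=1/(1+\alpha)$; the paper's route is algebraically lighter for identifying the curve, while yours, if completed, would yield the global concentration actually needed for permuton convergence.

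Neither is a proof. Your proposal itself acknowledges the main gap: the passage from the counting formula to a smooth functional $\Psi$, together with strict concavity and exponential concentration around its maximizer, is exactly where the work lies. One specific issue to flag: your invocation of R\'enyi's Bernoulli($1/i$) indicators describes an \emph{unconditioned} uniform permutation, not one conditioned to have exactly $n$ left-to-right minima; the conditioning is precisely what makes the entropy functional nontrivial, and you will need an LDP for the record profile under this conditioning rather than the raw Bernoulli product. The paper's heuristic stops at the analogous point (``we expect that the distribution $p^h_{n,k}(x,y)$ is log-concave and that it concentrates around its maximum''). The conjecture remains open.
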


We set $p^h_{n,k}(x,y)\coloneqq\P(\text{The $h$-\emph{th} left-to-right minimum of $\bm\sigma_n^k$ is at position } (x,y))$.

\begin{lem}\label{lem:decomp_counting} 
	It holds that 
	\begin{equation}\label{eq;wfevbiuywebfouw}
		p^h_{n,k}(x,y)=\frac{S_{x-1,h-1}S_{y-1,n-h}}{S_{m,n}}\binom{m-x}{m-x-y+1}\binom{m-y}{m-x-y+1}(m-x-y+1)!\;,
	\end{equation}
	where $S_{p,q}$ denotes the Stirling numbers of the first kind.
\end{lem}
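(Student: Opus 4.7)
The plan is to prove the formula by a direct bijective counting. Recall the classical fact that the (unsigned) Stirling number $S_{p,q}$ counts the permutations of size $p$ with exactly $q$ left-to-right minima. In particular, the denominator $S_{m,n}$ is exactly the cardinality of the uniform sample space of $\bm\sigma_n^k$, so it suffices to count the permutations $\sigma$ of $[m]$ having exactly $n$ left-to-right minima and such that $\sigma(x)=y$ is the $h$-th of them.

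First I would decompose $\sigma$ around position $x$ into three parts: the prefix $\sigma(1)\cdots\sigma(x-1)$, the value $\sigma(x)=y$, and the suffix $\sigma(x+1)\cdots\sigma(m)$. Because $y$ is a left-to-right minimum, every entry of the prefix must be strictly greater than $y$, so the prefix is obtained by first selecting $x-1$ values out of $\{y+1,\dots,m\}$ (in $\binom{m-y}{x-1}$ ways) and then arranging them into a permutation pattern of size $x-1$ with exactly $h-1$ left-to-right minima (in $S_{x-1,h-1}$ ways).

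Next I would analyse the suffix. Among the $m-x$ remaining values, $y-1$ are smaller than $y$ and $m-x-y+1$ are larger than $y$. The key observation is: a value $z<y$ placed after position $x$ is a left-to-right minimum of $\sigma$ if and only if no smaller value appears between positions $x+1$ and its own position, which amounts to saying that it is a left-to-right minimum of the subsequence of values less than $y$. Consequently, the suffix contributes exactly $n-h$ left-to-right minima if and only if the subsequence of values smaller than $y$ forms a pattern of size $y-1$ with exactly $n-h$ left-to-right minima. I would therefore count the suffix as: choose the $y-1$ positions (out of $m-x$) that will carry values less than $y$ in $\binom{m-x}{y-1}$ ways, arrange the values $\{1,\dots,y-1\}$ in those positions with exactly $n-h$ left-to-right minima in $S_{y-1,n-h}$ ways, and fill the remaining $m-x-y+1$ positions with the remaining large values in $(m-x-y+1)!$ arbitrary ways.

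Multiplying all factors and dividing by $S_{m,n}$, the formula becomes
\begin{equation*}
p^h_{n,k}(x,y)=\frac{S_{x-1,h-1}\,S_{y-1,n-h}}{S_{m,n}}\binom{m-y}{x-1}\binom{m-x}{y-1}(m-x-y+1)!.
\end{equation*}
The final step is a cosmetic one: apply the symmetry $\binom{m-y}{x-1}=\binom{m-y}{m-x-y+1}$ and $\binom{m-x}{y-1}=\binom{m-x}{m-x-y+1}$ to recover the stated form of \eqref{eq;wfevbiuywebfouw}. There is no serious obstacle in this proof; the only subtle point is the equivalence between ``being a left-to-right minimum of $\sigma$ after position $x$'' and ``being a left-to-right minimum of the restricted subsequence of values smaller than $y$'', which must be stated clearly since it is exactly what makes the Stirling number $S_{y-1,n-h}$ appear naturally for the suffix.
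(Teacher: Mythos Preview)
Your argument is correct and is essentially the paper's proof: the paper decomposes the diagram of $\sigma$ into four rectangular boxes around the point $(x,y)$ (the bottom-left box being empty since $(x,y)$ is a left-to-right minimum), and your prefix/suffix decomposition is exactly this picture read linearly. In particular, your choice of $x-1$ prefix values corresponds to the paper's choice of rows for the top-right box via $\binom{m-y}{x-1}=\binom{m-y}{m-x-y+1}$, and your choice of $y-1$ suffix positions for small values corresponds to the choice of columns via $\binom{m-x}{y-1}=\binom{m-x}{m-x-y+1}$; the two Stirling factors and the $(m-x-y+1)!$ match verbatim.
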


\begin{proof}
	Recall that the Stirling numbers of the first kind $S_{p,q}$ counts the number of permutations of size $p$ with exactly $q$ left-to-right minima. In order to prove \cref{eq;wfevbiuywebfouw}, it is enough to decompose the diagram of $\bm\sigma_n^k$ into four boxes w.r.t.\ the $h$-\emph{th} left-to-right minimum (see \cref{fig:schema_decomp}) and note that:
	\begin{itemize}
		\item $S_{x-1,h-1}$ counts the possible configurations in the top-left box;
		\item $S_{y-1,n-h}$ counts the possible configurations in the bottom-right box;
		\item $\binom{m-x}{m-x-y+1}\binom{m-y}{m-x-y+1}$ counts the possible choices of rows and columns with a point in the top-right box;
		\item $(m-x-y+1)!$ counts the possible permutations in the top-right box.
	\end{itemize}	
	This is enough to conclude the proof.
\end{proof}

\begin{figure}[htbp]
	\begin{minipage}[c]{0.57\textwidth}
		\centering
		\includegraphics[scale=0.8]{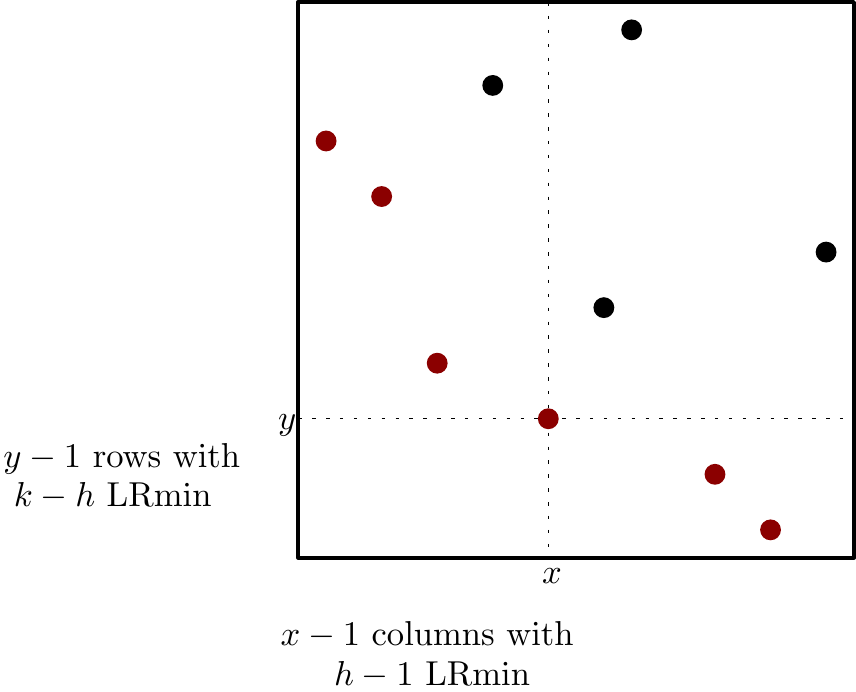}
	\end{minipage}
	\begin{minipage}[c]{0.42\textwidth}
		\caption{A schema for the proof of \cref{lem:decomp_counting}. The $n=6$ left-to right minima are highlighted in red. In this example $k=4, h=4, x=5,$ and $y=3$. \label{fig:schema_decomp}}
	\end{minipage}
\end{figure}

From \cite{MR1223774}, we have that when $n/m\to \ell$ and $a,b=O(1)$ then as $m\to\infty$,
\begin{equation}\label{eq:fweibwifbp;eiwq}
	\frac{S_{m+a,m+b}}{S_{m,n}}\sim m^{a-b}\frac{(1+Z)^a}{Z^b},
\end{equation}
where $Z=Z(\ell)$ is a fixed parameter that satisfies $\ell=Z\log(1+1/Z)$.

Heuristically, we expect that the distribution $p^h_{n,k}(x,y)$ is log-concave and that it concentrates around it maximum. The latter should coincide with the minimum of $-\log(p^h_{n,k}(x,y))$, i.e.\ it should be at $(x,y)$ satisfying $\nabla\log(p^h_{n,k}(x,y))=0$. Writing down the discrete partial derivatives we want to solve for $(x,y)$ the following system of equations:
\begin{equation}\label{eq:wejvbfewbfoewnbif}
	\begin{cases}
		\log(p^h_{n,k}(x+1,y))-\log(p^h_{n,k}(x,y))=0\\
		\log(p^h_{n,k}(x,y+1))-\log(p^h_{n,k}(x,y))=0
	\end{cases}.
\end{equation}
Setting $\beta\coloneqq\frac{1}{1+\alpha}$, we now want to solve the system above for 
\begin{equation}
	n\approx\beta m\qquad \text{and} \qquad h\approx\theta m,\quad \text{for some} \quad \theta\in(0,1).
\end{equation}
We expect that the solutions are of order
\begin{equation}
	x\approx\zeta m\qquad \text{and} \qquad y\approx\eta m,\quad \text{for some} \quad \zeta,\eta\in(0,1).
\end{equation}

Note that from \cref{lem:decomp_counting}  we have that
\begin{equation}
	\frac{p^h_{n,k}(x,y+1)}{p^h_{n,k}(x,y)}=\frac{S_{y,n-h}}{S_{y-1,n-h}}\frac{(m-y-1)!(y-1)!(m-x-y+1)!}{(m-y)!y!(m-x-y)!}\stackrel{\eqref{eq:fweibwifbp;eiwq}}{\approx}\frac{1-\eta-\zeta}{1-\eta}\left(1+Z(\frac{\beta-\theta}{\eta})\right),
\end{equation}
and similarly
\begin{equation}
	\frac{p^h_{n,k}(x+1,y)}{p^h_{n,k}(x,y)}\approx\frac{1-\eta-\zeta}{1-\zeta}\left(1+Z(\frac{\theta}{\zeta})\right).
\end{equation}
Therefore \cref{eq:wejvbfewbfoewnbif} rewrites as
\begin{equation}
	\begin{cases}
		\frac{1-\eta-\zeta}{1-\zeta}\left(1+Z(\frac{\theta}{\zeta})\right)=1\\
		\frac{1-\eta-\zeta}{1-\eta}\left(1+Z(\frac{\beta-\theta}{\eta})\right)=1
	\end{cases},
\end{equation}
and so
\begin{equation}
	\begin{cases}
		Z(\frac{\theta}{\zeta})=\frac{\eta}{1-\eta-\zeta}\\
		Z(\frac{\beta-\theta}{\eta})=\frac{\zeta}{1-\eta-\zeta}
	\end{cases}.
\end{equation}
Recalling that $Z=Z(\ell)$ solves $\ell=Z\log(1+1/Z)$, we obtain that
\begin{equation}
	\begin{cases}
		\frac{\theta}{\zeta}=\frac{\eta}{1-\eta-\zeta}\log\left(\frac{1-\zeta}{\eta}\right)\\
		\frac{\beta-\theta}{\eta}=\frac{\zeta}{1-\eta-\zeta}\log\left(\frac{1-\eta}{\zeta}\right)
	\end{cases}.
\end{equation}
Hence
\begin{equation}
	\beta=\frac{\eta\zeta}{1-\eta-\zeta}\log\left(\frac{(1-\eta)(1-\zeta)}{\eta\zeta}\right)=\frac{\eta\zeta}{(1-\eta)(1-\zeta)-\eta\zeta}\log\left(\frac{(1-\eta)(1-\zeta)}{\eta\zeta}\right).
\end{equation}
Setting $X\coloneqq\frac{\zeta}{1-\zeta}$ and $Y\coloneqq\frac{\eta}{1-\eta}$, we get $\beta=\frac{1}{XY-1}\log(XY)=\frac{1}{\gamma-1}\log(\gamma)$ for $\gamma\coloneqq XY$. Summarizing we have that
\begin{equation}
	(1-\eta)(1-\zeta)=\gamma \eta\zeta,\quad \text{where $\gamma$ solves} \quad  \beta=\frac{1}{\gamma-1}\log(\gamma).
\end{equation}
Finally, setting $t\coloneqq\frac{\theta}{\beta}=\log\left(\frac{1-\zeta}{\eta}\right)\frac{1}{\log(\gamma)}$, we have that
\begin{equation}
	\begin{cases}
		t=\frac{\log\left(\frac{1-\zeta}{\eta}\right)}{\log(\gamma)}\\
		(1-\eta)(1-\zeta)=\gamma \eta\zeta
	\end{cases},
\end{equation}
and we can conclude that
\begin{equation}
	\begin{cases}
		\zeta(t)=\frac{\gamma^t-1}{\gamma-1}\\
		\eta(t)=\frac{\gamma^{1-t}-1}{\gamma-1}\\
	\end{cases}, \quad\text{where $\gamma$ solves}\quad \beta=\frac{1}{\gamma-1}\log(\gamma)\quad \text{and}\quad t=\frac{\theta}{\beta}.
\end{equation}
In simple words, the $h\approx \theta m$-$th$ record "concentrates" at position $(\zeta(t),\eta(t))$ where $t=\frac{\theta}{\beta}$. 
Recalling that $\beta=\frac{1}{1+\alpha}$ we have that the curve $(\zeta(t),\eta(t))$ coincides with the curve described in \cref{defn:circ_perm_corner}, giving an informal proof of \cref{conj:hfbwefniwenfew}. With similar computations one can also show that the distribution of the internal points of $\bm\sigma_n^k$ is asymptotically uniform. Finally the expression for the boundary measure $\rho$ can be obtained using that a permuton has uniform marginals.

\section{321-avoiding permutations with internal points}
\label{sect:321av}

\subsection{Asymptotic enumeration}\label{sect:321av_1}

Permutations in $\avn(321)$ are in bijection with Dyck paths of size $2n$.  A Dyck path of size $2n$ is a path with two types of steps: $(1,1)$ or $(1,-1)$, that is conditioned to start at $(0,0),$ end at $(2n,0)$, and remain non-negative in between.  There are many possible bijections to choose from between 321-avoiding permutations and Dyck paths.  One particular bijection comes from \cite{MR1241505}, which we refer to as the Billey--Jockusch--Stanley (or BJS) bijection.  For a Dyck path, $\gamma_n,$ of size $2n$, we let $\tau_n = \tau_{\gamma_n}$ denote the corresponding permutation in $\avn(321)$ under the BJS-bijection.  In the other direction, for a permutation $\tau_n \in \avn(321)$ we let $\gamma_n = \gamma_{\tau_n}$ denote the corresponding Dyck path under the inverse bijection.  
This bijection is used in \cite{hoffman2017pattern} to show that the points of a 321-avoiding permutation converge to the Brownian excursion when properly scaled. 

Specifically, extending the definition of the permutation $\tau_n$ so that $\tau_n(0)=0$ and for $t\in [0,1]$, let 
$$F_{\tau_n}(t) \coloneqq \frac{1}{\sqrt{2n}}\big |\tau_n(\lfloor nt \rfloor) - \lfloor nt \rfloor \big|.$$  

\begin{thm}[Theorem 1.2 in \cite{hoffman2017pattern}]\label{chacha}
	Let $\bm{\tau}_n$ be a uniform random permutation in $\avn(321)$.  Then
	$$\left(F_{\bm{\tau}_n}(t)\right)_{t\in [0,1]} \stackrel{d}{\longrightarrow} \left(\bm{e}_t\right)_{t\in [0,1]},$$
	where $\bm{e}_t$ is the Brownian excursion on $[0,1]$ and the convergence holds in the space of right-continuous functions $D([0,1],\mathbb{R})$.
\end{thm}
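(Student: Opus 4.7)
The plan is to transfer the question to the setting of Dyck paths via the BJS-bijection introduced just above the theorem, and then apply the classical invariance principle for conditioned random walks. Since the BJS-bijection is a size-preserving bijection between $\Av_n(321)$ and the set $\mathcal{D}_n$ of Dyck paths of length $2n$, it pushes the uniform measure on $\Av_n(321)$ forward to the uniform measure on $\mathcal{D}_n$; write $\bm{\gamma}_n = \gamma_{\bm{\tau}_n}$ for the image of $\bm{\tau}_n$.

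The key combinatorial step is to establish, for every $\tau_n \in \Av_n(321)$ and every $i \in [n]$, a pointwise identity of the form
\begin{equation}
|\tau_n(i) - i| = h_{\gamma_n}(2i),
\end{equation}
(or a closely related variant whose argument differs from $2i$ by at most $O(1)$), where $h_{\gamma_n}$ denotes the height function of the associated Dyck path. I would prove this by induction on $i$, tracking how the BJS-bijection reads off $\tau_n(i)$ from the $i$-th up-step and the $i$-th down-step of $\gamma_n$: the point $(i,\tau_n(i))$ lies strictly above the diagonal (i.e.\ $i \in E^+$) or weakly below it (i.e.\ $i \in E^-$) exactly when, at time $2i$, the Dyck path has had an excess of up-steps or down-steps of the appropriate sign. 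The inductive step uses the fact that $\tau_n$ splits into two increasing subsequences, one indexed by $E^+$ and one by $E^-$, so that the absolute displacement changes by exactly $\pm 1$ between consecutive indices, matching the unit increments of $h_{\gamma_n}$ between times $2i$ and $2(i+1)$.

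Second, I would invoke the classical invariance principle for uniform Dyck paths: if $\bm{\gamma}_n$ is uniform in $\mathcal{D}_n$, then
\begin{equation}
\left(\frac{1}{\sqrt{2n}}\, h_{\bm{\gamma}_n}(\lfloor 2n t \rfloor)\right)_{t \in [0,1]} \xrightarrow{d} \left(\bm{e}_t\right)_{t \in [0,1]}
\end{equation}
in $D([0,1],\mathbb{R})$. This is standard and follows from Donsker's theorem applied to simple random walk bridges conditioned to stay non-negative, together with the Vervaat-type description of Brownian excursion. Combining this invariance principle with the pointwise identity from the first step gives
\begin{equation}
F_{\bm{\tau}_n}(t) = \frac{1}{\sqrt{2n}}\, h_{\bm{\gamma}_n}(2\lfloor nt\rfloor) \xrightarrow{d} \bm{e}_t
\end{equation}
in $D([0,1],\mathbb{R})$, which is exactly the claim.

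The main obstacle is the combinatorial verification in the first step: the precise identity between $|\tau_n(i) - i|$ and a height of $\gamma_{\tau_n}$ depends on the exact convention chosen for the BJS-bijection, and one must do a careful case analysis according to whether $i \in E^+(\tau_n)$ or $i \in E^-(\tau_n)$, and according to the relative position of the $i$-th up- and down-steps in $\gamma_{\tau_n}$. Once this identity is in hand, the remaining two steps are essentially direct substitution plus a classical invariance principle, and no further analytic work (tightness, finite-dimensional convergence, etc.) is needed beyond what is packaged in the Donsker-type statement for Dyck paths.
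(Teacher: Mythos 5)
Your high-level route (push the problem to Dyck paths via the BJS-bijection, then invoke the invariance principle for uniform Dyck paths) is the same one used in \cite{hoffman2017pattern}, which is where this theorem is actually proved --- the present manuscript only cites it and sketches the idea. However, your key combinatorial step is wrong as stated: there is no exact pointwise identity $|\tau_n(i)-i| = h_{\gamma_n}(2i)$, nor one valid up to $O(1)$. Concretely, for $\tau = 15234 \in \Av_5(321)$ the absolute displacement jumps from $0$ to $3$ between positions $1$ and $2$, so displacements do not change by $\pm 1$ between consecutive indices (nor do they match the Dyck-path increments, which change the height by $0$ or $\pm 2$ over two steps); and at $i=n$ one always has $\gamma_n(2n)=0$ while $|\tau_n(n)-n|$ can be positive (e.g.\ $\tau=231$), so the claimed identity already fails at the endpoint. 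The correct statement is Lemma \ref{flux}: \emph{for permutations whose Dyck path satisfies the Petrov conditions}, one has $\gamma_n(2j)-10n^{.4}\leq |\tau_n(j)-j|\leq \gamma_n(2j)+10n^{.4}$, with the signed version depending on whether $(j,\tau_n(j))$ is a left-to-right maximum or a right-to-left minimum. The error terms arise because the displacement at $j$ is read off from counts of up- and down-steps at nearby but not identical times, and controlling that discrepancy is exactly what the Petrov conditions are for; your induction cannot remove it.

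As a consequence, the proof is not "direct substitution plus Donsker". One must (i) prove the approximate comparison of Lemma \ref{flux}; (ii) show that a uniform element of $\Av_n(321)$ satisfies the Petrov conditions outside an event of exponentially small probability; and (iii) observe that the resulting error $10n^{.4}=o(\sqrt n)$ vanishes after the $1/\sqrt{2n}$ rescaling, so Kaigh's invariance principle for Dyck paths transfers to $F_{\bm\tau_n}$. Steps (i)--(ii) constitute the real content of the argument in \cite{hoffman2017pattern} and are missing from your proposal; with the exact identity replaced by this approximate comparison and the Petrov machinery supplied, your outline becomes the actual proof.
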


The main step in the proof of Theorem \ref{chacha} is to show that the function $F_{\bm{\tau}_n}(t)$ is often close to the corresponding scaled Dyck path $\gamma_{\bm{\tau}_n}$, which converges in distribution to the Brownian excursion \cite{Kaigh1976excursion}.  The proof uses an alternative version of the Petrov conditions stated in terms of Dyck paths. We denote the Petrov conditions for Dyck paths with $PC'$ and the Petrov conditions for permutations used in this manuscript with $PC$ (see \cref{defn:petrov}). We say $\tau_n\in \avn(321)$ satisfies $PC'$ if, using the BJS bijection, the corresponding Dyck path, $\gamma_n$, satisfies $PC'$ (we remark that this translation of $PC'$ on permutations gives a slightly modified version of $PC$.).

In what follow we say that $\tau_n\in \avn(321)$ satisfies the Petrov conditions if it satisfies both $PC$ and $PC'$ (and we do the same for Dyck paths). The exact version of $PC'$ is not important for our results, though we point out that a uniform random permutation in $\avn(321)$ has exponentially small probability to fail either $PC$ or $PC'$. 
This, together with Corollary 5.5 and Proposition 5.6 and 5.7 of \cite{hoffman2017pattern}, implies that there exist positive constants $C$ and $\delta$ such that the probability that the Petrov conditions are not satisfied for a uniform random permutation $\bm \tau_n$ in $\avn(321)$ is bounded above by $Ce^{-n^\delta}$. 

In order to prove Theorem \ref{thin red line}, we need the following technical result.

\begin{lem}[Lemma 2.7 in \cite{hoffman2017pattern}]\label{flux}
	Let $\gamma_n$ be a Dyck path of size $2n$ that satisfies the Petrov conditions, and let $\tau_n$ be the corresponding permutation in $\avn(321)$.  If $(j,\tau_n(j))$ is a left-to-right maximum, then 
	$$|\tau_n(j) - j - \gamma_n(2j)| \leq  10n^{.4},$$
	and if $(j,\tau_n(j))$ is a right-to-left minimum, then 
	$$| \tau_n(j) - j  + \gamma_n(2j)| \leq 10n^{.4}.$$
	Therefore for all $j\leq n$, 
	$$\gamma_n(2j)- 10n^{.4} \leq | \tau_n(j) - j| \leq \gamma_n(2j) + 10n^{.4}.$$
\end{lem}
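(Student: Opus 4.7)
The plan is to combine a precise reading of the BJS bijection with the regularity afforded by the Petrov conditions on $\gamma_n$. First I would recall the BJS bijection in the following convenient form: reading $\tau_n$ left-to-right, each position $j\in [n]$ contributes two consecutive steps to $\gamma_n$ at times $2j-1$ and $2j$, in such a way that $\gamma_n(2j)$ equals the number of ``open excedances'' at position $j$, namely the number of indices $k\le j$ with $\tau_n(k) > j$ (equivalently, the number of indices $k>j$ with $\tau_n(k)\le j$, since $\tau_n$ is a bijection of $[n]$). This is the standard property of the BJS bijection, which explains why $\gamma_n(2j)\ge 0$ and why the full sequence is a Dyck path of length $2n$.

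For an LR-max $(j,\tau_n(j))$, the values $\tau_n(1),\dots,\tau_n(j)$ are $j$ distinct elements of $[\tau_n(j)]$, so the set $[\tau_n(j)]\setminus\tau_n([j])$ has cardinality exactly $\tau_n(j)-j$ and is composed of values $v\in (j,\tau_n(j)]$ realized at positions $>j$. Comparing with the open-excedance count at position $j$, the two quantities differ by the number of indices $k>j$ with $\tau_n(k)\in (\tau_n(j),n]$, i.e.\ indices to the right of $j$ whose values exceed $\tau_n(j)$. I would then express this correction as a count of up-steps of $\gamma_n$ in the window $[2j,2j']$, where $j'$ is the last LR-max of $\tau_n$, and would bound it via the Petrov conditions: depending on the size of this window, either item~(1) or item~(2) of the Petrov conditions (applied to the up-step statistics of $\gamma_n$) gives a deviation of at most $O(n^{.4})$ from the expected count, and a separate estimate shows that the expected count is itself $O(n^{.4})$ close to zero for indices near an LR-max (using \cref{petrov_often}). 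Summing the contributions with explicit constants, and accounting for the two steps per index, yields the bound $|\tau_n(j)-j-\gamma_n(2j)|\le 10 n^{.4}$.

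For an RL-min $(j,\tau_n(j))$, I would invoke the reverse-complement symmetry: the map $\tau_n\mapsto \tilde\tau_n$ with $\tilde\tau_n(i)=n+1-\tau_n(n+1-i)$ preserves 321-avoidance, sends RL-minima of $\tau_n$ to LR-maxima of $\tilde\tau_n$, flips the sign of $\tau_n(j)-j$, and transforms $\gamma_n$ into a Dyck path that still satisfies the Petrov conditions (with the same constants). Applying the LR-max bound to $\tilde\tau_n$ then gives $|\tau_n(j)-j+\gamma_n(2j)|\le 10 n^{.4}$. The final inequality of the statement follows by observing that in a 321-avoiding permutation every index is either an LR-max or an RL-min (otherwise a 321 pattern could be exhibited), so each $j\in[n]$ is covered by one of the two preceding bounds and $|\tau_n(j)-j|$ lies in $[\gamma_n(2j)-10 n^{.4},\gamma_n(2j)+10 n^{.4}]$.

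The main obstacle will be pinning down the exact combinatorial identity between $\tau_n(j)-j$ and the open-excedance count, including the careful bookkeeping of which of the two steps produced at index~$j$ is up or down depending on whether $j\in E^+$ or $j\in E^-$, together with the boundary terms generated by fixed points and by the extremal positions $j=1$ and $j=n$. Once the identity is written exactly, the Petrov conditions supply each error bound essentially mechanically, and the constant $10$ will come out by summing a fixed (small) number of such local contributions.
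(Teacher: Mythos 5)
First, note that the thesis does not prove this statement: it is imported verbatim as Lemma~2.7 of \cite{hoffman2017pattern}, so the comparison here is with the argument of that paper (whose mechanism is also what the surrounding text of \cref{sect:321av_1} relies on).

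Your proof has a genuine gap at its very foundation: the identity you call ``the standard property of the BJS bijection'', namely $\gamma_n(2j)=\#\{k\le j:\tau_n(k)>j\}$, is false. For example, $\tau=21$ corresponds to the path $UUDD$, for which $\gamma(2)=2$ while the crossing count at $j=1$ is $1$; in the permutation $241635$ one likewise finds $\gamma(4)=2$ but only one index $k\le 2$ with $\tau(k)>2$. The discrepancy is not a boundary effect you can sweep into the constant: under the Petrov conditions the crossing count is approximately \emph{half} of $\tau_n(j)-j$, because the correction term your computation produces, $\#\{k>j:\tau_n(k)\in(j,\tau_n(j)]\}$, consists exactly of the deficiency values lying in the window $(j,\tau_n(j)]$, and Petrov regularity forces roughly half of the values in any mesoscopic window to be deficiencies. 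That correction is therefore of order $\gamma_n(2j)/2$, typically $\Theta(\sqrt n)$ and possibly as large as order $n^{.6}$, so it can never be absorbed into $10n^{.4}$; your appeal to \cref{petrov_often} and to ``the expected count being close to zero near an LR-max'' does not address this, since the term is a genuine half of the main quantity, not a small fluctuation.

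The correct mechanism is different: the exact identity holds at a \emph{shifted} time, not at $2j$. In the BJS correspondence (equivalently, in the tree/contour picture of \cref{premres_321}), the weak excedance $(j,\tau_n(j))$ corresponds to a leaf whose arrival up-step is the $\tau_n(j)$-th up-step and whose departure is the $j$-th down-step, so the path attains height exactly $\tau_n(j)-j+1$ at time $\tau_n(j)+j-1$. One then uses Petrov conditions~5--6 to bound the path height by $2n^{.6}$, hence the time shift $|\tau_n(j)+j-1-2j|\le 2n^{.6}+1$, and Petrov conditions~1--2 to bound the resulting change of height over such a window by roughly $2n^{.4}$, which yields the stated $10n^{.4}$; the right-to-left-minimum case follows by the symmetric identity for deficiencies (or by the reverse-complement/path-reversal symmetry you invoke, which is fine once the excedance case is in place). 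Your closing observation that every index of a $321$-avoiding permutation is an LR-maximum or an RL-minimum is correct, but without the correct identity at the shifted time the quantitative estimates cannot be recovered along the route you propose.
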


Let $M(\gamma_n)  = \max_{1\leq j\leq n}\gamma_n(2j)$ be the maximum of $\gamma_n$ at even positions and $$D(\tau_n) = \max_{1\leq j \leq n} | \tau_n(j) - j|$$ be the maximum absolute displacement.

\begin{cor}\label{dyck_max}
	Let $\tau_n$ be a permutation in $\avn(321)$ and let $\gamma_n = \gamma_{\tau_n}$ be the corresponding Dyck path of size $2n$.    If $\tau_n$ (and thus $\gamma_n$) satisfies the Petrov conditions, then $D(\tau_n) \leq M(\gamma_n) + 10n^{.4}.$
\end{cor}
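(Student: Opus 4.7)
The plan is to deduce the corollary directly from Lemma \ref{flux} by maximizing both sides of its pointwise estimate. Under the hypothesis that $\tau_n$ (and hence $\gamma_n = \gamma_{\tau_n}$) satisfies the Petrov conditions, the final assertion of Lemma \ref{flux} provides the uniform bound
\[
|\tau_n(j) - j| \leq \gamma_n(2j) + 10n^{.4}, \qquad \text{for all } j \in [1,n].
\]
Taking the maximum over $j \in [1,n]$ on both sides, the left-hand side becomes $D(\tau_n)$ by definition, and on the right-hand side the additive constant $10n^{.4}$ pulls out of the maximum, giving $M(\gamma_n) + 10n^{.4}$. This is exactly the claim.

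The only conceptual point worth flagging is why Lemma \ref{flux}'s uniform-in-$j$ conclusion applies to \emph{every} index. The core of that lemma handles the cases where $(j,\tau_n(j))$ is a left-to-right maximum or a right-to-left minimum separately. For a $321$-avoiding permutation, however, the diagram decomposes into two increasing subsequences -- the points weakly above the diagonal, which are left-to-right maxima, and those strictly below, which are right-to-left minima -- so every $j \in [1,n]$ falls into one of the two cases. This is precisely what is encoded in the "Therefore for all $j \leq n$" clause of Lemma \ref{flux}, which we use as a black box.

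Since the argument is just one application of a prior lemma followed by taking a maximum, there is no real obstacle: the substantive work was already done in Lemma \ref{flux} and in the combinatorial analysis of the BJS bijection tying the positions of records in $\tau_n$ to the values of $\gamma_n$ at even times. Consequently, I expect the written-out proof to occupy only one or two lines, and no additional preparatory lemma is needed.
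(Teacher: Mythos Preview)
Your proposal is correct and matches the paper's approach exactly: the corollary is stated immediately after Lemma~\ref{flux} with no separate proof, since it follows at once by taking the maximum over $j$ in the uniform bound $|\tau_n(j)-j|\leq \gamma_n(2j)+10n^{.4}$. Your additional paragraph explaining why every index is covered is accurate but already absorbed into the ``for all $j\leq n$'' clause of the lemma, as you note.
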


We also need the following estimate, obtained in \cite{borga2019almost} after a careful analysis of the possible different ways to insert an internal point in a permutation avoiding the pattern 321.

\begin{lem}[{\cite[Lemma 7.6]{borga2019almost}}] \label{city maps}
	Fix $k>0$.  Let $\tau_n \in \avn(321)$ satisfy the Petrov conditions. Then 
	$$\frac{k!}{(2n)^{3k/2}}|\asq(\tau_n,k)| = \left(\int_0^1 F_{\tau_n}(t)dt \right)^k + O(n^{-.1}).$$	
\end{lem}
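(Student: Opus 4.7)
The plan is to reduce the count $|\asq(\tau_n,k)|$ to an area computation between the upper and lower increasing subsequences of $\tau_n$, then convert that area into the integral of $F_{\tau_n}$ via Lemma~\ref{flux}, and finally assemble the count of $k$-tuples of internal insertions.

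First I would identify the set of positions at which one can insert a single internal point. Using the 321-avoiding structure, every point of $\tau_n$ lies either on the upper increasing subsequence $U=\{(j,\tau_n(j)):\tau_n(j)\ge j\}$ or on the lower increasing subsequence $L=\{(j,\tau_n(j)):\tau_n(j)<j\}$, and an inserted point is internal precisely when it lies strictly inside the open band bounded above by $U$ and below by $L$. A direct lattice count of positions in this band gives $A(\tau_n) = \sum_{j=1}^{n}\mathrm{width}_j + O(n)$, where $\mathrm{width}_j$ is the vertical opening of the band at column $j$. Lemma~\ref{flux}, applied to both the upper and lower hulls (using the two inequalities for LR maxima and RL minima), yields $\mathrm{width}_j = 2\gamma_n(2j) + O(n^{.4}) = 2|\tau_n(j)-j|+O(n^{.4})$, so
\[A(\tau_n)= 2\sum_{j=1}^{n}|\tau_n(j)-j|+O(n^{1.4}).\]

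Next, a Riemann-sum estimate converts this discrete sum into an integral: from the definition of $F_{\tau_n}$,
\[\sum_{j=1}^{n}|\tau_n(j)-j| = \sqrt{2n}\sum_{j=1}^{n}F_{\tau_n}(j/n) = \sqrt{2n}\,n\int_0^1 F_{\tau_n}(t)\,dt+O(n^{1.4}),\]
where the Petrov conditions give the Hölder-like regularity of $F_{\tau_n}$ needed to keep the Riemann-sum error of order $n^{1.4}$. Combined with the previous step,
\[A(\tau_n) = (2n)^{3/2}\int_0^1 F_{\tau_n}(t)\,dt + O(n^{1.4}).\]

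Finally I would pass from one insertion to $k$ insertions. Each element of $\asq(\tau_n,k)$ corresponds to an unordered choice of $k$ lattice positions in the band, subject to all $k$ positions lying in distinct columns and distinct rows. Since the band has width $O(n^{1/2})$ at every column, each already-chosen position forbids $O(n^{1/2})$ further positions (a column and a row of the band). Counting ordered $k$-tuples and dividing by $k!$ gives
\[|\asq(\tau_n,k)|=\frac{A(\tau_n)^k}{k!}+O\!\bigl(A(\tau_n)^{k-1}\cdot n^{1/2}\bigr)=\frac{A(\tau_n)^k}{k!}+O(n^{3k/2-1}).\]
Expanding $A(\tau_n)^k$ via step two and dividing by $(2n)^{3k/2}/k!$ gives the stated asymptotic with error $O(n^{-.1})$.

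The main obstacle is step one: correctly identifying the valid insertion region as the open band between $U$ and $L$ with its precise width $2|\tau_n(j)-j|$, rather than (for instance) $|\tau_n(j)-j|$. An incorrect leading constant would directly spoil the leading term of the expansion. Ruling out spurious contributions (insertions near the diagonal crossings, near the two endpoints of $\tau_n$, or sharing a column or row with an external point) requires using the Petrov conditions to bound the number of such exceptional columns by $O(n^{1/2+\varepsilon})$, which suffices for them to contribute only to the $O(n)$ error term inside $A(\tau_n)$.
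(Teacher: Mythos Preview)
Your outline follows the approach the paper indicates (deferring the details to \cite{borga2019almost}): identify the band of valid internal insertions, compute its area via Lemma~\ref{flux}, convert to the integral of $F_{\tau_n}$, and assemble $k$-tuples. The factor $2$ in the band width is correct, and the identity $A(\tau_n)=(2n)^{3/2}\int_0^1 F_{\tau_n}\,dt+O(n^{1.4})$ follows (in fact the Riemann sum in your step~3 is exact since $F_{\tau_n}$ is piecewise constant, so all of that error comes from step~2).

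The gap is in the error propagation for $k\ge 2$. Under the Petrov conditions the Dyck path height is bounded only by $O(n^{0.6})$, so $\int_0^1 F_{\tau_n}$ can be as large as $\Theta(n^{0.1})$ over Petrov-satisfying $\tau_n$. Expanding $\bigl(\int F_{\tau_n}+O(n^{-0.1})\bigr)^k$ then produces a leading cross term of order $k\bigl(\int F_{\tau_n}\bigr)^{k-1}\cdot O(n^{-0.1})=O(n^{0.1k-0.2})$, which is not $O(n^{-0.1})$ once $k\ge 2$; your step-4 error suffers the same inflation (and the band width is $O(n^{0.6})$, not $O(n^{1/2})$, under Petrov). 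To recover the stated uniform $O(n^{-0.1})$ you need sharper control of $A(\tau_n)-(2n)^{3/2}\int F_{\tau_n}$ than the per-column $O(n^{0.4})$ slack from Lemma~\ref{flux}; the actual proof exploits the exact staircase geometry of the two hulls to get this.
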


Now we can give the proof of Theorem \ref{thin red line}, that states that
$$|\avnk|\sim \frac{(2n)^{3k/2}}{k!}\cdot c_n\cdot  \mathbb{E}\left[\left(\int_0^1\bm e_t dt\right)^k\right].$$

\begin{proof}[Proof of Theorem \ref{thin red line}]

	Partition $\avn(321)$ into two sets $A_n$ and $B_n$, where permutations in $A_n$ satisfy the Petrov conditions and permutations in $B_n$ do not.  Let $c_n$ denote the $n$-th Catalan number $\frac{1}{n+1}{2n \choose n} \sim \frac{4^n}{\sqrt{2\pi n^3}}.$  Let $a_n$ and $b_n$ denote the size of $A_n$ and $B_n$ respectively.  For a uniform permutation in  $\avn(321)$, the Petrov conditions fail with probability at most $Ce^{-n^\delta}$ for some $C,\delta > 0$, thus we have that $b_n \leq  Ce^{-n^\delta}c_n$.    
	
	For any $\tau_n \in \avn(321)$ we always have the trivial upper bound $\asq(\tau_n,k) \leq (n+k)^{2k}$. Thus the contribution to $|\avnk|$ from permutations with external points in $B_n$, i.e.\ permutations in $\asq(B_n,k),$ is at most $c_n(n+k)^{2k} Ce^{-n^\delta}\leq c_n (2n)^{2k}Ce^{-n^\delta} = o(c_n)$.  Using Lemma \ref{city maps}, we obtain
	\begin{align}
		|\avnk| &=  \sum_{\tau_n \in \avn(321)} |\asq(\tau_n,k)|\label{carlos} \\
		 &=  \sum_{\tau_n \in A_n} |\asq(\tau_n,k)|+\sum_{\tau_n \in B_n} |\asq(\tau_n,k)| \\
		&= c_n \cdot \E\left[ |\asq(\bm{\tau}_n,k)|  \mathds{1}_{\left\{\bm \tau_n \in A_n\right\}} \right] + o\left(c_n n^{3k/2-.1}\right)\nonumber\\
		&= \frac{(2n)^{3k/2}c_n}{k!}\E\left [\left ( \int_0^1 F_{\bm{\tau}_n}(t) dt \right)^k \Bigg | \bm \tau_n \in A_n \right]\P(\bm\tau_n \in A_n) + o\left(c_n n^{3k/2-.1}\right).\nonumber
	\end{align}
	Using that $\P(\bm\tau_n\in B_n ) \leq Ce^{-n^{\delta}}$ and $F_{\bm\tau_n}(t) \leq n^{1/2}$, we have 
	\begin{equation}
		\label{eq:bound1}
		\E\left [\left ( \int_0^1 F_{\bm{\tau}_n}(t) dt \right)^k \Bigg | \bm \tau_n \in B_n \right]\P(\bm\tau_n\in B_n )\leq  Cn^{k/2}e^{-n^\delta}.
	\end{equation}
	Rewriting the expectation $\E\left [\left ( \int_0^1 F_{\bm{\tau}_n}(t) dt \right)^k \right]$ as
	\begin{equation}
		\label{eq:bound2}
		\E\left [\left ( \int_0^1 F_{\bm{\tau}_n}(t) dt \right)^k \Bigg | \bm \tau_n \in A_n \right] \P(\bm\tau_n\in A_n )+\E\left [\left ( \int_0^1 F_{\bm{\tau}_n}(t) dt \right)^k \Bigg | \bm \tau_n \in B_n \right] \P(\bm\tau_n\in B_n )
	\end{equation}
	we have that convergence of the $k$-th moment of $(\int_0^1 F_{\bm\tau_n}(t)dt \big | \bm\tau_n\in A_n)$ is equivalent to the convergence of the $k$-th moment of $\int_0^1 F_{\bm\tau_n}(t)dt$. Moreover, if the limits exist, they must agree. Suppose this is the case, then \cref{carlos} becomes
	
	\begin{equation}\label{dealio}
		|\avnk| = \frac{(2n)^{3k/2}c_n}{k!}\E\left [ \left ( \int_0^1 F_{\bm\tau_n}(t) dt \right)^k\right] + o\left(c_n n^{3k/2-.1}\right).	
	\end{equation}

	It remains to show the existence of the limit of the $k$-th moment of the area $\int_0^1 F_{\bm\tau_n}(t) dt$. We have the simple upper bound
	\begin{equation}\label{upside down}
		\int_0^1 F_{\bm{\tau}_n}(t) dt \leq \sup_{t\in [0,1]} F_{\bm\tau_n}(t) = \frac{1}{\sqrt{2n}} D( \bm \tau_n ).
	\end{equation}
	For each $k>0$ and for $n$ large enough, from Corollary \ref{dyck_max}
	\begin{align}
		\E\left [ \left( \int_0^1 F_{\bm\tau_n}(t) dt \right)^k  \Bigg | \bm\tau_n \in A_n \right] &\leq \E\left[ \left(\frac{1}{\sqrt{2n}}D( \bm\tau_n) \right)^k \Bigg | \bm\tau_n \in A_n \right] \nonumber \\
		& \leq  \E \left [ \left( \frac{1}{\sqrt{2n}}(M(\bm\gamma_n) + 10n^{.4})\right)^k \Bigg | \bm\tau_n \in A_n \right ] \nonumber \\
		& \leq \E\left[ \left( \frac{1}{\sqrt{2n}} M(\bm\gamma_n) \right)^k \right ] \frac{(1 + O(n^{-.1}))}{\P(\bm\tau_n \in A_n)}\nonumber \\
		& \leq \frac{2}{\P(\bm\tau_n \in A_n)}\E\left[ \left( \frac{1}{\sqrt{2n}} M(\bm\gamma_n) \right)^k \right ].\nonumber
	\end{align}
	Therefore, from \cref{eq:bound1} and \cref{eq:bound2} we obtain the following bound
	\begin{equation}
		\E\left [\left ( \int_0^1 F_{\bm{\tau}_n}(t) dt \right)^k \right]\leq 2\cdot \E\left[ \left( \frac{1}{\sqrt{2n}} M(\bm\gamma_n) \right)^k \right ]+Cn^{k/2}e^{-n^\delta}.
	\end{equation}
	By \cite[Theorem 1]{MR2535080} the exponential moment of $(2n)^{-1/2} M(\bm\gamma_n)$ is uniformly bounded in $n$,  thus for any $k>0$, the $k$-th moment of $\int_0^1 F_{\bm\tau_n}(t)dt$ is uniformly bounded in $n$.  This along with the convergence in distribution of $\int_0^1 F_{\bm\tau_n}(t) dt$ to $\int_0^1 \bm e_tdt$ implies convergence of the $k$-th moments:
	
	\begin{equation}\label{final countdown}
		\E\left [\left(\int_0^1 F_{\bm{\tau}_n}(t) dt \right)^k\right] \longrightarrow \E\left[ \left( \int_0^1 \bm{e}_t dt\right) ^k\right ]
	\end{equation}
	(see \cite[Theorem~4.5.2]{Chung2001course}, for instance). 
	Dividing both sides of \cref{dealio} by $(2n)^{3k/2}c_n/k!$ gives
	\begin{equation}
		\frac{|\avnk|}{(2n)^{3k/2}c_n/k!} 	 = \E\left [\left(\int_0^1 F_{\bm{\tau}_n}(t) dt \right)^k\right]  + o(1),
	\end{equation}
	and letting $n$ tend to infinity finishes the proof.
\end{proof}

\subsection{Fluctuations}\label{sect:321_fluctu}

We conclude this chapter proving Theorem \ref{thm:fluctuations_321}. We recall that for a permutation $\tau_n\in Av_n(321)$ (with the convention that $\tau_n(0)=0$) we defined
\begin{equation}\label{eq:first_def}
	F_{\tau_n}(t) \coloneqq \frac{1}{\sqrt{2n}}\big |\tau_n(\lfloor nt \rfloor) - \lfloor nt \rfloor \big|, \quad t\in[0,1].
\end{equation}
We also generalized this definition, by setting, for a permutation $\tau^k_n\in \avnk$ (with the convention that $\tau^k_n(0)=0$), 
\begin{equation}\label{eq:second_def}
	F_{\tau^k_n}(t) \coloneqq \frac{1}{\sqrt{2(n+k)}}\big |\tau^k_n(s(t)) - s(t) \big|,\quad  t\in[0,1],
\end{equation}
where $s(t)=\max\left\{m\leq \lfloor (n+k)t \rfloor|\tau^k_n(m)\text{ is an external point}\right\}$. Note that, for permutations in $ Av_n(321)$, the definition given in \cref{eq:second_def} coincides with the definition given in \cref{eq:first_def}.
We need the following technical result.

\begin{lem}[{\cite[Lemma 7.7]{borga2019almost}}]\label{lem:fluctuations}Let $Reg_n^k$ be the set of permutations in $\avnk$ such that the exterior satisfies the Petrov conditions. As $n\to\infty,$
	$$\sup_{\tau^k_n\in Reg^k_n}\|F_{\tau^k_n}(t)-F_{\ext(\tau^k_n)}(t)\|_{\infty}\to 0,$$
	where, for a function $f:[0,1]\to\R$, we denote $\|f\|_{\infty}=\sup_{t\in[0,1]}|f(t)|$.
\end{lem}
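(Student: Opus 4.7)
The plan is to establish the sup-norm bound by coupling the indices of $\tau^k_n$ and its exterior $\sigma:=\ext(\tau^k_n)\in \Av_n(321)$ via the \emph{rank} of external points, and then comparing the two interpolating functions term by term, using the Petrov conditions only through the already-available Lemma~\ref{flux}. The argument is fully deterministic on $Reg^k_n$, so the supremum will be controlled uniformly.

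First I would fix $t\in[0,1]$, set $s=s(t)$, and let $p=p(t)$ denote the number of external points at positions $\leq s$, so that $\tau^k_n(s)$ is precisely the $p$-th external value encountered from left to right. If $v_1<\cdots<v_n$ are the sorted external values then $\tau^k_n(s)=v_{\sigma(p)}$ by definition of standardization. Since there are at most $k$ internal points, three elementary counting bounds follow at once:
\begin{equation*}
0\leq s-p\leq k,\qquad 0\leq v_{\sigma(p)}-\sigma(p)\leq k,\qquad 0\leq \lfloor (n+k)t\rfloor - s\leq k,
\end{equation*}
and hence $|p-\lfloor nt\rfloor|\leq 3k$. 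Combining the first two inequalities gives the pointwise displacement bound $\bigl||\tau^k_n(s)-s|-|\sigma(p)-p|\bigr|\leq 2k$.

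Next I would absorb the change of normalization. Using $\frac{1}{\sqrt{2(n+k)}}=\frac{1}{\sqrt{2n}}\bigl(1+O(k/n)\bigr)$ together with the trivial bound $|\sigma(p)-p|\leq n$, one gets
\begin{equation*}
\left|F_{\tau^k_n}(t)-\frac{|\sigma(p)-p|}{\sqrt{2n}}\right|\leq \frac{2k}{\sqrt{2(n+k)}}+\frac{|\sigma(p)-p|}{\sqrt{2n}}\cdot O\!\left(\tfrac{k}{n}\right)=O\!\left(\tfrac{k}{\sqrt n}\right),
\end{equation*}
with an explicit constant depending only on $k$. This takes care of the shift from $s$ to $p$ and from $n+k$ to $n$ in a single step.

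It then remains to replace $p$ by $\lfloor nt\rfloor$ inside $|\sigma(p)-p|$. Here is the only place where the Petrov condition on the exterior enters: by Lemma~\ref{flux}, for every $j\leq n$, $\bigl||\sigma(j)-j|-\gamma_n(2j)\bigr|\leq 10n^{.4}$, where $\gamma_n$ is the Dyck path associated with $\sigma$ through the BJS bijection. Dyck paths are $1$-Lipschitz, so the step-3 bound $|p-\lfloor nt\rfloor|\leq 3k$ yields $|\gamma_n(2p)-\gamma_n(2\lfloor nt\rfloor)|\leq 6k$, and therefore
\begin{equation*}
\bigl||\sigma(p)-p|-|\sigma(\lfloor nt\rfloor)-\lfloor nt\rfloor|\bigr|\leq 6k+20n^{.4}.
\end{equation*}
Dividing by $\sqrt{2n}$ gives $O(k/\sqrt n)+O(n^{-.1})$. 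Summing the two contributions, $|F_{\tau^k_n}(t)-F_{\sigma}(t)|= O(n^{-.1})$ uniformly in $t\in[0,1]$ and in $\tau^k_n\in Reg^k_n$, which proves the claim. There is no real obstacle; the only subtle point is bookkeeping the four vanishing error sources (index shift $s\mapsto p$, value shift $\tau^k_n(s)\mapsto \sigma(p)$, renormalization $n+k\mapsto n$, and the Petrov-based transport $p\mapsto \lfloor nt\rfloor$) and checking that the dominant term $20n^{.4}/\sqrt{2n}$ still tends to zero.
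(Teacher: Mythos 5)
Your proof is correct. Note that this manuscript does not actually reproduce a proof of the lemma (it is imported from \cite[Lemma 7.7]{borga2019almost}), so there is no in-text argument to compare against; your route is the natural one given the tools the chapter sets up, and it is sound: the three elementary $\le k$ counting bounds couple $(s(t),\tau^k_n(s(t)))$ with $(p,\sigma(p))$ up to an error $\le k$, the change of normalization $\sqrt{2(n+k)}\mapsto\sqrt{2n}$ costs $O(k/\sqrt n)$, and the transport from $p$ to $\lfloor nt\rfloor$ is handled by Lemma~\ref{flux} combined with the $1$-Lipschitz property of Dyck paths, giving the dominant error $20n^{.4}/\sqrt{2n}=O(n^{-.1})$. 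Since every estimate is deterministic for $\tau^k_n\in Reg^k_n$ and uniform in $t$, the supremum over $Reg^k_n$ indeed tends to $0$ for fixed $k$ (in fact for any $k=o(\sqrt n)$), which is all the lemma requires.
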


We can finally prove Theorem \ref{thm:fluctuations_321}, namely, that for a uniform random permutation $\bm{\tau}^k_n$ in $\avnk$, the following convergence in distribution holds
$$\left(F_{\bm{\tau}^k_n}(t)\right)_{t\in [0,1]} \stackrel{d}{\longrightarrow} \left(\bm{e}^k_t\right)_{t\in [0,1]},$$
where we recall that $\bm{e}^k_t$ denotes the $k$-biased Brownian excursion on $[0,1]$ (see \cref{def:kbiasedex}).

\begin{proof}[Proof of Theorem \ref{thm:fluctuations_321}]
	It is enough to show that for every bounded continuous functional \break
	$G:D([0,1],\mathbb{R})\to\R,$
	\begin{equation}
		\E\left[G\left(F_{\bm{\tau}^k_n}(t)\right)\right]\to\E\left[G\left(\bm{e}^k_t\right)\right].
	\end{equation}
	Note that
	\begin{multline}
		\left|\E\left[G\left(F_{\bm{\tau}^k_n}(t)\right)\right]-\E\left[G\left(\bm{e}^k_t\right)\right]\right|\\
		\leq\E\left[\left|G\left(F_{\bm{\tau}^k_n}(t)\right)-G\left(F_{\ext(\bm{\tau}^k_n)}(t)\right)\right|\right]+\left|\E\left[G\left(F_{\ext(\bm{\tau}^k_n)}(t)\right)\right]-\E\left[G\left(\bm{e}^k_t\right)\right]\right|.
	\end{multline}
	We first show that
	\begin{equation}\label{eq:gooal1}
		\E\left[\left|G\left(F_{\bm{\tau}^k_n}(t)\right)-G\left(F_{\ext(\bm{\tau}^k_n)}(t)\right)\right|\right]\to 0.
	\end{equation}
	We have that
	\begin{equation}
		\E\left[\left|G\left(F_{\bm{\tau}^k_n}(t)\right)-G\left(F_{\ext(\bm{\tau}^k_n)}(t)\right)\right|\right]=\sum_{\tau^k_n\in\avnk}\left|G\left(F_{\tau^k_n}(t)\right)-G\left(F_{\ext(\tau^k_n)}(t)\right)\right|\P\left(\bm{\tau}^k_n=\tau^k_n\right).
	\end{equation}
	The continuity of $G$ and Lemma \ref{lem:fluctuations} show that the contribution to the sum vanishes as $n\to\infty$ for $\tau_n^k\in Reg_n^k$.  Since $G$ is bounded and $\P(\bm{\tau}^k_n\notin Reg_n^k)\to 0$, we can conclude that the contribution to the sum for $\tau_n^k\notin Reg_n^k$ also vanishes as $n\to\infty$, and thus (\ref{eq:gooal1}) holds.
	
	It remains to prove that 
	\begin{equation}\label{eq:goaal2}
		\left|\E\left[G\left(F_{\ext(\bm{\tau}^k_n)}(t)\right)\right]-\E\left[G\left(\bm{e}^k_t\right)\right]\right|\to 0.
	\end{equation}
	Note that 
	\begin{equation}
		\E\left[G\left(F_{\ext(\bm{\tau}^k_n)}(t)\right)\right]=\sum_{\tau^k_n\in\avnk}G\left(F_{\ext(\tau^k_n)}(t)\right)\cdot\P\left(\bm{\tau}^k_n=\tau^k_n\right).
	\end{equation}
	From the asymptotic enumeration of $\avnk$ obtained in \cref{thin red line} we have that, uniformly for every $\tau^k_n\in\avnk$,
	\begin{equation}
		\P\left(\bm{\tau}^k_n=\tau^k_n\right)\sim \frac{k!}{(2n)^{3k/2}}\cdot\frac{1}{c_n}\cdot\mathbb{E}\left[\left(\int_0^1\bm e_t dt\right)^k\right]^{-1},
	\end{equation}
	and so, setting $Ar_k=\mathbb{E}\left[\left(\int_0^1\bm e_t dt\right)^k\right]^{-1}$, we obtain
	\begin{align}
		\E\left[G\left(F_{\ext(\bm{\tau}^k_n)}(t)\right)\right]\sim& Ar_k\cdot\sum_{\tau^k_n\in\avnk}G\left(F_{\ext(\tau^k_n)}(t)\right)\cdot\frac{k!}{(2n)^{3k/2}}\cdot\frac{1}{c_n}\\
		&=Ar_k\cdot\sum_{\sigma_n\in Av_n(321)}G\left(F_{\sigma_n}(t)\right)\cdot\left|\asq(\sigma_n,k)\right|\cdot\frac{k!}{(2n)^{3k/2}}\cdot\frac{1}{c_n}.
	\end{align}
	From Lemma \ref{city maps}, for every $\sigma_n\in Av_n(321)$ that satisfies the Petrov conditions, it holds that
	$$\left|\asq(\sigma_n,k)\right| = \frac{(2n)^{3k/2}}{k!}\left(\left(\int_0^1 F_{\sigma_n}(t)dt \right)^k + O(n^{-.1})\right).$$
	Therefore, using the asymptotic result above and recalling that the number of 321-avoiding permutations of size $n$ that do not satisfy the Petrov conditions is bounded by $Ce^{-n^\delta}c_n$, we obtain
	\begin{align}
		\E\left[G\left(F_{\ext(\bm{\tau}^k_n)}(t)\right)\right]\sim& Ar_k\cdot\sum_{\sigma_n\in Av_n(321)}G\left(F_{\sigma_n}(t)\right)\cdot \left(\int_0^1 F_{\sigma_n}(t)dt \right)^k\cdot\frac{1}{c_n}\\
		&=Ar_k\cdot\E\left[G\left(F_{\bm\sigma_n}(t)\right)\cdot \left(\int_0^1 F_{\bm\sigma_n}(t)dt \right)^k\right],
	\end{align}
	where $\bm\sigma_n$ is a uniform permutation in $Av_n(321)$. Using \cref{chacha} and similar arguments to the ones used for proving the result in (\ref{final countdown}), we have that
	\begin{equation}
		\E\left[G\left(F_{\bm\sigma_n}(t)\right)\cdot \left(\int_0^1 F_{\bm\sigma_n}(t)dt \right)^k\right]\to\E\left[G\left(\bm{e}_t\right)\cdot \left(\int_0^1 \bm{e}_tdt \right)^k\right].
	\end{equation}
	Finally, recalling the definition of $k$-biased excursion given in Definition \ref{def:kbiasedex}, we can conclude that (\ref{eq:goaal2}) holds, finishing the proof.
\end{proof}

\section{Open problems}

\begin{itemize}
	\item The main open problem of this section is undoubtedly to give a formal proof of \cref{conj:phase_trans}. We believe that the strategy outlined in \cref{sect:strat_conj} can be formalized without big problems. Nevertheless, in order to prove the full statement of \cref{conj:phase_trans} it will be also needed a concentration result for the four boundary points of the diagram and a sort of independence of the points in the four parts of the diagram (recall that we just discussed the limit shape of the bottom-left part of the diagram). 
	
	\item In Section \ref{sect:321av} we investigate $321$-avoiding permutations with $k$ additional internal points for $k$ fixed. What about the case when $k\to\infty$? We believe that in this regime the fluctuations of the points for a uniform permutation are not of order $\sqrt n$ any more, drastically changing the behavior of the limiting shape of the permutation. 
	
	\item The $k$-th moment of the Brownian excursion area appearing in Theorem~\ref{thin red line} is known to be the continuous limit of the normalized $k$-th moment of the area under large Dyck paths. It would be interesting to establish a bijection between $\avnk$ and some specific set of Dyck paths covering $k$ marked points. This would also probably help in the resolution of the problem mentioned in the previous item.

  	\item The approach of assigning labels to the points of a permutation and then projecting these labels both horizontally and vertically seems to be quite generalizable.  The following are some ideas for future work. 

		\begin{itemize}      
			\item Monotone grid classes (introduced in \cite{huczynska2006grid} and then studied in many others works \cite{albert2013geometric,atkinson1999restricted,vatter2011partial,waton2007permutation}) seem to be a family of models where our approach fits well. We point out that some initial results on the shape of such permutations were given by Bevan in his Ph.D. thesis \cite{bevan2015growth}.  Our approach suggests a way to give a description of the fluctuations and local limits of monotone grid classes.
			\item We also believe that our technique can give an alternative approach to the probabilistic study of $X$-permutations first considered in \cite{waton2007permutation,elizalde2011thexclass} and recently in \cite{bassino2019scaling}. In particular our technique could give nice additional results about the fluctuations similar to the ones explored in this chapter. We finally recall that $X$-permutations are a particular instance of geometric grid classes (see \cite{albert2013geometric} for a nice introduction), and so also these families might be investigated in future projects.
		\end{itemize}
\end{itemize}

    \chapter{Permuton limits: A path towards a new universality class}\label{chp:perm_lim}
\chaptermark{Permuton limits}

\begin{adjustwidth}{8em}{0pt}
	\emph{In which we introduce the biased Brownian separable permuton (left picture) and the Baxter permuton (middle picture). We show that the first one is the universal limit --in the permuton sense -- of uniform random permutations in many substitution-closed classes and the second one is the permuton limit of uniform Baxter permutations. We also prove joint convergence of the four trees characterizing a uniform bipolar orientation and its dual to a coupling of four Brownian CRTs. We conclude by describing a new (conjectural) universal limiting object for random permutations, called skew Brownian permuton (right picture). The latter should include both the biased Brownian separable permuton and the Baxter permuton as particular cases.}
\end{adjustwidth}

\vspace{1cm}

\begin{figure}[b]
	\centering
	\includegraphics[scale=0.095]{Separable_1_1335}
	\includegraphics[scale=0.0865]{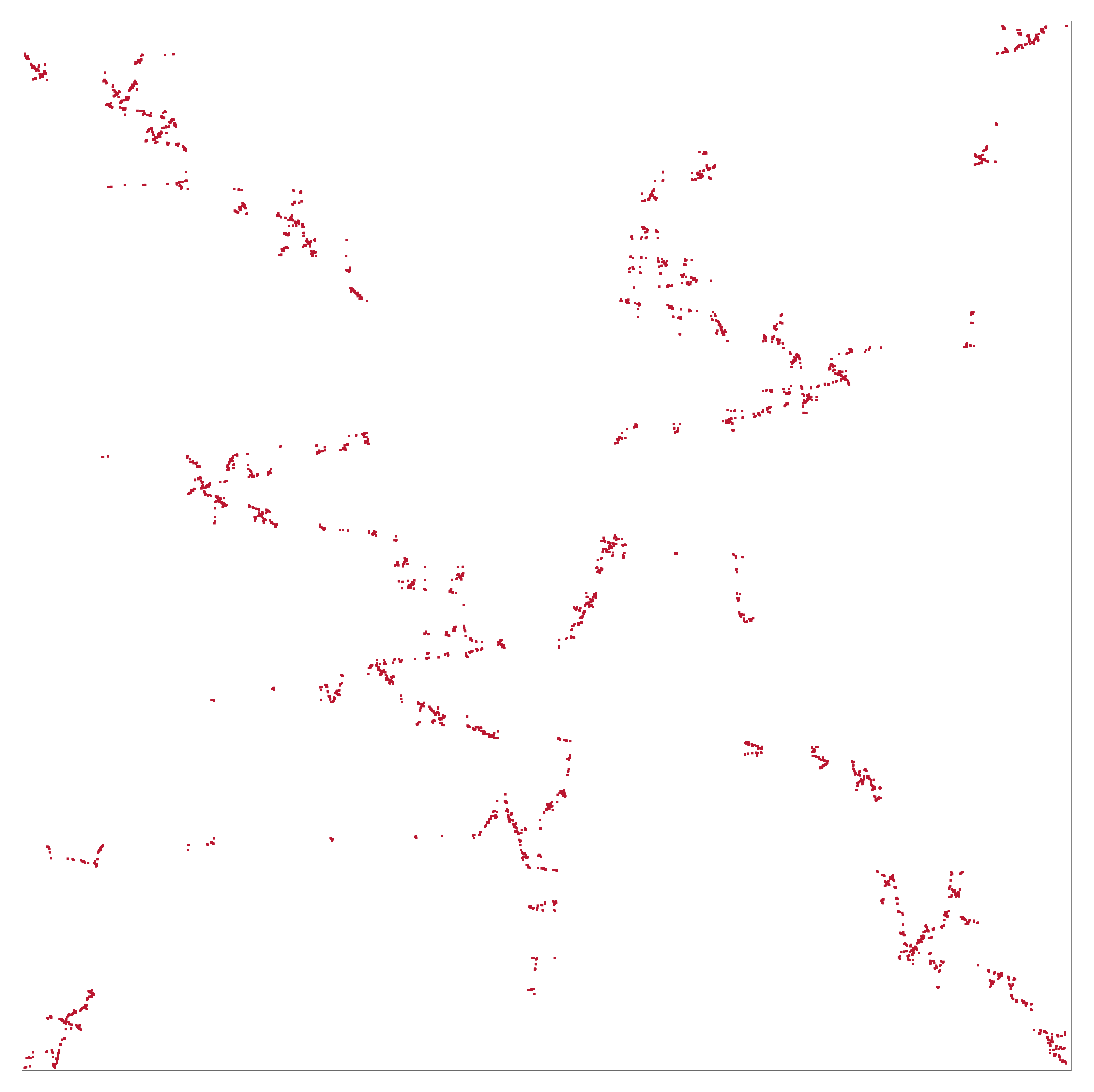}
	\includegraphics[scale=0.087]{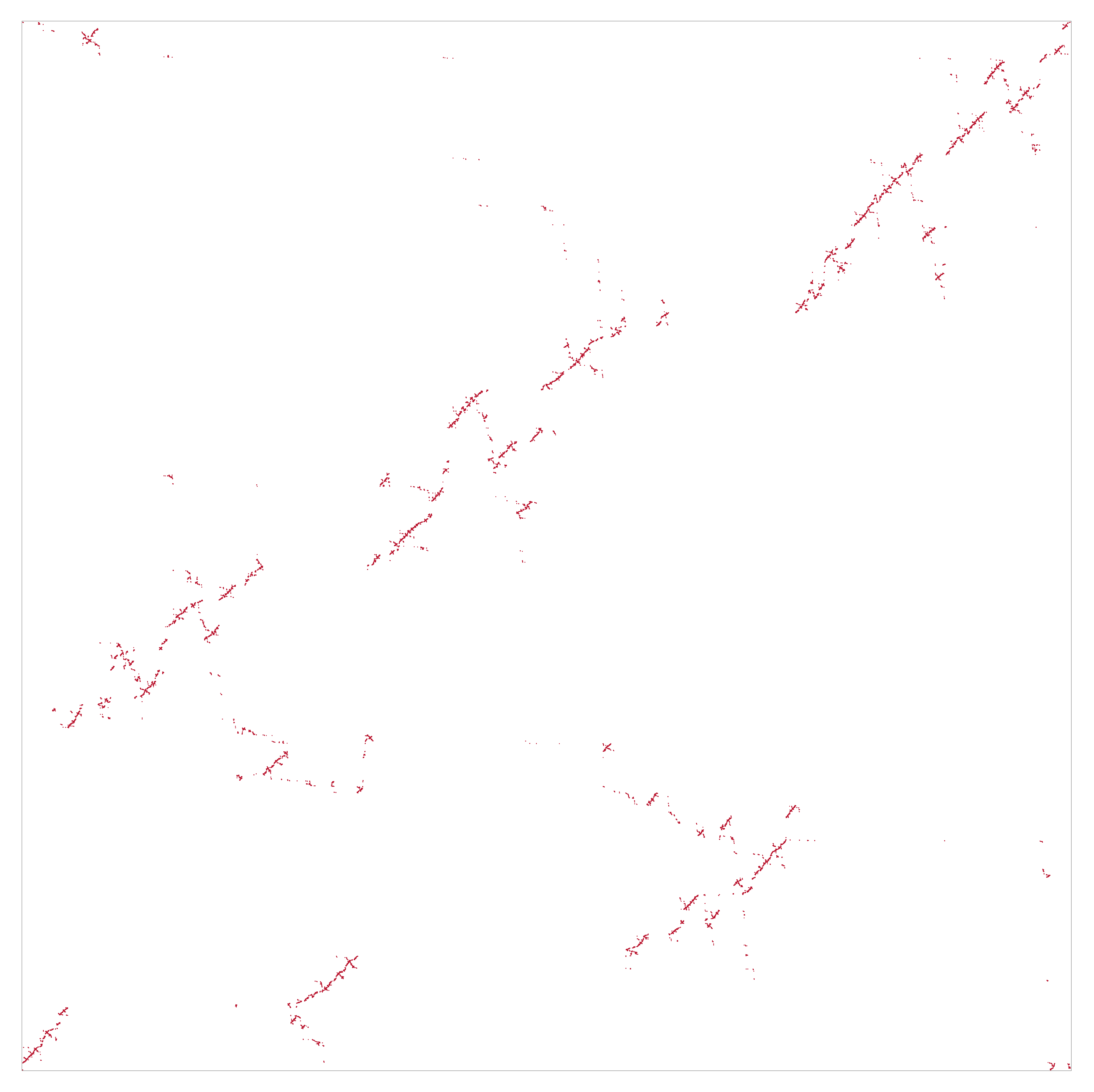}
	\caption{\textbf{Left}: The biased Brownian separable permuton. \textbf{Middle}: The Baxter permuton. \textbf{Right}: The skew Brownian permuton.\label{fig:sym}}
\end{figure}

\section{Overview of the main results}

In this chapter we deal with permuton limits, using the theory introduced in \cref{sect:perm_conv}. We focus on uniform random permutations in substitution-closed classes (\cref{sect:sub_close_cls}) and on uniform Baxter permutations (\cref{sect:intro_scaling_limit_results}), highlighting another  instance of a universal phenomenon for random constrained permutations.
Finally, we also investigate the scaling limit (in a sense made precise later) of uniform bipolar orientations. Both the proofs of the results for Baxter permutations and bipolar orientations builds on the existence of a scaling limit for the corresponding coalescent-walk processes, introduced in \cref{sect:coal_walk_intro}.

\medskip

At the end of the chapter (\cref{sect:skew_perm}), we finally discuss the definition of a new universal limiting permuton, called the \emph{skew Brownian permuton}, that is deeply related with the two permutons mentioned above. We will present several conjectures about this new random limiting object that we would like to investigate in depth in future research projects.

\subsection{Substitution-closed classes}\label{sect:sub_close_cls}
The {\em biased Brownian separable permuton} $\bm{\mu}^{(p)}$ of parameter $p\in[0,1]$
is a random permuton (see \cref{fig:perm_sub}) constructed from a Brownian excursion and independent signs associated
with its local minima. In \cref{sect:sep_perm_info} we will give more background on this Brownian object, discussing some of its fundamental properties.

The Brownian separable permuton $\bm{\mu}^{(1/2)}$ was first discovered in \cite{bassino2018separable} studying the permuton limit of separable permutations.
Then it was proved in \cite{bassino2017universal}  that the biased Brownian separable permuton $\bm{\mu}^{(p)}$ is a universal limiting object for substitution-closed classes,
in the sense that uniform random permutations in many substitution-closed classes $\mathcal C$
converge to $\bm{\mu}^{(p)}$, for some $p=p(\mathcal C)\in[0,1]$.

\medskip

Here, we give a new (more probabilistic\footnote{The proof in \cite{bassino2017universal} mainly uses analytic combinatorics techniques.}) proof of this theorem building on an extension of Aldous' skeleton decomposition~\cite{MR1207226}.

\begin{figure}[htbp]
	\begin{minipage}[c]{0.70\textwidth}
		\centering
		\includegraphics[scale=0.1]{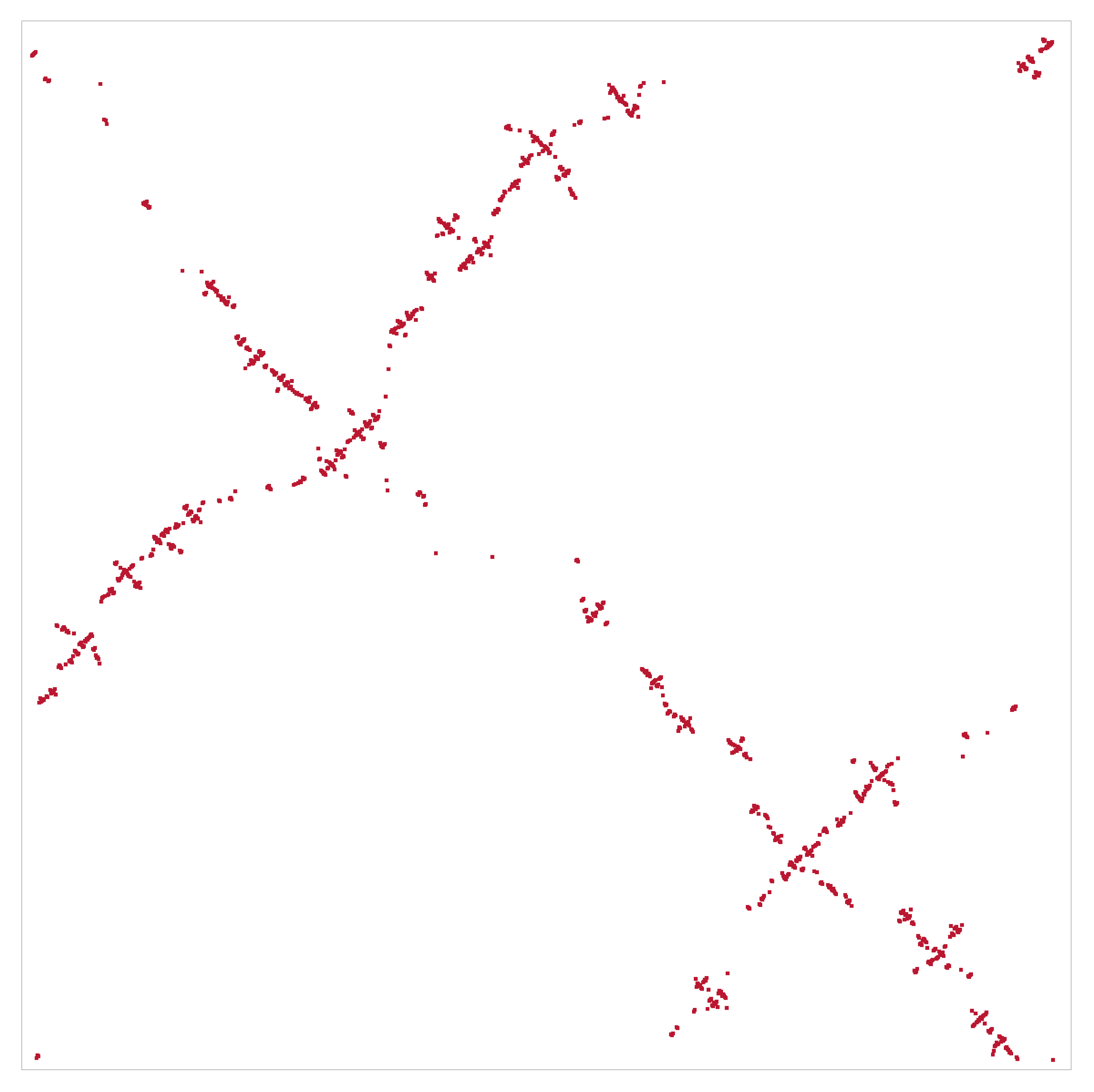}
		\includegraphics[scale=0.1]{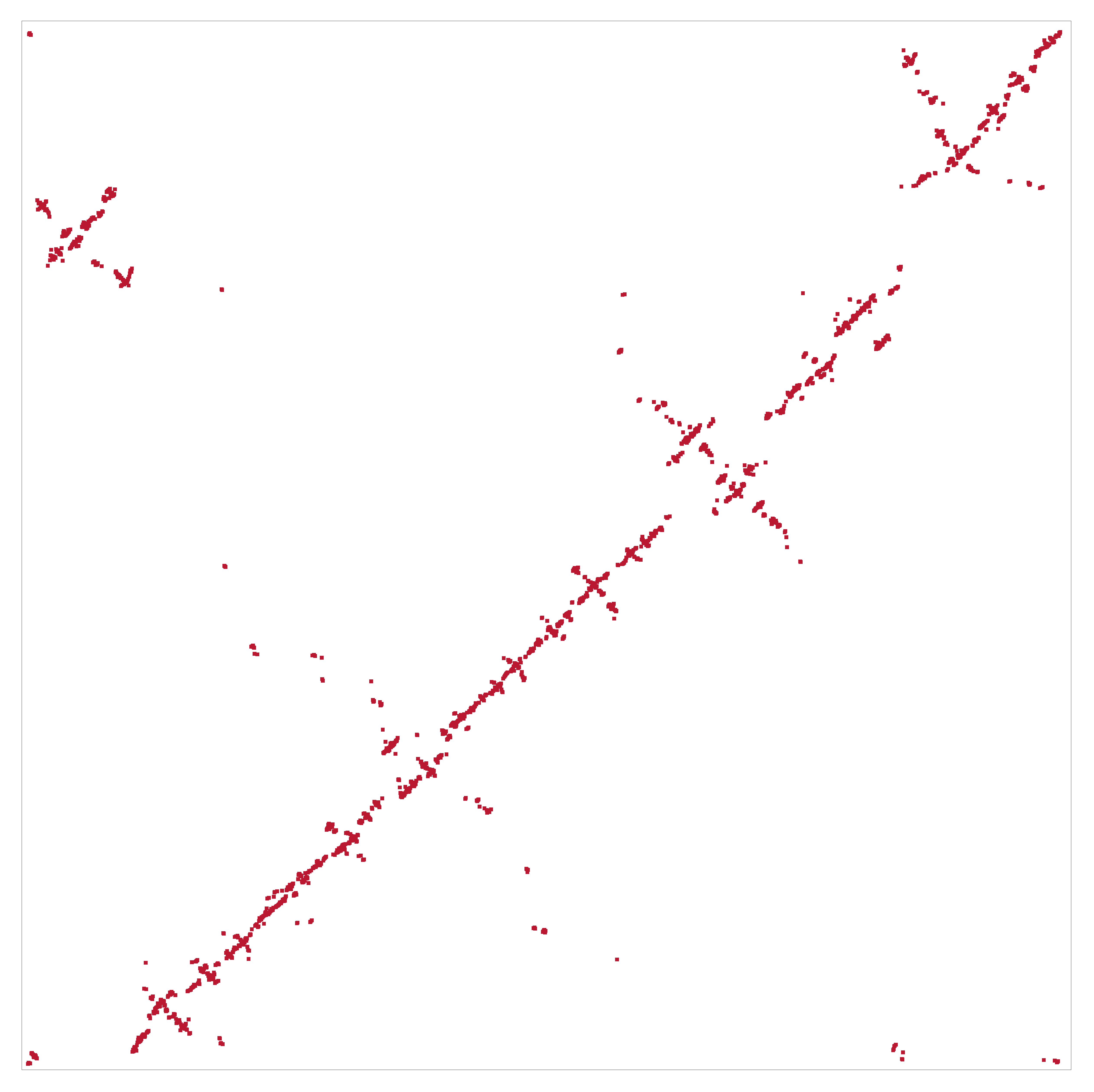}
	\end{minipage}
	\begin{minipage}[c]{0.29\textwidth}
		\caption{Two simulations of the biased Brownian separable permuton $\bm{\mu}^{(p)}$ for $p=1/2$ (left) and $p=0.6$ (right). 
			\label{fig:perm_sub}
		}
	\end{minipage}
\end{figure}

Recall the notation from \cref{subsec:PackedTrees_GW2} page \pageref{subsec:PackedTrees_GW2} and \cref{prop: offspring_distr_charact}.

\begin{thm}
	\label{thm:scaling_intro}
	Let $\bm{\sigma}_n$ be a uniform permutation of size $n$ from a proper substitution-closed class of permutations $\cC$, whose set of simple permutations is $\mathfrak S$. 
	Let $\xi$ be the offspring distribution of the Galton--Watson tree model associated with $\cC$. 
	Suppose that $\E[\xi]=1$ and $\V[\xi] < \infty$. That is, either
		\begin{equation}
		\label{eq:S_ExpMoments}
		\cS'(\rho_\cS) > \frac{2}{(1 +\rho_\cS)^2} -1,
		\end{equation}
	or
	\begin{equation}
		\label{eq:S_FiniteVariance}
		\cS'(\rho_\cS)  = \frac{2}{(1 +\rho_\cS)^2} -1 \qquad \text{and} \qquad 	\cS''(\rho_\cS)  < \infty.
	\end{equation}
	Then
	\[
	\bm{\mu}_{\bm{\sigma}_n} \xrightarrow{d} \bm{\mu}^{(p)},
	\]
	with $\bm{\mu}^{(p)}$ denoting the biased Brownian separable permuton with parameter
	\begin{equation}
		\label{eq:parm_p}
		p = \frac{2}{\sigma^2}\big(\kappa(1+\kappa)^3\Occ_{12}(\kappa)+\kappa\big),
	\end{equation}
	where $\kappa$ and $\sigma^2=\V[\xi]$ are defined in \cref{prop: offspring_distr_charact} and $\Occ_{12}(z)=\sum_{\alpha\in\mathfrak S}\occ(12,\alpha)z^{|\alpha|-2}$.
	
	This includes the case when $\cC$ is the class of separable permutations, for which $\mathfrak S=\emptyset$ and $p=1/2$.
\end{thm}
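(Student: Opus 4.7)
By \cref{thm:randompermutonthm} it suffices to show that for every $k \geq 1$, the random pattern $\pat_{\bm{I}_{n,k}}(\bm\sigma_n)$ induced by $k$ uniform positions converges in distribution to $\Perm_k(\bm{\mu}^{(p)})$. First, I would use \cref{prop:giant_comp_perm} to reduce to the case where $\bm\sigma_n$ is a uniform $\oplus$-indecomposable permutation of $\mathcal{C}$, since the $o_p(1)$-sized $\oplus$-components discarded can absorb at most $o_p(1)$ of the $k$ marked leaves. By \cref{le:bij_perm_tree} and \cref{lem:unif_packed_tree}, this reduces to analyzing $\RP(\bm P_n, \bm\ell_{1}, \ldots, \bm\ell_{k})$ where $\bm P_n = (\bm T_n, \bm\lambda_{\bm T_n})$, $\bm T_n$ has the law of a critical $\xi$-Galton--Watson tree (with offspring distribution $\xi$ from \cref{prop: offspring_distr_charact}) conditioned on having $n$ leaves, and $\bm \ell_1, \ldots, \bm \ell_k$ are i.i.d. uniform leaves. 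The assumption $\E[\xi]=1$ and $\V[\xi]<\infty$ places us in the domain of attraction of Aldous' Brownian CRT.

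Next, by \cref{ssec:patterns_subtrees}, the pattern $\pat_{\{\bm\ell_1, \ldots, \bm\ell_k\}}(\bm\sigma_n)$ is determined by (i) the combinatorial structure of the reduced subtree $\bm R_n$ of $\bm T_n$ spanned by $\bm\ell_1, \ldots, \bm\ell_k$ together with the root, and (ii) the decoration data at each branching vertex of $\bm R_n$ (if the branching vertex sits inside an $\mathfrak S$-gadget $G$, one needs the positions within $G$ of the branches leading to the marked leaves; if it lies on a $\circledast$-spine, the alternation rule of \cref{ssec:patterns_subtrees} applies). The plan is then to invoke a suitable extension of Aldous' skeleton decomposition of conditioned critical Galton--Watson trees (as in Aldous~\cite{MR1207226}, Duquesne--Le Gall, or more recent versions tailored for conditioning on the number of leaves, which is available e.g.\ in Kortchemski's work and aligns with the framework used by Stufler). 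This decomposition tells us that, after rescaling edge lengths by $\sqrt n$, the tree $\bm R_n$ converges to the reduced subtree spanned by $k$ i.i.d.\ uniform leaves in the Brownian CRT; in particular, the limiting tree is almost surely binary. Consequently, with probability tending to $1$, each of the $k-1$ branchpoints of $\bm R_n$ has exactly two marked descendants and lies in a macroscopic $\circledast$-block or $\mathfrak S$-gadget whose decoration, conditionally on being relevant, is sampled independently of everything else according to the size-biased stationary distribution induced by $\xi$.

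Therefore, conditionally on the limiting binary tree structure, the pattern information reduces to assigning an independent Bernoulli sign to each branchpoint, equal to $1$ (non-inversion) or $0$ (inversion) for the two marked descendants. By construction this is precisely how $\Perm_k(\bm{\mu}^{(p)})$ is built from the Brownian excursion coding the CRT with i.i.d. Bernoulli$(p)$ signs at its local minima (see \cref{sect:sep_perm_info}). It only remains to identify $p$ as the probability that, in the size-biased stationary decoration, two uniformly chosen children positions form a non-inversion. A direct computation using \cref{prop: offspring_distr_charact} weights $\oplus$-blocks contributing full probability $1$, $\ominus$-blocks contributing $0$, and each gadget $G$ built from $\alpha \in \mathfrak S$ contributing $\occ(12,\alpha)/\binom{|\alpha|}{2}$, suitably weighted by $\xi$; after simplification using $t_0 = \kappa/(1+\kappa)$ and $\V[\xi]=\sigma^2$ from \cref{prop: offspring_distr_charact}, this yields exactly the expression \eqref{eq:parm_p}.

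\textbf{Main obstacle.} The delicate step is controlling what happens at branchpoints in the packed tree that fall inside an $\mathfrak S$-gadget $G$: two marked leaves can be descendants of the same $\mathfrak S$-gadget vertex through different configurations, and the ``alternation rule'' for nested $\oplus/\ominus$ blocks requires tracking the parity of distances to the nearest $\mathfrak S$-gadget ancestor. One must verify that these short-range effects disappear in the scaling limit (branchpoints are macroscopically separated) and that, uniformly in $k$, the decoration probabilities aggregate into i.i.d.\ Bernoulli$(p)$ signs -- this is where the extension of Aldous' skeleton decomposition to decorated trees with size-biasing on internal vertices has to be established carefully, and where the hypothesis $\V[\xi]<\infty$ (either \eqref{eq:S_ExpMoments} or \eqref{eq:S_FiniteVariance}) enters in an essential way.
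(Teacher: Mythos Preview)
Your proposal is correct and follows essentially the same route as the paper: reduce to $\oplus$-indecomposable permutations via \cref{prop:giant_comp_perm}, encode as randomly decorated conditioned Galton--Watson packed trees, use an extended skeleton decomposition to show that the reduced tree on $k$ marked leaves is asymptotically a uniform proper $k$-tree with i.i.d.\ Bernoulli$(p)$ signs at branchpoints, and conclude via \cref{lem:convergence_via_induced_subtrees}. The ``suitable extension of Aldous' skeleton decomposition'' you seek is precisely \cref{le:semilocal}, which additionally tracks $o(\sqrt{n})$-neighborhoods of the essential vertices (exactly resolving your stated obstacle about the alternation rule and nearest $\mathfrak S$-gadget ancestors), supplemented by \cref{cor:semilocal3} for the parity argument in the separable case; the explicit computation of $p$ is deferred in the paper to \cite[Theorem~5.2]{borga2020decorated}.
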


Specifically, the result~\cite[Thm. 1.2]{bassino2018separable} corresponds to the special case where $\mathfrak S= \emptyset$, and \cite[Thm. 1.10]{bassino2017universal} corresponds to the special case where \cref{eq:S_ExpMoments} is satisfied. The result~\cite[Thm. 7.8]{bassino2017universal} corresponds to the case where \cref{eq:S_FiniteVariance} is satisfied with the additional assumption (removed in \cref{thm:scaling_intro}) that $\cS(z)$ is amendable to singularity analysis.

\subsection{Baxter permutations and bipolar orientations}
\label{sect:intro_scaling_limit_results}

In this section we present a joint scaling limit result for Baxter permutations (see \cref{fig:Baxter_sub}) and the tandem walks describing the four trees characterizing bipolar orientations and their dual maps.

This result will lead us to the discovery of the Baxter permuton, which is a \emph{new} limiting \emph{random} permuton not included in the family of biased Brownian separable permutons.

\begin{figure}[htbp]
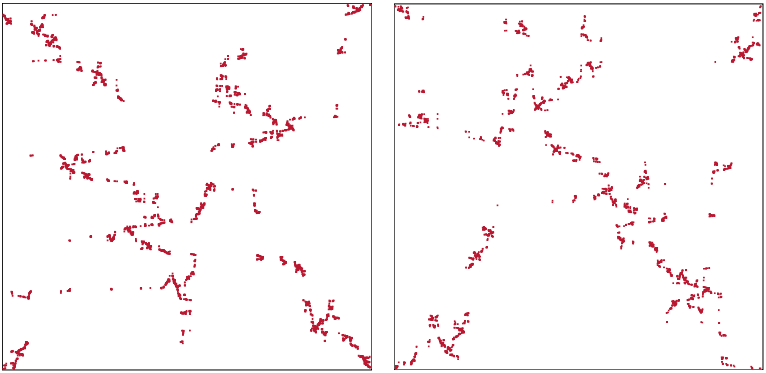

	\begin{minipage}[c]{0.70\textwidth}
		\centering
		\includegraphics[scale=0.09]{Baxter_1_4520}
		\includegraphics[scale=0.079]{Baxter_2_3253}
	\end{minipage}
	\begin{minipage}[c]{0.29\textwidth}
		\caption{Two large uniform Baxter permutations. 
			\label{fig:Baxter_sub}
		}
	\end{minipage}
\end{figure}

Recall the notation from \cref{sec:discrete}, see in particular \cref{thm:diagram_commutes} page \pageref{thm:diagram_commutes}. For $n\geq 1$, let $\bm \sigma_n$ be a uniform Baxter permutation of size $n$ and $\bm m_n=\bobp^{-1}(\bm \sigma_n)$ the corresponding uniform bipolar orientation with $n$ edges. Let $\bm W_n = \bow(\bm m_n)$ and $\bm W_n^* = \bow(\bm m_n^*)$ be the tandem walks associated with $\bm m_n$ and its dual $\bm m_n^*$.	
Let ${\conti W}_n$ and ${\conti W}_n^*$ be the two continuous functions from $[0,1]$ to $\R_{\geq 0}^2$ that linearly interpolate the points ${\conti W}_n^\theta\left(\frac kn\right) = \frac 1 {\sqrt {2n}} {\bm W}_{n}^\theta(k)$ for $1\leq k \leq n$ and $\theta\in\{\emptyset,*\}$.

Let $\conti W = (\conti X(t),\conti Y(t))_{t\geq 0}$ be a \textit{standard two-dimensional Brownian motion of correlation -1/2}, that is a continuous two-dimensional Gaussian process such that the components $\conti X$ and $\conti Y $ are standard one-dimensional Brownian motions, and $\mathrm{Cov}(\conti X(t),\conti Y(s)) = -1/2 \cdot \min\{t,s\}$. Let $\conti W_e$ be a \textit{two-dimensional Brownian excursion of correlation -1/2 in the non-negative quadrant}, that is the process $(\conti W(t))_{0\leq t\leq 1}$ conditioned on $\conti W(1) = (0,0)$ and on staying in the non-negative quadrant $\R_{\geq 0}^2$. A rigorous definition is given in \cref{sec:appendix}.

Consider the time-reversal and coordinate-swapping mapping
$s:\mathcal C([0,1],\R^2) \to \mathcal C([0,1],\R^2)$ defined by $s(f,g) = (g(1-\cdot), f(1-\cdot))$. Consider also the mapping $R: \mathcal M \to \mathcal M$ that rotates a permuton by an angle $-\pi/2$.

\begin{thm}\label{thm:joint_intro}
	There exist two measurable mappings 
	$$r:\mathcal C([0,1], \R_{\geq 0}^2) \to \mathcal C([0,1],\R_{\geq 0}^2)\quad\text{and}\quad \phi:\mathcal C([0,1], \R_{\geq 0}^2) \to \mathcal M$$ such that the following convergence in distribution holds
	\begin{equation}
	\label{eq:scal_lim_comp}
	({\conti W}_n,{\conti W}_n^*, \mu_{\bm \sigma_n}) \xrightarrow{d} (\conti W_e, \conti W_e^*, \bm \mu_B),
	\end{equation}	
	where $\conti W_e^* = r(\conti W_e)$, and $\bm \mu_B  = \phi(\conti W_e)$.
	In particular, we have $r(\conti W_e) \stackrel d= \conti W_e$. Moreover, we have the following equalities that hold at $\P_{\conti W_e}$-almost every point of $\mathcal C([0,1], \R_{\geq 0}^2)$,
	\begin{gather}
	r^2 = s, \quad
	r^4 = \Id,\quad
	\phi \circ r = R\circ \phi.
	\end{gather} 
\end{thm}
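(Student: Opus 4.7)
The starting point is the commutative diagram of \cref{thm:diagram_commutes} together with \cref{prop:rev_coal_prop,prop:anti-inv,cor:local_time}, which together encode the four objects $(\bm W_n, \bm W_n^*, \bm \sigma_n, \bm m_n)$ entirely in terms of walk data plus the coalescent-walk processes $\bm Z_n = \wcp(\bm W_n)$ and $\cev{\bm Z}_n = \wcp(\cev{\bm W}_n)$. The first step is to establish the walk convergence $\conti W_n \xrightarrow{d} \conti W_e$. By \cref{prop:unif_law}, $\bm W_n$ is an unconditioned walk with step distribution $\nu$ from \cref{eq:walk_distrib}, conditioned to form a two-dimensional excursion in the non-negative quadrant. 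The step distribution is centered with covariance $\begin{psmallmatrix} 2 & -1 \\ -1 & 2 \end{psmallmatrix}$, so Donsker's theorem combined with the quadrant-conditioning invariance principles of Denisov--Wachtel and Duraj--Wachtel (invoked in \cref{sec:appendix}) yields $\conti W_n \xrightarrow{d} \conti W_e$ in $\mathcal C([0,1], \R_{\geq 0}^2)$.

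Next I would build the two continuum mappings. For $\phi$, I would construct a continuum analogue of the coalescent-walk process: driven by $\conti W_e = (\conti X_e, \conti Y_e)$, define an a.s.-defined flow of real-valued processes $\{\conti Z^{(t)}\}_{t \in [0,1]}$ which informally follows $\conti Y_e$-increments when non-negative and $(-\conti X_e)$-increments when negative, with the appropriate reflection; these are the natural continuum limits of \cref{defn:distrib_incr_coal} and should be characterized as solutions of an SDE of skew-Brownian type. Then, mimicking \cref{eq:coal_to_perm}, define a random total pre-order on $[0,1]$ by $s \preccurlyeq_{\conti Z} t$ when either $s = t$, or $s < t$ and $\conti Z^{(s)}_t < 0$, or $s > t$ and $\conti Z^{(t)}_s \geq 0$, and let $\bm \mu_B = \phi(\conti W_e)$ be the pushforward of Lebesgue measure on $[0,1]$ under $x \mapsto (x, F_{\conti Z}(x))$, where $F_{\conti Z}$ is the rank function of $\preccurlyeq_{\conti Z}$. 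For $r$, I would use \cref{prop:anti-inv} as a template: if $\conti L^{(t)}_s$ denotes the local time at $0$ of $\conti Z^{(t)}$ up to time $s$, set $r(\conti W_e) = (\conti X_e^*, \conti Y_e^*)$ where $\conti X_e^*(t) = \conti L^{(F_{\conti Z}^{-1}(t))}_1$ and similarly for $\conti Y_e^*$ using the reversed coalescent $\cev{\conti Z}$.

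The main step is then to prove joint convergence of discrete local times. Using \cref{prop:anti-inv} one has $\bm W_n^*$ expressed exactly as local-time functionals of $\bm Z_n$ and $\cev{\bm Z}_n$, so it suffices to show that these discrete functionals converge jointly to their continuum analogues on the event that $\conti W_n$ is close to $\conti W_e$. Concretely, I would (i)~show tightness of $(\conti W_n, \conti W_n^*, \mu_{\bm \sigma_n})$, (ii)~identify every subsequential limit by passing to the limit in the identities $\bow(m^{**}) = \cev{\bow(m)}$ and the local-time formula, and (iii)~verify that the continuous coalescent driven by $\conti W_e$ is unique in law, which pins down the subsequential limit. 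The a.s.\ identities $r^2 = s$, $r^4 = \Id$, and $\phi \circ r = R \circ \phi$ then follow by taking the scaling limit of the discrete identities $\bow(m^{**}) = \cev{\bow(m)}$ (iterated) and $\bobp(m^*) = \bobp(m)^*$ from \cref{thm:rotation}, respectively, using continuity of $r$ and $\phi$ at $\conti W_e$. The equality $r(\conti W_e) \stackrel{d}{=} \conti W_e$ follows because for every $n$, $\bm W_n^* \stackrel{d}{=} \bm W_n$.

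The chief obstacle will be the rigorous construction and uniqueness of the continuum coalescent-walk process $\{\conti Z^{(t)}\}_{t \in [0,1]}$ driven by the two-dimensional Brownian excursion $\conti W_e$, and the proof of convergence of discrete coalescents to it (in particular, control of local times $L_{\bm Z_n}^{(i)}(n)$ uniformly in the starting index $i$, and showing that the rare coalescence events of \cref{defn:distrib_incr_coal} pass to the continuous limit). Once these pieces are in place, continuity of $r$ and $\phi$ at $\P_{\conti W_e}$-a.e.\ path follows by standard arguments and the theorem is proved.
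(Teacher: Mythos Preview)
Your plan is essentially the paper's approach: build the continuum coalescent-walk process as the strong solution of a perturbed Tanaka SDE driven by $\conti W_e$, define $\phi$ via the rank function of the induced order (exactly \cref{defn:Baxter_perm}), prove joint convergence of walk, coalescent trajectories, and local times at i.i.d.\ uniform starting points (\cref{thm:discret_coal_conv_to_continuous}), then pass the discrete identities of \cref{prop:rev_coal_prop,prop:anti-inv,thm:rotation} to the limit through a tightness/subsequential-limit argument.

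Two small technical points where the paper proceeds differently from what you wrote. First, the paper never proves (and does not need) continuity of $r$ or $\phi$: the map $r$ is not defined directly by your formula but is obtained \emph{a posteriori} from the fact that in any subsequential limit one has $\conti X_e^{*}(\varphi_{\conti Z_e}(\bm u)) = \conti L_e^{(\bm u)}(1)$ for an i.i.d.\ uniform sequence $(\bm u_i)$, whence $\conti W_e^* \in \sigma(\conti W_e)$ by density and the $0$--$1$ law; only then is $r$ declared to be the resulting measurable map, and the algebraic identities $r^2=s$ etc.\ follow from the already-established limit relations rather than from any continuity of $r$. Second, the local-time control is not done uniformly in the starting index $i$ but only for i.i.d.\ uniform random starting points (your ``uniform in $i$'' would be stronger than needed and likely harder); the passage from countably many random points to a statement about the map $r$ is handled by density of $(\varphi_{\conti Z_e}(\bm u_i))_i$ in $[0,1]$.
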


We give a few remarks on this result:
\begin{itemize}
	\item The convergence of the first marginal was obtained in \cite{MR3945746} as an immediate application of the results of \cite{duraj2015invariance} on walks in cones.
	\item Our strategy of proof is based on  coalescent-walk processes, which describe the relation between $\conti W_n$, $\conti W_n^*$ and $\bm \sigma_n$ in a way that allows itself to take limits. In the remainder of this section we explain what is the scaling limit of coalescent-walk processes, providing the reader with some insights on how the coupling of the right-hand side of \cref{eq:scal_lim_comp} is constructed. Precise statements, including explicit constructions of the mappings $r$ and $\phi$, are given in \cref{sec:final} (see in particular Theorems \ref{thm:permuton} and \ref{thm:joint_scaling_limits}).
	\item The limiting permuton $\bm \mu_B$, called the \emph{Baxter permuton}, is a new random measure on the unit square (see \cref{defn:Baxter_perm} for a precise definition and below for an informal one). 
	\item Recall that each coordinate of $\bm W_n$ or $\bm W_n^*$ records the height function of a tree which can be drawn on $\bm m_n$ or its dual (see in particular \cref{rem:height_process}). So this statement can be interpreted as joint convergence of four trees to a coupling of four Brownian CRTs. 
	\item For a discussion on the relation of our results and the Liouville quantum gravity literature (in particular with Conjecture 4.4 of \cite{MR3945746}, the main result of \cite{gwynne2016joint}, and other related works) we refer the reader to \cite[Section 1.6]{borga2020scaling}.
\end{itemize}

\medskip

As mentioned above, the proof of \cref{thm:joint_intro} is based on a scaling limit result for the coalescent-walk processes $\bm Z_n=\wcp (\bm W_n)$ , which appears to be of independent interest.
We give here some brief explanations and
we refer the reader to \cref{sec:coalescent} for more precise results.
The definition of the coalescent-walk processes $Z=\wcp(W)$ associated with a two-dimensional tandem walk $W$ (recall \cref{defn:distrib_incr_coal}) is a sort of "discretized" version of the following family of stochastic differential equations (SDEs) driven by \textit{the same} two-dimensional process $\conti W = (\conti X,\conti Y)$ and defined for $u\in\mathbb R$ by
\begin{align} 
\begin{cases}d\conti Z^{(u)}(t) = \idf_{\{\conti Z^{(u)}(t)> 0\}} d\conti Y(t) - \idf_{\{\conti Z^{(u)}(t)\leq 0\}} d\conti X(t), &t\geq u,\\
\conti Z^{(u)}(t)=0, &t\leq u.
\end{cases} \label{eq:flow_SDE_intro}
\end{align}
We emphasize that we have a different equation for every $u\in\R$, but all depend on \textit{the same} process $\conti W = (\conti X,\conti Y)$.
For a fixed $u\in\R$, this equation, that goes under the name of \textit{perturbed Tanaka's SDE}, has already been studied in the literature \cite{MR3098074,MR3882190} in the case where $\conti W$ is a two-dimensional Brownian motion of correlation $\rho$ with $\rho\in(-1,1)$, and more generally when the correlation coefficient varies with time. In particular, pathwise uniqueness and existence of a strong solution are known. Since the scaling limit of $\bm W_n$ (that is conditioned to start on the $x$-axis and end on the $y$-axis) is a two-dimensional Brownian \emph{excursion} $\conti W_e$ of correlation $-1/2$, one can expect that the scaling limit for the coalescent-walk process $\bm Z_n = \wcp(\bm W_n)$ is a sort of flow of solutions $\{\conti Z_e^{(u)}(t)\}_{u\in[0,1]}$ of the SDEs in \cref{eq:flow_SDE_intro} driven by $\conti W_e$ (instead of $\conti W$). This intuition is made precise in \cref{thm:discret_coal_conv_to_continuous} and it is the key-step for proving \cref{thm:joint_intro}.

\medskip

The study of flows of solutions driven by the \emph{same noise} is the subject of the theory of coalescing flows of Le Jan and Raimond, specifically that of \textit{flows of mappings}. See \cite{MR2060298,MR4112725} and the references therein. We point out that we do not need to make use of this theory. Indeed, in our proof of \cref{thm:joint_intro} we consider solutions of \cref{eq:flow_SDE_intro} for only a countable number of distinct $u$ at a time for a specific equation which admits strong solutions.
In particular, \cref{thm:discret_coal_conv_to_continuous} gives convergence of a countable number of trajectories in the product topology. Stronger convergence results, such as the ones obtained for the \textit{Brownian web} (see \cite{MR3644280} for a comprehensive survey) would be desirable, but are not a purpose of this thesis.

\medskip

We conclude this section by briefly describing the Baxter permuton $\bm \mu_B$. The collection of solutions $\{\conti Z_e^{(u)}\}_{u\in[0,1]}$ of the SDEs \eqref{eq:flow_SDE_intro}  driven by $\conti W_e$ (which cannot be defined for all pairs $(\omega,u)$ simultaneously, see \cref{rem:no_flow}) determines a random total order $\leq_{\conti Z_e}$ on (a subset of measure 1 of) $[0,1]$, defined for every $0\leq t<s \leq 1$ by:
$t\leq_{\conti Z_e} s$ if and only if $\conti Z_e^{(t)}(s)<0.$
This random total order $\leq_{\conti Z_e}$ identifies (see \cref{eq:level_function}) a Lebesgue measurable map $\varphi_{\conti Z_e}(t):[0,1]\to[0,1]$ satisfying $\varphi_{\conti Z_e}(t)\leq \varphi_{\conti Z_e}(s)$ if and only if $t\leq_{\conti Z_e} s$. The permuton $\bm \mu_B$ is, in some sense made precise in \cref{defn:Baxter_perm}, the graph of this function $\varphi_{\conti Z_e}$.

\section{Substitution-closed classes}\label{sect:subclosedperm}

In this section we show that the biased Brownian separable permuton is a universal limiting permuton, proving \cref{thm:scaling_intro}.

\subsection{Semi-local convergence of the skeleton decomposition}
\label{sec:skeleton}

\cref{subsec:PackedTrees_GW2} page \pageref{subsec:PackedTrees_GW2} establishes a connection between uniform permutations in substitution-closed classes
and conditioned Galton-Watson trees.
In this section, we provide a convergence result for skeletons induced by marked vertices in such trees.
The application to permutations will be discussed in further sections.\medskip

Aldous~\cite[Eq.\ (49)]{MR1207226} showed that 
the subtree spanned by a fixed number of random marked vertices 
in a large critical Galton--Watson tree admits a limit distribution. 
Here, we extend this \emph{skeleton decomposition} 
so that it additionally describes the asymptotic  local structure in $o(\sqrt{n})$-neighborhoods around the marked vertices
and their pairwise closest common ancestors.
Note also that Aldous works with Galton--Watson trees conditioned
on having $n$ vertices,
while we more generally consider
Galton--Watson trees conditioned
on having $n$ vertices with out-degree in a given set $\Omega$
(see \cite{MR2946438,MR3335013} for scaling limit results under such conditioning). Indeed, for our applications to the study of substitution-closed classes, we need to study Galton--Watson trees conditioned
on having $n$ leaves.

\subsubsection{Extracting the skeleton with a local structure}

Let $k \in\Z_{>0}$ denote a fixed integer and $T$ a rooted plane tree. Fix $\Omega\subseteq\Z_{\geq 0}$.
We choose an ordered sequence $\myvec{v} = (v_1, \ldots, v_k)$ of vertices in $T$
(possibly with repetitions) with out-degree in $\Omega$ that we call \emph{marked vertices}.
The goal of this section is to associate to this data
an object recording:
\begin{itemize}
	\item the genealogy between the marked vertices;
	\item the local structure around the \emph{essential vertices} of $T$, 
	which we define as the root of $T$, the marked vertices $v_1, \ldots, v_k$ and their pairwise closest common ancestors;
	\item the distances in the original tree between these vertices.
\end{itemize}
The reader can look at \cref{fig:skeleton} to see the different steps of the following construction.

\begin{itemize}
	\item  The first step is to consider the subtree $R(T, \myvec{v})$ consisting of the vertices $\myvec{v}$ and all their ancestors. For each $1 \le j \le k$ the vertex $v_j$ in $R(T, \myvec{v})$ receives the label $j$. 
	Note that the tree $T$ may be constructed from the skeleton $R(T, \myvec{v})$ by attaching an ordered sequence of branches (rooted plane trees) at each corner of $R(T, \myvec{v})$.  Here we have to consider the corner below the root-vertex twice, since branches at this corner may either be added to the left or to the right of $R(T, \myvec{v})$. 

	\item The second step is to remove the vertices of $T$ 
	which lie outside of the skeleton $R(T, \myvec{v})$ 
	and are "far" from the essential vertices. 
	For convenience, we call distance of any branch $B$ (grafted on $R(T, \myvec{v})$) 
	from a vertex  $w \in R(T, \myvec{v})$ the distance in $R(T, \myvec{v})$ 
	from $w$ to the corner where $B$ is attached. 
	For any integer $t \ge 0,$ we let $R^{[t]}(T, \myvec{v})$ denote the subtree of $T$ that contains $R(T, \myvec{v})$ and all branches grafted on $R(T, \myvec{v})$ that have distance at most $t$ from at least one essential vertex. In particular, $R^{[t]}(T, \myvec{v})$ contains all vertices of  $T$ that lie at distance at most $t$ from the essential vertices.
	
	\item The final step of the construction is to shrink the paths of $R^{[t]}(T, \myvec{v})$ 
	consisting of the vertices whose attached branches have been removed in step 2. 
	Indeed, we are interested in a scenario where the distance between any two  essential vertices is much larger than $2t$. Consider two essential  points $x \ne y$ that are connected by a path not containing other essential vertices. Assume that $x$ lies on the path from the root to $y$. If the distance between $x$ and $y$ is larger than $2t$, then the path joining $x$ and $y$ consists of a starting segment of length $t$ that starts at $x$, a \emph{middle segment} of positive length, and an end segment of length $t$ that ends at $y$.
	By construction, the branches attached to inner vertices of the middle segment of $R(T,\myvec{v})$
	do not appear in $R^{[t]}(T,\myvec{v})$.
	For any real number $s>0$, we let $s.R^{[t]}(T, \myvec{v})$ 
	denote the result of contracting each middle segment 
	to a single edge that receives a label given 
	by the product of $s$ and the number of deleted vertices in this segment. %
\end{itemize}

\begin{figure}[htbp]
	\centering
	\includegraphics[width=11cm]{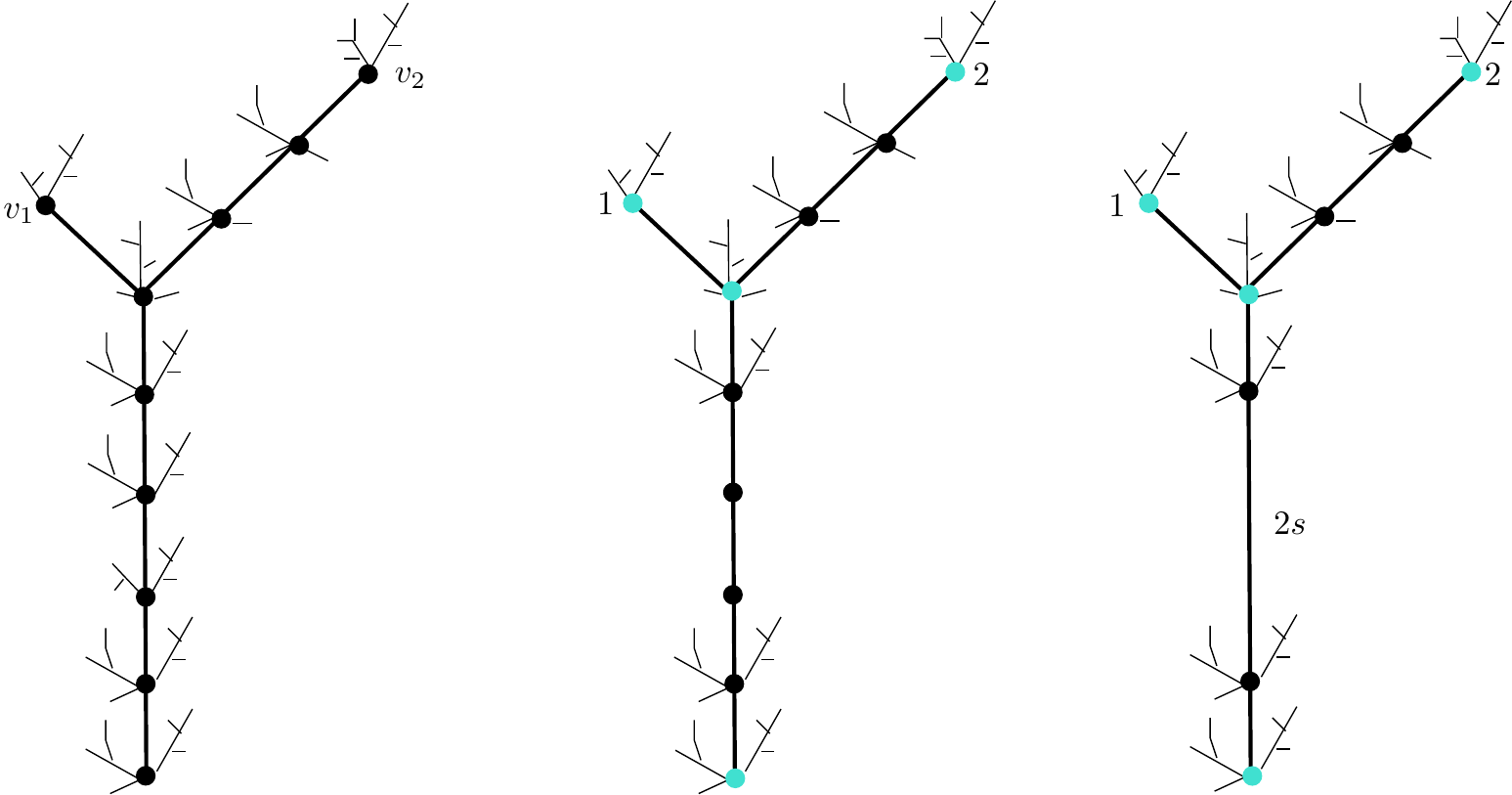}
	\caption{A tree $T$ with two marked vertices $v_1$ and $v_2$. 
		In the left-most picture, the subtree $R(T, \myvec{v})$ is represented in bold,
		while branches attached to its corner are drawn with thinner lines.
		The middle picture represent $R^{[1]}(T, \myvec{v})$:
		the essential vertices are in blue, and only two vertices of $R(T, \myvec{v})$
		are at distance more than 1 from the closest essential vertex.
		The branches attached to these vertices do not belong to $R^{[1]}(T, \myvec{v})$.
		The right-most picture represent $s.R^{[1]}(T, \myvec{v})$.
		In particular, observe that the three middle edges of the path 
		between the root and the branching vertex
		have been contracted into a single edge with label $2s$.}
	\label{fig:skeleton}
\end{figure}

\subsubsection{The space of skeletons with a local structure}
\label{ssec:skeleton_space}

In the following, we need to be more precise about 
the space in which $s.R^{[t]}(T, \myvec{v})$ lives and the topology we consider on it.
In the construction above, $s.R^{[t]}(T, \myvec{v})$ is a tree with $k$ distinguished vertices
with out-degree in $\Omega$,
where at most $2k-1$ edges have a (length-)label. 
Moreover, the distances between successive essential vertices
are at most $2t+1$ (we say that two essential vertices are successive if the path going from one to the other
does not contain any other essential vertex). 
The set of trees (without edge-labels) with $k$ marked distinguished vertices with out-degree in $\Omega$
such that the above distance condition holds is denoted $\setTkt$.
Moreover, we say that $G$ in $\setTkt$ is {\em generic} if:
\begin{itemize}
	\item there are $2k$ distinct essential vertices (the root, the $k$ distinguished vertices and $k-1$ distinct closest
	common ancestors of pairs of distinguished vertices);
	\item the distances between successive essential vertices 
	are exactly $2t+1$.
\end{itemize}

We note that the edges with (length-)label
are the middle edges of the paths of length $2t+1$ between essential vertices,
and hence depend only on the shape of the tree.
We can therefore encode these labels as a vector in $\RR^{2k-1}_{\geq 0}$, that has entries equal to $0$ whenever the corresponding essential vertices are at distance $2t$ or less.
Finally, $s.R^{[t]}(T, \myvec{v})$ can be seen as an element of
\[ \setTkt \times \RR_{\geq 0}^{2k-1}.\]

Using the discrete topology on $\setTkt$ and the usual one on $\RR^{2k-1}_{\geq 0}$,
this gives a topology on $\setTkt \times \RR_{\geq 0}^{2k-1}$, 
and then it makes sense to speak of convergence in distribution in this space.
We can also speak of {\em density}, taking as reference measure
the product of the counting measure on $\setTkt$ and the Lebesgue measure on $\RR_{\geq 0}^{2k-1}$.
Finally we denote by $\Sh$ and $\Lab$ the natural projections
from $\setTkt \times \RR_{\geq 0}^{2k-1}$ to $\setTkt$ and $\RR_{\geq 0}^{2k-1}$, respectively.
In words $\Sh$ erases the labels and outputs the {\em shape} of the tree,
while $\Lab$ outputs the vector of \emph{labels}.

\subsubsection{The limit tree}
\label{sec:limit_tree}

Throughout this and the following sections we let $\bm T$ denote a (non-degenerate) critical Galton--Watson tree having an aperiodic offspring distribution $\xi$.
We also assume that $\xi$ has finite variance $\sigma^2$.
We fix a subset $\Omega \subseteq \Z_{\geq 0}$ satisfying
\begin{align}
\P(\xi \in \Omega) > 0.
\end{align}
Given a rooted plane tree $T$, we let $|T|_\Omega$ denote the number of vertices $v \in T$ that have out-degree $d_T^+(v) \in \Omega$. 
For any value $n\in\Z_{>0}$ such that $\P(|\bm T|_\Omega=n)>0$,
we let $\bm T_n^\Omega$ denote the result of conditioning the tree $\bm T$ on $|\bm T|_\Omega = n$.
The goal is to describe the limit of $R^{[t]}(\bm T_n^\Omega,\myvec{\rv{v}})$,
where $\myvec{\rv{v}}=(\bm v_1,\dots,\bm v_k)$ are independently and uniformly chosen vertices of $\bm T_n^\Omega$,
{\em conditioned} to have out-degree in $\Omega$.
\medskip

We first recall the definition of simply and doubly size-biased versions of $\xi$,
namely the random variables $\hat{\xi}$ and $\xi^*$ with distributions
\begin{align}
\label{eq:xihat}
\P(\hat{\xi} = i) &= i\cdot \P(\xi = i), \\
\label{eq:xistar}
\P(\xi^* = i) &= i(i-1)\P(\xi = i) / \sigma^2.
\end{align}
Furthermore, for any fixed integer $k \in \Z_{>0}$ we say that a \emph{proper $k$-tree} is a rooted plane tree that has precisely $k$ leaves, labeled from $1$ to $k$, such that the root has out-degree $1$ and all other internal vertices have out-degree $2$.  Note that each such tree has $2k -1$ edges
and that there are $k! \mathsf{Cat}_{k-1}= 2^{k-1} \prod_{i=1}^{k-1}(2i-1)$ such trees.
Indeed, up to the single edge attached to the root, these trees are complete binary trees with $k$ leaves and a labeling of these leaves.
In the following, we order the edges of proper $k$-tree in some canonical order (e.g.\ depth first search order),
so that we can speak of the $i$-th edge of the tree; the chosen order is not relevant though.

\begin{figure}[htbp]
	\begin{minipage}[c]{0.6\textwidth}
		\centering
		\includegraphics[height=6.5cm]{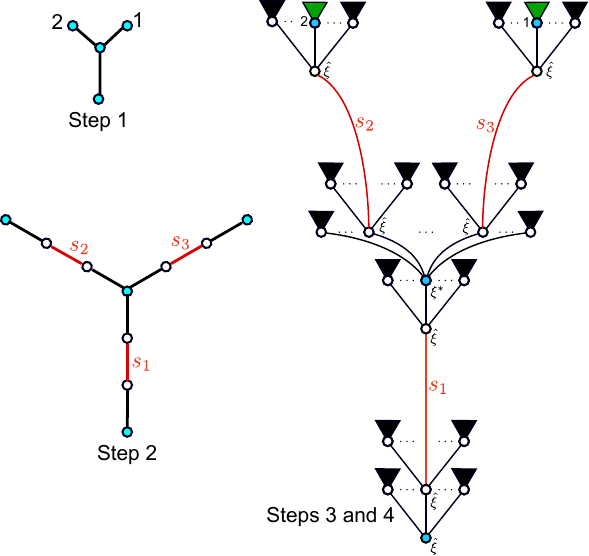}
	\end{minipage}
	\begin{minipage}[c]{0.39\textwidth}
		\caption{The construction of the limit tree ${\bm T}^{k,t}_\Omega$ for $k=2$ and $t=1$. The essential vertices are colored blue, and the middle edges are colored red. Each occurrence of $\hat{\xi}$ or $\xi^*$ at the side of a vertex represents that this vertex receives offspring according to an independent copy of the corresponding random variable (step 3). Each black triangle represents an independent copy of the Galton--Watson tree $\bm T$ (step 4). The green triangles represent independent copies of $\bm T$ conditioned on having root degree in $\Omega$ (step 4).\label{fig:ktree} }
	\end{minipage}
\end{figure}

For each integer $t \ge 1$ we can now construct a random rooted plane tree
${\bm T}^{k,t}_\Omega$ with $k$ distinguished vertices labeled from $1$ to $k$ having out-degree in $\Omega$, 
and $2k-1$ edges carrying length-labels.
We will prove later that this tree is the limit of $R^{[t]}(\bm T_n^\Omega,\myvec{\rv{v}})$.
A special case of this construction is illustrated in \cref{fig:ktree}. 
The general procedure goes as follows:
\begin{itemize}[]
	\item \emph{(Pick a skeleton)} Draw a proper $k$-tree uniformly at random. 
	Its leaves will correspond to the distinguished labeled vertices of ${\bm T}^{k,t}_\Omega$.
	Each possible outcome of this step is attained with probability
	\[
	\frac{1}{2^{k-1} \prod_{i=1}^{k-1}(2i-1)}.
	\]
	\item \emph{(Stretch it)} Select a vector $\myvec{\rv{s}} = (s_i)_i \in \mathbb{R}_{>0}^{2k-1}$ at random with density 
	\begin{align}
	\label{eq:density}
	\textstyle  (3 \cdot 5 \cdots (2k-3)) \, (\sum_i s_i) \, \exp \left( - \tfrac{(\sum_i s_i)^2}{2} \right).
	\end{align}
	It is simple to check that this defines a probability distribution,
	using classical expressions for absolute moments of the Gaussian distribution.
	For each $1 \le i \le 2k-1$, we replace the $i$-th edge of the $k$-tree by a path of length $2t+1$
	and assign label $s_i$ to the central edge of this path. 
	\item \emph{(Thicken it)} Each internal vertex receives additional offspring, independently from the rest. Here vertices with out-degree $1$ receive additional offspring according to an independent copy of $\hat{\xi} -1$, 
	while vertices with out-degree $2$ receive additional offspring according to an independent copy of $\xi^*-2$. An ordering of the total offspring that respects the ordering of the pre-existing offspring is chosen uniformly at random.
	\item \emph{(Graft branches)}
	Each distinguished vertex (i.e.\ each leaf of the original $k$-tree)
	becomes the root of an independent copy of $\bm T$ conditioned on having root-degree in $\Omega$.
	Other leaves of the tree resulting from step 3 
	become the roots of independent copies of Galton--Watson trees $\bm T$, 
	without conditioning.
\end{itemize}

The distribution of the random tree ${\bm T}^{k,t}_\Omega$, seen as an element of $\setTkt \times \RR_{\geq 0}^{2k-1}$, was determined in \cite[Lemma 4.1]{borga2020decorated}. This was a key step in the proof of the \cref{le:semilocal} below. Nevertheless, since we opt for skipping the technical details of the proof of \cref{le:semilocal}, we also avoid to show the quite complicated expression of this distribution.

\subsubsection{Convergence}
\label{sec:semiconv}

The following lemma extends Aldous' skeleton decomposition~\cite[Eq.\ (49)]{MR1207226} by keeping track of $o(\sqrt{n})$-neighborhoods 
near the essential vertices of the skeleton. 
The $o(\sqrt{n})$-threshold is sharp 
(for the applications in this manuscript,
the convergence of $t_n$-neighborhoods for any sequence $t_n$ 
tending to infinity would suffice).
We note that $o(\sqrt{n})$-neighborhoods of the root
have been previously considered in the literature,
{\em e.g.}\ by Aldous \cite{MR1085326,MR1166406}
and Kersting \cite{kersting2011height};
see also \cite[Theorem 5.2]{stufler2019offspring}
for a result on the $o(\sqrt{n})$-neighborhood
of a uniform random vertex in the tree.
Besides,
Lemma~\ref{le:semilocal} is also related to scaling limits obtained by Kortchemski~\cite{MR2946438}
and Rizzolo~\cite{MR3335013}, that imply  convergence of $R(\bm T_n^\Omega, \myvec{\rv{v}})$.

\medskip

We recall that we see trees of the form $s.R^{[t]}(T,\myvec{u})$ and ${\bm T}^{k, t}_\Omega$
as elements of $\setTkt \times \RR_{\geq 0}^{2k-1}$ as explained in \cref{ssec:skeleton_space}.

\begin{lem}[{\cite[Lemma 4.2]{borga2020decorated}}]
	\label{le:semilocal}
	Suppose that the offspring distribution $\xi$ is critical, aperiodic, and has finite variance $\sigma^2$. Let $\myvec{\rv{v}}$ be a vector of $k \ge 1$ independently and uniformly selected vertices with out-degree in $\Omega$ of the conditioned tree $\bm T_n^\Omega$. Then for each positive integer $t$ it holds that
	\begin{align}
	\label{eq:weaker}
	c_\Omega \sigma n^{-1/2}. R^{[t]}(\bm T_n^\Omega, \myvec{\rv{v}})	\xrightarrow{d} {\bm T}^{k, t}_\Omega,
	\end{align}
	with $c_\Omega= \sqrt{\P(\xi \in \Omega)}$. Even stronger, for each sequence $t_n = o(\sqrt{n})$ of positive integers it holds that
	\begin{align}
	\label{eq:toshow}
	\sup_{A,B} \left| \P\big[ c_\Omega \sigma n^{-1/2}.R^{[t_n]}(\bm T_n^\Omega, \myvec{\rv{v}})
	\in A \times B\big] - \P\big[{\bm T}^{k, t_n}_\Omega \in A \times B\big] \right| \to 0,
	\end{align}
	with $A$ ranging over all subsets of $\mathcal T_{k,\Omega}^{[t_n]}$,
	and $B$ over open intervals of $\RR_{\geq 0}^{2k-1}$.
\end{lem}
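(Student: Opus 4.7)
My plan is to compute the left-hand side of \eqref{eq:toshow} essentially by hand, using the explicit Boltzmann-type product formula for the distribution of a simply generated tree, and then match the resulting asymptotic expression with the explicit density of ${\bm T}^{k,t_n}_\Omega$ stated in \cref{sec:limit_tree}. Fix a shape $G\in\mathcal T_{k,\Omega}^{[t_n]}$ and an integer label vector $\myvec{\ell}\in\Z_{\geq 0}^{2k-1}$. The first step is to write
\[
\P\bigl(\Sh(s.R^{[t_n]}(\bm T_n^\Omega,\myvec{\rv v}))=G,\ \Lab(s.R^{[t_n]}(\bm T_n^\Omega,\myvec{\rv v}))=s\myvec{\ell}\bigr)
\;=\;\frac{1}{\P(|\bm T|_\Omega=n)}\cdot\frac{1}{|\bm T_n^\Omega|_\Omega^{\downarrow k}}\cdot N(G,\myvec{\ell},n),
\]
where $N(G,\myvec{\ell},n)$ is a weighted enumeration, obtained by summing $\prod_v\P(\xi=d^+(v))$ over all trees $T$ with $|T|_\Omega=n$ together with a marking $\myvec u$ producing shape $G$ and middle-edge lengths $\myvec\ell$. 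This enumeration factors into (i) a \emph{skeleton factor} coming from the $2k-1$ non-middle portions of $R^{[t_n]}$ (whose distribution converges to the Gibbs law used in steps 1 and 3 of the construction of ${\bm T}^{k,t}_\Omega$, up to the standard size-biases encoded by $\hat\xi$ and $\xi^\ast$), (ii) a \emph{middle-edge factor} of the form $\prod_{i=1}^{2k-1}\E[\xi]^{\ell_i}=1$ since $\xi$ is critical (the only contribution along these edges is thus the conditioning on having no $\Omega$-vertex among the $\ell_i$ vertices of that segment), and (iii) a \emph{branch factor}, corresponding to the $o(\sqrt n)$ independent Galton--Watson forests that fill the rest of the tree and contribute the required number of remaining $\Omega$-vertices.

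The second step is to control the conditioning $\{|\bm T|_\Omega=n\}$ via the Otter--Dwass / cyclic lemma identity together with Kersting-type local limit theorems. Setting $p_\Omega=\P(\xi\in\Omega)$, standard estimates give $\P(|\bm T|_\Omega=n)\sim \tfrac{C}{p_\Omega^{1/2}\sigma\,n^{3/2}}$ and, more importantly, a uniform local limit theorem for the displacement of the underlying \L ukasiewicz walk on sub-intervals. Combining this with step (iii) above, one checks that after removing the $O(k t_n)=o(\sqrt n)$ vertices on the skeleton, the residual probability of completing the tree so that the total number of $\Omega$-vertices equals $n$ behaves like
\[
\bigl(c_\Omega\sigma n^{-1/2}\bigr)^{2k-1}\cdot\Bigl(\textstyle\sum_i s_i\Bigr)\exp\!\Bigl(-\tfrac{(\sum_i s_i)^2}{2}\Bigr)\cdot\bigl(1+o(1)\bigr),
\]
uniformly for $\myvec{\ell}=\lfloor c_\Omega^{-1}\sigma^{-1}n^{1/2}\myvec s\rfloor$ with $\myvec s$ ranging in compact subsets of $\RR_{>0}^{2k-1}$. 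This Gaussian factor is exactly the one appearing in the density \eqref{eq:density}, and together with the combinatorial constant $3\cdot 5\cdots(2k-3)$ produced by step (i), it identifies the joint scaling limit of shape and labels with the distribution of ${\bm T}^{k,t_n}_\Omega$.

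The third step is to upgrade these \emph{pointwise} estimates to the uniform statement \eqref{eq:toshow}. Because $A$ may be any subset of $\mathcal T_{k,\Omega}^{[t_n]}$ and $B$ an open box in $\RR_{\geq 0}^{2k-1}$, it suffices to establish an $L^1$-type comparison between the (signed) densities on $\setTkt\times\RR_{\geq 0}^{2k-1}$. To do so I would split $B$ into a compact part $B\cap[0,M]^{2k-1}$, on which the ratio of densities is $1+o(1)$ uniformly by the local limit theorem of step 2, and a tail $B\setminus[0,M]^{2k-1}$, whose mass under both laws is $O(e^{-M^2/3})$ because of the Gaussian weight in \eqref{eq:density} and the usual subgaussian tail bounds for \L ukasiewicz excursions.

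The statement for fixed $t$ in \eqref{eq:weaker} is an immediate corollary of the above since then $A$ lies in a \emph{fixed} finite set. The main obstacle is genuinely in the uniformity for $t_n\to\infty$: one must control the local limit estimates for the \L ukasiewicz walk and the Gibbs partition functions appearing in step (iii) uniformly with respect to the number and positions of the $O(kt_n)$ removed vertices. This is the technically heaviest part; I would handle it by quoting the uniform local limit results of \cite{MR2946438,stufler2019offspring} and noting that a $t_n=o(\sqrt n)$ removal produces only a multiplicative perturbation of order $1+O(t_n/\sqrt n)$ in the relevant partition functions, which vanishes in the limit.
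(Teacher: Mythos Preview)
The thesis does not prove this lemma: it explicitly states ``We skip in this manuscript the (slightly technical and quite long) proof of this lemma,'' deferring to \cite[Lemma~4.2]{borga2020decorated}. Your high-level strategy --- compute the joint law of shape and labels by hand from the simply-generated-tree product formula and match it against the explicit density of ${\bm T}^{k,t}_\Omega$ via local limit estimates for the \L ukasiewicz walk --- is the standard route and almost certainly the one taken in the cited reference; the thesis itself hints at this by remarking that the explicit distribution of ${\bm T}^{k,t}_\Omega$ (computed in \cite[Lemma~4.1]{borga2020decorated}) is ``a key step in the proof.''

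Two technical points in your factorization are off, however, and would have to be repaired before the argument closes. In your factor~(ii), the parenthetical about ``conditioning on having no $\Omega$-vertex among the $\ell_i$ vertices'' is incorrect for general $\Omega$: a middle-segment spine vertex has (size-biased) out-degree $\hat\xi$ in $T$, which may well lie in $\Omega$ (only the special case $\Omega=\{0\}$ rules this out), and it also carries $\hat\xi-1$ pendant Galton--Watson branches whose own $\Omega$-counts feed into the constraint $|\bm T|_\Omega=n$. In your factor~(iii), the phrase ``$o(\sqrt n)$ independent Galton--Watson forests'' miscounts: the branches pendant to the middle segments are grafted at $\sum_i\ell_i=\Theta(\sqrt n)$ spine vertices, not at the $O(kt_n)$ vertices recorded in the shape $G$. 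It is precisely this $\Theta(\sqrt n)$-scale random sum of (spine + branch) $\Omega$-counts along the middle segments that, through the local limit theorem, produces the Rayleigh-type factor $(\sum_i s_i)\exp(-(\sum_i s_i)^2/2)$ in \eqref{eq:density}. The $O(kt_n)=o(\sqrt n)$ perturbation you invoke at the end is the right mechanism for allowing $t_n\to\infty$, but it is a lower-order correction sitting on top of the main $\Theta(\sqrt n)$ spine computation, not a substitute for it. With the bookkeeping in (ii)--(iii) straightened out, your outline is sound.
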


We skip in this manuscript the (slightly technical and quite long) proof of this lemma. The following statement will be useful (with $\Omega = \{0\}$, i.e.\ marking leaves) in the special case of separable permutations. It was deduced from some arguments used in the proof of \cref{le:semilocal}.

\begin{lem}[{\cite[Corollary 4.4]{borga2020decorated}}]
	Let the offspring distribution $\xi$ be critical, aperiodic, and have a finite variance. 
	Let $\myvec{\rv{v}}$ be a vector of $k \ge 1$ independently and
	uniformly selected vertices with out-degree in $\Omega$  of the conditioned tree $\bm T_n^\Omega$. 
	Then, for any fixed $t$, asymptotically as $n \to \infty$, the parities of the heights of the essential vertices induced by $\myvec{\rv{v}}$ (except the root of $\bm T_n^\Omega$) converge to  
	Bernoulli random variables of parameter $1/2$, independent among themselves, 
	and from the tree $\Sh(1.R^{[t]}(\bm T_n^\Omega, \myvec{\rv{v}}))$. 
	\label{cor:semilocal3}
\end{lem}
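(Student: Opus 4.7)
\textbf{Proof plan for Corollary~\ref{cor:semilocal3}.}

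The plan is to extract the parities from the convergence already established in Lemma~\ref{le:semilocal}. Fix $t \ge 1$. From Lemma~\ref{le:semilocal} we know that, as $n\to\infty$,
\[
\bigl(\Sh(1.R^{[t]}(\bm T_n^\Omega,\myvec{\rv{v}})),\; c_\Omega \sigma n^{-1/2}\Lab(1.R^{[t]}(\bm T_n^\Omega,\myvec{\rv{v}}))\bigr)
\xrightarrow{d}
\bigl(\Sh({\bm T}^{k,t}_\Omega),\;\Lab({\bm T}^{k,t}_\Omega)\bigr).
\]
The shape $\Sh$ lives in a discrete space, while the label vector converges to a random vector in $\RR^{2k-1}_{\ge 0}$ with a continuous density. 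The key observation is that each height of an essential vertex of $\bm T_n^\Omega$ (measured from the root) is, up to a bounded correction depending only on $t$ and on the shape, a sum of unscaled integer middle-edge lengths appearing in $\Lab(1.R^{[t]}(\bm T_n^\Omega,\myvec{\rv v}))$. Hence the parities of these heights are determined, modulo constants that depend only on the shape, by the parities of the $2k-1$ unscaled integer middle-edge lengths, which we denote $\bm L_1,\ldots,\bm L_{2k-1}$.

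My first step is to reduce the statement to an asymptotic equidistribution result modulo $2$: I want to show that, conditionally on the shape being a fixed generic $G\in\setTkt$ and on the scaled label vector $c_\Omega \sigma n^{-1/2}(\bm L_1,\ldots,\bm L_{2k-1})$ lying in a small open box of $\RR^{2k-1}_{>0}$, the parity vector $(\bm L_1,\ldots,\bm L_{2k-1})\bmod 2$ converges to the uniform distribution on $\{0,1\}^{2k-1}$ and becomes asymptotically independent of the shape and of the scaled labels. Because the shape and the heights determine the parity statement we want, this is exactly what is needed.

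The second step is to prove this equidistribution. Looking at the construction of ${\bm T}^{k,t}_\Omega$ in Section~\ref{sec:limit_tree}, the middle edges correspond, in the prelimit, to the lengths of independent chains of vertices whose out-degrees are i.i.d.\ with the size-biased law $\hat\xi$ (along degree-$1$ parts of the skeleton), once we condition on the shape $G$ and on the positions of the essential vertices. Aperiodicity of $\xi$ (which is assumed in Lemma~\ref{le:semilocal}) implies aperiodicity of $\hat\xi-1$, and one checks that conditioned on the total length being approximately $a_i\sqrt{n}$ for each $i$, the joint law of $(\bm L_1,\ldots,\bm L_{2k-1})$ is asymptotically given by independent random variables whose distributions satisfy a local central limit theorem. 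From a standard LCLT one concludes that, conditioned on the scaled lengths belonging to any fixed continuity box, the integer lengths equidistribute modulo $2$ and are asymptotically independent across $i=1,\ldots,2k-1$.

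The hard part of the argument is the second step, i.e.\ establishing the joint LCLT for the middle-edge lengths conditionally on the shape and on the scaled labels; the rest is bookkeeping on top of Lemma~\ref{le:semilocal}. One subtlety is that the conditioning ``$|\bm T|_\Omega = n$'' induces a non-trivial coupling between the different edge lengths, so strictly speaking the $\bm L_i$'s are not independent in the prelimit. However, by the Markov branching structure of $\bm T_n^\Omega$ given its skeleton, this dependence is carried entirely by the scaled lengths, which are the quantities we condition on. Once that is in place, independence modulo $2$ and uniformity of each parity follow from the aperiodic LCLT, and the corollary is obtained by combining this with the convergence of the shape and scaled labels provided by Lemma~\ref{le:semilocal}.
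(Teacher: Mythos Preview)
Your reduction in step~1 is correct and matches the paper's route: in the generic case (which holds with high probability by Lemma~\ref{le:semilocal}), the height of each non-root essential vertex in $\bm T_n^\Omega$ equals $\sum_i (2t + M_i)$ over the edges of the proper $k$-tree on its root-path, so its parity is an affine function over $\mathbb F_2$ of the parities of the integer middle-lengths $(\bm L_1,\dots,\bm L_{2k-1})$. Since this affine map is triangular (height of $v$ minus height of its parent essential vertex equals the corresponding edge length), it is a bijection for each fixed shape, and the corollary reduces to the equidistribution of $(\bm L_1,\dots,\bm L_{2k-1}) \bmod 2$ conditionally on the shape.

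The gap is in step~2. First, the \emph{statement} of Lemma~\ref{le:semilocal} is not enough: convergence in distribution of the scaled labels to a vector with a continuous density (even the uniform estimate~\eqref{eq:toshow} over open boxes) says nothing about the parities of the unscaled integers --- any parity pattern is compatible with the scaled label lying in a given box. What is needed is a pointwise local limit theorem of the form
\[
\P\bigl(\Sh = G,\; \bm L = \ell\bigr)
= s_n^{\,2k-1}\, f_G(s_n \ell) + o\bigl(s_n^{\,2k-1}\bigr),
\qquad s_n = c_\Omega \sigma n^{-1/2},
\]
uniformly over integer tuples $\ell$, where $f_G$ is the limiting density from the construction in Section~\ref{sec:limit_tree}. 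Summing over $\ell$ in each residue class of $(\mathbb Z/2\mathbb Z)^{2k-1}$ then yields the uniform parity distribution and independence from the shape. This pointwise estimate is precisely one of the intermediate results established inside the (omitted) proof of Lemma~\ref{le:semilocal}, and this is why the paper says the corollary ``was deduced from some arguments used in the proof of~\ref{le:semilocal}'' rather than from its statement.

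Second, your proposed justification of this LCLT via $\hat\xi-1$ is off target. The size-biased law $\hat\xi$ governs the \emph{out-degrees} of the spine vertices in the limit tree (step~3 of the construction of $\bm T_\Omega^{k,t}$); it does not describe the \emph{number} of such vertices. The lengths $\bm L_i$ are distances between essential vertices in the skeleton $R(\bm T_n^\Omega,\myvec{\rv v})$, not partial sums of i.i.d.\ copies of $\hat\xi-1$, so ``aperiodicity of $\hat\xi-1$'' is not the relevant ingredient. The LCLT for the skeleton actually comes from the random-walk encoding of $\bm T_n^\Omega$ (Lukasiewicz path and cycle lemma), where aperiodicity of $\xi$ is used to control the conditioning $|\bm T|_\Omega = n$; the heights/distances then inherit a local CLT from that analysis. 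To make your argument rigorous you would have to reopen the proof of Lemma~\ref{le:semilocal} and extract exactly this pointwise density estimate.
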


\subsection{Universality of the biased Brownian separable permuton}
\label{sec:scaling}

We start by exploring some properties of the biased Brownian separable permuton.

\subsubsection{Background on the biased Brownian separable permuton}\label{sect:sep_perm_info}

Recall (see \cref{thm:randompermutonthm} page \pageref{thm:randompermutonthm}) that if $\bm{\sigma}_n$ has size $n$, then these assertions are equivalent:
\begin{itemize}
	\item There exists a permuton $\bm{\mu}$ such that $\bm{\mu}_{\bm{\sigma}_n} \xrightarrow{d} \bm{\mu}$.
	\item For any integer $k \ge 1$ the pattern $\pat_{\rv{I}_{n,k}}(\bm{\sigma}_n)$ induced by a uniform random $k$-element subset $\rv{I}_{n,k} \subseteq [n]$ admits a distributional limit $\bm{\rho}_k$.
\end{itemize} 
In this case, the limiting family $(\bm{\rho}_k)_{k}$ is consistent (recall \cref{Def:consistency}, page~\pageref{Def:consistency}) and uniquely determine the distribution of the permuton $\bm{\mu}$. Indeed, $\bm{\mu}$ is the permuton limit of the sequence of permutations  $(\bm{\rho}_k)_{k}$ (recall \cref{Prop:existence_permuton}, page~\pageref{Prop:existence_permuton}).

\medskip

We can now define the biased Brownian separable permuton. Recall the bijection $\CanTree^{-1}$, defined at page \pageref{def:cantree}: it sends every canonical tree (see \cref{defintro:CanonicalTree}, page \pageref{defintro:CanonicalTree}) to the corresponding permutation in $\mathcal C$.
The following permutons were introduced in~\cite{bassino2018separable,bassino2017universal} 
where they were proved to be the limit of some substitution-closed classes:
\begin{itemize}
	\item The \emph{Brownian separable permuton} corresponds to the case where $\bm{\rho}_k$ is 
	the image by $\CanTree^{-1}$ of a uniform binary plane tree with $k$ leaves
	with uniform independent decorations from $\{\oplus, \ominus\}$ on its internal vertices (recall from \cref{rk:CT-1} that $\CanTree^{-1}$ can be applied to $\{\oplus, \ominus\}$-decorated trees,
	where neighbors may have the same sign). 
	\item Let $p\in[0,1]$ be fixed. The \emph{biased Brownian separable permuton} of parameter $p$ is constructed in the same way, but instead of assigning the $\{\oplus,\ominus\}$-decorations via fair coin flips, we toss a biased coin that shows $\oplus$ with probability $p$.
\end{itemize}

\begin{rem}\label{rem:maazoun_sep}
	Maazoun~\cite{maazoun17BrownianPermuton} gave a construction of the biased Brownian separable permutons in terms of decorated Brownian excursions. We do not present this point of view in this thesis, but we will discuss a (conjectural) equivalent construction of these permutons from Brownian excursions in \cref{sect:skew_perm} (see \cref{conj:Baxt_brow_same}, page~\pageref{conj:Baxt_brow_same}).
	
	We also mention that Maazoun~\cite[Theorem 1.5]{maazoun17BrownianPermuton} showed that, almost surely, the support of $\bm \mu^{(p)}$ is totally disconnected, and its Hausdorff dimension is 1 (with one-dimensional Hausdorff measure bounded above by $\sqrt{2}$). In addition, he showed that $\bm \mu^{(p)}$ inherits the self-similarity properties of the associated Brownian excursion, that is, $\bm \mu^{(p)}$ contains a lot of rescaled distributional copies of itself (see \cite[Theorem 1.6]{maazoun17BrownianPermuton} for a precise statement). Finally, in \cite[Theorem 1.7]{maazoun17BrownianPermuton}, he was able to calculate an expression for the intensity measure $\E[\bm \mu^{(p)}]$.
\end{rem}

Putting together the pattern characterization 
of permuton convergence (recalled above), 
the description of $\bm{\rho}_k$, 
and the connection between patterns and subtrees (explained in \cref{ssec:patterns_subtrees}),
we get a sufficient condition for convergence to a
biased Brownian separable permuton.

To state it, we recall that, if $\myvec\ell$ is an ordered sequence of marked leaves in a tree $T$,
then $R(T,\myvec\ell)$ denotes the subtree consisting of these marked leaves
and all their ancestors.
In addition, we denote by $R^\star(T,\myvec\ell)$ the tree obtained from $R(T,\myvec\ell)$
by successively removing all non-root vertices of out-degree $1$, merging their two adjacent edges.
\begin{lem}[{\cite[Lemma 5.1]{borga2020decorated}}]
	\label{lem:convergence_via_induced_subtrees}
	Let $p$ be fixed in $[0,1]$ and, for each $n \ge 1$, $\bm{\sigma}_n$ be a random permutation of size $n$.
	For each fixed $k \ge 1$, we take a uniform random sequence $\myvec{\bm{\ell}}= (\bm{\ell}_1, \ldots, \bm{\ell}_k)$ of $k$ leaves
	in the canonical tree $\bm{T}_n$ of $\bm{\sigma}_n$.
	We make the following assumptions.
	\begin{itemize}
		\item The tree $R^\star(\bm{T}_n,\myvec{\bm{\ell}})$ converges
		in distribution to a proper $k$-tree.
		\item For each non-root internal vertex $\bm u$ of $R^\star(\bm{T}_n,\myvec{\bm{\ell}})$, 
		we choose arbitrarily two leaves from $\myvec{\bm{\ell}}$, say $\bm{\ell}_{i_u}$ and $\bm{\ell}_{j_u}$,
		whose common ancestor is $\bm u$.
		We then assume that $\bm{\ell}_{i_u}$ and $\bm{\ell}_{j_u}$
		form a non-inversion asymptotically
		with probability $p$, and that,
		when $\bm u$ runs over non-root internal vertices of $R^\star(\bm{T}_n,\myvec{\bm{\ell}})$,
		these events are asymptotically
		independent from each other and from the shape $R^\star(\bm{T}_n,\myvec{\bm{\ell}})$.
	\end{itemize}
	Then $\bm{\sigma}_n$ converges to the biased Brownian separable permuton of parameter $p$.
\end{lem}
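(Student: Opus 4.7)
The plan is to invoke \cref{thm:randompermutonthm}: it suffices to show that, for every fixed $k\ge 1$, the pattern $\pat_{\rv{I}_{n,k}}(\bm{\sigma}_n)$ induced by a uniform random $k$-element subset of $[n]$ converges in distribution to $\bm{\rho}_k$, where $\bm{\rho}_k = \CanTree^{-1}(\bm{\tau}_k)$ and $\bm{\tau}_k$ is a uniform binary plane tree with $k$ leaves carrying i.i.d.\ decorations in $\{\oplus,\ominus\}$ with $\P(\oplus)=p$. Thanks to \cref{rk:Leaves_Elements}, the pattern $\pat_{\rv{I}_{n,k}}(\bm{\sigma}_n)$ coincides in distribution with the pattern read from $\bm{T}_n$ at $k$ uniformly chosen leaves, so the goal reduces to proving that this latter pattern converges to $\bm{\rho}_k$.

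By the pattern-reading rule of \cref{ssec:patterns_subtrees}, the induced pattern depends only on the skeleton $R(\bm{T}_n,\myvec{\bm{\ell}})$ together with the \emph{restricted} decoration at each of its non-leaf vertices, namely the relative order, in the decoration of that vertex, of its children that subtend at least one marked leaf. Under Assumption~1, the reduced tree $R^\star(\bm{T}_n,\myvec{\bm{\ell}})$ converges to a proper $k$-tree; in particular, asymptotically, each non-root internal vertex $\bm{u}$ of $R^\star$ is a genuine closest common ancestor in $\bm{T}_n$ of some pair of marked leaves, with exactly two branches carrying such leaves. The restricted decoration at $\bm{u}$ thus collapses to a single bit $\oplus/\ominus$, and \cref{ssec:patterns_subtrees} forces every pair of marked leaves whose closest common ancestor is $\bm{u}$ to form a non-inversion when this bit is $\oplus$ and an inversion when it is $\ominus$.

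Consequently, once we condition on $R^\star(\bm{T}_n,\myvec{\bm{\ell}})$ being a fixed proper $k$-tree, the induced pattern becomes a deterministic function of the $k-1$ binary signs sitting at the non-root internal vertices. Assumption~2 delivers precisely the needed input on these signs: picking one representative pair of marked leaves per non-root internal vertex $\bm{u}$ determines the sign at $\bm{u}$, and the hypothesis states that, jointly, these signs converge to i.i.d.\ $\mathrm{Bernoulli}(p)$ variables, independent of the limit shape. This is exactly the distributional description of $\bm{\tau}_k$, so by continuity of the pattern-reading map on the countable, discrete space of shapes equipped with binary labels, the induced pattern converges in distribution to $\CanTree^{-1}(\bm{\tau}_k)=\bm{\rho}_k$, and \cref{thm:randompermutonthm} yields convergence of $\bm{\mu}_{\bm{\sigma}_n}$ to $\bm{\mu}^{(p)}$.

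The main subtle step is the passage from Assumption~2, which only controls one pair of leaves per internal vertex of the limiting $k$-tree, to a statement about the full induced pattern. This step becomes painless as soon as the limit tree is known to be binary: every pair of marked leaves with common ancestor $\bm{u}$ then sits in the two distinct subtrees of $\bm{u}$ in $R^\star$ and therefore inherits its inversion status from the distinguished representative pair at $\bm{u}$. The remaining work is bookkeeping: combine the joint convergence of shape and signs supplied by the two assumptions with the pattern-reading principle of \cref{ssec:patterns_subtrees} to identify the limit pattern, and then conclude via the pattern characterization of permuton convergence.
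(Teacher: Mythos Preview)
Your proposal is correct and follows essentially the same approach as the paper. The paper does not give a detailed proof but only sketches the argument in the text surrounding the lemma (pattern characterization via \cref{thm:randompermutonthm}, identification of leaves with indices via \cref{rk:Leaves_Elements}, and pattern-reading via \cref{ssec:patterns_subtrees}), together with the remark after the statement that the arbitrary choice of representative pair is irrelevant once the reduced tree is binary; your write-up is a faithful and complete expansion of exactly this outline.
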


The arbitrary choices made in the second item above are irrelevant.
Indeed, when $\bm u$ has out-degree $2$ in $R^\star(\bm{T}_n,\myvec{\bm{\ell}})$
(which is the case with high probability under the first assumption),
the fact that $\bm{\ell}_{i_u}$ and $\bm{\ell}_{j_u}$ form an inversion
or not does not depend on the choice of $\bm{\ell}_{i_u}$ and $\bm{\ell}_{j_u}$
(this a simple consequence of the discussion from \cref{ssec:patterns_subtrees}).

\subsubsection{Permuton convergence of random permutations in substitution-closed classes}

We now prove our main theorem, Theorem~\ref{thm:scaling_intro}.

\begin{proof}[Proof of \cref{thm:scaling_intro}]
	By \cref{prop:giant_comp_perm} page \pageref{prop:giant_comp_perm}, it suffices to show that the uniform $n$-sized permutation $\bm{\sigma}_n$ from $\cC_{\nonp}$ satisfies
	\[
	\bm{\mu}_{\bm{\sigma}_n} \xrightarrow{d} \bm{\mu}^{(p)}.
	\]

	We first consider the separable case $\mathfrak S= \emptyset$. 
	Let $\bm T_n$ be the canonical tree of $\bm \sigma_n$.
	Here a vertex of $\bm T_n$ is decorated with $\ominus$ if and only if it has even height. 
	Without its decorations, $\bm T_n$ has the law of a critical Galton-Watson tree with finite variance
	conditioned on having $n$ leaves
	(see \cref{sec:indecomposable_GW}; 
	for the separable case, packed trees and canonical trees only differ by their decorations).
	
	Let $k \ge 1$ be given and $\myvec{\bm{\ell}} = (\bm{\ell}_1, \ldots, \bm{\ell}_k)$ be a uniform random sequence of leaves in $\bm T_n$. 
	It follows from Lemma~\ref{le:semilocal} 
	that $R^\star(\bm T_n, \myvec{\bm{\ell}})$ is asymptotically a uniform random proper $k$-tree.
	Corollary~\ref{cor:semilocal3} yields the additional information
	that the parities of the lengths of the $2k-1$ paths in $\bm T_n$ 
	corresponding to the edges of $R^\star(\bm T_n, \myvec{\bm{\ell}})$ 
	converge jointly to $2k-1$ independent fair coin flips,
	independently of the shape $R^\star(\bm T_n, \myvec{\bm{\ell}})$. 
	Hence in the limit as $n \to \infty$ each non-root internal vertex of $R^\star(\bm T_n, \myvec{\bm{\ell}})$
	receives a sign $\oplus$ or $\ominus$ with probability $1/2$
	(meaning that the corresponding leaves
	form an inversion with probability $1/2$, in the sense of the second item of \cref{lem:convergence_via_induced_subtrees}).
	Moreover, these events are asymptotically independent from each other
	and from the shape $R^\star(\bm T_n, \myvec{\bm{\ell}})$. 
	As this holds for all $k\ge 1$, thanks to \cref{lem:convergence_via_induced_subtrees},
	it follows that $\bm{\sigma}_n$ converges in distribution to the Brownian separable permuton $\bm{\mu}^{(1/2)}$.\vspace{3 mm}
	
	Let us now consider the case $\mathfrak S \neq \emptyset$. 
	In this case, it is more convenient to work with packed trees rather than canonical trees
	(note however that both trees have the same set of leaves).
	In particular, the random packed tree $\bm{P}_n=(\bm{T}_n,\bm{\lambda}_{\bm{T}_n})$
	associated with the uniform permutation $\bm \sigma_n$ in $\cC_{\nonp}$ 
	is a Galton-Watson tree with a specific offspring distribution $\xi$ (defined in \cref{eq:offspring_distribution_packed_tree} page \pageref{eq:offspring_distribution_packed_tree})
	conditioned on having $n$ leaves, with independent random decorations on each vertex
	(see \cref{sec:indecomposable_GW}). 
	As before, we fix $k \ge 1$ and consider a uniformly selected $k$-tuple of distinct leaves 
	$\myvec{\bm{\ell}} = (\bm{\ell}_1, \ldots, \bm{\ell}_k)$ in $\bm T_n$.
	
	By \cref{le:semilocal}, we know that
	$R^\star(\bm{T}_n,\myvec{\bm{\ell}})$ converges
	in distribution to a uniform proper $k$-tree
	(recall that $\xi$ is always aperiodic and that
	it has expectation $1$ and finite variance by assumption, as needed to apply \cref{le:semilocal}).
	In particular, the tree $R^\star(\bm{T}_n,\myvec{\bm{\ell}})$ is a proper $k$-tree 
	(with a root of out-degree $1$ and other internal vertices of out-degree $2$) 
	with high probability, as $n \to \infty$.
	When this is the case, since the packing construction only merges internal vertices,
	$R^\star(\bm{T}_n,\myvec{\bm{\ell}})$ coincide with 
	$R^\star(\tilde{\bm{T}}_n,\myvec{\bm{\ell}})$,
	where $\tilde{\bm{T}}_n$ is the {\em canonical} tree associated with $\bm{\sigma}_n$.
	Therefore, although \cref{lem:convergence_via_induced_subtrees} is stated with the canonical tree $\tilde{\bm{T}}_n$,
	we can use it here with the packed tree $\bm T_n$ instead.
	
	Using the notation of \cref{lem:convergence_via_induced_subtrees},
	it remains to analyse whether $\bm\ell_{i_u}$ and $\bm\ell_{j_u}$ form an inversion
	or not (for non-root internal vertices $\bm u$ of $R^\star(\bm{T}_n,\myvec{\bm{\ell}})$).
	
	We recall (see \cref{ssec:patterns_subtrees})
	that if $\bm u$ is decorated with an $\mathfrak S$-gadget,
	then whether $\bm \ell_{i_u}$ and $\bm \ell_{j_u}$ form an inversion or not
	is determined by the decoration of $\bm u$ and by which branches attached to $\bm u$
	contain $\bm \ell_{i_u}$ and $\bm \ell_{j_u}$.
	This information is contained in $\Sh(s.R^{[0]}(\bm{T}_n,\myvec{\bm{\ell}}))$ for any $s>0$.
	
	On the other hand, if $\bm u$ is decorated with $\circledast$, 
	then in order to determine whether $\bm \ell_{i_u}$ and $\bm \ell_{j_u}$ form an inversion or not, 
	we have to recover the parity of the distance of $\bm u$
	to its first ancestor decorated with an $\mathfrak S$-gadget 
	(if it exists, otherwise to the root of $\bm T_n$). 
	
	Take $t_n$ tending to infinity with $t_n=o(\sqrt{n})$.
	By Lemma~\ref{le:semilocal}, $\bm u$ has asymptotically $t_n$ ancestors 
	with out-degrees $\hat{\xi}_1, \hat{\xi}_2, \ldots, \hat{\xi}_{t_n}$ 
	being independent copies of $\hat{\xi}$ defined in \cref{eq:xihat}. 
	Moreover the vertex $\bm u$ and each of its ancestors receive a decoration
	that gets drawn independently and uniformly at random 
	among all $\widehat{\GGG(\mathfrak{S})}$-decorations with size equal to the out-degree of the vertex. 
	In this setting, with high probability, one of the $t_n$ ancestors will receive an $\mathfrak{S}$-gadget as decoration. 
	Therefore, with high probability, whether $\bm \ell_{i_u}$ and $\bm \ell_{j_u}$ form an inversion
	is determined by $\Sh(s.R^{[t_n]}(\bm{T}_n,\myvec{\bm{\ell}}))$ for any $s>0$.
	
	We say that two families (indexed by $\Z_{>0}$) of probability distributions are \emph{close} 
	when their total variation distance tends to $0$ as $n$ tends to infinity. 
	By Lemma~\ref{le:semilocal}, the distributions of the random trees $\Sh(s_n.R^{[t_n]}(\bm{T}_n,\myvec{\bm{\ell}}))$
	and $\Sh(\bm{T}^{k,t_n}_{\{0\}})$ are close, 
	for a well-chosen sequence $s_n$. 
	From the above discussion, this implies that
	the joint distributions of 
	$R^\star(\bm{T}_n,\myvec{\bm{\ell}})$ and
	\begin{equation}
		\left(\mathds{1}_{\left\{\text{$\bm \ell_{i_{\bm u}}$ and $\bm \ell_{j_{\bm u}}$ form an inversion
				in $(\bm{T}_n,\myvec{\bm{\ell}})$}\right\}}\right)_{\bm u}
	\end{equation}
	are close to the distributions of the same variables in the limiting tree $\bm{T}^{k,t_n}_{\{0\}}$.
	When $n$ tends to infinity, these tend a.s.\ (with the obvious coupling between the $\bm{T}^{k,t_n}_{\{0\}}$)
	to the same variables in $\bm{T}^{k,\bm{t}^*}_{\{0\}}$,
	where $\bm{t}^*$ denotes the minimal radius such that each internal 
	$\circledast$-decorated essential vertex (different from the root) has an ancestor decorated by an $\mathfrak{S}$-gadget.
	
	In the limiting tree $\bm{T}^{k,\bm{t}^*}_{\{0\}}$,
	the neighborhoods of the essential vertices $\bm u$ 
	are independent from each other
	and all have the same distribution (which does not depend on $k$,
	nor on the shape $R^\star(\bm{T}^{k,\bm{t}^*}_{\{0\}},\myvec{\bm{\ell}})$,
	the latter being the proper $k$-tree taken at step 1 of the construction).
	Therefore the probability that $\bm \ell_{i_u}$ and $\bm \ell_{j_u}$ form a non-inversion
	tend to some parameter $p$ in $[0,1]$,
	which depends only on the permutation class $\mathcal C$ we are working with.
	Moreover these events are asymptotically independent from each other 
	and from the shape $R^\star(\bm{T}_n,\myvec{\bm{\ell}})$.
	From \cref{lem:convergence_via_induced_subtrees}, this implies that $\bm{\mu}_{\bm{\sigma}_n} \xrightarrow{d} \bm{\mu}^{(p)}$.\medskip

	It remains to calculate the explicit expression for the limiting probability $p$ given in \cref{eq:parm_p}. For this we refer to the final part of the proof of \cite[Theorem 5.2]{borga2020decorated}.
\end{proof}

\section{Baxter permutations and related objects}\label{sect:baxperm}

In the previous section we saw that the biased Brownian separable permuton is a universal limiting object. Nevertheless, here we show that there exists a model of constrained random permutations that does not converge to the biased Brownian separable permuton but that exhibit a new interesting random limiting permuton.
In particular, this section is devoted to the proof of (more precise versions of) \cref{thm:joint_intro}. As explained before, this proof is based on some results on scaling limits of coalescent-walk processes.

\subsection{Scaling limits of coalescent-walk processes}\label{sec:coalescent}

Here we deal with scaling limits of discrete coalescent-walk processes both in the finite and infinite-volume case. The results culminate in \cref{thm:discret_coal_conv_to_continuous}, upon which the proofs of the two main theorems for Baxter permutations and bipolar orientations rely, namely \cref{thm:permuton} and \cref{thm:joint_scaling_limits}. Nevertheless, we believe that our intermediate results, \cref{thm:coal_con_uncond} and \cref{prop:coal_con_cond}, are of independent interest and might be useful also for some future projects.

All the spaces of continuous functions considered in this section are implicitly endowed with the topology of uniform convergence on every compact set.

\subsubsection{The continuous coalescent-walk process}

We start by defining the candidate continuous limiting object for discrete coalescent-walk process: it is formed by the solutions of the following family of stochastic differential equations (SDEs) indexed by $u\in \R$ and driven by a two-dimensional process $\conti W = (\conti X,\conti Y)$:
\begin{equation}\label{eq:flow_SDE}
\begin{cases}
d\conti Z^{(u)}(t) = \idf_{\{\conti Z^{(u)}(t)> 0\}} d\conti Y(t) - \idf_{\{\conti Z^{(u)}(t)\leq 0\}} d \conti X(t),& t\geq u,\\
\conti Z^{(u)}(t)=0,&  t\leq u.
\end{cases} 
\end{equation}
Existence and uniqueness of a solution of the SDE above (for $u\in \R$ fixed) were already studied in the literature in the case where the driving process $\conti W$ is a Brownian motion, in particular with the following result.

\begin{thm}[Theorem 2 of \cite{MR3098074}, Proposition 2.2 of \cite{MR3882190}]\label{thm:ext_and_uni}
	\label{thm:uniqueness}
	Fix $\rho \in (-1,1)$. Let $T\in (0,\infty]$ and let $\conti W = (\conti X,\conti Y)$ be a standard two-dimensional Brownian motion of correlation $\rho$ and time-interval $[0,T)$.
	We have pathwise uniqueness and existence of a strong solution for the SDE:
	\begin{equation} \label{eq:SDE}
	\begin{cases}
	d \conti Z(t) &= \idf_{\{\conti Z(t)> 0\}} d \conti Y(t) - \idf_{\{\conti Z(t)\leq 0\}} d \conti X(t), \quad  0\leq t < T,\\
	\conti Z(0)&=0.
	\end{cases} 
	\end{equation}
	Namely, letting $(\Omega, \mathcal F, (\mathcal F_t)_{0\leq t < T}, \P)$ be a filtered probability space satisfying the usual conditions, and assuming that $\conti W$ is an $(\mathcal F_t)_t$-Brownian motion, then: 
	\begin{itemize}
		\item If $\conti Z,\widetilde{\conti Z}$ are two $(\mathcal F_t)_t$-adapted continuous processes that verify \cref{eq:SDE} almost surely, then $\conti Z=\widetilde{\conti Z}$ almost surely.
		\item There exists an  $(\mathcal F_t)_t$-adapted continuous processes $\conti Z$ which verifies \cref{eq:SDE} almost surely.
	\end{itemize}
	In particular, there exists for every $t\in (0,T]$ a measurable mapping $\solution_t : \mathcal C([0,t)) \to \mathcal C([0,t))$, called the \emph{solution mapping}, such that:
	\begin{itemize}
		\item Setting $\conti Z = \solution_t(\conti W|_{[0,t)})$, then $\conti Z$ verifies \cref{eq:SDE} almost surely on the interval $[0,t)$.
		\item For every $0\leq s \leq t \leq T$, then $F_t(\conti W|_{[0,t)})|_{[0,s)} = F_s (\conti W|_{[0,s)})$ almost surely.
	\end{itemize} 
\end{thm}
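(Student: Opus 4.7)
My plan is to adapt the approach of Prokaj [MR3098074] and Hajri--Raimond [MR3882190] in three steps: pathwise uniqueness, weak existence, and then strong existence together with the solution mapping $F_t$ via the Yamada--Watanabe principle.

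For pathwise uniqueness, I would take two $(\mathcal F_t)_t$-adapted continuous solutions $\conti Z$ and $\widetilde{\conti Z}$ driven by the same $\conti W$, and set $D \coloneqq \conti Z - \widetilde{\conti Z}$. A direct computation from the SDE yields
\begin{equation}
dD(t) = \bigl(\idf_{\{\conti Z(t) > 0,\, \widetilde{\conti Z}(t) \leq 0\}} - \idf_{\{\conti Z(t) \leq 0,\, \widetilde{\conti Z}(t) > 0\}}\bigr)\bigl(d\conti Y(t) + d\conti X(t)\bigr),
\end{equation}
so that $D$ evolves only when $\conti Z$ and $\widetilde{\conti Z}$ have strictly opposite signs, and is then driven by $\conti X + \conti Y$, whose quadratic variation is $(2+2\rho)t > 0$. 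I would then follow Prokaj's excursion-theoretic argument: applying Tanaka's formula to $|D|$ and analysing the sign configurations of $(\conti Z,\widetilde{\conti Z})$ along excursions of $D$ away from zero, one shows that the local time of $D$ at $0$ vanishes identically, and combined with $D(0)=0$ this forces $D \equiv 0$ almost surely. As an alternative first move I would first reduce to a cleaner form by writing $\conti Y = \rho \conti X + \sqrt{1-\rho^2}\,\conti V$ with $\conti V$ independent of $\conti X$; a direct quadratic-variation computation shows that any solution $\conti Z$ is itself a standard Brownian motion in its own filtration, which helps organize the comparison argument.

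For existence I would construct a weak solution by a piecewise-constant Euler scheme: freeze the coefficient $\idf_{\{\conti Z > 0\}}$ on dyadic intervals of length $2^{-n}$, producing a sequence $\conti Z^n$ of continuous semimartingales with $\langle\conti Z^n\rangle_t = t$. Kolmogorov's tightness criterion, combined with the Burkholder--Davis--Gundy inequality, gives tightness of $\{\mathcal{L}aw(\conti Z^n, \conti W)\}_n$ on $\mathcal C([0,T),\R^3)$; any weak limit is identified as a weak solution of the SDE through a standard martingale-problem argument. Pathwise uniqueness together with weak existence then yields strong existence by the Yamada--Watanabe theorem, and the resulting measurable map $\conti W \mapsto \conti Z$ restricted to paths on $[0,t)$ defines $\solution_t$. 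The compatibility $\solution_t(\conti W|_{[0,t)})|_{[0,s)} = \solution_s(\conti W|_{[0,s)})$ is an immediate consequence of pathwise uniqueness applied on $[0,s)$, since the restriction to $[0,s)$ of a solution on $[0,t)$ is itself a strong solution on $[0,s)$.

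The main obstacle is the pathwise uniqueness step: the indicator structure of the coefficients forbids any direct application of the classical Lipschitz, monotone or Yamada--Watanabe modulus-of-continuity tools, and one genuinely needs the finer excursion-theoretic comparison from Prokaj. The hypothesis $\rho \in (-1,1)$ is essential here --- the degenerate case $\rho = -1$ reduces to a form of Tanaka's classical equation $dZ = \mathrm{sgn}(Z)\,dB$, for which pathwise uniqueness famously fails --- so the argument must use in an essential way the strict positivity of $\langle \conti X + \conti Y\rangle_t = (2+2\rho)t$.
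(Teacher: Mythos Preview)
The paper does not prove this theorem; it is quoted from Prokaj and Hajri--Raimond and used as a black box. Your overall architecture --- pathwise uniqueness plus weak existence, then Yamada--Watanabe for the strong solution and the measurable solution map $F_t$ with its compatibility under restriction --- is the correct one and matches those references.

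Your analysis of the degenerate endpoints, however, is backwards, and this matters because it is precisely where the mechanism for uniqueness lives. At $\rho = -1$ one has $\conti Y = -\conti X$, so the SDE collapses to $d\conti Z = -d\conti X$ with the trivially unique solution $\conti Z = -\conti X$; there is nothing Tanaka-like here. It is $\rho = 1$ that gives Tanaka's equation: with $\conti X = \conti Y = \conti B$ the SDE becomes $d\conti Z = (\idf_{\{\conti Z>0\}} - \idf_{\{\conti Z\le 0\}})\,d\conti B$, for which pathwise uniqueness famously fails --- and indeed the paper itself exploits exactly this failure later, in its discussion of the skew Brownian permuton at $\rho=1$. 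Consequently the essential hypothesis is \emph{not} $\langle \conti X + \conti Y\rangle_t = (2+2\rho)t > 0$, which is perfectly positive at the bad endpoint $\rho=1$, but the nondegeneracy of $\conti Y - \conti X$. The clean way to see this is the orthogonal decomposition
\[
d\conti Z \;=\; \sqrt{\tfrac{1+\rho}{2}}\,\sgn(\conti Z)\,d\conti B^{+} \;+\; \sqrt{\tfrac{1-\rho}{2}}\,d\conti B^{-}
\]
(with $\conti B^{\pm}$ independent standard Brownian motions and the convention $\sgn(0)=-1$), which exhibits the equation as Tanaka's SDE perturbed by an independent regularizing noise of strength $\sqrt{(1-\rho)/2}$. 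It is the strict positivity of this perturbation --- that is, $\rho < 1$ --- that makes Prokaj's excursion argument go through. Your $dD$ computation is correct, but the conclusion you draw from it would, if taken at face value, predict uniqueness at $\rho = 1$ and failure at $\rho = -1$, the exact opposite of the truth.
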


Assume from now on that $\conti W = (\conti X, \conti Y)$ is a standard two-dimensional Brownian motion of correlation $-1/2$ and time-interval $\R$. Let $(\mathcal F_t)_{t\in \R}$ be the canonical filtration of $\conti W$. For every $u\in \R$ we define
\[\conti Z^{(u)}(t) = \begin{cases}
F_\infty\Big((\conti W(u + s) - \conti W(u))_{0\leq s <\infty}\Big)(t-u),& t\geq u,\\
0, &t<u.
\end{cases}\] 

It is clear that $\conti Z^{(u)}$ is $(\mathcal F_t)_t$-adapted. Because \cref{eq:flow_SDE} is invariant by addition of a constant to $\conti W$, and because $\conti W(u + s) - \conti W(u)$ is a Brownian motion with time-interval $\RR_{\geq 0}$, we see that for every fixed $u\in\R$, $\conti Z^{(u)}$ verifies \cref{eq:flow_SDE} almost surely.

Our construction makes the mapping $(\omega,u)\mapsto \conti Z^{(u)}$ jointly measurable. Hence by Tonelli's theorem, for almost every $\omega$, $\conti Z^{(u)}$ is a solution for almost every $u$. 

\begin{rem}\label{rem:no_flow}
	Given $\omega$ (even restricted to a set of probability one), we cannot say that $\{\conti Z^{(u)}\}_{u\in \R}$ forms a whole field of solutions to \cref{eq:flow_SDE} driven by $\conti W$. Indeed, we cannot guarantee that the SDE holds for all $u\in\R$ simultaneously. In fact, we expect thanks to intuition coming from Liouville Quantum Gravity, that there exist exceptional times $u\in\R$ where uniqueness fails, with two or three distinct solutions. This phenomenon is also observed in another coalescing flow of an SDE \cite{MR2094439} and in the Brownian web \cite{MR3644280}.
\end{rem}

\begin{rem}\label{rem:solution_of_SDE_are_BM}
	By Lévy's characterization theorem  \cite[Theorem 3.6]{MR1725357}, for every fixed $u\in\mathbb R$, the process $\conti Z^{(u)}$ is a standard one-dimensional Brownian motion on $[u,\infty)$ with $\conti Z^{(u)}(u) = 0$. Note however that the coupling of $\conti Z^{(u)}$ for different values of $u\in\R$ is highly nontrivial.
\end{rem}

\begin{defn}
	We call \emph{continuous coalescent-walk process} (driven by $\conti W$) the collection of stochastic processes $\left\{\conti Z^{(u)}\right\}_{u\in \R}$. 
\end{defn}

Since for all $u\in\R$, $(\conti Z^{(u)}(t))_{t\geq u}$ is a Brownian motion, one can define almost surely its local time process at zero  $\conti L^{(u)}$ (see \cite[Chapter VI]{MR1725357}): namely for $t\geq u$, $\conti L^{(u)}(t)$ is the limit in probability of
\[\frac 1 {2\eps} \Leb\left(\left\{s\in [u,t]: |\conti Z^{(u)}(s)| <\eps \right\}\right).\]
By convention we set $\conti L^{(u)}(t) = 0$ for $t<u$ so that  $\conti L^{(u)}$ is a continuous process on $\R$.

In the next section we show that the \emph{continuous coalescent-walk process} is the scaling limit of the discrete infinite-volume coalescent-walk processes defined in \cref{sect: bij_walk_coal}.

\subsubsection{The unconditioned scaling limit result}\label{sect:scal_coal_proc}

Let $\overline{\bm W} = (\overline{\bm X},\overline{\bm Y}) =(\overline{\bm X}_k,\overline{\bm Y}_k)_{k\in \Z}$ be the random two-dimensional walk (with step distribution $\nu$) defined below \cref{eq:step_distribution_walk} page~\pageref{eq:step_distribution_walk}, and let $\overline{\bm Z} = \wcp(\overline{\bm W})$ be the corresponding discrete coalescent-walk process. Let also $(\overline {\bm L}^{(i)}(j))_{-\infty<i\leq j <\infty} = L_{\overline{\bm Z}}$ be the local time process of $\overline{\bm Z}$ as defined in \cref{eq:local_time_process} page~\pageref{eq:local_time_process}. By convention, from now on we extend trajectories of $\overline{\bm Z}$ and $\overline {\bm L}$ to the whole $\Z$ by setting $\overline{\bm Z}^{(j)}(i) = \overline {\bm L}^{(j)}(i) = 0$ for $i,j \in \Z$, $i<j$.
We define rescaled versions: for all $n\geq 1, u\in \R$, let $\overline {\conti W}_n:\R\to \R^2$, $\overline {\conti Z}^{(u)}_n:\R\to\R$ and $\overline {\conti L}^{(u)}_n:\R\to\R$ be the continuous functions defined by linearly interpolating the following points:
\begin{equation}\label{eq:rescaled_version}
\overline{\conti W}_n\left(\frac kn\right) = \frac 1 {\sqrt {2n}} \overline{\bm W}_{k}, \quad
\overline{\conti Z}^{(u)}_{n}\left(\frac kn\right) = \frac 1 {\sqrt {2n}} \overline{\bm Z}^{(\lceil nu\rceil)}_{k}, \quad
\overline{\conti L}^{(u)}_{n}\left(\frac kn\right) = \frac 1 {\sqrt {2n}} \overline{\bm L}^{(\lceil nu\rceil)}(k),\quad k\in \Z.
\end{equation}
Our first scaling limit result for infinite-volume coalescent-walk processes is the following result that deals with a single trajectory.
\begin{thm}\label{thm:coal_con_uncond}
	Fix $u\in \R$. 
	We have the following joint convergence in $\mathcal C(\R,\R)^{4}$: 
	\begin{equation}
	\label{eq:coal_con_uncond}
	\left(\overline{\conti W}_n,\overline{\conti Z}^{( u)}_n,\overline{\conti L}^{( u)}_n\right) 
	\xrightarrow[n\to\infty]{d}
	\left(\conti W,\conti Z^{(u)},\conti L^{(u)}\right).
	\end{equation}
\end{thm}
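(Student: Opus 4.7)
The overall strategy is a classical invariance-principle argument: tightness of the joint laws, identification of subsequential limits as solutions of the perturbed Tanaka SDE \eqref{eq:flow_SDE} driven by $\conti W$ at level $u$, and then appeal to the pathwise uniqueness given by \cref{thm:ext_and_uni} to conclude convergence of the whole sequence. By translation invariance of the construction it suffices to treat the case $u=0$.

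Tightness of $(\overline{\conti W}_n,\overline{\conti Z}_n^{(0)},\overline{\conti L}_n^{(0)})$ in $\mathcal C(\R,\R)^4$ (endowed with uniform convergence on compacts) follows from three ingredients. First, Donsker's theorem applied to $\overline{\bm W}$ yields $\overline{\conti W}_n \to \conti W$ in distribution, since $\nu$ has mean zero and covariance matrix $\bigl(\begin{smallmatrix} 2 & -1 \\ -1 & 2\end{smallmatrix}\bigr)$, so the normalization $1/\sqrt{2n}$ gives a two-dimensional Brownian motion of correlation $-1/2$. Second, by \cref{prop:trajectories_are_rw}, the single trajectory $(\overline{\bm Z}^{(0)}_k)_{k\geq 0}$ is itself a random walk whose step distribution has the same law as $\overline{\bm Y}$ (variance $2$), so again by Donsker $\overline{\conti Z}_n^{(0)}$ is tight and any limit is a standard Brownian motion on $[0,\infty)$ (extended by $0$ on $(-\infty,0]$). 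Third, the discrete local time $\overline{\bm L}^{(0)}(k)$ counts visits of $\overline{\bm Z}^{(0)}$ to $0$; joint convergence of a random walk to Brownian motion together with its number of zero crossings (suitably rescaled) to Brownian local time is a classical Skorokhod-type result, so tightness of $\overline{\conti L}_n^{(0)}$ and identification of any joint limit $\conti L$ with the local time of the limiting $\conti Z$ are both automatic once the walk itself converges.

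The heart of the proof is therefore the identification step: along any subsequence converging to a limit $(\conti W,\conti Z,\conti L)$, one must show that $\conti Z$ satisfies the SDE
\begin{equation*}
\conti Z(t)=\int_0^t \idf_{\{\conti Z(s)>0\}}\,d\conti Y(s)-\int_0^t \idf_{\{\conti Z(s)\leq 0\}}\,d\conti X(s), \qquad t\geq 0,
\end{equation*}
with respect to $\conti W=(\conti X,\conti Y)$. We apply Skorokhod's representation theorem to work on a probability space on which the convergence is almost sure and uniform on compacts. Reading \cref{defn:distrib_incr_coal}, the discrete increments decompose as
\begin{equation*}
\overline{\bm Z}_{\ell+1}^{(0)}-\overline{\bm Z}_\ell^{(0)}
=\idf_{\{\overline{\bm Z}_\ell^{(0)}\geq 0\}}\,(\overline{\bm Y}_{\ell+1}-\overline{\bm Y}_\ell)
-\idf_{\{\overline{\bm Z}_\ell^{(0)}<0\}}\,(\overline{\bm X}_{\ell+1}-\overline{\bm X}_\ell)+\bm R_\ell,
\end{equation*}
where the correction $\bm R_\ell$ is supported on coalescing steps, i.e.\ on the event $\{\overline{\bm Z}_\ell^{(0)}<0\leq \overline{\bm Z}_\ell^{(0)}+i\}$ when the walk takes a step $(-i,j)$. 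Summing and rescaling, the first two terms converge to the two stochastic integrals above (this is the classical invariance principle for stochastic integrals with respect to random walks, using that $\conti Z$ spends zero Lebesgue measure at $0$ almost surely and that the indicators $\idf_{\{\overline{\bm Z}_n^{(0)}\geq 0\}}$ converge a.e.\ with respect to the time variable to $\idf_{\{\conti Z>0\}}$), and one checks that $\sum_{\ell\leq nt}\bm R_\ell$ rescaled by $1/\sqrt{2n}$ converges to zero in probability, by bounding it in terms of the local time at $0$ of $\overline{\bm Z}^{(0)}$ multiplied by the typical jump size of a single step, which is $O(1)$ and therefore negligible after the $\sqrt n$ rescaling.

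The main obstacle I foresee is precisely this last point, namely controlling the correction $\bm R_\ell$ and passing the discontinuous indicators $\idf_{\{\overline{\bm Z}_\ell^{(0)}\geq 0\}}$ to the limit inside stochastic integrals driven by $\overline{\bm W}$. The standard semimartingale continuity theorem for stochastic integrals requires the integrand to converge in a sufficiently strong sense; here one needs to argue that the Lebesgue measure of $\{t:\conti Z(t)=0\}$ is zero (a property of the continuous solution to the perturbed Tanaka SDE, cf.\ \cite{MR3098074,MR3882190}) so that the discrete indicators converge $dt$-almost everywhere, together with a uniform integrability bound provided by $L^2$-estimates on the discrete walk. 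Once $\conti Z$ is identified as a solution of the SDE, \cref{thm:ext_and_uni} gives that $\conti Z=\conti Z^{(0)}$ almost surely, so the subsequential limit is unique and the joint convergence \eqref{eq:coal_con_uncond} follows.
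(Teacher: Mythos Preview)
Your overall scaffold (tightness via Donsker and \cref{prop:trajectories_are_rw}, Skorokhod representation, identification of the subsequential limit as a solution of the perturbed Tanaka SDE, then pathwise uniqueness from \cref{thm:ext_and_uni}) matches the paper's. However there are two genuine gaps in the identification step.

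First, the correction term $\bm R_\ell$ does \emph{not} vanish by the argument you sketch. The nonzero corrections occur at times when $\overline{\bm Z}^{(0)}$ crosses from negative to non-negative; for a random walk on $[0,nT]$ the number of such times is of order $\sqrt{n}$, and each $\bm R_\ell$ is $O(1)$, so $\tfrac{1}{\sqrt{2n}}\sum_{\ell\leq nT}\bm R_\ell$ is $O(1)$, not $o(1)$. Some cancellation must be happening (the limit SDE has no local-time term), but establishing it this way would require a delicate martingale argument specific to the step distribution $\nu$. The paper sidesteps this entirely: it fixes $\eps>0$ and a rational interval $(a,b)$ on which $\widetilde{\conti Z}>\eps$, observes that for $n$ large $\overline{\conti Z}_n^{(0)}>\eps/2$ on $(a,b)$, and then notes that \emph{by definition} of the coalescent-walk process $\overline{\conti Z}_n^{(0)}-\overline{\conti Y}_n$ is constant there (no correction can occur away from zero). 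Passing to the limit gives $\int_0^t\idf_{\{\widetilde{\conti Z}>\eps\}}d\widetilde{\conti Z}=\int_0^t\idf_{\{\widetilde{\conti Z}>\eps\}}d\conti Y$, then symmetrically for the negative part, and finally $\eps\to 0$ via stochastic dominated convergence and $\int\idf_{\{\widetilde{\conti Z}=0\}}d\widetilde{\conti Z}=0$ (since $\widetilde{\conti Z}$ is itself a Brownian motion). This is both simpler and avoids any invariance principle for stochastic integrals with discontinuous integrands.

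Second, you omit the filtration argument. Pathwise uniqueness in \cref{thm:ext_and_uni} requires $\conti W$ to be a Brownian motion with respect to a filtration to which the candidate solution $\widetilde{\conti Z}$ is adapted. This is not automatic from the marginal convergence $\overline{\conti W}_n\to\conti W$: one must check that $\conti W(t+s)-\conti W(t)$ is independent of $\sigma(\conti W(r),\widetilde{\conti Z}(r):r\leq t)$. The paper does this by passing the discrete independence $(\overline{\conti W}_n(t+s)-\overline{\conti W}_n(t))\perp\sigma(\overline{\bm W}_k:k\leq\lfloor nt\rfloor)$ through the limit, using that $\overline{\conti Z}_n^{(0)}$ is measurable with respect to the latter $\sigma$-algebra. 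Without this step you cannot invoke pathwise uniqueness to pin down the joint law.
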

	
\begin{proof} The proof is spitted in two main parts.

\paragraph*{Convergence of $\overline{\conti W}_n$ and of $(\overline{\conti Z}^{( u)}_n,\overline{\conti L}^{( u)}_n)$.}
The first step in the proof is to establish component-wise convergence in 
\cref{eq:coal_con_uncond}.
By Donsker's theorem, upon computing $\Var(\nu) = \begin{psmallmatrix}2 &-1 \\ -1 &2 \end{psmallmatrix}$,
we get that the rescaled random walk $\overline{\conti W}_n=(\overline{\conti X}_n,\overline{\conti Y}_n)$ converges to $\conti W=(\conti X, \conti Y)$ in distribution. Using \cref{prop:trajectories_are_rw}, we know that a single trajectory of the discrete coalescent-walk process has the distribution of a random walk, and applying again an invariance principle such as \cite[Theorem 1.1]{MR749918}, we get that $(\overline{\conti Z}^{(u)}_n({u+t}), \overline{\conti L}^{(u)}_n({u+t}))_{t\geq 0}$ converges to a one-dimensional Brownian motion and its local time, which is indeed distributed like $(\conti Z^{(u)},\conti L^{(u)})$ thanks to \cref{rem:solution_of_SDE_are_BM}.

\paragraph{Joint convergence.}
The second step in the proof is to establish joint convergence in distribution. By Prokhorov's theorem, both $\overline{\conti W}_n$ and $(\overline{\conti Z}^{( u)}_n,\overline{\conti L}^{( u)}_n)$ are tight sequences of random variables. Since the product of two compact sets is compact, then the left-hand side of \cref{eq:coal_con_uncond} forms a tight sequence. Therefore, again by Prokhorov's theorem, it is enough to identify the distribution of all joint subsequential limits in order to show the convergence in \cref{eq:coal_con_uncond}. Assume that along a subsequence, we have 
\begin{equation}\label{eq:joint_conve_to_prove}
\left(\overline{\conti W}_n,\overline{\conti Z}^{( u)}_n,\overline{\conti L}^{( u)}_n\right) 
\xrightarrow[n\to\infty]{d}
\left(\conti W,\widetilde {\conti Z},\widetilde{\conti L}\right),
\end{equation}
where $\widetilde {\conti Z}$ is a Brownian motion started at time $u$, and $\widetilde {\conti L}$ is its local time process at zero. The joint distribution of the right-hand side is unknown for now, but we will show that $\widetilde {\conti Z} = \conti Z^{(u)}$ almost surely, which will complete the proof. 
Using Skorokhod's theorem, we may define all involved variables on the same probability space and assume that the convergence in \cref{eq:joint_conve_to_prove} is in fact almost sure.

Let $(\mathcal G_t)_t$ be the smallest complete filtration to which $\conti W$ and $\widetilde{\conti Z}$ are adapted, i.e.\ $\mathcal G_t$ is the completion of $\sigma(\conti W(s),\widetilde{\conti Z}(s), s\leq t)$ by the negligible events of $\P$.
We shall show that $\conti W$ is a $(\mathcal G_t)_t$-Brownian motion, that is for $t\in \R, s\in \R_{\geq 0}$, 
$$\left(\conti W(t+s)-\conti W(t)\right) \indep \mathcal G_t.$$ 
For fixed $n$, by definition of random walk, $\left(\overline{\conti W}_n({t+s}) - \overline{\conti W}_n(t)\right) \indep \sigma\left(\overline{\bm W}_k, k\leq \lfloor n t \rfloor\right)$. Therefore, 
\begin{equation}
\left(\overline{\conti W}_n({t+s})-\overline{\conti W}_n(t)\right) \ \indep\ \left(\overline{\conti W}_n(r),\overline{\conti Z}^{(u)}_n(r)\right)_{r\leq n^{-1}\lfloor nt \rfloor}.
\end{equation}
By convergence, we obtain that $\left(\conti W(t+s)-\conti W(t)\right)$ is independent of $\left(\conti W(r),\widetilde{\conti Z}(r)\right)_{r\leq t}$, completing the claim that $\conti W$ is a $(\mathcal G_t)_t$-Brownian motion.

Now fix a rational $\eps>0$ and an open interval with rational endpoints $(\bm a,\bm b)$ on which $\widetilde{\conti Z}(t)>\eps$. Note that the interval $(\bm a,\bm b)$ depends on $\widetilde{\conti Z}(t)$. By almost sure convergence, there is $N_0$ such that for $n\geq N_0$, $\overline{\conti Z}^{(u)}_n>\eps/2$ on $(\bm a,\bm b)$. 
On the interval $(\bm a+1/n,\bm b)$, the process $\overline{\conti Z}^{(u)}_n - \overline{\conti Y}_n$ is constant  by construction of the coalescent-walk process (see \cref{sect: bij_walk_coal}). 
Hence the limit $\widetilde{\conti Z} - \conti Y$ is constant too on $(\bm a,\bm b)$ almost surely. We have shown that almost surely $\widetilde{\conti Z} - \conti Y$ is locally constant on $\{t>u:\widetilde{\conti Z}(t)>\eps\}$. This translates into the following almost sure equality:
\[\int_{u}^t \idf_{\{\widetilde{\conti Z}(r)>\eps\}} d\widetilde{\conti Z}(r) = \int_u^t \idf_{\{\widetilde{\conti Z}(r)>\eps\}} d\conti Y(r), \quad t\geq u. \]
The stochastic integrals are well-defined: on the left-hand side by considering the canonical filtration of $\widetilde {\conti Z}$, on the right-hand side by considering $(\mathcal G_t)_t$.	
The same can be done for negative values, leading to 
\[\int_u^t \idf_{\{|\widetilde{\conti Z}(r)|>\eps\}} d\widetilde{\conti Z}(r)  =  \int_{u}^t \idf_{\{\widetilde{\conti Z}(r)>\eps\}} d\conti Y(r) -  \int_{u}^t \idf_{\{\widetilde{\conti Z}(r)<-\eps\}} d\conti X(r). \]
By stochastic dominated convergence theorem, one can take the limit as $\eps\to 0$, \cite[Thm. IV.2.12]{MR1725357}, and obtain 
\[\int_u^t \idf_{\{\widetilde{\conti Z}(r)\neq0\}}d\widetilde{\conti Z}(r)  =  \int_{u}^t \idf_{\{\widetilde{\conti Z}(r)>0\}} d\conti Y(r) -  \int_{u}^t \idf_{\{\widetilde{\conti Z}(r)<0\}} d\conti X(r). \]
Thanks to the fact that $\widetilde{\conti Z}$ is a Brownian motion, $\int_u^t \idf_{\{\widetilde{\conti Z}(r)=0\}}d\widetilde{\conti Z}(r) = 0$, so that the left-hand side of the equation above equals $\widetilde {\conti Z}(t)$.
As a result $\widetilde {\conti Z}$ verifies \cref{eq:SDE} almost surely and we can apply pathwise uniqueness (\cref{thm:uniqueness}, item 1) to complete the proof that $\widetilde {\conti Z} = \conti Z^{(u)}$ almost surely. 

\end{proof}

\subsubsection{The conditioned scaling limit result}
\label{sec:cond_conv}

In the previous section we saw a scaling limit result for infinite-volume coalescent-walk processes. We now deal with the finite-volume case, the one that we need for our results.

For all $n\geq 1$, let $\bm W_n$ be a uniform element of the space of tandem walks $\mathcal W_n$ and $\bm Z_n = \wcp(\bm W_n)$ be the corresponding uniform coalescent-walk process. Let also $\bm L_n = ( {\bm L}^{(i)}_{n}(j))_{1\leq i\leq j \leq n} = L_{{\bm Z_n}}$ be the local time process of $\bm Z_n$ as defined in \cref{eq:local_time_process} page~\pageref{eq:local_time_process}. For all $n\geq 1, u\in (0,1)$, let ${\conti W}_n:[0,1]\to \R^2$, ${\conti Z}^{(u)}_n:[0,1]\to\R$ and ${\conti L}^{(u)}_n:[0,1]\to\R$ be the continuous functions defined by linearly interpolating the following points defined for all $k\in [n]$,
\begin{equation}
{\conti W}_n\left(\frac kn\right) = \frac 1 {\sqrt {2n}} \bm W_{n}(k), \quad
{\conti Z}^{(u)}_{n}\left(\frac kn\right) = \frac 1 {\sqrt {2n}} \bm Z^{(\lceil nu\rceil)}_{n}(k), \quad
{\conti L}^{(u)}_{n}\left(\frac kn\right) = \frac 1 {\sqrt {2n}} \bm L^{(\lceil nu\rceil)}_{n}(k).
\end{equation}Our goal is to obtain a scaling limit result for these processes in the fashion of \cref{thm:coal_con_uncond}.

Let $\conti W_e$ denote a two-dimensional Brownian excursion of correlation $-1/2$ in the non-negative quadrant and denote by $(\Omega, \mathcal F, (\mathcal F_t)_{0\leq t\leq 1}, \P_\exc)$ the completed canonical probability
space of $\conti W_e$. From now on we work in this space. The law of the process $\conti W_e$ is characterized (for instance) by \cref{prop:brown_ex} in \cref{sec:appendix}. 
Using \cref{prop:unif_law} and \cref{prop:DW}, we have that $\conti W_e$ is the scaling limit of $\conti W_n$. Then, the scaling limit of $\conti Z_n$ should be the continuous coalescent-walk process driven by $\conti W_e$, i.e.\ the collection (indexed by $u\in[0,1]$) of solutions of \cref{eq:flow_SDE} driven by $\conti W_e$. 

Let us remark that since Brownian excursions are semimartingales \cite[Exercise 3.11]{MR1725357}, it makes sense to consider stochastic integrals against such processes, so the SDE in \cref{eq:flow_SDE} driven by $\conti W_e$ is well-defined. We can also transport existence and uniqueness of strong solutions from \cref{thm:ext_and_uni} using absolute continuity arguments, obtaining the result in \cref{thm:ext_and_uni_excursion}, whose proof is omitted in this manuscript.

Denote by $\mathcal F^{(u)}_t$ the sigma-algebra generated by $\conti W_e(s) - \conti W_e(u)$  for $ u\leq s \leq t$ and completed by negligible events of $\P_\exc$.
\begin{thm}[{\cite[Theorem 4.6]{borga2020scaling}}]\label{thm:ext_and_uni_excursion}
	For every $u\in(0,1)$, there exists a continuous $\mathcal F^{(u)}_t$-adapted stochastic process $\conti Z_e^{(u)}$ on $[u,1)$, such that
	\begin{enumerate}
		\item  the mapping $(\omega,u)\mapsto \conti Z_e^{(u)}$ is jointly measurable. 
		\item For every $0<u<r<1$, $\conti Z_e^{(u)}$ verifies  \cref{eq:flow_SDE} with driving motion $\conti W_e$, restricted to the interval $[u,r]$, almost surely.
		\item If $0<u<r<1$ and $\widetilde {\conti Z}$ is an $\mathcal F^{(u)}_t$-adapted stochastic process that verifies \cref{eq:flow_SDE}  with driving motion $\conti W_e$ on interval $[u,r]$, then $\widetilde  {\conti Z} =  \conti Z_e^{(u)}$ on $[u,r]$ almost surely. 
	\end{enumerate}
\end{thm}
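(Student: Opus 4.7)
The overall strategy is to transfer the existence and uniqueness results of \cref{thm:ext_and_uni} from the Brownian motion setting to the Brownian excursion setting by exploiting local absolute continuity. More precisely, I would exploit the fact that, on any compact subinterval $[u,r] \subset (0,1)$ bounded away from the endpoints $0$ and $1$, the law of the increments of $\conti W_e$ is mutually absolutely continuous with respect to the law of a standard two-dimensional Brownian motion of correlation $-1/2$.

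First, I would establish the key absolute continuity lemma. Fix $0 < u < r < 1$. Let $q(t,x) \coloneqq \P(\text{BM of correl.\ }{-1/2}\text{ started at } x \text{ stays in the non-negative quadrant on } [0,1-t] \text{ and reaches } (0,0) \text{ at time } 1-t)$, which is strictly positive and continuous for $x$ in the interior of the non-negative quadrant and $t \in (0,1)$. Using the Markov property of the excursion together with its disintegration formula, one writes the law of $(\conti W_e(u+s))_{s\in [0,r-u]}$ conditionally on $\conti W_e(u) = x$ as the law of a free Brownian motion $(x + \conti W'(s))_{s \in [0,r-u]}$ reweighted by the Radon--Nikodym density
\begin{equation*}
D^{u,r}(\conti W') \;=\; \frac{q(r, x + \conti W'(r-u))}{q(u,x)} \, \mathds{1}_{\{x + \conti W'(s) \in \R_{\geq 0}^2, \forall s \in [0,r-u]\}}.
\end{equation*}
Since $\conti W_e(u)$ lies a.s.\ in the interior of the quadrant (so that $q(u,\conti W_e(u)) > 0$), the density $D^{u,r}$ is strictly positive $\P_\exc$-a.s.\ on a set of full measure, yielding the desired mutual absolute continuity on $\mathcal F^{(u)}_r$.

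Given this, existence and uniqueness follow as in classical arguments. For existence, I would apply the solution mapping $\solution_{r-u}$ of \cref{thm:ext_and_uni} to the shifted process $(\conti W_e(u+s) - \conti W_e(u))_{s \in [0, r-u]}$ and define $\conti Z_e^{(u)}(u+s) \coloneqq \solution_{r-u}\bigl((\conti W_e(u+t) - \conti W_e(u))_{t \in [0,r-u]}\bigr)(s)$. Under the reference Brownian motion law this process satisfies the SDE \cref{eq:flow_SDE} almost surely; by mutual absolute continuity the SDE is satisfied $\P_\exc$-a.s.\ on $[u,r]$, and adaptedness to $\mathcal F^{(u)}_t$ is inherited from the solution mapping. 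For uniqueness, if $\widetilde{\conti Z}$ is any $\mathcal F^{(u)}_t$-adapted solution, then under the equivalent Brownian measure $\widetilde{\conti Z}$ remains an adapted solution of the same SDE and pathwise uniqueness from \cref{thm:ext_and_uni} forces $\widetilde{\conti Z} = \conti Z_e^{(u)}$ on $[u,r]$ almost surely under both measures. The extension from $[u,r]$ to the whole interval $[u,1)$ then follows by consistency: if $r < r' < 1$, uniqueness guarantees that the solution built on $[u,r']$ restricts to the one built on $[u,r]$, so one obtains a well-defined process on $[u,1)$ by taking a countable increasing sequence $r_n \nearrow 1$.

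Finally, joint measurability of $(\omega,u) \mapsto \conti Z_e^{(u)}$ follows by combining the measurability of the solution mapping $\solution_t$ from \cref{thm:ext_and_uni} with the joint measurability of the shift operation $(\omega, u) \mapsto (\conti W_e(u+s) - \conti W_e(u))_{s \geq 0}$. The main technical obstacle is the careful verification of the absolute continuity formula: one must control the boundary behavior of the density $q(t,x)$ and ensure that the arguments on $[u,r]$ patch together coherently as $r \nearrow 1$, since the density $q$ degenerates at $t = 1$. This is handled by restricting all statements to arbitrary compact subintervals of $(u,1)$ and invoking consistency, avoiding the problematic endpoint directly.
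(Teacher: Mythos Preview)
Your proposal is correct and follows essentially the same approach as the paper: the paper explicitly states (just before the theorem) that existence and uniqueness are transported from \cref{thm:ext_and_uni} via absolute continuity arguments, and the detailed proof is omitted in the manuscript. Your write-up fills in precisely these details, and the absolute continuity you need is exactly the content of \cref{prop:brown_ex} in the appendix (in the form of the density $\alpha_\eps$), which plays the role of your $D^{u,r}$.
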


Now for $u\in (0,1)$ denote by $\conti Z_e^{(u)}$ the strong solution of \cref{eq:flow_SDE} driven by $\conti W_e$ provided by the previous theorem. Note that the process $\conti Z_e^{(u)}$ is a continuous process on the interval $[u,1)$. Since $\conti Z_e^{(u)}(u) = 0$, we extend continuously $\conti Z_e^{(u)}$ on $[0,1)$ by setting $\conti Z_e^{(u)}(t) = 0$ for $0\leq t\leq u$. It will turn out (see \cref{prop:coal_con_cond}) that $\conti Z_e^{(u)}$ can also be extended continuously at time $1$.

A remark similar to \cref{rem:no_flow} holds for the family $\{\conti Z_e^{(u)}\}_{u\in(0,1)}$, that is, we can only guarantee that for almost every $\omega$, $\conti Z_e^{(u)}$ is a solution of \cref{eq:flow_SDE} for almost every $u\in(0,1)$. Denote by $(\conti L_e^{(u)})_{u\leq t < 1}$ the local time process at zero of the semimartingale $\conti Z_e^{(u)}$ on $[u,1)$. By convention, set $\conti L_e^{(u)}(t) = 0$ for $0\leq t <u$.

\begin{defn}\label{defn:cont_coal_proc}
	We call \emph{continuous coalescent-walk process} (driven by $\conti W_e$) the collection of stochastic processes $\left\{\conti Z^{(u)}_e\right\}_{u\in(0,1)}$. 
\end{defn}

We can now state a scaling limit result for finite-volume coalescent-walk processes. We first deal with the case of a single trajectory $\conti Z_n^{(u)}$. Then we consider a more general case in \cref{thm:discret_coal_conv_to_continuous}. The key tools for the proof (here omitted) of the following result are the absolute continuity results in \cref{sec:appendix} used together with \cref{thm:coal_con_uncond}.

\begin{prop}[{\cite[Proposition 4.8]{borga2020scaling}}]\label{prop:coal_con_cond}
	Fix $u\in (0,1)$.
	The stochastic process ${\conti Z}_e^{( u)}$ can be extended to a continuous function on $[0,1]$ by setting ${\conti Z}_e^{( u)}(1) = 0$, and we
	have the following joint convergence in the product space of continuous functions $\mathcal C([0,1],\R^2)\times \mathcal C([0,1],\R)\times \mathcal C([0,1),\R)$: 
	\begin{equation}
	\label{eq:coal_con_cond}
	\left({\conti W}_n,{\conti Z}^{( u)}_n,{\conti L}^{( u)}_n \right)
	\xrightarrow[n\to\infty]{d}
	\left(\conti W_e,\conti Z_e^{(u)},\conti L_e^{(u)}\right).
	\end{equation}
\end{prop}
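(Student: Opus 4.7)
The plan is to combine the unconditioned scaling limit (\cref{thm:coal_con_uncond}) with the absolute continuity results between conditioned and unconditioned tandem walks recalled in \cref{sec:appendix} (specifically the tilting formula of \cref{lem:AbsCont} and the local limit theorem \cref{lem:LLT} already invoked in the proof of \cref{lem:quenched_conv_walks2}). The idea is to transfer the unconditioned convergence to the conditioned setting on a sub-interval $[\eps,1-\eps]$ that stays bounded away from the endpoints $0$ and $1$, and then to send $\eps \to 0$ after controlling the behavior at the two boundaries.

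\emph{Step 1: Restricted convergence via absolute continuity.} Fix $0<\eps<\min(u,1-u)$. By \cref{lem:AbsCont}, for every bounded continuous functional $F$ on $\mathcal C([\eps,1-\eps],\R^2)\times\mathcal C([\eps,1-\eps],\R)\times\mathcal C([\eps,1-\eps],\R)$, the expectation
\[
\E\Big[F\big(\conti W_n,\conti Z_n^{(u)},\conti L_n^{(u)}\big)\big|_{[\eps,1-\eps]}\Big]
\]
can be rewritten, after centering the walk at time $\lfloor n\eps\rfloor$, as an expectation against the unconditioned walk $\overline{\bm W}$ weighted by a Radon–Nikodym factor of the form $\alpha_{n+2,\lfloor n\eps\rfloor}^{0,0}(\,\cdot\,)$. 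The analogous rewriting has already been performed in the proof of \cref{lem:quenched_conv_walks2}. Since $\wcp$ is defined locally in time (so $\conti Z_n^{(u)}$ and $\conti L_n^{(u)}$ on $[\eps,1-\eps]$ are measurable functions of $\overline{\bm W}$ on a slightly enlarged window), one may apply \cref{lem:LLT} to replace the discrete tilt $\alpha_{n+2,\lfloor n\eps\rfloor}^{0,0}$ by its continuous counterpart $\alpha_\eps$ up to a vanishing error. Combining this with \cref{thm:coal_con_uncond} (in the form of convergence of $(\overline{\conti W}_n,\overline{\conti Z}_n^{(u)},\overline{\conti L}_n^{(u)})$ to $(\conti W,\conti Z^{(u)},\conti L^{(u)})$) and with the characterization of $\conti W_e$ via the tilt $\alpha_\eps$ (\cref{prop:brown_ex}), we obtain the joint convergence
\[
\Big(\conti W_n,\conti Z_n^{(u)},\conti L_n^{(u)}\Big)\Big|_{[\eps,1-\eps]}
\xrightarrow[n\to\infty]{d}
\Big(\conti W_e,\widetilde{\conti Z}^{(u)},\widetilde{\conti L}^{(u)}\Big)\Big|_{[\eps,1-\eps]},
\]
where $\widetilde{\conti Z}^{(u)}$ solves the SDE \eqref{eq:flow_SDE} driven by $\conti W_e$ on $[u,1-\eps]$ and $\widetilde{\conti L}^{(u)}$ is its local time at $0$. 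By the uniqueness part of \cref{thm:ext_and_uni_excursion}, $\widetilde{\conti Z}^{(u)}$ coincides with $\conti Z_e^{(u)}$ on $[u,1-\eps]$ (and both vanish on $[0,u]$), and therefore $\widetilde{\conti L}^{(u)}=\conti L_e^{(u)}$ there.

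\emph{Step 2: Tightness near the endpoints and passage to the full interval.} To upgrade the convergence from $[\eps,1-\eps]$ to the full interval $[0,1]$ (resp.\ $[0,1)$ for the local time), we need tightness of the sequences $({\conti W}_n,{\conti Z}_n^{(u)})$ on $[0,\eps]$ and $[1-\eps,1]$, uniformly as $\eps\to 0$. For $\conti W_n$ this is standard since $\conti W_n \to \conti W_e$ in $\mathcal C([0,1],\R^2)$ by \cref{prop:DW}. For $\conti Z_n^{(u)}$, the key input is \cref{prop:trajectories_are_rw}: unconditionally, each trajectory of the discrete coalescent-walk process is a random walk with step distribution $\nu$, so modulus-of-continuity estimates transfer to the conditioned setting via the same absolute continuity argument of Step 1 (with $\eps$ replaced by a shrinking window $\eps_n\to 0$ slowly enough that $\alpha_{n+2,\lfloor n\eps_n\rfloor}^{0,0}$ remains controlled). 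This yields tightness of $\conti Z_n^{(u)}$ in $\mathcal C([0,1],\R)$, and combined with Step 1 and Prokhorov's theorem, the full joint convergence in \eqref{eq:coal_con_cond} follows.

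\emph{Step 3: Continuous extension of $\conti Z_e^{(u)}$ at $t=1$.} Because the discrete processes $\conti Z_n^{(u)}$ are continuous on all of $[0,1]$ and converge in distribution in $\mathcal C([0,1],\R)$ by Step 2, the limit $\conti Z_e^{(u)}$ automatically admits a continuous extension to $t=1$. To identify the value $\conti Z_e^{(u)}(1)=0$ one uses that $\conti W_e(1)=(0,0)$ together with the fact that the SDE \eqref{eq:flow_SDE} forces $|\conti Z_e^{(u)}(t)|$ to be bounded by the total variation of a one-dimensional projection of $\conti W_e$ on $[t,1]$, which tends to $0$ as $t\to 1$. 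Note that no such extension is claimed for $\conti L_e^{(u)}$: its total mass may diverge as $t\to 1^-$, which is why the convergence of local times only holds in $\mathcal C([0,1),\R)$.

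The main obstacle in this scheme is Step 2: one must verify that the absolute continuity estimates are strong enough (uniformly in $n$) to give modulus-of-continuity control on $\conti Z_n^{(u)}$ on windows of size $\eps$ that one is allowed to shrink with $n$. This essentially amounts to checking that the Radon–Nikodym density in \cref{lem:AbsCont} has uniformly bounded moments of sufficient order, a verification made possible by \cref{lem:LLT} and \cref{prop:trajectories_are_rw}.
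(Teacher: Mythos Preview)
Your Step 1 is correct and matches the paper's indicated approach: transferring the unconditioned convergence (\cref{thm:coal_con_uncond}) to the conditioned setting on $[\eps,1-\eps]$ via the absolute continuity machinery of \cref{sec:appendix} is exactly the right move.

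However, Steps 2 and 3 contain genuine gaps. In Step 3, the claim that $|\conti Z_e^{(u)}(t)|$ is bounded by the total variation of a projection of $\conti W_e$ on $[t,1]$ is false: Brownian paths have infinite total variation on every nondegenerate interval, so this bound is vacuous. In Step 2, your shrinking-window strategy is problematic because the Radon--Nikodym density $\alpha_\eps$ in \cref{lem:LLT} blows up like $\eps^{-5}$ as $\eps\to 0$; your concluding paragraph acknowledges this is ``the main obstacle'' but the claim that it reduces to moment bounds on the density is not substantiated, and in fact this route does not yield the needed uniform-in-$n$ modulus of continuity near $t=1$.

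The missing idea is a deterministic sandwich bound available precisely because $\bm W_n$ is a \emph{tandem} walk (confined to the quadrant): an easy induction using \cref{defn:distrib_incr_coal} shows that
\[
-\bm X_n(j)\;\le\;\bm Z_n^{(i)}(j)\;\le\;\bm Y_n(j)\qquad\text{for all }j\ge i.
\]
In rescaled form this reads $-\conti X_n(t)\le \conti Z_n^{(u)}(t)\le \conti Y_n(t)$, and since $\conti W_n\to\conti W_e$ in $\mathcal C([0,1],\R^2)$ with $\conti W_e(1)=0$, one gets $\sup_{t\in[1-\eps,1]}|\conti Z_n^{(u)}(t)|\le \sup_{t\in[1-\eps,1]}\|\conti W_n(t)\|$, which is small uniformly in $n$ for small $\eps$. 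Together with $\conti Z_n^{(u)}\equiv 0$ on $[0,u]\supset[0,\eps]$, this gives tightness in $\mathcal C([0,1],\R)$ and forces $\conti Z_e^{(u)}(1)=0$, replacing both your Step 2 and your Step 3 by a clean combinatorial argument.
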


\begin{rem}
	Note that the convergence of local times does not go up to time 1. This will be corrected later in \cref{lem:local_time_does_not_disappear} for a uniformly random starting point.
\end{rem}

We finish by stating a simple generalization of the previous result, where we consider several uniform starting points. This is the foundation upon which the next section is built.
\begin{thm}[{\cite[Theorem 4.10]{borga2020scaling}}]
	\label{thm:discret_coal_conv_to_continuous}
	Let $(\bm u_i)_{i\in\Z_{>0}}$ be a sequence of i.i.d.\ uniform random variables on $[0,1]$, independent of all other variables. We have the following joint convergence in the product space of continuous functions $\mathcal C([0,1],\R^2)\times (\mathcal C([0,1],\R)\times \mathcal C([0,1),\R))^{\mathbb Z_{>0}}$:
	\begin{equation}
	\left(
	{\conti W}_n,\left({\conti Z}^{(\bm u_i)}_n, {\conti L}^{(\bm u_i)}_n\right)_{i\in\Z_{>0}}
	\right)
	\xrightarrow[n\to\infty]{d}
	\left( \conti {W}_e,\left({\conti Z}^{(\bm u_i)}_e, {\conti L}^{(\bm u_i)}_e\right)_{i\in\Z_{>0}}
	\right).
	\end{equation}
\end{thm}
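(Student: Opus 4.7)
Since the product topology on $\mathcal C([0,1],\R^2)\times (\mathcal C([0,1],\R)\times \mathcal C([0,1),\R))^{\mathbb Z_{>0}}$ is metrizable and a sequence of random elements converges in distribution in the product if and only if every finite-dimensional projection converges, it suffices to fix an integer $k\geq 1$ and establish the joint convergence
\begin{equation}\label{eq:plan_fin_dim}
\left({\conti W}_n,\left({\conti Z}^{(\bm u_i)}_n,{\conti L}^{(\bm u_i)}_n\right)_{i\in[k]}\right)
\xrightarrow[n\to\infty]{d}
\left(\conti W_e,\left({\conti Z}^{(\bm u_i)}_e,{\conti L}^{(\bm u_i)}_e\right)_{i\in[k]}\right).
\end{equation}

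The plan is to combine tightness with uniqueness for the SDE \eqref{eq:flow_SDE}. First, by Proposition~\ref{prop:coal_con_cond} and independence of the $\bm u_i$ from the walks, each component of the left-hand side of \eqref{eq:plan_fin_dim} is a tight sequence; since a finite product of tight families is tight, the joint sequence is tight. By Prokhorov's theorem, it is enough to identify the distribution of all subsequential limits. Along a convergent subsequence, invoke Skorokhod's representation theorem to realize the convergence almost surely on a common probability space; denote the limit by $\bigl(\conti W, (\widetilde{\conti Z}^{(\bm u_i)}, \widetilde{\conti L}^{(\bm u_i)})_{i\in[k]}\bigr)$. The marginal in $\conti W$ is $\conti W_e$, and by Proposition~\ref{prop:coal_con_cond} applied coordinate-wise (after conditioning on the values of the $\bm u_i$, which are independent of the walks), each pair $(\widetilde{\conti Z}^{(\bm u_i)},\widetilde{\conti L}^{(\bm u_i)})$ has the same law as $(\conti Z_e^{(\bm u_i)},\conti L_e^{(\bm u_i)})$. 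The non-trivial content is the joint law.

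The next step is to show that for each $i\in [k]$, the process $\widetilde{\conti Z}^{(\bm u_i)}$ is a strong solution of the SDE \eqref{eq:flow_SDE} driven by the \emph{same} process $\conti W$, started at time $\bm u_i$. This is the heart of the argument and mimics the second part of the proof of Theorem~\ref{thm:coal_con_uncond}: fix $i$ and a rational $\eps>0$, and consider an open interval $(\bm a,\bm b)\subset[\bm u_i,1)$ on which $\widetilde{\conti Z}^{(\bm u_i)}>\eps$. By almost sure convergence, for all $n$ large enough ${\conti Z}^{(\bm u_i)}_n>\eps/2$ on $(\bm a,\bm b)$, so the discrete construction of $\wcp$ in Definition~\ref{defn:distrib_incr_coal} implies that ${\conti Z}^{(\bm u_i)}_n-{\conti Y}_n$ is constant on $(\bm a+1/n,\bm b)$; passing to the limit shows $\widetilde{\conti Z}^{(\bm u_i)}-\conti Y$ is constant there. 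The symmetric argument for negative values, combined with stochastic dominated convergence as $\eps\downarrow 0$ and the fact that $\widetilde{\conti Z}^{(\bm u_i)}$ is a (time-changed) semimartingale whose local time at zero has vanishing $d\widetilde{\conti Z}^{(\bm u_i)}$-integral, yields that $\widetilde{\conti Z}^{(\bm u_i)}$ satisfies \eqref{eq:flow_SDE} on $[\bm u_i,r]$ for every $r<1$. The stochastic integrals are well-defined with respect to the enlarged complete filtration $(\mathcal G_t)_{t\in[0,1]}$ generated by $\conti W$ together with all of $\widetilde{\conti Z}^{(\bm u_1)},\dots,\widetilde{\conti Z}^{(\bm u_k)}$.

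The main obstacle, and the delicate point, is to ensure that $\conti W$ remains a Brownian excursion of correlation $-1/2$ with respect to this enlarged filtration $(\mathcal G_t)_t$, so that Theorem~\ref{thm:ext_and_uni_excursion} (item 3) can be invoked to conclude $\widetilde{\conti Z}^{(\bm u_i)} = \conti Z_e^{(\bm u_i)}$ almost surely, simultaneously for every $i\in[k]$. This is proved exactly as in Theorem~\ref{thm:coal_con_uncond}: for $t\in[0,1)$, $s\geq 0$ and $n$ fixed, the increment ${\conti W}_n(t+s)-{\conti W}_n(t)$ is independent (up to a shift of order $1/n$) of $\sigma\bigl({\conti W}_n(r),({\conti Z}^{(\bm u_i)}_n(r))_{i\in[k]}:r\leq t\bigr)$ by the Markov property of the walk and the definition of $\wcp$, and passing to the limit using the almost sure convergence of Skorokhod's coupling shows that $\conti W(t+s)-\conti W(t)$ is independent of $\mathcal G_t$. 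This identifies the joint law of the subsequential limit with that of $\bigl(\conti W_e, (\conti Z_e^{(\bm u_i)},\conti L_e^{(\bm u_i)})_{i\in[k]}\bigr)$ and completes the proof of \eqref{eq:plan_fin_dim}, and hence of the theorem.
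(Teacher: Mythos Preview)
Your overall architecture (reduce to finitely many indices, tightness, Skorokhod, identify the subsequential limit via the SDE) is right, but the identification step contains a genuine error. You assert that for the walk $\bm W_n$ uniform in $\mathcal W_n$, the increment ${\conti W}_n(t+s)-{\conti W}_n(t)$ is independent of the past $\sigma\bigl({\conti W}_n(r),({\conti Z}^{(\bm u_i)}_n(r))_{i\in[k]}:r\leq t\bigr)$. This is false: $\bm W_n$ is a \emph{conditioned} walk (it must stay in the quadrant and end on the $x$-axis), and its increments are not independent of the past. The argument you are copying from Theorem~\ref{thm:coal_con_uncond} works there only because $\overline{\bm W}$ is an unconditioned random walk. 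Worse, the conclusion you are aiming for --- that $\conti W_e(t+s)-\conti W_e(t)$ is independent of $\mathcal G_t$ --- would say the two-dimensional Brownian excursion has independent increments, which is false. So the filtration-enlargement step, as written, cannot succeed.

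The paper's route is both shorter and avoids this trap. Proposition~\ref{prop:coal_con_cond} already gives, for every fixed $u\in(0,1)$, the joint convergence of $({\conti W}_n,{\conti Z}^{(u)}_n,{\conti L}^{(u)}_n)$ to $(\conti W_e,\conti Z_e^{(u)},\conti L_e^{(u)})$, and crucially the limit $(\conti Z_e^{(u)},\conti L_e^{(u)})$ is the \emph{strong} solution, i.e.\ a deterministic measurable function $G(\conti W_e,u)$ (Theorem~\ref{thm:ext_and_uni_excursion}). After tightness and passing to a subsequential limit $(\conti W,(\widetilde{\conti Z}^{(i)},\widetilde{\conti L}^{(i)})_i,(\bm u_i)_i)$, the single-$u$ result (integrated over $\bm u_i$ by dominated convergence) forces each marginal $(\conti W,\bm u_i,\widetilde{\conti Z}^{(i)},\widetilde{\conti L}^{(i)})$ to have the law of $(\conti W_e,\bm u_i,G(\conti W_e,\bm u_i))$, hence $(\widetilde{\conti Z}^{(i)},\widetilde{\conti L}^{(i)})=G(\conti W,\bm u_i)$ almost surely for each $i$. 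Since the $\bm u_i$ are i.i.d.\ uniform and independent of $\conti W$, this pins down the full joint law. If you prefer to salvage your filtration argument, you must first pass through the absolute continuity of Lemmas~\ref{lem:AbsCont}--\ref{lem:LLT} to reduce to the \emph{unconditioned} walk on $[\eps,1-\eps]$; there your independence claim is valid and a multi-point version of Theorem~\ref{thm:coal_con_uncond} goes through, after which you transfer back and let $\eps\to 0$.
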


\subsection{Scaling limits of Baxter permutations and bipolar orientations}
\label{sec:final}

This section is split in two parts: in the first one, we construct the Baxter permuton (see \cref{defn:Baxter_perm}) from the continuous coalescent-walk process $\conti Z_e= \{\conti Z_e^{(u)}\}_{u\in [0,1]}$ introduced in \cref{defn:cont_coal_proc}, and we show that it is the limit of uniform Baxter permutations (see \cref{thm:permuton}). We also show that this convergence holds jointly with the one for the coalescent-walk process (proved in \cref{thm:discret_coal_conv_to_continuous}). Building on these results, in the second part, we prove a joint scaling limit convergence result for all the objects considered in the commutative diagram in \cref{thm:diagram_commutes}, i.e.\ tandem walks, Baxter permutations, bipolar orientations and coalescent-walk processes (see \cref{thm:joint_scaling_limits}). In both cases, a key ingredient is the convergence of the discrete coalescent-walk process to its continuous counterpart (\cref{thm:discret_coal_conv_to_continuous}).

\subsubsection{The permuton limit of Baxter permutations}

We now introduce the candidate limiting permuton for Baxter permutations. Its definition is rather straightforward by analogy with the discrete case (see \cref{sect:from_coal_to_perm}). We consider the continuous coalescent-walk process ${\conti Z_e} = \{\conti Z_e^{(t)}\}_{t\in [0,1]}$. Actually $\conti Z_e^{(t)}$ was not defined for $t\in \{0,1\}$ (see \cref{defn:cont_coal_proc}). As what happens on a negligible subset of $[0,1]$ is irrelevant to the arguments to come, this causes no problems.

We first define a random binary relation $\leq_{\conti Z_e}$ on $[0,1]^2$ as follows (this is an analogue of the definition given in \cref{eq:coal_to_perm} page~\pageref{eq:coal_to_perm} in the discrete case):
\begin{equation}\label{eq:cont_coal_to_perm}
\begin{cases}t\leq_{\conti Z_e} t &\text{ for every }t\in [0,1],\\
t\leq_{\conti Z_e} s &\text{ for every }0\leq t<s \leq 1\text{ such that }\ \conti Z_e^{(t)}(s)<0,\\
s\leq_{\conti Z_e} t,&\text{ for every }0\leq t<s \leq 1\text{ such that }\ {\conti Z}_e^{(t)}(s)\geq0.\end{cases}
\end{equation}
Note that the map $(\omega, t,s)\mapsto \idf_{t\leq_{\conti Z_e} s}$ is measurable.

\begin{prop}[{\cite[Proposition 5.1]{borga2020scaling}}]\label{prop:total_order}
	The relation $\leq_{\conti Z_e}$ is antisymmetric and reflexive. Moreover, there exists a random set $\bm A \subset [0,1]^2$ of a.s.\ zero Lebesgue measure, i.e.\ $\P(\Leb(\bm A)=0)=1$, such that the restriction of $\leq_{\conti Z_e}$ to $[0,1]^2\setminus \bm A$ is transitive almost surely.
\end{prop}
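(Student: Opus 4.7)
The plan is to handle the three properties in increasing order of difficulty. Reflexivity is immediate: the first clause of \cref{eq:cont_coal_to_perm} declares $t\leq_{\conti Z_e} t$ for every $t\in[0,1]$. For antisymmetry, observe that when $t\neq s$, say $t<s$, the second and third clauses are mutually exclusive and exhaustive: exactly one of $\conti Z_e^{(t)}(s)<0$ or $\conti Z_e^{(t)}(s)\geq 0$ holds, so we have exactly one of $t\leq_{\conti Z_e}s$ or $s\leq_{\conti Z_e}t$. This rules out $t\leq_{\conti Z_e}s$ and $s\leq_{\conti Z_e}t$ simultaneously when $t\neq s$.

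For transitivity, the idea is to transfer the discrete result (total order via $\cpbp$) to the continuum using the convergence result \cref{thm:discret_coal_conv_to_continuous}. First I would consider an i.i.d.\ sequence $(\bm u_i)_{i\in\Z_{>0}}$ of uniform random variables on $[0,1]$ independent of $\conti W_e$, and show that for any fixed triple $i<j<k$ of indices, almost surely
\[
\bm u_i\leq_{\conti Z_e}\bm u_j \text{ and } \bm u_j\leq_{\conti Z_e}\bm u_k \implies \bm u_i\leq_{\conti Z_e}\bm u_k.
\]
The reason is as follows: by \cref{thm:discret_coal_conv_to_continuous} we have the joint convergence $({\conti Z}_n^{(\bm u_\ell)})_{\ell=1,2,3}\xrightarrow{d}(\conti Z_e^{(\bm u_\ell)})_{\ell=1,2,3}$, so by Skorokhod's representation we may assume a.s.\ uniform convergence. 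In the discrete world, \cref{prop:tot_ord} asserts that $\leq_{\bm Z_n}$ is a total order on $[n]$, hence transitivity holds for the (a.s.\ distinct for large $n$) indices $\lceil n\bm u_i\rceil<\lceil n\bm u_j\rceil<\lceil n\bm u_k\rceil$. Transitivity then passes to the limit provided the limiting values $\conti Z_e^{(\bm u_\ell)}(\bm u_m)$ (for the three ordered pairs $(\ell,m)\in\{(i,j),(j,k),(i,k)\}$) avoid the critical value $0$, since then the discrete signs eventually match the continuous signs.

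The key technical ingredient is therefore the claim that $\P(\conti Z_e^{(\bm u_\ell)}(\bm u_m)=0)=0$ for $\ell\neq m$. Conditionally on $\{\bm u_\ell=u,\bm u_m=v\}$ with $u<v<1$, it follows from \cref{thm:ext_and_uni_excursion} that $\conti Z_e^{(u)}$ is a well-defined continuous semimartingale on $[u,1)$; by an absolute continuity argument with respect to the unconditioned coalescent-walk process (driven by the two-dimensional Brownian motion) and \cref{prop:trajectories_are_rw}, the one-dimensional marginal $\conti Z_e^{(u)}(v)$ has a density with respect to Lebesgue measure, so it hits the single point $0$ with probability zero. Integrating over $(u,v)$ gives the claim. (This is the step I expect to be the most delicate, as it requires care with the conditioning defining $\conti W_e$; the absolute continuity tools in Appendix/\cref{sec:appendix} should make it rigorous.)

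Once transitivity is established for a.e.\ triple of independent uniforms, it extends to a.e.\ triple in $[0,1]^3$ via Fubini: setting
\[
\bm T\coloneqq\Big\{(t_1,t_2,t_3)\in[0,1]^3 : \text{transitivity fails at }(t_1,t_2,t_3)\Big\},
\]
the previous step gives $\E[\Leb_3(\bm T)]=0$, so $\Leb_3(\bm T)=0$ almost surely. To produce the two-dimensional exceptional set $\bm A$ required by the statement, define
\[
\bm B\coloneqq\Big\{s\in[0,1] : \Leb_2\big(\{(r,t)\in[0,1]^2 : (r,s,t)\in\bm T\}\big)>0\Big\}
\]
and $\bm A\coloneqq(\bm B\times[0,1])\cup([0,1]\times\bm B)$. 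Another application of Fubini yields $\Leb_1(\bm B)=0$ a.s., hence $\Leb_2(\bm A)=0$ a.s.. By construction, whenever $(r,s),(s,t)\notin\bm A$ we have $s\notin\bm B$, and for a.e.\ such $(r,t)$ the triple $(r,s,t)\notin\bm T$, so the implication $r\leq_{\conti Z_e}s,\;s\leq_{\conti Z_e}t\Rightarrow r\leq_{\conti Z_e}t$ holds on the complement of $\bm A$, concluding the proof.
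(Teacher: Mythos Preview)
The thesis does not reproduce a proof here; the proposition is cited from \cite[Proposition 5.1]{borga2020scaling}, so there is no in-text argument to compare against. Judged on its own, your treatment of reflexivity and antisymmetry is correct, and the substantive part of the transitivity argument --- carrying the discrete total order of \cref{prop:tot_ord} to the limit through \cref{thm:discret_coal_conv_to_continuous} and Skorokhod's theorem, once one knows that $\P\big(\conti Z_e^{(\bm u_i)}(\bm u_j)=0\big)=0$ for $i\neq j$ --- is sound and delivers $\Leb_3(\bm T)=0$ almost surely. This is the real content of the proposition and is presumably how the original proof proceeds.

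Your final packaging step, however, has a genuine gap. For $s\notin\bm B$ you obtain that the slice $\{(r,t):(r,s,t)\in\bm T\}$ is Lebesgue-null; this tells you transitivity holds for \emph{almost every} $(r,t)$, not for \emph{every} $r,t\notin\bm B$. You are silently promoting ``a.e.\ $(r,t)$'' to ``every $(r,t)$ in a fixed co-null set,'' and no pure Fubini argument can achieve this: for instance $\bm T=\{(r,s,t)\in[0,1]^3:r+s+t=1\}$ has zero three-dimensional measure, yet for \emph{any} null $\bm A\subset[0,1]^2$ one finds (by the same measure-zero reasoning) full-measure-many triples in $\bm T$ with $(r,s),(s,t)\notin\bm A$. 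To actually construct $\bm A$, one must use the structure of the coalescent-walk process rather than abstract measure theory: the relevant null set of pairs $t_1<t_2$ is the union of those with $\conti Z_e^{(t_1)}(t_2)=0$ and those where the trajectories $\conti Z_e^{(t_1)},\conti Z_e^{(t_2)}$ strictly cross on $[t_2,1)$ --- the second piece is null because discrete trajectories never cross (\cref{def:discrete_coal_process}) and this passes to the uniform limit. A direct sign-tracking case check then shows that any $3$-cycle among $t_1<t_2<t_3$ forces one of its three pairs into this set.
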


We now define a random function that encodes the total order $\leq_{\conti Z_e}$:
\begin{multline}\label{eq:level_function}
\varphi_{\conti Z_e}(t)\coloneqq\Leb\left( \big\{x\in[0,1]|x \leq_{\conti Z_e} t\big\}\right)\\
=\Leb\left( \big\{x\in[0,t)|\conti Z_e^{(x)}(t)<0\big\} \cup \big\{x\in[t,1]|\conti Z_e^{(t)}(x)\geq0\big\} \right),
\end{multline}
where here $\Leb(\cdot)$ denotes the one-dimensional Lebesgue measure. Note that since the mapping $(\omega, t,s)\mapsto \idf_{t\leq_{\conti Z_e} s}$ is measurable, the mapping $(\omega, t)\mapsto \varphi_{\conti Z_e}(t)$ is measurable too.

\begin{obs}
	Note that the function defined in \cref{eq:level_function} is inspired by the following: if $\sigma$ is the Baxter permutation associated with a coalescent-walk process $Z=\{Z^{(t)}\}_{t\in[n]}\in\mathscr{C}$, then
	$\sigma(i)=\#\{j\in[n]|j\leq_Z i\}$.	
\end{obs}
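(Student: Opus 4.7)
The plan is to unwind the definitions from Section~\ref{sect:from_coal_to_perm}. By construction (see the definition of $\cpbp$), the permutation $\sigma = \cpbp(Z)$ is characterized by the equivalence $\sigma(i) \leq \sigma(j) \iff i \leq_Z j$ for all $i,j \in [n]$. Since Proposition~\ref{prop:tot_ord} guarantees that $\leq_Z$ is a genuine total order on the finite set $[n]$, the permutation $\sigma$ is nothing other than the \emph{ranking function} associated with this total order.

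First I would observe that any total order $\preccurlyeq$ on $[n]$ is encoded by a unique permutation $\tau \in \mathcal S_n$ satisfying $\tau(i) \leq \tau(j) \iff i \preccurlyeq j$, and that $\tau$ is given explicitly by $\tau(i) = \#\{j \in [n] \mid j \preccurlyeq i\}$. Indeed, the right-hand side assigns to each $i$ its rank in the total order, hence takes values in $[n]$, is injective (because $\preccurlyeq$ is antisymmetric), and preserves the order in the claimed sense.

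Applying this elementary fact to $\preccurlyeq {}={} \leq_Z$ and $\tau = \sigma = \cpbp(Z)$ gives immediately $\sigma(i) = \#\{j \in [n] \mid j \leq_Z i\}$, which is the claimed formula.

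There is no real obstacle here: the statement is essentially a bookkeeping reformulation that makes the analogy with~\eqref{eq:level_function} transparent, since in the continuous case $\Leb$ plays the role of the counting measure $\#$, and splitting the set $\{x \in [0,1] \mid x \leq_{\conti Z_e} t\}$ according to whether $x<t$ or $x\geq t$ mirrors the splitting implicit in the three cases of the discrete definition \eqref{eq:coal_to_perm}. The only point to double-check is that one uses the correct convention at $j=i$ (which contributes $1$ to the count thanks to reflexivity $i \leq_Z i$), matching the inclusion of the endpoint $x=t$ in the continuous formula.
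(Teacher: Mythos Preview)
Your proof is correct. The paper does not give a formal proof of this observation (it is stated as a motivational remark with no accompanying argument), but your reasoning is exactly the natural one: since $\sigma=\cpbp(Z)$ is \emph{defined} by the order-isomorphism $\sigma(i)\leq\sigma(j)\iff i\leq_Z j$ and $\leq_Z$ is a total order on $[n]$ by Proposition~\ref{prop:tot_ord}, $\sigma(i)$ must be the rank of $i$ in that order, i.e.\ $\#\{j\in[n]\mid j\leq_Z i\}$.
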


\begin{defn}\label{defn:Baxter_perm}
	The \emph{Baxter permuton} $\bm \mu_B$ is the push-forward of the Lebesgue measure on $[0,1]$ via the mapping $(\Id,\varphi_{\conti Z_e})$, that is
	\begin{equation}
	\label{eq:def_permuton}
	\bm \mu_B(\cdot)\coloneqq(\Id,\varphi_{\conti Z_e})_{*}\Leb (\cdot)= \Leb\left(\{t\in[0,1]|(t,\varphi_{\conti Z_e}(t))\in \cdot \,\}\right).
	\end{equation} 
\end{defn}

The Baxter permuton $\bm \mu_B$ is a random measure on the unit square $[0,1]^2$ and the terminology is justified by the following lemma, that also states some results useful for the proof of \cref{thm:permuton}. 

\begin{lem}[{\cite[Lemma 5.5]{borga2020scaling}}]\label{lem:boundary} The following claims hold:
	\begin{enumerate}
		\item The random measure $\bm \mu_B$ is a.s.\ a permuton.
		\item Almost surely, for almost every $t<s\in[0,1]$, 
		either
		$\conti Z_e^{(t)}(s)>0$ and $\varphi_{\conti Z_e}(s)<\varphi_{\conti Z_e}(t)$, or  $\conti Z_e^{(t)}(s)<0$ and $\varphi_{\conti Z_e}(s)>\varphi_{\conti Z_e}(t)$.
	\end{enumerate}
\end{lem}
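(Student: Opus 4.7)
My plan is to establish Claim 1 and Claim 2 in sequence, using the a.s.\ total order structure from \cref{prop:total_order} combined with measure-theoretic arguments about the level sets of $\varphi_{\conti Z_e}$.

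For Claim 1, the first marginal of $\bm \mu_B$ is automatically $\Leb$ on $[0,1]$ since $\bm \mu_B = (\Id, \varphi_{\conti Z_e})_* \Leb$. The substantial part is the uniformity of the second marginal, which is equivalent to $\varphi_{\conti Z_e}$ being a.s.\ Lebesgue measure-preserving. I would work on a full-probability event $\Omega_0$ on which (i) $\leq_{\conti Z_e}$ is a total order on $[0,1] \setminus \bm A$ with $\Leb(\bm A) = 0$ (from \cref{prop:total_order}), and (ii) for Lebesgue-a.e.\ $t$, the zero set $\{s \in [t,1] : \conti Z_e^{(t)}(s) = 0\}$ has Lebesgue measure zero. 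Property (ii) is a consequence of Fubini combined with \cref{rem:solution_of_SDE_are_BM} (each $\conti Z_e^{(t)}$ is a Brownian-type semimartingale whose zero set a.s.\ has Lebesgue measure zero). On $\Omega_0$, I would verify that $\varphi_{\conti Z_e}$ is order-preserving with respect to $\leq_{\conti Z_e}$: if $t \leq_{\conti Z_e} s$, transitivity gives $\{x : x \leq_{\conti Z_e} t\} \subseteq \{x : x \leq_{\conti Z_e} s\}$ modulo $\bm A$, hence $\varphi_{\conti Z_e}(t) \leq \varphi_{\conti Z_e}(s)$. Crucially, this inequality is \emph{strict} whenever $t \neq s$, because the intermediate set $\{x : t \leq_{\conti Z_e} x \leq_{\conti Z_e} s, x \neq t, s\}$ has positive Lebesgue measure, by a short argument exploiting the oscillations of $\conti Z_e^{(t)}$ between strictly positive and strictly negative values in a neighborhood of any hitting time of zero.

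Granted strict order-preservation, a level-set argument then shows that $\varphi_{\conti Z_e}$ is measure-preserving: for $r \in [0,1]$, the set $\{t : \varphi_{\conti Z_e}(t) \leq r\}$ is, modulo a null set, an initial segment of $\leq_{\conti Z_e}$, i.e.\ of the form $\{x : x \leq_{\conti Z_e} t_r\}$ for some suitable $t_r$ (obtained via approximation using density of the image of $\varphi_{\conti Z_e}$ in $[0,1]$), and hence has Lebesgue measure equal to $\varphi_{\conti Z_e}(t_r) = r$. This yields uniformity of the second marginal and concludes Claim 1. Claim 2 then follows essentially immediately: for almost every pair $(t,s)$ with $t < s$, property (ii) gives $\conti Z_e^{(t)}(s) \neq 0$, and the sign of $\conti Z_e^{(t)}(s)$ directly determines (via the definition of $\leq_{\conti Z_e}$ and the strict order-preservation of $\varphi_{\conti Z_e}$) the direction of the strict inequality between $\varphi_{\conti Z_e}(t)$ and $\varphi_{\conti Z_e}(s)$.

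The main obstacle is the strict order-preservation step, namely promoting $\varphi_{\conti Z_e}(t) \leq \varphi_{\conti Z_e}(s)$ to a strict inequality when $t \neq s$. Although heuristically clear from the Brownian-like nature of the trajectories $\conti Z_e^{(u)}$, a rigorous argument needs to exhibit quantitatively a positive-measure set of intermediate points $x$ separating $t$ and $s$ in the order, which calls on the strong Markov property and the fact that Brownian motion spends positive time in both half-lines on any interval containing a visit to zero. The density/approximation step required for the level-set identification is a secondary technical point that should be handled by monotone convergence.
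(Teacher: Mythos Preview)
The manuscript does not actually contain a proof of this lemma; it is quoted from the external paper~\cite{borga2020scaling}. So there is no in-text argument to compare your proposal against, and I will assess your outline on its own merits.

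Your overall strategy is correct. The reduction of both claims to strict order-preservation of $\varphi_{\conti Z_e}$ is the right organizing principle, and your oscillation argument for that step can be made rigorous cleanly: for Lebesgue-a.e.\ fixed pair $t<s$ with, say, $\conti Z_e^{(t)}(s)>0$, continuity keeps $\conti Z_e^{(t)}>0$ on some $[s,s+\epsilon]$, while by the absolute-continuity results of \cref{sec:appendix} the process $\conti Z_e^{(s)}$ on $[s,s+\epsilon]$ is absolutely continuous with respect to a one-dimensional Brownian motion started at $0$ (\cref{rem:solution_of_SDE_are_BM}), hence a.s.\ spends positive Lebesgue time strictly negative there. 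Any such $x$ satisfies $s<_{\conti Z_e} x <_{\conti Z_e} t$, producing the positive-measure intermediate set you need. Fubini then transfers this from fixed $(t,s)$ to almost every pair, and Claim~2 follows immediately as you say.

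For Claim~1, your level-set argument works, but the ``density of the image'' step should be justified rather than assumed. Once $\varphi_{\conti Z_e}$ is essentially injective (no atoms in the pushforward), the distribution function $F(r)=\Leb(\{\varphi_{\conti Z_e}\leq r\})$ is continuous; combined with the elementary sandwich $F(r^-)\leq r \leq F(r)$ at range points (which follows from order-preservation and the definition of $\varphi_{\conti Z_e}$), continuity forces $F\equiv\Id$ on the range, and then any gap $(a,b)$ would force $a=F(a)=F(b^-)=b$, a contradiction. Alternatively, Claim~1 follows from \cref{prop:total_order} alone without strict monotonicity, via a short moment computation: Fubini and totality of $\leq_{\conti Z_e}$ give $\int_0^1 \varphi_{\conti Z_e}(t)^k\,dt = 1-k\int_0^1 \varphi_{\conti Z_e}(t)^k\,dt$, hence $\int \varphi_{\conti Z_e}^k = 1/(k+1)$ for all $k\geq 1$, identifying the pushforward as Lebesgue. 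Either route closes the proof.
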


We can now prove that Baxter permutations converge in distribution to the Baxter permuton. Since it will be useful in the next section, we also show that this convergence is joint with the convergence of the corresponding tandem walk and the corresponding coalescent-walk process.

We reuse the notation of \cref{sec:cond_conv}. In particular, 
${\bm W}_n$ is a uniform element of the space of tandem walks $\mathcal W_n$, ${\bm Z}_n=\wcp({\bm W}_n)$ is the associated uniform coalescent-walk process, and ${\bm \sigma}_n = \cpbp({\bm Z}_n)$ is the associated uniform Baxter permutation.

\begin{thm}\label{thm:permuton}
	Jointly with the convergences in \cref{thm:discret_coal_conv_to_continuous}, we have that $\mu_{\bm \sigma_n} \xrightarrow{d} \bm \mu_B$.
\end{thm}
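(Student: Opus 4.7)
The plan is to work entirely through the coalescent-walk process picture, exploiting the fact that both sides of the claimed convergence admit a clean description in terms of signs of coalescing trajectories. By Theorem~\ref{thm:randompermutonthm}, to prove $\mu_{\bm\sigma_n}\xrightarrow{d}\bm\mu_B$ it suffices to check that, for every fixed $k\in\Z_{>0}$, the random pattern $\pat_{\bm I_{n,k}}(\bm\sigma_n)$ induced by a uniform $k$-subset $\bm I_{n,k}\subseteq[n]$ converges in distribution to $\Perm_k(\bm\mu_B)$. Equivalently, sampling $\bm u_1,\dots,\bm u_k$ i.i.d.\ uniform on $[0,1]$ (independent of everything) and setting $\bm j_i=\lceil n\bm u_i\rceil$, one needs to show that $\pat_{\{\bm j_1,\dots,\bm j_k\}}(\bm\sigma_n)$ converges jointly, together with the data of Theorem~\ref{thm:discret_coal_conv_to_continuous}, to the permutation pattern induced by $k$ i.i.d.\ samples from~$\bm\mu_B$.

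The two sides of the desired convergence admit parallel coalescent-walk descriptions. On the discrete side, Proposition~\ref{prop:patterns} tells us that $\pat_{\{\bm j_1,\dots,\bm j_k\}}(\bm\sigma_n)$ is entirely determined, after reordering so that $\bm j_{(1)}<\dots<\bm j_{(k)}$, by the signs of the quantities $\bm Z_n^{(\bm j_{(\ell)})}(\bm j_{(s)})=\sqrt{2n}\,\conti Z_n^{(\bm u_{(\ell)})}(\bm u_{(s)}) + o(1)$ for $1\leq\ell<s\leq k$. On the continuous side, $\bm\mu_B=(\Id,\varphi_{\conti Z_e})_*\Leb$, so $\Perm_k(\bm\mu_B)$ is the permutation which, after $x$-reordering $\bm u_{(1)}<\dots<\bm u_{(k)}$, records the relative order of $\varphi_{\conti Z_e}(\bm u_{(1)}),\dots,\varphi_{\conti Z_e}(\bm u_{(k)})$; by Lemma~\ref{lem:boundary}(2), this order is in turn encoded, almost surely at almost every pair of times, by the signs $\sgn\conti Z_e^{(\bm u_{(\ell)})}(\bm u_{(s)})$. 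Hence both patterns read off the same information from the coalescent-walk processes, with the two reading maps agreeing on the event $\bigcap_{\ell<s}\{\conti Z_e^{(\bm u_{(\ell)})}(\bm u_{(s)})\neq 0\}$.

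I would then invoke Theorem~\ref{thm:discret_coal_conv_to_continuous}, which gives the joint convergence of $(\conti W_n,(\conti Z_n^{(\bm u_i)},\conti L_n^{(\bm u_i)})_{i\in\Z_{>0}})$ to $(\conti W_e,(\conti Z_e^{(\bm u_i)},\conti L_e^{(\bm u_i)})_{i\in\Z_{>0}})$ in the product topology on $\mathcal C([0,1],\R^2)\times(\mathcal C([0,1],\R)\times\mathcal C([0,1),\R))^{\Z_{>0}}$. By Skorokhod's representation theorem one may realize this convergence almost surely on a common probability space, so that $\conti Z_n^{(\bm u_i)}\to\conti Z_e^{(\bm u_i)}$ uniformly on $[0,1]$ for every $i$. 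Evaluating at $\bm u_j$ gives $\conti Z_n^{(\bm u_i)}(\bm u_j)\to\conti Z_e^{(\bm u_i)}(\bm u_j)$ a.s.\ for every $i,j$; using Lemma~\ref{lem:boundary}(2), the event that any of the limit values $\conti Z_e^{(\bm u_{(\ell)})}(\bm u_{(s)})$ equals zero has probability zero (since $(\bm u_i)$ is independent of $\conti W_e$, Fubini reduces this to the a.s.-a.e. statement of Lemma~\ref{lem:boundary}). On the complementary full-probability event, the signs stabilize, so $\pat_{\{\bm j_1,\dots,\bm j_k\}}(\bm\sigma_n)$ equals $\Perm_k(\bm\mu_B)$ for all large $n$; the same argument also shows $\bm j_{(1)}<\dots<\bm j_{(k)}$ are eventually distinct.

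The jointness with the convergences of Theorem~\ref{thm:discret_coal_conv_to_continuous} comes essentially for free: the entire argument is run inside the Skorokhod coupling of Theorem~\ref{thm:discret_coal_conv_to_continuous}, and the permuton $\mu_{\bm\sigma_n}$ is deterministically recovered from the pattern statistics via Proposition~\ref{Prop:CaracterisationLoiPermuton}. The main technical obstacle I anticipate is the one already singled out: permuton convergence reduces to checking signs of $\conti Z_e^{(t)}(s)$, but the map $\conti W_e\mapsto\sgn\conti Z_e^{(t)}(s)$ is not continuous, so one must verify that the zero set of $\conti Z_e^{(t)}$ is avoided by the sampled times $\bm u_1,\dots,\bm u_k$ almost surely. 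This is precisely what Lemma~\ref{lem:boundary}(2) delivers, and combining it with the joint a.s. Skorokhod convergence closes the argument.
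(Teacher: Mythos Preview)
Your core argument is exactly the paper's: read patterns off the signs of coalescent-walk trajectories via Proposition~\ref{prop:patterns}, pass to the limit inside a Skorokhod coupling of Theorem~\ref{thm:discret_coal_conv_to_continuous}, and use Lemma~\ref{lem:boundary}(2) to handle the discontinuity of $\sgn$ at zero. That part is correct and matches the paper essentially line for line.

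The gap is in the last paragraph, where you claim the \emph{joint} convergence with the data of Theorem~\ref{thm:discret_coal_conv_to_continuous} ``comes essentially for free''. It does not. Your opening reduction via Theorem~\ref{thm:randompermutonthm} yields only the \emph{marginal} distributional convergence $\mu_{\bm\sigma_n}\xrightarrow{d}\bm\mu_B$; that theorem says nothing about joint limits with auxiliary random variables. Your appeal to Proposition~\ref{Prop:CaracterisationLoiPermuton} is the wrong tool: it states that the \emph{law} of a random permuton is determined by its $\Perm_k$ marginals, not that a permuton can be recovered as a continuous (or even well-behaved) function of the pattern data, and hence it does not upgrade pattern convergence in the coupling to permuton convergence in the coupling.

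What is actually needed is the quantitative bound of Lemma~\ref{lem:subpermapproxrandompermuton}: $d_\square(\mu_{\bm\rho_n^k},\mu_{\bm\sigma_n})\leq 16k^{-1/4}$ and $d_\square(\mu_{\bm\rho^k},\bm\mu_B)\leq 16k^{-1/4}$, each with probability at least $1-\tfrac12 e^{-\sqrt k}$. The paper uses this exactly as follows: take a subsequence along which $\mu_{\bm\sigma_n}$ converges (jointly with the rest) to some $\widetilde{\bm\mu}$, pass to a Skorokhod coupling, run your sign argument to get $\bm\rho_n^k\to\bm\rho^k=\Perm_k(\bm\mu_B)$, then apply Lemma~\ref{lem:subpermapproxrandompermuton} twice to sandwich $\widetilde{\bm\mu}=\bm\mu_B$ almost surely. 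This identifies every subsequential joint limit, which is precisely what your argument is missing. You could equivalently avoid the subsequence by showing $\mu_{\bm\sigma_n}\to\bm\mu_B$ in probability inside the Skorokhod coupling of Theorem~\ref{thm:discret_coal_conv_to_continuous}, again via the triangle inequality and Lemma~\ref{lem:subpermapproxrandompermuton}; but either way that lemma, not Proposition~\ref{Prop:CaracterisationLoiPermuton} or Theorem~\ref{thm:randompermutonthm}, is the ingredient that closes the jointness claim.
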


\begin{proof}
	Assume there is a subsequence along which $\mu_{\bm \sigma_n}$ converges in distribution to some random permuton $\widetilde{\bm \mu}$ jointly with the convergences in \cref{thm:discret_coal_conv_to_continuous}. We shall show that $\widetilde{\bm \mu}$ has the same distribution as $\bm \mu_B$. By virtue of Prokhorov's theorem and compactness of the space of permutons $\mathcal M$, this is enough to prove the joint convergence claim. To simplify things, we assume that the subsequential convergence is almost sure using Skorokhod's theorem. In particular, almost surely as $n\to\infty$, $\mu_{\bm \sigma_n} \to \widetilde{\bm \mu}$ in the space of permutons, and for every $i\geq 1$, $\conti Z_{n}^{(\bm u_i)} \to \conti Z_{e}^{(\bm u_i )}$ uniformly on $[0,1]$, where $(\bm u_i)_{i\geq 1}$ are i.i.d.\ uniform random variables on $[0,1]$.
	
	Fix $k\in\Z_{>0}$. We denote by $\bm \rho^k_n$ the pattern induced by $\bm \sigma_n$ on the indices $\bm \lceil n \bm u_1 \rceil,\ldots,\lceil n \bm u_k \rceil$ ($\bm \rho^k_n$ is undefined if two indices are equal). From the uniform convergence above, and recalling that ${\conti Z}^{(u)}_{n}\left(\frac kn\right) = \frac 1 {\sqrt {2n}} \bm Z^{(\lceil nu\rceil)}_{n}(k)$, we have for all $1\leq i<j\leq k$ that
	\begin{equation}\label{eq:weifguwefibghwe}
	\sgn\left({\bm Z}_n^{(\bm \lceil n \bm u_i \rceil \wedge \lceil n \bm u_j \rceil)}(\lceil n \bm u_i \rceil \vee \lceil n \bm u_j \rceil)\right)\xrightarrow[n\to\infty]{}\sgn(\conti Z_e^{(\bm u_i \wedge \bm u_j)}(\bm u_j \vee \bm u_i))\quad \text{ a.s.},
	\end{equation}
	where we used the conventions $x\wedge y=\min\{x,y\}$ and $x\vee y=\max\{x,y\}$.
	Note that the function $\sgn$ is not continuous, but by the second claim of \cref{lem:boundary}, the random variable $\conti Z_e^{(\bm u_i \wedge \bm u_j)}(\bm u_j \vee \bm u_i)$ is almost surely nonzero, hence a continuity point of $\sgn$.
	
	By \cref{prop:patterns} page \pageref{prop:patterns}, 
	\begin{equation}\label{eq:wefvbweibfwen}
		{\bm Z}_n^{(\bm \lceil n \bm u_i \rceil \wedge \lceil n \bm u_j \rceil)}(\lceil n \bm u_i \rceil \vee \lceil n \bm u_j \rceil) \geq 0   \iff  \bm \rho^k_n(\bm M_{i,j})<\bm \rho^k_n(\bm m_{i,j}),
	\end{equation}
	where $\bm M_{i,j}$ (resp.\ $\bm m_{i,j}$) denotes the ranking of $\lceil n \bm u_i \rceil \vee \lceil n \bm u_j \rceil$ (resp.\ $\lceil n \bm u_i \rceil \wedge \lceil n \bm u_j \rceil$) in the sequence $(\bm \lceil n \bm u_1 \rceil,\ldots,\lceil n \bm u_k \rceil)$.
	Using the second claim of \cref{lem:boundary}, \cref{eq:weifguwefibghwe,eq:wefvbweibfwen} mean that 
	$$\bm \rho_n^k \xrightarrow[n\to\infty]{} \bm \rho^k,$$ 
	where $\bm \rho^k$ denotes the permutation $\Perm_k(\bm \mu_B)$ induced by $(\bm u_i, \varphi_{\conti Z_e}(\bm u_i))_{i \in [k]}$.
	From \cref{lem:subpermapproxrandompermuton}, we have for $k$ large enough that
	\begin{equation}\label{eq:first_perm_bound}
	\P\left[
	d_{\square}(\mu_{\bm \rho_n^k},\mu_{\bm \sigma_n})
	> 16k^{-1/4}\right]
	\leq \frac12 e^{-\sqrt{k}} + O(n^{-1}),
	\end{equation}
	where the error term $O(n^{-1})$ comes from the fact that $\bm \rho^k_n$ might be undefined.
	Since $\bm \rho_n^k \xrightarrow[n\to\infty]{} \bm \rho^k$ and $\mu_{\bm \sigma_n} \to \widetilde{\bm \mu}$, then
	taking the limit as $n\to \infty$, we obtain that
	\[\P\left[
	d_{\square}(\mu_{\bm \rho^k},\widetilde {\bm \mu})
	> 16k^{-1/4}\right]
	\leq \frac12 e^{-\sqrt{k}},\]
	and so $\mu_{\bm \rho^k}\xrightarrow[k\to\infty]{P}\widetilde {\bm \mu}$. Another application of \cref{lem:subpermapproxrandompermuton} gives that $\mu_{\bm \rho^k} \xrightarrow[k\to\infty]{P}\bm \mu_B$. The last two limits yield $\widetilde {\bm \mu} = \bm \mu_B$ almost surely. This concludes the proof.
\end{proof}

\subsubsection{Joint convergence of the four trees of bipolar orientations}
Fix $n\geq 1$. Let $\bm m_n$ be a uniform bipolar orientation of size $n$, and consider its iterates $\bm m_n^*, \bm m_n^{**}, \bm m_n^{***}$ by the dual operation. Denote $\bigstar = \{\emptyset,*,**,{**}*\}$ the group of dual operations, that is isomorphic to $\Z/4\Z$. For $\theta \in \bigstar$, let $\bm W_n^\theta =(\bm X_n^\theta, \bm Y_n^\theta)$, $\bm Z_n^\theta$ and $\bm \sigma_n^\theta$ be the uniform objects corresponding to $\bm m_n^\theta$ via the commutative diagram in \cref{eq:comm_diagram} page~\pageref{eq:comm_diagram}. We also denote by $\bm L^\theta_n$  the discrete local time process of $\bm Z^\theta_n$ (see \cref{eq:local_time_process} page~\pageref{eq:local_time_process} for a definition).
We define rescaled versions as usual: for $u\in [0,1]$, let ${\conti W}_n^\theta:[0,1]\to \R^2$, ${\conti Z}^{\theta,(u)}_{n}:[0,1]\to\R$ and ${\conti L}^{\theta,(u)}_{n}:[0,1]\to\R$ be the continuous functions obtained by linearly interpolating the following families of points defined for all $k\in [n]$:
\begin{equation}
{\conti W}_n^\theta\left(\frac kn\right) = \frac 1 {\sqrt {2n}} {\bm W}_{n}^\theta (k), 
\quad 
{\conti Z}^{\theta,(u)}_{n}\left(\frac kn\right) = \frac 1 {\sqrt {2n}} {\bm Z}^{\theta,(\lceil nu\rceil)}_{n}(k),
\quad 
{\conti L}^{\theta,(u)}_{n}\left(\frac kn\right) = \frac 1 {\sqrt {2n}} {\bm L}^{\theta,(\lceil nu\rceil)}_{n}(k).
\end{equation}
Finally, for each $n\in\Z_{>0}$, let $((\bm u_{n,i}, \bm u_{n,i}^{*}))_{i\geq 1}$ be an i.i.d.\ sequence of distribution $\mu_{\bm \sigma_n}$ conditionally on $\bm m_n$. Let also $\bm u_{n,i}^{**} = 1-\bm u_{n,i}^{}$ and $\bm u_{n,i}^{***} = 1-\bm u_{n,i}^{*}$ for $n,i\in\Z_{>0}$. The first and second marginals of a permuton are uniform irregardless of the permuton, which implies that for all $n\in\Z_{>0}$ and $\theta \in \bigstar$, $(\bm u_{n,i}^\theta)_{i\geq 1}$ is an i.i.d.\ sequence of uniform random variables on $[0,1]$ independent of $\bm m_n^\theta$ (but for every fixed $n\in\Z_{>0}$, the joint distribution of $\left((\bm u_{n,i}^\theta)_{i\geq 1}\right)_{\theta\in \bigstar}$ depends on $(\bm m^\theta_n)_{\theta\in \bigstar}$).

\medskip

We can now state the main theorem of this section which is in some sense a joint scaling limit convergence result for all these objects. 
Recall the time-reversal and coordinates-swapping mapping $s:\mathcal C([0,1],\R^2) \to \mathcal C([0,1],\R^2)$ defined by $s(f,g) = (g(1-\cdot), f(1-\cdot))$.

\begin{thm}\label{thm:joint_scaling_limits}
	Let $\conti W_e$ be a two-dimensional Brownian excursion of correlation $-1/2$ in the non-negative quadrant. Let $\conti Z_e$ be the associated continuous coalescent-walk process and $\conti L_e$ be its local-time process. Let $\bm u$ denote a uniform random variable in $[0,1]$ independent of $\conti W_e$. Then
	\begin{enumerate}
		\item almost surely, $\conti L_e^{(\bm u)} \in \mathcal C([0,1),\R)$ has a limit at $1$, and we still denote by $\conti L_e^{(\bm u)} \in \mathcal C([0,1],\R)$ its extension.
		\item 
		There exists a measurable mapping $r:\mathcal C([0,1],\R^2) \to \mathcal C([0,1],\R^2)$ such that almost surely, denoting $(\widetilde{\conti X}, \widetilde{\conti Y}) = r(\conti W_e)$,
		\begin{equation}\label{eq:definition_map_r_1}
		\widetilde{\conti X}(\varphi_{\conti Z_e}(\bm u)) =  \conti L_e^{(\bm u)}(1)\qquad\text{and}\qquad r(s(\conti W_e)) = s(r(\conti W_e)).
		\end{equation}
		These properties uniquely determine the mapping $r$ $\P_{\conti W_e}$-almost everywhere. Moreover,
		\begin{equation}\label{eq:definition_map_r_2}
		r(\conti W_e) \stackrel d= \conti W_e, \qquad r^2=s \qquad\text{and}\qquad r^4 = \Id, \qquad\P_{\conti W_e}-\text{a.e.}
		\end{equation}
		\item Let $(\bm u_{i})_{i\geq 1}$ be an auxiliary i.i.d.\ sequence of uniform random variables on $[0,1]$, independent of $\conti W_e$. For each $\theta \in \{\emptyset, *,**\}$, let $\conti W_e^{\theta*} = r(\conti W_e^{\theta})$ and  $\bm u_i^{\theta *} = \varphi_{\conti Z_e}(\bm u_i^\theta)$ for $i\geq 1$. Let also $\conti Z_e^\theta$ be the associated continuous coalescent-walk process, $\conti L_e^\theta$ be its local-time process and $\mu_{\conti Z^\theta_e}$ be the associated Baxter permuton. 
		Then we have the joint convergence in distribution
		\begin{multline}\label{eq:joint_scaling_limits}
		\left(
		\conti W_n^\theta, 
		\left(\bm u_{n,i}^\theta,\conti Z^{\theta,(\bm u_{n,i}^\theta)}_n, \conti L^{\theta,(\bm u_{n,i}^\theta)}_n\right)_{i\in\Z_{>0}}, 
		\mu_{\bm \sigma_n^\theta}
		\right)_{\theta\in \bigstar}\\
		\xrightarrow[n\to\infty]{d}
		\left(
		\conti W_e^\theta, 
		\left(\bm u_i^\theta,\conti Z^{\theta,(\bm u_i^\theta)}_e, \conti L^{\theta,(\bm u_i^\theta)}_e\right)_{i\in\Z_{>0}}, 
		\mu_{\conti Z^\theta_e}
		\right)_{\theta\in \bigstar}
		\end{multline}
		in the space
		$$\left(\mathcal C([0,1], \R^2)\times ([0,1] \times \mathcal C([0,1], \R) \times \mathcal C([0,1], \R))^{\Z_{>0}} \times \mathcal M\right)^4.$$
		\item In this coupling, we almost surely have, for $\theta\in \bigstar$,
		\begin{equation}
		\varphi_{\conti Z_e^{\theta*}}\circ \varphi_{\conti Z_e^\theta} = 1 -\Id,\qquad\P_{\conti W_e}-\text{a.e.}
		\end{equation}
	\end{enumerate}
\end{thm}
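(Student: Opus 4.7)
The overall plan is to lift the discrete combinatorial identities of Section~\ref{sect:anti-invo} — in particular Corollary~\ref{cor:local_time} and Proposition~\ref{prop:anti-inv} — to the continuous setting via the convergences already established in Theorems \ref{thm:discret_coal_conv_to_continuous} and \ref{thm:permuton}. For Part~(1), I will approximate $\conti L_e^{(\bm u)}(1)$ through the discrete local times $\bm L_n^{(\bm i_n)}(n)$, where $\bm i_n$ is chosen close to $n\bm u$. The idea is that Corollary~\ref{cor:local_time} gives the identity $\bm X_n^*(\bm \sigma_n(\bm i_n)) = \bm L_n^{(\bm i_n)}(n)-1$, and since $\bm W_n^*$ is itself a uniform tandem walk (by duality), after rescaling it converges to a two-dimensional Brownian excursion, which in particular is a.s.\ continuous at time $1$. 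Combined with the convergence of $\conti L_n^{(\bm u)}$ on $[0,1)$ from \cref{prop:coal_con_cond}, this yields the existence of a continuous extension at time~$1$.

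For Part~(2), the construction of $r$ is guided by the same Corollary~\ref{cor:local_time}: the continuous analogue of $\bm X_n^*(\bm \sigma_n(\bm i_n)) \approx \bm L_n^{(\bm i_n)}(n)$, after rescaling, is precisely $\widetilde{\conti X}(\varphi_{\conti Z_e}(\bm u)) = \conti L_e^{(\bm u)}(1)$. I will first argue that this identity, applied to a countable dense family of i.i.d.\ uniform variables $(\bm u_i)$, together with continuity and uniform marginals of the Baxter permuton $\bm \mu_B$ (which makes $\{\varphi_{\conti Z_e}(\bm u_i)\}$ dense in $[0,1]$ a.s.), determines $\widetilde{\conti X}$ uniquely as a measurable function of $\conti W_e$; the second coordinate $\widetilde{\conti Y}$ is then obtained by the analogous identity involving $\cev{\conti Z}_e$. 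The relations $r^2=s$ and $r^4=\Id$ follow by lifting the combinatorial fact that $\bow(m^{**})$ is the time-reversed and coordinate-swapped version of $\bow(m)$, stated in \cref{prop:rev_coal_prop}. The commutation $r\circ s = s\circ r$ follows because both sides implement the $-3\pi/2$ rotation in the abelian group $\bigstar \cong \Z/4\Z$.

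For Part~(3), the main technical step, I will proceed by subsequential limits. By Prokhorov's theorem applied coordinate-wise (each sequence lives in a Polish space, and permutons are in the compact space $\mathcal M$), the sequence on the left-hand side of \eqref{eq:joint_scaling_limits} is tight, so it suffices to identify any subsequential limit. Using Skorokhod's theorem, I may assume almost sure convergence. For the unstarred coordinate $\theta=\emptyset$, the limit is given by \cref{thm:discret_coal_conv_to_continuous} combined with \cref{thm:permuton}. For the remaining coordinates $\theta\in\{*,**,***\}$, I use \cref{prop:anti-inv} to express $\bm W_n^*$ as a deterministic measurable function of $(\bm W_n, \bm Z_n, \cev{\bm Z}_n, \bm \sigma_n)$, which passes to the limit via the mapping $r$ constructed in Part~(2); in particular $\conti W_e^* = r(\conti W_e)$ is uniformly distributed as a two-dimensional Brownian excursion by the first identity in \eqref{eq:definition_map_r_2}. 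The sampled $\bm u_{n,i}^*$ correspond to the $y$-marginals of the permuton $\mu_{\bm \sigma_n}$, so their limits are precisely $\varphi_{\conti Z_e}(\bm u_i)$; and the coalescent-walk convergences for $\theta=*$ follow by applying \cref{thm:discret_coal_conv_to_continuous} to $\bm W_n^*$ at the starting points $(\bm u_{n,i}^*)$. The case $\theta=**$ (resp.\ $\theta=***$) is treated identically using $r^2=s$ (resp.\ $r^3$). The hardest step here will be to carefully handle the joint dependence between the auxiliary i.i.d.\ variables $(\bm u_i)_{i\geq 1}$ and the sampled $(\bm u_{n,i}^\theta)$, so that all four limits appear in the same coupling governed by $\conti W_e$ and a single i.i.d.\ sequence of uniforms.

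Part~(4) is an immediate consequence: in the discrete setting, $\bm\sigma_n^{\theta*} \circ \bm \sigma_n^\theta$ is, up to a reversal, the identity — this is just because $\bobp(m^*) = \bobp(m)^*$ (the $\pi/2$-rotation on permutations from \cref{thm:rotation}) satisfies $(\sigma^*)^* = $ reverse of $\sigma$. Translating via \cref{prop:anti-inv} and passing to the limit using the joint convergence of Part~(3) and the second item of \cref{lem:boundary} (which ensures that the level function $\varphi_{\conti Z_e}$ is a.e.\ strictly monotone on the relevant set), gives $\varphi_{\conti Z_e^{\theta*}}\circ \varphi_{\conti Z_e^\theta} = 1 - \Id$ almost everywhere. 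I anticipate the main obstacle to be Part~(3): establishing tightness is straightforward, but identifying the subsequential limit requires a careful bookkeeping of the discrete identities and a proof that the continuous mapping $r$ inherits enough continuity properties (almost everywhere w.r.t.\ $\P_{\conti W_e}$) to allow the passage to the limit without loss of information across the four dual copies.
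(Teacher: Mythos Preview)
Your overall architecture is right and matches the paper's proof: tightness plus Skorokhod, pass the discrete identities of \cref{cor:local_time} and \cref{prop:rev_coal_prop} to the limit to get \eqref{eq:definition_map_r_1} and \eqref{eq:definition_map_r_2}, then deduce that $\conti W_e^{\theta*}\in\sigma(\conti W_e^\theta)$ via density of $\{\varphi_{\conti Z_e}(\bm u_i)\}$. Items~2--4 proceed essentially as you outline.

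There is, however, a genuine gap in your treatment of the local times at time~$1$, and your diagnosis of where the difficulty lies is inverted. You write that for Part~(3) ``establishing tightness is straightforward''. It is not. From \cref{prop:coal_con_cond} you only have tightness of $\conti L_n^{(\bm u)}$ in $\mathcal C([0,1),\R)$, whereas the theorem requires convergence in $\mathcal C([0,1],\R)$. Your argument for Part~(1) does give that $\conti L_e^{(\bm u)}$, being monotone and bounded (by the subsequential limit of $\conti L_n^{(\bm u)}(1)$), has a limit at~$1$; but it does \emph{not} give that this limit equals $\lim_n \conti L_n^{(\bm u)}(1)$. Nothing prevents mass of the discrete local time from concentrating in a shrinking window $[1-\delta_n,1]$ and escaping in the limit, so that $\conti L_e^{(\bm u)}(1^-)<\lim_n\conti L_n^{(\bm u)}(1)$. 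If that happens, the discrete identity $\conti X_n^*(\bm u_{n,i}^*) = \conti L_n^{(\bm u_i)}(1) + o(1)$ passes to the limit as $\conti X_e^*(\bm u_i^*) = \lim_n\conti L_n^{(\bm u_i)}(1)$, which is the wrong quantity for \eqref{eq:definition_map_r_1}, and your construction of $r$ collapses.

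Ruling this out is precisely the content of \cref{lem:local_time_does_not_disappear}: one needs the uniform equicontinuity estimate $\P\big(\conti L_n^{(\bm u)}(1)-\conti L_n^{(\bm u)}(1-x)\geq\delta\big)\leq\eps$ for $x$ small and $n$ large. The paper flags this lemma as ``highly technical'' and imports it from \cite[Lemma~5.11]{borga2020scaling}; there is no shortcut via the identity with $\conti X_n^*$, since no analogous identity is available for $\conti L_n^{(\bm u)}(t)$ at intermediate times $t<1$. Once you have this lemma, the rest of your plan goes through as written.
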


\begin{rem}
	As in the discrete case, we point out that even though the joint distribution of $\left((\bm u^{\theta}_{i})_{i\geq 1}\right)_{\theta\in\bigstar}$ depends on $(\conti W_e^{\theta})_{\theta\in \bigstar}$, we have that $(\bm u^{\theta}_{i})_{i\geq 1}$ is independent of $\conti W_e^{\theta}$ for every fixed $\theta\in\bigstar$.
\end{rem}

\begin{rem}
	We highlight that the results presented in the theorem above (in particular in \cref{eq:definition_map_r_1,eq:definition_map_r_2}) are continuous analogs of the results obtained in \cref{sect:anti-invo} for discrete objects. The specific connections between the results for continuous and discrete objects are made clear in the proof of the theorem.
\end{rem}

\begin{proof}[Proof of \cref{thm:joint_scaling_limits}]\label{proof_of_thm}
	We start by showing that the left-hand side of \cref{eq:joint_scaling_limits} is tight. \cref{thm:discret_coal_conv_to_continuous} and \cref{thm:permuton} give us  tightness of all involved random variables, with the caveat that $(\conti L_n^{\theta,(\bm u_i^\theta)})_n$ is a priori only tight in the space $\mathcal C([0,1),\R)$. Tightness in $\mathcal C([0,1],\R)$ follows from the following admitted result\footnote{We remark that the proof of \cref{lem:local_time_does_not_disappear} is highly technical and builds on several combinatorial constructions and asymptotic estimates.}, which proves in passing item 1.
	
	\begin{lem}[{\cite[Lemma 5.11]{borga2020scaling}}] \label{lem:local_time_does_not_disappear}
		Let $\bm u$ be a uniform random variable on $[0,1]$, independent of ${\bm W}_n$. The sequence $(\conti L_n^ {(\bm u)} (1))_n$ is tight, and for every $\eps, \delta >0$, there exists $x\in (0,1)$ and $n_0\geq 1$ such that 
		\begin{equation}\label{eq:continuity_local_time_at_1}
		\P\Big(\conti L_n^ {(\bm u)} (1) -\conti L_n^ {(\bm u)} (1-x) \geq \delta\Big) \leq \eps,\quad \text{for all} \quad n\geq n_0.
		\end{equation}
		Therefore $(\conti L_n^{(\bm u)})_n$ is tight in the space $\mathcal C([0,1],\R)$.
	\end{lem}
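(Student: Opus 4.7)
The proof has two distinct parts: the tightness of the scalar $\conti L_n^{(\bm u)}(1)$, and the uniform continuity estimate \eqref{eq:continuity_local_time_at_1}. My strategy is to attack each part using a different set of tools already developed in this chapter, then combine them.

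For tightness, the key observation is that Corollary~\ref{cor:local_time} gives the exact discrete identity
\[
\conti L_n^{(u)}(1) \;=\; \frac{1}{\sqrt{2n}}\bigl(\bm X^*_{\bm \sigma_n(\lceil nu\rceil)} + 1\bigr) \;=\; \conti X_n^*\!\left(\tfrac{\bm\sigma_n(\lceil nu\rceil)}{n}\right) + O(n^{-1/2}),
\]
where $\conti X_n^*$ is the first coordinate of the rescaled dual walk $\conti W_n^*=\bow(\bm m_n^*)$. Since duality preserves uniformity on $\mathcal O_n$, we get $\conti X_n^* \xrightarrow d \conti X_e^*$ (the first coordinate of a Brownian excursion in the non-negative quadrant). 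Moreover, because $\bm\sigma_n$ is a bijection and $\bm u$ is independent of $\bm m_n$, the variable $\bm\sigma_n(\lceil n\bm u\rceil)/n$ is uniform on $\{1/n,\dots,1\}$ conditionally on $\bm m_n^*$, so it converges jointly with $\conti X_n^*$ to an independent uniform $\bm v^*$ on $[0,1]$. Hence $\conti L_n^{(\bm u)}(1) \xrightarrow d \conti X_e^*(\bm v^*)$, an a.s.\ finite limit, yielding tightness.

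For the continuity estimate, note that the difference $\conti L_n^{(\bm u)}(1) - \conti L_n^{(\bm u)}(1-x)$ is (up to the factor $1/\sqrt{2n}$) the number of zeros of $\bm Z_n^{(\lceil n\bm u\rceil)}$ in the late window $(\lfloor n(1-x)\rfloor, n]$. The plan is to control it in three steps: (i) at time $1-x$, Proposition~\ref{prop:coal_con_cond} shows that $\conti Z_n^{(\bm u)}(1-x) \xrightarrow d \conti Z_e^{(\bm u)}(1-x)$, which is non-degenerate for any fixed $x \in (0,1)$, so $|\bm Z_n^{(\lceil n\bm u\rceil)}(\lfloor n(1-x)\rfloor)|$ is typically of order $\sqrt{n}$ with a size controllable by $x$; (ii) conditionally on a starting value far from $0$, the trajectory on the remaining window $[\lfloor n(1-x)\rfloor, n]$ behaves like a random walk with step distribution $\nu$ (Proposition~\ref{prop:trajectories_are_rw}), up to an absolute continuity correction coming from the excursion conditioning on compact subsets of $[0,1)$ (the type of estimate collected in the appendix \cite[Section A]{borga2020scaling}); and (iii) a standard random-walk computation shows that a walk started at height $h$ and run for $t$ steps returns to $0$ at most $O_p(\max(\sqrt t - h,1))$ times, so taking $t=xn$ and $h$ of order $\sqrt n$ makes the rescaled number of returns $o_p(1)$ uniformly as $x \to 0$. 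Combined, steps (i)--(iii) yield \eqref{eq:continuity_local_time_at_1}.

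The hard part will be step (ii), namely transferring the random-walk estimate of step (iii) to the conditioned process $\bm Z_n$. The absolute continuity between the conditioned tandem walk $\bm W_n$ and the unconditioned walk $\overline{\bm W}$ (Lemma~\ref{lem:AbsCont}-style results) degrades as one approaches time $1$, so the naïve comparison breaks down precisely on the window we care about. To handle this one must either iterate the absolute continuity on a fixed window $[\lfloor n(1-x)\rfloor,\lfloor n(1-x/2)\rfloor]$ bounded away from $1$ and then use a dual/reversed argument (via Proposition~\ref{prop:rev_coal_prop}: applying Proposition~\ref{prop:coal_con_cond} to the reversed process $\wcp(\cev{\bm W}_n)$ near time $0$ instead of near time $1$) for the very last sub-window, or else exploit a deterministic bound on local times in terms of distance traveled that is robust to conditioning. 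Once this technical step is carried out, the tightness of $(\conti L_n^{(\bm u)}(1))_n$ in $\R$ upgrades to tightness of $(\conti L_n^{(\bm u)})_n$ in $\mathcal C([0,1],\R)$, finishing the proof.
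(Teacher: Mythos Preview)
The paper does not prove this lemma; it explicitly admits it from \cite{borga2020scaling}, with a footnote saying the original proof ``is highly technical and builds on several combinatorial constructions and asymptotic estimates.'' So there is no in-paper proof to compare against, only the indication that the actual argument is combinatorial rather than purely probabilistic.

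Your argument for tightness of $\conti L_n^{(\bm u)}(1)$ is correct and clean: the identity $\bm L_n^{(i)}(n)=\bm X_n^*(\bm\sigma_n(i))+1$ from \cref{cor:local_time}, together with the observation that $\bm\sigma_n(\lceil n\bm u\rceil)$ is uniform on $[n]$ and independent of $\bm m_n^*$ (since its conditional law given $\bm m_n$ is uniform regardless of $\bm m_n$), gives $\conti L_n^{(\bm u)}(1)\xrightarrow d \conti X_e(\bm v)$ with $\bm v$ an independent uniform variable. This is precisely the mechanism later used to establish \eqref{eq:main_L}.

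Your continuity argument, however, has a genuine gap at the step you yourself flag as ``the hard part,'' and neither of your proposed workarounds closes it. First, the reversed process $\cev{\bm Z}_n=\wcp(\cev{\bm W}_n)$ is \emph{not} the time-reversal of $\bm Z_n$: by \cref{prop:rev_coal_prop} its zeros encode ancestry in the tree $T(\bm m_n^{***})$, whereas the zeros of $\bm Z_n^{(i)}$ on $(\lfloor n(1-x)\rfloor,n]$ encode ancestors of edge $i$ in the tree $T(\bm m_n^{*})$ carrying large $T(\bm m_n)$-labels. These live in unrelated trees and there is no evident symmetry that converts late-time information about one into early-time information about the other. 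Second, the unspecified ``deterministic bound on local times in terms of distance traveled'' has no obvious candidate that survives the $1/\sqrt n$ rescaling. There is also a scaling issue upstream: since $\conti Z_e^{(\bm u)}(1)=0$ (\cref{prop:coal_con_cond}), the height $|\conti Z_e^{(\bm u)}(1-x)|$ tends to zero with $x$, so the comparison ``starting height $h$ versus $\sqrt{xn}$'' in your step (iii) is at best at the critical scale and yields no smallness on its own. The proof in \cite{borga2020scaling} evidently avoids these analytic difficulties via direct combinatorial estimates.
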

	
	We now consider a subsequence of 
	\[\left(
	\conti W_n^\theta, 
	\left(\bm u_i^\theta,\conti Z^{\theta,(\bm u_{n,i}^\theta)}_n, \conti L^{\theta,(\bm u_{n,1}^\theta)}_n\right)_{i\geq 1}, 
	\mu_{\bm \sigma_n^\theta}
	\right)_{\theta\in \bigstar}
	\]
	converging in distribution. For fixed $\theta \in \bigstar$, we know the distribution of the limit thanks to \cref{thm:discret_coal_conv_to_continuous} and \cref{thm:permuton} (the limit of $\conti L^{\theta,(\bm u_i^\theta)}_n$, being a random continuous function on $[0,1]$, is determined by its restriction to $[0,1)$). Henceforth, it is legitimate to denote by
	\begin{equation}\label{eq:limiting_vector}
	\left(
	\conti W_e^\theta, 
	\left(\bm u_i^\theta,\conti Z^{\theta,(\bm u_i^\theta)}_e, \conti L^{\theta,(\bm u_i^\theta)}_e\right)_{i\geq 1}, 
	\mu_{\conti Z^\theta_e}
	\right)_{\theta\in \bigstar}
	\end{equation}
	the limit, keeping in mind that the coupling for varying $\theta$ is undetermined at the moment. We shall determine it to complete the proof of items 2 and 3. We start by proving the following identities (we use the convention $\text{\small****}=\emptyset$): 
	\begin{align}
	\conti W_e^{**} = s(\conti W_e),\quad \conti W_e^{***} = s(\conti W_e^*), \label{eq:main_reversal}\\
	\bm u_i^{\theta *} = \varphi_{\conti Z_e}(\bm u_i^\theta),&\quad  i\geq 1, \theta \in {\bigstar}, \label{eq:main_phi}\\
	\conti X_e^{\theta*}(\bm u_i^{\theta *}) = \conti L_e^{\theta,(\bm u_i^\theta)}(1),&\quad   i\geq 1,\theta \in {\bigstar}. \label{eq:main_L}
	\end{align}
	
	\medskip
	
	The claim in \cref{eq:main_reversal} is the easiest. Thanks to \cref{prop:rev_coal_prop} page \pageref{prop:rev_coal_prop}, we have that $\conti W_n^{**} = s(\conti W_n)$ and $\conti W_n^{***} = s(\conti W_n^*)$, for every $n\in\Z_{>0}$.
	Since $s$ is continuous on $\mathcal C([0,1],\R^2)$, the same result holds in the limit, proving \cref{eq:main_reversal}.
	
	\medskip
	
	To prove \cref{eq:main_phi}, we use the following lemma, whose proof is skipped. It follows rather directly from the definition of weak convergence of measures.
	\begin{lem}\label{lem:cv_iid_seq}
		Suppose that for $n\in \Z_{>0} \cup \{\infty\}$, $\bm \mu_n$ is a random measure on a Polish space and $(\bm X^n_i)_{i\geq 1}$ an i.i.d.\ sequence of elements with distribution $\bm \mu_n$ conditionally on $\bm \mu_n$.
		Assume that $\bm \mu_n \to \bm \mu_\infty$ in distribution for the weak topology. Then we have the joint convergence in distribution 
		\[
		(\bm \mu_n,(\bm X^n_i)_{i\geq 1}) \xrightarrow[n\to\infty]{d} (\bm \mu_\infty,(\bm X^\infty_i)_{i\geq 1})\; .
		\]
	\end{lem}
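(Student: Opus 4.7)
My plan is to reduce the joint convergence in $M_1(S)\times S^{\mathbb N}$ (where $M_1(S)$ denotes the space of probability measures on $S$ endowed with the weak topology) to the convergence of all finite-dimensional marginals $(\bm \mu_n,\bm X^n_1,\dots,\bm X^n_m)$ in $M_1(S)\times S^m$, and then to verify each such convergence by an ``integrate out the samples'' argument that rewrites the test expectation as the expectation of a bounded continuous function of $\bm\mu_n$ alone. Since $S$ is Polish, so is $S^{\mathbb N}$ with the product topology, and the Borel $\sigma$-algebra of the product topology coincides with the product $\sigma$-algebra; in particular, weak convergence of probability measures on $M_1(S)\times S^{\mathbb N}$ is equivalent to weak convergence of all finite-dimensional projections $M_1(S)\times S^m$. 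So it suffices to fix $m\ge 1$ and prove that, for every bounded continuous $f:M_1(S)\times S^m\to\mathbb R$,
\begin{equation*}
  \mathbb E\bigl[f(\bm\mu_n,\bm X^n_1,\dots,\bm X^n_m)\bigr]\xrightarrow[n\to\infty]{} \mathbb E\bigl[f(\bm\mu_\infty,\bm X^\infty_1,\dots,\bm X^\infty_m)\bigr].
\end{equation*}

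The first key step is to use the conditional i.i.d.\ structure to rewrite, for each $n\in\Z_{>0}\cup\{\infty\}$,
\begin{equation*}
  \mathbb E\bigl[f(\bm\mu_n,\bm X^n_1,\dots,\bm X^n_m)\bigr]=\mathbb E\bigl[g(\bm\mu_n)\bigr],\qquad g(\mu):=\int_{S^m} f(\mu,x_1,\dots,x_m)\,d\mu^{\otimes m}(x_1,\dots,x_m).
\end{equation*}
The function $g$ is clearly bounded by $\|f\|_\infty$. The second key step, which is also the main obstacle, is to prove that $g$ is continuous on $M_1(S)$: given this, the assumption $\bm\mu_n\xrightarrow{d}\bm\mu_\infty$ and the Portmanteau theorem immediately give $\mathbb E[g(\bm\mu_n)]\to\mathbb E[g(\bm\mu_\infty)]$, which concludes the proof.

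The reason continuity of $g$ is the delicate point is that $f$ depends jointly on both arguments, so the standard fact ``$\mu_k\to\mu$ weakly implies $\mu_k^{\otimes m}\to\mu^{\otimes m}$ weakly'' cannot be applied to $f$ directly. The plan to bypass this is as follows. Assume $\mu_k\to\mu$ in $M_1(S)$ and write
\begin{equation*}
  g(\mu_k)-g(\mu)=\int\bigl[f(\mu_k,\cdot)-f(\mu,\cdot)\bigr]d\mu_k^{\otimes m}+\int f(\mu,\cdot)\,d\mu_k^{\otimes m}-\int f(\mu,\cdot)\,d\mu^{\otimes m}.
\end{equation*}
The last difference tends to $0$ because $f(\mu,\cdot)$ is a fixed bounded continuous function on $S^m$ and $\mu_k^{\otimes m}\to\mu^{\otimes m}$ weakly (standard product-measure lemma). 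For the first term, I will exploit tightness: since $(\mu_k^{\otimes m})_k$ is tight, for any $\varepsilon>0$ there is a compact $K\subset S^m$ with $\mu_k^{\otimes m}(K)\ge 1-\varepsilon$ for all $k$. The set $C:=(\{\mu_k\}_k\cup\{\mu\})\times K$ is compact in $M_1(S)\times S^m$, so $f$ is uniformly continuous on $C$; this yields $\sup_{x\in K}|f(\mu_k,x)-f(\mu,x)|\to 0$ as $k\to\infty$. Splitting the integral on $K$ and $K^c$, the first term is bounded by $\sup_{x\in K}|f(\mu_k,x)-f(\mu,x)|+2\|f\|_\infty\varepsilon$, hence has $\limsup_k$ at most $2\|f\|_\infty\varepsilon$; letting $\varepsilon\downarrow 0$ completes the continuity argument and thus the proof.
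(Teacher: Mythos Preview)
Your proof is correct. The paper does not actually give a proof of this lemma: it states that the proof ``is skipped'' and that the result ``follows rather directly from the definition of weak convergence of measures.'' Your argument supplies exactly those omitted details via the natural ``integrate out the samples'' reduction $\mathbb E[f(\bm\mu_n,\bm X^n_1,\dots,\bm X^n_m)]=\mathbb E[g(\bm\mu_n)]$, and your continuity proof for $g$ (splitting the difference, invoking tightness of $(\mu_k^{\otimes m})_k$, and using uniform continuity of $f$ on the compact set $(\{\mu_k\}\cup\{\mu\})\times K$) is clean and complete.
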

	In view of the construction of $\left(\mu_{\bm \sigma_n^\theta},(\bm u_{n,i}^\theta, \bm u_{n,i}^{\theta*})_{i\geq 1}\right)$,
	it implies that the joint distribution of $\left(\mu_{\conti Z^\theta_e},(\bm u_i^\theta, \bm u_i^{\theta*})_{i\geq 1}\right)$
	is that of $\mu_{\conti Z^\theta_e}$ together with an i.i.d.\ sequence of elements with distribution $\mu_{\conti Z^\theta_e}$ conditionally on $\mu_{\conti Z^\theta_e}$.
	In particular, we must have $\bm u_i^{\theta *} = \varphi_{\conti Z_e^{\theta}}(\bm u_i^{\theta})$ almost surely.
	This proves \cref{eq:main_phi}.
	
	\medskip
	
	Finally, thanks to \cref{cor:local_time} page \pageref{cor:local_time}, we have the discrete identity 
	\begin{equation}
		\conti X_n^{\theta*}(n^{-1}\lceil n\bm u_i^{\theta *}\rceil) = \conti L_n^{\theta,(\bm u_i^\theta)}(1) - \frac 1 {\sqrt{2n}},\quad\text{for every}\quad n\geq 1.
	\end{equation}
	By convergence in distribution, we obtain  \cref{eq:main_L}.
	
	\medskip
	
	The continuous stochastic process $\conti X_e^{\theta *}$ is almost surely determined by its values on the dense sequence $(\bm u_i^{\theta *})_{i\geq i_0}$. By \cref{eq:main_L} and 0-1 law, we have that $\conti X_e^{\theta *} \in \sigma(\conti W_e^\theta)$. This together with \cref{eq:main_reversal} implies that 
	$$\sigma(\conti Y_e^{\theta *}) = \sigma(\conti X_e^{\theta ***}) \subset \sigma(\conti W_e^{\theta**}) = \sigma(\conti W_e^{\theta}).$$ 
	As a result $\conti W_e^{\theta*} \in \sigma(\conti W_e^{\theta})$ and so there exists a measurable map $r:\mathcal C([0,1],\R^2) \to \mathcal C([0,1],\R^2)$ such that 
	\begin{equation}\label{eq:main_r}
	r(\conti W_e^{\theta}) = \conti W_e^{\theta*}.
	\end{equation} Then the claims in \cref{eq:definition_map_r_1,eq:definition_map_r_2} are an immediate consequence of \cref{eq:main_L,eq:main_reversal}. The fact that \cref{eq:definition_map_r_1} uniquely determines $r$ $\P_{\conti W_e}$-almost everywhere also results from the fact that a continuous function is uniquely determined by its values on a set of full Lebesgue measure. This completes the proof of item 2.
	
	Additionally, \cref{eq:main_phi,eq:main_r} show that the coupling in \cref{eq:limiting_vector} is the one announced in the statement of item 3, and in particular is independent of the subsequence. Together with tightness, this proves item 3.
	
	For item 4, we observe that $\bm u_{n,1}^{\theta} = 1- \bm u_{n,1}^{\theta**} + 1/n$, so that taking the limit, $\bm u_{1}^{\theta**} = 1- \bm u_{1}^{\theta}$. Then item 4 follows from \cref{eq:main_phi}.
\end{proof}

\section{Better than open problems: The skew Brownian permuton}\label{sect:skew_perm}

\emph{\textbf{Note}. In this section we present some possible future projects and several conjectures. There are no rigorous mathematical results here.} 

\medskip

We believe that a generalized version of the Baxter permuton, called the \emph{skew Brownian permuton}, describes the limit of various models of random permutations, identifying a new universality class for random permutations. We now introduce this new limiting permuton and then we comment on why it should be a universal object and on how it should be also related to the biased Brownian separable permuton.

\subsection{Definition}

The definition of the skew Brownian permuton follows the same lines as the definition of the Baxter permuton. Let $(\conti E_{\rho}(t))_{t\in [0,1]}$ denote a two-dimensional Brownian excursion of correlation\footnote{Note that $\conti E_{-1/2}$ coincides with $\conti W_e$ from the previous section. We adopt this change of notation here for typographical convenience.} $\rho\in[-1,1]$ in the non-negative quadrant and  let $q\in[0,1]$ be a further parameter.
Consider the solutions (see below for a discussion on existence and uniqueness) of the following family of stochastic differential equations (SDEs) indexed by $u\in [0,1]$ and driven by $\conti E_{\rho} = (\conti X_{\rho},\conti Y_{\rho})$:
\begin{equation}\label{eq:flow_SDE_gen}
\begin{cases}
d\conti Z_{\rho,q}^{(u)}(t) = \idf_{\{\conti Z_{\rho,q}^{(u)}(t)> 0\}} d\conti Y_{\rho}(t) - \idf_{\{\conti Z_{\rho,q}^{(u)}(t)\leq 0\}} d \conti X_{\rho}(t)+(2q-1)\cdot d\conti L^{\conti Z_{\rho,q}^{(u)}}(t),& t\geq u,\\
\conti Z_{\rho,q}^{(u)}(t)=0,&  t\leq u,
\end{cases} 
\end{equation}
where $\conti L^{\conti Z_{\rho,q}^{(u)}}(t)$ is the local-time process at zero of $\conti Z_{\rho,q}^{(u)}$.
Like before, we call \emph{continuous coalescent-walk process} driven by $(\conti E_{\rho},q)$ the collection of stochastic processes $\left\{\conti Z^{(u)}_{\rho,q}\right\}_{u\in[0,1]}$ and we consider the following random mapping: 
\begin{equation}\label{defn:varphi_gener}
\varphi_{\conti Z_{\rho,q}}(t)\coloneqq
\Leb\left( \big\{x\in[0,t)|\conti Z_{\rho,q}^{(x)}(t)<0\big\} \cup \big\{x\in[t,1]|\conti Z_{\rho,q}^{(t)}(x)\geq0\big\} \right), \quad t\in[0,1].
\end{equation} 

\begin{rem}
	We point out that a single trajectory $\conti Z^{(u)}_{\rho,q}$, for a fixed $u\in[0,1]$, is a skew Brownian motion (for an introduction to skew Brownian motions we refer to \cite{MR2280299}). This fact inspired our terminology in the following definition.
\end{rem}

\begin{defn}\label{defn:Baxter_perm_gen}
	Fix $\rho\in[-1,1]$ and $q\in[0,1]$. The \emph{skew Brownian permuton} with parameters $\rho, q$, denoted $\bm \mu_{\rho,q}$, is the push-forward of the Lebesgue measure on $[0,1]$ via the mapping $(\Id,\varphi_{\conti Z_{\rho,q}})$. 
\end{defn}

Note that the Baxter permuton coincides with the skew Brownian permuton with parameters $\rho=-1/2$ and $q=1/2$.
We also conjecture\footnote{In Appendix A \& B of \cite{borga2020scaling} we carefully explain why we strongly believe in this conjecture.} that the biased separable permuton $\bm \mu^{(p)}$ introduced in \cref{sect:sub_close_cls} is a sort of extremal case of the skew Brownian permuton.
	\begin{conj}\label{conj:Baxt_brow_same}
		For all $p\in[0,1]$, the biased Brownian separable permuton $\bm \mu^{(p)}$ has the same distribution as the skew Brownian permuton $\bm \mu_{1,1-p}$.
	\end{conj}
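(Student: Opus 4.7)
The plan is to analyze the skew Brownian permuton at the degenerate boundary value $\rho=1$ and to match it with Maazoun's excursion-plus-signs construction \cite{maazoun17BrownianPermuton} of the biased Brownian separable permuton $\bm\mu^{(p)}$. First, I would observe that a two-dimensional Brownian excursion of correlation one in the non-negative quadrant is supported on the diagonal, so that $\conti E_1=(\conti e,\conti e)$ for a single one-dimensional standard Brownian excursion $\conti e$ on $[0,1]$. The family of SDEs \eqref{eq:flow_SDE_gen} then collapses, for each $u\in[0,1]$, to the skew Tanaka-type equation
\begin{equation*}
d\conti Z^{(u)}_{1,q}(t)=\bigl(\mathds{1}_{\{\conti Z^{(u)}_{1,q}(t)>0\}}-\mathds{1}_{\{\conti Z^{(u)}_{1,q}(t)\leq 0\}}\bigr)\,d\conti e(t)+(2q-1)\,d\conti L^{\conti Z^{(u)}_{1,q}}(t),\qquad \conti Z^{(u)}_{1,q}(u)=0,
\end{equation*}
driven by the single excursion $\conti e$. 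Classical results on skew Brownian motion suggest that, once augmented with an independent randomization for the signs of the excursions of $|\conti Z^{(u)}_{1,q}|$ away from zero, this equation admits a weak solution whose modulus equals the reflected process $\conti e(t)-\inf_{s\in[u,t]}\conti e(s)$ and whose sign on each individual excursion is independently $+$ with probability $q$.

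The next step is to construct a global coupling of the whole flow $\{\conti Z^{(u)}_{1,q}\}_{u\in[0,1]}$ from a single source of randomness. The excursions of $t\mapsto\conti e(t)-\inf_{s\in[u,t]}\conti e(s)$ away from zero are indexed by the descending ladder epochs of $\conti e$ after time $u$, each of which is a local minimum of $\conti e$. It is therefore natural to fix, once and for all, an i.i.d.\ $\{+,-\}$-labeling $(\bm\epsilon_m)_{m\in\mathcal M}$ of the countable set $\mathcal M$ of local minima of $\conti e$ with $\P(\bm\epsilon_m=+)=q$, and to prescribe that the sign of $\conti Z^{(u)}_{1,q}(t)$ equals $\bm\epsilon_{m(u,t)}$, where $m(u,t)\coloneqq\arg\min_{s\in[u,t]}\conti e(s)$. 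Under this coupling, \eqref{eq:cont_coal_to_perm} gives, for $s<t$, $s\leq_{\conti Z_{1,q}}t$ iff $\bm\epsilon_{m(s,t)}=-$, an event of probability $1-q$. Setting $p\coloneqq 1-q$, this is precisely the total order underlying Maazoun's biased Brownian separable permuton $\bm\mu^{(p)}$, and pushing the Lebesgue measure forward via $(\mathrm{Id},\varphi_{\conti Z_{1,q}})$ then produces $\bm\mu^{(p)}$ in distribution.

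The main obstacle is that Theorem~\ref{thm:ext_and_uni} provides existence and pathwise uniqueness of strong solutions only for $\rho\in(-1,1)$: at $\rho=1$ the SDE fails to single out the sign of each excursion, so the flow $\{\conti Z^{(u)}_{1,q}\}_u$ cannot be realized as a strong functional of $\conti e$ alone, and the coupling of the preceding paragraph requires additional randomness. The cleanest way to bypass this is an approximation argument: for a sequence $\rho_n\uparrow 1$, the process $\conti E_{\rho_n}$ concentrates on the diagonal with common marginal converging to $\conti e$, while the vanishing independent component $\conti Y_{\rho_n}-\conti X_{\rho_n}$ should, in the limit, give rise to the independent excursion-sign randomness described above. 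One then needs tightness of the family $\{\conti Z^{(u)}_{\rho_n,q}\}_u$ in an appropriate space, identification of subsequential limits with the coupling constructed in the previous paragraph, and transfer of the convergence to the level of permutons. The technically most delicate point, which I expect to be the main difficulty, is showing that the independent perturbation does not simply disappear in the limit but survives precisely as the i.i.d.\ labeling of local minima of $\conti e$; this should be amenable to excursion-theoretic techniques in the spirit of \cref{thm:coal_con_uncond} and \cref{prop:coal_con_cond}, adapted to the more singular regime $|\rho|\to 1$.

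An alternative and possibly more robust route that I would pursue in parallel is combinatorial. The idea is to design a discrete family of models, for instance variants of the coalescent-walk processes of \cref{sec:discrete} driven by two-dimensional random walks whose step distribution concentrates on the diagonal, whose permuton limits can be shown on the one hand to be $\bm\mu_{1,q}$ by the machinery of \cref{sec:coalescent}, and on the other hand to be $\bm\mu^{(p)}$ via a substitution-closed-class argument in the spirit of \cref{thm:scaling_intro}. If successful, such an approach would realize the equality in distribution directly at the level of scaling limits of discrete models, entirely avoiding the degenerate SDE analysis at $\rho=1$.
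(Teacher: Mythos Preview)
The statement you are attempting to prove is labeled a \emph{conjecture} in the paper and is not proven there; the surrounding section is explicitly prefaced with ``There are no rigorous mathematical results here.'' So there is no proof in the paper to compare against. What the paper does provide is exactly the heuristic you reproduce: at $\rho=1$ the driving process collapses to a single excursion $\bm e$, the SDE becomes the (skew) Tanaka equation, pathwise uniqueness fails, and the expected extra randomness is an i.i.d.\ family of biased signs attached to the local minima of $\bm e$. This is recorded in the paper as a separate conjecture (that the solutions of \eqref{eq:flow_SDE_gen_Tanaka} can be constructed with independent biased signs), and the identification with Maazoun's construction of $\bm\mu^{(p)}$ is then immediate. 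Your write-up is therefore a faithful reconstruction of the paper's own heuristic, not an independent proof.

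That said, your proposal is not a proof either, and you acknowledge this. The genuine gap is precisely the one you name: at $\rho=1$ the SDE \eqref{eq:flow_SDE_gen} does not have a strong solution, so $\bm\mu_{1,q}$ is not even well-defined by the recipe in Definition~\ref{defn:Baxter_perm_gen} without first specifying what ``solution'' means in this degenerate case. Your two suggested fixes --- taking a limit $\rho_n\uparrow 1$ and showing the vanishing transverse noise survives as the sign field, or building a discrete model whose limit is simultaneously identified as $\bm\mu_{1,q}$ and as $\bm\mu^{(p)}$ --- are both reasonable research programs, but each contains a hard step that you leave as an expectation (``should be amenable to'', ``if successful''). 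In particular, the claim that the independent component $\conti Y_{\rho_n}-\conti X_{\rho_n}$ produces, in the limit, an i.i.d.\ labeling of the local minima of $\bm e$ with the correct bias is the crux of the matter and is not established by anything in the paper or in your outline. Until that (or the discrete alternative) is carried out, the statement remains open.
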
 

Note that when $\rho=1$, $\conti X_{\rho}(t)=\conti Y_{\rho}(t)=\bm e(t)$, where $(\bm e(t))_{t\in [0,1]}$ is a one-dimensional Brownian excursion on $[0,1]$. When in addition $q=1-p$, the SDEs in \cref{eq:flow_SDE_gen} rewrite as
\begin{equation}\label{eq:flow_SDE_gen_Tanaka}
\begin{cases}
d\conti Z_{1,1-p}^{(u)}(t) = \sgn(\conti Z_{1,1-p}^{(u)}(t)) d \bm e(t)+(1-2p)\cdot d\conti L^{\conti Z_{1,1-p}^{(u)}}(t),& t\geq u,\\
\conti Z_{1,1-p}^{(u)}(t)=0,&  t\leq u.
\end{cases} 
\end{equation}

When $p=1/2$, this is the coalescing flow of the well-known \textit{Tanaka SDE}, driven by an excursion instead of a Brownian motion. The characteristic feature of the Tanaka equation is the absence of pathwise uniqueness: solutions are not measurable functions of the driving process $\bm e$, but rather incorporate additional randomness, which takes in this instance (see \cite[Section 4.4.3]{MR2060298}) the form of independent uniform signs $\bm s(\ell) \in\{\oplus,\ominus\}$ for every $\ell\in (0,1)$ that is a local minimum\footnote{
	For the technicalities involved in indexing an i.i.d.\ sequence by this random countable set, see \cite{maazoun17BrownianPermuton}.} of $\bm e$.
The solutions are then constructed explicitly as follows. 

For $0\leq u \leq t \leq 1$, set $\bm m(u,t) \coloneqq \inf_{[u,t]} \bm e$, and $\bm \mu(u,t) =\inf\{s\geq t: \bm e(s) = \bm m(u,t)\}$. Then
$$
\conti Z^{(u)}_{1,1/2}(t) \coloneqq (\bm e(t) - \bm m(u,t))\bm s( \bm \mu(u,t)).
$$
By analogy, we believe in the following.

	\begin{conj}\label{conj:conj_rho_1}
		For $\rho=1$ and for all $p\in[0,1]$, the solutions $\conti Z_{1,1-p}^{(u)}$ of \cref{eq:flow_SDE_gen_Tanaka} can be constructed as above by taking independent \emph{biased} signs $\bm s(\ell) \in\{\oplus,\ominus\}$ in such a way that $\P(\bm s(\ell)=\oplus)=p$.
	\end{conj}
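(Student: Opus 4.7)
\textbf{Proof proposal for Conjecture \ref{conj:conj_rho_1}.}

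The plan is to reduce the conjecture, for every fixed starting time $u\in[0,1]$, to a verification via a Tanaka-type decomposition, and then to upgrade the pointwise-in-$u$ construction to a consistent flow by exploiting the coalescent structure of the SDE.

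\emph{Step 1: Single-trajectory analysis via Tanaka's formula.} Fix $u\in[0,1]$ and consider the candidate process
\begin{equation*}
\widetilde{\conti Z}^{(u)}(t) \coloneqq (\bm e(t)-\bm m(u,t))\,\bm s(\bm \mu(u,t)), \qquad t\geq u,
\end{equation*}
with $\widetilde{\conti Z}^{(u)}(t)=0$ for $t\leq u$, where the signs $\{\bm s(\ell)\}$ are independent Bernoulli$(p)$ random variables indexed by the (countable) set of strict local minima of $\bm e$. First I would check that the process $t\mapsto \bm e(t)-\bm m(u,t)$ coincides, up to the correct time parametrization, with a reflected Brownian excursion issued from $(u,0)$, and that assigning independent biased signs to its excursions away from zero realizes an excursion-theoretic skew Brownian excursion of parameter $p$ (this is the standard skew-Brownian construction of Harrison--Shepp, extended to the excursion setting by It\^o excursion theory). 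Applying Tanaka's formula to $|\widetilde{\conti Z}^{(u)}|$ and comparing local-time contributions with those of a skew Brownian motion of parameter $p$, one obtains
\begin{equation*}
d\widetilde{\conti Z}^{(u)}(t) = \idf_{\{\widetilde{\conti Z}^{(u)}(t)>0\}}\,d\bm e(t) - \idf_{\{\widetilde{\conti Z}^{(u)}(t)\leq 0\}}\,d\bm e(t) + (1-2p)\,d\conti L^{\widetilde{\conti Z}^{(u)}}(t),
\end{equation*}
which is exactly \cref{eq:flow_SDE_gen_Tanaka} in view of $\conti X_1 = \conti Y_1 = \bm e$ and $\sgn(z) = \idf_{\{z>0\}} - \idf_{\{z\leq 0\}}$.

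\emph{Step 2: Identification in law.} Next I would argue that this weak solution is the unique one in law. For $p=1/2$ this is classical (Tanaka with an excursion driver, see Le Jan--Raimond \cite{MR2060298}); for general $p\in(0,1)$ one can repeat the standard proof by showing that any solution $\conti Z^{(u)}_{1,1-p}$ has $|\conti Z^{(u)}_{1,1-p}|$ equal in law to the reflected process $\bm e(\cdot)-\bm m(u,\cdot)$ (via It\^o--Tanaka again, using that the local-time term disappears when we take absolute values and that the sign convention at $0$ is inessential on a $\Leb$-null set), and that conditionally on $|\conti Z^{(u)}_{1,1-p}|$ the signs assigned to the countable family of excursions away from zero are i.i.d.\ Bernoulli$(p)$ by the strong Markov property at excursion endpoints combined with the form of the local-time drift. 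The cases $p\in\{0,1\}$ are trivial (degenerate signs).

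\emph{Step 3: Consistency across $u$ and coalescence.} The non-trivial upgrade from pointwise-in-$u$ to the full family $\{\conti Z^{(u)}_{1,1-p}\}_{u\in[0,1]}$ is to check that the biased signs $\bm s(\ell)$ can be attached once and for all to the local minima of $\bm e$, independently of $u$, and that the resulting family is a version of the coalescent flow of solutions. Concretely, I would verify that if $u<v$ and $\widetilde{\conti Z}^{(v)}(t)=0$ for some $t\geq v$, then $\widetilde{\conti Z}^{(u)}(t)$ and $\widetilde{\conti Z}^{(v)}(t)$ must agree for all later times (the common sign then being dictated by the same $\bm s(\bm\mu(u\wedge v,\cdot))$). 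This coalescence follows from the structure of $\bm m(u,t)$ and $\bm\mu(u,t)$ as $u$ varies, since the running infimum processes align once two starting points are in the same excursion interval of $\bm e$ above its running minimum.

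\emph{The main obstacle.} The delicate point, which is also where one would like a fully rigorous treatment in a future project, is to make sense of ``independent biased signs indexed by the uncountable-measure-zero but countable random set of local minima'' in a measurable way, and to check that the resulting family $\{\widetilde{\conti Z}^{(u)}\}_{u\in[0,1]}$ really is a version of the canonical flow of weak solutions to \cref{eq:flow_SDE_gen_Tanaka} (and not merely a family of solutions for a.e.\ $u$). For $p=1/2$ this is carried out by Maazoun \cite{maazoun17BrownianPermuton} in the context of the Brownian separable permuton; for general $p$, I expect the same framework to go through, combining skew-Brownian excursion theory with the measurable indexing of excursions used in \cite{maazoun17BrownianPermuton}, but a careful verification is required. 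Once this step is in place, Conjecture \ref{conj:Baxt_brow_same} follows by comparing the resulting $\varphi_{\widetilde{\conti Z}_{1,1-p}}$ with Maazoun's construction of $\bm\mu^{(p)}$.
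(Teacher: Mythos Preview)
The paper does not prove this statement: it is explicitly labeled a \emph{Conjecture} in a section whose opening note says ``In this section we present some possible future projects and several conjectures. There are no rigorous mathematical results here.'' The paper offers no proof to compare your proposal against; it merely states the conjecture ``by analogy'' with the $p=1/2$ case and leaves it open.

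Your proposal is therefore not a comparison target but a research outline, and as such it is a reasonable sketch of the expected strategy. A few remarks on the outline itself. In Step~1, the claim that $t\mapsto \bm e(t)-\bm m(u,t)$ is ``a reflected Brownian excursion issued from $(u,0)$'' needs care: $\bm e$ is an excursion, not a Brownian motion, so the usual reflected-BM identification does not apply directly; you would need to work locally via absolute continuity with respect to Brownian motion on $[u,1-\delta]$ and then handle the endpoints. In Step~2, the uniqueness-in-law argument for general $p$ is not in the literature for an excursion driver, and the strong Markov argument you invoke would need to be adapted since $\bm e$ is not Markov in the usual sense. You correctly identify the main obstacle in Step~3 (measurable indexing of signs by a random countable set, and promoting from a.e.~$u$ to a genuine flow); this is precisely the point the paper flags as requiring ``the technicalities involved in indexing an i.i.d.\ sequence by this random countable set'' (footnote referring to \cite{maazoun17BrownianPermuton}). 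So your outline matches the heuristic the paper has in mind, but the paper makes no claim that these steps have been carried out.
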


The other extremal case $\rho = -1$ gives the following SDEs defined for all $u\in[0,1]$,
\begin{equation}\label{eq:flow_SDE_gen_anti_Tanaka}
\begin{cases}
d\conti Z_{-1,q}^{(u)}(t) = d \bm e(t)+(2q-1)\cdot d\conti L^{\conti Z_{-1,q}^{(u)}}(t),& t\geq u,\\
\conti Z_{-1,q}^{(u)}(t)=0,&  t\leq u.
\end{cases} 
\end{equation}
This equation (when $\bm e$ is replaced by a Brownian motion) is Harrison and Shepp's equation (\cite{MR606993}) defining the skew Brownian motion, whose solutions are pathwise unique \cite{MR2280299}, and whose coalescing flow was studied by Burdzy and Kaspi (see \cite{MR2094439} and the references therein). 

All the discussions above make us conjecture the following.

	\begin{conj}\label{conj:conj_rho_gen}
		Existence and pathwise uniqueness for the solutions of the SDEs in \cref{eq:flow_SDE_gen} holds for all $\rho \in [-1,1)$ and $q\in [0,1]$.
	\end{conj}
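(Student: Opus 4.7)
The plan is first to establish existence and pathwise uniqueness for the analogous SDE driven by a two-dimensional Brownian motion $\conti W_\rho = (\conti X_\rho, \conti Y_\rho)$ of correlation $\rho \in [-1,1)$, generalizing \cref{thm:ext_and_uni} by the addition of the skew term $(2q-1)\,d\conti L^{Z}(t)$. Once this is proved, the result can be transferred to the Brownian-excursion setting by the absolute-continuity argument that is already used to deduce \cref{thm:ext_and_uni_excursion} from \cref{thm:ext_and_uni} (and whose technical details are worked out in \cref{sec:appendix}).

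For existence in the Brownian-motion case, I would proceed by regularization. Replace the indicators $\idf_{\{Z>0\}}, \idf_{\{Z\leq 0\}}$ by smooth cut-offs $\chi_\varepsilon, 1-\chi_\varepsilon$ and the local time by its occupation-time approximation $\frac{1}{2\varepsilon}\idf_{\{|Z|<\varepsilon\}}\,dt$; for $\varepsilon>0$ fixed the resulting SDE has Lipschitz coefficients and hence admits a unique strong solution $Z^\varepsilon$. Standard moment bounds (using $|\chi_\varepsilon|\leq 1$ and the Burkholder--Davis--Gundy inequality applied to both driving coordinates) show that $(Z^\varepsilon)_{\varepsilon>0}$ is tight in $\mathcal C([0,T],\R)$ together with its approximate local time; any subsequential limit is a solution of the original SDE by the continuity of the Tanaka--Meyer local time. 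For pathwise uniqueness, I would adapt the Le Gall--Yamada--Watanabe technique already used in \cite{MR3098074, MR3882190} for the case $q=1/2$. Given two solutions $Z^1, Z^2$ with the same driver, write the semimartingale decomposition of $\Delta = Z^1 - Z^2$; the martingale part has bracket driven by $\langle \conti X_\rho - \conti Y_\rho\rangle_t = 2(1-\rho)t$, which is strictly positive precisely because $\rho < 1$. Applying a $C^2$-approximation of $|x|$ as in the Le Gall method and using that the Tanaka formula contributions from $d\conti L^{Z^1}-d\conti L^{Z^2}$ vanish in the limit (exactly as in the Harrison--Shepp uniqueness proof, see \cite{MR2280299}), one concludes $\Delta\equiv 0$.

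The transfer from Brownian motion to Brownian excursion proceeds in two stages, exactly as for the case $(\rho,q)=(-1/2,1/2)$ treated in \cref{thm:ext_and_uni_excursion}: on any sub-interval $[\varepsilon,1-\varepsilon]$ the law of $\conti E_\rho$ is absolutely continuous with respect to the law of $\conti W_\rho$ conditioned to stay in the non-negative quadrant, so strong existence and pathwise uniqueness immediately transfer from the Brownian case; one then uses the Markov property to extend continuously across the endpoints. The joint measurability in $(\omega,u)$ that appears in the statement of \cref{thm:ext_and_uni_excursion} is obtained as there by constructing $\conti Z^{(u)}_{\rho,q}$ from the solution mapping applied to the shifted excursion.

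The main obstacle will be the uniqueness step, which has to combine two distinct mechanisms that, in the existing literature, are treated separately: the sign-switch of the diffusion coefficient (perturbed Tanaka) and the local-time asymmetry at zero (Harrison--Shepp skew). In the pure perturbed-Tanaka case ($q=1/2$) uniqueness relies critically on $\rho<1$, and in the pure skew-BM case ($\conti X_\rho=\conti Y_\rho$) on delicate cancellations between the local times of $Z^1$ and $Z^2$. Making both arguments cohabit requires an auxiliary scale function $\phi_q$ that absorbs the skew term in the $\{Z>0\}$ and $\{Z\leq 0\}$ regimes simultaneously, and then redoing the Le Gall estimate for $\phi_q(Z^1)-\phi_q(Z^2)$; the fact that $\phi_q$ is only piecewise linear (so that $\phi_q'$ has a jump at zero) is precisely what forces the use of local-time-based techniques rather than purely martingale ones. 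A secondary difficulty, specific to the excursion case when $q\neq 1/2$, will be proving the analogue of \cref{lem:local_time_does_not_disappear} for $\conti L^{\conti Z^{(u)}_{\rho,q}}$ near time $1$ without access to the discrete combinatorial estimates that were available for Baxter permutations; this should be achievable by a direct time-reversal argument for the skew SDE, but it is the step where I expect the most technical work.
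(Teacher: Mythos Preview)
The statement you are attempting to prove is labeled in the paper as a \emph{Conjecture}, not a theorem. The paper contains no proof of it. In fact, the section in which it appears (\cref{sect:skew_perm}) opens with the explicit disclaimer: ``\emph{In this section we present some possible future projects and several conjectures. There are no rigorous mathematical results here.}'' So there is no paper proof to compare your proposal against; the author lists this as an open problem.

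As for the substance of your sketch: the overall architecture (prove strong existence and pathwise uniqueness for the Brownian-motion-driven SDE first, then transfer to the excursion via the absolute-continuity machinery of \cref{sec:appendix}) is the natural one, and mirrors exactly how the paper handles the special case $q=1/2$ in passing from \cref{thm:ext_and_uni} to \cref{thm:ext_and_uni_excursion}. Your honest identification of the main obstacle is also accurate: the uniqueness argument must simultaneously handle the sign-switch in the diffusion coefficient (which in \cite{MR3098074,MR3882190} relies on $\rho<1$) and the local-time push (which in the Harrison--Shepp analysis relies on the driver being one-dimensional). The piecewise-linear scale function $\phi_q$ you propose is the standard device for removing the skew term, but after applying it the transformed process satisfies an SDE whose diffusion coefficient has a jump discontinuity at zero \emph{and} depends on two correlated drivers; whether the Le Gall estimate survives this combination is precisely the open point, and your sketch does not resolve it --- it merely restates the difficulty. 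Likewise, the ``secondary difficulty'' you flag near time $1$ is not a minor technicality: the paper's proof of \cref{lem:local_time_does_not_disappear} is genuinely combinatorial and model-specific, and a purely continuous time-reversal argument for the skew SDE with $q\neq 1/2$ would itself be new. In short, your proposal is a reasonable research plan, but it is not a proof, and the paper does not claim one either.
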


Note that a corollary of Conjectures \ref{conj:conj_rho_1} and \ref{conj:conj_rho_gen} is the following.

	\begin{conj}
		The skew Brownian permuton $\bm \mu_{\rho,q}$ is well-defined for all $(\rho,q)\in[-1,1]\times[0,1]$.
	\end{conj}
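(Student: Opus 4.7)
The plan is to establish well-definedness of $\bm{\mu}_{\rho,q}$ by mimicking, for the full parameter range $(\rho,q)\in[-1,1]\times[0,1]$, the construction of the Baxter permuton $\bm{\mu}_B=\bm{\mu}_{-1/2,1/2}$ carried out in \cref{defn:Baxter_perm} and validated by \cref{prop:total_order} and \cref{lem:boundary}. Concretely, I would split the argument into the two regimes covered by the hypotheses. For $\rho\in[-1,1)$ and $q\in[0,1]$, I would invoke \cref{conj:conj_rho_gen} to obtain, for each fixed $u\in[0,1]$, a unique strong solution $\conti{Z}_{\rho,q}^{(u)}$ of \cref{eq:flow_SDE_gen} on $[u,1)$, extended continuously by zero on $[0,u]$. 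A standard adaptation of \cref{thm:ext_and_uni_excursion} (using pathwise uniqueness plus an absolute-continuity argument between $\conti{E}_\rho$ and a suitably shifted two-dimensional Brownian motion of correlation $\rho$, exactly as done in \cite[Section 4]{borga2020scaling}) would then upgrade this family into a jointly measurable map $(\omega,u)\mapsto \conti{Z}_{\rho,q}^{(u)}$. For $\rho=1$ and $q=1-p$, I would instead use the explicit recipe from \cref{conj:conj_rho_1}: sample a one-dimensional Brownian excursion $\bm{e}$, independent biased signs $\bm{s}(\ell)\in\{\oplus,\ominus\}$ at the local minima of $\bm{e}$, and define $\conti{Z}_{1,1-p}^{(u)}(t):=(\bm{e}(t)-\bm{m}(u,t))\bm{s}(\bm{\mu}(u,t))$; joint measurability in $(\omega,u)$ is built into this formula.

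With a jointly measurable coalescent-walk process in hand, the second step is to prove the analog of \cref{prop:total_order}: the binary relation $\leq_{\conti{Z}_{\rho,q}}$ obtained from \cref{eq:cont_coal_to_perm} (with $\conti{Z}_e$ replaced by $\conti{Z}_{\rho,q}$) is antisymmetric and reflexive by construction, and I claim it is transitive off a random set $\bm{A}\subset[0,1]^2$ of zero Lebesgue measure. For $\rho\in[-1,1)$, transitivity should follow because almost surely, for Lebesgue-a.e.\ pair $t<s$, the value $\conti{Z}_{\rho,q}^{(t)}(s)$ is non-zero (the skew Brownian motion spends zero Lebesgue time at $0$), and because two distinct solutions started at $t_1<t_2$ that ever coincide must coalesce (a standard consequence of pathwise uniqueness). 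In the $\rho=1$ case, the explicit formula shows that two trajectories $\conti{Z}_{1,1-p}^{(u_1)}$ and $\conti{Z}_{1,1-p}^{(u_2)}$ coincide from the first time after $\max(u_1,u_2)$ at which $\bm{e}$ revisits its running infimum on $[\min(u_1,u_2),\cdot]$, and transitivity of $\leq_{\conti{Z}_{1,1-p}}$ can be read off directly from the tree structure of the excursion together with the signs $\bm{s}(\ell)$. Then $\varphi_{\conti{Z}_{\rho,q}}$, as defined in \cref{defn:varphi_gener}, is a Lebesgue-measurable function $[0,1]\to[0,1]$ by Fubini, and $\bm{\mu}_{\rho,q}:=(\Id,\varphi_{\conti{Z}_{\rho,q}})_*\mathrm{Leb}$ is a well-defined random Borel probability measure on $[0,1]^2$ with uniform first marginal.

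The third step is to verify uniform second marginal, i.e.\ the analog of part (1) of \cref{lem:boundary}. Equivalently, $\varphi_{\conti{Z}_{\rho,q}}$ should push Lebesgue measure on $[0,1]$ to Lebesgue measure on $[0,1]$. In the Baxter case this followed from the fact that $\leq_{\conti{Z}_e}$ is, modulo a null set, a total order whose associated rank function $\varphi_{\conti{Z}_e}$ is measure-preserving. The same abstract argument works whenever the order is almost surely total on a set of full product measure, which has been secured in the previous step. Combining the three steps yields that $\bm{\mu}_{\rho,q}$ is almost surely a permuton for every $(\rho,q)\in[-1,1]\times[0,1]$, proving the conjecture modulo Conjectures \ref{conj:conj_rho_1} and \ref{conj:conj_rho_gen}.

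The main obstacle I expect is the third step for the degenerate endpoint $\rho=1$ (and symmetrically the delicate boundary case $\rho\to -1$). There, the explicit representation of solutions forces one to handle the countable family of independent signs $\bm{s}(\ell)$ attached to local minima of $\bm{e}$ in a jointly measurable way, and to check that the resulting $\varphi_{\conti{Z}_{1,1-p}}$ is still Lebesgue-preserving despite the lack of pathwise uniqueness and the consequent possibility of exceptional times $u$ where several solutions coexist (cf.\ \cref{rem:no_flow}). Following the strategy used by Maazoun in \cite{maazoun17BrownianPermuton} for the Brownian separable permuton — where exactly this kind of sign-based construction succeeds — should give a template for handling this case, and would in fact provide an independent route towards \cref{conj:Baxt_brow_same}.
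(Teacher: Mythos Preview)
The statement you are attempting to prove is presented in the paper as a \emph{conjecture}, not a theorem; the paper offers no proof. Its entire justification is the sentence immediately preceding the statement: ``Note that a corollary of Conjectures~\ref{conj:conj_rho_1} and~\ref{conj:conj_rho_gen} is the following.'' In other words, the paper simply asserts that well-definedness of $\bm\mu_{\rho,q}$ would follow once existence/pathwise uniqueness of solutions to \cref{eq:flow_SDE_gen} is granted for $\rho\in[-1,1)$ and once the explicit sign-based construction for $\rho=1$ is accepted.

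Your proposal is therefore not really comparable to a ``paper's proof'' but rather a fleshing-out of that one-line corollary claim. You correctly identify the same dependence on Conjectures~\ref{conj:conj_rho_1} and~\ref{conj:conj_rho_gen}, and you go further by sketching the three intermediate steps (joint measurability, the total-order property analogous to \cref{prop:total_order}, and the uniform-marginal property analogous to \cref{lem:boundary}) that the paper leaves entirely implicit. This is a reasonable and honest outline; in particular, your final paragraph explicitly acknowledges that the argument is conditional on the two input conjectures, which is exactly the status the paper assigns to the statement.

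One caution: your Step~3 (``the same abstract argument works whenever the order is almost surely total on a set of full product measure'') is more optimistic than what the paper establishes even in the Baxter case. The proof of \cref{lem:boundary} in \cite{borga2020scaling} is not purely order-theoretic; it uses specific properties of the SDE solutions (in particular that $\conti Z_e^{(t)}(s)\neq 0$ for a.e.\ pair $t<s$ and a comparison argument between trajectories). Extending this to general $(\rho,q)$, especially with the local-time drift term present, would require checking that the analogous non-degeneracy and coalescence properties survive, which is plausible but not automatic from totality of the order alone.
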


Simulations of the skew Brownian permuton for various values of $\rho$ and $q$ can be found in \cref{fig:uyievievbeee} or in \href{https://drive.google.com/drive/folders/1bqDI6fjKdoRuMAoq7qm-0hkyKIMWey8s?usp=sharing}{this link}.
\begin{figure}[htbp]
	\begin{minipage}[c]{0.06\textwidth}
		\centering
		\footnotesize$\rho=$\\
		\footnotesize$-0.995$
	\end{minipage}
	\begin{minipage}[c]{0.15\textwidth}
		\centering
		\footnotesize{$q\approx 0.1$}
		\includegraphics[scale=0.18]{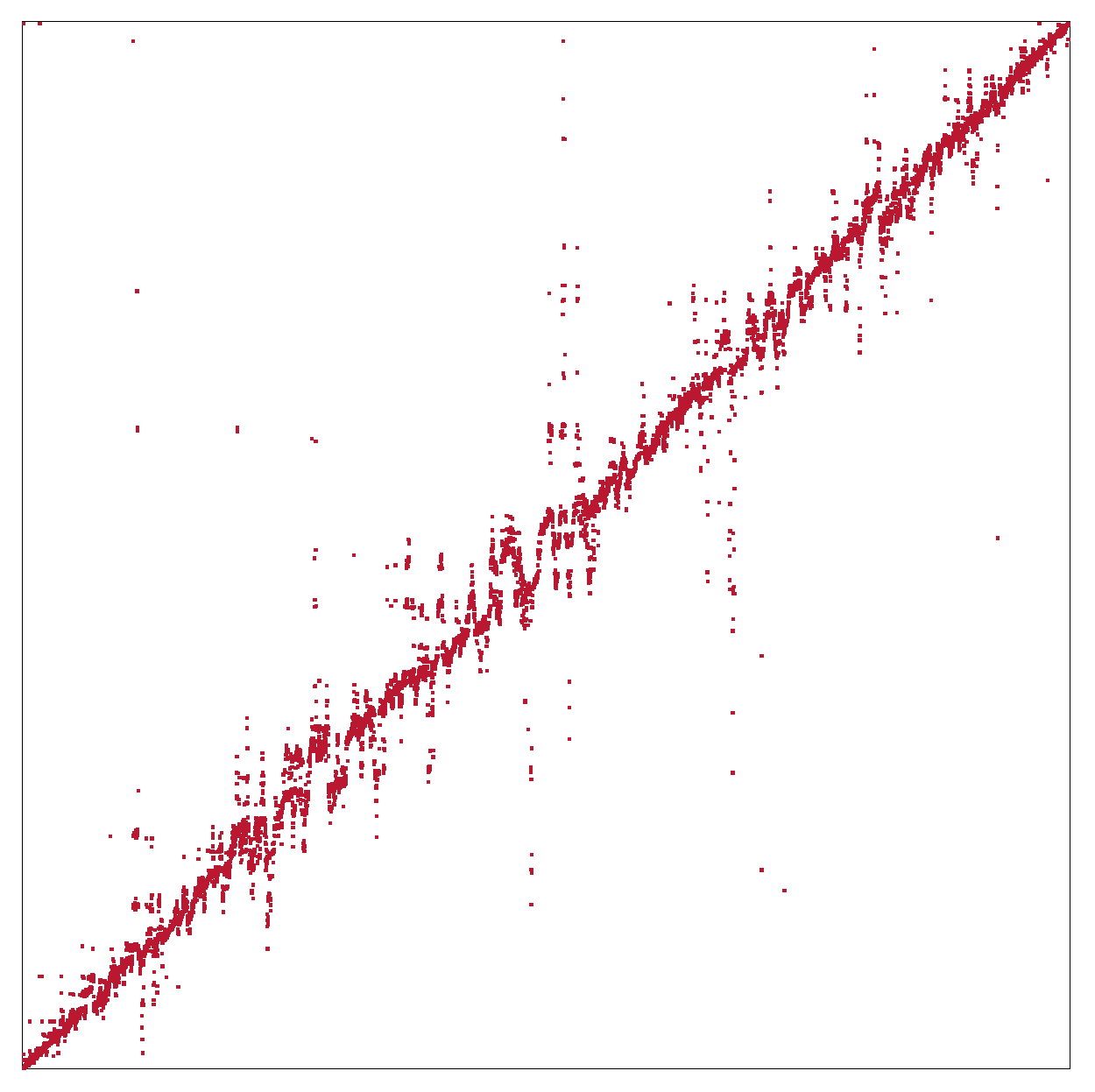}
	\end{minipage}
	\begin{minipage}[c]{0.15\textwidth}
		\centering
		\footnotesize{$q\approx 0.4$}
		\includegraphics[scale=0.18]{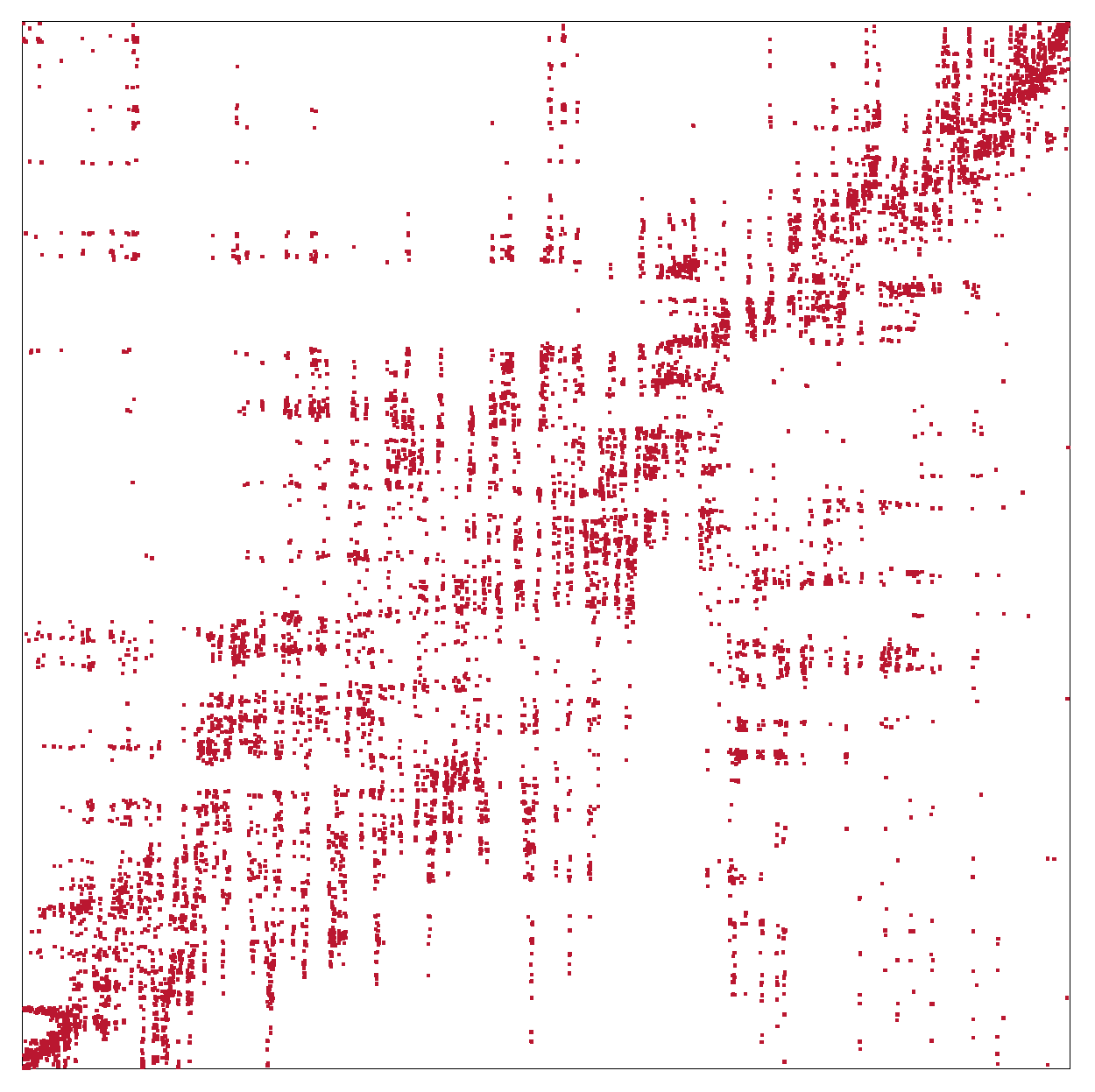}
	\end{minipage}
	\begin{minipage}[c]{0.15\textwidth}
		\centering
		\footnotesize{$q= 0.5$}
		\includegraphics[scale=0.18]{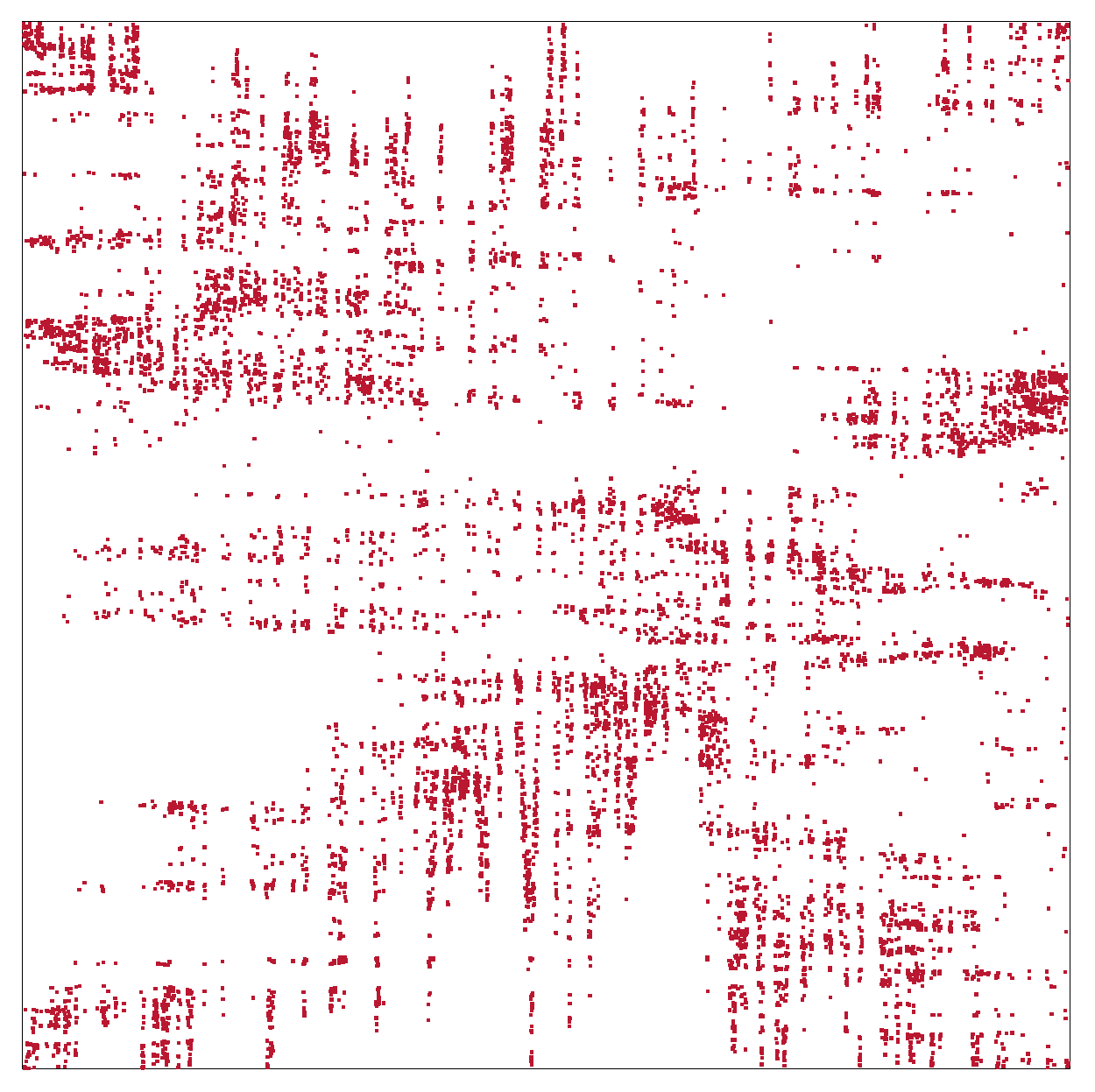}
	\end{minipage}
	\begin{minipage}[c]{0.15\textwidth}
		\centering
		\footnotesize{$q\approx 0.6$}
		\includegraphics[scale=0.18]{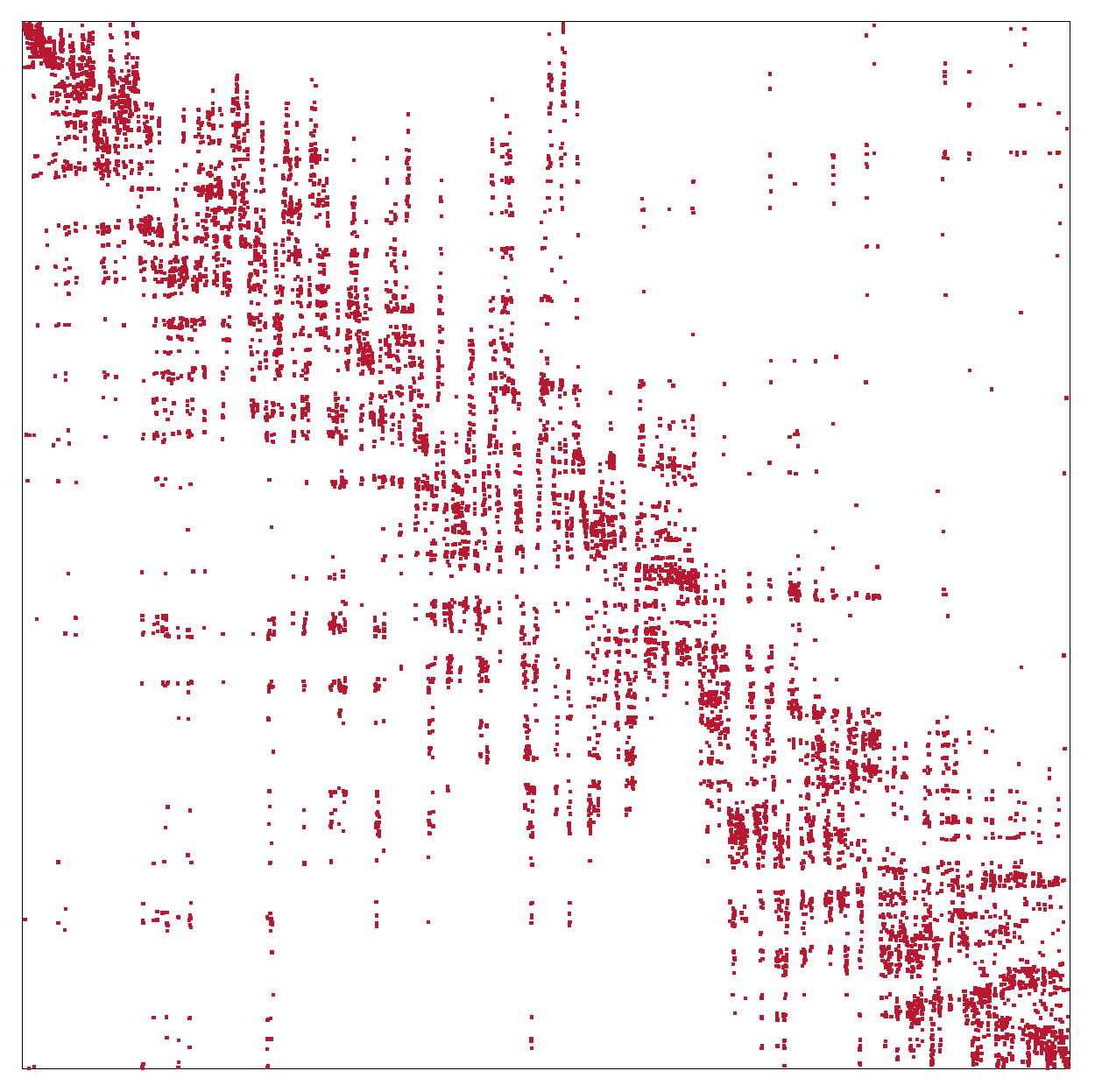}
	\end{minipage}
	\begin{minipage}[c]{0.15\textwidth}
		\centering
		\footnotesize{$q\approx 0.9$}
		\includegraphics[scale=0.18]{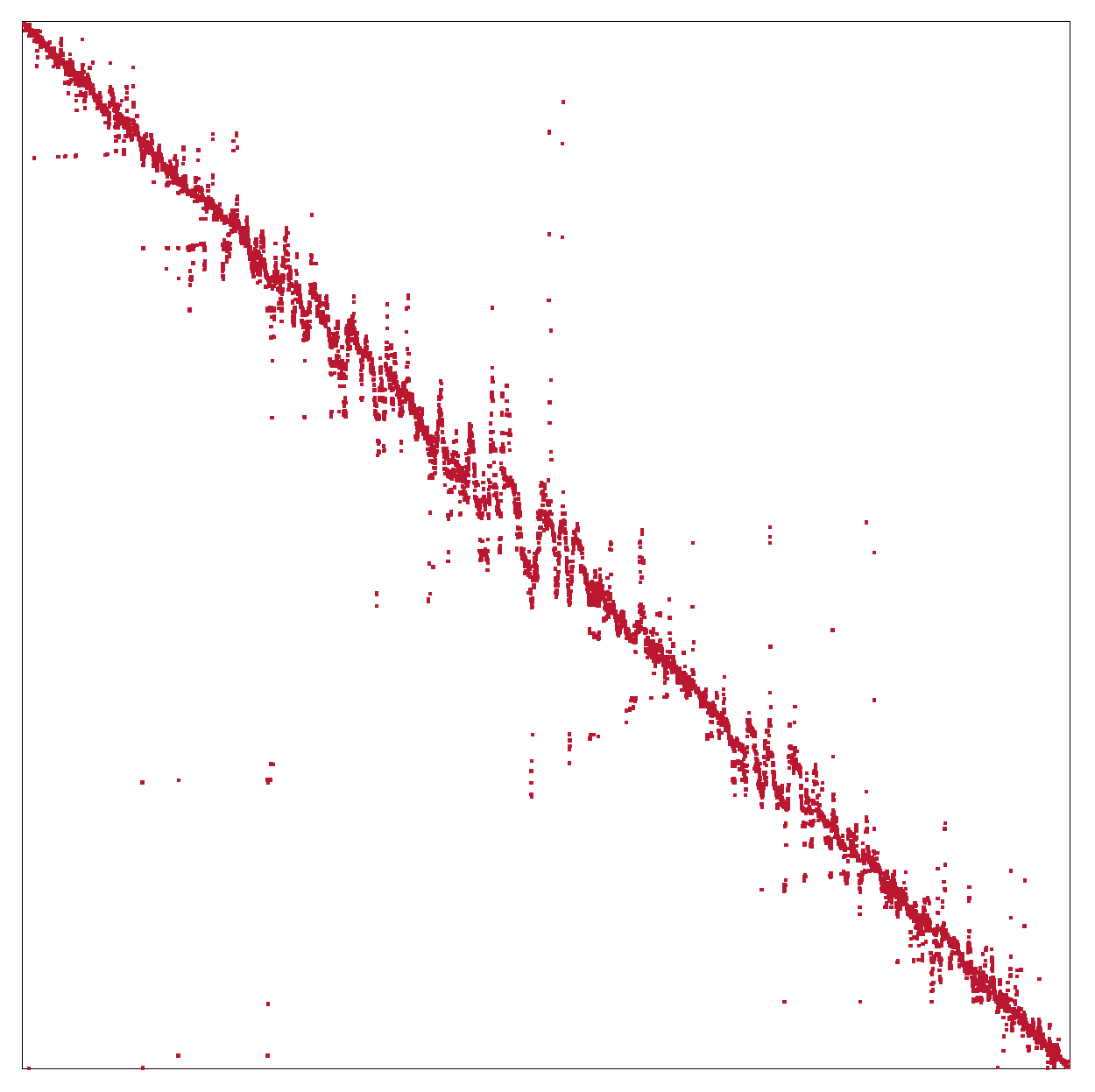}
	\end{minipage}
	\hspace{0.1 cm}
	\begin{minipage}[c]{0.15\textwidth}
		\centering
		\footnotesize{$\conti E_{\rho} = (\conti X_{\rho},\conti Y_{\rho})$}
		\includegraphics[scale=0.18]{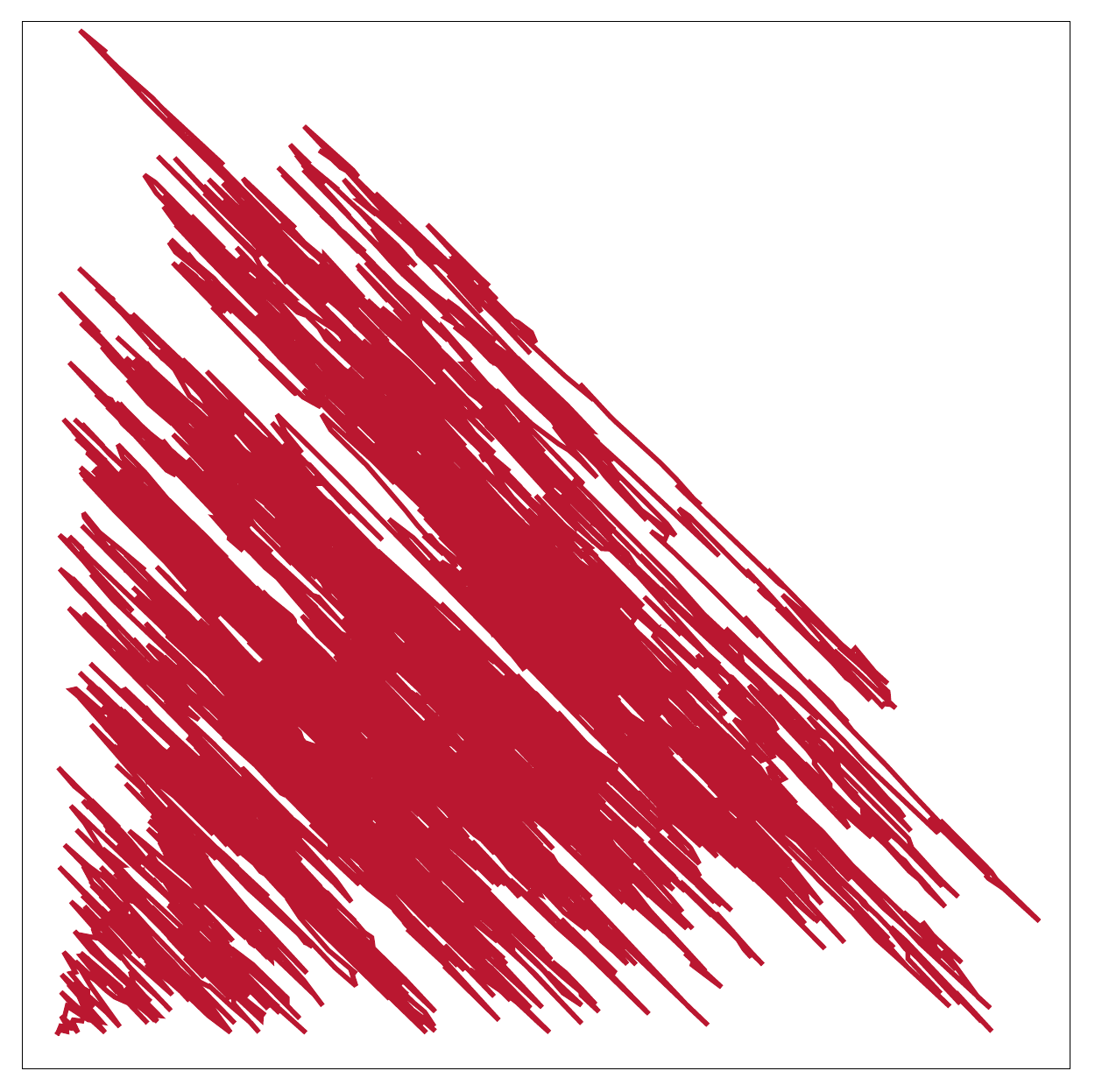}
	\end{minipage}
	
	\begin{minipage}[c]{0.06\textwidth}
		\centering
		\footnotesize$\rho=$\\
		\footnotesize$-0.5$
	\end{minipage}
	\begin{minipage}[c]{0.15\textwidth}
		\centering
		\includegraphics[scale=0.18]{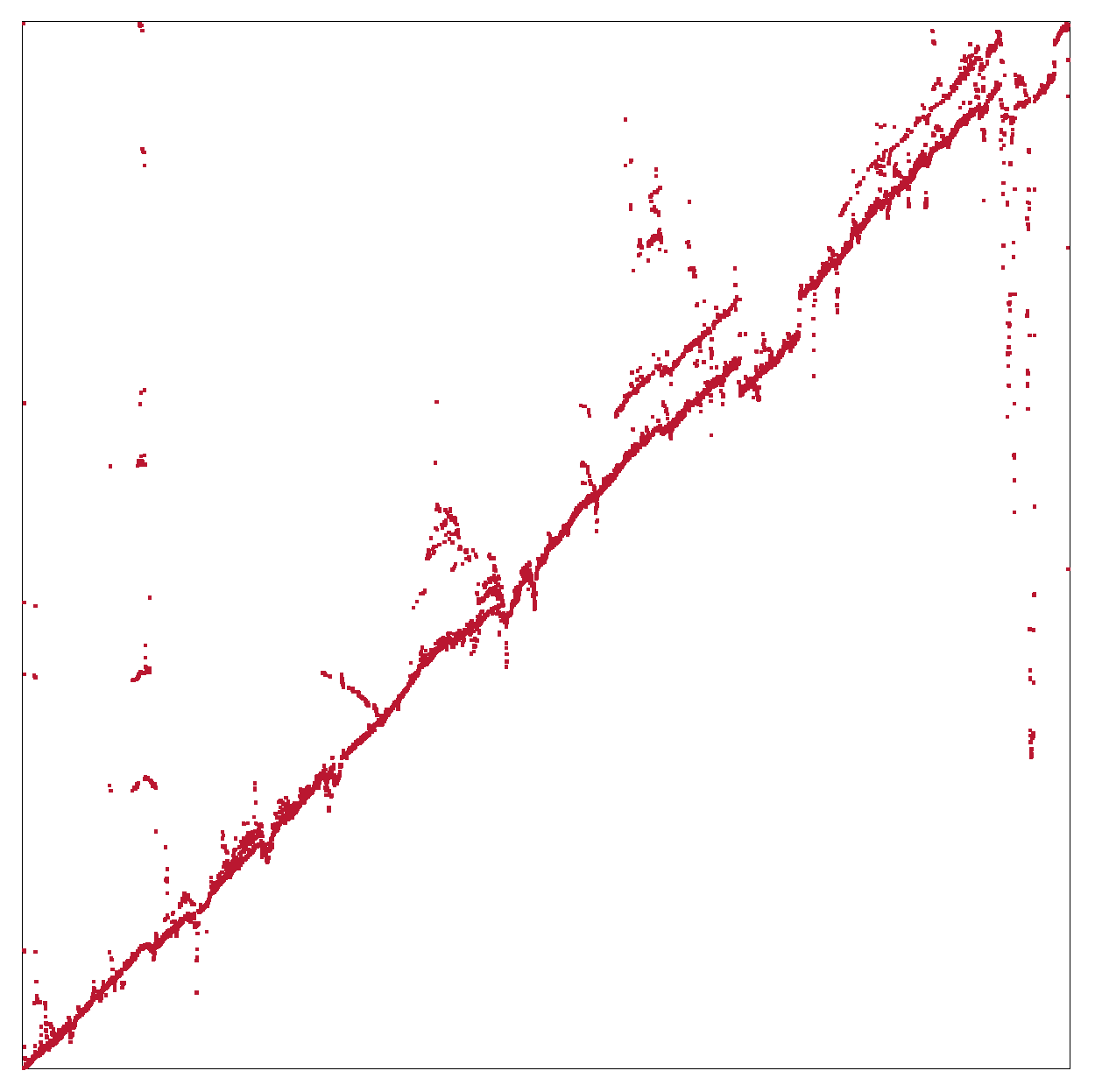}
	\end{minipage}
	\begin{minipage}[c]{0.15\textwidth}
		\centering
		\includegraphics[scale=0.18]{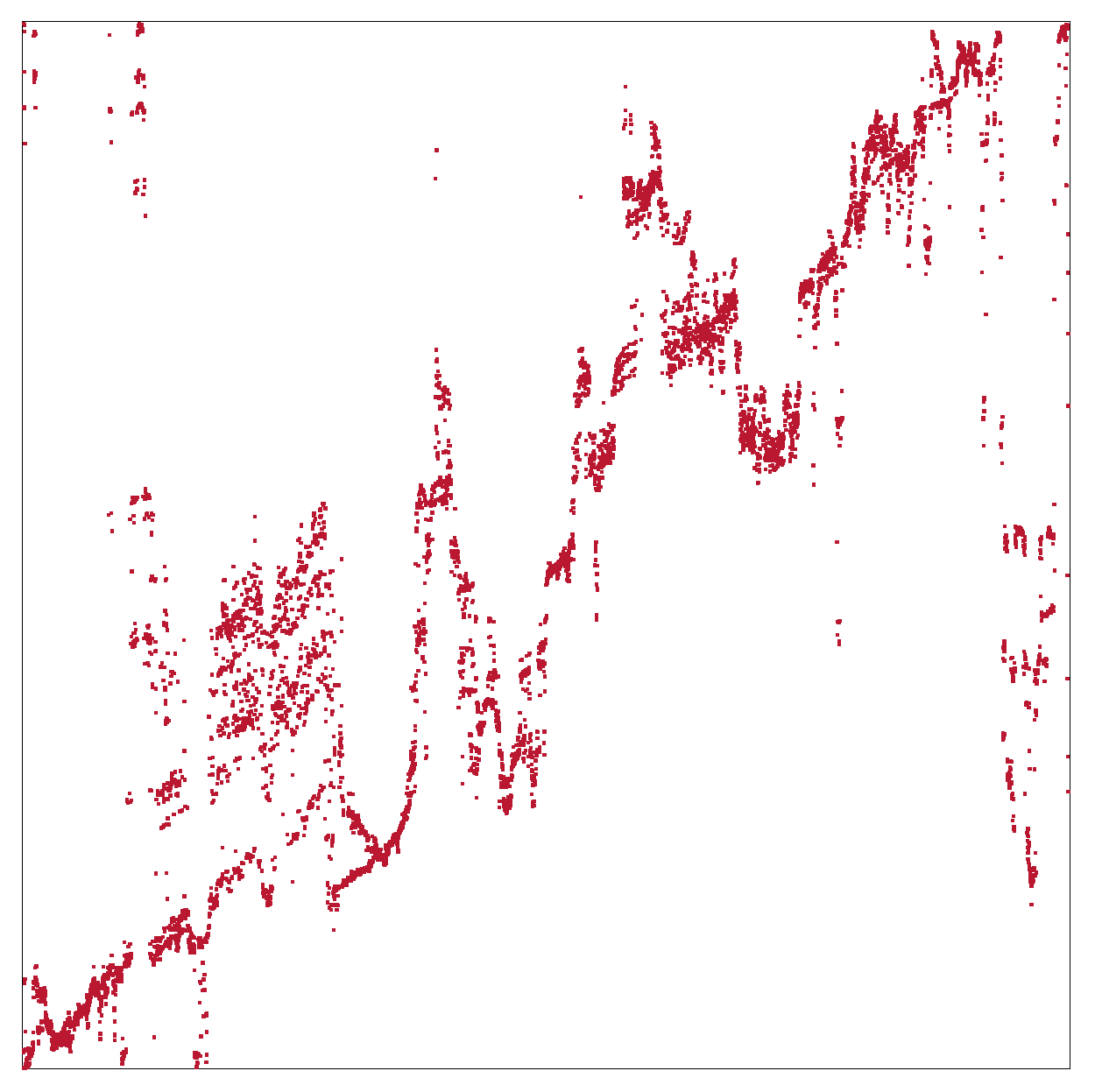}
	\end{minipage}
	\begin{minipage}[c]{0.15\textwidth}
		\centering
		\includegraphics[scale=0.18]{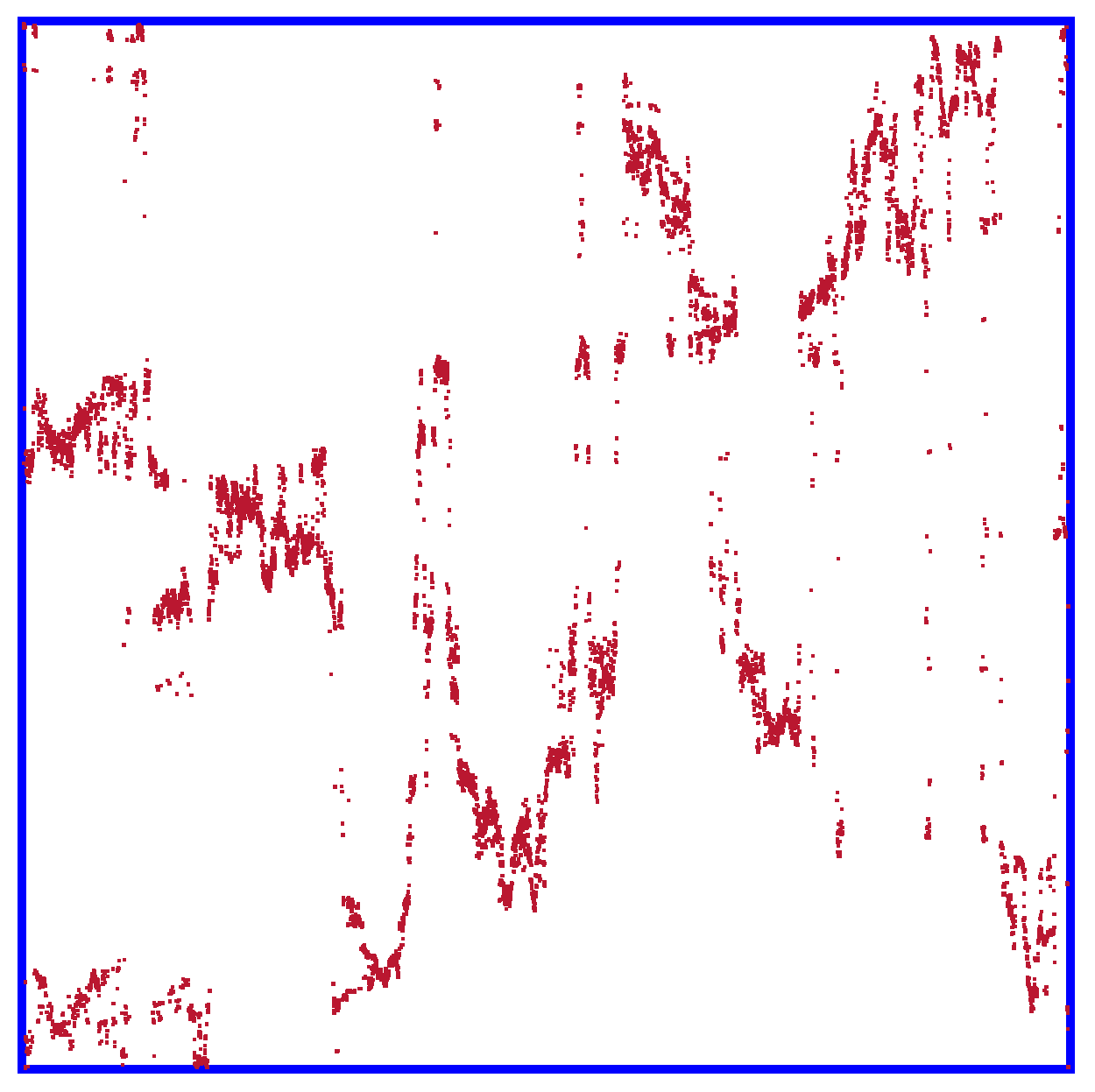}
	\end{minipage}
	\begin{minipage}[c]{0.15\textwidth}
		\centering
		\includegraphics[scale=0.18]{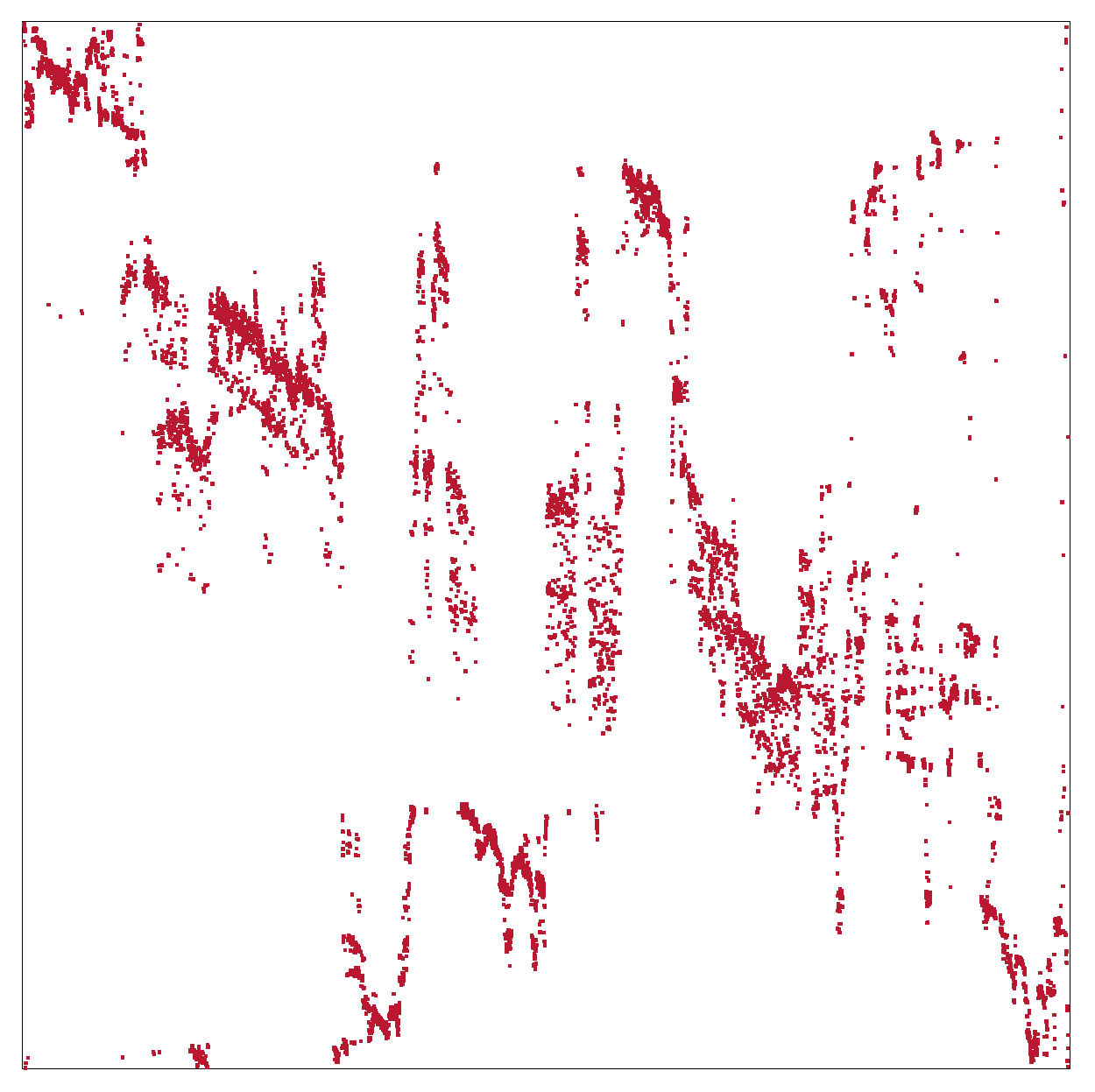}
	\end{minipage}
	\begin{minipage}[c]{0.15\textwidth}
		\centering
		\includegraphics[scale=0.18]{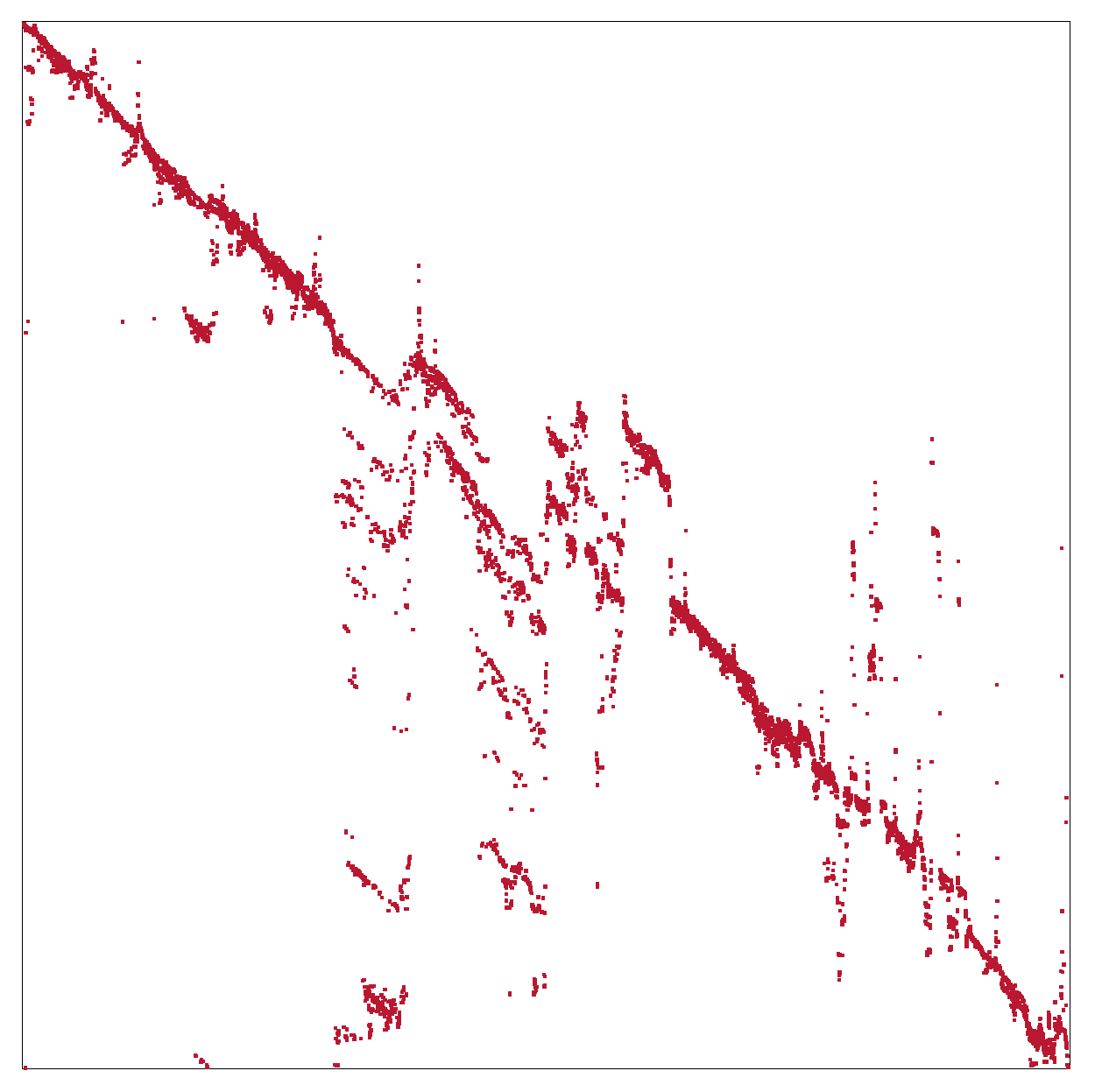}
	\end{minipage}
	\hspace{0.1 cm}
	\begin{minipage}[c]{0.15\textwidth}
		\centering
		\includegraphics[scale=0.18]{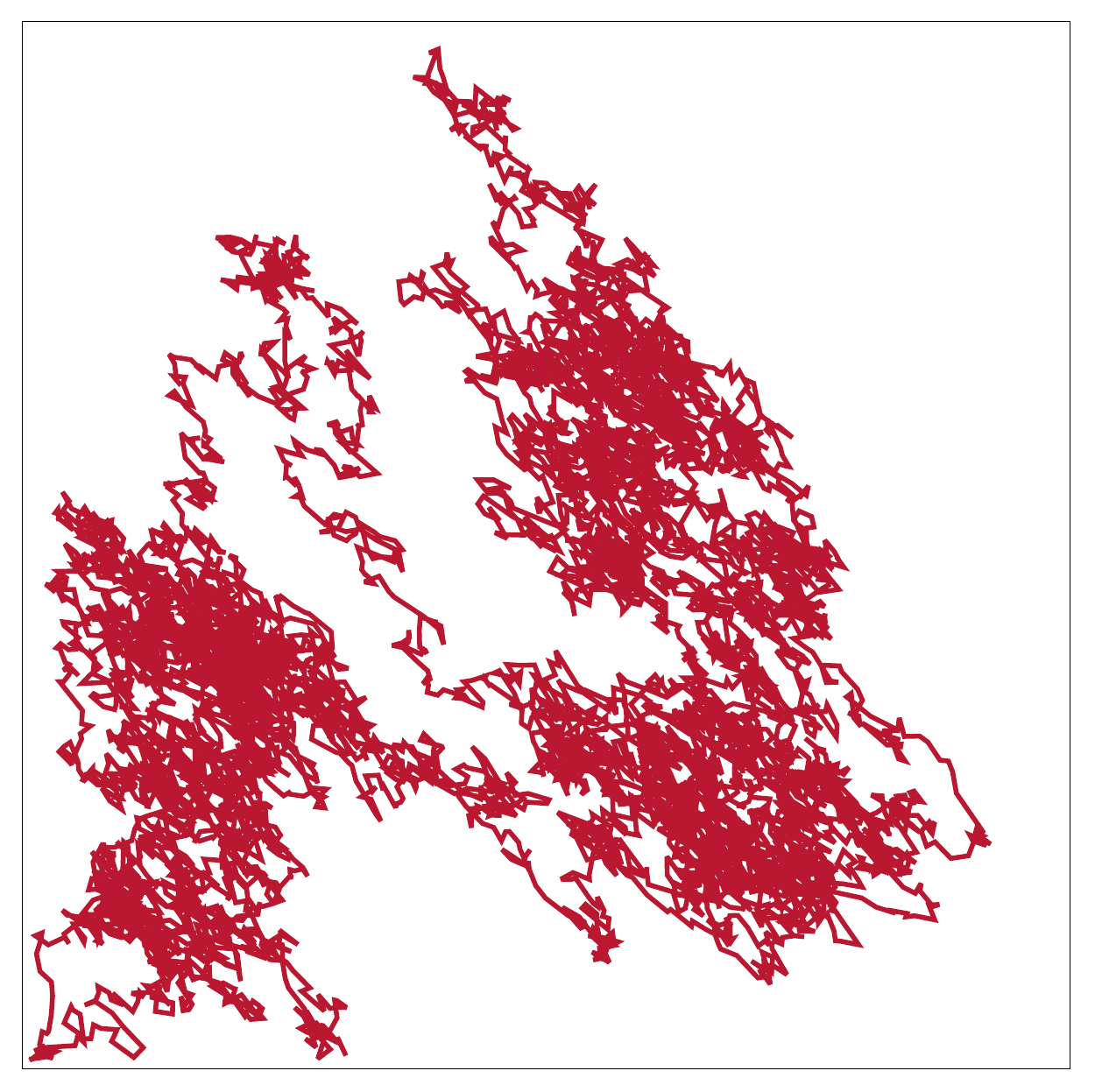}
	\end{minipage}
	
	\begin{minipage}[c]{0.06\textwidth}
		\centering
		\footnotesize$\rho=0$
	\end{minipage}
	\begin{minipage}[c]{0.15\textwidth}
		\centering
		\includegraphics[scale=0.18]{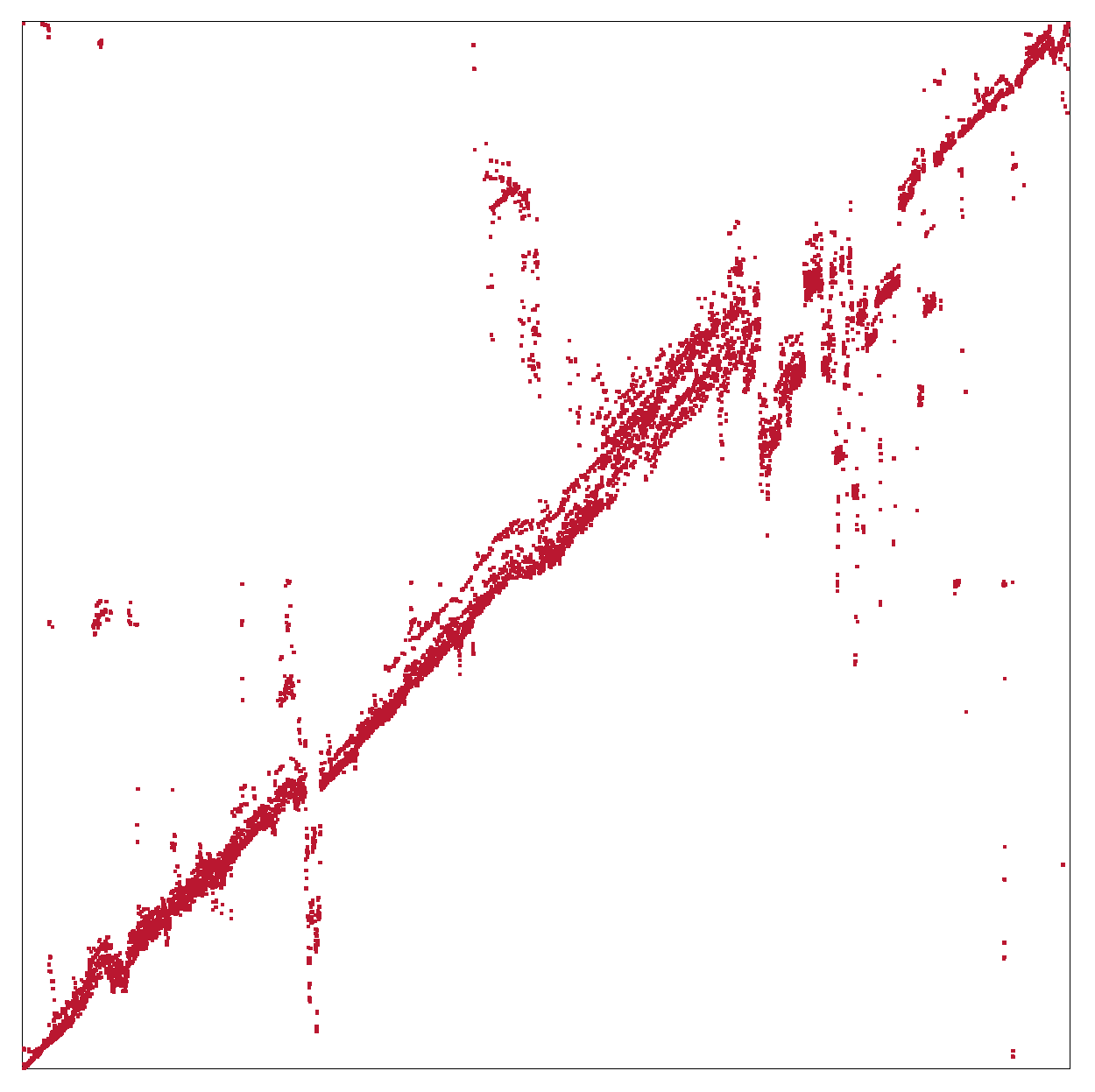}
	\end{minipage}
	\begin{minipage}[c]{0.15\textwidth}
		\centering
		\includegraphics[scale=0.18]{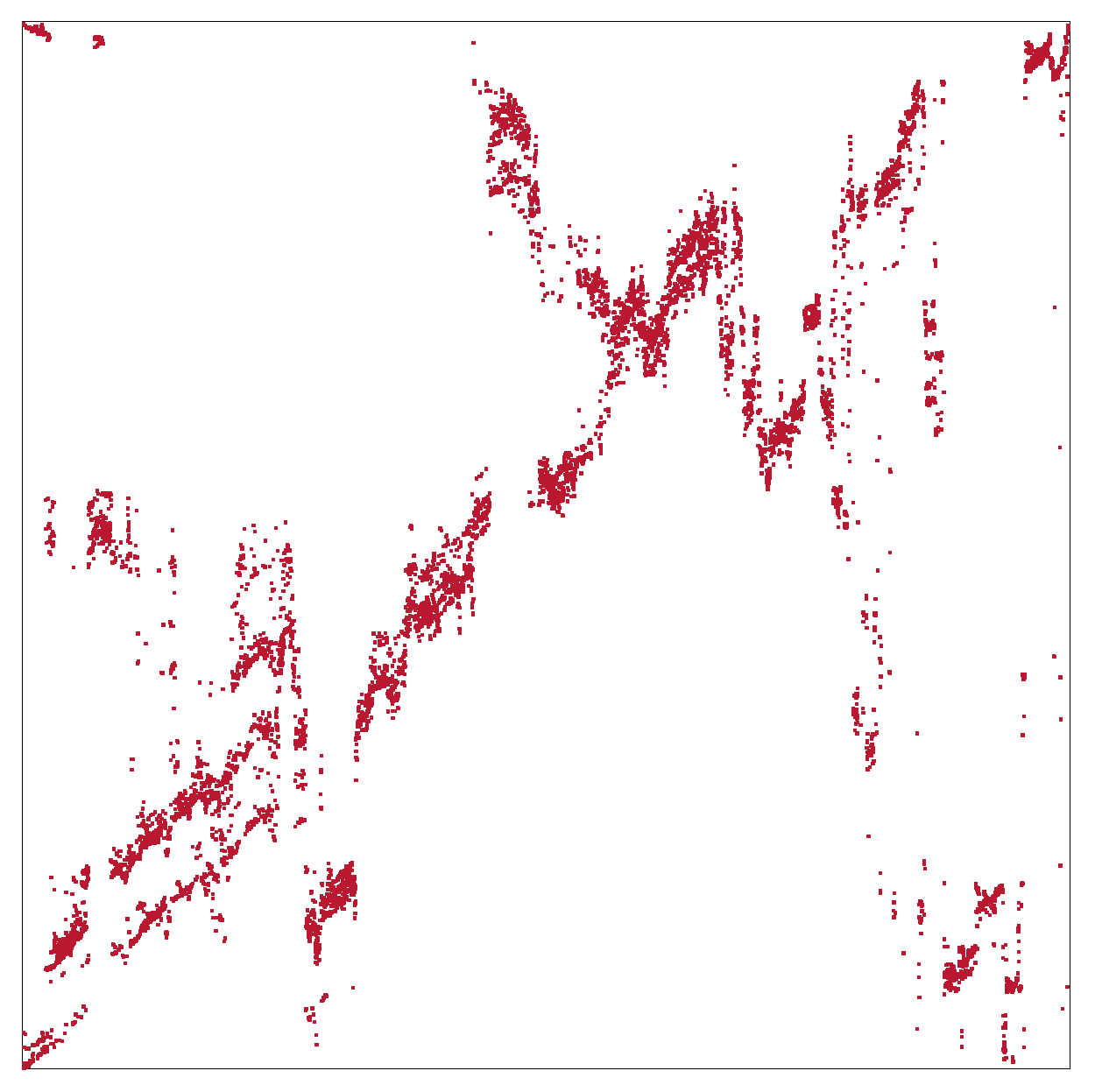}
	\end{minipage}
	\begin{minipage}[c]{0.15\textwidth}
		\centering
		\includegraphics[scale=0.18]{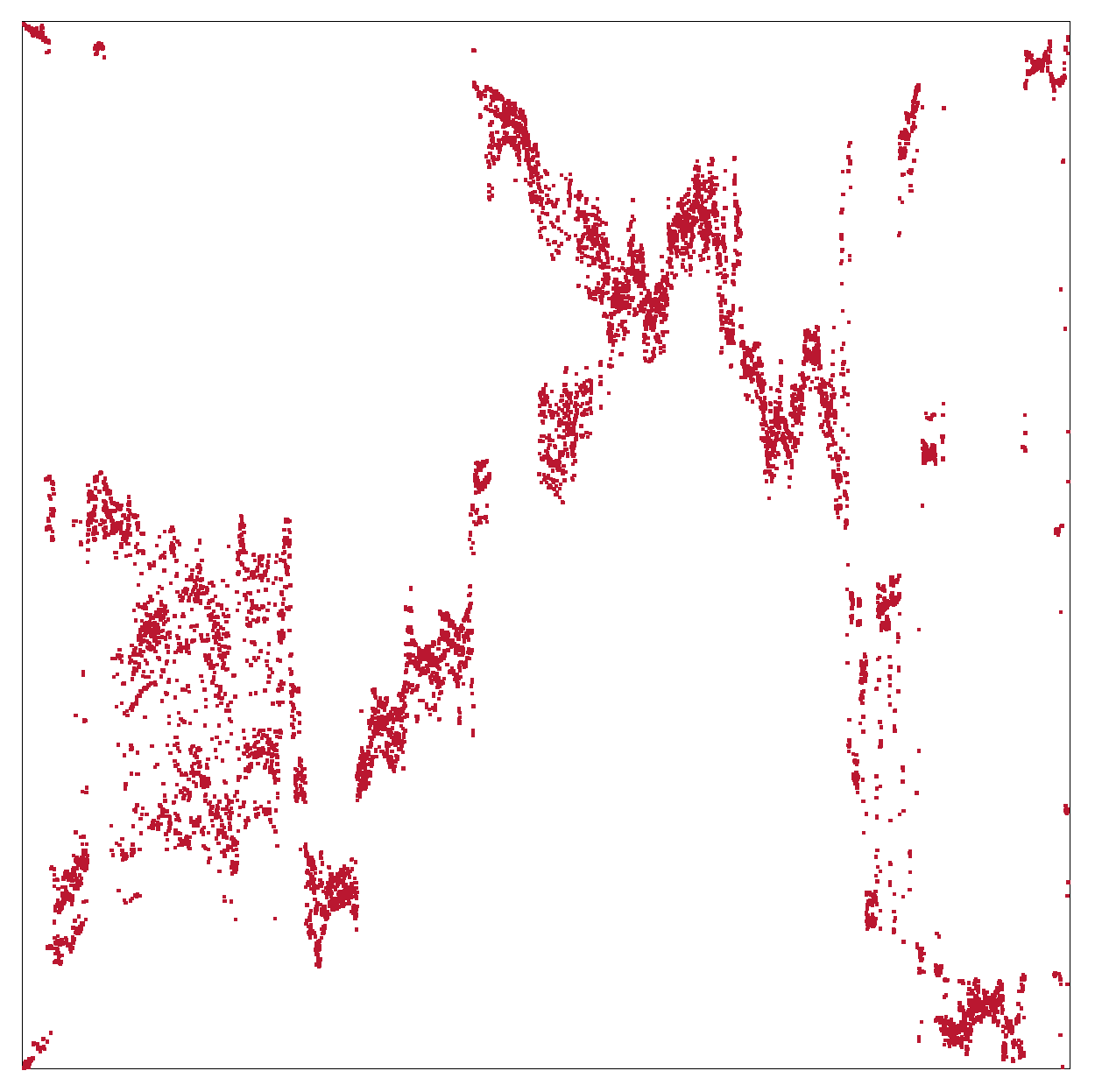}
	\end{minipage}
	\begin{minipage}[c]{0.15\textwidth}
		\centering
		\includegraphics[scale=0.18]{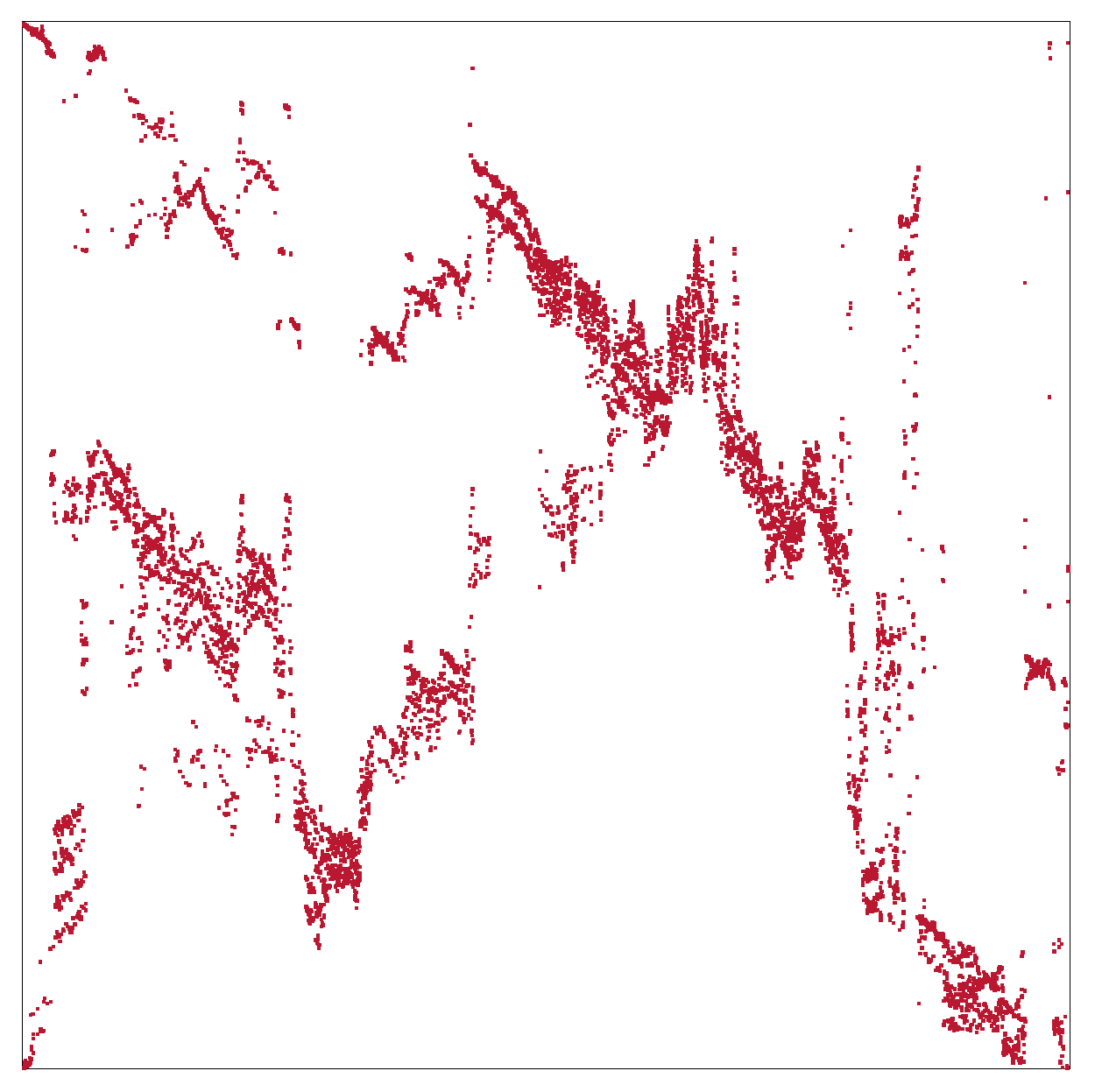}
	\end{minipage}
	\begin{minipage}[c]{0.15\textwidth}
		\centering
		\includegraphics[scale=0.18]{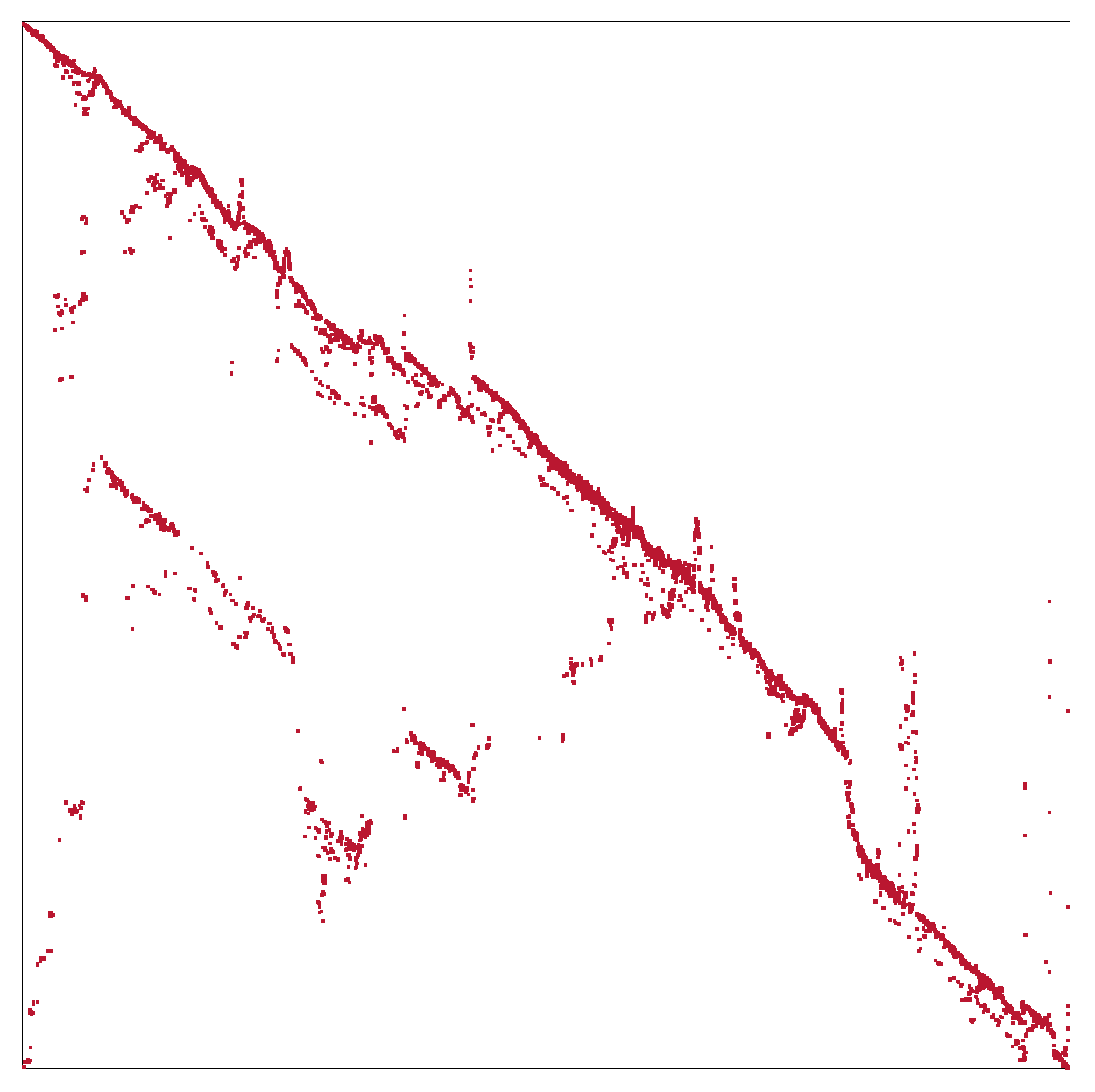}
	\end{minipage}
	\hspace{0.1 cm}
	\begin{minipage}[c]{0.15\textwidth}
		\centering
		\includegraphics[scale=0.18]{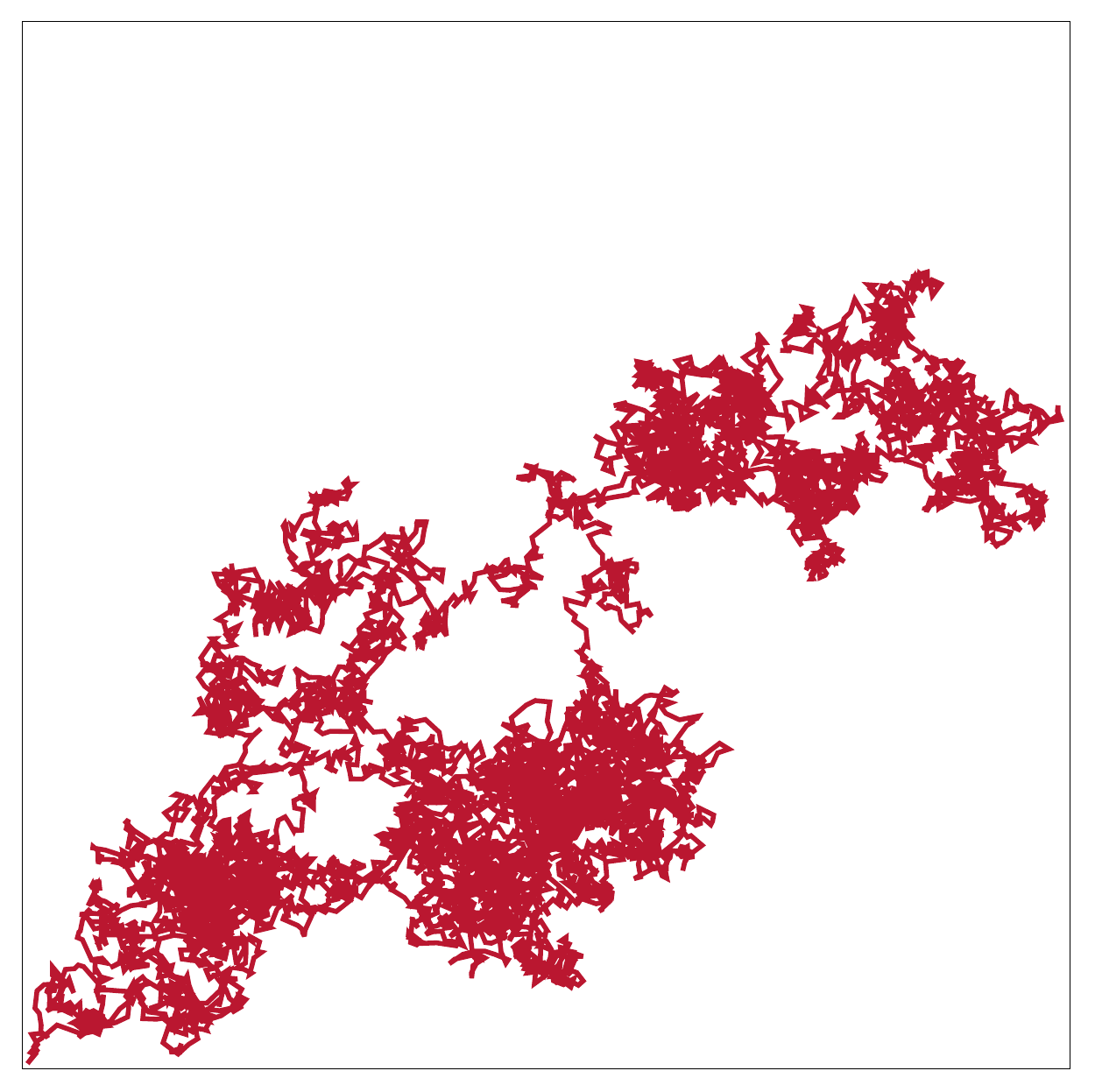}
	\end{minipage}
	
	\begin{minipage}[c]{0.06\textwidth}
		\centering
		\footnotesize$\rho=$\\
		\footnotesize$0.5$
	\end{minipage}
	\begin{minipage}[c]{0.15\textwidth}
		\centering
		\includegraphics[scale=0.18]{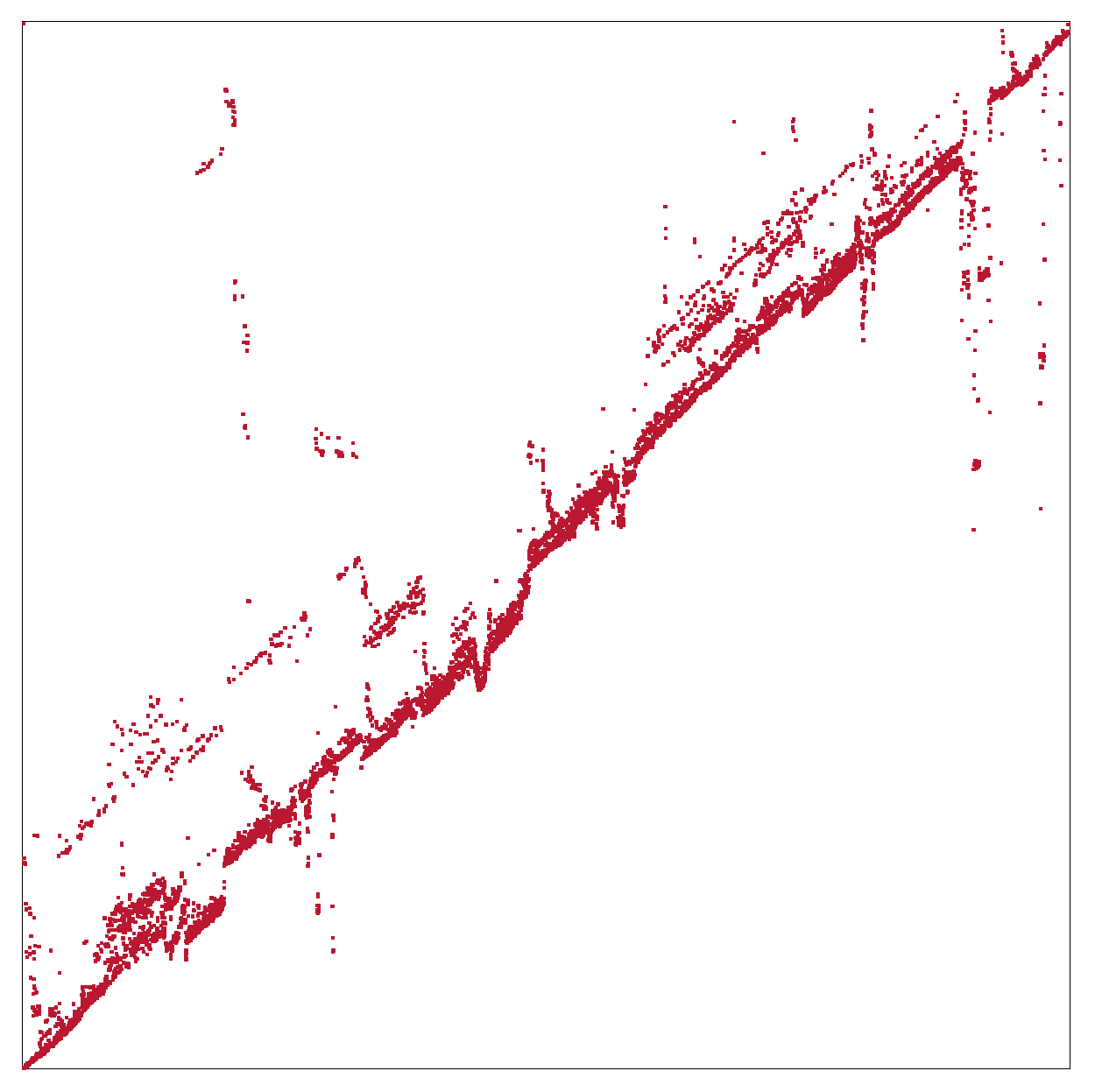}
	\end{minipage}
	\begin{minipage}[c]{0.15\textwidth}
		\centering
		\includegraphics[scale=0.18]{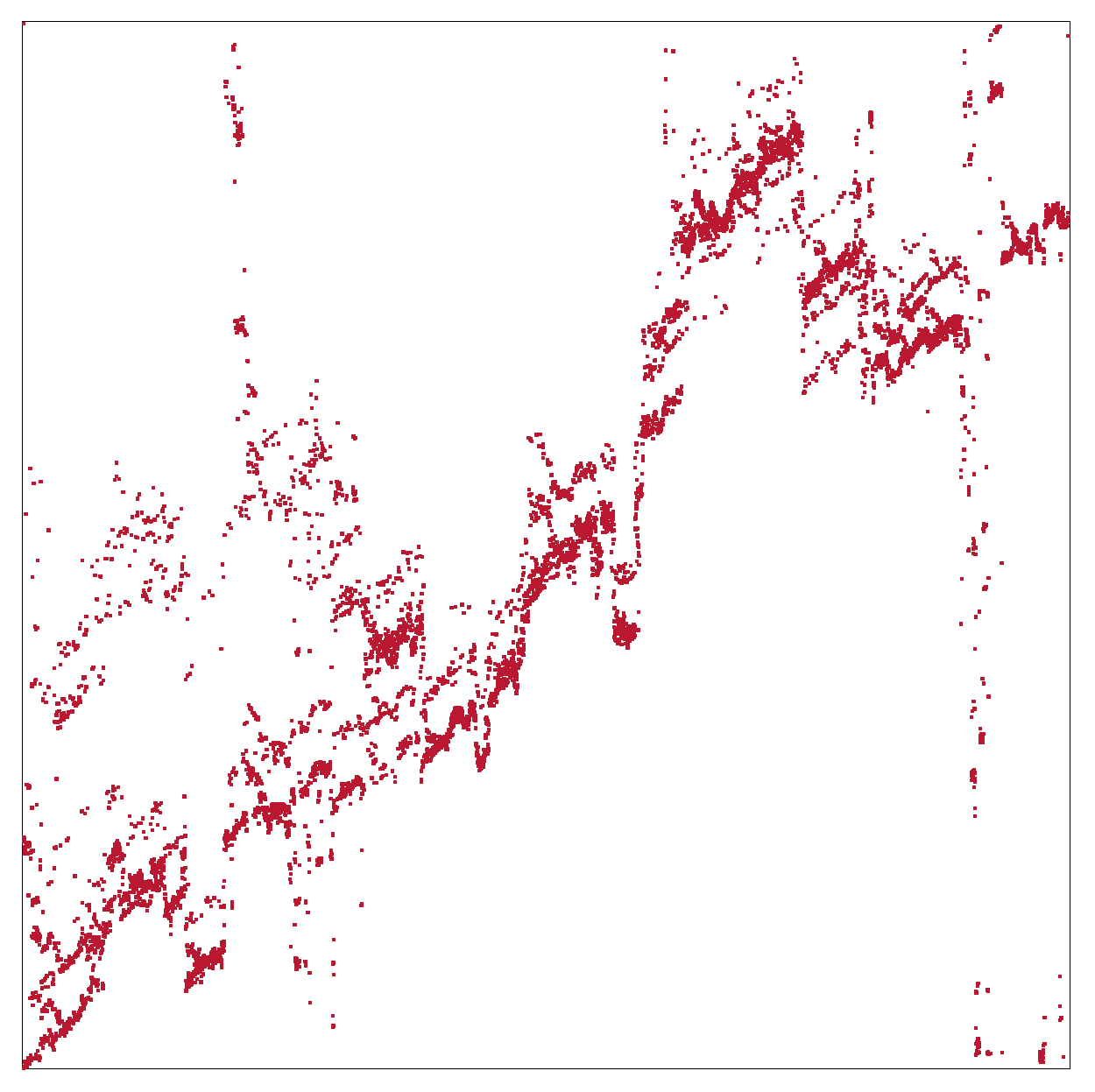}
	\end{minipage}
	\begin{minipage}[c]{0.15\textwidth}
		\centering
		\includegraphics[scale=0.18]{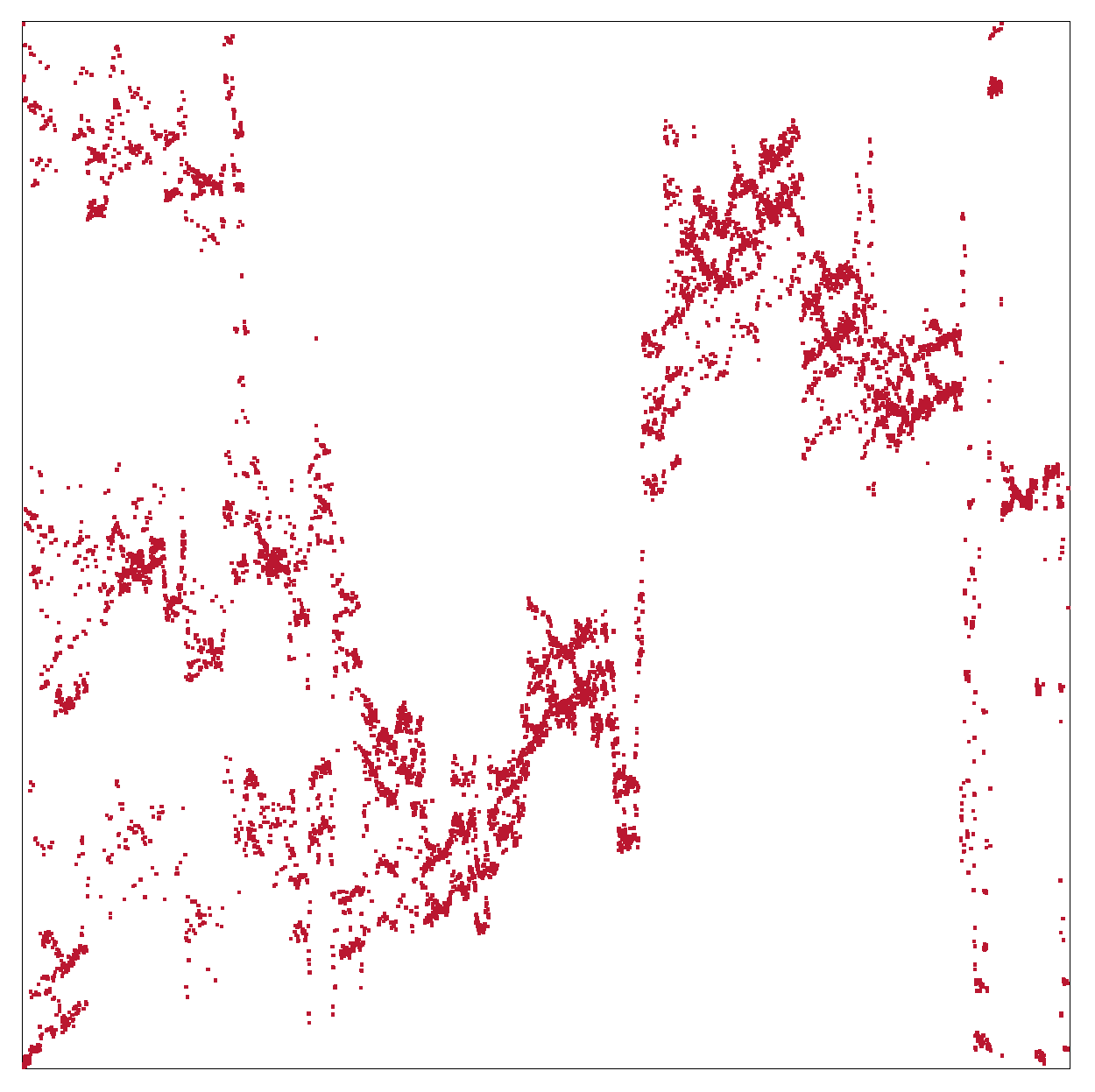}
	\end{minipage}
	\begin{minipage}[c]{0.15\textwidth}
		\centering
		\includegraphics[scale=0.18]{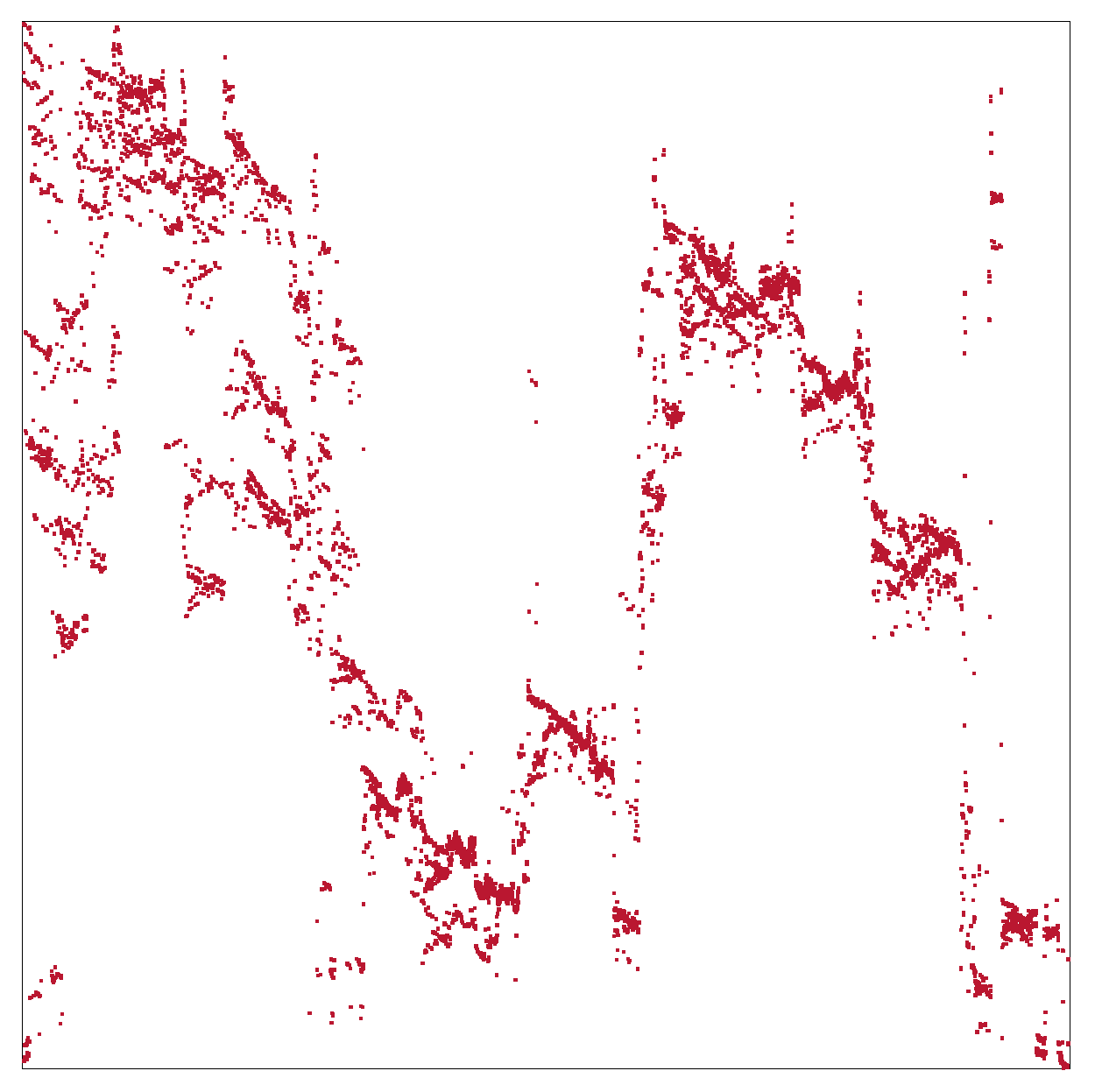}
	\end{minipage}
	\begin{minipage}[c]{0.15\textwidth}
		\centering
		\includegraphics[scale=0.18]{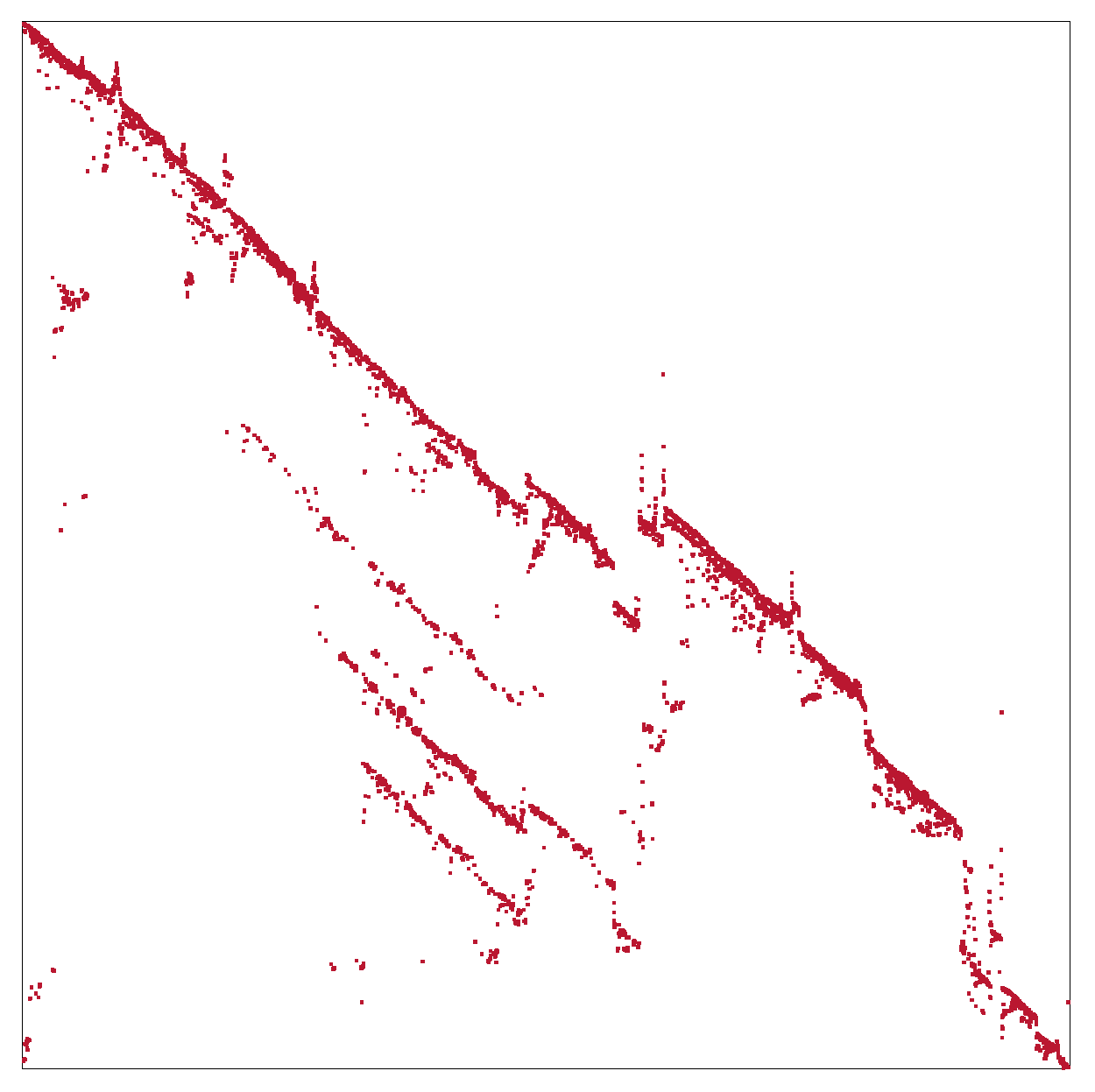}
	\end{minipage}
	\hspace{0.1 cm}
	\begin{minipage}[c]{0.15\textwidth}
		\centering
		\includegraphics[scale=0.18]{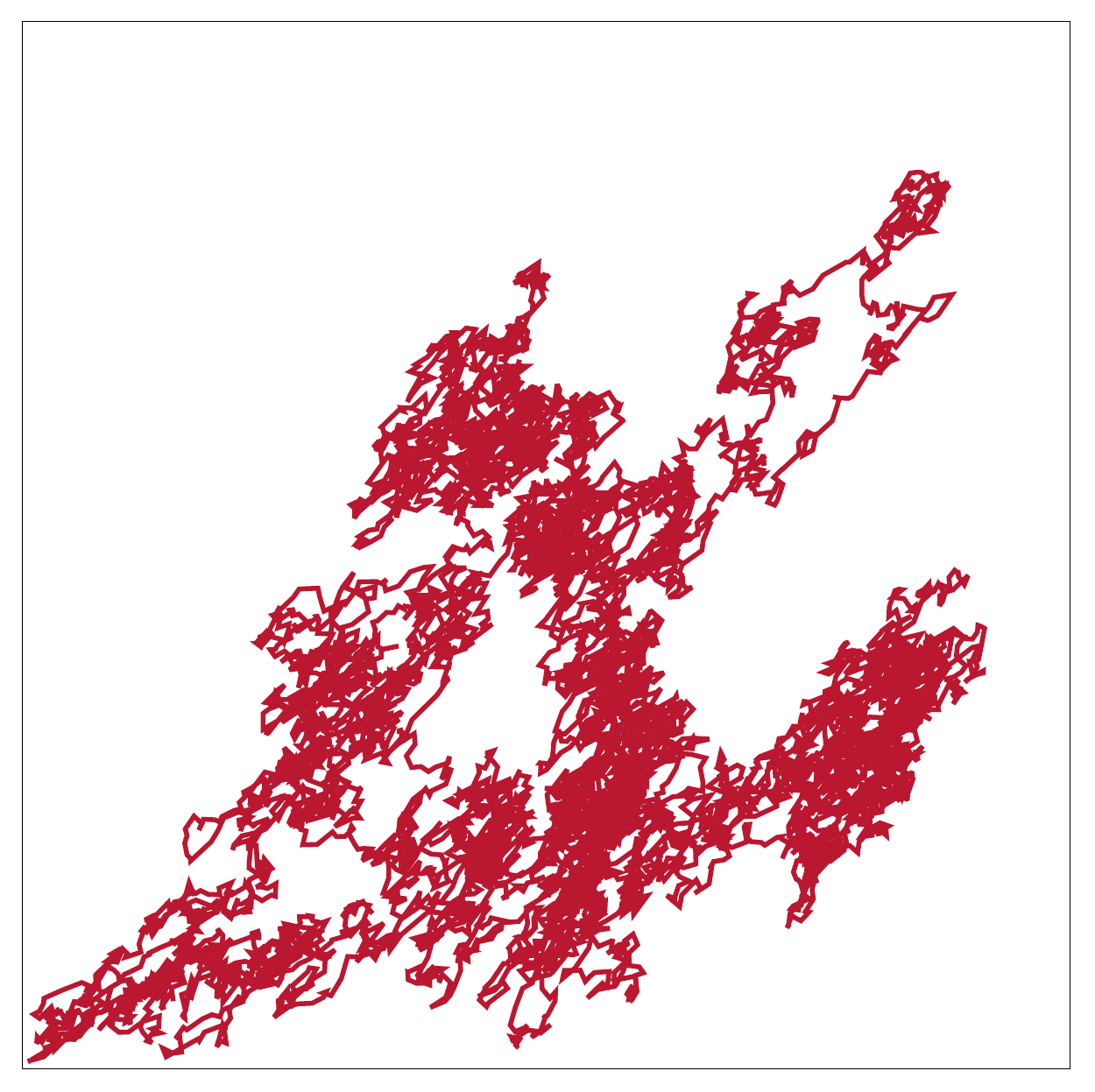}
	\end{minipage}
	
	\begin{minipage}[c]{0.06\textwidth}
		\centering
		\footnotesize$\rho=$\\
		\footnotesize$0.995$
	\end{minipage}
	\begin{minipage}[c]{0.15\textwidth}
		\centering
		\includegraphics[scale=0.18]{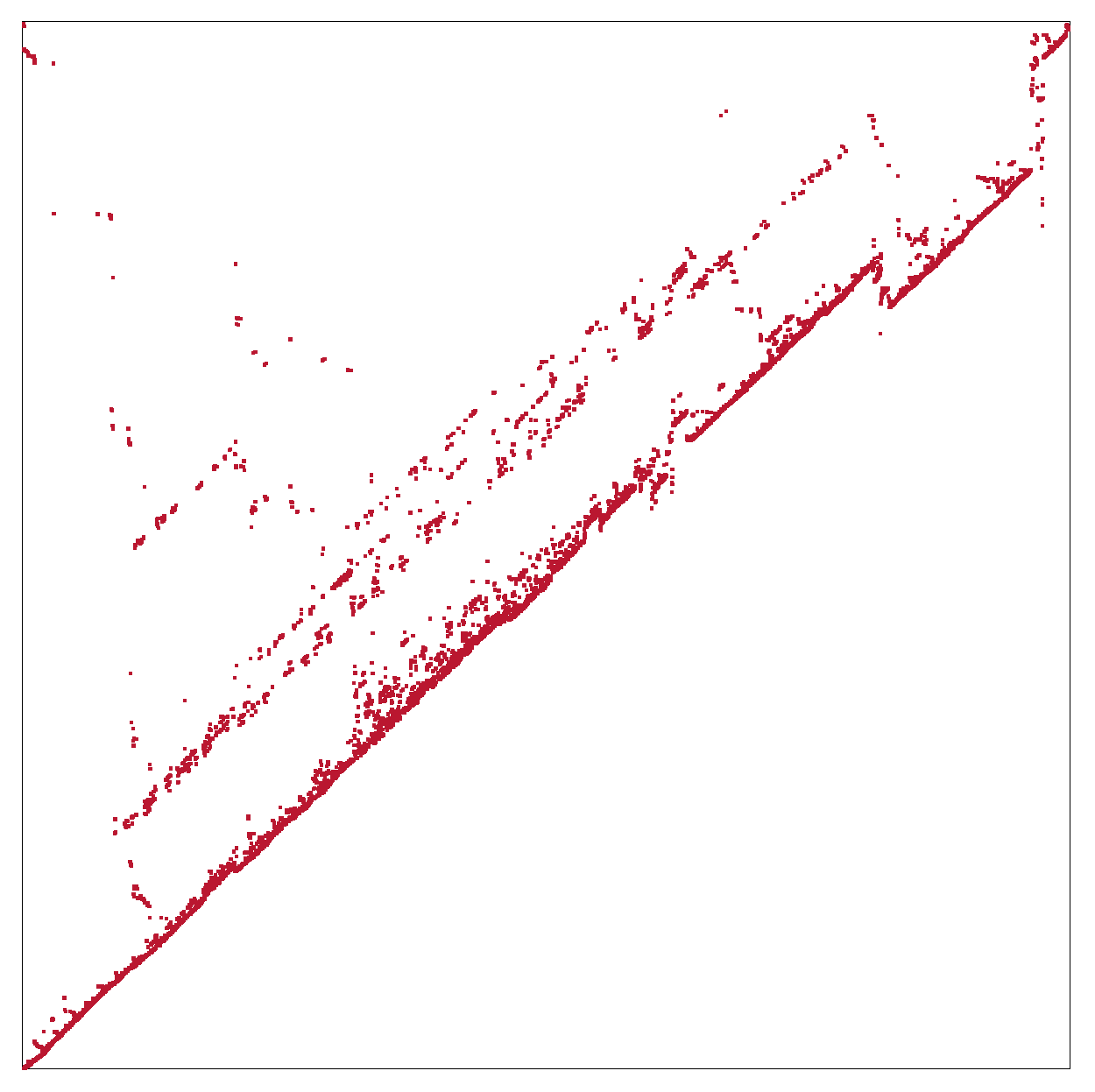}
	\end{minipage}
	\begin{minipage}[c]{0.15\textwidth}
		\centering
		\includegraphics[scale=0.18]{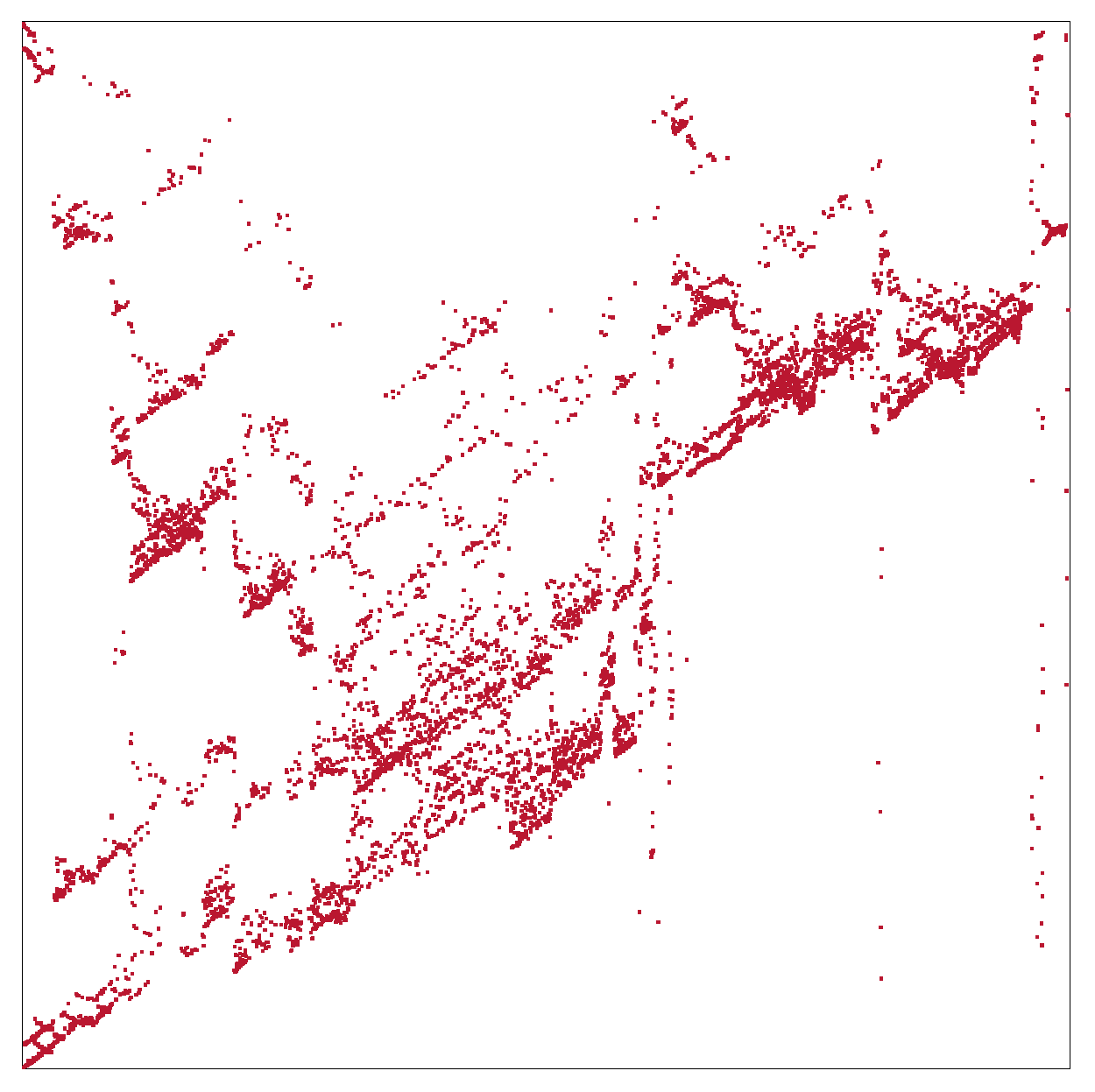}
	\end{minipage}
	\begin{minipage}[c]{0.15\textwidth}
		\centering
		\includegraphics[scale=0.18]{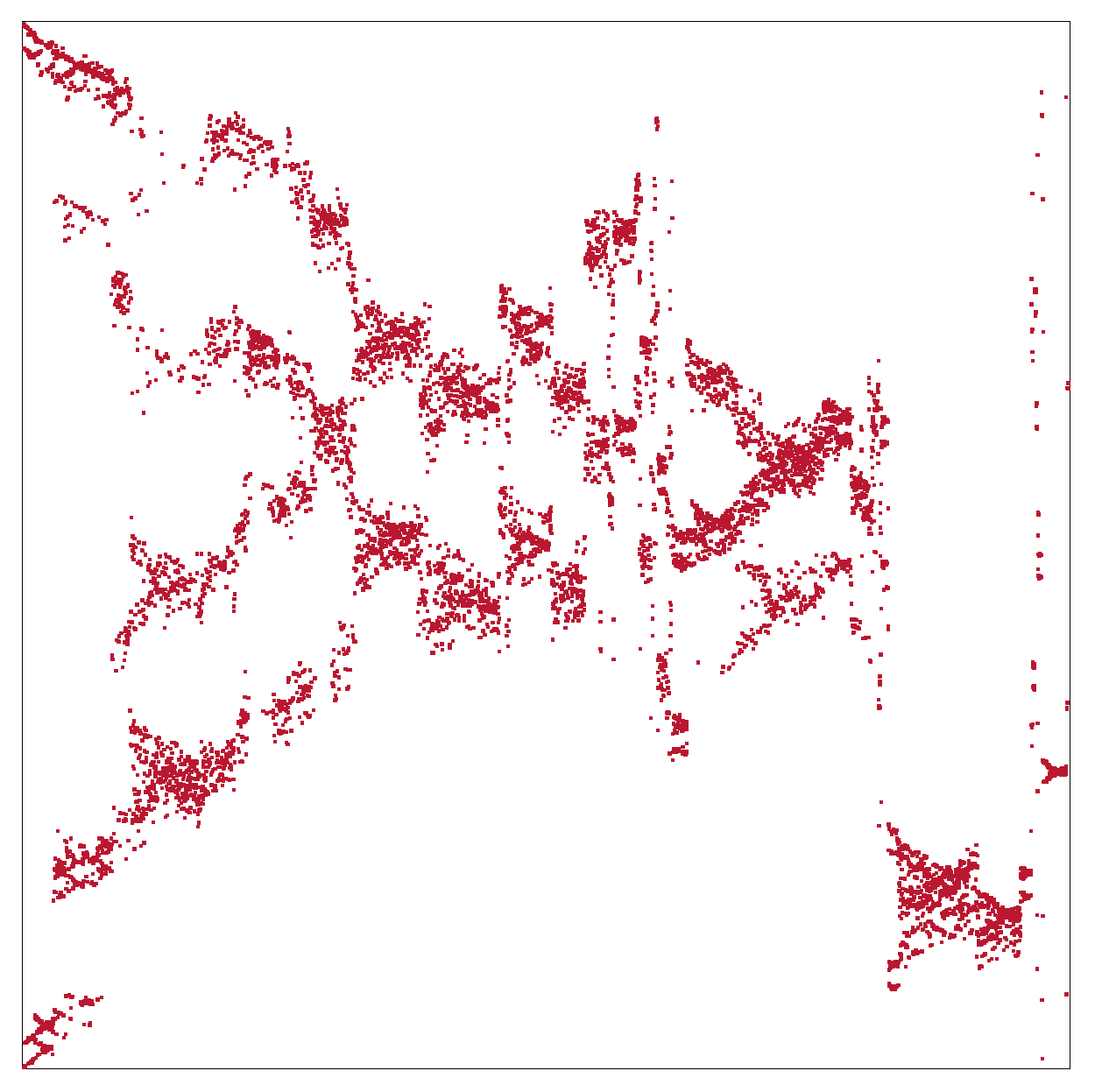}
	\end{minipage}
	\begin{minipage}[c]{0.15\textwidth}
		\centering
		\includegraphics[scale=0.18]{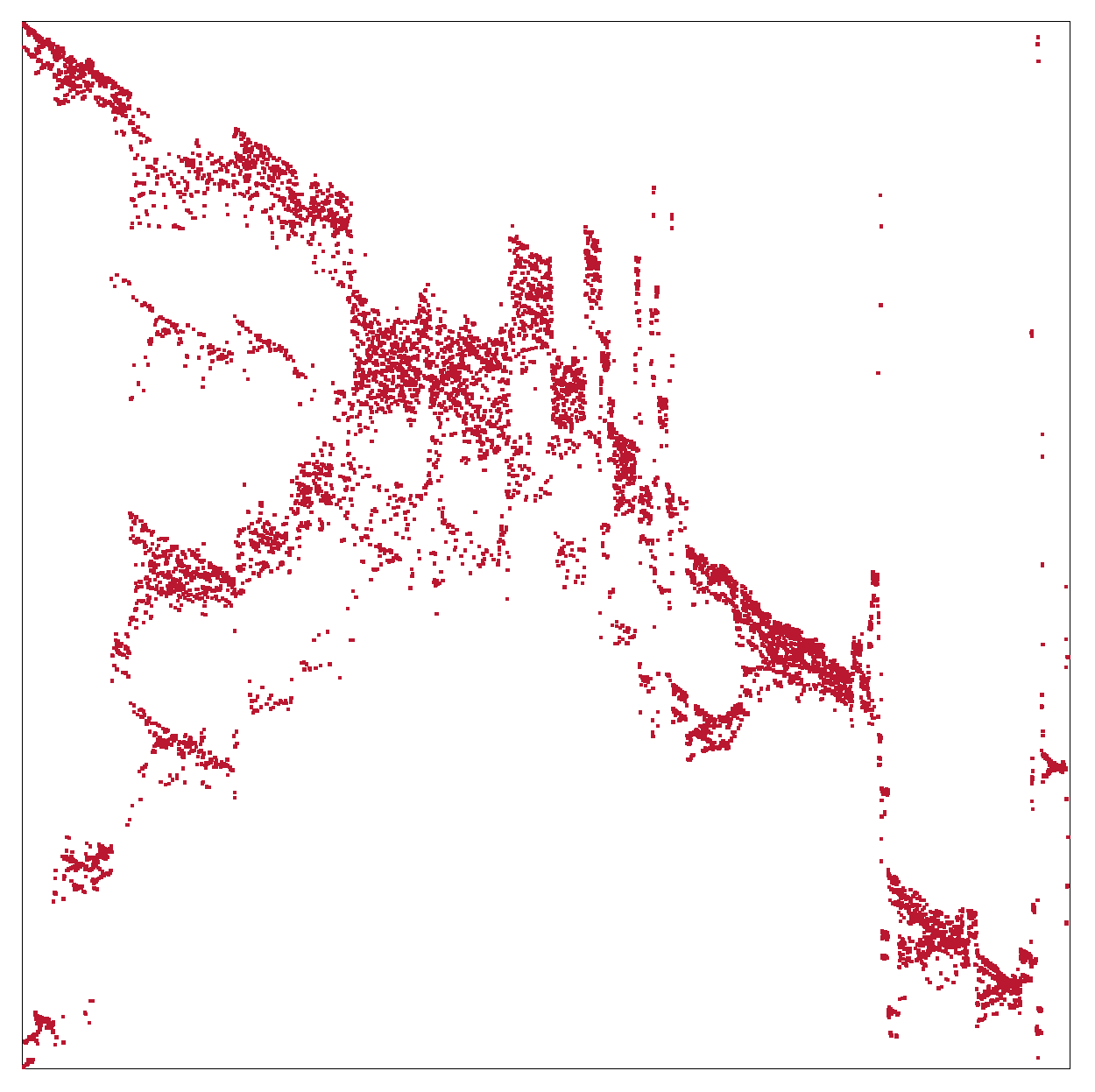}
	\end{minipage}
	\begin{minipage}[c]{0.15\textwidth}
		\centering
		\includegraphics[scale=0.18]{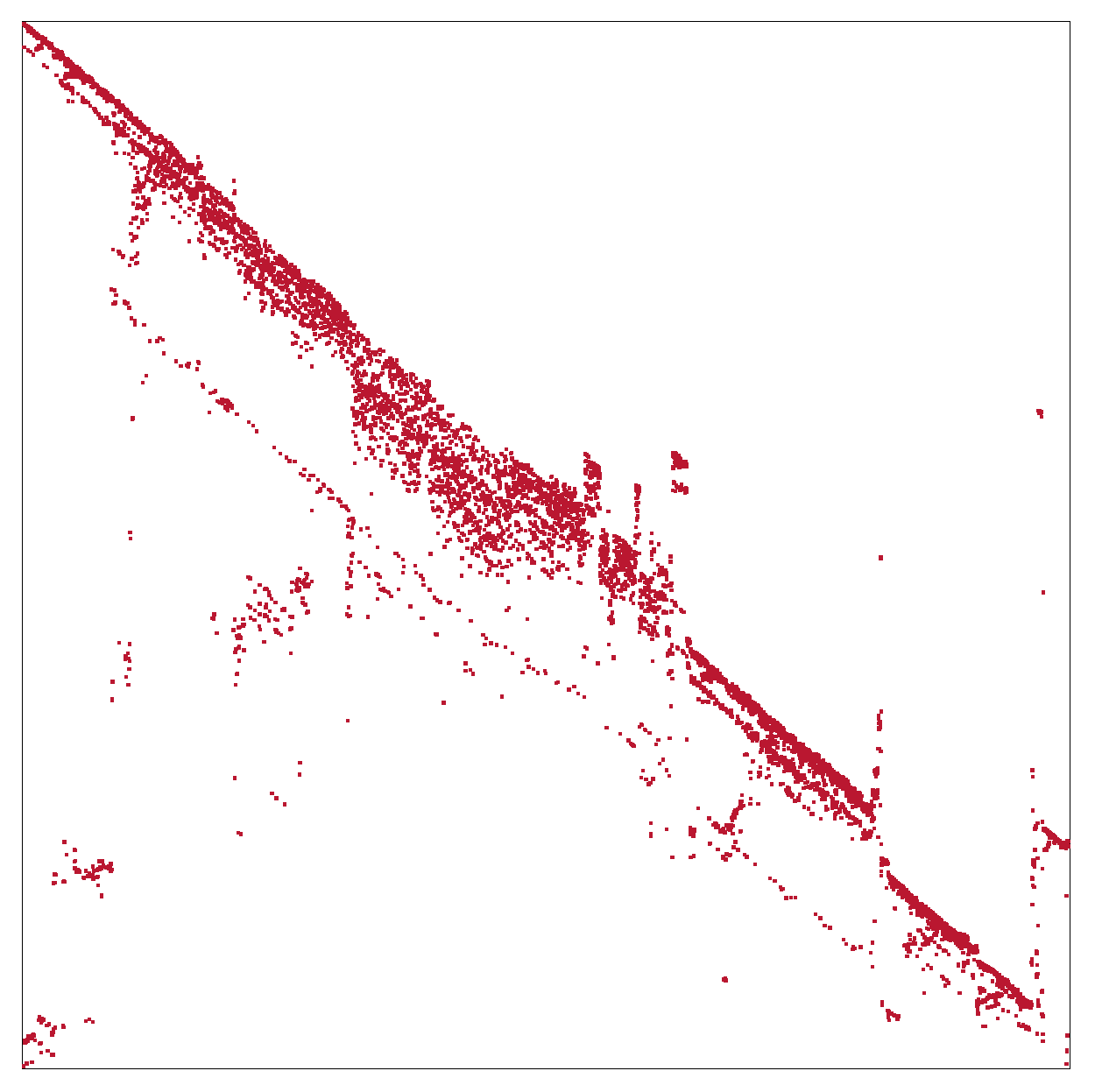}
	\end{minipage}
	\hspace{0.1 cm}
	\begin{minipage}[c]{0.15\textwidth}
		\centering
		\includegraphics[scale=0.18]{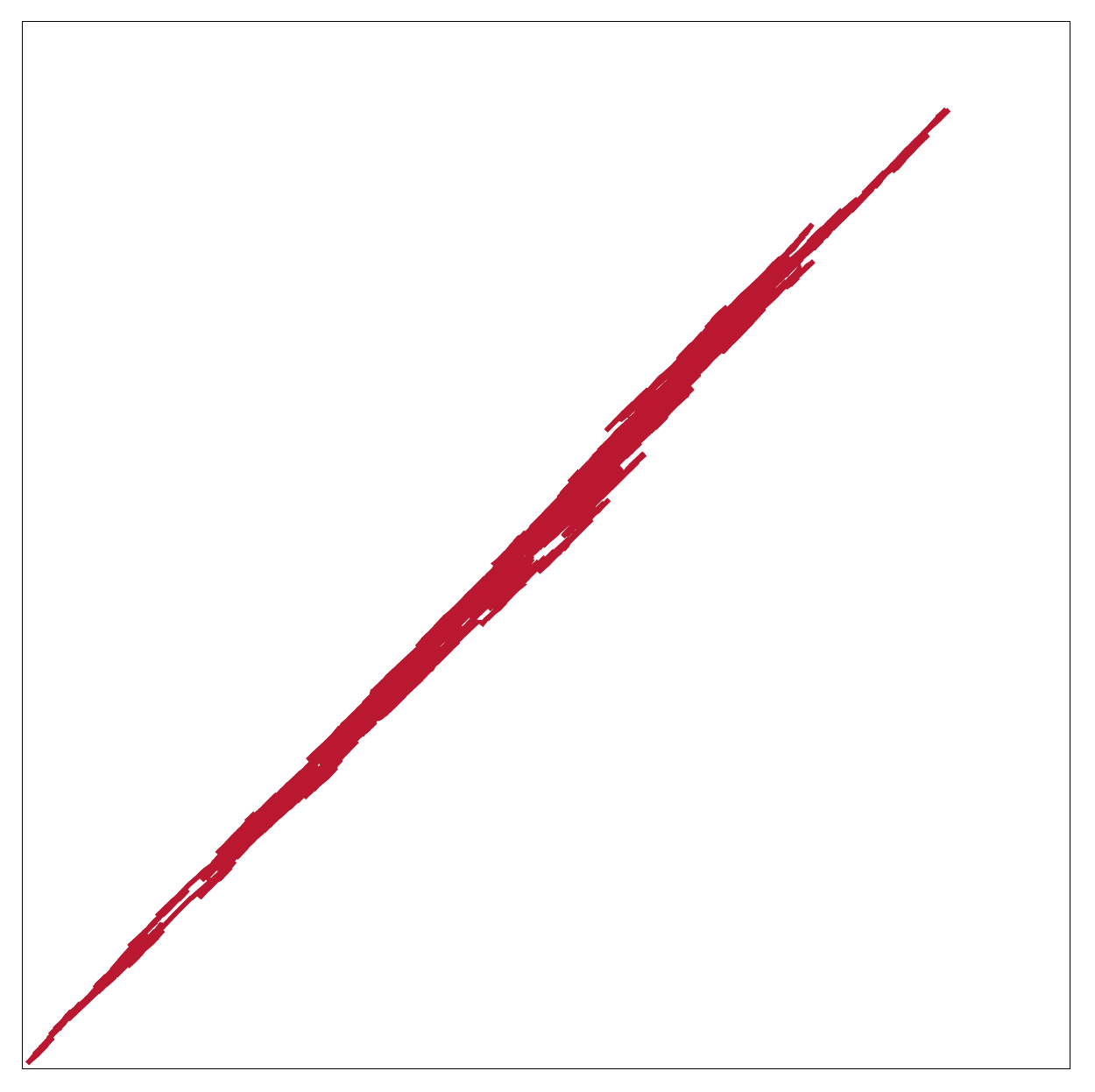}
	\end{minipage}
	
	\begin{minipage}[c]{0.06\textwidth}
		\centering
		\footnotesize$\rho=1$
	\end{minipage}
	\begin{minipage}[c]{0.15\textwidth}
		\centering
		\includegraphics[scale=0.18]{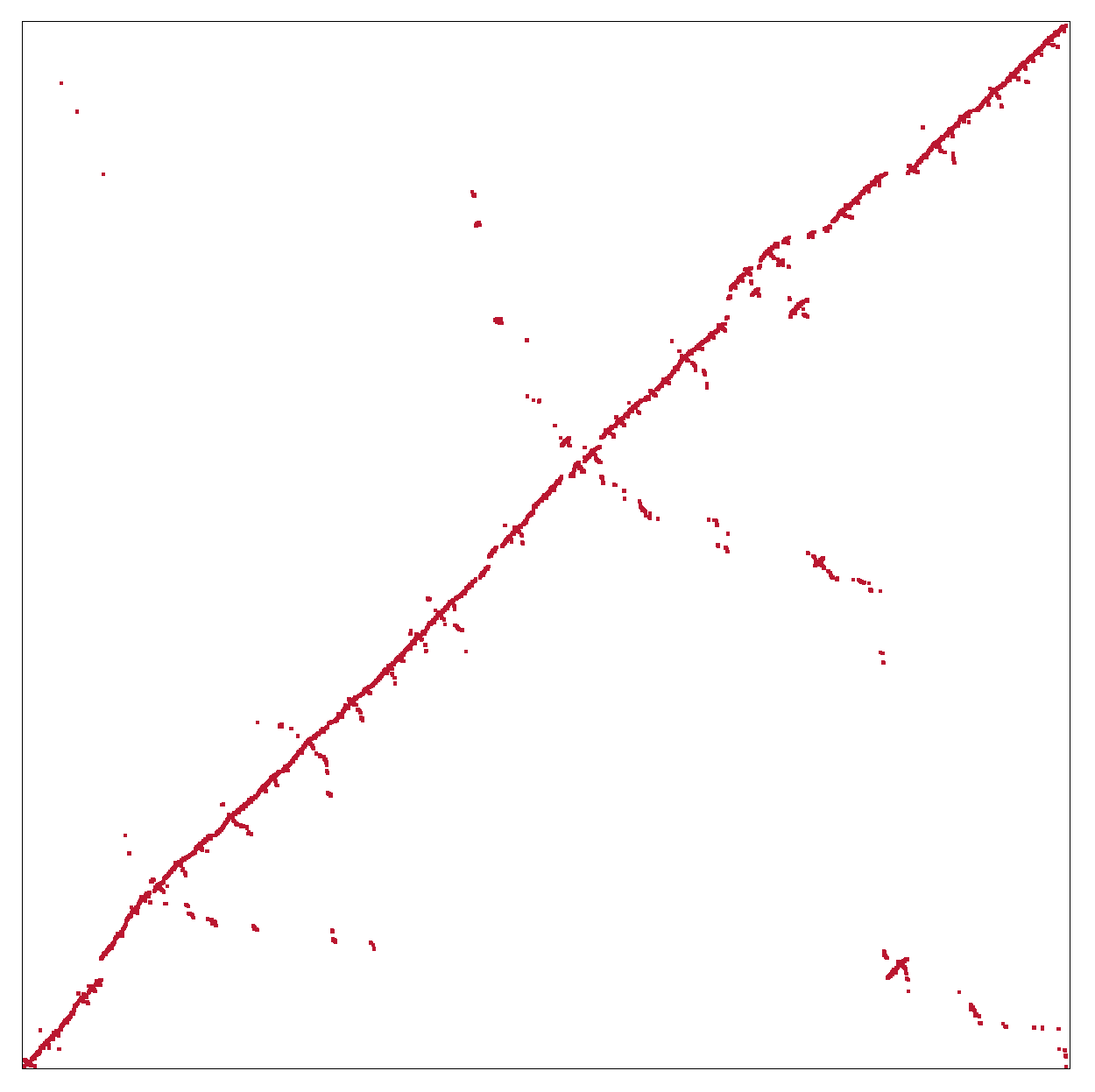}
	\end{minipage}
	\begin{minipage}[c]{0.15\textwidth}
		\centering
		\includegraphics[scale=0.18]{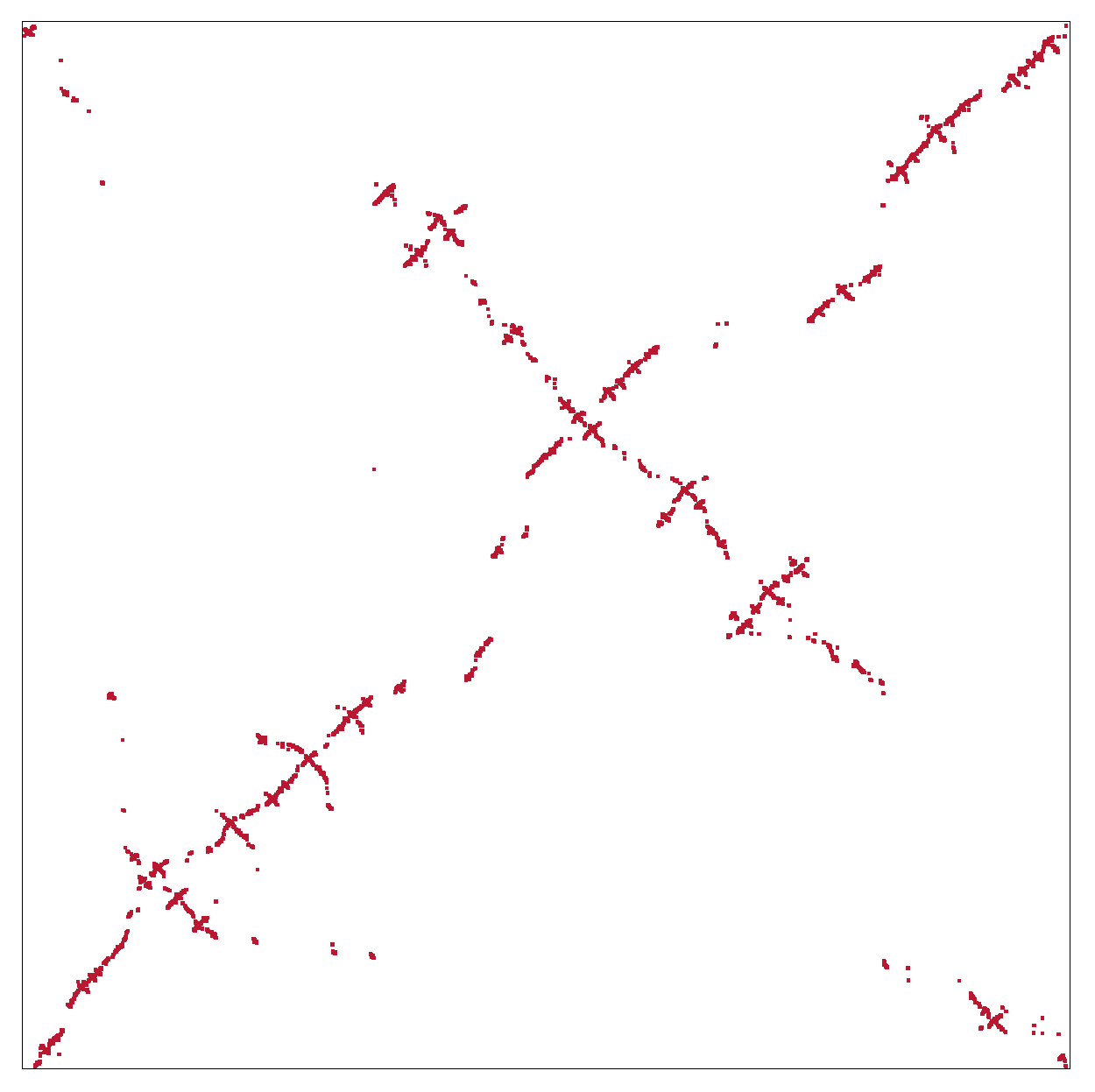}
	\end{minipage}
	\begin{minipage}[c]{0.15\textwidth}
		\centering
		\includegraphics[scale=0.18]{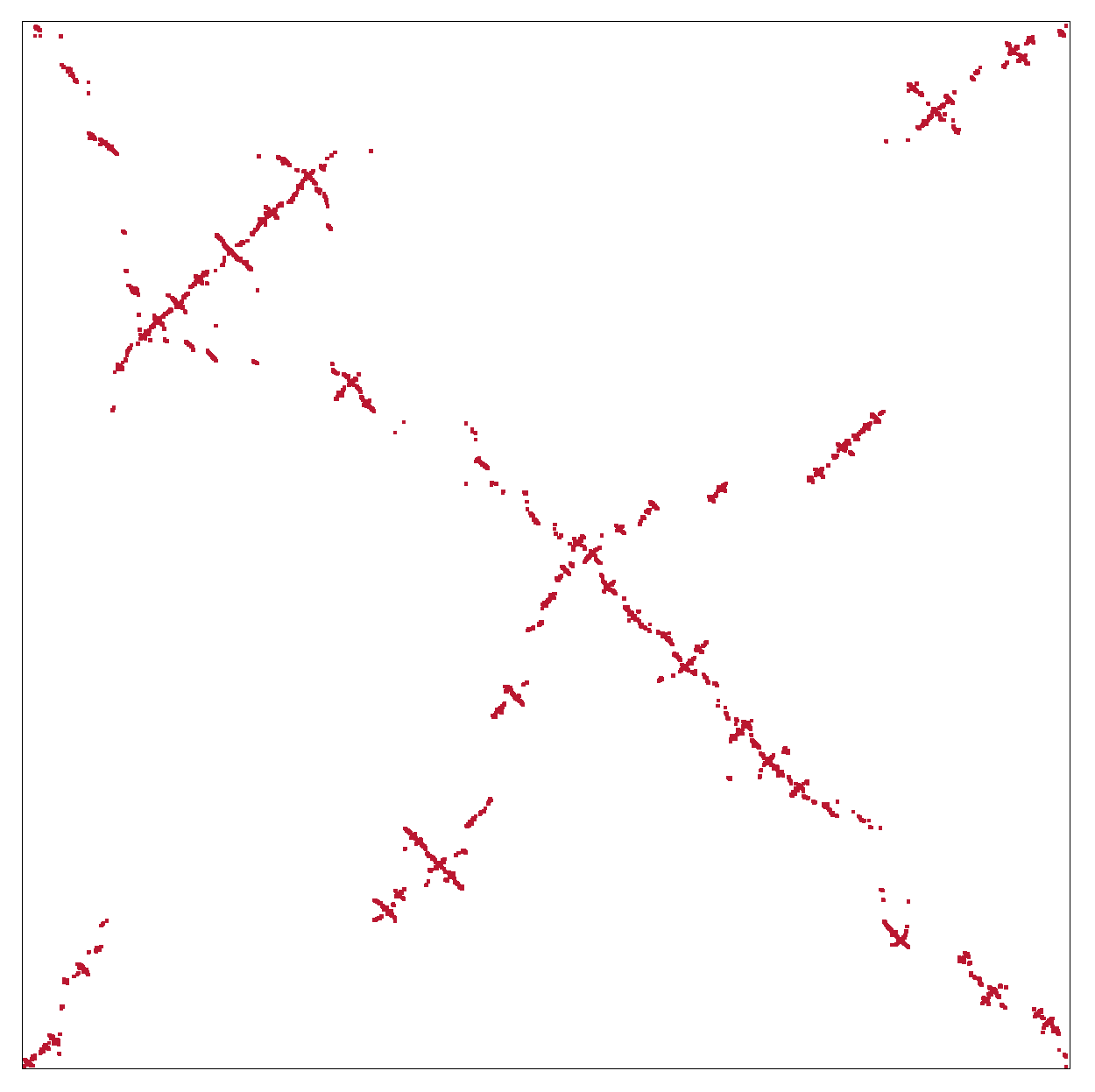}
	\end{minipage}
	\begin{minipage}[c]{0.15\textwidth}
		\centering
		\includegraphics[scale=0.18]{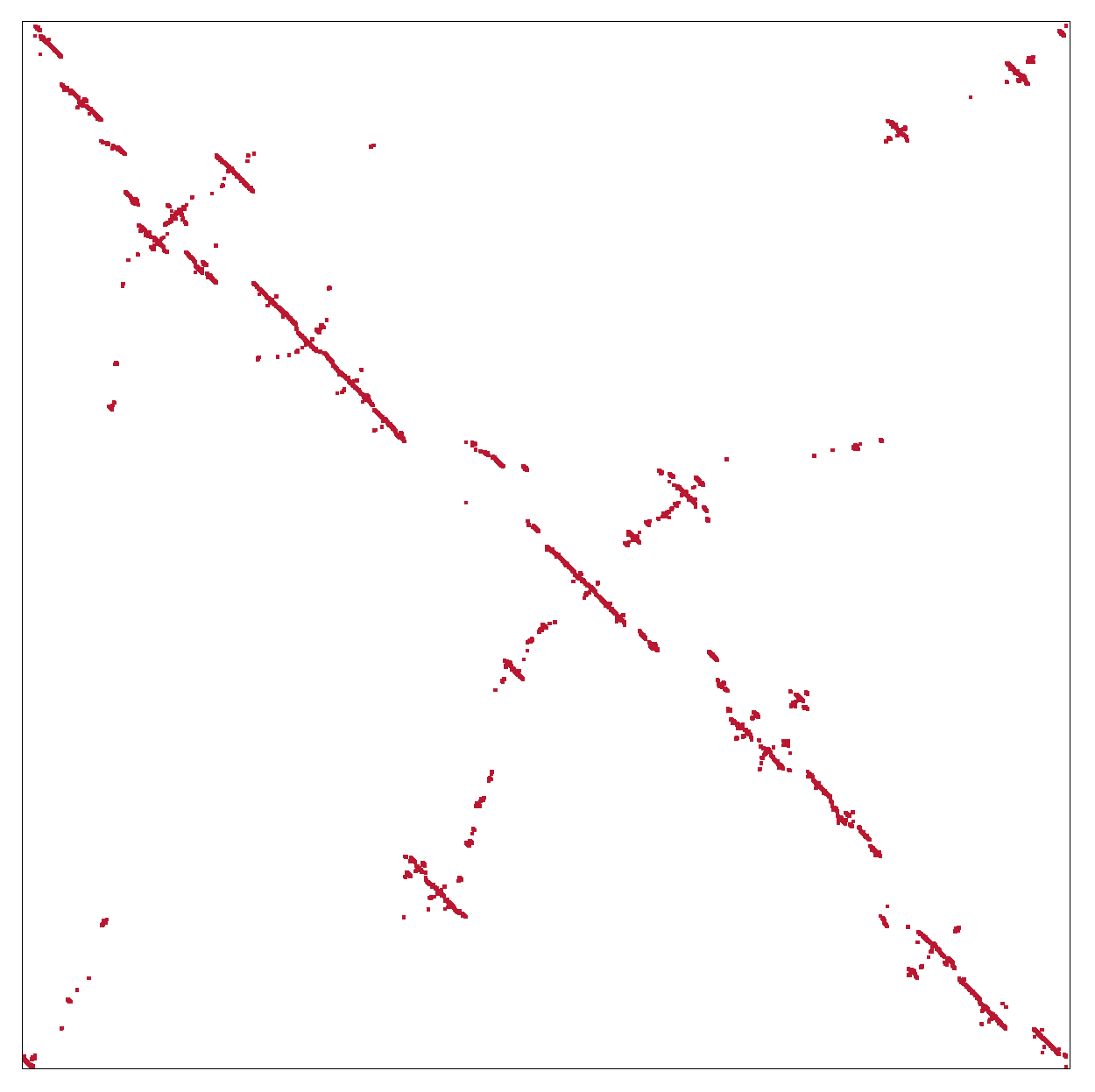}
	\end{minipage}
	\begin{minipage}[c]{0.15\textwidth}
		\centering
		\includegraphics[scale=0.18]{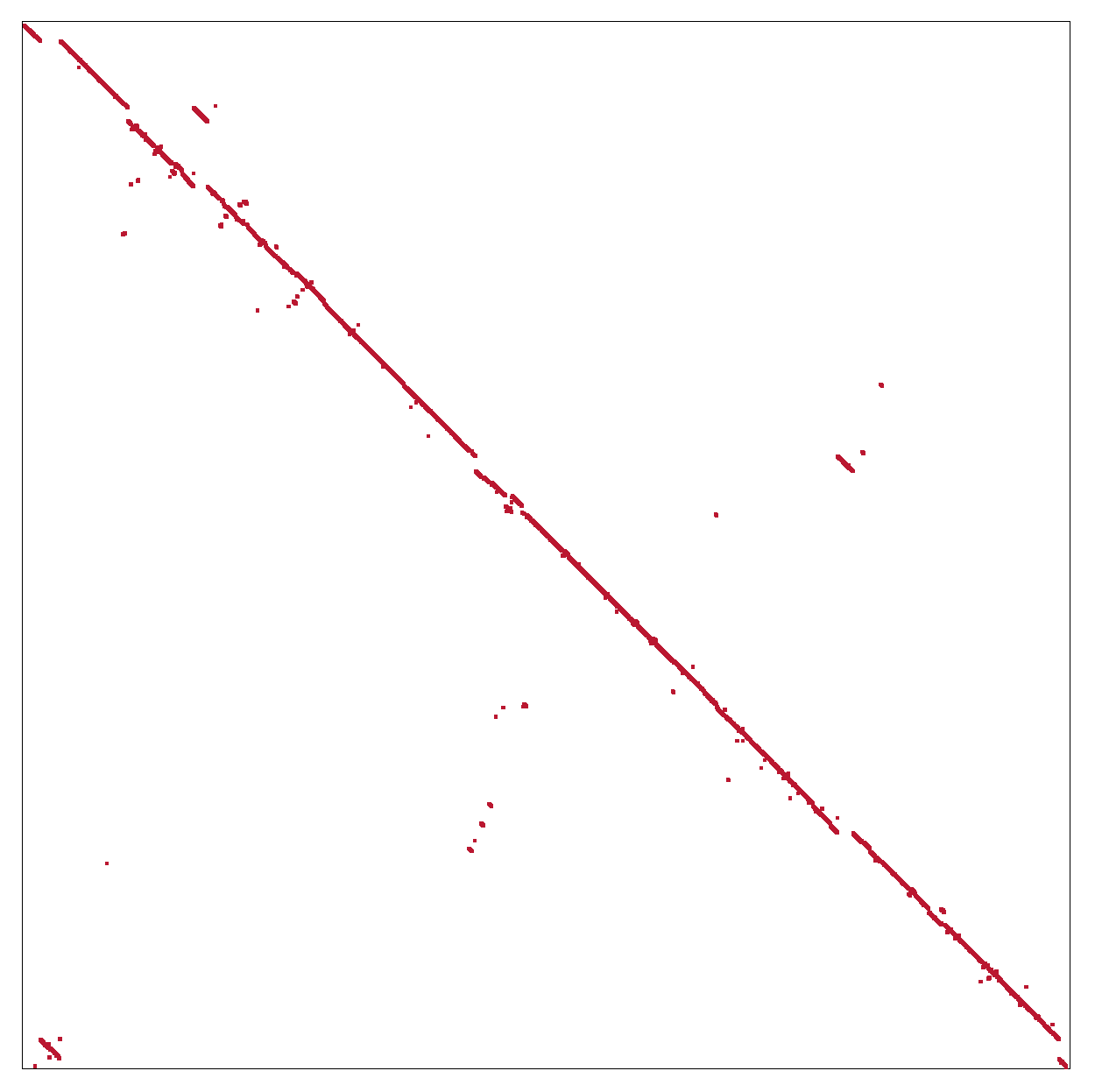}
	\end{minipage}
	\hspace{0.1 cm}
	\begin{minipage}[c]{0.15\textwidth}
		\centering
		\includegraphics[scale=0.18]{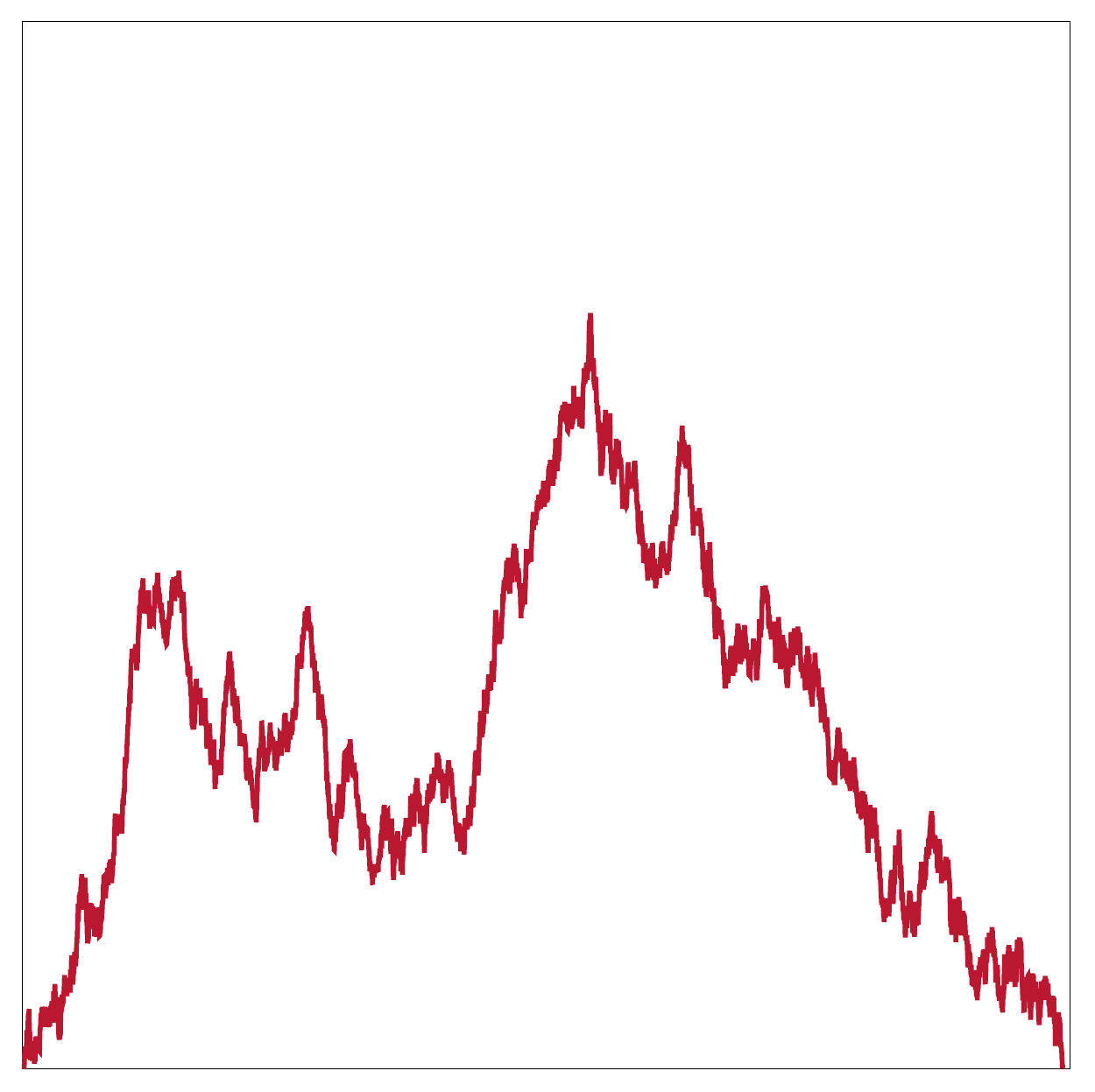}
	\end{minipage}
	\caption{Simulations of the skew Brownian permuton. In each row, five samplings of the skew Brownian permuton and the corresponding two-dimensional Brownian excursions in the non-negative quadrant of correlation $\rho$ (the specific value of $\rho$ is indicated at the beginng of each row). Every column corresponds to different values of the parameter $q$ (the specific value of $q$ is indicated at the top of each column).
		We remark that the five permutons in the same row are driven by the same Brownian excursion plotted at the end of the row. Note that when $\rho=1$ the corresponding two-dimensional Brownian excursion is actually a one-dimensional Brownian excursion.\newline
		The Baxter permuton (corresponding to the parameters $\rho=-1/2$ and $q=1/2$) is highlighted with a blue thicker boundary. The permutons in the bottom row (corresponding to the parameter $\rho=1$) should be skew Brownian permutons as stated in \cref{conj:Baxt_brow_same}.
		}\label{fig:uyievievbeee}
\end{figure}
Once this new random permuton has been defined, many natural questions arise. We mention a few of them.

	\begin{prbl}
		What is the Hausdorff dimension of the support of the skew Brownian permuton $\bm \mu_{\rho,q}$? 
		What is its intensity measure $\E[\bm \mu_{\rho,q}]$? 
	\end{prbl}

We recall (see \cref{rem:maazoun_sep} page~\pageref{rem:maazoun_sep}) that Maazoun~\cite{maazoun17BrownianPermuton} provided some answers to similar questions for the biased Brownian separable permuton (which should correspond, thanks to \cref{conj:Baxt_brow_same}, to the case $\rho=1$). 

Another interesting question is related with the already mentioned (see \cref{sect:op_probl_2} page \pageref{sect:op_probl_2}) recent work of  Bassino, Bouvel, Drmota, F{\'e}ray, Gerin, Maazoun, and Pierrot~\cite{bassino2021increasing}: They showed that the length of the longest increasing subsequence in permutations converging to the Brownian separable permuton has sublinear size (building on self-similarity properties of  the Brownian separable permuton).

\begin{prbl}
	Let $\bm \sigma_n$ be a sequence of random permutations converging to the skew Brownian permuton with parameters $\rho, q$. What is the the length of the longest increasing subsequence in $\bm \sigma_n$? We expect, under some regularity conditions on the sequence $\bm \sigma_n$, a formula that only depends on $\rho, q$.
\end{prbl}

We conclude this section with an additional discussion on the solutions of \cref{eq:flow_SDE_gen}: in the same spirit of \cref{rem:no_flow}, given $\omega$ (even restricted to a set of probability one), we cannot say that $\{\conti Z_{\rho,q}^{(u)}(\omega)\}_{u\in [0,1]}$ forms a whole field of solutions to \cref{eq:flow_SDE_gen}.
Also here, we expect that there exist exceptional times $u\in[0,1]$ where uniqueness fails, with two or three distinct solutions. 
	\begin{prbl}
		What are the properties of the stochastic flow determined by \cref{eq:flow_SDE_gen}? In particular, is it possible to describe the coalescence points and/or non-uniqueness points? What is the interaction between the stochastic flow and its backwards version? 
	\end{prbl}
We believe that answers to these questions will be fundamental in order to deepen the study of the geometric properties of the skew Brownian permuton $\bm \mu_{\rho,q}$.

\subsection{Universality}\label{sect:univ}

We believe that many models of uniform constrained permutations converge to the skew Brownian permuton with various values of the two parameters.

In \cref{thm:scaling_intro} we showed that the biased Brownian separable permuton has a universal property: it is the permuton limit of uniform random permutations in proper substitution-closed classes.  Assuming that \cref{conj:Baxt_brow_same} holds, about which we are very confident as explained before, \cref{thm:scaling_intro} is a first evidence of the universality of the skew Brownian permuton. A second evidence is \cref{thm:permuton} about Baxter permutations. 
The unsatisfactory feature of all these results is that either $\rho=1$ or $q=1/2$, and in both cases
the SDEs in \cref{eq:flow_SDE_gen} take a simplified form: either the driving process is a one-dimensional Brownian excursion or the local time term cancels. The goal of a future project is the following.

	\begin{prbl}\label{prbl:find_fam}
		Exhibit a natural model of random permutations converging to the skew Brownian permuton with parameters $|\rho|<1$ and $q\neq1/2$. 
	\end{prbl}

Recall that a key point in the proof of \cref{thm:permuton} is 
the possibility to encode a Baxter permutation with a specific two-dimensional walk and then to construct from it a discrete coalescent-walk process.
A good candidate for solving \cref{prbl:find_fam} are the families of semi-Baxter and strong-Baxter permutations (a superset and a subset of Baxter permutations, respectively). These families are encoded by two-dimensional labeled generating trees (see \cite{MR3882946}): using the machinery developed in \cref{sect:bij_color_walk_perm} we can easily sample a uniform semi-Baxter or strong-Baxter permutation as a two-dimensional random walk conditioned to stay in a cone. Moreover, from these walks it is possible to define discrete coalescent-walk processes for uniform semi-Baxter or strong-Baxter permutations.

	\begin{conj}\label{conj:semi-bax}
		The permuton limit of uniform semi-Baxter or strong-Baxter permutations (see \cref{fig:wekijfvuiyewbf}) is the skew Brownian permuton. In particular, in the case of strong-Baxter permutations, the limiting skew Brownian permuton has parameters $|\rho|<1$ and $q\neq1/2$.
	\end{conj}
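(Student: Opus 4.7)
The plan is to mimic, step by step, the strategy that was successful for Baxter permutations in Theorems \ref{thm:discret_coal_conv_to_continuous}, \ref{thm:permuton} and \ref{thm:joint_scaling_limits}, replacing the Kenyon--Miller--Sheffield--Wilson encoding by the generating-tree encoding of semi-Baxter (resp.\ strong-Baxter) permutations. Concretely, I would first apply Proposition \ref{prop:bij_gen_tree} to the two-dimensional succession rules recorded in \cite{MR3882946}: this identifies a uniform semi-Baxter (resp.\ strong-Baxter) permutation $\bm\sigma_n$ of size $n$ with a colored random walk $\bm W_n=(\bm X_n,\bm Y_n)$ in $\Z_{\geq 0}^2$ conditioned to start and end on the axes and to stay in the non-negative quadrant. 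Then I would define a discrete coalescent-walk process $\bm Z_n=\wcp(\bm W_n)$ from $\bm W_n$ by the exact analogue of Definition \ref{defn:distrib_incr_coal}, with the local coalescence rule read off the generating tree; the output is a permutation under the mapping $\cpbp$ of \cref{sect:from_coal_to_perm}, and I would check as in \cref{thm:diagram_commutes} that $\cpbp(\bm Z_n)=\bm\sigma_n$.

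The next block of the proof is the scaling-limit analysis of the walks. Because the increment distributions that come out of the semi- and strong-Baxter succession rules have finite exponential moments and a non-degenerate covariance, Donsker's theorem yields convergence of the unconditioned walk to a two-dimensional Brownian motion of some explicit correlation $\rho$ (different from $-1/2$ in the strong-Baxter case), and the invariance principle for walks in cones of Denisov--Wachtel \cite{MR3342657} yields convergence of the conditioned walk $\bm W_n$, suitably rescaled, to a two-dimensional Brownian excursion $\conti E_\rho$ of correlation $\rho$ in the non-negative quadrant. Then I would lift this to a scaling limit for the coalescent-walk process, following the two-step scheme of \cref{thm:coal_con_uncond} and \cref{prop:coal_con_cond}: prove the unconditioned convergence of a single trajectory via Skorokhod coupling and a Lévy-type characterization, then transfer to the conditioned case through the absolute-continuity estimates of \cref{sec:appendix}. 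Once convergence of countably many trajectories is established (the analogue of \cref{thm:discret_coal_conv_to_continuous}), the permuton limit is deduced exactly as in the proof of \cref{thm:permuton}, via pattern extraction through \cref{prop:patterns} and the approximation lemma \cref{lem:subpermapproxrandompermuton}.

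The hard part, and the main conceptual novelty, is the appearance of the skew local-time term $(2q-1)\,d\conti L^{\conti Z_{\rho,q}^{(u)}}(t)$ in the limiting SDE \cref{eq:flow_SDE_gen} when $q\neq 1/2$, which should arise from the strong-Baxter model. At the discrete level this term has to be read off the microscopic behavior of $\bm Z_n$ at visits to zero: unlike in the Baxter case, the generating tree for strong-Baxter permutations treats the bottom active site asymmetrically with respect to the top one, so each excursion of $\bm Z_n$ away from zero is started with a biased random sign whose bias $p\in(0,1)$ is determined by the succession rule. Identifying this bias and checking that it produces, in the diffusive limit, precisely a skew Brownian motion with parameter $q=p$ -- rather than being washed away -- will require a careful local analysis at the boundary of the cone, in the spirit of the work of Harrison--Shepp \cite{MR606993} and of the coalescing flow of Burdzy--Kaspi \cite{MR2094439}. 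A further genuine obstacle is that we need existence and pathwise uniqueness for the SDE system \cref{eq:flow_SDE_gen} with $|\rho|<1$ and $q\neq 1/2$, i.e.\ \cref{conj:conj_rho_gen}, in order to even identify subsequential limits; I would attack this first, by combining the Yamada--Watanabe-type argument of \cite{MR3098074,MR3882190} (which handles the case $q=1/2$, $|\rho|<1$) with the skew Tanaka techniques of \cite{MR2280299}, using the absolute continuity between $\conti E_\rho$ and a two-dimensional Brownian motion away from the endpoints to reduce the excursion setting to the unconditioned one.

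Finally, I would also verify in passing the matching of parameters predicted by the conjecture: compute $\rho$ from the covariance matrix of the step distribution of the walk obtained from the semi-Baxter (resp.\ strong-Baxter) generating tree, and compute $q$ from the asymptotic proportion of excursions of $\bm Z_n$ started with a positive sign. For the semi-Baxter case I expect to recover $q=1/2$ with some explicit $\rho\in(-1,-1/2)$, while for strong-Baxter both parameters should be non-trivial; checking that these values are consistent with the simulations of \cref{fig:uyievievbeee} would be a useful sanity test before embarking on the full proof.
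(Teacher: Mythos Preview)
The statement you are addressing is a \emph{conjecture}, and the paper contains no proof of it: the entire section in which it appears is prefaced by the explicit disclaimer that ``there are no rigorous mathematical results here''. So there is no proof in the paper to compare against; what the paper offers is only heuristic evidence (correlation computations, numerical simulations, and informal estimates of $q$) and a list of the obstacles that remain open.

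Your proposal is a reasonable research programme and indeed tracks the strategy the paper itself suggests for attacking \cref{prbl:find_fam}. You correctly identify that one would need \cref{conj:conj_rho_gen} (existence and pathwise uniqueness of the SDE \eqref{eq:flow_SDE_gen} for general $\rho,q$) as a prerequisite, and that this is itself open. However, one step in your outline is more problematic than you acknowledge. In the Baxter case, the proof of \cref{thm:coal_con_uncond} hinges crucially on \cref{prop:trajectories_are_rw}: each single trajectory of the discrete coalescent-walk process is \emph{itself} a random walk, which is what allows you to invoke an invariance principle for $(\overline{\conti Z}_n^{(u)},\overline{\conti L}_n^{(u)})$ directly and then identify the joint limit via the SDE. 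The paper explicitly states that for both semi-Baxter and strong-Baxter coalescent-walk processes ``the trajectories of the corresponding discrete coalescent-walk processes are not martingales'', so the analogue of \cref{prop:trajectories_are_rw} fails. Your plan to ``prove the unconditioned convergence of a single trajectory via Skorokhod coupling and a L\'evy-type characterization'' therefore cannot go through as written; you would need a genuinely new argument to establish tightness and identify the limit of a single trajectory, and the paper flags precisely this point as ``another aspect that deserves a more accurate investigation in future projects''.

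A secondary point: the paper's informal computations give $q=1/2$ for semi-Baxter (agreeing with your guess) but the paper itself finds this ``somehow surprising'' given the non-martingale property, so your expectation that the semi-Baxter case will be a routine variant of the Baxter argument may also be optimistic.
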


\begin{figure}[htbp]
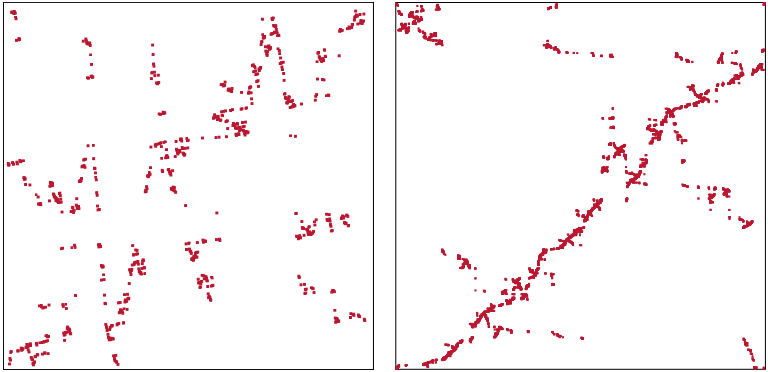

	\begin{minipage}[c]{0.70\textwidth}
		\centering
		\includegraphics[scale=0.099]{Semi_baxter_1_823}
		\includegraphics[scale=0.4]{Strong_baxter_1_11009}
	\end{minipage}
	\begin{minipage}[c]{0.29\textwidth}
		\caption{A large uniform semi-Baxter permutation (left) and a large uniform strong-Baxter permutation (right).
		\label{fig:wekijfvuiyewbf}}
	\end{minipage}
\end{figure}

There are various reasons to believe in this conjecture. First of all, it is easy to predict that $|\rho|<1$ simply computing the correlation between the coordinates of the two-dimensional random walk that describes a uniform random semi-Baxter or strong-Baxter permutation.

In addition, we have done both numerical simulations and informal computations to estimate the parameter $q$ in both cases, obtaining that $q\neq1/2$ for strong-Baxter permutations, while $q=1/2$ for semi-Baxter permutations. It is somehow surprising that the parameter $q$ for  semi-Baxter permutations is exactly $1/2$. Indeed, we can show that in both cases the trajectories of the corresponding discrete coalescent-walk processes are not martingales, and so we would expect a local time term appearing in the corresponding SDEs in both cases. This is another aspect that deserves a more accurate investigation in future projects.

Semi-Baxter and strong-Baxter permutations are just two possible suitable models of random permutations for solving \cref{prbl:find_fam}. Thanks to the machinery developed in \cref{sect:bij_color_walk_perm}, we could investigate many other families of permutations encoded by a two-dimensional labeled generating tree (for instance see \cite{bousquet2003four,MR2376115,MR3206150,MR3882946}).

\subsection{Connections with the LQG}

In this final section we focus on the connection between the skew Brownian permuton and the Liouville quantum gravity (LQG), with a particular focus on the work of Gwynne, Holden and Sun \cite{gwynne2016joint}. This connection was already mentioned in this manuscript\footnote{For more details see also \cite[Section 1.6 and Appendix B.2]{borga2020scaling}.}: indeed bipolar orientations\footnote{This is a corollary of our \cref{thm:joint_scaling_limits} that generalizes the results of \cite{MR3945746,gwynne2016joint}.} converge (in the Peano-sphere topology) jointly with its dual to a $\sqrt{4/3}$-LQG decorated with two orthogonal SLE${}_{12}$ (in the imaginary geometry sense). 

We believe that the SDEs in \cref{eq:flow_SDE_gen} defining the skew Brownian permuton also encode some quantities related to the LQG. We now explain this point more precisely.

\medskip

The first trajectory of the discrete coalescent-walk process associated with a bipolar orientation (in the terminology of \cite{borga2020scaling}) corresponds to the process $\conti Z_n$ defined\footnote{In \cite{gwynne2016joint} the process $\conti Z_n$ is denoted by $\conti X_n$.} in \cite[Equation (2.1)]{gwynne2016joint}. The latter describes (\cite[Lemma 2.1]{gwynne2016joint}) the points of intersection between two discrete curves in the bipolar orientation and its dual (i.e.\ the interface paths defined in \cref{sect:coal_walk_intro}).
In \cite[Proposition 4.1]{gwynne2016joint} it was shown that $\conti Z_n$ converges to a limiting process $\conti Z$ which describes (see \cite[Proposition 3.2]{gwynne2016joint}) the points of intersection between the two already mentioned orthogonal SLE${}_{12}$ (these are the limits of the two interface paths in the bipolar orientation and its dual).

The limiting process $\conti Z$ above is constructed in \cite{gwynne2016joint} in a much more general framework:
Let $\kappa' \in (4,\infty)$ and $\theta\in [0,2\pi)$. Let $\bm \mu$ be a $\sqrt{16/\kappa'}$-LQG quantum plane, 
$\bm h$ be a Gaussian free field independent of $\bm \mu$, and $\bm \eta$ be the space-filling SLE${}_{\kappa'}$ curve of angle zero generated (in the sense of imaginary geometry) by $\bm h$. It is possible to associate with these objects a process $(\conti B^x_{\kappa'},\conti B^y_{\kappa'},\conti Z_{\kappa',\theta})$ defined as follows:
\begin{itemize}
	\item The process $(\conti B^x_{\kappa'},\conti B^y_{\kappa'})$ is a two-dimensional Brownian motion with $\rho=-\cos(4\pi/\kappa')$. This Brownian motion is given by the \emph{mating-of-tree} encoding of $(\bm \mu,\bm \eta)$ (see \cite{gwynne2019mating}).
	\item The process $\conti Z_{\kappa',\theta}$ tracks, in some sense, the interaction between $\bm \eta$ and another SLE${}_{16/\kappa'}$ curve of angle $\theta$ also generated by $\bm h$.
\end{itemize}
Gwynne, Holden and Sun prove that there exists a constant\footnote{The explicit expression of $p(\kappa',\theta)$ is not known.} $p = p(\kappa',\theta)$, with $p(\kappa',\pi/2) \equiv 1/2$, such that $\conti Z_{\kappa',\theta}$ is a skew Brownian motion with parameter $p$. 
They also note that $\conti Z_{\kappa',\theta}$ is a measurable functional of $(\bm \mu,\bm h)$, which turns out to be completely determined by $(\conti B^x_{\kappa'},\conti B^y_{\kappa'})$. Nevertheless, they do not explicitly describe the measurable mapping $(\conti B^x_{\kappa'},\conti B^y_{\kappa'})\mapsto \conti Z_{\kappa',\theta}$.

\medskip

In the case $\kappa' = 12$ and $\theta = \pi/2$, our approach in \cite{borga2020scaling} provides the explicit mapping $(\conti B^x_{12},\conti B^y_{12})\mapsto \conti Z_{12,\pi/2}$: it is enough to solve the SDE in \cref{eq:flow_SDE} driven by $(\conti B^x_{12},\conti B^y_{12})$ for $u=0$. For general $\kappa' \in (4,\infty)$ and $\theta\in [0,2\pi)$, we conjecture the following.

	\begin{conj}\label{conj:LCQ_SKEW}
		The process $ \conti Z_{\kappa',\theta}$ is explicitly described in terms of  $(\conti B^x_{\kappa'},\conti B^y_{\kappa'})$ by the SDE
		\begin{equation}\label{eq:generalized_LQG}
		\begin{cases}
		d\conti Z_{\kappa',\theta}(t) = \idf_{\{\conti Z_{\kappa',\theta}(t)> 0\}} d\conti B^y_{\kappa'}(t) - \idf_{\{\conti Z_{\kappa',\theta}(t)\leq 0\}} d\conti B^x_{\kappa'}(t) +(2p-1)d\conti L^{\conti Z_{\kappa',\theta}}(t), \quad &t>0,\\
		\conti Z_{\kappa',\theta}(t) = 0, \quad &t= 0,
		\end{cases}
		\end{equation}
		where  $p = p(\kappa',\theta)$ is a constant to be determined, and $\conti L^{ \conti Z_{\kappa',\theta}}(t)$ is the local time process at zero of $\conti Z_{\kappa',\theta}$ accumulated during the time interval $[0,t]$.
	\end{conj}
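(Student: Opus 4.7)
The plan is to mimic in the general LQG setting the discrete-to-continuum strategy that works for bipolar orientations. Recall that for $\kappa' = 12$ and $\theta = \pi/2$, \cref{thm:joint_scaling_limits} already proves the conjecture: the two-dimensional walk $\bm W_n = \bow(\bm m_n)$ encoding uniform bipolar orientations is the discrete mating-of-trees walk, which after rescaling converges to $(\conti B^x_{12},\conti B^y_{12})$ (a Brownian motion of correlation $-1/2$); the discrete coalescent-walk process $\bm Z_n = \wcp(\bm W_n)$ converges to the solution of the SDE in \cref{eq:flow_SDE} (which is exactly \cref{eq:generalized_LQG} with $p=1/2$); and by \cite{gwynne2016joint} the height process between the primal and dual interface curves is $\conti Z_{12,\pi/2}$. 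The strategy for general $(\kappa',\theta)$ is therefore to find an appropriate decorated random planar map model and to repeat the three steps: walk convergence, coalescent-walk convergence, and identification of the limit.

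The first concrete step is to identify, for each $(\kappa',\theta)$, a pair $(M_n,\eta_n)$ consisting of a random planar map together with a decoration (e.g.\ an FK-cluster configuration for $\kappa'\in(4,8)$, or a spanning-tree-like structure for larger $\kappa'$) such that the natural mating-of-trees walk converges, after rescaling, to $(\conti B^x_{\kappa'},\conti B^y_{\kappa'})$, and such that a second decoration \emph{of angle $\theta$} produces an interface whose interaction with the first gives a discrete height function $\bm Z^{\kappa',\theta}_n$ converging to $\conti Z_{\kappa',\theta}$. The key requirement is to find a combinatorial analog of the mapping $\wcp$ from \cref{sec:discrete_coal} on these models, i.e.\ a procedure that constructs $\bm Z^{\kappa',\theta}_n$ trajectory-by-trajectory from the mating-of-trees walk via the local rule \emph{follow $\bm Y_n$ when positive, follow $-\bm X_n$ when negative, with a biased rule at zero}. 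The bias should be tuned so that, in the limit, trajectories are skew Brownian motions of parameter $p=p(\kappa',\theta)$; in the discrete model, $p$ should emerge as the asymptotic probability that a trajectory leaves $0$ upwards rather than downwards.

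Once such a discrete construction is in place, passage to the limit should follow the blueprint of \cref{thm:coal_con_uncond}. Tightness is automatic from Donsker's theorem applied componentwise; the limit is identified by showing that any subsequential limit $\widetilde{\conti Z}$ satisfies \cref{eq:generalized_LQG}, using an $\varepsilon$-regularization and stochastic dominated convergence as in the proof of \cref{thm:coal_con_uncond}, together with a Tanaka-type decomposition that produces the local-time term. Strong uniqueness for \cref{eq:generalized_LQG}, i.e.\ \cref{conj:conj_rho_gen}, is then invoked to conclude that $\widetilde{\conti Z}$ equals the unique strong solution driven by $(\conti B^x_{\kappa'},\conti B^y_{\kappa'})$, and the characterization of $\conti Z_{\kappa',\theta}$ as a skew Brownian motion from \cite{gwynne2016joint} pins down the parameter $p$.

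The main obstacle, and the reason this remains firmly conjectural, is twofold. First, \cref{conj:conj_rho_gen} itself --- strong existence and pathwise uniqueness of \cref{eq:flow_SDE_gen} for $|\rho|<1$ and arbitrary $q$ --- is an open stochastic-analysis problem that cannot be reduced to the known Harrison--Shepp (skew Brownian) or Tanaka (coalescing flow) cases, since both the oblique correlation of the driver and the local-time drift are present simultaneously. Second, generalizing the discrete coalescent-walk construction beyond bipolar orientations is delicate: the KMSW bijection and the \emph{miracle} of \cref{prop:trajectories_are_rw} (a single trajectory is itself a random walk with the right geometric step distribution) are very specific to the admissible step set $\Steps$ in \cref{eq:admis_steps}, and finding a decorated map model for generic $(\kappa',\theta)$ where an analogous trajectory-wise encoding exists, with independent-increment trajectories, is likely the hardest step. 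A plausible fallback is to bypass the trajectory-wise identification and work directly in the continuum by coupling $\conti Z_{\kappa',\theta}$ with the mating-of-trees walk via imaginary-geometry techniques, but this would require a new continuous input identifying the driving semimartingale of $\conti Z_{\kappa',\theta}$ in terms of $(\conti B^x_{\kappa'},\conti B^y_{\kappa'})$ on the positivity and negativity sets of $\conti Z_{\kappa',\theta}$.
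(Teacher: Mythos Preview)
The statement is a \emph{conjecture}, and the paper does not prove it. The surrounding section is explicitly flagged as containing ``possible future projects and several conjectures'' with ``no rigorous mathematical results.'' The paper's own contribution here is limited to motivating the conjecture: it observes that the case $\kappa'=12$, $\theta=\pi/2$ is settled by the coalescent-walk-process machinery for bipolar orientations, and then formulates \cref{eq:generalized_LQG} as the natural generalization, without proposing a proof strategy beyond the implicit suggestion that one might look for other discrete models.

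Your proposal is not a proof but a research plan, and you are up-front about this. The plan you sketch is reasonable and broadly consistent with the spirit of the paper: find decorated planar map models for general $(\kappa',\theta)$, build a discrete coalescent-walk process on them, and pass to the limit following the template of \cref{thm:coal_con_uncond}. You correctly identify the two genuine obstructions---the open stochastic-analysis problem of strong well-posedness for \cref{eq:flow_SDE_gen} with both correlation and local-time drift (\cref{conj:conj_rho_gen}), and the lack of a combinatorial analogue of the KMSW bijection and of the ``miracle'' in \cref{prop:trajectories_are_rw} for generic $(\kappa',\theta)$. The paper does not go further than you do on either front; if anything, your discussion of the obstacles is more detailed than the paper's. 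So there is no gap to flag beyond the one you already name: the conjecture is open, and your outline is a plausible but incomplete route toward it.
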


Let us briefly discuss the relations between the parameters $(\rho, q)$ defining the skew Brownian permuton and the parameters $(\kappa',\theta)$ above. It is natural to expect that $\rho=-\cos(4\pi/\kappa')$ and $q=p(\kappa',\theta)$, where  $p(\kappa',\theta)$ is the unknown constant mentioned above. We point out that at the moment we do not have an interpretation of the parameter $\theta$ in terms of permutations.
\cref{conj:LCQ_SKEW} makes us believe that the skew Brownian permuton can be directly constructed from the LQG (on a bounded domain\footnote{We prefer to not specify the precise notion of finite-volume LQG needed, since, at the moment, we are not sure what is the correct one.}).

	\begin{conj}
		Let $\kappa' \in (4,\infty)$ and $\theta\in [0,2\pi)$.
		Consider a $\sqrt{16/\kappa'}$-LQG (on a bounded domain) decorated with two SLEs curves $(\bm\eta(t),\bm\eta'(t))_{t\in [0,1]}$ with parameters $\kappa'$ and $16/\kappa'$ respectively, and of angle $\theta$. Consider the function $\bm{\psi}_{\kappa',\theta}(t)$ defined for all $t\in[0,1]$ by  $\bm\eta(t)=\bm\eta'(\bm{\psi}_{\kappa',\theta}(t)).$
		Then $\bm{\psi}_{\kappa',\theta}(t)=\varphi_{\conti Z_{\kappa',\theta}}(t)$ a.s.,
		where $\varphi_{\conti Z_{\kappa',\theta}}(t)$ is defined as in \cref{defn:varphi_gener} for $\{\conti Z^{(u)}_{\kappa',\theta}\}_{u\in[0,1]}$ solving \cref{eq:generalized_LQG} in the finite-volume case and for different starting times $u\in[0,1]$.
	\end{conj}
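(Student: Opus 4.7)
The plan is to bootstrap from the Baxter case ($\kappa'=12$, $\theta=\pi/2$), where the result is essentially the content of \cref{thm:joint_scaling_limits} combined with the identification of the scaling limit of bipolar orientations with the $\sqrt{4/3}$-LQG decorated by two orthogonal SLE${}_{12}$'s from \cite{gwynne2016joint}. The first step will therefore be to settle the auxiliary \cref{conj:LCQ_SKEW}, which provides the explicit SDE description $(\conti B^x_{\kappa'},\conti B^y_{\kappa'})\mapsto\conti Z_{\kappa',\theta}$ in the infinite-volume (quantum plane) setting. Assuming this, the mating-of-trees theorem of \cite{gwynne2019mating} gives a measurable encoding of the pair $(\bm\mu,\bm\eta)$ by the two-dimensional Brownian motion $(\conti B^x_{\kappa'},\conti B^y_{\kappa'})$ of correlation $-\cos(4\pi/\kappa')$, and then \cref{conj:LCQ_SKEW} identifies $\conti Z_{\kappa',\theta}$, which records the interaction points between $\bm\eta$ and the second SLE$_{16/\kappa'}$ curve $\bm\eta'$ of angle $\theta$, with the $u=0$ solution of \cref{eq:flow_SDE_gen} driven by $(\conti B^x_{\kappa'},\conti B^y_{\kappa'})$.

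Next I would transfer the statement from the infinite to the finite-volume setting. Here the natural strategy is to mimic the absolute-continuity argument used to go from \cref{thm:coal_con_uncond} to \cref{prop:coal_con_cond}: one conditions the planar Brownian motion $(\conti B^x_{\kappa'},\conti B^y_{\kappa'})$ to stay in the non-negative quadrant on $[0,1]$ and to return to the origin at time $1$, obtaining the two-dimensional Brownian excursion $\conti E_\rho$ with $\rho=-\cos(4\pi/\kappa')$. The same Radon--Nikodym tilt transports the flow of solutions $\{\conti Z_{\kappa',\theta}^{(u)}\}_{u\in[0,1]}$ to the finite-volume version driven by $\conti E_\rho$, and it transports the intersection process $\conti Z_{\kappa',\theta}$ for the quantum plane to the intersection process for the bounded-domain LQG decorated by the two SLE curves. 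Once the two intersection processes agree with the flow almost surely, applying $\varphi$ (whose definition depends only on the signs of $\conti Z^{(u)}_{\kappa',\theta}(t)$, a quantity stable under the tilt) gives the identity $\bm\psi_{\kappa',\theta}=\varphi_{\conti Z_{\kappa',\theta}}$ exactly as in \cref{rk:cpbp_through_fortree} in the discrete case: $\bm\psi_{\kappa',\theta}(t)$ is the ``dual time'' at which $\bm\eta'$ visits the same point as $\bm\eta$ at time $t$, which by the coalescent structure of the flow is precisely the Lebesgue measure of $\{x\leq_{\conti Z_{\kappa',\theta}}t\}$.

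The hard part will be \cref{conj:LCQ_SKEW} itself. The issue is that the Gwynne--Holden--Sun construction of $\conti Z_{\kappa',\theta}$ proceeds through imaginary geometry and is known only to be a measurable functional of $(\bm\mu,\bm h)$ (equivalently of $(\conti B^x_{\kappa'},\conti B^y_{\kappa'})$), without an explicit formula; moreover, for $\kappa'\neq 12$ or $\theta\neq\pi/2$ there is no discrete model known to converge jointly to $(\bm\mu,\bm h)$ and give $\conti Z_{\kappa',\theta}$ as a scaling limit (for $\kappa'=12$, $\theta=\pi/2$ this role is played by bipolar orientations via the diagram in \cref{eq:comm_diagram}). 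A promising avenue is to approach this via the families of random permutations encoded by two-dimensional generating trees discussed in \cref{sect:univ} (for instance semi-Baxter and strong-Baxter), proving joint scaling limits analogous to \cref{thm:joint_scaling_limits} and matching the parameter $p(\kappa',\theta)$ by computing the bias observed in the discrete coalescent-walk process; this would simultaneously prove \cref{conj:semi-bax}. A second obstacle, more technical but still delicate, is the pathwise well-posedness of \cref{eq:flow_SDE_gen} for $\rho\in(-1,1)$ and $q\in(0,1)$: without \cref{conj:conj_rho_gen} the right-hand side of \cref{eq:generalized_LQG} is not even unambiguously defined as a strong solution, so one would need to establish a Yamada--Watanabe-type uniqueness result for the perturbed skew Tanaka SDE driven by a correlated Brownian motion, perhaps by combining the perturbed Tanaka techniques of \cite{MR3098074,MR3882190} with the Harrison--Shepp analysis of the skew Brownian motion.
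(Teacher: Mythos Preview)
The statement you are attempting to prove is a \emph{conjecture}, not a theorem: it appears in the section explicitly prefaced with ``\emph{There are no rigorous mathematical results here}''. The paper offers no proof, only motivation. So there is nothing to compare your proposal against.

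That said, your write-up is not a proof either, and you are aware of this: you correctly identify that the argument hinges on \cref{conj:LCQ_SKEW} (the explicit SDE description of the Gwynne--Holden--Sun process) and on \cref{conj:conj_rho_gen} (strong well-posedness of the perturbed skew Tanaka SDE), both of which are themselves open conjectures stated just a few lines earlier in the same section. Your reduction to these is reasonable as a research outline, and indeed mirrors the paper's own informal reasoning for why the conjecture should hold. But you should not present a chain of conjectures as a proof proposal; what you have written is a plausible strategy contingent on resolving at least two other open problems, plus a nontrivial finite-volume transfer whose details (the absolute-continuity argument for the LQG side, not just the Brownian side) are not obviously parallel to the passage from \cref{thm:coal_con_uncond} to \cref{prop:coal_con_cond}.
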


\vspace{-0.5cm}

\subsection{Connections with families of planar maps}

We conclude with a quick discussion on some possible connections between families of permutations and families of (decorated) planar maps. We saw that Baxter permutations and bipolar orientations are in bijection (and this bijection played a fundamental role in guessing many of the connections described above between the skew Brownian permuton and the LQG). This is not the only instance of a bijection between families of (decorated) planar maps and families of permutations: separable permutations are in bijection with rooted series-parallel non-separable maps\footnote{Rooted series-parallel non-separable maps are rooted non-separable maps that do not contain the complete graph $K_4$.} (see \cite[Proposition 6]{MR2734180}). Moreover, Baxter permutations avoiding the pattern $2413$ -- which form a subset of Baxter permutations, and a superset of separable permutations -- are in bijection with rooted non-separable planar maps (see \cite[Proposition 5]{MR2734180}). Other interesting maps are Schnyder woods (studied in \cite{li2017schnyder}). The corresponding model of permutations is a weighted model of Baxter permutations. It would be also interesting to study the generalizations of Schnyder woods described in \cite{MR2871142}.

	\begin{prbl}
		Can the bijections above (or additional ones) between families of permutations and families of planar maps help in investigating the connections between the skew Brownian permuton and the LQG?
	\end{prbl} 

We believe that, as it was in the case of Baxter permutations and bipolar orientations, these additional bijections will be helpful in improving our understanding of several connections between the skew Brownian permuton and the LQG, leading to the solutions of some of the problems/conjectures mentioned above.

    \appendix
    \chapter{Two-dimensional walks in the non-negative quadrant}
\label{sec:appendix}

Let $\bm W=(\bm W_k)_{k\in \Z_{\geq 0}}$ be a two-dimensional random walk with step distribution $\nu$ (defined in \cref{eq:step_distribution_walk} page~\pageref{eq:step_distribution_walk}), started at a point $x\in\Z^2$. We denote this measure by $\P_x$. Let $\conti W = (\conti X,\conti Y)$ be a standard two-dimensional Brownian motion of correlation $-1/2$.
After the simple computation $\Var(\nu) = \begin{psmallmatrix}2 &-1 \\ -1 &2 \end{psmallmatrix}$, the classical Donsker's theorem implies that the process $\left(\frac 1 {\sqrt {2n}} \bm W_{\lfloor nt \rfloor}\right)_{t\in[0,1]}$ converges in distribution to the process $(\conti W_t)_{t\in[0,1]}$.
In this section we are interested in the behavior of $\bm W$ under the conditioning of starting and ending close to the origin, and staying in the non-negative quadrant $Q = \Z_{\geq 0}^2$.
This has been treated in much wider generality in \cite{MR3342657} and \cite{duraj2015invariance}, and specialized in \cite{bousquet2019plane} to families of walks with steps in $\Steps$ (defined in \cref{eq:admis_steps} page~\pageref{eq:admis_steps}).
The following convergence in distribution can be found in \cite{MR3945746}, as an immediate consequence of \cite[Theorem 4]{duraj2015invariance}.
\begin{prop}\label{prop:DW}
	Let $x,y\in Q$. 
	Then
	\[\P_x\left(\left(\tfrac 1 {\sqrt{2 n}} \bm W_{\lfloor nt \rfloor}\right)_{0\leq t \leq 1} \in \cdot \;
	\middle| \; \bm W_{[0,n]}\subset Q, \bm W_n = y\right)
	\xrightarrow[n\to\infty]{} \P(\conti W_e \in \cdot),\]
	where $\conti W_e$ is some process that we call the two-dimensional Brownian excursion of correlation $-1/2$ in the non-negative quadrant.
\end{prop}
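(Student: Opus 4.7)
The plan is to deduce this proposition directly from the general invariance principles for random walks conditioned to stay in a cone developed by Denisov--Wachtel \cite{MR3342657} and Duraj \cite{duraj2015invariance}. The target object $\conti W_e$ will then be, essentially by definition, the Brownian excursion in the non-negative quadrant $Q$ obtained from Denisov--Wachtel's theory applied to the two-dimensional Brownian motion $\conti W$ of covariance $\Sigma = \Var(\nu) = \bigl(\begin{smallmatrix}2&-1\\-1&2\end{smallmatrix}\bigr)$, that is, $\conti W$ conditioned on the event that it starts at $(0,0)$, ends at $(0,0)$ at time $1$, and stays in $Q$ in between (this conditioning being a degenerate event has to be made sense of via an appropriate $h$-transform, which is exactly what Denisov--Wachtel construct).

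Concretely, I would proceed as follows. First I would record the relevant structural properties of the step distribution $\nu$ defined in \cref{eq:step_distribution_walk}: (i) $\nu$ is centered, since by symmetry $\E_\nu[\bm X_1] = \E_\nu[\bm Y_1] = 0$, which is immediate from the explicit weights; (ii) $\nu$ has exponential moments in both coordinates, because the mass assigned to $(-i,j)$ is $2^{-i-j-3}$, so all polynomial moments used in Duraj's statement are finite; (iii) $\nu$ is strongly aperiodic on $\Z^2$ (the support generates $\Z^2$ as a group, since for instance $(+1,-1)$, $(-1,0)$, and $(0,+1)$ all have positive mass), so a local central limit theorem is available; (iv) the support of $\nu$ is not contained in a half-plane avoiding the interior of $Q$, because $(+1,-1)$ is a valid step and already exits $Q$ when started on the axes. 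These are exactly the structural hypotheses required by \cite[Theorem 4]{duraj2015invariance} (and by the refined statement in \cite[Theorem 4]{MR3342657}) for an invariance principle for a bridge of the walk conditioned to stay in the quadrant $Q$.

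Next, I would apply Duraj's theorem verbatim. After the diagonalizing linear change of variable that turns $\Sigma$ into the identity (and turns $Q$ into a cone of opening angle $\arccos(-1/2) = 2\pi/3$), the conditioning in the proposition becomes precisely the conditioning treated in \cite{duraj2015invariance}: a random walk started at $x$, conditioned to end at $y$ after $n$ steps and to stay in the (transformed) cone. Duraj's theorem then yields convergence in distribution of the rescaled process in $\mathcal C([0,1],\R^2)$ to the corresponding Brownian bridge conditioned (in the $h$-transform sense) to stay in the cone and end at the image of $y$. Pulling back by the inverse linear map gives convergence to the object we call $\conti W_e$, independently of the starting and ending points $x,y \in Q$ (this independence is part of Duraj's statement, and is natural since after rescaling the endpoints $x/\sqrt{2n}, y/\sqrt{2n}$ collapse to the origin).

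The main technical obstacle, and the reason this is a genuine reduction and not a triviality, is the verification that $\nu$ meets all of Duraj's hypotheses: in particular the combination of (a) the local limit theorem with polynomial error needed inside the cone, and (b) the strong aperiodicity on the intersection of $\Z^2$ with the cone. Point (a) is standard given the exponential moments of $\nu$; point (b) requires a short argument using the explicit positive-mass steps listed above to show that for all $x,y\in Q$ sufficiently far inside $Q$ the set of walks of length $n$ from $x$ to $y$ staying in $Q$ is nonempty for all large $n$ in the correct arithmetic progression, so that the conditional probability in the statement is positive for all large $n$. Once these verifications are in place, the proposition follows as stated.
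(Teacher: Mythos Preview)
Your proposal is correct and follows exactly the route the paper indicates: the paper does not give a self-contained proof but simply records that the statement appears in \cite{MR3945746} as an immediate consequence of \cite[Theorem 4]{duraj2015invariance} (with the specialization to walks with steps in $A$ already carried out in \cite{bousquet2019plane}). Your hypothesis checks (centered, exponential moments, aperiodicity on $\Z^2$, cone angle $2\pi/3$ after decorrelation) are the right ones and match what those references require.
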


We will now go through the initial steps of a slightly different proof of this result, one that highlights an absolute continuity phenomenon between a conditioned walk away of its starting and ending points and an unconditioned one. The two lemmas that we prove here (absolute continuity of the walk and local limit estimate of the density factor) are needed in this manuscript to show convergence of a coalescent-walk process driven by a conditioned random walk. 

In what follows, if $W = (X,Y)$ is a two-dimensional walk, then $\inf W = (\inf X, \inf Y)$.
We also use the \emph{hat} to denote reversal of coordinates, so that $\widehat{(i,j)} = (j,i)$.

\begin{lem}\label{lem:AbsCont}
	Let $h:\Z^2\to\R$ be a bounded measurable function. Let $x,y\in Q$ and $1\leq m<n/2$. Then
	\begin{multline}
	\E_x[h((\bm W_{i+m}-\bm W_m)_{0\leq i \leq n-2m}) \mid \bm W_{[0,n]}\subset Q, \bm W_n = y]\\
	= \E_0\left[
	h(\bm W_i)_{0\leq i \leq n-2m}\cdot
	\alpha_{n,m}^{x,y}\left(\inf_{0\leq i \leq n-2m} \bm W_i\; ,\;\bm W_{n-2m}\right)
	\right],
	\end{multline}
	where
	\begin{equation}\label{eq:alpha_tilting_function}
	\alpha_{n,m}^{x,y}(a,b) = \sum_{z\in Q \colon z+a \in Q} \frac{
		\P_x(\bm W_m = z, \bm{W}_{[0,m]}\subset Q)
		\P_{\widehat y}(\bm W_m = \widehat z+ \widehat b, \bm{W}_{[0,m]}\subset Q)
	}{\P_x(\bm W_n = y, \bm{W}_{[0,n]}\subset Q)}.
	\end{equation}
\end{lem}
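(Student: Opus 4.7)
The strategy is to decompose the length-$n$ walk conditioned to stay in $Q$ into three independent pieces via the Markov property, handle the last piece by a time-reversal-plus-coordinate-swap symmetry of the step distribution $\nu$, and finally reparameterize the resulting sum so that the middle piece is described by an \emph{unconditioned} walk under $\P_0$ together with its infimum and endpoint.

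First, I would write the numerator of the conditional expectation. By the Markov property applied at times $m$ and $n-m$, the joint law of $(\bm W_m,(\bm W_{m+i}-\bm W_m)_{0\leq i \leq n-2m},\bm W_{n-m})$ under $\P_x$, restricted to $\{\bm W_{[0,n]}\subset Q, \bm W_n = y\}$, factors as a sum over intermediate states $z, z'\in Q$:
\begin{multline*}
\P_x(\bm W_n = y, \bm W_{[0,n]}\subset Q)\cdot \E_x[h((\bm W_{m+i}-\bm W_m)_{0\leq i\leq n-2m})\mid \bm W_n = y, \bm W_{[0,n]}\subset Q]\\
=\sum_{z,z'\in Q} \P_x(\bm W_m = z,\bm W_{[0,m]}\subset Q)\cdot \E_z\!\left[h(\bm W_{[0,n-2m]}-z)\mathds 1_{\bm W_{[0,n-2m]}\subset Q,\,\bm W_{n-2m}=z'}\right]\\
\times \P_{z'}(\bm W_m=y,\bm W_{[0,m]}\subset Q).
\end{multline*}

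Next, I would rewrite the middle factor in terms of the unconditioned walk started at $0$: if $\tilde{\bm W}$ has law $\P_0$, then $(\bm W_i)_{i\leq n-2m}$ under $\P_z$ is distributed like $(z+\tilde{\bm W}_i)_{i\leq n-2m}$, so
\[
\E_z\!\left[h(\bm W_{[0,n-2m]}-z)\mathds 1_{\bm W_{[0,n-2m]}\subset Q,\,\bm W_{n-2m}=z'}\right]
=\E_0\!\left[h(\tilde{\bm W})\mathds 1_{\{z+\inf \tilde{\bm W}\in Q,\,\tilde{\bm W}_{n-2m}=z'-z\}}\right],
\]
where $z+\inf\tilde{\bm W}\in Q$ is the component-wise condition ensuring the walk stays in $Q$ throughout.

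The key observation for the third factor is the symmetry: if $\widehat{(i,j)}=(j,i)$, then the map sending a step $s\in\Steps$ to $-\widehat s$ leaves the distribution $\nu$ invariant (one checks $(+1,-1)\mapsto(+1,-1)$ and $(-i,j)\mapsto(-j,i)\in\Steps$, with matching probabilities $2^{-i-j-3}$). Applying this to the time-reversed trajectory yields
\[
\P_{z'}(\bm W_m = y,\bm W_{[0,m]}\subset Q)=\P_{\widehat y}(\bm W_m = \widehat{z'},\bm W_{[0,m]}\subset Q).
\]
Substituting and setting $a=\inf_{0\leq i\leq n-2m}\tilde{\bm W}_i$ and $b=\tilde{\bm W}_{n-2m}$, so that $z'=z+b$ and the constraint $z\in Q$ with $z+a\in Q$ remains as the condition on $z$, the sum over $z,z'$ collapses to a function of $(a,b)$ alone, namely $\alpha_{n,m}^{x,y}(a,b)$ as defined in \eqref{eq:alpha_tilting_function}. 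Dividing by $\P_x(\bm W_n=y,\bm W_{[0,n]}\subset Q)$ produces the claimed identity.

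The only non-routine point is the time-reversal-plus-coordinate-swap symmetry, which must be verified directly on $\nu$; everything else is a careful bookkeeping of the Markov decomposition and a change of summation variable. Because $h$ is bounded, all sums converge absolutely and Fubini is automatic.
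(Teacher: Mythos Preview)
Your proof is correct and follows the standard approach that the paper (via its reference to \cite{borga2020scaling}) uses: Markov decomposition at times $m$ and $n-m$, translation of the middle piece to $\P_0$, and the time-reversal-plus-coordinate-swap symmetry $s\mapsto -\widehat s$ of $\nu$ to rewrite the final piece starting from $\widehat y$. The only point worth noting is that the constraint $z'=z+b\in Q$ need not appear explicitly in the definition of $\alpha_{n,m}^{x,y}$ because $\P_{\widehat y}(\bm W_m = \widehat z + \widehat b, \bm W_{[0,m]}\subset Q)$ vanishes automatically when $\widehat z + \widehat b \notin Q$; you handle this implicitly, which is fine.
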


\begin{lem}\label{lem:LLT}
	Fix $x,y\in Q$. For all $1/2>\eps>0$,
	\begin{equation}
	\lim_{n\to\infty} \sup_{a\geq 0,b\in \Z} \left\lvert\alpha^{x,y}_{n,\lfloor n\eps\rfloor}(a,b) - \alpha_\eps\left(\tfrac a{\sqrt {2  n}},\tfrac b{\sqrt {2 n}}\right)\right\rvert = 0,
	\end{equation}
	where $\alpha_\eps$ is a bounded continuous function on $(\R_+)^2\times \R^2$  defined by
	\begin{equation}\label{eq:fuction_alpha}
	\alpha_\eps(a,b) =\frac {\sqrt 3}{8\eps^5} \int_{x: x+a \in \R^2_+} g(x)g(x+b)dx
	\end{equation}
	and 
	\begin{equation}\label{eq:fuction_g}
	g(x_1, x_2)=\frac{1}{\sqrt{3 \pi}} x_1 x_2(x_1+x_2) \exp \left(-\frac{1}{3}\left(x_1^{2}+x_2^{2}+x_1 x_2\right)\right).
	\end{equation}
\end{lem}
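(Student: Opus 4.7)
The statement is a local limit theorem for the tilting density $\alpha^{x,y}_{n,m}$ with $m = \lfloor n\eps\rfloor$. The strategy is to apply the Denisov--Wachtel local limit theorem (see \cite{MR3342657}) for two-dimensional random walks confined to a cone (here the non-negative quadrant $Q$) to each of the three probability factors that appear in the definition \eqref{eq:alpha_tilting_function} of $\alpha^{x,y}_{n,m}$, and then to recognize the resulting Riemann sum as a continuous integral.

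The first step is to record the relevant local limit asymptotics. Denote by $V$ the discrete harmonic function for $\bm W$ killed upon exiting $Q$ (it exists and is unique up to normalization by \cite{MR3342657}, and by the explicit form of the covariance matrix $\Var(\nu)=\begin{psmallmatrix}2&-1\\-1&2\end{psmallmatrix}$ one can identify $V(x)\sim c\,x_1 x_2(x_1+x_2)$ as $x\to\infty$ inside $Q$, giving the polynomial prefactor in $g$). Let $p_t(u,v)$ denote the transition density of the Brownian motion $\conti W$ in $Q$ killed at the boundary; a direct computation using the reflection principle for the Brownian motion of correlation $-1/2$ in $Q$ yields that as $|u|\to 0$,
\begin{equation*}
t\,p_t(u,v)/V(u) \;\longrightarrow\; \tfrac{1}{\sqrt{3\pi}}\,v_1v_2(v_1+v_2)\exp\!\bigl(-\tfrac{1}{3t}(v_1^2+v_2^2+v_1v_2)\bigr)\cdot t^{-5/2}\cdot C,
\end{equation*}
up to a universal constant $C$ that will end up absorbed in the normalization. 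The Denisov--Wachtel local limit theorem then gives, uniformly for $z\in Q$ with $z/\sqrt{2m}$ bounded,
\begin{equation*}
\P_x(\bm W_m = z,\bm W_{[0,m]}\subset Q) \;\sim\; \tfrac{V(x)}{(2m)^{5/2}}\,g\!\Bigl(\tfrac{z}{\sqrt{2m\eps}/\sqrt{\eps}}\Bigr)\cdot (\text{constant}),
\end{equation*}
with the analogous statement for the reversed walk starting from $\widehat y$, and for the denominator one has $\P_x(\bm W_n=y,\bm W_{[0,n]}\subset Q)\sim c\,V(x)V(\widehat y)\,n^{-4}$. The precise constants in all three estimates come from the same reference Brownian density and therefore cancel out of the ratio.

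The second step is to substitute these asymptotics into \eqref{eq:alpha_tilting_function}. Writing $z = \sqrt{2n}\,u$, $a=\sqrt{2n}\,\tilde a$, $b=\sqrt{2n}\,\tilde b$, the factors of $V(x)$, $V(\widehat y)$ and the universal constants cancel, and the sum $\sum_z$ becomes, up to an error $o(1)$ uniform in $\tilde a,\tilde b$, a Riemann sum for
\begin{equation*}
\frac{1}{\eps^{5/2}(1-2\eps)^{5/2}}\cdot\frac{n^4}{(2n)}\cdot\frac{1}{(2n)} \int_{u:u+\tilde a\in\R_+^2} g\!\bigl(u/\sqrt{\eps}\bigr)\,g\!\bigl((u+\tilde b)/\sqrt{\eps}\bigr)\,du,
\end{equation*}
(after collecting powers of $n$ from the three Denisov--Wachtel estimates and from the Jacobian $du=(2n)^{-1}dz$). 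A change of variables $u\mapsto u\sqrt\eps$ and a scaling check against the explicit formula \eqref{eq:fuction_alpha} then identifies the limit as exactly $\alpha_\eps(\tilde a,\tilde b)$, with the prefactor $\tfrac{\sqrt 3}{8\eps^5}$ coming out after the $\eps$ and $(1-2\eps)$ factors combine with the normalization constants in $g$.

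The third step is to upgrade pointwise convergence to the claimed uniform convergence in $(a,b)$. For $a$ or $b$ in a bounded range $\{\,a,b:\,|a|+|b|\le M\sqrt n\,\}$ this is built into the uniform form of the Denisov--Wachtel local limit theorem. For the tail regime $|a|+|b|\gg \sqrt n$ one uses the Gaussian decay of $g$ together with large-deviation bounds (of Cram\'er type, available because $\nu$ has exponential moments in the $(-1,1)$ direction and bounded support in the other) to show that both $\alpha^{x,y}_{n,m}(a,b)$ and $\alpha_\eps(a/\sqrt{2n},b/\sqrt{2n})$ are $o(1)$ as soon as $|a|+|b|/\sqrt n\to\infty$; the difference is therefore uniformly small. \textbf{The main obstacle} is checking that the three local-limit estimates (for the walk from $x$, the reversed walk from $\widehat y$, and the full walk from $x$ to $y$) can all be made uniform \emph{simultaneously} in the summation variable $z$ and in the parameters $a,b$, with error terms that still decay after summation over $z$; this requires extracting quantitative error bounds from \cite{MR3342657} or else combining the qualitative statement there with a separate Gaussian tail bound handling the $z$ far from the diagonal of the quadrant.
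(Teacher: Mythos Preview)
The paper does not give a proof of this lemma; it defers entirely to \cite[Appendix A]{borga2020scaling}. Your outline follows exactly the approach taken there: apply the Denisov--Wachtel local limit theorem \cite{MR3342657} to each of the three probabilities in \eqref{eq:alpha_tilting_function}, convert the sum over $z$ into a Riemann integral, and handle uniformity via the uniform error bounds built into Denisov--Wachtel together with Gaussian tail estimates. The identification $p=3$ for the harmonic-function degree, the form $V(x)\sim c\,x_1x_2(x_1+x_2)$, and the scaling $m^{-5/2}$ for the numerator factors and $n^{-4}$ for the denominator are all correct. So as a plan, this is the right proof.

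That said, the displayed computations in your second step do not check out as written. The factor $(1-2\eps)^{5/2}$ you introduce is spurious: the definition \eqref{eq:alpha_tilting_function} involves only walks of length $m$ (twice) and $n$ (once), so only powers of $\eps$ and absolute constants survive, never $(1-2\eps)$. Similarly, after your change of variables $u\mapsto u\sqrt\eps$ the argument of the second $g$ becomes $v+\tilde b/\sqrt\eps$ rather than $v+\tilde b$, which does not match \eqref{eq:fuction_alpha}; the resolution is that the function $g$ in the statement already absorbs the $\eps$-scaling in a specific way (its Gaussian part has covariance $\begin{psmallmatrix}2&-1\\-1&2\end{psmallmatrix}$, not the rescaled one), so the correct substitution is $v=z/\sqrt{2n\eps}$ directly, and the Jacobian produces $2n\eps$ rather than $2n$. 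Once you track this correctly the $\sqrt 3/(8\eps^5)$ prefactor falls out without any stray $(1-2\eps)$ term. These are bookkeeping errors, not conceptual ones, but in a result whose entire content is a precise limiting formula they need to be fixed.
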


A byproduct of this approach is a different characterization of the law of $\conti W_e$, which is immediate from Proposition \ref{prop:DW} and Lemmas \ref{lem:AbsCont} and \ref{lem:LLT}.
\begin{prop}\label{prop:brown_ex}
	For every $\eps>0$, the distribution of $(\conti W_e (\eps+t) - \conti W_e(\eps))_{0\leq t\leq 1-2\eps}$ is absolutely continuous with regards to the distribution of $\conti W_{|[0,1-2\eps]}$. The density function is the map
	\begin{equation}
	\mathcal{C}([0,1-2\eps],\R^2)\to \R,\qquad
	f\mapsto
	\alpha_\eps\big(\inf_{[0,1-2\eps]}f\; ,\; f(1-2\eps)\big).
	\end{equation}
	In particular, for every $\eps>0$ and for every integrable function  $h:\mathcal{C}([0,1-2\eps],\R^2)\to \R$,
	\begin{equation}
	\E\Big[h\big((\conti W_e (\eps+t) - \conti W_e(\eps))_{0\leq t\leq 1-2\eps}\big)\Big]
	= \E\Big[
	h\big(\conti W_{|[0,1-2\eps]}\big)
	\alpha_\eps\big(\inf_{[0,1-2\eps]} \conti W\; ,\; \conti W(1-2\eps)\big)\Big].
	\end{equation}
\end{prop}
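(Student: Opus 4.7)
The plan is to deduce \cref{prop:brown_ex} by passing to the scaling limit in the discrete Radon--Nikodym identity of \cref{lem:AbsCont}, using \cref{lem:LLT} to replace the discrete density by its continuous counterpart and \cref{prop:DW} to pass to the limit on the conditioned side.

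\smallskip

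\emph{Step 1 (set-up).} Fix $x,y \in Q$ and $\eps \in (0,1/2)$. Let $h:\mathcal{C}([0,1-2\eps],\R^2)\to\R$ be bounded continuous. For each $n$, set $m_n = \lfloor n\eps\rfloor$ and let $\hat{\bm W}_n(t) \coloneqq \tfrac{1}{\sqrt{2n}}\bm W_{\lfloor nt\rfloor}$ denote the rescaled walk. Applying \cref{lem:AbsCont} with $m=m_n$ and with the test functional $h \circ \Psi_n$, where $\Psi_n$ suitably rescales an integer-indexed walk of length $n-2m_n$ into an element of $\mathcal{C}([0,1-2\eps],\R^2)$, one obtains
\begin{equation}\label{eq:plan_step1}
\E_x\!\left[\,h\!\left(\hat{\bm W}_n(\eps+\cdot)-\hat{\bm W}_n(\eps)\right)\,\Big|\,\bm W_{[0,n]}\subset Q,\,\bm W_n=y\right] = \E_0\!\left[\,h(\hat{\bm W}_n)\cdot \alpha_{n,m_n}^{x,y}\!\left(\inf \bm W_{[0,n-2m_n]},\bm W_{n-2m_n}\right)\right],
\end{equation}
where on the right-hand side $h$ is evaluated on the appropriate restriction of $\hat{\bm W}_n$. (Minor length mismatches between $1-2\eps$ and $(n-2m_n)/n$ contribute an error that vanishes by uniform continuity and tightness of the walks.)

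\smallskip

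\emph{Step 2 (limit on the conditioned side).} By \cref{prop:DW}, the conditioned process $(\hat{\bm W}_n(t))_{0\le t\le 1}$ converges in distribution to $\conti W_e$. Since the shift and subtraction map $f\mapsto (f(\eps+\cdot)-f(\eps))$ is continuous on $\mathcal{C}([0,1],\R^2)$, the continuous mapping theorem gives
\[
\text{LHS of \eqref{eq:plan_step1}} \;\xrightarrow[n\to\infty]{}\; \E\!\left[\,h\!\left((\conti W_e(\eps+t)-\conti W_e(\eps))_{0\le t\le 1-2\eps}\right)\right].
\]

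\smallskip

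\emph{Step 3 (limit on the unconditioned side).} By \cref{lem:LLT}, $\alpha_{n,m_n}^{x,y}$ converges uniformly on its domain to $\alpha_\eps$ after the rescaling $(a,b)\mapsto (a/\sqrt{2n},b/\sqrt{2n})$; in particular $\alpha_\eps$ is bounded (and a direct check on \eqref{eq:fuction_alpha}--\eqref{eq:fuction_g}, using the Gaussian decay of $g$, shows continuity of $\alpha_\eps$ on $\R_{\ge 0}^2\times\R^2$). Since infimum and evaluation at $1-2\eps$ are continuous functionals on $\mathcal{C}([0,1-2\eps],\R^2)$, Donsker's theorem together with the continuous mapping theorem and the uniform replacement yield
\[
\text{RHS of \eqref{eq:plan_step1}} \;\xrightarrow[n\to\infty]{}\; \E\!\left[\,h(\conti W_{|[0,1-2\eps]})\cdot \alpha_\eps\!\left(\inf_{[0,1-2\eps]} \conti W,\;\conti W(1-2\eps)\right)\right].
\]
Boundedness of $h$ and $\alpha_\eps$ is used to ensure the expectations are well-defined and to justify the interchange.

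\smallskip

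\emph{Step 4 (identification and extension).} Equating the two limits gives the stated identity for every bounded continuous $h$. Hence the law of $(\conti W_e(\eps+t)-\conti W_e(\eps))_{0\le t\le 1-2\eps}$ is absolutely continuous with respect to the law of $\conti W_{|[0,1-2\eps]}$, with density $f\mapsto \alpha_\eps(\inf f,f(1-2\eps))$. A standard monotone class argument then extends the identity to all integrable $h$.

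\smallskip

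\emph{Main obstacle.} The delicate point is the continuity of the density functional, and the justification that $\alpha_\eps$ is bounded and continuous despite the fact that its defining integral \eqref{eq:fuction_alpha} is taken over the $a$-dependent region $\{x\in\R^2:x+a\in\R_+^2\}$. Boundedness and continuity in $(a,b)$ must be read off from the Gaussian decay of $g$; once this is available, Steps 2 and 3 become routine applications of Donsker's theorem, the continuous mapping theorem, and the uniform estimate of \cref{lem:LLT}. All remaining work is bookkeeping of vanishing boundary effects arising from the difference between $\eps$ and $m_n/n$.
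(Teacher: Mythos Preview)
Your proposal is correct and follows exactly the approach the paper indicates: the paper states that \cref{prop:brown_ex} ``is immediate from Proposition~\ref{prop:DW} and Lemmas~\ref{lem:AbsCont} and~\ref{lem:LLT},'' and your Steps~1--4 are precisely this passage to the limit. One small simplification: the boundedness and continuity of $\alpha_\eps$ that you flag as the ``main obstacle'' is already part of the statement of \cref{lem:LLT}, so you may cite it directly rather than re-deriving it from the Gaussian decay of $g$.
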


The proofs of the lemmas above can be found in \cite[Appendix A]{borga2020scaling}.

	\printnoidxglossary[sort=def,title={Notation}]\label{not:notation}

\bibliographystyle{alpha} 
\bibliography{my_bib.bib}

\newpage\thispagestyle{empty}

{\begingroup
	\AlCentroPaginaMod{Back_0_4}
\topskip0pt
\vspace*{\fill}
\begin{center}
	    \hspace{1cm}
		\textcolor{UMRed}{\textbf{\Huge{For the Love of Maths}}}
\end{center}
\vspace*{\fill}
\endgroup}

\end{document}